\DeclareRobustCommand{\SkipTocEntry}[5]{}
\numberwithin{section}{chapter}
\numberwithin{equation}{chapter}
\newtheorem{lma}{Lemma}[section]
\newaliascnt{thmCt}{lma}
\newtheorem{thm}[thmCt]{Theorem}
\newaliascnt{corCt}{lma}
\newtheorem{cor}[corCt]{Corollary}
\newaliascnt{prpCt}{lma}
\newtheorem{prp}[prpCt]{Proposition}
\newtheorem*{thm*}{Theorem}
\newtheorem*{cor*}{Corollary}
\newtheorem*{prp*}{Proposition}
\newtheorem{lmaChapter}{Lemma}[chapter]
\newaliascnt{thmChapterCt}{lmaChapter}
\newtheorem{thmChapter}[thmChapterCt]{Theorem}
\newaliascnt{prpChapterCt}{lmaChapter}
\newtheorem{prpChapter}[prpChapterCt]{Proposition}
\theoremstyle{definition}
\newaliascnt{pgrCt}{lma}
\newtheorem{pgr}[pgrCt]{}
\newaliascnt{dfnCt}{lma}
\newtheorem{dfn}[dfnCt]{Definition}
\newaliascnt{rmkCt}{lma}
\newtheorem{rmk}[rmkCt]{Remark}
\newaliascnt{rmksCt}{lma}
\newtheorem{rmks}[rmksCt]{Remarks}
\newaliascnt{prblCt}{lma}
\newtheorem{prbl}[prblCt]{Problem}
\newaliascnt{exaCt}{lma}
\newtheorem{exa}[exaCt]{Example}
\newaliascnt{exasCt}{lma}
\newaliascnt{conjCt}{lma}
\newtheorem{conj}[conjCt]{Conjecture}
\newaliascnt{pgrChapterCt}{lmaChapter}
\newtheorem{pgrChapter}[pgrChapterCt]{}
\newaliascnt{dfnChapterCt}{lmaChapter}
\newtheorem{dfnChapter}[dfnChapterCt]{Definition}
\newaliascnt{rmksChapterCt}{lmaChapter}
\newtheorem{rmksChapter}[rmksChapterCt]{Remarks}
\newaliascnt{prblChapterCt}{lmaChapter}
\newaliascnt{exaChapterCt}{lmaChapter}
\newtheorem{exaChapter}[exaChapterCt]{Example}
\newaliascnt{exasChapterCt}{lmaChapter}
\newtheorem{exasChapter}[exasChapterCt]{Examples}
\newcommand{\ssubset}{\ensuremath{\subset \hskip-5pt \subset}}
\newcommand{\verteq}{\rotatebox{90}{\,=}}
\newcommand{\ctsRel}{\ast}
\newcommand{\elmtrySgp}[1]{E_{#1}}
\newcommand{\inDown}{\rotatebox[origin=c]{90}{$\in$}}
\newcommand{\vect}[1]{\textbf{#1}}
\newcommand{\W}{\mathrm{W}}
\newcommand{\M}{\mathrm{M}}
\newcommand{\N}{\mathbb{N}}
\newcommand{\Q}{\mathbb{Q}}
\newcommand{\R}{\mathbb{R}}
\newcommand{\C}{\mathbb{C}}
\newcommand{\Z}{\mathbb{Z}}
\newcommand{\T}{\mathbb{T}}
\newcommand{\K}{\mathrm{K}}
\newcommand{\txtFA}{\text{ for all }}
\newcommand{\dcpo}{\textbf{dcpo}}
\newcommand{\pom}{positively ordered monoid}
\newcommand{\prePom}{positively pre-ordered monoid}
\newcommand{\pog}{partially ordered group}
\newcommand{\por}{partially ordered ring}
\newcommand{\txtCanc}{{\text{canc}}}
\newcommand{\txtDir}{{\text{dir}}}
\newcommand{\txtMax}{{\mathrm{max}}}
\newcommand{\txtMin}{{\mathrm{min}}}
\newcommand{\tensMax}{\otimes_{\mathrm{max}}}
\newcommand{\tensMin}{\otimes_{\mathrm{min}}}
\newcommand{\CatMon}{\mathrm{Mon}}
\newcommand{\CatMonMor}{\CatMon}
\newcommand{\CatMonBimor}{\mathrm{Bi}\CatMon}
\newcommand{\CatCon}{\mathrm{Con}}
\newcommand{\CatGp}{\mathrm{Gp}}
\newcommand{\CatPrePom}{\mathrm{PrePOM}}
\newcommand{\CatPom}{\mathrm{POM}}
\newcommand{\CatPomMor}{\CatPom}
\newcommand{\CatPomBimor}{\mathrm{Bi}\CatPom}
\newcommand{\CatPomLim}{\CatPom\text{-}\!\varinjlim}
\newcommand{\CatPoGp}{\mathrm{POGp}}
\newcommand{\CatPoGpMor}{\CatPoGp}
\newcommand{\CatPoGpBimor}{\mathrm{Bi}\CatPoGp}
\newcommand{\CatPoRg}{\mathrm{PORg}}
\newcommand{\CatSrg}{\mathrm{Srg}}
\newcommand{\CatSet}{\mathrm{Set}}
\newcommand{\CatTop}{\mathrm{Top}}
\newcommand{\CatCGHTop}{\mathrm{CGHTop}}
\newcommand{\CatCa}{C^*}
\newcommand{\CatCaMor}{\mathrm{Hom}_{C^*}}
\newcommand{\CatCaBimor}{\mathrm{BiHom}_{C^*}}
\newcommand{\CatCaLim}{\CatCa\text{-}\!\varinjlim}
\newcommand{\CatLocCa}{\CatCa_{\mathrm{loc}}}
\newcommand{\CatLocCaLim}{\CatLocCa\text{-}\!\varinjlim}
\newcommand{\CatCu}{\ensuremath{\mathrm{Cu}}}
\newcommand{\CatCuMor}{\CatCu}
\newcommand{\CatCuBimor}{\mathrm{Bi}\CatCu}
\newcommand{\CatCuLim}{\CatCu\text{-}\!\varinjlim}
\newcommand{\CatCuGenMor}[1]{\CatCuMor[#1]}
\newcommand{\CatCuGenBimor}[1]{\CatCuBimor[#1]}
\newcommand{\CatV}{\mathcal{V}}
\newcommand{\CatVMor}{\CatV}
\newcommand{\CatC}{\mathcal{C}}
\newcommand{\CatCMor}{\CatC}
\newcommand{\CatCBimor}{\mathrm{Bi}\CatC}
\newcommand{\CatD}{\mathcal{D}}
\newcommand{\CatDMor}{\CatD}
\newcommand{\CatPreW}{\mathrm{PreW}}
\newcommand{\CatPreWLim}{\CatPreW\text{-}\!\varinjlim}
\newcommand{\CatPreWW}{\mathrm{(Pre)W}}
\newcommand{\CatW}{\mathrm{W}}
\newcommand{\CatWMor}{\CatW}
\newcommand{\CatWBimor}{\mathrm{Bi}\CatW}
\newcommand{\CatWLim}{\CatW\text{-}\!\varinjlim}
\newcommand{\CatWGenMor}[1]{\CatWMor[#1]}
\newcommand{\CatWGenBimor}[1]{\CatWBimor[#1]}
\newcommand{\preCa}{pre-$C^*$-algebra}
\newcommand{\ca}{$C^*$-al\-ge\-bra}
\newcommand{\starAlg}{${}^*$-al\-ge\-bra}
\newcommand{\starHom}{${}^\ast$-ho\-mo\-mor\-phism}
\newcommand{\axiomO}[1]{(O#1)}
\newcommand{\axiom}[1]{(#1)}
\newcommand{\axiomW}[1]{(W#1)}
\DeclareMathOperator{\Gr}{Gr}
\DeclareMathOperator{\Her}{Her}
\DeclareMathOperator{\Lat}{Lat}
\DeclareMathOperator{\Tor}{Tor}
\DeclareMathOperator{\Cu}{Cu}
\DeclareMathOperator{\Lsc}{Lsc}
\DeclareMathOperator{\ev}{ev}
\DeclareMathOperator{\supp}{supp}
\DeclareMathOperator{\id}{id}
\DeclareMathOperator{\QT}{QT}
\DeclareMathOperator{\Idl}{Idl}
\newcommand{\pnc}{{\rm{pnc}}}
\newcommand{\wpnc}{{\rm{wpnc}}}
\newcommand{\soft}{{\rm{soft}}}
\newcommand{\op}{{\rm{op}}}
\newcommand{\alg}{\mathrm{alg}}
\newcommand{\freeVar}{\_\,}
\newcolumntype{P}[1]{>{\RaggedRight\hspace{0pt}}m{#1}}
\newcommand{\rmnum}[1]{\romannumeral #1}
\newcommand{\Rmnum}[1]{\expandafter\@slowromancap\romannumeral #1@}
\newcommand{\enumStatement}[1]{(#1)}
\newcommand{\enumCondition}[1]{(\rmnum{#1})}
\newcommand{\implStatements}[2]{\enumStatement{#1}$\Rightarrow$\enumStatement{#2}}
\newcommand{\beginEnumStatements}{\begin{enumerate}[label=(\arabic*),align=Center,leftmargin=*, widest=iiii]}
\newcommand{\beginEnumConditions}{\begin{enumerate}[label=(\roman*),align=Center, leftmargin=*, widest=iiii]}
\begin{document}

\frontmatter
\title{Tensor products and regularity properties of Cuntz semigroups}

\author{Ramon Antoine}

\author{Francesc Perera}

\author{Hannes Thiel}

\date{October 27, 2014; Revised October 08, 2015}

\address{
R.~Antoine, Departament de Matem\`{a}tiques,
Universitat Aut\`{o}noma de Barcelona,
08193 Bellaterra, Barcelona, Spain}
\email[]{ramon@mat.uab.cat}

\address{
F.~Perera, Departament de Matem\`{a}tiques,
Universitat Aut\`{o}noma de Barcelona,
08193 Bellaterra, Barcelona, Spain}
\email[]{perera@mat.uab.cat}

\address{
H.~Thiel,
Mathematisches Institut,
Universit\"at M\"unster,
Einsteinstra{\ss}e 62, 48149 M\"{u}nster, Germany}
\email[]{hannes.thiel@uni-muenster.de}

\dedicatory{\vspace{1.5cm} {\Large Dedicated to George A.\ Elliott on the occasion of his 70th birthday} \\ \vspace{0.5cm} October 08, 2015}

\subjclass[2010]
{Primary
06B35, 
06F05, 
15A69, 
46L05. 
Secondary
06B30, 
06F25, 
13J25, 
16W80, 
16Y60, 
18B35, 
18D20, 
19K14, 
46L06, 
46M15, 
54F05. 
}

\keywords{Cuntz semigroup, tensor product, continuous poset, $C^*$-algebra}


\begin{abstract}
The Cuntz semigroup of a \ca{} is an important invariant in the structure and classification theory of \ca{s}.
It captures more information than $K$-theory but is often more delicate to handle.
We systematically study the lattice and category theoretic aspects of Cuntz semigroups.

Given a \ca{} $A$, its (concrete) Cuntz semigroup $\Cu(A)$ is an object in the category $\CatCu$ of (abstract) Cuntz semigroups, as introduced by Coward, Elliott and Ivanescu in \cite{CowEllIva08CuInv}.
To clarify the distinction between concrete and abstract Cuntz semigroups, we will call the latter $\CatCu$-semigroups.

We establish the existence of tensor products in the category $\CatCu$ and study the basic properties of this construction.
We show that $\CatCu$ is a symmetric, monoidal category and relate $\Cu(A\otimes B)$ with $\Cu(A)\otimes_\CatCu\Cu(B)$ for certain classes of \ca{s}.

As a main tool for our approach we introduce the category $\CatW$ of pre-completed Cuntz semigroups.
We show that $\CatCu$ is a full, reflective subcategory of $\CatW$.
One can then easily deduce properties of $\CatCu$ from respective properties of $\CatW$, for example the existence of tensor products and inductive limits.
The advantage is that constructions in $\CatW$ are much easier since the objects are purely algebraic.

For every (local) \ca{} $A$, the classical Cuntz semigroup $W(A)$ together with a natural auxiliary relation is an object of $\CatW$.
This defines a functor from \ca{s} to $\CatW$ which preserves inductive limits.
We deduce that the assignment $A\mapsto\Cu(A)$ defines a functor from \ca{s} to $\CatCu$ which preserves inductive limits.
This generalizes a result from \cite{CowEllIva08CuInv}.

We also develop a theory of $\CatCu$-semirings and their semimodules.
The Cuntz semigroup of a strongly self-absorbing \ca{} has a natural product giving it the structure of a $\CatCu$-semiring.
For \ca{s}, it is an important regularity property to tensorially absorb a strongly self-absorbing \ca{}.
Accordingly, it is of particular interest to analyse the tensor products of $\CatCu$-semigroups with the $\CatCu$-semiring of a strongly self-absorbing \ca{}.
This leads us to define `solid' $\CatCu$-semirings (adopting the terminology from solid rings), as those $\CatCu$-semirings $S$ for which the product induces an isomorphism between $S\otimes_\CatCu S$ and $S$.
This can be considered as an analog of being strongly self-absorbing for $\CatCu$-semirings.
As it turns out, if a strongly self-absorbing \ca{} satisfies the UCT, then its $\CatCu$-semiring is solid.
We prove a classification theorem for solid $\CatCu$-semirings.
This raises the question of whether the Cuntz semiring of every strongly self-absorbing \ca{} is solid.

If $R$ is a solid $\CatCu$-semiring, then a $\CatCu$-semigroup $S$ is a semimodule over $R$ if and only if $R\otimes_{\CatCu}S$ is isomorphic to $S$.
Thus, analogous to the case for \ca{s}, we can think of semimodules over $R$ as $\CatCu$-semigroups that tensorially absorb $R$.
We give explicit characterizations when a $\CatCu$-semigroup is such a semimodule for the cases that $R$ is the $\CatCu$-semiring of a strongly self-absorbing \ca{} satisfying the UCT.
For instance, we show that a $\CatCu$-semigroup $S$ tensorially absorbs the $\CatCu$-semiring of the Jiang-Su algebra if and only if $S$ is almost unperforated and almost divisible, thus establishing a semigroup version of the Toms-Winter conjecture.
\end{abstract}

\maketitle

\tableofcontents

\mainmatter

\chapter{Introduction}
\label{sec:intro}

\section{Background}

This paper is concerned with a number of regularity properties of Cuntz semigroups, which are invariants naturally associated to \ca{s}.
To put our results into perspective, we first review the $C^*$-motivation behind our work, as well as the importance of these semigroups in the context of the Elliott classification program.

\subsection{The Elliott classification program}
\label{subsct:classification}
The Cuntz semigroup $\W(A)$ of a \ca{} $A$ is an important invariant in the structure theory of \ca{s}, particularly in connection with the classification program of simple, nuclear \ca{s} initiated by George Elliott.
In itself, nuclearity is a finite-dimensional approximation property that includes a large number of our stock-in-trade \ca{s}.

The original Elliott Conjecture asserts that simple, separable, unital, nuclear \ca{s} can be classified by an invariant $\mathrm{Ell}(\freeVar)$ of a $K$-theoretic nature.
Without going into much detail, the Elliott invariant $\mathrm{Ell}(A)$ for a \ca{} $A$ consists of the ordered $K_0$-group, the topological $K_1$-group, the trace simplex $T(A)$ and the (natural) pairing between traces and projections.

\begin{conj}[Elliott's Classification Conjecture]
\index{terms}{Elliott's Classification Conjecture}
For \ca{s} $A$ and $B$ as above, we have $\mathrm{Ell}(A)\cong\mathrm{Ell}(B)$ if and only if $A\cong B$.
\end{conj}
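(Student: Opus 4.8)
The plan is to attack this statement by the standard tool for classification results of this shape, namely Elliott's approximate two-sided intertwining argument, keeping in mind that all the content lies in one direction. The ``if'' direction is formal: since the Elliott invariant is functorial, an isomorphism $A \cong B$ induces an isomorphism of the ordered $K_0$-groups, the $K_1$-groups, the trace simplices and the pairings, hence $\mathrm{Ell}(A) \cong \mathrm{Ell}(B)$. So I would concentrate entirely on the ``only if'' direction: given an isomorphism $\varphi \colon \mathrm{Ell}(A) \to \mathrm{Ell}(B)$, produce a $*$-isomorphism $A \to B$ lifting it.

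First I would try to realise $\varphi$ and $\varphi^{-1}$ approximately by $*$-homomorphisms. This rests on two ingredients that are the technical heart of every theorem of this kind. An \emph{existence theorem} asserts that any morphism $\mathrm{Ell}(A) \to \mathrm{Ell}(B)$ is induced, up to arbitrarily small perturbation, by a $*$-homomorphism $A \to B$; a \emph{uniqueness theorem} asserts that two $*$-homomorphisms $A \to B$ inducing the same map on the invariant are approximately unitarily equivalent. Granting both, I would assemble approximate lifts of $\varphi$ and $\varphi^{-1}$ into an approximately commuting diagram and invoke Elliott's intertwining to extract an honest $*$-isomorphism $A \to B$ inducing $\varphi$.

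The hard part is that this program cannot be carried out for the class as literally stated, and the obstacle is fundamental rather than technical: the conjecture is \emph{false} in this generality. R\o rdam constructed a simple, separable, unital, nuclear \ca{} containing both a finite and an infinite projection, and Toms produced simple, separable, unital, nuclear \ca{s} with isomorphic Elliott invariants that are nonetheless non-isomorphic. Such examples show that no existence or uniqueness theorem of the required form can hold, because the Elliott invariant simply does not see enough. Tellingly, the Toms examples are distinguished precisely by their Cuntz semigroups (via the radius of comparison), which is one reason the finer invariant studied in this paper is indispensable. The common feature of all these counterexamples is a failure of regularity: they are not $\mathcal{Z}$-stable, have infinite nuclear dimension, and have badly behaved --- for instance perforated or non-almost-divisible --- Cuntz semigroups.

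Consequently, the honest outcome of this analysis is that the statement must be amended rather than established: one restricts to the regular class singled out by the Toms--Winter conjecture. Imposing $\mathcal{Z}$-stability (equivalently, in this setting, the Cuntz-semigroup regularity --- almost unperforation together with almost divisibility --- analysed here), along with the UCT, is exactly what makes the existence and uniqueness theorems above hold, after which Elliott's intertwining delivers the isomorphism. This is also the context in which the present paper operates: by characterising when a $\CatCu$-semigroup tensorially absorbs the $\CatCu$-semiring of a strongly self-absorbing \ca{} such as the Jiang--Su algebra, it supplies the semigroup-theoretic half of the regularity that any proof of a corrected Elliott-type classification is forced to invoke.
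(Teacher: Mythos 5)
The statement you were asked about is presented in the paper as a \emph{conjecture}, with no proof offered; the surrounding text records exactly what you observe: the ``if'' direction is formal functoriality, while the converse fails as stated, with R{\o}rdam's and Toms's counterexamples (the latter distinguished precisely by the Cuntz semigroup, via the order property of almost unperforation) forcing the restriction to the regular, $\mathcal{Z}$-stable class in which existence and uniqueness theorems plus Elliott intertwining can operate. Your assessment therefore coincides with the paper's own treatment, and your diagnosis of where the intertwining program must break down for the class as literally stated is accurate.
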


The Elliott program has had tremendous success in the classification of wide classes of algebras (see for example \cite{Ror92StructureUHF2} and \cite{EllTOm08Regularity}).
However, the first counterexamples to the conjecture as stated above appeared in the work of R{\o}rdam (\cite{Ror03FinInfProj}) and Toms (\cite{Tom05IndependenceK-SR}).
Both examples allowed to repair the conjecture by adding a minimal amount of information to the invariant (in this case, the real rank).
Soon after that though, Toms produced in \cite{Tom08ClassificationNuclear} two simple AH-algebras that agreed not only on the Elliott invariant, but also on a whole collection of topological invariants (among them the real and stable rank).

The distinguishing factor for the said algebras is precisely the Cuntz semigroup $W(A)$.
This is an object that was introduced by Cuntz in \cite{Cun78DimFct} as equivalence classes of positive elements in matrices over a \ca{} $A$, in very much the same way the semigroup $V(A)$ (as a precursor of $K_0(A)$) is constructed via Murray-von Neumann equivalence classes of projections in matrices over $A$. For a large class of simple \ca{s} (see \ref{subsc:ssa}), the Elliott invariant and the Cuntz semigroup, suitably interpreted, determine one another in a functorial way.

One of the key features of the Cuntz semigroup is its ordering, which is in general not algebraic.
As a matter of fact, it is one order property -- almost unperforation -- that is used to distinguish the algebras mentioned above.

Many of the classes of algebras considered in the classification program admit an inductive limit decomposition, and hence it is desirable that any addition to the original Elliott invariant behaves well with respect to inductive limits.
This is not the case of the Cuntz semigroup, when considered as an invariant from the category of \ca{s} to the category of semigroups.
This shortcoming can be remedied by passing to stable algebras and considering as a target category a suitable category $\CatCu$ of ordered semigroups (see below).
This was carried out by Coward, Elliott and Ivanescu in \cite{CowEllIva08CuInv}, where they defined $\Cu(A)$ using Hilbert modules (and showed it is naturally isomorphic to $W(A\otimes K)$).
In this way, the assignment $A\mapsto \Cu(A)$ defines a sequentially continuous functor.

To this date, there is no counterexample to the conjecture of whether the Elliott invariant, together with the Cuntz semigroup, constitutes a complete invariant for the class of unital, simple, separable, nuclear \ca{s}.
It is therefore natural to ask what is the largest possible class for which the Elliott Conjecture can be proved to hold.

It is important to point out that the Cuntz semigroup alone has become a useful tool in the classification of certain classes of nonsimple algebras.
A remarkable instance of this situation is found in the work of Robert in \cite{Rob12LimitsNCCW}, where the Cuntz semigroup is used to classify, up to approximate unitary equivalence, \starHom{s} out of an inductive limit of $1$-dimensional noncommutative CW-complexes with trivial $K_1$-groups into a stable rank one algebra.
As a consequence, Robert classifies all (not necessarily simple) inductive limits of $1$-dimensional NCCW-complexes with trivial $K_1$-groups using the Cuntz semigroup.

\subsection{Strongly self-absorbing \texorpdfstring{\ca{s}}{C*-algebras}. \texorpdfstring{$\mathcal Z$}{Z}-stability}
\label{subsc:ssa}

Toms and Winter, \cite[Definition~1.3]{TomWin07ssa}, termed a \ca{} $D$ strongly self-absorbing if $D\neq\C$ and if there is an isomorphism $\varphi\colon D\to D\otimes D$ that is approximately unitarily equivalent to the inclusion in the first factor (or, as it turns out, in the second).
\index{terms}{C*-algebra@\ca{}!strongly self-absorbing}
\index{terms}{strongly self-absorbing \ca{}}
\index{terms}{Jiang-Su algebra}
Such \ca{s} are automatically simple, nuclear (Effros-Rosenberg), and are either purely infinite or stably finite with a unique trace (Kirchberg).
The only known examples of strongly self-absorbing \ca{s} are:
The Cuntz algebras $\mathcal{O}_2$ and $\mathcal{O}_\infty$, every UHF-algebra of infinite type, the tensor products $\mathcal{O}_\infty\otimes U$ where $U$ is a UHF-algebra of infinite type, and the Jiang-Su algebra $\mathcal{Z}$.
All these algebras satisfy the Universal Coefficient Theorem (UCT).

The Elliott classification program predicts that they are in fact the only strongly self-absorbing \ca{s} satisfying the UCT. This has been verified very recently in \cite{TikWhiWin15}.
It remains an important open problem to determine whether there is a strongly self-absorbing \ca{} outside the UCT class, as this would provide a nuclear, non-UCT \ca.
If $D$ is strongly self-absorbing, a \ca{} $A$ is called $D$-stable provided that $A\cong A\otimes D$.
Winter showed that all strongly self-absorbing \ca{s} are $\mathcal{Z}$-stable (see \cite{Win11ssaZstable}, \cite{DadRor09ssa}).
It follows from this result that $\mathcal{Z}$ is an initial object in the category of strongly self-absorbing \ca{s}.

The Jiang-Su algebra $\mathcal{Z}$ has the same Elliott invariant as the complex numbers, and it has become prominent in the classification program.
In fact, tensoring a \ca{} with $\mathcal{Z}$ is inert at the level of $K$-theory and traces (although it may change the order of the $K_0$-group, except under some additional assumptions).
It is thus reasonable to expect that classification can be achieved within the class of simple, separable, unital, nuclear, $\mathcal{Z}$-stable algebras.
In this way, $\mathcal{Z}$-stability postulates itself as a regularity property for \ca{s}. For the said algebras that furthermore fall in the UCT class and have finite nuclear dimension, classification is now complete by the work of many authors (see \cite{GonLinNiu15}, \cite{EllGonLinNiu15}, \cite{TikWhiWin15} and the references therein).

All classes of simple, nuclear \ca{s} for which the Elliott Conjecture has been verified consist of $\mathcal{Z}$-stable \ca{s}; see \cite{TomWin08ASH}.
One may therefore wonder what role the Cuntz semigroup plays in these results, if any.
As proved in \cite{AntDadPerSan14RecoverElliott} (see also \cite{Tik11CuFunctions}, \cite{BroPerTom08CuElliottConj}), for the class of unital, simple, separable, nuclear and $\mathcal{Z}$-stable \ca{s} (this is the class alluded to above), the Elliott invariant and the Cuntz semigroup of any such algebra tensored with the circle define naturally equivalent functors.
Thus, for this class, $\mathrm{Ell}(\freeVar)$ is a classifying functor if and only if so is $\Cu(C(\T,\freeVar))$.

\subsection{The regularity Conjecture}

This conjecture, which is also known as the Toms-Winter conjecture (see \cite[Remarks~3.5]{TomWin09Villadsen} and \cite[Conjecture~0.1]{Win12NuclDimZstable}) links three seemingly unrelated regularity properties that a simple, separable, nuclear, nonelementary \ca{} $A$ may enjoy.
The first of these properties is that $A$ has finite nuclear dimension.
We will not define nuclear dimension here.
Instead let us just say that it is a strengthening of the definition of nuclearity that uses completely positive order-zero maps (that is, completely positive maps that preserve orthogonality of elements).

The second regularity property is $\mathcal{Z}$-stability, and the third one is strict comparison of positive elements, which may be roughly stated by saying that comparison of positive elements (modulo Cuntz subequivalence) is determined by the states on the Cuntz semigroup.
This is equivalent to saying that the Cuntz semigroup is almost unperforated.

\begin{conj}[Toms-Winter]
\index{terms}{Regularity Conjecture}\index{terms}{Toms-Winter Conjecture}
Let $A$ be a simple, separable, nuclear, nonelementary \ca.
Then the following conditions are equivalent:
\beginEnumConditions
\item
The \ca{} $A$ has finite nuclear dimension.
\item
The \ca{} $A$ is $\mathcal{Z}$-stable.
\item
The Cuntz semigroup $\W(A)$ is almost unperforated.
\end{enumerate}
\end{conj}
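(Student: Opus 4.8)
The plan is to prove the three equivalences by establishing the four implications \enumCondition{1}$\Rightarrow$\enumCondition{2}, \enumCondition{2}$\Rightarrow$\enumCondition{3}, \enumCondition{3}$\Rightarrow$\enumCondition{2}, and \enumCondition{2}$\Rightarrow$\enumCondition{1}; the first two are the ``downward'' directions and are accessible by existing technology, while the last two are the structurally hard ``upward'' directions. The common framework for all four is the central sequence algebra $F(A)=(A'\cap A^\omega)/\mathrm{Ann}(A)$ together with the trace space $T(A)$ and its tracial von Neumann completions $\pi_\tau(A)''$. The guiding principle is that $\mathcal Z$-stability is detected by the existence of unital, approximately central homomorphisms from prime dimension-drop algebras $Z_{p,q}$ into $F(A)$, and that such homomorphisms can be manufactured once one has enough room to compare and cut down positive elements---room that is quantified precisely by almost unperforation of $\W(A)$ and by the dimension of the trace simplex.

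First I would dispatch \enumCondition{1}$\Rightarrow$\enumCondition{2}: finite nuclear dimension supplies, for every finite subset and tolerance, a finite-dimensional model $F$ and completely positive order-zero maps $F\to A$ whose ranges cover the unit up to a bounded number of colours, and piecing these order-zero maps together inside $F(A)$ produces the approximately central unital copies of $Z_{p,q}$ that characterise $\mathcal Z$-stability. For \enumCondition{2}$\Rightarrow$\enumCondition{3} I would route the argument through the semigroup machinery of this paper rather than through a direct $C^*$-computation. If $A$ is $\mathcal Z$-stable then $A\cong A\otimes\mathcal Z$, and since $\mathcal Z$ is strongly self-absorbing also $A\cong A\otimes\mathcal Z\otimes\mathcal Z$; applying $\Cu$ and the identification of $\Cu(B\otimes\mathcal Z)$ with $\Cu(B)\otimes_{\CatCu}\Cu(\mathcal Z)$ valid for the relevant class yields $\Cu(A)\cong\Cu(A)\otimes_{\CatCu}\Cu(\mathcal Z)$, so that $\Cu(A)$ is a $\Cu(\mathcal Z)$-semimodule. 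By the classification of such semimodules established above, $\Cu(A)$ is then automatically almost unperforated and almost divisible; since $\Cu(A)\cong\W(A\otimes\K)$ and almost unperforation passes between $\W(A)$ and $\W(A\otimes\K)$, condition \enumCondition{3} follows.

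The first genuinely hard step is \enumCondition{3}$\Rightarrow$\enumCondition{2}. Here I would adopt the strategy of Matui--Sato: almost unperforation of $\W(A)$ yields \emph{property (SI)}, a comparison principle for central sequences asserting that a central sequence which is uniformly small in trace can be dominated, inside $F(A)$, by one that is uniformly large. Combining (SI) with an excision argument over the extreme boundary $\partial_e T(A)$, one builds orthogonal, approximately central positive elements of prescribed tracial sizes and assembles them into the generators of a unital copy of $Z_{p,q}$ in $F(A)$. The obstruction is the geometry of $T(A)$: the construction is transparent when $\partial_e T(A)$ is finite, or compact of finite covering dimension, but controlling the estimates uniformly in $\tau$ for an arbitrary trace simplex is exactly the point where the argument currently stalls, and bridging that gap for general $A$ is the principal difficulty of this direction.

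The deepest step, and the one I expect to be the main obstacle overall, is \enumCondition{2}$\Rightarrow$\enumCondition{1}: deducing finite nuclear dimension from $\mathcal Z$-stability. The plan is first to produce \emph{tracial} finite-dimensional approximations of $\id_A$---which come easily from $\mathcal Z$-stability via the completions $\pi_\tau(A)''$, each a McDuff factor---and then to upgrade these from approximations in the uniform trace norm to honest norm approximations by order-zero maps of bounded colour. This upgrade is the crux: it requires a complemented, finitely many-coloured partition of unity subordinate to the approximation, together with a classification up to unitary equivalence of the order-zero maps involved, powered by uniform property $\Gamma$ of $F(A)$. Assembling these ingredients into a single nuclear-dimension bound, uniformly over the whole trace simplex and without any UCT or classifiability hypothesis, is the hardest part of the entire argument and is where I would expect to spend the overwhelming majority of the effort.
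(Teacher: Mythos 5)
You have not proved anything here, and neither does the paper: this statement is stated as a \emph{conjecture}, and at the time of writing the paper only surveys the partial results that your proposal reproduces -- Winter's \enumCondition{1}$\Rightarrow$\enumCondition{2}, R{\o}rdam's \enumCondition{2}$\Rightarrow$\enumCondition{3}, the Matui--Sato/Kirchberg--R{\o}rdam/Sato/Toms--White--Winter results giving \enumCondition{3}$\Rightarrow$\enumCondition{2} only when $T(A)$ is a Bauer simplex with finite-dimensional extreme boundary, and the Sato--White--Winter line giving \enumCondition{2}$\Rightarrow$\enumCondition{1} only for Bauer-simplex trace spaces. By your own admission your sketch ``stalls'' at exactly the same two points (the uniform-in-$\tau$ estimates over an arbitrary trace simplex in \enumCondition{3}$\Rightarrow$\enumCondition{2}, and the tracial-to-norm upgrade in \enumCondition{2}$\Rightarrow$\enumCondition{1}), so what you have is a research program that matches the state of the art, not a proof; the two hard implications remain open in general.

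There is also a concrete technical error in your route for \enumCondition{2}$\Rightarrow$\enumCondition{3}. You invoke ``the identification of $\Cu(B\otimes\mathcal{Z})$ with $\Cu(B)\otimes_{\CatCu}\Cu(\mathcal{Z})$ valid for the relevant class,'' but the paper deliberately excludes $\mathcal{Z}$ from its K\"unneth-type results (see \autoref{prp:tensSaCa}, which requires $D\neq\mathcal{Z}$), and \autoref{cor:tensornotlsc} shows the identification already fails for $B=C([0,1])$, since $\Lsc([0,1],Z)\ncong\Lsc\left([0,1],\overline{\N}\right)\otimes_{\CatCu}Z$. The step would therefore fail as written. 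It is, however, easily repaired within the paper's machinery: \autoref{prp:semirgFromSSA}\enumStatement{2} shows directly that $\mathcal{Z}$-stability of $A$ endows $\Cu(A)$ with a $Z$-multiplication (no tensor-product computation of $\Cu(A\otimes\mathcal{Z})$ is needed), and then \autoref{prp:ZModTFAE} yields that $\Cu(A)$ -- hence $\W(A)$ -- is almost unperforated (indeed also almost divisible); alternatively one can simply cite R{\o}rdam's original direct argument in \cite{Ror04StableRealRankZ}.
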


Important work by many authors has led to a partial confirmation of this conjecture:
Winter proved in \cite{Win12NuclDimZstable} that~(i) implies~(ii), and R{\o}rdam showed in \cite{Ror04StableRealRankZ} that~(ii) implies~(iii).
If $A$ has no tracial states then it is purely infinite and the conjecture has been confirmed in that case using \cite{KirPhi00Crelle1}, \cite{Ror04StableRealRankZ}, and \cite{MatSat14}.
It was shown that~(iii) implies~(ii) in the case that $T(A)$ is a Bauer simplex whose extreme boundary is finite-dimensional (see \cite{KirRor12arXivCentralSeq}, \cite{Sat12TraceSpace}, and \cite{TomWhiWin12arXivZStable}, and the precursor \cite{MatSat12StrComparison}).
It was proved that~(ii) implies~(i) in the case that $A$ has a unique tracial case (\cite{SatWhiWin14arXivNuclDim}), which was very recently generalized to the case that $T(A)$ is any Bauer simplex; see \cite{BosBroSatTikWhiWin14}.
It follows that the Toms-Winter conjecture is verified for all \ca{s} $A$ such that $T(A)$ is a Bauer simplex with finite-dimensional extreme boundary.

For every Elliott invariant of a simple, separable, nuclear $\mathcal{Z}$-stable \ca{} there exists a model given as an inductive limit whose building blocks are either Cuntz algebras over the circle, or subhomogeneus algebras whose primitive ideal spaces have dimension at most $2$.
Therefore, if classification and the regularity conjecture hold we would get deep insight into the structure of simple, nuclear \ca{s}.

\section{The categories \texorpdfstring{$\CatW$}{W} and \texorpdfstring{$\CatCu$}{Cu}}
\label{subsect:W and Cu}

As we have mentioned in \ref{subsct:classification}, Coward, Elliott and Ivanescu introduced a category of ordered semigroups $\CatCu$ such that $\Cu(A)$ is an object in $\CatCu$ for every \ca{} $A$.
The axioms defining this category capture the continuous nature of the Cuntz semigroup.
The first axiom asks for every increasing sequence to admit an order-theoretic supremum, while the second axiom requires that every element $a$ is the supremum of a sequence $(a_n)_n$ such that $a_n\ll a_{n+1}$ for each $n$.
(See \autoref{pgr:axiomsW} for the definition of `$\ll$' and further details).
Given a \ca{} $A$, a positive element $a$ in $A$ and $\varepsilon>0$, one always has that $[(a-\varepsilon)_+]\ll [a]$ in $\Cu(A)$.
A projection $p$ in $A$ always satisfies $[p]\ll [p]$.
The elements $s$ satisfying $s\ll s$ play an important role and are termed \emph{compact}.
We may think with advantage that they are equivalence classes of projections.
The third and fourth axioms express compatibility between addition, suprema, and the relation~$\ll$.

It is then natural to ask what continuity properties are already reflected in $W(A)$ and how $\Cu(A)$ is obtained out of them.
Attempts in this direction may be found in \cite{AntBosPer11CompletionsCu}.

We introduce here a new category of semigroups $\CatW$ parallel to the category $\CatCu$ and show that $W(A)$ is an object of this category.
One of the key ingredients is that the objects in $\CatW$ are semigroups equipped with an additional relation, sufficiently compatible with addition, referred to as an auxiliary relation (\cite{GieHof+03Domains}).
We show that $W(A)$ can be endowed with such a relation; this is also the case with $\Cu(A)$, as was already noted in \cite{CowEllIva08CuInv}, where one takes $\ll$ as an auxiliary relation.
Another ingredient in our approach consists of considering the larger category $\CatLocCa$ of local \ca{s}.
Essentially, these are pre-\ca{s} that admit functional calculus on finite sets of positive elements.

We give a full picture of the exact relation between the functors $W(\freeVar)$ and $\Cu(\freeVar)$ (see \autoref{prp:limitsW}, and Theorems~\ref{prp:functorW}, \ref{prp:CureflectivePreW},~\ref{prp:commutingFctrs}).

\begin{thm*} The following conditions hold true:
\beginEnumConditions
\item
The category $\CatW$ admits arbitrary inductive limits and the assignment $A\mapsto W(A)$ defines a continuous functor from the category $\CatLocCa$ of local \ca{s} to the category $\CatW$.
\item
The category $\CatCu$ is a full, reflective subcategory of $\CatW$.
Therefore, $\CatCu$ also admits arbitrary inductive limits.
\item
There is a diagram, that commutes up to natural isomorphisms:
\[
\xymatrix@R=15pt@M+3pt{
\CatLocCa \ar@/_1pc/[d]_{\gamma} \ar[r]^{\W}
& \CatW \ar@/^1pc/[d]^{\gamma} \\
\CatCa \ar@{^{(}->}[u] \ar[r]^{\Cu}
& \CatCu \ar@{^{(}->}[u] \\
}
\]
where $\gamma\colon\CatW\to\CatCu$ is the reflection functor and $\gamma\colon\CatLocCa\to \CatCa$ is the completion functor that assigns to a local \ca{} its completion (which is a \ca{}).
In particular, the assignment $A\mapsto\Cu(A)$ is also a continuous functor from the category of \ca{s} to the category $\CatCu$.
\end{enumerate}
\end{thm*}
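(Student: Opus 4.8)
The plan is to treat the three assertions separately, since each is of a different nature; together they amount to \autoref{prp:limitsW} and Theorems~\ref{prp:functorW},~\ref{prp:CureflectivePreW} and~\ref{prp:commutingFctrs}, and I sketch the strategy for each. For part~(i), the existence of inductive limits in $\CatW$ exploits the feature emphasized in the abstract, namely that $\CatW$-objects are purely algebraic: a $\CatW$-semigroup is a positively ordered monoid equipped with an auxiliary relation $\prec$ satisfying a short list of compatibility axioms, with \emph{no} completeness requirement. Given a system $(S_i)$, I would form the algebraic inductive limit of the underlying monoids, equip it with the induced order and with the auxiliary relation witnessed at finite stages, and then verify the $\CatW$-axioms directly; the absence of any supremum axiom is precisely what makes this construction set-theoretic and obstruction-free. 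To build the functor $W\colon\CatLocCa\to\CatW$ I would let $W(A)$ be the usual Cuntz classes of positive elements over $A$, with $[a']\prec[a]$ iff $a'\precsim(a-\varepsilon)_+$ for some $\varepsilon>0$; the axioms follow from functional calculus applied to $\varepsilon$-cut-downs, which is available precisely because $A$ is a \emph{local} \ca{}, while functoriality is immediate since \starHom{s} preserve Cuntz subequivalence and cut-downs. Continuity---that is, $W(\varinjlim A_i)\cong\varinjlim W(A_i)$ in $\CatW$---holds because each relation $[a']\prec[a]$ involves an $\varepsilon$-cut-down and is therefore detected at a finite stage; this is exactly where working with $\prec$ rather than the way-below relation $\ll$, and inside $\CatLocCa$ rather than $\CatCa$, lets the limit be computed on the nose, with no passage to suprema of increasing sequences.

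For part~(ii), to see that $\CatCu$ is a \emph{full} subcategory of $\CatW$ I would check that on a $\CatCu$-semigroup the abstract auxiliary relation coincides with $\ll$, so that a $\CatW$-morphism between $\CatCu$-semigroups automatically preserves $\ll$ and suprema of increasing sequences, hence is a $\CatCu$-morphism. For reflectivity I would construct $\gamma\colon\CatW\to\CatCu$ as a completion: to a $\CatW$-semigroup $S$ I assign its round-ideal completion $\gamma(S)$ (equivalently, the $\CatCu$-semigroup built from $\prec$-increasing sequences), which satisfies the $\CatCu$-axioms and comes with a universal $\CatW$-morphism $S\to\gamma(S)$. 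Verifying that every $\CatW$-morphism from $S$ into a $\CatCu$-semigroup factors uniquely through $\gamma(S)$ exhibits $\gamma$ as left adjoint to the inclusion. Since a reflective subcategory is closed under colimits formed by applying the reflection to the ambient colimit, the existence of inductive limits in $\CatCu$ then follows from part~(i).

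For part~(iii), the left-hand vertical $\gamma\colon\CatLocCa\to\CatCa$ is the $C^*$-completion functor. The heart of the diagram is the natural isomorphism $\gamma(W(A))\cong\Cu(\overline{A})$ for a local \ca{} $A$ with completion $\overline{A}$; I would prove it by matching the round-ideal completion of the algebraic semigroup $W(A)$ with the continuous semigroup $\Cu(\overline{A})\cong W(\overline{A}\otimes\mathcal{K})$, using functional calculus to pass between $\varepsilon$-cut-downs and suprema. Taking $A$ already complete yields $\Cu(A)\cong\gamma(W(A))$; since $W$ preserves inductive limits by part~(i) and $\gamma$, being a left adjoint, preserves all colimits, the composite $\Cu\cong\gamma\circ W$ preserves inductive limits, generalizing the sequential continuity of \cite{CowEllIva08CuInv}.

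The principal difficulty is concentrated in parts~(i) and~(iii) and is really a single issue: establishing continuity of $W$ demands tight control of the auxiliary relation under the algebraic limit, and identifying $\gamma\circ W$ with $\Cu\circ\gamma$ demands reconciling the abstract round-ideal completion with the analytic completion of the Cuntz semigroup. Both hinge on the same device---keeping functional calculus internal by working with local \ca{s} and with the cut-down relation $\prec$ in place of $\ll$---which is what allows the two completions to be matched and the limits to be preserved exactly.
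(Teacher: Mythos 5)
Your overall route matches the paper's: part~(ii) goes through the coincidence of the auxiliary relation with $\ll$ on $\CatCu$-semigroups plus a round-ideal-type completion $\gamma$ realized by $\prec$-increasing sequences, and part~(iii) through the identification of $\Cu(\overline{A})$ with the $\CatCu$-completion of $W(A)$, using that $W(A)\to W(\overline{A}\otimes\K)$ is an order-embedding with dense image (R{\o}rdam's lemma). The one genuine gap is in part~(i). A $\CatW$-semigroup is \emph{not} free of completeness requirements: axiom \axiomW{2} demands $a=\sup a^\prec$ for every $a$, and this axiom does \emph{not} survive the naive algebraic inductive limit. If you form the monoid colimit of the $S_i$, equip it with the order and the auxiliary relation witnessed at finite stages, you obtain an object satisfying \axiomW{1}, \axiomW{3} and \axiomW{4} (this is exactly the paper's \autoref{prp:limitsPreW}, carried out in the larger category $\CatPreW$ of semigroups satisfying only those three axioms), but an element of the colimit need not be the supremum of its $\prec$-predecessors there, since suprema computed in the stages $S_i$ need not persist in the colimit order. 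So your step ``verify the $\CatW$-axioms directly'' fails for \axiomW{2}, and your opening claim that $\CatW$-objects carry no supremum axiom is the source of the error: that description fits $\CatPreW$, not $\CatW$.

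The repair is the same device you already deploy for $\CatCu$ in part~(ii), one level down: show that $\CatW$ is a full, reflective subcategory of $\CatPreW$ (the reflection $\mu$ identifies $a$ with $b$ when $a^\prec=b^\prec$ and orders classes by inclusion of predecessor sets, which forces \axiomW{2}), and then define $\CatWLim S_i:=\mu(\CatPreWLim S_i)$. With this correction your continuity argument for $\W$ also needs a small adjustment: rather than computing the limit ``on the nose,'' one shows that the canonical map $\CatPreWLim W(A_i)\to W(\varinjlim A_i)$ is surjective and satisfies $s^\prec\subset t^\prec$ whenever the images are comparable, so that it becomes an isomorphism after applying $\mu$; this is where R{\o}rdam's lemma and the description of the pre-norm on the algebraic limit enter, precisely in the spirit of your remark that each relation $[a']\prec[a]$ is detected at a finite stage. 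Everything else in your sketch---fullness of $\CatCu$, the completion $\gamma$ and its universal property, preservation of colimits by left adjoints, and the identification of $\Cu\circ\gamma$ with $\gamma\circ\W$---is the paper's argument.
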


Notice that condition (i) above sets up the right framework for the functor $\W$ to be continuous, by enlarging the source category to $\CatLocCa$ and identifying the range category $\CatW$.
Condition (iii) generalizes \cite[Theorem~2]{CowEllIva08CuInv} from sequential to arbitrary inductive limits.

A key concept in the proof is that of a $\Cu$-completion of a semigroup $S$ in the category $\CatW$.
This may be thought of as a pair $(T,\alpha)$, where $T\in \CatCu$ and $\alpha\colon S\to T$ is a morphism that, suitably interpreted, is an embedding with dense image.

\subsection{The range problem. Additional axioms}
\index{terms}{Range problem}
It is an important problem to determine which semigroups in the category $\CatCu$ come as Cuntz semigroups of \ca{s}.
For example, we know that, for any finite dimensional, compact Hausdorff space, the semigroup $\Lsc(X,\N\cup\{\infty\})$ of lower semicontinuous functions is an object of $\CatCu$ (\cite{AntPerSan11PullbacksCu}), but if fails to be the Cuntz semigroup of a \ca{} whenever the dimension of $X$ is larger than $2$ (\cite{Rob13CuSpaces2D}).

There are two additional axioms that the Cuntz semigroup of any \ca{} satisfies, and which are not derived from the original set of axioms used to define the category $\CatCu$.
The first of such axioms was established by R{\o}rdam and Winter (\cite{RorWin10ZRevisited}) and indicates how far the partial order in $\Cu(A)$ is from being algebraic.
It is usually referred to as the \emph{almost algebraic order} axiom.
Given $a',a$ and $b$, the axiom says:
\[
\text{If }a'\ll a\leq b, \text{ then there is }c\text{ such that }a'+c\leq b\leq a+c.
\]
It is worth pointing out that, if $a\ll a$, then the above implies that whenever $a\leq b$, there is an element $c$ with $a+c=b$.
Thus, this axiom is a generalization of the fact that the order among Cuntz classes of projections is algebraic.

The second axiom was established by Robert (\cite{Rob13Cone}) and is a condition of a Riesz decomposition type, usually referred to as the \emph{almost Riesz decomposition} axiom.
Given $a',a,b$ and $c$, the axiom reads as follows:
\[
\text{If }a'\ll a\leq b+c, \text{ then there are } b'\leq b,a \text{ and } c'\leq c,a \text{ such that } a'\leq b'+c'.
\]

In \autoref{dfn:addAxioms}, we introduce a strengthening of the almost algebraic order axiom, and we prove that it is satisfied by the Cuntz semigroup $\Cu(A)$ of any \ca{} $A$.
It is equivalent to the original formulation if the semigroup is weakly cancellative, that is, if $a+c\ll b+c$ implies $a\ll b$.
(As shown in \cite{RorWin10ZRevisited}, $\Cu(A)$ is weakly cancellative when $A$ has stable rank one.) With this new formulation, the axiom passes to inductive limits.

We also introduce corresponding versions of these axioms for the category $\CatW$ and show that they are satisfied by $W(A)$, for any local \ca{} $A$.
The $\Cu$-completion process, as described above, relates exactly each one of the $\CatW$-axioms with its $\CatCu$-counterpart.
All the axioms considered pass to inductive limits.

Although it may be premature to recast the category $\CatCu$ by adding the axioms of almost algebraic order and almost Riesz decomposition (as new axioms may emerge in the near future), it is quite pertinent to add them to our basket of assumptions in many results of the paper.

\subsection{Softness and pure noncompactness}

While compact elements in a $\CatCu$-semigroup may be thought of as `projections', the class of purely noncompact elements can be placed at the other end of the scale, that is, as far as possible from projections.
This may be phrased by saying that the element in question only becomes compact in a quotient when it is zero or properly infinite.
It was shown by Elliott, Robert, and Santiago that the purely noncompact elements in $\Cu(A)$ are, in the almost unperforated case, the ones that can be compared by traces, \cite{EllRobSan11Cone}.

It is natural to seek for a result of this nature in the framework of the category $\CatCu$ alone.
For this, given a $\CatCu$-semigroup $S$, we need to consider the set $F(S)$ of functionals on $S$, that is, extended states on $S$ that respect suprema of increasing sequences.
Note that, in this way, any element $a$ in $S$ induces a linear, lower semicontinuous, $[0,\infty]$-valued function $\hat{a}$ on $F(S)$ by evaluation.
It is to be noted that $F(\Cu(A))$ is homeomorphic to the trace simplex of non-normalized traces on $A$ (when $A$ is exact), as shown in \cite[Theorem 4.4]{EllRobSan11Cone}.
Indeed, given a trace $\tau$, its corresponding functional $d_\tau$ maps the class $[x]$ of a positive element $x$ in $A\otimes\K$ to $\lim_n\tau(x^{1/n})$.

The key notion in the abstract setting of $\CatCu$-semigroups is that of a soft element.
By definition, $a\in S$ is soft if any $a'\ll a$ satisfies $(n+1)a'\leq na$ for some $n$.
As it turns out, the subset $S_\soft$ of soft elements is a submonoid of $S$.
If $S$ is furthermore simple and stably finite, then $S_\soft$ is also a $\CatCu$-semigroup.
Our definition of softness is inspired by \cite{GooHan82Stenosis}.

For almost unperforated $\CatCu$-semigroups, soft elements are the ones whose comparison theory is completely determined by functionals.
Namely, given $a$, $b\in S$ with $a$ soft, then $a\leq b$ precisely when $\hat{a}\leq \hat{b}$.
This generalizes \cite[Theorem~6.6]{EllRobSan11Cone}.
It is worth mentioning that, in the presence of the almost algebraic order axiom, softness is equivalent to a suitable weakening of pure noncompactness.
The concept of softness is, however, easier to state and to use.

\subsection{Algebraic semigroups}

A particularly interesting class of $\CatCu$-se\-mi\-groups is that of algebraic semigroups.
These are $\CatCu$-semigroups where the compact elements are dense, and they are modelled after \ca{s} of real rank zero, where the structure of projections determines a great deal of the structure of the algebra.

We show that this is also the case at the semigroup level.
Of particular significance is the fact that axioms of interest have a translation into properties of the compact elements (see \autoref{prp:propertiesAlgebraic}):

\begin{thm*}
Let $S$ be an algebraic $\CatCu$-semigroup, and let $S_c$ be the submonoid of compact elements.
Then:
\beginEnumConditions
\item
The $\CatCu$-semigroup $S$ satisfies the axiom of almost algebraic order if and only if $S_c$ is algebraically ordered.
\item
The $\CatCu$-semigroup $S$ is weakly cancellative if and only if $S_c$ is a cancellative semigroup.
\item
If $S_c$ has Riesz decomposition, then $S$ satisfies the axiom of almost Riesz decomposition.
Conversely, if $S$ satisfies the axioms of almost algebraic order and almost Riesz decomposition and is weakly cancellative, then $S_c$ has Riesz decomposition.
\end{enumerate}
\end{thm*}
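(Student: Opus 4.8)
The plan is to reduce every statement to the submonoid $S_c$ of compact elements, exploiting the defining feature of an algebraic $\CatCu$-semigroup that each element is the supremum of an increasing sequence of compact elements lying below it. First I would record the elementary facts used throughout: if $x$ is compact and $x\leq y$ then $x\ll y$ (since $x\ll x\leq y$); consequently, for arbitrary $a,b$ one has $a\ll b$ if and only if there is a compact $x$ with $a\leq x\leq b$ (write $b=\sup_n b_n$ with $b_n$ compact and note that $a\ll b$ means $a\leq b_n$ for some $n$); sums of compact elements are compact and $\ll$ is additive; and addition commutes with suprema of increasing sequences. The common strategy is then to sandwich a ``way-below'' datum under a compact element, carry out the required manipulation inside $S_c$, and transport the conclusion back.

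For (i), assume first that $S_c$ is algebraically ordered. The key step is the claim that whenever $x$ is compact and $x\leq b$ there is $c$ with $x+c=b$: writing $b=\sup_n y_n$ with $y_n$ compact and $x\leq y_{n_0}$, one chooses compact witnesses $z_n$ with $x+z_n=y_n$ \emph{inductively so that they increase}, absorbing each successive difference $y_n\leq y_{n+1}$ (itself realized by an algebraic-order witness in $S_c$) into $z_{n+1}$; then $c=\sup_n z_n$ satisfies $x+c=b$. Given $a'\ll a\leq b$, pick a compact $x$ with $a'\leq x\leq a$ and apply the claim to $x\leq b$, obtaining $c$ with $x+c=b$, whence $a'+c\leq x+c=b\leq a+c$. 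Conversely, if $S$ has almost algebraic order and $x\leq y$ are compact, almost algebraic order applied to $x\ll x\leq y$ gives $c$ with $x+c=y$; approximating $c$ by compacts and using compactness of $y$ to absorb into a single compact term upgrades this to a compact witness, so $S_c$ is algebraically ordered.

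For (ii), if $S$ is weakly cancellative and $x+z\leq y+z$ with $x,y,z$ compact, then $x+z\ll y+z$ (the left side is compact), so weak cancellation yields $x\ll y$, hence $x\leq y$; thus $S_c$ is cancellative. For the converse I would start from $a+c\ll b+c$, sandwich it under a compact $e=b_N+c_N\ll b+c$ with $b_N\leq b$ and $c_N\leq c$, and then for each compact $a'\ll a$ observe $a'+c_N\leq a+c\leq b_N+c_N$ with all three terms compact; cancelling $c_N$ gives $a'\leq b_N$, and taking the supremum over $a'$ yields $a\leq b_N\leq b$ with $b_N$ compact, i.e.\ $a\ll b$. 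Here cancellativity is used in its order form $x+z\leq y+z\Rightarrow x\leq y$ for compact elements, which is the relevant notion and which the forward direction also produces.

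Statement (iii) is where the work concentrates. The forward implication is a sandwich argument: given $a'\ll a\leq b+c$, choose a compact $x$ with $a'\leq x\leq a$, use compactness to find compact $b_M\leq b$ and $c_M\leq c$ with $x\leq b_M+c_M$, apply Riesz decomposition in $S_c$ to write $x=x_1+x_2$ with $x_1\leq b_M$ and $x_2\leq c_M$, and set $b'=x_1$, $c'=x_2$; then $b'\leq b$, $c'\leq c$, and since $x_i\leq x\leq a$ also $b',c'\leq a$, while $a'\leq x=b'+c'$. The converse is the main obstacle, since it must upgrade the \emph{inexact} almost Riesz decomposition to an \emph{exact} decomposition inside $S_c$ by combining the conclusions of (i) and (ii). Starting from compact $x\leq y_1+y_2$, almost Riesz decomposition applied to $x\ll x$ yields $y_1'\leq y_1,x$ and $y_2'\leq y_2,x$ with $x\leq y_1'+y_2'$; approximating the $y_i'$ by compacts and using compactness of $x$ replaces them by compact $w_i\leq y_i$ with $x\leq w_1+w_2$ and $w_1\leq x$. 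Now almost algebraic order lets me invoke (i): $S_c$ is algebraically ordered, so $w_1\leq x$ gives a compact $u$ with $w_1+u=x$; then $w_1+u=x\leq w_1+w_2$, and weak cancellation via (ii) cancels $w_1$ to give $u\leq w_2\leq y_2$. Hence $x=w_1+u$ is the desired compact Riesz decomposition with $w_1\leq y_1$ and $u\leq y_2$. The delicate points to verify are the inductive construction of increasing witnesses in (i) and the legitimacy of the two approximation-and-cancellation steps in the converse of (iii).
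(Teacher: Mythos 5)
Your argument is correct in outline and in nearly all details, but it takes a genuinely different route from the paper's. The paper proves this as a statement about an arbitrary \pom{} $M$ and its $\CatCu$-completion $\Cu(M)$: it first shows (\autoref{prp:axiomsPassToCu}) that a $\CatPreW$-semigroup satisfies \axiomW{5} (respectively \axiomW{6}, weak cancellation) if and only if its $\CatCu$-completion satisfies \axiomO{5} (respectively \axiomO{6}, weak cancellation), and then verifies the $\CatW$-level axioms purely inside $(M,\leq)$, where they reduce to short algebraic manipulations. You instead work directly in $S$, re-deriving the transfer by hand each time via the sandwich characterization of $\ll$ (namely $a\ll b$ if and only if $a\leq x\leq b$ for some compact $x$) together with \axiomO{4}. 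Your route is more self-contained and avoids the completion machinery; the paper's route factors all the ``pass to suprema'' arguments into one reusable lemma. Your converse of (iii) is in fact slightly cleaner than the paper's: you cancel $w_1$ directly from $w_1+u\leq w_1+w_2$, whereas the paper juggles three algebraic-order witnesses $x,y,z$ before cancelling.

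There is one point you must repair. In the forward direction of (i) you verify only the \emph{original} form of almost algebraic order, \axiomO{5'}: given $a'\ll a\leq b$ you produce $c$ with $a'+c\leq b\leq a+c$. The axiom the theorem refers to is the strengthened \axiomO{5} of \autoref{dfn:addAxioms}, which starts from data $a+b\leq c$, $a'\ll a$, $b'\ll b$ and additionally demands $b'\leq x$ of the complement $x$. The implication from \axiomO{5'} to \axiomO{5} is only known under weak cancellation, which is not among your hypotheses here, so as written your argument establishes a formally weaker conclusion. The fix is routine within your own framework: sandwich $a'\leq x_a\leq a$ and $b'\leq x_b\leq b$ with $x_a,x_b$ compact, note that $x_a+x_b$ is compact with $x_a+x_b\leq c$, apply your key claim to obtain $d$ with $x_a+x_b+d=c$, and set $x:=x_b+d$; then $a'+x\leq x_a+x_b+d=c\leq a+x$ and $b'\leq x_b\leq x$. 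The remaining delicate steps you flagged --- the increasing choice of the witnesses $z_n$ in the key claim, and the approximation-and-absorption steps in the converses of (i) and (iii) --- all check out.
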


\subsection{Near unperforation}

The notion of near unperforation allows us to analyse almost unperforation from a new perspective. A \pom{} $S$ is \emph{nearly unperforated} if $a\leq b$ whenever $2a\leq 2b$ and $3a\leq 3b$.
(This is not our original definition, but a useful restatement.)
Nearly unperforated semigroups are always almost unperforated, so this becomes a strengthened notion.
In the simple case, a converse is available (see \autoref{prp:nearUnperfSimpleCu}):

\begin{thm*}
Let $S$ be a simple, stably finite $\CatCu$-semigroup that satisfies the almost algebraic order axiom. Then $S$ is nearly unperforated precisely when it is almost unperforated and weakly cancellative.
\end{thm*}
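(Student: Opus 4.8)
The plan is to prove the two implications of ``$\iff$'' separately, organizing everything around the compact/soft structure of simple, stably finite $\CatCu$-semigroups. The common elementary ingredient is the observation that $2a\le 2b$ and $3a\le 3b$ together force $na\le nb$ for every $n\ge 2$: since $\gcd(2,3)=1$, each such $n$ can be written as $n=2p+3q$ with $p,q\ge 0$, whence $na=p(2a)+q(3a)\le p(2b)+q(3b)=nb$. Thus near unperforation is really the statement ``$na\le nb$ for all large $n$ implies $a\le b$'', and this is the form I would use. Since the text already records that near unperforation implies almost unperforation, the forward direction reduces to proving weak cancellativity, while the backward direction asks us to produce $a\le b$ from the family $na\le nb$ using almost unperforation and weak cancellativity.

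The organizing tool is the dichotomy that, in a simple, stably finite $\CatCu$-semigroup satisfying almost algebraic order, every element is either compact or soft; the soft case will be dispatched by functionals and the compact case by passage to an ordered group. For the backward direction, to prove $a\le b$ it suffices (by the continuity axiom) to prove $a'\le b$ for all $a'\ll a$. If $a$ is soft, then $2a\le 2b$ gives $2\hat a\le 2\hat b$ on $F(S)$ and hence $\hat a\le\hat b$ (dividing in $[0,\infty]$); the quoted comparison result for soft elements in the almost unperforated setting then yields $a\le b$ at once. If $a$ is compact and $b$ is soft, the relation $2a\le 2b$ is an inequality of a compact element below a soft one, which forces strict domination of functionals and again gives $a\le b$.

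The substantive case is when both $a$ and $b$ are compact. Here weak cancellativity upgrades to genuine order-cancellation on the submonoid $S_c$ of compacts: if $x+z\le y+z$ with $x,y,z$ compact, then $x+z\ll x+z\le y+z$, so $x+z\ll y+z$, and weak cancellativity gives $x\ll y$, i.e.\ $x\le y$. Consequently $S_c$ embeds order-faithfully into its Grothendieck group $\Gr(S_c)$, a partially ordered abelian group, and the hypotheses $na\le nb$ translate into $n\bigl([b]-[a]\bigr)\ge 0$ in $\Gr(S_c)$. In this language near unperforation corresponds to $\Gr(S_c)$ being unperforated, almost unperforation to its being weakly unperforated, and weak cancellativity to the faithfulness of $S_c\hookrightarrow\Gr(S_c)$; the asserted equivalence then mirrors the classical relationship between these notions for dimension-type ordered groups. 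The forward direction is the easier half of this correspondence: near unperforation makes $\Gr(S_c)$ unperforated, hence a genuine cancellative ordered group receiving $S_c$, so $S_c$ is cancellative, and combining this with functional comparison of soft elements returns weak cancellativity of $S$.

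I expect the main obstacle to be precisely the backward direction on the compact part, namely upgrading \emph{weak} unperforation of $\Gr(S_c)$ (which is all that almost unperforation gives) to \emph{full} unperforation. Almost unperforation alone does not rule out perforation of the group of compacts, so the argument must genuinely use simplicity and stable finiteness: I would interpolate soft elements below the compact ones (available by simplicity), compare them via functionals, and use stable finiteness to prevent the compact monoid from absorbing its perforation, thereby forcing $[a]\le[b]$ and hence $a\le b$. Controlling this interface between the compact and soft parts --- especially at functionals where $\hat a=\hat b$, exactly where strict comparison fails and the unperforation hypothesis must do the work --- is where the care is needed, together with the routine verification that mixed elements (a compact summand plus a nonzero soft summand) fall into the soft case.
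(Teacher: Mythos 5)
Your overall architecture --- reduce to the compact/soft dichotomy of \autoref{prp:softNoncpctSimple} and treat the soft cases by comparison --- matches the paper's proof, and your soft cases are essentially correct: if $a$ is soft then $\hat a\le\hat b$ together with \autoref{prp:soft_comparison} gives $a\le b$, and if $a$ is compact and $b$ is soft then $2a\ll 2a\le 2b$ with $2b$ soft yields $2a<_s 2b$, hence $a<_s b$ by \autoref{prp:relSTFAE} and $a\le b$ by almost unperforation. The problem is that the one case you flag as ``the main obstacle'' --- both $a$ and $b$ compact --- is the heart of the theorem, and the strategy you propose for it does not close. Interpolating soft elements and comparing by functionals cannot work when $\hat a=\hat b$ (for instance when $na=nb$ for some $n$), which is precisely where almost unperforation is powerless and weak cancellation must carry the argument; your sketch never invokes weak cancellation concretely here. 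The missing idea is a dichotomy: either $na<nb$ strictly for some $n$, in which case \axiomO{5} produces a nonzero $x$ with $na+x=nb$, simplicity and compactness of $a$ give $a\le kx$, hence $(kn+1)a\le kna+kx=knb$, so $a<_s b$ and almost unperforation yields $a\le b$; or else $na=nb$ and $(n+1)a=(n+1)b$ for some $n$, in which case setting $x:=na=nb$ gives $a+x\ll b+x$ and weak cancellation yields $a\ll b$ directly. No passage to $\Gr(S_c)$ is needed.

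The forward direction as you present it is also circular. You cannot first conclude from near unperforation that $\Gr(S_c)$ is unperforated (translating $n([b]-[a])\ge 0$ back into $na\le nb$ already presupposes cancellation in $S_c$), and in any case the fact that $\Gr(S_c)$ is a group does not imply that $S_c$ is cancellative --- the natural map $S_c\to\Gr(S_c)$ kills exactly the failure of cancellation you are trying to rule out. The paper argues directly: \axiomO{5} gives a preminimality property (\autoref{prp:preminO5}), so from $a+x\ll b+x$ one obtains, using simplicity and stable finiteness to bound $x$ by multiples of $a$ and of $b$, the inequalities $a+ka\le b+ka$ and $a+lb\le b+lb$; iterating these (\autoref{prp:comparisonLeadsToP} and \autoref{prp:cancUpToP_simple}) gives $a\le_p b$, and near unperforation then yields $a\le b$, hence weak cancellation.
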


R\o rdam proved in \cite{Ror04StableRealRankZ} that a simple, $\mathcal{Z}$-stable \ca{} is either purely infinite or has stable rank one.
He also showed that the Cuntz semigroup of every $\mathcal{Z}$-stable \ca{} is almost unperforated. While there are examples of simple, almost unperforated $\CatCu$-semigroups that fail to be weakly cancellative, the above theorem shows that this is no longer the case for nearly unperforated semigroups.

Thus, as a corollary, if $A$ is a simple $\mathcal{Z}$-stable \ca, $\Cu(A)$ is nearly unperforated. We have also verified that the Cuntz semigroup of (not necessarily simple) $\mathcal{Z}$-stable \ca{s} is nearly unperforated in a variety of situations. For example, if further $A$ has real rank zero and stable rank one, or also if $A$ has no $K_1$-obstructions (see \autoref{sec:nearUnp}). It then remains an interesting open problem to decide whether $\Cu(A)$ is always nearly unperforated for any $\mathcal{Z}$-stable \ca{} $A$.
We conjecture this is always the case.

\section{Tensor products}

Tensor products with the Jiang-Su algebra or, more generally, by a strongly self-absorbing \ca, are of particular relevance in connection with the current status of the classification program.

The tensor product construction at the level of (ordered) semigroups has a long tradition (for completeness, we have included a review of the necessary definitions and results in \autoref{sec:appendixPom}).
It is therefore a very natural question to ask how $\Cu(A\otimes B)$ and $\Cu(A)\otimes\Cu(B)$ are related.
The first step towards a solution to this question resides in carrying out a construction of the tensor product within the category $\CatCu$, so as to `equip' the usual semigroup tensor product with the necessary continuous structure.

Our approach has a categorical flavor, and at the same time allows for computations of examples.
A central notion is that of a bimorphism $\varphi\colon S\times T\to R$, that is, a biadditive map that is required to satisfy certain additional conditions depending on the category where the objects $S$, $T$ and $R$ live.
Thus, for example, if we focus on the category $\CatCu$, we speak of $\CatCu$-bimorphisms and we shall be asking that $\varphi$ is continuous in each variable (that is, preserves suprema of increasing sequences) and is jointly preserving the relation $\ll$, that is, $s'\ll s$ and $t'\ll t$ imply $\varphi(s',t')\ll\varphi(s,t)$.
This requirement breaks away from the usual conventions in existing (semigroup) tensor products, and makes our construction technically more demanding.
One asks a tensor product in $\CatCu$ of $S$ and $T$ to be a pair $(Q,\varphi)$, where $Q$ is an object in $\CatCu$ and $\varphi\colon S\times T\to Q$ is a $\CatCu$-bimorphism with certain universal properties involving different types of morphisms.

We can also regard the tensor product as an object that represents the bimorphism bifunctor $\CatCuBimor(S\times T,\freeVar)$.
We use $\CatCuMor(\freeVar,\freeVar)$ and $\CatWMor(\freeVar,\freeVar)$ below to denote the corresponding morphism sets, which are positively ordered semigroups in a natural way.
We prove (see \autoref{prp:tensProdCu}):

\begin{thm*}
Let $S$ and $T$ be $\CatCu$-semigroups.
There is a $\CatCu$-semigroup $S\otimes_{\CatCu}T$ and a $\CatCu$-bimorphism $\varphi\colon S\times T\to S\otimes_{\CatCu}T$ such that the pair ($S\otimes_{\CatCu} T$, $\varphi$) represents the bimorphism functor $\CatCuBimor(S\times T,\freeVar)$ that takes values in the category of positively ordered semigroups.
Thus, for every $\CatCu$-semigroup $R$, the $\CatCu$-bimorphism $\varphi$ induces a positively ordered semigroup isomorphism of the following (bi)morphism sets:
\[
\CatCuMor(S\otimes_{\CatCu} T,R)\to \CatCuBimor(S\times T,R).
\]
\end{thm*}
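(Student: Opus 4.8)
The plan is to reduce everything to the category $\CatW$, whose objects are purely algebraic, and then transport the construction to $\CatCu$ by means of the reflection functor $\gamma\colon\CatW\to\CatCu$ furnished by \autoref{prp:CureflectivePreW}. Concretely, I would first construct a tensor product $S\otimes_\CatW T$ in $\CatW$ together with a universal $\CatW$-bimorphism $\psi\colon S\times T\to S\otimes_\CatW T$, and then set
\[
S\otimes_\CatCu T:=\gamma(S\otimes_\CatW T),\qquad \varphi:=\eta\circ\psi,
\]
where $\eta\colon S\otimes_\CatW T\to\gamma(S\otimes_\CatW T)$ is the unit of the reflection. The target $S\otimes_\CatCu T$ is a $\CatCu$-semigroup by construction, and since $\psi$ is a $\CatW$-bimorphism and $\eta$ is a $\CatW$-morphism, the composite $\varphi$ is a $\CatW$-bimorphism into a $\CatCu$-semigroup; by the bimorphism analogue of fullness discussed below it is then a genuine $\CatCu$-bimorphism.

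To build $S\otimes_\CatW T$, I would start from the tensor product of positively ordered monoids recalled in \autoref{sec:appendixPom}, which carries the universal biadditive, order-compatible map $(s,t)\mapsto s\otimes t$. The task is to equip this pom $S\otimes T$ (or a suitable quotient of it) with an auxiliary relation $\prec$ turning it into an object of $\CatW$ and making $\psi$ jointly relation-preserving. The natural definition declares $x\prec y$ when $x\le\sum_i s_i'\otimes t_i'$ for some finite family with $\sum_i s_i\otimes t_i\le y$ and $s_i'\ll s_i$, $t_i'\ll t_i$ for each $i$, and one checks the axioms (W) for this relation. I expect this verification --- ensuring that $\prec$ is transitive, interpolative, and compatible with addition, so that $(S\otimes T,\prec)$ genuinely lies in $\CatW$ --- to be the main obstacle, precisely because the joint preservation of $\ll$ required of $\CatW$- and $\CatCu$-bimorphisms is more rigid than the biadditivity used in the classical pom tensor product. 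Granting this, the universal property of $S\otimes T$ refined by $\prec$ yields for every $R\in\CatW$ a natural isomorphism of positively ordered semigroups
\[
\CatWMor(S\otimes_\CatW T,R)\xrightarrow{\ \cong\ }\CatWBimor(S\times T,R),\qquad g\mapsto g\circ\psi .
\]

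With the $\CatW$-tensor product in hand, I would chain three natural isomorphisms of positively ordered semigroups, valid for every $\CatCu$-semigroup $R$. First, the reflection of \autoref{prp:CureflectivePreW} gives $\CatCuMor(\gamma(S\otimes_\CatW T),R)\cong\CatWMor(S\otimes_\CatW T,R)$, since $R$ lies in the full subcategory $\CatCu$. Second, the universal property above gives $\CatWMor(S\otimes_\CatW T,R)\cong\CatWBimor(S\times T,R)$. Third, one needs the bimorphism version of fullness: for $S,T,R\in\CatCu$ every $\CatW$-bimorphism $S\times T\to R$ is automatically continuous in each variable, hence a $\CatCu$-bimorphism, so that $\CatWBimor(S\times T,R)=\CatCuBimor(S\times T,R)$. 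This is proved exactly as the fullness statement for morphisms: writing each argument as the supremum of a $\ll$-increasing sequence and using joint $\ll$-preservation together with the defining axioms of $\CatCu$-semigroups to recover preservation of suprema. Composing the three isomorphisms and unwinding the identifications shows that the resulting bijection $\CatCuMor(S\otimes_\CatCu T,R)\to\CatCuBimor(S\times T,R)$ is precisely $f\mapsto f\circ\varphi$, and each step manifestly respects the addition and order of the Hom- and Bimor-semigroups.

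Finally, naturality in $R$ of the three isomorphisms --- immediate for the adjunction and the representation, and routine for the fullness identity --- shows that the pair $(S\otimes_\CatCu T,\varphi)$ represents the bifunctor $\CatCuBimor(S\times T,\freeVar)$ with values in positively ordered semigroups, which is the assertion.
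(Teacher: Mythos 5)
Your proposal follows essentially the same route as the paper: the paper constructs the tensor product at the pre-completed level (in $\CatPreW$, with the auxiliary relation on $S\otimes_\CatPom T$ defined exactly as you describe and the \axiomW{1}--\axiomW{4} verification carried out in a technical lemma), then defines $S\otimes_\CatCu T:=\gamma(S\otimes_\CatPreW T)$ and chains the same three natural $\CatPom$-isomorphisms you list, using the reflection, the universal property at the $\CatPreW$ level, and the identification of $\CatW$-bimorphisms with $\CatCu$-bimorphisms between $\CatCu$-semigroups. The step you defer --- checking that the proposed relation $\prec$ satisfies the $\CatW$-axioms, in particular the interpolation property \axiomW{1} and the decomposition property \axiomW{4} --- is indeed where the bulk of the paper's work lies, so your plan is sound and complete in outline.
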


In outline, the construction of the object $S\otimes_{\CatCu}T$ in the theorem above uses the reflection functor $\gamma\colon\CatW\to \CatCu$ as described in \ref{subsect:W and Cu}, and so the tensor product in $\CatCu$ comes as a completion of the corresponding object in $\CatW$.
In fact, recalling that $\CatCu$ is a reflective subcategory of $\CatW$, we have (see \autoref{thm:tensProdCompl}):

\begin{thm*}
Let $S$ and $T$ be semigroups in the category $\CatW$.
There is then a $\CatW$-semigroup $S\otimes_{\CatW}T$ and a $\CatW$-bimorphism that, for every $\CatCu$-semigroup $R$, induces a commutative diagram where every row and column are semigroup isomorphisms:
\[
\xymatrix{
\CatW(S\otimes_{\CatW} T,R) \ar[r]_{}^{\cong}
& \CatWBimor(S\times T,R) \\
\CatCuMor(\gamma(S\otimes_{\CatW} T),R) \ar[u]_{}_{\cong} \ar[r]^<<<<{\cong}
& \CatCuBimor(\gamma(S)\times \gamma(T),R). \ar[u]^{\cong}
}
\]
In particular, we can identify $\gamma(S)\otimes_{\CatCu}\gamma(T)$ with $\gamma(S\otimes_{\CatW}T)$.
\end{thm*}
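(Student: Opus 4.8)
The plan is to construct $S\otimes_{\CatW}T$ by hand and then to read off every arrow of the square from the reflection $\gamma\colon\CatW\to\CatCu$. For the construction I would begin with the positively ordered monoid tensor product of the underlying monoids of $S$ and $T$ (reviewed in \autoref{sec:appendixPom}), which already represents biadditive, order-compatible maps, and then equip it with an auxiliary relation. The natural candidate is the smallest transitive, additively compatible relation $\prec$ containing all pairs $s'\otimes t'\prec s\otimes t$ with $s'\prec s$ in $S$ and $t'\prec t$ in $T$, closed so as to satisfy the $\CatW$-axioms. The map $\varphi\colon S\times T\to S\otimes_{\CatW}T$, $(s,t)\mapsto s\otimes t$, is then a $\CatW$-bimorphism by design. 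The first main obstacle is to verify that this really produces an object of $\CatW$---that $\prec$ is a genuine auxiliary relation compatible with addition and satisfying the axioms---and that $\varphi$ has the universal property that every $\CatW$-bimorphism $S\times T\to R$ factors uniquely through $\varphi$ by a $\CatW$-morphism. Granting this, the top arrow $\CatW(S\otimes_{\CatW}T,R)\to\CatWBimor(S\times T,R)$ is a bijection by definition, additive and order-preserving in both directions since addition and order on both hom-sets are computed pointwise in $R$, hence a positively ordered semigroup isomorphism.

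For the left column I would invoke the reflection adjunction. Because $\CatCu$ is full reflective in $\CatW$ with reflector $\gamma$, for every $X\in\CatW$ and every $R\in\CatCu$ the unit $X\to\gamma X$ induces a natural bijection $\CatCuMor(\gamma X,R)\cong\CatW(X,R)$ of positively ordered semigroups; taking $X=S\otimes_{\CatW}T$ gives the left column.

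The right column needs a bimorphism analog of this adjunction, which is the second delicate point. I would prove that, for $R\in\CatCu$, precomposition with the units $S\to\gamma S$ and $T\to\gamma T$ yields a bijection $\CatCuBimor(\gamma S\times\gamma T,R)\cong\CatWBimor(S\times T,R)$. The content is surjectivity: given a $\CatW$-bimorphism $\psi\colon S\times T\to R$, fixing either variable produces a $\CatW$-morphism in the other that factors through the corresponding reflection; the task is to assemble these one-variable factorizations into a single well-defined map $\gamma S\times\gamma T\to R$ and to check it is continuous in each variable and jointly $\ll$-preserving, i.e.\ a bona fide $\CatCu$-bimorphism. Here I would use that elements of $\gamma S$ arise as suprema of images of elements of $S$, defining the factored map on such suprema and exploiting the joint continuity and $\ll$-preservation of $\psi$ to see it is well defined.

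Finally, every arrow of the square is the natural map induced by $\varphi$ together with the adjunction units, so the square commutes by a direct chase; since the top row, left column and right column are isomorphisms, the bottom row is forced to be one as well. The bottom row then exhibits $\gamma(S\otimes_{\CatW}T)$ as a representing object of the functor $\CatCuBimor(\gamma S\times\gamma T,\freeVar)$ on $\CatCu$. Since $\gamma S\otimes_{\CatCu}\gamma T$ represents the same functor, the uniqueness of representing objects gives the natural isomorphism $\gamma(S\otimes_{\CatW}T)\cong\gamma S\otimes_{\CatCu}\gamma T$ asserted in the final clause. I expect the two hard points to be establishing that the generated relation turns $S\otimes_{\CatW}T$ into a $\CatW$-object with the universal property, and the simultaneous two-variable factorization underpinning the right column.
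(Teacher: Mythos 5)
Your proposal follows essentially the same route as the paper: the paper builds the tensor product in $\CatPreW$ as the $\CatPom$-tensor product of the underlying monoids equipped with an explicitly generated auxiliary relation (\autoref{dfn:tensProdAuxRelHelping}, \autoref{lma:tensProdAuxRelHelping}, \autoref{prp:tensProdPreW}), obtains the left column from the reflectivity of $\CatCu$ in $\CatPreW$, the right column from the bimorphism version of that adjunction (\autoref{prp:bimorPreW}, \autoref{prp:bimorCu}, \autoref{thm:Cuification}), and deduces $\gamma(S)\otimes_{\CatCu}\gamma(T)\cong\gamma(S\otimes_{\CatW}T)$ by uniqueness of representing objects, exactly as you outline. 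The two technical points you flag as hard are indeed where the paper spends its effort, so your sketch matches both the strategy and its load-bearing steps.
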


Applied to \ca{s}, the results above yield:

\begin{thm*}
The following conditions hold true:
\beginEnumConditions
\item
Let $A$ and $B$ be local \ca{s}.
Then
\[
\Cu(A)\otimes_{\CatCu}\Cu(B)=\gamma(W(A)\otimes_{\CatW}W(B)).
\]
\item
Let $D$ be a strongly self-absorbing \ca\ of real rank zero that satisfies the UCT.
Then
\[
\Cu(A\otimes D)\cong \Cu(A)\otimes_\CatCu\Cu(D)\,,
\]
for any \ca{} $A$.
\end{enumerate}
\end{thm*}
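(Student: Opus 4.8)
\emph{Part (i)} is a direct consequence of the results already established. Indeed, for a local \ca{} $A$ one has $\Cu(A)=\gamma(W(A))$ by the commuting diagram relating $\W$, $\Cu$ and the reflection $\gamma$, and likewise $\Cu(B)=\gamma(W(B))$. The theorem on $\CatW$-tensor products furnishes a natural identification $\gamma(S)\otimes_{\CatCu}\gamma(T)\cong\gamma(S\otimes_{\CatW}T)$ for all $S,T\in\CatW$; taking $S=W(A)$ and $T=W(B)$ gives precisely $\Cu(A)\otimes_{\CatCu}\Cu(B)=\gamma(W(A)\otimes_{\CatW}W(B))$. So no further argument is needed for (i).

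For \emph{part (ii)} the first step is to produce the comparison map. Sending an elementary pair $([a],[d])$ to $[a\otimes d]$ defines a biadditive assignment $\Cu(A)\times\Cu(D)\to\Cu(A\otimes D)$; using that $(a-\varepsilon)_+\otimes(d-\varepsilon)_+$ is way-below $a\otimes d$ one checks it preserves $\ll$ jointly and is continuous in each variable, hence is a $\CatCu$-bimorphism. By the universal property of $\otimes_{\CatCu}$ it factors through a natural $\CatCu$-morphism
\[
\Phi\colon\Cu(A)\otimes_{\CatCu}\Cu(D)\longrightarrow\Cu(A\otimes D),
\]
and the task is to show that $\Phi$ is an isomorphism for every \ca{} $A$.

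The plan is then to use the classification of the algebras involved. A strongly self-absorbing \ca{} of real rank zero satisfying the UCT is, by that classification, either a UHF algebra of infinite type or one of $\mathcal{O}_2$, $\mathcal{O}_\infty$, $\mathcal{O}_\infty\otimes U$; real rank zero serves precisely to exclude $\mathcal{Z}$ and to ensure that $\Cu(D)$ is algebraic, so that positive elements of $A\otimes D\otimes\K$ are approximated by finite sums $\sum_i a_i\otimes p_i$ with $p_i$ a projection. For a UHF algebra $U=\varinjlim_n M_{k_n}$ I would use that $\Cu$ preserves inductive limits and that $S\mapsto S\otimes_{\CatCu}T$ does too (being given by a universal property, it carries colimits to colimits): at each stage $\Cu(A\otimes M_{k_n})=\Cu(A)$ by stability while $\Cu(M_{k_n})=\overline{\N}=\Cu(\C)$ is the tensor unit, so $\Phi$ is an isomorphism stagewise and hence, by naturality, in the limit. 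In the remaining cases $D$ is purely infinite and simple, so $\Cu(D)=\{0,\infty\}$; then $A\otimes D$ is $\mathcal{O}_\infty$-stable and $\Cu(A\otimes D)\cong\Lat(A)$, while $\{0,\infty\}$ is a solid $\CatCu$-semiring whose semimodules are the idempotent $\CatCu$-semigroups, giving $\Cu(A)\otimes_{\CatCu}\{0,\infty\}\cong\Lat(A)$; one checks that $\Phi$ realizes this identification.

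I expect the main obstacle to be injectivity of $\Phi$ --- equivalently, showing the natural surjection does not collapse distinct elements. Surjectivity (after completion) is comparatively soft, following from the density of the sums $\sum_i a_i\otimes p_i$ afforded by real rank zero. Injectivity, by contrast, requires that the comparison relation in $\Cu(A\otimes D)$ among such sums be exactly the one forced abstractly by $\Cu(A)$ and the projection structure of $D$; this is where the algebraicity of $\Cu(D)$, the solidity of the relevant $\CatCu$-semiring, and (in the purely infinite cases) the identification of $\Cu$ of an $\mathcal{O}_\infty$-stable algebra with its ideal lattice must be combined to construct an inverse to $\Phi$.
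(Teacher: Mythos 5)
Your proposal follows essentially the same route as the paper. Part (i) is exactly the combination of \autoref{prp:commutingFctrs} with \autoref{thm:tensProdCompl}, and for part (ii) the paper likewise splits by the classification into the UHF case (treated by writing the UHF algebra as an inductive limit of matrix algebras, as in \autoref{prp:tensProdAF} and \autoref{prp:tensRqCa}) and the purely infinite case (treated via $\Cu(D)\cong\{0,\infty\}$ and the identification of both sides with the ideal lattice, as in \autoref{prp:tensWithInfty} and \autoref{prp:tensCaWithSimplePI}).

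One justification in your sketch does not stand up as written: you assert that $S\mapsto S\otimes_{\CatCu}T$ preserves inductive limits because it is ``given by a universal property'' and hence ``carries colimits to colimits''. That inference would require $\freeVar\otimes_{\CatCu}T$ to be a left adjoint, i.e.\ $\CatCu$ to be a closed monoidal category, which is precisely one of the open problems listed in \autoref{sec:openproblems}. The continuity of the tensor product in each variable is true, but it is proved by hand in \autoref{prp:tensLim}: one descends to $\CatPom$, where $\freeVar\otimes_{\CatPom}T$ genuinely is a left adjoint, and then checks separately that the auxiliary relations on the inductive limit and on the tensor product agree before applying the completion functor. With that replacement your UHF argument is the paper's. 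Finally, the ``main obstacle'' of injectivity that you anticipate is not where the work actually lies: in the UHF case the comparison map is an isomorphism at every finite stage for trivial reasons and isomorphisms pass to limits, so no separate injectivity argument is needed; in the purely infinite case injectivity is supplied by the slice-map computation of \autoref{prp:idealsTens}, which shows that the map $\psi_{A,B}$ of \autoref{prp:Latf} is an order-embedding (note that the paper's purely infinite case assumes $A$ separable at this point).
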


\section{Multiplicative structure of \texorpdfstring{$\CatCu$}{Cu}-semigroups. Solid \texorpdfstring{$\CatCu$}{Cu}-semirings}

As noted in \ref{subsc:ssa}, the class of $D$-stable \ca{s}, where $D$ is strongly self-absorbing, is relevant for the theory, and thus a description of their Cuntz semigroup is of particular interest.
Towards this end, we identify which semigroups should play the role of strongly self absorbing \ca{s}.
If $D$ is such an algebra, then the isomorphism $D\otimes D\cong D$ induces a $\CatCu$-bimorphism $\Cu(D)\times \Cu(D)\to \Cu(D)$, which in turn can be used to equip $\Cu(D)$ with a unital semiring structure (see \autoref{sec:poRg} for the definition of a semiring).
This is also compatible with the continuous properties of $\Cu(D)$ and leads us to introduce the notion of a $\CatCu$-semiring.
In the case of the Jiang-Su algebra $\mathcal{Z}$, its Cuntz semigroup may be identified with $Z:=\N\sqcup (0,\infty]$, where the product is the obvious one in either $\N$ or $(0,\infty]$ and mixed terms multiply into $(0,\infty]$.

In a similar vein, if $A$ is a $D$-stable \ca, there is a natural $\CatCu$-bimorphism $\Cu(D)\times \Cu(A)\to \Cu(A)$ which is moreover compatible with the multiplicative structure of $\Cu(D)$.
This leads us to define the notion of a $\CatCu$-semimodule $S$ over a $\CatCu$-semiring $R$.
We refer to this situation by saying that $S$ has an $R$-multiplication.

Of particular importance is the structure of $\CatCu$-semimodules over semirings that come from strongly self-absorbing algebras, or from the Jacelon-Razak algebra $\mathcal R$, whose Cuntz semigroup is $[0,\infty]$ (see \cite{Jac13Projectionless} and also \cite{Rob13Cone}).
As Robert points out for $\Cu(\mathcal R)$ (see \cite{Rob13Cone}), having a $\Cu(\mathcal R)$-multiplication is in fact a property of the semigroup rather than an additional structure.
Denote by $R_q$ the Cuntz semigroup of a UHF-algebra of infinite type (and supernatural number $q$).
We then prove the following (see Theorems~\ref{prp:ZModTFAE}, \ref{prp:RqModTFAE}, \ref{prp:RModTFAE}, and \ref{prp:PIModTFAE}):

\begin{thm*}
Let $S$ be a $\CatCu$-semigroup.
Then:
\beginEnumConditions
\item
The $\CatCu$-semigroup $S$ has $Z$-multiplication if and only if $S$ is almost divisible and almost unperforated.
\item
The $\CatCu$-semigroup $S$ has $R_q$-multiplication if and only if $S$ is $p$-divisible and $p$-unperforated whenever $p$ is an integer that divides $q$.
\item
The $\CatCu$-semigroup $S$ has $[0,\infty]$-multiplication if and only if $S$ is unperforated, divisible and every element of $S$ is soft.
\item
The $\CatCu$-semigroup $S$ has $\{0,\infty\}$-multiplication if and only if $2x=x$ for every $x\in S$.
\end{enumerate}
\end{thm*}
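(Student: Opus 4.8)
The plan is to prove all four equivalences by a single two-step scheme. By definition, an $R$-multiplication on $S$ is a $\CatCu$-semimodule structure, namely a $\CatCu$-bimorphism $\mu\colon R\times S\to S$ that is unital, $\mu(u,\freeVar)=\id_S$ for the unit $u\in R$, and associative, $\mu(r,\mu(r',s))=\mu(rr',s)$; since each of $Z$, $R_q$, $[0,\infty]$ and $\{0,\infty\}$ is a solid $\CatCu$-semiring, this is the intrinsic structure to analyse. In each case I would read off the relevant arithmetic property of $S$ from the action of a single distinguished ``reciprocal'' element of $R$, and conversely reconstruct $\mu$ from that property.

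For the forward direction I would argue uniformly. Each $R$ contains, for the relevant $n$ (resp.\ prime $p\mid q$), an element $\rho$ with $n\rho=u$ --- exactly in $[0,\infty]$ and $R_q$, where $\rho=\tfrac1n$ resp.\ $\tfrac1p$, and only approximately in $Z$, where $\rho$ is a soft scalar close to $\tfrac1n$. Divisibility then falls out of biadditivity: since $\rho+\dots+\rho=u$ ($n$ summands), the element $y:=\mu(\rho,x)$ satisfies $ny=\mu(u,x)=x$, giving exact ($p$-)divisibility for $[0,\infty]$ and $R_q$, while in $Z$ the soft/compact gap turns this into almost divisibility. Unperforation comes by cancelling $\rho$: from $nx\le ny$, which reads $\mu(n,x)\le\mu(n,y)$, the order-preserving map $\mu(\rho,\freeVar)$ and associativity give $x=\mu(n\rho,x)=\mu(\rho,\mu(n,x))\le\mu(\rho,\mu(n,y))=y$, yielding genuine ($p$-)unperforation for $[0,\infty]$ and $R_q$ and, because $n\rho$ is only approximately $u$ in $Z$, almost unperforation there. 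Case~(4) is immediate: biadditivity forces $s=\mu(\infty,s)=\mu(\infty+\infty,s)=2\mu(\infty,s)=2s$.

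For the reverse direction I would construct $\mu$ explicitly and extend it by continuity. Case~(4) is trivial: given $2x=x$, hence $nx=x$ for all $n\ge1$, set $\mu(\infty,s)=\sup_n ns=s$ and $\mu(0,s)=0$. For the other three I would first use the divisibility hypothesis to define multiplication by a rational scalar $\tfrac pq$ --- choosing an element ``$s$ divided by $q$'' and adding it to itself $p$ times --- and then put $\mu(t,s)=\sup\{\mu(\tfrac pq,s):\tfrac pq<t\}$ for a real or soft scalar $t$, with the compact summand ($\N$ of $Z$, $(\Q_q)_+$ of $R_q$) handled by ordinary multiples. The main obstacle is well-definedness: one must show that the order relations among these fractional multiples are forced and choice-independent, and that the resulting $\mu$ preserves suprema and the relation~$\ll$ in each variable. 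This is exactly where unperforation enters, since it guarantees that $\tfrac pq\le\tfrac{p'}{q'}$ implies $\mu(\tfrac pq,s)\le\mu(\tfrac{p'}{q'},s)$, so that the defining supremum is independent of the approximating rationals and $\mu$ is a genuine $\CatCu$-bimorphism.

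The second delicate point is the interface between compact and soft elements. For $[0,\infty]$ the hypothesis that every element of $S$ be soft is precisely what permits scaling by arbitrary non-integer reals, since a compact element admits no consistent non-integer multiple; softness removes this obstruction and also forces $\mu(t,s)$ to be soft, matching the target semiring. For $Z$ and $R_q$, whose semirings glue a compact summand to the soft part $(0,\infty]$, I would verify separately that the compact-scalar and soft-scalar pieces of $\mu$ agree where they overlap and jointly satisfy $\mu(r,\mu(r',s))=\mu(rr',s)$. Once $\mu$ is a well-defined, unital, associative $\CatCu$-bimorphism, the remaining semimodule axioms are formal, and each of the four equivalences follows.
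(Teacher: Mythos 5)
Your overall strategy is sound and, for the forward implications and for case (1), it is essentially the paper's own argument: the paper's Lemma~\ref{prp:ZModAlmUnpDiv} (resp.\ \ref{prp:RqModUnpDiv}) derives (almost) divisibility and (almost) unperforation exactly from the action of a reciprocal-type element, and the reverse direction of Theorem~\ref{prp:ZModTFAE} is the supremum-over-rationals construction you describe, carried out via Proposition~\ref{prp:zembed}. Where you genuinely diverge is in the organization of cases (2)--(4). The paper does not redo the supremum construction for $R_q$ and $[0,\infty]$: it first observes that the hypotheses in those cases imply almost unperforation and almost divisibility, hence $Z$-multiplication by case (1), and then only has to supply the action of the \emph{compact} scalars $K_q\subset R_q$ (where exact $q$-divisibility makes $\mu_n$ an honest isomorphism and $\tfrac{k}{n}\cdot a:=k\mu_n^{-1}(a)$ is unambiguous), respectively to identify $[0,\infty]\otimes_{\CatCu}S$ with $S_\soft$ (Lemma~\ref{prp:RModViaSoft}). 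Moreover, the paper leans on solidity via Lemma~\ref{prp:constructingSolidMod}: once one has a \emph{generalized} $\CatCu$-bimorphism $\varphi$ with $\varphi(1,a)=a$, associativity and joint preservation of $\ll$ come for free, so much of the verification you propose to do by hand can be skipped. Your uniform construction buys conceptual symmetry at the cost of repeating the hardest estimates three times.

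One point in your sketch is stated too optimistically and would need the paper's precise formulation to go through. In case (1) there is no element ``$s$ divided by $q$'': almost divisibility only produces, for each $a'\ll a$, an $x$ with $qx\leq a$ and $a'\leq(q+1)x$, so the ``fractional multiple'' is an interval $\mu((k,n),a)=\{x: nx\leq ka\leq(n+1)x\}$ of candidates rather than a single element. Almost unperforation does \emph{not} compare these intervals for all rationals with $\tfrac{p}{q}\leq\tfrac{p'}{q'}$; the comparison in Lemma~\ref{lem:mult} requires the strict separation $\tfrac{k_1}{n_1}<\tfrac{k_2}{n_2+1}$. This is exactly why the soft part of $\mu(t,a)$ must be defined as a supremum over rationals \emph{strictly below} $t$ along a sufficiently separated cofinal sequence, and why the compact scalars have to be treated by integer multiples rather than as the value of that supremum at $t=1$ (which, as you note for the $[0,\infty]$ case, would give $1'a\neq a$ on compact elements). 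Once you replace ``the element $s/q$'' by the interval and restrict the comparison claim accordingly, your argument closes up and coincides with the paper's.
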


Condition (i) above allows us to prove a semigroup version of the Toms-Winter conjecture:

\begin{thm*}
Let $S$ be a $\CatCu$-semigroup.
Then the following are equivalent:
\beginEnumConditions
\item
We have $S\cong Z\otimes_\CatCu S$.
\item
The $\CatCu$-semigroup $S$ is almost unperforated and almost divisible.
\end{enumerate}
\end{thm*}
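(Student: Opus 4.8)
The plan is to factor this equivalence through the notion of a $Z$-multiplication, reducing the statement to two results already at our disposal: the characterization of $Z$-semimodules in Theorem~\ref{prp:ZModTFAE}, and the fact that $Z$ is a \emph{solid} $\CatCu$-semiring. Recall that $Z=\Cu(\mathcal{Z})$ and that Theorem~\ref{prp:ZModTFAE}(i) asserts that $S$ carries a $Z$-multiplication---equivalently, is a semimodule over the $\CatCu$-semiring $Z$---precisely when $S$ is almost unperforated and almost divisible. Thus the content of the theorem is exactly that, for this particular semiring, admitting a $Z$-multiplication coincides with tensorially absorbing $Z$.

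First I would record that $Z$ is solid, that is, that the product bimorphism $Z\times Z\to Z$ induces an isomorphism $Z\otimes_{\CatCu}Z\cong Z$. Since $\mathcal{Z}$ is strongly self-absorbing and satisfies the UCT (see \ref{subsc:ssa}), this follows from the general principle that the $\CatCu$-semiring of such a \ca{} is solid; alternatively, because $Z=\N\sqcup(0,\infty]$ is completely explicit, one may verify solidity by a direct computation of $Z\otimes_{\CatCu}Z$. Granting solidity, the characterization of semimodules over a solid semiring yields, for any $\CatCu$-semigroup $S$, that $S$ is a $Z$-semimodule if and only if $Z\otimes_{\CatCu}S\cong S$.

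The two implications then follow by chaining. For (ii)$\Rightarrow$(i): if $S$ is almost unperforated and almost divisible, then by Theorem~\ref{prp:ZModTFAE}(i) it is a $Z$-semimodule, whence $Z\otimes_{\CatCu}S\cong S$ by the previous paragraph, which is (i). For (i)$\Rightarrow$(ii): the object $Z\otimes_{\CatCu}S$ always carries a canonical $Z$-multiplication, induced by the semiring product on the left tensor factor through the associativity isomorphism $Z\otimes_{\CatCu}(Z\otimes_{\CatCu}S)\cong(Z\otimes_{\CatCu}Z)\otimes_{\CatCu}S\to Z\otimes_{\CatCu}S$; hence $Z\otimes_{\CatCu}S$ is a $Z$-semimodule for every $S$. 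If $S\cong Z\otimes_{\CatCu}S$, this semimodule structure transports across the isomorphism, so $S$ is a $Z$-semimodule, and Theorem~\ref{prp:ZModTFAE}(i) forces $S$ to be almost unperforated and almost divisible.

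The one genuinely nontrivial input is the solidity of $Z$, i.e.\ the isomorphism $Z\otimes_{\CatCu}Z\cong Z$; everything else is formal once Theorem~\ref{prp:ZModTFAE} and the solid-semimodule dictionary are in hand. I expect this to be the delicate point, since it requires either an explicit understanding of the $\CatCu$-tensor product of $Z$ with itself or the invocation of the UCT-based classification of solid $\CatCu$-semirings. One must also take care to distinguish an abstract isomorphism $Z\otimes_{\CatCu}S\cong S$ from one implemented by the semimodule structure map---this is precisely the gap that solidity closes.
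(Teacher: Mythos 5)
Your reduction is, for the tensor-absorption side, exactly the paper's: both arguments insert the intermediate condition that $S$ carries a $Z$-multiplication, establish that $Z$ is solid (\autoref{prp:ZSolid}), and invoke the solid-semimodule dictionary (\autoref{prp:solidModuleTFAE}) to identify ``$S$ is a $Z$-semimodule'' with ``$S\cong Z\otimes_{\CatCu}S$''. Your remark that an abstract isomorphism $Z\otimes_{\CatCu}S\cong S$ must be upgraded to one implemented by the action map, and that solidity is what closes this gap, is correct and is precisely the content of \autoref{prp:solidModuleTFAE}.

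The problem is that you quote ``$S$ has a $Z$-multiplication if and only if $S$ is almost unperforated and almost divisible'' as a prior result, citing \autoref{prp:ZModTFAE} --- but the statement you are proving \emph{is} \autoref{prp:ZModTFAE} (its conditions (1) and (3)), and that equivalence is the nontrivial content of its proof rather than something available in advance. Two pieces are therefore missing. First, ``$Z$-multiplication $\Rightarrow$ almost unperforated and almost divisible'' needs an argument: the paper's \autoref{prp:ZModAlmUnpDiv} writes $1_Z=\sup_n u_n$ with $u_n\ll u_{n+1}$ and uses almost divisibility of the unit of $Z$ to produce, for $a'\ll a$ and $k\in\N$, an element $x$ with $kx\leq a$ and $a'\leq(k+1)x$, and likewise to pass from $(k+1)a\leq kb$ to $a\leq b$. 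Second, and this is the bulk of the paper's proof, the converse requires \emph{constructing} a $Z$-action on an arbitrary almost unperforated, almost divisible $S$: for each $a\in S$ one builds the unique generalized $\CatCu$-morphism $\alpha_a\colon Z\to S$ with $\alpha_a(1)=a$ by setting
\[
\alpha_a(t)=\sup\left\{ x\in S : nx\leq ka\leq (n+1)x,\ \tfrac{k}{n}<t \right\}
\]
for soft $t$ (\autoref{prp:zembed}, resting on \autoref{lem:mult}), and then checks that $(z,a)\mapsto\alpha_a(z)$ is additive and sup-preserving in $a$ (via the uniqueness clause of \autoref{prp:zembed}) and jointly preserves $\ll$. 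You have also misplaced the difficulty: solidity of $Z$ is a short computation ($1\otimes\tfrac{k}{n}=\tfrac{k}{n}\otimes 1$ for compact elements, plus density of the rationals for the soft part), whereas the existence of the $Z$-action is the delicate step your proposal leaves unaddressed.
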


A key ingredient in the above theorem is the fact that $Z\otimes_\CatCu Z\cong Z$, where the isomorphism is induced by the natural product.
This naturally poses the question of which is the right notion for a `strongly self-absorbing $\CatCu$-semigroup'.
We adopt here the terminology of a solid ring, as introduced in \cite{BouKan72Core}, and call a unital $\CatCu$-semiring $R$ \emph{solid} if the multiplication induces an isomorphism $R\otimes_{\CatCu}R\cong R$.
Every such semiring is automatically simple and, in the stably finite case, has a unique normalized functional.

\begin{thm*}
Let $D$ be a strongly self-absorbing \ca{} satisfying the UCT.
Then $\Cu(D)$ is a solid $\CatCu$-semiring, and so $\Cu(D)\otimes_\CatCu\Cu(D)\cong\Cu(D)$.
\end{thm*}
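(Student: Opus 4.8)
The plan is to reduce solidity of $\Cu(D)$ to the invertibility of the canonical comparison morphism, and then to verify that invertibility case by case using the classification of strongly self-absorbing \ca{s} satisfying the UCT. First I would unwind the semiring structure on $\Cu(D)$: its product is the $\CatCu$-bimorphism $\Cu(D)\times\Cu(D)\to\Cu(D)$ induced by a strongly self-absorbing isomorphism $m\colon D\otimes D\xrightarrow{\cong} D$, and linearizing this bimorphism through the universal property of the tensor product gives the multiplication morphism $\mu\colon\Cu(D)\otimes_\CatCu\Cu(D)\to\Cu(D)$ whose being an isomorphism is exactly solidity. The key structural observation is that $\mu$ factors as $\mu=\Cu(m)\circ\theta$, where $\theta\colon\Cu(D)\otimes_\CatCu\Cu(D)\to\Cu(D\otimes D)$ is the canonical comparison morphism and $\Cu(m)$ is the isomorphism obtained by functoriality from the \ca{}-isomorphism $m$. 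Since $\Cu(m)$ is automatically invertible, solidity is equivalent to $\theta$ being an isomorphism, and the whole problem is reduced to this.

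Next I would invoke the classification from \cite{TikWhiWin15}, by which $D$ is one of $\mathcal{O}_2$, $\mathcal{O}_\infty$, a UHF-algebra of infinite type, $\mathcal{O}_\infty\otimes U$ with $U$ a UHF-algebra of infinite type, or the Jiang-Su algebra $\mathcal{Z}$. The first four are of real rank zero, being Kirchberg or UHF-algebras, so part~(ii) of the tensor product theorem above applies with $A=D$ and identifies $\Cu(D)\otimes_\CatCu\Cu(D)$ with $\Cu(D\otimes D)$ via exactly the comparison morphism $\theta$; hence $\theta$ is an isomorphism and, by the previous paragraph, $\Cu(D)$ is solid in these cases. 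For the remaining case $D=\mathcal{Z}$, which is not of real rank zero and so falls outside part~(ii), I would instead use the explicit identification $\Cu(\mathcal{Z})\cong Z=\N\sqcup(0,\infty]$ together with the computation $Z\otimes_\CatCu Z\cong Z$ recorded as the key ingredient of the semigroup version of the Toms-Winter conjecture proved above, whose isomorphism is induced by the natural product. This says directly that $\mu$ is an isomorphism for $\mathcal{Z}$, so $\Cu(\mathcal{Z})$ is solid as well; combining the cases yields that $\Cu(D)$ is a solid $\CatCu$-semiring and in particular $\Cu(D)\otimes_\CatCu\Cu(D)\cong\Cu(D)$.

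The step I expect to be the main obstacle is the compatibility bookkeeping underlying the factorization $\mu=\Cu(m)\circ\theta$: one must verify carefully that the multiplication defining the semiring structure on $\Cu(D)$ coincides with the composite of the functorial map $\Cu(m)$ with the isomorphism supplied by the tensor product theorem, i.e.\ that this isomorphism is the same comparison bimorphism used to define the product. This is a naturality statement for the comparison transformation $\theta_{A,B}\colon\Cu(A)\otimes_\CatCu\Cu(B)\to\Cu(A\otimes B)$, and once it is in place the conclusion is immediate from the two cases. A secondary, organizational point is to confirm that $\mathcal{Z}$ is genuinely the only algebra on the list which is not of real rank zero, so that the real-rank-zero hypothesis of part~(ii) is satisfied in every other case and the case split is exhaustive.
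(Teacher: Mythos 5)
Your argument is correct, but it takes a genuinely different route from the paper's for the real-rank-zero cases. The paper (\autoref{pgr:ssaSolid}) never uses the comparison morphism $\tau_{D,D}$: it first determines the possible semirings $\Cu(D)$ --- namely $\{0,\infty\}$, $Z$ and $R_q$ --- using $\mathcal{Z}$-stability, the decomposition $\Cu(D)\cong V(D)\sqcup(0,\infty]$ in the monotracial case, stable rank one, and the Toms--Winter $K$-theory computation, and then verifies solidity of each of these semirings \emph{intrinsically} by checking the flip condition $1\otimes a=a\otimes 1$ of \autoref{prp:solidTFAE} (see \autoref{prp:PureInfSolid}, \autoref{prp:ZSolid} and \autoref{prp:RqSolid}). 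Your reduction to invertibility of $\tau_{D,D}$ is sound --- the factorization $\mu=\Cu(\psi^{-1})\circ\tau_{D,D}$ holds essentially by the construction of the product in \autoref{prp:semirgFromSSA}, so the ``compatibility bookkeeping'' you single out as the main obstacle is not really one --- and the $C^*$-algebraic inputs you invoke (\autoref{prp:tensProdAF} for the UHF case, \autoref{prp:tensCaWithSimplePI} for the Kirchberg cases) are proved in the paper without circularity. The trade-off is that your route must identify the algebra $D$ itself (as an AF algebra, or as a simple, purely infinite, nuclear algebra) before those tensor-product formulas apply, so it genuinely leans on the Tikuisis--White--Winter classification of UCT strongly self-absorbing \ca{s}; the paper's route only needs to identify the semiring $\Cu(D)$, for which the older $K$-theoretic input suffices, and its intrinsic computations feed into the stronger classification of all solid $\CatCu$-semirings (\autoref{thm:solidSemirgClassification}). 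For $D=\mathcal{Z}$ the two arguments coincide, since $Z\otimes_\CatCu Z\cong Z$ is exactly \autoref{prp:ZSolid}.
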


As solid $\CatCu$-semirings have good structural properties, it is natural to analyse the tensor product of a $\CatCu$-semigroup with one of these semirings.
This process may be termed a \emph{regularization}, as the final object enjoys regularity properties (for example, it absorbs the $\CatCu$-semiring $Z$ tensorially).
We explore two such constructions, closely related to \ca{s}: the rationalization and the realification of a semigroup.

The \emph{rationalization} of a $\CatCu$-semigroup $S$ is, by definition, its tensor product with a semigroup of the form $R_q$, where $R_q$ is, as mentioned above, the Cuntz semigroup of a UHF-algebra of infinite type, so that $q=(p_i)$ is a supernatural number of infinite type.
The tensor product $R_q\otimes_\CatCu S$ can be realized as the inductive limit $S_q$ constructed as $S \stackrel{\cdot p_1}{\longrightarrow} S \stackrel{\cdot p_2}{\longrightarrow}  S \stackrel{\cdot p_3}{\longrightarrow} \dots$.

Given a $\CatCu$-semigroup $S$, Robert introduced in \cite{Rob13Cone} the \emph{realification} of $S$, which is a $\CatCu$-semigroup denoted by $S_R$.
Robert indicates that his construction may be thought of as the tensor product with $[0,\infty]$.
This semigroup is, by definition, the subsemigroup of lower semicontinuous, linear, $[0,\infty]$-valued functions defined on $F(S)$ that can be obtained as pointwise suprema of functions of the type $\frac{1}{n}\hat{s}$, for $s\in S$.
Robert obtains in \cite[Theorem~3.2.1]{Rob13Cone} a more abstract characterization of $S_R$.
We make the connection of $S_R$ with the tensor product construction precise and we show that we indeed have $S_R\cong [0,\infty]\otimes_\CatCu S$.
It then follows from our results and \cite[Theorem~5.1.2]{Rob13Cone} that $\Cu(\mathcal{R}\otimes A)\cong\Cu(\mathcal{R})\otimes_\CatCu\Cu(A)$ for any \ca{} $A$.

In contrast to strongly self-absorbing \ca{s}, where a complete classification is not available yet, solid $\CatCu$-semirings do admit a very satisfactory classification, as follows (see \autoref{thm:solidSemirgClassification}):

\begin{thm*}
Let $S$ be a nonzero $\CatCu$-semiring that satisfies the almost algebraic order and the almost Riesz decomposition axioms.
Then exactly one of the following statements hold:
\beginEnumConditions
\item
We have $S\cong\{0,1,\ldots, k,\infty\}$ for some $k\geq 0$.
\item
We have $S\cong \overline{\N}$.
\item
We have $S\cong [0,\infty]$.
\item
We have $S\cong Z$.
\item
We have that $S$ is algebraic, and there exists a solid ring $R\not\cong\Z$ with non-torsion unit such that $S_c=R_+$.
\end{enumerate}
\end{thm*}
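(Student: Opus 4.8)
The plan is to drive the entire argument from the two structural facts recalled above: a nonzero solid $\CatCu$-semiring $S$ is \emph{simple}, and, when stably finite, admits a \emph{unique} normalized functional $\lambda$. Write $1$ for the unit. The classification then unfolds as a cascade of dichotomies, the first being whether $S$ is stably finite. If it is not, then, being simple, the unit is infinite in the sense that its multiples stabilize: there is a least $k\ge 0$ with $(k+1)\cdot 1=(k+2)\cdot 1=:\infty$, for otherwise the strictly increasing chain $(n\cdot 1)_n$ would carry a lower semicontinuous dimension function and hence a faithful functional, contradicting the failure of stable finiteness. Simplicity makes $\infty$ the absorbing maximum, and the almost algebraic order axiom forces the order below $\infty$ to be algebraic, so that $S=\{0,1,\dots,k,\infty\}$ with all elements compact; this is outcome~(1), the case $k=0$ being detected by $2x=x$ through \autoref{prp:PIModTFAE}.

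From now on $S$ is stably finite, with functional $\lambda$. A key preliminary is that the soft elements form a \emph{multiplicative} ideal: if $y$ is soft and $x\in S$ is arbitrary, then $xy$ is soft, since any $z\ll xy$ is dominated by some product $x'y'$ with $x'\ll x$ and $y'\ll y$, whence $(n+1)z\le x'\bigl((n+1)y'\bigr)\le x'(ny)\le n(xy)$ by biadditivity. Applying this to $x=x\cdot 1$ shows that if $1$ is soft then every element is soft, so $S_\soft=S$ and $S_c=\{0\}$; conversely $S_c\neq\{0\}$ forces $1$ to be compact. When $S_c=\{0\}$, every nonzero element is soft, so by the comparison of soft elements through functionals in the almost unperforated case, $\lambda$ is an order embedding of $S$ into $[0,\infty]$. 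Solidity supplies unperforation and divisibility, so \autoref{prp:RModTFAE} applies and the image of $\lambda$ is a divisible, supremum-closed subsemigroup of $[0,\infty]$ containing $1$; hence $\lambda$ is an isomorphism onto $[0,\infty]$, which is outcome~(3).

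It remains to treat $S_c\neq\{0\}$, where $1$ is compact, splitting on whether $S$ is algebraic. If $S$ is \emph{not} algebraic, I claim $S_c$ is exactly the chain of multiples of the unit, $S_c\cong\N$: faithfulness of $\lambda$ places $S_c$ inside $[0,\infty)$, and $\lambda(S_c)$ is a subsemiring of $\R_+$ containing $1$ and closed under multiplication, so any compact value in $(0,1)$ would generate values accumulating at $0$ and, through the axioms and the divisibility coming from solidity, make the compacts dense and $S$ algebraic, a contradiction; thus $\lambda(S_c)=\N$. The soft part $S_\soft$, again a $\CatCu$-semigroup on which $\lambda$ is an order embedding, is identified with $(0,\infty]$ as in the previous paragraph, and reassembling gives $S\cong\N\sqcup(0,\infty]=Z$, outcome~(4), in agreement with \autoref{prp:ZModTFAE}. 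If instead $S$ is algebraic, then $S$ is the completion of $S_c$, and one classifies $S_c$: by \autoref{prp:propertiesAlgebraic}, the almost algebraic order axiom makes $S_c$ algebraically ordered, while weak cancellation makes it cancellative and, together with the almost Riesz decomposition axiom, endows it with Riesz decomposition. Such an $S_c$ is the positive cone of a partially ordered abelian group $R=\Gr(S_c)$, and the multiplication promotes $R$ to a partially ordered ring with $S_c=R_+$; transporting $S\otimes_\CatCu S\cong S$ to the dense compact parts yields $R\otimes_\Z R\cong R$, so that $R$ is a \emph{solid ring} in the sense of \cite{BouKan72Core}, with non-torsion unit by stable finiteness. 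If $R\cong\Z$ then $S_c\cong\N$ and the completion is $\overline{\N}$, outcome~(2); otherwise we are in outcome~(5). The five outcomes are pairwise separated by stable finiteness, by $S_c$, by algebraicity, and by the isomorphism type of $R$, so exactly one occurs.

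The step I expect to be the main obstacle is the algebraic case: establishing weak cancellation (equivalently, cancellativity of $S_c$) for a solid semiring, which I would derive either directly from the isomorphism $S\otimes_\CatCu S\cong S$ or via near unperforation (\autoref{prp:nearUnperfSimpleCu}) after showing solidity forces nearly unperforated, and then \emph{faithfully} transferring the solidity isomorphism from $S$ down to the ring identity $R\otimes_\Z R\cong R$ required for the Bousfield--Kan classification. A secondary delicate point is pinning down $S_c\cong\N$ in the non-algebraic case, that is, ruling out any compact element with functional value in $(0,1)$ without it forcing density of the compacts.
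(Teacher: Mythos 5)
Your overall architecture runs parallel to the paper's proof, just with the case distinctions reordered: you split on stable finiteness first, the paper splits on elementarity first, but both analyses rest on simplicity of a solid $\CatCu$-semiring, the unique multiplicative normalized functional, the decomposition $S=S_c^\times\sqcup S_\soft$, and \autoref{prp:imagelambda} to separate the algebraic case from $\lambda(S_c)\subset\N$. Your identification of the delicate points in the algebraic case is accurate, and your proposed route to weak cancellation (weak divisibility of $S_c$ $\Rightarrow$ near unperforation $\Rightarrow$ cancellation) is exactly the paper's \autoref{prp:cancSimpleAlgebraicSemirg}; note that solidity is not even needed for that step, only simplicity, nonelementarity, algebraicity and \axiomO{5}.

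There are, however, two genuine gaps. The serious one is in the case leading to $Z$. From ``$S$ is not algebraic'' you correctly get $\lambda(S_c)\subset\N$, but ``reassembling gives $S\cong\N\sqcup(0,\infty]$'' silently assumes that $\lambda$ is \emph{injective} on $S_c$, i.e., that there is no second compact element sharing an integer functional value with a multiple of the unit. That is precisely the phenomenon exhibited by the semigroup $Z'$ of \autoref{sec:openproblems}\eqref{listPrbls:Zprime}: simple, stably finite, almost unperforated, almost divisible, satisfying \axiomO{5} and \axiomO{6}, with two distinct compacts of functional value $1$. Almost unperforation only compares elements whose functional values are \emph{strictly} separated, so nothing you have established rules this out; you flag compact values in $(0,1)$ as the delicate point, but that is already handled by \autoref{prp:imagelambda} --- the real obstacle is equal integer values. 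The paper sidesteps the issue by using solidity once more: it builds the obvious unital multiplicative $\CatCu$-morphism $\alpha\colon S\to Z$, deduces $S\otimes_\CatCu Z\cong Z$ from solidity of $S$ via \autoref{prp:solidMorImpliesAbsorb}, and combines this with $Z\otimes_\CatCu S\cong S$ (from almost unperforation and almost divisibility) to conclude $S\cong Z$. Some such use of solidity seems unavoidable here. The second, smaller gap is in the non-stably-finite case: that the multiples of $1$ stabilize at a compact $\infty$ is fine, but the conclusion that \emph{every} element of $S$ is a multiple of $1$ does not follow from the almost algebraic order axiom alone; you need $1$ to be the \emph{least} nonzero element, which the paper obtains from \axiomO{6} via \autoref{prp:downwards} together with the elementarity dichotomy (\autoref{prp:simpleSemirgCompacts} and \autoref{prp:elmtryO6}). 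Without \axiomO{6} this step genuinely fails, as \autoref{exa:elmtrySemirg} shows.
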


Since there is a classification theory for solid rings (see \cite{BouKan72Core}, \cite{BowSch77Rings}), it is therefore possible to give a complete list of solid $\CatCu$-semirings.

Now denote by $Q=\Q_+\sqcup (0,\infty]$ the Cuntz semigroup of the universal UHF-algebra.
As a consequence of our classification theorem, we obtain that $Z$ and $Q$ can be (uniquely) characterized as initial and final objects in the category of nonelementary, solid $\CatCu$-semirings satisfying the almost algebraic order axiom, and whose unit is compact.
Likewise, $[0,\infty]$ is the unique solid $\CatCu$-semiring that contains no nonzero compact elements.
This is an exact parallell of Winter's result that strongly self-absorbing \ca{s} are $\mathcal{Z}$-stable and in fact, our methods allow us to recover this result.

It would be interesting to know whether the Cuntz semigroup of any strongly self-absorbing \ca{} $D$ is always solid.
This is clearly the case if $D$ is purely infinite simple.
As $D$ is $\mathcal{Z}$-stable by Winter's result, \cite{Win11ssaZstable}, and monotracial in the stably finite case, it follows that $\Cu(D)$ may be identified with $V(D)\sqcup (0,\infty]$.
With our classification theorem of solid $\CatCu$-semirings at hand, this would shed light on whether there could exist a non-UCT strongly self-absorbing example.

\section*{Acknowledgments}

We thank Joan Bosa, Nate Brown, Joachim Cuntz, George Elliott, Martin Engbers, Klaus Keimel, Henning Petzka, Leonel Robert, Luis Santiago, Aaron Tikuisis, Fred Wehrung, Stuart White and Wilhelm Winter for valuable comments.
We are grateful to Ken Goodearl for several discussions on tensor products of semigroups.
We also thank Snigdhayan Mahanta for pointing out the notion of a solid ring, and Eusebio Gardella for carefully reading the first chapters of a draft of this paper.

We also thank the anonymous referees for a careful reading and valuable comments that led to a substantial improvement of the manuscript.

This work was initiated at the Centre de Recerca Matem\`{a}tica (Bellaterra) during the program `The Cuntz Semigroup and the Classification of \ca{s}' in 2011.
The authors would like to thank the CRM for financial support and working conditions provided.
Part of this research was conducted while the third named author was visiting the Universitat Aut\`{o}noma de Barcelona (UAB) in 2012 and 2013, and while he was attending the Thematic Program on Abstract Harmonic Analysis, Banach and Operator Algebras at the Fields Institute in January-June 2014.
He would like to thank both Mathematics departments for their kind hospitality.

The two first named authors were partially supported by DGI MICIIN (grant No.\ MTM2011-28992-C02-01), and by the Comissionat per Universitats i Recerca de la Generalitat de Catalunya.
The third named author was partially supported by the Danish National Research Foundation through the Centre for Symmetry and Deformation (DNRF92), and by the Deutsche Forschungsgemeinschaft (SFB 878).

\chapter{Pre-completed Cuntz semigroups}
\label{sec:catW}

In the first part of this chapter, we introduce the categories $\CatPreW$ and $\CatW$ of (abstract) pre-completed Cuntz semigroups and we develop their general theory.
An object of $\CatPreW$ or $\CatW$ is a \pom\ (see \autoref{pgr:pom} for the definition) equipped with an auxiliary relation such that certain axioms are satisfied; see \autoref{pgr:axiomsW} and \autoref{dfn:catW}.
We show that $\CatW$ is a full, reflective subcategory of $\CatPreW$ (see \autoref{prp:WreflectivePreW}) and that both categories have inductive limits;
see \autoref{prp:limitsPreW} and \autoref{prp:limitsW}.

In the second part, we associate to every local \ca\ $A$ its \emph{pre-completed Cuntz semigroup} $W(A)$, which naturally belongs to the category $\CatW$.
It is given as the original definition of the Cuntz semigroup (equivalence classes of positive elements in matrices over $A$), together with a natural auxiliary relation; see \autoref{prp:WfromCAlg}.
We show that the assignment $A\mapsto W(A)$ extends to a continuous functor from local \ca{s} to the category $\CatW$; see \autoref{prp:functorW}.
This is inspired by \cite{CowEllIva08CuInv}, where the analogous results are shown for the completed Cuntz semigroup; see \autoref{sec:catCu}.

\section{The categories \texorpdfstring{$\CatPreW$}{PreW} and \texorpdfstring{$\CatW$}{W}}

We refer to \autoref{sec:pom} for the basic theory of \pom{s}.

\begin{pgr}[Axioms for the category $\CatW$]
\label{pgr:axiomsW}
\index{terms}{auxiliary relation}
\index{terms}{countably-based}
\index{terms}{basis (of a $\CatPreW$-semigroup)}
\index{terms}{compact containment}
\index{terms}{way-below \seeonly{compact containment}}
\index{terms}{relation!compact containment}
\index{terms}{relation!way-below \seeonly{compact containment}}
\index{terms}{W1-W4@\axiomW{1}-\axiomW{4}}
\index{terms}{element!compact}
\index{symbols}{$\prec$ \quad(auxiliary relation)}
\index{symbols}{$\ll$ \quad(compact containment)}
\index{symbols}{a$^\prec$@$a^\prec$ \quad(set of $\prec$-predecessor of $a$)}
Let $S$ be a \pom.
Following \cite[Definition~I-1.11, p.57]{GieHof+03Domains}, an \emph{auxiliary relation} on $S$ is a binary relation $\prec$ such that the following conditions hold:
\beginEnumConditions
\item
We have that $a\prec b$ implies $a\leq b$, for any $a,b\in S$.
\item
We have that $a\leq b\prec c\leq d$ implies $a\prec d$, for any $a,b,c,d\in S$.
\item
We have $0\prec a$, for any $a\in S$.
\end{enumerate}

Let $S$ be a \pom\ and fix an auxiliary relation $\prec$ on $S$.
We say that $S$ is \emph{countably-based} if there exists a countable subset $B\subset S$ such that for any $a',a$ in $S$ satisfying $a'\prec a$, there exists $b\in B$ such that $a'\leq b\prec a$.
A subset $B$ with these properties is called a \emph{basis} for $S$;
see \cite[Proposition~III-4.2, p.~241]{GieHof+03Domains}.

A particularly interesting auxiliary relation is the following:
Given elements $a$ and $b$ in a \pom\ $S$, we say that $a$ is \emph{compactly contained in} $b$ (or $a$ is \emph{way-below} $b$), denoted $a\ll b$, if whenever $(b_n)_{n\in\N}$ is an increasing sequence in $S$ for which the supremum $\sup_n b_n$ exists, then $b\leq \sup_n b_n$ implies that there is $k$ such that $a\leq b_k$.
If $a\in S$ satisfies $a\ll a$, we say that $a$ is \emph{compact}, and we shall denote the set of compact elements by $S_c$.

Note that the compact containment relation is usually defined by considering suprema of arbitrary upwards directed sets;
see \cite[Definition~I-1.1, p.49]{GieHof+03Domains}.
The definition given here is a sequential version.
In \autoref{rmk:catCu} we will see that both notions agree under a suitable separability assumption.

We will use the following axioms to define the objects in the categories $\CatPreW$ and $\CatW$.
Given an element $a\in S$, we use the notation $a^\prec:=\left\{ x\in S : x\prec a \right\}$.

\begin{itemize}
\item[\axiomW{1}]
For each $a\in S$, there exists a sequence $(a_k)_k$ in $a^\prec$ satisfying $a_k\prec a_{k+1}$ for each $k$ and such that for any $b\in a^\prec$ there exists $k$ with $b\prec a_k$.
\item[\axiomW{2}]
For each $a\in S$, we have $a=\sup a^\prec$.
\item[\axiomW{3}]
If $a',a,b',b\in S$ satisfy $a'\prec a$ and $b'\prec b$, then $a'+b'\prec a+b$.
\item[\axiomW{4}]
If $a,b,c\in S$ satisfy $a\prec b+c$, then there exist $b',c'\in S$ such that $a\prec b'+c'$, $b'\prec b$, and $c'\prec c$.
\end{itemize}

Axiom \axiomW{1} implies that for each $a\in S$, the set $a^\prec$ is upward directed and contains a cofinal increasing sequence with respect to $\prec$. For a general auxiliary relation $\prec$, the set $a^\prec$ need not be upward directed. For instance, consider $S=\N^2$ with pointwise order and addition. Define an auxiliary relation by   $x\prec y$ if $x\leq y$ and $x\neq y$ or $x=0$.

Given $a,b\in S$, axiom \axiomW{3} means that the set
\[
a^\prec+b^\prec = \left\{ a'+b' : a'\prec a, b'\prec b\right\},
\]
is contained in $(a+b)^\prec$.
Axiom \axiomW{4} means that $a^\prec+b^\prec$ is cofinal in $(a+b)^\prec$.
\end{pgr}

\begin{dfn}
\label{dfn:catW}
\index{terms}{PreW-semigroup@$\CatPreW$-semigroup}
\index{terms}{W-semigroup@$\CatW$-semigroup}
\index{terms}{W-morphism@$\CatW$-morphism!generalized}
\index{terms}{category@category!$\CatPreW$}
\index{terms}{category@category!$\CatW$}
\index{symbols}{PreW@$\CatPreW$ \quad (category of $\CatPreW$-semigroups)}
\index{symbols}{W@$\CatW$ \quad (category of $\CatW$-semigroups)}
\index{symbols}{W(S,T)@$\CatWMor(S,T)$ \quad ($\CatW$-morphisms)}
\index{symbols}{W[S,T]@$\CatWGenMor{S,T}$ \quad (generalized $\CatW$-morphisms)}
A \emph{$\CatPreW$-semigroup} is a pair $(S,\prec)$, where $S$ is a \pom\ and $\prec$ is a fixed auxiliary relation on $S$, satisfying axioms \axiomW{1}, \axiomW{3} and \axiomW{4} from \autoref{pgr:axiomsW}.
If $(S,\prec)$ also satisfies axiom \axiomW{2}, then it is called a \emph{$\CatW$-semigroup}.
We often drop the reference to the auxiliary relation and simply write $S$ for a $\CatPreWW$-semigroup.

Given $\CatPreW$-semigroups $S$ and $T$, a \emph{generalized $\CatW$-morphism} $f\colon S\to T$ is a $\CatPom$-morphism that is continuous in the following sense:
\begin{itemize}
\item[\axiom{M}]
For every $a\in S$ and $b\in T$ with $b\prec f(a)$, there exists $a'\in S$ such that $a'\prec a$ and $b\leq f(a')$.
\end{itemize}
We denote the collection of all such maps by $\CatWGenMor{S,T}$.
A $\CatW$-morphism is a generalized $\CatW$-morphism that preserves the auxiliary relation and we denote the set of all such maps by $\CatWMor(S,T)$.

We let $\CatPreW$ be the category that has as objects all $\CatPreW$-semigroups, and whose morphisms are the $\CatW$-morphisms.
We let $\CatW$ be the full subcategory of $\CatPreW$ whose objects are $\CatW$-semigroups.
Note that we call the morphisms in both categories $\CatW$-morphisms.
\end{dfn}

\begin{rmks}
\label{rmk:catW}
(1)
The order of the axioms \axiomW{1}-\axiomW{4} has been chosen so that they roughly correspond to the axioms \axiomO{1}-\axiomO{4} for $\CatCu$-semigroups; see \autoref{pgr:axiomsCu}.
Indeed, one should think of the W-axioms as a version of the O-axioms formulated in such a way that the semigroup is not required to have suprema of increasing sequences.

(2)
Let $S$ be a \pom{} with an auxiliary relation $\prec$ such that \axiomW{1} is satisfied.
It is easy to check that $S$ satisfies \axiomW{2} if and only if for every $a,b\in S$ we have that $a^\prec\subset b^\prec$ implies $a\leq b$.
Note that the converse of this statement is always true, that is, if $a\leq b$ then $a^\prec\subset b^\prec$.
This means that in the presence of \axiomW{2}, the partial order may be derived from the auxiliary relation.
For a $\CatCu$-semigroup, the converse is also true, since then the auxiliary relation is the compact containment relation which is defined in terms of the partial order.

(3)
In a $\CatW$-semigroup, the relation $\ll$ is stronger than $\prec$.
The class of $\CatW$-semigroups where $\prec$ is equal to $\ll$ was studied in \cite{AntBosPer11CompletionsCu}.

(4)
Let $f\colon S\to T$ be a $\CatPom$-morphism between $\CatW$-semigroups.
If $f$ is continuous, then for each $a\in S$ we have
\[
f(a)=\sup f(a^\prec).
\]
Indeed, by \axiomW{2} in $T$, we have $f(a)=\sup f(a)^\prec$.
Continuity of the map $f$ implies that $f(a^\prec)$ is cofinal in $f(a)^\prec$ in the sense that for every $t'\in f(a)^\prec$ there exists $a'\in a^\prec$ with $t'\leq f(a')$.
However, let us remark that we do not assume that $f$ preserves the auxiliary relation, so that we do not necessarily have $f(a^\prec)\subset f(a)^\prec$.
Nevertheless, it follows from this version of cofinality that the suprema of the sets $f(a^\prec)$ and $f(a)^\prec$ agree.

The converse of this statement in the context of $\CatCu$-semigroups is given in \autoref{prp:CuWMor}.
\end{rmks}

\begin{pgr}[$\CatW$-completions]
\label{pgr:WreflectivePreW}
\index{terms}{W-completion@$\CatW$-completion}
We will now show that the category $\CatW$ is a reflective subcategory of $\CatPreW$.
This means that the inclusion functor $\CatW\to\CatPreW$ admits a left adjoint.
We use the idea of universal completions described in \cite[\S~2]{KeiLaw09DCompletion}.
Note that in this reference, what we call $\CatW$-completion appears under the name of universal $\CatW$-ification.
\end{pgr}

\begin{dfn}
\label{dfn:Wification}
\index{terms}{W-completion@$\CatW$-completion}
Let $S$ be a $\CatPreW$-semigroup.
A \emph{$\CatW$-completion} of $S$ is a $\CatW$-semigroup $T$ together with a $\CatW$-morphism $\alpha\colon S\to T$ satisfying the following universal property:
For every $\CatW$-semigroup $R$ and for every $\CatW$-morphism $f\colon S\to R$, there exists a unique $\CatW$-morphism $\tilde{f}\colon T\to R$ such that $f=\tilde{f}\circ\alpha$.
\end{dfn}

It is clear that $\CatW$-completions are unique whenever they exist.
We will now prove their existence by giving an explicit construction.

Let $(S,\prec)$ be a $\CatPreW$-semigroup.
In order to enforce \axiomW{2}, we consider the binary relation $\preceq$ on $S$ given by $a\preceq b$ if and only if $a^\prec\subset b^\prec$, for any $a,b\in S$.
It is clear that $\preceq$ is a pre-order on $S$.
By symmetrizing $\preceq$, we obtain an equivalence relation $\sim$ on $S$ such that for any $a,b\in S$ we have
\[
a\sim b \quad \text{ if and only if }\quad
a\preceq b\preceq a\quad \text{ if and only if }\quad
a^\prec=b^\prec.
\]
We let $\mu(S)=S/\!\sim$ denote the set of equivalence classes, and we denote the class of an element $a\in S$ by $[a]$.

By construction, the pre-order $\preceq$ induces a partial order on $\mu(S)$ by setting $[a]\leq[b]$ if and only if $a^\prec\subset b^\prec$, for any $a,b\in S$.
It is easy to check that the addition on $S$ induces an addition on $\mu(S)$ and that this endows $\mu(S)$ with the structure of a \pom.
We define an auxiliary relation on $\mu(S)$ by setting $[a]\prec[b]$ if and only if $[a]\leq [b']$ for some $b'\in b^\prec$.

\begin{prp}
\label{prp:Wification}
Let $S$ be a $\CatPreW$-semigroup.
Then $\mu(S)$ with the addition, order and auxiliary relation defined above becomes a $\CatW$-semigroup.
Moreover, the map
\[
\beta\colon S\to \mu(S),\quad
a\mapsto[a],\quad
\txtFA a\in S,
\]
is a $\CatW$-morphism defining a $W$-completion of $S$.
\end{prp}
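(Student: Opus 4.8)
The plan is to verify axioms \axiomW{1}--\axiomW{4} for $\mu(S)$ directly from the construction, to check that $\beta$ is a $\CatW$-morphism, and then to establish the universal property of \autoref{dfn:Wification}. The organizing observation is that $\sim$ is defined precisely so that the partial order on $\mu(S)$ is recovered from the auxiliary relation — by construction $[a]\leq[b]$ iff $a^\prec\subset b^\prec$ — which is exactly the reformulation of \axiomW{2} recorded in \autoref{rmk:catW}(2). I take the \pom\ structure on $\mu(S)$ as established in the discussion preceding the statement.

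First I would confirm that the induced relation $\prec$ on $\mu(S)$ is an auxiliary relation. Condition (i) of \autoref{pgr:axiomsW} follows since $[a]\leq[b']$ with $b'\prec b$ gives $[b']\leq[b]$, whence $[a]\leq[b]$; condition (ii) follows by choosing, for $[a]\leq[b]\prec[c]\leq[d]$, an element $c'\prec c$ with $[b]\leq[c']$, then $c'\prec c\leq d$ yields $c'\prec d$ in $S$, and $[a]\leq[c']$ witnesses $[a]\prec[d]$; condition (iii) holds because $0\prec a$ in $S$. For \axiomW{1} I take the cofinal $\prec$-increasing sequence $(a_k)_k$ in $a^\prec$ provided by \axiomW{1} in $S$ and check that $([a_k])_k$ is cofinal and $\prec$-increasing in $[a]^\prec$, each step following because $a_k\in a^\prec$ and $a_k\in a_{k+1}^\prec$. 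Axioms \axiomW{3} and \axiomW{4} transfer by lifting: given $[a']\prec[a]$ and $[b']\prec[b]$ I pick $a_0\in a^\prec$, $b_0\in b^\prec$ with $[a']\leq[a_0]$, $[b']\leq[b_0]$ and apply \axiomW{3} in $S$ to place $a_0+b_0$ in $(a+b)^\prec$; the analogous lift using \axiomW{4} in $S$ settles \axiomW{4}.

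The one step needing an actual idea is \axiomW{2}, that $[a]=\sup[a]^\prec$. Being an upper bound is condition (i); for minimality, suppose $[c]$ bounds $[a]^\prec$ and take $y\in a^\prec$. Using the sequence $(a_k)_k$ from \axiomW{1}, choose $k$ with $y\prec a_k$; since $a_k\in a^\prec$ we have $[a_k]\prec[a]$, hence $[a_k]\leq[c]$, i.e.\ $a_k^\prec\subset c^\prec$, so $y\in c^\prec$. Thus $a^\prec\subset c^\prec$, that is $[a]\leq[c]$, as needed.

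It is then routine that $\beta$ preserves addition, zero, order and $\prec$, and satisfies axiom \axiom{M}, so $\beta$ is a $\CatW$-morphism. For the universal property, let $f\colon S\to R$ be a $\CatW$-morphism into a $\CatW$-semigroup $R$; the required factorization forces $\tilde f([a])=f(a)$, so the crux is well-definedness: $a^\prec=b^\prec$ must imply $f(a)=f(b)$. This is exactly where I invoke \autoref{rmk:catW}(4): as $f$ is continuous, $f(a)=\sup f(a^\prec)$, a supremum depending only on the set $a^\prec$. With well-definedness in hand, additivity and $\tilde f([0])=0$ are clear; order-preservation follows from the same supremum description; preservation of $\prec$ follows from its definition on $\mu(S)$ together with $f$ preserving $\prec$; and axiom \axiom{M} for $\tilde f$ is inherited from axiom \axiom{M} for $f$. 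Finally $\tilde f\circ\beta=f$ holds by construction and uniqueness is forced since every element of $\mu(S)$ lies in the image of $\beta$. The main obstacle is precisely this well-definedness of $\tilde f$, which is where the continuity of $\CatW$-morphisms is indispensable; all remaining verifications are careful but mechanical transfers of the $S$-axioms across the quotient.
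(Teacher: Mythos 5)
Your proof is correct and follows essentially the same route as the paper's: the only substantive verifications are \axiomW{2} for $\mu(S)$ (reduced, via the cofinal sequence from \axiomW{1} in $S$, to comparing the sets $a^\prec$) and the well-definedness of $\tilde f$ (via continuity of $f$), and you handle both as the paper does, merely packaging the latter through the formula $f(a)=\sup f(a^\prec)$ of \autoref{rmk:catW}(4) instead of the paper's inline comparison of $f(a)^\prec$ with $f(b)^\prec$. The one point worth flagging is that \autoref{rmk:catW}(4) is stated for maps between $\CatW$-semigroups while your source $S$ is only a $\CatPreW$-semigroup; this is harmless because its proof uses \axiomW{2} only in the target $R$, but you should note it when citing the remark.
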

\begin{proof}
In order to prove that $\mu(S)$ is a $\CatPreW$-semigroup, the only detail that needs some verification is that $\mu(S)$ satisfies \axiomW{2}, and for this we use the second observation in \autoref{rmk:catW}.
Thus assume that $a,b\in S$ satisfy $[a]^\prec\subset [b]^\prec$.
By definition, this means that $[a']\prec [b]$ for any $a'\in S$ satisfying $[a']\prec [a]$.
This in turn implies that $a'\prec b$ whenever $a'\in S$ satisfies $a'\prec a$.
Therefore $a^\prec\subset b^\prec$, which means $[a]\leq [b]$, as desired.

It is clear that $\beta$ is a $\CatW$-morphism.
Thus, we only need to prove that $\beta$ satisfies the universal property in \autoref{dfn:Wification}.
In order to verify this, let $R$ be a $\CatW$-semigroup and let $f\colon S\to R$ be a $\CatW$-morphism.
It is clear that there is at most one map $\tilde{f}\colon\mu(S)\to R$ satisfying $f=\tilde{f}\circ\beta$.
To show existence, we define
\[
\tilde{f}\colon \mu(S)\to R,\quad
[a]\mapsto f(a),\quad
\txtFA a\in S.
\]

Let us show that $\tilde{f}$ has the desired properties.
Suppose $[a]\leq [b]$ for some $a,b\in S$.
Let $x\in R$ satisfy $x\prec f(a)$.
Since $f$ is continuous, we can choose $a'\in S$ such that $a'\prec a$ and $x\leq f(a')$.
Now, since $[a]\leq [b]$, we have $a'\prec b$ and thus
\[
x\leq f(a')\prec f(b).
\]
Thus, $f(a)^\prec\subset f(b)^\prec$, and since $R$ is a $\CatW$-semigroup, this ensures $f(a)\leq f(b)$.
It follows that $\tilde{f}$ is a well-defined, order-preserving map satisfying $f=\tilde{f}\circ\beta$.
Similarly, one proves that $\tilde{f}$ is continuous, and preserves addition and the auxiliary relation.
Hence, $\tilde{f}$ is a $\CatW$-morphism.
\end{proof}

Now, by the arguments in \cite[\S~2]{KeiLaw09DCompletion} regarding universal $W$-ifications, the above construction of the $\CatW$-completion induces a reflection functor $\mu\colon\CatPreW\to\CatW$:

\begin{prp}
\label{prp:WreflectivePreW}
The category $\CatW$ is a full, reflective subcategory of $\CatPreW$.
\end{prp}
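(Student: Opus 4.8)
The plan is to establish the two assertions separately. Fullness is immediate: by \autoref{dfn:catW}, the category $\CatW$ is \emph{defined} to be the full subcategory of $\CatPreW$ whose objects are the $\CatW$-semigroups, so no morphisms are lost in passing between the two categories. It therefore remains only to show that the inclusion functor $\CatW\hookrightarrow\CatPreW$ admits a left adjoint.

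First I would recall that \autoref{prp:Wification} furnishes, for each $\CatPreW$-semigroup $S$, a $\CatW$-semigroup $\mu(S)$ together with a $\CatW$-morphism $\beta_S\colon S\to\mu(S)$ satisfying the universal property of \autoref{dfn:Wification}. This universal property is precisely the assertion that $\beta_S$ is a \emph{reflection arrow} (a universal morphism) from $S$ to the inclusion functor: for every $\CatW$-semigroup $R$ and every $\CatW$-morphism $f\colon S\to R$ there is a unique $\CatW$-morphism $\tilde f\colon\mu(S)\to R$ with $f=\tilde f\circ\beta_S$. The existence of such a reflection arrow at every object is, by the standard pointwise construction of adjoints, equivalent to the existence of a left adjoint to the inclusion.

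To promote $\mu$ to a functor, I would define it on morphisms as follows. Given a $\CatW$-morphism $h\colon S\to S'$ of $\CatPreW$-semigroups, the composite $\beta_{S'}\circ h\colon S\to\mu(S')$ is a $\CatW$-morphism into the $\CatW$-semigroup $\mu(S')$, so by the universal property of $(\mu(S),\beta_S)$ there is a unique $\CatW$-morphism $\mu(h)\colon\mu(S)\to\mu(S')$ with $\mu(h)\circ\beta_S=\beta_{S'}\circ h$. Uniqueness of the induced map immediately gives $\mu(\id_S)=\id_{\mu(S)}$ and $\mu(h'\circ h)=\mu(h')\circ\mu(h)$, so $\mu\colon\CatPreW\to\CatW$ is a functor and the family $(\beta_S)_S$ is natural in $S$. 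The universal property then identifies, naturally in both variables, the morphism sets $\CatWMor(\mu(S),R)$ and $\CatWMor(S,R)$ for $R\in\CatW$, exhibiting $\mu$ as left adjoint to the inclusion.

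There is no serious obstacle here, since all the genuine content---verifying that $\mu(S)$ satisfies \axiomW{2} and that $\beta_S$ enjoys the universal property---has already been discharged in \autoref{prp:Wification}. The only remaining task is the formal bookkeeping of assembling the pointwise reflection arrows into an adjunction, which is routine and is exactly the universal $W$-ification argument of \cite[\S~2]{KeiLaw09DCompletion} referred to above.
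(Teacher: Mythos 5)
Your proposal is correct and follows the same route as the paper: the substantive content is \autoref{prp:Wification}, and the remaining step is the standard assembly of the pointwise reflection arrows $(\beta_S)_S$ into a left adjoint, which the paper simply delegates to the universal $W$-ification arguments of \cite[\S~2]{KeiLaw09DCompletion}. Your explicit verification of functoriality of $\mu$ and naturality of $\beta$ is exactly that routine bookkeeping, spelled out.
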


\begin{pgr}[Inductive limits in $\CatPom$]
\label{pgr:limPom}
Recall that an inductive system $\mathcal{S}$ in a category $\CatC$ consists of a directed set $I$, a family $(A_i)_{i\in I}$ of objects in $\CatC$, and a family of morphisms $(f_{i,j}\colon A_i\to A_j)_{i\leq j\in I}$ such that $f_{i,i}=\text{id}_{A_i}$ for all $i\in I$ and $f_{i,k}=f_{j,k}\circ f_{i,j}$ whenever $i\leq j\leq k$ in $I$.

A morphism from $\mathcal{S}$ to an object $X$ in $\CatC$ is a collection $(g_i\colon A_i\to X)_{i\in I}$ such that $g_j\circ f_{i,j}=g_i$ whenever $i\leq j$ in $I$.
An inductive limit of $\mathcal{S}$ is an object $X$ in $\CatC$ together with a morphism $(g_i)_{i\in I}$ from $\mathcal{S}$ to $X$ satisfying the following universal property:
For every object $Y$ and morphism $(h_i)_{i\in I}$ from $\mathcal{S}$ to $Y$, there exists a unique morphism $h\colon X\to Y$ such that $h_i=h\circ g_i$ for each $i\in I$.
If it exists, the inductive limit is unique up to isomorphism and will be denoted by $\varinjlim (A_i,f_{i,j})$, or simply $\varinjlim A_i$.

Let $((S_i)_{i\in I},(\varphi_{i,j})_{i, j\in I, i\leq j})$ be an inductive system in $\CatPom$, indexed over $I$.
We define an equivalence relation $\sim$ on the disjoint union $\bigsqcup_{i\in I} S_i$, by setting for any $a\in S_i$ and any $b\in S_j$:
\[
a\sim b\quad \text{ if and only if there exists } k\geq i,j \text{ such that } \varphi_{i,k}(a)=\varphi_{j,k}(b).
\]
The set of equivalence classes is denoted by $\CatPomLim S_i$.
We denote the equivalence class of an element $a\in S_i$ by $[a]$.
Given $a\in S_i$ and $b\in S_j$, we define
\[
[a]+[b]=[\varphi_{i,k}(a)+\varphi_{j,k}(b)],\quad
\text{ for any } k\geq i,j.
\]
It is clear that this is a well-defined addition on $\CatPomLim S_i$.
We also define a partial order by setting for $a\in S_i$ and $b\in S_j$:
\[
[a]\leq[b]\quad
\text{ if and only if there exists } k\geq i,j \text{ such that } \varphi_{i,k}(a)\leq\varphi_{j,k}(b).
\]
This gives $\CatPomLim S_i$ the structure of a \pom, and it is well-known that this is the inductive limit in the category $\CatPom$.
The $\CatPom$-morphism from $S_i$ to the inductive limit is denoted by $\varphi_{i,\infty}$.
\end{pgr}

\begin{dfn}
\label{dfn:limAuxRel}
Let $((S_i)_{i\in I},(\varphi_{i,j})_{i,j\in I, i\leq j})$ be an inductive system in $\CatPreW$.
We define a relation $\prec$ on the inductive limit $\CatPomLim S_i$ of the underlying \pom{s} by setting for any $a\in S_i$ and $b\in S_j$:
\[
[a]\prec[b]\quad
\text{ if and only if there exists } k\geq i,j \text{ such that } \varphi_{i,k}(a)\prec\varphi_{j,k}(b).
\]
\end{dfn}

\begin{thm}
\label{prp:limitsPreW}
\index{terms}{inductive limits@inductive limits!in $\CatPreW$}
\index{symbols}{PreWLim@$\CatPreWLim$ \quad (inductive limit in $\CatPreW$)}
The category $\CatPreW$ has inductive limits.
More precisely, let $((S_i)_{i\in I},(\varphi_{i,j})_{i,j\in I, i\leq j})$ be an inductive system in $\CatPreW$.
Let $S=\CatPomLim S_i$ be the inductive limit of the underlying \pom{s}, together with $\CatPom$-morphisms $\varphi_{i,\infty}\colon S_i\to S$.
The relation $\prec$ on $S$ as defined in \autoref{dfn:limAuxRel} is an auxiliary relation and $(S,\prec)$ is a $\CatPreW$-semigroup, denoted by $\CatPreWLim S_i$.
Moreover, the maps $\varphi_{i,\infty}$ are $\CatW$-morphisms and $\CatPreWLim S_i$ is the inductive limit of the system $((S_i)_{i\in I},(\varphi_{i,j})_{i,j\in I, i\leq j})$ in $\CatPreW$.
\end{thm}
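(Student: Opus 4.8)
The plan is to reduce every assertion to the corresponding statement inside a single stage $S_k$, exploiting that any finite collection of elements of the limit $S=\CatPomLim S_i$, and any finitely many $\prec$-relations among them, can be \emph{lifted} to one common stage. Throughout I would repeatedly use that the transition maps $\varphi_{i,k}$ are $\CatW$-morphisms, so they preserve $\prec$ and satisfy the continuity axiom \axiom{M}, together with the functoriality identities $\varphi_{i,k'}=\varphi_{k,k'}\circ\varphi_{i,k}$. First I would verify that the relation of \autoref{dfn:limAuxRel} is well defined on classes: if $[a]\prec[b]$ is witnessed by $\varphi_{i,k}(a)\prec\varphi_{j,k}(b)$ and $a\sim a''$, $b\sim b''$, then choosing $k''$ above $k$ and above the stages where the representatives become equal, applying $\varphi_{k,k''}$ (which preserves $\prec$), and rewriting via functoriality yields $\varphi_{i'',k''}(a'')\prec\varphi_{j'',k''}(b'')$. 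The three defining conditions of an auxiliary relation then follow by lifting: condition (i) from condition (i) in $S_k$ together with the stagewise description of the order; condition (ii) by lifting all four elements of $[a]\leq[b]\prec[c]\leq[d]$ to one stage and applying (ii) there; and $0\prec[a]$ from $0\prec a$ in $S_i$ with witness $k=i$.

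For the axioms, \axiomW{3} and \axiomW{4} are direct lifts. Given $a'\prec a$ and $b'\prec b$ in $S$, I would lift both $\prec$-relations to a common stage $S_k$ and apply \axiomW{3} in $S_k$, using that addition in $S$ is computed stagewise, to get $a'+b'\prec a+b$; for \axiomW{4}, I would lift $a\prec b+c$ to $\varphi_{i,k}(a)\prec\varphi_{j,k}(b)+\varphi_{l,k}(c)$ in some $S_k$, apply \axiomW{4} there to obtain interpolants in $S_k$, and push them forward via $\varphi_{k,\infty}$. I expect the genuine obstacle to be \axiomW{1}. Here I would take a representative $a\in S_i$ and a sequence $(a_n)$ as in \axiomW{1} for $a$ inside $S_i$, and set $[a]_n=\varphi_{i,\infty}(a_n)$; that this sequence is $\prec$-increasing and lies in $[a]^\prec$ is immediate, but cofinality is not, since $\varphi_{i,k}$ need not carry a family cofinal in $a^\prec$ to one cofinal in $\varphi_{i,k}(a)^\prec$. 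This is precisely where continuity is needed: given $[b]\prec[a]$, lift to $\varphi_{j,k}(b)\prec\varphi_{i,k}(a)$, apply \axiom{M} for $\varphi_{i,k}$ to produce $a'\prec a$ in $S_i$ with $\varphi_{j,k}(b)\leq\varphi_{i,k}(a')$, use cofinality of $(a_n)$ in $a^\prec$ \emph{inside} $S_i$ to get $a'\prec a_n$ for some $n$, and conclude $\varphi_{j,k}(b)\leq\varphi_{i,k}(a')\prec\varphi_{i,k}(a_n)$, whence $[b]\prec[a]_n$ by condition (ii).

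Next I would check that each $\varphi_{i,\infty}$ is a $\CatW$-morphism. Preservation of $\prec$ is immediate from the witness $k=i$, and \axiom{M} for $\varphi_{i,\infty}$ is the same continuity argument as above: lift $[b]\prec\varphi_{i,\infty}(a)=[a]$, apply \axiom{M} for $\varphi_{i,k}$ to obtain $a'\prec a$ in $S_i$ with $[b]\leq[a']=\varphi_{i,\infty}(a')$. Compatibility $\varphi_{j,\infty}\circ\varphi_{i,j}=\varphi_{i,\infty}$ is inherited from the $\CatPom$-limit.

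Finally, for the universal property, let $(h_i\colon S_i\to T)$ be a cone of $\CatW$-morphisms into a $\CatPreW$-semigroup $T$. The $\CatPom$ inductive limit already furnishes a unique $\CatPom$-morphism $h\colon S\to T$ with $h\circ\varphi_{i,\infty}=h_i$, namely $h([a])=h_i(a)$, so uniqueness of $h$ as a $\CatW$-morphism is automatic. It then remains to see that this $h$ is a $\CatW$-morphism: preservation of $\prec$ follows by lifting $[a]\prec[b]$ to $S_k$ and applying the $\CatW$-morphism $h_k$ together with $h_k\circ\varphi_{i,k}=h_i$, while \axiom{M} for $h$ follows from \axiom{M} for $h_i$ by choosing $a'\prec a$ in $S_i$ and setting $[a']=\varphi_{i,\infty}(a')$. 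All steps besides the cofinality argument in \axiomW{1} are mechanical applications of the lifting principle; that single step, resting on the continuity axiom \axiom{M} of the transition maps, is the one I expect to require care.
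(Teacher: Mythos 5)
Your proposal is correct and follows essentially the same route as the paper: everything is reduced to a common stage $S_k$, the cofinality half of \axiomW{1} is handled exactly as in the paper by combining the continuity axiom \axiom{M} of a transition map with cofinality inside $S_i$, and the universal property is verified the same way. The only (harmless) divergence is in \axiomW{4}, where you push the interpolants forward from $S_k$ via $\varphi_{k,\infty}$ instead of pulling them back to $S_i$ by continuity as the paper does; both work, and your version delivers the required $\prec$-inequality slightly more directly.
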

\begin{proof}
We first show that $\prec$ is an auxiliary relation.
Condition~(i) and~(ii) from \autoref{pgr:axiomsW} are easily verified.
To show condition~(iii), let $[a],[b],[c],[d]\in S$ satisfy $[a]\leq [b]\prec [c]\leq [d]$.
We have to show $[a]\prec [d]$.
We may assume that there is an index $i$ such that all four elements are represented in $S_i$.
Since $[a]\leq [b]$, we can choose $j\geq i$ with $\varphi_{i,j}(a)\leq\varphi_{i,j}(b)$.
Similarly, we can choose $k\geq i$ and $l\geq i$ satisfying $\varphi_{i,k}(b)\prec\varphi_{i,k}(c)$ and $\varphi_{i,l}(c)\leq\varphi_{i,l}(d)$.
Choose $n\geq j,k,l$.
Using that the connecting maps preserve the order and the auxiliary relation, we obtain
\[
\varphi_{i,n}(a)\leq\varphi_{i,n}(b)\prec\varphi_{j,n}(c)\leq\varphi_{i,n}(d),
\]
which implies $[a]\prec [d]$, as desired.

Next, we show that $(S\prec)$ is a $\CatPreW$-semigroup.
In order to verify \axiomW{1}, let $a\in S_i$ for some $i$.
Since $S_i$ satisfies \axiomW{1}, there is a cofinal $\prec$-increasing sequence $(a_k)_{k\in\N}$ in $a^{\prec}$.
Then $[a_k]\prec [a]$ for all $k$.
Further, if $[b]\prec [a]$ for some $j$ and $b\in S_j$, then we can choose $n\geq i,j$ with $\varphi_{j,n}(b)\prec\varphi_{i,n}(a)$.
Using that $\varphi_{i,n}$ is continuous, we can choose $k$ and $a'\prec a$ such that $\varphi_{j,n}(b)\leq \varphi_{i,n}(a')\prec\varphi_{i,n}(a_k)$.
Thus $[b]\prec [a_k]$.
This shows that $[a]^\prec$ is upward directed and contains a cofinal $\prec$-increasing sequence.

It is routine to check \axiomW{3}.
In order to show that $(S,\prec)$ satisfies \axiomW{4}, suppose that $[c]\prec [a]+[b]$ for some elements $[a],[b]$ and $[c]$ in $S$.
We may assume that all three elements are represented in $S_i$, for some index $i$.
Choose $j\geq i$ such that $\varphi_{i,j}(c)\prec\varphi_{i,j}(a)+\varphi_{i,j}(b)$.
Since $S_j$ satisfies \axiomW{4}, we can choose $d,e\in S_j$ with
\[
\varphi_{i,j}(c)\leq d+e,\quad
d\prec\varphi_{i,j}(a),\quad
e\prec\varphi_{i,j}(b).
\]
Using that $\varphi_{i,j}$ is continuous, we can choose $a',b'\in S_i$ such that
\[
a'\prec a,\quad
b'\prec b,\quad
d\leq\varphi_{i,j}(a'),\quad
e\leq\varphi_{i,j}(b').
\]
Then $[a']\prec [a]$, $[b']\prec [b]$, and $[c]\leq [a']+[b']$, which shows that the elements $[a']$ and $[b']$ have the desired properties to verify \axiomW{4} in $S$.

The natural maps $\varphi_{i,\infty}\colon S_i\to S$ preserve the auxiliary relation.
It is also easy to see that they are continuous using arguments similar to the proof of \axiomW{1}.

Finally, in order to show that $S$ is the inductive limit in the category $\CatPreW$, let $T$ be a $\CatPreW$-semigroup and let $\lambda_i\colon S_i\to T$ be $\CatW$-morphisms such that $\lambda_j\circ\varphi_{i,j}=\lambda_i$ whenever $i\leq j$.
Since $S$ is the limit in the category $\CatPom$, there is a unique $\CatPom$-morphism $\alpha\colon S\to T$ such that $\alpha\circ\varphi_{i,\infty}=\lambda_i$ for each $i$.

In order to show that $\alpha$ is continuous, let $x\in T$ and $y\in S$ satisfy $x\prec\alpha(y)$.
Choose $i\in I$ and $a\in S_i$ such that $y=[a]$.
Then $\alpha([a])=\lambda_i(a)$ and since $\lambda_i$ is a $\CatW$-morphism, we can choose $a'\in S_i$ such that $a'\prec a$ and $x\leq \lambda_i(a')$.
We have $[a']\prec [a]$ and $x \leq \lambda_i(a') = \alpha([a'])$, which shows that $[a']$ is the desired element to verify that $\alpha$ is continuous.

In order to show that $\alpha$ preserves the auxiliary relation, let $a\in S_i$ and $b\in S_j$ satisfy $[a]\prec [b]$.
Choose $k\geq i,j$ such that $\varphi_{i,k}(a)\prec \varphi_{j,k}(b)$ in $S_k$.
Using that $\lambda_k$ preserves the auxiliary relation at the third step, we deduce
\begin{align*}
\alpha([a])
=\alpha(\varphi_{k,\infty}\circ\varphi_{i,k}(a))
&=\lambda_k(\varphi_{i,k}(a)) \\
&\prec\lambda_k(\varphi_{j,k}(b))
=\alpha(\varphi_{k,\infty}\circ\varphi_{j,k}(b))
=\alpha([b]),
\end{align*}
as desired.
Thus, $\alpha$ preserves the auxiliary relation and is hence a $\CatW$-morphism.
\end{proof}

Given an inductive system $((S_i)_{i\in I},(\varphi_{i,j})_{i,j\in I, i\leq j})$ in $\CatW$, the inductive limit $\CatPreWLim S_i$ from \autoref{prp:limitsPreW} need not be a $\CatW$-semigroup.
However, since $\CatW$ is a reflective subcategory of $\CatPreW$, it follows from general category theory that $\CatW$ has inductive limits and that the reflection functor $\mu\colon\CatPreW\to\CatW$ is continuous;
see for example \cite[Proposition~3.2.2, p.106]{Bor94HandbookCat1}.

\begin{cor}
\label{prp:limitsW}
\index{terms}{inductive limits@inductive limits!in $\CatW$}
\index{symbols}{WLim@$\CatWLim$ \quad (inductive limit in $\CatW$)}
The category $\CatW$ has inductive limits.
More precisely, let $((S_i)_{i\in I},(\varphi_{i,j})_{i,j\in I, i\leq j})$ be an inductive system in $\CatW$.
Then the inductive limit in $\CatW$ is the $\CatW$-completion of the inductive limit in $\CatPreW$:
\[
\CatWLim S_i = \mu(\CatPreWLim S_i).
\]
\end{cor}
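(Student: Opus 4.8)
The plan is to deduce the statement formally from the two preceding results, namely \autoref{prp:limitsPreW} (existence and explicit description of inductive limits in $\CatPreW$) and \autoref{prp:WreflectivePreW} (the inclusion $\iota\colon\CatW\to\CatPreW$ admits the reflection functor $\mu$ as a left adjoint). All of the real work has already been carried out there; the only genuinely new content is the identification $\CatWLim S_i=\mu(\CatPreWLim S_i)$, and this is an instance of the standard fact that a reflective subcategory inherits whatever inductive limits the ambient category possesses, computed by reflecting the ambient limit. Accordingly I would keep the argument soft and categorical, ultimately appealing to \cite[Proposition~3.2.2, p.106]{Bor94HandbookCat1}.

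Concretely, I would start with an inductive system $((S_i)_{i\in I},(\varphi_{i,j}))$ in $\CatW$, regard it via $\iota$ as a system in $\CatPreW$, and form its inductive limit $\CatPreWLim S_i$ together with the structure maps $\varphi_{i,\infty}$, all provided by \autoref{prp:limitsPreW}. Applying $\mu$ and composing the completion morphism $\beta\colon\CatPreWLim S_i\to\mu(\CatPreWLim S_i)$ of \autoref{prp:Wification} with the $\varphi_{i,\infty}$ produces a cocone $(\beta\circ\varphi_{i,\infty})_{i}$ on $\mu(\CatPreWLim S_i)$ in $\CatW$. It then remains only to check that this cocone is universal.

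The verification is a chain of natural bijections. For any $\CatW$-semigroup $R$, the adjunction $\mu\dashv\iota$, the universal property of the inductive limit in $\CatPreW$, and the fullness of $\CatW$ in $\CatPreW$ give, naturally in $R$,
\begin{align*}
\CatWMor(\mu(\CatPreWLim S_i),R)
&\cong \CatPreW(\CatPreWLim S_i,R) \\
&\cong \varprojlim_i \CatPreW(S_i,R) \\
&\cong \varprojlim_i \CatWMor(S_i,R).
\end{align*}
The composite identifies $\CatW$-morphisms out of $\mu(\CatPreWLim S_i)$ with compatible families of $\CatW$-morphisms out of the $S_i$, which is exactly the universal property defining the inductive limit of the system in $\CatW$. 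Tracking an element through the bijections shows that the cocone realizing this universal property is the one constructed above, so indeed $\CatWLim S_i=\mu(\CatPreWLim S_i)$.

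I do not anticipate a serious obstacle, since the analytic input is entirely encapsulated in \autoref{prp:limitsPreW} and the reflectivity in \autoref{prp:WreflectivePreW}. The two points to handle with care are the middle isomorphism—expressing that $\mu$, being a left adjoint, carries the $\CatPreW$-limit to the $\CatW$-limit—and the use of fullness to pass between $\CatPreW(S_i,R)$ and $\CatWMor(S_i,R)$. Both are routine, but they are precisely what pins down the specific representative $\mu(\CatPreWLim S_i)$ (rather than some unrelated $\CatW$-completion) as the correct inductive limit.
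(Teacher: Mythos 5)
Your argument is correct and is exactly the route the paper takes: the corollary is deduced from \autoref{prp:limitsPreW} together with the reflectivity of $\CatW$ in $\CatPreW$ (\autoref{prp:WreflectivePreW}), with the identification $\CatWLim S_i=\mu(\CatPreWLim S_i)$ coming from the general fact about reflective subcategories cited to \cite[Proposition~3.2.2, p.106]{Bor94HandbookCat1}. The only difference is that you spell out the chain of adjunction bijections that the paper delegates to that reference.
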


\vspace{5pt}
\section{The pre-completed Cuntz semigroup of a \texorpdfstring{C*-algebra}{{C}*-algebra}}

\begin{pgr}[Local \ca{s}]
\label{pgr:localCa}
\index{terms}{pre-C*-algebra@\preCa}
\index{terms}{C*-algebra@\ca{}!pre-}
\index{terms}{local C*-algebra@local \ca}
\index{terms}{C*-algebra@\ca{}!local}
\index{symbols}{M$_\infty$@$M_\infty(A)$}
\index{symbols}{$\CatLocCa$ \quad (category of local \ca{s})}
A \emph{\preCa} $A$ is a \starAlg\ over the complex numbers together with a $C^*$-norm $\|\freeVar\|$, that is, $\|a^*a\|=\|a\|^2$ for all $a\in A$.
It is known that such a norm is automatically submultiplicative and that the involution becomes isometric;
see \cite[Theorem~9.5.14, p.956]{Pal01BAlg2}.
Every \preCa\ $A$ naturally embeds as a dense sub-\starAlgß\ in its completion $\overline{A}$, which is a \ca.

In this paper, we will say that $A$ is a \emph{local \ca} if there is a family of complete, ${}^*$-invariant subalgebras $A_i\subset A$ such that for any $i_1,i_2$ there is $i_3$ such that $A_{i_1}\cup A_{i_2}\subset A_{i_3}$ and such that $A=\bigcup_i A_i$.
Note that each $A_i$ is a \ca.
Viewing a \preCa\ $A$ inside its completion $\overline{A}$, it is a local \ca\ if and only if for any finite subset $F\subset A$, the \ca\ $C^*(F)$ generated inside $\overline{A}$ is contained in $A$.
The main point is that local \ca{s} are closed under continuous functional calculus.

We say that a pre-\ca\ $A$ is \emph{separable} if it contains a countable dense subset (equivalently, $\overline{A}$ is separable).
If $A$ is a local \ca, then so is every matrix algebra $M_k(A)$, and there is a natural dense embedding $\M_k(A)\subset M_k(\bar{A})$.
The \ca{s} $M_k(\bar{A})$ sit (as upper left corners) inside the stabilization $\bar{A}\otimes\K$, where $\K$ denotes the compact operators, and we may consider the union
\[
M_\infty(A):=\bigcup_k M_k(A)\subset \bar{A}\otimes\K.
\]
This is a dense embedding, hence $\overline{M_\infty(A)}=\bar{A}\otimes\K$, and one sees that $M_\infty(A)$ is again a local \ca.

A \starHom\ between local \ca{s} is automatically continuous and even norm-decreasing.
We let $\CatLocCa$ be the category whose objects are local \ca{s}, and whose morphisms are \starHom s.
\index{terms}{category@category!$\CatLocCa$}

We remark that there are other definitions of a local \ca\ in the literature, in particular in \cite[3.1]{Bla98KThy}, \cite{Mey99Cyclic}, \cite{CunMeyRos07TopBivarK} and \cite[Definition~I.1.1(a)]{BlaHan82DimFct}.
Some of these definitions seem to be more general than the one given here.
It is conceivable that the theory of pre-completed Cuntz semigroups can be carried out in this more general framework, but we will not pursue this here.
\end{pgr}

\begin{pgr}[Cuntz comparison in a local \ca]
\label{pgr:CuntzComparison}
\index{terms}{Cuntz subequivalent}
\index{terms}{relation!Cuntz subequivalence}
\index{terms}{Cuntz equivalent}
\index{terms}{relation!Cuntz equivalence}
\index{symbols}{$\precsim$ \quad (Cuntz subequivalence)}
\index{symbols}{$\sim$ \quad (Cuntz equivalence)}
Let $A$ be a local \ca{}, and let $A_+$ be the subset of positive elements.
Given $x,y\in A_+$ we say that $x$ is \emph{Cuntz sub-equivalent} to $y$, in symbols $x\precsim y$, if there exists a sequence $(z_n)_n$ in $A$ such that $x=\lim_n z^*_nyz_n$.
We say $x$ is \emph{Cuntz equivalent} to $y$, in symbols $x\sim y$, if $x\precsim y$ and $y\precsim x$.
These relations were introduced in \cite{Cun78DimFct}.

R{\o}rdam's fundamental results on Cuntz comparison, \cite[Proposition~2.4]{Ror92StructureUHF2} (see also \cite[Proposition~2.17]{AraPerTom11Cu}), remain valid in local \ca{s}, that is, for any $x,y\in A_+$ the following conditions are equivalent:
\beginEnumStatements
\item
We have $x\precsim y$.
\item
For every $\varepsilon>0$ there exists $\delta>0$ such that $(x-\varepsilon)_+\precsim(y-\delta)_+$.
\item
For every $\varepsilon>0$ there exist $\delta>0$ and $r\in A$ such that $(x-\varepsilon)_+=r(y-\delta)_+r^*$.
\end{enumerate}

Here, $(a-\varepsilon)_+$ denotes the standard $\varepsilon$-cutdown of a positive element obtained as $(a-\varepsilon)_+=f_\varepsilon(a)$ where $f_\varepsilon(t)=\text{max}\{0,t-\varepsilon\}$.
\end{pgr}

\begin{pgr}
\label{pgr:originalW}
\index{symbols}{W(A)@$\W(A)$ \quad (precompleted Cuntz semigroup)}
Given a local \ca\ $A$, the (original) definition of the Cuntz semigroup of $A$ is
\[
W(A):=M_\infty(A)_+/\!\!\sim,
\]
the set of Cuntz equivalence classes of positive elements in matrix algebras over $A$.
The equivalence class of an element $x\in M_\infty(A)_+$ is denoted by $[x]$.
Given $x,y\in M_\infty(A)_+$, we set $[x]\leq [y]$ if $x\precsim y$, and we define
$[x]+[y]=[(\begin{smallmatrix}x & 0 \\ 0 & y \end{smallmatrix})]$.
This defines a partial order and a well-defined abelian addition on $W(A)$.
The zero element in $W(A)$ is given by the class of the zero element $0\in A$.
This equips $W(A)$ with the structure of a \pom.
Next, we will endow $W(A)$ with an auxiliary relation, making it a $\CatW$-semigroup.
\end{pgr}

\begin{dfn}
\label{dfn:WfromCAlg}
\index{terms}{Cuntz semigroup!pre-completed}
Let $A$ be a local \ca.
We define a relation $\prec$ on the \pom\ $W(A)=M_\infty(A)_+/\!\sim$ by setting for any $a,b\in M_\infty(A)_+$:
\[
[a]\prec[b]\quad
\text{ if and only if there exists } \varepsilon>0 \text{ such that } [a]\leq[(b-\varepsilon)_+].
\]
We call $W(A)=(W(A),\prec)$ the \emph{pre-completed Cuntz semigroup} of $A$.
\end{dfn}

\begin{prp}
\label{prp:WfromCAlg}
Let $A$ be a local \ca.
Then the relation $\prec$ defined in \autoref{dfn:WfromCAlg} is an auxiliary relation and $(W(A),\prec)$ is a $\CatW$-semigroup.
If $A$ is separable, then $W(A)$ is countably-based.
\end{prp}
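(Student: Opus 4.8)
The plan is to reduce every assertion to a statement about Cuntz subequivalence $\precsim$ in $M_\infty(A)$ and then feed it through R\o rdam's three equivalent conditions recorded in \autoref{pgr:CuntzComparison}, together with the elementary functional-calculus identities $((a-\varepsilon)_+-\delta)_+=(a-\varepsilon-\delta)_+$ and the monotonicity $(a-\varepsilon)_+\leq(a-\delta)_+$ for $\varepsilon\geq\delta$ (valid because both sides are continuous functions of the single element $a$). First I would check that $\prec$ is well defined on Cuntz classes: if $b\sim b_1$, then by condition~(2) of \autoref{pgr:CuntzComparison} every cut-down $(b-\varepsilon)_+$ is dominated by some $(b_1-\delta)_+$, so the existence of $\varepsilon$ with $[a]\leq[(b-\varepsilon)_+]$ is insensitive to the representative. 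The three auxiliary-relation axioms (i)--(iii) of \autoref{pgr:axiomsW} are then immediate: $a\prec b$ forces $a\leq b$ since $(b-\varepsilon)_+\precsim b$; the transitivity pattern $a\leq b\prec c\leq d\Rightarrow a\prec d$ follows by applying condition~(2) to $c\precsim d$; and $0\prec a$ is trivial.

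For the $\CatW$-axioms I would dispatch \axiomW{3} and \axiomW{4} using that cut-downs distribute over the block-diagonal sum, $(\mathrm{diag}(a,b)-\varepsilon)_+=\mathrm{diag}((a-\varepsilon)_+,(b-\varepsilon)_+)$: for \axiomW{3}, take a common $\varepsilon$ witnessing $a'\prec a$ and $b'\prec b$; for \axiomW{4}, given $[x]\prec[a]+[b]$ with $x\precsim((a\oplus b)-\varepsilon)_+$, the elements $a'=(a-\varepsilon/2)_+$ and $b'=(b-\varepsilon/2)_+$ work because $((a'\oplus b')-\varepsilon/2)_+=((a\oplus b)-\varepsilon)_+$. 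For \axiomW{1} the sequence $c_k=[(a-1/k)_+]$ is $\prec$-increasing and cofinal in $a^\prec$, the cofinality being exactly the observation that $[b]\prec[a]$ means $b\precsim(a-\varepsilon)_+$ for some $\varepsilon$, whence $b\precsim(a-1/k)_+$ once $1/k<\varepsilon$.

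The axiom that requires real work is \axiomW{2}, namely $a=\sup a^\prec$. Each $(a-\varepsilon)_+$ lies below $a$, so $[a]$ is an upper bound; the content is that it is the \emph{least} one. Given an upper bound $[y]$ of $a^\prec$ one has $(a-\varepsilon)_+\precsim y$ for all $\varepsilon>0$, and I would upgrade this to $a\precsim y$ as follows: for fixed $\varepsilon$, apply condition~(2) of \autoref{pgr:CuntzComparison} to the subequivalence $(a-\varepsilon/2)_+\precsim y$ with tolerance $\varepsilon/2$ to produce $\delta>0$ with $(a-\varepsilon)_+=((a-\varepsilon/2)_+-\varepsilon/2)_+\precsim(y-\delta)_+$. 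As $\varepsilon$ was arbitrary, the ``$\forall\varepsilon\,\exists\delta$'' characterization (condition~(2)$\Rightarrow$(1)) yields $a\precsim y$, that is, $[a]\leq[y]$. This double use of R\o rdam's characterization --- first to present $y$ itself via its own cut-downs, then to close up $a$ --- is the conceptual crux, and it is precisely what distinguishes $\CatW$ from $\CatPreW$.

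Finally, for countable basedness when $A$ is separable, I would fix a countable dense subset $\{c_n\}$ of the (separable) positive cone $M_\infty(A)_+$ and take $B=\{[(c_n-r)_+]:n\in\N,\ r\in\Q,\ r>0\}$. Given $[a']\prec[a]$, so $a'\precsim(a-\varepsilon)_+$ for some $\varepsilon>0$, I would interpolate $[a']\prec[a_1]\prec[a]$ with $a_1=(a-\varepsilon/3)_+$ and $a'\precsim(a_1-\varepsilon/3)_+$, pick $c_n$ with $\|c_n-a_1\|<\delta$ for $\delta<\varepsilon/6$, and invoke the perturbation estimates for Cuntz comparison (also due to R\o rdam, valid in local \ca{s} since these are closed under functional calculus): on the one hand $(c_n-\delta)_+\precsim a_1$ gives $[(c_n-r)_+]\leq[a_1]\prec[a]$, hence $[(c_n-r)_+]\prec[a]$ by axiom~(ii), for any $r\geq\delta$; on the other hand the cut-to-cut estimate places $(a_1-\varepsilon/3)_+$ below $(c_n-(\varepsilon/3-\delta))_+$, so that $[a']\leq[(c_n-r)_+]$ for any $r\leq\varepsilon/3-\delta$. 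A rational $r$ with $\delta<r\leq\varepsilon/3-\delta$ exists because $\delta<\varepsilon/6$, and $[(c_n-r)_+]\in B$ is the required basis element. The hard part here, and the main technical obstacle of the whole proposition, is exactly this coordination: the two perturbation inequalities pull the cut parameter $r$ in opposite directions, and they can be reconciled only because the preliminary interpolation manufactures a fixed gap $\varepsilon/3$ that dominates the approximation tolerance $\delta$.
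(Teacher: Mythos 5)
Your proof is correct and follows essentially the same route as the paper's, which verifies that $\prec$ is a well-defined auxiliary relation via R{\o}rdam's characterization of $\precsim$ and then delegates the verification of \axiomW{1}--\axiomW{4} to \cite[Lemma~2.5]{KirRor00PureInf} and the countably-based claim to \cite[Lemma~1.3]{AntPerSan11PullbacksCu}; you have simply written out in full the details the paper leaves to those references. One hair to split: in \axiomW{1} your cofinality step only yields $[b]\leq[(a-1/k)_+]$ rather than $[b]\prec[(a-1/k)_+]$, but since you have already shown the sequence is $\prec$-increasing, axiom~(ii) of auxiliary relations gives $[b]\prec[(a-1/(k+1))_+]$, so the conclusion stands.
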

\begin{proof}
By abusing notation, let us define a relation $\prec$ on positive elements in $M_\infty(A)$ by setting $a\prec b$ if there exists $\varepsilon>0$ such that $a\precsim (b-\varepsilon)_+$.
R{\o}rdam's results on Cuntz comparison show that $a\precsim b$ if and only if for every $\varepsilon>0$ there exists $\delta>0$ such that $(a-\varepsilon)_+\precsim(b-\delta)_+$;
see \autoref{pgr:CuntzComparison} and \cite[Proposition~2.4]{Ror92StructureUHF2},
Thus, given any positive elements $a,b$ and $c$, we see that $a\prec b\precsim c$ implies $a\prec c$.

It follows that $\prec$ as in \autoref{dfn:WfromCAlg} is well-defined.
Given $a,b\in M_\infty(A)_+$, we have $(a-\varepsilon)_+\precsim a$ for each $\varepsilon>0$, which shows that $[a]\prec [b]$ implies $[a]\leq [b]$.
It follows from this and the considerations in the previous paragraph that $\prec$ is an auxiliary relation on $W(A)$.
The axioms \axiomW{1}-\axiomW{4} are now straightforward to check, for example using \cite[Lemma 2.5]{KirRor00PureInf}.
Finally, if $A$ is separable, then $W(A)$ countably-based; the argument can be found in the proof of \cite[Lemma~1.3]{AntPerSan11PullbacksCu}; see also \cite[Proposition~5.1.1]{Rob13Cone}.
\end{proof}

\begin{rmks}
\label{rmk:WfromCAlg}
(1)
Usually, by $W(A)$ we denote the pre-completed Cuntz semigroup of $A$, considered as a $\CatW$-semigroup which is understood to be equipped with an auxiliary relation.
It should be clear from the context when by $W(A)$ we only mean the underlying \pom.

(2)
It is not known whether the auxiliary relation on $W(A)$ can be deduced from its structure as a \pom, but it seems unlikely that this is the case without assuming certain regularity properties on the \ca.
Thus, we consider the auxiliary relation on $W(A)$ as an additional structure, not just as a property.
This is in contrast to $\CatCu$-semigroups, where the auxiliary relation (the way-below relation) is defined in terms of the order structure; see \autoref{sec:catCu}.

The case when the auxiliary relation in $W(A)$ is just the way-below relation was studied in \cite{AntBosPer11CompletionsCu}.
The class of such semigroups was denoted by PreCu.
See also \autoref{rmk:catW}(3).
\end{rmks}

We want to extend the assignment $A\mapsto W(A)$ to a functor from the category of local \ca{s} to the category $\CatW$.
Thus, we need to show that a \starHom\ $\varphi\colon A\to B$ induces a $\CatW$-morphism between the respective $W$-semigroups.
We will first prove a related result for c.p.c.\ order-zero maps.
As shown in \cite[Corollary~4.5]{WinZac09CpOrd0}, a c.p.c.\ order-zero map between \ca{s} induces a $\CatPom$-morphism between the respective $\CatW$-semigroups.
In the result below we establish that this map is also continuous, hence a generalized $\CatW$-morphism.

Note that a c.p.c.\ order-zero map $\varphi\colon A\to B$ naturally extends to a c.p.c.\ order-zero map between the local \ca{s} $M_\infty(A)$ and $M_\infty(B)$.
By abuse of notation, we will denote this extension also by $\varphi$.

\begin{prp}[{Winter, Zacharias, \cite[Corollary~4.5]{WinZac09CpOrd0}}]
\label{prp:mor_from_cpc0}
Let $A$ and $B$ be local \ca{s}.
Then every c.p.c.\ order-zero map $\varphi\colon A\to B$ naturally induces a generalized $\CatW$-morphism
\[
W(\varphi)\colon W(A)\to W(B),\quad
[x]\mapsto[\varphi(x)],\quad
\txtFA x\in M_\infty(A)_+.
\]
If $\varphi$ is a \starHom, then $W(\varphi)$ also preserves the auxiliary relation and thus is a $\CatW$-morphism.
\end{prp}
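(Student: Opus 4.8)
The plan is to take from \cite[Corollary~4.5]{WinZac09CpOrd0} that $W(\varphi)$ is a well-defined $\CatPom$-morphism, so that the genuine content is to verify the continuity axiom \axiom{M} and, in the \starHom\ case, the preservation of $\prec$. Both will follow from a single comparison estimate: for every $x\in M_\infty(A)_+$ and every $\varepsilon>0$,
\[
(\varphi(x)-\varepsilon)_+ \precsim \varphi((x-\varepsilon)_+).
\]
To prove this, I would invoke the Winter--Zacharias structure theorem, which factors the c.p.c.\ order-zero map as $\varphi(a)=h\,\pi(a)$ for a positive contraction $h$ and a \starHom\ $\pi$ whose range commutes with $h$; amplifying $h$ to $h\otimes 1$ and $\pi$ to $\pi\otimes\id$ extends the factorization to $M_\infty(A)$, so I may argue with the extended map. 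Writing $k=\pi(x)$, the elements $h,k$ commute, $0\le h\le 1$, and $\varphi(x)=hk$, while $\varphi((x-\varepsilon)_+)=h(k-\varepsilon)_+$ because $\pi$ respects continuous functional calculus. Inside the commutative C*-algebra $C^*(h,k)$ one has $h(k-\varepsilon)_+\ge h(k-\varepsilon)=hk-\varepsilon h\ge hk-\varepsilon$ together with $h(k-\varepsilon)_+\ge 0$, whence $(\varphi(x)-\varepsilon)_+\le\varphi((x-\varepsilon)_+)$, and in particular the claimed Cuntz subequivalence.

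Granting the estimate, continuity is a short formal step. To check \axiom{M}, suppose $[y]\prec W(\varphi)([x])=[\varphi(x)]$ in $W(B)$; by definition of the auxiliary relation there is $\varepsilon>0$ with $[y]\le[(\varphi(x)-\varepsilon)_+]$. Setting $x'=(x-\varepsilon)_+$, the estimate gives
\[
[y]\le[(\varphi(x)-\varepsilon)_+]\le[\varphi((x-\varepsilon)_+)]=W(\varphi)([x']),
\]
while $[x']\prec[x]$ because $(x-\varepsilon)_+\precsim(x-\varepsilon/2)_+$. This is exactly the conclusion required by \axiom{M}, so $W(\varphi)$ is a generalized $\CatW$-morphism.

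For the final assertion, suppose $\varphi$ is a \starHom. Then the structure theorem is not needed: a \starHom\ preserves Cuntz subequivalence (apply it to a witnessing sequence) and commutes with functional calculus, so $\varphi((b-\varepsilon)_+)=(\varphi(b)-\varepsilon)_+$. Hence, if $[a]\prec[b]$ is witnessed by $a\precsim(b-\varepsilon)_+$ for some $\varepsilon>0$, then $\varphi(a)\precsim(\varphi(b)-\varepsilon)_+$, that is, $[\varphi(a)]\prec[\varphi(b)]$. Thus $W(\varphi)$ preserves $\prec$ and is a $\CatW$-morphism.

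I expect the main obstacle to be the comparison estimate: one must apply the order-zero structure theorem correctly (passing to completions of the local \ca{s} where necessary, and checking that the resulting order inequality between elements of $M_\infty(B)$ descends from the completion) and make sure the factorization survives the amplification to $M_\infty(A)$; one must also be careful that the inequality $h(k-\varepsilon)_+\ge hk-\varepsilon$ is read inside $C^*(h,k)$, where $0\le h\le 1$ is used. Once this single inequality is in place, both the continuity and the \starHom\ statements reduce to the brief formal manipulations above.
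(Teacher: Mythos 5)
Your proof is correct and follows essentially the same route as the paper: both reduce everything to the single operator inequality $(\varphi(x)-\varepsilon)_+\leq\varphi((x-\varepsilon)_+)$, from which axiom \axiom{M} and the \starHom{} case follow by the same formal manipulations. The only difference is cosmetic: you derive the inequality explicitly from the Winter--Zacharias structure theorem $\varphi=h\,\pi(\cdot)$ inside the commutative algebra $C^*(h,\pi(x))$, whereas the paper extends $\varphi$ to a c.p.c.\ map on the unitization, uses $\tilde{\varphi}(1)\leq 1$, and then invokes the commutation of $\tilde{\varphi}(x)-\varepsilon 1$ with $\tilde{\varphi}((x-\varepsilon 1)_+)$ --- a commutation that itself rests on the same order-zero structure you make explicit.
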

\begin{proof}
It follows from \cite[Corollary~4.5]{WinZac09CpOrd0} that $W(\varphi)$ is a well-defined map that preserves addition, order and the zero element.
Given $a\in M_\infty(A)_+$ and $\varepsilon>0$, let us first show
\begin{align}
\label{prp:mor_from_cpc0:eq1}
(\varphi(x)-\varepsilon)_+
\leq\varphi((x-\varepsilon)_+).
\end{align}
We let $\widetilde{A}$ denote the minimal unitalization of $A$ (with $\widetilde{A}=A$ if $A$ is unital), and similarly for $\widetilde{B}$.
We can extend $\varphi$ to a c.p.c.\ map $\tilde{\varphi}\colon\widetilde{A}\to\widetilde{B}$.
Choose $k$ such that $a\in M_k(A)$, and consider the amplification of $\tilde{\varphi}$ to a map $M_k(\widetilde{A})\to M_k(\widetilde{B})$, which we also denote $\tilde{\varphi}$.
We have $\tilde{\varphi}(1_{M_k(\widetilde{A})})\leq 1_{M_k(\widetilde{B})}$, which we use to deduce
\[
\tilde{\varphi}(x) - \varepsilon 1_{M_k(\widetilde{B})}
\leq \tilde{\varphi}(x) - \varepsilon\tilde{\varphi}(1_{M_k(\widetilde{A})})
=\tilde{\varphi}(x-\varepsilon 1_{M_k(\widetilde{A})})
\leq \tilde{\varphi}((x-\varepsilon 1_{M_k(\widetilde{A})})_+).
\]
Using that $\tilde{\varphi}(x) - \varepsilon 1_{M_k(\widetilde{B})}$ commutes with $\tilde{\varphi}((x-\varepsilon 1_{M_k(\widetilde{A})})_+)$ for the second step, we obtain
\[
(\varphi(x)-\varepsilon)_+
= (\tilde{\varphi}(x) - \varepsilon 1_{M_k(\widetilde{B})})_+
\leq \tilde{\varphi}((x-\varepsilon 1_{M_k(\widetilde{A})})_+)
=\varphi((x-\varepsilon)_+),
\]
as desired.

To show that $W(\varphi)$ is continuous, let $b\in W(B)$ and $a\in W(A)$ satisfy $b\prec W(\varphi)(a)$.
We need to find $a'\in W(A)$ such that $a'\prec a$ and $b\leq W(\varphi)(a')$.
Choose $x\in M_\infty(A)_+$ with $a=[x]$.
By definition of $\prec$ on $W(B)$, we can choose $\varepsilon>0$ such that $b\leq[(\varphi(x)-\varepsilon)_+]$.
Set $a':=[(x-\varepsilon)_+]$.
Then $a'\prec a$ in $W(A)$.
Using \eqref{prp:mor_from_cpc0:eq1} at the second step, we deduce
\[
b\leq[(\varphi(x)-\varepsilon)_+]\leq [\varphi((x-\varepsilon)_+)]=f(a'),
\]
as desired.

Finally, if $\varphi$ is a \starHom, then $\varphi((x-\varepsilon)_+)=(\varphi(x)-\varepsilon)_+$ for each $x\in M_\infty(A)_+$ and $\varepsilon>0$, which implies that $W(\varphi)$ preserves the auxiliary relation.
\end{proof}

It follows that $\W$ is a functor from the category $\CatLocCa$ of local \ca{s} with \starHom s to the category $\CatW$.

\begin{pgr}[Inductive limits in $\CatLocCa$]
\index{terms}{inductive limits@inductive limits!in $\CatLocCa$}
\label{pgr:LimCatLocCa}
\index{symbols}{C*LocLim@$\CatLocCaLim$ \quad (inductive limit in $\CatLocCa$)}
Let $((A_i)_{i\in I},(\varphi_{i,j})_{i,j\in I, i\leq j})$ be an inductive system in the category $\CatLocCa$, indexed over the directed set $I$.
To construct the inductive limit in $\CatLocCa$, we first consider $((A_i)_{i\in I},(\varphi_{i,j})_{i,j\in I, i\leq j})$ as an inductive system in the category of $^*$-algebras, and we let $A_\alg$ denote the corresponding inductive limit.
This $^*$-algebra can be equipped with the pre-norm defined by
\[
\|x\| :=\inf \left\{ \|\varphi_{i,j}(x)\| : j\in I, j\geq i \right\},
\]
for $x\in A_i$.
The set $N:=\left\{ x\in A_\alg : \|x\|=0 \right\}$ is a two-sided ${}^*$-ideal.
Set
\[
\CatLocCaLim A_i := A_\alg / N,
\]
which is a local \ca\ satisfying the universal properties of an inductive limit.

Observe that for each $i,j\in I$ satisfying $i\leq j$, the map $\varphi_{i,j}$ induces a natural \starHom\ $M_\infty(A_i)\to M_\infty(A_j)$, which we denote by $\tilde{\varphi}_{i,j}$.
The limit of the inductive system $((M_{\infty}(A_i))_{i\in I},(\varphi_{i,j})_{i,j\in I, i\leq j})$ is naturally isomorphic to $M_\infty(\CatLocCaLim A_i)$.
\end{pgr}

\begin{thm}
\label{prp:functorW}
The functor $W\colon\CatLocCa\to\CatW$ is continuous.
\end{thm}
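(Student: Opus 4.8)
The plan is to show that $W$ preserves inductive limits. Fix an inductive system $((A_i)_{i\in I},(\varphi_{i,j}))$ in $\CatLocCa$ with limit $A=\CatLocCaLim A_i$ and canonical maps $\varphi_{i,\infty}\colon A_i\to A$ (whose induced maps on matrix algebras I denote by the same symbols). By \autoref{prp:limitsW}, the inductive limit of $(W(A_i),W(\varphi_{i,j}))$ in $\CatW$ is $L:=\mu(\CatPreWLim W(A_i))$, and the canonical $\CatW$-morphisms $\lambda_i\colon W(A_i)\to L$ are the composites of the maps from \autoref{prp:limitsPreW} with the reflection map $\beta$ of \autoref{prp:Wification}. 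The family $(W(\varphi_{i,\infty}))_i$ is a cocone, so the universal property produces a unique $\CatW$-morphism $\Phi\colon L\to W(A)$ with $\Phi\circ\lambda_i=W(\varphi_{i,\infty})$ for all $i$. I must prove $\Phi$ is an isomorphism, and I will do so by showing that $\Phi$ is surjective, that it is an order-embedding (hence injective), and that it reflects the relation $\prec$; from these facts the set-inverse $\Psi=\Phi^{-1}$ is seen to be a $\CatW$-morphism.

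For surjectivity I use that, by \autoref{pgr:LimCatLocCa}, $M_\infty(A)$ is the local $C^*$-algebra $\varinjlim M_\infty(A_i)$ formed as an algebraic limit modulo a norm-zero ideal; hence every element of $M_\infty(A)$ is of the form $\varphi_{i,\infty}(a)$ for some $i$ and $a\in M_\infty(A_i)$. Given $x\in M_\infty(A)_+$ with $x=\varphi_{i,\infty}(a)$, the element $a^*a$ is positive and $\varphi_{i,\infty}(a^*a)=x^*x=x^2$. Since $x\sim x^2$, it follows that $[x]=[x^2]=W(\varphi_{i,\infty})([a^*a])=\Phi(\lambda_i([a^*a]))$, so $\Phi$ is onto.

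The technical heart is the following descent lemma, which I expect to be the main obstacle: if $\varphi_{i,\infty}(a)\precsim\varphi_{j,\infty}(b)$ in $M_\infty(A)_+$, then for every $\varepsilon>0$ there is $l\geq i,j$ with $(\varphi_{i,l}(a)-\varepsilon)_+\precsim\varphi_{j,l}(b)$ in $M_\infty(A_l)_+$. To prove it I would apply the implication (1)$\Rightarrow$(3) of \autoref{pgr:CuntzComparison} to obtain $\delta>0$ and $r\in M_\infty(A)$ with $(\varphi_{i,\infty}(a)-\varepsilon/2)_+=r(\varphi_{j,\infty}(b)-\delta)_+r^*$, lift $r$ to some $s\in M_\infty(A_k)$ with $k\geq i,j$, and use that the two sides of the resulting identity agree only after passing to the quotient by the norm-zero ideal: thus at a sufficiently large stage $l$ the element $(\varphi_{i,l}(a)-\varepsilon/2)_+$ lies within $\varepsilon/2$ in norm of $\varphi_{k,l}(s)(\varphi_{j,l}(b)-\delta)_+\varphi_{k,l}(s)^*$. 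A standard perturbation argument (as in \cite[Lemma~2.5]{KirRor00PureInf}) then yields $(\varphi_{i,l}(a)-\varepsilon)_+\precsim(\varphi_{j,l}(b)-\delta)_+\precsim\varphi_{j,l}(b)$, as required. Granting the lemma, order-reflection follows: suppose $\Phi(\xi)\leq\Phi(\eta)$ with $\xi=\lambda_i([a])$ and $\eta=\lambda_j([b])$. Since $L$ satisfies \axiomW{2} we have $\xi=\sup\xi^\prec$, so it suffices to check $\xi'\leq\eta$ for every $\xi'\prec\xi$. Continuity of $\lambda_i$ provides $a'\prec a$ in $W(A_i)$ with $\xi'\leq\lambda_i([a'])$, and $[a']\prec[a]$ gives some $\varepsilon>0$ with $a'\precsim(a-\varepsilon)_+$. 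The descent lemma applied to $\varphi_{i,\infty}(a)\precsim\varphi_{j,\infty}(b)$ then produces a stage $l$ with $[(\varphi_{i,l}(a)-\varepsilon)_+]\leq[\varphi_{j,l}(b)]$ in $W(A_l)$; applying $\lambda_l$ yields $\lambda_i([(a-\varepsilon)_+])\leq\eta$, whence $\xi'\leq\eta$. Thus $\Phi$ is an order-embedding, and in particular injective.

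Finally, I claim $\Phi$ reflects $\prec$; this upgrades order-reflection rather than requiring a new descent. Suppose $\Phi(\xi)\prec\Phi(\eta)$ with $\xi=\lambda_i([a])$ and $\eta=\lambda_i([b])$ (a common index can be arranged by directedness). Then $\varphi_{i,\infty}(a)\precsim(\varphi_{i,\infty}(b)-\varepsilon_0)_+$ for some $\varepsilon_0>0$, and since $(\varphi_{i,\infty}(b)-\varepsilon_0)_+\precsim(\varphi_{i,\infty}(b)-\varepsilon_0/2)_+=\varphi_{i,\infty}((b-\varepsilon_0/2)_+)$, the already-established order-reflection gives $\xi\leq\lambda_i([(b-\varepsilon_0/2)_+])$. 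As $[(b-\varepsilon_0/2)_+]\prec[b]$ in $W(A_i)$ and $\lambda_i$ preserves $\prec$, we get $\lambda_i([(b-\varepsilon_0/2)_+])\prec\eta$; combining with condition~(ii) for the auxiliary relation in \autoref{pgr:axiomsW} yields $\xi\prec\eta$. It remains to note that $\Psi$ is a $\CatW$-morphism: it preserves addition and order because $\Phi$ does and is bijective, it preserves $\prec$ because $\Phi$ reflects $\prec$, and it is continuous because, given $\zeta\prec\Psi(y)$, applying $\Phi$ gives $\Phi(\zeta)\prec y$, so $y':=\Phi(\zeta)$ satisfies $y'\prec y$ and $\zeta\leq\Psi(y')$. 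Hence $\Phi$ is an isomorphism in $\CatW$, and $W$ is continuous.
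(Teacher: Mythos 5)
Your proposal is correct and follows essentially the same route as the paper: reduce to the $\CatW$-completion of the $\CatPreW$-limit, prove surjectivity by lifting positive elements, and establish order-reflection via R{\o}rdam's lemma together with lifting the implementing element $r$ and the norm estimate coming from the description of the limit in $\CatLocCa$. The only difference is that you additionally spell out why the surjective order-embedding is an isomorphism in $\CatW$ (reflection of $\prec$ and continuity of the inverse), a point the paper's proof leaves implicit.
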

\begin{proof}
Let $((A_i)_{i\in I},(\varphi_{i,j})_{i,j\in I, i\leq j})$ be an inductive system in $\CatLocCa$.
We let $A_{alg}$ be the algebraic inductive limit, we set $A:=A_{alg}$ for convenience, and we let $N$ denote the ${}^*$-ideal defined in \autoref{pgr:LimCatLocCa} so that the inductive limit in $\CatLocCa$ is given by $A/N$ with \starHom s $\varphi_{i,\infty}\colon A_i\to A/N$.
As explained in \autoref{pgr:LimCatLocCa}, we may replace each $A_i$ by $M_\infty(A_i)$.
Then every Cuntz class in $W(A_i)$ is realized by a positive element in $A_i$.
It follows that also every class in $W(A/N)$ is realized by a positive element in $A/N$, that is, $W(A/N)=(A/N)_+/\!\!\sim$.

The following diagram shows the algebras and maps to be constructed.
\[
\xymatrix@R=15pt@C=15pt{
A_i \ar[r]^-{\varphi_{i,j}}
    & A_j \ar[r]
    & \ldots \ar[r]
    & A \ar@{->>}[r]
    & A/N
    \\
& & & & W(A/N)
    \\
W(A_i) \ar[r]^{\psi_{i,j}}
    & W(A_j) \ar[r] \ar[drr]_{\psi_{j,\infty}} \ar[urrr]^{W(\varphi_{j,\infty})}
    & \ldots \ar[r]
    & {\CatPreWLim W(A_i)} \ar@{}[d]|{\verteq} \ar@{->>}[r] \ar[ur]^{\omega}
    & {\CatWLim W(A_i)}\ar[u]_{\tilde{\omega}} \ar@{}[d]|{\verteq}
    \\
& & & S \ar@{->>}[r]_-{\beta}
    & \mu(S)
}
\]

For $i\leq j$, set $\psi_{i,j}:=W(\varphi_{i,j})$ and consider the induced $\CatW$-inductive system $((W(A_i))_{i\in I},(\psi_{i,j})_{i,j\in I, i\leq j})$.
Let $S$ be the inductive limit in $\CatPreW$; see \autoref{prp:limitsPreW}.
Denote the $\CatW$-morphisms into the limit by $\psi_{i,\infty}\colon W(A_i)\to S$.
Let $\omega\colon S\to W(A/N)$ be the unique $\CatW$-morphism that is induced by the maps $W(\varphi_{i,\infty})\colon W(A_i)\to W(A/N)$.
The limit in the category $\CatW$ is given as the $\CatW$-completion of the limit in $\CatPreW$; see \autoref{prp:limitsW}.
By \autoref{prp:Wification}, there exists a $\CatW$-morphism $\tilde\omega\colon \CatWLim W(A_i)\to W(A/N)$ such that $\tilde w([s])=w(s)$ for all $s\in S$.
Thus it is enough to prove that $\tilde\omega$ is a surjective order-embedding.
Since every positive element in $A/N$ is the image of a positive element in some $A_i$, we conclude that $\tilde\omega$ is surjective.

In order to show that $\tilde\omega$ is an order-embedding, let $s,t\in S$ satisfy $\omega(s)\leq\omega(t)$.
We need to show $s^\prec\subset t^\prec$.
Let $s'$ be an element in $S$ with $s'\prec s$.
Choose $i$ and elements $a',a\in W(A_i)$ such that $a'\prec a$ in $W(A_i)$ and $s'=\psi_{i,\infty}(a')$ and $s=\psi_{i,\infty}(a)$.
We may assume that $t$ is also realized by an element in $W(A_i)$ (by passing to a larger index, if necessary).
This means that we can choose $b\in W(A_i)$ with $t=\psi_{i,\infty}(b)$.

Let $x,y\in (A_i)_+$ with $a=[x]$ and $b=[y]$.
By definition of the relation $\prec$ in $W(A_i)$, we can choose $\varepsilon>0$ such that $a'\leq[(x-\varepsilon)_+]$.
Note that $\omega(s)$ and $\omega(t)$ are the Cuntz classes of $\varphi_{i,\infty}(x)$ and $\varphi_{i,\infty}(y)$ in $A/N$, respectively.
By assumption, $\varphi_{i,\infty}(x)\precsim\varphi_{i,\infty}(y)$.
Using R{\o}rdam's lemma (see \autoref{pgr:CuntzComparison}), we can choose $\delta>0$ and $r\in A/N$ such that
\[
\left( \varphi_{i,\infty}(x)-\frac{\varepsilon}{2} \right)_+
=r \left( \varphi_{i,\infty}(y)-\delta \right)_+r^*.
\]
Choose $j$ and $\bar{r}\in A_j$ such that $r=\varphi_{j,\infty}(\bar{r})$.
We may assume $j\geq i$.
Then
\[
\varphi_{i,\infty}\left( (x-\frac{\varepsilon}{2})_+ \right)
= \varphi_{j,\infty}\left( \bar{r}\varphi_{i,j}((y-\delta)_+)\bar{r}^* \right).
\]
Using the description of the limit in $\CatLocCa$ (see \autoref{pgr:LimCatLocCa}), this implies that we can choose $k\geq j$ such that
\[
\left\| \varphi_{i,k}\left( (x-\frac{\varepsilon}{2})_+ \right)
-\varphi_{j,k}\left( \bar{r}\varphi_{i,j}((y-\delta)_+)\bar{r}^* \right) \right\|
\leq \frac{\varepsilon}{2}.
\]
Using \cite[Lemma~2.2]{Ror92StructureUHF2} at the second step, we deduce
\begin{align*}
\left( \varphi_{i,k}(x)-\varepsilon \right)_+
&=\left( \varphi_{i,k}\left( (x-\frac{\varepsilon}{2})_+ \right) -\frac{\varepsilon}{2} \right)_+ \\
&\precsim \varphi_{j,k}\left( \bar{r}\varphi_{i,j}((y-\delta)_+)\bar{r}^* \right) \\
&\precsim \left( \varphi_{j,k}(y) -\delta \right)_+.
\end{align*}
Then $\psi_{i,\infty}( [ (x-\varepsilon)_+ ] ) \leq \psi_{i,\infty}( [ (y-\delta)_+ ] )$.
Using this at the third step, we obtain
\[
s'
= \psi_{i,\infty}(a')
\leq \psi_{i,\infty}( [(x-\varepsilon)_+] )
\leq \psi_{i,\infty}( [(y-\delta)_+] )
\prec \psi_{i,\infty}( [y] )
= t.
\]
Hence, $s'\prec t$ as desired.
\end{proof}

\chapter{Completed Cuntz semigroups}
\label{sec:catCu}

\index{terms}{completed Cuntz semigroup}

In the first part of this chapter, we recall the definition of the category $\CatCu$ of (abstract) completed Cuntz semigroups, as introduced in \cite{CowEllIva08CuInv}.
We show that $\CatCu$ is a full, reflective subcategory of $\CatPreW$;
see \autoref{prp:CureflectivePreW}.
The reflection of a $\CatPreW$-semigroup $S$ in $\CatCu$ is called its \emph{$\Cu$-completion}.
Since $\CatPreW$ has inductive limits, the same holds for $\CatCu$.
This generalizes \cite[Theorem~2]{CowEllIva08CuInv} and it provides a new description of inductive limits in $\CatCu$;
see \autoref{prp:limitsCu}.

In the second part, we consider the functor $\Cu\colon\CatCa\to\CatCu$, as introduced in \cite{CowEllIva08CuInv}.
It associates to a \ca\ $A$ the set of Cuntz equivalence classes of positive elements in the stabilization of $A$, that is, $\Cu(A)=(A\otimes\K)_+/\!\!\sim$.
It turns out that $\Cu(A)$ is an object in $\CatCu$ and we call it the \emph{completed Cuntz semigroup} of $A$.
The main result of this chapter, \autoref{prp:commutingFctrs}, states that for every \ca\ $A$, its completed Cuntz semigroup $\Cu(A)$ is naturally isomorphic to the $\Cu$-completion of its pre-completed Cuntz semigroup $W(A)$.
Moreover, all involved functors are continuous.

\section{The category \texorpdfstring{\CatCu}{Cu}}

\begin{pgr}[Axioms for the category $\CatCu$]
\label{pgr:axiomsCu}
\index{terms}{rapidly increasing sequence}
\index{terms}{O1-O4@\axiomO{1}-\axiomO{4}}
Given a \pom\ $S$, the following axioms were introduced in \cite{CowEllIva08CuInv};
see also \cite{Rob13Cone}.
Recall the definition of the compact containment relation $\ll$ from \autoref{pgr:axiomsW}.
\begin{itemize}
\item[\axiomO{1}]
Every increasing sequence $(a_n)_{n\in\N}$ in $S$ has a supremum $\sup_n a_n\in S$.
\item[\axiomO{2}]
Every element $a\in S$ is the supremum of a sequence $(a_n)_n$ such that $a_n\ll a_{n+1}$ for all $n$.
\item[\axiomO{3}]
If $a',a,b',b\in S$ satisfy $a'\ll a$ and $b'\ll b$, then $a'+b'\ll a+b$.
\item[\axiomO{4}]
If $(a_n)_n$ and $(b_n)_n$ are increasing sequences in $S$, then $\sup_n(a_n+b_n)=\sup_n a_n+\sup_n b_n$.
\end{itemize}
A sequence as in \axiomO{2} is called \emph{rapidly increasing}.
\end{pgr}

\begin{dfn}[{The category $\CatCu$; Coward, Elliott, Ivanescu, \cite{CowEllIva08CuInv}}]
\label{dfn:catCu}
\index{terms}{Cu-semigroup@$\CatCu$-semigroup}
\index{terms}{Cu-morphism@$\CatCu$-morphism}
\index{terms}{Cu-morphism@$\CatCu$-morphism!generalized}
\index{terms}{category@category!$\CatCu$}
\index{symbols}{Cu@$\CatCu$ \quad (category of $\CatCu$-semigroups)}
\index{symbols}{Cu(S,T)@$\CatCuMor(S,T)$ \quad ($\CatCu$-morphisms)}
\index{symbols}{Cu[S,T]@$\CatCuGenMor{S,T}$ \quad (generalized $\CatCu$-morphisms)}
A \emph{$\CatCu$-semigroup} is a \pom\ that satisfies axioms \axiomO{1}-\axiomO{4} from \autoref{pgr:axiomsCu}.
Given $\CatCu$-semigroups $S$ and $T$, a \emph{$\CatCu$-morphism} $f\colon S\to T$ is a $\CatPom$-morphism that preserves compact containment and suprema of increasing sequences.
We denote the collection of such maps by $\CatCuMor(S,T)$.
We let $\CatCu$ be the category whose objects are $\Cu$-semigroups and whose morphisms are $\Cu$-morphisms.

A \emph{generalized $\CatCu$-morphism} between $\Cu$-semigroups is a $\CatCu$-morphism that does not necessarily preserve compact containment, that is, a $\CatPom$-morphism that preserves suprema of increasing sequences.
We denote the set of generalized $\CatCu$-morphisms by $\CatCuGenMor{S,T}$.
\end{dfn}

\begin{rmks}
\label{rmk:catCu}
\index{terms}{directed complete partially ordered set (dcpo)}
\index{terms}{w-continuous@$\omega$-continuous}
(1)
In lattice theory, a partially ordered set $M$ is called a \emph{directed complete partially ordered set}, often abbreviated to \dcpo, if each upward directed set in $M$ has a supremum;
see \cite[Definition~0-2.1, p.~9]{GieHof+03Domains}.
If the existence of suprema is only required for increasing sequences, then $M$ is called an $\omega$-\dcpo.

A \dcpo\ $M$ is called \emph{continuous} if each element $a$ in $M$ is the supremum of the elements compactly contained in $a$;
see \cite[Definition~I-1.6, p.~54]{GieHof+03Domains}.
Recall from \autoref{pgr:axiomsW} that we use a sequential version of compact containment.
An $\omega$-\dcpo\ is called \emph{$\omega$-continuous} if every element $a$ is the supremum of a sequence $(a_k)_k$ where $a_k$ is sequentially compactly contained in $a_{k+1}$ for each $k$.

Thus, axioms \axiomO{1} and \axiomO{2} mean exactly that the \pom\ in question is a $\omega$-continuous $\omega$-\dcpo.

(2)
Let $S$ be a \pom, considered with the derived auxiliary relation $\ll$.
Recall from \autoref{pgr:axiomsW} that $S$ is called \emph{countably-based} if there exists a countable subset $B\subset S$ such that, whenever $a',a\in S$ satisfy $a'\ll a$, there exists $b\in B$ with $a'\leq b\ll a$.
If $S$ satisfies \axiomO{1} and \axiomO{2} from \autoref{pgr:axiomsCu}, then this is equivalent to the condition that every $a\in S$ is the supremum of a rapidly increasing sequence $(a_k)_k$ with $a_k\in B$ for each $k$.

Assume now that $S$ is a countably-based \pom\ satisfying \axiomO{1} and \axiomO{2}.
Then $S$ is a continuous \dcpo.
In order to verify that $S$ is a \dcpo, let $X$ be an upward directed subset of $S$ and let $B$ be a (countable) basis for $S$.
Set $X':=\{b\in B : b\ll x\text{ for some }x\in X\}$.
Then $X'$ is countable and upward directed.
Indeed, if $b_1$, $b_2\in X'$, find $x\in X$ satisfying $b_1,b_2\ll x$.
By \axiomO{2} and the assumption that $S$ is countably-based, we can choose a rapidly increasing sequence $(x_k)_k$ in $B$ such that $x=\sup x_k$.
Choose $n\in\N$ with $b_1,b_2\ll x_n\ll x$.
Then $x_n\in X'$.
As $X'$ is countable and upward directed, there is an increasing sequence $(c_k)_k$ in $X'$ such that $\sup_k c_k$, which exists by \axiomO{1}, is the supremum of $X'$.

Next, let us verify that $\sup X'$ is the supremum of $X$.
If $x\in X$ and $x'\in B$ satisfy $x'\ll x$, then $x'\in X'$ and thus $\sup X'\geq x'$.
Since $S$ is countably-based, it follows that $\sup X'\geq x$ for every $x\in X$.
Moreover, if $y\in S$ satisfies $y\geq x$ for every $x\in X$, then $y\geq x'$ for every $x'\in X'$, and consequently $\sup X'=\sup X$.
This shows that $S$ is a \dcpo.
It is left to the reader to verify that $S$ is also continuous.

(3) \index{terms}{Scott-topology}
A $\CatPom$-morphism between $\CatCu$-semigroups preserves suprema of increasing sequences if and only if it is sequentially continuous for the so-called \emph{Scott-topology};
see \cite[Definition~II-2.2, p.~158]{GieHof+03Domains}.
\end{rmks}

Given a $\CatCu$-semigroup $S$, it is easily checked that the pair $(S,\ll)$ is a $\CatW$-semigroup.
The next result implies that under this identification, the notions of (generalized) $\CatCu$-morphisms and (generalized) $\CatW$-morphisms agree.

\begin{lma}
\label{prp:CuWMor}
Let $S$ and $T$ be $\CatCu$-semigroups, and let $f\colon S\to T$ be a $\CatPom$-morphism.
Then the following are equivalent:
\beginEnumStatements
\item
The map $f\colon S\to T$ preserves suprema of increasing sequences.
\item
The map $f\colon S\to T$ is continuous in the sense of \autoref{dfn:catW}.
\item
We have $f(a)=\sup f(a^\ll)$ for each $a\in S$.
\end{enumerate}
\end{lma}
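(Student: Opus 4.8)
The plan is to establish the cyclic chain of implications \implStatements{1}{3}, \implStatements{3}{2}, and \implStatements{2}{1}. Throughout I would use the observation made just before the statement that $(S,\ll)$ and $(T,\ll)$ are $\CatW$-semigroups, so that \axiomO{2} provides for every element a rapidly increasing sequence converging to it, while \axiomW{1} provides for each $a$ a $\ll$-increasing sequence in $a^\ll$ that is cofinal in $a^\ll$. I would also record the standing elementary fact that, since $f$ is order-preserving and $x\ll a$ implies $x\leq a$, every element of $f(a^\ll)$ lies below $f(a)$; hence $f(a)$ is always an upper bound for $f(a^\ll)$.

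For \implStatements{1}{3}, fix $a\in S$ and use \axiomO{2} to write $a=\sup_n a_n$ for a rapidly increasing sequence with $a_n\ll a$. Preservation of sequential suprema gives $f(a)=\sup_n f(a_n)$. Each $f(a_n)$ lies in $f(a^\ll)$, while $f(a)$ is an upper bound for $f(a^\ll)$; since any upper bound of $f(a^\ll)$ bounds every $f(a_n)$ and hence bounds $\sup_n f(a_n)=f(a)$, the element $f(a)$ is the least upper bound of $f(a^\ll)$. Thus $\sup f(a^\ll)$ exists and equals $f(a)$.

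For \implStatements{3}{2} I would verify axiom \axiom{M}. Given $a$, choose by \axiomW{1} a $\ll$-increasing sequence $(a_n)$ cofinal in $a^\ll$; cofinality together with order-preservation shows that $\sup_n f(a_n)=\sup f(a^\ll)$, which equals $f(a)$ by hypothesis. Now if $b\ll f(a)$, then applying the sequential definition of compact containment to the increasing sequence $\bigl(f(a_n)\bigr)_n$, whose supremum is $f(a)$, yields an index $k$ with $b\leq f(a_k)$. Since $a_k\in a^\ll$, the element $a':=a_k$ satisfies $a'\ll a$ and $b\leq f(a')$, which is exactly \axiom{M}.

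The implication \implStatements{2}{1} is where the real work lies. Let $(x_n)$ be increasing with $x=\sup_n x_n$. Order-preservation gives $\sup_n f(x_n)\leq f(x)$ at once, so the issue is the reverse inequality. Here I would use \axiomO{2} to reduce the problem: it suffices to show that every $b\ll f(x)$ lies below $\sup_n f(x_n)$, because $f(x)$ is the supremum of a rapidly increasing sequence of such elements. Given such a $b$, continuity \axiom{M} produces $a'\ll x$ with $b\leq f(a')$; and since $a'\ll x=\sup_n x_n$, the sequential compact containment relation yields an $n$ with $a'\leq x_n$, whence $b\leq f(a')\leq f(x_n)\leq\sup_n f(x_n)$. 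This interlocking use of \axiom{M} to pass from $T$ back into $S$, and of the sequential definition of $\ll$ to absorb $a'$ into the sequence $(x_n)$, is the crux of the argument; once it is in place the two inequalities combine to give $f(x)=\sup_n f(x_n)$.
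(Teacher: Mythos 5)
Your proof is correct and uses essentially the same toolkit as the paper's: rapidly increasing sequences from \axiomO{2}, the sequential definition of $\ll$ to locate an index in a given increasing sequence, and the fact that every element of a $\CatCu$-semigroup is the supremum of its way-below set. The only difference is that you run the cycle as \implStatements{1}{3}, \implStatements{3}{2}, \implStatements{2}{1} while the paper proves \implStatements{1}{2}, \implStatements{2}{3}, \implStatements{3}{1}; the individual arguments are interchangeable.
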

\begin{proof}
For the implication `\implStatements{1}{2}', let $a\in S$ and $b\in T$ satisfy $b\ll f(a)$.
Using \axiomO{2} for $S$, we choose a rapidly increasing sequence $(a_k)_k$ in $S$ with $a=\sup_k a_k$.
Since $f(a)=\sup_k f(a_k)$ by assumption, we can choose $k$ such that $b\leq f(a_k)$.
Then $a_k$ has the desired properties.

The implication `\implStatements{2}{3}' was shown in \autoref{rmk:catW}(4).

Finally, to show the implication `\implStatements{3}{1}', let $(a_k)_k$ be an increasing sequence in $S$.
Set $a:=\sup_k a_k$.
We clearly have $f(a)\geq\sup_k f(a_k)$.
For the converse inequality, we choose a rapidly increasing sequence $(c_n)_n$ in $S$ satisfying $a=\sup_n c_n$.
It follows from the assumption that $f(a)=\sup_n f(c_n)$.
Now, given any $b\in T$ satisfying $b\ll f(a)$, we can choose an index $n$ such that $b\leq f(c_n)$.
Since $c_n\ll a=\sup_k a_k$, we can choose an index $k$ with $c_n\leq a_k$.
Then
\[
b\leq f(c_n)\leq f(a_k)\leq \sup_k f(a_k).
\]
Thus, for every $b$ satisfying $b\ll f(a)$ we have $b\leq\sup_kf(a_k)$.
Using \axiomW{2} we have $f(a)=\sup\{b\in T : b\ll f(a)\}$.
It follows $f(a)\leq \sup_kf(a_k)$, as desired.
\end{proof}

\begin{pgr}[$\CatCu$-completions]
\label{pgr:CufullW}
Consider the functor $\CatCu\to\CatW$ that maps a $\CatCu$-se\-mi\-group $S$ to the $\CatW$-semigroup $(S,\ll)$, and that sends a $\CatCu$-morphism $f\colon S\to T$ to the same map $f$, considered as a $\CatW$-morphism.
It follows from \autoref{prp:CuWMor} that this functor is fully faithful.
This means that we may consider $\CatCu$ as a full subcategory of $\CatW$, and we will therefore usually not distinguish between a $\CatCu$-semigroup $S$ and the associated $\CatW$-semigroup $(S,\ll)$.
\end{pgr}

We will now show that $\CatCu$ is a reflective subcategory of $\CatW$, using again the idea of a universal completion as described in \cite[\S~2]{KeiLaw09DCompletion}.
We first show that every $\CatPreW$-semigroup $S$ can be suitably completed to a $\CatCu$-semigroup $\gamma(S)$;
see \autoref{prp:CuificationExists}.
We then show that this has the desired universal properties;
see \autoref{thm:Cuification}.

The proof of the following result is inspired by the so-called round ideal completion, which associates to a partially ordered set with an auxiliary relation a continuous \dcpo;
see \cite[Theorem~2.4]{Law97RoundIdeal}.
The construction given here is a sequential version that also takes the additive structure into account.

\begin{prp}
\label{prp:CuificationExists}
Let $(S,\prec)$ be a $\CatPreW$-semigroup.
Then there exist a $\CatCu$-semi\-group $\gamma(S)$ and a $\CatW$-morphism $\alpha\colon S\to\gamma(S)$ satisfying the following conditions:
\beginEnumConditions
\item
The map $\alpha$ is an `embedding' in the sense that $a'\prec a$ whenever $\alpha(a')\ll\alpha(a)$, for any $a',a\in S$.
\item
The map $\alpha$ has `dense image' in the sense that for every $b',b\in \gamma(S)$ with $b'\ll b$ there exists $a\in S$ such that $b'\leq\alpha(a)\leq b$.
\end{enumerate}
In particular, if $S$ is countably-based, then so is $\gamma(S)$.
\end{prp}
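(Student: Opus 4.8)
The plan is to adapt the round ideal completion of the abstract basis $(S,\prec)$ to the sequential, additive setting demanded by $\CatCu$. Concretely, I would take $\gamma(S)$ to be the set of $\sim$-equivalence classes of \emph{rapidly increasing sequences} $(a_n)_n$ in $S$ (those with $a_n\prec a_{n+1}$ for all $n$), where $(a_n)_n\preceq(b_m)_m$ means that for each $n$ there is $m$ with $a_n\prec b_m$, and $\sim$ is the induced symmetrization. Addition is termwise, which is well-defined and again rapidly increasing by \axiomW{3}; the order is induced by $\preceq$, the zero is the class of the zero sequence, and positivity is immediate since $0\prec b$ for all $b$. The map $\alpha$ would send $a\in S$ to the class of a cofinal rapidly increasing sequence in $a^\prec$; such a sequence exists and is unique up to $\sim$ precisely by \axiomW{1}. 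Here I record that \axiomW{1} also yields interpolation of $\prec$ (given $a'\prec a$, a cofinal sequence in $a^\prec$ produces $a'\prec a_k\prec a$), which will be used throughout.

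The first task is to check that $\gamma(S)$ is a $\CatCu$-semigroup. Axiom \axiomO{1} is the technical heart: given an increasing sequence $(x^{(m)})_m$ with chosen representatives, I would build a single rapidly increasing sequence representing $\sup_m x^{(m)}$ by a diagonal argument, repeatedly invoking $x^{(m)}\le x^{(m+1)}$ and interpolation to pass from a term of one representative to a $\prec$-larger term of the next. The point is that the diagonal terms are drawn from the given representatives, so its class dominates every $x^{(m)}$ and is dominated by any common upper bound. Axiom \axiomO{4} then follows because this construction is compatible with termwise addition, and \axiomO{3} is a direct consequence of \axiomW{3}. For \axiomO{2} I would show that for $x=[(a_n)_n]$ one has $\alpha(a_n)\ll\alpha(a_{n+1})$ and $x=\sup_n\alpha(a_n)$, so every element is a supremum of a rapidly increasing sequence.

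The crucial lemma is that $a'\prec a$ if and only if $\alpha(a')\ll\alpha(a)$. The forward direction, which simultaneously shows that $\alpha$ is a $\CatW$-morphism, uses interpolation $a'\prec a''\prec a$ together with the description of suprema from \axiomO{1}: any increasing sequence whose supremum dominates $\alpha(a)$ must, by cofinality of the representing sequence, have a term past $a''$, which forces $\alpha(a')$ below that term. The converse is exactly the embedding property~(i), and I expect it to be the main obstacle. Using $\alpha(a)=\sup_n\alpha(a_n)$ one extracts from $\alpha(a')\ll\alpha(a)$ a single $b\prec a$ with $\alpha(a')\le\alpha(b)$; the delicate step is to promote this to $a'\prec a$, which is where the interaction between the auxiliary-relation axioms and the derived order must be handled with care (it becomes transparent once the order is recoverable from $\prec$). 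By contrast, the density property~(ii) is then easy: writing $b=[(c_n)_n]$ and using $b=\sup_n\alpha(c_n)$, any $b'\ll b$ satisfies $b'\le\alpha(c_k)\le b$ for some $k$, so $a:=c_k$ witnesses~(ii).

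Finally, for the countably-based claim I would verify that if $B$ is a countable basis of $S$ then $\{\alpha(b):b\in B\}$ is a basis of $\gamma(S)$, combining the embedding and density properties: given $b'\ll b$ in $\gamma(S)$, density traps $b'$ below some $\alpha(c_k)\le b$, and refining $c_k$ through $B$ inserts a basis element in between. In summary, the genuinely substantial work is the diagonal construction underpinning \axiomO{1} and \axiomO{4} and the careful proof of the embedding direction in~(i); the remaining $\CatCu$-axioms, the $\CatW$-morphism property of $\alpha$, density, and preservation of a countable basis should all be comparatively routine once the suprema in $\gamma(S)$ and the relation $\ll$ are pinned down in terms of $\alpha$ and $\prec$.
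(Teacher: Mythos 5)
Your construction is the one the paper uses: $\gamma(S)$ is the set of classes of $\prec$-increasing sequences under mutual cofinality, addition is termwise via \axiomW{3}, \axiomO{1} comes from the diagonal argument, $\ll$ on $\gamma(S)$ is identified with the relation ``there exists $n$ with $a'_k\prec a_n$ for all $k$'', and $\alpha(a)$ is the class of a cofinal sequence in $a^\prec$ furnished by \axiomW{1}. Your routes to \axiomO{2}, \axiomO{3}, \axiomO{4}, to the density property (ii), and to the countably-based claim all agree with the paper's (which in fact delegates several of these verifications, including conditions (i), (ii) and the countably-based statement, to the reader).

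The one step your write-up does not close is precisely the one you flag: the embedding direction (i). Unwinding your identification of $\ll$, the hypothesis $\alpha(a')\ll\alpha(a)$ produces an element $c\prec a$ (a term of the chosen cofinal sequence in $a^\prec$) such that every $x\prec a'$ satisfies $x\prec c$, i.e.\ $(a')^\prec\subseteq c^\prec$. If $S$ satisfies \axiomW{2}, then $a'=\sup(a')^\prec\leq c\prec a\leq a$ and hence $a'\prec a$; this is exactly your parenthetical ``once the order is recoverable from $\prec$''. But a $\CatPreW$-semigroup is not assumed to satisfy \axiomW{2}, so as written your argument only establishes (i) for $\CatW$-semigroups, or equivalently yields the conclusion $[a']\prec[a]$ in the $\CatW$-completion $\mu(S)$ rather than $a'\prec a$ in $S$ itself. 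You need either an argument for (i) that does not pass through recovering $a'$ as the supremum of $a'^{\,\prec}$, or an explicit reduction explaining why the general $\CatPreW$ case follows; neither is present, and this is the only genuine gap in an otherwise faithful reproduction of the paper's proof.
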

\begin{proof}
To construct $\gamma(S)$, first consider the set $\overline{S}$ of $\prec$-increasing sequences in $S$.
We write such sequences as $\vect{a}=(a_k)_k=(a_1,a_2,\ldots)$.
For $\vect{a}$ and $\vect{b}$ in $\overline{S}$, we define their sum as $\vect{a}+\vect{b}=(a_k+b_k)_k$.
We define a binary relation $\subset$ on $\overline{S}$ by setting for any $\vect{a},\vect{b}\in\overline{S}$:
\[
\vect{a}\subset\vect{b}\quad
\text{ if and only if for every } k\in\N \text{ there exists } n\in\N \text{ such that } a_k\prec b_n.
\]
It is easy to check that $\subset$ is a pre-order on $\overline{S}$.
We obtain an equivalence relation by setting for any $\vect{a},\vect{b}\in\overline{S}$:
\[
\vect{a}\sim\vect{b}\quad
\text{ if and only if }\quad \vect{a}\subset\vect{b} \text{ and } \vect{b}\subset\vect{a}.
\]
We denote the set of equivalence classes by
\[
\gamma(S):=\overline{S}/\!\!\sim.
\]
For an element $\vect{a}\in\overline{S}$, we denote its class in $\gamma(S)$ by $[\vect{a}]$.
The relation $\subset$ induces a partial order $\leq$ on $\gamma(S)$ by setting $[\vect{a}]\leq[\vect{b}]$ if and only if $\vect{a}\subset\vect{b}$, for any $\vect{a},\vect{b}\in\overline{S}$.

Since $0\prec 0$ in $S$, the sequence $\vect{0}=(0,0,\ldots)$ is an element of $\overline{S}$.
We denote its class in $\gamma(S)$ by $0$.
For each $\vect{a}\in\overline{S}$, we have $\vect{0}\subset\vect{a}$, and therefore $0\leq[\vect{a}]$.
It follows from axiom \axiomW{3} for $S$ that $\vect{a}\subset\vect{b}$ implies $\vect{a}+\vect{c}\subset\vect{b}+\vect{c}$, for any $\vect{a},\vect{b},\vect{c}\in\overline{S}$.
Thus, the addition on $\overline{S}$ induces an addition on $\gamma(S)$.
Together with the partial order and the zero element, this gives $\gamma(S)$ the structure of a \pom.

We define a binary relation $\ssubset$ on $\overline{S}$ by setting for any $\vect{a},\vect{b}\in\overline{S}$:
\[
\vect{a}\ssubset\vect{b}\quad
\text{ if and only if there exists } n\in\N \text{ such that } a_k\prec b_n \text{ for all } k\in\N.
\]
It is easy to check that $\ssubset$ is an auxiliary relation on $\overline{S}$.
This induces an auxiliary relation $\prec$ on $\gamma(S)$ by setting $[\vect{a}]\prec[\vect{b}]$ if and only if $\vect{a}\ssubset\vect{b}$, for any $\vect{a},\vect{b}\in\overline{S}$.

We will now check that $\gamma(S)$ satisfies axioms \axiomO{1}-\axiomO{4}.
To show \axiomO{1}, let an increasing sequence $\vect{a}^{(1)}\subset\vect{a}^{(2)}\subset\ldots$ in $\overline{S}$ be given.
We employ a standard diagonalization argument, which is also used to show existence of suprema in the inductive limit construction in $\CatCu$;
see for example \cite[Theorem~4.34]{AraPerTom11Cu}.
Write $\vect{a}^{(k)}=(a^{(k)}_1,a^{(k)}_2,\ldots)$ for each $k$.
We inductively choose indices $n_k\in\{0,1,2,\ldots\}$ such that
\[
a^{(i)}_{n_i+j}\prec a^{(k)}_{n_k}\quad
\text{ for all }\quad i,j \text{ with } i+j\leq k.
\]
We start with $n_1:=0$.
Since $\vect{a}^{(1)}\subset\vect{a}^{(2)}$, we can choose $n_2$ such that $a^{(1)}_{n_1+1}=a^{(1)}_1\prec a^{(2)}_{n_2}$.
Now assume $n_i$ has been chosen for $i\leq k$.
Since $\vect{a}^{(1)},\ldots,\vect{a}^{(k)}\subset\vect{a}^{(k+1)}$, we can choose $n_{k+1}$ such that
\[
a^{(1)}_{n_1+k}, a^{(2)}_{n_2+k-1}, \ldots,
a^{(k-1)}_{n_{k-1}+2}, a^{(k)}_{n_k+1}
\prec a^{(k+1)}_{n_{k+1}}
\]
This completes the inductive step.
After reindexing the sequences, we may assume $n_i=i$ and therefore $a^{(i)}_{i+j}\prec a^{(i+j)}_{i+j}$ for all $i,j\geq 1$.
Set $b_j:=a^{(j)}_j$ for each $j$, and set $\vect{b}:=(b_1,b_2,\ldots)\in\overline{S}$.
It is straightforward to check that $\vect{b}$ is the supremum of the sequence $(\vect{a}^{(k)})_k$ in $\overline{S}$, that is, $\vect{a}^k\subset\vect{b}$ (for each $k$) and if $\vect{a}^k\subset \vect{b}'$ (for each $k$) then $\vect{b}\subset\vect{b}'$.
It follows
\[
[\vect{b}] = \sup_k [\vect{a}^{(k)}],
\]
in $\gamma(S)$, which verifies \axiomO{1}.
It is left to the reader to prove axiom \axiomO{4} for $\gamma(S)$.

Next, we show that $\ssubset$ induces the compact containment relation $\ll$ on $\gamma(S)$.
To that end, we first show that the analog of \axiomO{2} holds for $\ssubset$, that is, for every $\vect{a}=(a_n)_n$ in $\overline{S}$ there exist elements $\vect{a}^{(k)}\in\overline{S}$ for $k\in\N$ such that
\[
\vect{a}=\sup_k \vect{a}^{(k)},\quad
\text{ and }\quad
\vect{a}^{(1)} \ssubset\vect{a}^{(2)} \ssubset\vect{a}^{(3)} \ssubset\ldots.
\]
Given $\vect{a}\in\overline{S}$, by \axiomW{1} for $S$, for each $k$ there is a sequence $a_{[k,1]}\prec a_{[k,2]}\prec\ldots$ with $a_k\prec a_{[k,1]}$ and $a_{[k,i]}\prec a_{k+1}$ for all $i$.
Set
\[
\vect{a}^{(k)}:=(a_1,a_2,\ldots,a_k,a_{[k,1]},a_{[k,2]},\ldots).
\]
It is straightforward to check that this sequence has the desired properties.

Now let $\vect{a},\vect{b}\in\overline{S}$ satisfy $[\vect{a}]\ll[\vect{b}]$.
As explained in the previous paragraph, we can choose a sequence of elements $\vect{b}^{(k)}\in\overline{S}$ such that $\vect{b}=\sup_k\vect{b}^{(k)}$ and such that $b^{(k)}_k=b_k$ and $b^{(k)}_i\prec b_{k+1}$ for each $k,i\in\N$.
Then
\[
[\vect{a}]\ll[\vect{b}]=\sup_k[\vect{b}^{(k)}],
\]
which implies that we can choose $k$ with $[\vect{a}]\leq[\vect{b}^{(k)}]$.
This mean $\vect{a}\subset\vect{b}^{(k)}$, and it follows easily that $\vect{a}\ssubset\vect{b}$.

Conversely, let $\vect{a},\vect{b}\in\overline{S}$ satisfy $\vect{a}\ssubset\vect{b}$.
In order to show $[\vect{a}]\ll[\vect{b}]$ in $\gamma(S)$, let $(\vect{c}^{(k)})_k$ be an increasing sequence in $\overline{S}$ satisfying $[\vect{b}]\leq \sup_k[\vect{c}^{(k)}]$.
By the argument above, $\overline{S}$ is closed under suprema of increasing sequences. We can therefore define $\vect{c}=\sup_k\vect{c}^{k}$, giving $[\vect{c}]=\sup_k[\vect{c}^k]$.
Then
\[
\vect{b}\subset \vect{c}=\sup_k\vect{c}^{(k)}.
\]
After reindexing, we may assume $\vect{c}=(c^{(k)}_k)_k$.
Since $\vect{a}\ssubset\vect{b}$, we can choose an index $n$ such that $a_i\prec b_n$ for all $i$.
Since $\vect{b}\subset\vect{c}$, we can choose $m$ such that $b_n\prec c_m=c^{(m)}_m$.
It follows that $\vect{a}\subset\vect{c}^{(m)}$ and therefore $[\vect{a}]\leq[\vect{c}^{(m)}]$.
Thus, for any $\vect{a},\vect{b}\in\overline{S}$ we have shown that
\[
\vect{a}\ssubset\vect{b}\text{ in } \overline{S}\quad
\text{ if and only if }\quad
[\vect{a}]\ll[\vect{b}]\text{ in } \gamma(S).
\]
It easily follows that \axiomO{2} holds in $\gamma(S)$.

To verify \axiomO{3} for $\gamma(S)$, we first show the analog for $\overline{S}$.
In order to verify this, let $\vect{a}',\vect{a},\vect{b}',\vect{b}\in\overline{S}$ satisfy $\vect{a}'\ssubset\vect{a}$ and $\vect{b}'\ssubset\vect{b}$.
Choose indices $m$ and $n$ such that $a_k'\prec a_m$ and $b_k'\prec b_n$ for all $k$.
Set $d:=\max\{m,n\}$.
Using \axiomW{3} for $S$, we deduce
\[
a_k'+b_k'\prec a_d+b_d,
\]
for all $k$.
This shows $\vect{a}'+\vect{b}'\ssubset\vect{a}+\vect{b}$.
It easily follows that $\gamma(S)$ satisfies \axiomO{3}.
This completes the proof that $\gamma(S)$ is a $\CatCu$-semigroup.

We define the map $\alpha\colon S\to\gamma(S)$ as follows:
Given $a\in S$, we apply \axiomW{1} to choose a sequence $a_1\prec a_2\prec\ldots$ that is cofinal in $a^\prec$.
Set $\alpha(a):=[(a_1,a_2,\ldots)]$.
It is straightforward to check that $\alpha(a)$ does not depend on the choice of the cofinal sequence in $a^\prec$ and that $\alpha$ is a $\CatW$-morphism satisfying conditions (i) and (ii) of the statement.
\end{proof}

\begin{dfn}
\label{dfn:Cuification}
\index{terms}{Cu-completion@$\CatCu$-completion}
Let $S$ be a $\CatPreW$-semigroup.
A \emph{$\CatCu$-completion} of $S$ is a $\CatCu$-semigroup $T$ together with a $\CatW$-morphism $\alpha\colon S\to T$ satisfying the following universal property:
For every $\CatCu$-semigroup $R$ and for every $\CatW$-morphism $f\colon S\to R$, there exists a unique $\CatCu$-morphism $\tilde{f}\colon T\to R$ such that $f=\tilde{f}\circ\alpha$.
\end{dfn}

Any two $\CatCu$-completions of a $\CatW$-semigroup are isomorphic in the following sense:
If $\alpha_i\colon S\to T_i$ are two $\CatCu$-completions, then there is a unique isomorphism $\varphi\colon T_1\to T_2$ such that $\alpha_2=\varphi\circ\alpha_1$.

\begin{thm}
\label{thm:Cuification}
Let $S$ be a $\CatPreW$-semigroup, let $T$ be a $\CatCu$-semigroup, and let $\alpha\colon S\to T$ be a $\CatW$-morphism.
Then the following are equivalent:
\beginEnumStatements
\item
The map $\alpha$ satisfies the conditions of \autoref{prp:CuificationExists}, namely:
\beginEnumConditions
\item
The map $\alpha$ is an `embedding' in the sense that $a'\prec a$ whenever $\alpha(a')\ll\alpha(a)$, for any $a',a\in S$.
\item
The map $\alpha$ has `dense image' in the sense that for every $b',b\in T$ with $b'\ll b$ there exists $a\in S$ such that $b'\leq\alpha(a)\leq b$.
\end{enumerate}
\item
The map $\alpha$ is a $\CatCu$-completion of $S$.
\item
For every $\CatCu$-semigroup $R$ and every generalized $\CatW$-morphism $f\colon S\to R$, there exists a generalized $\CatCu$-morphism $\tilde{f}\colon T\to R$ such that $f=\tilde{f}\circ\alpha$.
Moreover, $f$ is a $\CatW$-morphism if and only if $\tilde{f}$ is a $\CatCu$-morphism.
Moreover, if $g_1,g_2\colon T\to R$ are generalized $\CatCu$-morphisms such that $g_1\circ\alpha\leq g_2\circ\alpha$, then $g_1\leq g_2$.
(We consider the pointwise ordering among morphisms.)
\end{enumerate}
\end{thm}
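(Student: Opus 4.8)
The plan is to prove the cycle \implStatements{1}{3}, \implStatements{3}{2}, \implStatements{2}{1}; all the real work sits in \implStatements{1}{3}, while the other two implications are essentially formal. For \implStatements{1}{3}, assume $\alpha$ satisfies the embedding condition (i) and the density condition (ii). Given a $\CatCu$-semigroup $R$ and a generalized $\CatW$-morphism $f\colon S\to R$, I would define
\[
\tilde f(b):=\sup\{f(a):a\in S,\ \alpha(a)\ll b\},\quad b\in T.
\]
To see the supremum exists, use \axiomO{2} in $T$ to write $b=\sup_k b_k$ with $b_k\ll b_{k+1}$, and use (ii) to pick $a_k\in S$ with $b_k\leq\alpha(a_k)\leq b_{k+1}$; the embedding (i) (which turns $\alpha(c)\ll\alpha(d)$ into $c\prec d$, hence $f(c)\leq f(d)$) shows the indexed set is upward directed with $(f(a_k))_k$ cofinal, so its supremum exists by \axiomO{1} in $R$ after passing to an increasing rearrangement. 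The identity $\tilde f\circ\alpha=f$ splits into two inequalities: $\tilde f(\alpha(a))\leq f(a)$ holds since $\alpha(a')\ll\alpha(a)$ forces $a'\prec a$ hence $f(a')\leq f(a)$; conversely, because $\alpha$ is a $\CatW$-morphism we have $a'\prec a\Rightarrow\alpha(a')\ll\alpha(a)$, and the identity $f(a)=\sup f(a^\prec)$ (valid for any continuous $\CatPom$-morphism into a $\CatW$-semigroup, the proof of \autoref{rmk:catW}(4) using only \axiomW{2} in the target) gives $f(a)\leq\tilde f(\alpha(a))$.

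That $\tilde f$ is additive, order preserving, and preserves suprema of increasing sequences is then routine from the defining formula, using \axiomW{4} for additivity and the fact that $\alpha(a)\ll b=\sup_k b_k$ localizes into some $b_k$ for continuity. The step I expect to be the main obstacle is the equivalence ``$f$ is a $\CatW$-morphism iff $\tilde f$ is a $\CatCu$-morphism''. The direction where $\tilde f$ preserves $\ll$ implies $f$ preserves the auxiliary relation is immediate from $f=\tilde f\circ\alpha$ and the fact that $\alpha$ sends $\prec$ to $\ll$. For the converse I would prove $\tilde f(b')\ll\tilde f(b)$ from $b'\ll b$ by interpolating $b'\leq b_k\ll b_{k+1}\ll b_{k+2}\ll b_{k+3}\leq b$ via \axiomO{2}, then using (ii) to choose $a,a_0$ with $b_{k+1}\leq\alpha(a)\leq b_{k+2}$ and $b_{k+3}\leq\alpha(a_0)$, $\alpha(a_0)\ll b$; then $\alpha(a)\ll\alpha(a_0)$ gives $a\prec a_0$ by (i), whence $f(a)\ll f(a_0)\leq\tilde f(b)$ because $f$ preserves $\prec$, while $b'\ll\alpha(a)$ together with (i) gives $\tilde f(b')\leq f(a)$, so that $\tilde f(b')\leq f(a)\ll\tilde f(b)$. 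Finally the order statement (if $g_1,g_2\colon T\to R$ are generalized $\CatCu$-morphisms with $g_1\circ\alpha\leq g_2\circ\alpha$ then $g_1\leq g_2$) I would prove directly and independently of $\tilde f$: with $b=\sup_k b_k$ and $a_k$ as above,
\[
g_1(b)=\sup_k g_1(b_k)\leq\sup_k g_1(\alpha(a_k))\leq\sup_k g_2(\alpha(a_k))\leq g_2(b),
\]
using that $g_1,g_2$ preserve suprema of increasing sequences.

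For \implStatements{3}{2}, specializing (3) to an honest $\CatW$-morphism $f\colon S\to R$ yields a lift $\tilde f$ which is a $\CatCu$-morphism by the ``moreover'' clause; and if $\tilde f_1,\tilde f_2$ both lift $f$, then $\tilde f_1\circ\alpha=\tilde f_2\circ\alpha$ forces $\tilde f_1\leq\tilde f_2$ and $\tilde f_2\leq\tilde f_1$ by the order statement, hence $\tilde f_1=\tilde f_2$. This is exactly the universal property of \autoref{dfn:Cuification}, so $\alpha$ is a $\CatCu$-completion.

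For \implStatements{2}{1}, \autoref{prp:CuificationExists} provides a $\CatW$-morphism $\alpha_0\colon S\to\gamma(S)$ satisfying (i) and (ii), hence satisfying (1); by the implications already established, $\alpha_0$ is a $\CatCu$-completion of $S$. Since $\alpha$ is assumed to be a $\CatCu$-completion as well, the uniqueness of $\CatCu$-completions noted after \autoref{dfn:Cuification} yields a $\CatCu$-isomorphism $\varphi\colon\gamma(S)\to T$ with $\alpha=\varphi\circ\alpha_0$. As $\varphi$ and $\varphi^{-1}$ preserve both $\leq$ and $\ll$, the embedding and density conditions transport from $\alpha_0$ to $\alpha$, giving (1) and closing the cycle.
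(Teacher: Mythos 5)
Your proposal is correct and follows essentially the same route as the paper: the same cycle of implications, the same formula $\tilde f(t)=\sup\{f(a):\alpha(a)\ll t\}$ for \implStatements{1}{3}, and the same use of \autoref{prp:CuificationExists} together with uniqueness of $\CatCu$-completions for \implStatements{2}{1}. The only differences are cosmetic (your chain interpolation for showing $\tilde f$ preserves $\ll$ versus the paper's single interpolant plus continuity of $\alpha$, and your spelling out of \implStatements{3}{2}, which the paper dismisses as clear).
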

\begin{proof}
The implication `\implStatements{3}{2}' is clear.
To show the implication `\implStatements{1}{3}', let $R$ be a $\CatCu$-semigroup, and let $f\colon S\to R$ be a generalized $\CatW$-morphism.
It follows from conditions (i) and (ii) that for every $t\in T$, the set $\left\{ a\in S : \alpha(a)\ll t \right\}$ is $\prec$-upwards directed and contains a cofinal sequence.
Thus, we may define
\[
\tilde{f}\colon T\to R,\quad
\tilde{f}(t) := \sup \left\{f(a) : \alpha(a)\ll t \right\},\quad
\txtFA t\in T.
\]
Let us show that $\tilde{f}$ is a generalized $\Cu$-morphism.
Let $t_1,t_2\in T$.
If $t_1\leq t_2$, then it follows from condition (ii) of an auxiliary relation (see \autoref{pgr:axiomsW}) that
\[
\left\{ a\in S : \alpha(a)\ll t_1 \right\}
\subset \left\{ a\in S : \alpha(a)\ll t_2 \right\},
\]
and therefore $\tilde{f}(t_1)\leq\tilde{f}(t_2)$.

Similarly, it follows from axiom \axiomW{3} for $S$ that there is an inclusion
\[
\left\{ a\in S : \alpha(a)\ll t_1 \right\}
+ \left\{ a\in S : \alpha(a)\ll t_2 \right\}
\subset \left\{ a\in S : \alpha(a)\ll t_1+t_2 \right\},
\]
which is moreover cofinal by \axiomW{4}.
It follows that $\tilde{f}(t_1+t_2)=\tilde{f}(t_1)+\tilde{f}(t_2)$.
It is easy to check that $\tilde{f}$ is continuous in the sense of \autoref{dfn:catW}.
By \autoref{prp:CuWMor}, this implies that $\tilde{f}$ preserves suprema of increasing sequences.
Thus, $\tilde{f}$ is a generalized $\CatCu$-morphism.

To show $f=\tilde{f}\circ\alpha$, let $a\in S$ be given.
Using at the second step that $\alpha$ preserves the auxiliary relations and satisfies condition (i), and at the third step that $f$ is continuous, we obtain
\begin{align*}
\tilde{f}\circ\alpha(a)
=\sup \left\{ f(a') : \alpha(a')\ll\alpha(a) \right\}
=\sup \left\{ f(a') : a'\prec a \right\}
=f(a).
\end{align*}

We claim that if $f$ is additionally assumed to preserve the auxiliary relation, then so does $\tilde{f}$.
To see this, let $t',t\in T$ satisfy $t'\ll t$.
Choose $x\in T$ such that $t'\ll x\ll t$.
By condition (ii) for $\alpha$, we can choose $a\in S$ with $x\leq\alpha(a)\leq t$.
Then $t'\ll\alpha(a)$, and since $\alpha$ is continuous, we can choose $a'\in S$ satisfying $a'\prec a$ and $t'\leq\alpha(a')$.
We deduce
\[
\tilde{f}(t')
\leq \tilde{f}(\alpha(a'))
=f(a')
\ll f(a)
= \tilde{f}(\alpha(a))
\leq \tilde{f}(t).
\]

Finally, assume $g_1,g_2\colon T\to R$ are generalized $\CatCu$-morphisms satisfying $g_1\circ\alpha\leq g_2\circ\alpha$.
Given $t\in T$, choose an increasing sequence $(a_k)_k$ in $S$ with $t=\sup_k\alpha(a_k)$.
Then
\[
g_1(t)
=g_1(\sup_k\alpha(a_k))
=\sup_k (g_1\circ\alpha)(a_k)
\leq\sup_k(g_2\circ\alpha)(a_k)
=g_2(t),
\]
which shows $g_1\leq g_2$.

Let us show the implication `\implStatements{2}{1}'.
By \autoref{prp:CuificationExists}, we can choose a $\CatCu$-semigroup $\gamma(S)$ and a $\CatW$-morphism $\tilde{\alpha}\colon S\to\gamma(S)$ satisfying (1).
We have seen that (1) implies (2).
Thus, $\tilde{\alpha}$ is a $\CatCu$-completion satisfying (1).
Since every two $\CatCu$-completions of $S$ are isomorphic, it follows that every $\Cu$-completion satisfies (1), as desired.
\end{proof}

\begin{rmks}
\label{rmk:Cuification}
(1)
Let $S$ be a $\CatPreW$-semigroup.
By \autoref{prp:CuificationExists} and \autoref{thm:Cuification}, we can choose a $\CatCu$-completion $\alpha\colon S\to\gamma(S)$.
Given a $\CatCu$-semi\-group $R$, assigning to a (generalized) $\CatCu$-morphism $f\colon \gamma(S)\to R$ the (generalized) $\CatW$-morphism $f\circ\alpha$ is an isomorphism of the following morphism sets:
\[
\xymatrix@R=15pt@C=15pt@M+3pt{
\CatWGenMor{S,R} & \CatCuGenMor{\gamma(S),R} \ar[l]_{\cong}\\
\CatWMor(S,R) \ar@{^{(}->}[u]
& \CatCuMor(\gamma(S),R) \ar@{^{(}->}[u] \ar[l]_{\cong}.
}
\]

(2)
Given a $\CatPreW$-semigroup $S$, let $\alpha\colon S\to\gamma(S)$ be its $\CatCu$-completion, as constructed in \autoref{prp:CuificationExists}.
As remarked in \autoref{rmk:catW}(2), $S$ satisfies \axiomW{2} if and only if $a^\prec\subset b^\prec$ implies $a\leq b$.
It follows that $\alpha$ is an order-embedding if and only if $S$ is a $\CatW$-semigroup.

(3)
More generally, let $\alpha\colon S\to T$ be a $\CatW$-morphism from a $\CatW$-semigroup $S$ to a $\CatCu$-semigroup $T$.
Then $\alpha$ is a $\CatCu$-completion if and only if $\alpha$ is an order-embedding that has `dense image' in the sense of condition (ii) from \autoref{thm:Cuification}.

Necessity follows from (2) above.
For the converse, assume $\alpha$ is an order-embedding.
Let $a',a\in S$ such that $\alpha(a')\ll\alpha(a)$.
Since $\alpha$ is continuous, we can choose $x\in S$ with $x\prec a$ and $\alpha(a')\leq\alpha(x)$.
Since $\alpha$ is an order-embedding, $a'\leq x$ and then $a'\prec a$, as desired.
\end{rmks}

By \autoref{pgr:CufullW}, the category $\CatCu$ is a full subcategory of $\CatPreW$.
Moreover, for every $\CatPreW$-semigroup $S$, there exists a $\CatCu$-completion.
As described in \cite[\S~2]{KeiLaw09DCompletion}, this induces a reflection functor $\gamma\colon\CatPreW\to\CatCu$ (see also \autoref{rmk:Cuification}(1)).
\index{symbols}{$\gamma(S)$ \quad (reflection from $\CatPreW$ in $\CatCu$)}

\begin{thm}
\label{prp:CureflectivePreW}
The category $\CatCu$ is a full, reflective subcategory of $\CatPreW$.
\end{thm}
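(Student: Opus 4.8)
The plan is to establish the two requirements separately: that $\CatCu$ sits as a \emph{full} subcategory of $\CatPreW$, and that the inclusion functor $\iota\colon\CatCu\to\CatPreW$ admits a left adjoint (the reflection). The substantive work has already been carried out in \autoref{prp:CuificationExists} and \autoref{thm:Cuification}; what remains is to repackage it in the language of adjunctions, exactly as was done for $\CatW$ in \autoref{prp:WreflectivePreW}.

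For fullness, I would invoke \autoref{pgr:CufullW}: the assignment $S\mapsto(S,\ll)$ realizes $\CatCu$ as a full subcategory of $\CatW$, where \autoref{prp:CuWMor} is precisely what identifies $\CatCu$-morphisms with the relevant $\CatW$-morphisms. Since $\CatW$ is by definition a full subcategory of $\CatPreW$, composing the two embeddings shows that $\CatCu$ is full in $\CatPreW$. In particular, for a $\CatCu$-semigroup $R$ the $\CatCu$-morphisms into $R$ coincide with the $\CatPreW$-morphisms into $R$, which is the fact I shall use below to match hom-sets.

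For reflectivity, the key input is that every $\CatPreW$-semigroup $S$ has a $\CatCu$-completion. By \autoref{prp:CuificationExists} one may choose a $\CatCu$-semigroup $\gamma(S)$ together with a $\CatW$-morphism $\alpha_S\colon S\to\gamma(S)$ satisfying conditions (i) and (ii), and by the implication \implStatements{1}{2} of \autoref{thm:Cuification} this $\alpha_S$ is a $\CatCu$-completion in the sense of \autoref{dfn:Cuification}. Unwinding that definition, and using that $\CatPreW$-morphisms are precisely $\CatW$-morphisms while $\CatCu$-morphisms into $R$ are precisely $\CatPreW$-morphisms into $R$, the universal property says exactly that $\alpha_S$ is a universal arrow from $S$ to $\iota$: every $\CatPreW$-morphism $f\colon S\to\iota(R)$ factors uniquely as $f=\tilde f\circ\alpha_S$ with $\tilde f\colon\gamma(S)\to R$ a $\CatCu$-morphism.

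It then follows from the standard theory of adjoint functors --- a choice of universal arrow at each object promotes $\gamma$ to a functor left adjoint to $\iota$ --- that $\CatCu$ is reflective; this is the argument of \cite[\S~2]{KeiLaw09DCompletion} (see also \cite[Proposition~3.2.2, p.106]{Bor94HandbookCat1}). Concretely, for a $\CatW$-morphism $h\colon S\to S'$ one defines $\gamma(h)$ to be the unique $\CatCu$-morphism with $\gamma(h)\circ\alpha_S=\alpha_{S'}\circ h$, existence and uniqueness being supplied by the universal property applied to $\alpha_{S'}\circ h\colon S\to\gamma(S')$; compatibility with identities and composition is then forced by the same uniqueness. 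The resulting natural bijection $\CatCuMor(\gamma(S),R)\cong\CatWMor(S,R)$, already recorded in \autoref{rmk:Cuification}(1), is the adjunction $\gamma\dashv\iota$. The only point needing care --- a bookkeeping matter rather than a genuine obstacle --- is the hom-set matching just described: the universal property is phrased for $\CatW$-morphisms out of a semigroup $S$ that need not satisfy \axiomW{2}, and one must check this is the correct morphism set for $\CatPreW$, which it is by \autoref{dfn:catW} together with fullness of $\CatCu$ in $\CatPreW$.
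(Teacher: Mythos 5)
Your proposal is correct and follows essentially the same route as the paper: fullness via \autoref{pgr:CufullW} (i.e.\ \autoref{prp:CuWMor}) composed with the full inclusion $\CatW\subset\CatPreW$, and reflectivity from the existence of $\CatCu$-completions (\autoref{prp:CuificationExists} together with \autoref{thm:Cuification}) assembled into a left adjoint by the standard universal-arrow argument of \cite[\S~2]{KeiLaw09DCompletion}. The paper delegates the functoriality bookkeeping to the references, whereas you spell it out, which is fine.
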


As noticed before \autoref{prp:limitsW}, it follows from general category theory that the category $\CatCu$ has inductive limits and that the reflection functor $\gamma\colon\CatPreW\to\CatCu$ is continuous.
Thus, we obtain the following generalization of \cite[Theorem~2]{CowEllIva08CuInv}.

\begin{cor}
\label{prp:limitsCu}
\index{terms}{inductive limits@inductive limits!in $\CatCu$}
\index{symbols}{CuLim@$\CatCuLim$ \quad (inductive limit in $\CatCu$)}
The category $\CatCu$ has inductive limits. More precisely, let $((S_i)_{i\in I},(\varphi_{i,j})_{i,j\in I, i\leq j})$ be an inductive system in $\CatCu$.
Then the inductive limit in $\CatCu$ is the $\CatCu$-completion of the inductive limit in $\CatPreW$:
\[
\CatCuLim S_i = \gamma(\CatPreWLim S_i).
\]
\end{cor}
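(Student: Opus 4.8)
The plan is to derive the statement from the two structural facts already established: that $\CatPreW$ has inductive limits (\autoref{prp:limitsPreW}), and that $\CatCu$ is a full, reflective subcategory of $\CatPreW$ with reflection functor $\gamma$ (\autoref{prp:CureflectivePreW}). Being a reflection, $\gamma$ is left adjoint to the inclusion $\CatCu\hookrightarrow\CatPreW$, and a left adjoint preserves all colimits that exist in its domain, in particular inductive limits. This is the general categorical principle recorded in \cite[Proposition~3.2.2, p.106]{Bor94HandbookCat1} and already invoked for $\CatW$ before \autoref{prp:limitsW}; here it is applied verbatim to $\gamma$ in place of $\mu$.

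Concretely, I would first regard the given system $((S_i),(\varphi_{i,j}))$, which lives in $\CatCu$, as an inductive system in the ambient category $\CatPreW$. This is legitimate since $\CatCu$ is a full subcategory (\autoref{pgr:CufullW}) and the connecting maps are $\CatW$-morphisms. By \autoref{prp:limitsPreW} its inductive limit in $\CatPreW$ exists and is $\CatPreWLim S_i$, with structure maps $\varphi_{i,\infty}$. Applying the colimit-preserving functor $\gamma$, the object $\gamma(\CatPreWLim S_i)$ is the inductive limit in $\CatCu$ of the system $((\gamma(S_i)),(\gamma(\varphi_{i,j})))$, with the induced structure maps.

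The remaining point is to identify this latter system with the original one. Since each $S_i$ already satisfies \axiomO{1}--\axiomO{4}, it is a $\CatCu$-semigroup, and on such a semigroup the auxiliary relation is $\ll$; hence the identity map is a $\CatCu$-completion in the sense of \autoref{thm:Cuification}, so the unit $S_i\to\gamma(S_i)$ is an isomorphism. Under these isomorphisms $\gamma(\varphi_{i,j})$ is identified with $\varphi_{i,j}$, so the two inductive systems coincide and therefore share the same inductive limit in $\CatCu$. This yields the asserted equality $\CatCuLim S_i=\gamma(\CatPreWLim S_i)$, and in particular shows that $\CatCu$ has inductive limits.

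I expect no serious obstacle, since all the real work is contained in \autoref{prp:limitsPreW} and \autoref{prp:CureflectivePreW}; the only thing to be careful about is the standard fact that left adjoints preserve inductive limits, together with the observation that $\gamma$ restricts to a functor naturally isomorphic to the identity on the full subcategory $\CatCu$. As an alternative to citing the abstract principle, one could verify the universal property directly: given a cocone $(h_i\colon S_i\to R)$ of $\CatCu$-morphisms into a $\CatCu$-semigroup $R$, the universal property of $\CatPreWLim S_i$ produces a unique $\CatW$-morphism $\bar h\colon\CatPreWLim S_i\to R$, which by the universal property of the $\CatCu$-completion (\autoref{thm:Cuification}) factors uniquely through $\gamma(\CatPreWLim S_i)$ as a $\CatCu$-morphism; chasing uniqueness through both universal properties then gives the claim.
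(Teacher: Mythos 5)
Your proposal is correct and follows essentially the same route as the paper: the paper likewise deduces the corollary from \autoref{prp:limitsPreW} together with \autoref{prp:CureflectivePreW}, citing the general fact that a full, reflective subcategory of a category with inductive limits has inductive limits computed by applying the reflector (the same principle invoked before \autoref{prp:limitsW} for $\mu$). Your extra remarks — identifying $\gamma(S_i)\cong S_i$ for $S_i$ already in $\CatCu$, and the optional direct verification via the two universal properties — are just the details the paper leaves implicit.
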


\begin{rmk}
\label{rmk:reflectors}
The reflection functors $\mu\colon\CatPreW\to\CatW$ from \autoref{pgr:WreflectivePreW} and $\gamma\colon\CatPreW\to\CatCu$ from \autoref{prp:CureflectivePreW} commute in the sense that the composed functor $\gamma\circ\mu\colon\CatPreW\to\CatCu$ is naturally isomorphic to $\gamma$.
The situation is shown in the following diagram:
\[
\xymatrix{
\CatPreW \ar[r]^{\mu} \ar@/_1.5pc/[rr]^{\gamma}
& \CatW \ar[r]^{\gamma}
& \CatCu \\
}
\]
More precisely, starting with a $\CatPreW$-semigroup $S$, let us first consider the universal $\CatW$-morphism $\beta_S\colon S\to\mu(S)$ from $S$ to its $\CatW$-completion.
There is a universal $\CatW$-morphism $\alpha_{\mu(S)}\colon\mu(S)\to\gamma(\mu(S))$ to the $\CatCu$-completion of $\mu(S)$.
The composition $\alpha_{\mu(S)}\circ\beta_S\colon S\to\gamma(\mu(S))$ is a $\CatCu$-completion of $S$.
Since any two $\CatCu$-completions are isomorphic, there is an isomorphism $\gamma(S)\cong\gamma(\mu(S))$ which intertwines $\alpha_S$ and $\alpha_{\mu(S)}\circ\beta_S$.
\end{rmk}

\vspace{5pt}
\section{The completed Cuntz semigroup of a \texorpdfstring{{C}*-algebra}{{C}*-algebra}}

\begin{pgr}
\label{pgr:CufromCAlg}
Let $A$ be \ca.
In \cite{CowEllIva08CuInv}, a new definition of the Cuntz semigroup is introduced as $\Cu(A)=(A\otimes\K)_+/\!\!\sim$, the set of Cuntz equivalence classes of positive elements in the stabilization of $A$.
(See \autoref{pgr:CuntzComparison} for the definition of Cuntz equivalence.)
The relation of Cuntz subequivalence induces a partial order on $\Cu(A)$.
Using an isomorphism $M_2(\K)\cong\K$ we get an isomorphism $\psi\colon M_2(A\otimes\K)\to A\otimes\K$, which is used to obtain a well-defined addition $[x]+[y]=[\psi(\begin{smallmatrix}x & 0 \\ 0 & y \end{smallmatrix})]$.
With this structure, $\Cu(A)$ becomes a \pom.
\end{pgr}

\begin{dfn}
\label{dfn:CufromCAlg}
\index{terms}{Cuntz semigroup!completed}
\index{symbols}{Cu(A)@$\Cu(A)$ \quad (completed Cuntz semigroup)}
Let $A$ be a \ca.
We call
\[
\Cu(A) :=(A\otimes\K)_+/\!\!\sim
\]
the \emph{completed Cuntz semigroup} of $A$.
\end{dfn}

\begin{prp}[{\cite[Theorem~1]{CowEllIva08CuInv}}]
\label{prp:CufromCAlg}
Let $A$ be a \ca.
Then the \pom\ $\Cu(A)$ satisfies \axiomO{1}-\axiomO{4} from \autoref{pgr:axiomsCu} and is therefore a $\Cu$-semigroup.
If $A$ is separable, then $\Cu(A)$ is countably-based.
\end{prp}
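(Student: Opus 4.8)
The plan is to leverage the $\CatW$-machinery already in place and to isolate the single genuinely analytic axiom, namely the existence of suprema \axiomO{1}. First I would observe that every \ca\ is in particular a local \ca, so \autoref{prp:WfromCAlg} applies to the stabilization $A\otimes\K$. Viewing $M_\infty(A\otimes\K)$ inside $(A\otimes\K)\otimes\K\cong A\otimes\K$ and noting that this inclusion is a bijection on Cuntz classes (as $A\otimes\K$ is already stable, so that comparison in the corner and in the stabilization agree), one identifies $W(A\otimes\K)$ with $\Cu(A)=(A\otimes\K)_+/\!\sim$ as a \pom\ carrying the cutdown auxiliary relation $\prec$ of \autoref{dfn:WfromCAlg}. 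Thus \emph{for free}, $(\Cu(A),\prec)$ satisfies \axiomW{1}, \axiomW{3} and \axiomW{4}, and the last assertion is immediate: if $A$ is separable then so is $A\otimes\K$, and \autoref{prp:WfromCAlg} gives that $\Cu(A)$ is countably-based.

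The heart of the matter is \axiomO{1}. Given an increasing sequence $([a_n])_n$ in $\Cu(A)$, I would use \axiomW{1} to pass to a $\prec$-rapidly increasing, cofinal subsequence, so without loss of generality $[a_n]\prec[a_{n+1}]$. Using R{\o}rdam's comparison lemma from \autoref{pgr:CuntzComparison}, I would select representatives together with contractions implementing the subequivalences of the form $(a_n-\varepsilon_n)_+ = r_n\,(a_{n+1}-\delta_{n+1})_+\,r_n^*$, and then exploit the stability of $A\otimes\K$ to house mutually orthogonal copies of the relevant elements. A diagonalisation/telescoping construction, directly analogous to the one carried out for \axiomO{1} in the proof of \autoref{prp:CuificationExists}, produces a single positive element $a\in A\otimes\K$ whose class dominates every $[a_n]$ and is dominated by every common upper bound, so that $[a]=\sup_n[a_n]$. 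This is the step where stability is indispensable, and it is precisely the content that the abstract $\CatW$-axioms do not supply: the analogous supremum need not exist in $W(A)$ itself.

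With \axiomO{1} in hand, the remaining axioms are formal. For \axiomO{2} I would show that the cutdown relation coincides with way-below on $\Cu(A)$: one inclusion, $[(a-\varepsilon)_+]\ll[a]$, is the standard fact that cutdowns are compactly contained, now meaningful since \axiomO{1} guarantees the ambient suprema, while the reverse inclusion $\ll\,\subseteq\,\prec$ is \autoref{rmk:catW}(3). Combined with $[a]=\sup_n[(a-1/n)_+]$, this exhibits $[a]$ as the supremum of the rapidly increasing sequence $\bigl([(a-1/n)_+]\bigr)_n$, giving \axiomO{2}. Axiom \axiomO{3} then transfers directly from \axiomW{3} via the identification $\ll\,=\,\prec$, and \axiomO{4} follows from \axiomO{1} together with the compatibility of the orthogonal sum with the limit construction.

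I expect \axiomO{1} to be the sole real obstacle. It is the one point where the abstract $\CatW$-framework is insufficient and one must return to the \ca, using stability and R{\o}rdam's lemma to assemble a limit positive element by a diagonal argument; every other axiom is either inherited from \autoref{prp:WfromCAlg} or is a short consequence of the existence of suprema. In particular this recovers \cite[Theorem~1]{CowEllIva08CuInv} within the present language.
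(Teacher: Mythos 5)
The paper does not actually prove this proposition; it is imported wholesale from \cite{CowEllIva08CuInv} as a black box, and indeed the later results (\autoref{prp:commutingFctrs} in particular) rely on it rather than reprove it. So your proposal is not a variant of the paper's argument but a reconstruction of the external one, and as an outline it is the standard and correct route: identifying $\Cu(A)$ with $W(A\otimes\K)$ via \autoref{rmk:CufromCAlg}(2) does hand you \axiomW{1}, \axiomW{3}, \axiomW{4} and the countably-based statement from \autoref{prp:WfromCAlg}, your derivation of \axiomO{2} from the two inclusions $\prec\,\subseteq\,\ll$ (cutdowns are way-below) and $\ll\,\subseteq\,\prec$ (\autoref{rmk:catW}(3), which needs \axiomW{1} and \axiomW{2}) is right, and \axiomO{3} then transfers from \axiomW{3}. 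What your approach buys over the citation is a clean separation of the formal content (everything the $\CatW$-framework already supplies) from the single analytic input, namely \axiomO{1}.

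Two places are thinner than they should be. First, the analogy with the proof of \axiomO{1} in \autoref{prp:CuificationExists} is only partial: that argument is purely order-theoretic and produces a formal diagonal sequence, whereas here you must manufacture an actual positive element of $A\otimes\K$. The standard mechanism is to use R{\o}rdam's lemma to build elements $b_n$ with $(b_n-2^{-n})_+$ Cuntz equivalent to suitable cutdowns of the $a_n$, arranged so that $\|b_n-b_{n+1}\|<2^{-n}$, and then invoke the estimate that $\|x-y\|<\varepsilon$ implies $(x-\varepsilon)_+\precsim y$ to identify the class of the norm limit as the supremum; without saying this much, ``diagonalisation/telescoping'' does not yet constitute a proof of the only nontrivial axiom. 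Second, \axiomO{4} does not follow formally from \axiomO{1}--\axiomO{3}; it requires revisiting the explicit construction of suprema and checking compatibility with orthogonal sums (again via stability), so ``compatibility of the orthogonal sum with the limit construction'' is the correct idea but should be carried out rather than asserted.
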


\begin{rmks}
\label{rmk:CufromCAlg}
(1)
We will show in \autoref{prp:commutingFctrs} that $\Cu(A)$ is isomorphic to the $\CatCu$-completion of $\W(A)$.
This is why we call $\Cu(A)$ the \emph{completed} Cuntz semigroup, and $W(A)$ the \emph{pre-completed} Cuntz semigroup of $A$.

(2)
Another way of looking at $\Cu(A)$ is to identify it with $W(A\otimes\K)$.
In fact, the \starHom\ $A\otimes\K\to M_\infty(A\otimes\K)$ given by embedding an element in the upper-left corner induces a bijection of Cuntz equivalence classes, respecting the given order and addition.
Thus, as a \pom, $\Cu(A)$ is nothing but $W(A\otimes\K)$.
The auxiliary relation $\prec$ on $W(A\otimes\K)$ as defined in \autoref{dfn:WfromCAlg} is precisely the compact containment relation, which is deduced from the order structure.

Indeed, it was shown in \cite{CowEllIva08CuInv} that for every $b\in (A\otimes\K)_+$ and $\varepsilon>0$ we have $[(b-\varepsilon)_+]\ll [b]$ in $W(A\otimes\K)$.
It follows in particular that $[p]\ll [p]$ for any projection $p$, so projections are a natural source of compact elements in $\Cu(A)$ (sometimes the only source; see \cite{BroCiu09IsoHilbert}).
Given $a,b\in (A\otimes\K)_+$, we have by definition that $[a]\prec [b]$ if and only if $[a]\leq [(b-\varepsilon)_+]$ for some $\varepsilon>0$.
It follows that
\[
[a]\prec [b]\text{ in } \W(A\otimes\K)\quad
\text{ if and only if}\quad
[a]\ll [b]\text{ in } \Cu(A)=W(A\otimes\K).
\]
\end{rmks}

\begin{pgr}
\label{pgr:morphisms}
Let $\varphi\colon A\to B$ be a \starHom\ (respectively a c.p.c.\ order-zero map).
Then $\varphi$ naturally extends to a \starHom\ (respectively a c.p.c.\ order-zero map) between the stabilizations, which we denote by $\bar{\varphi}\colon A\otimes\K\to B\otimes\K$.
By \autoref{prp:mor_from_cpc0}, this induces a (generalized) $\CatW$-morphism $W(\bar{\varphi})\colon W(A\otimes\K)\to W(B\otimes\K)$.
Using the identification $\Cu(A)=W(A\otimes\K)$ (see \autoref{rmk:CufromCAlg}(2)), and \autoref{prp:CuWMor}, the map $W(\bar{\varphi})$ corresponds to a (generalized) $\CatCu$-morphism
\[
\Cu(\varphi)\colon\Cu(A)\to\Cu(B).
\]
One obtains a functor $\Cu\colon\CatCa\to\CatCu$;
see \cite[Theorem~2]{CowEllIva08CuInv}.

We have seen that c.p.c.\ order-zero maps between \ca{s} naturally induce generalized $\CatCu$-morphisms between their respective completed Cuntz semigroups.
Another source of generalized $\CatCu$-morphisms are lower-semicontinuous $2$-quasitraces.
For each \ca\ $A$, these are in natural one-to-one correspondence with the generalized $\CatCu$-morphisms from $\Cu(A)$ to the extended positive real line, $[0,\infty]$, also called the functionals on $\Cu(A)$; see \autoref{pgr:fctl}.
We refer the reader to \cite[Section~2.9]{BlaKir04PureInf} and \cite[Section~4]{EllRobSan11Cone} for details.
\end{pgr}

\begin{pgr}
\label{pgr:connectionFunctors}
\index{symbols}{C*@$\CatCa$ \quad (category of \ca{s})}
Let us clarify the connection between the functors
\[
\W\colon\CatLocCa\to\CatW\quad
\text{ and }\quad
\Cu\colon\CatCa\to\CatCu.
\]
The category $\CatCa$ of \ca{s} with \starHom{s} is a full, reflective subcategory of $\CatLocCa$.
Indeed, assigning to a local \ca\ $A$ its completion $\overline{A}$ extends to a functor $\gamma\colon\CatLocCa\to\CatCa$ which is left adjoint to the inclusion of $\CatCa$ in $\CatLocCa$.\index{terms}{category@category!$\CatCa$}

On the other hand, we have the functor $\gamma\colon\CatW\to\CatCu$ from \autoref{prp:CureflectivePreW}.
In \autoref{prp:commutingFctrs}, we will show that the functors $\W$ and $\Cu$ are intertwined by these completion functors.
\end{pgr}

\begin{lma}
\label{prp:CuntzCompCompletion}
Let $A$ be a local \ca, considered as a dense subalgebra of its completion $\overline{A}$.
Then:
\beginEnumStatements
\item
For $x,y\in A_+$, we have $x\precsim y$ in $A$ if and only if $x\precsim y$ in $\overline{A}$.
\item
For every $x\in \overline{A}_+$ and $\varepsilon>0$, there exists $y\in A$ such that $(x-\varepsilon)_+\precsim y\precsim x$ in $\overline{A}$.
\end{enumerate}
\end{lma}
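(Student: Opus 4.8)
The plan is to prove the two parts separately, using Rørdam's characterisation of Cuntz comparison recalled in \autoref{pgr:CuntzComparison} together with two structural facts about a local \ca{} $A\subset\overline{A}$: that $A$ is \emph{dense} in $\overline{A}$, and that $A$ is closed under continuous functional calculus. The latter means that for $a\in A_+$ and $\varepsilon>0$ the cutdown $(a-\varepsilon)_+$ again lies in $A$, and that $s a s^*\in A$ whenever $s,a\in A$; these closure properties are exactly what will let me keep all witnessing elements inside $A$ rather than merely in $\overline{A}$.

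For part~(1), the forward implication is immediate: a witnessing sequence $(z_n)\subset A$ for $x\precsim y$ in $A$ also witnesses $x\precsim y$ in $\overline{A}$. The content is the converse, which I would prove as follows. Assume $x\precsim y$ in $\overline{A}$ and fix $\varepsilon>0$. Applying the implication \implStatements{1}{3} of \autoref{pgr:CuntzComparison} in the \ca{} $\overline{A}$ at the cutdown level $\varepsilon/2$ produces $\delta>0$ and $r\in\overline{A}$ with $(x-\varepsilon/2)_+=r(y-\delta)_+r^*$. Using density I would choose $s\in A$ with $\|s-r\|$ so small that $w:=s(y-\delta)_+s^*\in A$ satisfies $\|(x-\varepsilon/2)_+-w\|<\varepsilon/2$. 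Rørdam's lemma \cite[Lemma~2.2]{Ror92StructureUHF2} then gives $(x-\varepsilon)_+=\bigl((x-\varepsilon/2)_+-\varepsilon/2\bigr)_+\precsim w$, and crucially the connecting element supplied by that lemma lies in the \ca{} generated by $(x-\varepsilon/2)_+$ and $w$, hence in $A$. Since also $w=s(y-\delta)_+s^*\precsim(y-\delta)_+$ in $A$ (witnessed by the constant sequence $s^*$), transitivity yields $(x-\varepsilon)_+\precsim(y-\delta)_+$ in $A$. As $\varepsilon>0$ is arbitrary, condition~(2) of \autoref{pgr:CuntzComparison} holds in $A$, and \implStatements{2}{1} gives $x\precsim y$ in $A$.

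For part~(2), fix $x\in\overline{A}_+$ and $\varepsilon>0$; here every comparison already takes place in $\overline{A}$, so the only job of $A$ is to house $y$. First I would approximate $x$ by a positive element of $A$: picking $a\in A$ with $\|x-a\|<\varepsilon/8$ and passing to $b:=\bigl((a+a^*)/2\bigr)_+\in A_+$ (using ${}^*$-closedness and functional calculus) yields $b\in A_+$ with $\|x-b\|<\varepsilon/4$. I would then set $y:=(b-3\varepsilon/4)_+\in A$. On the one hand, $(b-3\varepsilon/4)_+\precsim(b-\varepsilon/4)_+\precsim x$, the last step by \cite[Lemma~2.2]{Ror92StructureUHF2} since $\|b-x\|<\varepsilon/4$; hence $y\precsim x$. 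On the other hand, the comparison-with-constants estimate (if $\|u-v\|\le\eta$ then $(u-(\sigma+\eta))_+\precsim(v-\sigma)_+$; see \cite[Lemma~2.2]{Ror92StructureUHF2} and \cite[Lemma~2.5]{KirRor00PureInf}), applied with $\eta=\varepsilon/4$ and $\sigma=3\varepsilon/4$, gives $(x-\varepsilon)_+\precsim(b-3\varepsilon/4)_+=y$. This establishes $(x-\varepsilon)_+\precsim y\precsim x$.

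The main obstacle is the converse in part~(1): \emph{a priori} the element $r$ implementing $(x-\varepsilon/2)_+=r(y-\delta)_+r^*$ lives only in $\overline{A}$, so one cannot directly read off a comparison inside $A$ (indeed Cuntz comparison in a non-dense subalgebra genuinely differs from that in the ambient algebra). The resolution is to combine density, which replaces $r$ by $s\in A$ up to a controlled norm error, with closure of local \ca{s} under functional calculus, which keeps both the perturbed element $w$ and the connecting element furnished by Rørdam's lemma inside $A$. A secondary point worth flagging in part~(2) is that one must invoke the genuine comparison-with-constants lemma rather than a naive Lipschitz bound on $a\mapsto(a-\varepsilon)_+$, since that map is not operator-Lipschitz.
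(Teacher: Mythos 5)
Your proposal is correct. Part~(2) is essentially the paper's argument: the paper likewise picks $z\in A$ with $\|x-z\|<\varepsilon/2$ and takes $y:=(z-\varepsilon/2)_+$, applying R{\o}rdam's perturbation lemma in both directions; your version only adds the explicit self-adjointification step and uses slightly different constants. Part~(1) is where you genuinely diverge. The paper's converse is much lighter: from $x=\lim_k z_k^*yz_k$ with $z_k\in\overline{A}$ it approximates each $z_k$ by $w_k\in A$ with $\|w_k^*yw_k-z_k^*yz_k\|<1/k$ and concludes $x=\lim_k w_k^*yw_k$ directly from the definition of $\precsim$ -- no cutdowns, no functional calculus, only density. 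Your route instead passes through the $\varepsilon$-cutdown characterization in $\overline{A}$, perturbs the implementing element $r$ to $s\in A$, and applies R{\o}rdam's lemma inside $C^*\bigl((x-\varepsilon/2)_+,w\bigr)\subset A$ to keep the witness in $A$; this is valid (the application of the lemma \emph{within} the generated sub-\ca{} is the right way to guarantee $d\in A$, and transitivity plus the equivalence \implStatements{2}{1} of \autoref{pgr:CuntzComparison} in the local \ca{} $A$ closes the argument), but it leans on the validity of R{\o}rdam's characterizations in local \ca{s} and on closure under generated subalgebras, neither of which the paper needs for this step. What your version buys is that it makes completely explicit where the local-\ca{} axioms enter; what the paper's buys is brevity. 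Both are fine.
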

\begin{proof}
(1):
The forward implication is obvious.
To show the converse implication, assume $x\precsim y$ in $\overline{A}$.
Then $x=\lim_k z_k^*yz_k$ for some sequence $(z_k)_k$ in $\overline{A}$.
Since $A$ is dense in $\overline{A}$, we may approximate each $z_k$ arbitrarily well by elements from $A$.
A diagonalization argument shows that $x\precsim y$ in $A$.

(2):
Let $x$ and $\varepsilon$ be as in the statement.
Since $A$ is dense in $\overline{A}$, we can choose $z\in A$ with $\|x-z\|<\tfrac{\varepsilon}{2}$.
By \cite[Proposition~2.2]{Ror92StructureUHF2}, we have $(z-\tfrac{\varepsilon}{2})_+\precsim x$ in $\overline{A}$.
Similarly, since $\|x-(z-\tfrac{\varepsilon}{2})_+\|<\varepsilon$, we obtain $(x-\varepsilon)_+\precsim (z-\tfrac{\varepsilon}{2})_+$.
Thus, the element $y:=(z-\tfrac{\varepsilon}{2})_+$ has the desired properties.
\end{proof}

\begin{thm}
\label{prp:commutingFctrs}
The compositions $\gamma\circ\W$ and $\Cu\circ\gamma$ are naturally isomorphic as functors $\CatLocCa\to\CatCu$.
This means that the following diagram commutes (up to natural isomorphism):
\[
\xymatrix@R=15pt@M+3pt{
\CatLocCa \ar@/_1pc/[d]_{\gamma} \ar[r]^{\W}
& \CatW \ar@/^1pc/[d]^{\gamma} \\
\CatCa \ar@{^{(}->}[u]  \ar[r]^{\Cu}
& \CatCu \ar@{^{(}->}[u] \\
}
\]

In particular, if $A$ is a \ca, then its completed Cuntz semigroup $\Cu(A)$ is naturally isomorphic to the $\CatCu$-completion $\gamma(W(A))$ of its pre-completed Cuntz semigroup $W(A)$.
\end{thm}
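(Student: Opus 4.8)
The plan is to exhibit, for each local \ca\ $A$, an explicit $\CatCu$-completion of $W(A)$ whose target is $\Cu(\overline{A})$, and then to invoke the uniqueness of $\CatCu$-completions to obtain the natural isomorphism $\gamma(W(A))\cong\Cu(\overline{A})$. The whole argument rests on the characterization of $\CatCu$-completions in \autoref{thm:Cuification}: it suffices to produce a $\CatW$-morphism $\alpha\colon W(A)\to\Cu(\overline{A})$ that is an `embedding' and has `dense image' in the senses of conditions (i) and (ii) there. Throughout I use the identification $\Cu(\overline{A})=W(\overline{A}\otimes\K)$ from \autoref{rmk:CufromCAlg}(2), under which the auxiliary relation $\prec$ on $W(\overline{A}\otimes\K)$ coincides with compact containment $\ll$ in $\Cu(\overline{A})$.

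First I would define $\alpha$. Since the completion of the local \ca\ $M_\infty(A)$ is $\overline{A}\otimes\K$ (see \autoref{pgr:localCa}), every element of $M_\infty(A)_+$ may be regarded as a positive element of $\overline{A}\otimes\K$, and I set $\alpha([x]):=[x]$, reading the class on the right in $\Cu(\overline{A})$. That $\alpha$ is well defined and preserves order, addition and the zero element follows from \autoref{prp:CuntzCompCompletion}(1), which says that Cuntz subequivalence of elements of $M_\infty(A)$ is the same computed in $A$ or in $\overline{A}$. To see that $\alpha$ preserves $\prec$, suppose $[x]\prec[y]$ in $W(A)$, so $x\precsim(y-\varepsilon)_+$ in $M_\infty(A)$ for some $\varepsilon>0$; then the same subequivalence holds in $\overline{A}\otimes\K$ by \autoref{prp:CuntzCompCompletion}(1), and since $[(y-\varepsilon)_+]\ll[y]$ in $\Cu(\overline{A})$ we obtain $\alpha([x])\ll\alpha([y])$. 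Thus $\alpha$ is a $\CatW$-morphism.

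Next I would verify the two conditions of \autoref{thm:Cuification}. For the embedding condition, assume $\alpha([x'])\ll\alpha([x])$; unwinding the identification, this means $x'\precsim(x-\varepsilon)_+$ in $\overline{A}\otimes\K$ for some $\varepsilon>0$, and since $x'$ and $(x-\varepsilon)_+$ both lie in $M_\infty(A)_+$ (local \ca{s} being closed under functional calculus), \autoref{prp:CuntzCompCompletion}(1) brings this subequivalence back to $M_\infty(A)$, giving $[x']\prec[x]$ in $W(A)$. The dense-image condition is the crux of the argument, and it is exactly where \autoref{prp:CuntzCompCompletion}(2) is needed: given $b'\ll b=[z]$ with $z\in(\overline{A}\otimes\K)_+$, I would use $[z]=\sup_n[(z-\tfrac1n)_+]$ to pick $n$ with $b'\leq[(z-\tfrac1n)_+]$, and then \autoref{prp:CuntzCompCompletion}(2) (applied to $M_\infty(A)\subset\overline{A}\otimes\K$ with $\varepsilon=\tfrac1n$) produces $y\in M_\infty(A)_+$ with $(z-\tfrac1n)_+\precsim y\precsim z$, whence $b'\leq\alpha([y])\leq b$. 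By \autoref{thm:Cuification}, $\alpha$ is a $\CatCu$-completion of $W(A)$, so the uniqueness of $\CatCu$-completions yields a unique $\CatCu$-isomorphism $\eta_A\colon\gamma(W(A))\to\Cu(\overline{A})$ intertwining $\alpha$ with the canonical map $W(A)\to\gamma(W(A))$.

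It remains to check that $\eta_A$ is natural, and I expect this to be routine rather than the main difficulty. For a morphism $\varphi\colon A\to B$ in $\CatLocCa$, a direct computation on classes shows that the square relating $\alpha_A$, $\alpha_B$, $W(\varphi)$ and $\Cu(\overline{\varphi})$ commutes, since both composites send $[x]$ to $[\overline{\varphi}(x)]$ and $\overline{\varphi}$ restricts to $\varphi$ on $M_\infty(A)$. Precomposing the two candidate composites $\Cu(\overline{\varphi})\circ\eta_A$ and $\eta_B\circ\gamma(W(\varphi))$ with the universal map $W(A)\to\gamma(W(A))$ and using functoriality of $\gamma$ reduces their equality to this naturality of $\alpha$; the universal property of the $\CatCu$-completion then forces $\Cu(\overline{\varphi})\circ\eta_A=\eta_B\circ\gamma(W(\varphi))$. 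The `in particular' statement is the special case $A=\overline{A}$, giving $\gamma(W(A))\cong\Cu(A)$. The main obstacle in the whole proof is the dense-image condition, which hinges on being able to replace a positive element of the completion by a Cuntz-subequivalent-from-below element of the dense subalgebra, precisely the content of \autoref{prp:CuntzCompCompletion}(2).
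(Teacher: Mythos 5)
Your proposal is correct and follows essentially the same route as the paper: both exhibit the map induced by the inclusion $M_\infty(A)\hookrightarrow\overline{M_\infty(A)}\cong\overline{A}\otimes\K$ as an explicit $\CatCu$-completion of $W(A)$, verify the embedding and dense-image conditions via \autoref{prp:CuntzCompCompletion}, and conclude by uniqueness of $\CatCu$-completions. The only cosmetic difference is that you check condition (i) of \autoref{thm:Cuification} directly, whereas the paper routes through the order-embedding characterization of \autoref{rmk:Cuification}(3); these are equivalent since $W(A)$ is a $\CatW$-semigroup.
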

\begin{proof}
Let $A$ be a local \ca.
Set $B:=M_\infty(A)$, which is again a local \ca.
Let $\iota\colon B\to\overline{B}$ be the natural inclusion map into the completion.
Note that there is a natural isomorphism $\overline{B}\cong\gamma(A)\otimes\K$.

We have $W(A)=B_+/\!\!\sim$ and $\Cu(\gamma(A))=\overline{B}_+/\!\!\sim$, and the \starHom\ $\iota$ induces a $\CatW$-morphism $W(\iota)\colon W(A)\to\Cu(\gamma(A))$.
By \autoref{prp:CuntzCompCompletion}, $W(\iota)$ is an order-embedding with `dense image' in the sense of  \autoref{thm:Cuification} (ii).
It then follows from \autoref{rmk:Cuification}(3) that $W(\iota)$ is a $\Cu$-completion of $W(A)$.
By uniqueness of $\Cu$-completions combined with \autoref{prp:CuificationExists}, we obtain $\Cu(\gamma(A))\cong \gamma(W(A))$.
\end{proof}

By \autoref{prp:functorW}, the functor $\W\colon\CatLocCa\to\CatW$ is continuous.
Since $\gamma\colon\CatW\to\CatCu$ is also continuous, we obtain from \autoref{prp:commutingFctrs} the following generalization of \cite[Theorem~2]{CowEllIva08CuInv}.

\begin{cor}
\label{prp:functorCu}
The functor $\Cu\colon\CatCa\to\CatCu$ is continuous.
More precisely, given an inductive system of \ca{s} $((A_i)_{i\in I},(\varphi_{i,j})_{i,j\in I, i\leq j})$, there are natural isomorphisms:
\[
\Cu(\CatCaLim A_i) \cong \gamma(\CatWLim \W(A_i)) \cong \CatCuLim\Cu(A_i).
\]
\end{cor}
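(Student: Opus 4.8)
The plan is to deduce continuity of $\Cu$ purely formally from the three continuity results already in hand, using the commuting square of \autoref{prp:commutingFctrs} to transport the computation of inductive limits from the $\W$-side (where \autoref{prp:functorW} applies) to the $\Cu$-side. The key observation is that every functor appearing in that square other than $\Cu$ itself is known to preserve inductive limits: the functor $\W\colon\CatLocCa\to\CatW$ is continuous by \autoref{prp:functorW}; the reflection $\gamma\colon\CatW\to\CatCu$ is continuous because it is a left adjoint; and the completion functor $\gamma\colon\CatLocCa\to\CatCa$ is continuous for the same reason, being left adjoint to the inclusion $\CatCa\hookrightarrow\CatLocCa$ as recorded in \autoref{pgr:connectionFunctors}.

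First I would establish the left-hand isomorphism $\Cu(\CatCaLim A_i)\cong\gamma(\CatWLim\W(A_i))$. View the given system in $\CatLocCa$ and write $B:=\CatLocCaLim A_i$ for its inductive limit there. Since $\CatCa$ is a reflective subcategory of $\CatLocCa$ with reflector the completion functor $\gamma$, the inductive limit in $\CatCa$ is computed by reflecting the one in $\CatLocCa$, giving $\CatCaLim A_i\cong\gamma(B)$. Applying the natural isomorphism $\gamma\circ\W\cong\Cu\circ\gamma$ of \autoref{prp:commutingFctrs} to the object $B$ yields $\Cu(\gamma(B))\cong\gamma(\W(B))$, and continuity of $\W$ (\autoref{prp:functorW}) gives $\W(B)=\W(\CatLocCaLim A_i)\cong\CatWLim\W(A_i)$. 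Chaining these three identities produces the first isomorphism.

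Next I would establish the right-hand isomorphism $\gamma(\CatWLim\W(A_i))\cong\CatCuLim\Cu(A_i)$. Since $\gamma\colon\CatW\to\CatCu$ preserves inductive limits, $\gamma(\CatWLim\W(A_i))\cong\CatCuLim\gamma(\W(A_i))$; and the ``in particular'' clause of \autoref{prp:commutingFctrs}, applied to each $C^*$-algebra $A_i$, identifies $\gamma(\W(A_i))\cong\Cu(A_i)$ naturally in $i$, so the two inductive limits agree. Composing the two isomorphisms gives $\Cu(\CatCaLim A_i)\cong\CatCuLim\Cu(A_i)$, which says exactly that $\Cu$ preserves the inductive limit; naturality of all isomorphisms involved (those of \autoref{prp:commutingFctrs} together with the universal property of the limit) upgrades this to continuity of the functor.

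I expect no serious obstacle: the argument is a diagram chase, and the genuine work has already been carried out in \autoref{prp:functorW} and \autoref{prp:commutingFctrs}. The only point requiring care is the bookkeeping between the two distinct functors both denoted $\gamma$ — the completion $\CatLocCa\to\CatCa$ and the reflection $\CatW\to\CatCu$ — and checking that each is applied to a genuine inductive limit in the appropriate category before its continuity is invoked. In particular one must ensure that the limit fed into $\W$ is the $\CatLocCa$-limit $B$, not the $\CatCa$-limit, since $\W$ is only known to be continuous as a functor out of $\CatLocCa$.
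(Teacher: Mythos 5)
Your proposal is correct and follows essentially the same route as the paper, which derives the corollary directly from the continuity of $\W$ (\autoref{prp:functorW}), the continuity of the reflection $\gamma\colon\CatW\to\CatCu$, and the commuting square of \autoref{prp:commutingFctrs}. Your more explicit bookkeeping (factoring through the $\CatLocCa$-limit $B$ and distinguishing the two functors named $\gamma$) is just a careful unwinding of what the paper leaves implicit.
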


\chapter{Additional axioms}
\label{sec:addAxioms}

In this chapter, we consider additional axioms for (pre)completed Cuntz semigroups.
For $\CatCu$-semigroups, these are denoted by \axiomO{5} and \axiomO{6}, and they are satisfied by all completed Cuntz semigroups of \ca{s};
see \autoref{prp:o5o6}.
We work with a slightly stronger version of \axiomO{5} than the one that has previously appeared in the literature.
The advantage is that the new \axiomO{5} passes to inductive limits in $\CatCu$;
see \autoref{prp:axiomsPassToLim}.

We also introduce axioms \axiomW{5} and \axiomW{6} for pre-completed Cuntz semigroups, which are the exact counterparts of \axiomO{5} and \axiomO{6}.
Indeed, a $\CatPreW$-semigroup satisfies \axiomW{5} if and only if its $\CatCu$-completion satisfies \axiomO{5}, and analogously for \axiomW{6} and \axiomO{6};
see \autoref{prp:axiomsPassToCu}.

\vspace{5pt}

Let $A$ be a \ca.
The axiomatic description of $\Cu(A)$ as an object in $\CatCu$ had a positive impact in the study of the Cuntz semigroup as an invariant.
For instance, the structure as a $\omega$-continuous $\omega$-\dcpo\ provides $\Cu(A)$ with nice topological properties.

However, the category $\CatCu$ of (abstract) Cuntz semigroups is still far bigger than the subcategory of concrete Cuntz semigroups that are isomorphic to $\Cu(A)$ for some \ca\ $A$.
For example, it is shown in \cite[Theorem~1.3]{Rob13CuSpaces2D} that the semigroup $\Lsc(S^2,\overline{\N})$ of lower-semicontinuous functions from the sphere to $\overline{\N}=\{0,1,2,\ldots,\infty\}$ is not the Cuntz semigroup of any \ca.
In order to get a better understanding of the class of concrete Cuntz semigroups, it has been useful to determine additional axioms satisfied by Cuntz semigroups of \ca{s}.

\begin{dfnChapter}
\label{dfn:addAxioms}
\index{terms}{O5@\axiomO{5}}
\index{terms}{almost algebraic order}
\index{terms}{O6@\axiomO{6}}
\index{terms}{almost Riesz decomposition}
\index{terms}{weak cancellation}
Let $S$ be a $\CatCu$-semigroup.
\begin{itemize}
\item[\axiomO{5}]
We say that $S$ has \emph{almost algebraic order}, or that $S$ satisfies \axiomO{5}, if for every $a',a,b',b,c\in S$ that satisfy
\[
a+b\leq c,\quad
a'\ll a,\quad
b'\ll b,
\]
there exists $x\in S$ such that
\[
a'+x\leq c\leq a+x,\quad
b'\leq x.
\]
\item[\axiomO{6}]
We say that $S$ has \emph{almost Riesz decomposition}, or that $S$ satisfies \axiomO{6}, if for every $a',a,b,c\in S$ that satisfy
\[
a'\ll a\leq b+c,
\]
there exist elements $e$ and $f$ in $S$ such that
\[
a'\leq e+f,\quad
e\leq a,b,\quad
f\leq a,c.
\]
\item[\axiom{C}]
We say that $S$ has \emph{weak cancellation}, or that $S$ is \emph{weakly cancellative}, if for every $a,b,x\in S$ we have that $a+x\ll b+x$ implies $a\ll b$.
\end{itemize}
\end{dfnChapter}

\begin{rmksChapter}
\label{rmk:addAxioms}
(1)
The axiom \axiomO{5} of almost algebraic order was first considered in \cite[Lemma~7.1]{RorWin10ZRevisited}.
It also appeared in \cite[Corollary~4.16]{OrtRorThi11CuOpenProj} and \cite[2.1]{Rob13Cone}.
Note, however, that the version of \axiomO{5} given here is formally stronger than the original versions in the literature.
Nevertheless, \axiomO{5} is satisfied by every Cuntz semigroup coming from a \ca{}
(see \autoref{prp:o5o6}), and the proof is essentially the same as the original one in \cite[Lemma~7.1]{RorWin10ZRevisited}, with additional care in the choice of the complement.

The new \axiomO{5} has the advantage that it passes to inductive limits in the category $\CatCu$;
see \autoref{prp:axiomsPassToCu}.
This seems unlikely for the original \axiomO{5}, although we have no example where the original \axiomO{5} does not pass to an inductive limit.
In \autoref{rmk:functorToo5o6}, we show that for weakly cancellative $\CatCu$-semigroups, the new \axiomO{5} is equivalent to the original version of the axiom.

(2)
Axiom \axiomO{6} was introduced in \cite[\S 4]{Rob13Cone}.
It was shown to hold for completed Cuntz semigroups of \ca{s} in \cite[Proposition~5.1.1]{Rob13Cone}.

(3)
The axiom of weak cancellation was introduced in \cite[after Lemma~1]{RobSan10ClassificationHom}.
The definition given there is equivalent to \autoref{dfn:addAxioms}.
It is also equivalent to the property that $a+c\leq b+c'$ for $c'\ll c$ implies $a\leq b$, which was shown to hold in completed Cuntz semigroups of \ca{s} with stable rank one;
see \cite[Theorem~4.3]{RorWin10ZRevisited}.
\end{rmksChapter}

The spirit of $\CatW$-semigroups is that the order relation $\leq$ is derived from the auxiliary relation $\prec$.
It is therefore natural to formulate versions of axioms \axiomO{5} and \axiomO{6} only in terms of the auxiliary relation, without using the partial order.
That the axioms \axiomW{5} and \axiomW{6} of \autoref{dfn:w5w6} are the `correct' analogs is justified by \autoref{prp:axiomsPassToCu}.
We also formulate the axiom of weak cancellation for $\CatPreW$-semigroups, simply by replacing the compact containment relation by an arbitrary auxiliary relation.
It should cause no confusion that we call this axiom `weak cancellation' as well.

\begin{dfnChapter}
\label{dfn:w5w6}
\index{terms}{W5@\axiomW{5}}
\index{terms}{W6@\axiomW{6}}
Let $(S,\prec)$ be a $\CatPreW$-semigroup.
We define the axioms \axiomW{5}, \axiomW{6} and weak cancellation for $S$ as follows:
\begin{itemize}
\item[\axiomW{5}]
We say that $S$ satisfies \axiomW{5} if for every $a',a,b',b,c,\tilde{c}\in S$ that satisfy
\[
a+b\prec c,\quad
a'\prec a,\quad
b'\prec b,\quad
c\prec\tilde{c},
\]
there exist elements $x'$ and $x$ in $S$ such that:
\[
a'+x\prec \tilde{c},\quad
c\prec a+x',\quad
b'\prec x'\prec x.
\]
\item[\axiomW{6}]
We say that $S$ satisfies \axiomW{6} if for every $a',a,b,c\in S$ that satisfy
\[
a'\prec a\prec b+c,
\]
there exist elements $e$ and $f$ in $S$ such that
\[
a'\prec e+f,\quad
e\prec a,b,\quad
f\prec a,c.
\]
\item[\axiom{C}]
We say that $M$ satisfies \emph{weak cancellation}, or that $M$ is \emph{weakly cancellative}, if for every $a,b,x\in S$ we have that $a+x\prec b+x$ implies $a\prec b$.
\end{itemize}
\end{dfnChapter}

\begin{thmChapter}
\label{prp:axiomsPassToCu}
Let $S$ be a $\CatPreW$-semigroup, and let $\gamma(S)$ be its $\CatCu$-com\-ple\-tion.
Then:
\beginEnumStatements
\item
The semigroup $S$ satisfies \axiomW{5} if and only if $\gamma(S)$ satisfies \axiomO{5}.
\item
The semigroup $S$ satisfies \axiomW{6} if and only if $\gamma(S)$ satisfies \axiomO{6}.
\item
The semigroup $S$ is weakly cancellative if and only if $\gamma(S)$ is.
\end{enumerate}
\end{thmChapter}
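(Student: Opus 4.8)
The plan is to prove all three equivalences through a single dictionary relating the relation $\prec$ on $S$ to the relations $\ll$ and $\leq$ on $\gamma(S)$. First I would record the facts I need about the $\CatCu$-completion $\alpha\colon S\to\gamma(S)$ of \autoref{prp:CuificationExists}. Since $\alpha$ is a $\CatW$-morphism (hence preserves the auxiliary relation, which is $\ll$ in $\gamma(S)$) and satisfies the embedding condition~(i) of that proposition, one has $a'\prec a$ if and only if $\alpha(a')\ll\alpha(a)$. Moreover every $u\in\gamma(S)$ is the supremum of a rapidly increasing sequence $\alpha(u_1)\ll\alpha(u_2)\ll\cdots$ arising from a $\prec$-increasing sequence $(u_k)$ in $S$; this is immediate from the description of $\gamma(S)$ as classes of $\prec$-increasing sequences. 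I would also use that $\prec$ interpolates in $S$ (from \axiomW{1}: given $c\prec\tilde c$ there is $c^\sharp$ with $c\prec c^\sharp\prec\tilde c$), that $\alpha$ is additive, that $\gamma(S)$ satisfies \axiomO{3} and \axiomO{4}, and that one may pull a term out of an increasing supremum once something is $\ll$ the supremum.

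For part~(3) the reverse implication is immediate: if $a+x\prec b+x$ in $S$ then $\alpha(a)+\alpha(x)\ll\alpha(b)+\alpha(x)$, so weak cancellation in $\gamma(S)$ gives $\alpha(a)\ll\alpha(b)$, hence $a\prec b$. For the forward implication I would argue with representing sequences: writing $u=[\vect{a}]$, $v=[\vect{b}]$, $w=[\vect{x}]$, the hypothesis $u+w\ll v+w$ says there is $n$ with $a_k+x_k\prec b_n+x_n$ for all $k$. For $k\geq n$, monotonicity of $\vect{x}$ gives $b_n+x_n\leq b_n+x_k$, whence $a_k+x_k\prec b_n+x_k$, and weak cancellation in $S$ yields $a_k\prec b_n$; for $k<n$ one has $a_k\leq a_n\prec b_n$. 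Thus $a_k\prec b_n$ for all $k$, i.e. $u\ll v$.

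Part~(2) is the easiest, because the conclusion of \axiomO{6} involves only $\leq$ and only the single element $a'$. For the implication \axiomW{6}$\Rightarrow$\axiomO{6}, given $a'\ll a\leq b+c$ I would pick $p\in S$ with $a'\leq\alpha(p)\ll a$; using that $\alpha(p)$ is way-below $a\leq b+c$ I find $S$-approximants with $a_{n_0}\prec a_{n_0+1}\prec b_j+c_j$, and a single application of \axiomW{6} produces $e_1,f_1$ with $a_{n_0}\prec e_1+f_1$, $e_1\prec a_{n_0+1},b_j$ and $f_1\prec a_{n_0+1},c_j$; then $e:=\alpha(e_1)$ and $f:=\alpha(f_1)$ witness \axiomO{6}. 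The converse of part~(2) and the converse of part~(1) share one scheme: translate the $S$-hypotheses into $\gamma(S)$ via $\alpha$, interpolate once or twice in $S$ to create strict room (such as $a'\prec a''\prec a$, or $c\prec c^\sharp\prec\tilde c$), apply the relevant $\CatCu$-axiom to the images, and extract the required elements of $S$ by writing the witness as a supremum $\sup_n\alpha(Y_n)$ and pulling out a term that is way-below. For \axiomO{5}$\Rightarrow$\axiomW{5} the key point is that applying \axiomO{5} to $\alpha(a)+\alpha(b)\leq\alpha(c^\sharp)$ yields a witness $Y$ with $\alpha(a')+Y\leq\alpha(c^\sharp)\ll\alpha(\tilde c)$ and $\alpha(c)\ll\alpha(c^\sharp)\leq\alpha(a)+Y$, so that $x':=Y_{m+1}$ and $x:=Y_{m+2}$ from a rapidly increasing sequence for $Y$ satisfy $c\prec a+x'$, $a'+x\prec\tilde c$ and $b'\prec x'\prec x$.

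The hard part will be \axiomW{5}$\Rightarrow$\axiomO{5}, specifically the construction of the witness $x$ for \axiomO{5}. Unlike \axiomO{6}, the requirement $c\leq a+x$ forces $x$ to be a genuine limit, so a single application of \axiomW{5} cannot suffice: $x$ must play the role of ``$c-a$'' at every level of an approximating sequence for $c$. My plan is to reduce \axiomO{5} to the construction of a single $\prec$-increasing sequence $(\xi_k)$ in $S$, with $x:=\sup_k\alpha(\xi_k)$, subject to three cofinality conditions relative to fixed $\prec$-increasing representatives of $a,a',b',c$: each $a'+\xi_k$ lies below some $c_{(\cdot)}$ (forcing $a'+x\leq c$), the $c_j$ are cofinally dominated by the $a_{(\cdot)}+\xi_{(\cdot)}$ (forcing $c\leq a+x$), and $b'\prec\xi_1$ (forcing $b'\leq x$). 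I would build $(\xi_k)$ inductively, invoking \axiomW{5} once at each stage on the $S$-approximants $a_k,b_k,c_k$ with $\tilde c:=c_{k+2}$, and use the two auxiliary features of \axiomW{5} — the split $x'\prec x$ and the upper approximant $\tilde c$ — to interlock consecutive stages so that $\xi_{k-1}\prec\xi_k$ while keeping the upper bound $a'+x\leq c$ and the lower cofinality $c\leq a+x$ simultaneously under control. The genuine obstacle is exactly this simultaneous bookkeeping in the absence of cancellation; it is precisely to make such a limiting construction possible that \axiomW{5} carries more variables than \axiomO{5}, and I expect the $\prec$-monotonicity of $(\xi_k)$ to be the delicate step demanding the most care.
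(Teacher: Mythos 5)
Your treatment of parts (2), (3), and of the implication \axiomO{5}$\Rightarrow$\axiomW{5} is correct and essentially the paper's argument. For the forward direction of (3) you work directly with $\prec$-increasing representing sequences where the paper instead uses the abstract dense-image and embedding properties of $\alpha$; your version is, if anything, more self-contained (the step $a_k+x_k\prec b_n+x_n\leq b_n+x_k$, hence $a_k+x_k\prec b_n+x_k$, is exactly what condition~(ii) of an auxiliary relation provides, and weak cancellation in $S$ then gives $a_k\prec b_n$ for all $k$).

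The gap is where you expect it, in \axiomW{5}$\Rightarrow$\axiomO{5}, and your sketch points the induction in a direction that will not close up. Two corrections are needed. First, the approximants of $a$ cannot be a fixed $\prec$-increasing representative of $a$: each application of \axiomW{5} returns $c\prec A+x'$ only after replacing the input $A$ by something strictly smaller at the next stage, so feeding the output back in forces the $a$-slot to decrease. The paper accordingly takes a $\prec$-\emph{decreasing} chain $a'\ll\bar{a}_{n+1}\ll\bar{a}_n\ll a$ squeezed between $a'$ and $a$; the lower bound by $a'$ is what turns the estimates $\bar{a}_{n+1}+\bar{x}_n\leq\bar{c}_{n+1}$ into $a'+x\leq c$ in the limit, and the upper bound by $a$ turns $\bar{c}_n\leq\bar{a}_n+\bar{x}_n'$ into $c\leq a+x$. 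Second, and more importantly, you cannot invoke \axiomW{5} at stage $k$ ``on the approximants $a_k,b_k,c_k$'': independent applications produce unrelated witnesses, and nothing forces $\xi_{k-1}\prec\xi_k$. The interlocking is achieved by using the \emph{previous witness} as the $b$-input of the next application: stage $n$ produces $x_n'\prec x_n$ with $a_{n+1}+x_n\prec c_{n+1}$, and the data $a_{n+1}+x_n\prec c_{n+1}$, $a_{n+2}\prec a_{n+1}$, $x_n'\prec x_n$, $c_{n+1}\prec c_{n+2}$ is precisely the hypothesis of \axiomW{5} at stage $n+1$, whose conclusion $x_n'\prec x_{n+1}'\prec x_{n+1}$ is what makes $(x_n')_n$ a $\prec$-increasing sequence. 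The approximants of $b$ enter only once, at the initial stage (as $x_0':=s'$ with $b'\ll\bar{s}'\ll\bar{s}\ll b$), which is also how $b'\leq x$ is secured. This feedback of the witness into the $b$-slot is the whole point of the extra variables in \axiomW{5}; without it the simultaneous bookkeeping you identify as the obstacle cannot be carried out.
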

\begin{proof}
Given an element $s\in S$, we will denote its image in $\gamma(S)$ by $\bar{s}$.

First, let us show that weak cancellation passes from $S$ to its $\CatCu$-completion.
Let $a,b,c\in\gamma(S)$ satisfy $a+c\ll b+c$.
Using that $\alpha$ has dense image in the sense of \autoref{thm:Cuification}, we can find elements $s,t\in S$ such that $\bar{s}\ll b$, $\bar{t}\ll c$, and $a+c\ll\bar{s}+\bar{t}$.
We can choose an increasing sequence $(r_n)_n$ in $S$ such that $a=\sup_n\bar{r}_n$.
Then $\bar{r}_n+\bar{t}\ll\bar{s}+\bar{t}$ for each $n$.
Since $\alpha$ is an embedding in the sense of \autoref{thm:Cuification}, we obtain the same inequality for the pre-images in $S$.
Since $S$ is weakly cancellative, we have $r_n\prec s$ for each $n$.
Then
\[
a=\sup_n\bar{r}_n\leq\bar{s}\ll b,
\]
which verifies that $\gamma(S)$ is weakly cancellative.
The converse follows directly from the properties of $\alpha$.
Using similar methods one proves \enumStatement{2}.

In order to verify that \axiomO{5} for $\gamma(S)$ implies \axiomW{5} for $S$, let $a',a,b',b,c,\tilde{c}\in S$ satisfy
\[
a+b\prec c,\quad
a'\prec a,\quad
b'\prec b,\quad
c\prec\tilde{c}.
\]
Choose $b_0,c_0\in S$ such that $b'\prec b_0\prec b$ and $c\prec c_0\prec\tilde{c}$.
Then
\[
\bar{a}+\bar{b}\leq\bar{c}_0,\quad
\bar{a}'\ll\bar{a},\quad
\bar{b}_0\ll\bar{b}.
\]
Applying \axiomO{6}, we obtain $y\in\gamma(S)$ with
\[
\bar{a}'+y\leq \bar{c}_0\leq \bar{a}+y,\quad
\bar{b}_0\leq y.
\]
Then $\bar{c}\ll\bar{a}+y$ and $\bar{b}'\ll y$.
Therefore, we can choose $x\in S$ satisfying $\bar{x}\leq y$, $c\prec a+x$, and $b'\prec x$.
Then we can find $x'\in S$ such that $x'\prec x$, $c\prec a+x'$, and $b'\prec x'$.
It follows that $x'$ and $x$ have the desired properties to verify \axiomW{5}.

In order to prove the other implication of \enumStatement{1}, let us assume that $S$ satisfies \axiomW{5}.
We have to show that $\gamma(S)$ satisfies \axiomO{5}.
Let $a',a,b',b,c\in\gamma(S)$ satisfy
\[
a+b\leq c,\quad
a'\ll a,\quad
b'\ll b.
\]

Choose a rapidly decreasing sequence $(a_n)_n$ in $S$ such that $a'\ll \bar{a}_{n+1}\ll \bar{a}_{n}\ll a$ for all $n$.
Choose $s'$ and $s$ in $S$ such that $b'\ll\bar{s}'\ll\bar{s}\ll b$.
Finally, choose a rapidly increasing sequence $(c_n)_n$ in $S$ such that $c=\sup_n \bar{c}_n$.
We can moreover assume $\bar{a}_1+\bar{s}\ll \bar{c}_1$, and so $a_1+s\prec c_1$.

We will inductively find elements $x_n$ and $x_n'$ in $S$ satisfying:
\begin{align}
\tag{$R_n$}
x_{n-1}'\prec x_n'\prec x_n,\quad
a_{n+1}+x_n\prec c_{n+1},\quad
c_n\prec a_n+x_n'.
\end{align}

To make sense of ($R_1$), we set $x_0':=s'$.
Then:
\[
a_1+s\prec c_1,\quad
a_2\prec a_1,\quad
x_0\prec s,\quad
c_1\prec c_2.
\]
By \axiomW{5}, we can choose $x_1'$ and $x_1$ in $S$ fulfilling ($R_1$).

For the inductive step, assume $x_n'$ and $x_n$ have been constructed satisfying $(R_n)$.
Applying \axiomW{5} to $a_{n+1}+x_n\prec c_{n+1}$ and $a_{n+2}\prec a_{n+1}$, $x_n'\prec x_n$ and $c_{n+1}\prec c_{n+2}$, we can find $x_{n+1}'$ and $x_{n+1}$ fulfilling ($R_{n+1}$).

We obtain a rapidly increasing sequence $(x_n')_n$ in $S$.
Using the existence of suprema in $\gamma(S)$, we may set $x:=\sup_n \bar{x}_n'$.
For each $n$, we have:
\[
a'+\bar{x}_n'
\leq \bar{a}_{n+1}+\bar{x}_n'
\leq \bar{a}_{n+1}+\bar{x}_n
\leq \bar{c}_{n+1},\quad
\bar{c}_n\leq \bar{a}_n+\bar{x}_n'\leq a+\bar{x}_n.
\]
Therefore:
\[
a'+x
=\sup_n(a'+\bar{x}_n')
\leq\sup_n \bar{c}_{n+1}
=c
=\sup_n \bar{c}_n
\leq\sup_n(a+\bar{x}_n')
=a+x.
\]
Moreover, $x\geq \bar{x}_1'\geq b'$, which shows that $x$ has the desired properties.
\end{proof}

\begin{thmChapter}
\label{prp:axiomsPassToLim}
Let $((S_i)_{i\in I},(\varphi_{i,j})_{i,j\in I, i\leq j})$ be an inductive system in $\CatPreW$.
If each $S_i$ satisfies \axiomW{5} (respectively \axiomW{6}, weak cancellation), then so does the inductive limit in $\CatPreW$.

Similarly, axiom \axiomO{5} (respectively \axiomO{6}, weak cancellation) passes to inductive limits in $\CatCu$.
\end{thmChapter}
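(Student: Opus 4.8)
The plan is to establish the $\CatPreW$ assertions by a direct argument at the level of finite stages, and then to deduce the $\CatCu$ assertions formally from them. For the deduction, let $((S_i)_i,(\varphi_{i,j}))$ be an inductive system in $\CatCu$, regarded as a system in $\CatPreW$ via the full embedding of \autoref{pgr:CufullW} (so $\prec$ is the relation $\ll$ on each $S_i$). Since each $S_i$ is already a $\CatCu$-semigroup, it is its own $\CatCu$-completion, so $\gamma(S_i)\cong S_i$ (see \autoref{rmk:Cuification}(3)); hence \autoref{prp:axiomsPassToCu} applied to $S_i$ shows that $S_i$ satisfies \axiomW{5} as a $\CatPreW$-semigroup exactly when it satisfies \axiomO{5} as a $\CatCu$-semigroup, and likewise for \axiomW{6}/\axiomO{6} and for weak cancellation. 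Thus, assuming each $S_i$ satisfies \axiomO{5}, the $\CatPreW$-part (below) gives that $\CatPreWLim S_i$ satisfies \axiomW{5}; applying \autoref{prp:axiomsPassToCu} once more, its $\CatCu$-completion satisfies \axiomO{5}; and by \autoref{prp:limitsCu} that completion is exactly $\CatCuLim S_i$. The same three-step reduction handles \axiomO{6} and weak cancellation.

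It therefore remains to treat the $\CatPreW$-limit $S:=\CatPreWLim S_i$, with canonical $\CatW$-morphisms $\varphi_{i,\infty}\colon S_i\to S$. The uniform device is as follows. Each axiom concerns finitely many elements of $S$ subject to finitely many relations; I would first use directedness of $I$ to represent all given elements by elements of a single $S_i$, and then, recalling from \autoref{dfn:limAuxRel} that $[u]\prec[v]$ holds in $S$ precisely when $\varphi_{\cdot,k}(u)\prec\varphi_{\cdot,k}(v)$ for some large $k$ (and analogously for $\leq$, with sums computed at a common stage), pass to an index $j\geq i$ at which \emph{all} the hypothesis relations are simultaneously realized in $S_j$. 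Here one uses that the connecting maps preserve $\prec$, $\leq$ and addition, so each relation persists under further push-forward to $S_j$. One then applies the axiom inside $S_j$ to produce the required witnesses and transports them to $S$ along $\varphi_{j,\infty}$; as $\varphi_{j,\infty}$ preserves $\prec$ and addition, the output relations hold in $S$.

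Carrying this out axiom by axiom: for weak cancellation, given $a+x\prec b+x$ in $S$ with $a,b,x$ represented in $S_i$, one finds $j\geq i$ with $\varphi_{i,j}(a)+\varphi_{i,j}(x)\prec\varphi_{i,j}(b)+\varphi_{i,j}(x)$, applies weak cancellation in $S_j$ to get $\varphi_{i,j}(a)\prec\varphi_{i,j}(b)$, and reads off $a\prec b$ from \autoref{dfn:limAuxRel}. For \axiomW{6}, given $a'\prec a\prec b+c$, one realizes both relations in some $S_j$, applies \axiomW{6} there to obtain $e,f\in S_j$ with the required $\prec$-relations to the representatives of $a,b,c$, and pushes $e,f$ forward by $\varphi_{j,\infty}$. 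For \axiomW{5}, one brings the four hypothesis relations $a+b\prec c$, $a'\prec a$, $b'\prec b$, $c\prec\tilde c$ down to a common stage $S_j$, invokes \axiomW{5} of $S_j$ to obtain $x',x\in S_j$, and sets $X':=\varphi_{j,\infty}(x')$ and $X:=\varphi_{j,\infty}(x)$; preservation of $\prec$ and addition then yields $a'+X\prec\tilde c$, $c\prec a+X'$ and $b'\prec X'\prec X$ in $S$.

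No step is conceptually deep; the content is entirely bookkeeping. The main obstacle is ensuring that all hypothesis relations of a given axiom can be forced to hold at one common index $j$: each relation individually holds only after applying some connecting map to some stage $k$, and one must select a single $j$ dominating all the finitely many indices involved and verify, using that the connecting maps preserve $\prec$, $\leq$ and addition, that every relation survives the push-forward to $S_j$. Once the hypotheses are localized at $S_j$, applying the axiom there and transporting the conclusion along the structure-preserving map $\varphi_{j,\infty}$ is routine.
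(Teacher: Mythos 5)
Your proposal is correct and follows essentially the same route as the paper: localize the finitely many hypothesis relations at a common stage $S_j$ using the definition of $\prec$ on the limit, apply the axiom there, and push the witnesses forward along $\varphi_{j,\infty}$; then deduce the $\CatCu$ case by viewing each $S_i$ as its own $\CatCu$-completion and invoking \autoref{prp:axiomsPassToCu} together with \autoref{prp:limitsCu}. The paper writes out only the weak-cancellation case and declares the others analogous, which is exactly the bookkeeping you sketch.
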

\begin{proof}
Let us verify that weak cancellation passes to inductive limits in $\CatPreW$.
Set $S:=\CatPreWLim S_i$ and let $a,b,c\in S$ satisfy $a+c\prec b+c$.
We can choose an index $i$ and elements $x,y,z\in S_i$ with $a=[x],b=[y]$, and $c=[z]$.
By definition of $\prec$ on $S$ (see \autoref{dfn:limAuxRel}), we can find $j\geq i$ such that $\varphi_{i,j}(x+z)\prec\varphi_{i,j}(y+z)$ in $S_j$.
Using that $S_j$ is weakly cancellative, we deduce $\varphi_{i,j}(x)\prec\varphi_{i,j}(y)$ in $S_j$.
It follows $a\prec b$ in $S$, as desired.

It is shown analogously that the other axioms pass to inductive limits in $\CatPreW$.

Now, let $((S_i)_{i\in I},(\varphi_{i,j})_{i,j\in I, i\leq j})$ be an inductive system in $\CatCu$.
Assume that each $S_i$ is weakly cancellative.
Considering $S_i$ as a $\CatPreW$-semigroup that is isomorphic to its own $\CatCu$-completion, we obtain from \autoref{prp:axiomsPassToCu} that $S_i$ is weakly cancellative as a $\CatPreW$-semigroup.
It follows that the limit in $\CatPreW$, $S:=\CatPreWLim S_i$, is weakly cancellative.
By \autoref{prp:limitsCu}, the limit in $\CatCu$ is isomorphic to the $\CatCu$-completion of $S$.
Using the `only if' implication of part (3) of \autoref{prp:axiomsPassToCu}, we deduce that $\CatCuLim S_i$ is weakly cancellative.

The argument for \axiomO{5} and \axiomO{6} is completely analogous.
\end{proof}

\begin{prpChapter}
\label{prp:o5o6}
Let $A$ be a \ca.
Then $\Cu(A)$ is a $\CatCu$-semigroup satisfying \axiomO{5} and \axiomO{6}.
\end{prpChapter}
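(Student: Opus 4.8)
The plan is to verify \axiomO{5} and \axiomO{6} directly in $\Cu(A)=(A\otimes\K)_+/\!\!\sim$, lifting classes to positive elements of $A\otimes\K$ (using $M_2(A\otimes\K)\cong A\otimes\K$ to realize sums) and exploiting Rørdam's characterizations of Cuntz comparison from \autoref{pgr:CuntzComparison}. The two ingredients I would use repeatedly are that cutdowns commute with orthogonal sums, $\big((x\oplus y)-\varepsilon\big)_+=(x-\varepsilon)_+\oplus(y-\varepsilon)_+$, and that for positive $e,f$ one has $e\leq e+f$ and $e+f\precsim e\oplus f$, so that $[e+f]\leq[e]+[f]$ while each summand is dominated by the sum. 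Throughout, $a'\ll a$ translates, via the identification $\Cu(A)=W(A\otimes\K)$ and \autoref{rmk:CufromCAlg}, into the existence of $\varepsilon>0$ with $a'\leq[(x-\varepsilon)_+]$ for a representative $x$ of $a$.

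For \axiomO{6} I would argue as follows, reproducing Robert's argument. Suppose $a'\ll a\leq b+c$ and pick positive $x,y,z$ with $a=[x]$, $b=[y]$, $c=[z]$, so $x\precsim y\oplus z$. Choose $\varepsilon>0$ with $a'\leq[(x-\varepsilon)_+]$. Condition~(3) of \autoref{pgr:CuntzComparison} gives $\delta>0$ and $r$ with $(x-\varepsilon)_+=r\big((y\oplus z)-\delta\big)_+r^*=r\big((y-\delta)_+\oplus(z-\delta)_+\big)r^*$. Writing $r$ in block-row form $(r_1\mid r_2)$ this becomes $(x-\varepsilon)_+=r_1(y-\delta)_+r_1^*+r_2(z-\delta)_+r_2^*$; set $e_0:=r_1(y-\delta)_+r_1^*$, $f_0:=r_2(z-\delta)_+r_2^*$ and $e:=[e_0]$, $f:=[f_0]$. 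Then $a'\leq[(x-\varepsilon)_+]=[e_0+f_0]\leq e+f$, while $e_0\leq e_0+f_0=(x-\varepsilon)_+\precsim x$ yields $e\leq a$ (and symmetrically $f\leq a$), and $e_0\precsim(y-\delta)_+\precsim y$ yields $e\leq b$ (and symmetrically $f\leq c$). This is the routine half.

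The main obstacle is \axiomO{5}, where a \emph{single} complement must be produced. Given $a+b\leq c$, $a'\ll a$, $b'\ll b$, I would lift to $x\oplus y\precsim z$ and seek $x_0$ with $a'+[x_0]\leq c\leq a+[x_0]$ and $b'\leq[x_0]$. Following \cite[Lemma~7.1]{RorWin10ZRevisited}, one applies Rørdam's lemma to peel a copy of a cutdown of $x$ off $z$---realized as an element $v^*v$ inside the hereditary subalgebra generated by $(z-\delta)_+$---and takes $x_0$ to be the orthogonal remainder of $(z-\delta)_+$ after removing that copy; the two-sided sandwich $a'+[x_0]\leq c\leq a+[x_0]$ then follows from the usual estimates. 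The delicate point---the ``additional care in the choice of the complement'' noted in \autoref{rmk:addAxioms}---is to arrange the remainder so that it \emph{simultaneously} dominates $b'$: since $x\oplus y\precsim z$, the part of $z$ not consumed by $x$ must still contain a cutdown of $y$, and one has to tune the several cut parameters $\varepsilon,\delta,\dots$ and the supporting function-calculus cut so that both the sandwich and $b'\leq[x_0]$ hold at once. I expect this bookkeeping to be the only real difficulty; the stronger form of the axiom in \autoref{dfn:addAxioms} costs exactly this extra care, and no idea beyond Rørdam's lemma is needed.

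As an alternative that avoids computing directly in $\Cu(A)$, one can instead verify the purely relational axioms \axiomW{5} and \axiomW{6} for the $\CatW$-semigroup $W(A)$---phrased entirely in terms of $\prec$ and cutdowns---and then conclude by \autoref{prp:axiomsPassToCu} together with the identification $\Cu(A)\cong\gamma(W(A))$ from \autoref{prp:commutingFctrs}; the $C^*$-algebraic content is identical.
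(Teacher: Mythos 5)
Your overall route is the paper's: \axiomO{6} is Robert's argument (the paper simply cites \cite[Proposition~5.1.1]{Rob13Cone}, while you reproduce it, correctly), and \axiomO{5} is proved by adapting \cite[Lemma~7.1]{RorWin10ZRevisited}, which is exactly what the paper does. So there is no divergence in strategy, and your \axiomO{6} half is complete.

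The problem is your treatment of \axiomO{5}, where you defer the construction of the complement to ``bookkeeping'' and ``tuning the cut parameters''. That step is the entire content of the proof, and it is not parameter-tuning but a specific construction which your proposal does not contain. Concretely, the paper first lifts $a$ and $b$ to \emph{orthogonal} positive elements $\tilde x\perp\tilde y$ (possible after stabilizing), applies R{\o}rdam's lemma to $(\tilde x-\varepsilon)_++(\tilde y-\varepsilon)_+=(\tilde x+\tilde y-\varepsilon)_+=r^*(z-\delta)_+r$, and conjugates by $v:=(z-\delta)_+^{1/2}r$ so as to transport \emph{both} cutdowns simultaneously into $\Her((z-\delta)_+)$ as orthogonal elements $x:=v(\tilde x-\varepsilon)_+v^*\perp y:=v(\tilde y-\varepsilon)_+v^*$ Cuntz equivalent to the originals. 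The complement is then $w:=f_\delta(z)-f_\beta(x)$, a positive difference formed by functional calculus against the local unit $e:=f_\delta(z)$ of $\Her((z-\delta)_+)$; one checks $(x-\beta)_+\perp w$ and $x+w\geq\delta'e$, which give the sandwich $a'+[w]\leq c\leq a+[w]$, and the inequality $b'\leq[w]$ falls out of
\[
y=y^{1/2}ey^{1/2}\leq\tfrac{1}{\delta'}\,y^{1/2}(x+w)y^{1/2}=\tfrac{1}{\delta'}\,y^{1/2}wy^{1/2}\precsim w,
\]
using $y\perp x$. So the ``additional care'' is the simultaneous orthogonal transport of the two cutdowns plus this function-calculus definition of the remainder, not a choice of several $\varepsilon$'s; until this is written out, your proof of \axiomO{5} is incomplete. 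Your alternative of verifying \axiomW{5} and \axiomW{6} in $W(A)$ and invoking \autoref{prp:axiomsPassToCu} is legitimate but does not bypass the issue: the paper in fact runs that implication in the opposite direction, deducing \axiomW{5} and \axiomW{6} for $W(A)$ from \axiomO{5} and \axiomO{6} for $\Cu(\overline{A})$ in \autoref{prp:w5w6}.
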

\begin{proof}
By \cite[Theorem~1]{CowEllIva08CuInv}, $\Cu(A)$ is a $\CatCu$-semigroup.
Axiom \axiomO{6} is verified in \cite[Proposition~5.1.1]{Rob13Cone}.

The proof of \axiomO{5} is based on that of \cite[Lemma~7.1]{RorWin10ZRevisited}.
Let $a',a,b',b,c\in\Cu(A)$ satisfy $a+b\leq c$, $a'\ll a$, and $b'\ll b$.
We may assume that $A$ is stable.
Let $\tilde{x}, \tilde{y}, z\in A_+$, with $\tilde{x}\perp\tilde{y}$, be such that $a=[\tilde{x}]$, $b=[\tilde{y}]$, and $c=[z]$.
Choose $\varepsilon>0$ such that $a'\ll[(\tilde{x}-\varepsilon)_+]$ and $b'\ll[(\tilde{y}-\varepsilon)_+]$.
Since $\tilde{x}+\tilde{y}\precsim z$, by \cite[Section~2]{Ror92StructureUHF2} and \autoref{pgr:CuntzComparison}, we can choose $\delta>0$ and $r\in A$ such that
\[
(\tilde{x}-\varepsilon)_++(\tilde{y}-\varepsilon)_+=(\tilde{x}+\tilde{y}-\varepsilon)_+=r^*(z-\delta)_+r.
\]
Set $v:=(z-\delta)_+^{1/2}r$, set $x:=v(\tilde{x}-\varepsilon)_+v^*$ and set $y:=v(\tilde{y}-\varepsilon)_+v^*$.
Then $x\perp y$ and
\begin{align*}
(\tilde{x}-\varepsilon)_+
\sim(\tilde{x}-\varepsilon)_+^2
=(\tilde{x}-\varepsilon)_+^{1/2}v^*v(\tilde{x}-\varepsilon)_+^{1/2}
\sim v(\tilde{x}-\varepsilon)_+v^*
=x,
\end{align*}
and similarly $(\tilde{y}-\varepsilon)_+\sim y$.
We have shown that there exist $\delta>0$ and orthogonal positive elements $x,y$ in $\Her((z-\delta)_+)$, the hereditary sub-\ca{} generated by $(z-\delta)_+$, such that $a'\ll[x]\leq a$ and $b'\ll[y]\leq b$.

For $\eta>0$, let $f_\eta\colon\R^+\to[0,1]$ be the function that takes value $0$ at $0$, value $1$ on $[\eta,\infty)$ and which is linear on $[0,\eta]$.
Set $e:=f_{\delta}(z)$.
Then $e\sim z$ and $e$ acts as a unit on $x$ and $y$.
Choose $\beta>0$ such that $a'\leq[(x-\beta)_+]$.
Set $w:=e-f_\beta(x)$, which is positive since $e$ commutes with $x$ and therefore with $f_{\beta}(x)$.
Set $s:=[w]$, which we will show to have the desired properties to verify \axiomO{5}.

Then we have $w\in\Her(z)$ and $(x-\beta)_+\perp w$.
Moreover, the element $x+w$ is strictly positive in $\Her(z)$.
We deduce
\begin{align*}
a'+s
&\leq [(x-\beta)_+]+[w]
=[(x-\beta)_+ + w]
\leq [z]
=c \\
c
&=[z]
=[x+w]
\leq [x]+[w]
\leq a + s.
\end{align*}

Moreover, $x+w\geq\delta' e$ for some $\delta'>0$.
Therefore, since $x\perp y$,
\[
y
=y^{1/2}ey^{1/2}
\leq \frac{1}{\delta'} y^{1/2}(x+w)y^{1/2}
=\frac{1}{\delta'} y^{1/2}wy^{1/2}
\precsim w,
\]
and thus $b'\leq [y]\leq[w]=s$.
This shows that $s$ has the desired properties to verify \axiomO{5} for $\Cu(A)$.
\end{proof}

\begin{prpChapter}
\label{prp:w5w6}
Let $A$ be a local \ca.
Then $W(A)$ is a $\CatW$-sem\-i\-group satisfying \axiomW{5} and \axiomW{6}.
\end{prpChapter}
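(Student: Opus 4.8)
The plan is to obtain \axiomW{5} and \axiomW{6} for $W(A)$ by transfer from its $\CatCu$-completion, rather than by a direct functional-calculus argument at the level of $M_\infty(A)_+$. All the machinery needed for this has already been assembled, so the argument will be short. First I would record that, by \autoref{prp:WfromCAlg}, $W(A)$ is a $\CatW$-semigroup, hence in particular a $\CatPreW$-semigroup, so that \autoref{prp:axiomsPassToCu} is applicable to it.

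Next I would identify the $\CatCu$-completion of $W(A)$. By \autoref{prp:commutingFctrs}, there is a natural isomorphism $\gamma(W(A)) \cong \Cu(\overline{A})$, where $\overline{A}$ denotes the completion of the local \ca{} $A$, which is a genuine \ca. Since \autoref{prp:o5o6} applies to the \ca{} $\overline{A}$, the semigroup $\Cu(\overline{A})$ satisfies both \axiomO{5} and \axiomO{6}; as these are isomorphism-invariant properties, the same holds for $\gamma(W(A))$. I would then invoke the `if' directions of parts (1) and (2) of \autoref{prp:axiomsPassToCu}: since $\gamma(W(A))$ satisfies \axiomO{5} (respectively \axiomO{6}), the $\CatPreW$-semigroup $W(A)$ satisfies \axiomW{5} (respectively \axiomW{6}). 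This completes the argument.

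There is essentially no obstacle along this route, precisely because the substantive work is borrowed: the R{\o}rdam--Winter style functional-calculus construction of the complement that establishes \axiomO{5} for $\Cu(\overline{A})$ in \autoref{prp:o5o6}, Robert's verification of \axiomO{6}, and the bridge between the $\CatW$- and $\CatCu$-axioms supplied by \autoref{prp:axiomsPassToCu}. The only point deserving a moment's care is the precise identification $\gamma(W(A)) \cong \Cu(\overline{A})$, which is exactly the content of \autoref{prp:commutingFctrs}.

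If instead a self-contained proof were wanted, the hard part would be to reprove \axiomO{5} directly at the level of positive elements of $M_\infty(A)_+$---choosing cutdowns $(\freeVar-\varepsilon)_+$ and applying R{\o}rdam's lemma to produce the witnessing elements $x'$ and $x$---which is feasible since local \ca{s} are closed under continuous functional calculus, but is noticeably more involved. The transfer argument is therefore the preferable one.
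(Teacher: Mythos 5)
Your proposal is correct and is essentially identical to the paper's own proof: both establish that $W(A)$ is a $\CatW$-semigroup via \autoref{prp:WfromCAlg}, identify $\gamma(W(A))\cong\Cu(\overline{A})$ via \autoref{prp:commutingFctrs}, invoke \autoref{prp:o5o6} for $\Cu(\overline{A})$, and transfer back to \axiomW{5} and \axiomW{6} using \autoref{prp:axiomsPassToCu}. No gaps.
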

\begin{proof}
By \autoref{prp:WfromCAlg}, $\W(A)$ is a $\CatW$-semigroup.
Let $\overline{A}$ denote the completion of $A$.
Since $\overline{A}$ is a \ca{}, it follows from \autoref{prp:o5o6} that $\Cu(\overline{A})$ satisfies \axiomO{5} and \axiomO{6}.
By \autoref{prp:commutingFctrs}, the $\CatCu$-completion of $\W(A)$ is isomorphic to $\Cu(\overline{A})$.
Therefore, \axiomW{5} and \axiomW{6} for $\W(A)$ follow from \autoref{prp:axiomsPassToCu}.
\end{proof}

Though \axiomO{5} holds for all complete Cuntz semigroups coming from \ca{s}, we note that,
under the additional hypothesis of weak cancellation, it is equivalent to the original
formulation of the axiom:
\begin{itemize}
\item[\axiomO{5'}]
If $a\leq b$ and $a'\ll a$, then there is $x\in S$ such that $a'+x\leq b\leq a+x$.
\end{itemize}

\begin{lmaChapter}
\label{prp:o5o5'}
Let $S$ be a $\CatCu$-semigroup.
If $S$ satisfies \axiomO{5}, then it also satisfies \axiomO{5'}.
The converse holds if $S$ is weakly cancellative.
\end{lmaChapter}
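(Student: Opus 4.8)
The plan is to prove the two implications separately. The forward direction, \axiomO{5} $\Rightarrow$ \axiomO{5'}, should be immediate by specializing the second pair to zero. Given $a\le b$ and $a'\ll a$, I would apply \axiomO{5} to the data $a+0\le b$, $a'\ll a$, $0\ll 0$ (recall $0$ is compact, so $0\ll 0$ holds trivially). This produces $x$ with $a'+x\le b\le a+x$ and $0\le x$, which is exactly the conclusion of \axiomO{5'}. No real work is needed here.

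For the converse I assume $S$ is weakly cancellative and satisfies \axiomO{5'}, and I take $a',a,b',b,c$ with $a+b\le c$, $a'\ll a$ and $b'\ll b$. The key preliminary move is to interpolate: by the standard interpolation property of $\ll$ in a $\CatCu$-semigroup (a consequence of \axiomO{2} together with the transitivity property of the auxiliary relation in \autoref{pgr:axiomsW}), I would choose $a''\in S$ with $a'\ll a''\ll a$. Since $a''\ll a\le a+b\le c$ gives $a''\le c$, and $a'\ll a''$, axiom \axiomO{5'} applied to the pair $a''\le c$ yields an element $x$ with
\[
a'+x\le c\le a''+x.
\]
Because $a''\le a$, the right-hand inequality upgrades to $c\le a+x$, so together with $a'+x\le c$ two of the three conclusions required by \axiomO{5} are already in place.

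It remains to verify $b'\le x$, and this is the only genuine obstacle, where weak cancellation must be used. Combining the hypothesis $a+b\le c$ with $c\le a''+x$ gives $a+b\le a''+x$, i.e.\ $b+a\le x+a''$ with $a''\ll a$. Here I would invoke the reformulation of weak cancellation recorded in \autoref{rmk:addAxioms}(3) (namely, $u+d\le v+d'$ with $d'\ll d$ implies $u\le v$), which forces $b\le x$, and hence $b'\le b\le x$. Thus $x$ witnesses \axiomO{5}. The crux of the argument is manufacturing the \emph{strict} relation $a''\ll a$ before applying \axiomO{5'}: feeding the inequality $a\le c$ directly into \axiomO{5'} would leave two equal copies of $a$ in $a+b\le a+x$, which the cancellation principle cannot remove. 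Interpolating $a''$ is precisely what makes the $\le$-form of weak cancellation applicable.
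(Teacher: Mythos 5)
Your proof is correct. The forward direction (specializing $b,b'$ to $0$ and using $0\ll 0$) is exactly the intended triviality, and your converse is sound: interpolating $a'\ll a''\ll a$, applying \axiomO{5'} to $a'\ll a''\le c$, and then cancelling via $b+a\le x+a''$ with $a''\ll a$ does produce the required witness, and in fact gives the stronger conclusion $b\le x$.

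The route differs from the paper's in packaging rather than in substance. The paper does not verify \axiomO{5} directly; it reduces to the auxiliary-relation version \axiomW{5} via \autoref{prp:axiomsPassToCu}, where the hypothesis $c\ll\tilde c$ already supplies the strict inequality needed for cancellation: from $b+a\le c\ll c^\sharp\le x+a$ one gets $b+a\ll x+a$ and applies weak cancellation in its original $\ll$-form. You instead stay entirely inside \axiomO{5}, manufacture the strictness on the $a$-side by interpolation, and invoke the order-form of weak cancellation ($u+d\le v+d'$ with $d'\ll d$ implies $u\le v$) recorded in \autoref{rmk:addAxioms}(3). Your version is the more self-contained of the two, since it avoids the detour through the $\CatW$-machinery; its only external input is the equivalence asserted (without proof) in that remark, which you should note follows in two lines from the definition together with \axiomO{2} and \axiomO{4}: given $u+d\le v+d'$ with $d'\ll d$, pick $u_0\ll u$ and $d'\ll d''\ll d$, observe $u_0+d''\ll u+d\le v+d'\le v+d''$, cancel to get $u_0\ll v$, and pass to the supremum over $u_0$.
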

\begin{proof}
It is clear that \axiomO{5} implies \axiomO{5'} in general.
To show the converse, assume that $S$ is a weakly cancellative $\CatCu$-semigroup satisfying \axiomO{5'}.
By \autoref{prp:axiomsPassToCu}, it is enough to verify \axiomW{5}.
Suppose we are given $a',a,b',b,c,\tilde{c}\in S$ satisfying
\[
a+b\ll c,\quad
a'\ll a,\quad
b'\ll b,\quad
c\ll\tilde c.
\]
Choose $c^\sharp\in S$ with $c\ll c^\sharp\ll \tilde c$.
Applying \axiom{O5'} to $a'\ll a\leq c^\sharp$, we obtain $x\in S$ such that $a'+x\leq c^\sharp\leq a+x$.
Then
\[
b+a\leq c\ll c^\sharp\leq x+a,
\]
which by weak cancellation implies $b\ll x$.
Choose $x'\in S$ such that $x'\ll x$, $b'\ll x'$, and $c\ll a+x'$.
Then the elements $x$ and $x'$ have the desired properties to verify \axiomW{5}.
\end{proof}

\begin{rmksChapter}
\label{rmk:functorToo5o6}
Let $\CatW_{5,6}$ be the full subcategory of $\CatW$ consisting of $\CatW$-sem\-i\-groups satisfying \axiomW{5} and \axiomW{6}.
It follows from \autoref{prp:axiomsPassToLim} that $\CatW_{5,6}$ is closed under inductive limits in $\CatW$ and therefore has inductive limits itself.
By \autoref{prp:w5w6}, the functor $\W$ from \autoref{prp:functorW} takes values in $\CatW_{5,6}$.

Similarly, the full subcategory $\CatCu_{5,6}$ of $\CatCu$ consisting of $\CatCu$-semigroups satisfying \axiomO{5} and \axiomO{6} is closed under inductive limits.
By \autoref{prp:o5o6}, the functor $\Cu$ takes values in $\CatCu_{5,6}$.

By \autoref{prp:axiomsPassToCu}, the reflector $\gamma\colon\CatW\to\CatCu$ maps $\CatW_{5,6}$ to $\CatCu_{5,6}$.
\end{rmksChapter}

Given a $\CatPreW$-semigroup $(S,\prec)$, we say that an element $s\in S$ is \emph{full}\index{terms}{element!full} if, whenever there are $t',t\in S$ satisfying $t'\prec t$, then there is $n\in\N$ such that $t'\leq ns$.
We say that an element $s$ \emph{cancels from sums} if $a+s\leq b+s$ implies $a\leq b$ for any $a,b$.

\begin{prpChapter}
\label{prp:wc}
Let $(S,\prec)$ be a $\CatPreW$-semigroup satisfying \axiomW{5}.
If $S$ contains a full element $e$ such that $e\prec e$ and $e$ cancels from sums, then $S$ has weak cancellation.

Similarly, if a $\CatCu$-semigroup satisfying \axiomO{5} contains a full compact element that cancels from sums, then it has weak cancellation.
\end{prpChapter}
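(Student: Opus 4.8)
The plan is to prove the $\CatPreW$ statement directly from \axiomW{5}, and then read off the $\CatCu$ statement for free. Indeed, a $\CatCu$-semigroup $T$ satisfying \axiomO{5} may be viewed (via \autoref{pgr:CufullW}) as the $\CatW$-semigroup $(T,\ll)$; its $\CatCu$-completion is $T$ itself, so by \autoref{prp:axiomsPassToCu} the pair $(T,\ll)$ satisfies \axiomW{5}. In $(T,\ll)$ the relation $\prec$ is $\ll$, so a full compact element $e$ satisfies $e\prec e$, is still full, and still cancels from sums; hence weak cancellation of $(T,\ll)$, which is precisely weak cancellation of $T$, follows from the first statement.

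For the first statement, let $a,x,b\in S$ with $a+x\prec b+x$; the goal is $a\prec b$. First I would strip the inequality down to a bounded, almost-compact situation. Applying \axiomW{4} to $a+x\prec b+x$ (with the decomposition $b+x$) produces $b'\prec b$ and $x'\prec x$ with $a+x\prec b'+x'$; since $x'\le x$ this gives $a+x'\le a+x\prec b'+x'$, so by condition~(ii) of an auxiliary relation in \autoref{pgr:axiomsW} we have $a+x'\prec b'+x'$. Because $e$ is full and $x'\prec x$, there is $M$ with $x'\le Me$, and iterating \axiomW{3} on $e\prec e$ shows $Me\prec Me$, i.e. $Me$ is compact. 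Thus everything is reduced to cancelling $x'$, now dominated by the full compact element $Me$. The strategy is to \emph{complete} $x'$ up to $Me$: use \axiomW{5} to produce a complement $w$ of $x'$ inside $Me$, so that $Me\prec x'+w'$ together with a matching lower estimate, add this complement to $a+x'\prec b'+x'$, and thereby transport the relation onto the compact element, obtaining $a+Me\le b'+Me$. Since $e$, and hence $Me$, cancels from sums, this yields $a\le b'$, and then $a\le b'\prec b$ gives $a\prec b$ by condition~(ii) of \autoref{pgr:axiomsW}.

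The hard part will be making the completion \emph{exact}, i.e. genuinely reaching $a+Me\le b'+Me$ rather than $a\le b'+Me$ with a residual copy of $Me$. The difficulty is structural: \axiomW{5} (like \axiomO{5}) furnishes a complement satisfying $x'_\flat+w\prec Me\prec x'+w'$ only for some strictly smaller $x'_\flat\prec x'$ and $w'\prec w$, and the gap between $x'$ and $x'_\flat$ leaks a residual multiple of $e$ when one substitutes. The clean source of exactness is the consequence of \axiomO{5} that compact elements have exact complements: applying \axiomO{5} to $p+0\le q$ with $p\ll p$ gives $r$ with $p+r\le q\le p+r$, hence $p+r=q$ (and similarly for \axiomW{5}). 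The delicate point is that $x'$ itself need not be compact, so this applies to $Me$ but not directly to $x'$; reconciling the two is exactly where the hypothesis that $e$ cancels from sums should do the essential work, by letting one strip the residual multiple of $e$ after passing to the compact witness $Me$. I expect verifying that the completion can be arranged residue-free to be the technically demanding step, while the reduction above and the final upgrade from $a\le b'$ to $a\prec b$ are routine bookkeeping with \axiomW{3}, \axiomW{4}, and the defining properties of an auxiliary relation.
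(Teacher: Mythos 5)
Your reduction of the $\CatCu$ statement to the $\CatPreW$ statement is correct and matches the paper, and your overall strategy for the first statement (reduce via \axiomW{4}, dominate by a multiple $ne$ of the full element, complement inside $ne$ via \axiomW{5}, transport the inequality onto $ne$, cancel) is the paper's strategy. But the step you flag as ``the technically demanding step'' is the entire content of the proof, and you have not carried it out; moreover, the mechanism you propose for it --- letting the cancellation hypothesis strip a residual multiple of $e$ --- is not how the residue is avoided. Cancellation is used exactly once, at the very end, to pass from $a+ne\leq b'+ne$ to $a\leq b'$. Worse, your preliminary reduction actively destroys the asymmetry you need: \axiomW{4} hands you $a+x\prec b'+x'$ with the \emph{original} $x$ on the left and the smaller $x'$ on the right, and you then weaken this to $a+x'\prec b'+x'$ with the same $x'$ on both sides. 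Once both sides carry $x'$, any complement of $x'$ inside $ne$ produced by \axiomW{5} satisfies its upper estimate only for some strictly smaller $x''\prec x'$, and the leak you describe is genuinely unavoidable from that starting point.

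The actual resolution is to interpose \emph{upwards} rather than downwards: keep $a+x\leq b'+x'$ with $x'\prec x$, use \axiomW{1} to find $x'\prec x''\prec x$, use fullness to get $x''\leq ne$ (hence $x''\prec ne$, since $ne\prec ne$ by \axiomW{3}), and apply \axiomW{5} to the data $x''+0\prec ne$, $x'\prec x''$, $0\prec 0$, $ne\prec ne$. The asymmetric conclusion of \axiomW{5} produces $w'\prec w$ with $x'+w\prec ne$ (small element paired with the large complement) and $ne\prec x''+w'$ (large element paired with the small complement). Then
\[
a+ne\leq a+x''+w'\leq a+x+w\leq b'+x'+w\leq b'+ne,
\]
with no residue: the lower estimate of $ne$ uses $x''\leq x$, while the upper estimate uses precisely the $x'$ already sitting on the right-hand side of the \axiomW{4} inequality. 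Cancelling $ne$ gives $a\leq b'\prec b$, hence $a\prec b$ by condition~(ii) of an auxiliary relation. So the missing piece is not bookkeeping; it is the choice of where to interpose the intermediate element and of which half of \axiomW{5}'s conclusion to deploy on which side of the inequality.
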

\begin{proof}
Suppose $a,b,c\in S$ satisfy $a+c\prec b+c$.
Using \axiomW{4}, we can choose $b',c'\in S$ such that
\[
b'\prec b,\quad
c'\prec c,\quad
a+c\leq b'+c'.
\]
Apply \axiomW{1} to find $c''\in S$ with $c'\prec c''\prec c$.
Since $e$ is full, we can choose $n\in\N$ satisfying $c''\leq ne$.
By \axiomW{5}, applied to $c''+0\prec ne$, $c'\prec c''$, and using that $e\prec e$, we can find elements $x',x\in S$ such that $x'\prec x$ and $c'+x\prec ne\prec c''+x'$.
Now
\[
a+ne\leq a+c''+x'\leq a+c+x'\leq b'+c'+x\prec b+ne,
\]
from which we deduce $a\prec b$, as desired.

The analogous result for $\CatCu$-semigroups follows from \autoref{prp:axiomsPassToCu}.
\end{proof}

\chapter{Structure of \texorpdfstring{$\CatCu$}{Cu}-semigroups}
\label{sec:misc}

This chapter contains general results about the structure of $\CatCu$-sem\-i\-groups.

\section{Ideals and quotients}

In this section, we study ideals and quotients of $\CatCu$-semigroups.
We show that \axiomO{5}, \axiomO{6} and weak cancellation pass to ideals and quotients;
see \autoref{prp:axiomsQuotient}.
Given a $\CatCu$-semigroup $S$, we denote the set of ideals in $S$ by $\Lat(S)$.
We show that $\Lat(S)$ has a natural structure as a complete lattice;
see \autoref{pgr:Lat}.
The subset of singly-generated ideals forms a sublattice, denoted by $\Lat_{\mathrm{f}}(S)$.
We show that $\Lat_{\mathrm{f}}(S)$ is a $\CatCu$-semigroup;
see \autoref{prp:LatCu}.
In \autoref{sec:CuSrgSmod}, we will see that $\Lat_{\mathrm{f}}(S)$ is naturally isomorphic to the tensor product $S\otimes_\CatCu\{0,\infty\}$;
see \autoref{prp:tensWithInfty}.

Then, we consider the case of a concrete Cuntz semigroup $\Cu(A)$ of a \ca{} $A$.
We show that there is a natural isomorphism between $\Lat(\Cu(A))$ and the lattice of ideals in $A$, which we denote by $\Lat(A)$;
see \autoref{prp:LatCa}.
This isomorphism identifies the $\CatCu$-semigroup $\Lat_{\mathrm{f}}(\Cu(A))$ with the subset of $\Lat(A)$ consisting of ideals that contain a full, positive element.
In the case that $A$ is separable, every ideal in $A$ is $\sigma$-unital and hence contains a positive, full (even strictly positive) element.
It follows that in this case, $\Lat(A)$ is a $\CatCu$-semigroup;
see \autoref{prp:LatSepCa}.

\begin{pgr}
\label{pgr:ideals}
\index{terms}{order-hereditary}
\index{terms}{ideal}
\index{terms}{order-ideal}
\index{symbols}{$\leq_I$}
\index{symbols}{$\sim_I$}
\index{symbols}{S/I@$S/I$ \quad (quotient semigroup)}
Let $M$ be a \pom.
A subset $I$ of $M$ is \emph{order-hereditary} if for every $a,b\in M$ we have that $a\leq b$ and $b\in I$ imply $a\in I$.
An \emph{ideal} (also called \emph{order-ideal}) in $M$ is a subsemigroup which is order-hereditary.
Given a $\CatCu$-semigroup $S$, we shall also require that an \emph{ideal} in $S$ is closed under suprema of increasing sequences.

Given an ideal $I$ in a $\CatCu$-semigroup $S$, we define a binary relation $\leq_I$ on $S$ as follows:
For elements $a,b\in S$, we set $a\leq_I b$ if and only if there exists $c\in I$ such that $a\leq b+c$.
By symmetrizing, we define a relation $\sim_I$ on $S$:
For elements $a,b\in S$, we set $a\sim_I b$ if and only if both conditions $a\leq_I b$ and $b\leq_I a$ are met.

It is easy to see that $\sim_I$ is a congruence relation on $S$.
Recall that a congruence is (by definition) an additive equivalent relation; see \autoref{pgr:tensorMonConstr}.
We denote the set of congruence classes by
\[
S/I := S/\!\!\sim_I.
\]
The partial order on $S$ induces a partial order on $S/I$, giving the latter the structure of a \pom.
Given an element $a\in S$, we denote its congruence class in $S/I$ by $a_I$.
In the next result, we verify that $S/I$ satisfies \axiomO{1}-\axiomO{4}.
\end{pgr}

\begin{lma}
\label{prp:quotientCu}
Let $S$ be a $\CatCu$-semigroup, and let $I$ be an ideal in $S$.
Then $S/I$ is a $\CatCu$-semigroup.
Moreover, the map
\[
\pi_I\colon S\to S/I,\quad
a\mapsto a_I,\quad
\txtFA a\in S,
\]
is a surjective $\CatCu$-morphism.
\end{lma}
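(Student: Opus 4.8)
The plan is to derive every axiom for $S/I$ and every property of $\pi_I$ from a single lifting tool, and then to push the corresponding facts about $S$ through $\pi_I$. That $\pi_I$ is a surjective $\CatPom$-morphism is immediate: surjectivity holds by construction, and preservation of order, addition and $0$ follows from the already-noted facts that $\sim_I$ is a congruence and that the order on $S/I$ is the induced one ($a_I\leq b_I$ iff $a\leq_I b$). The key observation is a \emph{lifting lemma}: every increasing sequence $(x_n)_n$ in $S/I$ lifts to an increasing sequence in $S$. Indeed, choosing any $a_n\in S$ with $(a_n)_I=x_n$ and, using $x_n\leq x_{n+1}$, elements $c_n\in I$ with $a_n\leq a_{n+1}+c_n$, the telescoped sequence $b_n:=a_n+c_1+\dots+c_{n-1}$ is increasing in $S$, is $\sim_I$-equivalent to $a_n$ (so $(b_n)_I=x_n$), and lifts $(x_n)_n$.

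First I would establish \axiomO{1} together with the statement that $\pi_I$ preserves suprema of increasing sequences; these are proved at once. Given increasing $(x_n)_n$ in $S/I$, lift it to an increasing $(b_n)_n$ in $S$ and set $b:=\sup_n b_n$, which exists by \axiomO{1} in $S$. I claim $b_I=\sup_n x_n$. It is an upper bound since $\pi_I$ is order-preserving. For the least-upper-bound property, suppose $d_I\geq x_n$ for all $n$; then $b_n\leq_I d$, say $b_n\leq d+e_n$ with $e_n\in I$. Passing to the partial sums $f_n:=e_1+\dots+e_n\in I$ keeps $b_n\leq d+f_n$ valid and makes $(f_n)_n$ increasing, so $e:=\sup_n f_n$ exists and lies in $I$ because $I$ is closed under suprema of increasing sequences. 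Then $b_n\leq d+f_n\leq d+e$ for all $n$, whence $b\leq d+e$, i.e.\ $b\leq_I d$ and $b_I\leq d_I$. I expect this to be the main obstacle: it is precisely here that closedness of $I$ under increasing suprema is used, and the same computation shows $\pi_I$ sends the supremum of \emph{any} increasing lift to $\sup_n x_n$.

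Next I would show $\pi_I$ preserves compact containment: if $a\ll b$ in $S$, then $a_I\ll b_I$. Given an increasing $(y_n)_n$ in $S/I$ with $b_I\leq\sup_n y_n$, lift it to an increasing $(b_n)_n$ with supremum $\beta$, so $\sup_n y_n=\beta_I$ by the previous step; from $b\leq_I\beta$ choose $c\in I$ with $b\leq\beta+c=\sup_n(b_n+c)$ (using \axiomO{4}), and then $a\ll b$ gives $a\leq b_k+c$ for some $k$, so $a_I\leq (b_k)_I=y_k$. With sup-preservation and $\ll$-preservation in hand, \axiomO{2} is immediate: write $y=a_I$, take a rapidly increasing $(a_n)_n$ with $\sup_n a_n=a$, and apply the two preservation properties to get $y=\sup_n (a_n)_I$ with $(a_n)_I\ll (a_{n+1})_I$.

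Finally I would transport \axiomO{3} and \axiomO{4}. For \axiomO{3}, given $u\ll v$ and $u'\ll v'$ in $S/I$, use \axiomO{2} in $S$ to write $v=\sup_n(b_n)_I$ and $v'=\sup_n(b'_n)_I$ with rapidly increasing images, absorb $u\leq (b_N)_I$ and $u'\leq (b'_N)_I$ for a common $N$, invoke \axiomO{3} in $S$ to obtain $b_N+b'_N\ll b+b'$, push this through $\pi_I$, and conclude $u+u'\ll v+v'$ via the auxiliary-relation property $x\leq y\ll z\Rightarrow x\ll z$. For \axiomO{4}, lift both increasing sequences, apply \axiomO{4} in $S$, and use additivity together with sup-preservation of $\pi_I$. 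Since $\pi_I$ is an additive, order- and $0$-preserving map that preserves suprema of increasing sequences and compact containment, it is a $\CatCu$-morphism, and it is surjective; combined with \axiomO{1}--\axiomO{4} this shows $S/I$ is a $\CatCu$-semigroup.
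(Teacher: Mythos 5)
Your proof is correct and follows essentially the same route as the paper's: lift increasing sequences from $S/I$ to increasing sequences in $S$, use closedness of $I$ under suprema via the partial-sum trick to obtain \axiomO{1} and sup-preservation of $\pi_I$ simultaneously, then deduce preservation of $\ll$, \axiomO{2}, and finally transport \axiomO{3} and \axiomO{4}. The only difference is that you spell out the explicit lifting construction and the verifications of \axiomO{3} and \axiomO{4}, which the paper records as claims that are ``easily verified'' or leaves to the reader.
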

\begin{proof}
Let $S$ and $I$ be as in the statement.
As explained in \autoref{pgr:ideals}, we have that $S/I$ is a \pom.
It is also easy to see that $\pi_I$ is a surjective $\CatPom$-morphism.
The following claims are easily verified.

Claim 1:
Given $x,y\in S/I$, we have $x\leq y$ if and only if there exist representatives $a,b\in S$ such that $x=a_I$, $y=b_I$, and $a\leq b$.

Claim 2:
Given an increasing sequence $(x_k)_k$ in $S/I$, there exists an increasing sequence $(a_k)_k$ in $S$ such that $x_k=(a_k)_I$ for each $k$.

To verify \axiomO{1} for $S/I$, let $(x_k)_k$ be an increasing sequence in $S/I$.
By claim~2, we can choose an increasing sequence $(a_k)_k$ in $S$ such that for each $k$, the element $x_k$ is represented by $a_k$.
Since $S$ satisfies \axiomO{1}, the sequence $(a_k)_k$ has a supremum in $S$ which we denote by $a:=\sup_ka_k$.
We claim that $a_I$ is the supremum of the sequence $(x_k)_k$ in $S/I$.

We have $x_k\leq a_I$ for each $k$.
Conversely, let $y\in S/I$ satisfy $x_k\leq y$ for all $k$.
Choose $b\in S$ with $y=b_I$.
Then, for each $k$, we can choose $c_k\in I$ such that $a_k\leq b+c_k$.
Set
\[
c :=\sum_{k=0}^\infty c_k = \sup_n \sum_{k=0}^n c_k,
\]
which is an element in $I$.
We obtain
\[
a_k \leq b+c_k \leq b+c,
\]
for each $k$.
By definition of $a$, this implies
\[
a=\sup_ka_k\leq b+c.
\]
Since $c\in I$, we get $a\leq_I b$ and therefore $x\leq y$, as desired.
It also follows from the above argument that $\pi_I$ preserves suprema of increasing sequences.

In order to show that $\pi_I$ preserves the way-below relation, let $a,b\in S$ satisfy $a\ll b$ in $S$.
To show $a_I\ll b_I$ in $S/I$, let $(x_k)_k$ be an increasing sequence in $S/I$ satisfying $b_I\leq \sup_k x_k$.
By claim 2, we can choose an increasing sequence $(b_k)_k$ in $S$ such that $x_k=(b_k)_I$ for each $k$.
Then
\[
b_I \leq \sup_k x_k = (\sup_k b_k)_I,
\]
whence we can choose $c\in I$ such that $b\leq (\sup_k b_k) +c$.
Using that $S$ satisfies \axiomO{4}, we obtain
\[
a \ll b \leq (\sup_k b_k) +c = \sup_k (b_k +c).
\]
Therefore, there exists $n\in\N$ such that $a\leq b_n+c$, and hence $a_I\leq(b_n)_I=x_n$, as desired.

To verify \axiomO{2} for $S/I$, let $x\in S/I$.
Choose $a\in S$ such that $x=a_I$.
Since $S$ satisfies \axiomO{2}, we can choose a rapidly increasing sequence $(a_k)_k$ in $S$ such that $a=\sup_k a_k$.
For each $k$, set $x_k:=(a_k)_I$.
It follows that $(x_k)_k$ is a rapidly increasing sequence in $S/I$ with $x=\sup_k x_k$.
This finishes the proof of \axiomO{2} for $S/I$.
Finally, it is straightforward to verify the axioms \axiomO{3} and \axiomO{4} for $S/I$.
\end{proof}

\begin{prp}
\label{prp:axiomsQuotient}
Let $S$ be a $\CatCu$-semigroup, and let $I$ be an ideal in $S$.
If $S$ satisfies \axiomO{5} (respectively \axiomO{6}, weak cancellation), the so does the ideal $I$ and the quotient $S/I$.
\end{prp}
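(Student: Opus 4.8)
The proposition falls into an ideal case and a quotient case, and in each we must treat \axiomO{5}, \axiomO{6} and weak cancellation. The guiding principle is uniform: every axiom, together with the data witnessing its hypotheses, can be lifted to $S$, where the relevant axiom is assumed, and then transported back. Throughout I will use that $I$ is itself a $\CatCu$-semigroup (axioms \axiomO{1}--\axiomO{4} for $I$ are routine, using order-heredity and closure under suprema of increasing sequences) and that $\pi_I\colon S\to S/I$ is a surjective $\CatCu$-morphism by \autoref{prp:quotientCu}.

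For the ideal $I$, the plan is to first establish the key compatibility that compact containment computed inside $I$ agrees with the one computed in $S$, namely $a\ll_I b \Leftrightarrow a\ll b$ for all $a,b\in I$. Since suprema in $I$ coincide with those in $S$, the implication from $\ll$ to $\ll_I$ is immediate. For the converse, given $a\ll_I b$ and an increasing sequence $(c_n)_n$ in $S$ with $b\leq\sup_n c_n$, I would invoke \axiomO{2} in $S$ to write $b=\sup_k b_k$ rapidly increasing; each $b_k$ lies in $I$ by order-heredity and $b_k\ll b$ gives $b_k\leq c_{n_k}$ for suitable $n_k$, so applying $a\ll_I b$ to the sequence $(b_k)_k$ produces $a\leq b_K\leq c_{n_K}$ and hence $a\ll b$ in $S$. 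Granting this, each axiom for $I$ reduces to the one in $S$: the hypotheses transfer into $S$ via $\ll_I\Rightarrow\ll$, the $S$-axiom yields a witness, and order-heredity returns every witness to $I$ (for \axiomO{5} note $x\leq a'+x\leq c\in I$; for \axiomO{6} that $e,f\leq a\in I$), while for weak cancellation the conclusion $a\ll b$ in $S$ is brought back by $\ll\Rightarrow\ll_I$.

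For the quotient I would use repeatedly that $z_I=0$ for $z\in I$, hence $(p+z)_I=p_I$, and two lifting moves. An inequality $\alpha+\beta\leq\gamma$ in $S/I$ unfolds directly to $a+b\leq c+z$ for chosen representatives and some $z\in I$; a relation $\alpha'\ll\alpha$ is lifted by picking a representative $a$ of $\alpha$, writing $a=\sup_k a_k$ rapidly increasing, and setting $a':=a_{k_0}\ll a$ with $\alpha'\leq (a')_I$ (since $\pi_I$ preserves $\ll$ and suprema). Then \axiomO{5} for $S/I$ follows by applying \axiomO{5} in $S$ to $a+b\leq c+z$ with lifted $a'\ll a$, $b''\ll b$; pushing the resulting inequalities through $\pi_I$ and putting $\xi:=x_I$ gives $\alpha'+\xi\leq\gamma\leq\alpha+\xi$ and $\beta'\leq\xi$. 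Axiom \axiomO{6} is analogous: lift $\alpha\leq\beta+\gamma$ to $a\leq b+(c+z)$, apply \axiomO{6} in $S$, and read off $e_I,f_I$, the term $z\in I$ being harmless since $(c+z)_I=\gamma$.

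The main obstacle is weak cancellation for $S/I$, since here the conclusion is a $\ll$-statement and one may assume neither \axiomO{5} nor \axiomO{6}, only weak cancellation in $S$; naively cancelling an inequality fails. My plan is: given $\alpha+\xi\ll\beta+\xi$ with representatives $a,b,x$, write $b=\sup_k b_k$ and $x=\sup_k x_k$ rapidly increasing, so that, by \axiomO{3} and \axiomO{4}, $(b+x)_I=\sup_k(b_k+x_k)_I$ is rapidly increasing in $S/I$; the relation $(a+x)_I\ll(b+x)_I$ then yields an index $k$ with $a+x\leq b_k+x_k+z_1$ for some $z_1\in I$, where $b_k\ll b$ and $x_k\ll x$. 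For any $a'\ll a$, axiom \axiomO{3} gives $a'+x_k\ll a+x\leq(b_k+z_1)+x_k$, and weak cancellation in $S$ cancels $x_k$ to give $a'\ll b_k+z_1$, hence $a'\leq b_k+z_1$. Passing to the supremum over $a'\ll a$ yields $a\leq b_k+z_1$, so $a_I\leq(b_k)_I\ll b_I$ and therefore $\alpha\ll\beta$. The delicate point is exactly this use of \axiomO{3} to manufacture an honest $\ll$-relation in $S$ on which weak cancellation can be applied.
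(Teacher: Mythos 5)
Your proof is correct and follows the same lift--apply--push-down strategy as the paper, which only writes out the \axiomO{5} quotient case in detail (choosing representatives via the analogue of Claim~1 in \autoref{prp:quotientCu} rather than carrying an error term $z\in I$, a cosmetic difference) and declares the ideal case and the remaining quotient cases routine. Your argument for weak cancellation in $S/I$ --- lifting $(a+x)_I\ll(b+x)_I$ to $a+x\leq b_k+x_k+z_1$ via continuity of $\pi_I$, then introducing $a'\ll a$ and using \axiomO{3} to produce an honest relation $a'+x_k\ll (b_k+z_1)+x_k$ in $S$ before cancelling --- correctly supplies the one genuinely delicate step that the paper leaves to the reader.
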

\begin{proof}
It is easy to verify that each of the axioms passes to ideals.
To show that \axiomO{5} passes to quotients, let $S$ be a $\CatCu$-semigroup and let $I$ be an ideal in $S$.
Assume that $S$ satisfies \axiomO{5}.
To verify \axiomO{5} for $S/I$, let $a',a,b',b,c\in S/I$ satisfy
\[
a+b\leq c,\quad
a'\ll a,\quad
b'\ll b.
\]
Choose $s,t,r\in S$ such that
\[
a=s_I,\quad
b=t_I,\quad
c=r_I,\quad
s+t\leq r.
\]
Since the quotient map is continuous, we can find $s',t'\in S$ satisfying
\[
s'\ll s,\quad
t'\ll t,\quad
a'\leq (s')_I,\quad
b'\leq (t')_I.
\]
Since $S$ satisfies \axiomO{5}, we can choose $x\in S$ such that
\[
s'+x\leq r\leq s+x,\quad
t'\leq x.
\]
Then $x_I$ has the desired properties to verify \axiomO{5} for $S/I$.
The proofs that \axiomO{6} and weak cancellation pass to quotients can be obtained with the same technique and are left to the reader.
\end{proof}

\begin{rmk}
\label{rmk:quotientsW}
It is possible to define the notion of ideals and quotients in the category $\CatPreW$.
We do not pursue this idea.
\end{rmk}

\begin{prbl}
\label{prbl:axiomsExtension}
Let $S$ be a $\CatCu$-semigroup, and let $I$ be an ideal in $S$.
Assume that $I$ and $S/I$ satisfy \axiomO{5} (respectively \axiomO{6}, weak cancellation).
Under what assumptions does this imply that $S$ itself satisfies the respective axiom?
\end{prbl}

\begin{pgr}
\label{pgr:Lat}
\index{terms}{ideal!lattice}
\index{symbols}{Lat(S)@$\Lat(S)$ \quad (lattice of ideals)}
Let $S$ be a $\CatCu$-semigroup.
We denote the set of all ideals in $S$ by $\Lat(S)$.
Inclusion of ideals defines a partial order on $\Lat(S)$.
Let $(I_\lambda)_\lambda\subset\Lat(S)$ be a family of ideals.
It is easy to check that the intersection $\bigcap_\lambda I_\lambda$ is again an ideal.
Clearly, this is the largest ideal contained in each $I_\lambda$.
Therefore, the family $(I_\lambda)_\lambda$ has an infimum in $\Lat(S)$, given by $\bigwedge_\lambda I_\lambda = \bigcap_\lambda I_\lambda$.

On the other hand,
the family $(I_\lambda)_\lambda$ has a supremum in $\Lat(S)$ given by
\[
\bigvee_\lambda I_\lambda
= \bigcap \left\{ J\in\Lat(S) : I_\lambda\subset J \text{ for all } \lambda \right\}.
\]
This shows that $\Lat(S)$ is a complete lattice.

Set $M:=\{a\in S : a\leq y_1+\cdots+y_n,\text{ for some }y_1,\dots,y_n\in \bigcup_\lambda I_\lambda\}$, that is, $M$ is the order-hereditary submonoid generated by $\bigcup_\lambda I_\lambda$.
We claim that the supremum of the family $(I_\lambda)_\lambda$ is also given by
\[
\bigvee_\lambda I_\lambda
= \left\{ \sup_n a_n : (a_n)_n \text{ rapidly increasing sequence in } M \right\}.
\]
To see this, let us temporarily denote the right hand side in the equation above by $P$.
Using that $S$ satisfies \axiomO{3} and \axiomO{4}, it follows easily that $P$ is closed under addition.
To show that $P$ is order-hereditary, let $a,b\in S$ satisfy $a\leq b$ and $b\in P$.
By definition of $P$, we can choose an increasing sequence $(b_n)_n$ in $M$ such that $b=\sup_nb_n$.
Since $S$ satisfies \axiomO{2}, we can choose a rapidly increasing sequence $(a_k)_k$ in $S$ such that $a=\sup_k a_k$.
For each $k$, we have
\[
a_k \ll a \leq b=\sup_nb_n,
\]
whence we can choose $n(k)\in\N$ with $a_k\leq b_{n(k)}$.
Since $M$ is order-hereditary, this implies $a_k\in M$, and hence $a\in P$, as desired.
Finally, a standard diagonalization argument shows that $P$ is closed under suprema of increasing sequences.
Thus, $P$ is an ideal of $S$ that contains $I_\lambda$ for each $\lambda$.
Since $P$ is clearly the smallest ideal with this property, we have $P=\bigvee_\lambda I_\lambda$.

It follows that an element $a\in S$ is contained in $\bigvee_\lambda I_\lambda$ if and only if for every $a'\in S$ satisfying $a'\ll a$ we have that $a'$ is contained in $M$.

For finitely many ideals $I_1,\dots,I_n$, it is easy to check that their supremum can also be described as
\[
I_1\vee\cdots\vee I_n = \left\{a\in S : a\leq y_1+\cdots y_n\text{ with }y_i\in I_i \right\}.
\]

Given $a\in S$, we denote by $\Idl(a)$ the ideal generated by $a$, that is:
\[ \index{symbols}{Idl(a)@$\Idl(a)$ \quad (ideal generated by $a$)}
\Idl(a)=\left\{ x\in S : x\leq \infty\cdot a \right\}.
\]
We claim that $\Idl(a')\ll\Idl(a)$ in $\Lat(S)$ for any $a',a\in S$ satisfying $a'\ll a$.
To prove the claim, let $(I_k)_{k\in\N}$ be an increasing sequence in $\Lat(S)$ with $\Idl(a)\subset\bigvee_k I_k$.
Then $a'\ll a\in \bigvee_k I_k$ and therefore $a'\in\bigcup_k I_k$.
Thus, there is $n\in\N$ such that $a'\in I_n$.
But this implies $\Idl(a')\subset I_n$, which proves the claim.

Let $a\in S$.
Since $S$ satisfies \axiomO{2}, we can choose a rapidly increasing sequence $(a_n)_n$ in $S$ with $a=\sup_n a_n$.
It follows that $\Idl(a)$ is the supremum of the rapidly increasing sequence $(\Idl(a_n))_n$ in $\Lat(S)$.

However, it is no longer true that general ideals in a $\CatCu$-semigroup can be written as a supremum of a rapidly increasing sequence of ideals.
We define
\[
\Lat_{\mathrm{f}}(S)
:= \left\{ \Idl(a) : a\in S \right\} \subset \Lat(S),
\]
which is the set of singly-generated ideals in $S$. Note that, for $a$, $b\in S$, we have $\Idl(a)\vee\Idl(b)=\Idl(a+b)$, so that $\Lat_{\mathrm{f}}(S)$ becomes an abelian semigroup with $\vee$ as addition.
\index{symbols}{Latf(S)@$\Lat_{\mathrm{f}}(S)$ \quad (singly-generated ideals)}

Given an ideal $I$ in $S$, the following are equivalent:
\beginEnumStatements
\item
We have $I\in\Lat_{\mathrm{f}}(S)$, that is, $I$ is generated by a single element.
\item
The ideal $I$ is generated by countably many elements.
\item
The ideal $I$ has a maximal element, denoted by $\bigvee I$, and then
\[
I = \Idl\left( \bigvee I \right) = \left\{ x\in S : x\leq \bigvee I \right\}.
\]
\end{enumerate}
It is clear that \enumStatement{1} implies \enumStatement{2}, and that \enumStatement{3} implies \enumStatement{1}.
To show that \enumStatement{2} implies \enumStatement{3}, assume that $I$ is an ideal that is generated by a countable set of elements, say $\{a_0,a_1,a_2,\ldots\}\subset S$.
Then the element
\[
s:=\infty\cdot\sum_{k=0}^\infty a_k
= \sup_n \sum_{k=0}^n na_k,
\]
is contained in $I$.
Since $a_k\leq s$ for each $k$, it is clear that $I=\Idl(s)$.
Since, moreover, $\infty\cdot s=s$, we also have $s=\bigvee I$.
\end{pgr}

\begin{prp}
\label{prp:LatCu}
Let $S$ be a $\CatCu$-semigroup.
Then $\Lat_{\mathrm{f}}(S)$ is a $\CatCu$-semigroup satisfying \axiomO{5}.
If $S$ satisfies \axiomO{6}, then so does $\Lat_{\mathrm{f}}(S)$.
Moreover, the map
\[
S\to\Lat_{\mathrm{f}}(S),\quad
a\mapsto \Idl(a),\quad
\txtFA a\in S,
\]
is a surjective $\CatCu$-morphism.

If $S$ is countably-based, then
\[
\Lat(S)=\Lat_{\mathrm{f}}(S).
\]
\end{prp}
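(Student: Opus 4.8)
The plan is to lean heavily on the facts already assembled in \autoref{pgr:Lat}: that $\Idl(a)\vee\Idl(b)=\Idl(a+b)$, that $\Idl(a')\ll\Idl(a)$ whenever $a'\ll a$ in $S$, and the equivalence ``singly generated $\Leftrightarrow$ countably generated $\Leftrightarrow$ has a maximal element'' for ideals. Throughout, addition in $\Lat_{\mathrm{f}}(S)$ is $\vee$, the order is inclusion, and the zero is $\Idl(0)=\{0\}$. A first useful observation is that $\Lat_{\mathrm{f}}(S)$ is idempotent, since $\Idl(a)\vee\Idl(a)=\Idl(2a)=\Idl(a)$ (as $a\leq 2a\leq\infty\cdot a$); in particular its order is algebraic, a fact that will trivialize \axiomO{5}.

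First I would verify \axiomO{1}--\axiomO{4}. For \axiomO{1}, given an increasing sequence $(I_k)_k$ in $\Lat_{\mathrm{f}}(S)$, its supremum in the complete lattice $\Lat(S)$ is generated by the countably many generators of the $I_k$, hence lies in $\Lat_{\mathrm{f}}(S)$ by the countably-generated criterion. Axiom \axiomO{2} is immediate: for a rapidly increasing $(a_n)_n$ in $S$ with $a=\sup_n a_n$, the sequence $(\Idl(a_n))_n$ is rapidly increasing with supremum $\Idl(a)$. Axiom \axiomO{4} is the identity $\bigvee_k(I_k\vee J_k)=(\bigvee_k I_k)\vee(\bigvee_k J_k)$, valid in any complete lattice. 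For \axiomO{3}, given $I'\ll I$ and $J'\ll J$, I would use \axiomO{2} to fix rapidly increasing $(\Idl(a_n))_n$, $(\Idl(b_n))_n$ with suprema $I,J$, absorb $I',J'$ below a common $\Idl(a_n),\Idl(b_n)$, and conclude from $a_n+b_n\ll a_{n+1}+b_{n+1}$ (\axiomO{3} in $S$) that $I'\vee J'\subseteq\Idl(a_n+b_n)\ll\Idl(a_{n+1}+b_{n+1})\subseteq I\vee J$. These same computations show $a\mapsto\Idl(a)$ is a surjective $\CatCu$-morphism: surjectivity and additivity are definitional, order-preservation is clear, preservation of $\ll$ is the cited fact, and preservation of suprema of increasing sequences follows since $\bigvee_k\Idl(a_k)=\Idl(\sup_k a_k)$ (the right-hand ideal contains each $a_k$ and is closed under suprema).

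Axiom \axiomO{5} is then automatic from idempotency: given $A+B\leq C$, $A'\ll A$, $B'\ll B$, the choice $X:=C$ works, since $A'\leq A\leq C$ gives $A'\vee C=C$, $A\leq C$ gives $A\vee C=C$, and $B'\leq B\leq C$ gives $B'\leq X$. The substantive part, and what I expect to be the main obstacle, is \axiomO{6} under the hypothesis that $S$ satisfies it. Take $A'\ll A\leq B\vee C$ with $A=\Idl(a)$, $B=\Idl(b)$, $C=\Idl(c)$, fix a rapidly increasing $(a_n)_n$ with $a=\sup_n a_n$, and choose $n_0$ with $A'\subseteq\Idl(a_{n_0})$. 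The key manoeuvre is to extract a \emph{finite} multiple: from $a_{n_0+1}\ll a_{n_0+2}\leq a\leq\infty\cdot(b+c)=\sup_m m(b+c)$ one obtains $a_{n_0+1}\leq mb+mc$ for some $m$. Applying \axiomO{6} in $S$ to $a_{n_0}\ll a_{n_0+1}\leq mb+mc$ yields $e,f\in S$ with $a_{n_0}\leq e+f$, $e\leq a_{n_0+1},mb$, and $f\leq a_{n_0+1},mc$. Setting $E:=\Idl(e)$, $F:=\Idl(f)$ and using $\Idl(mb)=\Idl(b)$, $\Idl(mc)=\Idl(c)$, one checks $E\leq A,B$, $F\leq A,C$, and $A'\subseteq\Idl(a_{n_0})\subseteq\Idl(e+f)=E\vee F$, as required. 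The delicate point is the index bookkeeping between the rapidly increasing sequence and the way-below relation, ensuring that the element whose ideal absorbs $A'$ is exactly the one playing the role of the ``$a'$'' in \axiomO{6}.

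Finally, for the countably-based case I would prove that every ideal $I$ is countably generated, whence singly generated by the criterion of \autoref{pgr:Lat}, so $\Lat(S)=\Lat_{\mathrm{f}}(S)$. Fixing a countable basis $B$, I claim $I$ is generated by the countable set $I\cap B$: for $a\in I$, choose a rapidly increasing $(a_k)_k$ with $a=\sup_k a_k$ and, by the basis property, $b_k\in B$ with $a_k\leq b_k\ll a_{k+1}$; then $b_k\leq a\in I$ forces $b_k\in I\cap B$, the $(b_k)_k$ are increasing, and $a=\sup_k b_k$ lies in the ideal generated by $I\cap B$. Hence that ideal equals $I$, completing the proof.
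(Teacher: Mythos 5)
Your proof is correct and follows essentially the same route as the paper's: the axioms \axiomO{1}--\axiomO{4} and the properties of $a\mapsto\Idl(a)$ are read off from the facts assembled in \autoref{pgr:Lat}, \axiomO{5} is trivial because addition in $\Lat_{\mathrm{f}}(S)$ is the idempotent join (so the order is algebraic), \axiomO{6} is lifted to \axiomO{6} in $S$, and the countably-based case reduces to showing every ideal is countably generated. The one (cosmetic) divergence is in the \axiomO{6} step: where you extract a finite multiple $m$ with $a_{n_0+1}\leq mb+mc$ via compact containment against the increasing sequence $\left(m(b+c)\right)_m$, the paper instead replaces the generators of $J$ and $K$ by $\infty\cdot b$ and $\infty\cdot c$ (which generate the same ideals) so that $a\leq b+c$ holds on the nose --- both devices rest on $\Idl(mb)=\Idl(b)$ and are equally valid.
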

\begin{proof}
We denote the map $S\to\Lat_{\mathrm{f}}(S)$ from the statement by $\Idl$.
Let $a,b\in S$.
It is easy to see that $\Idl(a+b)=\Idl(a)+\Idl(b)$.
Moreover, we have $\Idl(a)\subset \Idl(b)$ if and only if $\infty\cdot a \leq \infty\cdot b$.
It follows that $\Lat_{\mathrm{f}}(S)$ is an algebraically ordered submonoid of $\Lat(S)$.
We also get that the map $\Idl$ is a $\CatPom$-morphism.

To verify \axiomO{1} for $\Lat_{\mathrm{f}}(S)$, let $(I_n)_n$ be an increasing sequence of singly-generated ideals.
The supremum $\bigvee_n I_n$ in $\Lat(S)$ is a countably-generated ideal.
As observed in \autoref{pgr:Lat}, this implies that $\bigvee_n I_n\in\Lat_{\mathrm{f}}(S)$.
It follows that $\bigvee_n I_n$ is the supremum of $(I_n)_n$ in $\Lat_{\mathrm{f}}(S)$, which verifies \axiomO{1}.

In \autoref{pgr:Lat}, we have already observed that $\Lat_{\mathrm{f}}(S)$ satisfies \axiomO{2} and that the map $\Idl$ preserves the way-below relation.
Then it is easy to check that $\Lat_{\mathrm{f}}(S)$ satisfies \axiomO{3} and \axiomO{4}, and that $\Idl$ is a surjective $\CatCu$-morphism.
Moreover, since the order on $\Lat_{\mathrm{f}}(S)$ is algebraic, \axiomO{5} holds trivially.

Next, let us assume that $S$ satisfies \axiomO{6}.
In order to show that $\Lat_{\mathrm{f}}(S)$ satisfies \axiomO{6}, let $I',I,J,K\in \Lat_{\mathrm{f}}(S)$ satisfy
\[
I'\ll I\subset J+K.
\]
Choose $a\in S$ with $I=\Idl(a)$.
Since $I'\ll I$, we can find $a'\in S$ such that
\[
a'\ll a,\quad
I'\subset \Idl(a'),\quad
I=\Idl(a).
\]
Moreover, since $I\subset J+K$, we can choose $b,c\in S$ with
\[
a\leq b+c,\quad
J=\Idl(b),\quad
K=\Idl(c).
\]
Using that $S$ satisfies \axiomO{6}, we can find $e,f\in S$ such that
\[
a'\leq e+f,\quad
e\leq a,b,\quad
f\leq a,c.
\]
It is now easy to check that the ideals $\Idl(e)$ and $\Idl(f)$ have the desired properties to verify \axiomO{6} for $\Lat_{\mathrm{f}}(S)$.

Finally, assume $S$ is countably-based.
Given an ideal $I$ in $S$, it is straightforward to check that $I$ is generated by countably many elements.
As observed in \autoref{pgr:Lat}, this implies $I\in\Lat_{\mathrm{f}}(S)$, as desired.
\end{proof}

\begin{rmk}
\label{rmk:LatCu}
In \autoref{prp:tensWithInfty}, we will show that there is a natural isomorphism
\[
\Lat_{\mathrm{f}}(S) \cong S\otimes_\CatCu \{0,\infty\}.
\]
\end{rmk}

\begin{pgr}
\label{pgr:idealsCa}
Let $A$ be a \ca, and let $I$ be an ideal in $A$.
(By an ideal in a \ca, we always mean a closed, two-sided ideal.)
The inclusion map $\iota\colon I\to A$ induces a $\CatCu$-morphism
\[
\Cu(\iota)\colon\Cu(I)\to\Cu(A).
\]
It is shown in \cite[Proposition~3.1.1]{Ciu08PhD} that $\Cu(\iota)$ is an order-embedding.
We may therefore identify $\Cu(I)$ with a subsemigroup of $\Cu(A)$.
(The assumption that the \ca{} is separable is not needed in the proof of \cite[Proposition~3.1.1]{Ciu08PhD}.)
In fact, the argument is not difficult and we include it for completeness.

First, we show that $\Cu(\iota)$ is an order-embedding.
We may assume that $A$ and $I$ are stable.
Let $x,y\in I_+$ such that $x$ is Cuntz-subequivalent to $y$ in $A$, and let $\varepsilon>0$.
Then, using R{\o}rdam's lemma (see \autoref{pgr:CuntzComparison}), we can find $\delta>0$ and $r\in A$ such that
\[
(x-\varepsilon)_+ = r(y-\delta)_+r^*.
\]
Let $f_\delta\colon\R\to[0,1]$ be the function that takes value $0$ on $(-\infty,\delta/2)$, that takes value $1$ on $[\delta,\infty)$, and that is linear on $[\delta/2,\delta]$.
By functional calculus, we obtain
\[
(y-\delta)_+=f_\delta(y)(y-\delta)_+f_\delta(y).
\]
This implies
\[
(x-\varepsilon)_+ = (rf_\delta(y)) (y-\delta)_+ (rf_\delta(y))^*.
\]
Since $f_\delta(y)\in I$ and since $I$ is an ideal, we have $rf_\delta(y)\in I$.
Then, using  R{\o}rdam's lemma in the other direction, it follows that $x$ is Cuntz subequivalent to $y$ in $I$, as desired.

Let us also show that $\Cu(I)$ is an ideal in $\Cu(A)$.
First, it is clear that $\Cu(I)$ is a submonoid of $\Cu(A)$.
To show that it is an order-hereditary subset, let $a,b\in\Cu(A)$ satisfy $a\leq b$ and $b\in\Cu(I)$.
Choose $x\in A_+$ and $y\in I_+$ with $a=[x]$ and $b=[y]$.
By definition, we can find a sequence $(r_k)_k$ in $A$ such that $x=\lim_kr_kyr_k^*$.
Since $I$ is an ideal, we have $r_kyr_k^*\in I$ for each $k$.
As $I$ is also closed, we get $x\in I$ and so $a\in\Cu(I)$, as desired.

Finally, in order to show that $\Cu(I)$ is closed under suprema of increasing sequences, let $(a_k)_k$ be an increasing sequence in $\Cu(I)$ with $a:=\sup_ka_k \in \Cu(A)$.
Choose representatives $x_k\in I_+$ for $k\in\N$ and $x\in A_+$ such that $a=[x]$ and $a_k=[x_k]$ for each $k$.
We need to show $a\in I$.
Let $\varepsilon>0$.
Then
\[
[(x-\varepsilon)_+]\ll[x]=a=\sup_k a_k,
\]
which implies that there exists $n\in\N$ such that $[(x-\varepsilon)_+]\leq a_k$.
We have already observed that this implies that $(x-\varepsilon)_+\in I$.
Since this holds for every $\varepsilon>0$, we get $x\in I$, and hence $a\in\Cu(I)$, as desired.

We let $\Lat(A)$ denote the collection of ideals of $A$, equipped with the partial order given by inclusion of ideals.
It is well-known that $\Lat(A)$ is a complete lattice.
We let $\Lat_{\mathrm{f}}(A)$ denote the subset of ideals in $A$ that contain a full, positive element.
We remark that every $\sigma$-unital ideal of $A$ belongs to $\Lat_{\mathrm{f}}(A)$, but the converse does not hold.
Indeed, in \cite[Lemma~2.2]{BroGreRie77StableIso} an example of a simple \ca{} without strictly positive element is given.

It is easy to see that $\Lat_{\mathrm{f}}(A)$ is a sublattice of $\Lat(A)$.
\end{pgr}

\begin{prp}
\label{prp:LatCa}
Let $A$ be a \ca.
Then the map
\[
\Lat(A)\to\Lat(\Cu(A)),\quad
I\mapsto\Cu(I),\quad
\txtFA I\in\Lat(A),
\]
is a natural isomorphism of complete lattices.

Moreover, it maps the sublattice $\Lat_{\mathrm{f}}(A)$ of ideals in $A$ that contain a full, positive element onto the sublattice $\Lat_{\mathrm{f}}(\Cu(A))$ of singly-generated ideals in $\Cu(A)$.
\end{prp}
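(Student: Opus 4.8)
The plan is to prove that $\Phi\colon I\mapsto\Cu(I)$ is an order-isomorphism by constructing an explicit inverse, and then to deduce the complete-lattice statement and the claim about the sublattices $\Lat_{\mathrm{f}}$. By \autoref{pgr:idealsCa} the assignment is well defined, and it is plainly order-preserving. For the inverse, given $J\in\Lat(\Cu(A))$ I would consider the set of positive elements
\[
H_J:=\left\{ a\in A_+ : [a]\in J \right\},
\]
where $a\in A_+$ is regarded as $a\otimes e_{11}\in(A\otimes\K)_+$, and set $\Psi(J):=\overline{A\,H_J\,A}$. The first task is to check that $H_J$ is exactly the positive part of this closed two-sided ideal. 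Indeed, $H_J$ is an $\mathbb{R}_+$-cone closed under addition (since $[a+b]\leq[a]+[b]\in J$ and $J$ is order-hereditary), it is downward hereditary and invariant under Cuntz subequivalence (from $b\precsim a$, $[a]\in J$, together with $x^*ax\precsim a$ for the two-sided invariance), and it is norm-closed: if $a_m\to a$ with $a_m\in H_J$, then for each $n$ Rørdam's lemma gives $(a-\tfrac1n)_+\precsim a_m$ for large $m$, so $[(a-\tfrac1n)_+]\in J$, and since $[a]=\sup_n[(a-\tfrac1n)_+]$ and $J$ is closed under suprema of increasing sequences, $[a]\in J$. By the standard characterization of positive parts of ideals, $H_J=\Psi(J)\cap A_+$.

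Next I would verify that $\Psi$ and $\Phi$ are mutually inverse. For $\Psi(\Cu(I))=I$ it suffices to show that, for $a\in A_+$, one has $[a]\in\Cu(I)$ if and only if $a\in I$: the `if' is clear, and for `only if' the fact that $\Cu(\iota)$ is an order-embedding (\autoref{pgr:idealsCa}) gives $a\precsim b$ for some $b\in(I\otimes\K)_+$, whence $a\in\overline{I\otimes\K}\cap A=I$. For $\Cu(\Psi(J))=J$, the inclusion $\supseteq$ is immediate from the definition of $H_J$, while for $\subseteq$ I would take $[b]\in\Cu(\Psi(J))$, write $[b]=\sup_n[(b-\tfrac1n)_+]$ with each $(b-\tfrac1n)_+$ lying in some $M_k(\Psi(J))_+$, use $c\precsim c_{11}\oplus\cdots\oplus c_{kk}$ to see that its class lies in $J$, and conclude $[b]\in J$ by closure under suprema. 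Since $\Phi$ is thus an order-isomorphism of posets, it automatically preserves arbitrary infima and suprema and is an isomorphism of complete lattices; naturality with respect to the maps on ideal lattices induced by \starHom s is checked directly on Cuntz classes.

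Finally, for the sublattices: by \autoref{pgr:Lat}, $\Lat_{\mathrm{f}}(\Cu(A))$ is exactly the set of ideals $\Idl(x)$. If $a\in I_+$ is full in $I$, a comparison argument shows $\Cu(I)=\Idl([a])$, so $\Cu(I)$ is singly generated. Conversely, if $\Cu(I)=\Idl(x)$, choose $a\in(A\otimes\K)_+$ with $x=[a]$; then $a\in I\otimes\K$, and the ideal of $A\otimes\K$ generated by $a$ has Cuntz semigroup $\Idl([a])=\Cu(I)=\Cu(I\otimes\K)$, so by injectivity of $\Phi$ applied to $A\otimes\K$ this ideal equals $I\otimes\K$; that is, $a$ is full in $I\otimes\K$. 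The main obstacle is precisely this last descent, namely producing a full positive element of $I$ itself from a full positive element of its stabilization $I\otimes\K$. I would resolve it with the standard fact that an ideal and its stabilization simultaneously admit full positive elements (compare the discussion around \cite[Lemma~2.2]{BroGreRie77StableIso}), for instance by compressing a full positive element of $I\otimes\K$ to the corner $I\otimes e_{11}\cong I$.
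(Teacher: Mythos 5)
Your overall strategy matches the paper's: construct an explicit inverse to $I\mapsto\Cu(I)$, check that the two composites are the identity, and then treat the sublattices $\Lat_{\mathrm{f}}$ separately. The packaging of the inverse differs slightly --- the paper reduces at once to stable $A$, sets $\psi(J)=\{x\in A:[xx^*]\in J\}$ and verifies the ideal axioms by hand, whereas you keep $A$ general, form the hereditary cone $H_J\subset A_+$ and invoke the correspondence between closed two-sided ideals and invariant hereditary cones --- but these amount to the same computation, and your verifications (additivity via $[a+b]\leq[a]+[b]$, two-sided invariance via $x^*ax\precsim a$, norm-closedness via R{\o}rdam's lemma and closure of $J$ under suprema of increasing sequences) are correct. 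One small caveat: the inclusion $J\subset\Cu(\Psi(J))$ is not really ``immediate''; since $H_J$ only sees positive elements of $A$ itself, this direction needs the same cut-down-to-$M_k(A)$ and diagonal-comparison argument as the reverse inclusion.

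The one step that actually fails is your proposed resolution of the descent at the end. Compressing a full positive element $a$ of $I\otimes\K$ to the corner $I\otimes e_{11}$ does not produce a full element of $I$: the compression $(1\otimes e_{11})a(1\otimes e_{11})=a_{11}\otimes e_{11}$ can even be zero, for instance when $a=b\otimes e_{22}$ with $b$ full in $I$, because compression to a corner destroys fullness in general. The fact you want is nevertheless true, but the proof is different: writing $a=(a_{ij})$ with entries $a_{ij}\in I$, the element $c:=\sum_i 2^{-i}a_{ii}\in I_+$ is full in $I$, since $a_{ii}\leq 2^i c$ for each $i$ and $(1\otimes p_k)a(1\otimes p_k)\leq k\cdot\mathrm{diag}(a_{11},\dots,a_{kk})$ (with $p_k=e_{11}+\dots+e_{kk}$) shows that $a$ lies in the closed ideal generated by $c\otimes e_{11}$. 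The paper sidesteps the issue by assuming $A$ stable from the outset, so that the largest element of a singly generated $\Cu(I)$ is represented by a positive element of $I$ itself; transferring the $\Lat_{\mathrm{f}}$ statement back to non-stable $A$ requires exactly the fact above. With that repair your argument is complete.
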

\begin{proof}
For the case that $A$ is a separable \ca, a proof of the statement can be found in \cite[Proposition~3.1.2]{Ciu08PhD}.
Our proof is based on the ideas given by Ciuperca, and we include it for completeness.
We may assume that $A$ is stable, so that $\Cu(A)=A_+/\!\!\sim$.
Let us denote the map of the statement by $\varphi\colon\Lat(A)\to\Lat(\Cu(A))$.
Consider the map
\[
c\colon A\to \Cu(A),\quad
x\mapsto [xx^*],\quad
\txtFA x\in A,
\]
which assigns to an element $x\in A$ the Cuntz class of $xx^*$.
Given an ideal $I$ in $A$, we easily see
\[
\varphi(I)
=\left\{ [x]\in\Cu(A) : x\in I_+ \right\}
= \left\{ [xx^*]\in\Cu(A) : x\in I \right\}
= c(I).
\]
Let us define a map, which will turn out to be the inverse of $\varphi$, as
\[
\psi\colon \Lat(\Cu(A)) \to\Lat(A),\quad
\psi(J) :+ c^{-1}(J) = \left\{ x\in A : [xx^*]\in J \right\},
\]
for all $J\in\Lat(\Cu(A)))$.

Given an ideal $J$ in $\Cu(A)$, let us check that $\psi(J)$ is an ideal of $A$.
To show that $\psi(J)$ is closed under addition, let $x,y\in\psi(J)$.
We have
\[
(x+y)(x+y)^*
\leq (x+y)(x+y)^*+(x-y)(x-y)^*
= 2xx^* + 2yy^*,
\]
and therefore $[(x+y)(x+y)^*]\leq [xx^*]+[yy^*]$.
Since $J$ is an ideal and $[xx^*],[yy^*]\in J$, we get $[(x+y)(x+y)^*]\in J$ and so $x+y\in\psi(J)$.
It is straightforward to check that $\psi(J)$ is closed under scalar multiplication.

To show that $\psi(J)$ is an ideal, let $x\in\psi(J)$ and $y\in A$.
We have
\[
(xy)(xy)^*
= x(yy^*)x^*
\precsim xx^*,\quad
(yx)(yx)^*
= y(xx^*)y^*
\precsim xx^*,
\]
which again implies $xy,yx\in\psi(J)$.
It is left to the reader to check that $\psi(J)$ is also closed.

It is clear that both $\varphi$ and $\psi$ are order-preserving.
Next, let us show that these maps are inverses of each other.
Given an ideal $J$ in $\Cu(A)$, using that $c$ is a surjective map, we easily deduce
\[
\varphi\circ\psi(J) = c(c^{-1}(J)) = J.
\]
Conversely, let $I$ be an ideal of $A$.
Then $I$ is clearly a subset of $\psi\circ\varphi(I) = c^{-1}(c(I))$.
By definition, if $x\in c^{-1}(c(I))$, then $xx^*\in c(I)$, which means that there exists $y\in I_+$ such that $xx^*\sim y$.
We have already seen that this implies $xx^*\in I$ and hence also $x\in I$, as desired.

Finally, let us see that $\varphi$ maps $\Lat_{\mathrm{f}}(A)$ onto $\Lat_{\mathrm{f}}(\Cu(A))$.
In one direction, let $I\in\Lat_{\mathrm{f}}(A)$ and choose a full, positive element $x\in I_+$.
Set $a:=[x]\in\Cu(I)$.
In order to show that $\infty\cdot a$ is the largest element of $\Cu(I)$, let $y\in I_+$, and let $\varepsilon>0$.
Since $x$ is full and $y$ is positive, we can choose $K\in\N$ and elements $r_1,\ldots,r_K \in I$ such that
\[
\left\| y - \sum_{k=1}^K r_k x r_k^* \right\|<\varepsilon.
\]
It follows
\[
[(y-\varepsilon)_+] \leq K[x] \leq \infty\cdot a.
\]
Since this holds for every $\varepsilon>0$, we get $[y]\leq\infty\cdot a$, as desired.

Conversely, assume that $J$ is a singly-generated ideal in $\Cu(A)$ and set $I:=\psi(J)$.
Then, as observed in \autoref{pgr:Lat}, there exists a largest element in $J$, which we denote by $a$.
Choose $x\in I_+$ such that $a=[x]$.
In order to show that $x$ is a full element in $I$, let $y\in I$.
Since $a$ is the largest element in $J$, we get $yy^*\precsim x$.
This implies that $yy^*$ and hence $y$ is contained in the ideal generated by $x$.
Thus, $x$ is full in $I$, as desired.
\end{proof}

\index{terms}{C*-algebra@\ca{}!simple}
Recall that a \ca{} $A$ is called \emph{simple} if $\{0\}$ and $A$ are the only ideals of $A$.
Analogously, we define for $\CatCu$-semigroups:

\begin{dfn}
\label{dfn:simpleCu}
\index{terms}{Cu-semigroup@$\CatCu$-semigroup!simple}
\index{terms}{simple (Cu-semigroup)@simple ($\CatCu$-semigroup)}
A $\CatCu$-semigroup $S$ is called \emph{simple} if $\{0\}$ and $S$ are the only ideals of $S$.
\end{dfn}

\begin{cor}
\label{prp:simpleCa}
A \ca{} $A$ is simple if and only if its (completed) Cuntz semigroup $\Cu(A)$ is a simple $\CatCu$-semigroup.
\end{cor}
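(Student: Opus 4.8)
The plan is to deduce this corollary directly from the lattice isomorphism established in \autoref{prp:LatCa}. That proposition provides a natural isomorphism of complete lattices
\[
\varphi\colon\Lat(A)\to\Lat(\Cu(A)),\quad I\mapsto\Cu(I).
\]
Since $\varphi$ is in particular an order-isomorphism between the two lattices of ideals, it carries the bottom element of $\Lat(A)$ to the bottom element of $\Lat(\Cu(A))$ and the top element to the top element. Concretely, the explicit formula gives $\varphi(\{0\})=\Cu(\{0\})=\{0\}$, the zero ideal of $\Cu(A)$, and $\varphi(A)=\Cu(A)$, the largest ideal of $\Cu(A)$, so the distinguished ideals match up correctly.

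Next I would reformulate simplicity as a purely lattice-theoretic condition. By the definition recalled before the statement (and by \autoref{dfn:simpleCu} for the semigroup side), $A$ is simple precisely when $\{0\}$ and $A$ are the only ideals of $A$, that is, when $\Lat(A)$ consists of exactly its bottom and top elements; likewise $\Cu(A)$ is a simple $\CatCu$-semigroup precisely when $\Lat(\Cu(A))$ consists of exactly its bottom and top elements. Because an order-isomorphism is a bijection that preserves order in both directions, $\Lat(A)$ has no elements besides $\{0\}$ and $A$ if and only if $\Lat(\Cu(A))$ has no elements besides $\{0\}$ and $\Cu(A)$. Combining this with the matching of distinguished ideals from the previous step yields the claimed equivalence.

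I expect essentially no obstacle here beyond bookkeeping, since all the substantive content is packaged into \autoref{prp:LatCa}. The only point deserving a line of care is confirming that $\varphi$ sends the zero ideal to the zero ideal and the whole algebra to the whole semigroup, which is immediate from $\varphi(I)=\Cu(I)$ together with $\Cu(\{0\})=\{0\}$. If one wishes to be exhaustive, the degenerate case $A=0$ can be noted separately: then $\Cu(A)=\{0\}$ and both $\Lat(A)$ and $\Lat(\Cu(A))$ are singletons, so the two notions of (non)simplicity again agree under whatever convention is adopted for the zero object.
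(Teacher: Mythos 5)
Your argument is correct and is exactly the intended one: the corollary is stated in the paper as an immediate consequence of the lattice isomorphism $\Lat(A)\cong\Lat(\Cu(A))$ from \autoref{prp:LatCa}, with no further proof given. Your bookkeeping about bottom and top elements (and the degenerate case $A=0$) fills in precisely what the paper leaves implicit.
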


\begin{cor}
\label{prp:LatSepCa}
Let $A$ be a separable \ca.
Then the ideal lattice $\Lat(A)$ is a $\CatCu$-semigroup.
\end{cor}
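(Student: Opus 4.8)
The plan is to deduce the statement formally from \autoref{prp:LatCu} and \autoref{prp:LatCa}, with separability supplying the one extra input. First I would invoke \autoref{prp:CufromCAlg}: since $A$ is separable, the completed Cuntz semigroup $\Cu(A)$ is a countably-based $\CatCu$-semigroup. Applying \autoref{prp:LatCu} to $S=\Cu(A)$ then shows that $\Lat_{\mathrm{f}}(\Cu(A))$ is a $\CatCu$-semigroup, and, because $\Cu(A)$ is countably-based, the final assertion of that proposition gives $\Lat(\Cu(A))=\Lat_{\mathrm{f}}(\Cu(A))$. Hence the full ideal lattice $\Lat(\Cu(A))$ carries a $\CatCu$-semigroup structure, with join as addition and inclusion as order.

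Next I would transport this structure back to $\Lat(A)$ along the natural isomorphism of complete lattices $\varphi\colon\Lat(A)\to\Lat(\Cu(A))$, $I\mapsto\Cu(I)$, furnished by \autoref{prp:LatCa}. The point where separability re-enters is the identity $\Lat(A)=\Lat_{\mathrm{f}}(A)$: as recorded in \autoref{pgr:idealsCa}, every ideal of a separable \ca{} is $\sigma$-unital and therefore contains a (strictly) positive full element. By the second part of \autoref{prp:LatCa}, $\varphi$ maps $\Lat_{\mathrm{f}}(A)$ onto $\Lat_{\mathrm{f}}(\Cu(A))$; combining the two identities, $\varphi$ restricts to a bijection from $\Lat(A)=\Lat_{\mathrm{f}}(A)$ onto $\Lat_{\mathrm{f}}(\Cu(A))=\Lat(\Cu(A))$.

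Finally I would observe that $\varphi$ is simultaneously an isomorphism of \pom{s} and a monoid isomorphism for the join operation. Being a lattice isomorphism, $\varphi$ preserves inclusion and arbitrary joins, hence in particular finite joins, which are exactly the semigroup addition on $\Lat_{\mathrm{f}}(\Cu(A))$ (recall $\Idl(a)\vee\Idl(b)=\Idl(a+b)$ from \autoref{pgr:Lat}); it also sends the zero ideal to the zero ideal. Since axioms \axiomO{1}-\axiomO{4} are phrased purely in terms of the partial order, suprema of increasing sequences, the way-below relation, and addition, all of which $\varphi$ preserves, the $\CatCu$-semigroup structure on $\Lat(\Cu(A))$ pulls back to one on $\Lat(A)$. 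The only step needing any attention is the verification that separability yields $\Lat(A)=\Lat_{\mathrm{f}}(A)$, and this is immediate from $\sigma$-unitality; so no genuine obstacle arises, and the corollary follows as a formal consequence of the two preceding propositions.
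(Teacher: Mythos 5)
Your argument is correct and follows the same route the paper intends: the corollary is a formal consequence of \autoref{prp:LatCu} and \autoref{prp:LatCa}, with separability entering exactly through the identification $\Lat(\Cu(A))=\Lat_{\mathrm{f}}(\Cu(A))$ (equivalently, $\Lat(A)=\Lat_{\mathrm{f}}(A)$ via $\sigma$-unitality of ideals), after which the $\CatCu$-structure transports along the complete-lattice isomorphism $I\mapsto\Cu(I)$. No gaps.
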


\begin{rmks}
\label{rmk:LatCa}
Let $A$ be a separable \ca.

(1)
In \autoref{prp:tensCaWithInfty}, we will show that there are natural isomorphisms between the following $\CatCu$-semigroups:
\[
\Cu(\mathcal{O}_\infty\otimes A)
\cong \Lat(A)
\cong \Lat(\Cu(A))
\cong \{0,\infty\}\otimes_\CatCu\mathcal{}\Cu(A).
\]

(2)
The $\CatCu$-semigroup $\Lat(A)$ is algebraic (see \autoref{sec:algebraicSemigp}) if and only if the \ca{} $A$ has the ideal property.
\end{rmks}

\begin{prp}[{Ciuperca, Robert, Santiago, \cite[Proposition~3.3]{CiuRobSan10CuIdealsQuot}}]
\label{prp:CuQuotientCa}
Let $I$ be an ideal in a \ca{} $A$.
Then there is a natural isomorphism
\[
\Cu(A) / \Cu(I) \cong \Cu(A / I).
\]
\end{prp}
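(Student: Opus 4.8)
The plan is to realize the claimed isomorphism as the map induced by the quotient \starHom{} $\pi\colon A\to A/I$. After replacing $A$ and $I$ by their stabilizations (using $\Cu(A)=\Cu(A\otimes\K)$ together with $(A/I)\otimes\K\cong (A\otimes\K)/(I\otimes\K)$), I may assume $A$ is stable, so that $\Cu(A)=A_+/\!\!\sim$ and likewise for the quotient. Functoriality of $\Cu$ gives a $\CatCu$-morphism $\Cu(\pi)\colon\Cu(A)\to\Cu(A/I)$. Writing $J:=\Cu(I)$, viewed as an ideal of $\Cu(A)$ via \autoref{pgr:idealsCa}, I first observe that $\Cu(\pi)$ kills $J$: if $c=[z]\in J$ with $z\in I_+$, then $\pi(z)=0$ and so $\Cu(\pi)(c)=0$. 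Consequently $a\leq_J b$ forces $\Cu(\pi)(a)\leq\Cu(\pi)(b)$, so $\Cu(\pi)$ respects the congruence $\sim_J$ of \autoref{pgr:ideals} and factors as a $\CatCu$-morphism $\overline{\Cu(\pi)}\colon\Cu(A)/J\to\Cu(A/I)$, $a_J\mapsto\Cu(\pi)(a)$. It then remains to show that $\overline{\Cu(\pi)}$ is a surjective order-embedding; naturality in $(A,I)$ is automatic from functoriality.

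Surjectivity is the easy half: every $b\in\Cu(A/I)$ is represented by some $\bar y\in (A/I)_+$, and since $\pi$ is a surjective \starHom{} it lifts $\bar y$ to a positive element $y\in A_+$ (lift to a self-adjoint element and apply the positive-part functional calculus), whence $\overline{\Cu(\pi)}([y]_J)=[\bar y]=b$.

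The crux is order-reflection: assuming $\Cu(\pi)(a)\leq\Cu(\pi)(b)$, I must produce $a_J\leq b_J$. Fixing representatives $x,y\in A_+$, the hypothesis reads $\pi(x)\precsim\pi(y)$. For each $\varepsilon>0$, Rørdam's lemma (\autoref{pgr:CuntzComparison}) supplies $\delta>0$ and $s\in A/I$ with $(\pi(x)-\varepsilon)_+=s(\pi(y)-\delta)_+s^*$. Lifting $s$ to $r\in A$ and setting $d:=r(y-\delta)_+r^*$, one has $\pi(d)=(\pi(x)-\varepsilon)_+=\pi((x-\varepsilon)_+)$, so the self-adjoint element $(x-\varepsilon)_+-d$ lies in $I$. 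Splitting it into positive and negative parts $g_+,g_-\in I_+$ (using that $I$ is closed under functional calculus) yields $(x-\varepsilon)_+\leq d+g_+$, and therefore in $\Cu(A)$,
\[
[(x-\varepsilon)_+]\leq [d+g_+]\leq [d]+[g_+]\leq [y]+[g_+],
\]
where the middle step uses $d+g_+\precsim d\oplus g_+$ and the last uses $d\precsim(y-\delta)_+\precsim y$. Since $[g_+]\in J$, this gives $[(x-\varepsilon)_+]_J\leq b_J$ for every $\varepsilon>0$.

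Finally I pass to the supremum. Choosing $\varepsilon_n\downarrow 0$ produces an increasing sequence with $[x]=\sup_n[(x-\varepsilon_n)_+]$, and because the quotient map $\pi_J\colon\Cu(A)\to\Cu(A)/J$ preserves suprema of increasing sequences (\autoref{prp:quotientCu}) I obtain $a_J=\sup_n[(x-\varepsilon_n)_+]_J\leq b_J$. Hence $\overline{\Cu(\pi)}$ is a surjective order-embedding; being additive and order-reflecting it is automatically a $\CatCu$-isomorphism. The main obstacle is exactly this order-reflection step: the lift of the Rørdam witness only matches $(x-\varepsilon)_+$ \emph{modulo} $I$, and the key device is to absorb the resulting error $g_+$ into the ideal at each cutdown and then take the supremum over $\varepsilon$, thereby avoiding the need to bound all cutdowns by one fixed ideal element simultaneously.
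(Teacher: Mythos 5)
Your proof is correct. Note that the paper itself gives no proof of this statement --- it is quoted from Ciuperca--Robert--Santiago --- and your argument is essentially the standard one from that reference: reduce to the stable case, factor $\Cu(\pi)$ through $\Cu(A)/\Cu(I)$, and establish order-reflection by lifting the R{\o}rdam witness for $(\pi(x)-\varepsilon)_+\precsim(\pi(y)-\delta)_+$, absorbing the error $(x-\varepsilon)_+-d\in I$ into the ideal via its positive part, and then passing to the supremum over the cutdowns, using that the quotient map $\pi_J$ preserves suprema of increasing sequences (\autoref{prp:quotientCu}). All the individual steps check out, including the final observation that an additive order-isomorphism of $\CatCu$-semigroups is automatically a $\CatCu$-isomorphism.
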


\begin{pgr}[Elementary semigroups]
\label{pgr:elementarySemigr}
\index{terms}{Cu-semigroup@$\CatCu$-semigroup!elementary}
\index{terms}{nonelementary Cu-semigroup@nonelementary $\CatCu$-semigroup}
\index{symbols}{Ek@$\elmtrySgp{k}$ \quad (elementary $\CatCu$-semigroup)}
We call a simple $\CatCu$-semigroup $S$ \emph{elementary} if $S\cong\{0\}$ or if $S$ contains a minimal, nonzero element.
The typical example is the semigroup of extended natural numbers
\[
\overline{\N}
=\left\{ 0,1,2,\ldots,\infty \right\}.
\]
For each $k\in\N$, we define a semigroup
\[
\elmtrySgp{k}
:=\left\{ 0,1,2,\ldots,k,\infty \right\},
\]
with the natural order and with $a+b$ defined as $\infty$ if usually one would have $a+b\geq k+1$.
For $k=0$ we obtain $\elmtrySgp{0}=\{0,\infty\}$.
It is easy to check that these are simple $\CatCu$-semigroups satisfying \axiomO{5} and \axiomO{6}, and all elements are compact.

There exist simple, elementary $\CatCu$-semigroups satisfying \axiomO{5} that are not isomorphic to $\{0\}$, to $\overline{\N}$ or to $\elmtrySgp{k}$ for some $k$;
see \autoref{exa:elmtrySemirg}.
With the assumption of \axiomO{6}, this is not possible as we will show in \autoref{prp:elmtryO6}.
\end{pgr}

\begin{rmk}
The term elementary for a $\CatCu$-semigroup was first coined by Engbers in \cite{Eng14PrePhD} to refer only to the semigroup $\overline{\N}$.
With this terminology, he proved that a simple, separable \ca{} $A$ is elementary if and only if $\Cu(A)$ is elementary.
Our definition of elementary is motivated by Engbers', and is suitable for the general study of abstract Cuntz semigroups.
J.~Bosa proved (private communication) that $\elmtrySgp{k}$ does not come as the Cuntz semigroup of a \ca{} except for $k=0$.
It follows from this fact and from \autoref{prp:elmtryO6} below that, if $A$ is a nonzero, simple and separable \ca, then $\Cu(A)$ is elementary if and only if $A$ is elementary or $A$ is purely infinite.
\end{rmk}

\begin{lma}
\label{prp:downwards}
Let $S$ be a simple $\CatCu$-semigroup satisfying \axiomO{6}.
Given nonzero elements $a_1,\ldots,a_n\in S$, there exists a nonzero element $x\in S$ such that $x\ll a_k$ for all $k$.
\end{lma}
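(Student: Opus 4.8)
The plan is to reduce the general statement to the case $n=2$ and then prove that case by a ``multiplicity induction'' based on \axiomO{6}. First observe that the case $n=1$ is immediate: if $a\neq 0$, then by \axiomO{2} we may write $a=\sup_k a_k$ for a rapidly increasing sequence, not all of whose terms can vanish, so some $a_{k_0}\neq 0$ satisfies $a_{k_0}\ll a_{k_0+1}\le a$, whence $a_{k_0}\ll a$. I will use repeatedly that a nonzero element dominates a nonzero element way-below it, and that $\ll$ is an auxiliary relation, so that $x\le y\ll z$ implies $x\ll z$ and $y\ll z\le w$ implies $y\ll w$.

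Granting the case $n=2$, the general case follows by induction on $n$: given nonzero $a_1,\dots,a_n$, the induction hypothesis produces a nonzero $x'\ll a_1,\dots,a_{n-1}$, and applying the case $n=2$ to the pair $(x',a_n)$ yields a nonzero $x\ll x'$ with $x\ll a_n$; since $x\le x'\ll a_k$ we get $x\ll a_k$ for all $k$. For the case $n=2$, simplicity enters exactly once: since $a_2\neq 0$, the ideal $\Idl(a_2)$ is nonzero, hence equals $S$, so $a_1\le\infty\cdot a_2=\sup_N Na_2$. Choosing a nonzero $a_1'\ll a_1$ and using that $(Na_2)_N$ is increasing with supremum above $a_1$, the definition of $\ll$ gives an index $N$ with $a_1'\le Na_2$.

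The heart of the argument is then the following claim, to be proved by induction on $m\ge 1$: \emph{if $y\neq 0$ satisfies $y\ll a_1$ and $y\le m\cdot a_2$, then there is a nonzero $x$ with $x\ll a_1$ and $x\ll a_2$.} For $m=1$ one shrinks $y$ to a nonzero $x\ll y$ and checks $x\ll a_1,a_2$ directly. For the inductive step I would pick a nonzero $y'\ll y$, write $m\cdot a_2=a_2+(m-1)a_2$, and apply \axiomO{6} to $y'\ll y\le a_2+(m-1)a_2$ to obtain $e\le y,a_2$ and $f\le y,(m-1)a_2$ with $y'\le e+f$. Since $y'\neq 0$, at least one of $e,f$ is nonzero; if $e\neq 0$ the case $m=1$ applies to $e$, and if $f\neq 0$ the induction hypothesis applies to $f$ (which is nonzero, satisfies $f\le y\ll a_1$, hence $f\ll a_1$, and $f\le(m-1)a_2$). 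Feeding $y=a_1'$ and $m=N$ into this claim completes the case $n=2$.

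I expect the main obstacle to be the iteration of \axiomO{6}: the axiom returns only $\le$-bounds on the summands $e,f$, not way-below data, so one cannot apply it twice in a row to the same element. The device that makes the recursion go through is to re-introduce a fresh way-below predecessor at each stage (the $y'\ll y$ above) and to carry along the invariant $f\le y\ll a_1$, which upgrades to $f\ll a_1$ and keeps the way-below relation to $a_1$ alive while the multiplicity $m$ strictly decreases. A secondary point requiring care is the bookkeeping of which element plays the role of $a$ in each application of \axiomO{6}, together with the routine but essential verifications that $\ll$ propagates through $\le$ on both sides.
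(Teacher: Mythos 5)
Your proof is correct and follows essentially the same route as the paper: both reduce to $n=2$, use simplicity to bound a nonzero way-below predecessor of $a_1$ by a multiple $Na_2$, and then decompose via \axiomO{6} to extract a nonzero piece lying below both elements. The only difference is that the paper invokes a $k$-summand version of \axiomO{6} in a single application, whereas you derive it by iterating the two-summand axiom with a fresh way-below predecessor at each stage of the multiplicity induction; your version is, if anything, slightly more careful on this point.
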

\begin{proof}
It is enough to prove the case $n=2$ (and then use induction).
To verify this case, let $a$ and $b$ be nonzero elements in $S$.
We need to find a nonzero element $x\in S$ such that $x\ll a$ and $x\ll b$.

Choose nonzero elements $a'$ and $a''$ in $S$ satisfying $a''\ll a'\ll a$.
By simplicity of $S$, we can find $k\in\N$ such that $a'\leq kb$.
Considering the situation
\[
a''\ll a'\leq kb = b+b+\ldots+b,
\]
we may apply \axiomO{6} in $S$ to obtain elements $c_1,\ldots,c_k\in S$ with
\[
a''\leq c_1+\cdots+c_k,\quad
c_i\leq a',b,\quad i=1,2,\ldots,k.
\]
Since $a''$ is nonzero, there has to be an index $i_0$ with $c_{i_0}\neq 0$.
Choose a nonzero element $x\in S$ with $x\ll c_{i_0}$.
Then $x$ has the desired properties.
\end{proof}

The following result was observed independently by Engbers, \cite{Eng14PrePhD}.
He also noted that one must exclude elementary semigroups to obtain results like Glimm Halving, \cite[Proposition~5.2.1]{Rob13Cone};
see \autoref{prp:GlimmHalving}.

\begin{prp}
\label{prp:elmtryO6}
Let $S$ be a simple $\CatCu$-semigroup satisfying \axiomO{5} and \axiomO{6}.
Then $S$ is elementary if and only if $S$ is isomorphic to $\{0\}$, to $\overline{\N}$, or to $\elmtrySgp{k}$ for some $k\in\N$.
\end{prp}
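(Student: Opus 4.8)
The forward implication is immediate: $\{0\}$ is elementary by definition, while $\overline{\N}$ and $\elmtrySgp{k}$ each contain a minimal nonzero element (namely $1$), and hence are elementary in the sense of \autoref{pgr:elementarySemigr}. So the content lies in the reverse implication. Assume $S\not\cong\{0\}$ is elementary and fix a minimal nonzero element $u\in S$; the plan is to show that every element of $S$ is a multiple of $u$, and then to read off the isomorphism type.

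First I would collect structural facts about $u$. Writing $u=\sup_n u_n$ with $u_n\ll u_{n+1}$ via \axiomO{2}, minimality forces each $u_n\in\{0,u\}$ and not all can vanish, so $u_{n_0}=u$ for some $n_0$; this yields $u\ll u$, so $u$ is compact, and by \axiomO{3} every finite multiple $nu$ is compact. Next, for any nonzero $a\in S$, \autoref{prp:downwards} provides a nonzero $x$ with $x\ll a$ and $x\ll u$; since $x\leq u$ and $u$ is minimal, $x=u$, so in fact $u\ll a$. Thus every nonzero element dominates $u$. Finally, simplicity forces $\Idl(u)=S$, so $\infty\cdot u:=\sup_n nu$ is the largest element of $S$, which I denote $\infty$. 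A short computation shows that $mu=(m+1)u$ implies $nu=mu$ for all $n\geq m$ and hence $mu=\infty$; contrapositively, as long as $nu\neq\infty$ the chain $u<2u<3u<\cdots$ is strictly increasing. This splits the analysis according to whether $nu=\infty$ for some finite $n$.

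The heart of the argument is surjectivity: every element is some $nu$ or is $\infty$. Let $a\in S$ with $a\neq 0,\infty$. The set $\{n:nu\leq a\}$ is nonempty and bounded (otherwise $a\geq\sup_n nu=\infty$); let $n_0$ be its maximum, so $n_0 u\leq a$ but $(n_0+1)u\not\leq a$. Since $n_0 u$ is compact, \axiomO{5'} (available from \axiomO{5} by \autoref{prp:o5o5'}), applied to $n_0 u\ll n_0 u\leq a$, produces $w$ with $n_0 u+w\leq a\leq n_0 u+w$, that is, $a=n_0 u+w$. If $w\neq 0$ then $u\leq w$ by the previous paragraph, whence $a\geq(n_0+1)u$, contradicting the choice of $n_0$. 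Therefore $w=0$ and $a=n_0 u$. I expect this \axiomO{5'} cancellation step to be the main obstacle, as it is precisely what excludes elements lying strictly between consecutive multiples of $u$.

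It remains to identify $S$. If $nu\neq\infty$ for all $n$, the elements $0<u<2u<\cdots<\infty$ are pairwise distinct and exhaust $S$, and $n\mapsto nu$, $\infty\mapsto\infty$ is checked to be a $\CatCu$-isomorphism $\overline{\N}\cong S$ (using \axiomO{4} for $nu+\infty=\infty$ and the description of suprema of increasing sequences). Otherwise, let $k$ be the largest integer with $ku\neq\infty$, so $(k+1)u=\infty$ (with $k=0$ giving $u=\infty$); then $S=\{0,u,\ldots,ku,\infty\}$ and the analogous map identifies $S$ with $\elmtrySgp{k}$, the truncated addition matching exactly the rule that $a+b=\infty$ whenever $a+b\geq k+1$. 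This exhausts all cases and completes the proof.
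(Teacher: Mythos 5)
Your proof is correct and follows essentially the same route as the paper's: both establish via \autoref{prp:downwards} that the minimal element is compact and is the least nonzero element, and then use \axiomO{5} to write every element as a multiple of it or as $\infty$ before reading off the isomorphism type. The only difference is organizational — the paper iteratively peels off copies of the minimal element ($b=a+x$, then $x=a+y$, and so on), whereas you take the maximal $n_0$ with $n_0u\leq a$ and apply \axiomO{5'} once, which is a slightly tidier way to run the same induction.
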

\begin{proof}
The `if' part of the statement is clear.
To show the converse implication, assume that $S$ is an elementary $\CatCu$-semigroup with $S\neq\{0\}$.
Then there exists a minimal, nonzero element $a$ in $S$.
By \autoref{prp:downwards}, the element $a$ is compact.
We claim that $a$ is the least nonzero element.
Indeed, let $b\in S$ be an arbitrary nonzero element.
By \autoref{prp:downwards}, there exists a nonzero element $b'$ with $b'\leq a,b$.
Since $a$ is minimal, we have $b'=a$ and therefore $a\leq b$.

Now, let $b$ be an arbitrary nonzero element in $S$.
Then $a\leq b$ and since $a$ is compact and $S$ satisfies \axiomO{5}, we can choose $x\in S$ such that $a+x=b$.
If $x=0$, we have $b=a$.
Otherwise, since $a$ is the least nonzero element, we obtain $a\leq x$ and so there is $y\in S$ with $a+y=x$ and consequently $2a+y=b$.
Continuing in this way, we find that either $b=na$ for some $n\in\N$ or otherwise $na\leq b$ for all $n\in\N$.
The latter implies $b=\infty$, whence
\[
S = \{ \infty \} \cup \{ na : n\in\N \}.
\]
Now, if $na\neq ma$ for any $n,m\in\N$ with $n\neq m$, then we have $S\cong\overline{\N}$.
Otherwise, there is $k\in\N$ with $ka=(k+1)a$.
For the smallest such $k$, we have $S\cong E_k$.
\end{proof}

\vspace{5pt}
\section{Functionals}
\label{sec:fctl}

In this section, we study functionals on $\CatCu$-semigroups and their connection to the order structure.
First, we show that the existence of nontrivial functionals characterizes stable finiteness of simple $\CatCu$-semigroups;
see \autoref{prp:simpleSF}.
Then, we study the relation of `stable domination' of elements in a \pom;
see \autoref{dfn:relS}.

We recall that comparison by extended states is closely related to stable domination of elements;
see \autoref{prp:relSTFAE}.
In the context of $\CatCu$-semigroups, we introduce the `regularization' of a relation;
see \autoref{dfn:regularRel}.
The main result of this section is \autoref{prp:comparisonRelations}, where we show that comparison by functionals on a $\CatCu$-semigroup is closely related to the regularization of the stable domination relation.

\begin{pgr}
\label{pgr:fctl}
\index{terms}{state}
\index{terms}{extended state}
\index{terms}{functional}
\index{symbols}{F(S)@$F(S)$ \quad (functionals)}
Let $S$ be a \pom.
A \emph{state} on $S$ is a map $f\colon S\to [0,\infty)$ that preserves addition, order and the zero-element.
If the value $\infty$ is allowed, then we call $f$ an \emph{extended state}.
Thus, an (extended) state is a $\CatPom$-morphism from $S$ to $[0,\infty)$ (respectively from $S$ to $[0,\infty]$).

Assume now that $S$ is a $\CatCu$-semigroup.
A \emph{functional} on $S$ is a map $\lambda\colon S\to[0,\infty]$ that preserves addition, order, the zero-element, and suprema of increasing sequences.
Hence, a functional is a generalized $\CatCu$-morphism from $S$ to $[0,\infty]$.
The set of functionals on $S$ is denoted by $F(S)$.
When equipped with a suitable topology, $F(S)$ becomes a compact Hausdorff space;
see \cite[Theorem~4.8]{EllRobSan11Cone}, see also \cite{Rob13Cone}.
If $S$ is countably-based, then $F(S)$ is second-countable, hence a compact, metrizable space.

We equip $F(S)$ with pointwise addition and order, which provides it with the structure of a \pom.
If $S$ satisfies \axiomO{5} (or just the original form of the axiom, \axiomO{5'}; see \autoref{rmk:addAxioms}), then $F(S)$ is algebraically ordered;
see \cite[Proposition~2.2.3]{Rob13Cone}.

It is clear that by multiplying a functional $\lambda\in F(S)$ with a positive scalar $\theta\in(0,\infty)$, one obtains a functional $\theta\cdot\lambda$.
It was shown in the comments before Theorem~4.8 in \cite{EllRobSan11Cone} that this can be extended to a scalar multiplication
\[
[0,\infty]\times F(S) \to F(S),
\]
by defining:
\begin{align*}
(0\cdot\lambda)(a) &:= 0 \text{ if } \lambda(a')<\infty \text{ for all } a'\ll a,\quad
(0\cdot\lambda)(a) := \infty \text{ otherwise}, \\
(\infty\cdot\lambda)(a) &:= 0 \text{ if } \lambda(a)=0,\quad
(\infty\cdot\lambda)(a) := \infty \text{ otherwise},
\end{align*}
for $a\in S$.
As shown in \cite{EllRobSan11Cone}, the restricted scalar multiplication $[0,\infty)\times F(S) \to F(S)$ is jointly continuous.
However, differently than stated in \cite{EllRobSan11Cone}, the multiplication with $\infty$ is not continuous as a map $F(S) \to F(S)$.
Consider for instance $S=[0,\infty]$, the Cuntz semigroup of the Jacelon-Razak algebra;
see \autoref{rmk:Razac}.
As a \pom, $F(S)$ is isomorphic to $[0,\infty]$.
Given $t\in[0,\infty]$ we denote the corresponding functional in $F(S)$ by $\lambda_t$.
The sequence $(\lambda_{1/n})_{n\in\N}$ converges to $\lambda_0$ in the topology of $F(S)$.
However, we have $\infty\cdot\lambda_{1/n} = \lambda_\infty$ for each $n$, while $\infty\cdot \lambda_0 = \lambda_0$.

Given an element $a\in S$, we say that a functional $\lambda$ is normalized at $a$ provided $\lambda(a)=1$, and we denote the set of these functionals by $F_a(S)$.
If $S$ is simple and $a\in S$ is a compact element, then $F_a(S)$ is a closed, convex subset of $F(S)$.

We denote by $\Lsc(F(S))$ the set of functions $f\colon F(S)\to[0,\infty]$ that are lower-semicontinuous $\CatPom$-morphisms. 
\index{symbols}{Lsc(F(S))@$\Lsc(F(S))$}
Given $f\in\Lsc(F(S))$ and $\theta\in(0,\infty)$, the pointwise product $\theta f$ belongs to $\Lsc(F(S))$, and we have $\theta f(\lambda)=f(\theta\cdot\lambda)$ for every $\lambda\in F(S)$.
Indeed, one may first show this for rational $\theta$ and then use lower semicontinuity of $f$ to extend to all $\theta\in(0,\infty)$.

If $F(S)$ is algebraically ordered (for example if $S$ satisfies \axiomO{5}), then a function $f\colon F(S)\to[0,\infty]$ is automatically order-preserving as soon as it is additive.

We define a binary relation $\lhd$ on $\Lsc(F(S))$ as follows:
Given $f,g\in\Lsc(F(S))$, we set $f\triangleleft g$ if and only if $f\leq (1-\varepsilon)g$ for some $\varepsilon>0$, and if moreover $f$ is continuous at each $\lambda\in F(S)$ where $g(\lambda)<\infty$;
see the paragraph after Remark~3.1.5 in \cite{Rob13Cone}.
We let $L(F(S))$ be the subset of $\Lsc(F(S))$ consisting of all $f\in\Lsc(F(S))$ for which there exists a sequence $(f_n)_n$ in $\Lsc(F(S))$ satisfying $f=\sup_nf_n$ (the pointwise supremum) and $f_n\triangleleft f_{n+1}$ for each $n$.
\index{symbols}{$\lhd$}
\index{symbols}{L(F(S))@$L(F(S))$}

Any element $a\in S$ induces a function
\[
\hat{a}\colon F(S)\to[0,\infty],\quad
\hat{a}=(\lambda\mapsto \lambda(a)),\quad
\txtFA \lambda\in F(S).
\]
The assignment $a\mapsto \hat{a}$ defines a map $S\to L(F(S))$ that preserves addition, order and suprema of increasing sequences.
(That $\hat{a}$ is an element of $L(F(S))$ follows from \cite[Proposition~3.1.6]{Rob13Cone}.)
\index{symbols}{a$\hat$@$\hat{a}$}

If $S$ is a $\CatCu$-semigroup satisfying \axiomO{5}, then it is shown in \cite{Rob13Cone} that $L(F(S))$ is also a $\CatCu$-semigroup satisfying \axiomO{5}.
\end{pgr}

\begin{pgr}[Stable finiteness]
\label{pgr:finiteSemigr}
\index{terms}{element!finite}
\index{terms}{finite element}
\index{terms}{infinite element}
\index{terms}{element!properly infinite}
\index{terms}{properly infinite element}
\index{terms}{Cu-semigroup@$\CatCu$-semigroup!stably finite}
\index{terms}{stably finite Cu-semigroup@stably finite $\CatCu$-semigroup}
Let $S$ be a $\CatCu$-semigroup.
An element $a\in S$ is \emph{finite} if for every $b\in S$, we have that $a+b=a$ implies $b=0$.
Equivalently, we have $a<a+b$ for every nonzero $b\in S$.
We call an element \emph{infinite} if it is not finite.
An infinite element $a\in S$ is \emph{properly infinite} if $2a=a$.
We say that $S$ is \emph{stably finite} if an element $a\in S$ is finite whenever there exists $\tilde{a}\in S$ with $a\ll\tilde{a}$.
If $S$ contains a largest element, denoted by $\infty$, then the latter condition is equivalent to $a\ll\infty$.

In general, a $\CatCu$-semigroup does not contain a largest element.
There are, however, two important cases when a largest element always exists.
First, consider a simple $\CatCu$-semigroup $S$.
We may assume $S\neq\{0\}$.
Choose $a\in S$ nonzero and consider the increasing sequence $(ka)_{k\in\N}$.
By axiom \axiomO{1}, the supremum of this sequence exists and it is easy to check that it is the largest element of $S$:
\[
\infty=\sup_{k\in\N} ka.
\]

In the other case, assume that $S$ is a countably-based $\CatCu$-semigroup.
Choose a countable set $\{a_0,a_1,a_2,\ldots\}$ in $S$ that is a basis in the sense of \autoref{pgr:axiomsW}.
For each $n\in\N$, consider the $n$-th partial sum $\sum_{k=0}^na_k$.
It is straightforward to check that the supremum of this increasing sequence of partial sums is the largest element of $S$:
\[
\infty = \sup_n \sum_{k=0}^n a_k.
\]

Thus, if $S$ is a $\CatCu$-semigroup that is simple or countably-based, then $S$ is stably finite if and only if an element $a\in S$ is finite whenever $a\ll\infty$.
\end{pgr}

The following result is the generalization of \cite[Lemma~4.1, p.61]{Goo86Poag} from the setting of partially ordered abelian groups to the setting of \pom{s}.
It is shown in \cite{BlaRor92ExtendStates} that extensions of states on \pom{s} exist.
However, since we need to control the extended state, we have to prove a refined version of \cite[Corollary~2.7]{BlaRor92ExtendStates}.
We thank the referee for suggesting this approach to fix and generalize our original \autoref{prp:existenceFctl} from the setting of simple $\CatCu$-semigroups to general $\CatCu$-semigroups.

\begin{lma}
\label{prp:extendingStates}
Let $M$ be a \pom, let $N$ be a submonoid of $M$, let $f\colon N\to[0,\infty)$ be a state on $N$, and let $x\in M$.
Set
\begin{align*}
p &:= \sup\left\{ \tfrac{f(y_1)-f(y_2)}{m} : y_1,y_2\in N, m\in\N_+, y_1\leq y_2+mx \right\}, \\
p'&:= \sup\left\{ \tfrac{f(y_1)-f(y_2)}{m} : y_1,y_2\in N, m\in\N_+, \bar{m}\in\N, y_1+\bar{m}x\leq y_2+(m+\bar{m})x \right\}, \\
r &:= \inf\left\{ \tfrac{f(z_2)-f(z_1)}{n} : z_1,z_2\in N, n\in\N_+, z_1+nx\leq z_2 \right\}, \\
r'&:= \inf\left\{ \tfrac{f(z_2)-f(z_1)}{n} : z_1,z_2\in N, n\in\N_+, \bar{n}\in\N, z_1+(n+\bar{n})x\leq z_2+\bar{n}x \right\}.
\end{align*}
Then:
\beginEnumStatements
\item
We have $0\leq p=p'\leq r\leq\infty$ and $-\infty\leq r'\leq r\leq\infty$.
\item
If $x\leq l y$ for some $y\in N$ and $l\in\N$, then $p<\infty$ and $r'=r$.
\item
If $\tilde{f}\colon N+\N x\to[0,\infty)$ is a state extending $f$, then $p'\leq\tilde{f}(x)\leq r'$.
\item
For every real number $q$ satisfying $p'\leq q\leq r'$ there exists a (unique) state $\tilde{f}\colon N+\N x\to[0,\infty)$ that extends $f$ and such that $\tilde{f}(x)=q$.
\end{enumerate}
\end{lma}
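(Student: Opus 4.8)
The plan is to treat the four parts in order, using parts (3) and (4) to motivate why the primed quantities are the correct bounds and parts (1)--(2) to do the combinatorial work relating them to the unprimed ones. Throughout, the recurring difficulty is that $M$ is only a monoid, so one cannot cancel $x$; the key device is a telescoping identity proved by \emph{substitution} rather than by naive addition of relations.

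For part (1), the bound $0\le p$ is witnessed by $y_1=y_2=0$, $m=1$ (using $0\le x$), and $p\le p'$ is free by taking $\bar m=0$, so the content is $p'\le p$. First I would prove, by induction on $k$, the telescoping relation $k y_1+\bar m x\le k y_2+(km+\bar m)x$ starting from $y_1+\bar m x\le y_2+(m+\bar m)x$: the inductive step adds $y_1$ to both sides and then replaces the summand $y_1+\bar m x$ appearing on the right by the larger $y_2+(m+\bar m)x$. Dropping the nonnegative term $\bar m x$ on the smaller side gives the genuine relation $k y_1\le k y_2+(km+\bar m)x$, whence $p\ge \frac{k(f(y_1)-f(y_2))}{km+\bar m}$; letting $k\to\infty$ yields $p\ge\frac{f(y_1)-f(y_2)}{m}$ (the case $f(y_1)<f(y_2)$ being trivial since $p\ge0$), so $p'\le p$. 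For $p'\le r$ (equivalently $p\le r$) I would scale a $p$-relation $y_1\le y_2+mx$ by $n$ and an $r$-relation $z_1+nx\le z_2$ by $m$; the $x$ on the \emph{left} of the latter absorbs the $x$ on the right of the former via transitivity, giving $n y_1+m z_1\le n y_2+m z_2$ inside $N$, and applying $f$ and rearranging gives $\frac{f(y_1)-f(y_2)}{m}\le\frac{f(z_2)-f(z_1)}{n}$. The inequalities $r'\le r$ and $-\infty\le r'\le r\le\infty$ are immediate from the set inclusions and $f\ge0$.

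For part (2), if $x\le ly$ with $y\in N$ then any $p$-relation gives $y_1\le y_2+ml\,y$ in $N$, so $f(y_1)-f(y_2)\le ml\,f(y)$ and $p\le l\,f(y)<\infty$. For $r'=r$ only $r\le r'$ needs work, and I would dualize the telescoping: from $z_1+(n+\bar n)x\le z_2+\bar n x$ prove $k z_1+(kn+\bar n)x\le k z_2+\bar n x$ (the inductive step adds $z_1+nx$ and replaces $z_1+(n+\bar n)x$ on the right by $z_2+\bar n x$). Now the spurious $x$ sits on the \emph{larger} side and cannot be discarded; instead I use $x\le ly$ to replace $\bar n x$ by $\bar n l\,y\in N$, and drop the $x$-surplus from the smaller side, obtaining the $r$-relation $k z_1+kn\,x\le k z_2+\bar n l\,y$ of ratio $\frac{f(z_2)-f(z_1)}{n}+\frac{\bar n l\,f(y)}{kn}$. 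Letting $k\to\infty$ gives $r\le\frac{f(z_2)-f(z_1)}{n}$, and the infimum over $r'$-relations yields $r\le r'$.

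For part (3) I would apply the state $\tilde f$ to a $p'$-relation to get $f(y_1)+\bar m\tilde f(x)\le f(y_2)+(m+\bar m)\tilde f(x)$ and cancel the finite real number $\bar m\tilde f(x)$, obtaining $\tilde f(x)\ge\frac{f(y_1)-f(y_2)}{m}$; symmetrically an $r'$-relation gives $\tilde f(x)\le\frac{f(z_2)-f(z_1)}{n}$. For part (4), uniqueness is clear since $\tilde f(w+jx)=f(w)+jq$ is forced, and for existence I define $\tilde f$ by this formula and verify order-preservation (which also gives well-definedness). Given $w+jx\le w'+j'x$: if $j<j'$ this is a $p'$-relation forcing $\frac{f(w)-f(w')}{j'-j}\le p'\le q$, and if $j>j'$ it is an $r'$-relation forcing $q\le r'\le\frac{f(w')-f(w)}{j-j'}$; either way $(j-j')q\le f(w')-f(w)$, which is precisely $\tilde f(w+jx)\le\tilde f(w'+j'x)$. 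The borderline case $j=j'$ is the subtle point: telescoping $kw+jx\le kw'+jx$ and adding $x$ on the right produces the $p'$-relation $kw+jx\le kw'+(j+1)x$, so $k(f(w)-f(w'))\le p'\le q$ for every $k$, and finiteness of $q$ forces $f(w)\le f(w')$. I expect the main obstacle to be exactly these telescoping identities: because cancellation fails, the correct move is the substitution that pins the spurious multiple of $x$ to one side so it can be dropped (part 1) or absorbed via $x\le ly$ (part 2), with the finiteness-of-$q$ argument in the $j=j'$ case being the other point requiring care.
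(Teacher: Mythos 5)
Your proof is correct and follows essentially the same route as the paper's: the same substitution-based telescoping induction to show $p=p'$ (and, with $x\leq ly$, $r=r'$), the same cross-multiplication argument for $p\leq r$, and the same three-way case analysis for part (4), with the $j=j'$ case handled exactly as the paper's Claim~1 via a $p'$-relation with denominator $1$ and finiteness of $q$. No gaps.
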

\begin{proof}
\enumStatement{1}:
It is straightforward to check $0\leq p\leq p'$ and $-\infty\leq r'\leq r\leq\infty$.
To verify $p\geq p'$, let $y_1,y_2\in N, m\in\N_+$ and $\bar{m}\in\N$ satisfy $y_1+\bar{m}x\leq y_2+(m+\bar{m})x$.
It is enough to show $p\geq \tfrac{f(y_1)-f(y_2)}{m}$.
We have
\[
2y_1 +\bar{m}x
\leq y_1 +y_2 +mx +\bar{m}x
= (y_2+mx) +(y_1+\bar{m}x)
\leq 2y_2 +2mx +\bar{m}x.
\]
Proceeding by induction (as in the proof of \cite[Proposition~3.2]{Ror92StructureUHF2} or \cite[Lemma~2.1]{AraGooOMePar98Separative}), we obtain
\[
ky_1 +\bar{m}x
\leq ky_2 +kmx +\bar{m}x,
\]
for all $k\in\N_+$.
Then $ky_1 \leq ky_2 + (km+\bar{m})x$, which implies
\[
p \geq \tfrac{f(ky_1)-f(ky_2)}{km+\bar{m}}
= \tfrac{f(y_1)-f(y_2)}{m+\bar{m}/k}.
\]
Since this holds for all $k\in\N_+$, we obtain $p\geq \tfrac{f(y_1)-f(y_2)}{m}$, as desired.

Next, let us show $p\leq r$.
This clearly holds if $r=\infty$.
In the other case, let $y_1,y_2,z_1,z_2\in N$ and $m,n\in\N_+$ satisfy
\[
y_1\leq y_2+mx,\quad
z_1+nx \leq z_2.
\]
Multiplying the first inequality by $n$ and adding $mz_1$ on both sides, and multiplying the second inequality by $m$ and adding $ny_2$ on both sides, we obtain
\begin{align*}
ny_1 +mz_1 \leq ny_2 +mz_1 + nmx \leq mz_2 +ny_2,
\end{align*}
and hence
\[
\tfrac{f(y_1)-f(y_2)}{m} \leq \tfrac{f(z_2) - f(z_1)}{n}.
\]
Passing to the supremum and infimum, we obtain $p\leq r$, as desired.

\enumStatement{2}:
Let $y\in N$ and $l\in\N$ such that $x\leq ly$.
(This means $x\varpropto y$; see \autoref{dfn:relS}.)
It is straightforward to show $p\leq lf(y)<\infty$.
To prove $r'\geq r$, let $z_1,z_2\in N, n\in\N_+$ and $\bar{n}\in\N$ such that $z_1+(n+\bar{n})x\leq z_2+\bar{n}x$.
It is enough to show $\tfrac{f(z_2)-f(z_1)}{n} \geq r$.
As in the proof of \enumStatement{1}, we deduce by induction
\[
kz_1+knx+\bar{n}x
\leq kz_2+\bar{n}x,
\]
for all $k\in\N_+$.
Using $x\leq ly$ at the last step, we deduce
\[
kz_1 + knx
\leq kz_1+knx+\bar{n}x
\leq kz_2+\bar{n}x
\leq kz_2+\bar{n}ly,
\]
and therefore
\[
\tfrac{f(z_2)-f(z_1)}{n} + \tfrac{f(\bar{n}ly)}{kn}
= \tfrac{f(kz_2+\bar{n}ly)-f(kz_1)}{kn}
\geq r.
\]
Since this holds for every $k\in\N_+$, we obtain $\tfrac{f(z_2)-f(z_1)}{n}\geq r$, as desired.

\enumStatement{3}:
This is straightforward to show.

Before proceeding to prove (4), we first verify a helpful statement.

\textbf{Claim 1}.
Let $y_1,y_2\in N$ and $k\in\N$ satisfy $y_1+kx\leq y_2+kx$.
If $p<\infty$, then $f(y_1)\leq f(y_2)$.

To verify the claim, let $n\in\N$.
Then
\[
ny_1 + nkx \leq ny_2 + nkx \leq  ny_2 + (nk+1)x.
\]
Using the definition of $p'$ at the second step and that $p=p'$, we compute
\[
n \left( f(y_1)-f(y_2) \right)
= \tfrac{f(ny_1)-f(ny_2)}{(nk+1)-(nk)} \leq p.
\]
Since this holds for all $n\in\N$, and since $p<\infty$, we deduce $f(y_1)-f(y_2)\leq 0$.

\enumStatement{4}:
Let $q\in\R$ satisfy $p'\leq q\leq r'$.
It is clear that there is at most one state $\tilde{f}$ on $N+\N x$ that extends $f$ and that satisfies $\tilde{f}(x)=q$.
To show existence, we verify that the assignment
\[
\tilde{f}\colon N+\N x\to[0,\infty),\quad
\tilde{f}(y+kx) = f(y) + kq\quad
(y\in N, k\in\N),
\]
defines a state on $N+\N x$.
Let $y,z\in N$ and $\bar{m},\bar{n}\in\N_+$ satisfy $y+\bar{m}x \leq z+\bar{n}x$.
By distinguishing three cases, we show $\tilde{f}(y+\bar{m}x) \leq \tilde{f}(z+\bar{n}x)$.

Case~1:
Assume $\bar{m}<\bar{n}$.
Set $m:=\bar{n}-\bar{m}$ so that
\[
y+\bar{m}x \leq z + (m+\bar{m})x.
\]
It follows from the definition of $p'$ that $f(y)-f(z)\leq mp'$.
Using this at the second step, and using $p'\leq q$ at the third step, we deduce
\[
\tilde{f}(y+\bar{m}x)
= f(y) + \bar{m}q
\leq f(z) + mp' + \bar{m}q
\leq f(z) + mq + \bar{m}q
= \tilde{f}(z+\bar{n}x).
\]

Case 2:
Assume $\bar{m}=\bar{n}$.
Using Claim~1 at the second step, we deduce
\[
\tilde{f}(y+\bar{m}x)
= f(y) + \bar{m}q
\leq f(z) + \bar{m}q
= \tilde{f}(z+\bar{n}x).
\]

Case 3:
Assume $\bar{m}>\bar{n}$.
Set $n:=\bar{m}-\bar{n}$ so that
\[
y+(n+\bar{n})x \leq z + \bar{n}x.
\]
This time, it follows from the definition of $r'$ that $nr'\leq f(z)-f(y)$.
Using this at the third step, and using $q\leq r'$ at the second step, we deduce
\[
\tilde{f}(y+\bar{m}x)
= f(y) + nq + \bar{n}q
\leq f(y) + nr' + \bar{n}q
\leq f(z) + \bar{n}q
= \tilde{f}(z+\bar{n}x).
\]
This shows that $\tilde{f}$ is well-defined and order-preserving.
It follows easily that $\tilde{f}$ is a state on $M$ such that $\tilde{f}(x)=q$.
\end{proof}

\begin{rmks}
\label{rmk:extendingStates}
(1)
As in \autoref{prp:extendingStates}, let $M$ be a \pom, let $N$ be a submonoid of $M$, let $f\colon N\to[0,\infty)$ be a state on $N$, and let $x\in M$.
Assume $x\leq ly$ for some $y\in N$ and $l\in\N$.
(This means $x\varpropto y$; see \autoref{dfn:relS}.)
Then $f$ can be extended to a state $\tilde{f}$ on $N+\N x$.
Moreover, the maximal value of $\tilde{f}(x)$ for such an extension is $r$, as defined in \autoref{prp:extendingStates}.
Thus, we can find an extension with $\tilde{f}(x)>0$ if and only if $r>0$.

(2)
Consider the semigroup $M=\overline{\N}=\{0,1,2,\ldots,\infty\}$, and the submonoid $N=\langle 1\rangle = \N$, and $x=\infty$.
Then the canonical state on $N$ cannot be extended to a state on $M$.
Indeed, we have $p=p'=r=\infty$ and $r'=-\infty$.
\end{rmks}

\begin{lma}
\label{prp:stateEstimateR}
Let $M$ be a \pom, and let $a,x\in M$.
Set $N:=\langle a\rangle$, the submonoid of $M$ generated by $a$.
For each $m,n\in\N$, assume that $ma\leq na$ if and only if $m\leq n$.
Consider the state $f$ on $N$ given by $f(na)=n$ for each $n\in\N$.
Set
\[
\tilde{r} := \inf \left\{ \tfrac{k}{n} : k,n\in\N_+, nx\leq ka \right\}.
\]
Then $r=\tilde{r}$, for $r$ defined as in \autoref{prp:extendingStates} (with respect to $M$, $N$, $f$ and $x$).

Given $L\in\N_+$ such that $r<\tfrac{1}{L}$, there exists $k\in\N_+$ with $kLx\leq ka$.
Conversely, if there exist $L\in\N_+$ and $k\in\N_+$ satisfying $kLx\leq ka$, then $r\leq\tfrac{1}{L}$.
Thus, we have $0<r$ if and only if there exists $L\in\N_+$ such that $kLx \nleq ka$ for all $k\in\N_+$.
\end{lma}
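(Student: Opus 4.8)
The statement packages one main computation, $r=\tilde r$, from which the two divisibility-type claims and the final equivalence follow formally. The plan is to first rewrite $r$ using the concrete description of $N=\langle a\rangle$. Since every element of $N$ has the form $ja$ with $f(ja)=j$, and since the standing hypothesis ($ma\le na \iff m\le n$) guarantees that any relation $j_1 a+nx\le j_2 a$ forces $j_1 a\le j_1 a+nx\le j_2 a$ and hence $j_1\le j_2$, the quantity $r$ from \autoref{prp:extendingStates} becomes $r=\inf\{(j_2-j_1)/n : j_1,j_2\in\N,\ n\in\N_+,\ j_1 a+nx\le j_2 a\}$, an infimum of nonnegative terms. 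The easy inequality $r\le\tilde r$ is then immediate: every relation $nx\le ka$ defining a term of $\tilde r$ is the special case $j_1=0$, $j_2=k$ of a term of $r$.

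The heart of the argument is the reverse inequality $\tilde r\le r$, which I would obtain by showing $\tilde r\le (j_2-j_1)/n$ for each individual relation $j_1 a+nx\le j_2 a$. Writing $d=j_2-j_1\ge 0$, the key step is an induction establishing
\[
knx\le(kd+j_1)a\quad\text{for all } k\in\N_+.
\]
The base case $k=1$ is just $nx\le j_1 a+nx\le j_2 a=(d+j_1)a$, and the inductive step follows by adding $nx$ to $knx\le(kd+j_1)a$ and rewriting $(kd+j_1)a+nx=(kd)a+(j_1 a+nx)\le(kd)a+j_2 a=((k+1)d+j_1)a$ via the original relation. Once this is in place, each inequality $knx\le(kd+j_1)a$ is (when $j_2\ge 1$) a legitimate witness for $\tilde r$, giving $\tilde r\le(kd+j_1)/(kn)=d/n+j_1/(kn)$; taking the infimum over $k$ yields $\tilde r\le d/n$. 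The degenerate case $j_2=0$ (so $nx\le 0$, hence $nx=0$) is handled directly, since then $knx=0\le a$ forces $\tilde r=0=d/n$. The main obstacle is precisely the uncancellable summand $j_1 a$ on the left of $j_1 a+nx\le j_2 a$: the monoid need not be cancellative, so $j_1 a$ cannot simply be removed. The device that circumvents this is the asymptotic one — carry $j_1 a$ along inside the coefficient $kd+j_1$ and let the error $j_1/(kn)$ vanish as $k\to\infty$ — mirroring the proof of part (2) of \autoref{prp:extendingStates}.

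With $r=\tilde r$ in hand, the remaining claims are formal. If $r<1/L$, then by definition of $\tilde r$ there are $k,n\in\N_+$ with $nx\le ka$ and $k/n<1/L$, whence $kL<n$ and therefore $kLx\le nx\le ka$, producing the desired $k$. Conversely, a relation $kLx\le ka$ is a term of $\tilde r$ of value $k/(kL)=1/L$, so $r=\tilde r\le 1/L$. Finally, combining these two implications shows that $r=0$ (equivalently, $r$ is not positive, as $r\ge 0$ by \autoref{prp:extendingStates}(1)) holds if and only if for every $L\in\N_+$ there is some $k\in\N_+$ with $kLx\le ka$; negating this equivalence gives exactly the stated characterization that $0<r$ if and only if there exists $L\in\N_+$ with $kLx\nleq ka$ for all $k\in\N_+$.
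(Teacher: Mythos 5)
Your proof is correct and follows essentially the same route as the paper: the reverse inequality $\tilde r\le r$ is obtained in both cases by iterating the relation $j_1a+nx\le j_2a$ to get $knx\le(kd+j_1)a$ (the paper keeps $j_1a$ as a separate summand on both sides and drops it only at the end, which is a cosmetic difference) and then letting the error term $j_1/(kn)$ vanish as $k\to\infty$, with the remaining claims following formally exactly as you describe. Your explicit treatment of the degenerate case $j_2=0$ and of why the witnesses lie in $\N_+$ is a small but welcome addition of care over the paper's version.
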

\begin{proof}
It is straightforward to check $r\leq\tilde{r}$.
To verify $r\geq\tilde{r}$, let $z_1,z_2\in N$ and $m\in\N_+$ such that $z_1 + mx \leq z_2$.
We need to show $\tfrac{f(z_2)-f(z_1)}{m}\geq\tilde{r}$.

Choose $k_1$ and $k_2$ such that $z_1=k_1 a$ and $z_2 = k_2 a$.
It follows from the assumptions on $a$ that $k_1\leq k_2$.
Set $k:=k_2-k_1$.
Then
\[
k_1 a + mx \leq k_1 a + ka.
\]
By induction, as in the proof of \enumStatement{1} in \autoref{prp:extendingStates} above, we deduce
\[
k_1 a + lmx
\leq k_1 a + lka,
\]
for all $l\in\N_+$.
This implies $lmx \leq (k_1+lk)a$, and therefore
\[
\tfrac{k_1}{lm} + \tfrac{k}{m}
= \tfrac{k_1+lk}{lm}
\geq \tilde{r}.
\]
Since this holds for all $l\in\N_+$, we have $\tfrac{f(z_2)-f(z_1)}{m}=\tfrac{k}{m}\geq \tilde{r}$, as desired.

Next, let $L\in\N_+$ such that $\tilde{r}<\tfrac{1}{L}$.
By definition of $\tilde{r}$, there exist $k,n\in\N_+$ such that $\tfrac{k}{n}<\tfrac{1}{L}$ (and hence $kL \leq n$) and such that $nx \leq ka$.
Then $kL x \leq nx \leq ka$, as desired.
Finally, if $kL x \leq ka$ for some $k,L\in\N_+$, then clearly $r=\tilde{r}\leq\tfrac{1}{L}$.
\end{proof}

\begin{thm}
\label{prp:existenceFctl}
Let $S$ be a $\CatCu$-semigroup, and let $a\in S$.
Then the following are equivalent:
\beginEnumStatements
\item
For every $m,n\in\N$, we have $ma\leq na$ if and only $m\leq n$.
Furthermore, there exist $l,L\in\N$ and $x\in S$ such that $x\ll la$ and such that $kLx\nleq ka$ for all $k\in\N_+$.
\item
There exist $l\in\N$ and $x\in S$ such that $x\ll la$.
Furthermore, there exists a state $f$ on the submonoid generated by $a$ and $x$ such that $f(x)>0$.
\item
There exists a functional $\lambda\in F(S)$ such that $\lambda(a)=1$.
\end{enumerate}
\end{thm}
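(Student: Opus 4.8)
The plan is to run the cycle \implStatements{1}{2}, \implStatements{2}{1}, \implStatements{3}{2} and, as the main point, \implStatements{2}{3}. Throughout I write $N_0:=\langle a\rangle$ for the submonoid generated by $a$, and I use repeatedly that $x\ll la$ forces $x\le la$, i.e.\ $x\varpropto a$ in the sense of the stable domination relation.

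For \implStatements{1}{2}, the hypothesis that $ma\le na$ holds iff $m\le n$ is exactly what is needed to define the state $f_0(na):=n$ on $N_0$ and to invoke \autoref{prp:stateEstimateR}: the existence of $L$ with $kLx\nleq ka$ for all $k\in\N_+$ gives $r>0$, where $r$ is the invariant of \autoref{prp:extendingStates} attached to $(S,N_0,f_0,x)$. Since $x\varpropto a$, \autoref{rmk:extendingStates}(1) then produces a state $f$ on $\langle a,x\rangle$ extending $f_0$ with $f(x)>0$, which is \enumStatement{2}. Conversely, for \implStatements{2}{1}, from $x\le la$ and $f(x)>0$ we get $0<f(x)\le lf(a)$, so $l\ge1$ and $f(a)>0$; applying $f$ to $ma\le na$ yields $m\le n$, giving the ``iff'' (the reverse implication being automatic). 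Applying $f$ to any relation $nx\le ka$ gives $k/n\ge f(x)/f(a)>0$, so the infimum $r=\tilde r$ of \autoref{prp:stateEstimateR} is positive; that lemma then returns the required $L$, and together with $x\ll la$ this is \enumStatement{1}.

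For \implStatements{3}{2}, given $\lambda\in F(S)$ with $\lambda(a)=1$, use \axiomO{2} to write $a=\sup_k a_k$ with $a_k\ll a$; since $\lambda$ preserves suprema of increasing sequences, $\sup_k\lambda(a_k)=1$, so some $x:=a_k$ satisfies $x\ll a$ and $\lambda(x)>0$. The restriction of $\lambda$ to $\langle a,x\rangle$ is finite, since $\lambda(ma+nx)=m+n\lambda(x)<\infty$, hence a state $f$ with $f(x)>0$ and $l=1$, which is \enumStatement{2}.

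The substantial implication is \implStatements{2}{3}. First extend the finite state $f$ to an extended state $h\colon S\to[0,\infty]$ (a $\CatPom$-morphism), which exists by \cite{BlaRor92ExtendStates}; as $h$ extends $f$ we retain $h(a)=f(a)<\infty$ and $h(x)=f(x)>0$. The extended state need not preserve suprema, so I pass to its lower-semicontinuous \emph{regularization}
\[
\lambda_0(s):=\sup\{h(w):w\in S,\ w\ll s\},\qquad s\in S.
\]
The heart of the argument is to check that $\lambda_0$ is a functional. Monotonicity and $\lambda_0(0)=0$ are immediate. For additivity, \axiomO{3} gives $\lambda_0(s+s')\ge\lambda_0(s)+\lambda_0(s')$, while for the reverse inequality one writes $s=\sup_k s_k$ and $s'=\sup_k s'_k$ rapidly increasing and uses \axiomO{4} to get $s+s'=\sup_k(s_k+s'_k)$; any $w\ll s+s'$ then satisfies $w\le s_k+s'_k$ for some $k$, whence $h(w)\le h(s_k)+h(s'_k)\le\lambda_0(s)+\lambda_0(s')$. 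Preservation of suprema uses the interpolation property of $\ll$ (a consequence of \axiomO{2}): if $w\ll\sup_n s_n$ one chooses $w^+$ with $w\ll w^+\ll\sup_n s_n$, so that $w^+\le s_n$ for some $n$ and hence $h(w)\le\lambda_0(s_n)$. Finally $\lambda_0(a)\le h(a)<\infty$, while from $x\ll la=\sup_k la_k$ (by \axiomO{4}) we obtain $x\le la_k$ and $h(a_k)\ge h(x)/l>0$ for some $k$, so that $0<\lambda_0(a)<\infty$; normalizing, $\lambda:=\lambda_0/\lambda_0(a)$ lies in $F(S)$ with $\lambda(a)=1$, which is \enumStatement{3}. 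The main difficulty is precisely this last step: manufacturing from a finite state on a two-generator submonoid a genuinely continuous functional on all of $S$, where the extension to an extended state and the verification that regularization restores additivity and preservation of suprema (relying on \axiomO{3}, \axiomO{4} and $\ll$-interpolation) are the crucial points, and where one must ensure the value at $a$ stays finite and nonzero before normalizing.
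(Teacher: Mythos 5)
Your proof is correct, and the engine of the hard implication \implStatements{2}{3} --- extend the state, pass to the lower-semicontinuous regularization $\lambda_0(s)=\sup\{h(w):w\ll s\}$, check it is a functional, and normalize at $a$ --- is the same as the paper's; your reorganization of the cycle (proving \implStatements{2}{1} and \implStatements{3}{2} in place of the paper's \implStatements{3}{1}) is harmless, and your verifications of additivity via \axiomO{3}/\axiomO{4} and of supremum-preservation via $\ll$-interpolation are exactly what the paper leaves to the reader. The one substantive divergence is the extension step: you extend $f$ from $\langle a,x\rangle$ to an extended state on all of $S$ in one stroke, citing \cite{BlaRor92ExtendStates}, whereas the result the paper actually invokes, \cite[Corollary~2.7]{BlaRor92ExtendStates}, extends \emph{finite} states along submonoids containing an \emph{order-unit}. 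That is why the paper first passes to $T_0=\{b\in S: b\leq ka \text{ for some }k\}$, where $a$ is an order-unit and $\langle a,x\rangle\subset T_0$, gets a finite state there, regularizes on the ideal $T$ generated by $a$, and only then extends to $S$ by declaring the value $\infty$ off $T$. Your global extension does exist --- a one-element extension in the spirit of \autoref{prp:extendingStates} always succeeds once the value $\infty$ is allowed (if $r<\infty$ then $x\leq z_2$ for some $z_2$ in the domain, so part~(2) of that lemma supplies a finite admissible value; otherwise $\infty$ is consistent), and Zorn's lemma finishes --- but this needs a version of the one-element lemma for partial \emph{extended} states, which is neither proved nor literally cited in the paper. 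So either supply that argument or adopt the $T_0$/$T$ device, which is designed precisely to stay inside the order-unit setting where the quoted extension theorem applies; the rest of your proof, including the use of \autoref{prp:stateEstimateR} in \implStatements{1}{2} and \implStatements{2}{1} and the check that $0<\lambda_0(a)<\infty$ before normalizing, matches the paper.
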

\begin{proof}
All three conditions imply $a\neq 0$.
Let us show that \enumStatement{1} implies \enumStatement{2}.
Let $M=\langle a,x\rangle$, and let $N=\langle a\rangle$.
Let $f_0$ be the state on $N$ given by $f_0(na)=n$ for each $n\in\N$.
By \autoref{prp:stateEstimateR}, we have $r>0$, for $r$ defined as in \autoref{prp:extendingStates} (with respect to $M$, $N$, $f_0$ and $x$).
Then, by \autoref{prp:extendingStates}, there exists a state $f$ on $M$ that extends $f_0$ and such that $f(x)=r$.
This verifies \enumStatement{2}.

Next, let us show that \enumStatement{2} implies \enumStatement{3}.
Let $M=\langle a,x\rangle$.
Consider the following subsets of $S$:
\begin{align*}
T_0 &= \left\{ b\in S : b\leq ka, \text{ for some } k\in\N \right\}, \\
T &= \left\{ b\in S : b \leq \infty\cdot a \right\}.
\end{align*}
Then $T_0$ is a submonoid of $S$ with order-unit $a$, and $M$ is a submonoid of $T_0$ containing the order-unit.
By \cite[Corollary~2.7]{BlaRor92ExtendStates}, we can extend the state $f$ on $M$ to a state $\tilde{f}$ on $T_0$.
For $b',b\in T$ satisfying $b'\ll b$, we have $b'\in T_0$.
We can therefore define
\[
\lambda_0\colon T\to[0,\infty],\quad
\lambda_0(b):= \sup \left\{ \tilde{f}(b') : b'\ll b \right\},\quad
\txtFA b\in T.
\]
It is straightforward to check that $\lambda_0$ is a functional on $T$.
We may extend $\lambda_0$ to a functional $\lambda_1$ on $S$ by setting
\[
\lambda_1(s)
:= \begin{cases}
\lambda_0(s), &\text{ if } s\in T, \\
\infty, &\text{ if } s\notin T.
\end{cases}
\]
for all $s\in S$.
It follows
\[
\lambda_1(la) = \lambda_0(la) \geq \tilde{f}(x) = f(x) > 0.
\]
Thus, $\lambda_1(a)\neq 0$.
Moreover, we have $\lambda_1(a)=\lambda_0(a)\leq \tilde{f}(a)=f(a)<\infty$.
Then the functional $\lambda=\tfrac{1}{\lambda_1(a)}\cdot\lambda_1$ has the desired properties.

Finally, let us show that \enumStatement{3} implies \enumStatement{1}. From the existence of a functional $\lambda\in F(S)$ with $\lambda(a)=1$ it is clear that $ma\leq na$ if and only if $m\leq n$.
Furthermore, since $a$ is the supremum of an increasing sequence of elements that are compactly contained in $a$, and since $\lambda$ preserves suprema of increasing sequences, there exists $x\in S$ such that $x\ll a$ and $\lambda(x)\neq 0$.
Choose $L$ such that $L>\tfrac{1}{\lambda(x)}$.
Then $kLx\nleq kx$, for all $k\in\N_+$, as desired.
\end{proof}

\begin{cor}
\label{prp:existenceFctlSimple}
Let $S$ be a simple $\CatCu$-semigroup, and let $a\in S$ such that $a\ll\infty$.
Then the following are equivalent:
\beginEnumStatements
\item
The element $a$ is nonzero, and for every $n\in\N$ the element $na$ is finite.
\item
For every $m,n\in\N$, we have $ma\leq na$ if and only if $m\leq n$.
\item
There exists a functional $\lambda\in F(S)$ such that $\lambda(a)=1$.
\end{enumerate}
\end{cor}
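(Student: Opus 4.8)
The plan is to derive everything from \autoref{prp:existenceFctl}, noting that corollary condition (3) is verbatim condition (3) of that theorem, and that corollary condition (2) is the cancellation clause of its condition (1). The only real content is that, for a simple $S$ with $a\ll\infty$, the technical ``witness'' clause of \autoref{prp:existenceFctl}(1)---existence of $x$ and $L$ with $x\ll la$ and $kLx\nleq ka$ for all $k$---is automatic, so that conditions (1)--(3) here collapse to the clean statements. I would prove the cycle (3)$\Rightarrow$(2)$\Rightarrow$(1)$\Rightarrow$(3).

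The implication (3)$\Rightarrow$(2) is immediate: if $\lambda\in F(S)$ satisfies $\lambda(a)=1$, then $ma\leq na$ gives $m=\lambda(ma)\leq\lambda(na)=n$, while $m\leq n\Rightarrow ma\leq na$ holds in any \pom. For (2)$\Rightarrow$(1), first note $a\neq 0$, since $a=0$ would give $2a\leq a$ with $2\not\leq 1$. Suppose some $na$ were infinite, that is, $na+b=na$ for some $b\neq 0$. By simplicity $\sup_k kb=\infty$ is the largest element (see \autoref{pgr:finiteSemigr}), and since $a\ll\infty$ we obtain $a\leq kb$ for some $k$. Iterating $na+b=na$ yields $na+kb=na$, hence $(n+1)a=na+a\leq na+kb=na$, contradicting (2). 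Thus every $na$ is finite.

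The substantive step is (1)$\Rightarrow$(3), which I would carry out by verifying the hypotheses of \autoref{prp:existenceFctl}(1) and invoking that theorem. Cancellation is quick: if $ma\leq na$ with $m>n$, then $na+(m-n)a=na$ with $(m-n)a\neq 0$, contradicting finiteness of $na$; so $ma\leq na\Leftrightarrow m\leq n$. For the witness, use \axiomO{2} to pick a nonzero $x\ll a$ and set $l=1$. Since $x\neq 0$, simplicity gives $\sup_j jx=\infty$, and $a\ll\infty$ then yields $a\leq mx$ for some $m\in\N_+$. I claim $L:=m+1$ defeats every $k$. If instead $k(m+1)x\leq ka$ for some $k\geq 1$, then from $ka\leq kmx$ we get $kmx+kx\leq kmx$, hence $kmx+kx=kmx$; iterating shows $kmx+jkx=kmx$ for all $j$, and $j=m$ gives $2kmx=kmx$. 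Thus the nonzero element $kmx$ is properly infinite, so by simplicity and proper infiniteness $a\leq\infty\cdot(kmx)=kmx$, giving $kma\leq kmx$; combined with $kmx\leq kma$ (from $x\leq a$) this forces $kma=kmx$, whence $2kma=kma$, contradicting finiteness of the nonzero element $kma$. This proves the claim, so \autoref{prp:existenceFctl}(1) holds and (3) follows.

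The main obstacle is exactly this witness construction: one must certify, with a single $L$, that the chosen $x\ll a$ is not arbitrarily small relative to $a$, i.e.\ $kLx\nleq ka$ for all $k$ simultaneously. The proper-infiniteness computation is the device that turns a hypothetical failure into a properly infinite multiple of $a$, contradicting finiteness; this is the only point where both simplicity and the hypothesis $a\ll\infty$ are genuinely used. An alternative packaging is to invoke \autoref{prp:stateEstimateR} and show $\tilde{r}\geq 1/m>0$ by the same comparison $x\leq a\leq mx$, which directly produces the required $L$.
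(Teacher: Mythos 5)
Your proof is correct, and it rests on the same pivot as the paper's: everything is funneled through \autoref{prp:existenceFctl}. The differences are in the packaging. The paper proves \implStatements{1}{2} (trivially), \implStatements{3}{1} (via faithfulness of nonzero functionals on simple $\CatCu$-semigroups), and \implStatements{2}{3} by verifying the witness clause of \autoref{prp:existenceFctl}\enumStatement{1} \emph{directly from the cancellation hypothesis}: picking nonzero $x\ll a$, using simplicity and $a\ll\infty$ to get $a\leq Nx$, and setting $L=2N$, one has $2ka\leq kLx$, so $kLx\leq ka$ would force $2ka\leq ka$, contradicting \enumStatement{2} in one line. You instead cycle \implStatements{3}{2}, \implStatements{2}{1}, \implStatements{1}{3}, and since your input to the theorem is finiteness rather than cancellation, your witness verification goes through the longer absorption/proper-infiniteness computation ($kmx+kx=kmx$, hence $kmx=\infty$ by \autoref{prp:elementInSimple}, hence $kma$ infinite). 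Both are sound; the paper's route buys a shorter witness argument, while yours makes explicit the equivalence of \enumStatement{1} and \enumStatement{2} without functionals and records the useful observation (also your closing remark) that \autoref{prp:stateEstimateR} gives $r\geq 1/m>0$ directly from $a\leq mx$. One cosmetic gap: in your cancellation step, ``$ma\leq na$ with $m>n$ implies $na+(m-n)a=na$'' tacitly uses $na\leq ma$ to upgrade the inequality to an equality before invoking finiteness; this is immediate but worth a clause.
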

\begin{proof}
One easily checks that \enumStatement{1} implies \enumStatement{2}.
Let us show that \enumStatement{3} implies \enumStatement{1}.
Assume that there exists a functional $\lambda$ with $\lambda(a)=1$.
This clearly implies that $a$ is nonzero.
Let $k\in\N$.
In order to show that $ka$ is finite, let $b\in S$ be an element such that $ka+b=ka$.
This implies that $\lambda(b)=0$.
Note that every nonzero functional $\mu$ on a simple $\CatCu$-semigroup is faithful in the sense that $\mu(x)\neq 0$ for every nonzero element $x$.
Thus, it follows from $\lambda(b)=0$ that we have $b=0$, as desired.

Finally, let us show that \enumStatement{2} implies \enumStatement{3}.
We verify \enumStatement{1} of \autoref{prp:existenceFctl}.
We have $a\neq 0$.
Choose $x\in S$ with $x\ll a$ and $x\neq 0$.
Since $S$ is simple and $x\neq 0$,
there exists $N\in\N$ such that $a\leq Nx$. Put $L=2N$.
It follows that for every $k\in \N_+$, $2ka\leq 2kNx=kLx$ and hence $kLx\nleq ka$, as desired.
\end{proof}

\begin{lma}
\label{prp:elementInSimple}
Let $S$ be a simple $\CatCu$-semigroup, and let $a\in S$.
Then the following are equivalent:
\beginEnumStatements
\item
The element $a$ is infinite.
\item
The element $a$ is properly infinite.
\item
We have $a=\infty$.
\end{enumerate}
\end{lma}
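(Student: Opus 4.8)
The plan is to prove the cyclic chain of implications \implStatements{1}{3}, \implStatements{3}{2}, and \implStatements{2}{1}. The last of these is immediate from the definitions in \autoref{pgr:finiteSemigr}: a properly infinite element is by definition infinite, so \enumStatement{2} trivially entails \enumStatement{1}. Before starting I would dispose of the degenerate situation. For a nonzero simple $\CatCu$-semigroup one has $\infty\neq 0$, and then all three conditions fail for $a=0$ (the element $0$ is finite, and $0\neq\infty$); I therefore assume throughout that $S\neq\{0\}$, which is the natural reading since for $S=\{0\}$ one has $\infty=0$ finite, so that \enumStatement{3} would hold for $a=0$ while \enumStatement{1} fails.

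The substantial implication is \implStatements{1}{3}. Suppose $a$ is infinite, so that there is a \emph{nonzero} $b\in S$ with $a+b=a$. A straightforward induction gives $a+nb=a$ for every $n\in\N$, and since $S$ is a \pom{} we have $a=a+nb\geq nb$, so $nb\leq a$ for all $n$. Using \axiomO{1} to form the supremum of the increasing sequence $(nb)_n$, and invoking the fact recorded in \autoref{pgr:finiteSemigr} that in a nonzero simple $\CatCu$-semigroup the supremum $\sup_n nb$ of the multiples of any nonzero element equals the largest element $\infty$, one concludes $\infty=\sup_n nb\leq a$, whence $a=\infty$. This is the only place where simplicity is genuinely used, and recognizing that infiniteness forces $a$ to dominate every multiple $nb$ (which already exhaust $S$ up to $\infty$) is the key step.

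For \implStatements{3}{2}, assume $a=\infty$. Since $\infty$ is the largest element, $2a=a+a\leq\infty=a$, and also $2a\geq a$ by positivity, so $2a=a$. Moreover $a=\infty\neq 0$ together with $a+a=a$ exhibits a nonzero element (namely $a$ itself) that is absorbed by $a$, so $a$ is infinite; combined with $2a=a$ this shows $a$ is properly infinite. This closes the cycle and yields the equivalence.

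I expect no serious obstacle: the argument is elementary once the supremum identity $\sup_n nb=\infty$ from \autoref{pgr:finiteSemigr} is in hand. The only points requiring mild care are the harmless bookkeeping in the degenerate case $S=\{0\}$ and verifying that the relevant suprema exist, which is guaranteed by \axiomO{1}.
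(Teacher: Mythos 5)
Your proof is correct and follows essentially the same route as the paper: the implication \implStatements{2}{1} is definitional, \implStatements{3}{2} uses that the largest element absorbs itself, and the key step \implStatements{1}{3} iterates $a=a+b$ to get $nb\leq a$ for all $n$ and then invokes $\sup_n nb=\infty$ from \autoref{pgr:finiteSemigr}, exactly as the paper does via $\infty=\sup_k kb\leq\sup_k(a+kb)=a$. Your remark on the degenerate case $S=\{0\}$ is a harmless clarification that the paper leaves implicit.
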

\begin{proof}
Observe that in a general nonzero $\CatCu$-semigroup the largest element is properly infinite (if it exists), and that every properly infinite element is infinite.
Finally, to show that \enumStatement{1} implies \enumStatement{3}, let us assume that $S$ is a simple $\CatCu$-semigroup, and let $a\in S$ be infinite.
By definition, we can choose $b\in S$ nonzero with $a=a+b$.
Then $a=a+2b$, and inductively $a=a+kb$ for every $k\in\N$.
Therefore,
\[
\infty = \sup_k kb \leq \sup_k (a+kb) = \sup_k a = a \leq \infty,
\]
which shows $a=\infty$, as desired.
\end{proof}

\begin{rmk}
\label{rmk:elementInSimple}
Let $S$ be a simple $\CatCu$-semigroup, and let $a\in S$.
Then a multiple of $a$ can be infinite even if $a$ itself is finite.
This happens for instance in the elementary semigroups $\elmtrySgp{k}$ from \autoref{pgr:elementarySemigr}.
\end{rmk}

For the next result, we call a functional nontrivial if it does not only take the values $0$ and $\infty$.

\begin{prp}
\label{prp:simpleSF}
Let $S$ be a simple $\CatCu$-semigroup with $S\neq\{0\}$.
Then the following are equivalent:
\beginEnumStatements
\item
The semigroup $S$ is stably finite.
\item
Every compact element in $S$ is finite.
\item
The largest element $\infty$ is not compact.
\item
There exists a nontrivial functional $\lambda\in F(S)$.
\item
There exists a nonzero element $a\ll\infty$ such that $na$ is finite for all $n\in\N$.
\end{enumerate}
\end{prp}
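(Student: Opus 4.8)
The plan is to organize the five conditions into a single cycle of implications, leaning on the facts already available for simple semigroups. Throughout, since $S$ is simple and $S\neq\{0\}$, it has a largest element $\infty=\sup_k ka$ (for any nonzero $a$, as recorded in \autoref{pgr:finiteSemigr}), and $\infty$ is properly infinite, hence infinite, because $\infty+\infty=\infty$ and $\infty\neq 0$. The heavy analytic work of producing functionals from order data is already done in \autoref{prp:existenceFctl} and \autoref{prp:existenceFctlSimple}, so the present argument is essentially a bookkeeping of implications.

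First I would settle the three order-theoretic conditions \enumStatement{1}, \enumStatement{2}, \enumStatement{3}. By \autoref{prp:elementInSimple}, an element of a simple $\CatCu$-semigroup is finite exactly when it is not equal to $\infty$. Hence condition~\enumStatement{2}, that every compact element is finite, says precisely that $\infty$ is not compact, which is condition~\enumStatement{3}; this yields \implStatements{2}{3} and its converse simultaneously. For the link with stable finiteness I would use the characterization from \autoref{pgr:finiteSemigr}: since $S$ is simple, $S$ is stably finite if and only if every $a$ with $a\ll\infty$ is finite. If $S$ is stably finite and $a$ is compact, then $a\ll a\leq\infty$ forces $a\ll\infty$, so $a$ is finite, giving~\enumStatement{2}; conversely, if $\infty$ is not compact, then any $a\ll\infty$ satisfies $a\neq\infty$ (otherwise $\infty\ll\infty$), hence $a$ is finite by \autoref{prp:elementInSimple}, and so $S$ is stably finite. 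This establishes \implStatements{1}{2}, \implStatements{2}{3}, and \implStatements{3}{1}.

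Next I would prove \implStatements{3}{5}. Assume $\infty$ is not compact and choose, via \axiomO{2}, a nonzero element $a$ with $a\ll\infty$. Using \axiomO{3} together with $\infty+\infty=\infty$, an induction shows $na\ll\infty$ for every $n\geq 1$: from $na\ll\infty$ and $a\ll\infty$ one gets $(n+1)a\ll\infty+\infty=\infty$. If some $na$ equalled $\infty$, we would obtain $\infty\ll\infty$, contradicting~\enumStatement{3}; therefore $na\neq\infty$, and by \autoref{prp:elementInSimple} each $na$ is finite. Thus $a$ is a nonzero element with $a\ll\infty$ all of whose multiples are finite, which is exactly~\enumStatement{5}. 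For \implStatements{5}{4}, note that condition~\enumStatement{5} is verbatim condition~\enumStatement{1} of \autoref{prp:existenceFctlSimple} applied to this $a$, so condition~\enumStatement{3} of that corollary holds: there is a functional $\lambda\in F(S)$ with $\lambda(a)=1$. Since $\lambda$ takes the value $1\notin\{0,\infty\}$, it is nontrivial, giving~\enumStatement{4}.

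Finally I would close the loop with \implStatements{4}{3}. Let $\lambda$ be a nontrivial functional, so there is $b\in S$ with $0<\lambda(b)<\infty$; in particular $b\neq 0$, whence $\infty=\sup_k kb$ and, as $\lambda$ preserves suprema of increasing sequences and is additive, $\lambda(\infty)=\sup_k k\lambda(b)=\infty$. If $\infty$ were compact, then $\infty\ll\infty=\sup_k kb$ would give $\infty\leq kb$ for some $k$, so $kb=\infty$ and $k\lambda(b)=\lambda(\infty)=\infty$, contradicting $\lambda(b)<\infty$. Hence $\infty$ is not compact, which is~\enumStatement{3}. The implications $\enumStatement{1}\Leftrightarrow\enumStatement{2}\Leftrightarrow\enumStatement{3}$ and $\enumStatement{3}\Rightarrow\enumStatement{5}\Rightarrow\enumStatement{4}\Rightarrow\enumStatement{3}$ together prove all equivalences. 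The only genuinely delicate step internal to this proof is \implStatements{3}{5}, where non-compactness of $\infty$ must be upgraded to the finiteness of all multiples of a fixed $a\ll\infty$; everything constructing an actual functional is deferred to \autoref{prp:existenceFctlSimple}.
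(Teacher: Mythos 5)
Your proof is correct and uses the same ingredients as the paper's (\autoref{prp:elementInSimple}, \autoref{prp:existenceFctlSimple}, and the characterization of stable finiteness for simple semigroups from \autoref{pgr:finiteSemigr}); the only difference is cosmetic, in that you arrange the implications as $\enumStatement{1}\Leftrightarrow\enumStatement{2}\Leftrightarrow\enumStatement{3}$ followed by the cycle $\enumStatement{3}\Rightarrow\enumStatement{5}\Rightarrow\enumStatement{4}\Rightarrow\enumStatement{3}$, whereas the paper proves $\enumStatement{1}\Rightarrow\enumStatement{4}$ directly and handles $\enumStatement{4}\Leftrightarrow\enumStatement{5}$ separately. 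Each individual implication is argued essentially as in the paper, so there is nothing to add.
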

\begin{proof}
In general, every compact element in a stably finite $\CatCu$-semigroup is finite.
Moreover, the largest element $\infty$ is never finite.
It follows that \enumStatement{1} implies \enumStatement{2}, and that \enumStatement{2} implies \enumStatement{3}.

Let us show that \enumStatement{1} implies \enumStatement{4}.
Choose a nonzero element $a\in S$ satisfying $a\ll\infty$.
Then $na\ll\infty$ for all $n\in\N$.
Since $S$ is stably finite, we obtain that $na$ is finite for every $n\in\N$.
By \autoref{prp:existenceFctlSimple}, there exists a functional $\lambda\in F(S)$ with $\lambda(a)=1$.
This functional is nontrivial, as desired.

Let us show that \enumStatement{4} implies \enumStatement{3}.
Choose a nontrivial functional $\lambda\in F(S)$.
By rescaling if necessary, we may assume that there exists $a\in S$ with $\lambda(a)=1$.
In order to show \enumStatement{3}, assume that $\infty$ is compact.
Then, since $\infty=\sup_k(ka)$, there exists $n\in\N$ with $\infty\leq na$, and hence $\infty=na$.
This implies
\[
n=\lambda(na)=\lambda(\infty)=\lambda(2\infty)=\lambda(2na)=2n,
\]
which clearly is a contradiction.
Hence, $\infty$ is not compact, which shows \enumStatement{3}.

Finally, let us show that \enumStatement{3} implies \enumStatement{1}.
To verify the contraposition, assume that $S$ is not stably finite.
Then there exists a nonzero, infinite element $a\in S$ satisfying $a\ll\infty$.
By \autoref{prp:elementInSimple}, every infinite element in $S$ is equal to the largest element $\infty$.
It follows $\infty=a\ll\infty$, and so $\infty$ is compact.
The equivalence between \enumStatement{4} and \enumStatement{5} follows from \autoref{prp:existenceFctlSimple}.
\end{proof}

\begin{rmk}
\label{rmk:simpleSF}
The equivalence of \enumStatement{1} and \enumStatement{4} in \autoref{prp:simpleSF} is well-known, especially for Cuntz semigroups of (simple) \ca{s}.
It is used to show that every unital, simple, stably finite \ca\ has a $2$-quasitrace.
In fact, the correspondence between $2$-quasitraces on a \ca\ and functionals on its Cuntz semigroup was one of the original motivations for Cuntz to introduce the semigroups named after him;
see \cite{Cun78DimFct}, \cite{BlaHan82DimFct}.
\end{rmk}

In the next part of this section, we will study the connection between the order structure of a \pom\ and the set of its functionals.
We first recall a notion that has appeared many times in the literature.
The notation chosen here follows Definition~2.2 in~\cite{OrtPerRor12CoronaStability}.

\begin{dfn}
\label{dfn:relS}
\index{terms}{stably dominated}
\index{terms}{relation!stable domination}
\index{symbols}{$<_s$ \quad (stable domination relation)}
\index{symbols}{$\varpropto$}
Let $M$ be a \pom, and let $a,b\in M$.
We will write $a\varpropto b$ if there exists $k\in\N$ such that $a\leq kb$.

We say that $a$ is \emph{stably dominated} by $b$, denoted by $a<_sb$, if there exists $k\in\N$ such that $(k+1)a\leq kb$.
\end{dfn}

The following result provides useful characterizations of the relation $<_s$.
Several versions of this results have appeared in the literature (see for example \cite[Proposition~2.1]{OrtPerRor12CoronaStability}), and most are based on \cite[Lemma~4.1]{GooHan76RankFct}.

\begin{prp}
\label{prp:relSTFAE}
Let $M$ be a \pom, and let $a,b\in M$.
Then the following are equivalent:
\beginEnumStatements
\item
We have $a<_sb$, that is, there exists $k\in\N$ such that $(k+1)a\leq kb$.
\item
There exists $k_0\in\N$ such that $(k+1)a\leq kb$ for all $k\geq k_0$.
\item
Given $n\in\N_{+}$, there exists $k\in\N$ such that $(k+n)a\leq kb$.
\item
Given $n\in\N_{+}$, there exists $k_0\in\N$ such that $(k+n)a\leq kb$ for all $k\geq k_0$.
\item
We have $a\varpropto b$, and $f(a)<f(b)$ for every extended state on $S$ that is normalized at $b$.
\end{enumerate}
If $b$ is an order-unit for $M$, then the above statements are also equivalent to:
\beginEnumStatements
\setcounter{enumi}{5}
\item
We have $f(a)<f(b)$ for every state on $S$ that is normalized at $b$.
\end{enumerate}
\end{prp}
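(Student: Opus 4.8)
The plan is to separate the purely order-theoretic equivalences \enumStatement{1}--\enumStatement{4}, which require only that the order on $M$ is compatible with addition and that $0\le z$ for all $z$, from the implication \enumStatement{1}$\Leftrightarrow$\enumStatement{5}, which carries the analytic content, and finally to read off \enumStatement{5}$\Leftrightarrow$\enumStatement{6} from the order-unit hypothesis. For \enumStatement{1}--\enumStatement{4} I would prove a single amplification lemma: if $(k+n)a\leq kb$ holds for one $k\in\N_+$, then it holds for all sufficiently large $k$. The argument is elementary: scaling the hypothesis by $m$ gives $(mk+mn)a\leq mkb$, and for $N$ with $mk\leq N\leq(m+1)k-1$ and $m$ large enough that $k-1\leq(m-1)n$, positivity yields $(N+n)a\leq(mk+mn)a\leq mkb\leq Nb$, filling the gaps between consecutive multiples of $k$. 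Granting this lemma, \enumStatement{1}$\Rightarrow$\enumStatement{2} is the case $n=1$; \enumStatement{1}$\Rightarrow$\enumStatement{3} follows by scaling $(k+1)a\leq kb$ by $n$ to get $(nk+n)a\leq(nk)b$; \enumStatement{3}$\Rightarrow$\enumStatement{4} applies the lemma for each $n$; and \enumStatement{2}$\Rightarrow$\enumStatement{1}, \enumStatement{4}$\Rightarrow$\enumStatement{2}, \enumStatement{4}$\Rightarrow$\enumStatement{3} are trivial specializations.

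For \enumStatement{1}$\Rightarrow$\enumStatement{5} I would argue directly. If $(k+1)a\leq kb$ then $a\leq(k+1)a\leq kb$, so $a\varpropto b$; and for any extended state $f$ with $f(b)=1$, applying $f$ gives $(k+1)f(a)\leq k$, which forces $f(a)<\infty$ and $f(a)\leq\tfrac{k}{k+1}<1=f(b)$.

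The crux is \enumStatement{5}$\Rightarrow$\enumStatement{1}, and here I would split according to whether the multiples of $b$ are free. If they collapse, say $pb=qb$ with $p>q$, then one shows by induction that $Mb\leq qb$ for every $M\in\N$; combined with $a\varpropto b$ (so $a\leq qb$) this gives $(q+1)a\leq(q+1)q\,b\leq qb$, which is exactly \enumStatement{1} with $k=q$ (using only clause one of \enumStatement{5}, since no extended state can be normalized at $b$ in this case). If instead $mb\leq nb\Leftrightarrow m\leq n$, then $f_0(mb):=m$ is a state on $\langle b\rangle$, and I would feed $M$, $N=\langle b\rangle$, $f_0$ and $x=a$ into \autoref{prp:extendingStates}: since $a\varpropto b$, part (2) gives $p'=p<\infty$ and $r'=r$, while \autoref{prp:stateEstimateR} identifies $r=\inf\{k/n:n,k\in\N_+,\ na\leq kb\}$. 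Were $r\geq1$, I could pick the real number $q=\max(p',1)\in[p',r]$, extend $f_0$ to a state on $\langle a,b\rangle$ with value $q$ at $a$ by \autoref{prp:extendingStates}(4), and then extend to an extended state on all of $M$ by \cite[Corollary~2.7]{BlaRor92ExtendStates}; this state is normalized at $b$ with $f(a)\geq1=f(b)$, contradicting \enumStatement{5}. Hence $r<1$, so there are $n>k\geq1$ with $na\leq kb$, and then $(k+1)a\leq na\leq kb$ gives \enumStatement{1}. The main obstacle is this free case: arranging the one-variable extensions so that $[p',r]$ provides a genuine value $q\geq1$, and in particular lifting the resulting state on $\langle a,b\rangle$ to an extended state on the whole of $M$ via the Blackadar--R{\o}rdam extension theorem, while keeping the free/collapse dichotomy airtight.

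Finally, assuming $b$ is an order-unit, every $x\in M$ satisfies $x\leq k_x b$, so an extended state $f$ with $f(b)=1$ has $f(x)\leq k_x<\infty$ and is therefore an honest state; moreover $a\varpropto b$ holds automatically. Thus the quantification over extended states normalized at $b$ coincides with that over states normalized at $b$ and the first clause of \enumStatement{5} is vacuous, giving \enumStatement{5}$\Leftrightarrow$\enumStatement{6}.
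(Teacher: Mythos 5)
Your proof is correct, and for the two nontrivial steps it departs from the paper's route in ways worth noting. For the purely order-theoretic part, the paper proves \implStatements{3}{4} by taking $d$ with $(d+n)a\leq db$, setting $k_0:=d(d+1)$, and writing $k=(d+1)x+y$ with $x\geq d$, $y\leq d$; your amplification lemma instead scales by $m$ and covers the interval $[mk,(m+1)k-1]$ using $k-1\leq(m-1)n$. Both are elementary and correct (your restriction to $k\in\N_+$ is harmless: a witness $k=0$ forces $na=0$, whence $k=n$ also witnesses \enumStatement{3}). The real divergence is \implStatements{5}{1}: the paper simply cites \cite[Proposition~2.1]{OrtPerRor12CoronaStability}, whereas you give a self-contained argument built from the paper's own \autoref{prp:extendingStates} and \autoref{prp:stateEstimateR}, splitting on whether the multiples of $b$ are order-free. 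Your dichotomy is indeed airtight (if $mb\leq nb$ with $m>n$ then antisymmetry in the \pom{} gives $mb=nb$, and no extended state can be normalized at $b$), and in the free case the inequalities $p'=p\leq r=r'<\infty$ together with $r\geq 1$ do yield a state on $\langle a,b\rangle$ with $\tilde f(a)\geq 1=\tilde f(b)$; the lift to an extended state on all of $M$ should be done as in the proof of \autoref{prp:existenceFctl}, namely first extending by \cite[Corollary~2.7]{BlaRor92ExtendStates} to the order-ideal $\{z\in M: z\varpropto b\}$, on which $b$ is an order-unit, and then setting the value $\infty$ outside it. What your approach buys is independence from the external reference and a demonstration that the extension machinery already developed for \autoref{prp:existenceFctl} suffices; what it costs is the extra case analysis and the bookkeeping around $p',r'$ that the citation hides.
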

\begin{proof}
It is clear that \enumStatement{4} implies \enumStatement{3} and \enumStatement{2}, and that \enumStatement{3} implies \enumStatement{1}, and that \enumStatement{2} implies  \enumStatement{1}.
It is also easy to see that \enumStatement{1} implies \enumStatement{3}.
Indeed, if $(k+1)a\leq kb$ for some $k\in\N$, then $(kn+n)a\leq knb$, as desired.

In order to show that \enumStatement{3} implies \enumStatement{4}, let $n\in\N_{+}$ be given.
By assumption, we can find $d\in\N$ such that $(d+n)a\leq db$.
Set $k_0:=d(d+1)$, which we claim has the desired properties.
To verify this, let $k\in\N$ satisfy $k\geq k_0$.
Then there are $x,y\in\N$ with $k=(d+1)x+y$ and $x\geq d$ and $y\leq d$.
Then
\[
(k+n)a
=[(d+1)x]a+(y+n)a
\leq [(d+n)x]a+(d+n)a
\leq (dx)b+db
\leq kb,
\]
as desired.

Finally, the equivalence between \enumStatement{1} and \enumStatement{5}
is shown in \cite[Proposition~2.1]{OrtPerRor12CoronaStability}.
If $b$ is an order-unit, it is easy to verify that \enumStatement{5} and \enumStatement{6} are equivalent.
\end{proof}

\begin{dfn}
\label{dfn:regularRel}
\index{terms}{regularization of a relation}
\index{terms}{regular relation}
\index{symbols}{R*@$R^\ctsRel$ \quad (regularization of a relation)}
Let $S$ be a $\CatCu$-semigroup, and let $R\subset S\times S$ be a binary relation.
The \emph{regularization} of $R$, denoted by $R^\ctsRel$, is the binary relation defined as follows:
For any $a,b\in S$, we set $a R^\ctsRel b$ if and only if $a' R b$ for every $a'\in S$ satisfying $a'\ll a$.
A relation $R$ is \emph{regular} if it is equal to its own regularization.
\end{dfn}

\begin{exa}
\index{symbols}{$<_s^\ctsRel$}
\index{symbols}{$\varpropto^\ctsRel$}
Let $S$ be a $\CatCu$-semigroup.

(1)
The usual order relation $\leq$ on $S$ is regular.
Indeed, given $a,b\in S$, it is clear that $a\leq b$ implies $a\leq^\ctsRel b$.
The converse follows from axiom \axiomO{2} for $S$.

(2)
The way-below relation $\ll$ on $S$ is not regular.
In fact, it is straightforward to check that the regularization of $\ll$ is nothing but the order relation $\leq$.

(3)
\index{terms}{element!soft}
\index{terms}{soft element}
The stable domination relation $<_s$ from \autoref{dfn:relS} is not regular.
However, we will show in \autoref{prp:comparisonRelations} that the regularization of $<_s$ is closely related to comparison by functionals.
In \autoref{sec:soft}, we will study elements $a\in S$ satisfying $a<_s^\ctsRel a$.
(We call such elements `soft'.)

(4)
The relation $\varpropto$ is not regular.
However, its regularization determines exactly which ideal an element of $S$ generates.
More precisely, given $a,b\in S$, we have $a\varpropto^\ctsRel b$ if and only if $a\leq\infty\cdot b$, and if and only if $\Idl(a)\subset \Idl(b)$;
see \autoref{pgr:Lat}.
\end{exa}

\begin{dfn}[{R{\o}rdam, \cite[Section~3]{Ror92StructureUHF2}}]
\label{dfn:almUnp}
\index{terms}{positively ordered monoid!almost unperforated}
\index{terms}{almost unperforated}
\index{terms}{unperforated!almost}
A \pom\ $M$ is \emph{almost unperforated} if for every $a,b\in M$, we have that $a<_sb$ implies $a\leq b$.
\end{dfn}

The following result is straightforward to verify.

\begin{lma}
\label{prp:almUnpCu}
Let $S$ be a $\CatCu$-semigroup.
Then $S$ is almost unperforated if and only if for every $a,b\in S$, we have that $a<_s^\ctsRel b$ implies $a\leq b$.
\end{lma}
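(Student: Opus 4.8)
The plan is to prove both implications directly from the definitions, using axiom \axiomO{2} as the only nontrivial ingredient. Recall that $a<_s^\ctsRel b$ means $a'<_s b$ for every $a'\ll a$, and that $a'<_s b$ means $(k+1)a'\leq kb$ for some $k\in\N$.

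For the forward implication, I would assume $S$ is almost unperforated and take $a,b\in S$ with $a<_s^\ctsRel b$. The key is that \axiomO{2} lets me write $a$ as a supremum of elements compactly contained in it: choose a rapidly increasing sequence $(a_n)_n$ with $a_n\ll a_{n+1}$ for all $n$ and $a=\sup_n a_n$. For each $n$, from $a_n\ll a_{n+1}\leq a$ one obtains $a_n\ll a$ by the standard transitivity property of the way-below relation (if $x\ll y\leq z$ then $x\ll z$, which follows immediately from the definition of $\ll$ in \autoref{pgr:axiomsW}). Then the definition of the regularization gives $a_n<_s b$, and almost unperforation yields $a_n\leq b$. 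Passing to the supremum, $a=\sup_n a_n\leq b$, as desired.

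For the converse, I would assume the regularized implication holds and take $a,b\in S$ with $a<_s b$, so that $(k+1)a\leq kb$ for some $k$. The point is simply that $a<_s b$ already forces $a<_s^\ctsRel b$: for any $a'\ll a$ we have $a'\leq a$, hence, using monotonicity of addition in a \pom,
\[
(k+1)a'\leq (k+1)a\leq kb,
\]
so $a'<_s b$. As this holds for all $a'\ll a$, we get $a<_s^\ctsRel b$, and the hypothesis then gives $a\leq b$. Thus $S$ is almost unperforated.

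I do not expect a genuine obstacle here; the statement is essentially a bookkeeping lemma. The only subtlety worth stating carefully is the use of \axiomO{2} together with the transitivity of $\ll$ to realize $a$ as the supremum of a sequence of elements each \emph{strictly} way-below $a$ (rather than merely $\leq a$), which is exactly what is needed to feed the hypothesis $a<_s^\ctsRel b$ through almost unperforation term-by-term.
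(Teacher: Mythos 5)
Your proof is correct, and since the paper states this lemma without proof (calling it straightforward), your argument is exactly the expected one: \axiomO{2} plus transitivity of $\ll$ for the forward direction, and monotonicity of addition for the converse. No issues.
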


\begin{thm}
\label{prp:comparisonRelations}
Let $S$ be a $\CatCu$-semigroup, and let $a,b\in S$.
Consider the following statements:
\beginEnumStatements
\item
We have $a<_sb$, that is, there exists $k\in\N$ such that $(k+1)a\leq kb$.
\item
We have $\hat{a}<_s\hat{b}$, that is, there exists $k\in\N$ such that $(k+1)\hat{a}\leq k\hat{b}$.
\item
We have $a\varpropto^\ctsRel b$, and $\lambda(a)<\lambda(b)$ for every $\lambda\in F(S)$ satisfying $\lambda(b)=1$.
\item
We have $a<_s^\ctsRel b$, that is, we have $a'<_s b$ for every $a'\in S$ satisfying $a'\ll a$.
\item
We have $\hat{a}\leq\hat{b}$.
\end{enumerate}
Then the following implications hold:
'\enumStatement{1} $\Rightarrow$ \enumStatement{2} $\Rightarrow$ \enumStatement{3} $\Rightarrow$ \enumStatement{4} $\Rightarrow$ \enumStatement{5}'.

If $a$ is compact, then \enumStatement{4} implies \enumStatement{1}.
If the element $a$ satisfies $a<_s^\ctsRel a$ (such elements will be called `soft'; see \autoref{dfn:softElement}), then \enumStatement{5} implies \enumStatement{4}.
If $S$ is almost unperforated, then \enumStatement{4} implies $a\leq b$.
The different implications are shown in the following diagram:
\[
\xymatrix@R+=15pt@C+=40pt@M+=5pt{
{ a<_sb } \ar@{=>}[r]
& { \hat{a}<_s\hat{b} } \ar@{=>}[r]
& {(3)}
\ar@{=>}[r]
& { a<_s^\ctsRel b } \ar@{=>}[r]\ar@/_1.5pc/@{-->}[lll]_{\text{$a$ compact}}
\ar@{-->}[d]_{\parbox{46pt}{\scriptsize $S$ is almost unperforated}}
& { \hat{a}\leq\hat{b} } \ar@/_1pc/@{-->}[l]_{\text{$a$ soft}} \\
& & &  { a\leq b } \ar@{=>}[ur]
}
\]
\end{thm}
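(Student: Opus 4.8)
The plan is to treat the five conditions as algebraic statements about $a,b$ and their ``functional shadows'' $\hat a,\hat b$, using \autoref{prp:relSTFAE} as the bridge between $<_s$ and comparison by states, and the regularization of an extended state to a functional as the main technical device. I begin with the routine arrows. For \implStatements{1}{2}, recall from \autoref{pgr:fctl} that $a\mapsto\hat a$ preserves order and addition, so $(k+1)a\le kb$ yields $(k+1)\hat a\le k\hat b$. For \implStatements{4}{5}, given $\lambda\in F(S)$ and $a'\ll a$, the relation $a'<_sb$ gives $(k+1)\lambda(a')\le k\lambda(b)$, hence $\lambda(a')\le\lambda(b)$ in $[0,\infty]$; passing to a rapidly increasing sequence with supremum $a$ (axiom \axiomO{2}) and using that $\lambda$ preserves suprema gives $\lambda(a)\le\lambda(b)$, i.e.\ $\hat a\le\hat b$. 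If $a$ is compact then $a\ll a$, so \enumStatement{4} applied with $a'=a$ is exactly \enumStatement{1}; and if $S$ is almost unperforated, \enumStatement{4} gives $a\le b$ directly by \autoref{prp:almUnpCu}.

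For \implStatements{2}{3}, note that the order on the target of $a\mapsto\hat a$ is pointwise, so $(k+1)\hat a\le k\hat b$ means $(k+1)\lambda(a)\le k\lambda(b)$ for every $\lambda\in F(S)$. Evaluating at $\lambda$ with $\lambda(b)=1$ gives $\lambda(a)\le\tfrac{k}{k+1}<1=\lambda(b)$, which is the strict comparison in \enumStatement{3}. The same inequality forces $\lambda(a)\le\lambda(b)$ for \emph{all} $\lambda$; applying this to the functional $\lambda_{\Idl(b)}$ that equals $0$ on $\Idl(b)$ and $\infty$ elsewhere (a direct check shows this is a functional) yields $\lambda_{\Idl(b)}(a)\le\lambda_{\Idl(b)}(b)=0$, hence $a\in\Idl(b)$, i.e.\ $a\le\infty\cdot b$. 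By the Example following \autoref{dfn:regularRel} this is precisely $a\varpropto^\ctsRel b$, completing \enumStatement{3}.

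For \implStatements{3}{4}, fix $a'\ll a$; I want $a'<_sb$. Since $a\varpropto^\ctsRel b$ already gives $a'\varpropto b$, by \autoref{prp:relSTFAE} it suffices to show $f(a')<f(b)$ for every extended state $f$ normalized at $b$. The obstacle is that $f$ need not preserve suprema, so it cannot be fed into condition \enumStatement{3} directly; the key move is to pass to its regularization $\lambda(x):=\sup\{f(z):z\ll x\}$, which is a genuine functional with $\lambda\le f$ (this is the same construction shown to be a functional in the proof of \autoref{prp:existenceFctl}) and which satisfies $\lambda(a)\ge f(a')$ because $a'\ll a$. Now split on $\lambda(b)$: if $\lambda(b)>0$, rescaling and invoking \enumStatement{3} gives $\lambda(a)<\lambda(b)\le f(b)=1$, so $f(a')\le\lambda(a)<1$; if $\lambda(b)=0$, then $a\le\infty\cdot b$ forces $\lambda(a)=0$, so again $f(a')\le\lambda(a)=0<1$. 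Thus $f(a')<f(b)$ in all cases, and \autoref{prp:relSTFAE} gives $a'<_sb$ for every $a'\ll a$, i.e.\ \enumStatement{4}.

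Finally, the conditional \implStatements{5}{4} under softness runs in parallel. Assume $\hat a\le\hat b$ and $a<_s^\ctsRel a$, fix $a'\ll a$, and interpolate $a'\ll a''\ll a$. As in the previous paragraph, $\hat a\le\hat b$ gives $a\in\Idl(b)$ via $\lambda_{\Idl(b)}$, whence $a'\varpropto b$; so it remains to bound every extended state $f$ normalized at $b$. Regularizing to $\lambda$, softness provides $k$ with $(k+1)a''\le ka$, and applying $\lambda$ together with $\hat a\le\hat b$ and $\lambda(b)\le f(b)=1$ yields $(k+1)\lambda(a'')\le k\lambda(a)\le k\lambda(b)\le k$, so $\lambda(a'')\le\tfrac{k}{k+1}<1$ and hence $f(a')\le\lambda(a'')<1=f(b)$. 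Then \autoref{prp:relSTFAE} gives $a'<_sb$, and since $a'\ll a$ was arbitrary, $a<_s^\ctsRel b$. I expect the genuine difficulty throughout to be exactly the functional-to-extended-state passage: verifying that the regularization is a functional and correctly handling the degenerate case $\lambda(b)=0$ (which is where $a\varpropto^\ctsRel b$ is indispensable).
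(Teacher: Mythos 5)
Your proposal is correct and follows essentially the same route as the paper: the bridge through \autoref{prp:relSTFAE}, the ideal functional $\lambda_{\Idl(b)}$ to extract $a\varpropto^\ctsRel b$, and the regularization $\lambda(x)=\sup\{f(z):z\ll x\}$ of an extended state with the case split on $\lambda(b)=0$ versus $\lambda(b)>0$ are exactly the paper's devices. The only (harmless) variation is in \implStatements{5}{4}, where the paper shortens the argument by noting that softness gives $\hat{y}<_s\hat{a}\leq\hat{b}$ for an interpolated $y$ and then reuses the already-established implication \implStatements{2}{4} for the pair $(y,b)$, whereas you rerun the extended-state computation directly.
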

\begin{proof}
It is clear that \enumStatement{1} implies \enumStatement{2}, and it is straightforward to check that \enumStatement{4} implies \enumStatement{5}.
To see that \enumStatement{2} implies \enumStatement{3}, assume $\hat{a}<_s\hat{b}$.
This clearly implies $\lambda(a)<1$ for every $\lambda\in F_b(S)$.
Thus, it remains to show $a\varpropto^\ctsRel b$.
Let $I$ be the ideal generated by $b$, that is, $I=\left\{ x\in S : x\leq\infty\cdot b \right\}$.
Consider the following map
\[
\lambda_I\colon S\to[0,\infty],\quad
\lambda_I(x):=\begin{cases}
0, &\text{ if } x\in I \\
\infty, &\text{ if } x\notin I \\
\end{cases},\quad
\txtFA x\in S.
\]
It is easy to check that $\lambda_I$ is a functional.
Since $\lambda_I(b)=0$ and $\hat{a}<_s\hat{b}$, it follows $\lambda_I(a)=0$ and therefore $a\leq\infty\cdot b$, as desired.

Let us show that \enumStatement{3} implies \enumStatement{4}.
Assume $a$ and $b$ satisfy the statement of \enumStatement{3}, and let us show $a<_s^\ctsRel b$.
Let $a'\in S$ satisfy $a'\ll a$.
We want to verify \enumStatement{5} of \autoref{prp:relSTFAE} to show $a'<_sb$.
The argument is similar to the one in the proof of \cite[Proposition~6.2]{EllRobSan11Cone} and \cite[Proposition~2.2.6]{Rob13Cone}.
Since $a'\ll a\varpropto^\ctsRel b$, we have $a'\varpropto b$.

Now, let $f\colon S\to[0,\infty]$ be an extended state with $f(b)=1$.
We want to show $f(a')<f(b)$.
Consider the map
\[
\tilde{f}\colon S\to[0,\infty],\quad
\tilde{f}(x)=\sup\left\{ f(x') : x'\ll x \right\},\quad
\txtFA x\in S.
\]
It is easy to see that $\tilde{f}$ is a functional on $S$.
In the literature, the map $\tilde{f}$ is sometimes called the regularization of $f$.
Since $f(b')\leq f(b)$ for every $b'\ll b$, it follows from the definition of $\tilde{f}$ that $\tilde{f}(b)\leq f(b)$.
We distinguish two cases:

In the first case, assume $\tilde{f}(b)=0$.
Since $a$ is in the ideal generated by $b$, it follows $\tilde{f}(a)=0$.
Using the definition of $\tilde{f}$ at the first step, we deduce
\[
f(a') \leq \tilde{f}(a)=0 < 1=f(b).
\]

In the second case, assume $\tilde{f}(b)>0$.
Using the definition of $\tilde{f}$ at the first and last step, and the assumption \enumStatement{3} in the middle step, we obtain
\[
f(a') \leq \tilde{f}(a) < \tilde{f}(b) \leq f(b).
\]
Thus, in either case, we have $f(a')<f(b)$.
Applying \autoref{prp:relSTFAE}, we obtain $a'<_sb$, as desired.

Finally, if $a$ is compact, it is clear that \enumStatement{4} implies \enumStatement{1}.
Moreover, as observed in \autoref{prp:almUnpCu}, if $S$ is almost unperforated then $a<_s^\ctsRel b$ implies $a\leq b$.
It remains to show that $\hat{a}\leq\hat{b}$ implies $a<_s^\ctsRel b$ if $a$ satisfies $a<_s^\ctsRel a$.
Let $x\in S$ satisfy $x\ll a$.
Choose $y$ such that $x\ll y\ll a$.
By assumption, this implies $y<_s a$.
It follows
\[
\hat{y} <_s\hat{a} \leq\hat{b}.
\]
Using that \enumStatement{2} implies \enumStatement{4}, we get $y<_s^\ctsRel b$.
Since $x\ll y$, we obtain $x<_s b$.
\end{proof}

The next result describes which information about the order structure of a $\CatCu$-semigroup is recorded by its functionals.
It has appeared in \cite[Proposition~2.2.6]{Rob13Cone}, under the additional assumption that the $\CatCu$-semigroup satisfies \axiomO{5}.
However, an inspection of the proof of \cite[Proposition~2.2.6]{Rob13Cone} shows that \axiomO{5} is not needed.

\begin{prp}
\label{prp:comparison_hat}
Let $S$ be a $\CatCu$-semigroup, and let $a,b\in S$.
Then the following are equivalent:
\beginEnumStatements
\item
We have $\hat{a}\leq\hat{b}$.
\item
For each $n\in\N$, we have $na<_s^\ctsRel(n+1)b$.
\item
For every $a'\in S$ satisfying $a'\ll a$ and every $\varepsilon>0$, there exist $k,n\in\N$ such that $(1-\varepsilon)<\tfrac{k}{n}$ and $ka'\leq nb$.
\end{enumerate}
If $S$ is almost unperforated, then these conditions are also equivalent to:
\beginEnumStatements
\setcounter{enumi}{3}
\item
For each $n\in\N$, we have $na\leq(n+1)b$.
\end{enumerate}
\end{prp}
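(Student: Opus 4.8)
The plan is to prove the cycle \implStatements{1}{2}, \implStatements{2}{1}, \implStatements{3}{1}, \implStatements{1}{3}, which establishes the equivalence of \enumStatement{1}--\enumStatement{3}, and then to treat \enumStatement{4} separately under the almost unperforation hypothesis. Throughout I would lean on \autoref{prp:comparisonRelations} and \autoref{prp:almUnpCu}, so that \axiomO{5} is never invoked, confirming the remark that Robert's hypothesis is superfluous.

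For \implStatements{1}{2}, fix $n$ and apply \autoref{prp:comparisonRelations} to the pair $(na,(n+1)b)$ by verifying its condition \enumStatement{3}. First, $na\varpropto^\ctsRel(n+1)b$: testing $\hat{a}\leq\hat{b}$ against the ideal-indicator functional $\lambda_I$ of the ideal generated by $b$ (as in the proof of \autoref{prp:comparisonRelations}) forces $a\leq\infty\cdot b$, hence $na\leq\infty\cdot(n+1)b$. Second, if $\lambda((n+1)b)=1$ then $\lambda(b)=\tfrac{1}{n+1}<\infty$, so $\lambda(na)=n\lambda(a)\leq n\lambda(b)=\tfrac{n}{n+1}<1=\lambda((n+1)b)$. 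Thus condition \enumStatement{3} of \autoref{prp:comparisonRelations} holds, and that proposition delivers its condition \enumStatement{4}, namely $na<_s^\ctsRel(n+1)b$.

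For \implStatements{2}{1}, I would use that \autoref{prp:comparisonRelations} turns $na<_s^\ctsRel(n+1)b$ into $\widehat{na}\leq\widehat{(n+1)b}$, that is, $n\lambda(a)\leq(n+1)\lambda(b)$ for every $\lambda\in F(S)$ and every $n$. Fixing $\lambda$ with $\lambda(b)<\infty$, the case $n=1$ forces $\lambda(a)<\infty$, and letting $n\to\infty$ gives $\lambda(a)\leq\lambda(b)$; when $\lambda(b)=\infty$ the inequality is trivial, so $\hat{a}\leq\hat{b}$. The implication \implStatements{3}{1} is parallel: for $a'\ll a$ and $\varepsilon>0$ choose $k,n$ as in \enumStatement{3}, so for any $\lambda$ with $\lambda(b)<\infty$ we get $\lambda(a')\leq\tfrac{n}{k}\lambda(b)<\tfrac{1}{1-\varepsilon}\lambda(b)$; taking the supremum over $a'\ll a$ (using that $\lambda$ preserves suprema of increasing sequences and \axiomO{2}) and then $\varepsilon\to0$ yields $\lambda(a)\leq\lambda(b)$.

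The one genuinely computational step is \implStatements{1}{3}, and it is where the bookkeeping of indices must be done carefully. Having established \enumStatement{2}, choose $m$ with $\tfrac{m}{m+1}>1-\varepsilon$. From $ma<_s^\ctsRel(m+1)b$ together with $ma'\ll ma$ (obtained from $a'\ll a$ by $m$-fold use of \axiomO{3}) I get $ma'<_s(m+1)b$, so by definition of $<_s$ there is $j$ with $(j+1)ma'\leq j(m+1)b$. Setting $k:=(j+1)m$ and $n:=j(m+1)$ gives $ka'\leq nb$ with $\tfrac{k}{n}=\tfrac{m}{m+1}\cdot\tfrac{j+1}{j}\geq\tfrac{m}{m+1}>1-\varepsilon$, as required. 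Finally, under almost unperforation, \implStatements{4}{1} follows from the same $n\to\infty$ argument as in \implStatements{2}{1} (applied to $na\leq(n+1)b$ in place of the functional inequality), whereas \implStatements{1}{4} combines \enumStatement{2} with \autoref{prp:almUnpCu}, which upgrades $na<_s^\ctsRel(n+1)b$ to $na\leq(n+1)b$.
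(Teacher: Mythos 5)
Your proof is correct and follows essentially the same route as the paper, which reduces everything to \autoref{prp:comparisonRelations} and elementary functional estimates and dismisses the remaining implications as ``straightforward''; your write-up simply supplies the details the paper omits (and enters \autoref{prp:comparisonRelations} through its condition \enumStatement{3} rather than \enumStatement{2}, which is immaterial since that proposition chains \enumStatement{2}$\Rightarrow$\enumStatement{3}$\Rightarrow$\enumStatement{4}). The index bookkeeping in \implStatements{1}{3} and the limiting arguments for \implStatements{2}{1}, \implStatements{3}{1} and \implStatements{4}{1} are all sound, and you correctly isolate almost unperforation as needed only for \implStatements{1}{4} via \autoref{prp:almUnpCu}.
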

\begin{proof}
Let $a,b\in S$.
First, in order to show that \enumStatement{1} implies \enumStatement{2}, assume $\hat{a}\leq\hat{b}$.
This clearly implies $n\hat{a}<_s(n+1)\hat{b}$ for each $n\in\N$.
Then \enumStatement{2} follows from \autoref{prp:comparisonRelations}.

It is straightforward to check that \enumStatement{2} implies \enumStatement{3}, and that \enumStatement{3} implies \enumStatement{1}.
Finally, \enumStatement{4} implies \enumStatement{1} in general.
Conversely, it is clear that \enumStatement{2} implies \enumStatement{4} if $S$ is almost unperforated.
\end{proof}

The equivalence of statements \enumStatement{1} and \enumStatement{2} in the next result follows immediately from \autoref{prp:relSTFAE} and was first obtained by R{\o}rdam, \cite[Proposition~3.2]{Ror04StableRealRankZ} (see also \cite[Proposition~3.1]{Ror92StructureUHF2}).
The equivalence with condition \enumStatement{3} follows easily from \autoref{prp:comparisonRelations} and was first shown in \cite[Proposition~6.2]{EllRobSan11Cone}.

\begin{prp}
\label{prp:almUnp}
Let $S$ be a \pom.
Then the following are equivalent:
\beginEnumStatements
\item
The semigroup $S$ is almost unperforated.
\item
For all $a,b\in S$ we have that $a\leq b$ whenever $a\varpropto b$ and $f(a)<f(b)$ for every extended state $f$ on $S$ that is normalized at $b$.
\end{enumerate}
If, moreover, $S$ is a $\CatCu$-semigroup, then these conditions are also equivalent to:
\beginEnumStatements
\setcounter{enumi}{2}
\item
For all $a,b\in S$ we have that $a\leq b$ whenever $a\varpropto^\ctsRel b$ and $\lambda(a)<\lambda(b)$ for every \emph{functional} $\lambda$ on $S$ that is normalized at $b$.
\end{enumerate}
\end{prp}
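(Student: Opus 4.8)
The plan is to deduce both equivalences directly from the two preceding results, \autoref{prp:relSTFAE} and \autoref{prp:comparisonRelations}, since all of the substantive work is already contained in them; what remains is to match conventions and chase implications.

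For \enumStatement{1} and \enumStatement{2}, the key is to read the equivalence of conditions \enumStatement{1} and \enumStatement{5} in \autoref{prp:relSTFAE} as a reformulation of the relation $<_s$: for any $a,b\in S$, one has $a<_sb$ if and only if $a\varpropto b$ and $f(a)<f(b)$ for every extended state $f$ on $S$ normalized at $b$. Substituting this description of $<_s$ into the definition of almost unperforation (that $a<_sb$ implies $a\leq b$ for all $a,b$) turns \enumStatement{1} verbatim into \enumStatement{2}. Thus the equivalence of \enumStatement{1} and \enumStatement{2} requires no further argument, and the degenerate cases (such as $b=0$, where no state is normalized at $b$) are already absorbed into \autoref{prp:relSTFAE}.

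For the equivalence with \enumStatement{3} when $S$ is a $\CatCu$-semigroup, I would first observe that the hypothesis appearing in \enumStatement{3} is precisely condition \enumStatement{3} of \autoref{prp:comparisonRelations}: both assert that $a\varpropto^\ctsRel b$ and that $\lambda(a)<\lambda(b)$ for every $\lambda\in F(S)$ with $\lambda(b)=1$. Call this property $(\star)$. To prove \implStatements{1}{3}, assume $S$ is almost unperforated and that a pair $a,b$ satisfies $(\star)$; the implication \implStatements{3}{4} of \autoref{prp:comparisonRelations} gives $a<_s^\ctsRel b$, and then \autoref{prp:almUnpCu} yields $a\leq b$, which is \enumStatement{3}. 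Conversely, to prove \implStatements{3}{1}, assume \enumStatement{3} and suppose $a<_sb$; the chain \implStatements{1}{2} and \implStatements{2}{3} of \autoref{prp:comparisonRelations} shows that $(\star)$ holds for $a,b$, so \enumStatement{3} forces $a\leq b$. Hence $a<_sb$ implies $a\leq b$, i.e.\ $S$ is almost unperforated.

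The main obstacle, such as it is, is not a difficulty of proof but one of alignment: one must verify that the normalization convention ($\lambda(b)=1$, respectively $f(b)=1$) and the proportionality hypotheses ($\varpropto$ in \enumStatement{2}, $\varpropto^\ctsRel$ in \enumStatement{3}) correspond exactly to condition \enumStatement{5} of \autoref{prp:relSTFAE} and condition \enumStatement{3} of \autoref{prp:comparisonRelations}, so that the cited implications apply unchanged. Once these correspondences are in place, the proof is a direct bookkeeping argument.
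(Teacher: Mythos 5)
Your proposal is correct and follows exactly the route the paper takes: the paper states (without a written proof) that the equivalence of (1) and (2) follows immediately from \autoref{prp:relSTFAE} and that the equivalence with (3) follows easily from \autoref{prp:comparisonRelations}, which is precisely the bookkeeping you carry out. Your use of \autoref{prp:almUnpCu} together with the implications \implStatements{3}{4} and \implStatements{1}{2}, \implStatements{2}{3} of \autoref{prp:comparisonRelations} fills in the details correctly.
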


For the next result, recall that a $2$-quasitrace on a \ca{} $A$ is a map
\[
\tau\colon (A\otimes\K)_+\to[0,\infty],
\]
such that $\tau(0)=0$, such that $\tau(xx^*)=\tau(x^*x)$ for all $x\in A\otimes\K$, and such that $\tau(x+y)=\tau(x)+\tau(y)$ for all $x,y\in (A\otimes\K)_+$ that commute.
The set of lower-semicontinuous $2$-quasitraces on $A$ is denoted by $\QT_2(A)$.
Its structure (for example as a lattice and noncancellative cone) has been thoroughly studied in \cite{EllRobSan11Cone}.

Given a $2$-quasitrace $\tau$ on $A$, consider the map
\[
d_\tau\colon (A\otimes\K)_+\to[0,\infty],\quad
d_\tau(x)=\lim_k\tau(x^{1/k}),\quad
\txtFA x\in(A\otimes\K)_+.
\]
If $x$ is Cuntz-subequivalent to $y$, then $d_\tau(x)\leq d_\tau(y)$.
It follows that a $2$-quasitrace $\tau$ on $A$ induces a map
\[
d_\tau\colon\Cu(A)\to[0,\infty],\quad
d_\tau(x)=\lim_k\tau(x^{1/k}),\quad
\txtFA x\in(A\otimes\K)_+,
\]
which is an extended state on $\Cu(A)$.
If $\tau$ is a lower-semicontinuous, then $d_\tau$ is a functional on $\Cu(A)$.

\begin{prp}[{Elliott, Robert, Santiago, \cite[Theorem~4.4]{EllRobSan11Cone}}]
\label{prp:FctlCa}
Let $A$ be a \ca{}.
Then the map
\[
\QT_2(A) \to F(\Cu(A)),\quad
\tau\mapsto d_\tau,\quad
\txtFA \tau\in\QT_2(A),
\]
is a bijection.
When $\QT_2(A)$ and $F(\Cu(A)$ are equipped with suitable natural topologies and order structures, then this map becomes a homeomorphic order isomorphism.
\end{prp}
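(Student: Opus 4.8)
The plan is to construct an explicit inverse to $\tau\mapsto d_\tau$ and then check compatibility with the order and topological structures. Well-definedness was already noted above: for a lower-semicontinuous $2$-quasitrace $\tau$, the map $d_\tau$ is a functional on $\Cu(A)$. What remains is injectivity, surjectivity, and the assertion that the bijection is a homeomorphic order isomorphism. The starting point for injectivity (and the template for surjectivity) is the integral recovery formula
\[
\tau(a) = \int_0^\infty d_\tau\big([(a-t)_+]\big)\, dt, \qquad \txtFA a\in (A\otimes\K)_+.
\]
To prove it I would fix $a$ and restrict to the commutative subalgebra $C^*(a)$, on which $\tau$ is integration against a Borel measure $\mu$ on the spectrum of $a$; then $d_\tau([(a-t)_+])=\mu((t,\infty))$ and the formula reduces to the layer-cake identity $\int_0^\infty \mu((t,\infty))\,dt=\int s\,d\mu(s)=\tau(a)$. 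Since $\tau$ is recovered from $d_\tau$ by this formula, injectivity is immediate.

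For surjectivity, the main obstacle, I would reverse the formula. Given $\lambda\in F(\Cu(A))$, set
\[
\tau_\lambda(a) := \int_0^\infty \lambda\big([(a-t)_+]\big)\, dt, \qquad \txtFA a\in (A\otimes\K)_+,
\]
and show $\tau_\lambda$ is a lower-semicontinuous $2$-quasitrace with $d_{\tau_\lambda}=\lambda$. The integrand is nonincreasing in $t$ with values in $[0,\infty]$, so the integral is well-defined, and $\tau_\lambda(0)=0$. Lower semicontinuity follows from monotone convergence together with continuity of $\lambda$ and lower semicontinuity of $a\mapsto[(a-t)_+]$. The quasitrace identity $\tau_\lambda(xx^*)=\tau_\lambda(x^*x)$ reduces level by level to the standard Cuntz equivalence $(xx^*-t)_+\sim(x^*x-t)_+$, which gives $\lambda([(xx^*-t)_+])=\lambda([(x^*x-t)_+])$ for every $t>0$.

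The genuinely delicate point is additivity of $\tau_\lambda$ on a commuting pair $x,y\in(A\otimes\K)_+$. Here I expect the real work to lie, and it is the step I would single out as the crux: one must show that $\tau_\lambda$ restricts to integration against a Radon measure on each commutative subalgebra $C^*(x,y)\cong C_0(\Omega)$. The natural route is to build a Borel measure $\mu_\lambda$ on $\Omega$ out of the dimension function $\lambda$ (assigning to open sets the values of $\lambda$ on classes of functions supported there, and checking these assemble consistently), and then verify $\tau_\lambda(f)=\int f\,d\mu_\lambda$; additivity on commuting positives then becomes linearity of the integral. Once $\tau_\lambda$ is confirmed to be a lower-semicontinuous $2$-quasitrace, the identity $d_{\tau_\lambda}=\lambda$ is checked by the direct computation $d_{\tau_\lambda}([a])=\lim_k\tau_\lambda(a^{1/k})=\lambda([a])$, where the last equality uses that, after normalizing $\|a\|\le 1$, the classes $[(a^{1/k}-t)_+]$ increase to $[a]$ for $t<1$ and $\lambda$ preserves suprema of increasing sequences.

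Finally I would identify the order and topological structures. Order preservation holds in both directions: $\tau\le\tau'$ forces $d_\tau\le d_{\tau'}$ because $d_\tau([a])=\lim_k\tau(a^{1/k})$ is monotone in $\tau$, while $\lambda\le\lambda'$ forces $\tau_\lambda\le\tau_{\lambda'}$ by monotonicity of the integral; combined with the bijection this yields an order isomorphism of cones. For the topologies I would use the description of the topology on $F(\Cu(A))$ from \cite[Theorem~4.8]{EllRobSan11Cone} and the natural (pointwise, lower-semicontinuous) topology on $\QT_2(A)$. Continuity of $\tau\mapsto d_\tau$ is clear from its pointwise formula, and continuity of the inverse $\lambda\mapsto\tau_\lambda$ follows from the integral formula together with a monotone-convergence argument, giving the asserted homeomorphism.
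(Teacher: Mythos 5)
The paper does not prove this statement at all: it is quoted verbatim from \cite[Theorem~4.4]{EllRobSan11Cone}, so there is no internal proof to compare against. Your proposal reconstructs essentially the argument of that reference (which in turn leans on the Blackadar--Handelman correspondence between lower-semicontinuous dimension functions and $2$-quasitraces): recover $\tau$ from $d_\tau$ by the layer-cake formula $\tau(a)=\int_0^\infty d_\tau([(a-t)_+])\,dt$ for injectivity, and invert the formula to define $\tau_\lambda$ for surjectivity. That is the right route, and your identification of the crux --- proving additivity of $\tau_\lambda$ on commuting positive elements by exhibiting its restriction to $C^*(x,y)\cong C_0(\Omega)$ as integration against a measure built from $\lambda$ --- is accurate. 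But that crux is exactly where all the work lives, and your proposal leaves it as a one-sentence plan. To close it you must check that $U\mapsto\lambda([f_U])$ is well defined (two positive functions with the same open support are Cuntz equivalent in a commutative \ca{}), finitely additive on disjoint opens (from additivity of $\lambda$), countably additive on increasing unions (from preservation of suprema of increasing sequences), and inner regular enough to extend to a Radon measure reproducing $\tau_\lambda$; none of this is automatic, and it is the content of the cited theorem. The remaining steps (injectivity, $d_{\tau_\lambda}=\lambda$ via $\sup_k[(a^{1/k}-t)_+]=[a]$, order preservation in both directions) are correct as written. So: no gap in the sense of a wrong idea, but the proposal as it stands is a roadmap for the hard step rather than a proof of it, and for the topological statement you are in any case deferring back to \cite[Theorem~4.8]{EllRobSan11Cone}, which is consistent with how the paper itself treats the result.
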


\begin{cor}
\label{prp:UniqueFctlFromCa}
Let $A$ be a simple, unital \ca{} with a unique $2$-quasi\-trace $\tau$ that satisfies $\tau(1_A)=1$.
Then the Cuntz semigroup $\Cu(A)$ has a unique functional $\lambda$ that satisfies $\lambda([1_A])=1$.
\end{cor}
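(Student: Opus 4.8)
The plan is to read the statement off the Elliott--Robert--Santiago correspondence recorded in \autoref{prp:FctlCa}, which asserts that $\tau\mapsto d_\tau$ is a bijection (indeed an order isomorphism) from the lower-semicontinuous $2$-quasitraces $\QT_2(A)$ onto the functionals $F(\Cu(A))$. The first step is to check that this correspondence respects normalization at the unit. Viewing $1_A$ as a projection in the upper-left corner of $A\otimes\K$, we have $1_A^{1/k}=1_A$ for every $k$, and therefore
\[
d_\tau([1_A])=\lim_k\tau(1_A^{1/k})=\tau(1_A)
\]
for every $\tau\in\QT_2(A)$. Consequently a functional $\lambda=d_\tau$ satisfies $\lambda([1_A])=1$ if and only if $\tau(1_A)=1$.

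Granting that the unique normalized $2$-quasitrace lies in $\QT_2(A)$ (see below), the proof then concludes as follows. The hypothesis says that the set of $2$-quasitraces $\tau$ with $\tau(1_A)=1$ is a single point; by the previous step this set is exactly $\{\tau\in\QT_2(A):d_\tau([1_A])=1\}$, so it is a singleton in $\QT_2(A)$. Transporting through the bijection of \autoref{prp:FctlCa} yields that
\[
\{\lambda\in F(\Cu(A)):\lambda([1_A])=1\}
\]
is a singleton, which is precisely the assertion: existence of $\lambda$ comes from the existence half of the hypothesis, and its uniqueness from injectivity of $\tau\mapsto d_\tau$.

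The one point deserving care, and the step I expect to be the main obstacle, is the identification used above: that the unique $2$-quasitrace normalized at $1_A$ is automatically lower-semicontinuous, hence an element of $\QT_2(A)$. This is where unitality (and the normalization) enters. Restricting such a $\tau$ to $A_+$ and using monotonicity together with $a\le\|a\|\,1_A$ shows $\tau(a)\le\|a\|$ on $A_+$, so $\tau|_A$ is a bounded $2$-quasitrace on the unital algebra $A$; its canonical lower-semicontinuous extension to $(A\otimes\K)_+$ is again a $2$-quasitrace normalized at $1_A$, and by the uniqueness hypothesis it must coincide with $\tau$. Hence $\tau$ is lower-semicontinuous. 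Simplicity of $A$ plays only an auxiliary role here, guaranteeing that the resulting functional is faithful and finite on the (full) ideal generated by $[1_A]$, so that the singleton we produce consists of a genuine, nontrivial functional rather than a degenerate one; the essential content of the corollary is carried entirely by \autoref{prp:FctlCa}.
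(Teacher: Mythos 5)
Your proof is correct and follows essentially the same route as the paper, which states the corollary as an immediate consequence of the Elliott--Robert--Santiago bijection in \autoref{prp:FctlCa} together with the observation that $d_\tau([1_A])=\tau(1_A)$ for the projection $1_A$. Your additional paragraph checking that the unique normalized $2$-quasitrace is automatically lower-semicontinuous (hence lies in $\QT_2(A)$) is a sensible piece of extra care that the paper leaves implicit, but it does not change the argument.
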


\vspace{5pt}
\section{Soft and purely noncompact elements}
\label{sec:soft}

In this section, we first introduce the notion of `softness' for elements in a $\CatCu$-semigroup;
see \autoref{dfn:softElement}.
This concept is closely related to that of `pure noncompactness', which was introduced in the Definition before~6.4 in \cite{EllRobSan11Cone}.
In fact, we will slightly generalize their definition to that of `weak pure noncompactness', which for elements in a $\CatCu$-semigroup $S$ satisfying \axiomO{5} is equivalent to softnes;
see \autoref{prp:soft_wpnc}.
In \autoref{prp:softTFAE}, we will show that under the additional assumption that $S$ is almost unperforated or residually stably finite, an element $a\in S$ is soft if and only if it is purely noncompact.

The set of soft elements in a $\CatCu$-semigroup $S$ forms a submonoid that is closed under suprema of increasing sequences and that is absorbing in a suitable sense;
see \autoref{prp:softAbsorbing}.
The main result of this section is \autoref{prp:soft_comparison}, where we show that the order among soft elements in an almost unperforated $\CatCu$-semigroup $S$ is determined solely by the functionals of $S$.
This generalizes \cite[Theorem~6.6]{EllRobSan11Cone}, where the analogous result is shown for the comparison of purely noncompact elements in the Cuntz semigroup of a \ca.
We point out that we obtain our result without using \axiomO{5}, by considering soft elements
instead of (weakly) purely noncompact elements;
see \autoref{rmk:soft_comparison}.

\vspace{5pt}

Let $M$ be a \pom.
An \emph{interval}\index{terms}{interval} in $M$ is a subset $I\subset M$ that is upward directed and order-hereditary.
An interval $I$ is \emph{soft}\index{terms}{interval!soft}\index{terms}{soft interval} if for every $x\in I$ there exist $y\in I$ and $n\in\N$ such that $(n+1)x\leq ny$.
This notion was introduced by Goodearl and Handelman;
see the Definition before Lemma~7.4 in \cite{GooHan82Stenosis}.
It was also studied in \cite{Goo96KMultiplier} and \cite{Per01IdealsMultiplier}.

Using the relation $<_s$ from \autoref{dfn:relS}, an interval $I$ is soft if and only if for every $x\in I$ there exists $y\in I$ such that $x<_s y$.

Next, we introduce the notion of `softness' for elements in $\CatCu$-semigroups.
In \autoref{prp:softElementTFAE}, we will show that it is equivalent to softness of the interval of compactly contained elements.

\begin{dfn}
\label{dfn:softElement}
\index{terms}{element!soft}
\index{terms}{soft}
\index{symbols}{S$_\soft$@$S_\soft$ \quad (soft elements)}
Let $S$ be a $\CatCu$-semigroup.
An element $a\in S$ is \emph{soft} if for every $a'\in S$ satisfying $a'\ll a$ there exists $n\in\N$ such that $(n+1)a'\leq na$.

We denote by $S_\soft$ the subset of soft elements in $S$.
\end{dfn}

\begin{rmk}
\label{rmk:softElement}
Let $S$ be a $\CatCu$-semigroup, and let $a\in S$.

(1)
Consider the set of compactly-contained elements $a^{\ll}:=\{ x\in S : x\ll a \}$.
We have that $a$ is soft if and only if for every $x\in a^{\ll}$ we have $x<_s a$.
Thus, an element is soft if and only if it stably dominates every compactly-contained element.

(2)
Recall that $<_s^\ctsRel$ denotes the regularization of the stable domination relation $<_s$;
see Definitions~\ref{dfn:relS} and~\ref{dfn:regularRel}.
Then $a$ is soft if and only if $a <_s^\ctsRel a$.
In \autoref{prp:comparisonRelations}, we have seen that for soft elements there is a close connection between the order comparison in the $\CatCu$-semigroup and the comparison by functionals.
In the case that the $\CatCu$-semigroup is almost unperforated, we even have that the functionals record the complete information about comparison between soft elements;
see \autoref{prp:soft_comparison}.
\end{rmk}

\begin{prp}
\label{prp:softElementTFAE}
Let $S$ be a $\CatCu$-semigroup, and let $a\in S$.
Then the following are equivalent:
\beginEnumStatements
\item
The element $a$ is soft.
\item
The set of compactly-contained elements, $a^{\ll}$, is a soft interval.
\item
For every $b\in S$ satisfying $b\ll a$, we have $\hat{b}<_s\hat{a}$.
\end{enumerate}
\end{prp}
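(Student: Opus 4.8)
The plan is to establish the two equivalences \implStatements{1}{2} and \implStatements{1}{3} separately, using throughout the reformulation recorded in \autoref{rmk:softElement}(2) that $a$ is soft exactly when $a'<_s a$ holds for every $a'\ll a$. I would open with the routine observation that $a^{\ll}$ is always an interval: it is order-hereditary because $x\leq y\ll a$ forces $x\ll a$ by condition (ii) of an auxiliary relation, and it is upward directed because \axiomO{2} supplies a rapidly increasing sequence $(a_n)_n$ with $a=\sup_n a_n$, so that any two elements of $a^{\ll}$ lie below a common $a_n\in a^{\ll}$.

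For \implStatements{1}{2}, given $x\ll a$ I would first interpolate $x\ll x_1\ll a$ using the rapidly increasing sequence. Softness applied to $x_1$ yields $k$ with $(k+1)x_1\leq ka$; combining $(k+1)x\ll(k+1)x_1$ (from \axiomO{3}) with $(k+1)x_1\leq ka$ gives $(k+1)x\ll ka$ by condition (ii) of an auxiliary relation. Since $ka=\sup_n ka_n$ (by iterating \axiomO{4}) and the sequence $(ka_n)_n$ is increasing, there is an $n$ with $(k+1)x\leq ka_n$; then $y:=a_n\in a^{\ll}$ satisfies $x<_s y$, so $a^{\ll}$ is a soft interval. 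The converse \implStatements{2}{1} is immediate: for $a'\ll a$ the softness of the interval $a^{\ll}$ gives $y\in a^{\ll}$ with $a'<_s y$, and since $y\leq a$ this yields $(k+1)a'\leq ky\leq ka$, i.e. $a'<_s a$.

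For \implStatements{1}{3} I would simply invoke \autoref{prp:comparisonRelations}: if $a$ is soft and $b\ll a$ then $b<_s a$, and the implication \implStatements{1}{2} of that theorem (applied to the pair $b,a$) turns $b<_s a$ into $\hat{b}<_s\hat{a}$. For the converse \implStatements{3}{1}, given $a'\ll a$ I would interpolate $a'\ll b\ll a$; the hypothesis gives $\hat{b}<_s\hat{a}$, and the implication \implStatements{2}{4} of \autoref{prp:comparisonRelations} upgrades this to $b<_s^{\ctsRel}a$, that is, $b'<_s a$ for every $b'\ll b$. Taking $b'=a'$ (legitimate since $a'\ll b$) yields $a'<_s a$, so $a$ is soft.

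The only genuinely technical point is the forward direction \implStatements{1}{2}, where one must replace $a$ on the right-hand side of a stable-domination inequality by a compactly-contained element \emph{without enlarging the coefficient} $k$; the interpolation step $x\ll x_1\ll a$ together with the identity $ka=\sup_n ka_n$ is precisely what makes this possible, since it is $(k+1)x\ll ka$ (rather than merely $(k+1)x\leq ka$) that lets one descend to a single $ka_n$. Everything else is either routine or a direct citation of \autoref{prp:comparisonRelations}, so I expect no further obstacle.
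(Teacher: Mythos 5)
Your proof is correct and follows essentially the same route as the paper's: the key step in \implStatements{1}{2} is the identical interpolation $x\ll\tilde{x}\ll a$ followed by applying softness to $\tilde{x}$ and descending from $(k+1)x\ll ka$ to a compactly contained majorant, and the functional statements are handled by the same citations of \autoref{prp:comparisonRelations}. The only difference is organizational (two separate biconditionals instead of the cycle \implStatements{1}{2}, \implStatements{2}{3}, \implStatements{3}{1}) and that you spell out the ``it follows'' step via $ka=\sup_n ka_n$, which is exactly what the paper leaves implicit.
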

\begin{proof}
To see that \enumStatement{1} implies \enumStatement{2}, assume $a$ is soft, and let $x\in a^{\ll}$.
Choose $\tilde{x}\in S$ satisfying $x\ll \tilde{x}\ll a$.
Since $a$ is soft, we can find $n\in\N$ such that $(n+1)\tilde{x}\leq na$.
Then
\[
(n+1)x \ll (n+1)\tilde{x}\leq na.
\]
It follows that we can find $y\in S$ with $y\ll a$ and $(n+1)x\leq ny$.
Thus, $x<_s y$ and $y\in a^{\ll}$, which shows that $a^{\ll}$ is a soft interval.

It is easy to see that \enumStatement{2} implies \enumStatement{3}.
In order to show that \enumStatement{3} implies \enumStatement{1}, let $a'\in S$ satisfy $a'\ll a$.
Choose $b\in S$ with $a'\ll b\ll a$.
By assumption, we have $\hat{b}<_s\hat{a}$.
By \autoref{prp:comparisonRelations}, this implies $b<_s^\ctsRel a$.
Since $a'\ll b$, we get $a'<_sa$, as desired.
\end{proof}

For the next definition, recall from \autoref{pgr:ideals} that given an ideal $I$ in a $\CatCu$-semigroup $S$, we denote the image of an element $a\in S$ in the quotient $S/I$ by $a_I$.
To notion of `pure noncompactness' was introduced in the Definition before~6.4 in \cite{EllRobSan11Cone}.
We will recall their definition and also generalize it to the concept of `weak pure noncompactness', which is more closely connected to softness;
see \autoref{prp:soft_wpnc}.

\begin{dfn}
\label{dfn:pnc}
\index{terms}{purely noncompact}
\index{terms}{element!purely noncompact}
\index{terms}{weakly purely noncompact}
\index{terms}{element!weakly purely noncompact}
Let $S$ be a $\CatCu$-semigroup.
An element $a\in S$ is \emph{purely noncompact} if for every ideal $I$ in $S$, we have that $a_I\ll a_I$ implies $2a_I=a_I$.

An element $a\in S$ is \emph{weakly purely noncompact} if for every ideal $I$ in $S$, we have that $a_I\ll a_I$ implies $(k+1)a_I=ka_I$ for some $k\in\N$.
\end{dfn}

Thus, if $a$ is a (weakly) purely noncompact element, and if $I$ is an ideal such that $a_I$ is compact, then either $a_I=0$ or (a multiple of) $a_I$ is properly infinite.

The following result clarifies the connection between softness and weak pure noncompactness.
In the context of Cuntz semigroups of \ca{s}, the following result has partially been obtained in \cite[Proposition~6.4]{EllRobSan11Cone}.

\begin{prp}
\label{prp:soft_wpnc}
Let $S$ be a $\CatCu$-semigroup, and let $a\in S$.
Consider the following statements:
\beginEnumStatements
\item
The element $a$ is soft.
\item
The element $a$ is weakly purely noncompact.
\item
For every $a',x\in S$ satisfying $a'\ll a\leq a'+x$, there is $k\in\N$ such that $(k+1)a\leq ka'+\infty\cdot x$.
\item
For every $a',x\in S$ satisfying $a'\ll a\leq a'+x$, we have $a'<_sa'+x$.
\end{enumerate}
Then the following implications hold:
`\enumStatement{1}$\Rightarrow$\enumStatement{2}$\Rightarrow$\enumStatement{3}$\Rightarrow$\enumStatement{4}'.

If $S$ satisfies \axiomO{5}, then \enumStatement{4} implies \enumStatement{1}, and so all $4$ statements are equivalent in that case.
Moreover, if $a$ is purely noncompact, then \enumStatement{3} holds for $k=1$, and the converse holds if $S$ satisfies \axiomO{5}.
\end{prp}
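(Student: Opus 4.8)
The plan is to prove the chain of implications \enumStatement{1}$\Rightarrow$\enumStatement{2}$\Rightarrow$\enumStatement{3}$\Rightarrow$\enumStatement{4}, then close the loop under \axiomO{5} via \enumStatement{4}$\Rightarrow$\enumStatement{1}, and finally treat the pure-noncompactness addendum. For \enumStatement{1}$\Rightarrow$\enumStatement{2}, I would pass to a quotient $S/I$ and use that the quotient map $\pi_I$ is a $\CatCu$-morphism (\autoref{prp:quotientCu}), so it preserves $\ll$ and suprema. If $a_I\ll a_I$, I would lift this to find $a'\in S$ with $a'\ll a$ and $\pi_I(a')=a_I$; softness of $a$ gives $(n+1)a'\le na$ for some $n$, and applying $\pi_I$ yields $(n+1)a_I\le na_I$, whence $(n+1)a_I=na_I$ since $a_I\ll a_I$ forces the reverse inequality. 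This establishes weak pure noncompactness.

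For \enumStatement{2}$\Rightarrow$\enumStatement{3}, suppose $a'\ll a\le a'+x$. The natural candidate ideal is $I=\Idl(x)$, the ideal generated by $x$ (see \autoref{pgr:Lat}). In the quotient $S/I$ we have $x_I=0$, so $a_I\le a'_I$; combined with $a'\ll a$ this should give $a_I\ll a_I$, and weak pure noncompactness yields $(k+1)a_I=ka_I$ for some $k$. Translating back through the definition of $\sim_I$ (recall $a\le_I b$ means $a\le b+c$ for some $c\in I$, and elements of $I$ are dominated by multiples of $x$, hence by $\infty\cdot x$), the equality in the quotient unwinds to $(k+1)a\le ka'+\infty\cdot x$, which is precisely \enumStatement{3}. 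For \enumStatement{3}$\Rightarrow$\enumStatement{4} I would apply \autoref{prp:comparisonRelations}: the inequality $(k+1)a\le ka'+\infty\cdot x$ says $a<_s(a'+x)$ after absorbing $\infty\cdot x$ into $a'+x$ (since $\infty\cdot x\le\infty\cdot(a'+x)$ and one checks the stable-domination inequality persists), from which $a'<_s a'+x$ follows because $a'\le a$.

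The main obstacle will be the implication \enumStatement{4}$\Rightarrow$\enumStatement{1} under \axiomO{5}, since this is where the almost-algebraic-order axiom must do real work. Given $a'\ll a$, I would first choose an intermediate element $a''$ with $a'\ll a''\ll a$, and produce a decomposition $a\le a''+x$ for a suitable complement $x$ by exploiting \axiomO{5} applied to $a''\ll a\le a$ (or to a sum dominated by $a$), extracting $x$ with $a''+x'$-type control. The hypothesis \enumStatement{4} then gives $a''<_s a''+x$, and the difficulty is converting the stable domination of $a''+x$ into stable domination of $a$ itself, i.e.\ deducing $(n+1)a'\le na$. This requires arranging the complement $x$ so that $a''+x\le a$ (or is comparable to $a$ up to the slack allowed by $a'\ll a''$), which is exactly the delicate ``choice of complement'' that \axiomO{5} provides in its strengthened form. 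I expect to feed the stable-domination inequality back through \autoref{prp:comparisonRelations} and \autoref{prp:softElementTFAE}, checking that the regularized relation $a<_s^\ctsRel a$ holds.

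Finally, for the pure-noncompactness addendum, if $a$ is purely noncompact then the quotient argument of \enumStatement{1}$\Rightarrow$\enumStatement{2} yields $2a_I=a_I$ rather than a general multiple, so repeating the \enumStatement{2}$\Rightarrow$\enumStatement{3} computation with $k=1$ gives $2a\le a'+\infty\cdot x$ directly. The converse, that \enumStatement{3} with $k=1$ implies pure noncompactness under \axiomO{5}, would run by taking any ideal $I$ with $a_I\ll a_I$, realizing this compactness as a relation $a\le a'+x$ with $x\in I$ and $a'\ll a$ via the density of compactly-contained elements, applying \enumStatement{3} for $k=1$ to get $2a\le a'+\infty\cdot x$, and reducing modulo $I$ to obtain $2a_I\le a_I$, hence $2a_I=a_I$. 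Throughout I would lean on the fact that softness is the regularized relation $a<_s^\ctsRel a$ (\autoref{rmk:softElement}(2)) to keep the bookkeeping between $\ll$, $<_s$, and $\le$ transparent.
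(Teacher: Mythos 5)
Your steps \enumStatement{1}$\Rightarrow$\enumStatement{2}, \enumStatement{2}$\Rightarrow$\enumStatement{3} and the pure-noncompactness addendum match the paper's argument and are fine. There are, however, two genuine gaps. First, in \enumStatement{3}$\Rightarrow$\enumStatement{4} the move of ``absorbing $\infty\cdot x$ into $a'+x$'' does not work: the relation $a<_s(a'+x)$ requires a \emph{finite} multiple on the right, namely $(k+1)a\leq k(a'+x)$, whereas the inequality $ka'+\infty\cdot x\leq\infty\cdot(a'+x)$ only yields $a\varpropto^\ctsRel(a'+x)$, which is much weaker; the element $\infty\cdot x$ is not dominated by any finite multiple of $a'+x$ in general, so the stable-domination inequality does not ``persist.'' The missing ingredient is precisely the hypothesis $a'\ll a$, which you never use in this step: from $(k+1)a'\ll(k+1)a\leq ka'+\infty\cdot x=\sup_n(ka'+nx)$ one extracts a finite $n$ with $(k+1)a'\leq ka'+nx$, then sets $m:=\max\{k,n\}$ and adds $(m-k)a'$ to both sides to obtain $(m+1)a'\leq m(a'+x)$, which is the desired $a'<_s a'+x$. (Note also that the intermediate claim $a<_s a'+x$ you aim for is stronger than what is needed or proved.)

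Second, in \enumStatement{4}$\Rightarrow$\enumStatement{1} you correctly set up $a'\ll a''\ll a$, invoke \axiomO{5} to produce a complement, and obtain $a''<_s a''+x$, but you explicitly leave open ``the difficulty'' of converting this into $(n+1)a'\leq na$ --- and that difficulty is exactly where the proof lives. The paper does not attempt this conversion inside the ordered semigroup; it passes to functionals. From \axiomO{5} one gets $b+x\leq a\leq c+x$ with $b\ll c\ll a$ (note the asymmetry: the lower bound involves $b$, not $c$, contrary to your ``arranging $x$ so that $a''+x\leq a$''). Applying any $\lambda\in F(S)$ to $(k+1)c\leq kc+kx$ and cancelling $\lambda(c)$ when it is finite gives $\lambda(c)\leq k\lambda(x)$, whence $(k+1)\lambda(b)\leq k\lambda(b)+\lambda(c)\leq k\lambda(b)+k\lambda(x)\leq k\lambda(a)$, i.e.\ $\hat{b}<_s\hat{a}$ for every $b\ll a$; softness then follows from the functional characterization in \autoref{prp:softElementTFAE}. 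Without this detour through $F(S)$ (or an equivalent device) the implication is not established.
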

\begin{proof}
In order to show that \enumStatement{1} implies \enumStatement{2}, let $I$ be an ideal of $S$ and assume $a_I\ll a_I$.
We have to show that a multiple of $a_I$ is properly infinite.
Choose a rapidly increasing sequence $(a_n)_n$ in $S$ such that $a=\sup_n a_n$.
Then, in the quotient $S/I$, we have $a_I=\sup_n (a_n)_I$.

Using that $a_I$ is compact, we can choose $n\in\N$ satisfying $a_I\leq (a_n)_I$.
Since $a_n\ll a$ and since $a$ is soft by assumption, we obtain $a_n<_s a$.
Choose $k\in\N$ such that $(k+1)a_n\leq ka$.
It follows $(k+1)a_I=ka_I$, as desired.

Next, to show that \enumStatement{2} implies \enumStatement{3}, assume that $a$ is weakly purely noncompact, and let $a',x\in S$ satisfy $a'\ll a\leq a'+x$.
Let $I:=\left\{ b\in S :  b\leq \infty\cdot x\right\}$ be the ideal of $S$ generated by $x$.
Then, in the quotient $S/I$, we have $a_I\leq a'_I\ll a_I$, whence by assumption we can find $k\in\N$ with $ka_I=(k+1)a_I=ka'_I$.
This implies $(k+1)a\leq ka'+\infty\cdot x$, as desired.

To show that \enumStatement{3} implies \enumStatement{4}, let $a',x\in S$ satisfy $a'\ll a\leq a'+x$.
By assumption, we can find $k\in\N$ such that $(k+1)a\leq ka'+\infty\cdot x$.
Then
\[
(k+1)a'\ll (k+1)a\leq ka'+\infty\cdot x = \sup_{n\in\N} (ka'+nx).
\]
It follows that we can choose $n\in\N$ with $(k+1)a'\leq ka'+nx$.
Let $m\in\N$ be the maximum of $k$ and $n$.
We get
\[
(k+1)a'\leq ka'+ mx.
\]
Adding $(m-k)a'$ on both sides, we obtain
\[
(m+1)a'\leq ma'+ mx = m(a'+x),
\]
and hence $a'<_sa'+x$, as desired.

Finally, let us show that \enumStatement{4} implies \enumStatement{1} under the assumption that $S$ satisfies \axiomO{5}.
By \autoref{prp:softElementTFAE}, it is enough to show $\hat{b}<_s\hat{a}$ for every $b\in S$ satisfying $b\ll a$.
Let such $b$ be given.
Choose $c\in S$ with $b\ll c\ll a$.
Since $S$ satisfies \axiomO{5}, we can find $x\in S$ such that
\[
b+x \leq a \leq c+x.
\]
By assumption, we get $c<_sc+x$.
This means that we can find $k\in\N$ satisfying
\begin{align}
\label{prp:soft_wpnc:eq1}
(k+1)c\leq kc+kx.
\end{align}
In order to show $(k+1)\hat{b}\leq k\hat{a}$, let $\lambda\in F(S)$.
If $\lambda(a)=\infty$, then there is nothing to show.
Thus, we may assume $\lambda(a)<\infty$.
Since $c\leq a$, it follows $\lambda(c)<\infty$.
Applying $\lambda$ to the inequality \eqref{prp:soft_wpnc:eq1}, we obtain
\[
(k+1)\lambda(c)\leq k\lambda(c)+k\lambda(x).
\]
Since $\lambda(c)<\infty$, we may cancel $k$ summands of $\lambda(c)$ on both sides to get
\[
\lambda(c)\leq k\lambda(x).
\]
Then, using $b\leq c$ at the first step, and using $b+x\leq a$ at the last step, we deduce
\[
(k+1)\lambda(b)
\leq k\lambda(b)+\lambda(c)
\leq k\lambda(b)+k\lambda(x)
\leq k\lambda(a).
\]
This shows $\hat{b}<_s\hat{a}$, as desired.

The implications concerning a purely noncompact element are obtained analogously.
\end{proof}

\begin{pgr}
\label{pgr:RQQ}
\index{terms}{property@property!$(RQQ)$}
\index{terms}{property@property!$(QQ)$}
\index{symbols}{S$_\wpnc$@$S_\wpnc$ \quad (weakly purely noncompact elements)}
\index{symbols}{S$_\pnc$@$S_\pnc$ \quad (purely noncompact elements)}
Let $S$ be a $\CatCu$-semigroup.
Let us denote the subsets of (weakly) purely noncompact elements in $S$ by $S_\wpnc$ and $S_\pnc$.
We clearly have $S_\pnc\subset S_\wpnc$, but the converse might fail.
Indeed, if $S$ is the elementary semigroup $\elmtrySgp{k}=\left\{ 0,1,2,\ldots,k,\infty \right\}$ as considered in \autoref{pgr:elementarySemigr}, then $S_\wpnc=S$ and $S_\pnc=\left\{ 0,\infty \right\}$.

Let us say that $S$ satisfies condition $(RQQ)$ if in every quotient of $S$, an element is properly infinite whenever a multiple of it is properly infinite.
This property is a residual version (meaning to hold in all quotients) of property $(QQ)$ as introduced in \cite[Remark~2.15]{OrtPerRor12CoronaStability}, where it is also shown that $(QQ)$ is connected to the (strong) Corona factorization property.

It is easy to see that a $\CatCu$-semigroup $S$ with $(RQQ)$ satisfies $S_\pnc=S_\wpnc$.
\end{pgr}

\begin{lma}
\label{prp:RQQ}
Let $S$ be an almost unperforated $\CatCu$-semigroup.
Then $S$ satisfies $(RQQ)$.
\end{lma}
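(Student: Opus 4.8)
The plan is to reduce $(RQQ)$ to two separate facts and combine them. Write $S$ for our almost unperforated $\CatCu$-semigroup. By definition, to verify $(RQQ)$ we must show that for every ideal $I$ of $S$ and every $\bar a\in S/I$, if some multiple $n\bar a$ is properly infinite (necessarily with $n\geq 1$, since properly infinite elements are nonzero), then $\bar a$ itself is properly infinite. I would establish this from: (a) almost unperforation passes to quotients, so that each $S/I$ is again almost unperforated; and (b) in \emph{any} almost unperforated $\CatCu$-semigroup $T$, an element $a$ whose multiple $na$ (with $n\geq 1$) is properly infinite is itself properly infinite. Granting (a) and (b), the result is immediate: apply (b) to $T=S/I$.

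For step (a), let $I$ be an ideal and suppose $\bar a<_s\bar b$ in $S/I$, say $(k+1)\bar a\leq k\bar b$; the case $k=0$ forces $\bar a=0\leq\bar b$, so assume $k\geq 1$. Lifting through the quotient via the description of $\leq_I$ from \autoref{pgr:ideals}, this means $(k+1)a\leq kb+c$ in $S$ for some $c\in I$. The key observation is that $\infty\cdot c=\sup_n nc$ lies in $I$ (as $I$ is a submonoid closed under suprema of increasing sequences) and is idempotent under addition, so that $k\cdot(\infty\cdot c)=\infty\cdot c$ by \axiomO{4}. Hence $(k+1)a\leq kb+c\leq kb+\infty\cdot c=k(b+\infty\cdot c)$, i.e. $a<_s b+\infty\cdot c$ in $S$. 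Almost unperforation of $S$ then gives $a\leq b+\infty\cdot c$, and since $\infty\cdot c\in I$ this is exactly $\bar a\leq\bar b$. Thus $S/I$ is almost unperforated.

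For step (b), suppose $na=2na$ with $na\neq 0$ and $n\geq 1$. An easy induction gives $jna=na$ for all $j\geq 1$, and hence $ma\leq na$ for every $m\geq 1$ (choose $j$ with $jn\geq m$ and use positivity). Taking $m=2n+2$ yields $(n+1)\cdot 2a=(2n+2)a\leq na$, which is precisely $2a<_s a$ with witness $k=n$. Almost unperforation of $T$ then forces $2a\leq a$; combined with the automatic inequality $a\leq 2a$ this gives $2a=a$, and $a\neq 0$ since $na\neq 0$. Therefore $a$ is properly infinite.

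The only genuinely delicate point is step (a): one must absorb the ideal ``error term'' $c$ coming from the quotient before almost unperforation can be applied, and the mechanism that makes this work is the idempotency of $\infty\cdot c$ together with its membership in $I$. Everything else is routine. Note that this argument uses neither \axiomO{5} nor weak cancellation, so in effect it adds almost unperforation to the list of axioms (cf.\ \autoref{prp:axiomsQuotient}) that descend to quotients.
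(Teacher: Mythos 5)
Your proof is correct and follows essentially the same route as the paper's: reduce to the fact that almost unperforation passes to quotients, then derive $2a_I<_s a_I$ from proper infiniteness of a multiple and apply almost unperforation in $S/I$. The only difference is that you spell out the quotient step (absorbing the ideal error term $c$ via the idempotent $\infty\cdot c\in I$), which the paper dismisses as ``straightforward to check''.
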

\begin{proof}
Let $I$ be an ideal of $S$, and let $a\in S$ such that a multiple of $a_I$ is properly infinite.
Choose $k\in\N$ with $(k+1)a_I=ka_I$.
We need to show $2a_I=a_I$.

Since $S$ is almost unperforated, it is straightforward to check that the quotient $S/I$ is also almost unperforated.
It follows from $(k+1)a_I=ka_I$ that $(k+n)a_I=ka_I$ for every $n\in\N$.
In particular, we have $(k+1)2a_I=ka_I$.
By almost unperforation, it follows $2a_I\leq a_I$.
The converse inequality always holds, which shows $2a_I=a_I$, as desired.
\end{proof}

In the next results, we will say that a $\CatCu$-semigroup $S$ is \emph{residually stably finite} \index{terms}{Cu-semigroup@$\CatCu$-semigroup!residually stably finite} if for every ideal $I$ in $S$, the quotient $\CatCu$-semigroup $S/I$ is stably finite.
This is in accordance with the terminology used in \ca{} theory;
see for example \cite[Definition~V.2.1.3]{Bla06OpAlgs}.

\begin{lma}
\label{prp:softRSF}
Let $S$ be a residually stably finite $\CatCu$-semigroup satisfying \axiomO{5}, and let $a\in S$.
Then the following statements are equivalent:
\beginEnumStatements
\item
The element $a$ is soft.
\item
For every $a'\in S$ satisfying $a'\ll a$, there exists $x\in S$ such that $a'+x\leq a$ and $a\leq\infty\cdot x$.
\item
For every $a',x\in S$ satisfying $a'\ll a\leq a'+x$, we have $a\leq \infty\cdot x$.
\end{enumerate}
\end{lma}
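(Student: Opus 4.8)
The plan is to prove the cyclic chain \implStatements{1}{2}, \implStatements{2}{3}, and \implStatements{3}{1}. The recurring device throughout is to pass to the quotient $S/\Idl(x)$ by the ideal generated by a suitable `complement' element $x$: in this quotient the element $a$ will become \emph{compact}, and since $S$ is residually stably finite, every compact element of $S/\Idl(x)$ is finite. This finiteness is exactly what forces $a$ into $\Idl(x)$, i.e.\ $a\le\infty\cdot x$. Note that $S$ satisfies \axiomO{5'} by \autoref{prp:o5o5'}, and that the quotient map $\pi_{\Idl(x)}$ preserves both $\le$ and $\ll$ by \autoref{prp:quotientCu}; I would use both facts freely.

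For \implStatements{1}{2}, given $a'\ll a$ I would insert $a''$ with $a'\ll a''\ll a$ and apply \axiomO{5'} to $a'\ll a''\le a$ to obtain $x$ with
\[
a'+x\le a\le a''+x.
\]
The first inequality is already the first requirement of~(2). Writing $I:=\Idl(x)$ and passing to $S/I$, the relations $a\le a''+x$ and $a''\le a$ give $a_I=a''_I$, while $a''\ll a$ forces $a_I\ll a_I$, so $a_I$ is compact. Softness of $a$ yields $a''<_s a$, hence $(k+1)a''\le ka$ for some $k$, which in $S/I$ becomes $(k+1)a_I=ka_I$; thus $ka_I$ absorbs $a_I$. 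Since $ka_I$ is compact by \axiomO{3}, residual stable finiteness makes it finite, and this forces $a_I=0$, i.e.\ $a\le\infty\cdot x$.

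For \implStatements{2}{3}, suppose $a'\ll a\le a'+x$. Applying~(2) to $a'$ produces $x'$ with $a'+x'\le a$ and $a\le\infty\cdot x'$. Passing to $S/\Idl(x)$, the hypothesis $a\le a'+x$ together with $a'\le a$ gives that $a_{\Idl(x)}=a'_{\Idl(x)}$ is compact, hence finite; then $a'+x'\le a$ gives $a'_{\Idl(x)}+x'_{\Idl(x)}\le a_{\Idl(x)}=a'_{\Idl(x)}$, hence equality, and finiteness forces $x'_{\Idl(x)}=0$, i.e.\ $x'\le\infty\cdot x$. Combined with $a\le\infty\cdot x'$ this yields $a\le\infty\cdot x$. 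Finally, for \implStatements{3}{1} — which needs only \axiomO{5}, not residual stable finiteness — given $b\ll a$ I would choose $c$ with $b\ll c\ll a$, apply \axiomO{5'} to get $x$ with $b+x\le a\le c+x$, and invoke~(3) (with $a'=c$) to obtain $a\le\infty\cdot x=\sup_n nx$. Then $b\ll a$ gives $b\le nx$ for some $n$, and for every $k\ge n$,
\[
(k+1)b=kb+b\le kb+nx\le kb+kx=k(b+x)\le ka,
\]
so $b<_s a$; as $b\ll a$ was arbitrary, $a$ is soft by \autoref{dfn:softElement}.

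The main obstacle is the passage to the quotient and the verification that the target element becomes genuinely compact there, since everything hinges on `compact $\Rightarrow$ finite' in $S/\Idl(x)$, which is precisely where residual stable finiteness enters (and where \implStatements{3}{1} escapes needing it). I would be careful that the quotient map preserves $\ll$ so that the identities $a_I=a''_I$ and $a_I\ll a_I$ are legitimate, and that the absorption identity $(k+1)a_I=ka_I$ genuinely contradicts finiteness of the compact element $ka_I$ unless $a_I=0$.
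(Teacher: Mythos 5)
Your proof is correct: every implication checks out, and the central mechanism is the same one the paper uses, namely passing to the quotient by $\Idl(x)$, observing that $a$ becomes compact there, and letting residual stable finiteness force $a$ into $\Idl(x)$. The differences are organizational rather than substantive, but they are worth noting. The paper proves the star-shaped equivalences \implStatements{1}{2}, \implStatements{2}{1}, \implStatements{1}{3}, \implStatements{3}{1}, and in both \implStatements{1}{2} and \implStatements{3}{1} it outsources the softness computations to \autoref{prp:soft_wpnc} (for \implStatements{1}{2} it cites that a soft element is weakly purely noncompact, so a multiple of $a_J$ is properly infinite; for \implStatements{3}{1} it observes that your condition~(3) is stronger than condition~(3) of that proposition). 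You instead run a genuine cycle \implStatements{1}{2}, \implStatements{2}{3}, \implStatements{3}{1} and redo the relevant estimates by hand: in \implStatements{1}{2} you derive $(k+1)a_I=ka_I$ directly from $a''<_sa$ and kill $a_I$ using finiteness of the compact element $ka_I$, and your \implStatements{3}{1} is the explicit computation $(k+1)b\le k(b+x)\le ka$ rather than an appeal to \autoref{prp:soft_wpnc}. The implication \implStatements{2}{3} is the one step with no counterpart in the paper; your argument — apply~(2) to get $x'$, then show $x'_{\Idl(x)}=0$ by finiteness of the compact class $a_{\Idl(x)}$, hence $a\le\infty\cdot x'\le\infty\cdot x$ — is valid and slightly more work than the paper's direct \implStatements{1}{3}, but it buys a self-contained proof that does not depend on the weakly-purely-noncompact machinery. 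Your remark that \implStatements{3}{1} needs only \axiomO{5} and not residual stable finiteness is also accurate and matches \autoref{rmk:softRSF}.
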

\begin{proof}
The proof is similar to that of \autoref{prp:soft_wpnc}.
First, in order to show that \enumStatement{1} implies \enumStatement{2}, let $a'\in S$ satisfy $a'\ll a$.
Choose $b\in S$ with $a'\ll b\ll a$.
By \axiomO{5} in $S$, we can choose $x\in S$ such that $a'+x\leq a\leq b+x$.
Consider the ideal $J$ of $S$ generated by $x$.
Then $a_J$ is compact.

By \autoref{prp:soft_wpnc}, the element $a$ is weakly purely noncompact.
Thus, a multiple of $a_J$ is properly infinite.
Since $S/J$ is stably finite, this implies that $a_J$ is zero.
Therefore $a\leq \infty\cdot x$, as desired.

Conversely, to prove that \enumStatement{2} implies \enumStatement{1}, let $a'\in S$ satisfy $a'\ll a$.
We have to show $a'<_sa$.
By assumption, we can find $x\in S$ such that
\[
a'+x\leq a\leq\infty\cdot x.
\]
Since $a'\ll a$, we can choose $n\in\N$ satisfying $a'\leq nx$.
Then
\[
(n+1)a'
\leq na'+nx
\leq na,
\]
which shows $a'<_sa$, as desired.

Next, to show that \enumStatement{1} implies \enumStatement{3}, let $a',x\in S$ satisfy $a'\ll a\leq a'+x$.
Consider the ideal $J$ of $S$ generated by $x$.
Then $a_J$ is compact.
As in the first part of the proof, we obtain $a_J=0$ and hence $a\leq \infty\cdot x$, as desired.

Finally, statement \enumStatement{3} is stronger than statement \enumStatement{3} of \autoref{prp:soft_wpnc}, which shows that it implies that $a$ is soft.
\end{proof}

\begin{rmk}
\label{rmk:softRSF}
Let $S$ be a $\CatCu$-semigroup satisfying \axiomO{5}, and let $a\in S$.
Consider the statements \enumStatement{2} and \enumStatement{3} from \autoref{prp:softRSF}.
Even if $S$ is not necessarily residually stably finite, then these imply that $a$ is soft.

Conversely, if $S$ is not residually stably finite, and if $a$ is a soft element in $S$, then \enumStatement{2} and \enumStatement{3} of \autoref{prp:softRSF} might fail.
Consider for example the elementary $\CatCu$-semigroup $\elmtrySgp{1}=\left\{ 0,1,\infty \right\}$.
The element $1$ in $\elmtrySgp{1}$ is soft and compact.
Then \enumStatement{3} fails by considering $a'=a=1$ and $x=0$.

To show that \enumStatement{2} fails, consider $a'=a=1$.
If $x$ is an element in $\elmtrySgp{1}$ such that $1+x\leq 1$, then $x=0$.
But then $1\nleq\infty\cdot x$.
We thank H.~Petzka for pointing out this example.
\end{rmk}

\begin{cor}
\label{prp:softTFAE}
Let $S$ be a $\CatCu$-semigroup satisfying \axiomO{5}, and let $a\in S$.
Assume that $S$ satisfies $(RQQ)$ (for example, $S$ is almost unperforated) or that $S$ is residually stably finite.
Then the following statements are equivalent:
\beginEnumStatements
\item
The element $a$ is soft.
\item
The element $a$ is weakly purely noncompact.
\item
The element $a$ is purely noncompact.
\end{enumerate}
\end{cor}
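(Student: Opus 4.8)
The plan is to reduce everything to two facts already in hand: that softness and weak pure noncompactness coincide under \axiomO{5} (\autoref{prp:soft_wpnc}), and that pure noncompactness always implies weak pure noncompactness since $S_\pnc\subset S_\wpnc$ (see \autoref{pgr:RQQ}). Granting these, the equivalence `\implStatements{1}{2}' requires no further argument, and `\implStatements{3}{2}' is free; the entire content of the corollary is therefore the single implication `\implStatements{2}{3}', and this is where each of the two alternative hypotheses enters.

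For the case that $S$ satisfies $(RQQ)$ I would simply invoke the observation recorded in \autoref{pgr:RQQ} that $(RQQ)$ forces $S_\pnc=S_\wpnc$, so that a weakly purely noncompact element is automatically purely noncompact. Since \autoref{prp:RQQ} shows that every almost unperforated $\CatCu$-semigroup satisfies $(RQQ)$, this also disposes of the parenthetical special case in the statement.

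The substantive step is the residually stably finite case. Here I would take a weakly purely noncompact $a$ and an ideal $I$ with $a_I\ll a_I$, and use weak pure noncompactness to fix $k\in\N$ with $(k+1)a_I=ka_I$. The key point is that $ka_I$ is itself compact in the quotient $S/I$ (this follows from $a_I\ll a_I$ by iterating \axiomO{3}), and that stable finiteness of $S/I$ makes every compact element finite (by the definitions in \autoref{pgr:finiteSemigr}). Reading $(k+1)a_I=ka_I$ as $a_I+ka_I=ka_I$ and applying finiteness of $ka_I$ then forces $a_I=0$, whence $2a_I=a_I$ holds trivially and $a$ is purely noncompact. The only place requiring care is the bookkeeping that upgrades weak pure noncompactness to the strictly stronger conclusion $2a_I=a_I$: in the residually stably finite setting this is reached not by cancellation but by observing that the hypothesis $a_I\ll a_I$ can only be met when $a_I$ vanishes in the quotient. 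I do not expect a genuine obstacle, since all the needed machinery (\axiomO{3}, the notions of finite and stably finite, and \autoref{prp:soft_wpnc}) is already available.
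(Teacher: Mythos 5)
Your proposal is correct and follows essentially the same route as the paper: both handle \implStatements{1}{2} via \autoref{prp:soft_wpnc}, both dispose of the $(RQQ)$ case (and the almost unperforated special case via \autoref{prp:RQQ}) by the observation in \autoref{pgr:RQQ} that $(RQQ)$ gives $S_\pnc=S_\wpnc$. For the residually stably finite case the paper merely cites \autoref{prp:softRSF}, whereas you argue directly that a compact $a_I$ with $(k+1)a_I=ka_I$ must vanish in a stably finite quotient; this is precisely the mechanism used inside the proof of that lemma, so the substance is the same and your bookkeeping (including the trivial case $k=0$) is sound.
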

\begin{proof}
Since $S$ satisfies axiom \axiomO{5}, statements \enumStatement{1} and \enumStatement{2} are equivalent by \autoref{prp:soft_wpnc}.
If $S$ satisfies $(RQQ)$, then statements \enumStatement{2} and \enumStatement{3} are equivalent, as observed in \autoref{pgr:RQQ}.
Finally, if $S$ is residually stably finite, it follows easily from \autoref{prp:softRSF} that \enumStatement{2} and \enumStatement{3} are equivalent.
\end{proof}

\begin{thm}
\label{prp:softAbsorbing}
Let $S$ be a $\CatCu$-semigroup.
Then:
\beginEnumStatements
\item
The set $S_\soft$ of soft elements of $S$ is a subsemigroup of $S$ that is closed under passing to suprema of increasing sequences.
\item
The set $S_\soft$ is absorbing in the sense that for any $a,b\in S$ with $b\varpropto^\ctsRel a$, we have that $a+b$ is soft whenever $a$ is.
\end{enumerate}
\end{thm}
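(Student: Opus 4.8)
The plan is to reduce both assertions to the stable domination relation $<_s$ and its characterizations in \autoref{prp:relSTFAE}, using that an element is soft exactly when $a<_s^\ctsRel a$, i.e.\ when $a'<_sa$ for every $a'\ll a$ (see \autoref{rmk:softElement}(2)). The common first move is a decomposition: given $c'\ll a+b$, pick rapidly increasing sequences $(a_k)_k$, $(b_k)_k$ with $\sup_k a_k=a$ and $\sup_k b_k=b$ from \axiomO{2}; then $a+b=\sup_k(a_k+b_k)$ by \axiomO{4}, so $c'\leq a_k+b_k$ for some $k$, with $a_k\ll a$ and $b_k\ll b$.

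For the subsemigroup part of (1), assume $a$ and $b$ are soft. With the decomposition above, softness gives $a_k<_s a$ and $b_k<_s b$. By \autoref{prp:relSTFAE} there is $n_0$ with $(n+1)a_k\leq na$ for all $n\geq n_0$ and an $n_1$ with $(n+1)b_k\leq nb$ for all $n\geq n_1$; taking $n=\max\{n_0,n_1\}$ yields $(n+1)c'\leq(n+1)a_k+(n+1)b_k\leq n(a+b)$, so $c'<_s a+b$. As $c'$ was arbitrary, $a+b$ is soft; since $0$ is trivially soft, $S_\soft$ is a submonoid. For closure under suprema, let $(a_k)_k$ be an increasing sequence of soft elements with $a=\sup_k a_k$ and let $c'\ll a$. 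Interpolating via \axiomO{2} we find $c''$ with $c'\ll c''\ll a$; then $c''\leq a_k$ for some $k$, whence $c'\ll a_k$. Softness of $a_k$ gives $n$ with $(n+1)c'\leq na_k\leq na$, so $a$ is soft.

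For (2), recall that $b\varpropto^\ctsRel a$ means $b\leq\infty\cdot a$ (the Example after \autoref{dfn:regularRel}), so every $b_k\ll b$ satisfies $b_k\leq ma$ for some $m$. Assume $a$ soft and let $c'\ll a+b$; decompose $c'\leq a_k+b_k$ as above and fix $m$ with $b_k\leq ma$. The idea is to establish $(n+1)c'\leq n(a+b)=na+nb$ by splitting the $(n+1)b_k$ term as $(n+1)b_k=nb_k+b_k\leq nb+ma$, and then absorbing the leftover $ma$ into the $a$-budget. For the latter, softness of $a$ together with \autoref{prp:relSTFAE} (in the form: for the fixed value $m+1$ there is $n_0$ with $(n'+m+1)a_k\leq n'a$ for all $n'\geq n_0$) lets us choose $n=n'+m$ large so that $(n+1)a_k\leq(n-m)a$, and hence $(n+1)a_k+ma\leq na$. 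Combining, $(n+1)c'\leq(n+1)a_k+(n+1)b_k\leq\bigl[(n+1)a_k+ma\bigr]+nb\leq na+nb=n(a+b)$, so $c'<_s a+b$ and $a+b$ is soft.

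The only genuinely delicate point is this bookkeeping in (2): a naive bound $(n+1)c'\leq n(m+1)a$ is useless because the factor $m+1$ destroys the $(n+1)$-versus-$n$ margin required for softness. The resolution is to keep $nb$ intact on the right and to pay for the single extra copy $b_k\leq ma$ out of the $a$-summand, which is possible precisely because $a$ is soft and \autoref{prp:relSTFAE}(4) permits trading a fixed surplus of copies of $a_k$ against a vanishing fraction of $a$. All remaining verifications (the order-preserving regroupings of the estimates and the interpolation from \axiomO{2}) are routine.
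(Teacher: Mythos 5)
Your proof is correct and follows essentially the same route as the paper: decompose a compactly contained element $c'\ll a+b$ as $c'\leq a_k+b_k$ with $a_k\ll a$, $b_k\ll b$, then invoke \autoref{prp:relSTFAE} to pass from $a_k<_sa$ and $b_k<_sb$ (resp.\ $b_k\leq ma$) to a common inequality $(n+1)c'\leq n(a+b)$. In particular your bookkeeping in (2) — trading the extra copy $b_k\leq ma$ against the surplus in $(n'+m+1)a_k\leq n'a$ — is, after the substitution $n=n'+m$, literally the paper's estimate $(k+n+1)x\leq ka+(k+n)b'+b'\leq(k+n)(a+b)$.
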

\begin{proof}
To prove \enumStatement{1}, let us first show that $S_\soft$ is closed under addition.
Let $a,b\in S_\soft$, and let $x\in S$ such that $x\ll a+b$.
We need to show $x<_sa+b$.

Choose $a',b'\in S$ such that
\[
x\leq a'+b',\quad
a'\ll a,\quad
b'\ll b.
\]
Since $a$ and $b$ are soft, it follows $a'<_sa$ and $b'<_sb$.
By \autoref{prp:relSTFAE}, we can find $k_0,l_0\in\N$ satisfying $(k+1)a'\leq ka$ for all $k\geq k_0$ and such that $(l+1)b'\leq lb$ for all $l\geq l_0$.
Let $n\in\N$ be the maximum of $k_0$ and $l_0$.
Then
\[
(n+1)x
\leq (n+1)(a'+b')
\leq n(a+b),
\]
which shows $x<_sa+b$, as desired.

Next, let us show that $S_\soft$ is closed under suprema of increasing sequences.
Let $(a_n)_{n\in\N}$ be an increasing sequence in $S_\soft$ and set $a:=\sup_n a_n$.
Let $x\in S$ satisfy $x\ll a$.
We need to show $x<_sa$.
Choose $\tilde{x}\in S$ such that $x\ll\tilde{x}\ll a$.

By definition of the way-below relation, we can choose $n\in\N$ such that $\tilde{x}\leq a_n$.
Then $x\ll a_n$.
Since $a_n$ is soft, it follows $x<_sa_n$, and therefore $x<_sa$, as desired.

To prove \enumStatement{2}, let $a\in S_\soft$ and $b\in S$ satisfy $b\varpropto^\ctsRel a$.
To show that $a+b$ is soft, let $x\in S$ such that $x\ll a+b$.
We need to show $x<_sa+b$.
Choose elements $a',b'\in S$ with
\[
x\leq a'+b',\quad
a'\ll a,\quad
b'\ll b.
\]
Since $b\varpropto^\ctsRel a$, we can find $n\in\N$ such that $b'\leq na$.
Moreover, since $a$ is soft, we get $a'<_sa$.
By \autoref{prp:relSTFAE}, we can choose $k\in\N$ with $(k+n+1)a'\leq ka$.
Then
\begin{align*}
(k+n+1)x
&\leq (k+n+1)(a'+b') \\
&= (k+n+1)a'+(k+n)b'+b' \\
&\leq ka+(k+n)b+na
= (k+n)(a+b),
\end{align*}
which shows $x<_sa+b$, as desired.
\end{proof}

\begin{thm}
\label{prp:soft_comparison}
Let $S$ be an almost unperforated $\CatCu$-semigroup, and let $a,b\in S$.
Assume $a$ is soft.
Then $a\leq b$ if and only if $\hat{a}\leq\hat{b}$.
\end{thm}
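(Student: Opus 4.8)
The plan is to prove the nontrivial direction: assuming $a$ is soft and $\hat{a}\leq\hat{b}$, I want to derive $a\leq b$. The forward direction ($a\leq b$ implies $\hat{a}\leq\hat{b}$) is immediate since each functional is order-preserving, so I would dispose of it in one line.

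\textbf{Main argument.} The key is to combine \autoref{prp:comparisonRelations} with the almost unperforation hypothesis. The starting observation is \autoref{rmk:softElement}(2): softness of $a$ means precisely $a<_s^\ctsRel a$. So I have an element satisfying the regularized self-domination condition, which is exactly the hypothesis needed to run the implication `\enumStatement{5}$\Rightarrow$\enumStatement{4}' in \autoref{prp:comparisonRelations}. Concretely, the diagram in \autoref{prp:comparisonRelations} records that, when $a$ is soft, the statement $\hat{a}\leq\hat{b}$ (statement \enumStatement{5}) implies $a<_s^\ctsRel b$ (statement \enumStatement{4}). Then, since $S$ is almost unperforated, the same proposition (the dashed vertical arrow, or equivalently \autoref{prp:almUnpCu}) converts $a<_s^\ctsRel b$ into $a\leq b$. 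Thus the whole proof is a short chain:
\[
\hat{a}\leq\hat{b}
\ \Longrightarrow\ a<_s^\ctsRel b
\ \Longrightarrow\ a\leq b,
\]
where the first implication uses softness of $a$ and the second uses almost unperforation.

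\textbf{Where the real work sits.} The substance is entirely inherited from \autoref{prp:comparisonRelations}, so at the level of this theorem there is essentially no obstacle; the delicate part has already been carried out in proving that `$a$ soft' lets one pass from $\hat{a}\leq\hat{b}$ back to $a<_s^\ctsRel b$. If I instead wanted a self-contained argument, the crux would be the following: given $x\ll a$, pick $y$ with $x\ll y\ll a$; softness gives $y<_sa$, hence $\hat{y}<_s\hat{a}\leq\hat{b}$, and then the implication `\enumStatement{2}$\Rightarrow$\enumStatement{4}' of \autoref{prp:comparisonRelations} yields $y<_s^\ctsRel b$, so $x<_sb$. Since $x\ll a$ was arbitrary, this is exactly $a<_s^\ctsRel b$, and almost unperforation (via \autoref{prp:almUnpCu}) finishes. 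The one point requiring care is the interpolation step $x\ll y\ll a$, which is legitimate because \axiomO{2} provides a rapidly increasing sequence with supremum $a$; everything else is a direct citation. I therefore expect the proof to be genuinely short, with its only conceptual content being the correct invocation of the soft-element implications already established.
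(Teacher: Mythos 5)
Your proposal is correct and follows essentially the same route as the paper: the paper's proof also reduces to showing $x\leq b$ for each $x\ll a$, using softness to get $x<_s a$, hence $\hat{x}<_s\hat{a}\leq\hat{b}$, then invoking \autoref{prp:comparisonRelations} to obtain $x<_s^\ctsRel b$ and almost unperforation to conclude. Your shorter "main argument" (citing the soft case of \implStatements{5}{4} in \autoref{prp:comparisonRelations} directly, then \autoref{prp:almUnpCu}) and your self-contained expansion are both valid packagings of the identical idea.
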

\begin{proof}
It is clear that $a\leq b$ implies $\hat{a}\leq\hat{b}$.
In order to show the converse implication, assume $\hat{a}\leq\hat{b}$.
It is enough to verify $x\leq b$ for every $x\in S$ satisfying $x\ll a$.
Let $x\in S$ such that $x\ll a$.

Since $a$ is soft, we get $x<_sa$.
Then $\hat{x}<_s\hat{a}$, and together with the assumption we obtain $\hat{x}<_s\hat{b}$.
By \autoref{prp:comparisonRelations}, it follows $x<_s^\ctsRel b$.
Since $S$ is almost unperforated, this implies $x\leq b$, as desired.
\end{proof}

\begin{rmk}
\label{rmk:soft_comparison}
Theorems \ref{prp:softAbsorbing} and \ref{prp:soft_comparison} are inspired by Proposition~6.4 and Theorem~6.6 in \cite{EllRobSan11Cone}.
Their results are concerned with purely noncompact elements in Cuntz semigroups of \ca{s}, and their proofs use \ca ic methods.

We generalize the mentioned results in \cite{EllRobSan11Cone} in two ways.
First, we consider abstract $\CatCu$-semigroups instead of concrete Cuntz semigroups coming from \ca{s}.
Therefore, our proofs are necessarily purely algebraic.

Second, we do not assume \axiomO{5}, which is implicitly used to prove the results in \cite{EllRobSan11Cone}.
Note that axiom \axiomO{5} automatically holds for Cuntz semigroups of \ca{s} (see \autoref{prp:o5o6}), whence it is not an unreasonable assumption.
We were able to obtain our results without using \axiomO{5}, by considering soft elements instead of (weakly) purely noncompact elements.

It seems that soft elements form the right class to prove desirable results like Theorems \ref{prp:softAbsorbing} and \ref{prp:soft_comparison}.
In the absence of \axiomO{5}, it is unclear whether the same results hold for the class of (weakly) purely noncompact elements.
Moreover, as shown in \autoref{prp:soft_wpnc}, under the assumption of \axiomO{5} the class of soft and weakly purely noncompact elements coincide, so that then the results for (weakly) purely noncompact elements follow from that for soft elements.
\end{rmk}

\begin{prbl}
\label{prbl:pncInCu}
Given a $\CatCu$-semigroup $S$, is the subsemigroup $S_\soft$ of soft elements again a $\CatCu$-semigroup?
Does this hold under the additional assumption that $S$ satisfies \axiomO{5}?
If so, does then $S_\soft$ satisfy \axiomO{5} as well?
\end{prbl}

\begin{rmk}
By \autoref{prp:softAbsorbing}, $S_\soft$ is a subsemigroup of $S$.
It therefore inherits a natural structure as a \pom.
Moreover, axiom \axiomO{1} is satisfied.
It is not clear, whether axiom \axiomO{2} holds.
(If so, axioms \axiomO{3} and \axiomO{4} should follow immediately.)

The answer to \autoref{prbl:pncInCu} is not even clear for Cuntz semigroups of \ca{s}.
In \autoref{prp:softInCuZmult}, we will provide a positive answer for semigroups with $Z$-multiplication, which includes in particular the Cuntz semigroups of $\mathcal{Z}$-stable \ca{s}.

We end this section by showing that \autoref{prbl:pncInCu} also has a positive answer for simple, stably finite $\CatCu$-semigroups satisfying \axiomO{5} and \axiomO{6}.
We first observe that, in this case, every noncompact element is automatically soft.
This should be compared to \cite[Lemma~3.4]{Per01IdealsMultiplier}.
\end{rmk}

\begin{prp}
\label{prp:softNoncpctSimple}
Let $S$ be a simple, stably finite $\CatCu$-semigroup satisfying \axiomO{5}.
Then a nonzero element in $S$ is soft if and only if it is not compact.
\end{prp}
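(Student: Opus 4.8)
The claim is that in a simple, stably finite $\CatCu$-semigroup $S$ satisfying \axiomO{5}, a nonzero element is soft if and only if it is not compact. One direction is essentially immediate: if $a$ is compact and nonzero, then $a\ll a$, so taking $a'=a$ in the definition of softness would require $(n+1)a\leq na$ for some $n$, which says a multiple of $a$ is properly infinite (indeed $(n+1)a=na$ forces $a_I$-type proper infiniteness). But $a\ll a\ll\infty$ and stable finiteness of $S$ say that $a$ is finite, and more generally every multiple $na$ is finite since $na\ll na\ll\infty$; a finite element cannot satisfy $(n+1)a\leq na$ with $a\neq 0$. Hence a nonzero compact element is not soft, giving the forward implication by contraposition.

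The substantive direction is: if $a$ is nonzero and not compact, then $a$ is soft. The plan is to verify softness directly from \autoref{dfn:softElement}, so I must show that for every $a'\ll a$ there is $n$ with $(n+1)a'\leq na$. First I would fix $a'\ll a$ and, using \axiomO{2}, interpose elements $a'\ll a''\ll a'''\ll a$. Since $a$ is not compact, $a$ is not way-below itself, and I want to exploit this to produce genuine `room' between $a''$ and $a$. Concretely, because $a'''\ll a$ but $a\not\ll a$, the element $a$ is strictly larger than $a'''$ in a way detectable by \axiomO{5}: applying \axiomO{5} (almost algebraic order) to the relation $a''\ll a'''\leq a$ yields an element $x$ with $a''+x\leq a\leq a'''+x$. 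The key point will be to argue that this complement $x$ is nonzero, and in fact full (by simplicity every nonzero element is full).

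The heart of the argument is then to convert the inequality $a''+x\leq a$, with $x$ nonzero, into stable domination $a'<_s a$. Since $S$ is simple and $x\neq 0$, fullness gives $a'\leq a''\leq a\leq Nx$ for some $N\in\N$, so $a'\varpropto x$. Combining $a'\ll a''$, $a''+x\leq a$, and $a'\leq Nx$, I would iterate: from $a''+x\leq a$ one gets $a'+x\leq a$, and then
\[
(N+1)a' \leq Na' + Nx \leq Na' + \cdots,
\]
which I would massage into $(k+1)a'\leq ka$ for a suitable $k$ by absorbing copies of $a'$ into copies of $x$ using $a'\leq Nx\leq N a$. This is exactly the bookkeeping in \autoref{prp:softRSF}(2)$\Rightarrow$(1), and I expect to reuse that computation almost verbatim: establishing condition (2) of \autoref{prp:softRSF}, namely that for every $a'\ll a$ there is $x$ with $a'+x\leq a$ and $a\leq\infty\cdot x$, then immediately yields softness. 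So the real work reduces to producing such an $x$ and showing $a\leq\infty\cdot x$, where $a\leq\infty\cdot x$ follows from fullness of the nonzero $x$ together with simplicity.

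The main obstacle, and the step needing the most care, is proving that the complement $x$ furnished by \axiomO{5} is nonzero when $a$ is noncompact. This is where noncompactness must really be used: if every such complement were zero, I would be able to run \axiomO{5} to conclude $a\leq a'''$ for $a'''\ll a$, and then a limiting argument over a rapidly increasing sequence $a'''_n\ll a$ with supremum $a$ would force $a\ll a$, contradicting noncompactness. I would phrase this contrapositively: assuming $a$ is soft-failing at some $a'$, i.e. the only available complements vanish, I derive that $a$ is way-below itself. Once nonzeroness of $x$ is secured, simplicity and stable finiteness do the rest, and the remaining estimates are the routine ones already packaged in \autoref{prp:softRSF}.
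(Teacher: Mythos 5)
Your proposal is correct and follows essentially the same route as the paper: both reduce to \autoref{prp:softRSF} together with \autoref{rmk:softRSF}, with the key observation that noncompactness of $a$ forces the complement $x$ supplied by \axiomO{5} (equivalently, any $x$ with $a'\ll a\leq a'+x$) to be nonzero, after which simplicity yields $a\leq\infty\cdot x$. The only cosmetic difference is that you verify condition (2) of \autoref{prp:softRSF} while the paper verifies condition (3), and your ``limiting argument'' for nonzeroness of $x$ is unnecessary since $a\leq a'''\ll a$ already gives $a\ll a$ directly.
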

\begin{proof}
In general, a finite compact element is never soft.
To show the converse, let $a\in S$ be nonzero and noncompact.
We need to show that $a$ is soft.
By \autoref{prp:softRSF} and \autoref{rmk:softRSF}, it is enough to show that for every $a',x\in S$ satisfying $a'\ll a\leq a'+x$, we have $a\leq\infty\cdot x$.
Given such $a'$ and $x$, since $a'\ll a$ and $a$ is not compact, we get $a'\neq a$.
Therefore, $x$ is nonzero.
Since $S$ is simple, this implies $a\leq\infty\cdot x$, as desired.
\end{proof}

\begin{lma}
\label{lem:density}
Let $S$ be a $\CatCu$-semigroup, and let $B\subset S$ be a submonoid.
Assume that for every $b\in B$, there exists a sequence $(b_k)_{k\in\N}$ in $B$ such that $b=\sup_k b_k$ and such that $b_k\ll b_{k+1}$ in $S$ for each $k$.
Then
\[
\overline{B} = \left\{ \sup_k b_k : (b_k)_{k\in\N} \text{ increasing sequence in } B \right\}
\]
is a submonoid of $S$ that is closed under passing to suprema of increasing sequences.
Moreover, $\overline{B}$ is a $\CatCu$-semigroup such that for any $a,b\in\overline{B}$ we have $a\ll b$ in $\overline{B}$ if and only if $a\ll b$ in $S$.
\end{lma}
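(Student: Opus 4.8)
The plan is to reduce everything to a single diagonalization argument together with a convenient reformulation of $\overline{B}$. First I would record the easy structural facts. Since $S$ satisfies \axiomO{1}, every increasing sequence in $B$ has a supremum in $S$, so $\overline{B}$ is well defined; it contains $0$ (as the supremum of the constant sequence) and $B\subseteq\overline{B}$. The order on $\overline{B}$ is inherited from $S$, so a supremum computed in $S$ of a sequence lying in $\overline{B}$ is also its supremum in $\overline{B}$ whenever it lies in $\overline{B}$. Closure under addition follows from \axiomO{4}: if $a=\sup_k a_k$ and $b=\sup_k b_k$ with $(a_k)_k,(b_k)_k$ increasing in $B$, then $(a_k+b_k)_k$ is increasing in $B$ and $\sup_k(a_k+b_k)=a+b$. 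Thus $\overline{B}$ is a submonoid, and hence a \pom.

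The technical core is the following diagonalization, which I would prove by mirroring the verification of \axiomO{1} in \autoref{prp:CuificationExists}. Suppose $(x_{k,j})_{k,j}$ is a double sequence in $B$ whose rows are $\ll$-increasing in $S$ (that is, $x_{k,j}\ll x_{k,j+1}$ for all $j$), with $\sup_j x_{k,j}=:y_k$, and suppose $(y_k)_k$ is increasing with $\sup_k y_k=y$. The key observation is that for $i\leq k$ and every $j$ there is some $m$ with $x_{i,j}\ll x_{k,m}$: indeed $x_{i,j}\ll x_{i,j+1}\leq y_i\leq y_k=\sup_m x_{k,m}$, so $x_{i,j}\ll\sup_m x_{k,m}$ and hence $x_{i,j}\leq x_{k,m}\ll x_{k,m+1}$ for suitable $m$. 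This plays exactly the role of the relation ``$\subset$'' in \autoref{prp:CuificationExists}, so the same inductive choice of indices, after reindexing, produces a $\ll$-increasing sequence $d_m:=x_{m,m}$ in $B$ with $\sup_m d_m=y$.

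With this in hand I would prove the reformulation
\[
\overline{B}=\left\{\sup_k b_k : (b_k)_k \text{ is a } \ll\text{-increasing sequence in } B\right\}.
\]
The inclusion ``$\supseteq$'' is immediate. For ``$\subseteq$'', given $a=\sup_k a_k$ with $(a_k)_k$ increasing in $B$, the hypothesis supplies for each $a_k$ a $\ll$-increasing sequence $(a_{k,j})_j$ in $B$ with $\sup_j a_{k,j}=a_k$; applying the diagonalization with $x_{k,j}=a_{k,j}$ and $y_k=a_k$ expresses $a$ as the supremum of a $\ll$-increasing sequence in $B$. The same diagonalization then yields closure under suprema of increasing sequences, that is, \axiomO{1} for $\overline{B}$: given an increasing sequence $(c^{(n)})_n$ in $\overline{B}$, the reformulation provides $\ll$-increasing rows $(c^{(n)}_j)_j$ in $B$ with $\sup_j c^{(n)}_j=c^{(n)}$, and diagonalizing with $y_n=c^{(n)}$ shows $\sup_n c^{(n)}\in\overline{B}$.

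Finally I would settle the comparison of the two way-below relations, from which the remaining axioms follow. The implication ``$a\ll b$ in $S$ $\Rightarrow$ $a\ll b$ in $\overline{B}$'' is automatic, because every increasing sequence in $\overline{B}$ is an increasing sequence in $S$ with the same supremum. For the converse, write $b=\sup_k b_k$ with $(b_k)_k$ a $\ll$-increasing sequence in $B$ via the reformulation; this is an increasing sequence in $\overline{B}$ with supremum $b$, so $a\ll b$ in $\overline{B}$ forces $a\leq b_{k_0}$ for some $k_0$, whence $a\leq b_{k_0}\ll b_{k_0+1}\leq b$ gives $a\ll b$ in $S$. Granting this equivalence, \axiomO{2} for $\overline{B}$ follows from the reformulation (the approximating $b_k$ lie in $\overline{B}$ and are way-below their successors in $\overline{B}$), \axiomO{3} follows from \axiomO{3} in $S$ together with the equivalence, and \axiomO{4} is inherited because suprema in $\overline{B}$ agree with those in $S$; thus $\overline{B}$ is a $\CatCu$-semigroup. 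The main obstacle is the diagonalization argument; once it is set up as above, the reformulation, the closure property, and the hard direction of the $\ll$-comparison all fall out of it, and the rest is routine verification.
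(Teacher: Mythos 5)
Your proposal is correct and follows essentially the same route as the paper: the same diagonalization over a double array with $\ll$-increasing rows yields closure under suprema and the fact that every element of $\overline{B}$ is a supremum of a $\ll_S$-increasing sequence from $B$, and the two directions of the comparison of way-below relations are argued exactly as in the paper's proof. Isolating the diagonalization as a standalone lemma and phrasing its consequence as a reformulation of $\overline{B}$ is only a cosmetic reorganization.
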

\begin{proof}
We view $\overline{B}$ is a subset of $S$.
It is easy to see that $B$ is a subset of $\overline{B}$ and that $\overline{B}$ is a submonoid of $S$.
Thus, endowed with the partial order induced by $S$, we have that $\overline{B}$ is a \pom.
Given an increasing sequence $(c_n)_n$ in $\overline{B}$, let us show that the supremum $\sup_n c_n$ is an element of $\overline{B}$.
For each $n\in\N$ we can, by assumption, choose a sequence $(c_{n,k})_k$ in $B$ that is rapidly increasing in $S$ and such that $c_n=\sup_k c_{n,k}$.
As in the proof of \autoref{prp:CuificationExists}, we can inductively choose indices $k_n$ for $n\in\N$ such that
\[
c_{1,k_1+n-1}, c_{2,k_2+n-2},\ldots,c_{n,k_n} \leq c_{n+1,k_{n+1}}.
\]
Then $(c_{n,k_n})_n$ is an increasing sequence of elements in $B$ such that $\sup_n c_n = \sup_n c_{n,k_n}$.
By definition, the element $\sup_n c_{n,k_n}$ belongs to $\overline{B}$.
Thus, $\overline{B}$ is closed under passing to suprema of increasing sequences.
Then, axioms \axiomO{1} and \axiomO{4} for $\overline{B}$ follow easily from their counterparts in $S$.

For clarity, let us denote the compact-containment relation with respect to $S$ by $\ll_S$, and similarly for $\ll_{\overline{B}}$.
Given $a,b\in\overline{B}$ satisfying $a\ll_S b$, let us prove $a\ll_{\overline{B}} b$.
Since $\overline{B}$ satisfies \axiomO{1}, we need to show that for every increasing sequence $(b_k)_k$ in $\overline{B}$ satisfying $b\leq \sup_k b_k$ there exists $n\in\N$ such that $a\leq b_n$.
Since the sequence $(b_k)_k$ has the same supremum in $S$ as in $\overline{B}$, this follows directly from the assumption that $a\ll_S b$.

It follows that every element in $B$ is the supremum of a sequence of elements in $B$ that is rapidly increasing in $\overline{B}$.
A diagonalization argument shows that the same holds for every element in $\overline{B}$.
This verifies \axiomO{2} for $\overline{B}$.

Finally, given $a,b\in\overline{B}$ satisfying $a\ll_{\overline{B}} b$, let us show $a\ll_S b$.
Choose a sequence $(b_k)_k$ in $B$ with $b=\sup_k b_k$ and such that $b_k\ll_Sb_{k+1}$ for each $k$.
Since the supremum in $S$ and in $\overline{B}$ agree and since $a\ll_{\overline{B}}b$, we obtain $a\leq b_k$ for some $k$.
Then $a\leq b_k\ll_Sb_{k+1}\leq b$, so that $a\ll_Sb$.
Therefore, axiom \axiomO{3} for $\overline{B}$ follows since $S$ satisfies \axiomO{3}.
\end{proof}

\begin{prp}
\label{prp:softSimple}
Let $S$ be a simple, stably finite $\CatCu$-semigroup satisfying \axiomO{5} and \axiomO{6}.
Then the subsemigroup $S_\soft$ is a $\CatCu$-semigroup satisfying \axiomO{5} and \axiomO{6}.
\end{prp}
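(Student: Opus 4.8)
The plan is to deduce every axiom for $S_\soft$ from the corresponding axiom for $S$, exploiting that $S_\soft$ is already well-behaved as a subobject. By \autoref{prp:softAbsorbing}, $S_\soft$ is a submonoid of $S$ that is closed under suprema of increasing sequences; hence it is a \pom, and since suprema computed in $S_\soft$ agree with those in $S$, it satisfies \axiomO{1}. It remains to check \axiomO{2}--\axiomO{4} and then \axiomO{5}, \axiomO{6}. First I would dispose of the elementary case: by \autoref{prp:elmtryO6}, a simple elementary $\CatCu$-semigroup satisfying \axiomO{5} and \axiomO{6} is isomorphic to $\{0\}$, to $\overline{\N}$, or to some $\elmtrySgp{k}$; since the top element of $\elmtrySgp{k}$ is compact and infinite, stable finiteness excludes these, leaving $S\cong\{0\}$ or $S\cong\overline{\N}$. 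In the first case $S_\soft=\{0\}$, and in the second \autoref{prp:softNoncpctSimple} gives $S_\soft=\{0,\infty\}=\elmtrySgp{0}$; both are $\CatCu$-semigroups satisfying \axiomO{5} and \axiomO{6} by \autoref{pgr:elementarySemigr}. Thus I may assume $S$ nonelementary from now on.

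The core of the argument is a \emph{soft refinement lemma}: for every nonzero soft $a$ and every $a'\ll a$ in $S$ there is a soft element $c$ with $a'\le c\ll a$. To prove it, choose $a'\ll a_1\ll a_2\ll a$. Since $a$ is soft, \autoref{prp:softRSF} provides $x\in S$ with $a_2+x\le a$ and $a\le\infty\cdot x$; as $a\neq 0$ and $S$ is simple, $x\neq 0$. Because $S$ is simple, nonelementary and satisfies \axiomO{5}, \axiomO{6}, Glimm halving (\autoref{prp:GlimmHalving}) yields a nonzero soft element $y$ with $y\ll x$. Set $c:=a'+y$. Then $c\ll a_2+x\le a$ by \axiomO{3}, while $a'\le c$; and $c$ is soft by the absorption property \autoref{prp:softAbsorbing}, applied to $y$ soft with $a'\varpropto^\ctsRel y$ (which holds since $\infty\cdot y=\infty\ge a'$ by simplicity). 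Iterating this lemma produces, for each soft $a$, soft elements $c_1\ll c_2\ll\cdots$ in $S$ with $\sup_n c_n=a$: starting from a rapidly increasing sequence $(a_n)$ in $S$ with $\sup_n a_n=a$, replace each $a_n$ by a soft $c_n$ satisfying $a_n\le c_n\ll a_{n+1}$. Thus $S_\soft$ satisfies the hypothesis of \autoref{lem:density} with $B=S_\soft$, and since $S_\soft$ is already closed under suprema we have $\overline{S_\soft}=S_\soft$. Consequently \autoref{lem:density} shows that $S_\soft$ is a $\CatCu$-semigroup and that its compact-containment relation agrees with that of $S$ on soft elements; in particular \axiomO{2}, \axiomO{3} and \axiomO{4} hold.

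Finally I would establish \axiomO{5} and \axiomO{6} for $S_\soft$. As the two compact-containment relations agree, soft witnesses $a'\ll a$, $b'\ll b$ in $S_\soft$ are also way-below in $S$, so I may invoke the corresponding axiom of $S$. For \axiomO{5}, the data $a+b\le c$, $a'\ll a$, $b'\ll b$ (all soft) give $x\in S$ with $a'+x\le c\le a+x$ and $b'\le x$; for \axiomO{6}, the data $a'\ll a\le b+c$ give $e\le a,b$ and $f\le a,c$ with $a'\le e+f$. The witnesses produced in $S$ need not be soft, and the remaining work is to replace them by soft elements realizing the same order relations. This softening is carried out with the soft refinement lemma and \autoref{prp:softAbsorbing}: one enlarges the non-soft (necessarily compact, hence finite) part of a witness by a small soft bump obtained from Glimm halving, chosen small enough to preserve the defining inequalities, and uses simplicity to secure the absorption hypothesis $\varpropto^\ctsRel$.

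I expect this softening step to be the main obstacle. The witnesses $x$ (for \axiomO{5}) and $e,f$ (for \axiomO{6}) lie below the soft element $c$, respectively $a$, but a subelement of a soft element need not be soft — as $1\le\tfrac{3}{2}$ in $Z$ shows — so one must manufacture soft replacements while respecting two-sided constraints such as $a'+x\le c\le a+x$. The elementary/nonelementary dichotomy is essential here as well: in $\overline{\N}$ the compact-containment relation of $S_\soft=\{0,\infty\}$ genuinely differs from that of $S$, which is precisely why the nonelementary hypothesis — entering through Glimm halving and the soft refinement lemma — is needed to make both the transfer of \axiomO{2}--\axiomO{4} and the softening of complements possible.
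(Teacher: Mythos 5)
Your reduction to the nonelementary case and your verification that $S_\soft$ is a $\CatCu$-semigroup follow the same route as the paper: produce nonzero soft elements below arbitrary nonzero elements, use \autoref{prp:softRSF} to refine soft elements from below by soft elements, and feed the result into \autoref{lem:density}. One imprecision there: \autoref{prp:GlimmHalving} by itself only yields a nonzero $y$ with $2y\le x$, not a \emph{soft} one; to get a soft element below $x$ you must iterate the halving, sum the resulting sequence $a_n$ with $2a_{n+1}\le a_n$, and invoke stable finiteness plus \autoref{prp:softNoncpctSimple} to see the sum is noncompact, hence soft. This is exactly Claim~1 of the paper's proof and is easy to supply, so I would not count it as a gap.

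The genuine gap is in your treatment of \axiomO{5}. You correctly identify the two-sided constraint $a'+x\le c\le a+x$ as the obstacle, but the remedy you describe --- ``enlarge the non-soft part of a witness by a small soft bump chosen small enough to preserve the defining inequalities'' --- cannot work as stated: replacing $x$ by $x+s$ with $s$ nonzero destroys the left-hand inequality $a'+x\le c$, and no choice of ``small'' $s$ repairs this, since there is no metric smallness available. The paper's resolution is to perturb the \emph{input} rather than the output: using your refinement lemma (the paper's Claim~2), first choose a soft $s$ with $a'+s\ll a$, then apply \axiomO{5} in $S$ to the triple $(a'+s)\ll a$, $b'\ll b$, $a+b\le c$, obtaining $x$ with $(a'+s)+x\le c\le a+x$ and $b'\le x$. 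Setting $d:=s+x$, which is soft by \autoref{prp:softAbsorbing}\enumStatement{2} because $x\varpropto^\ctsRel s$ by simplicity, gives $a'+d\le c\le a+d$ and $b'\le d$ with a soft witness. Your sketch for \axiomO{6} (enlarging a compact $e$ to $e+x$ with $x$ soft and $x\le x_1,x_2$ where $e+x_1=a$, $e+x_2=b$) is sound, since there the constraint on $e$ is one-sided; but the \axiomO{5} step needs the pre-enlargement of $a'$, and as written your argument does not supply it.
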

\begin{proof}
Since the statement clearly holds if $S$ is elementary, we may assume from now on that $S$ is nonelementary.
We want to apply \autoref{lem:density} with $B=S_\soft$.
By \autoref{prp:softAbsorbing}, we have that $S_\soft$ is closed under passing to suprema of increasing sequences.
This implies that $S_\soft$ satisfies \axiomO{1} and that $\overline{S_\soft}=S_\soft$.

Claim 1:
For every nonzero $a\in S$ there exists a nonzero $b\in S_\soft$ satisfying $b\leq a$.
To prove this claim, we inductively construct nonzero elements $a_n\in S$ for $n\in\N$ such that $2a_{n+1}\leq a_n$ for each $n$.
We start by setting $a_0:=a$.
Assuming that we have constructed $a_k$ for all $k\leq n$, we apply \cite[Proposition~5.2.1]{Rob13Cone} (see \autoref{prp:GlimmHalving}), to obtain a nonzero element $a_{n+1}\in S$ such that $2a_{n+1}\leq a_n$.
Then set
\[
b :=\sum_{n\in\N} a_n
=\sup_{n\in\N} \sum_{k=0}^na_k.
\]
We have $b\leq a$.
Since $S$ is stably finite, the element $b$ cannot be compact.
Therefore, by \autoref{prp:softNoncpctSimple}, it is a soft element.
This proves the claim.

Claim 2:
For every $a\in S$ and $b\in S_\soft$ satisfying $a\ll b$, there exists $s\in S_\soft$ with $a+s\ll b$.
Note that by \autoref{prp:softAbsorbing}\enumStatement{2} this implies that $a+s$ is soft.
To prove this claim, we first choose $b'\in S$ such that $a\ll b'\ll b$.
By \autoref{prp:softRSF}, we can find $x\in S$ with $b'+x\leq b$ and $b\leq\infty\cdot x$.
In particular, $x$ is nonzero.
Choose $x'\in S$ nonzero such that $x'\ll x$.
By claim~1, we can find $s\in S_\soft$ nonzero with $s\leq x'$.
This implies $s\ll x$.
Moreover, we get
\[
a+s\ll b'+x\leq b,
\]
which proves the claim.

Claim 3:
For every $a\in S_\soft$, there exists a sequence $(a_k)_k$ in $S_\soft$ that is rapidly increasing in $S$ and such that $a=\sup_k a_k$.
Since $S$ satisfies \axiomO{2}, it is enough to show that for every $a'\in S$ satisfying $a'\ll a$, there exists $b\in S_\soft$ such that $a'\leq b\ll a$.
This follows directly from claim 2 and \autoref{prp:softAbsorbing}(2).

We can now apply \autoref{lem:density} to deduce that $S_\soft$ is a $\CatCu$-semigroup.
To verify \axiomO{5} for $S_\soft$, let $a',a,b',b,c\in S_\soft$ satisfy
\[
a+b \leq c,\quad
a'\ll a,\quad
b'\ll b.
\]
Applying claim~2 for $a'\ll a$, choose $s\in S_\soft$ with $a'+s\ll a$.
Then, using \axiomO{5} in $S$, we obtain an element $x\in S$ such that
\[
(a'+s)+x \leq c \leq a+x,\quad
b' \leq x.
\]
Set $d:=s+x$, which is soft by \autoref{prp:softAbsorbing}\enumStatement{2}.
Then
\[
a'+d
=a'+s+x
\leq c
\leq a+x
\leq a+d,\quad
b'\leq x\leq d,
\]
which show that $d$ has the desired properties to verify \axiomO{5} for $S_\soft$.

Finally, to verify \axiomO{6} for $S_\soft$, let $a',a,b,c\in S_\soft$ satisfy
\[
a'\ll a\leq b+c.
\]
Without loss of generality, we may assume that the elements $a',a,b$ and $c$ are nonzero.
Using \axiomO{6} for $S$, choose $e,f\in S$ such that
\[
a'\leq e+f,\quad
e\leq a,b,\quad
f\leq a,c.
\]
If $e$ and $f$ are soft, then these elements have the desired properties to verify \axiomO{6} for $S_\soft$.
Let us assume that $e$ is not soft.
By \autoref{prp:softNoncpctSimple}, this implies that $e$ is compact.
Using \axiomO{5} for $S$, this implies that we can choose elements $x_1,x_2\in S$ such that
\[
e+x_1 = a,\quad
e+x_2 = b.
\]
Since $a$ and $b$ are not compact, $x_1$ and $x_2$ are nonzero elements.
By \autoref{prp:downwards}, we can find $\tilde{x}\in S$ nonzero satisfying $\tilde{x}\leq x_1, x_2$.
Then, by claim~1, we can find $x\in S_\soft$ nonzero such that $x\leq \tilde{x}$ and hence $x\leq x_1,x_2$.
By \autoref{prp:softAbsorbing}, the element $e+x$ is soft.
Moreover, we get
\[
e+x \leq e+x_1 = a,\quad
e+x \leq e+x_2 = b.
\]
An analogous argument works in the case that $f$ is not soft.
\end{proof}

\vspace{5pt}
\section{Predecessors, after Engbers}
\label{sec:predecessors}

In \cite{Eng14PrePhD}, Engbers develops a theory of \emph{predecessors} of compact elements in Cuntz semigroups of simple, stably finite \ca{s}.
His work is based on results of a problem session at a workshop on Cuntz semigroups held at the American Institute of Mathematics (AIM) in 2009.
Engbers attributes the problem to J.~Cuntz and mentions that several participants contributed to the solution, notably N.~C.~Phillips.
Using algebraic methods, we obtain a weaker version of his results;
see \autoref{thm:predecessors}.
First, we recall the following Glimm-halving result of Robert:

\begin{prp}[{Robert, \cite[Proposition~5.2.1]{Rob13Cone}}]
\label{prp:GlimmHalving}
Let $S$ be a simple, nonelemen\-tary $\CatCu$-semigroup satisfying \axiomO{5} and \axiomO{6}.
Then, for every nonzero $a\in S$, there exists $b\in S$ nozero with $2b\leq a$.
\end{prp}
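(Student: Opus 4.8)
The plan is to reduce the halving statement to a \emph{splitting} statement and then produce the split using almost algebraic order. First I note that since $S$ is simple and nonelementary, by the definition in \autoref{pgr:elementarySemigr} the semigroup $S$ is nonzero and has no minimal nonzero element; in particular the given nonzero $a$ is not minimal, so there is a nonzero $c\in S$ with $c\leq a$ and $c\neq a$. The key reduction is the observation that it suffices to find \emph{two} nonzero elements $b_1,b_2$ with $b_1+b_2\leq a$. Indeed, given such $b_1,b_2$, \autoref{prp:downwards} (which uses only simplicity and \axiomO{6}) provides a nonzero $b$ with $b\ll b_1$ and $b\ll b_2$; then $b\leq b_1$ and $b\leq b_2$, so by monotonicity of addition $2b\leq b_1+b_2\leq a$, as desired.

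To produce the two nonzero summands, I would use almost algebraic order in its classical form. Since $c\neq 0$, axiom \axiomO{2} lets me choose a nonzero $c'$ with $c'\ll c$ (take a nonzero term of a rapidly increasing sequence with supremum $c$). Because $S$ satisfies \axiomO{5}, it also satisfies \axiomO{5'} by \autoref{prp:o5o5'}. Applying \axiomO{5'} to the data $c'\ll c\leq a$ yields an element $x\in S$ with
\[
c'+x\leq a\leq c+x.
\]
The candidate summands are then $b_1:=c'$ and $b_2:=x$, which by the left inequality satisfy $b_1+b_2\leq a$; and $b_1=c'$ is nonzero by construction.

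The main obstacle, and the only place where nonelementarity is genuinely used, is verifying that the complement $x$ is nonzero. Here the right inequality $a\leq c+x$ does the work: if $x=0$ we would get $a\leq c$, which together with $c\leq a$ forces $a=c$, contradicting the choice $c\neq a$. Thus $x\neq 0$, both $b_1,b_2$ are nonzero, and combining with the reduction of the first paragraph finishes the argument. It is worth emphasizing why this cannot be cheated: for the elementary semigroup $\overline{\N}$ with $a=1$ the element is minimal and one cannot choose $c\lneq a$, so no complement $x\neq 0$ is available and indeed $2b\leq 1$ fails. This confirms that the hypotheses enter exactly as expected: \axiomO{2} supplies $c'$, \axiomO{5} (through \axiomO{5'}) supplies the complement, \axiomO{6} together with simplicity supplies the common lower bound via \autoref{prp:downwards}, and nonelementarity guarantees the strict inequality $c\lneq a$ that makes the complement nonzero.
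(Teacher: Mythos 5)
Your proof is correct. The paper does not reproduce a proof of this proposition---it is quoted from \cite[Proposition~5.2.1]{Rob13Cone}---but your argument is a valid, self-contained derivation from results established earlier in the paper: nonelementarity gives a nonzero $c\leq a$ with $c\neq a$, axiom \axiomO{2} gives a nonzero $c'\ll c$, axiom \axiomO{5} (via the implication to \axiomO{5'}) gives a complement $x$ with $c'+x\leq a\leq c+x$ whose nonvanishing follows from $c\neq a$ by antisymmetry, and \autoref{prp:downwards} turns the two nonzero summands into a single halving element.
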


\begin{prp}
\label{prp:smallerElement}
Let $S$ be a simple, countably-based $\CatCu$-semigroup satisfying \axiomO{5} and \axiomO{6}.
Then there exists a sequence $(g_n)_n$ in $S$ of nonzero elements with the following properties:
\beginEnumStatements
\item
The sequence is rapidly decreasing, that is, $g_n\gg g_{n+1}$ for each $n$.
\item
The sequence is cofinal among all nonzero elements, that is, for every nonzero $a\in S$, there exists $n\in\N$ such that $g_n\leq a$.
\end{enumerate}
\end{prp}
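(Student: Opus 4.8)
The plan is to build the sequence $(g_n)_n$ by a recursive halving procedure, using \autoref{prp:GlimmHalving} as the engine that lets us shrink any nonzero element, and the countable basis to guarantee cofinality. First I would fix a countable basis $B=\{b_0,b_1,b_2,\ldots\}$ for $S$ in the sense of \autoref{pgr:axiomsW}, which exists since $S$ is countably-based. The idea is to interleave two requirements: at each stage I force the next term $g_{n+1}$ to be $\ll g_n$ (so the sequence is rapidly decreasing), and I simultaneously force $g_n$ to lie below enough basis elements that eventually every nonzero $a$ dominates some $g_n$. Note that since $S$ is simple and nonzero, it is nonelementary precisely when it has no minimal nonzero element; I would first dispose of the elementary case, where $S\cong\overline{\N}$ or $S\cong\elmtrySgp{k}$ by \autoref{prp:elmtryO6}, and observe that in those cases no such rapidly decreasing sequence of nonzero elements exists unless one checks separately — in fact for $\overline{\N}$ and $\elmtrySgp{k}$ every nonzero element is compact, so $g_n\gg g_{n+1}$ with all terms nonzero is impossible, meaning the proposition implicitly requires $S$ nonelementary. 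I would therefore assume $S$ is nonelementary so that \autoref{prp:GlimmHalving} applies.

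**The recursion.**
I would construct $g_n$ together with an auxiliary nonzero element $h_n$ satisfying $g_{n+1}\ll h_{n+1}\ll g_n$. Start with any nonzero $g_0\in S$. Given $g_n$ nonzero, apply \autoref{prp:GlimmHalving} to obtain a nonzero $h\in S$ with $2h\leq g_n$; then use \axiomO{2} to pick $g_{n+1}$ nonzero with $g_{n+1}\ll h\leq g_n$, which gives $g_{n+1}\ll g_n$ and in fact $g_{n+1}\ll g_n$ with room to spare. This alone yields property (i). The real work is property (ii). To secure cofinality I would, at stage $n$, also arrange that $g_n\leq b_n$ \emph{whenever} this is compatible, i.e.\ whenever there is a nonzero element below $b_n$; concretely, using \autoref{prp:downwards} I can find a single nonzero element dominated simultaneously by $g_{n-1}$ (to keep the chain descending) and by $b_n$ (to handle the $n$-th basis element), then halve it. Because \autoref{prp:downwards} gives, for finitely many nonzero elements, a nonzero element way-below all of them, at each step I can absorb one more basis element into the lower bounds while maintaining the rapid-decrease chain.

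**Proving cofinality.**
To verify (ii), let $a\in S$ be nonzero. By \axiomO{2} choose a nonzero $a'\ll a$, and since $B$ is a basis, choose $b_m\in B$ with $a'\leq b_m\ll a$; in particular $b_m$ is nonzero, so $b_m$ lies above some nonzero element. The construction ensures that at the stage indexed by $m$ the element $g_m$ (or some later $g_{n}$ with $n\geq m$) was chosen below $b_m$, hence $g_n\leq b_m\leq a$. The bookkeeping here is the delicate point: I must interleave the "descend" requirement and the "get below $b_n$" requirement so that each basis element is eventually treated, which is a standard back-and-forth indexing but needs care because halving via \autoref{prp:GlimmHalving} may force me to pass to a strictly smaller element that is no longer below the basis element I wanted.

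**Main obstacle.**
The hard part will be reconciling the two competing demands: rapid decrease ($g_{n+1}\ll g_n$, which repeatedly shrinks the element) versus staying below a prescribed basis element $b_m$ for cofinality. The clean way to handle this, which I would adopt, is to use \autoref{prp:downwards} at each stage to find a nonzero element below \emph{both} the current $g_n$ and the target $b_{n}$ simultaneously, and only then halve; since \autoref{prp:downwards} produces an element way-below all the given ones, the descent and the basis-domination are achieved at one stroke, and nonzeroness is preserved because the produced element is nonzero. I expect the remaining verifications — that the resulting sequence is genuinely rapidly decreasing and that every nonzero $a$ dominates some $g_n$ — to follow routinely from the way-below interpolation afforded by \axiomO{2} and the defining property of the basis, so the essential content is concentrated in setting up this recursion correctly.
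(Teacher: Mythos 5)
Your eventual construction --- at each stage invoke \autoref{prp:downwards} to produce a nonzero element compactly contained in both the current term $g_n$ and the next basis element, and observe that cofinality follows because every nonzero $a$ dominates a nonzero basis element --- is precisely the paper's proof. The detour through \autoref{prp:GlimmHalving} is superfluous: \autoref{prp:downwards} already delivers a nonzero element way-below finitely many prescribed nonzero elements, so there is nothing left to ``halve''.

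The genuine problem is your preliminary reduction to the nonelementary case. You assert that for $\overline{\N}$ and $\elmtrySgp{k}$ ``every nonzero element is compact, so $g_n\gg g_{n+1}$ with all terms nonzero is impossible,'' and conclude that the proposition implicitly excludes elementary semigroups. This is false: rapidly decreasing only means $g_{n+1}\ll g_n$, not strict decrease, so the constant sequence $g_n=p$ at a compact element $p$ satisfies $g_{n+1}\ll g_n$; taking $p$ to be the minimal nonzero element of $\overline{\N}$ or $\elmtrySgp{k}$ gives a sequence satisfying both (i) and (ii). Hence the statement does hold in the elementary case, and your proof as written leaves that case uncovered while resting on an incorrect claim about it. The fix is simply to drop the nonelementarity reduction altogether: \autoref{prp:downwards} --- unlike \autoref{prp:GlimmHalving} --- requires only simplicity and \axiomO{6}, so the inductive construction $g_{n+1}\ll g_n, a_0,\dots,a_{n+1}$ over a countable basis $\{a_n\}$ of nonzero elements goes through for every simple, countably-based $S$ with no case distinction. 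Once you discard the halving step, the spurious need for nonelementarity disappears with it.
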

\begin{proof}
Since $S$ is countably-based, we can choose a countable set of nonzero elements $\{a_n\}_{n\in\N} \subset S$ that is dense in the sense of \autoref{pgr:axiomsW}.
We inductively construct the sequence $(g_n)_n$ such that for each $n$ we have
\[
g_{n+1} \ll g_n,a_0,a_1,\dots,a_{n+1}.
\]
We start by letting $g_0\in S$ be any nonzero element satisfying $g_0\ll a_0$.
Assume we have constructed $g_k$ for all $k\leq n$.
By \autoref{prp:downwards}, we can find $g_{n+1}$ with the desired properties.
By construction, the sequence $(g_n)_n$ is rapidly decreasing.
Finally, let $a\in S$ be nonzero.
Since $\{a_n\}_n$ is a basis, we can find $n$ with $a_n\leq a$.
It follows $g_n\leq a$, as desired.
\end{proof}

In \cite{Eng14PrePhD}, Engbers introduced the notion of a \emph{predecessor} \index{terms}{predecessor} of a compact element $p$ in a simple $\CatCu$-semigroup $S$.
It is defined as
\[
\gamma(p) =\max \left\{ x\in S : x<p \right\},
\]
provided this maximum exists.
Engbers shows the existence of predecessors for Cuntz semigroups of separable, simple and stably finite \ca{s}, and for these semigroups he proves the following properties:
\beginEnumStatements
\item
For every nonzero $z\in S$, we have $p\leq \gamma(p)+z$.
\item
For every noncompact $y\in S$, we have $\gamma(p)+y=p+y$.
\item
For every $\lambda\in F(S)$, we have $\lambda(\gamma(p))=\lambda(p)$.
\end{enumerate}

Using the axioms \axiomO{5} and \axiomO{6}, we can almost recover this result in the algebraic setting, by showing that elements in $\left\{ x\in S : x<p \right\}$ with these properties do exist.
However, we only get existence of the maximum (and thus uniqueness of predecessors) in the presence of weak
cancellation or almost unperforation.

Since for a compact element $p$, the induced map $\hat{p}$ in $\Lsc(F(S))$ is continuous, the result gives us a \emph{noncompact} element with the same property.

\begin{lma}
\label{lma:predecessors}
Let $S$ be a simple, countably-based, nonelementary, stably finite $\CatCu$-semigroup satisfying \axiomO{5} and \axiomO{6}.
Then, for every nonzero compact $p\in S$, there exists a noncompact $c\in S$ with $c<p$ and such that $p\leq c+z$ for every nonzero $z\in S$.
\end{lma}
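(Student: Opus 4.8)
The plan is to realize $c$ as the supremum of a rapidly increasing sequence of elements lying strictly below the compact element $p$, whose ``defects'' shrink along the cofinal family provided by \autoref{prp:smallerElement}. First fix a sequence $(g_n)_n$ of nonzero elements as in \autoref{prp:smallerElement}, so that it is rapidly decreasing and cofinal among all nonzero elements. The crucial reduction is that, by cofinality, an element $c\le p$ satisfies $p\le c+z$ for \emph{every} nonzero $z$ if and only if $p\le c+g_n$ for every $n$ (given nonzero $z$, choose $n$ with $g_n\le z$). I would then construct inductively nonzero elements $c_n$ and $s_n$ with
\[
c_n\ll c_{n+1}\ll p,\qquad c_n+s_n\le c_{n+1},\qquad s_n\neq 0,\qquad p\le c_n+g_n,
\]
and set $c:=\sup_n c_n$, which exists by \axiomO{1}.

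Granting such a sequence, the remaining verifications are clean. Clearly $c\le p$, and $p\le c_n+g_n\le c+g_n$ for all $n$, so $p\le c+z$ for every nonzero $z$ by the reduction above. To see $c\neq p$: if $c=p$, then compactness of $p$ together with $p=\sup_n c_n$ forces $p\le c_N$, hence $c_N=p$, for some $N$; but then $c_{N+1}\ge c_N+s_N=p+s_N$ while $c_{N+1}\le c=p$, so $p+s_N=p$ with $s_N\neq 0$, making $p$ infinite. By \autoref{prp:elementInSimple} this gives $p=\infty$, so $\infty$ is compact, contradicting stable finiteness via \autoref{prp:simpleSF}. Thus $c<p$. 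Finally $c$ is noncompact: if $c\ll c$, then since $c\le p$ with $c$ compact, \axiomO{5'} (which holds by \autoref{prp:o5o5'}) applied to $c\ll c\le p$ yields $w$ with $c+w=p$, and $w\neq 0$ as $c\neq p$. Using \autoref{prp:GlimmHalving} pick nonzero $y$ with $2y\le w$ and then, by \axiomO{2}, a nonzero $z_0\ll y$, so that $z_0+z_0\le 2y\le w$. By domination $p\le c+z_0$, while $z_0\le w$ gives $c+z_0\le c+w=p$; hence $c+z_0=p$, and then $(c+z_0)+z_0\le c+w=c+z_0$ shows $c+z_0$ is infinite, so again $p=\infty$ is compact, a contradiction. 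Therefore $c$ is noncompact, as required. (Equivalently one may invoke \autoref{prp:softNoncpctSimple}.)

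It remains to carry out the inductive step, which is the heart of the matter, including the base case $n=0$ of producing a first $c_0\ll p$ with $p\le c_0+g_0$ and a nonzero room element. Given $c_n\ll p$ with $p\le c_n+g_n$, I would apply \axiomO{6} to $p\ll p\le c_n+g_n$ to obtain a decomposition $p\le e+f$ with $e\le c_n$, $f\le g_n$ and $e\ll p$, so that $f$ records the current defect. The goal is then to transfer the bulk of $f$ into the $c$-part, obtaining $c_{n+1}$ with $c_n\le c_{n+1}\ll p$, to leave a residual defect $\le g_{n+1}$, and simultaneously to split off a genuinely nonzero summand $s_n$ (manufactured by Glimm-halving, \autoref{prp:GlimmHalving}, and \autoref{prp:downwards}) so that $c_n+s_n\le c_{n+1}$. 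Here simplicity enters decisively: since $g_{n+1}$ is full, the prescribed small element is comparable (up to multiples) to any piece of $f$, which is what should allow the residual defect to be pushed below the single copy $g_{n+1}$ rather than merely below some multiple of it.

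I expect this last point to be the main obstacle: one must control, \emph{simultaneously}, that $c_{n+1}$ stays way-below $p$, that it dominates $c_n$ by a nonzero amount, and that the leftover defect is at most the one prescribed element $g_{n+1}$. Any single one of these is routine (\axiomO{5} and \axiomO{6} produce complements and Riesz-type splittings, Glimm-halving produces nonzero room, and simplicity produces fullness), but coordinating all three through one application of the axioms requires careful bookkeeping, and is where the argument is genuinely delicate. A natural way to ease this is to arrange the refined sequence $(g_n)_n$ to satisfy a halving gap $g_{n+1}+v_n\le g_n$ with $v_n\neq0$ (obtainable from \autoref{prp:GlimmHalving} and \autoref{prp:downwards}), so that the excess freed up at each stage is visibly nonzero and can serve as the room element $s_n$.
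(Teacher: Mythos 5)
The surrounding architecture of your argument is sound and matches the paper's: fix the rapidly decreasing cofinal sequence $(g_n)_n$ from \autoref{prp:smallerElement}, build an increasing sequence $(c_n)_n$ below $p$ with $p\leq c_n+g_n$, set $c:=\sup_n c_n$, and then the reduction to $p\leq c+g_n$, the strictness $c<p$, and the noncompactness of $c$ via \axiomO{5'} and Glimm halving all go through exactly as you describe. But the inductive construction itself — which you correctly call the heart of the matter — is not carried out, and the route you sketch for it does not close. Applying \axiomO{6} to $p\ll p\leq c_n+g_n$ and then invoking fullness of $g_{n+1}$ only lets you bound the residual defect by a \emph{multiple} $mg_{n+1}$, and since $m$ depends on the stage you cannot repair this by re-indexing the $g_n$'s in advance; this is precisely the obstruction you flag in your last paragraph, and it is a genuine gap, not a bookkeeping issue.

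The paper resolves exactly this difficulty with a single application of the strengthened axiom \axiomO{5} (the form with the extra clause $b'\leq x$), and this is the idea your proposal is missing. One maintains the two-sided estimate $g_{n+1}+\tilde{c}_n\leq p\leq g_n+c_n$ together with interpolants $c_n\ll x_n\ll\tilde{c}_n$, and at the inductive step applies \axiomO{5} to $g_{n+1}+\tilde{c}_n\leq p$ with the way-below pairs $g_{n+2}\ll g_{n+1}$ and $x_n\ll\tilde{c}_n$: the complement $\tilde{c}_{n+1}$ it produces simultaneously satisfies $g_{n+2}+\tilde{c}_{n+1}\leq p$ (the new lower defect bound), $p\leq g_{n+1}+\tilde{c}_{n+1}$ (the improved upper defect bound, by the single element $g_{n+1}$ rather than a multiple), and $x_n\leq\tilde{c}_{n+1}$ (domination of the previous approximant). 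Compactness of $p$ then lets one pass back to a $c_{n+1}\ll\tilde{c}_{n+1}$ with $p\leq g_{n+1}+c_{n+1}$. All three constraints you worried about coordinating are delivered by one axiom application; no appeal to \axiomO{6}, fullness, or auxiliary room elements $s_n$ is needed (strictness $c<p$ follows instead from $g_{n+1}+c_n\leq p$, since $c=p$ would force $p=c_n$ and hence $g_{n+1}+p\leq p$, contradicting stable finiteness). To complete your proof you would need to replace your \axiomO{6}-based sketch with this \axiomO{5} argument, or find another way to force the residual defect below the single element $g_{n+1}$.
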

\begin{proof}
Using \autoref{prp:smallerElement}, we can choose a rapidly decreasing sequence $(g_n)_n$ in $S$ that is cofinal among the nonzero elements of $S$.
By reindexing, if necessary, we may assume $g_0 \leq p$.
We will inductively construct $c_n,x_n,\tilde{c}_n\in S$ satisfying
\[
c_n\ll x_n\ll \tilde{c}_n,\quad
c_n\leq c_{n+1},\quad
x_n\leq \tilde{c}_{n+1},\quad
g_{n+1}+\tilde{c}_n\leq p \leq g_{n}+c_n,
\]
for each $n\in\N$.
First, we have
\[
g_1 \ll g_0 \leq p.
\]
Applying axiom \axiomO{5}, we find $\tilde{c}_0$ such that
\[
g_1+\tilde{c}_0\leq p \leq g_0+\tilde{c}_0.
\]
Since $p$ is compact, we can choose $c_0\in S$ with $c_0\ll \tilde{c}_0$ and $p \leq g_0+c_0$.
Find $x_0\in S$ satisfying $c_0\ll x_0\ll \tilde{c}_0$.

For the induction, assume that we have constructed $c_k,x_k$ and $\tilde{c}_k$ for $k\leq n$.
Thus, we have
\[
g_{n+1}+\tilde{c}_n\leq p,\quad
g_{n+2}\ll g_{n+1},\quad
x_n\ll\tilde{c}_n.
\]
Then, by applying axiom \axiomO{5}, we can choose $\tilde{c}_{n+1}\in S$ such that
\[
g_{n+2}+\tilde{c}_{n+1}\leq p \leq g_{n+1}+\tilde{c}_{n+1},\quad
x_n\leq \tilde{c}_{n+1}.
\]
Then $c_n\ll \tilde{c}_{n+1}$.
Using also that $p$ is a compact element, we can find $c_{n+1}\in S$ with $c_n\ll c_{n+1}\ll\tilde{c}_{n+1}$ and $p \leq g_{n+1}+c_{n+1}$.
Choose $x_{n+1}\in S$ such that $c_{n+1}\ll x_{n+1}\ll \tilde{c}_{n+1}$.

Note that the sequence $(c_n)_n$ is increasing.
Therefore, we may set
\[
c:=\sup_n c_n.
\]
Let us show that $c$ has the desired properties.

We first observe that $c<p$.
Indeed, it is clear that $c\leq p$.
To obtain a contradiction, assume $c=p$.
Then, since $p$ is compact and $c=\sup_nc_n$, we would have $p=c_n$ for some $n$.
But we have $g_{n+1}+c_n\leq p$ and $g_{n+1}$ is nonzero.
Thus, $p$ would be an infinite compact element, which is not possible since $S$ is stably finite.

Next, let $z\in S$ be nonzero.
Choose $n\in\N$ such that $z\geq g_n$.
It follows
\[
c+z\geq c+g_n\geq c_n+g_n\geq p,
\]
as desired.

Finally, let us show that $c$ is not compact.
Indeed, if $c$ were compact, then by \axiomO{5} there could find $y\in S$ satisfying $c+y=p$.
Since $c<p$, the element $y$ is nonzero.
By \autoref{prp:GlimmHalving}, we can choose $z\in S$ nonzero such that $2z\leq y$.
As shown above, this implies $p\leq c+z$.
But then
\[
p+z \leq c+z+z \leq p,
\]
which is impossible since $S$ is stably finite.
\end{proof}

\begin{prp}
\label{prp:predecessors}
Let $S$ be a simple, nonelementary, stably finite $\CatCu$-sem\-i\-group satisfying \axiomO{5} and \axiomO{6}.
Let $p\in S$ be compact, and let $c\in S$ be nonzero such that $c<p$.
Consider the following conditions:
\beginEnumStatements
\item
For every nonzero $z\in S$, we have $p\leq c+z$.
\item
The element $c$ is noncompact, and for every noncompact $y\in S$, we have $c+y=p+y$.
\item
For every $\lambda\in F(S)$, we have $\lambda(c)=\lambda(p)$.
\end{enumerate}
Then the following implications hold:
`\enumStatement{1}$\Leftrightarrow$\enumStatement{2}$\Rightarrow$\enumStatement{3}'.

Moreover, if $S$ has weak cancellation, then an element $c$ satisfying \enumStatement{1} or \enumStatement{2} is equal to the maximum of the set $\left\{ x : x<p \right\}$, and hence it is uniquely determined.

If $S$ is almost unperforated, then all three conditions are equivalent and the element $c$ satisfying \enumStatement{1}-\enumStatement{3} is uniquely determined.
\end{prp}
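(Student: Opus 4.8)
The plan is to prove the cycle \implStatements{1}{2}, \implStatements{2}{1}, \implStatements{2}{3} in general and then add the two regularity hypotheses separately. Two standing remarks organize everything: since $S$ is simple, its only quotients are $S$ and $\{0\}$, so $S$ is automatically residually stably finite, and by \autoref{prp:softNoncpctSimple} a nonzero element is soft if and only if it is noncompact. For \implStatements{1}{2} I would first check that $c$ is noncompact: if $c$ were compact, \axiomO{5'} (available by \autoref{prp:o5o5'}) gives $w$ with $c+w=p$, and $w\neq 0$ since $c<p$; choosing a nonzero $z$ with $2z\le w$ via \autoref{prp:GlimmHalving} and using \enumStatement{1} to get $p\le c+z$, one finds $p+z\le c+2z\le c+w=p$, contradicting that the compact element $p$ is finite (\autoref{prp:simpleSF}). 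Hence $c$ is soft. Then for any noncompact (hence soft) $y$ and any $y'\ll y$, \autoref{prp:softRSF} furnishes a necessarily nonzero $x$ with $y'+x\le y$ and $y\le\infty\cdot x$; applying \enumStatement{1} gives $p\le c+x$, so $p+y'\le c+(x+y')\le c+y$, and passing to the supremum over $y'\ll y$ (continuity of addition, \axiomO{4}) yields $p+y\le c+y$, while $c\le p$ gives the reverse, which is \enumStatement{2}. For \implStatements{2}{1}, given a nonzero $z$ I would use that iterating \autoref{prp:GlimmHalving} produces a nonzero soft $y\le z$ (as in the proof of \autoref{prp:softSimple}), whence $c+z\ge c+y=p+y\ge p$.

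For \implStatements{2}{3}, fix $\lambda\in F(S)$. The inequality $c\le p$ gives $\lambda(c)\le\lambda(p)$, and for the reverse I would (assuming $\lambda(c)<\infty$, the other case being trivial) Glimm-halve $c$ repeatedly to obtain nonzero $z_n$ with $\lambda(z_n)\to 0$, and let $n\to\infty$ in $\lambda(p)\le\lambda(c)+\lambda(z_n)$, which comes from \enumStatement{1}. Under the additional hypothesis that $S$ is almost unperforated I would close the loop with \implStatements{3}{1}: for a nonzero $z$, the pair $(p,c+z)$ satisfies condition \enumStatement{3} of \autoref{prp:comparisonRelations}, because $p\le\infty\cdot(c+z)=\infty$ and, whenever $\lambda(c+z)=1$, faithfulness of the nonzero functional $\lambda$ on the simple semigroup $S$ forces $\lambda(z)>0$, so $\lambda(p)=\lambda(c)=1-\lambda(z)<1$. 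That proposition then gives $p<_s^\ctsRel c+z$, and almost unperforation (\autoref{prp:almUnpCu}) yields $p\le c+z$, i.e.\ \enumStatement{1}. Thus all three conditions are equivalent, and in particular $c$ is soft by \enumStatement{2}.

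It remains to handle maximality and uniqueness, which use genuinely different tools. In the weakly cancellative case, given $x<p$ and any $x'\ll x$, I would apply \axiomO{5'} to $x'\ll x\le p$ to get $s$ with $x'+s\le p\le x+s$, where $s\neq 0$ because $x\neq p$; using \autoref{prp:downwards} I choose a nonzero $z\ll s$ and invoke \enumStatement{1} to obtain $p\le c+z$. Then $x'+s\le c+z$ with $z\ll s$, which is exactly the shape that the cancellation law of \autoref{rmk:addAxioms}(3) consumes, giving $x'\le c$; letting $x'\ll x$ vary yields $x\le c$. Hence $c$ is an upper bound of $\{x:x<p\}$ lying in it, so $c=\max\{x:x<p\}$ and is unique. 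In the almost unperforated case uniqueness is simpler: two elements satisfying the (now equivalent) conditions are soft with $\hat{c_1}=\hat p=\hat{c_2}$, so \autoref{prp:soft_comparison} gives $c_1\le c_2$ and $c_2\le c_1$.

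The main obstacle I anticipate is getting the cancellation argument pointed in the right direction: softness naturally produces inequalities of the unusable shape $a+t'\le b+t$ with $t'\ll t$ (the small copy on the left), so one cannot simply cancel from $x+y\le c+y$. The decomposition coming from \axiomO{5'}, which places the \emph{large} summand $s$ on the left of $x'+s\le c+z$ with $z\ll s$, is what makes weak cancellation applicable, and this is the delicate point. A secondary point is verifying the \emph{strict} functional inequality needed for condition \enumStatement{3} of \autoref{prp:comparisonRelations} in the implication \implStatements{3}{1}, where faithfulness of nonzero functionals on a simple $\CatCu$-semigroup is essential.
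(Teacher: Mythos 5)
Your proof is correct and follows essentially the same route as the paper's: the same $\axiomO{5}$-decomposition arguments for the equivalence of \enumStatement{1} and \enumStatement{2}, comparison by functionals via \autoref{prp:comparisonRelations} and \autoref{prp:soft_comparison} for the almost unperforated case, and cancellation of a nonzero remainder produced by \axiomO{5} for the weak-cancellation case. The only deviations are cosmetic and both variants are sound --- for instance you obtain \implStatements{2}{3} by a Glimm-halving limit where the paper cancels $\lambda(c)$ from the identity $p+c=2c$, and in the weak-cancellation step you shrink the remainder to $z\ll s$ and invoke the alternative formulation of weak cancellation from \autoref{rmk:addAxioms}(3), where the paper instead uses compactness of $p$ to upgrade $x'+t\leq p\leq c+t$ to $x'+t\ll c+t$.
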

\begin{proof}
In order to show that \enumStatement{1} implies \enumStatement{2}, let $c\in S$ satisfy the statement of \enumStatement{1}.
As shown at the end of the proof of \autoref{lma:predecessors}, we have that $c$ is necessarily noncompact.

Let $y\in S$ be noncompact.
To show $c+y=p+y$, we follow an argument similar to the one in \cite[Theorem~5.7]{Eng14PrePhD}, which we include for completeness.
We clearly have $c+y\leq p+y$.
For the converse inequality, it is enough to show $p+y'\leq c+y$ for every $y'\in S$ satisfying $y'\ll y$.
Given such $y'$, choose $y''\in S$ satisfying $y'\ll y''\ll y$.
Applying \axiomO{5} in $S$, we choose $z\in S$ such that
\[
y'+z \leq y \leq y''+z.
\]
Notice that $z$ is nonzero, as otherwise $y$ would be compact.
Using the assumption at the first step, we deduce
\[
y'+p\leq y'+c+z\leq y+c,
\]
as desired.

Next, to show that \enumStatement{2} implies \enumStatement{1}, let $z\in S$ be nonzero.
We need to show $p\leq c+z$.
By assumption, this is clear if $z$ is noncompact.
We may therefore assume that $z$ is compact.
Choose a noncompact, nonzero element $y\in S$ with $y<z$.
Then
\[
p\leq p+y=c+y\leq c+z,
\]
as desired.

Next, to show that \enumStatement{2} implies \enumStatement{3}, let $\lambda\in F(S)$.
We distinguish two cases.
In the first case, we assume $\lambda(p)<\infty$.
Then $\lambda(c)<\infty$.
Applying $\lambda$ to the equality $p+c=2c$, we get
\[
\lambda(p)+\lambda(c)=\lambda(c)+\lambda(c).
\]
Then we can cancel $\lambda(c)$ on both sides and obtain $\lambda(p)=\lambda(c)$, as desired.

If the other case, we assume $\lambda(p)=\infty$.
It follows that $\lambda$ is equal to $\lambda_{\infty}$, the functional that takes value $\infty$ everywhere except at $0$.
Then $\lambda(c)=\infty=\lambda(p)$, as desired.

Suppose now that $S$ has weak cancellation, and that $c\in S$ satisfies \enumStatement{1}-\enumStatement{2}.
Assume $x\in S$ satisfies $x<p$.
We need to show $x\leq c$.
For this, it is enough to show $x'\leq c$ for every $x'\in S$ satisfying $x'\ll x$.
Let $x'\in S$ satisfy $x'\ll x$.
By \axiomO{5}, we can choose $t\in S$ such that
\[
x'+t \leq p \leq x+t.
\]
Then $t\neq 0$ as $S$ is stably finite.
Therefore, we get
\[
x'+t \leq p\ll p\leq c+t.
\]
Applying weak cancellation, we obtain $x'\leq c$, as desired.

Finally, assume that $S$ is almost unperforated.
In order to show that \enumStatement{3} implies \enumStatement{1}, let $z\in S$ be nonzero.
By assumption, we have $\hat{p}=\hat{c}$.
Since $S$ is simple and $z$ is nonzero, it is straightforward to check
\[
\hat{p}<_s \widehat{c+z}.
\]
By \autoref{prp:comparisonRelations}, we get $p<_s^\ctsRel c+z$.
Since $p$ is compact and $S$ is almost unperforated, it follows $p\leq c+z$.
Moreover, the element is uniquely determined by \autoref{prp:soft_comparison} and \autoref{prp:softNoncpctSimple}.
\end{proof}

\begin{thm}[{Engbers, \cite[Theorem~5.15]{Eng14PrePhD}}]
\label{thm:predecessors}
Let $S$ be a countably-based, simple, nonelementary $\CatCu$-semigroup satisfying \axiomO{5}, \axiomO{6} and weak cancellation.
Then every compact $p\in S$ has a predecessor $\gamma(p)$, uniquely determined by the property that $p\leq\gamma(p)+z$ for every nonzero $z\in S$.
\end{thm}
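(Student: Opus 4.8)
The plan is to realize the predecessor as $\gamma(p)=\max\{x\in S:x<p\}$ by first producing a concrete candidate and then recognizing it as this maximum. The two tools for this are \autoref{lma:predecessors}, which supplies existence, and \autoref{prp:predecessors}, which supplies maximality and uniqueness under weak cancellation. Both of them, however, require $S$ to be \emph{stably finite}, a hypothesis that is conspicuously absent from the statement. So the first—and essentially the only nonroutine—step will be to deduce stable finiteness from simplicity, nonelementarity and weak cancellation; everything afterward is assembly of the cited results. We may also assume $p\neq 0$, the case $p=0$ being trivial.

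To establish stable finiteness I would argue by contradiction. Since $S$ is simple and nonzero (it is nonelementary), it possesses a largest element $\infty$ (see \autoref{pgr:finiteSemigr}), and by \autoref{prp:simpleSF} the failure of stable finiteness forces $\infty$ to be compact, i.e.\ $\infty\ll\infty$. Then for an arbitrary $a\in S$ one has $a+\infty=\infty=0+\infty$, and because $\infty\ll\infty$ this says $a+\infty\ll 0+\infty$. Weak cancellation, applied with the common summand $\infty$, then yields $a\ll 0$, whence $a\leq 0$ and $a=0$. This would make $S=\{0\}$, contradicting nonelementarity; hence $S$ is stably finite.

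With this in hand, I would apply \autoref{lma:predecessors} to the nonzero compact element $p$ to obtain a noncompact $c\in S$ with $c<p$ and with $p\leq c+z$ for every nonzero $z\in S$. In the terminology of \autoref{prp:predecessors}, this $c$ satisfies condition~(1). Since $S$ moreover has weak cancellation, the relevant clause of \autoref{prp:predecessors} asserts that any element satisfying~(1) equals $\max\{x\in S:x<p\}$ and is thereby uniquely determined. In particular the maximum exists and coincides with $c$, so $\gamma(p):=\max\{x\in S:x<p\}$ is a well-defined predecessor of $p$.

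The stated characterization is then immediate, since the defining property ``$p\leq\gamma(p)+z$ for every nonzero $z\in S$'' is precisely condition~(1), and the weak-cancellation part of \autoref{prp:predecessors} shows this condition pins down a unique element, namely the maximum $\gamma(p)$. I expect the only real obstacle to be the preliminary reduction to the stably finite case; once that is secured, \autoref{lma:predecessors} and \autoref{prp:predecessors} do all the remaining work.
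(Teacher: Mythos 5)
Your proof is correct and follows essentially the same route as the paper, whose entire proof is the single sentence that the result follows from \autoref{lma:predecessors} and \autoref{prp:predecessors}. Your preliminary deduction of stable finiteness from simplicity, nonelementarity and weak cancellation (via \autoref{prp:simpleSF}: a non-stably-finite simple semigroup has compact $\infty$, and then $a+\infty\ll 0+\infty$ cancels to give $a=0$) is in fact a necessary step that the paper's terse proof leaves implicit, since both cited results assume stable finiteness while the theorem's hypotheses do not include it; your argument for that step is valid.
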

\begin{proof}
This follows from \autoref{lma:predecessors} and \autoref{prp:predecessors}.
\end{proof}

\vspace{5pt}
\section{Algebraic semigroups}
\label{sec:algebraicSemigp}

In this section, we study $\CatCu$-semigroup that have a basis of compact elements.
Such semigroups are called `algebraic'.
Important examples are given by Cuntz semigroups of \ca{s} with real rank zero.

Given a \pom\ $M$, we show how to construct an algebraic $\CatCu$-semigroup $\Cu(M)$ such that the semigroup of compact elements in $\Cu(M)$ can be naturally identified with $M$;
see \autoref{prp:algebraicSemigp}.
This establishes an equivalence between the category $\CatPom$ of \pom{s}, and the full subcategory of $\CatCu$ consisting of algebraic $\CatCu$-semigroups;
see \autoref{pgr:equivalenceAlgCu}.
In \autoref{prp:propertiesAlgebraic}, we will see how certain properties of $M$ translate to properties of $\Cu(M)$.
Then we provide a version of the Effros-Handelman-Shen theorem by showing that a $\CatCu$-semigroup is an inductive limit of simplicial $\CatCu$-semigroups if and only if it is weakly cancellative, unperforated, algebraic and satisfies \axiomO{5} and \axiomO{6};
see \autoref{prp:EffHandShen_CuVersion}.
This also characterizes the Cuntz semigroups of separable AF-algebras.

\begin{dfn}
\label{dfn:algebraicSemigp}
\index{terms}{Cu-semigroup@$\CatCu$-semigroup!algebraic}
\index{terms}{algebraic Cu-semigroup@algebraic $\CatCu$-semigroup}

A $\CatCu$-semigroup $S$ is \emph{algebraic} if every element in $S$ is the supremum of an increasing sequence of compact elements of $S$.
\end{dfn}

\begin{rmks}
\label{rmk:algebraicSemigp}
(1)
\autoref{dfn:algebraicSemigp} is following the convention of domain theory to call a continuous partially ordered set \emph{algebraic} if its compact elements form a basis;
see \cite[Definition~I-4.2, p.~115]{GieHof+03Domains}.

(2)
If $A$ is a \ca\ with real rank zero, then $\Cu(A)$ is algebraic.
For \ca{s} with stable rank one, the converse holds;
see \cite[Corollary~5]{CowEllIva08CuInv}.
\end{rmks}

\begin{pgr}
\label{pgr:algebraicSemigp}
Given a \pom\ $M$, it is easy to see that the partial order $\leq$ is an auxiliary relation in the sense of \autoref{pgr:axiomsW}.
In fact, it is the strongest auxiliary relation on $M$.
Moreover, it is straightforward to check that $(M,\leq)$ is a $\CatW$-semigroup.
We denote its $\CatCu$-completion by $\Cu(M)$.
In \autoref{prp:algebraicSemigp}, we will see that $\Cu(M)$ is algebraic and that every algebraic $\CatCu$-semigroup arises this way.

Every $\CatPom$-morphism $f\colon M\to N$ between \pom{s} induces a $\CatCu$-morphism $\Cu(f)\colon\Cu(M)\to\Cu(N)$.
Thus, we obtain a functor
\[
\Cu\colon \CatPom\to\CatCu,\quad
M\mapsto\Cu(M),\quad
\txtFA M\in\CatPom.
\]

Conversely, given a $\CatCu$-semigroup $S$, we let $S_c$ denote the set of compact elements in $S$.
It is easy to see that $S_c$ is a submonoid of $S$ and we equip it with the order induced by $S$.
It follows that $S_c$ is a \pom.
Moreover, every $\CatCu$-morphism $f\colon S\to T$ between $\CatCu$-semigroups restricts to a $\CatPom$-morphism from $S_c$ to $T_c$.
Hence, we obtain a functor
\[
\CatCu\to\CatPom,\quad
S\mapsto S_c,\quad
\txtFA S\in\CatCu.
\]
\end{pgr}

\begin{prp}
\label{prp:algebraicSemigp}
\beginEnumStatements
\item
Let $M$ be a \pom.
Then $\Cu(M)$ as introduced in \autoref{pgr:algebraicSemigp} is an algebraic $\CatCu$-semigroup.
Moreover, there is a natural identification of $M$ with the \pom\ of compact elements in $\Cu(M)$.
\item
Let $S$ be an algebraic $\CatCu$-semigroup.
Consider the \pom\ $S_c$ of compact elements in $S$.
Then there is a natural isomorphism $S\cong\Cu(S_c)$.
\end{enumerate}
\end{prp}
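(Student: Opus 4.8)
The plan is to realise both statements through the explicit model of the $\CatCu$-completion $\Cu(M)=\gamma(M,\leq)$ from \autoref{prp:CuificationExists} together with its canonical $\CatW$-morphism $\alpha\colon M\to\Cu(M)$, and then to invoke uniqueness of completions. For (1), since $(M,\leq)$ is a $\CatW$-semigroup (as noted in \autoref{pgr:algebraicSemigp}), \autoref{rmk:Cuification}(2) immediately gives that $\alpha$ is an order-embedding; being additive and preserving $0$, it is a \pom-embedding. The key simplification is that the auxiliary relation here is $\leq$ itself, so $a^\prec=\{x:x\leq a\}$ has greatest element $a$, the constant sequence $(a,a,\ldots)$ is cofinal, and $\alpha(a)=[(a,a,\ldots)]$. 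Reading off the relation $\ssubset$ that induces $\ll$ on $\gamma(M)$ (see \autoref{prp:CuificationExists}), one sees at once that $\alpha(a)\ll\alpha(a)$, so every element of $\alpha(M)$ is compact.

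The two remaining points of (1) are that $\alpha(M)$ exhausts $\Cu(M)_c$ and that $\Cu(M)$ is algebraic. For the first, if $s\ll s$ in $\Cu(M)$, I would apply the ``dense image'' property (condition (ii) of \autoref{prp:CuificationExists}) with $b'=b=s$ to produce $a\in M$ with $s\leq\alpha(a)\leq s$, whence $s=\alpha(a)$. For algebraicity, given $s\in\Cu(M)$ write $s=[\vect{a}]$ with $\vect{a}=(a_1,a_2,\ldots)$ increasing in $M$; the elements $\alpha(a_n)=[(a_n,a_n,\ldots)]$ are compact and increasing, and a short unwinding of the order $\subset$ on $\overline{M}$ shows directly that $s$ is their least upper bound, so $s=\sup_n\alpha(a_n)$ is a supremum of compacts. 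Transporting the \pom-structure along $\alpha$ then identifies $M$ with $\Cu(M)_c$, finishing (1).

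For (2), the strategy is to exhibit the inclusion of compact elements as a $\CatCu$-completion of $(S_c,\leq)$. Equipping $S_c$ with the order induced from $S$ makes it a \pom, hence a $\CatW$-semigroup whose $\CatCu$-completion is by definition $\Cu(S_c)$. I would check that the inclusion $j\colon(S_c,\leq)\to(S,\ll)$ is a $\CatW$-morphism: it is a \pom-morphism; it preserves the auxiliary relation because $p\leq q$ with $p$ compact forces $p\ll p\leq q$, so $p\ll q$ by property (ii) of auxiliary relations in \autoref{pgr:axiomsW}; and condition (M) holds trivially by taking $p'=p$. It is an order-embedding since $S_c$ carries the induced order.

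The crux of (2) is that $j$ has dense image, and this is exactly where the hypothesis of algebraicity (\autoref{dfn:algebraicSemigp}) is used: given $b'\ll b$ in $S$, write $b=\sup_n p_n$ with $(p_n)_n$ increasing and each $p_n$ compact; then $b'\ll b$ yields an index $n$ with $b'\leq p_n\leq b$, and $p_n\in S_c$ is the required witness. By \autoref{rmk:Cuification}(3), an order-embedding with dense image from a $\CatW$-semigroup into a $\CatCu$-semigroup is a $\CatCu$-completion, so $j$ is a $\CatCu$-completion of $(S_c,\leq)$; since $\Cu(S_c)$ is by definition another, the uniqueness of $\CatCu$-completions stated after \autoref{dfn:Cuification} furnishes a canonical isomorphism $S\cong\Cu(S_c)$ intertwining $j$ with the completion map, and naturality follows from this universal characterisation. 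I expect the main obstacle to be purely organisational rather than deep: one must set up correctly the interplay between the two auxiliary relations in play --- the order $\leq$ on $S_c$ and the way-below relation $\ll$ on $S$ --- in order to see that $j$ really is a $\CatW$-morphism; once that is in place, both parts collapse to the universal property of the completion plus a single application of algebraicity (dually, surjectivity onto the compacts in (1)).
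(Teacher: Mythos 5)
Your argument is correct and follows essentially the same route as the paper: part (1) rests on \autoref{rmk:Cuification}(2) for the order-embedding, preservation of the auxiliary relation (equivalently your explicit check with constant sequences) for compactness of $\alpha(a)$, and the dense-image property of \autoref{prp:CuificationExists} for both surjectivity onto $\Cu(M)_c$ and algebraicity. The paper leaves part (2) to the reader, and your proof --- verifying that the inclusion $(S_c,\leq)\hookrightarrow(S,\ll)$ is a $\CatW$-morphism and an order-embedding with dense image (the latter being exactly where algebraicity enters), then invoking \autoref{rmk:Cuification}(3) and uniqueness of $\CatCu$-completions --- is precisely the intended argument.
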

\begin{proof}
Let us show \enumStatement{1}.
Consider the natural map $\alpha\colon M\to\Cu(M)$ from $M$ to its $\CatCu$-completion.
Since $(M,\leq)$ is a $\CatW$-semigroup, the map $\alpha$ is an order-embedding;
see \autoref{rmk:Cuification}(2).

Given $a\in M$, we have $a\leq a$ and therefore $\alpha(a)\ll\alpha(a)$, showing that $\alpha$ maps $M$ to the compact elements of $S$.
On the other hand, let $s\in S$ be a compact element.
By \autoref{thm:Cuification}(1,ii), there exists $a\in M$ such that $s\leq \alpha(a)\leq s$, and hence $s=\alpha(a)$.
This shows that $\alpha$ is an order-embedding that maps $M$ onto $S_c$.
It also follows from \autoref{thm:Cuification}(1,ii) that every element in $S$ is the supremum of an increasing sequence of compact elements, showing that $S$ is an algebraic $\CatCu$-semigroup.

We leave the proof of \enumStatement{2} to the reader.
\end{proof}

\begin{prp}
\label{pgr:equivalenceAlgCu}
The two functors from \autoref{pgr:algebraicSemigp}, assigning to a \pom\ $M$ its $\CatCu$-completion $\Cu(M)$, and assigning to an algebraic $\CatCu$-semigroup its \pom\ of compact elements, establish an equivalence of the following categories:
\begin{enumerate}
\item
The category $\CatPom$ of \pom{s};
see \autoref{pgr:pom}.
\item
The full subcategory of $\CatCu$ consisting of algebraic $\CatCu$-semigroups.
\end{enumerate}
\end{prp}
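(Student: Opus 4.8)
The plan is to show that the functor $\Cu\colon\CatPom\to\CatCu$ from \autoref{pgr:algebraicSemigp} restricts to an equivalence onto the full subcategory of algebraic $\CatCu$-semigroups, with quasi-inverse the functor $S\mapsto S_c$. By the standard criterion it suffices to verify that $\Cu$ is essentially surjective and fully faithful, once we know it lands in the right subcategory; the natural isomorphisms of the composite functors with the identities then come from \autoref{prp:algebraicSemigp}. That $\Cu(M)$ is always algebraic, and that essential surjectivity holds, are precisely the two assertions of that proposition: part~(1) gives that $\Cu(M)$ is algebraic with $(\Cu(M))_c\cong M$, and part~(2) gives that every algebraic $\CatCu$-semigroup $S$ is naturally isomorphic to $\Cu(S_c)$.

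It remains to prove full faithfulness, namely that for all \pom{s} $M,N$ the assignment $f\mapsto\Cu(f)$ is a bijection $\CatPom(M,N)\to\CatCuMor(\Cu(M),\Cu(N))$. First I would invoke \autoref{rmk:Cuification}(1), which identifies $\CatCuMor(\Cu(M),R)$ with $\CatWMor((M,\leq),R)$ via precomposition with the completion map $\alpha_M\colon M\to\Cu(M)$, for every $\CatCu$-semigroup $R$. Taking $R=\Cu(N)$, the task reduces to identifying the $\CatW$-morphisms $(M,\leq)\to\Cu(N)$ with the $\CatPom$-morphisms $M\to N$.

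The key observation is that, since the auxiliary relation on $M$ is the full order $\leq$, any $\CatW$-morphism $h\colon(M,\leq)\to\Cu(N)$ preserves it, so $a\leq b$ in $M$ forces $h(a)\ll h(b)$ in $\Cu(N)$; taking $a=b$ shows that $h(a)$ is compact for every $a\in M$. Hence $h$ takes values in $(\Cu(N))_c$, which by \autoref{prp:algebraicSemigp}(1) is the image of the order-embedding $\alpha_N$, so $h$ factors uniquely as $\alpha_N\circ\bar h$ for a $\CatPom$-morphism $\bar h\colon M\to N$. Conversely, for any $\CatPom$-morphism $g\colon M\to N$, the composite $\alpha_N\circ g$ is a $\CatW$-morphism: it preserves addition, order and $0$, it sends $\leq$ to $\ll$ because $\alpha_N$ maps into compact elements, and it satisfies the continuity axiom~\axiom{M} trivially by choosing $a'=a$. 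These assignments are mutually inverse, and unwinding the universal property from \autoref{thm:Cuification} shows that the resulting bijection $\CatPom(M,N)\cong\CatCuMor(\Cu(M),\Cu(N))$ is precisely $f\mapsto\Cu(f)$.

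The main obstacle is this compactness observation: it is what forces a $\CatW$-morphism out of $(M,\leq)$ to land in the compact part and thereby pins down its codomain to $N$, making the Hom-set computation go through. The remaining work — that the Hom-set bijection is implemented by $\Cu(-)$, and that the object isomorphisms of \autoref{prp:algebraicSemigp} are natural in $M$ and in $S$ — is routine, relying only on the naturality of the completion maps $\alpha$, which in turn follows from the universal property defining $\Cu(f)$.
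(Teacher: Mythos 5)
Your proof is correct and follows exactly the route the paper intends: the paper states this proposition without proof, treating it as a consequence of \autoref{prp:algebraicSemigp} together with the universal property of the $\CatCu$-completion (\autoref{thm:Cuification} and \autoref{rmk:Cuification}(1)), which is precisely what you spell out. In particular, your key observation that a $\CatW$-morphism out of $(M,\leq)$ must land in the compact elements of $\Cu(N)$ (since $a\leq a$ forces $h(a)\ll h(a)$) is the right mechanism for reducing $\CatCuMor(\Cu(M),\Cu(N))$ to $\CatPomMor(M,N)$ and establishes full faithfulness correctly.
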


\begin{rmk}
Let $M$ be a \pom.
The $\CatCu$-completion $\Cu(M)$ has appeared in the literature before using different but equivalent constructions.
First, recall that an interval in $M$ is a nonempty, upwards directed, order-hereditary subset of $M$.
In the literature, intervals are often called ideals or sometimes round ideals;
see \cite[Definition~0-1.3, p.~3]{GieHof+03Domains} and \cite[Definition~2.1]{Law97RoundIdeal}.

An interval $I$ in $M$ is \emph{countably generated} \index{terms}{interval!countably generated} if there exists a countable cofinal subset for $I$.
This is equivalent to saying that there is an increasing sequence $(a_n)_n$ in $I$ such that
\[
I= \left\{ a\in M : a\leq a_n\text{ for some }n \right\}.
\]
Countably generated intervals in $M$ form a \pom\ $\Lambda_{\sigma}(M)$, where addition of intervals $I$ and $J$ is the interval generated by $I+J$, and order is given by set inclusion;
see \cite{Weh96TensorInterpol}, and also \cite{Per97StructurePositive}.

Let us define a map $\Lambda_{\sigma}(M)\to\Cu(M)$.
Given a countably generated interval $I\in\Lambda_\sigma(M)$, let $(a_n)_n$ be a cofinal subsequence of $I$.
Considering $M$ as a submonoid of $\Cu(M)$, we may assign to $I$ the element $\sup_na_n$ in $\Cu(M)$.
This induces a natural isomorphism $\Lambda_{\sigma}(M)\cong\Cu(M)$.

Similarly, if $S$ is a $\CatCu$-semigroup, we may consider the natural map $\Phi\colon S\to \Lambda_{\sigma}(S_c)$, which sends an element $a\in S$ to the interval
\[
\Phi(a) = \left\{ x\in S_c : x\leq a \right\}.
\]
If $S$ is algebraic, then $\Phi$ is an isomorphism of \pom{s};
see \cite[Theorem~6.4]{AntBosPer11CompletionsCu}.
\end{rmk}

We will now study how properties of a \pom\ relate to properties of its $\CatCu$-completion.
The results in \autoref{prp:propertiesAlgebraic} should be compared to \autoref{prp:axiomsPassToCu}.

\begin{dfn}
\label{dfn:RieszProperties}
\index{terms}{Riesz refinement property}
\index{terms}{Riesz decomposition property}
\index{terms}{Riesz interpolation property}
\index{terms}{positively ordered monoid!cancellative}
Let $M$ be a \pom.
\beginEnumStatements
\item
We say that $M$ has the \emph{Riesz refinement property} if whenever there are $a_1,a_2,b_1,b_2\in M$ with $a_1+a_2=b_1+b_2$, then there exist $x_{i,j}\in M$ for $i=1,2$ such that $a_i=x_{i,1}+x_{i,2}$ for $i=1,2$ and $b_j=x_{1,j}+x_{2,j}$ for $j=1,2$.
\item
We say that $M$ has the \emph{Riesz decomposition property} if whenever there are $a,b,c\in M$ with $a\leq b+c$, then there exist $b',c'\in M$ such that $a=b'+c'$, $b'\leq b$ and $c'\leq c$.
\item
We say that $M$ has the \emph{Riesz interpolation property} if whenever there are $a_1,a_2,b_1,b_2\in S$ such that $a_i\leq b_j$ for $i,j=1,2$, then there exists $c\in S$ such that $a_1,a_2\leq c\leq b_1,b_2$.
\item
We say $M$ has \emph{cancellation} (or that $M$ is \emph{cancellative}) if for any $a,b,x\in M$, $a+x\leq b+x$ implies $a\leq b$.
\end{enumerate}
\end{dfn}

The three Riesz properties are closely related but not equivalent in general.
If $M$ is algebraically ordered, then Riesz refinement implies Riesz decomposition.
If $M$ is cancellative and algebraically ordered, then all three Riesz properties are equivalent (see for example \cite[Proposition 2.1]{Goo86Poag})

\begin{thm}
\label{prp:propertiesAlgebraic}
Let $M$ be a \pom.
Then:
\beginEnumStatements
\item
The monoid $M$ is algebraically ordered if and only if $\Cu(M)$ satisfies \axiomO{5}.
\item
The monoid $M$ is cancellative if and only if $\Cu(M)$ is weakly cancellative.
\item
The monoid $M$ has Riesz interpolation if and only if $\Cu(M)$ does.
\item
If $M$ satisfies the Riesz decomposition property, then $\Cu(M)$ satisfies \axiomO{6}.
Conversely, if $\Cu(M)$ satisfies \axiomO{5}, \axiomO{6} and weak cancellation, then $M$ satisfies the Riesz decomposition property.
\end{enumerate}
\end{thm}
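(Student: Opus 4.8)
The unifying idea is to exploit that, by \autoref{pgr:algebraicSemigp} and \autoref{prp:algebraicSemigp}, the pair $(M,\le)$ is a $\CatW$-semigroup whose auxiliary relation is the order itself, and $\Cu(M)$ is precisely its $\CatCu$-completion, with $M$ recovered as the \pom\ of compact elements. Consequently, for parts (1), (2) and (4) I would first invoke \autoref{prp:axiomsPassToCu} to replace each of \axiomO{5}, \axiomO{6} and weak cancellation on $\Cu(M)$ by the corresponding axiom \axiomW{5}, \axiomW{6} or weak cancellation on $(M,\le)$. The point is that, since $\prec$ is now literally $\le$, these $\CatW$-axioms collapse to elementary order-theoretic statements about $M$ that I can match by hand to algebraic order, cancellation and Riesz decomposition. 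Only part (3) will require working directly inside $\Cu(M)$.

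Part (2) is then immediate: weak cancellation for $(M,\le)$ reads ``$a+x\le b+x$ implies $a\le b$'', which is exactly cancellativity of $M$, so \autoref{prp:axiomsPassToCu}(3) gives the equivalence at once. For part (1), I would show \axiomW{5} for $(M,\le)$ is equivalent to $M$ being algebraically ordered. If \axiomW{5} holds, specialize it to $a'=a$, $b=b'=0$, $\tilde c=c$ with hypothesis $a\le c$; the conclusion produces $x',x$ with $a+x\le c\le a+x'$ and $x'\le x$, and chaining these inequalities forces $c=a+x'$, i.e.\ algebraic order. Conversely, if $M$ is algebraically ordered, then from $a+b\le c$ I write $c=(a+b)+d$ and verify that $x=x'=b+d$ satisfies all the requirements of \axiomW{5}.

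For part (4), the forward direction is quick: assuming Riesz decomposition, given $a'\le a\le b+c$ I decompose $a=b'+c'$ with $b'\le b$ and $c'\le c$, note $b',c'\le a$, and take $e=b'$, $f=c'$ to verify \axiomW{6}; hence $\Cu(M)$ satisfies \axiomO{6} by \autoref{prp:axiomsPassToCu}(2). For the converse, \axiomO{5}, \axiomO{6} and weak cancellation on $\Cu(M)$ translate (via parts (1), (2) and \autoref{prp:axiomsPassToCu}(2)) into $M$ being algebraically ordered, cancellative and satisfying \axiomW{6}. Given $a\le b+c$, I apply \axiomW{6} with $a'=a$ to obtain $e\le a,b$ and $f\le a,c$ with $a\le e+f$; algebraic order lets me write $a=e+g$, and cancelling $e$ from $e+g=a\le e+f$ yields $g\le f\le c$, so $a=e+g$ is the desired decomposition with $e\le b$, $g\le c$.

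Part (3) is the one I expect to be the main obstacle, because it cannot be routed through \autoref{prp:axiomsPassToCu} and must be argued inside $\Cu(M)$. The converse direction is still easy: given $a_1,a_2\le b_1,b_2$ in $M$, interpolate in $\Cu(M)$ to get $\bar c$ with $a_i\le\bar c\le b_j$; writing $\bar c=\sup_k c_k$ as a supremum of compact elements and using that each compact $a_i$ lies below some $c_k$, a single $c_k\in M$ interpolates in $M$. The forward direction is the delicate part: I would express each of the four elements as a supremum of a rapidly increasing sequence of compact elements and build, by diagonalization, an increasing sequence $(c_n)_n$ of compact elements with each $c_n\le b_{j,l}\le\bar b_j$ (for a suitable index $l$) and with $c_n$ dominating the $n$-th approximants of $\bar a_1,\bar a_2$ together with $c_{n-1}$; its supremum, which exists by \axiomO{1}, then interpolates in $\Cu(M)$. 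The work here is to produce each $c_n$: one first upgrades the two-element Riesz interpolation of $M$ to finitely many elements on each side by a routine induction, and then uses compactness to realize the bounds on $\bar b_j$ as honest inequalities in $M$ before applying finite interpolation. Keeping the sequence increasing (by feeding $c_{n-1}$ into the left-hand family at each step) and checking that the resulting supremum sits below each $\bar b_j$ is where the care is needed.
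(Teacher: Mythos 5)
Your proposal is correct, and for parts (1), (2) and (4) it follows essentially the same route as the paper: reduce via \autoref{prp:axiomsPassToCu} to the axioms \axiomW{5}, \axiomW{6} and weak cancellation for $(M,\le)$, then match these by hand to algebraic order, cancellation and Riesz decomposition. Your specialization $a'=a$, $b=b'=0$, $\tilde c=c$ in (1) and the choice $x=x'=b+d$ for the converse are exactly the paper's; in the converse of (4) your cancellation of $e$ from $e+g=a\le e+f$ is in fact a little more direct than the paper's computation with three auxiliary elements $x,y,z$, and decomposing $a$ rather than $a'$ in the forward direction of (4) is an immaterial variant.

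The genuine divergence is part (3). The paper disposes of it in one line by citing the equivalence of conditions (1) and (3) in \cite[Proposition~2.12]{Per97StructurePositive}, whereas you prove it directly: the easy converse via compactness of the $a_i$, and the forward direction by upgrading two-by-two interpolation in $M$ to finite interpolation, pulling the compact approximants $a_{1,n},a_{2,n},c_{n-1}$ below common terms $b_{1,l},b_{2,l}$ of the approximating sequences for $\bar b_1,\bar b_2$, interpolating in $M$ to get an increasing sequence $(c_n)$, and taking its supremum. This argument is sound, and it buys self-containedness: the paper itself appends a remark acknowledging that the referee found a gap in the proof of the cited Proposition~2.12 (while asserting the relevant equivalence still holds), so a direct verification along your lines is a legitimate and arguably preferable alternative. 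The only point requiring the care you already flag is the induction hypothesis $c_{n-1}\le\bar b_1,\bar b_2$, which is what lets you find the common index $l$ at each stage.
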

\begin{proof}
Let us show \enumStatement{1}.
By \autoref{prp:axiomsPassToCu}(1), $\Cu(M)$ satisfies \axiomO{5} if and only if $M$, considered as a $\CatW$-semigroup $M=(M,\leq)$, satisfies \axiomW{5}.
First, assume that $M$ is algebraically ordered.
To show that $M$ satisfies \axiomW{5}, let $a',a,b',b,c,\tilde{c}\in M$ satisfy
\[
a+b\leq c,\quad
a'\leq a,\quad
b'\leq b,\quad
c\leq\tilde{c},
\]
Since $M$ is algebraically ordered, we can find $y\in M$ with $a+b+y=c$.
Set $x':=x=b+y$.
One checks that $x'$ and $x$ have the desired properties to verify \axiomW{5} for $M=(M,\leq)$.

Conversely, assume that $M$ satisfies \axiomW{5}.
To show that $M$ is algebraically ordered, let $a,c\in M$ satisfy $a\leq c$.
Set $a':=a$, $b':=b=0$ and set $\tilde{c}:=c$.
Since $M$ satisfies \axiomW{5}, we can find $x',x\in M$ such that
\[
a'+x\leq\tilde{c},\quad
c\leq a+x',\quad
x'\leq x.
\]
Then
\[
a+x = a'+x \leq \tilde c = c \leq a+x' \leq a+x,
\]
which shows $a+x=c$.
Thus, $M$ is algebraically ordered.

Statement \enumStatement{2} follows directly from \autoref{prp:axiomsPassToCu}(3).
Statement \enumStatement{3} follows from the equivalence between conditions (1) and (3) in \cite[Proposition~2.12]{Per97StructurePositive}.

Finally, let us show \enumStatement{4}.
First, assume that $M$ satisfies the Riesz decomposition property.
By \autoref{prp:axiomsPassToCu}(2), it is enough to verify \axiomW{6} for $M$.
Let $a',a,b,c\in M$ satisfy
\[
a'\leq a\leq b+c.
\]
By assumption, we can choose $b',c'\in M$ such that
\[
a'=b'+c',\quad
b'\leq b,\quad
c'\leq c.
\]
Then $b'\leq a$ and $c'\leq a$, showing that $b'$ and $c'$ have the desired properties to verify \axiomW{6} for the $\CatW$-semigroup $(M,\leq)$.

Conversely, assume that $S$ satisfies \axiomO{5}, \axiomO{6} and weak cancellation.
By statements \enumStatement{1} and \enumStatement{2} and \autoref{prp:axiomsPassToCu}(2), we have that $M$ is algebraically ordered, cancellative and satisfies \axiomW{6}.
To show that $M$ has Riesz decomposition, let $a,b,c\in M$ satisfy $a\leq b+c$.
Set $a':=a$.
Since $M$ satisfies \axiomW{6}, we can find $e,f\in M$ satisfying
\[
a'=a\leq e+f,\quad
e\leq a,b,\quad
f\leq a,c.
\]
Since $M$ is algebraically ordered, we can choose $x,y,z\in M$ such that
\[
a+x=e+f,\quad
e+y=a,\quad
f+z=a.
\]
Then
\[
a+x+y+z
=e+f+y+z
=2a.
\]
Since $M$ is cancellative, we obtain $a=x+y+z$.
It follows
\[
y+[x+z] = a \leq a+x = f+[x+z],
\]
which implies $y\leq f$.
Thus, we have $a=e+y$ with $e\leq b$ and $y\leq c$, as desired.
\end{proof}

\begin{rmk}
We are thankful to the referee, who spotted a gap in the proof of \cite[Proposition~2.12]{Per97StructurePositive}.
This, however, does not affect the result that, for a partially ordered monoid $M$, the conditions (1) $M$ has Riesz interpolation; (2) The monoid $\Lambda(M)$ of all intervals in $M$ has Riesz interpolation, and (3) The monoid $\Lambda_\sigma(M)$ of all countably generated intervals has Riesz interpolation, are equivalent.
(This is proved following the arguments in \cite[Proposition 2.12]{Per97StructurePositive}.)
\end{rmk}

\begin{cor}
\label{prp:algebraicRieszTFAE}
Let $S$ be an algebraic $\CatCu$-semigroup satisfying \axiomO{5} and weak cancellation.
Then the following are equivalent:
\beginEnumStatements
\item
The $\CatCu$-semigroup $S$ satisfies \axiomO{6}.
\item
The $\CatCu$-semigroup $S$ has Riesz refinement.
\item
The $\CatCu$-semigroup $S$ has Riesz decomposition.
\item
The $\CatCu$-semigroup $S$ has Riesz interpolation.
\item
The monoid of compact elements, $S_c$, has Riesz refinement (or equivalently, $S_c$ has Riesz decomposition, or $S_c$ has Riesz interpolation).
\end{enumerate}
\end{cor}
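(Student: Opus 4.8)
The plan is to transport the entire statement to the \pom\ of compact elements $M := S_c$ and to read off the five conditions there. Since $S$ is algebraic, \autoref{prp:algebraicSemigp} provides a natural isomorphism $S \cong \Cu(M)$. Because $S$ satisfies \axiomO{5} and is weakly cancellative, \autoref{prp:propertiesAlgebraic}(1) and (2) show that $M$ is algebraically ordered and cancellative. The remark following \autoref{dfn:RieszProperties} then tells us that for such an $M$ the Riesz refinement, decomposition and interpolation properties all coincide; this justifies the parenthetical claim in (5) and lets us treat (5) as a single condition on $M$.

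With this reduction, two equivalences are immediate. First, (1)$\Leftrightarrow$(5): by \autoref{prp:propertiesAlgebraic}(4), if $M$ has Riesz decomposition then $\Cu(M)\cong S$ satisfies \axiomO{6}, and conversely, since $S$ satisfies \axiomO{5}, \axiomO{6} and weak cancellation, $M$ has Riesz decomposition. Second, (4)$\Leftrightarrow$(5): by \autoref{prp:propertiesAlgebraic}(3), $S\cong\Cu(M)$ has Riesz interpolation if and only if $M$ does. I would also dispose of the easy direction (3)$\Rightarrow$(1) directly: given $a'\ll a\leq b+c$, a decomposition $a=b'+c'$ with $b'\leq b$ and $c'\leq c$ automatically satisfies $b'\leq a$ and $c'\leq a$ by positivity of the order, so $e:=b'$ and $f:=c'$ witness \axiomO{6}.

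The real content lies in relating conditions (2) and (3), stated for the actual order on $S$ (not the algebraic order), back to the compact level. For (2)$\Rightarrow$(5) I would first record the auxiliary fact that in a weakly cancellative $\CatCu$-semigroup any summand of a compact element is again compact: if $p=x+y$ with $p$ compact, choose rapidly increasing sequences with $x=\sup_n x_n$ and $y=\sup_n y_n$; by \axiomO{4} and compactness $p=x_N+y_N$ for some $N$, and then for each $m$ one has $x_m+y_N\ll x_{m+1}+y_{N+1}\leq p=x_N+y_N$, so weak cancellation gives $x_m\ll x_N$, whence $x=x_N$ (and likewise $y$) is compact. Applying Riesz refinement of $S$ to compact $a_1+a_2=b_1+b_2$ then forces all refinement data into $M$, yielding (5).

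The remaining, and hardest, directions are the lifts (5)$\Rightarrow$(2) and (5)$\Rightarrow$(3), i.e. passing the refinement and decomposition properties from $M$ up to the completion $S$. Here I would use the identification of $\Cu(M)$ with the monoid $\Lambda_\sigma(M)$ of countably generated intervals described after \autoref{pgr:equivalenceAlgCu}. For (5)$\Rightarrow$(2) the plan is to invoke that the interval monoid of a refinement monoid is again a refinement monoid (Wehrung, \cite{Weh96TensorInterpol}), converting back to the given order on $S$ by means of \axiomO{5}. For (5)$\Rightarrow$(3) I would argue by approximation: given $a\leq b+c$, write $a=\sup_n a_n$ with $a_n$ compact and rapidly increasing, find a stage with $a_n\leq b_m+c_m$, and apply Riesz decomposition of $M$ to the compact inequality; a back-and-forth diagonalization, using refinement in $M$ to make the summands increasing and \axiomO{4} to control their suprema, produces $b'\leq b$ and $c'\leq c$ with $b'+c'=a$. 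I expect this lifting to be the main obstacle: Riesz refinement is an equality-type condition and is not stable under suprema, so the diagonalization must be carried out carefully, with weak cancellation supplying exactly the upgrade of the limiting inequalities to the required equalities.
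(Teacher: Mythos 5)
Your proposal is essentially correct and follows the paper's strategy: reduce everything to the \pom{} $M=S_c$ via \autoref{prp:propertiesAlgebraic}, obtaining \enumStatement{1}$\Leftrightarrow$\enumStatement{5} and \enumStatement{4}$\Leftrightarrow$\enumStatement{5} exactly as the paper does, and dispose of \enumStatement{3}$\Rightarrow$\enumStatement{1} directly. There are two local deviations. First, for the implication from \enumStatement{2}, the paper proves \enumStatement{2}$\Rightarrow$\enumStatement{1} directly (pick a compact $x$ with $a'\leq x\leq a$, use \axiomO{5} to write $x+y=b+c$, and refine), whereas you prove \enumStatement{2}$\Rightarrow$\enumStatement{5} via the auxiliary fact that, under weak cancellation, any summand of a compact element is compact; your argument for that lemma is sound (the step $x_m+y_N\ll x_N+y_N$ plus weak cancellation, and then $x_N\ll x_{N+1}\leq x_N$ to get compactness), and this route is a perfectly good alternative, at the cost of an extra lemma the paper avoids. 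Second, for the lifts \enumStatement{5}$\Rightarrow$\enumStatement{2} and \enumStatement{5}$\Rightarrow$\enumStatement{3}, which you correctly identify as the only nontrivial content and leave as a sketched diagonalization over $\Lambda_\sigma(M)$, the paper does not carry out any such argument: it simply cites Lemma~2.6(a) and Proposition~2.5 of \cite{Goo96KMultiplier}, which state precisely that the monoid of (countably generated) intervals in a refinement monoid again has refinement and decomposition. So the "main obstacle" you anticipate is already in the literature and need not be re-proved; if you do want a self-contained argument, note that for refinement no order conversion via \axiomO{5} is needed (refinement is a purely additive condition and the identification $S\cong\Lambda_\sigma(M)$ is a \pom-isomorphism), while for decomposition the delicate point is indeed upgrading the limiting inequality to the exact equality $a=b'+c'$, which is where refinement in $M$ enters the diagonalization.
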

\begin{proof}
Let $S$ be an algebraic $\CatCu$-semigroup satisfying \axiomO{5} and weak cancellation.
Let $M$ be the \pom\ of compact elements in $S$.
As shown in \autoref{prp:algebraicSemigp}, we have that $S$ is isomorphic to the $\CatCu$-completion of $M$.
By \autoref{prp:propertiesAlgebraic}(1) and (2), $M$ is algebraically ordered and cancellative.
It follows that the Riesz properties stated in \enumStatement{5} are equivalent for $M$.

By \autoref{prp:propertiesAlgebraic}(3), we have that $M$ has Riesz interpolation if and only $S$ does.
This shows the equivalence between \enumStatement{4} and \enumStatement{5}.
Similarly, we obtain the equivalence between \enumStatement{1} and \enumStatement{5} from \autoref{prp:propertiesAlgebraic}(4).

It is easy to check that \enumStatement{3} implies \enumStatement{1}.
To see that \enumStatement{2} implies \enumStatement{1}, let $a',a,b,c\in S$ satisfy $a'\ll a\leq b+c$.
Since $S$ is algebraic, we can choose $x\in S$ compact with $a'\leq x\leq a$.
Since $S$ satisfies \axiomO{5}, we can find $y\in S$ satisfying $x+y=b+c$.
Using Riesz refinement, we choose $r_{i,j}\in S$ for $i,j=1,2$ such that
\[
x=r_{1,1}+r_{1,2},\quad
y=r_{2,1}+r_{2,2},\quad
b=r_{1,1}+r_{2,1},\quad
c=r_{1,2}+r_{2,2}.
\]
Set $e:=r_{1,1}$ and $f:=r_{1,2}$.
Then $e$ and $f$ have the desired properties to verify \axiomO{6} for $S$.
Finally, it follows from Lemma~2.6(a) and Proposition~2.5 in \cite{Goo96KMultiplier} that \enumStatement{5} implies \enumStatement{2} and \enumStatement{3}.
\end{proof}

We will now consider the class of algebraic $\CatCu$-semigroups that are $\CatCu$-comple\-tions of dimension groups.
We first recall some definitions.

\begin{dfn}
\label{dfn:dimMonoid}
\index{terms}{positively ordered monoid!simplicial}
\index{terms}{simplicial monoid}
\index{terms}{dimension monoid}
Let $M$ be a \pom.
\beginEnumStatements
\item
We call $M$ a \emph{simplicial monoid} if it is isomorphic to the algebraically ordered monoid $\N^r$, for some $r\in\N_+$.
\item
We call $M$ a \emph{dimension monoid} if it is isomorphic to the inductive limit in $\CatPom$ of simplicial monoids.
\end{enumerate}
\end{dfn}

Let $M$ be a \pom.
Recall that $M$ is \emph{unperforated} if for every $a,b\in M$ we have $a\leq b$ whenever $na\leq nb$ for some $n\in\N_+$.
\index{terms}{positively ordered monoid!unperforated} \index{terms}{unperforated}
Every simplicial monoid is algebraically ordered, cancellative, unperforated and satisfies the Riesz refinement property.
It is easy to see that all these properties pass to inductive limits, whence they are satisfied by all dimension monoids.
The converse is known as the Effros-Handelman-Shen theorem, \cite{EffHanShe80DimGps}, which is formulated for partially ordered groups.
The version given here for a \pom\ $M$ follows by passing to the Grothendieck completion $G$, from which $M$ can be recovered as $M=G_+$.
It is clear that for every separable AF-algebra $A$, the Murray-von Neumann semigroup $V(A)$ is a dimension monoid.
The converse can for instance be found in \cite[Proposition~1.4.2, p.20]{Ror02Classification}.

\begin{thm}[Effros, Handelman, Shen]
\label{prp:EffHanShen}
Let $M$ be a countable, \pom.
Then the following are equivalent:
\beginEnumStatements
\item
The monoid $M$ is a dimension monoid.
\item
The monoid $M$ is algebraically ordered, cancellative, unperforated and satisfies the Riesz refinement property.
\item
There is a separable AF-algebra $A$ such that $M\cong V(A)$.
\end{enumerate}
\end{thm}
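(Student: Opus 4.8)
The plan is to prove the Effros--Handelman--Shen theorem by reducing it to the classical statement for partially ordered abelian groups via the Grothendieck construction. The implications that require work are \implStatements{2}{1} and the equivalence with \enumStatement{3}; the remaining directions \implStatements{1}{2} and \implStatements{3}{1} (via $M\cong V(A)$ for $A$ AF) are routine, since every simplicial monoid $\N^r$ is algebraically ordered, cancellative, unperforated and has Riesz refinement, and all four properties pass to inductive limits in $\CatPom$ (and $V$ sends a unital embedding of finite-dimensional algebras to an order-embedding of simplicial monoids, with $V$ preserving sequential inductive limits).

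For \implStatements{2}{1}, the first step is to pass to the Grothendieck group. Since $M$ is cancellative, the canonical map $M\to G:=\Gr(M)$ into its enveloping group is injective, and one equips $G$ with the partial order whose positive cone is $G_+:=M$; then $M\cong G_+$ as \pom{s}, and $(G,G_+)$ is a partially ordered abelian group. I would verify that the properties of $M$ in \enumStatement{2} translate exactly into the hypotheses of the classical Effros--Handelman--Shen theorem for $G$: cancellation makes $M\hookrightarrow G$ an order-embedding onto the positive cone, unperforation of $M$ gives an unperforated group (here one uses that $na\le nb$ in $M$ is detected in $G$), and Riesz refinement on $M$ yields the Riesz interpolation property on $G$. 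Countability of $M$ gives countability of $G$. Invoking the group version of the theorem, \cite{EffHanShe80DimGps}, we obtain that $(G,G_+)$ is a countable dimension group, i.e.\ an inductive limit in the category of partially ordered groups of simplicial groups $(\Z^{r_i},\Z^{r_i}_+)$ with positive connecting maps.

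The second step is to transport this back to monoids. Applying the positive-cone functor $(\freeVar)_+$ to the limit diagram of groups, and using that $(\Z^{r})_+=\N^{r}$ and that positive homomorphisms restrict to $\CatPom$-morphisms between the cones, I would show that $M=G_+$ is the inductive limit in $\CatPom$ of the simplicial monoids $\N^{r_i}$. The key point here is that the positive cone functor preserves inductive limits along positive maps; concretely, an element of $G_+=M$ lies in the image of some $\Z^{r_i}$, and because the connecting maps are positive, it is already represented by an element of $\N^{r_i}$, so the cone of the limit group coincides with the limit of the cones. This exhibits $M$ as a dimension monoid, proving \enumStatement{1}.

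The main obstacle is the careful verification in the reduction step that the \emph{monoid}-level Riesz refinement property corresponds precisely to the \emph{group}-level interpolation property used in the classical theorem, and dually that the cone of the resulting dimension group is recovered as an inductive limit of the cones $\N^{r_i}$ rather than something larger. Both are standard facts about the equivalence between dimension monoids and dimension groups under $M\mapsto\Gr(M)$, $G\mapsto G_+$, but they must be stated cleanly; I would cite \cite[Proposition~1.4.2]{Ror02Classification} for the passage between AF-algebras and dimension monoids to close the loop with \enumStatement{3}, and otherwise defer the routine checks to the reader.
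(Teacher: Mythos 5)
Your proposal is correct and follows essentially the same route the paper takes: the paper also deduces \implStatements{1}{2} from the fact that simplicial monoids have all four properties and that these pass to inductive limits, obtains \implStatements{2}{1} by passing to the Grothendieck completion $G$ with $M=G_+$ and invoking the classical group version of \cite{EffHanShe80DimGps}, and closes the loop with \enumStatement{3} via \cite[Proposition~1.4.2, p.20]{Ror02Classification}. Your write-up merely makes explicit the routine verifications (directedness of $G$, the equivalence of refinement and interpolation in the cancellative algebraically ordered setting, and that the positive-cone functor commutes with the inductive limit) that the paper leaves implicit.
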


In order to formulate the analog of the Effros-Handelman-Shen theorem for $\CatCu$-semigroups, we will call a $\CatCu$-semigroup $S$ a \emph{simplicial $\CatCu$-semigroup} if it is isomorphic to the $\CatCu$-completion of a simplicial monoid, that is, if $S\cong\overline{\N}^r$ with the algebraic order, for some $r\in\N_+$.
\index{terms}{Cu-semigroup@$\CatCu$-semigroup!simplicial}

\begin{cor}
\label{prp:EffHandShen_CuVersion}
Let $S$ be a countably-based $\CatCu$-semigroup.
Then the following are equivalent:
\beginEnumStatements
\item
The semigroup $S$ is isomorphic to an inductive limit of simplicial $\CatCu$-semi\-groups.
\item
There is a dimension monoid $M$ such that $S\cong\Cu(M)$.
\item
The semigroup $S$ is weakly cancellative, unperforated, algebraic and satisfies \axiomO{5} and \axiomO{6}.
\item
There is a separable AF-algebra $A$ such that $S\cong\Cu(A)$.
\end{enumerate}
\end{cor}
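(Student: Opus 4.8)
The plan is to prove the cycle \implStatements{3}{2}, \implStatements{2}{3}, \implStatements{2}{1}, \implStatements{1}{2}, and \implStatements{2}{4} together with its converse, in every case reducing the statement to the level of the monoid of compact elements and then invoking the classical Effros--Handelman--Shen theorem (\autoref{prp:EffHanShen}). The technical backbone is the equivalence between $\CatPom$ and the full subcategory of algebraic $\CatCu$-semigroups (\autoref{pgr:equivalenceAlgCu}), together with the dictionary of \autoref{prp:propertiesAlgebraic} relating order properties of a \pom\ $M$ to the axioms of $\Cu(M)$.

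For \implStatements{3}{2}, assume $S$ is weakly cancellative, unperforated, algebraic and satisfies \axiomO{5} and \axiomO{6}. Since $S$ is algebraic, \autoref{prp:algebraicSemigp}(2) gives $S\cong\Cu(M)$ with $M:=S_c$. First I would record that $M$ is countable: if $B$ is a countable basis for $S$ and $x\in S_c$, then $x\ll x$ yields $b\in B$ with $x\leq b\ll x$, hence $x=b$, so $S_c\subset B$. Next I translate the hypotheses through \autoref{prp:propertiesAlgebraic}: \axiomO{5} makes $M$ algebraically ordered, weak cancellation makes $M$ cancellative, and \axiomO{5}, \axiomO{6} together with weak cancellation give that $M$ has the Riesz decomposition property; as $M$ is then algebraically ordered and cancellative, this is equivalent to Riesz refinement. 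Unperforation of $M$ is immediate because $M\subset S$ carries the induced order and addition. Thus $M$ is a countable, algebraically ordered, cancellative, unperforated \pom\ with Riesz refinement, and \autoref{prp:EffHanShen} shows $M$ is a dimension monoid. The implication \implStatements{2}{3} runs the same translations in reverse, $S\cong\Cu(M)$ being algebraic and countably-based by \autoref{prp:algebraicSemigp}(1); the one property not covered by \autoref{prp:propertiesAlgebraic} is unperforation, which I verify directly: given $a,b\in\Cu(M)$ with $na\leq nb$, it suffices by algebraicity to show $x\leq b$ for every compact $x\ll a$, and for such $x$ one has $nx\ll na\leq nb=\sup_k nb_k$ (using \axiomO{3} and \axiomO{4}, where $b=\sup_k b_k$ with $b_k\in M$), so $nx\leq nb_k$ for some $k$, whence unperforation of $M$ gives $x\leq b_k\leq b$.

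The equivalence \implStatements{2}{1} and \implStatements{1}{2} rests on the fact that $\Cu\colon\CatPom\to\CatCu$ preserves inductive limits, which I would establish by exhibiting $\Cu$ as a left adjoint to $S\mapsto S_c$: restriction along $\alpha\colon M\to\Cu(M)$ gives a natural bijection $\CatCuMor(\Cu(M),S)\cong\CatPomMor(M,S_c)$, since a $\CatPom$-morphism $M\to S_c$ is precisely a $\CatW$-morphism $(M,\leq)\to S$ and one applies the universal property of the $\CatCu$-completion. Left adjoints preserve colimits, so if $M=\varinjlim\N^{r_i}$ in $\CatPom$ then $\Cu(M)\cong\varinjlim\Cu(\N^{r_i})$ is an inductive limit of simplicial $\CatCu$-semigroups, which is \implStatements{2}{1}. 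Conversely, given an inductive system of simplicial $\CatCu$-semigroups, the equivalence of \autoref{pgr:equivalenceAlgCu} identifies its connecting $\CatCu$-morphisms with $\CatPom$-morphisms between the corresponding $\N^{r_i}$, whose $\CatPom$-limit $M$ is a dimension monoid with $S\cong\Cu(M)$. Finally, for \implStatements{2}{4} and its converse I combine \autoref{prp:EffHanShen} (a countable dimension monoid is $V(A)$ for a separable AF-algebra $A$) with the observation that AF-algebras have real rank zero, so $\Cu(A)$ is algebraic by \autoref{rmk:algebraicSemigp}(2) with $\Cu(A)_c\cong V(A)$; then \autoref{prp:algebraicSemigp}(2) yields $\Cu(A)\cong\Cu(V(A))\cong\Cu(M)\cong S$.

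The step I expect to be most delicate is the transfer of unperforation across the $\Cu$-completion, since it is the one regularity property not supplied by \autoref{prp:propertiesAlgebraic} and must be checked by the explicit \axiomO{3}/\axiomO{4} computation above; the left-adjointness of $\Cu$, while conceptually clean, also requires careful bookkeeping of the morphism correspondences, and I would isolate both as small lemmas before assembling the cycle.
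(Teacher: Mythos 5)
Your proof is correct and follows essentially the same route as the paper: reduce everything to the monoid $S_c$ via \autoref{prp:algebraicSemigp} and \autoref{pgr:equivalenceAlgCu}, translate the axioms through \autoref{prp:propertiesAlgebraic}, check unperforation of the completion by the same \axiomO{3}/\axiomO{4} computation on compact approximants, and invoke \autoref{prp:EffHanShen}. Your left-adjoint packaging of \implStatements{1}{2} is just a restatement of what the paper gets from \autoref{prp:limitsCu}, and your explicit verification that $S_c$ is countable fills in a detail the paper leaves implicit.
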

\begin{proof}
Using \autoref{prp:limitsCu}, it is easy to see that \enumStatement{1} and \enumStatement{2} are equivalent.
It is also easy to check that \enumStatement{3} implies \enumStatement{2}.

In order to show that \enumStatement{2} implies \enumStatement{3}, let $M$ be a dimension monoid such that $S\cong\Cu(M)$.
It follows directly from Theorems~\ref{prp:EffHanShen} and~\ref{prp:propertiesAlgebraic} that $S$ is weakly cancellative, algebraic and satisfies \axiomO{5} and \axiomO{6}.
To verify that $S$ is unperforated, let $a,b\in S$ and assume $na\leq nb$ for some $n\in\N_+$.
Since $S$ is algebraic, we can choose increasing sequences $(a_k)_k$ and $(b_k)_k$ of compact elements in $S$, such that $a=\sup_k a_k$ and $b=\sup_k b_k$.
For each $k$ we have
\[
na_k\ll na\leq nb=\sup_{l\in\N} nb_l.
\]
Choose $l\in\N$ with $na_k\leq na_l$.
Since $M$ is unperforated and the natural map from $M$ to $S$ is an order-embedding, this implies $a_k\leq b_l$.
Thus, we have $a_k\leq b$ for each $k$, and therefore $a\leq b$, as desired.

Finally, let us show that \enumStatement{2} and \enumStatement{4} are equivalent.
Given a separable AF-algebra $A$, the Cuntz semigroup of $A$ is isomorphic to the $\CatCu$-completion of $V(A)$.
Therefore, the desired equivalence follows from \autoref{prp:EffHanShen}.
\end{proof}

\vspace{5pt}
\section{Nearly unperforated semigroups}
\label{sec:nearUnp}

In this section, we introduce the notion of `near unperforation' for \pom{s};
see \autoref{dfn:nearUnperf}.
We study how this concept is connected to other notions like almost unperforation, separativity and cancellation properties.
The main result of this section is \autoref{prp:nearUnperfSimpleCu} where we show that a simple, stably finite $\CatCu$-semigroup that satisfies \axiomO{5} is nearly unperforated if and only if it is weakly cancellative and almost unperforated.

In \cite{JiaSu99Projectionless}, the famous Jiang-Su algebra $\mathcal{Z}$ was introduced.
Recall that it is a unital, separable, simple, nonelementary, nuclear \ca{} with stable rank one and unique tracial state.
It is strongly self-absorbing and $KK$-equivalent to the complex numbers, which means $K_0(\mathcal{Z})\cong\Z$ and $K_1(\mathcal{Z})=0$.
Therefore, tensoring with $\mathcal{Z}$ has no effect on the $K$-theory of a \ca{}, although it can change the ordering on the $K_0$-group (see for example \cite{GonJiaSu2000}).
In the Elliott classification program, the Jiang-Su algebra is considered as the stably finite analog of the Cuntz algebra $\mathcal{O}_\infty$, which plays a central role in the classification of purely infinite \ca{s}.
\index{terms}{Jiang-Su algebra}
\index{symbols}{Z@$\mathcal{Z}$ \quad (Jiang-Su algebra)}

Given a \ca{} $A$ that tensorially absorbs the Jiang-Su algebra $\mathcal{Z}$, it is well-known that the Cuntz semigroup $\Cu(A)$ is almost unperforated, \cite[Theorem~4.5]{Ror04StableRealRankZ}.
Under the additional assumption that $A$ is simple or that $A$ has real rank zero and stable rank one, we obtain that $\Cu(A)$ is even nearly unperforated;
see \autoref{prp:nearUnpCaZstable}.
We conjecture that the Cuntz semigroup of every $\mathcal{Z}$-stable \ca{} is nearly unperforated;
see \autoref{conj:nearUnpCaZstable}.

\begin{dfn}
\label{dfn:nearUnperf}
\index{terms}{positively ordered monoid!nearly unperforated}
\index{terms}{unperforated!nearly}
\index{terms}{nearly unperforated}
\index{symbols}{$\leq_p$}
Let $M$ be a \pom.
We define a binary relation $\leq_p$ on $M$ by setting $a\leq_p b$ for $a,b\in M$ if and only if there exists $k_0\in\N$ such that $ka\leq kb$ for all $k\in\N$ satisfying $k\geq k_0$.

We say that $M$ is \emph{nearly unperforated} if for all $a,b\in M$ we have that $a\leq_p b$ implies $a\leq b$.
\end{dfn}

Note that $a\leq_p b$ if and only if there exists $k\in\N$ such that $ka\leq kb$ and $(k+1)a\leq(k+1)b$.

\begin{lma}
\label{prp:nearUnperfTFAE}
Let $M$ be a \pom.
Then the following are equivalent:
\beginEnumStatements
\item
The monoid $M$ is nearly unperforated.
\item
For all $a,b\in M$, we have that $2a\leq 2b$ and $3a\leq 3b$ imply $a\leq b$.
\end{enumerate}
\end{lma}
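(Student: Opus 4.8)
The plan is to fix $a,b\in M$ and analyze the set $S_{a,b}:=\{k\in\N : ka\leq kb\}$. The first observation is that $S_{a,b}$ is a submonoid of $(\N,+)$: it contains $0$, and if $ka\leq kb$ and $la\leq lb$ then, since the order on a \pom{} is compatible with addition, $(k+l)a=ka+la\leq kb+lb=(k+l)b$, so $k+l\in S_{a,b}$. By \autoref{dfn:nearUnperf}, $a\leq_p b$ holds precisely when $S_{a,b}$ contains every sufficiently large integer, and $M$ is nearly unperforated exactly when, for all $a,b$, the cofiniteness of $S_{a,b}$ forces $1\in S_{a,b}$ (that is, $a\leq b$).

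For the implication \implStatements{1}{2}, suppose $2a\leq 2b$ and $3a\leq 3b$, so that $2,3\in S_{a,b}$. Being a submonoid of $\N$ containing $2$ and $3$, the set $S_{a,b}$ then contains $2,3,4=2+2,5=2+3,\ldots$, hence every integer $\geq 2$. Thus $a\leq_p b$ (with $k_0=2$), and near unperforation yields $a\leq b$.

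The substance lies in \implStatements{2}{1}. Here I would first rephrase (2) in a scaled form: applying the hypothesis of (2) to the pair $(ma,mb)$ shows that, writing $P(k)$ for the statement $ka\leq kb$, one has the implication $P(2m)\wedge P(3m)\Rightarrow P(m)$ for every $m\in\N_+$. Now assume $a\leq_p b$, so that $P(k)$ holds for all $k\geq k_0$. I would then run a downward induction on the threshold: if $P(k)$ holds for all $k\geq t$, then for any $m$ with $m\geq t/2$ one has $2m\geq t$ and $3m\geq t$, whence $P(2m)$ and $P(3m)$ hold and the scaled implication gives $P(m)$; thus $P$ holds for all $k\geq\lceil t/2\rceil$. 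Iterating the replacement $t\mapsto\lceil t/2\rceil$ starting from $t=k_0$ reaches $t=1$ after finitely many steps, so $P(1)$ holds, i.e.\ $a\leq b$. This establishes that $M$ is nearly unperforated.

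The routine parts are the submonoid closure and the numerical fact that a submonoid of $\N$ containing $2$ and $3$ is cofinite; the only point requiring care is the threshold-halving descent in \implStatements{2}{1}, where the key is the reformulation of (2) as $P(2m)\wedge P(3m)\Rightarrow P(m)$ obtained by scaling $a,b$ by $m$.
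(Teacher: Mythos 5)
Your proof is correct and follows essentially the same route as the paper: the key step in both is to apply condition (2) to the scaled pair $(ma,mb)$, yielding $P(2m)\wedge P(3m)\Rightarrow P(m)$, and then descend on the threshold (the paper decrements a minimal threshold $n$ by $1$ using $2(n-1),3(n-1)\geq n$ for $n\geq 2$, while you halve it; the difference is cosmetic). The forward implication via the submonoid generated by $2$ and $3$ is likewise the paper's "easy to see" direction.
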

\begin{proof}
It is easy to see that \enumStatement{1} implies \enumStatement{2}.
For the converse implication, let $a,b\in M$ satisfy $a\leq_p b$.
Let $n\in\N$ be the smallest integer such that $ka\leq kb$ for all $k\geq n$.
Arguing as in \cite[Lemma~2.1]{AraGooOMePar98Separative}, we will show that $(n-1)a\leq(n-1)b$ if $n\geq 2$.
This shows that $n=1$, and so $a\leq b$.

Assuming $n\geq 2$, we have $2(n-1)\geq n$ and $3(n-1)\geq n$.
It follows
\[
2(n-1)a\leq 2(n-1)b,\quad
3(n-1)a\leq 3(n-1)b.
\]
By assumption, this implies $(n-1)a\leq(n-1)b$, as desired.
\end{proof}

Let $M$ be a \pom.
Recall that $M$ is \emph{unperforated} if for all elements $a,b\in M$ we have that $na\leq nb$ for some $n\in\N_+$ implies $a\leq b$.
Recall from \autoref{dfn:almUnp} that $M$ is \emph{almost unperforated} if $a<_s b$ implies $a\leq b$.

Let us say that $M$ is \emph{weakly separative} if for all elements $a$ and $b$ we have that $2a\leq a+b\leq 2b$ implies $a\leq b$.
\index{terms}{positively ordered monoid!weakly separative}
We warn the reader that different definitions of `separativity' for (partially ordered) semigroups appear in the literature.
However, in most of the recent literature, the notion of `separativity' has been used for a concept which is stronger than the condition above;
see for example \cite[Definition~1.2]{Weh94SeparativePOM}.
That is why we call the above condition `weak separativity'.

\begin{prp}
\label{prp:nearUnperfImplications}
Let $M$ be a \pom.
Then the following implications hold:
\[
\xymatrix@C=15pt{
{M \text{ is unperforated } } \ar@{=>}[r]
& { M \text{ is nearly unperforated } } \ar@{=>}[r] \ar@{=>}[d]
& { M \text{ is almost unperforated } } \\
& { M \text{ is weakly separative } }.
}
\]
\end{prp}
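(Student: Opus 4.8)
The plan is to verify the three implications separately, in each case reducing matters to the convenient restatement of near unperforation supplied by \autoref{prp:nearUnperfTFAE}: namely that $2a\leq 2b$ together with $3a\leq 3b$ already forces $a\leq b$. All three implications are elementary, and the only one carrying any computational content is the passage to weak separativity, which I expect to be the (modest) main obstacle.

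For the implication that unperforation implies near unperforation, I would argue straight from the definitions. Suppose $M$ is unperforated and $a\leq_p b$. By \autoref{dfn:nearUnperf} there is a threshold $k_0$ with $ka\leq kb$ for all $k\geq k_0$; choosing any integer $n\geq\max\{k_0,1\}$ gives $na\leq nb$ with $n\in\N_+$, and unperforation then yields $a\leq b$. This step is immediate.

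For the implication that near unperforation implies almost unperforation, the idea is to promote the single inequality defining $<_s$ to the eventual inequalities defining $\leq_p$, using only positivity of the monoid. Concretely, if $a<_s b$ then by the equivalence of conditions \enumStatement{1} and \enumStatement{2} in \autoref{prp:relSTFAE} there is $k_0$ with $(k+1)a\leq kb$ for every $k\geq k_0$. Since $kb\leq(k+1)b$ holds automatically in a \pom, we obtain $(k+1)a\leq(k+1)b$ for all such $k$, that is, $a\leq_p b$ (with threshold $k_0+1$). Near unperforation then gives $a\leq b$, as desired.

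The implication that near unperforation implies weak separativity is where a small chain of inequalities is needed. Assuming $2a\leq a+b\leq 2b$, the outer inequality already gives $2a\leq 2b$. For the tripled version I would chain the two hypotheses as $3a\leq 2a+b\leq a+2b\leq 3b$, where the first step adds $a$ to $2a\leq a+b$, the second adds $b$ to $2a\leq a+b$, and the third adds $b$ to $a+b\leq 2b$. Having established both $2a\leq 2b$ and $3a\leq 3b$, \autoref{prp:nearUnperfTFAE} delivers $a\leq b$, completing the proof. The only point requiring care is arranging this chain so that each step is a legitimate monotonicity-under-addition move rather than an (unavailable) cancellation, which is precisely why I route through $2a+b$ and $a+2b$ instead of attempting to cancel.
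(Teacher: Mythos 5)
Your proof is correct and follows essentially the same route as the paper: the horizontal implications are deduced by observing that $a<_sb$ implies $a\leq_p b$ implies $na\leq nb$ for some $n\in\N_+$ (via \autoref{prp:relSTFAE}), and weak separativity is obtained by producing $2a\leq 2b$ and $3a\leq 3b$ from $2a\leq a+b\leq 2b$ and invoking \autoref{prp:nearUnperfTFAE}. The only difference is cosmetic: you chain through $2a+b$ and $a+2b$, while the paper goes directly via $3a\leq a+2b=(a+b)+b\leq 3b$.
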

\begin{proof}
It follows from \autoref{prp:relSTFAE} that the relation $a<_s b$ is stronger than the relation $\leq_p$.
Therefore, for any $a,b\in M$, the following implications hold:
\[
\xymatrix@M+=10pt{
{ na\leq nb \text{ for some $n\in\N_+$ } }
& { a\leq_p b } \ar@{=>}[l]
& { a<_s b } \ar@{=>}[l].
}
\]
This implies the horizontal implications of the diagram.

Finally, let us assume that $M$ is nearly unperforated.
In order to verify that $M$ is weakly separative, let $a,b\in M$ satisfy $2a\leq a+b\leq 2b$.
Then
\[
2a\leq 2b,\quad
3a \leq a+2b = (a+b) + b \leq 3b.
\]
By \autoref{prp:nearUnperfTFAE}, this implies $a\leq b$, as desired.
\end{proof}

\begin{lma}
\label{prp:comparisonLeadsToP}
Let $M$ be a \pom, and let $a,b\in M$ and $k,l\in\N$.
If $a+ka\leq b+ka$ and $a+lb\leq b+lb$, then $a\leq_p b$.
\end{lma}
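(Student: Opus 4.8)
The plan is to convert each hypothesis into a one-parameter family of inequalities by induction, and then to combine the two families so as to absorb the spurious summands without ever cancelling in $M$ (which is only assumed to be a \pom{}, hence need not be cancellative).

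First I would rewrite the hypotheses as $(k+1)a\leq ka+b$ and $a+lb\leq(l+1)b$. Since addition in a \pom{} is order-preserving, a straightforward induction on $m$ yields
\[
(k+m)a\leq ka+mb \quad\text{for all } m\in\N;
\]
call this $(\star)$. Indeed the inductive step is $(k+m+1)a=(k+m)a+a\leq(k+1)a+mb\leq ka+(m+1)b$, using $(\star)$ for $m$ and then the first hypothesis. A symmetric induction on $m$, now built from the second hypothesis $a+lb\leq(l+1)b$, gives
\[
ma+lb\leq(l+m)b \quad\text{for all } m\in\N;
\]
call this $(\star\star)$, the inductive step being $(m+1)a+lb=ma+(a+lb)\leq ma+(l+1)b=(ma+lb)+b\leq(l+m)b+b=(l+m+1)b$.

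The key step is to combine $(\star)$ and $(\star\star)$. For any $n\geq k+l$, apply $(\star)$ with $m=n-k$ to get $na\leq ka+(n-k)b$. Because $n-k\geq l$, I would split off an $lb$ and apply $(\star\star)$ with $m=k$, which reads $ka+lb\leq(l+k)b$, to obtain
\[
na\leq ka+lb+(n-k-l)b\leq(l+k)b+(n-k-l)b=nb.
\]
Thus $na\leq nb$ for every $n\geq k+l$, which is precisely $a\leq_p b$ with witness $k_0=k+l$ in the sense of \autoref{dfn:nearUnperf}.

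There is no serious obstacle here beyond careful index bookkeeping; the single conceptual point worth flagging is that one must \emph{not} try to cancel the leftover $ka$ (or $lb$) directly, as $M$ need not be cancellative. The device that avoids this is exactly to feed the leftover $ka$ coming from $(\star)$ into $(\star\star)$, trading it for copies of $b$ at the cost of the fixed additive constant $l$, which becomes harmless once $n\geq k+l$.
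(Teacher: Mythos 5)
Your proof is correct and follows essentially the same route as the paper's: both arguments derive by induction the two families $(k+m)a\leq ka+mb$ and $ma+lb\leq(l+m)b$ from the respective hypotheses, and then chain them to get $na\leq nb$ for all $n\geq k+l$, carefully avoiding any cancellation. The only difference is cosmetic indexing.
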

\begin{proof}
Let $a,b$ and $k,l$ be as in the statement.
Arguing as in \cite[Lemma~2.1]{AraGooOMePar98Separative}, it follows
\[
2a+ka
=a+[a+ka]
\leq a+[b+ka]
\leq 2b+ka.
\]
Inductively, we get $ra+ka\leq rb+ka$ for all $r\in\N$.
Analogously, we obtain $lb+sa\leq lb+sb$ for all $s\in\N$.
Then, for any $n\in\N$, we get
\[
(k+l+n)a \leq ka+(l+n)b =[ka+lb]+nb \leq [kb+lb]+nb=(k+l+n)b,
\]
which shows $a\leq_p b$, as desired.
\end{proof}

\begin{dfn}
\label{dfn:preminSimpleSFPrePom}
\index{terms}{positively ordered monoid!preminimally ordered}
\index{terms}{preminimally ordered}
\index{terms}{positively ordered monoid!simple}
\index{terms}{simple (positively ordered monoid)}
\index{terms}{positively ordered monoid!stably finite}
\index{terms}{stably finite positively pre-ordered monoid}
Let $M$ be a \prePom.
We say that $M$ is \emph{preminimally ordered} if for all elements $a,b,x,y\in M$, we have that $a+x\leq b+x$ and $x\leq y$ imply $a+y\leq b+y$.

We say that $M$ is \emph{simple} if for all elements $a,b\in M$ with $b$ nonzero, we have $a\varpropto b$, that is, there exists $n\in\N$ such that $a\leq nb$.

An element $a$ in $M$ is \emph{finite} if $a<a+x$ for every nonzero element $x\in M$.
We say that $M$ is \emph{stably finite} if each of its elements is finite.
\end{dfn}

\begin{rmks}
\label{rmk:preminSimpleSFPrePom}
(1)
The notion of being `preminimally ordered' was introduced in \cite[Definition~1.2]{Weh94SeparativePOM}.
This concept is closely related to what has been called `well-behaved' and `strictly well-behaved' in \cite[Definition~2.2.1]{Bla90Rational}.

(2)
Let $M$ be a \prePom.
If $M$ is cancellative, then it is stably finite.
Indeed, given elements $a$ and $x$ in $M$ we always have $a\leq a+x$ and if $a=a+x$ then cancellation implies that $x=0$.

(3)
Let $M$ be a conical monoid equipped with its algebraic pre-order.
Then $M$ is a \prePom.
If $M$ is stably finite, then its algebraic pre-order is antisymmetric and hence $M$, with its algebraic order, becomes a partially ordered monoid.

(4)
The notions of simplicity and stable finiteness have already been defined for $\CatCu$-semigroups;
see \autoref{pgr:finiteSemigr} and \autoref{dfn:simpleCu}.
However, we warn the reader that for a $\CatCu$-semigroup $S$, theses notions do not coincide when considering $S$ as a \prePom.
For instance, a nonzero $\CatCu$-semigroup always contains elements that are not finite.
Moreover, a nonzero, simple $\CatCu$-semigroup is not simple as a \pom, since for a nonzero element $a\in S$ we need not have $\infty\varpropto a$, but only $\infty\varpropto^\ctsRel a$.

One can, however, obtain a close connection as follows.
Given a $\CatCu$-semigroup $S$, consider
\[
S_0 := \left\{ x\in S : x\ll\tilde{x} \text{ for some } \tilde{x}\in S \right\}.
\]
Then $S$ is simple (respectively stably finite) as a $\CatCu$-semigroup if and only if $S_0$ is simple (respectively stably finite) as a \pom.
\end{rmks}

The next result shows that for $\CatCu$-semigroups, the axiom~\axiomO{5} of almost algebraic order implies a suitable version of preminimality:

\begin{lma}
\label{prp:preminO5}
Let $S$ be a $\CatCu$-semigroup satisfying \axiomO{5}, and let $a,b,x,y\in S$.
If $a+x\ll b+x$ and $x\leq y$, then $a+y\leq b+y$.
\end{lma}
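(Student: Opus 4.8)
The plan is to use axiom \axiomO{5} to produce a complement for the relation $a+x\leq b+x$ and then feed it through the hypothesis $x\leq y$. First I would reduce to a statement about rapidly increasing approximants: by \axiomO{2} choose sequences $a'\ll a$ and $x'\ll x$, so that $a'+x'\ll a+x$. The subtle point is that we are given $a+x\ll b+x$ as a \emph{way-below} relation, not merely $\leq$; this is exactly the extra strength that lets \axiomO{5} apply. Indeed, \axiomO{5} as stated in \autoref{dfn:addAxioms} takes hypotheses of the form $u+v\leq w$ together with compact containments $u'\ll u$, $v'\ll v$ and produces a complement $z$ with $u'+z\leq w\leq u+z$ and $v'\leq z$.

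The key step is to set up the application of \axiomO{5} correctly. I would apply it to the inequality $a+x\leq b+x$, viewing $b+x$ as the dominating element $c$, and using the compact containments coming from $a+x\ll b+x$. More precisely, pick $a'\ll a$ and $x'\ll x$; then applying \axiomO{5} to $a+x\leq b+x$ with $a'\ll a$, $x'\ll x$ yields an element $z\in S$ with
\[
a'+x'+z\leq b+x\leq a+x+z.
\]
Here the right inequality, with $x\leq y$, is what I want to exploit: adding nothing but replacing $x$ by the larger $y$ on appropriate sides should let me cancel the complement against $y$.

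The heart of the argument is then to combine $b+x\leq a+x+z$ with $x\leq y$ to deduce $a+y\leq b+y$. The mechanism I expect to use is that $z$ absorbs $x$ in a way that, once $x$ is enlarged to $y$, forces the desired inequality. Concretely, from $a'+x'+z\leq b+x$ I would add $y$ and use $x\leq y$ to manipulate the $x$ and $z$ terms; the intended outcome is an inequality $a'+y\leq b+y$ valid for all approximants $a'\ll a$, from which $a+y\leq b+y$ follows by taking the supremum over $a'$ via \axiomO{2} and \axiomO{4}. The main obstacle I anticipate is bookkeeping the complement $z$ so that the $x$'s cancel cleanly: one must ensure that the same $z$ produced by \axiomO{5} simultaneously bounds $b+x$ above by $a+x+z$ and is controlled from below, so that substituting $y\geq x$ does not break the chain. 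Getting the direction of the two inequalities in \axiomO{5} to line up with the single inequality $a+y\leq b+y$ — rather than its reverse — is the delicate part, and I would verify it carefully on the approximants before passing to the supremum.
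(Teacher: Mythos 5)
There is a genuine gap, and it lies in which inequality you feed to \axiomO{5}. Applying \axiomO{5} to $a+x\leq b+x$ produces a complement $z$ of $a+x$ \emph{inside} $b+x$, i.e.\ roughly $b+x\approx (a+x)+z$. That datum cannot be converted into $a+y\leq b+y$: your proposed step of ``adding $y$ and manipulating the $x$ and $z$ terms'' would require cancelling $x$ (or $x'+z$) from both sides of an inequality such as $a'+x'+z+y\leq b+x+y$, and no cancellation is available in a $\CatCu$-semigroup. The hypothesis $x\leq y$ is never actually engaged by your construction, because $z$ measures the gap between $a$ and $b$, not the gap between $x$ and $y$.

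The missing idea is to use \axiomO{5} on the pair $(x,y)$ rather than on $(a+x,b+x)$, and to use the way-below hypothesis for a different purpose than you suggest. Since $b+x=\sup_{x'\ll x}(b+x')$ by \axiomO{4}, the relation $a+x\ll b+x$ lets you choose $x'\ll x$ with $a+x\leq b+x'$; this is where $\ll$ is genuinely needed, not merely to ``license'' \axiomO{5}. Then apply \axiomO{5} to $x'\ll x\leq y$ to obtain $d$ with $x'+d\leq y\leq x+d$ — so $d$ plays the role of the difference $y-x$ — and conclude by the chain
\[
a+y\;\leq\;a+x+d\;\leq\;b+x'+d\;\leq\;b+y.
\]
This is the paper's argument; your setup cannot be completed without this change of target for \axiomO{5}.
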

\begin{proof}
Let $a,b,x$ and $y$ be as in the statement.
Choose $x'\in S$ such that $x'\ll x$ and $a+x\ll b+x'$.
Applying \axiomO{5} to the inequality $x'\ll x\leq y$, choose $d\in S$ satisfying $x'+d\leq y\leq x+d$.
Then
\[
a+y \leq a+x+d \leq b+x'+d \leq b+y,
\]
as desired.
\end{proof}

\begin{prp}
\label{prp:cancUpToP}
(1)
Let $M$ be a preminimally \pom, and let $a,b,x\in M$.
If $a+x\leq b+x$ and $x\varpropto a,b$, then $a\leq_p b$.

(2)
Let $S$ be a $\CatCu$-semigroup satisfying \axiomO{5}, and let $a,b,x\in S$.
If $a+x\ll b+x$ and $x\varpropto^\ctsRel a,b$, then $a\leq_p b$.
\end{prp}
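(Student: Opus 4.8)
The plan is to reduce both statements to \autoref{prp:comparisonLeadsToP}, which asserts that $a+ka\le b+ka$ together with $a+lb\le b+lb$ forces $a\le_p b$. Thus in each case it suffices to produce a single integer $k$ with $a+ka\le b+ka$ and a single integer $l$ with $a+lb\le b+lb$; the two witnesses will be extracted from the hypotheses on the $a$-side ($x\varpropto a$, resp.\ $x\varpropto^\ctsRel a$) and on the $b$-side ($x\varpropto b$, resp.\ $x\varpropto^\ctsRel b$), respectively.

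For part (1) this is immediate. Since $x\varpropto a$, choose $k\in\N$ with $x\le ka$; since $M$ is preminimally ordered (\autoref{dfn:preminSimpleSFPrePom}), the hypothesis $a+x\le b+x$ together with $x\le ka$ yields $a+ka\le b+ka$. Symmetrically, choosing $l\in\N$ with $x\le lb$ and applying preminimality to $a+x\le b+x$ and $x\le lb$ gives $a+lb\le b+lb$. Now \autoref{prp:comparisonLeadsToP} applies and gives $a\le_p b$.

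For part (2) the role of preminimality is played by \autoref{prp:preminO5}, which requires the \emph{strict} inequality $a+x'\ll b+x'$ in order to conclude $a+y\le b+y$ from $x'\le y$. The first step is therefore to manufacture, from $a+x\ll b+x$, a single $x'\ll x$ with $a+x'\ll b+x'$. I would do this by the usual cofinality trick: write $x=\sup_n x_n$ and $b=\sup_n b_n$ as suprema of rapidly increasing sequences (\axiomO{2}), so that $b+x=\sup_n(b_n+x_n)$ by \axiomO{4}; since $a+x\ll b+x$, pick $m$ with $a+x\le b_m+x_m$, and set $x':=x_{m+1}$. Then $b_m+x_m\ll b_{m+1}+x_{m+1}\le b+x'$ by \axiomO{3}, whence $a+x'\le a+x\le b_m+x_m\ll b+x'$ gives $a+x'\ll b+x'$ by condition~(ii) of an auxiliary relation. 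With $x'$ in hand the argument parallels part (1): unfolding $x\varpropto^\ctsRel a$ (\autoref{dfn:regularRel} applied to the relation $\varpropto$ of \autoref{dfn:relS}), every element way-below $x$ is $\varpropto a$, and since $x'\ll x$ we obtain $k\in\N$ with $x'\le ka$; applying \autoref{prp:preminO5} to $a+x'\ll b+x'$ and $x'\le ka$ gives $a+ka\le b+ka$. Symmetrically $x'\varpropto b$ yields $l$ with $x'\le lb$ and hence $a+lb\le b+lb$, and \autoref{prp:comparisonLeadsToP} again gives $a\le_p b$.

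The only genuinely delicate point is the passage from $a+x\ll b+x$ to $a+x'\ll b+x'$ for a fixed $x'\ll x$; everything else is a direct application of the cited results. I expect this cutting-down step to be the main obstacle, since it is where compatibility of $\ll$ with addition (\axiomO{3}) and with suprema (\axiomO{2}, \axiomO{4}) is used, and where one must ensure that the single chosen $x'$ simultaneously serves the $a$-side and the $b$-side of the two subsequent estimates.
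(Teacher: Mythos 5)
Your argument is correct and follows essentially the same route as the paper's proof: both reduce to \autoref{prp:comparisonLeadsToP} after producing the two inequalities $a+ka\leq b+ka$ and $a+lb\leq b+lb$ via \autoref{prp:preminO5} (resp.\ preminimality in part (1)), and both obtain the needed $x'\ll x$ with $a+x'\ll b+x'$ by the same cofinality argument that the paper leaves implicit. No gaps.
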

\begin{proof}
To show \enumStatement{2}, let $S$ be as in the statement, and let $a,b,x\in S$ satisfy $a+x\ll b+x$ and $x\varpropto^\ctsRel a,b$.
Choose $x'\in S$ with $x'\ll x$ and $a+x\ll b+x'$.
Then
\[
a+x' \leq a+x \ll b+x'.
\]
Moreover, we have $x'\varpropto a,b$, whence we can find $k,l\in\N$ such that $x\leq ka$ and $x\leq lb$.
By \autoref{prp:preminO5}, we obtain
\[
a+ka\leq b+ka,\quad
b+lb\leq b+lb.
\]
Then, by \autoref{prp:comparisonLeadsToP}, it follows $a\leq_p b$.
The proof of \enumStatement{1} is similar (and easier).
\end{proof}

The following result should be compared to \cite[Theorem~2.2.6]{Bla90Rational}.

\begin{cor}
\label{prp:cancUpToP_simple}
(1)
Let $M$ be a simple, stably finite, preminimally \pom, and let $a,b,x\in M$.
If $a+x\leq b+x$, then $a\leq_p b$.

(2)
Let $S$ be a simple, stably finite $\CatCu$-semigroup satisfying \axiomO{5}, and let $a,b,x\in S$.
If $a+x\ll b+x$, then $a\leq_p b$.
\end{cor}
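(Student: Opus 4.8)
The plan is to deduce both parts directly from \autoref{prp:cancUpToP}, whose only hypotheses beyond those of the present statement are the divisibility conditions $x\varpropto a,b$ (respectively $x\varpropto^\ctsRel a,b$). The guiding observation is that in a simple semigroup these conditions are automatic as soon as $a$ and $b$ are nonzero, so the corollary reduces to supplying them and then dealing with the degenerate cases where $a$ or $b$ vanishes.

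First I would treat the main case of each part, assuming $a\neq 0$ and $b\neq 0$. For part~(1), simplicity of the \prePom\ $M$ (\autoref{dfn:preminSimpleSFPrePom}) says that $x\varpropto c$ for every nonzero $c$; applying this with $c=a$ and $c=b$ gives exactly $x\varpropto a,b$, so \autoref{prp:cancUpToP}(1) yields $a\leq_p b$. For part~(2), I would instead use the characterization recorded after \autoref{dfn:regularRel}, namely that $x\varpropto^\ctsRel c$ holds if and only if $\Idl(x)\subset\Idl(c)$. Since $S$ is simple (\autoref{dfn:simpleCu}) and $a,b$ are nonzero, the ideals $\Idl(a)$ and $\Idl(b)$ must each equal all of $S$, whence $x\varpropto^\ctsRel a$ and $x\varpropto^\ctsRel b$ hold trivially, and \autoref{prp:cancUpToP}(2) applies.

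It remains to dispose of the degenerate cases, and this is the step I expect to be the main (if still minor) obstacle. If $a=0$ then $a\leq b$, so $a\leq_p b$ holds trivially in both parts. The delicate point is ruling out $b=0$ when $a\neq 0$. In part~(1), $b=0$ gives $a+x\leq x$; combined with positivity $x\leq a+x$ and antisymmetry of the order this forces $a+x=x$, and stable finiteness of $M$ (finiteness of $x$, see \autoref{dfn:preminSimpleSFPrePom}) then forces $a=0$, a contradiction. In part~(2) the same conclusion needs the extra remark that $a+x\ll x$ together with $x\leq a+x$ yields $x\ll x$ via property~(ii) of the auxiliary relation $\ll$ (\autoref{pgr:axiomsW}); thus $x$ is compact, hence finite by stable finiteness of $S$ (\autoref{pgr:finiteSemigr}). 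Since $a+x\ll x$ also gives $a+x\leq x$, antisymmetry yields $a+x=x$, and finiteness of $x$ forces $a=0$, a contradiction. Hence whenever $a\neq 0$ we must have $b\neq 0$, reducing every situation to the main case handled above.
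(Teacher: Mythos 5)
Your proof is correct and follows essentially the same route as the paper: reduce to the case where $a$ and $b$ are both nonzero (handling $a=0$ trivially and excluding $b=0$ via stable finiteness), observe that simplicity then gives $x\varpropto a,b$ (respectively $x\varpropto^\ctsRel a,b$), and invoke \autoref{prp:cancUpToP}. You merely spell out the degenerate cases and the ideal-theoretic verification of $x\varpropto^\ctsRel a,b$ in more detail than the paper does.
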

\begin{proof}
The argument for both statements is analogous.
Let the elements $a,b$ and $x$ be as in the statements.
The conclusion is clearly true if $a$ is zero.

We may therefore assume that $a$ is nonzero.
Then, using stable finiteness in both cases, we get that $b$ cannot be zero.
Thus, we may assume that both $a$ and $b$ are nonzero.
By simplicity, this implies $x\varpropto a,b$ or $x\varpropto^\ctsRel a,b$, respectively.
Then the conclusion follows from \autoref{prp:cancUpToP}.
\end{proof}

Recall from \autoref{dfn:addAxioms} that a $\CatCu$-semigroup $S$ is \emph{weakly cancellative} if $a+x\ll b+x$ implies $a\leq b$, for any $a,b,x\in S$.
It follows from the previous result that $S$ is weakly cancellative whenever it is simple, stably finite, nearly unperforated and satisfies \axiomO{5}.
We remark that a simple, almost unperforated $\CatCu$-semigroup need not be weakly cancellative;
see \autoref{sec:openproblems}\eqref{listPrbls:Zprime}.
See also \autoref{sec:openproblems}\eqref{listPrbls:RangeZmultNotWkCanc} where we ask if this phenomenon is also possible for Cuntz semigroups of (simple) \ca{s}.

By \autoref{prp:nearUnperfImplications}, near unperforation implies almost unperforation in general.
The following result provides a converse.

\begin{thm}
\label{prp:nearUnperfSimpleCu}
Let $S$ be a simple $\CatCu$-semigroup satisfying \axiomO{5}.
Then $S$ is stably finite and nearly unperforated if and only if $S$ is weakly cancellative and almost unperforated.
\end{thm}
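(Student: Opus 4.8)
The plan is to prove the two implications separately; in both directions the key is to reduce the comparison $a\leq b$ to the behaviour of soft and compact elements, using the dichotomy of \autoref{prp:softNoncpctSimple} (every nonzero element of a simple, stably finite $S$ satisfying \axiomO{5} is \emph{either} soft \emph{or} compact) as the organising principle. For the forward direction, assume $S$ is stably finite and nearly unperforated. Almost unperforation is immediate from \autoref{prp:nearUnperfImplications}. For weak cancellation, suppose $a+x\ll b+x$. I would first apply \autoref{prp:cancUpToP_simple}(2) (using simplicity, stable finiteness and \axiomO{5}) to obtain $\hat a\leq\hat b$ in the form $a\leq_p b$, and then near unperforation to get $a\leq b$. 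To promote $\leq$ to $\ll$, choose rapidly increasing sequences $b=\sup_k b_k$ and $x=\sup_k x_k$ and an interpolant $a+x\ll w\ll b+x=\sup_k(b_k+x_k)$; then $a+x\ll b_m+x_m$ for some $m$ with $b_m\ll b$ and $x_m\leq x$, so $a+x_m\ll b_m+x_m$, whence \autoref{prp:cancUpToP_simple}(2) and near unperforation give $a\leq b_m\ll b$, i.e. $a\ll b$.

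For the reverse direction, assume $S$ is weakly cancellative and almost unperforated; I may assume $S\neq\{0\}$. Stable finiteness comes first: if the largest element $\infty$ were compact, then from $\infty+\infty=\infty=0+\infty$ with $\infty\ll\infty$ weak cancellation would force $\infty\ll 0$, hence $\infty=0$, a contradiction; thus $\infty$ is not compact and $S$ is stably finite by \autoref{prp:simpleSF}. For near unperforation I would invoke \autoref{prp:nearUnperfTFAE} and show that $2a\leq 2b$ and $3a\leq 3b$ imply $a\leq b$, splitting according to whether $a$ (and then $b$) is soft or compact. If $a$ is soft (or $0$), then $2a\leq 2b$ and $3a\leq 3b$ give $\hat a\leq\hat b$, and \autoref{prp:soft_comparison} yields $a\leq b$ directly. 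If $a$ is compact and $b$ is soft, then $2a\ll 2b$ gives $2a\leq 2b_m$ for some $b_m\ll b$, and softness of $b$ forces $b_m<_s b$, so $\lambda(a)\leq\lambda(b_m)<\lambda(b)$ (strict functional values of stable domination, \autoref{prp:relSTFAE}) for every functional normalized at $b$; since also $a\varpropto^\ctsRel b$, almost unperforation gives $a\leq b$ by \autoref{prp:almUnp}.

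The main obstacle is the remaining case, in which both $a$ and $b$ are compact: here almost unperforation alone is insufficient (functionals may agree on $a$ and $b$ without $a\leq b$), and weak cancellation must be used decisively. Writing $p=a$ and $q=b$, I would first extract complements $2q=2p+s$ and $3q=3p+t$ from \axiomO{5'}; the soft/compact dichotomy together with \autoref{prp:softAbsorbing}(2) shows $s$ and $t$ are again compact, since a soft summand would make $2q$ (resp. $3q$) soft, contradicting compactness of $q$. Weak cancellation makes the compact elements cancellative (a sum of compacts is compact, so $\leq$ becomes $\ll$ there), and cancelling $6p$ from $6q=6p+3s=6p+2t$ gives $3s=2t$. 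In particular $3s\leq 2t$, which is exactly $s<_s t$, so almost unperforation yields $s\leq t$; writing $t=s+r$ and then cancelling $s$ and $2p$ from the chain $q+2p+s=3q=3p+t=3p+r+s$ gives $q=p+r$, that is $a\leq b$.

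The delicate points to get right are precisely the ones where this last case can fail: one must verify that every complement produced by \axiomO{5'} remains compact (so that each cancellation is licensed by weak cancellation applied to compact totals), and that the bookkeeping identity $3s=2t$ indeed packages the whole comparison into the \emph{single} stable domination $s<_s t$ to which almost unperforation can be applied. Once these are in place, the four cases combine to prove that $2a\leq 2b$ and $3a\leq 3b$ imply $a\leq b$, which by \autoref{prp:nearUnperfTFAE} is near unperforation, completing the reverse implication.
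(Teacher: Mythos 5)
Your proof is correct, and both implications are organized around the same skeleton as the paper's argument: the forward direction via \autoref{prp:nearUnperfImplications} and \autoref{prp:cancUpToP_simple}, and the reverse direction via the soft/compact dichotomy of \autoref{prp:softNoncpctSimple} after first extracting stable finiteness from weak cancellation. Where you genuinely diverge is inside the cases. For soft $a$ you pass directly to $\hat{a}\leq\hat{b}$ and quote \autoref{prp:soft_comparison}, and for compact $a$ with soft $b$ you compare functionals normalized at $b$ and invoke \autoref{prp:almUnp}; the paper instead runs both soft cases through the stable domination relation ($a'\ll a$ gives $a'<_s a$, hence $a'<_s b$, hence $a'\leq b$), which is slightly more elementary but amounts to the same mechanism. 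The real difference is the compact--compact case. The paper splits it into two subcases: if $na<nb$ strictly for some $n$ it extracts a nonzero complement $x$ with $na+x=nb$, uses simplicity to get $a\leq kx$ and hence $a<_sb$, and concludes by almost unperforation; if instead $na=nb$ and $(n+1)a=(n+1)b$ it cancels $x=na$ directly. Your argument avoids this dichotomy by extracting complements from \emph{both} relations, $2q=2p+s$ and $3q=3p+t$ via \axiomO{5'}, verifying their compactness through the absorption property of soft elements (\autoref{prp:softAbsorbing}), cancelling $6p$ to get $3s=2t$, reading off $s<_st$, and then cancelling once more to obtain $q=p+r$. This is more uniform but costs you the bookkeeping of checking that every complement is compact so that each application of weak cancellation is between compact totals -- a point you correctly flag and which is indeed where the argument would break if left unverified. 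Your explicit upgrade of $a\leq b$ to $a\ll b$ in the weak-cancellation verification (via an interpolant and a rapidly increasing decomposition of $b+x$) is also a detail the paper glosses over, since it states the conclusion of \autoref{prp:cancUpToP_simple} only up to $\leq_p$; it is worth keeping.
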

\begin{proof}
In order to verify the remaining `if' part of the statement, assume that $S$ is weakly cancellative and almost unperforated.
It is clear that weak cancellation implies that $S$ is stably finite.
Let $a,b\in S$ satisfy $a\leq_p b$.
By \autoref{prp:softNoncpctSimple}, an element in a simple, stably finite $\CatCu$-semigroup satisfying \axiomO{5} is either compact or nonzero and soft.
We may therefore distinguish the following cases.

Case 1:
Assume $a$ is soft.
Let $a'\in S$ satisfy $a'\ll a$.
Since $a$ is soft, it follows $a'<_s a$ and therefore $a'<_s b$.
Using that $S$ is almost unperforated, we get $a'\leq b$.
Thus, we have shown $a'\leq b$ for every $a'\in S$ satisfying $a'\ll a$, whence $a\leq b$.

Case 2:
Assume $b$ is soft.
Since $a\leq_p b$, we can find $n\in\N$ such that $na\leq nb$.
Note that $nb$ is also soft.

Let $a'\in S$ satisfy $a'\ll a$.
Then $na'\ll nb$, and since $nb$ is soft, it follows $na'<_s nb$.
This implies $a'<_s b$, and hence $a'\leq b$ by almost unperforation.
Again, as this holds for every $a'\in S$ satisfying $a'\ll a$, we get $a\leq b$.

Case 3:
Assume that $a$ and $b$ are compact.
If there is $n\in\N$ such that $na<nb$, then using \axiomO{5} for $S$, there exists a nonzero element $x\in S$ such that $na+x=nb$.
Since $S$ is simple and $a$ is compact, we can find $k\in\N$ such that $a\leq kx$.
Then
\[
(kn+1)a
\leq kna + kx
= knb,
\]
which shows $a<_sb$.
Since $S$ is almost unperforated, we get $a\leq b$.

In the other case, there is $n\in\N$ with $na=nb$ and $(n+1)a=(n+1)b$.
Let $x=na=nb$.
Then $a+x\ll b+x$.
It follows from weak cancellation that $a\ll b$.

Thus, in all cases, it follows $a\leq b$.
This shows that $S$ is nearly unperforated.
\end{proof}

\begin{prp}
\label{prp:nearUnperfSimplePom}
Let $M$ be a simple, stably finite, algebraically ordered monoid.
Then $M$ is nearly unperforated if and only if $M$ is cancellative and almost unperforated.
\end{prp}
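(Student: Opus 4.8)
The plan is to prove the two implications separately, closely following the strategy behind the $\CatCu$-semigroup version in \autoref{prp:nearUnperfSimpleCu} but exploiting that, in a \pom{} with algebraic order, everything can be done purely algebraically. A preliminary observation I would record is that an algebraically ordered monoid is automatically preminimally ordered: if $x\leq y$ and $a+x\leq b+x$, then writing $y=x+z$ and $b+x=a+x+w$ gives $b+y=(a+y)+w$, so $a+y\leq b+y$. This lets me invoke the preminimality hypotheses of the cited lemmas.

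For the `only if' direction, suppose $M$ is nearly unperforated. Almost unperforation is then immediate from \autoref{prp:nearUnperfImplications}. For cancellation I would take $a,b,x\in M$ with $a+x\leq b+x$; since $M$ is simple, stably finite and (by the observation above) preminimally ordered, \autoref{prp:cancUpToP_simple}(1) yields $a\leq_p b$, and near unperforation then gives $a\leq b$. Hence $M$ is cancellative. This direction is routine.

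For the `if' direction, assume $M$ is cancellative and almost unperforated and let $a\leq_p b$; I must show $a\leq b$. The degenerate cases are disposed of directly: if $b=0$ then $ka=0$ for large $k$, and since $a\leq ka$ in any \pom{} this forces $a=0$; and $a=0$ is trivial. So assume $a,b$ are nonzero, so that $a\varpropto b$ by simplicity. I would then split according to whether $a<_sb$. If $a<_sb$, almost unperforation immediately gives $a\leq b$.

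The hard part will be the remaining case, where $a\leq_p b$ holds but $a<_sb$ fails; here functionals only record $f(a)\leq f(b)$ rather than the strict inequality that almost unperforation needs, and this is exactly the boundary where cancellation must be brought in. Using the equivalence of conditions (1) and (5) in \autoref{prp:relSTFAE} together with $a\varpropto b$, the failure of $a<_sb$ produces an extended state $f_0$ normalized at $b$ with $f_0(a)\geq f_0(b)=1$; combined with $f_0(a)\leq f_0(b)$ coming from $a\leq_p b$, this pins down $f_0(a)=f_0(b)=1$. A short argument shows $f_0$ is faithful on nonzero elements: if $m\neq 0$ then $b\leq nm$ for some $n$ by simplicity, so $f_0(m)\geq 1/n>0$. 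Now I would choose $k$ with $ka\leq kb$ and $(k+1)a\leq(k+1)b$, write $kb=ka+c$ and $(k+1)b=(k+1)a+d$ using the algebraic order, and apply $f_0$ to obtain $f_0(c)=f_0(d)=0$; faithfulness then forces $c=d=0$. Thus $ka=kb$ and $(k+1)a=(k+1)b$, and cancelling $kb=ka$ from $(k+1)b=(k+1)a$ gives $b=a$, so in particular $a\leq b$. Assembling the two cases shows $M$ is nearly unperforated, completing the proof.
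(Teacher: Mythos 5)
Your proof is correct. The `only if' direction coincides with the paper's (via \autoref{prp:cancUpToP_simple} and \autoref{prp:nearUnperfImplications}), and your explicit check that an algebraically ordered monoid is preminimally ordered is a useful detail the paper leaves implicit. In the converse direction you take a genuinely different route. The paper's dichotomy is on whether some inequality $na\leq nb$ is strict: if $na<nb$, it writes $nb=na+x$ with $x\neq 0$, uses simplicity to get $a\leq kx$, and deduces $(kn+1)a\leq knb$, i.e.\ $a<_sb$, so almost unperforation applies; otherwise $na=nb$ and $(n+1)a=(n+1)b$ for suitable $n$, and cancellation finishes. Your dichotomy is instead on whether $a<_sb$ holds, and in the negative case you extract an extended state $f_0$ normalized at $b$ from \autoref{prp:relSTFAE}, pin down $f_0(a)=1$ using $a\leq_p b$, prove faithfulness of $f_0$ from simplicity, and use this to upgrade $ka\leq kb$ and $(k+1)a\leq(k+1)b$ to equalities before cancelling. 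Both arguments are sound; the paper's is more elementary in that it never leaves the monoid (no states are needed, only the algebraic order and simplicity), while yours makes visible exactly where almost unperforation fails to bite --- namely when some extended state assigns $a$ and $b$ the same value --- at the cost of invoking the state-theoretic characterization of $<_s$.
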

\begin{proof}
It follows from \autoref{prp:cancUpToP_simple} that every simple, stably finite, nearly unperforated, preminimally \pom\ is cancellative.
Moreover, by \autoref{prp:nearUnperfImplications}, near unperforation implies almost unperforation.
This shows the `only if' part of the statement.

For the converse, assume that $M$ is a cancellative, almost unperforated, simple, algebraically ordered monoid.
Let $a,b\in M$ satisfy $a\leq_p b$.
If there is $n\in\N$ such that $na<nb$, then since $M$ is algebraically ordered, there is a nonzero element $x\in M$ such that $na+x=nb$.
As in case 3 in the proof of \autoref{prp:nearUnperfSimpleCu}, this implies $a\leq b$.

In the other case, there exists $n\in\N$ with $na=nb$ and $(n+1)a=(n+1)b$.
By cancellation, it follows $a\leq b$, as desired.
\end{proof}

\begin{prp}
\label{prp:nearUnperfCuAlgCanc}
Let $S$ be a $\CatCu$-semigroup satisfying \axiomO{5}.
Assume $S$ is algebraic, almost unperforated and weakly cancellative.
Then $S$ is nearly unperforated.
\end{prp}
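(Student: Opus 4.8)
The plan is to reduce the statement to the submonoid $S_c$ of compact elements and then lift. Since $S$ is algebraic, \autoref{prp:algebraicSemigp} identifies $S$ with the $\CatCu$-completion $\Cu(S_c)$ of the \pom{} $S_c$. By \autoref{prp:propertiesAlgebraic}(1), axiom \axiomO{5} for $S$ means precisely that $S_c$ is algebraically ordered, and by \autoref{prp:propertiesAlgebraic}(2), weak cancellation of $S$ means precisely that $S_c$ is cancellative. Finally, since the order and the relation $<_s$ on $S_c$ are induced from $S$, almost unperforation of $S$ restricts to almost unperforation of $S_c$. Thus $S_c$ is a cancellative, algebraically ordered, almost unperforated \pom, and the first goal is to show that every such \pom{} is nearly unperforated.

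For this monoid step, let $a,b\in S_c$ satisfy $a\leq_p b$. Using the remark after \autoref{dfn:nearUnperf}, fix $k$ with $ka\leq kb$ and $(k+1)a\leq(k+1)b$. Since $S_c$ is algebraically ordered, choose $p,q\in S_c$ with $kb=ka+p$ and $(k+1)b=(k+1)a+q$. Multiplying the first identity by $k+1$ and the second by $k$ and comparing (both equal $k(k+1)b$), cancellation yields $(k+1)p=kq$. In particular $(k+1)p\leq kq$, that is $p<_s q$, so almost unperforation gives $p\leq q$, say $q=p+r$. Expanding $(k+1)b=kb+b$ in two ways,
\[
ka+p+b=(k+1)b=(k+1)a+q=ka+a+p+r,
\]
and cancelling $ka+p$ leaves $b=a+r$, i.e.\ $a\leq b$. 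Hence $S_c$ is nearly unperforated. The essential point is that genuine cancellation is available in $S_c$ (whereas $S$ itself has only weak cancellation), which is exactly what the passage to compact elements buys us.

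To lift this to $S$, it suffices by \autoref{prp:nearUnperfTFAE} to show that $2a\leq 2b$ and $3a\leq 3b$ imply $a\leq b$ for $a,b\in S$. Write $a=\sup_n a_n$ and $b=\sup_n b_n$ with $a_n,b_n\in S_c$ rapidly increasing, as allowed by algebraicity. Fix $n$; by \axiomO{3} we have $2a_n\ll 2a\leq 2b$ and $3a_n\ll 3a\leq 3b$, while \axiomO{4} gives $2b=\sup_m 2b_m$ and $3b=\sup_m 3b_m$. Compact containment then produces an index $m$ (taking a common upper bound for the two indices obtained separately) with $2a_n\leq 2b_m$ and $3a_n\leq 3b_m$. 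As $a_n,b_m\in S_c$ and $S_c$ is nearly unperforated, \autoref{prp:nearUnperfTFAE} yields $a_n\leq b_m\leq b$. Letting $n$ vary gives $a=\sup_n a_n\leq b$, as desired.

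The main obstacle is the monoid lemma: one must extract the additive witnesses $p,q$, which requires the order on the relevant semigroup to be genuinely algebraic, and one must have full cancellation to run the identity $(k+1)p=kq$ and to conclude $b=a+r$. Both requirements fail in $S$ directly (where \axiomO{5} only gives almost algebraic order and weak cancellation is restricted to the relation $\ll$), so the reduction to $S_c$ via \autoref{prp:propertiesAlgebraic} is what makes the argument go through; the lifting step is then routine bookkeeping with \axiomO{3}, \axiomO{4}, and algebraicity.
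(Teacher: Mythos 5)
Your proof is correct and follows essentially the same route as the paper's: both reduce to compact elements via algebraicity, extract additive complements from $ka\leq kb$ and $(k+1)a\leq(k+1)b$, cross-multiply to obtain $(k+1)p=kq$, and then invoke almost unperforation to conclude. The only difference is organizational: you package the computation as a standalone lemma about cancellative, algebraically ordered, almost unperforated \pom{s} (justified by \autoref{prp:propertiesAlgebraic}), whereas the paper runs the identical computation directly in $S$, using \axiomO{5} and weak cancellation at compact elements in place of genuine algebraic order and cancellation in $S_c$.
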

\begin{proof}
Let $S$ be as in the statement.
To show that $S$ is nearly unperforated, let $a,b\in S$ satisfy $a\leq_p b$.
Since $S$ is algebraic, we may assume without loss of generality that $a$ and $b$ are compact.
Choose $n\in\N$ such that $na\leq nb$ and $(n+1)a\leq(n+1)b$.
Since $S$ satisfies \axiomO{5}, we can find $x,y\in S$ such that
\[
na+x=nb,\quad
(n+1)a+y=(n+1)b.
\]
Multiplying the first equation by $(n+1)$, and multiplying the second equation by $n$, we obtain
\[
n(n+1)a+(n+1)x=n(n+1)b,\quad
n(n+1)a+ny=n(n+1)b.
\]
Using that $b$ is compact and that $S$ is weakly cancellative, it follows
\[
(n+1)x = ny.
\]
Then, since $S$ is almost unperforated, we get $x\leq y$.
Using this at the second step, we get
\[
a+nb
= a+na+x
\leq a+na+y
= b + nb.
\]
Then, using that $b$ is compact and that $S$ has weak cancellation, it follows $a\leq b$, as desired.
\end{proof}

\begin{prbl}
\label{prbl:nearUnpFromAlmUnp}
Let $S$ be an almost unperforated $\CatCu$-semigroup.
Which conditions are necessary and sufficient for $S$ to be nearly unperforated?
In particular, is it sufficient to assume that $S$ satisfies weak cancellation and \axiomO{5}?
\end{prbl}

Concerning the second part of this problem, let $S$ be an almost unperforated, weakly cancellative $\CatCu$-semigroup satisfying \axiomO{5}.
Then $S$ is nearly unperforated if we additionally assume that $S$ is simple or algebraic;
see \autoref{prp:nearUnperfSimpleCu} and \autoref{prp:nearUnperfCuAlgCanc}.

Let us draw some conclusions for Cuntz semigroups of \ca{s}.

\begin{cor}
\label{prp:nearUnpCaSr1}
Let $A$ be a \ca{} with stable rank one.
Assume that $A$ is either simple or has real rank zero.
Then $\Cu(A)$ is nearly unperforated whenever it is almost unperforated.
\end{cor}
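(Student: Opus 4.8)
The plan is to reduce this corollary to the abstract $\CatCu$-semigroup results already established, namely \autoref{prp:nearUnperfSimpleCu} and \autoref{prp:nearUnperfCuAlgCanc}. The key observation is that for a \ca{} $A$ with stable rank one, the completed Cuntz semigroup $\Cu(A)$ satisfies the hypotheses needed to invoke those theorems, once we feed in the almost unperforation assumption. First I would recall that by \autoref{prp:o5o6}, $\Cu(A)$ always satisfies \axiomO{5} and \axiomO{6}, so the axiom \axiomO{5} required throughout this section is automatic. Next, the crucial input from stable rank one is weak cancellation: as noted in \autoref{rmk:addAxioms}(3) (citing \cite{RorWin10ZRevisited}), the Cuntz semigroup of a \ca{} with stable rank one is weakly cancellative. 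Thus for any such $A$, the $\CatCu$-semigroup $\Cu(A)$ already satisfies \axiomO{5} and weak cancellation, and the hypothesis of the corollary supplies almost unperforation.

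With these structural facts in hand, the proof splits into the two cases of the hypothesis. In the simple case, I would invoke \autoref{prp:simpleCa} to conclude that $\Cu(A)$ is a simple $\CatCu$-semigroup. Since $\Cu(A)$ is weakly cancellative, it is stably finite (weak cancellation implies stable finiteness, as observed after \autoref{prp:cancUpToP_simple}). Then \autoref{prp:nearUnperfSimpleCu} applies directly: a simple $\CatCu$-semigroup satisfying \axiomO{5} is weakly cancellative and almost unperforated if and only if it is stably finite and nearly unperforated. Since we have verified weak cancellation and almost unperforation, the theorem yields that $\Cu(A)$ is nearly unperforated, as desired.

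In the real rank zero case, the relevant fact is that $\Cu(A)$ is algebraic. By \autoref{rmk:algebraicSemigp}(2), a \ca{} of real rank zero has an algebraic Cuntz semigroup. Hence $\Cu(A)$ is an algebraic $\CatCu$-semigroup satisfying \axiomO{5}, it is weakly cancellative (from stable rank one), and it is almost unperforated by assumption. These are precisely the hypotheses of \autoref{prp:nearUnperfCuAlgCanc}, which concludes that $\Cu(A)$ is nearly unperforated. This finishes the second case, and the two cases together establish the corollary.

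The substantive work has in effect already been done in the abstract semigroup theorems; the role of this corollary is to translate the \ca{} hypotheses (stable rank one, plus simplicity or real rank zero) into the semigroup hypotheses (weak cancellation, plus simplicity or algebraicity). The one point requiring a little care, and the main potential obstacle, is making sure that the correct combination of structural properties is invoked in each branch — in particular that stable rank one genuinely supplies weak cancellation in full (not merely a one-sided or conditional version), and that stable finiteness is properly derived rather than assumed, since \autoref{prp:nearUnperfSimpleCu} is phrased as an equivalence between two conjunctions. Once these translations are verified, both branches are immediate citations, so the proof is short and essentially bookkeeping.
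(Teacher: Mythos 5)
Your proof is correct and follows essentially the same route as the paper: weak cancellation from stable rank one via \cite[Theorem~4.3]{RorWin10ZRevisited}, then \autoref{prp:nearUnperfSimpleCu} in the simple case and \autoref{prp:nearUnperfCuAlgCanc} in the real rank zero case. The only superfluous step is deriving stable finiteness separately before invoking \autoref{prp:nearUnperfSimpleCu} — the ``if'' direction of that theorem already concludes both stable finiteness and near unperforation from weak cancellation and almost unperforation.
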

\begin{proof}
By \cite[Theorem~4.3]{RorWin10ZRevisited}, $\Cu(A)$ has weak cancellation.
If $A$ is simple, then so is $\Cu(A)$;
see \autoref{prp:simpleCa}.
If $A$ has real rank zero, then $\Cu(A)$ is algebraic.
Then the statement follows from \autoref{prp:nearUnperfSimpleCu} (if $A$ is simple) and \autoref{prp:nearUnperfCuAlgCanc} (if $A$ has real rank zero).
\end{proof}

\begin{cor}
\label{prp:nearUnpCaZstable}
Let $A$ be a $\mathcal{Z}$-stable \ca.
Then $\Cu(A)$ is nearly unperforated if $A$ is simple or has real rank zero and stable rank one.
\end{cor}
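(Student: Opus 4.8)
The plan is to assemble \autoref{prp:nearUnpCaSr1} together with R\o rdam's structural results on $\mathcal{Z}$-stable \ca{s}. The ingredient common to all cases is that $\Cu(A)$ is almost unperforated: this holds for every $\mathcal{Z}$-stable \ca{} by \cite[Theorem~4.5]{Ror04StableRealRankZ}. Recall also that $\Cu(A)$ satisfies \axiomO{5} by \autoref{prp:o5o6}. First I would dispose of the case that $A$ has real rank zero and stable rank one: here \autoref{prp:nearUnpCaSr1} applies in its real-rank-zero branch, and since $\Cu(A)$ is almost unperforated, it is nearly unperforated.

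Next I would treat the case that $A$ is simple. By \cite{Ror04StableRealRankZ}, a simple $\mathcal{Z}$-stable \ca{} is either purely infinite or has stable rank one. If $A$ has stable rank one, then $\Cu(A)$ is nearly unperforated by the simple branch of \autoref{prp:nearUnpCaSr1}, again using almost unperforation. If instead $A$ is purely infinite, then \autoref{prp:nearUnpCaSr1} no longer applies, since $\Cu(A)$ fails to be stably finite; but in this case the conclusion is immediate. Indeed, in a simple, purely infinite \ca{} any two nonzero positive elements are mutually Cuntz subequivalent, and passing to $A\otimes\K$ (which is again simple and purely infinite) yields $\Cu(A)\cong\{0,\infty\}$, the elementary semigroup $\elmtrySgp{0}$ of \autoref{pgr:elementarySemigr}. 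Near unperforation of $\{0,\infty\}$ is then checked by hand: the only potentially nontrivial instance of $a\leq_p b$ with $a\neq 0$ would force $\infty\leq_p 0$, which fails, while $0\leq_p b$ always gives $0\leq b$. Equivalently, one may simply note that every nonzero element is properly infinite, so $a\leq_p b$ with $a\neq 0$ forces $b\neq 0$ and hence $a=\infty=b$.

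The only step requiring any attention is the purely infinite branch, and even there the work is minimal once one observes $\Cu(A)\cong\{0,\infty\}$; the genuine content of the corollary is carried entirely by \autoref{prp:nearUnpCaSr1} and by R\o rdam's dichotomy for simple $\mathcal{Z}$-stable algebras. Consequently I anticipate no serious obstacle, the proof being an assembly of results already established, with the $\mathcal{Z}$-stability hypothesis entering only through the almost unperforation of $\Cu(A)$ and through the structural trichotomy (purely infinite, or stable rank one, or real rank zero plus stable rank one) that routes each case to the appropriate earlier result.
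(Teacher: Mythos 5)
Your proposal is correct and follows essentially the same route as the paper: almost unperforation of $\Cu(A)$ from R\o rdam's Theorem~4.5, the dichotomy for simple $\mathcal{Z}$-stable algebras (purely infinite, giving $\Cu(A)\cong\{0,\infty\}$, versus stable rank one, handled by \autoref{prp:nearUnpCaSr1}), and the real-rank-zero branch of \autoref{prp:nearUnpCaSr1} for the remaining case. The only difference is that you spell out the (trivial) verification that $\{0,\infty\}$ is nearly unperforated, which the paper states without comment.
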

\begin{proof}
Since $A$ is $\mathcal{Z}$-stable, it follows from \cite[Theorem~4.5]{Ror04StableRealRankZ} that $\Cu(A)$ is almost unperforated.

We first assume that $A$ is simple.
Without loss of generality, we have $A\neq\{0\}$.
Since $A$ is $\mathcal{Z}$-stable, we can distinguish two cases.
If $A$ is purely infinite, then $\Cu(A)=\{0,\infty\}$, which is nearly unperforated.
In the other case, $A$ is stably finite, which by \cite[Theorem~6.7]{Ror04StableRealRankZ} implies that $A$ has stable rank one.
Then it follows from \autoref{prp:nearUnpCaSr1} that $\Cu(A)$ is nearly unperforated.

If $A$ has real rank zero and stable rank one, then it follows also directly from \autoref{prp:nearUnpCaSr1} that $\Cu(A)$ is nearly unperforated.
\end{proof}

\begin{lma}
\label{lem:limnearlyunperf}
(1)
Let $S$ be a nearly unperforated $\CatPreW$-semigroup.
Then its $\CatCu$-completion $\gamma(S)$ is nearly unperforated.

(2)
Let $(S_i,\varphi_i)$ be an inductive system of nearly unperforated semigroups in $\CatPom$, $\CatPreW$ or $\CatCu$.
Then $S=\lim S_i$ is nearly unperforated.
\end{lma}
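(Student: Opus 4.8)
The plan is to reduce part~(2) to part~(1) together with the purely order-theoretic case of $\CatPom$, and to locate all the genuinely new content in part~(1). First I would observe that near unperforation, in the sense of \autoref{dfn:nearUnperf}, is a property of the underlying \pom\ alone, so the auxiliary relations play no direct role. In the category $\CatPom$ the inductive limit is the one described in \ref{pgr:limPom}: an element is a class $[a]$ with $a\in S_i$, and $[a]\le[b]$ iff $\varphi_{i,k}(a)\le\varphi_{j,k}(b)$ for some $k$. Using the reformulation in \autoref{prp:nearUnperfTFAE}, assume $2[a]\le 2[b]$ and $3[a]\le 3[b]$; after choosing a common index $i$ for $a,b$ and passing far enough along the system I obtain an index $l$ with $2\varphi_{i,l}(a)\le 2\varphi_{i,l}(b)$ and $3\varphi_{i,l}(a)\le 3\varphi_{i,l}(b)$ in $S_l$, and near unperforation of $S_l$ gives $\varphi_{i,l}(a)\le\varphi_{i,l}(b)$, hence $[a]\le[b]$. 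Since the underlying \pom\ of $\CatPreWLim S_i$ is exactly $\CatPomLim S_i$ by \autoref{prp:limitsPreW}, the same computation settles the $\CatPreW$ case. For the $\CatCu$ case, \autoref{prp:limitsCu} gives $\CatCuLim S_i=\gamma(\CatPreWLim S_i)$; viewing each $\CatCu$-semigroup $S_i$ as a $\CatPreW$-semigroup $(S_i,\ll)$, the $\CatPreW$ case shows $\CatPreWLim S_i$ is nearly unperforated, and part~(1) then finishes.

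Thus everything reduces to part~(1). Let $S$ be a nearly unperforated $\CatPreW$-semigroup with $\CatCu$-completion $\alpha\colon S\to\gamma(S)$ as in \autoref{prp:CuificationExists}. By \autoref{prp:nearUnperfTFAE} it suffices to show that $2a\le 2b$ and $3a\le 3b$ imply $a\le b$ for $a,b\in\gamma(S)$. Using \axiomO{2} in $\gamma(S)$ together with property~(ii) in \autoref{prp:CuificationExists}, I would first write $a=\sup_m\alpha(a_m)$ and $b=\sup_n\alpha(b_n)$ with $a_m,b_n\in S$ and with $\alpha(a_m)\ll\alpha(a_{m+1})$ and $\alpha(b_n)\ll\alpha(b_{n+1})$ for all $m,n$. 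By \axiomO{4} one has $2b=\sup_n 2\alpha(b_n)$ and $3b=\sup_n 3\alpha(b_n)$. It then suffices to prove $\alpha(a_m)\le b$ for each fixed $m$, since taking the supremum over $m$ yields $a\le b$.

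Fix $m$. Using \axiomO{3} I get $2\alpha(a_m)\ll 2\alpha(a_{m+1})\le 2a\le 2b=\sup_n 2\alpha(b_n)$, so $2\alpha(a_m)\le 2\alpha(b_{n_1})$ for some $n_1$; combining with $2\alpha(b_{n_1})\ll 2\alpha(b_{n_1+1})$ gives $\alpha(2a_m)\ll\alpha(2b_{n_1+1})$. The key point, and the main obstacle, is that for a general $\CatPreW$-semigroup the map $\alpha$ is \emph{not} an order-embedding (by \autoref{rmk:Cuification}(2) this holds only for $\CatW$-semigroups), so I cannot descend $\le$-inequalities directly. Instead I invoke the weaker embedding property~(i) of \autoref{thm:Cuification}, which from $\alpha(2a_m)\ll\alpha(2b_{n_1+1})$ yields $2a_m\prec 2b_{n_1+1}$, hence $2a_m\le 2b_{n_1+1}$ in $S$. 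Running the same argument with the factor $3$ produces some $n_2$ with $3a_m\le 3b_{n_2+1}$.

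To conclude, set $N:=\max(n_1,n_2)+1$; monotonicity of $(b_n)$ gives $2a_m\le 2b_N$ and $3a_m\le 3b_N$ in $S$. Near unperforation of $S$, again through \autoref{prp:nearUnperfTFAE}, then yields $a_m\le b_N$, whence $\alpha(a_m)\le\alpha(b_N)\le b$, as required. The only delicate ingredient throughout is the repeated insertion of an extra step in each rapidly increasing sequence (passing from $\alpha(b_{n_1})$ to $\alpha(b_{n_1+1})$, and from $\alpha(a_m)$ to $\alpha(a_{m+1})$) so as to convert the available $\le$-relations into the compact containments needed before applying the embedding property~(i); this is exactly what compensates for $\alpha$ failing to be an order-embedding.
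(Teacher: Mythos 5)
Your proposal is correct and follows essentially the same route as the paper: for part (1) you approximate $a$ and $b$ by rapidly increasing sequences from the image of $\alpha$, bump the index by one to upgrade the inequalities $2\alpha(a_m)\le 2\alpha(b_{n})$ and $3\alpha(a_m)\le 3\alpha(b_{n})$ to compact containments, pull them back to $\prec$-relations in $S$ via the embedding property of \autoref{thm:Cuification}, and apply near unperforation of $S$; for part (2) you reduce to the $\CatPom$ computation and to part (1) exactly as the paper does. The only differences are cosmetic (you take two indices and then their maximum, the paper takes the maximum first, and you write out the $\CatPom$ limit argument that the paper calls straightforward).
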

\begin{proof}
Let us show \enumStatement{1}.
Given $s\in S$, we denote by $\bar{s}$ its image in $\gamma(S)$.
Let $a,b\in\gamma(S)$ such that $2a\leq 2b$ and $3a\leq 3b$.
By properties of the $\CatCu$-completion (see \autoref{thm:Cuification}), we can choose rapidly increasing sequences $(a_n)_n$ and $(b_n)_n$ in $S$ such that $a=\sup_n \bar{a}_n$ and $b=\sup_n \bar{b}_n$.

Fix $n\in\N$.
Then
\[
2\bar{a}_n\ll 2b=\sup_k 2\bar{b}_k,\quad
3\bar{a}_n\ll 3b=\sup_k 3\bar{b}_k.
\]
Thus, we can find indices $k$ and $l$ such that $2\bar{a}_n\leq 2\bar{b}_k$ and $3\bar{a}_n\leq 3\bar{b}_l$.
Set $m:=\max\{k,l\}+1$.
Then
\[
2\bar{a}_n\ll 2\bar{b}_m,\quad
3\bar{a}_n\ll 3\bar{b}_m.
\]
By properties of the $\CatCu$-completion, this implies $2a_n\prec 2b_m$ and $3a_n\prec 3b_m$.
Using that $S$ is nearly unperforated, we obtain $a_n\leq b_m$, and thus $\bar{a}_n\leq \bar{b}_m\leq b$.
It follows $a\leq b$, as desired.

Next, let us show \enumStatement{2}.
It is straightforward to check the statement for limits in $\CatPom$.
Using that the limit in $\CatPreW$ has the same order structure as the limit in $\CatPom$, the result follows for limits in $\CatPreW$.
\autoref{prp:limitsCu} shows that the limit of an inductive system in $\CatCu$ is the $\CatCu$-completion of the limit of the same system considered in $\CatPreW$.
Therefore, the statement for $\CatCu$ follows from \enumStatement{1}.
\end{proof}

For the next result, recall that we say that a \ca{} $A$ has \emph{no $K_1$-ob\-struc\-tions}, \index{terms}{no $K_1$-obstructions} if it has stable rank one and if $K_1(I)=\{0\}$ for any closed two-sided ideals $I$ of $A$;
see \cite{AntBosPer13CuFields} and \cite{AntBosPerPet14GeomDimFct}.

\begin{thm}
\label{prp:NearUnpNoK1}
Let $A$ be a separable $\mathcal Z$-stable \ca{} that has no $K_1$-obstructions.
Then $\Cu(A)$ is nearly unperforated.
\end{thm}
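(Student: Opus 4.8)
The plan is to reduce the claim to the cases already handled by \autoref{prp:nearUnpCaZstable} by using a suitable ideal decomposition of $A$ together with the fact (\autoref{lem:limnearlyunperf}) that near unperforation passes to $\CatCu$-completions and inductive limits. The hypothesis ``no $K_1$-obstructions'' means $A$ has stable rank one and $K_1(I)=\{0\}$ for every closed two-sided ideal $I$ of $A$; the role of this assumption is to guarantee that, although $A$ need not have real rank zero, its Cuntz semigroup behaves as if built out of well-understood pieces. Since $A$ is separable, $\Lat(A)$ is a $\CatCu$-semigroup (\autoref{prp:LatSepCa}) and by \autoref{prp:LatCa} it is isomorphic to $\Lat(\Cu(A))=\Lat_{\mathrm f}(\Cu(A))$, so every ideal is singly generated and the ideal structure of $\Cu(A)$ is fully controlled.

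First I would record the properties of $\Cu(A)$ that are available without any case distinction. Because $A$ is $\mathcal Z$-stable, $\Cu(A)$ is almost unperforated by \cite[Theorem~4.5]{Ror04StableRealRankZ}; because $A$ has stable rank one, $\Cu(A)$ is weakly cancellative by \cite[Theorem~4.3]{RorWin10ZRevisited}; and $\Cu(A)$ satisfies \axiomO{5} and \axiomO{6} by \autoref{prp:o5o6}. Thus the only missing ingredient for \autoref{prp:nearUnperfCuAlgCanc} is algebraicity, and the only missing ingredient for applying \autoref{prp:nearUnperfSimpleCu} fibrewise is simplicity. The strategy is therefore to pass to the \emph{simple quotients} or \emph{simple subquotients} of $A$: for each pair of ideals $I\subset J$ with $J/I$ simple, the subquotient $J/I$ is again separable, $\mathcal Z$-stable (these properties pass to ideals and quotients), and has stable rank one, hence $\Cu(J/I)$ is simple and nearly unperforated by \autoref{prp:nearUnperfSimpleCu}. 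The heart of the argument is then to glue this fibrewise near unperforation back to $\Cu(A)$.

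The gluing step is where the real work and the main obstacle lie. Near unperforation is \emph{not} obviously an extension-closed property (compare the open \autoref{prbl:axiomsExtension} for \axiomO{5}, \axiomO{6} and weak cancellation), so one cannot simply run an induction over a composition series of ideals. Instead I would exploit the $K_1$-vanishing hypothesis to show that $\Cu(A)$ is an inductive limit (in $\CatCu$) of $\CatCu$-semigroups whose near unperforation is already known, and then invoke \autoref{lem:limnearlyunperf}(2). Concretely, the absence of $K_1$-obstructions is precisely the hypothesis under which the Cuntz semigroup of a stable-rank-one algebra has been shown (in \cite{AntBosPer13CuFields}, \cite{AntBosPerPet14GeomDimFct}) to decompose as a limit of Cuntz semigroups of simpler building blocks, with the comparison in each fibre governed by functionals and the global order assembled from these fibres. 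I would verify that each building block in this decomposition is nearly unperforated, using that it is either elementary (handled directly, since $\{0,\infty\}$, $\overline{\N}$ and the $\elmtrySgp k$ are all nearly unperforated by inspection) or simple and $\mathcal Z$-stable with stable rank one (handled by \autoref{prp:nearUnperfSimpleCu}).

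Finally I would assemble the proof as follows: establish the global properties (almost unperforation, weak cancellation, \axiomO{5}, \axiomO{6}) of $\Cu(A)$; invoke the no-$K_1$-obstruction structure theory to realize $\Cu(A)$ as an inductive limit of nearly unperforated $\CatCu$-semigroups indexed over the simple subquotients of $A$; check near unperforation of each term via \autoref{prp:nearUnperfSimpleCu}; and conclude by \autoref{lem:limnearlyunperf}(2) that the limit $\Cu(A)$ is nearly unperforated. The delicate point to get right is that the order on $\Cu(A)$ really is recovered as the limit order — that is, that $2a\le 2b$ and $3a\le 3b$ globally can be detected on the fibres — which is exactly what the $K_1$-vanishing is needed to ensure and which I expect to be the main technical obstacle of the argument.
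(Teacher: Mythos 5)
Your overall skeleton --- reduce to known cases and pass to an inductive limit via \autoref{lem:limnearlyunperf}(2) --- matches the paper's strategy, and you correctly diagnose why a naive induction over a composition series fails. But the decomposition you propose in the middle does not exist, and this is a genuine gap. There is no result (in \cite{AntBosPer13CuFields}, \cite{AntBosPerPet14GeomDimFct}, or elsewhere) realizing $\Cu(A)$ as an inductive limit of $\CatCu$-semigroups ``indexed over the simple subquotients of $A$,'' with each term elementary or simple; for a nonsimple $A$ the Cuntz semigroup simply is not assembled from its simple subquotients in this way, and your building blocks (elementary semigroups, or simple $\mathcal Z$-stable stable-rank-one fibres) never appear. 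Consequently \autoref{prp:nearUnperfSimpleCu} is never actually applicable, and the ``gluing step'' you flag as the heart of the argument is left entirely unperformed.

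The paper's proof uses a different, concrete decomposition. Since $\mathcal{Z}\cong\varinjlim Z_{2^\infty,3^\infty}$, one has $A\cong\varinjlim (A\otimes Z_{2^\infty,3^\infty})$, so by \autoref{lem:limnearlyunperf}(2) it suffices to treat $A\otimes Z_{2^\infty,3^\infty}$. This algebra is a pullback of $C([0,1],A\otimes M_{6^\infty})$ against $A\otimes M_{2^\infty}\oplus A\otimes M_{3^\infty}$, and the no-$K_1$-obstructions hypothesis enters precisely to apply \cite[Theorem~3.5]{AntPerSan11PullbacksCu} (computing the Cuntz semigroup of the pullback as a pullback of Cuntz semigroups) and \cite[Corollary~2.7]{AntPerSan11PullbacksCu} (identifying $\Cu(C([0,1],B))$ with $\Lsc([0,1],\Cu(B))$). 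The relation $a\leq_p b$ is then checked componentwise, and each component lives in the Cuntz semigroup of a UHF-stable algebra, which is nearly unperforated by \autoref{prp:UHFStableNearUnp} (via $q$-unperforation, \autoref{prp:qUnpNearUnp}) --- not by simplicity, which is unavailable here. Note also that your opening reductions via almost unperforation plus weak cancellation cannot close the argument either, since whether these imply near unperforation in general is exactly the open \autoref{prbl:nearUnpFromAlmUnp}.
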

\begin{proof}
Recall from \cite{RorWin10ZRevisited}, that $\mathcal{Z}$ is isomorphic to a sequential inductive limit where each algebra in the inductive system is equal to the fixed generalized dimension drop algebra
\[
Z_{2^\infty,3^\infty}
:=\left\{ f\in C([0,1],M_{2^\infty}\!\otimes\! M_{3^\infty}) :
f(0)\in M_{2^\infty}\!\otimes\! 1,\
f(1)\in 1\!\otimes\! M_{3^\infty} \right\}.
\]
Since $A$ is $\mathcal{Z}$-stable we have $A\cong \varinjlim_k A\!\otimes\! Z_{2^\infty,3^\infty}$.
By \autoref{prp:functorCu}, we have
\[
\Cu(A) \cong \varinjlim_k \Cu(A\!\otimes\! Z_{2^\infty,3^\infty}).
\]
Therefore, by \autoref{lem:limnearlyunperf}, it is enough to prove that $\Cu(A\!\otimes\! Z_{2^\infty,3^\infty})$ is nearly unperforated.
We remark that so far the argument applies for every \ca{} $A$.

We identify $A\!\otimes\! Z_{2^\infty,3^\infty}$ with the \ca{} of continuous maps $f$ from $[0,1]$ to $A\!\otimes\! M_{2^\infty}\!\otimes\! M_{3^\infty}$ such that
\[
f(0)\in A\!\otimes\! M_{2^\infty}\!\otimes\! 1,\quad
f(1)\in A\!\otimes\! 1\!\otimes\! M_{3^\infty}.
\]
We use $\ev_0$ and $\ev_1$ to denote the evaluation at the endpoints $0$ and $1$ of $[0,1]$, respectively.
Then we have a commutative pullback diagram:
\[
\xymatrix{
A\!\otimes\!  Z_{2^\infty,3^\infty}\ar@{-->}[r]^-{\ev_0\oplus\ev_1} \ar@{-->}[d]
& { A\!\otimes\! M_{2^\infty}\!\otimes\! 1 \oplus A\!\otimes\! 1\!\otimes\! M_{3^\infty} } \ar[d]^{\iota_3,\iota_2} \\
C([0,1], A\!\otimes\!  M_{2^\infty}\!\otimes\! M_{3^\infty}) \ar@{->>}[r]^-{\ev_0\oplus\ev_1}
& { A\!\otimes\! M_{2^\infty}\!\otimes\! M_{3^\infty} \oplus  A\!\otimes\! M_{2^\infty}\!\otimes\! M_{3^\infty} },
}
\]
where $\iota_3$ and $\iota_2$ denote the natural inclusion maps
\[
\iota_3\colon A\!\otimes\! M_{2^\infty}\!\otimes\! 1
\to A\!\otimes\! M_{2^\infty}\!\otimes\! M_{3^\infty},\quad
\iota_2\colon A\!\otimes\! 1\!\otimes\! M_{3^\infty}
\to A\!\otimes\! M_{2^\infty}\!\otimes\! M_{3^\infty}.
\]
We identify $M_{6^\infty}$ with $M_{2^\infty}\!\otimes\! M_{3^\infty}$, and $A\!\otimes\! M_{2^\infty}\!\otimes\! 1$ with $ A\!\otimes\! M_{2^\infty}$, and $ A\!\otimes\! 1\!\otimes\! M_{3^\infty}$ with $A\!\otimes\! M_{3^\infty}$.
Then, since $A$ has no $K_1$-obstructions, we can apply \cite[Theorem~3.5]{AntPerSan11PullbacksCu} to compute $\Cu(A\!\otimes\! Z_{2^\infty,3^\infty})$ as the pullback semigroup
\[
\xymatrix{
\Cu(A\!\otimes\!  Z_{2^\infty,3^\infty}) \ar@{-->}[r]^-{\ev_0\oplus\ev_1} \ar@{-->}[d]
& { \Cu( A\!\otimes\! M_{2^\infty}) \oplus \Cu(A\!\otimes\! M_{3^\infty}) } \ar[d]^{\Cu(\iota_3),\Cu(\iota_2)} \\
\Cu( C([0,1], A\!\otimes\! M_{6^\infty}) ) \ar@{->>}[r]^-{\ev_0\oplus\ev_1}
& { \Cu( A\!\otimes\! M_{6^\infty}) \oplus \Cu(A\!\otimes\! M_{6^\infty}) }.
}
\]
Given a $\CatCu$-semigroup $S$, we denote by $\Lsc([0,1],S)$ the semigroup of lower-semi\-con\-tin\-u\-ous functions from $[0,1]$ to $S$ with pointwise order and addition.
Again, using that $A$ has no $K_1$-obstructions, by \cite[Corollary~2.7]{AntPerSan11PullbacksCu} we have
\[
\Cu(C([0,1],A\!\otimes\! M_{6^\infty}))
\cong \Lsc([0,1],\Cu(A\!\otimes\! M_{6^\infty})).
\]

Now, let $a,b\in \Cu(A\otimes Z_{2^\infty,3^\infty})$ satisfy $a\leq_p b$.
Using the pullback description above, we can choose $f,g\in\Lsc([0,1],\Cu(A\!\otimes\! M_{6^\infty}))$, and $x,u\in\Cu(A\!\otimes\! M_{2^\infty})$, and $y,v\in \Cu(A\!\otimes\! M_{3^\infty})$ such that
\[
f(0) = x,\quad
f(1) = y,\quad
g(0) = u,\quad
g(1) = v,
\]
and so that $a,b$ are identified as
\[
a=(f,x,y),\quad b=(g,u,v).
\]
Then $f\leq_p g$, $x\leq_p u$, and $y\leq_p v$.
By \autoref{prp:UHFStableNearUnp}, the Cuntz semigroups $\Cu(C([0,1],A\!\otimes\! M_{6^\infty}))$, $\Cu(A\!\otimes\! M_{2^\infty})$ and $\Cu(A\!\otimes\! M_{3^\infty})$ are nearly unperforated.
Therefore, we obtain $f\leq g$, $x\leq u$, and $y\leq v$.
Hence $a\leq b$, as desired.
\end{proof}

Inspired by the previous results, we make the following conjecture.

\begin{conj}
\index{terms}{nearly unperforated Conjecture}
\label{conj:nearUnpCaZstable}
Let $A$ be a $\mathcal{Z}$-stable \ca.
Then $\Cu(A)$ is nearly unperforated.
\end{conj}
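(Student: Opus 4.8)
The plan is to follow the scheme used in the proof of \autoref{prp:NearUnpNoK1} and to isolate precisely where the hypothesis of no $K_1$-obstructions enters. Writing $\mathcal{Z}$ as the sequential inductive limit with constant building block $Z_{2^\infty,3^\infty}$, the $\mathcal{Z}$-stability of $A$ gives $A\cong\varinjlim_k A\otimes Z_{2^\infty,3^\infty}$, and hence by \autoref{prp:functorCu} we have $\Cu(A)\cong\varinjlim_k\Cu(A\otimes Z_{2^\infty,3^\infty})$. Since near unperforation passes to inductive limits in $\CatCu$ by \autoref{lem:limnearlyunperf}\enumStatement{2}, it suffices to prove that $\Cu(A\otimes Z_{2^\infty,3^\infty})$ is nearly unperforated. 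As noted inside the proof of \autoref{prp:NearUnpNoK1}, this reduction is valid for an arbitrary \ca{} $A$; no $K_1$-hypothesis has yet been used.

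The second step would be to exploit the pullback presentation of $A\otimes Z_{2^\infty,3^\infty}$ over the two endpoints of $[0,1]$, exactly as in the pullback diagram in the proof of \autoref{prp:NearUnpNoK1}. The three building blocks $C([0,1],A\otimes M_{6^\infty})$, $A\otimes M_{2^\infty}$ and $A\otimes M_{3^\infty}$ are all UHF-stable, so their Cuntz semigroups are nearly unperforated by \autoref{prp:UHFStableNearUnp}. If $\Cu(A\otimes Z_{2^\infty,3^\infty})$ were identified with the pullback of the associated Cuntz semigroups, then given $a\leq_p b$ one would represent $a=(f,x,y)$ and $b=(g,u,v)$, deduce $f\leq_p g$, $x\leq_p u$ and $y\leq_p v$, apply near unperforation in each component to obtain $f\leq g$, $x\leq u$, $y\leq v$, and reassemble these into $a\leq b$ in the pullback.

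The obstacle, and the reason this remains a conjecture, is precisely the identification of $\Cu(A\otimes Z_{2^\infty,3^\infty})$ with the pullback semigroup. This identification is supplied by \cite[Theorem~3.5]{AntPerSan11PullbacksCu} only under the assumption that $A$ has no $K_1$-obstructions; in general the comparison map from the Cuntz semigroup of the pullback algebra to the pullback of the Cuntz semigroups need not be an isomorphism, the defect being governed by the $K_1$-data of the ideals of $A$. The hard part will therefore be to understand and control this defect: one would either need a corrected pullback formula for $\Cu$ that accounts for the $K_1$-obstruction, or a direct argument, at the level of Cuntz classes of positive elements in $A\otimes Z_{2^\infty,3^\infty}$, showing that $2a\leq 2b$ and $3a\leq 3b$ force $a\leq b$ without passing through the clean pullback description. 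I expect this last point to be the genuine difficulty, since it is exactly the interaction between the unitary (that is, $K_1$) phenomena and Cuntz comparison across the interval that the pullback hypothesis is designed to suppress.

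A more robust alternative might be to bypass the pullback entirely and argue via functionals, combining almost unperforation of $\Cu(A)$ (\cite[Theorem~4.5]{Ror04StableRealRankZ}) with a divisibility input coming from the $M_{2^\infty}$- and $M_{3^\infty}$-coordinates, in the spirit of \autoref{prp:comparisonRelations} and \autoref{prp:nearUnperfSimpleCu}. However, every such characterization of near unperforation in the present paper requires some form of weak cancellation, and in the non-simple, non-stable-rank-one setting weak cancellation is unavailable in general (indeed a simple, almost unperforated $\CatCu$-semigroup need not be weakly cancellative). Securing a substitute for weak cancellation in this generality is what makes even the functional-theoretic route delicate, so I would not expect it to avoid the underlying $K_1$-obstruction.
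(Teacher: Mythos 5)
The statement you were asked about is a \emph{conjecture} (\autoref{conj:nearUnpCaZstable}); the paper offers no proof of it in general, only verifications in special cases listed in \autoref{pgr:AnswersNearUnpCaZstable}, so there is no proof of the full statement to compare yours against. What you have written is not a proof but an accurate diagnosis of why the general case is open, and it matches the paper's own partial result precisely: the reduction to $\Cu(A\otimes Z_{2^\infty,3^\infty})$ via $\mathcal{Z}$-stability, \autoref{prp:functorCu} and \autoref{lem:limnearlyunperf}\enumStatement{2} is indeed valid for arbitrary $A$ (the paper says so explicitly inside the proof of \autoref{prp:NearUnpNoK1}), and the point at which the argument stalls is exactly the identification of $\Cu(A\otimes Z_{2^\infty,3^\infty})$ with the pullback of the three corner Cuntz semigroups, which \cite[Theorem~3.5]{AntPerSan11PullbacksCu} supplies only under the no-$K_1$-obstructions hypothesis. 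Your remarks on the functional-theoretic alternative are also consistent with the paper: every criterion for near unperforation established there (\autoref{prp:nearUnperfSimpleCu}, \autoref{prp:nearUnperfCuAlgCanc}) requires weak cancellation or algebraicity, and weak cancellation can fail for simple almost unperforated $\CatCu$-semigroups (the semigroup $Z'$ of \autoref{sec:openproblems}\eqref{listPrbls:Zprime}). The only caveat is one of presentation: since no step beyond the cases already covered by \autoref{prp:NearUnpNoK1} and \autoref{prp:UHFStableNearUnp} is actually carried out, this text should be framed as a discussion of the obstruction rather than as a proof attempt.
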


\begin{pgr}
\label{pgr:AnswersNearUnpCaZstable}
We have verified \autoref{conj:nearUnpCaZstable} for several classes of \ca{s}.
Let $A$ be a $\mathcal{Z}$-stable \ca.
Then $\Cu(A)$ is nearly unperforated in the following cases:
\beginEnumStatements
\item
If $A$ is simple;
see \autoref{prp:nearUnpCaZstable}.
\item
If $A$ has real rank zero and stable rank one;
see \autoref{prp:nearUnpCaZstable}.
\item
If $A$ is UHF-stable;
see \autoref{prp:UHFStableNearUnp}.
\item
If $A$ is purely infinite (not necessarily simple);
see \autoref{prp:pureInfCanearUnp}.
\item
If $A$ has no $K_1$-obstructions;
see \autoref{prp:NearUnpNoK1}.
\end{enumerate}
\end{pgr}

\chapter{Bimorphisms and tensor products}
\label{sec:tensProd}

In this chapter, we first present a framework for a theory of tensor products in enriched categories.
We focus on the categories $\CatPreW$ and $\CatCu$, which are both enriched over the category $\CatPom$; see \autoref{prp:enrichmentW-Cu}.

In \autoref{sec:tensProdPreW}, we construct tensor products in $\CatPreW$.
Given $\CatPreW$-semigroup{s} $S$ and $T$, we consider the tensor product $S\otimes_\CatPom T$ of the underlying \pom{s}, as constructed in \autoref{sec:pom}, and we equip it with a natural auxiliary relation $\prec$;
see \autoref{dfn:tensProdAuxRel}.
We show that the pair
\[
(S\otimes_\CatPom T,\prec),
\]
which we abbreviate $S\otimes_\CatPreW T$, is a $\CatPreW$-semigroup{} that has the universal properties of a tensor product;
see \autoref{prp:tensProdPreW}.
We then show that this gives $\CatPreW$ the structure of a symmetric, monoidal category;
see \autoref{pgr:monoidalPreW}.

In \autoref{sec:tensProdCu}, we show the existence of tensor products in $\CatCu$ by combining the result for $\CatPreW$ with the fact that $\CatCu$ is a reflective subcategory of $\CatPreW$.
More precisely, given $\CatCu$-semigroup{s} $S$ and $T$, their tensor product in $\CatCu$ is given as
\[
S\otimes_\CatCu T = \gamma(S\otimes_\CatPreW T),
\]
which is the $\CatCu$-completion of $S\otimes_\CatPreW T$;
see \autoref{prp:tensProdCu}.

Given \ca{s} $A$ and $B$, there is a natural $\CatCu$-morphism
\[
\tau_{A,B}^{\txtMax}\colon\Cu(A)\otimes_{\CatCu}\Cu(B) \to \Cu(A\tensMax B).
\]
It is natural to ask when this map is an isomorphism.
In \autoref{prp:tensProdAF}, we provide a positive answer if one of the \ca{s} is an AF-algebra.
The crucial observation is that tensor products in $\CatPreW$ and $\CatCu$ are continuous functors in each variable;
see \autoref{prp:tensLim}.

In \autoref{prp:tensWithInfty}, we show that for every $\CatCu$-semigroup{} $S$, the tensor product of $S$ with $\{0,\infty\}$ is naturally isomorphic to the $\CatCu$-semigroup{}  $\Lat_{\mathrm{f}}(S)$ of singly-generated ideals of $S$ as considered in \autoref{prp:LatCu}.
It follows that given $\CatCu$-semigroup{s} $S$ and $T$, there is a natural isomorphism
\[
\Lat_{\mathrm{f}}(S\otimes_{\CatCu} T) \cong \Lat_{\mathrm{f}}(S)\otimes_{\CatCu} \Lat_{\mathrm{f}}(T).
\]
In \autoref{prp:tensCaWithInfty}, we apply these results for the Cuntz semigroup of a separable \ca{} $A$ and deduce that there are natural isomorphisms
\[
\Cu(A\otimes\mathcal{O}_2)
\cong \Lat(A)
\cong \Cu(A)\otimes_\CatCu\{0,\infty\}
\cong \Cu(A)\otimes_\CatCu\Cu(\mathcal{O}_2),
\]
where $\mathcal{O}_2$ denotes the Cuntz algebra generated by two isometries with range projections adding up to the unit.
The same result holds when $\mathcal{O}_2$ is replaced by any simple, purely infinite \ca.

\section{Tensor product as representing object}
\label{sec:abstractTensProd}

In this section, we give a general categorical setup for tensor products, which is in part inspired by the approach in \cite{BanNel76Bimor}.
When constructing the tensor product of objects with a certain structure, the notion of a bimorphism is a crucial ingredient.

In some categories a bimorphism from a pair of objects $(X,Y)$ to a third object $Z$ is simply a set function $X\times Y\to Z$ that is a morphism in each variable; for instance in the categories $\CatMon$ and $\CatPom$ (see Paragraphs~\ref{pgr:mon} and~\ref{pgr:pom}).
In other cases, a bimorphism is a set function $X\times Y\to Z$ that is only required to be a `generalized' morphism in each variable, but additionally satisfies a condition taking both variables into account. This is the case, for instance, in the category of \ca{s} $\CatCa$ (see \autoref{exa:bimorphismFctr}), and the categories $\CatPreW$ and $\CatCu$ (see Definitions~\ref{dfn:bimorPreW} and~\ref{dfn:bimorCu}; see also Lemmas~\ref{prp:bimorPreW} and~\ref{prp:bimorCu}).

With the notion of bimorphisms at hand, the tensor product of two objects $X$ and $Y$ can be defined as an object that represents the functor $Z\mapsto\operatorname{Bimor}(X\times Y, Z)$.
This means that the tensor product $X\otimes Y$ satisfies
\[
\operatorname{Bimor}(X\times Y, Z) \cong \operatorname{Mor}(X\otimes Y,Z),
\]
for all objects $Z$.
Of course, whether the functor $\operatorname{Bimor}(X\times Y,\freeVar)$ is representable or not depends heavily on the categories considered and the objects $X$ and $Y$.

If the (bi)morphism sets carry additional structure, we may demand that it be preserved by the above identification.
This can be made precise using the language of enriched categories and functors.
The basic theory of monoidal and enriched categories can be found in \autoref{sec:appendixMonoidal}.
For details, we refer the reader to \cite{MacLan71Categories} and \cite{Kel05EnrichedCat}.

In this section, $\CatV$ will always denote a concrete, locally small, closed symmetric monoidal category, and $I$ will denote the unit object in $\CatV$.

\begin{pgr}[Representable functor]
\label{pgr:representableFctr}
\index{terms}{representable functor}
Let $\CatC$ be a category that is enriched over $\CatV$.
Each object $X$ in $\CatC$ defines a $\CatV$-functor
\[
\CatCMor(X,\freeVar)\colon \CatC\to\CatV
\]
as follows:
An object $Z$ in $\CatC$ is sent to the object $\CatCMor(X,Z)$ in $\CatV$.
Further, given objects $Z$ and $Z'$ in $\CatC$, the $\CatV_0$-morphism
\[
C(X,\freeVar)_{Z,Z'} \colon\CatCMor(Z,Z')\to \CatCMor(X,Z')^{\CatCMor(X,Z)}
\]
is the one corresponding to $M_{X,Z,Z'}$ (defining the composition of morphisms in $\CatC$) under the identification
\[
\CatVMor_0\left( \CatCMor(Z,Z')\otimes\CatCMor(X,Z),\CatCMor(X,Z') \vphantom{\CatC(X)^{\CatC(X)}} \right)
\cong\CatVMor_0\left( \CatCMor(Z,Z'),\CatCMor(X,Z')^{\CatCMor(X,Z)} \right).
\]
The $\CatV$-functor $\CatCMor(X,\freeVar)$ is called the \emph{representable functor} corresponding to $X$.
\end{pgr}

\begin{pgr}[Bimorphism functor]
\label{pgr:bimorphismFctr}
Let $\CatC$ be a category that is enriched over $\CatV$.
Given objects $X$ and $Y$ in $\CatC$, we assume that there is a $\CatV$-functor
\[
\CatCBimor(X\times Y,\freeVar)\colon\CatC\to\CatV.
\]
This means that for each object $Z$ in $\CatC$ there is an object $\CatCBimor(X\times Y,Z)$ in $\CatV$, representing the bimorphisms from $X\times Y$ to $Z$.
Moreover, given objects $Z$ and $Z'$ in $\CatC$, there is a $\CatV_0$-morphism
\[
\CatCBimor(X\times Y,\freeVar)_{Z,Z'}\colon\CatCMor(Z,Z')
\to \CatCBimor(X\times Y,Z')^{\CatCBimor(X\times Y,Z)}.
\]

We remark that the notation `$X\times Y$' appearing in the bimorphism functor does not refer to the product of the objects $X$ and $Y$.
In general, we do not assume that the considered category has products.
The notation is chosen since for the concrete cases considered in this paper, a bimorphism is a set function from the cartesian product of the underlying sets of $X$ and $Y$ to the underlying set of $Z$.
\end{pgr}

\begin{exa}
\label{exa:bimorphismFctr}
Let $\CatC$ be one of the concrete categories considered in this paper (for example, $\CatCu$), and let $X,Y, Z$ be objects in $\CatC$.
Then $X,Y$ and $Z$ are sets with additional structure, and a $\CatC$-morphism from $X$ to $Z$ is just a set function $X\to Z$ preserving this structure.
Similarly, a $\CatC$-bimorphism from $X\times Y$ to $Z$ is a set function $X\times Y\to Z$ satisfying certain conditions.

Consider for example the category $\CatCa$ of \ca{s}, which is enriched over $\CatCGHTop$; see \autoref{exa:closedCat}.
Given \ca{s} $A$ and $B$ we denote the set of \starHom{s} from $A$ to $B$ by $\CatCaMor(A,B)$ (we do not use $\CatCa(A,B)$ to avoid confusion with the \ca\ generated by $A$ and $B$). $\CatCaMor(A,B)$ has a natural topology giving it the structure of a compactly generated, Hausdorff space; see \cite{DubPor71ConvenientCatTopAlg}.
The representable functor
\[
\CatCaMor(A,\freeVar)\colon \CatCa \to \CatCGHTop,
\]
sends a \ca{} $C$ to $\CatCaMor(A,C)$ in $\CatCGHTop$;
and for any pair $(C,C')$ of \ca{s}, the $\CatCGHTop$-morphism
\[
\CatCaMor(A,\freeVar)_{C,C'}\colon \CatCaMor(C,C') \to \CatCaMor(A,C')^{\CatCaMor(A,C)}
\]
is given by
\[
\CatCaMor(A,\freeVar)_{C,C'}(\alpha)(\varphi):=\alpha\circ\varphi,
\]
for all $\alpha\in\CatCaMor(C,C')$ and $\varphi\in\CatCaMor(A,C)$.

Given \ca{s} $A,B$ and $C$, a $\CatCa$-bimorphism from $A\times B$ to $C$ is a set function $\varphi\colon A\times B\to C$ satisfying the following conditions:
\beginEnumConditions
\item
The function $\varphi$ is bounded and linear in each variable.
\item
We have $\varphi(a^*,b^*)=\varphi(a,b)^*$ for each $a\in A$ and $b\in B$.
\item
We have $\varphi(a_1a_2,b_1b_2)=\varphi(a_1,b_1)\varphi(a_2,b_2)$ for each $a_1,a_2\in A$ and $b_1,b_2\in B$.
\end{enumerate}
We equip the set $\CatCaBimor(A\times B,C)$ of all $\CatCa$-bimorphisms from $A\times B$ to $C$ with the topology of point-norm convergence.

Given \ca{s} $A$ and $B$, we define the bimorphism functor
\[
\CatCaBimor(A\times B,\freeVar)\colon \CatCa \to \CatCGHTop
\]
as follows:
A \ca{} $C$ is sent to $\CatCaBimor(A\times B,C)$ in $\CatCGHTop$;
and for any pair $(C,C')$ of \ca{s}, the $\CatCGHTop$-morphism
\[
\CatCaBimor(A\times B,\freeVar)_{C,C'}\colon \CatCaMor(C,C')
\to \CatCaBimor(A\times B,C')^{\CatCaBimor(A\times B,C)}
\]
is given by
\[
\CatCaBimor(A\times B,\freeVar,)_{C,C'}(\alpha)(\varphi):=\alpha\circ\varphi,
\]
for all $\alpha\in\CatCaMor(C,C')$ and $\varphi\in\CatCaBimor(A\times B,C)$.
\end{exa}

\begin{pgr}
\label{pgr:abstractTensProd}
Let $\CatC$ be a category that is enriched over $\CatV$.
Assume that for any objects $X$ and $Y$ in $\CatC$ there is a bimorphism $\CatV$-functor $\CatCBimor(X\times Y,\freeVar)$.
Let $V$ be an object in $\CatC$, and let $\varphi$ be an element of $\CatCBimor(X\times Y,V)$.
Let us show that this induces a $\CatV$-natural transformation
\[
\Phi\colon \ \CatCMor(V,\freeVar) \ \Rightarrow\ \CatCBimor(X\times Y,\freeVar).
\]
For each object $Z$ in $\CatC$, we need to define a $\CatV_0$-morphism $\Phi_Z$ from $\CatCMor(V,Z)$ to $\CatCBimor(X\times Y,Z)$.
To define $\Phi_Z$, we use the $\CatV_0$-morphism defining the bimorphism functor
\[
\CatCBimor(X\times Y,\freeVar)_{V,Z} \colon \CatCMor(V,Z)
\to \CatCBimor(X\times Y,Z)^{\CatCBimor(X\times Y,V)},
\]
which naturally corresponds to a $\CatV_0$-morphism
\[
G_{V,Z}\colon\CatCMor(V,Z)\otimes\CatCBimor(X\times Y,V)\to\CatCBimor(X\times Y,Z).
\]
Then $\Phi_Z$ is the $\CatV_0$-morphism given as the following composition:
\[
\CatCMor(V,Z) \xrightarrow{\cong}
\CatCMor(V,Z)\otimes I \xrightarrow{\id\otimes\varphi}
\CatCMor(V,Z)\otimes\CatCBimor(X\times Y,V) \xrightarrow{G_{V,Z}}
\CatCBimor(X\times Y,Z).
\]
\end{pgr}

\begin{dfn}
\label{dfn:abstractTensProd}
\index{terms}{tensor product}
With the notation from \autoref{pgr:abstractTensProd}, we say that the pair $(V,\varphi)$ is a \emph{tensor product} of $X$ and $Y$ in the enriched category $\CatC$ if $\Phi$ is a natural isomorphism, that is, if
\[
\Phi_Z\colon\CatCMor(V,Z)\to\CatCBimor(X\times Y,Z)
\]
is a $\CatV_0$-isomorphism for each object $Z$ in $\CatC$.
\end{dfn}

\begin{rmk}
\label{rmk:abstractTensProd}
We retain the notation from \autoref{pgr:abstractTensProd}.
Recall that a representation of a $\CatV$-functor $F\colon\CatC\to\CatV$ is an object $V$ in $\CatC$ together with a natural isomorphism $\Phi$ from the representable functor $\CatCMor(V,\freeVar)$ to $F$.
Thus, a tensor product $(V,\varphi)$ for $X$ and $Y$ induces a representation $(V,\Phi)$ of the $\CatV$-functor $\CatCBimor(X\times Y,\freeVar)$.

Conversely, assume that the $\CatV$-functor $\CatCBimor(X\times Y,\freeVar)$ is represented by the object $V$ and the natural isomorphism $\Phi$.
Then $\Phi_V$ is a $\CatV_0$-isomorphism
\[
\Phi_V\colon\CatCMor(V,V)\xrightarrow{\cong}\CatCBimor(X\times Y,V).
\]
Under this isomorphism, the identity element $\id_V\in\CatCMor(V,V)$ corresponds to an element $\varphi\in\CatCBimor(X\times Y,V)$.
It is straightforward to check that $(V,\varphi)$ induces the $\CatV$-natural isomorphism $(V,\Phi)$.
Thus, $(V,\varphi)$ is a tensor product of $X$ and $Y$.

To summarize, we have a natural correspondence between the following classes:
\beginEnumStatements
\item
Concrete tensor products $(V,\varphi)$ of $X$ and $Y$, where $V$ is an object in $\CatC$, and where $\varphi$ is an element in $\CatCBimor(X\times Y,V)$.
\item
Representations of the $\CatV$-functor $\CatCBimor(X\times Y,\freeVar)$.
\end{enumerate}
Any two tensor products of $X$ and $Y$ are isomorphic, and hence the notation $X\otimes Y$ is unambiguous.
We also write $X\otimes_{\CatC}Y$ if we need to specify the category where the tensor product is taken.
\end{rmk}

\begin{exa}
\label{exa:tensProdCa}
Consider the category $\CatCa$ of \ca{s}, which is enriched over $\CatCGHTop$.
Let $A$ and $B$ be \ca{s}, and consider the $\CatCa$-bimorphism functor
\[
\CatCaBimor(A\times B,\freeVar)\colon \CatCa \to \CatCGHTop
\]
from \autoref{exa:bimorphismFctr}.
Let $A\tensMax B$ denote the maximal tensor product of $A$ and $B$;
we refer the reader to \cite[\S~II.9]{Bla06OpAlgs} for an introduction and details of the rich theory of tensor products of \ca{s}.
Consider the map
\[
\varphi_{A,B}\colon A\times B \to A\tensMax B,\quad
(a,b)\mapsto a\otimes b,\quad
\txtFA a\in A, b\in B.
\]
It is easy to see that $\varphi_{A,B}$ is a $\CatCa$-bimorphism.
For each \ca{} $C$, the assignment
\[
\CatCaMor(A\tensMax B,C) \to \CatCaBimor(A\times B,C),
\]
defined by mapping $\tau\in\CatCaMor(A\tensMax B,C)$ to $\tau\circ\varphi_{A,B}$,
is a homeomorphism, that is, an isomorphism in $\CatCGHTop$.
This means that the maximal tensor product of \ca{s} represents the $\CatCa$-bimorphism functor.
\end{exa}

\begin{pgr}
\label{pgr:functorialityTensor}
Let $\CatC$ be a category that is enriched over $\CatV$.
Assume that $\CatC$ has a bimorphism functor that is also functorial in the first two variables.
This means there is a $\CatV$-multifunctor
\[
\CatCBimor(\freeVar\times\freeVar,\freeVar)\colon\CatC^\op\times\CatC^\op\times\CatC\to\CatV.
\]
Let us also assume that $\CatC$ has tensor products, that is, for any objects $X$ and $Y$ in $\CatC$ there are an object $X\otimes Y$ in $\CatC$ and a universal bimorphism
\[
\varphi_{X,Y}\in\CatCBimor(X\times Y,X\otimes Y).
\]

Functoriality of $\CatCBimor(\freeVar\times\freeVar,\freeVar)$ in the first two variables induces a $\CatV$-bifunctor
\[
\otimes\colon\CatC\times\CatC\to\CatC.
\]
A pair $(X,Y)$ in $\CatC\times\CatC$ is sent to the object $X\otimes Y$ in $\CatC$.
Given objects $X,X',Y$, and $Y'$ in $\CatC$, the required $\CatV_0$-morphism
\[
\CatCMor(X,X')\otimes\CatCMor(Y,Y')\to\CatCMor(X\otimes Y,X'\otimes Y')
\]
is obtained as the following composition, where the first morphism is obtained using that $\CatCBimor(\freeVar\times\freeVar,X'\otimes Y')$ is a (contravariant) $\CatV$-bifunctor, the second morphism is obtained by applying the $\CatV_0$-morphism $\varphi_{X',Y'}\colon I \to \CatCBimor(X'\times Y', X'\otimes Y')$ in the first variable to the internal hom-bifunctor, the first isomorphism is a natural isomorphism for closed monoidal categories and the second and last isomorphism is obtained using that $X\otimes Y$ represents the functor $\CatCBimor(X\times Y,\freeVar)$:
\begin{align*}
\CatCMor(X,X')\otimes\CatCMor(Y,Y')
&\to
\CatCBimor(X\times Y,X'\otimes Y')^{\CatCBimor(X'\times Y',X'\otimes Y')} \\
&\to
\CatCBimor(X\times Y,X'\otimes Y')^{I} \\
&\cong \CatCBimor(X\times Y,X'\otimes Y')
\cong \CatCMor(X\otimes Y,X'\otimes Y').
\end{align*}
\end{pgr}

\vspace{5pt}
\section{The tensor product in \texorpdfstring{$\CatPreW$}{PreW}}
\label{sec:tensProdPreW}

\begin{pgr}
\label{pgr:enrichmentPreW}
Let us show that $\CatPreW$ is enriched over the closed, monoidal category $\CatPom$.
Given $\CatPreW$-semigroup{s} $S$ and $T$, recall that we denote by $\CatWMor(S,T)$ the set of $\CatW$-morphisms from $S$ to $T$.
Equipped with pointwise order and addition, $\CatWMor(S,T)$ has the natural structure of a \pom.

Given $\CatPreW$-semigroup{s} $S,T$ and $R$, it is easy to see that the composition of morphisms
\[
C_{S,T,R}\colon\CatWMor(T,R)\times\CatWMor(S,T)\to\CatWMor(S,R),\quad
(g,f)\mapsto g\circ f,
\]
is a $\CatPom$-bimorphism.
By \autoref{prp:tensorPom}, $C_{S,T,R}$ factors through the $\CatPom$-tensor product.
This means that there exists a $\CatPom$-morphism
\[
M_{S,T,R}\colon\CatWMor(T,R)\otimes_{\CatPom}\CatWMor(S,T)\to\CatWMor(S,R)
\]
such that $g\circ f=M_{S,T,R}(g\otimes f)$ for every $g\in\CatWMor(T,R)$ and $f\in\CatWMor(S,T)$.

One can prove that this structure defines an enrichment of $\CatPreW$ over $\CatPom$.
Since the categories $\CatW$ and $\CatCu$ are full subcategories of $\CatPreW$, they inherit the enrichment over $\CatPom$.
\end{pgr}

\begin{prp}
\label{prp:enrichmentW-Cu}
The categories $\CatPreW$, $\CatW$ and $\CatCu$ are enriched over the category $\CatPom$.
Moreover, the reflection functors $\mu\colon\CatPreW\to\CatW$
and $\gamma\colon\CatPreW\to\CatCu$, from \autoref{pgr:WreflectivePreW}  and \autoref{prp:CureflectivePreW}, are $\CatPom$-functors.
\end{prp}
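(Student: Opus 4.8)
The plan is to unwind the definition of enrichment over $\CatPom$ (see \autoref{sec:appendixMonoidal}) and verify each required piece for $\CatPreW$, then deduce the claim for $\CatW$ and $\CatCu$ using that they are full subcategories. The essential data of a $\CatPom$-enrichment consists of: hom-objects $\CatWMor(S,T)$ that live in $\CatPom$; composition morphisms $M_{S,T,R}$ in $\CatPom$; and identity morphisms, subject to the associativity and unit coherence axioms. Most of this is already assembled in \autoref{pgr:enrichmentPreW}, so the proof is largely a matter of confirming that the outlined structure satisfies the axioms.

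First I would confirm that $\CatWMor(S,T)$ is a \pom\ under pointwise order and addition: the zero morphism serves as the neutral element, the sum of two $\CatW$-morphisms is again a $\CatW$-morphism (preservation of order, the auxiliary relation, and continuity are all checked pointwise, using axiom \axiomW{3} to handle additivity of the auxiliary relation), and positivity follows since every morphism dominates the zero morphism. Next I would verify that the composition map
\[
C_{S,T,R}\colon\CatWMor(T,R)\times\CatWMor(S,T)\to\CatWMor(S,R),\quad (g,f)\mapsto g\circ f,
\]
is a $\CatPom$-bimorphism; this amounts to checking biadditivity and order-preservation in each variable, which is routine. Invoking \autoref{prp:tensorPom}, $C_{S,T,R}$ factors through the $\CatPom$-tensor product to yield the $\CatPom$-morphism $M_{S,T,R}$ with $g\circ f = M_{S,T,R}(g\otimes f)$. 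The identity element $\id_S\in\CatWMor(S,S)$ provides the unit. I would then check the two coherence diagrams—associativity of $M$ and compatibility of $M$ with the identities—by evaluating on elementary tensors $g\otimes f$, where each reduces to the ordinary associativity and unit laws for composition of set functions; these hold automatically since composition in $\CatPreW$ is just composition of underlying maps.

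For the subcategories, I would observe that $\CatW$ and $\CatCu$ are \emph{full} subcategories of $\CatPreW$ (by \autoref{dfn:catW} and \autoref{pgr:CufullW}). Fullness means that for objects $S,T$ in $\CatW$ (respectively $\CatCu$), the hom-set $\CatWMor(S,T)$ computed in the subcategory coincides with the one in $\CatPreW$; consequently the hom-objects, composition morphisms, and units restrict verbatim, and the coherence axioms are inherited with no further work. This establishes the enrichment for all three categories.

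Finally, to see that the reflectors $\mu\colon\CatPreW\to\CatW$ and $\gamma\colon\CatPreW\to\CatCu$ are $\CatPom$-functors, I would exhibit, for each pair $S,T$ of $\CatPreW$-semigroups, the action-on-morphisms map
\[
\CatWMor(S,T)\to\CatWMor(\mu(S),\mu(T)),\quad f\mapsto\mu(f),
\]
and show it is a $\CatPom$-morphism compatible with $M$ and the units. Here I would use the universal property of the $\CatW$-completion $\beta_S\colon S\to\mu(S)$ from \autoref{prp:Wification}: $\mu(f)$ is the unique $\CatW$-morphism extending $\beta_T\circ f$, and uniqueness gives functoriality $\mu(g\circ f)=\mu(g)\circ\mu(f)$ as well as $\mu(\id_S)=\id_{\mu(S)}$, which is exactly compatibility with the enriched composition. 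That $f\mapsto\mu(f)$ preserves sums and order follows because $\beta$ does and the extension in \autoref{prp:Wification} is built by the formula $\tilde f([a])=f(a)$, which is visibly additive and monotone in $f$. The identical argument applies to $\gamma$ using the $\CatCu$-completion from \autoref{prp:CuificationExists} and \autoref{thm:Cuification}. The main obstacle, and the one point deserving care, is verifying additivity and monotonicity of $f\mapsto\mu(f)$ (and $f\mapsto\gamma(f)$) in $f$; since the completion constructions are defined by suprema and pass through the factorization of \autoref{prp:tensorPom}, one must check that these operations commute with the pointwise $\CatPom$-structure on hom-sets, but this follows from \axiomW{3}, \axiomW{4} and the explicit descriptions of the completions.
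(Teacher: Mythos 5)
Your proposal is correct and follows essentially the same route as the paper: the enrichment data is exactly the structure assembled in \autoref{pgr:enrichmentPreW} (hom-\pom{s}, composition as a $\CatPom$-bimorphism factored through $\otimes_\CatPom$ via \autoref{prp:tensorPom}), inherited by $\CatW$ and $\CatCu$ as full subcategories, and the reflectors act on hom-objects by $f\mapsto\beta_T\circ f$ (resp.\ $f\mapsto\alpha_T\circ f$) followed by the adjunction identification of \autoref{prp:Wification} (resp.\ \autoref{thm:Cuification}), which is precisely the paper's construction of $\gamma_{S,T}$. Your extra remarks on why these assignments are additive and monotone in $f$ just fill in a step the paper declares ``easy to see.''
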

\begin{proof}
We have already observed in \autoref{pgr:enrichmentPreW} that the three categories are enriched over $\CatPom$.
In order to show that the reflection functor $\gamma\colon\CatPreW\to\CatCu$ is compatible with the enrichment, let $S$ and $T$ be $\CatPreW$-semigroup{s}. We need to define a $\CatPom$-morphism
\[
\gamma_{S,T}\colon\CatWMor(S,T) \to \CatCuMor(\gamma(S),\gamma(T)).
\]
Let $\alpha_T\colon T\to\gamma(T)$ be the $\CatCu$-completion of $T$; see \autoref{dfn:Cuification}.
Given $f$ in $\CatWMor(S,T)$, we consider the composition $\alpha_T\circ f\colon S\to\gamma(T)$.
Using this assignment at the first step, and using \autoref{thm:Cuification} to obtain the natural identification at the second step, we obtain the following composition:
\[
\CatWMor(S,T) \to \CatWMor(S,\gamma(T))
\xrightarrow{\cong} \CatCuMor(\gamma(S),\gamma(T)).
\]
It is easy to see that these maps respect the $\CatPom$-structure of the involved morphism sets.
It is then straightforward to check that $\gamma$ is a $\CatPom$-functor.

Analogously, one shows that $\mu$ preserves is a $\CatPom$-functor.
\end{proof}

\begin{dfn}
\label{dfn:bimorPreW}
\index{terms}{W-bimorphism@$\CatW$-bimorphism}
\index{terms}{W-bimorphism@$\CatW$-bimorphism!generalized}
Let $S$, $T$ and $R$ be $\CatPreW$-semigroup{s}, and let $f\colon S\times T\to R$ be a $\CatPom$-bimorphism.
We say that $f$ is a \emph{$\CatW$-bimorphism} if it satisfies the following conditions:
\beginEnumConditions
\item
The map $f$ is continuous in the following sense:
For every $a\in S$, $b\in T$, and $r\in R$ satisfying $r\prec f(a,b)$, there exist $a'\in S$ and $b'\in T$ such that $a'\prec a$, $b'\prec b$, and $r\leq f(a',b')$.
\item
If $a'\prec a$ and $b'\prec b$, then $f(a',b')\prec f(a,b)$, for any $a',a\in S$ and $b',b\in T$.
\end{enumerate}
We denote the set of all $\CatW$-bimorphisms by $\CatWBimor(S\times T,R)$.

If the $\CatPom$-bimorphism $f$ is only required to satisfy condition \enumCondition{1}, then we call it a \emph{generalized $\CatW$-bi\-morphism}.
We denote the collection of all generalized $\CatW$-bimorphisms by $\CatWGenBimor{S\times T,R}$.
\end{dfn}

\begin{lma}
\label{prp:bimorPreW}
Let $S$, $T$ and $R$ be $\CatPreW$-semigroup{s}, and let $f\colon S\times T\to R$ be a $\CatPom$-bimorphism.
Then the following conditions are equivalent:
\beginEnumStatements
\item
The map $f$ is a generalized $\CatW$-bimorphism.
\item
In each variable, $f$ is a generalized $\CatW$-morphism.
\end{enumerate}
\end{lma}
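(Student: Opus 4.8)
The plan is to prove the equivalence by unwinding the definitions, since the two conditions differ only in how the continuity requirement is phrased. The implication \implStatements{2}{1} is the routine direction and I would dispatch it first. Assume $f$ is a generalized $\CatW$-morphism in each variable separately. Given $a\in S$, $b\in T$, and $r\in R$ with $r\prec f(a,b)$, I would apply continuity of $f$ in the first variable to the element $f(\freeVar,b)$: since $r\prec f(a,b)=f(\freeVar,b)(a)$ and $f(\freeVar,b)$ is a generalized $\CatW$-morphism, there exists $a'\in S$ with $a'\prec a$ and $r\leq f(a',b)$. Now I still need to shrink in the second variable. From $r\leq f(a',b)$ I cannot immediately conclude $r\prec f(a',b)$, so the cleaner route is to first choose an intermediate $r^\sharp$ with $r\prec r^\sharp\prec f(a,b)$ (using \axiomW{1} applied to $f(a,b)^\prec$, which is upward directed and contains a cofinal $\prec$-increasing sequence), pull $r^\sharp$ back in the first variable to get $a'\prec a$ with $r^\sharp\leq f(a',b)$, and then use $r\prec r^\sharp\leq f(a',b)$ together with continuity in the second variable to extract $b'\prec b$ with $r\leq f(a',b')$. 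This gives condition \enumCondition{1}.

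For the harder implication \implStatements{1}{2}, I would assume $f$ is a generalized $\CatW$-bimorphism and fix $b\in T$; the goal is to show the map $g:=f(\freeVar,b)\colon S\to R$ is a generalized $\CatW$-morphism, i.e.\ satisfies axiom \axiom{M}: for $a\in S$ and $r\in R$ with $r\prec g(a)=f(a,b)$, there is $a'\prec a$ with $r\leq g(a')=f(a',b)$. Applying the bimorphism continuity \enumCondition{1} directly yields $a'\prec a$ and $b'\prec b$ with $r\leq f(a',b')$. The obstacle is that $b'$ has shrunk, and since $b'\prec b$ gives $b'\leq b$, monotonicity of $f$ in the second variable (which holds because $f$ is a $\CatPom$-bimorphism, hence order-preserving in each argument) yields $f(a',b')\leq f(a',b)$. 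Therefore $r\leq f(a',b')\leq f(a',b)=g(a')$, which is exactly axiom \axiom{M} for $g$. The same argument with the roles of the variables exchanged handles continuity in the first variable.

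The point I would flag as the main subtlety is precisely this use of monotonicity to absorb the unwanted shrinking of the second variable: the definition of a generalized $\CatW$-bimorphism builds in simultaneous shrinking in both variables, whereas a generalized $\CatW$-morphism in a single variable only shrinks that one variable. The key observation making the equivalence work is that $a\prec b$ implies $a\leq b$ (condition (i) of an auxiliary relation in \autoref{pgr:axiomsW}), so shrinking $b$ to $b'$ only decreases the value of $f$, and a lower bound $r\leq f(a',b')$ remains a lower bound for the larger value $f(a',b)$. No appeal to \axiomW{2} or to the full $\CatW$-axioms is needed here; only the basic properties of an auxiliary relation and the biadditivity (hence bimonotonicity) of $f$ enter. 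I expect the entire proof to be short, with the only care required being the order of quantifier manipulations in the \implStatements{2}{1} direction, where one must interpolate an auxiliary element to feed the second-variable continuity.
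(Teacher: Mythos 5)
Your proof is correct and follows essentially the same route as the paper's: the easy direction uses monotonicity of $f$ in the shrunken variable ($f(a',b')\leq f(a',b)$ since $b'\prec b$ implies $b'\leq b$), and the other direction interpolates an element $r\prec\tilde r\prec f(a,b)$ via \axiomW{1} in $R$ before applying continuity in each variable in turn. The only quibble is that you label \implStatements{1}{2} the "harder" implication when it is in fact the one-line monotonicity argument, but the mathematics in both directions matches the paper's proof.
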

\begin{proof}
To show that \enumStatement{1} implies \enumStatement{2}, fix an element $b\in T$ and consider the map $f_b\colon S\to R$ given by $f_b(a)=f(a,b)$
for $a\in S$.
This map is clearly a $\CatPom$-morphism.
In order to show that it is continuous, let $a\in S$ and $r\in R$ satisfy $r\prec f(a,b)$.
We need to find $a'\in S$ such that $a'\prec a$ and $r\leq f(a',b)$.

By assumption we can choose $a'\in S$ and $b'\in T$ such that
\[
a'\prec a,\quad
b'\prec b,\quad
r\leq f(a',b').
\]
Since $f(a',b')\leq f(a',b)$, we see that $a'$ has the desired properties.
The analogous result holds in the second variable.

To show that \enumStatement{2} implies \enumStatement{1}, let $a\in S$, $b\in T$, and $r\in R$ satisfy $r\prec f(a,b)$.
Since $R$ satisfies \axiomW{1}, we can choose $\tilde{r}\in R$ such that
\[
r\prec\tilde{r}\prec f(a,b).
\]
Since $f$ is continuous in the first variable, we can find $a'\in S$ with $a'\prec a$ and $\tilde{r}\leq f(a',b)$.
It follows $r\prec f(a',b)$.
Using that $f$ is continuous in the second variable, we obtain an element $b'\in T$ such that $b'\prec b$ and $r\leq f(a',b')$, as desired.
\end{proof}

\begin{pgr}
\label{pgr:bimorPreW}
Let $S$ and $T$ be $\CatPreW$-semigroup{s}.
Let us show that there is a $\CatPom$-functor
\[
\CatWBimor(S\times T,\freeVar)\colon\CatPreW\to\CatPom.
\]
Given a $\CatPreW$-semigroup{} $R$, the set $\CatWBimor(S\times T,R)$ has a natural structure of a \pom{} when endowed with pointwise addition and order.
This defines an assignment from the objects in $\CatPreW$ to the objects in $\CatPom$.
Moreover, given $\CatPreW$-semigroup{s} $R$ and $R'$, we define a $\CatPom$-morphism
\[
\CatWBimor(S\times T,\freeVar)_{R,R'}\colon\CatWMor(R,R')\to \CatPomMor\left( \vphantom{A^A} \CatWBimor(S\times T,R),\CatWBimor(S\times T,R') \right),
\]
as follows:
A $\CatW$-morphism $f\in\CatWMor(R,R')$ is sent to the $\CatPom$-morphism
\[
\CatWBimor(S\times T,R)\to\CatWBimor(S\times T,R'),\quad
\tau\mapsto f\circ\tau,\quad
\txtFA \tau\in\CatWBimor(S\times T, R).
\]
It is straightforward to check that this defines a $\CatPom$-functor.
In \autoref{prp:tensProdPreW}, we show that the bimorphism functor is representable.
\end{pgr}

\begin{pgr}[Auxiliary relation on (bi)morphism sets]
\index{terms}{auxiliary relation!on (bi)morphism sets}
\label{pgr:auxiliaryMor}
Let $S$, $T$ and $R$ be $\CatPreW$-semigroup{s}.
We define an auxiliary relation $\prec$ on the set $\CatWGenMor{S,T}$ of generalized $\CatW$-morphisms as follows:
Given $f,g\in\CatWGenMor{S,T}$, we set $f\prec g$ if and only if $a'\prec a$ implies $f(a')\prec g(a)$, for any $a',a\in S$.

Similarly, we define an auxiliary relation $\prec$ on the set $\CatWGenBimor{S\times T,R}$ of generalized $\CatW$-bimorphisms as follows:
Given $f,g\in\CatWGenBimor{S\times T,R}$, we set $f\prec g$ if and only if $a'\prec a$ and $b'\prec b$ implies $f(a',b')\prec f(a,b)$, for any $a',a\in S$ and $b',b\in T$.
We have
\[
\CatWMor(S,T)=\left\{ f\in\CatWGenMor{S,T} : f\prec f \right\},
\]
and
\[
\CatWBimor(S\times T,R)=\left\{ f\in \CatWGenBimor{S\times T,R} : f\prec f \right\}.
\]
In this way, we can think of the $\CatW$-(bi)morphisms as the `compact' generalized $\CatW$-(bi)morphisms.

It is clear that the auxiliary relation $\prec$ on $\CatWGenMor{S,T}$ satisfies \axiomW{3}.
In some cases, $\prec$ also satisfies \axiomW{1} and \axiomW{4}, but this seems not to be the case in general.
Thus, we warn the reader that the pair $(\CatWGenMor{S,T},\prec)$ is not necessarily a $\CatPreW$-semigroup.
The same remark applies to $(\CatWGenBimor{S\times T,R},\prec)$.
\end{pgr}

The following \autoref{dfn:tensProdAuxRelHelping} and \autoref{lma:tensProdAuxRelHelping} are rather technical, but they contain the necessary details to define an auxiliary relation on the $\CatPom$-tensor product $S\otimes_{\CatPom}T$ of two $\CatPreW$-semigroups $S$ and $T$.
The idea is that for simple tensors in $S\otimes_{\CatPom}T$ we have $a'\otimes b'\prec a\otimes b$ whenever $a'\prec a$ and $b'\prec b$.

For the next definition, we need to recall some notation for the construction of tensor products in $\CatPom$ from \autoref{prp:tensorPom}.
Let $S$ and $T$ be $\CatPom$-semigroups.
We denote by $S^\times$ the submonoid of $S$ consisting of nonzero elements.

We consider the congruence relation $\cong$ from \autoref{pgr:tensorMonConstr} on the free abelian monoid $F:=\N[S^\times \times T^\times]$.
Recall that a congruence is (by definition) an additive equivalent relation.
Then $S\otimes_\CatMon T := F/\!\!\cong$ is the tensor product of the underlying monoids.
Further, recall the binary relation $\leq'$ on $F$ from \autoref{pgr:tensorPrePomConstr}.
Let $\leq$ be the relation on $F$ generated by $\cong$ and $\leq'$.
Then $\leq$ is a pre-order on $F$.

Recall from \autoref{pgr:tensorMonConstr} that for elements $a\in S^\times$ and $b\in T^\times$, we write $a\odot b$ for the generator in $F$ indexed by $(a,b)$.
For every $f\in F$, there exist a finite set $I$ and pairs $(a_i,b_i)\in S^\times\times T^\times$, for $i\in I$, satisfying $f=\sum_{i\in I}a_i\odot b_i$.
Note that we do not require that $a_i$ and $a_j$ are distinct for different indices $i$ and $j$.

\begin{dfn}
\label{dfn:tensProdAuxRelHelping}
Let $S$ and $T$ be $\CatPreW$-semigroup{s}.
We define a relation $\lessdot$ on the free abelian monoid $F:=\N[S^\times\times T^\times]$ as follows:
For $f$ and $g=\sum_{j\in J} a_j\odot b_j$ in $F$ we set $f\lessdot g$ if and only if there exist a subset $J'\subset J$, and elements $a_j'\in S^\times$ and $b_j'\in T^\times$ for $j\in J'$ such that
\[
f \leq \sum_{j\in J'} a_j'\odot b_j',\quad\text{ and }\quad
a_j'\prec a_j,\, b_j'\prec b_j,
\]
for every $j\in J'$.
\end{dfn}

\begin{lma}
\label{lma:tensProdAuxRelHelping}
Let $S$ and $T$ be $\CatPreW$-semigroups, and let $f,f',g,g',h\in F=\N[S^\times\times T^\times]$ .
Then:
\beginEnumStatements
\item
If $f\lessdot g$, then $f\leq g$.
\item \label{part2}
If $f'\leq f\lessdot g$, then $f'\lessdot g$.
\item
If $f\lessdot g\leq g'$, then $f\lessdot g'$.
\item
We have $0\lessdot f$.
\item
For each $g$ in $F$, there exists a sequence $(g_k)_{k\in \N}$ in $F$ such that $g_k\lessdot g_{k+1}\lessdot g$ for each $k\in \N$ and such that, for any $f\in F$ satisfying $f\lessdot g$, there exists an index $k\in \N$ with $f\leq g_k$.
\item
If $f\lessdot g$ and $f'\lessdot g'$, then $f+f'\lessdot g+g'$.
\item
If $f,g$ and $h$ in $F$ satisfy $h\lessdot f+g$, then there exist $f_0$ and $g_0$ in $F$ such that $h\leq f_0+g_0$, $f_0\lessdot f$, and $g_0\lessdot g$.
\end{enumerate}
\end{lma}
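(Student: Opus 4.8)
The plan is to verify the seven items essentially in the order listed, treating them as the analogues for $\lessdot$ of the auxiliary-relation axioms (items \enumStatement{1}--\enumStatement{4}), of \axiomW{1} (item \enumStatement{5}), of \axiomW{3} (item \enumStatement{6}) and of \axiomW{4} (item \enumStatement{7}). Several are immediate from the definition of $\lessdot$ together with the fact that $F=\N[S^\times\times T^\times]$ is \emph{free} abelian, so that each element has a canonical expression as a sum of generators $a\odot b$. For \enumStatement{1}, if $f\leq\sum_{j\in J'}a_j'\odot b_j'$ with $a_j'\prec a_j$ and $b_j'\prec b_j$, then $a_j'\leq a_j$ and $b_j'\leq b_j$ by condition~(i) of an auxiliary relation, so $\sum_{j\in J'}a_j'\odot b_j'\leq\sum_{j\in J}a_j\odot b_j=g$ and hence $f\leq g$. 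Item \enumStatement{2} is transitivity of $\leq$ (reuse the same witnesses), item \enumStatement{4} is the case $J'=\emptyset$, and item \enumStatement{6} follows because the canonical representation of $g+g'$ is the concatenation of those of $g$ and $g'$, so one takes the union of the two index sets and of their witnesses and adds the two displayed inequalities. Item \enumStatement{7} is of the same combinatorial nature: the sub-multiset $L$ witnessing $h\lessdot f+g$ splits as $L=L_f\sqcup L_g$ according to whether a generator comes from $f$ or from $g$, and the corresponding partial sums $f_0,g_0$ do the job.

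For \enumStatement{5} I would argue componentwise. Writing $g=\sum_{j\in J}a_j\odot b_j$, use \axiomW{1} in $S$ and in $T$ to choose $\prec$-increasing sequences $(a_{j,k})_k$ in $a_j^\prec$ and $(b_{j,k})_k$ in $b_j^\prec$ that are cofinal in $a_j^\prec$, resp.\ $b_j^\prec$. Setting $g_k:=\sum_{j\in J}a_{j,k}\odot b_{j,k}$ gives $g_k\lessdot g_{k+1}\lessdot g$ at once. For cofinality, if $f\lessdot g$ is witnessed by $J'\subseteq J$ and $a_j'\prec a_j$, $b_j'\prec b_j$, then by \axiomW{1} each $a_j'$ lies below some $a_{j,k_j}$ and each $b_j'$ below some $b_{j,k_j}$; taking $k$ larger than all these indices yields $\sum_{j\in J'}a_j'\odot b_j'\leq g_k$, hence $f\leq g_k$.

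The real content is item \enumStatement{3}, which is where I expect the main difficulty. First I would use \enumStatement{2} to reduce to the case $f=\sum_{j\in J'}a_j'\odot b_j'$ with $a_j'\prec a_j$, $b_j'\prec b_j$ and $g=\sum_{j\in J}a_j\odot b_j\leq g'$; the task is then to transfer the ``$\prec$-shrunk'' witnesses of $g$ to witnesses relative to the canonical representation of $g'$. Since $\leq$ on $F$ is the pre-order generated by the congruence $\cong$ of \autoref{pgr:tensorMonConstr} and the relation $\leq'$ of \autoref{pgr:tensorPrePomConstr}, any instance $g\leq g'$ is realized by a finite chain of elementary steps (up to translation by a common element), and I would induct on its length. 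A monotonicity step coming from $\leq'$ replaces a generator $a\odot b$ by $\tilde a\odot\tilde b$ with $a\leq\tilde a$, $b\leq\tilde b$; here a shrunk witness $a'\prec a\leq\tilde a$ remains a witness because $a'\prec\tilde a$ by condition~(ii) of an auxiliary relation. The delicate steps are the biadditivity ($\cong$) steps, where a generator $(c_1+c_2)\odot d$ is split as $c_1\odot d+c_2\odot d$ or merged back: to move a witness across a split I would apply \axiomW{4} in $S$ to decompose $e'\prec c_1+c_2$ as $e'\leq e_1'+e_2'$ with $e_i'\prec c_i$, and to move one across a merge I would use \axiomW{1} to find a common $d^\sharp\prec d$ dominating the two shrunk second coordinates and then \axiomW{3} to add the first coordinates.

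Thus the biadditivity steps are exactly where \axiomW{1}, \axiomW{3} and \axiomW{4} of $S$ and $T$ enter, and making the bookkeeping of these merges and splits precise---keeping all inequalities $\leq$ on the nose while preserving every instance of $\prec$---is the principal obstacle; it is essentially a tensorial recasting of the verification of \axiomW{4} itself, carried out along a derivation of $g\leq g'$ rather than for a single relation.
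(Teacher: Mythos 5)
Your plan is correct and follows essentially the same route as the paper's proof: the easy items are handled directly from the definition, item \enumStatement{5} is done componentwise via \axiomW{1}, and item \enumStatement{3} is reduced (using \enumStatement{2}, \enumStatement{6}, \enumStatement{7}) to single elementary steps of $\leq'$, the splits $\rightarrow^0$, and the merges, which are treated exactly as you describe with condition~(ii) of the auxiliary relation, \axiomW{4}, and \axiomW{1} together with \axiomW{3}, respectively. The only detail worth adding in a write-up is the bookkeeping in \enumStatement{5} for indices $j$ with $a_j^\prec=\{0\}$ or $b_j^\prec=\{0\}$, since $F$ has generators only over $S^\times\times T^\times$.
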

\begin{proof}
Statements \enumStatement{1}, \enumStatement{2}, \enumStatement{4} and \enumStatement{6} are straightforward to show.
To prove \enumStatement{7}, let $f,g,h\in F$ satisfy $h\lessdot f+g$.
Choose finite disjoint index sets $I_1$ and $I_2$, and elements $a_i\in S^\times$ and $b_i\in T^\times$ for $i\in I_1\cup I_2$ such that
\[
f= \sum_{i\in I_1} a_i\odot b_i,\quad
g= \sum_{i\in I_2} a_i\odot b_i.
\]
Since $h\lessdot f+g$, we can choose a subset $I'\subset I_1\cup I_2$, and elements $a_i'\in S^\times$ and $b_i'\in T^\times$ for $i\in I'$ such that
\[
h \leq \sum_{i\in I'} a_i'\odot b_i',\quad\text { and }\quad
a_i'\prec a_i,\,  b_i'\prec b_i
\]
for each $i\in I'$.
Set
\[
f_0 := \sum_{i\in I_1\cap I'} a_i'\odot b_i',\quad
g_0 := \sum_{i\in I_2\cap I'} a_i'\odot b_i'.
\]
Then it is easy to check that
\[
h\leq f_0+g_0,\quad
f_0\lessdot f,\quad
g_0\lessdot g,
\]
as desired.

Next, let us show \enumStatement{3}. To this end, let $f,g,g'\in F$ satisfy $f\lessdot g\leq g'$.
Since the relation $\leq$ is the transitive closure of the relation generated by $\rightarrow$, $\leftarrow$ and $\leq'$, it is enough to consider the cases where $g\rightarrow g'$, or $g\leftarrow g'$, or $g\leq' g'$.
We may assume that $f,g$ and $g'$ are nonzero.
Using statements \enumStatement{6} and \enumStatement{7}, it is furthermore enough to consider the following cases (recall that $a\odot b+c\odot b$ is not being identified with $(a+b)\odot c$):

Case 1:
Assume $g\rightarrow^0 g'$.
Choose elements $a\in S^\times$, $b\in T^\times$,  nonempty finite index sets $J$ and $K$, elements $a_j\in S^\times$, for $j\in J$, and elements $b_k\in T^\times$, for $k\in K$, such that
\[
a =\sum_{j\in J}a_j,\quad
b =\sum_{k\in K}b_k,\quad
g = a\odot b,\quad
g' = \sum_{j\in J, k\in K} a_j\odot b_k.
\]
Since $f\lessdot g$ we can choose elements $a'\in S^\times$ and $b'\in T^\times$ such that
\[
f \leq a'\odot b',\quad
a'\prec a,\quad
b'\prec b.
\]
Using that $S$ satisfies \axiomW{4} and that $a'\prec a = \sum_{j\in J}a_j$, we obtain elements $a_j'\in S$, for $j\in J$, such that $a'\leq \sum_{j\in J}a_j'$, and $a_j'\prec a_j$ for each $j\in J$.
Similarly, we obtain elements $b_k'\in T$, for $k\in K$, such that
$b'\leq \sum_{k\in K}b_k'$  and $b_k'\prec b_k$ for each $k\in K$.

Set $J':=\{j\in J : a_j'\neq 0\}$ and $K':=\{k\in K : b_k'\neq 0\}$.
Then it is easy to check that
\[
f \leq  \sum_{j\in J',k\in K'}a_j'\odot b_k'.
\]
This shows that $f\lessdot g'$, as desired.

Case 2:
Assume $g\prescript{0}{}{\leftarrow} g'$.
Choose elements $a\in S^\times$, $b\in T^\times$, nonempty finite index sets $J$ and $K$, elements $a_j\in S^\times$, for $j\in J$, and elements $b_k\in T^\times$, for $k\in K$, such that
\[
a =\sum_{j\in J}a_j,\quad
b =\sum_{k\in K}b_k,\quad
g = \sum_{j\in J, k\in K} a_j\odot b_k,\quad
g' = a\odot b.
\]
Since $f\lessdot g$ we can choose a subset $L\subset J\times K$, elements $a_{j,k}'\in S^\times$ and $b_{j,k}'\in T^\times$, for $(j,k)\in L$, such that
\[
f \leq \sum_{(j,k)\in L} a_{j,k}'\odot b_{j,k}'\]
and $a_{j,k}'\prec a_j$, $b_{j,k}'\prec b_k$ for each $(j,k)\in L$.

For each $j^\sharp\in J$ and $k^\sharp\in K$ set
\[
K_{j^\sharp} := \left\{ k\in K : (j^\sharp,k)\in L \right\},\quad
J_{k^\sharp} := \left\{ j\in J : (j,k^\sharp)\in L \right\}.
\]
Moreover, set
\[
J' := \left\{ j^\sharp\in J : K_{j^\sharp}\neq\emptyset \right\},\quad
K' := \left\{ k^\sharp\in K : J_{k^\sharp}\neq\emptyset \right\}.
\]
Let $j^\sharp\in J'$.
For each $k\in K_{j^\sharp}$, we have $a_{j^\sharp,k}'\prec a_{j^\sharp}$.
Using that $S$ satisfies \axiomW{1}, choose an element $a_{j^\sharp}'\in S$ such that $a_{j^\sharp}'\prec a_{j^\sharp}$ and  $a_{j^\sharp,k}'\leq a_{j^\sharp}'$ for each $k\in K_{j^\sharp}$.
Note that $a_{j^\sharp}'$ is necessarily nonzero.
Similarly, for each $k^\sharp\in K'$ choose an element $b_{k^\sharp}'\in T^\times$ such that $b_{k^\sharp}'\prec b_{k^\sharp}$ and $b_{j,k^\sharp}'\leq b_{k^\sharp}'$ for each $j\in J_{k^\sharp}$.

It follows
\[
f \leq  \sum_{(j,k)\in L}a_{j,k}'\odot b_{j,k}'
\leq' \sum_{(j,k)\in L}a_j'\odot b_k'.
\]
Set $a' := \sum_{j\in J'} a_j'$ and $b' := \sum_{k\in K'} b_k'$.
Since $S$ and $T$ satisfy \axiomW{3}, we get $a'\prec a$ and $b'\prec b$.
Then
\[
f \leq \sum_{(j,k)\in L}a_j'\odot b_k'
\leq \sum_{(j,k)\in J'\times K'}a_j'\odot b_k'
\cong a'\odot b'
\lessdot a\odot b=g'.
\]

By part \ref{part2} of the present lemma, we conclude that  $f\lessdot g'$, as desired.

Case 3:
Assume $g\leq^0 g'$.
Choose elements $a,\tilde{a}\in S^\times$ and $b,\tilde{b}\in T^\times$ with
\[
a\leq \tilde{a},\quad
b\leq \tilde{b},\quad
g = a\odot b,\quad
g' = \tilde{a}\odot \tilde{b}.
\]
Since $f\lessdot g$, we can choose elements $a'\in S^\times$ and $b'\in T^\times$ such that
\[
f \leq a'\odot b',\quad
a'\prec a,\quad
b'\prec b.
\]
Since $\prec$ is an auxiliary relation for $S$ and $T$, we deduce $a'\prec\tilde{a}$ and $b'\prec\tilde{b}$.
Therefore, we immediately get $f\lessdot g'$, as desired.

Finally, let us prove \enumStatement{5}.
Let $g=\sum_{i\in I} a_i\odot b_i\in F$.
Given $a\in S$, we write $a^\prec$ for the set $\left\{x\in S : x\prec a \right\}$, and similarly for elements in $T$.
Since $S$ satisfies \axiomW{1}, for each $i\in I$ we can choose a sequence $(a_{i,k})_{k\in\N}$ in $S$ that is cofinal in $a_i^\prec$ and such that $a_{i,k}\prec a_{i,k+1}$ for each $k$.
Similarly, for each $i$ we choose a $\prec$-increasing sequence $(b_{i,k})_{k\in\N}$ in $T$ that is cofinal in $b_i^\prec$.
Set
\[
I' := \left\{ i\in I : a_i^\prec\neq\{0\}, b_i^\prec\neq\{0\} \right\}.
\]
For $i\in I\setminus I'$ we have $a_{i,k}=0$ for each $k\in\N$ or $b_{i,k}=0$ for each $k\in\N$.
For $i\in I'$, we may assume that $a_{i,k}$ and $b_{i,k}$ are nonzero for each $k\in\N$.
Then we set
\[
g_k := \sum_{i\in I'} a_{i,k}\odot b_{i,k},
\]
which is an element of $F$.
We clearly have $g_k\lessdot g_{k+1}$, for each $k$.

Let $f\in F$ satisfy $f\lessdot g$.
We need to show that there is $n\in\N$ such that $f\leq g_n$.
Since $f\lessdot g$, we can choose a subset $J\subset I$, and elements $a_j'\in S^\times$ and $b_j'\in T^\times$ for $j\in J$ such that
\[
f \leq \sum_{j\in J} a_j'\odot b_j',\quad\text{ and }\quad
a_j'\prec a_j,\, b_j'\prec b_j,
\]
for each $j\in J$.
Note that $J$ is necessarily a subset of $I'$.

Since $S$ and $T$ satisfy \axiomW{1}, for each $j\in J$ we we choose indices $k(j),l(j)\in\N$ such that
\[
a_j'\leq a_{j,k(j)},\quad
b_j'\leq b_{j,l(j)}.
\]
Set
\[
n := \max\left\{ k(j), l(j) : j\in J \right\}.
\]
Then
\[
f \leq \sum_{j\in J} a_j'\odot b_j'
\leq \sum_{j\in J} a_{j,n}\odot b_{j,n}
\leq \sum_{i\in I'} a_{i,n}\odot b_{i,n}
= g_n,
\]
as desired.
\end{proof}

For the next definition, recall that for $\CatPreW$-semigroups $S$ and $T$, and for an element $f\in F:=\N[S^\times\times T^\times]$, we denote the congruence class of $f$ in $S\otimes_\CatPom T=F/\!\!\cong$ by $[f]$.

\begin{dfn}[Auxiliary relation on $S\otimes_{\CatPom}T$]
\index{terms}{auxiliary relation!on $S\otimes_{\CatPom}T$}
\label{dfn:tensProdAuxRel}
Let $S$ and $T$ be $\CatPreW$-semi\-groups, and let $\lessdot$ be the relation on $\N[S^\times\times T^\times]$ introduced in \autoref{dfn:tensProdAuxRelHelping}.

We let $\prec$ be the binary relation on the tensor product $S\otimes_\CatPom T$ of the underlying \pom{s} that is induced by $\lessdot$.
That is, given $x,y\in S\otimes_\CatPom T$ we set $x\prec y$ if and only if there exist representatives $f,g\in \N[S^\times\times T^\times]$ such that $x=[f]$, $y=[g]$, and $f\lessdot g$.
\end{dfn}

\index{terms}{tensor product!in $\CatPreW$}
\begin{thm}
\label{prp:tensProdPreW}
\index{symbols}{$\otimes_{\CatPreW}$}
Let $S$ and $T$ be $\CatPreW$-semigroup{s}.
Let
\[
\omega\colon S\times T\to S\otimes_\CatPom T
\]
be the tensor product of the underlying \pom{s}, as constructed in \autoref{prp:tensorPom}.
Then the relation $\prec$ on $S\otimes_\CatPom T$ from \autoref{dfn:tensProdAuxRel} is an auxiliary relation and $(S\otimes_\CatPom T,\prec)$ is a $\CatPreW$-semigroup{}, denoted by $S\otimes_{\CatPreW} T$.
Moreover, the map $\omega$ is a $\CatW$-bimorphism.

Furthermore, for every $\CatPreW$-semigroup{} $R$, the following universal properties hold:
\beginEnumStatements
\item
For every (generalized) $\CatW$-bimorphism $f\colon S\times T\to R$, there exists a (generalized) $\CatW$-morphism $\tilde{f}\colon S\otimes_{\CatPreW} T\to R$ such that $f=\tilde{f}\circ\omega$.
\item
We have $g_1\circ\omega\leq g_2\circ\omega$ if and only if $g_1\leq g_2$, for any generalized $\CatW$-morphisms $g_1,g_2\colon S\otimes_{\CatPreW} T \to R$.
\item
We have $g_1\circ\omega\prec g_2\circ\omega$ if and only if $g_1\prec g_2$, for any generalized $\CatW$-morphisms $g_1,g_2\colon S\otimes_{\CatPreW} T \to R$.
\end{enumerate}
Thus, for every $\CatPreW$-semigroup $R$, we obtain the commutative diagram below.
In the top row, the assignment $w^\ast\colon f\mapsto f\circ\omega$ is an isomorphism of the sets of generalized $\CatW$-(bi)morphisms with their structure as \pom{s} and with their additional auxiliary relations from \autoref{pgr:auxiliaryMor}.
When restricting to $\CatW$-(bi)mor\-phisms, as in the bottom row of the diagram, the map $\omega$ induces a $\CatPom$-isomorphism $\omega^*$ of the respective $\CatW$-(bi)morphisms sets.
\[
\xymatrix@R=15pt@M+3pt{
\CatWGenMor{S\otimes_{\CatPreW} T,R} \ar[r]^{w^*}
& \CatWGenBimor{S\times T,R} \\
\CatWMor(S\otimes_{\CatPreW} T,R) \ar@{^{(}->}[u] \ar[r]^<<<<{w^*}
& \CatWBimor(S\times T,R). \ar@{^{(}->}[u]
}
\]
In particular, the pair ($S\otimes_{\CatPreW} T$, $\omega$) represents the bimorphism $\CatPom$-functor $\CatWBimor(S\times T,\freeVar)$.
\end{thm}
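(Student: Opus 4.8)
The plan is to push every assertion through the relation $\lessdot$ on the free monoid $F=\N[S^\times\times T^\times]$, for which \autoref{lma:tensProdAuxRelHelping} has already done the hard combinatorial work. First I would verify that the relation $\prec$ on $S\otimes_\CatPom T$ of \autoref{dfn:tensProdAuxRel} is a well-defined auxiliary relation: condition \enumCondition{1} ($x\prec y\Rightarrow x\leq y$) is \autoref{lma:tensProdAuxRelHelping}\enumStatement{1}, condition \enumCondition{3} ($0\prec x$) is \enumStatement{4}, and condition \enumCondition{2} follows by picking representatives and chaining \enumStatement{2} and \enumStatement{3}, so that $f\leq g\lessdot h\leq k$ yields $f\lessdot k$. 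Then \axiomW{3} and \axiomW{4} for $S\otimes_\CatPom T$ are immediate translations of \enumStatement{6} and \enumStatement{7}, while \axiomW{1} comes from \enumStatement{5}: given $x=[g]$, the sequence $[g_k]$ produced there is $\prec$-increasing and cofinal in $x^\prec$, the cofinality using that $f\leq g_k\lessdot g_{k+1}$ forces $f\lessdot g_{k+1}$ via \enumStatement{2}. This establishes that $(S\otimes_\CatPom T,\prec)=S\otimes_\CatPreW T$ is a $\CatPreW$-semigroup{}. That $\omega(a,b)=[a\odot b]$ is a $\CatW$-bimorphism can be read directly off \autoref{dfn:tensProdAuxRelHelping}: condition \enumCondition{2} of \autoref{dfn:bimorPreW} is the single-term case of $\lessdot$, and condition \enumCondition{1} is exactly the defining property of $f\lessdot a\odot b$.

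For the universal property \enumStatement{1}, I would start from the $\CatPom$ tensor product: by \autoref{prp:tensorPom} a (generalized) $\CatW$-bimorphism $f$, being in particular a $\CatPom$-bimorphism, factors uniquely as $f=\tilde f\circ\omega$ with $\tilde f$ a $\CatPom$-morphism, and it remains to upgrade $\tilde f$. Writing a general element as $x=[\sum_i a_i\odot b_i]$, so that $\tilde f(x)=\sum_i f(a_i,b_i)$, I would check continuity of $\tilde f$ as follows: given $s\prec\tilde f(x)$, apply \axiomW{4} in $R$ iteratively to split $s\leq\sum_i s_i$ with $s_i\prec f(a_i,b_i)$, then invoke joint continuity of $f$ (condition \enumCondition{1} of \autoref{dfn:bimorPreW}) to obtain $a_i'\prec a_i$, $b_i'\prec b_i$ with $s_i\leq f(a_i',b_i')$; the element $x'=[\sum_i a_i'\odot b_i']$ then satisfies $x'\prec x$ and $s\leq\tilde f(x')$. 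If moreover $f$ preserves $\prec$, then so does $\tilde f$: for $x\prec y$ choose representatives with $f_0\lessdot\sum_j a_j\odot b_j$, dominate $\tilde f(x)$ by $\sum_{j\in J'}f(a_j',b_j')$, apply $f(a_j',b_j')\prec f(a_j,b_j)$ and \axiomW{3} in $R$, and conclude with the auxiliary-relation axiom.

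Properties \enumStatement{2} and \enumStatement{3} are then obtained by reducing to generators. The forward directions are trivial, and the nontrivial ``if'' directions use the same decomposition: for $x'\prec x$ with $x=[\sum_j a_j\odot b_j]$ one has $g_1(x')\leq\sum_{j\in J'}g_1(\omega(a_j',b_j'))$ with $a_j'\prec a_j$ and $b_j'\prec b_j$, so the hypothesis $g_1\circ\omega\leq g_2\circ\omega$ (respectively $g_1\circ\omega\prec g_2\circ\omega$), together with additivity and \axiomW{3} in $R$, yields $g_1\leq g_2$ (respectively $g_1(x')\prec g_2(x)$). Property \enumStatement{2} makes $w^\ast\colon f\mapsto f\circ\omega$ an order embedding, and \enumStatement{1} makes it surjective, with injectivity again from \enumStatement{2}, so $w^\ast$ is an isomorphism of \pom{s}; \enumStatement{3} shows it also preserves and reflects the auxiliary relation from \autoref{pgr:auxiliaryMor}. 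Since a generalized $\CatW$-(bi)morphism $h$ is a genuine $\CatW$-(bi)morphism exactly when $h\prec h$, the isomorphism $w^\ast$ restricts to the bottom row of the diagram, and its naturality in $R$ is routine; this is precisely the statement that $(S\otimes_{\CatPreW}T,\omega)$ represents the $\CatPom$-functor $\CatWBimor(S\times T,\freeVar)$.

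The main obstacle is the continuity of $\tilde f$ in \enumStatement{1}: it is the one place where the joint continuity of $f$ must be combined with a finite iteration of \axiomW{4} in $R$, and where one has to take care that the intermediate elements $a_i'$ and $b_i'$ remain nonzero, so as to index legitimate generators of $F=\N[S^\times\times T^\times]$. Everything else is bookkeeping with representatives, kept insulated from the genuine combinatorics by \autoref{lma:tensProdAuxRelHelping}.
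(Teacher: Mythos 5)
Your proposal is correct and follows essentially the same route as the paper: all the combinatorics is delegated to \autoref{lma:tensProdAuxRelHelping} exactly as the paper does, the axioms \axiomW{1}, \axiomW{3}, \axiomW{4} are read off from statements (5)--(7), and the universal property is obtained by factoring through the $\CatPom$ tensor product and upgrading $\tilde f$ to a continuous map via a finite application of \axiomW{4} in $R$ combined with the joint continuity of $f$. The only (cosmetic) differences are that the paper derives property (2) from a commutative diagram involving the forgetful functor $\CatPreW\to\CatPom$ rather than by a direct computation on generators, and leaves property (3) to the reader, which you instead sketch explicitly.
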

\begin{proof}
Set $F:=\N[S^\times\times T^\times]$, and let $\leq$ be the pre-order on $F$ introduced in \autoref{pgr:tensorPrePomConstr}.
Let $\sim$ be the binary relation on $F$ defined by $f\sim g$ if and only if $f\leq g$ and $g\leq f$.
Then $\sim$ is a congruence relation on $F$ and $S\otimes_\CatPom T = F/\!\!\sim$.
Given $f\in F$, we denote by $[f]$ the congruence class of $f$ in $S\otimes_\CatPom T$.
Let $\lessdot$ be the relation on $F$ from \autoref{dfn:tensProdAuxRelHelping}.

It follows from statements \enumStatement{2} and \enumStatement{3} in \autoref{lma:tensProdAuxRelHelping} that $\lessdot$ only depends on the $\sim$-equivalence class of elements in $F$.
Thus, for every $x,y\in S\otimes_\CatPom T$ the following are equivalent:
\beginEnumStatements
\item
We have $x\prec y$ in the sense of \autoref{dfn:tensProdAuxRel}, that is, there are $f,g\in F$ such that $x=[f]$, $y=[g]$ and $f\lessdot g$.
\item
For each $f,g\in F$ satisfying $x=[f]$ and $y=[g]$, we have $f\lessdot g$.
\end{enumerate}

It follows easily from \enumStatement{1}-\enumStatement{4} in \autoref{lma:tensProdAuxRelHelping} that $\prec$ is an auxiliary relation on $S\otimes_\CatPom T$.
Moreover, statements \enumStatement{5}-\enumStatement{7} in \autoref{lma:tensProdAuxRelHelping} imply that $(S\otimes_\CatPom T,\prec)$ satisfies \axiomW{1}, \axiomW{3} and \axiomW{4}, showing that it is a $\CatPreW$-semigroup{}, denoted by $S\otimes_{\CatPreW} T$.

Let us show that the map
\[
\omega\colon S\times T\to S\otimes_\CatPreW T
\]
is a $\CatW$-bimorphism.
It is clear that $\omega$ is a $\CatPom$-bimorphism respecting the auxiliary relations.
Thus, it remains to show that $\omega$ is continuous in the sense of \autoref{dfn:bimorPreW}.
To this end, let $a\in S$, $b\in T$, and $x\in S\otimes_\CatPreW T$ satisfy $x\prec \omega(a,b)$.
We may assume that $a,b$, and $x$ are nonzero.
Then $f= a\odot b$ is an element in $F$ such that $\omega(a,b) = [f]$.
By the equivalent conditions stated above, and since $x\prec [f]$, we can choose $f'\in F$ such that $x\leq [f']$ and $f'\lessdot f$.
Therefore, by definition of the relation $\lessdot$ and since $x$ is nonzero, we can choose elements $a'\in S^\times$ and $b'\in T^\times$ such that
\[
f'\leq a'\odot b',\quad
a'\prec a,\quad
b'\prec b.
\]
Then
\[
x\leq [f']\leq [a'\odot b'] = \omega(a',b'),
\]
showing that $a'$ and $b'$ have the desired properties to verify the continuity of $\omega$.

Let $\mathfrak{F}\colon\CatPreW\to\CatPom$ be the forgetful functor, which associates to a $\CatPreW$-semigroup{} $X$ the underlying \pom\ (also denoted by $X$, by abuse of notation).
It is clear that  $\mathfrak{F}$ is faithful.

Given $\CatPreW$-semigroup{s} $X,Y$, and $Z$, the functor $\mathfrak{F}$ induces maps
\begin{align*}
\mathfrak{F}_{X,Y} &\colon \CatWGenMor{X,Y} \to \CatPomMor(X,Y), \\
\mathfrak{F}_{X\times Y,Z} &\colon \CatWGenBimor{X\times Y, Z} \to \CatPomBimor(X\times Y, Z),
\end{align*}
by mapping a generalized $\CatW$-(bi)morphism to the same map considered as a $\CatPom$-(bi)morphism. It is clear that $\mathfrak{F}_{X,Y}$ and $\mathfrak{F}_{X\times Y,Z}$ are order embeddings when the respective (bi)morphism sets are equipped with their natural structure as \pom{s}.

To check the universal properties, let $R$ be a $\CatPreW$-semigroup{}.
Consider the map
\[
\Omega_R\colon\CatPomMor(S\otimes_{\CatPom} T,R)\to\CatPomBimor(S\times T,R),\quad
f\mapsto f\circ\omega.
\]
Since $S\otimes_{\CatPom} T$ and $\omega$ have the universal property of a tensor product in $\CatPom$, the map $\Omega_R$ is an isomorphism of the (bi)morphism sets with their structure as objects in $\CatPom$;
see \autoref{prp:tensorPom}.
In particular, $\Omega_R$ is an order-embedding.

Since $\omega$ is also a $\CatW$-bimorphism, the same assignment maps (generalized) $\CatW$-morphisms to (generalized) $\CatW$-bimorphisms.
We denote this map by
\[
\Phi_R\colon \CatWGenMor{S\otimes_{\CatPreW} T,R} \to\CatWGenBimor{S\times T,R},\quad
f\mapsto f\circ\omega.
\]

We have a commutative diagram of $\CatPom$-morphisms:
\[
\xymatrix@R=15pt@M+3pt{
\CatPomMor(S\otimes_{\CatPom} T,R) \ar[r]^{\Omega_R}
& \CatPomBimor(S\times T,R) \\
\CatWGenMor{S\otimes_{\CatPreW} T,R}  \ar[r]^{\Phi_R} \ar@{^{(}->}[u]^{\mathfrak{F}_{S\otimes T,R}}
& \CatWGenBimor{S\times T,R} \ar@{^{(}->}[u]_{\mathfrak{F}_{S\times T,R}} \\
}
\]
Since $\mathfrak{F}_{S\otimes T,R}$ and $\mathfrak{F}_{S\times T,R}$ are order-embeddings, and since the map $\Omega_R$ is a $\CatPom$-iso\-morphism, it follows that $\Phi_R$ is an order-embedding.
This shows the universal property \enumStatement{2}.

For \enumStatement{1}, we need to show that $\Phi_R$ is surjective.
Thus, let
\[
f\colon S\times T \to R
\]
be a generalized $\CatW$-bimorphism.
Considering $f$ as a $\CatPom$-bimorphism and using that $\Omega_R$ is an isomorphism, there exists a unique $\CatPom$-morphism
\[
\tilde{f}\colon S\otimes_{\CatPom}T\to R
\]
such that $f=\tilde{f}\circ\omega$.
We need to show that $\tilde{f}$ is continuous.

Let $x\in S\otimes_{\CatPreW} T$ and $r\in R$ satisfy $r\prec \tilde{f}(x)$.
We need to find $x'\in S\otimes_{\CatPreW} T$ such that $x'\prec x$ and $r\leq \tilde{f}(x')$.
Choose a finite index set $I$, and elements $a_i\in S$ and $b_i\in T$, for $i\in I$, such that
$x=\sum_{i\in I} a_i\otimes b_i$.
Then
\[
r\prec \tilde{f}(x)
=\sum_{i\in I} \tilde{f}(a_i\otimes b_i).
\]
Using that $R$ satisfies \axiomW{4}, choose elements $r_i$ in $R$, for $i\in I$, such that
\[
r\leq\sum_{i\in I} r_i,\quad\text{ and }\quad
r_i\prec \tilde{f}(a_i\otimes b_i)=f(a_i,b_i)\]
for each $i\in I$.

Since $f$ is continuous, for each $i\in I$ we can choose $a_i'\in S$ and $b_i'\in T$ with
\[
a_i'\prec a_i,\quad
b_i'\prec b_i,\quad
r_i\leq f(a_i',b_i').
\]
Set $x':=\sum_{i\in I} a_i'\otimes b_i'$.
Then $x'\prec x$ and
\[
r\leq\sum_{i\in I} r_i
\leq\sum_{i\in I} f(a_i',b_i')
=\tilde{f}(x'),
\]
as desired.

Finally, to prove \enumStatement{3}, we need to show that for any generalized $\CatW$-morphisms $f$ and $g$ in $\CatWGenMor{S\otimes_\CatPreW T,R}$, we have $f\prec g$ if and only if $\Phi_R(f)\prec\Phi_R(g)$.
This is left to the reader.
\end{proof}

\begin{pgr}[$\CatPreW$ is a symmetric, monoidal category]
\label{pgr:monoidalPreW}
It is straightforward to check that the bimorphism-functor on $\CatPreW$ from \autoref{pgr:bimorPreW} is also functorial in the first entries.
Thus, we have a $\CatPom$-multifunctor
\[
\CatWBimor(\freeVar\times\freeVar,\freeVar)\colon\CatPreW^\op\times\CatPreW^\op\times\CatPreW\to\CatPom.
\]
By \autoref{prp:tensProdPreW}, the tensor product of two $\CatPreW$-semigroup{s} exists.
Therefore, as explained in \autoref{pgr:functorialityTensor}, it follows that the tensor product in $\CatPreW$ induces a $\CatPom$-bifunctor
\[
\otimes\colon \CatPreW\times\CatPreW\to\CatPreW.
\]
We use this to define a monoidal structure on $\CatPreW$.

Recall that the unit object of $\CatPom$ is given by $\N$ with its usual structure as an algebraically ordered monoid.
We equip $\N$ with the auxiliary relation that is equal to the partial order.
Then $\N$ is a $\CatPreW$-semigroup{}.

Let $S$ be a $\CatPreW$-semigroup{}.
Since $\N$ is the unit object of $\CatPom$, there are natural isomorphisms:
\[
\N \otimes_\CatPom S \cong S \cong S\otimes_\CatPom \N.
\]
It is straightforward to check that these isomorphisms preserve the auxiliary relations and are therefore isomorphisms in $\CatPreW$.
Thus, $\N$ is the unit object in $\CatPreW$.
In the same way, associativity and symmetry of the tensor product in $\CatPreW$ follow from the respective properties in $\CatPom$.
\end{pgr}

\vspace{5pt}
\section{The tensor product in \texorpdfstring{$\CatCu$}{Cu}}
\label{sec:tensProdCu}

In this section, we will use the construction of tensor products in $\CatPreW$ and the fact that $\CatCu$ is a reflective subcategory of $\CatPreW$ to show that the category $\CatCu$ has a symmetric, monoidal structure.

Before we make this concrete in \autoref{prp:tensProdCu}, let us consider the natural notion of bimorphisms in the category $\Cu$; see \cite[Definition~4.3]{AntBosPer13CuFields}.

\begin{dfn}
\label{dfn:bimorCu}
\index{terms}{Cu-bimorphism@$\CatCu$-bimorphism}
\index{terms}{Cu-bimorphism@$\CatCu$-bimorphism!generalized}
Let $S, T$, and $R$ be $\CatCu$-semigroup{s}, and let $f\colon S\times T\to R$ be a $\CatPom$-bimorphism.
We say that $f$ is a \emph{$\CatCu$-bimorphism} if it satisfies the following conditions:
\beginEnumConditions
\item
We have $\sup_k f(a_k,b_k) = f(\sup_k a_k, \sup_k b_k)$, for any increasing sequences $(a_k)_k$ in $S$ and $(b_k)_k$ in $T$.
\item
If $a'\ll a$ and $b'\ll b$, then $f(a',b')\ll f(a,b)$, for any $a',a\in S$ and $b',b\in T$.
\end{enumerate}
We denote the set of all $\CatCu$-bimorphisms by $\CatCuBimor(S\times T,R)$.

If $f$ is only required to satisfy condition \enumCondition{1} then we call it a \emph{generalized $\CatCu$-bimorphism}.
We denote the collection of all generalized $\CatCu$-bimorphisms by $\CatCuGenBimor{S\times T,R}$.
\end{dfn}

The next result is the analog of \autoref{prp:CuWMor} for (generalized) $\CatCu$-bimorphisms.
It shows that for $\CatCu$-semigroups, the notions of (generalized) $\CatW$-bimorphism and (generalized) $\CatCu$-bimorphism agree.

\begin{lma}
\label{prp:bimorCu}
Let $S$, $T$, and $R$ be $\CatCu$-semigroups, and let $f\colon S\times T\to R$ be a $\CatPom$-bimorphism.
Then the following are equivalent:
\beginEnumStatements
\item
The map $f$ is a (generalized) $\CatCu$-bimorphism.
\item
In each variable, $f$ is a (generalized) $\CatCu$-morphism.
\item
The map $f$ is a (generalized) $\CatW$-bimorphism.
\end{enumerate}
\end{lma}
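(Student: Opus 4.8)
The plan is to prove the three-way equivalence by combining the corresponding result already established in the $\CatW$-setting (\autoref{prp:bimorPreW}) with the characterization of continuity for $\CatCu$-morphisms (\autoref{prp:CuWMor}). The key observation is that each of the three conditions can be reformulated variable-by-variable, and that in each variable the relevant statement is exactly the content of \autoref{prp:CuWMor}. Concretely, I would establish the equivalences by proving `\enumStatement{1}$\Leftrightarrow$\enumStatement{2}' directly and `\enumStatement{1}$\Leftrightarrow$\enumStatement{3}' by reducing condition \enumCondition{1} of \autoref{dfn:bimorCu} and of \autoref{dfn:bimorPreW} to one another.

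\textbf{First}, I would handle `\enumStatement{1}$\Leftrightarrow$\enumStatement{2}' in the generalized case. Condition \enumCondition{1} of \autoref{dfn:bimorCu} asks that $f$ preserve joint suprema of increasing sequences. Fixing $b\in T$ and considering the partial map $f_b\colon a\mapsto f(a,b)$, one sees that $f_b$ preserves suprema of increasing sequences if and only if $f$ preserves joint suprema in the first variable with the second variable held constant; the general joint statement then follows by a two-step argument, first varying $a$ with $b$ fixed and then $b$ with the new $a$ fixed, using axiom \axiomO{4} to combine the two. Thus $f$ is a generalized $\CatCu$-bimorphism iff each partial map $f_a,f_b$ preserves suprema of increasing sequences, which by \autoref{prp:CuWMor} (the implication `\implStatements{1}{3}' and back) is exactly the statement that $f$ is a generalized $\CatCu$-morphism in each variable. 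The non-generalized case then follows by additionally invoking condition \enumCondition{2} of \autoref{dfn:bimorCu}, which says precisely that $f$ preserves joint compact containment; fixing one variable at an element $a$ with $a\ll a$ is not available, so instead I would argue that condition \enumCondition{2} holds iff each partial map preserves $\ll$, using that $a'\ll a$ and $b'\ll b$ can be tested separately against the joint condition.

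\textbf{Second}, for `\enumStatement{1}$\Leftrightarrow$\enumStatement{3}' I would use that, under the identification of a $\CatCu$-semigroup $S$ with the $\CatW$-semigroup $(S,\ll)$ from \autoref{pgr:CufullW}, the auxiliary relation $\prec$ \emph{is} the compact containment relation $\ll$. Then condition \enumCondition{2} of \autoref{dfn:bimorPreW} (preservation of $\prec$) coincides verbatim with condition \enumCondition{2} of \autoref{dfn:bimorCu} (preservation of $\ll$). The only genuine work is matching the two continuity conditions \enumCondition{1}. Here I would invoke \autoref{prp:bimorPreW}, which already reduces the $\CatW$-bimorphism continuity condition to continuity in each variable as a generalized $\CatW$-morphism; combined with \autoref{prp:CuWMor}, which identifies generalized $\CatW$-continuity with preservation of suprema of increasing sequences for $\CatCu$-semigroups, this yields that \enumStatement{3} is equivalent to \enumStatement{2}, hence to \enumStatement{1}.

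\textbf{Main obstacle.} The routine parts are the diagram-chasing reformulations, which are immediate from the cited lemmas. The one step requiring genuine care is showing that the \emph{joint} continuity condition \enumCondition{1} of \autoref{dfn:bimorCu} is equivalent to separate continuity in each variable; unlike the $\CatW$-side, where \autoref{prp:bimorPreW} supplies an explicit interpolation argument using \axiomW{1}, here I must reconstruct the analogous argument for suprema of increasing sequences, interpolating a joint supremum through a diagonal sequence and appealing to \axiomO{4} to split and recombine the two suprema. I expect this diagonalization to be the technical heart; once it is in place, everything else follows formally from \autoref{prp:CuWMor} and \autoref{prp:bimorPreW}.
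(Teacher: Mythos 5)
Your overall architecture is the same as the paper's: the paper also disposes of the lemma by combining \autoref{prp:bimorPreW} with \autoref{prp:CuWMor} and the identification of $\CatCu$ as a full subcategory of $\CatW$ (the paper links \enumStatement{2} and \enumStatement{3} directly rather than routing both through \enumStatement{1}, but the ingredients are identical), and it treats the equivalence of \enumStatement{1} and \enumStatement{2} as straightforward. Your diagonal argument for matching joint and separate preservation of suprema is the right idea, but \axiomO{4} is not the relevant axiom -- it concerns addition of suprema, not iterated suprema. What you actually need is only that for a family $(x_{j,l})$ increasing in both indices the diagonal $(x_{k,k})$ is cofinal, so $\sup_j\sup_l x_{j,l}=\sup_k x_{k,k}$; this uses nothing beyond \axiomO{1}.

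The genuine problem is your treatment of the non-generalized case, where you propose to show that the joint condition \enumCondition{2} of \autoref{dfn:bimorCu} ``holds iff each partial map preserves $\ll$,'' arguing that $a'\ll a$ and $b'\ll b$ ``can be tested separately.'' One direction is fine: if each partial map preserves $\ll$, then $f(a',b')\ll f(a,b')\leq f(a,b)$ gives the joint condition. The converse fails. Take $S=T=R=[0,\infty]$ and $f$ the multiplication of \autoref{exa:R}: this is a $\CatCu$-bimorphism, but the partial map $f(\freeVar,\infty)$ sends $1\ll 2$ to $\infty\not\ll\infty$, so it is not a $\CatCu$-morphism. Hence there is no argument of the kind you sketch, and you should not route the non-generalized statement through a separate-variable reformulation of condition \enumCondition{2}. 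For the equivalence \enumStatement{1}$\Leftrightarrow$\enumStatement{3}, which is the part of the lemma actually used in \autoref{prp:tensProdCu}, none of this is needed: under the identification $\prec\,=\,\ll$ the two joint conditions \enumCondition{2} of \autoref{dfn:bimorCu} and \autoref{dfn:bimorPreW} coincide verbatim, and only the continuity conditions \enumCondition{1} require matching, which is exactly what \autoref{prp:bimorPreW} together with \autoref{prp:CuWMor} accomplishes.
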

\begin{proof}
The equivalence of \enumStatement{2} and \enumStatement{3} follows by combining \autoref{prp:bimorPreW}, \autoref{prp:CuWMor} and the fact that $\CatCu$ is a full subcategory of $\CatW$ (see \autoref{pgr:CufullW}).
Moreover, it is clear that \enumStatement{1} implies \enumStatement{2}, and the converse is straightforward to show.
\end{proof}

Given $\CatCu$-semigroup{s} $S$ and $T$, the $\CatCu$-bimorphisms from $S\times T$ to a $\CatCu$-semigroup $R$ form a \pom{} under pointwise order and addition. Thus there is a $\CatPom$-functor
\[
\CatCuBimor(S\times T,\freeVar)\colon \CatCu\to\CatPom.
\]
By \autoref{prp:bimorCu}, this functor is just the restriction of the $\CatPom$-functor $\CatWBimor(S\times T,\freeVar)$ from \autoref{pgr:bimorPreW} to the full subcategory $\CatCu$ of $\CatPreW$.
In the next result, we will show that this bimorphism functor is representable.

Given $\CatCu$-semigroups $S,T$, and $R$, we equip the sets $\CatCuGenMor{S,R}$ and $\CatCuGenBimor{S\times T,R}$ with the same auxiliary relation as defined in \autoref{pgr:auxiliaryMor}.
For example, for $f,g\in\CatCuGenMor{S,R}$, we set $f\prec g$ if and only if $a'\ll a$ implies $f(a')\ll g(a)$, for any $a',a\in S$.

\begin{thm}
\label{prp:tensProdCu}
\index{terms}{tensor product!in $\CatCu$}
\index{symbols}{$\otimes_{\CatCu}$}
Let $S$ and $T$ be $\CatCu$-semigroups.
Consider the tensor product
\[
\omega\colon S\times T\to S\otimes_{\CatPreW}T,
\]
as constructed in \autoref{prp:tensProdPreW}.
By applying the completion functor $\gamma$ from \autoref{prp:CuificationExists}, we obtain a $\CatCu$-semigroup{} $\gamma(S\otimes_{\CatPreW}T)$, which we denote by $S\otimes_{\CatCu}T$, and a universal $\CatW$-morphism $\alpha\colon S\otimes_{\CatPreW}T\to S\otimes_{\CatCu}T$.
Then the composed map
\[
\varphi:=\alpha\circ\omega\colon S\times T \xrightarrow{\omega} S\otimes_{\CatPreW}T
\xrightarrow{\alpha} \gamma(S\otimes_{\CatPreW}T)
= S\otimes_{\CatCu}T
\]
is a $\CatCu$-bimorphism.
For every $\CatCu$-semigroup{} $R$, it satisfies the following universal properties:
\beginEnumStatements
\item
For every (generalized) $\CatCu$-bimorphism $f\colon S\times T\to R$, there exists a (generalized) $\CatCu$-morphism $\tilde{f}\colon S\otimes_{\CatCu} T\to R$ such that $f=\tilde{f}\circ\varphi$.
\item
We have $g_1\circ\varphi\leq g_2\circ\varphi$ if and only if $g_1\leq g_2$, for any generalized $\CatCu$-morphisms $g_1,g_2\colon S\otimes_{\CatCu} T \to R$.
\item
We have $g_1\circ\varphi\prec g_2\circ\varphi$ if and only if $g_1\prec g_2$, for any generalized $\CatCu$-morphisms $g_1,g_2\colon S\otimes_{\CatCu} T \to R$.
\end{enumerate}
Thus, for every $\CatCu$-semigroup $R$, we obtain the commutative diagram below, which is analogous to that in \autoref{prp:tensProdPreW}.
\[
\xymatrix@R=15pt@M+3pt{
\CatCuGenMor{S\otimes_{\CatCu} T,R} \ar[r]^{\varphi^*}
& \CatCuGenBimor{S\times T,R} \\
\CatCuMor(S\otimes_{\CatCu} T,R) \ar@{^{(}->}[u] \ar[r]^<<<<{\varphi^*}
& \CatCuBimor(S\times T,R), \ar@{^{(}->}[u]
}
\]
where $\varphi^*$ is an isomorphism of the respective (bi)morphism sets, given by $\varphi^*(f)=f\circ\varphi$.
In particular, the pair ($S\otimes_{\CatCu} T$, $\varphi$) represents the bimorphism $\CatPom$-functor $\CatCuBimor(S\times T,\freeVar)$.
\end{thm}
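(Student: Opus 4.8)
The plan is to reduce everything to the already-established universal property of the $\CatPreW$-tensor product (\autoref{prp:tensProdPreW}) and the universal property of the $\CatCu$-completion (\autoref{thm:Cuification} together with \autoref{rmk:Cuification}(1)), exploiting that $S\otimes_{\CatCu}T$ is by definition $\gamma(S\otimes_{\CatPreW}T)$. First I would verify that $\varphi=\alpha\circ\omega$ is a $\CatCu$-bimorphism. By \autoref{prp:tensProdPreW} the map $\omega$ is a $\CatW$-bimorphism, and $\alpha$ is a $\CatW$-morphism; composing a $\CatW$-bimorphism with a $\CatW$-morphism in the target yields a $\CatW$-bimorphism, so $\varphi$ is a $\CatW$-bimorphism. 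Since $S$, $T$ and $S\otimes_{\CatCu}T$ are all $\CatCu$-semigroups, \autoref{prp:bimorCu} upgrades this to the assertion that $\varphi$ is a $\CatCu$-bimorphism.

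For the universal properties, fix a $\CatCu$-semigroup $R$ and factor the map $\varphi^{*}\colon f\mapsto f\circ\varphi=(f\circ\alpha)\circ\omega$ as the composite
\[
\CatCuGenMor{S\otimes_{\CatCu}T,R}
\xrightarrow{\ \alpha^{*}\ }
\CatWGenMor{S\otimes_{\CatPreW}T,R}
\xrightarrow{\ \omega^{*}\ }
\CatWGenBimor{S\times T,R}
=\CatCuGenBimor{S\times T,R}.
\]
Here $\alpha^{*}$ is the isomorphism of \autoref{rmk:Cuification}(1) applied to the $\CatPreW$-semigroup $S\otimes_{\CatPreW}T$, $\omega^{*}$ is the isomorphism of \autoref{prp:tensProdPreW}, and the final identification is \autoref{prp:bimorCu}. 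As a composite of bijections, $\varphi^{*}$ is a bijection, which is exactly the factorization property (1). The same diagram restricts to the honest (bi)morphism sets: $\alpha^{*}$ restricts to $\CatCuMor(S\otimes_{\CatCu}T,R)\cong\CatWMor(S\otimes_{\CatPreW}T,R)$ by \autoref{rmk:Cuification}(1), $\omega^{*}$ restricts to $\CatWMor\cong\CatWBimor$ by \autoref{prp:tensProdPreW}, and $\CatWBimor(S\times T,R)=\CatCuBimor(S\times T,R)$ by \autoref{prp:bimorCu}, giving the bottom row of the stated diagram.

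Property (2) follows because both factors are order-isomorphisms: $\omega^{*}$ is one by \autoref{prp:tensProdPreW}, while $\alpha^{*}$ reflects order by the last clause of \autoref{thm:Cuification}(3) and preserves it trivially. Property (3), the auxiliary-relation statement, is the part that does not merely quote an earlier result. \autoref{prp:tensProdPreW}(3) already records that $\omega^{*}$ preserves and reflects $\prec$, and that $\alpha^{*}$ preserves $\prec$ is immediate since $\alpha$ preserves the auxiliary relation. The one genuine step is that $\alpha^{*}$ \emph{reflects} $\prec$, which I would prove directly: given generalized $\CatCu$-morphisms $h_{1},h_{2}$ with $h_{1}\circ\alpha\prec h_{2}\circ\alpha$ and elements $x'\ll x$ in $S\otimes_{\CatCu}T$, choose $y$ with $x'\ll y\ll x$, apply the dense-image property of \autoref{thm:Cuification}(1,ii) to find $a$ with $y\leq\alpha(a)\leq x$, and use continuity of $\alpha$ to find $a'\prec a$ with $x'\leq\alpha(a')$; then $h_{1}(x')\leq h_{1}(\alpha(a'))\ll h_{2}(\alpha(a))\leq h_{2}(x)$, so $h_{1}(x')\ll h_{2}(x)$ and hence $h_{1}\prec h_{2}$.

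The main obstacle is precisely this reflection of the auxiliary relation through $\alpha^{*}$, since it is the only place where the embedding and dense-image characterizations of the $\CatCu$-completion must be invoked by hand rather than cited; everything else is a formal chaining of the two universal properties. Two bookkeeping points to check carefully are that $\omega^{*}\circ\alpha^{*}=\varphi^{*}$ holds on the nose (mere associativity of composition) and that the auxiliary relation on the intermediate set $\CatWGenMor{S\otimes_{\CatPreW}T,R}$ used by the two factors coincides, which it does because $R$ carries $\ll$ as its auxiliary relation whether it is viewed as a $\CatW$- or a $\CatCu$-semigroup.
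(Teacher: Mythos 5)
Your proposal is correct and follows essentially the same route as the paper's proof: factoring $\varphi^{*}$ as $\omega^{*}\circ\alpha^{*}$ and invoking the universal properties of the $\CatCu$-completion and of the $\CatPreW$-tensor product, together with the identification of $\CatW$- and $\CatCu$-(bi)morphisms. The only difference is that you spell out the reflection of the auxiliary relation through $\alpha^{*}$, a verification the paper dismisses as straightforward; your argument for it is correct.
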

\begin{proof}
It is clear that $\varphi$ is a $\CatW$-bimorphism. Therefore, it follows from \autoref{prp:bimorCu} that it is a $\CatCu$-bimorphism.
To check the universal properties, let $R$ be a $\CatCu$-semigroup{}.
In the diagram below, the horizontal maps on the left (given by composing on the right with $\alpha$) are $\CatPom$-isomorphisms by \autoref{thm:Cuification}  and the horizontal maps on the right (given by composing on the right with $\omega$) are $\CatPom$-isomorphisms by \autoref{prp:tensProdPreW}.
\[
\xymatrix@R=15pt@M+3pt{
\CatCuGenMor{S\otimes_{\CatCu} T,R} \ar[r]^{\alpha^*}
& \CatWGenMor{S\otimes_{\CatPreW}T,R} \ar[r]^-{\omega^*}
& \CatWGenBimor{S\times T,R} \\
\CatCuMor(S\otimes_{\CatCu} T,R) \ar@{^{(}->}[u] \ar[r]^-{\alpha^*}
& \CatWMor(S\otimes_{\CatPreW}T,R) \ar@{^{(}->}[u] \ar[r]^-{\omega^*}
& \CatWBimor(S\times T,R). \ar@{^{(}->}[u]
}
\]
This establishes the universal properties \enumStatement{1} and \enumStatement{2}.
It is also straightforward to check that the isomorphism between $\CatCuGenMor{S\otimes_{\CatCu}T,R}$ and $\CatCuGenBimor{S\times T, R}$ preserves the auxiliary relations, which establishes \enumStatement{3}.
\end{proof}

\begin{rmk}[Tensor product in $\CatW$]
\index{terms}{tensor product!in $\CatW$}
\label{rmk:tensProdW}
Analogous to the above \autoref{prp:tensProdCu}, one can construct tensor products in the category $\CatW$.
Given $\CatW$-semigroup{s} $S$ and $T$, one first considers the tensor product  $S\otimes_{\CatPreW}T$ in $\CatPreW$.
Then one uses the reflection $\CatPreW\to\CatW$ from \autoref{pgr:WreflectivePreW} to obtain the tensor product in $\CatW$.
\end{rmk}

\begin{thm}
\label{thm:tensProdCompl}
Let $S$ and $T$ be $\CatPreW$-semigroups.
Then there is a natural $\CatCu$-isomorphism
\[
\gamma(S)\otimes_{\CatCu}\gamma(T)\cong\gamma(S\otimes_{\CatPreW}T).
\]
\end{thm}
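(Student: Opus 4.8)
The plan is to prove this isomorphism by exhibiting a universal arrow and invoking uniqueness of $\CatCu$-completions, exactly as in \autoref{rmk:reflectors}. The key point is that both $\gamma(S)\otimes_{\CatCu}\gamma(T)$ and $\gamma(S\otimes_{\CatPreW}T)$ are $\CatCu$-semigroups equipped with natural $\CatW$-bimorphisms from $S\times T$, and I will show each of them serves as a $\CatCu$-completion of the same underlying $\CatPreW$-bimorphism data. Since $\CatCu$-completions are unique up to canonical isomorphism (\autoref{dfn:Cuification} and the remark following it, together with \autoref{thm:Cuification}), the desired isomorphism follows and is automatically natural.

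First I would assemble the relevant universal maps. Let $\alpha_S\colon S\to\gamma(S)$ and $\alpha_T\colon T\to\gamma(T)$ be the $\CatCu$-completions from \autoref{prp:CuificationExists}, and let $\omega\colon S\times T\to S\otimes_{\CatPreW}T$ and $\omega'\colon\gamma(S)\times\gamma(T)\to\gamma(S)\otimes_{\CatPreW}\gamma(T)$ be the $\CatPreW$-tensor bimorphisms from \autoref{prp:tensProdPreW}. Composing with the respective $\CatCu$-completion maps $\alpha\colon S\otimes_{\CatPreW}T\to\gamma(S\otimes_{\CatPreW}T)$ and $\alpha'\colon\gamma(S)\otimes_{\CatPreW}\gamma(T)\to\gamma(S)\otimes_{\CatCu}\gamma(T)$ gives two $\CatCu$-bimorphisms out of $S\times T$, the second being precomposed with $\alpha_S\times\alpha_T$. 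The strategy is to verify that the composite
\[
S\times T \xrightarrow{\alpha_S\times\alpha_T} \gamma(S)\times\gamma(T) \xrightarrow{\omega'} \gamma(S)\otimes_{\CatPreW}\gamma(T) \xrightarrow{\alpha'} \gamma(S)\otimes_{\CatCu}\gamma(T)
\]
exhibits its target as a $\CatCu$-completion of the $\CatPreW$-semigroup $S\otimes_{\CatPreW}T$. To do this I would use the universal property of $\omega$ (\autoref{prp:tensProdPreW}(1)) to factor the above $\CatW$-bimorphism through a $\CatW$-morphism $S\otimes_{\CatPreW}T\to\gamma(S)\otimes_{\CatCu}\gamma(T)$, and then check conditions (i) and (ii) of \autoref{thm:Cuification}, namely the `embedding' and `dense image' properties, using \autoref{rmk:Cuification}(3).

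The main obstacle I anticipate is verifying the embedding and dense-image conditions for this factored map. For this I would argue via morphism sets rather than by hand: by \autoref{prp:tensProdCu} and \autoref{thm:Cuification}, for every $\CatCu$-semigroup $R$ there are natural $\CatPom$-isomorphisms
\[
\CatCuMor(\gamma(S\otimes_{\CatPreW}T),R)\cong\CatWMor(S\otimes_{\CatPreW}T,R)\cong\CatWBimor(S\times T,R),
\]
and likewise, using that $\gamma\colon\CatPreW\to\CatCu$ is a reflector (\autoref{prp:CureflectivePreW}) so that $\CatWBimor(S\times T,R)\cong\CatCuBimor(\gamma(S)\times\gamma(T),R)$ whenever $R\in\CatCu$,
\[
\CatCuMor(\gamma(S)\otimes_{\CatCu}\gamma(T),R)\cong\CatCuBimor(\gamma(S)\times\gamma(T),R)\cong\CatWBimor(S\times T,R).
\]
Both $\gamma(S\otimes_{\CatPreW}T)$ and $\gamma(S)\otimes_{\CatCu}\gamma(T)$ therefore corepresent the same $\CatPom$-functor $R\mapsto\CatWBimor(S\times T,R)$ on $\CatCu$, and the Yoneda lemma yields a unique $\CatCu$-isomorphism between them commuting with the structural bimorphisms. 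The delicate step is the identification $\CatWBimor(S\times T,R)\cong\CatCuBimor(\gamma(S)\times\gamma(T),R)$, which requires that a $\CatW$-bimorphism on $S\times T$ extends uniquely to a $\CatCu$-bimorphism on $\gamma(S)\times\gamma(T)$; I would establish this by applying the reflector separately in each variable, using \autoref{prp:bimorPreW} and \autoref{prp:bimorCu} to pass between bimorphisms and morphisms-in-each-variable, and the universal property of $\alpha_S$, $\alpha_T$ from \autoref{thm:Cuification}. Naturality of the resulting isomorphism then follows from naturality of all the corepresentation isomorphisms involved.
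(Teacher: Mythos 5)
Your proposal is correct and follows essentially the same route as the paper: both $\gamma(S)\otimes_{\CatCu}\gamma(T)$ and $\gamma(S\otimes_{\CatPreW}T)$ are shown to corepresent the functor $R\mapsto\CatWBimor(S\times T,R)$ on $\CatCu$, via \autoref{prp:tensProdPreW} and reflectivity on one side and the identification $\CatWBimor(S\times T,R)\cong\CatCuBimor(\gamma(S)\times\gamma(T),R)$ together with \autoref{prp:tensProdCu} on the other. You correctly isolate that identification as the only delicate point (the paper cites \autoref{prp:bimorCu} somewhat tersely for it), and your plan of applying the reflector in each variable is exactly how it is justified.
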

\begin{proof}
Let $R$ be a $\CatCu$-semigroup{}.
Using \autoref{prp:tensProdPreW} at the first step, and using that $\CatCu$ is a reflective subcategory of $\CatPreW$ at the second step, we have natural isomorphisms of the following (bi)morphism sets
\[
\CatWBimor(S\times T,R)\cong \CatWMor(S\otimes_{\CatPreW} T,R)\cong \CatCuMor(\gamma(S\otimes_{\CatPreW} T),R).
\]
On the other hand, using \autoref{prp:bimorCu} at the first step, and \autoref{prp:tensProdCu} at the second step, we obtain natural isomorphisms
\[
\CatWBimor(S\times T,R)
\cong\CatCuBimor(\gamma(S)\times\gamma(T),R)
\cong\CatCuMor(\gamma(S)\otimes_{\CatCu}\gamma(T),R).
\]
Hence, the $\CatCu$-semigroups $\gamma(S)\otimes_{\CatCu}\gamma(T)$ and $\gamma(S\otimes_{\CatPreW} T)$ both represent the same functor, which implies that they are naturally isomorphic.
\end{proof}

\index{terms}{tensor product!in $\CatCu$!associative}
\begin{cor}
\label{prp:tensProdAssoc}
Let $S,T$, and $R$ be $\CatCu$-semigroup{s}.
Then there is a natural isomorphism:
\[
S\otimes_{\CatCu} (T\otimes_{\CatCu} R)\ \cong\ (S\otimes_{\CatCu} T)\otimes_{\CatCu} R.
\]
\end{cor}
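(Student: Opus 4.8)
The plan is to prove associativity by showing that both $S\otimes_{\CatCu}(T\otimes_{\CatCu}R)$ and $(S\otimes_{\CatCu}T)\otimes_{\CatCu}R$ represent the same $\CatPom$-functor, namely the functor of ``triple $\CatCu$-morphisms'' $\CatCuMor(\freeVar,Q)$ applied to each candidate. Since by \autoref{prp:tensProdCu} the tensor product is characterized (uniquely up to isomorphism) as the representing object of the bimorphism $\CatPom$-functor, it suffices to identify, for every $\CatCu$-semigroup $Q$, a natural $\CatPom$-isomorphism between $\CatCuMor\bigl(S\otimes_{\CatCu}(T\otimes_{\CatCu}R),Q\bigr)$ and $\CatCuMor\bigl((S\otimes_{\CatCu}T)\otimes_{\CatCu}R,Q\bigr)$. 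Both sides should be shown to be naturally isomorphic to a common object, the $\CatPom$ of \emph{trimorphisms} $S\times T\times R\to Q$, which are maps that are $\CatCu$-morphisms separately in each of the three variables (equivalently, continuous and jointly preserving $\ll$ in all three variables).

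The key steps, carried out in the $\CatCu$ setting via the representability from \autoref{prp:tensProdCu}, are as follows. First I would unfold the right-hand side: by \autoref{prp:tensProdCu} applied to $S\otimes_{\CatCu}T$ and $R$, morphisms out of $(S\otimes_{\CatCu}T)\otimes_{\CatCu}R$ correspond to $\CatCu$-bimorphisms $(S\otimes_{\CatCu}T)\times R\to Q$. By \autoref{prp:bimorCu}, such a bimorphism is a $\CatCu$-morphism in each variable; fixing the $R$-variable and using \autoref{prp:tensProdCu} again for $S\otimes_{\CatCu}T$ converts the dependence on the first variable into a $\CatCu$-bimorphism $S\times T\to Q$. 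Chasing these identifications shows $\CatCuMor\bigl((S\otimes_{\CatCu}T)\otimes_{\CatCu}R,Q\bigr)$ is naturally isomorphic to the $\CatPom$ of trimorphisms. An entirely symmetric unfolding of the left-hand side, using \autoref{prp:tensProdCu} for $S$ and $T\otimes_{\CatCu}R$ and then for $T\otimes_{\CatCu}R$, yields the same identification with trimorphisms. Since all the isomorphisms in \autoref{prp:tensProdCu} are $\CatPom$-isomorphisms and are natural in $Q$, composing them gives a natural $\CatPom$-isomorphism of the two representable functors, and hence, by Yoneda, a natural $\CatCu$-isomorphism of the representing objects.

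The cleanest route in practice is to reduce to the $\CatPreW$ level, where the underlying tensor products are purely algebraic. By \autoref{thm:tensProdCompl} we have $S\otimes_{\CatCu}(T\otimes_{\CatCu}R)\cong\gamma\bigl(S\otimes_{\CatPreW}\mu_\CatCu(T\otimes_{\CatCu}R)\bigr)$ and repeated application lets one rewrite both triple products as $\Cu$-completions of iterated $\CatPreW$-tensor products; associativity of $\otimes_{\CatPreW}$ (noted in \autoref{pgr:monoidalPreW}, where $\CatPreW$ is shown to be symmetric monoidal) then transports through $\gamma$. I would therefore first establish the analogous associativity isomorphism $(S\otimes_{\CatPreW}T)\otimes_{\CatPreW}R\cong S\otimes_{\CatPreW}(T\otimes_{\CatPreW}R)$ at the level of $\CatPreW$-semigroups, which follows from associativity of $\otimes_\CatPom$ (\autoref{prp:tensorPom}) together with a check that the auxiliary relation $\prec$ from \autoref{dfn:tensProdAuxRel} is compatible on both sides; then apply $\gamma$ and \autoref{thm:tensProdCompl} to descend to $\CatCu$.

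The main obstacle I expect is bookkeeping rather than conceptual: verifying that the three-variable universal object is genuinely represented on each side requires fixing one variable at a time and invoking \autoref{prp:tensProdCu} iteratively, and at each stage one must confirm that the ``partially evaluated'' map is itself a $\CatCu$-morphism (so that the next application of the universal property is legitimate) and that the joint continuity and joint $\ll$-preservation in the remaining variables survive. The technical heart is checking that the auxiliary relation defined simple-tensor-wise in \autoref{dfn:tensProdAuxRel} matches on $(S\otimes_{\CatPreW}T)\otimes_{\CatPreW}R$ and $S\otimes_{\CatPreW}(T\otimes_{\CatPreW}R)$; this is where \axiomW{4} (joint cofinality of $a^\prec+b^\prec$ in $(a+b)^\prec$) and \autoref{lma:tensProdAuxRelHelping} do the real work, exactly as in the two-variable case, and I would lean on the symmetry and naturality already recorded for $\CatPreW$ in \autoref{pgr:monoidalPreW} to avoid redoing these estimates from scratch.
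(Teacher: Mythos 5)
Your proposal is correct, and the route you single out as ``the cleanest in practice'' --- reducing both triple products to $\Cu$-completions of iterated $\otimes_{\CatPreW}$-products via \autoref{thm:tensProdCompl} and then invoking associativity of $\otimes_{\CatPreW}$ from \autoref{pgr:monoidalPreW} --- is exactly the paper's proof. The trimorphism/representability argument you sketch first is a legitimate alternative, but it is not needed once the $\CatPreW$-level associativity (which the paper derives from associativity of $\otimes_{\CatPom}$, with the compatibility of the auxiliary relations checked as you describe) is in hand.
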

\begin{proof}
Using \autoref{thm:tensProdCompl} at the second and last step, and that $\otimes_{\CatPreW}$ is associative (see \autoref{pgr:monoidalPreW}) at the third step, we obtain
\begin{align*}
S\otimes_{\CatCu} (T\otimes_{\CatCu} R)
&\cong\gamma(S)\otimes_{\CatCu}\gamma(T\otimes_{\CatPreW}R) \\
&\cong\gamma(S\otimes_{\CatPreW}(T\otimes_{\CatPreW}R)) \\
&\cong\gamma((S\otimes_{\CatPreW}T)\otimes_{\CatPreW}R)
\cong (S\otimes_{\CatCu}T)\otimes_{\CatCu}R,
\end{align*}
and all isomorphisms are natural.
\end{proof}

\begin{pgr}[$\CatCu$ is a symmetric, monoidal category]
\label{pgr:monoidalCu}
Similar as in \autoref{pgr:monoidalPreW}, it follows that tensor product in $\CatCu$ extends to a bifunctor
\[
\otimes\colon \CatCu\times\CatCu \to \CatCu.
\]
We showed in \autoref{prp:tensProdAssoc} that this functor is associative.
Let us show that the $\CatCu$-semigroup $\overline{\N}$ is a unit object for $\CatCu$.
Note that $\overline{\N}$ is the reflection in $\CatCu$ of the unit object $\N$ of $\CatPreW$.
Let $S$ be a $\CatCu$-semigroup $S$.
Using \autoref{thm:tensProdCompl} at the first step, we obtain natural isomorphisms
\[
S\otimes_{\CatCu}\overline{\N}
\cong\gamma(S\otimes_{\CatPreW}\N)
\cong\gamma(S)\cong S,
\]
and analogously $\overline{\N}\otimes_{\CatCu}S\cong S$.
Similarly, symmetry of the tensor product in $\CatCu$ follows from symmetry of the tensor product in $\CatPreW$.
Thus, the category $\CatCu$ has a symmetric, monoidal structure.
\end{pgr}

\vspace{5pt}
\section{Examples and Applications}
\label{sec:bimorAppl}

In this subsection, we are mainly concerned with the following problems:
Under which conditions do the axioms \axiomO{5}, \axiomO{6} and weak cancellation pass to tensor products of $\CatCu$-semigroups; see \autoref{prbl:AxiomsTensProd}.
Secondly, for \ca{s} $A$ and $B$, what can we say about the natural $\CatCu$-morphism from $\Cu(A)\otimes_{\CatCu}\Cu(B)$ to $\Cu(A\tensMax B)$; see \autoref{prbl:tensCa}.

The following result is a useful tool to solve particular cases of both problems.

\index{terms}{tensor product!in $\CatCu$!continuous}
\index{terms}{tensor product!in $\CatPreW$!continuous}
\begin{prp}
\label{prp:tensLim}
The tensor products in $\CatPreW$ and $\CatCu$ are continuous in each variable.
More precisely, let $((S_i)_{i\in I},(\varphi_{i,j})_{i,j\in I,i\leq j})$ be an inductive system in $\CatCu$, and let $T$ be a $\CatCu$-semigroup{}.
Then there is a natural isomorphism
\[
\CatCuLim (S_i\otimes_{\CatCu} T)
\cong (\CatCuLim S_i)\otimes_{\CatCu} T.
\]
The analogous statement holds for the second variable and for the tensor product in $\CatPreW$.
\end{prp}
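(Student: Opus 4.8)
The plan is to prove the statement first in $\CatPreW$ and then transfer it to $\CatCu$ using \autoref{thm:tensProdCompl} together with the continuity of the reflector $\gamma$. Throughout I would invoke the Yoneda lemma: an object of $\CatPreW$ (resp.\ of $\CatCu$) is determined up to natural isomorphism by the $\CatPom$-valued functor $R\mapsto\CatWMor(\freeVar,R)$ it represents, so it suffices to match representable functors naturally in $R$.

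For the $\CatPreW$ statement, I would fix an inductive system $(S_i)$ in $\CatPreW$ (noting that $(S_i\otimes_\CatPreW T)$ is again an inductive system in $\CatPreW$, with connecting maps $\varphi_{i,j}\otimes\id_T$, by functoriality of $\otimes_\CatPreW$ from \autoref{pgr:monoidalPreW}), a $\CatPreW$-semigroup $T$, and an arbitrary $R$, and compute both sides. On one hand, \autoref{prp:tensProdPreW} gives $\CatWMor((\CatPreWLim S_i)\otimes_\CatPreW T,R)\cong\CatWBimor((\CatPreWLim S_i)\times T,R)$. On the other hand, the universal property of the inductive limit in \autoref{prp:limitsPreW}, followed by \autoref{prp:tensProdPreW}, gives
\[
\CatWMor(\CatPreWLim(S_i\otimes_\CatPreW T),R)\cong\varprojlim_i\CatWMor(S_i\otimes_\CatPreW T,R)\cong\varprojlim_i\CatWBimor(S_i\times T,R).
\]
Hence the whole statement reduces to a natural $\CatPom$-isomorphism $\CatWBimor((\CatPreWLim S_i)\times T,R)\cong\varprojlim_i\CatWBimor(S_i\times T,R)$.

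This last isomorphism is the heart of the argument and I would establish it by an explicit construction. Restriction along the canonical maps $\varphi_{i,\infty}\colon S_i\to\CatPreWLim S_i$ sends a $\CatW$-bimorphism $f$ to the compatible family $(f\circ(\varphi_{i,\infty}\times\id_T))_i$, giving the map from left to right. For the inverse, given a compatible family $(f_i)_i$, I would set $f([a],t):=f_i(a,t)$ for $a\in S_i$, which is well defined by the description of the $\CatPom$-limit in \autoref{pgr:limPom}, and then verify that $f$ is a genuine $\CatW$-bimorphism. Biadditivity is immediate from biadditivity of each $f_i$ and the addition formula on $\CatPomLim S_i$; separate continuity follows from \autoref{prp:bimorPreW} and continuity of the $f_i$; and joint preservation of $\prec$ reduces, via the description of the auxiliary relation on the limit in \autoref{dfn:limAuxRel} (namely $[a]\prec[b]$ exactly when $\varphi_{i,k}(a)\prec\varphi_{j,k}(b)$ for some $k$), to the joint $\prec$-preservation of the single bimorphism $f_k$. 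The two assignments are mutually inverse and respect the pointwise \pom-structure. I expect this joint $\prec$-preservation to be the main obstacle, as it is the one bimorphism axiom genuinely coupling the two variables and not covered by \autoref{prp:bimorPreW}; the key point is precisely that \autoref{dfn:limAuxRel} lets one realize any instance of $\prec$ in the limit at a single finite stage $S_k$.

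Finally I would deduce the $\CatCu$ statement from the $\CatPreW$ one. Writing $\CatCuLim S_i=\gamma(\CatPreWLim S_i)$ (\autoref{prp:limitsCu}) and using $T\cong\gamma(T)$, one application of \autoref{thm:tensProdCompl} followed by the $\CatPreW$ result gives
\[
(\CatCuLim S_i)\otimes_\CatCu T\cong\gamma\bigl((\CatPreWLim S_i)\otimes_\CatPreW T\bigr)\cong\gamma\bigl(\CatPreWLim(S_i\otimes_\CatPreW T)\bigr).
\]
On the other side, since $S_i\otimes_\CatCu T=\gamma(S_i\otimes_\CatPreW T)$ and the reflector $\gamma$ is continuous (as noted before \autoref{prp:limitsW} and used in \autoref{prp:limitsCu}), hence commutes with inductive limits, we obtain
\[
\CatCuLim(S_i\otimes_\CatCu T)=\CatCuLim\gamma(S_i\otimes_\CatPreW T)\cong\gamma\bigl(\CatPreWLim(S_i\otimes_\CatPreW T)\bigr).
\]
Comparing the two displays yields the desired natural isomorphism. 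The analogous statements for the second variable follow from symmetry of the tensor products established in \autoref{pgr:monoidalPreW} and \autoref{pgr:monoidalCu}, and the $\CatPreW$ case is exactly what was proven in the first step.
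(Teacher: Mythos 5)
Your proposal is correct, and its outer architecture --- settle the $\CatPreW$ case first, then transfer to $\CatCu$ via $\CatCuLim=\gamma\circ\CatPreWLim$, \autoref{thm:tensProdCompl} and the continuity of the reflector $\gamma$ --- is exactly the paper's; the $\CatCu$ deduction and the appeal to symmetry for the second variable match the paper essentially verbatim. Where you genuinely diverge is in the $\CatPreW$ step. The paper constructs the canonical comparison map $\psi\colon\CatPreWLim(S_i\otimes_{\CatPreW}T)\to(\CatPreWLim S_i)\otimes_{\CatPreW}T$ directly, notes it is an isomorphism of the underlying \pom{s} because $\freeVar\otimes_{\CatPom}T$ is a left adjoint, and then does the real work by hand: showing that $\psi$ \emph{reflects} the auxiliary relation, which forces one to decompose an element of $(\CatPreWLim S_i)\otimes_{\CatPreW}T$ into simple tensors, unwind the definition of $\prec$ on the tensor product, and lift the resulting data to a finite stage $S_j$. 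You instead argue by representability, reducing everything to the gluing statement $\CatWBimor((\CatPreWLim S_i)\times T,R)\cong\varprojlim_i\CatWBimor(S_i\times T,R)$. This shifts the element-chasing to the easier side of the adjunction: checking that a glued compatible family jointly preserves $\prec$ is immediate from \autoref{dfn:limAuxRel}, since any instance of $\prec$ in the limit is witnessed at a single finite stage, and you never manipulate representatives inside the tensor product of the limit because \autoref{prp:tensProdPreW} has already packaged that universal property. The trade-off is that the paper's route exhibits the isomorphism concretely as $\psi$ (and records in passing that $\psi$ is an order-embedding), whereas yours obtains it abstractly from Yoneda and must trace the identity through the chain of natural bijections to recognize it as the canonical map. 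Both arguments are complete.
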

\begin{proof}
We first note that the tensor product in $\CatPom$ is a continuous functor in each variable, since
 $\freeVar\otimes_\CatPom T$ is left adjoint to $\CatPomMor(T,\freeVar)$.
To simplify notation, in the first part of this proof we will write $\otimes$ for $\otimes_{\CatPreW}$ and $\varinjlim$ for $\CatPreWLim$.

Let $((S_i)_{i\in I},(\varphi_{i,j})_{i,j\in I, i\leq j})$ be an inductive system in $\CatPreW$ indexed over the directed set $I$, and let $T$ be a $\CatPreW$-semigroup{}.
This induces an inductive system $((S_i\otimes T)_{i\in I}, (\varphi_{i,j}\otimes\id_T)_{i,j\in I, i\leq j})$.
For $j\in I$, we denote the respective $\CatW$-morphisms into the inductive limits by
\[
\varphi_{j,\infty}\colon S_j\to \varinjlim S_i,\text{ and }
\lambda_{j,\infty}\colon S_j\otimes T\to \varinjlim (S_i\otimes T).
\]
The $\CatW$-morphisms $\varphi_{j,\infty}\otimes\id_T$
induce a $\CatW$-morphism
\[
\psi\colon\varinjlim (S_i\otimes T) \to (\varinjlim S_i)\otimes T
\]
that satisfies $(\psi\circ\lambda_{j,\infty})(s\otimes t)=\varphi_{j,\infty}(s)\otimes t$ for every $j\in I$, $s\in S_j$, and $t\in T$.
These maps are shown in the following commutative diagram:
\[
\xymatrix{
S_j\otimes T \ar[d]_{\lambda_{j,\infty}} \ar[r]^-{\varphi_{j,\infty}\otimes\id_T}
& { (\varinjlim S_i)\otimes T } \\
{ \varinjlim (S_i\otimes T) } \ar[ur]_{\psi}
}
\]

The inductive limit of $\CatPreW$-semigroup{s} is simply the inductive limit in $\CatPom$ of the underlying \pom{s} equipped with a natural auxiliary relation; see \autoref{prp:limitsPreW}.
Similarly, the tensor product of two $\CatPreW$-semigroup{s} is the $\CatPom$-tensor product of the underlying \pom{s} equipped with a natural auxiliary relation; see \autoref{prp:tensProdPreW}.

Thus, since the tensor product in $\CatPom$ is continuous in each variable, the map $\psi$ is a $\CatPom$-isomorphism.
Moreover, $\psi$ preserves the auxiliary relation since it is a $\CatW$-morphism.
Hence, to show that $\psi$ is a $\CatW$-isomorphism, it remains to prove that $x\prec y$ whenever $\psi(x)\prec\psi(y)$ for any $x$ and $y$ in the domain of $\psi$.

Given such $x$ and $y$, we can choose an index $i\in I$ and $n\in\N$ and elements $s_k\in S_i$ and $t_k\in T$ for $k=1,\ldots,n$ such that $y=\lambda_{i,\infty}(\sum_{k=1}^n s_k\otimes t_k)$.
Then
\[
\psi(x)\prec\psi(y)=\sum_{k=1}^n\varphi_{i,\infty}(s_k)\otimes t_k.
\]
By definition of the auxiliary relation for tensor products in $\CatPreW$, there are elements $a_k'\in\varinjlim S_i$ and $t_k'\in T$ with $\psi(x)\leq\sum_{k=1}^n a_k'\otimes t_k'$, satisfying
\[
a_k'\prec\varphi_{i,\infty}(s_k), \text{ and } t_k'\prec t_k
\]
for each $k=1,\ldots,n$.
It follows from the definition of the auxiliary relation for inductive limits in $\CatPreW$, that there is an index $j\geq i$ and elements $s_k'\in S_j$ for $k=1,\ldots, n$ such that
\[
a_k'=\varphi_{j,\infty}(s_k'),\text{ and }
s_k'\prec\varphi_{i,j}(s_k)
\]
for each $k=1,\ldots,n$.
Set $y':=\lambda_{j,\infty}(\sum_{k=1}^n s_k'\otimes t_k')$.
Then
\[
\psi(x)
\leq\sum_{k=1}^n a_k'\otimes t_k'
=\sum_{k=1}^n\varphi_{j,\infty}(s_k')\otimes t_k'
=\psi(y').
\]
Since $\psi$ is an order-embedding, we have $x\leq y'$.
It easily follows $y'\prec y$, and thus $x\prec y$, as desired.

Continuity in the second variable is proven analogously.

The result for tensor products in $\CatCu$ follows from that for $\CatPreW$.
More precisely, let $((S_i)_{i\in I},(\varphi_{i,j})_{i,j\in I, i\leq j})$ be an inductive system in $\CatCu$, and let $T$ be a $\CatCu$-semigroup{}.
Using \autoref{prp:limitsCu} at the first and last steps, using \autoref{thm:tensProdCompl} at the second to last step, and using the result for $\CatPreW$ at the second step, we obtain natural $\CatCu$-isomorphisms:
\begin{align*}
\CatCuLim (S_i\otimes_{\CatCu} T)
&\cong \gamma\left( \CatPreWLim (S_i\otimes_{\CatPreW} T) \right) \\
&\cong \gamma\left( (\CatPreWLim S_i)\otimes_{\CatPreW} T \right) \\
&\cong \gamma\left( \CatPreWLim S_i \right) \otimes_{\CatCu} \gamma(T)
\cong (\CatCuLim S_i) \otimes_{\CatCu} T.
\end{align*}
This finishes the proof.
\end{proof}

\begin{prbl}
\label{prbl:AxiomsTensProd}
Given $\CatCu$-semigroups $S$ and $T$ that satisfy \axiomO{5} (respectively \axiomO{6}, weak cancellation).
When does $S\otimes_\CatCu T$ satisfy \axiomO{5} (respectively \axiomO{6}, weak cancellation)?
\end{prbl}

\begin{pgr}
\label{pgr:answersAxiomsTensProd}
In general, \axiomO{5} does not pass to tensor products; see \autoref{prp:tensornotO5}.
However, for given $\CatCu$-semigroups $S$ and $T$, we obtain the following partial positive answers to \autoref{prbl:AxiomsTensProd}:
\beginEnumStatements
\item
If $S$ or $T$ is an inductive limit of simplicial $\CatCu$-semigroups, then each of the axioms  \axiomO{5}, \axiomO{6} and weak cancellation pass from $S$ and $T$ to $S\otimes_{\CatCu}T$; see \autoref{prp:tensLimSimplicial}.
\item
If $S$ and $T$ are algebraic $\CatCu$-semigroups, then axiom \axiomO{5} passes from $S$ and $T$ to $S\otimes_{\CatCu}T$; see \autoref{prp:axiomsTensProdAlgebraic}.
\end{enumerate}

This suggests the following refined version of \autoref{prbl:AxiomsTensProd}:
Does \axiomO{5} pass to tensor products where one of the $\CatCu$-semigroups is algebraic?
Do the axioms pass to tensor products of simple $\CatCu$-semigroups?
\end{pgr}

For the next result, we use $Z$ to denote the semigroup $\N\sqcup(0,\infty]$. Note that $Z$ is the Cuntz semigroup of the Jiang-Su algebra $\mathcal{Z}$; see \autoref{pgr:Z}.
We let $\Lsc\left( [0,1],\overline{\N} \right)$ denote the set of lower-semicontinuous functions from $[0,1]$ to $\overline{\N}$, which is known to be isomorphic to the Cuntz semigroup of the \ca{} $C([0,1])$;
see \cite{Rob13CuSpaces2D}.
It follows from \autoref{prp:o5o6} that $Z$ and $\Lsc\left( [0,1],\overline{\N} \right)$ satisfy \axiomO{5}, but this is also easy to see directly.
The next result shows that \axiomO{5} does in general not pass to tensor products. It requires the use of $Z$-multiplication, a notion that will be defined in \autoref{sec:solidCuSrg} (see also \autoref{sec:ZMod}).
\index{symbols}{Z@$Z$ \quad (Cuntz semigroup of Jiang-Su algebra)}

\begin{prp}
\label{prp:tensornotO5}
The $\CatCu$-semigroup $Z\otimes_{\CatCu}\Lsc\left( [0,1],\overline{\N} \right)$ does not satisfy axiom \axiomO{5}.
\end{prp}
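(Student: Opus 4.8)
The plan is to exhibit an explicit failure of \axiomO{5} in the tensor product $Z\otimes_\CatCu\Lsc([0,1],\overline{\N})$. First I would try to understand this tensor product concretely. Since $Z$ has $Z$-multiplication and $Z\otimes_\CatCu Z\cong Z$, a $\CatCu$-semigroup $S$ with $Z$-multiplication satisfies $Z\otimes_\CatCu S\cong S$; by the result referenced for condition (i) of the semiring theorem, $S$ has $Z$-multiplication if and only if $S$ is almost divisible and almost unperforated. The key point is that $\Lsc([0,1],\overline{\N})$ is \emph{not} almost divisible (its compact elements are integer-valued and cannot be halved), so tensoring with $Z$ genuinely changes it. I expect $Z\otimes_\CatCu\Lsc([0,1],\overline{\N})$ to be identifiable with something like $\Lsc([0,1],Z)$ or a closely related semigroup of lower-semicontinuous $Z$-valued functions; computing this identification is the first substantive task, using \autoref{prp:tensLim} (continuity of $\otimes_\CatCu$) together with an inductive-limit or pullback decomposition of $\Lsc([0,1],\overline{\N})$ as in the Cuntz-semigroup-of-$C([0,1])$ description.

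Once I have a concrete model, the strategy is to find elements $a'\ll a$, $b'\ll b$ with $a+b\leq c$ for which no complement $x$ satisfying $a'+x\leq c\leq a+x$ and $b'\leq x$ can exist. The intuition is that in $Z$-tensored function semigroups one can arrange a situation where the function $c$ jumps between a compact (integer) value and a soft value across a point of $[0,1]$, so that the two constraints $c\leq a+x$ (forcing $x$ to be large, even soft, near the jump) and $a'+x\leq c$ (forcing $x$ to be small where $c$ is compact) become incompatible. Concretely I would take $c$ to be a function equal to an integer $n$ on a closed subinterval and jumping up to a value involving the soft part $(0,\infty]$ elsewhere, with $a$ and $b$ chosen so their sum is dominated by $c$ but $a$ itself is compact near the jump; the failure of \axiomO{5} then manifests as the impossibility of interpolating a single $x$ respecting both the lower-semicontinuity and the mixed compact/soft comparison.

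The main obstacle will be the explicit identification of $Z\otimes_\CatCu\Lsc([0,1],\overline{\N})$ and the verification that lower semicontinuity, combined with the order structure of $Z=\N\sqcup(0,\infty]$, genuinely obstructs the complement $x$. In particular I must rule out \emph{all} candidate complements $x$, which requires a clean description of the order on the tensor product and careful bookkeeping of where $x$ is forced to be compact versus soft. The delicate step is showing that no lower-semicontinuous $x$ can simultaneously be small (compact) on the region where $c$ is compact and large (soft) arbitrarily close to that region; this is precisely where almost divisibility would have rescued \axiomO{5} in $Z$ alone, and where the non-almost-divisible factor $\Lsc([0,1],\overline{\N})$ breaks it after tensoring. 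I would present the counterexample as a single explicit triple $(a,b,c)$ together with chosen $a'\ll a$, $b'\ll b$, and then derive a contradiction from the assumed existence of $x$, using the concrete comparison rules in the tensor-product model established in the first step.
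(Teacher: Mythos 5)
There is a genuine gap, and it sits at your very first step. You propose to identify $Z\otimes_{\CatCu}\Lsc\left([0,1],\overline{\N}\right)$ with $\Lsc([0,1],Z)$ ``or a closely related semigroup of lower-semicontinuous $Z$-valued functions'' and then work with concrete comparison rules there. But $\Lsc([0,1],Z)\cong\Cu(C([0,1],\mathcal{Z}))$ is the Cuntz semigroup of a \ca{} and therefore \emph{does} satisfy \axiomO{5}; the content of the proposition is precisely that the tensor product is \emph{not} isomorphic to $\Lsc([0,1],Z)$ (this is \autoref{cor:tensornotlsc}). For the same reason your proposed obstruction cannot work as stated: you want lower semicontinuity, combined with the mixed compact/soft order on $Z$, to make every complement $x$ impossible, but in $\Lsc([0,1],Z)$ the required complement does exist (a lower-semicontinuous function may take the compact value $1$ at a single point and soft values on either side), so no argument carried out purely inside a function semigroup can yield the contradiction.

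The missing idea is that the failure is a statement about the \emph{sub}semigroup of $\Lsc([0,1],Z)$ reachable from elementary tensors; the paper never computes the tensor product, it only uses the universal map out of it. Concretely: the $\CatCu$-bimorphism $(z,f)\mapsto z\cdot f$ induces $\tilde\tau\colon Z\otimes_{\CatCu}\Lsc\left([0,1],\overline{\N}\right)\to S_0$, where $S_0$ is the sup-closure of the span of the products $z\cdot 1_U$ (a $\CatCu$-semigroup by \autoref{lem:density}). Taking $a'=\tfrac14\cdot 1_{(3/4,1]}\ll a=\tfrac12\cdot 1_{(1/2,1]}\leq b=1_{[0,1]}$, one shows no $c\in S_0$ satisfies $a'+c\leq b\leq a+c$: such a $c$ has full support and $c(\tfrac12)=1$, hence some approximant $c_0=\sum_i z_i\cdot 1_{U_i}$ from the algebraic span already has full support and $c_0(\tfrac12)=1$; minimality of the compact element $1$ in $Z$ and absorption by soft elements force a unique $U_{i_0}\ni\tfrac12$ with $z_{i_0}=1$, and since $U_{i_0}$ is a proper open set covering $[0,1]$ together with the remaining $U_i$, the overlap is nonempty and there $c_0>1$, a contradiction. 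Pushing a hypothetical \axiomO{5}-complement for $\tfrac14\otimes 1_{(3/4,1]}\ll\tfrac12\otimes 1_{(1/2,1]}\leq 1\otimes 1_{[0,1]}$ through $\tilde\tau$ would produce exactly such a $c$. Without isolating $S_0$ and running this covering argument on simple-tensor decompositions, your plan does not close.
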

\begin{proof}
Consider the $\CatCu$-semigroup $S=\Lsc([0,1],Z)$ which clearly has $Z$-mul\-ti\-pli\-ca\-tion.
Let $S_{00}$ be the smallest submonoid of $S$ containing all elements of the form $z\cdot f$ for $z\in Z$ and $f\in\Lsc\left( [0,1],\overline{\N} \right)$.
Let us verify that the assumptions of \autoref{lem:density} are satisfied for $S_{00}$.
Given $z\in Z$ and $f\in\Lsc\left( [0,1],\overline{\N} \right)$, we choose rapidly increasing sequences $(z_n)_n$ in $Z$ and $(f_n)_n$ in $\Lsc\left( [0,1],\overline{\N} \right)$ such that $z=\sup_n z_n$ and $f=\sup_n f_n$.
Then, as $z_n\ll z_{n+1}$ and $f_n\ll f_{n+1}$ for all $n$, the sequence $(z_n\cdot f_n)_n$ is rapidly increasing and
\[
z\cdot f =\sup_n (z_n\cdot f_n).
\]
Applying \autoref{lem:density}, we obtain that the sup-closure $S_0$ of $S_{00}$ in $S$, denoted by
\[
S_0=\overline{\text{span}}\left\{ z\cdot f : z\in Z, f\in\Lsc\left( [0,1],\overline{\N} \right) \right\},
\]
is a $\CatCu$-semigroup.
We first show that $S_0$ does not satisfy \axiomO{5}.

Given an open set $U$ in $[0,1]$, we denote by $1_U$ the indicator function of $U$, which is an element in $\Lsc\left( [0,1],\overline{\N} \right)\subset S$.
Given $a\in S$, we let $\supp_\CatSet(a)$ denote the set-theoretic support of $a$, which is open since $a$ is lower semicontinuous.
Set
\[
a':=\tfrac{1}{4}\cdot 1_{(3/4,1]},\quad
a:=\tfrac{1}{2}\cdot 1_{(1/2,1]},\quad
b:=1_{[0,1]}.
\]
These are all elements in $S_0$, and it is clear that $a'\ll a\leq b$.

We claim that there does not exist $c\in S_0$ such that
\begin{align}
\label{prp:tensornotO5:eq1}
a'+c\leq b \leq a+c.
\end{align}
(Note that an element $c$ with this property can easily be found in $S$; in fact, its existence is guaranteed since $S$ is the Cuntz semigroup of a \ca{} and therefore satisfies \axiomO{5}.)

In order to obtain a contradiction, suppose that an element $c\in S_0$ with the above property does exist.
Choose a sequence $(c_n)_n$ in $S_{00}$ such that $c=\sup_n c_n$.
By evaluating the inequality \eqref{prp:tensornotO5:eq1} at each point in $[0,1]$, it is clear that $\supp_\CatSet(c)=[0,1]$.
Since $\supp_\CatSet(c)=\cup_n \supp_\CatSet(c_n)$, there exists $N\in\N$ such that $\supp_\CatSet(c_{N})=[0,1]$.
Without loss of generality, we may assume that $N=0$.

By evaluating \eqref{prp:tensornotO5:eq1} at $\tfrac{1}{2}$, we get $c\left(\tfrac{1}{2}\right)=1$.
Since the element $1$ in $Z$ is compact, it follows from $c\left(\tfrac{1}{2}\right)=\sup_n c_n\left(\tfrac{1}{2}\right)$ that there exists $N\in\N$ such that $c_{N}\left(\tfrac{1}{2}\right)=1$.
Again, without loss of generality, we may assume that $N=0$.

Note that every $f$ in $\Lsc([0,1],\overline{\N})$ has a (canonical) decomposition as $f=\sum_{k=0}^\infty 1_{f^{-1}((k,\infty])}$, where each $f^{-1}((k,\infty])$ is an open set.
Applying this to the element $c_0$, and using that $c_0\leq 1$, we can choose a finite index set $I$, nonempty open subsets $U_{i}\subset [0,1]$ and nonzero elements $z_i\in Z$ for $i\in I$ such that
\[
c_0= \sum_{i\in I} z_{i}\cdot 1_{U_{i}}.
\]
Then $c_0\left(\tfrac{1}{2}\right)$ is the sum of the elements $z_i$ for which $\tfrac{1}{2}\in U_i$.
Since $1=c_0\left(\tfrac{1}{2}\right)$ is a minimal compact element in $Z$, and since the noncompact elements in $Z$ are soft and therefore absorbing, we deduce that $\tfrac{1}{2}$ belongs to exactly one of the sets $U_i$.
Let $i_0$ be the unique index in $I$ such that $\tfrac{1}{2}\in U_{i_0}$.
We necessarily have $z_{i_0} = 1$.
Set
\[
V:=U_{i_0},\quad
W:=\bigcup_{i\neq i_0} U_i.
\]
Then $V$ and $W$ are open subset of $[0,1]$.
Since $\supp_\CatSet(c_0)=[0,1]$, it follows that $V\cup W=[0,1]$.
Since $c_0$ is strictly less than $1_{[0,1]}$, the set $V$ is a proper subset of $[0,1]$.
Therefore, the intersection $V\cap W$ is nonempty.
For each $t\in V\cap W$ we have $c_0(t)=1+\sum_{i\neq i_0}z_i1_{U_i}(t)$, with $z_i\neq 0$ for each $i\neq i_0$ and $t\in U_i$ for at least one $i\neq i_0$. Thus $c_0(t)>1$, which clearly is a contradiction. This proves the claim.

Next, consider the map
\[
\tau\colon Z\times \Lsc\left([0,1],\overline{\N}\right)
\to S_0\subset\Lsc([0,1],Z),
\]
defined by $\tau((z,f))= z\cdot f$. It is easy to see that $\tau$ is a $\Cu$-bimorphism.
By \autoref{prp:tensProdCu}, there exists a $\Cu$-morphism
\[
\tilde{\tau} \colon Z\otimes_{\CatCu} \Lsc\left([0,1],\overline{\N}\right)\to S_0,
\]
such that $\tilde{\tau}(z\otimes f)=\tau(z,f)$ for each $z\in Z$ and $f\in\Lsc\left([0,1],\overline{\N}\right)$.

Since $1_{(3/4,1]}\ll 1_{(1/4,1]}\leq 1$ in $\Lsc\left([0,1],\overline{\N}\right)$, and
$\frac{1}{4}\ll \frac{1}{2}\leq 1$ in $Z$, we have
\[
\tfrac{1}{4}\otimes 1_{(3/4,1]}
\ll \tfrac{1}{2}\otimes 1_{(1/2,1]}
\leq 1\otimes 1_{[0,1]},
\]
in $Z\otimes_{\CatCu} \Lsc\left([0,1],\overline{\N}\right)$.
Note that
\[
a' = \tilde{\tau}\left(\tfrac{1}{4}\otimes 1_{(3/4,1]}\right),\quad
a = \tilde{\tau}\left(\tfrac{1}{2}\otimes 1_{(1/2,1]}\right),\quad
b = \tilde{\tau}\left(1\otimes 1_{[0,1]}\right).
\]
Thus, if there existed $d\in Z\otimes_{\Cu}\Lsc\left([0,1],\overline{\N}\right)$ such that
\[
\tfrac{1}{4}\otimes 1_{(3/4,1]} + d
\leq 1\otimes 1_{[0,1]}
\leq \tfrac{1}{2}\otimes 1_{(1/2,1]} +d,
\]
then the element $c=\tilde{\tau}(d)$ would satisfy $a'+c \leq b \leq a+c$, which is not possible by the first part of the proof.
Therefore, $Z\otimes_{\CatCu} \Lsc\left([0,1],\overline{\N}\right)$ does not satisfy \axiomO{5}.
\end{proof}

\begin{cor}
\label{cor:tensornotlsc}
We have $\Lsc([0,1],Z)\ncong \Lsc\left([0,1],\overline{\N}\right)\otimes_{\CatCu}Z$.
\end{cor}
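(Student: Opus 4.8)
The plan is to deduce this corollary directly from the preceding \autoref{prp:tensornotO5} together with \autoref{prp:o5o6}, using the principle that two $\CatCu$-semigroups cannot be isomorphic if one satisfies \axiomO{5} and the other does not. First I would observe that \axiomO{5} is a property that is preserved under $\CatCu$-isomorphism, since it is phrased purely in terms of the order, addition, and compact containment relation, all of which any $\CatCu$-isomorphism preserves. Thus it suffices to exhibit the two sides of the claimed isomorphism as semigroups that disagree on whether \axiomO{5} holds.

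The key steps are as follows. On the one hand, $\Lsc([0,1],Z)$ is (isomorphic to) the Cuntz semigroup of the \ca{} $C([0,1])\otimes\mathcal{Z}$; indeed, by \autoref{prp:functorCu} and the identification of $\Cu(C([0,1]))$ with $\Lsc([0,1],\overline{\N})$ and of $\Cu(\mathcal{Z})$ with $Z$, together with the fact that tensoring a commutative \ca{} over $[0,1]$ with $\mathcal{Z}$ yields the $Z$-valued lower-semicontinuous functions, one sees that $\Lsc([0,1],Z)\cong\Cu(C([0,1])\otimes\mathcal{Z})$. (Alternatively, one can argue directly that $\Lsc([0,1],Z)$ satisfies \axiomO{5} by a pointwise argument, as was already noted in the discussion preceding \autoref{prp:tensornotO5}.) In either case $\Lsc([0,1],Z)$ satisfies \axiomO{5}: via \autoref{prp:o5o6} applied to the \ca{} $C([0,1])\otimes\mathcal{Z}$ in the first approach, or directly in the second. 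On the other hand, \autoref{prp:tensornotO5} asserts precisely that $Z\otimes_{\CatCu}\Lsc([0,1],\overline{\N})$ does \emph{not} satisfy \axiomO{5}. By symmetry of the tensor product in $\CatCu$ (see \autoref{pgr:monoidalCu}), we have $Z\otimes_{\CatCu}\Lsc([0,1],\overline{\N})\cong\Lsc([0,1],\overline{\N})\otimes_{\CatCu}Z$, so the right-hand side of the claimed isomorphism also fails \axiomO{5}.

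Putting these together, I would conclude: if there were a $\CatCu$-isomorphism $\Lsc([0,1],Z)\cong\Lsc([0,1],\overline{\N})\otimes_{\CatCu}Z$, then, since \axiomO{5} transfers across isomorphisms and the left-hand side satisfies \axiomO{5}, the right-hand side would satisfy \axiomO{5} as well, contradicting \autoref{prp:tensornotO5}. Hence no such isomorphism exists, which is exactly the statement $\Lsc([0,1],Z)\ncong\Lsc([0,1],\overline{\N})\otimes_{\CatCu}Z$.

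I do not anticipate a genuine obstacle here, as this is essentially a one-line consequence of the hard work already done in \autoref{prp:tensornotO5}. The only point requiring slight care is the verification that $\Lsc([0,1],Z)$ genuinely satisfies \axiomO{5}; the cleanest route is to identify it as a concrete Cuntz semigroup of a \ca{} and invoke \autoref{prp:o5o6}, thereby avoiding any direct manipulation of the order structure. One should also be careful to invoke the symmetry of $\otimes_{\CatCu}$ explicitly, since the corollary is stated with the factors in the opposite order from \autoref{prp:tensornotO5}.
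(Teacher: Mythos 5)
Your proposal is correct and follows essentially the same route as the paper: show that $\Lsc([0,1],Z)$ satisfies \axiomO{5} by identifying it with $\Cu(C([0,1],\mathcal{Z}))$ and invoking \autoref{prp:o5o6}, while the right-hand side fails \axiomO{5} by \autoref{prp:tensornotO5}. The only refinement is that the identification $\Lsc([0,1],Z)\cong\Cu(C([0,1],\mathcal{Z}))$ is not a formal consequence of continuity of $\Cu$; the paper cites \cite[Theorem~3.4]{AntPerSan11PullbacksCu} for it, which is the precise reference you should use in place of your informal appeal to ``the fact that tensoring with $\mathcal{Z}$ yields the $Z$-valued lower-semicontinuous functions.''
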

\begin{proof}
By \autoref{prp:tensornotO5}, $Z\otimes_{\CatCu}\Lsc\left([0,1],\overline{\N}\right)$ does not satisfy axiom~\axiomO{5}.
On the other hand,
\[
\Lsc([0,1],Z)\cong\Cu(C([0,1],\mathcal Z)),
\]
by \cite[Theorem~3.4]{AntPerSan11PullbacksCu}, which in combination with \autoref{prp:o5o6} shows that $\Lsc([0,1],Z)$ satisfies \axiomO{5}.
\end{proof}

For the next result, recall from \autoref{dfn:dimMonoid} that a $\CatCu$-semigroup is \emph{simplicial} if it is isomorphic to the algebraically ordered $\CatCu$-semigroup $\overline{\N}^r$ for some $r\in\N$.
In \autoref{prp:EffHandShen_CuVersion}, we have seen that a countably-based $\CatCu$-semigroup $S$ is an inductive limit of simplicial $\CatCu$-semigroups if and only if there exists a separable AF-algebra $A$ such that $S\cong\Cu(A)$.

\begin{prp}
\label{prp:tensLimSimplicial}
Let $S$ be an inductive limit of simplicial $\CatCu$-semigroup{s}.
Then taking the tensor product with $S$ preserves \axiomO{5}, \axiomO{6} and weak cancellation.
\end{prp}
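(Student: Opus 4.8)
The plan is to reduce the statement about the limit $S$ to the case where $S$ is a single simplicial $\CatCu$-semigroup, and then to handle that base case by an explicit computation. The reduction rests on two facts already established in the excerpt. First, by \autoref{prp:tensLim}, the tensor product in $\CatCu$ is continuous in each variable, so if $S\cong\CatCuLim S_i$ with each $S_i$ simplicial, then for any $\CatCu$-semigroup $T$ we have
\[
S\otimes_{\CatCu}T \cong \CatCuLim (S_i\otimes_{\CatCu}T).
\]
Second, by \autoref{prp:axiomsPassToLim}, each of \axiomO{5}, \axiomO{6} and weak cancellation passes to inductive limits in $\CatCu$. Hence, provided we can show that $S_i\otimes_{\CatCu}T$ satisfies the relevant axiom whenever $T$ does and $S_i$ is simplicial, the limit $S\otimes_{\CatCu}T$ will inherit it as well.

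So the core of the proof is the simplicial base case: if $S\cong\overline{\N}^r$ (with the algebraic order) for some $r\in\N_+$, and $T$ satisfies \axiomO{5} (respectively \axiomO{6}, weak cancellation), then $S\otimes_{\CatCu}T$ does too. The key step here is to identify $\overline{\N}^r\otimes_{\CatCu}T$ explicitly. The natural guess is that
\[
\overline{\N}^r\otimes_{\CatCu}T \cong T^r = \underbrace{T\times\cdots\times T}_{r},
\]
the $r$-fold direct product of $T$ in $\CatCu$ (with coordinatewise order, addition and auxiliary relation). I would first reduce to $r=1$ using that $\overline{\N}^r$ is the $\CatCu$-completion of the simplicial monoid $\N^r$, together with additivity/distributivity of the tensor product over the direct sum decomposition $\N^r\cong\N\oplus\cdots\oplus\N$ in $\CatPreW$ and \autoref{thm:tensProdCompl}; since the direct product is a finite biproduct, tensoring distributes over it. For $r=1$, the unit object computation in \autoref{pgr:monoidalCu} gives $\overline{\N}\otimes_{\CatCu}T\cong T$ directly, as $\overline{\N}$ is the unit of the monoidal category $\CatCu$. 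Thus $\overline{\N}^r\otimes_{\CatCu}T\cong T^r$, and it remains to observe that each of the three properties is preserved under finite direct products, which is a routine coordinatewise verification: an inequality or way-below relation in $T^r$ holds if and only if it holds in each coordinate, so a witness for \axiomO{5}, \axiomO{6}, or the cancellation implication can be assembled coordinatewise from witnesses in $T$.

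The main obstacle I anticipate is making the distributivity of $\otimes_{\CatCu}$ over finite direct products rigorous, since the excerpt develops the monoidal structure (associativity, unit, symmetry in \autoref{pgr:monoidalCu}) but does not explicitly record that the tensor product commutes with finite biproducts. I would establish this by a representability argument: for a fixed $\CatCu$-semigroup $R$, the functor $\CatCuBimor(\freeVar\times T,R)$ sends a finite direct sum in the first variable to the corresponding direct product of bimorphism \pom{s}, because a bimorphism out of $(S_1\oplus S_2)\times T$ is the same data as a pair of bimorphisms out of $S_1\times T$ and $S_2\times T$ (biadditivity splits along the direct sum). Comparing representing objects via \autoref{prp:tensProdCu} then yields $(S_1\oplus S_2)\otimes_{\CatCu}T\cong (S_1\otimes_{\CatCu}T)\oplus(S_2\otimes_{\CatCu}T)$, where $\oplus$ denotes the direct product in $\CatCu$. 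Once this bookkeeping is in place, the rest is the coordinatewise check above, and the whole argument closes by feeding the simplicial base case through \autoref{prp:tensLim} and \autoref{prp:axiomsPassToLim}.
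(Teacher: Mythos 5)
Your proposal is correct and follows essentially the same route as the paper: reduce to the simplicial case via the continuity of the tensor product (\autoref{prp:tensLim}) and the fact that the axioms pass to inductive limits (\autoref{prp:axiomsPassToLim}), identify $\overline{\N}^r\otimes_{\CatCu}T\cong T^r$, and check the three properties coordinatewise. The only difference is that you spell out, via a representability/biproduct argument, the identification $\overline{\N}^r\otimes_{\CatCu}T\cong T^r$ that the paper dismisses as ``easily checked''; that argument is sound.
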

\begin{proof}
Let $((S_i)_{i\in I},(\varphi_{i,j})_{i, j\in I, i\leq j})$ be an inductive system of simplicial $\CatCu$-semigroups, indexed over a directed set $I$.
Then there are numbers $r_i\in\N$ such that $S_i\cong\overline{\N}^{r_i}$ for each $i\in I$.
Let $T$ be a $\CatCu$-semigroup{}.
For $r\in\N$, let $T^r$ be the set of $r$-tuples with entries in $T$, equipped with pointwise addition and order.
It is easily checked that $T^r$ is a $\CatCu$-semigroup{} and that there is a natural isomorphism $S_i\otimes_{\CatCu} T\cong T^{r_i}$ for each $i$.
It follows from \autoref{prp:tensLim} that $S\otimes_{\CatCu}T\cong\CatCuLim_i T^{r_i}$.

Assume now that $T$ satisfies \axiomO{5}.
It follows easily that $T^r$ satisfies \axiomO{5} for each $r\in\N$.
Then $S\otimes_{\CatCu}T$ satisfies \axiomO{5} by \autoref{prp:axiomsPassToLim}.
It is proved analogously that weak cancellation and \axiomO{6} pass from $T$ to $S\otimes_{\CatCu}T$.
\end{proof}

For the next result, recall that for a \pom{} $M$, we denote by $\Cu(M)$ the $\CatCu$-completion of the $\CatPreW$-semigroup $(M,\leq)$;
see \autoref{pgr:algebraicSemigp}.

\begin{prp}
\label{prp:tensorAlgebraic}
Let $M$ and $N$ be \pom{s}.
Then there is a canonical isomorphism
\[
\Cu(M)\otimes_{\CatCu}\Cu(N)
\cong \Cu(M\otimes_{\CatPom}N).
\]
\end{prp}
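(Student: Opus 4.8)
The plan is to exploit the two constructions that have already been set up in this excerpt: the $\CatCu$-completion functor $\gamma$ and the tensor product in $\CatPreW$. Recall that for a \pom{} $M$, the $\CatCu$-semigroup $\Cu(M)$ is by definition $\gamma(M,\leq)$, where $(M,\leq)$ is the $\CatW$-semigroup carrying the strongest auxiliary relation, namely the order itself (see \autoref{pgr:algebraicSemigp}). The key tool is \autoref{thm:tensProdCompl}, which asserts a natural isomorphism $\gamma(S)\otimes_{\CatCu}\gamma(T)\cong\gamma(S\otimes_{\CatPreW}T)$ for any $\CatPreW$-semigroups $S$ and $T$. Applying this with $S=(M,\leq)$ and $T=(N,\leq)$ gives
\[
\Cu(M)\otimes_{\CatCu}\Cu(N)
=\gamma(M,\leq)\otimes_{\CatCu}\gamma(N,\leq)
\cong\gamma\bigl((M,\leq)\otimes_{\CatPreW}(N,\leq)\bigr).
\]
Thus the whole problem reduces to identifying the $\CatPreW$-tensor product of the two order-auxiliary semigroups.

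So the main step is to prove that $(M,\leq)\otimes_{\CatPreW}(N,\leq)$ is isomorphic, as a $\CatPreW$-semigroup, to $(M\otimes_{\CatPom}N,\leq)$, that is, to the $\CatPom$-tensor product equipped with its own order as auxiliary relation; applying $\gamma$ to this identification then yields $\Cu(M\otimes_{\CatPom}N)$ by definition and completes the proof. By \autoref{prp:tensProdPreW}, the underlying \pom{} of $(M,\leq)\otimes_{\CatPreW}(N,\leq)$ is exactly $M\otimes_{\CatPom}N$, so what must be checked is only that the auxiliary relation $\prec$ defined in \autoref{dfn:tensProdAuxRel} coincides with the order $\leq$ on $M\otimes_{\CatPom}N$. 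First I would verify that $\prec$ is always contained in $\leq$: this is immediate from \autoref{lma:tensProdAuxRelHelping}(1), which gives $f\lessdot g\Rightarrow f\leq g$ on the level of the free monoid $F=\N[M^\times\times N^\times]$, hence $x\prec y\Rightarrow x\leq y$ after passing to congruence classes. For the reverse containment, I would use that in $(M,\leq)$ the auxiliary relation is the order, so that $a'\prec a$ simply means $a'\leq a$; given any element $g=\sum_{j\in J}a_j\odot b_j$ of $F$, choosing the full index set $J'=J$ and $a_j'=a_j$, $b_j'=b_j$ (after discarding zero terms) shows directly from \autoref{dfn:tensProdAuxRelHelping} that $g\lessdot g$, whence $x\prec x$ for every $x$, and then the transfer property $f'\leq f\lessdot g\Rightarrow f'\lessdot g$ of \autoref{lma:tensProdAuxRelHelping}(2) upgrades $x\leq y$ to $x\prec y$.

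The naturality claim is routine once the isomorphism is established: both sides are given by functorial constructions ($\gamma$, $\otimes_{\CatPreW}$, $\otimes_{\CatPom}$), and the identification $\prec\ =\ \leq$ is defined uniformly in $M$ and $N$, so it is compatible with $\CatPom$-morphisms in each variable. I would phrase this by noting that a $\CatPom$-bimorphism $M\times N\to R$ into a \pom{} corresponds, via $(R,\leq)$ being a $\CatW$-semigroup, to a $\CatW$-bimorphism $(M,\leq)\times(N,\leq)\to(R,\leq)$, and then invoke the universal properties of both tensor products together with the representability statement at the end of \autoref{prp:tensProdPreW}.

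The step I expect to be the genuine point of care is the verification that $g\lessdot g$ for arbitrary $g\in F$, together with the passage from the free-monoid relation $\lessdot$ to the induced relation $\prec$ on congruence classes. The subtlety is that $\lessdot$ is defined through a choice of subset $J'\subset J$ and dominated simple tensors, and one must handle the normalization to nonzero components (the semigroups $M^\times$, $N^\times$ of nonzero elements) correctly; but since in $(M,\leq)$ every element $\prec$-dominates itself, the obstruction that makes $\prec\neq\leq$ in a general $\CatPreW$-semigroup simply disappears here. Once this is settled, the identification $\prec\ =\ \leq$ on $M\otimes_{\CatPom}N$ follows, and the theorem is an immediate consequence of \autoref{thm:tensProdCompl} and the definition of $\Cu(\freeVar)$ on \pom{s}.
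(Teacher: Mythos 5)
Your proposal is correct and follows essentially the same route as the paper: the paper's proof also reduces to \autoref{thm:tensProdCompl} after observing that $(M,\leq)\otimes_{\CatPreW}(N,\leq)\cong(M\otimes_{\CatPom}N,\leq)$, a step it states "follows easily from the construction" and which you have simply spelled out (correctly) via \autoref{lma:tensProdAuxRelHelping}(1) and (2).
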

\begin{proof}
We will write $(M,\leq)$ and $(N,\leq)$ for the $\CatW$-semigroup{s} associated to $M$ and $N$;
see \autoref{prp:algebraicSemigp}.
It follows easily from the construction of the tensor product in $\CatPreW$ that
\[
(M,\leq)\otimes_{\CatPreW}(N,\leq)\cong(M\otimes_{\CatPom} N,\leq).
\]
Now the result follows from \autoref{thm:tensProdCompl}.
\end{proof}

\begin{cor}
\label{prp:algebraicPassesToTensProd}
If $S$ and $T$ are algebraic $\CatCu$-semigroup{s}, then so is $S\otimes_{\CatCu}T$.
\end{cor}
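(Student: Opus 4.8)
The statement to prove is \autoref{prp:algebraicPassesToTensProd}: if $S$ and $T$ are algebraic $\CatCu$-semigroups, then so is $S\otimes_{\CatCu}T$. The plan is to reduce everything to the characterization of algebraic $\CatCu$-semigroups as $\CatCu$-completions of \pom{s}, which was established in \autoref{prp:algebraicSemigp}, combined with the tensor-product compatibility result \autoref{prp:tensorAlgebraic}.

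First I would recall that since $S$ is algebraic, \autoref{prp:algebraicSemigp}(2) gives a natural isomorphism $S\cong\Cu(S_c)$, where $S_c$ is the \pom{} of compact elements of $S$; and likewise $T\cong\Cu(T_c)$. These are exactly the $\CatCu$-completions of the \pom{s} $S_c$ and $T_c$ in the sense of \autoref{pgr:algebraicSemigp}, where the relevant $\CatW$-semigroup structures are $(S_c,\leq)$ and $(T_c,\leq)$ with the partial order as auxiliary relation. Next I would apply \autoref{prp:tensorAlgebraic} with $M:=S_c$ and $N:=T_c$, which yields the canonical isomorphism
\[
S\otimes_{\CatCu}T \cong \Cu(S_c)\otimes_{\CatCu}\Cu(T_c) \cong \Cu(S_c\otimes_{\CatPom}T_c).
\]
Thus $S\otimes_{\CatCu}T$ is expressed as $\Cu(P)$ for the \pom{} $P:=S_c\otimes_{\CatPom}T_c$.

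Finally, it remains to invoke \autoref{prp:algebraicSemigp}(1), which asserts that for any \pom{} $P$, the $\CatCu$-semigroup $\Cu(P)$ is algebraic. Since $S_c\otimes_{\CatPom}T_c$ is a \pom{} (tensor products in $\CatPom$ exist and produce \pom{s}, as recalled in \autoref{prp:tensorPom} and used throughout \autoref{sec:tensProdPreW}), this immediately shows that $S\otimes_{\CatCu}T\cong\Cu(S_c\otimes_{\CatPom}T_c)$ is algebraic, completing the proof.

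I do not expect a genuine obstacle here, since the real work has already been done in \autoref{prp:tensorAlgebraic} (whose proof in turn rests on \autoref{thm:tensProdCompl} and the identification of $(M,\leq)\otimes_{\CatPreW}(N,\leq)$ with $(M\otimes_{\CatPom}N,\leq)$). The only point requiring a line of care is the naturality of the identifications $S\cong\Cu(S_c)$ so that the chain of isomorphisms genuinely composes; but this is guaranteed by \autoref{pgr:equivalenceAlgCu}, which establishes the equivalence of categories between \pom{s} and algebraic $\CatCu$-semigroups, ensuring that passing to compact elements and back via $\Cu(\freeVar)$ is coherent. Hence the argument is essentially a two-step citation, and the mild subtlety is merely bookkeeping of which \pom{} plays the role of $M$ and $N$.
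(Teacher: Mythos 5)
Your proof is correct and follows exactly the route the paper intends: the corollary is deduced from \autoref{prp:tensorAlgebraic} together with \autoref{prp:algebraicSemigp}, writing $S\otimes_{\CatCu}T\cong\Cu(S_c\otimes_{\CatPom}T_c)$ and noting that the $\CatCu$-completion of any \pom{} is algebraic. The paper leaves this unproved as an immediate corollary, and your write-up supplies precisely that argument.
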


\begin{cor}
\label{prp:axiomsTensProdAlgebraic}
Let $S$ and $T$ be algebraic $\CatCu$-semigroup{s}.
If $S$ and $T$ satisfy axiom~\axiomO{5}, then so does $S\otimes_{\CatCu}T$.
\end{cor}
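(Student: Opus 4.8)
The plan is to reduce the statement about $\CatCu$-semigroups to the corresponding statement about $\CatPom$-monoids, using the equivalence between algebraic $\CatCu$-semigroups and \pom{s} established in \autoref{pgr:equivalenceAlgCu} together with the description of the tensor product of algebraic semigroups in \autoref{prp:tensorAlgebraic}. First I would invoke \autoref{prp:algebraicSemigp} to write $S\cong\Cu(S_c)$ and $T\cong\Cu(T_c)$, where $S_c$ and $T_c$ are the \pom{s} of compact elements. By \autoref{prp:tensorAlgebraic}, there is then a canonical isomorphism
\[
S\otimes_{\CatCu}T\cong\Cu(S_c)\otimes_{\CatCu}\Cu(T_c)\cong\Cu(S_c\otimes_{\CatPom}T_c).
\]
In particular $S\otimes_{\CatCu}T$ is the $\CatCu$-completion of the \pom\ $M:=S_c\otimes_{\CatPom}T_c$.

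By \autoref{prp:propertiesAlgebraic}(1), a $\CatCu$-completion $\Cu(M)$ of a \pom\ $M$ satisfies \axiomO{5} if and only if $M$ is algebraically ordered. Thus the task becomes purely algebraic: I must show that if $S_c$ and $T_c$ are algebraically ordered \pom{s}, then their $\CatPom$-tensor product $S_c\otimes_{\CatPom}T_c$ is algebraically ordered as well. Here I would first translate the hypothesis using \autoref{prp:propertiesAlgebraic}(1) again, in the converse direction: since $S$ and $T$ satisfy \axiomO{5} and are algebraic with $S\cong\Cu(S_c)$, $T\cong\Cu(T_c)$, the monoids $S_c$ and $T_c$ are algebraically ordered. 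So the whole corollary rests on the lemma that algebraic order passes to $\CatPom$-tensor products.

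The key step, and the main obstacle, is therefore proving that $M\otimes_{\CatPom}N$ is algebraically ordered whenever $M$ and $N$ are. For this I would work with the explicit construction of the $\CatPom$-tensor product recalled just before \autoref{dfn:tensProdAuxRelHelping}: elements are congruence classes of finite sums $\sum_i a_i\odot b_i$ in the free monoid $F=\N[M^\times\times N^\times]$, and the order $\leq$ is generated by the congruence $\cong$ together with the relation $\leq'$. To show algebraic order I must show that $[f]\leq[g]$ implies the existence of $[h]$ with $[f]+[h]=[g]$. The idea is that on simple tensors algebraic order is inherited: if $a\odot b\leq \tilde a\odot\tilde b$ comes from $a\leq\tilde a$ and $b\leq\tilde b$ via $\leq'$, then using $a+x=\tilde a$ and $b+y=\tilde b$ (algebraic order in $M$, $N$) one expands $\tilde a\odot\tilde b=(a+x)\odot(b+y)$ and, applying the distributivity relations $\rightarrow^0,\prescript{0}{}{\leftarrow}$ defining $\cong$, writes it as $a\odot b$ plus an explicit complement. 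The difficulty is to carry this through a general chain of elementary steps ($\rightarrow$, $\leftarrow$, $\leq'$) witnessing $[f]\leq[g]$, controlling the complement additively at each step; this is a careful but essentially combinatorial bookkeeping argument over the generators of the congruence and the pre-order, analogous in spirit to the case analysis in \autoref{lma:tensProdAuxRelHelping}, but now for the order itself rather than the auxiliary relation.

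Once the lemma is in place, the corollary follows immediately: $S_c\otimes_{\CatPom}T_c$ is algebraically ordered, hence its $\CatCu$-completion $\Cu(S_c\otimes_{\CatPom}T_c)\cong S\otimes_{\CatCu}T$ satisfies \axiomO{5} by \autoref{prp:propertiesAlgebraic}(1). I would also record, via \autoref{prp:algebraicPassesToTensProd}, that $S\otimes_{\CatCu}T$ is again algebraic, so that the hypotheses of \autoref{prp:propertiesAlgebraic} apply to it as well and the conclusion is stated for a genuinely algebraic $\CatCu$-semigroup. The only real work is the algebraic lemma; everything else is an assembly of results already proved in the excerpt.
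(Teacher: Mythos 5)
Your proposal is correct and follows essentially the same route as the paper: reduce via \autoref{prp:algebraicSemigp}, \autoref{prp:tensorAlgebraic} and \autoref{prp:propertiesAlgebraic}(1) to showing that algebraic order passes to $\CatPom$-tensor products. The combinatorial lemma you identify as the only real work is already proved in the appendix as \autoref{prp:tensPomPreservesAlgOrd}, by exactly the bookkeeping over $\leq^0$, $\leq'$ and the congruence that you sketch, so the paper's proof of the corollary simply cites it.
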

\begin{proof}
Let $S_c$ and $T_c$ denote the \pom{} of compact elements in $S$ and $T$, respectively.
By \autoref{prp:algebraicSemigp} and \autoref{prp:tensorAlgebraic}, there are natural isomorphisms
\[
S\cong \Cu(S_c),\quad
T\cong \Cu(T_c),\text{ and }
S\otimes_{\CatCu} T \cong \Cu (S_c\otimes_{\CatPom} T_c).
\]

Assume now that $S$ and $T$ satisfy \axiomO{5}.
By \autoref{prp:propertiesAlgebraic}, this implies that $S_c$ and $T_c$ are algebraically ordered.
It follows from \autoref{prp:tensPomPreservesAlgOrd} that $S_c\otimes_{\CatPom}T_c$ is algebraically ordered.
Using \autoref{prp:propertiesAlgebraic} again, we deduce that $S\otimes_{\CatCu} T$ satisfies \axiomO{5}.
\end{proof}

\begin{pgr}
\label{pgr:tensCa}
\index{symbols}{$\tau_{A,B}^\txtMax$}
\index{symbols}{$\tau_{A,B}^\txtMin$}
Let $A$ and $B$ be \ca{s}, and let $A\tensMax B$ be their maximal tensor product.
Given $k,l\in\N$, $x_1,x_2\in M_k(A)_+$ and $y_1,y_2\in M_l(B)_+$, the simple tensors $x_1\otimes y_1$ and $x_2\otimes y_2$ are positive elements in $M_k(A)\tensMax M_l(B)$.
We have:
\begin{enumerate}
\item
If $x_1\precsim x_2$ and $y_1\precsim y_2$, then $x_1\otimes y_1\precsim x_2\otimes y_2$ in $M_k(A)\tensMax M_l(B)$;
see \cite[Lemma~4.1]{Ror04StableRealRankZ}.
\item
If $x_1\precsim (x_2-\varepsilon)_+$ and $y_1\precsim (y_2-\varepsilon)_+$ for some $\varepsilon>0$, then there exists $\delta>0$ such that $x_1\otimes y_1\precsim (x_2\otimes y_2-\delta)_+$ in $M_k(A)\tensMax M_l(B)$;
see the proof of Proposition~4.5 in~\cite{AntBosPer13CuFields}.
\end{enumerate}
Let $\varphi\colon M_\infty(A)\tensMax M_\infty(B) \to M_\infty(A\tensMax B)$ be an isomorphism that identifies the natural copies of $A\tensMax B$ in $M_\infty(A)\tensMax M_\infty(B)$ and $M_\infty(A\tensMax B)$.
Such an isomorphism is unique up to approximate unitary equivalence.
We define a map $W(A)\times W(B)\to W(A\tensMax B)$ by $([x],[y]) \mapsto [\varphi(x\otimes y)]$ for $x\in M_\infty(A)_+$ and $y\in M_\infty(B)_+$,
Using (1) and (2) above, it is easily checked that this map is a $\CatW$-bimorphism.
It is independent of the choice of $\varphi$.
We therefore obtain a natural $\CatW$-morphism
\[
W(A)\otimes_{\CatW} W(B)\to W(A\tensMax B).
\]

Similarly, choosing a natural isomorphism $\psi$ from $(A\otimes\K)\tensMax(B\otimes\K)$ to $(A\tensMax B)\otimes\K$, we obtain a natural $\CatCu$-bimorphism
\[
\tau_{A,B}^\txtMax\colon\Cu(A)\otimes_{\CatCu}\Cu(B) \to \CatCu(A\tensMax B),
\]
such that $\tau_{A,B}^\txtMax([x]\otimes[y])=[\psi(x\otimes y)]$ for every $x\in (A\otimes\K)_+$ and $y\in(B\otimes\K)_+$.

The natural quotient \starHom{} from $A\tensMax B$ to $A\tensMin B$ induces a surjective $\CatCu$-morphism
\[
\Cu(A\tensMax B) \to \Cu(A\tensMin B).
\]
By composing the map $\tau_{A,B}^\txtMax$ with this $\CatCu$-morphism, we obtain a natural $\CatCu$-morphism
\[
\tau_{A,B}^\txtMin\colon\Cu(A)\otimes_{\CatCu}\Cu(B) \to \Cu(A\tensMin B).
\]
\end{pgr}

\begin{prbl}
\label{prbl:tensCa}
Let $A$ and $B$ be \ca{s}.
When is the map $\tau_{A,B}^\txtMax$ a $\CatCu$-isomorphism?
When is it surjective?
When is it an order embedding?
Similarly, when is the map $\tau_{A,B}^\txtMin$ an isomorphism? When is it surjective? When is it an order-embedding?
\end{prbl}

\begin{pgr}
\label{pgr:AnswerTensCa}
Let $A$ and $B$ be \ca{s}.
It is clear that $\tau_{A,B}^\txtMax$ is an order-embedding whenever $\tau_{A,B}^\txtMin$ is.
Similarly, if $\tau_{A,B}^\txtMax$ is surjective, then so is $\tau_{A,B}^\txtMin$.

If $A$ or $B$ is nuclear, then the natural map from $A\tensMax B$ to $A\tensMin B$ is an isomorphism.
In that case, the maps $\tau_{A,B}^\txtMin$ and $\tau_{A,B}^\txtMax$ are equal, and we simply write $\tau_{A,B}$ for this map.

It is easy to find examples of \ca{s} $A$ and $B$ for which the natural map $\tau_{A,B}^\txtMax$ is not surjective.
For instance, this is the case for $A=C([0,1])$ and $B=\mathcal{Z}$, as shown in \autoref{prp:tensornotO5}; see also \autoref{cor:tensornotlsc}.

Other (counter)examples can be found using $K$-theory.
If $A$ is a unital, simple, stably finite \ca{}, then $K_0(A)$ is determined by the Cuntz semigroup of $A$ via the formula
\begin{align}
\label{pgr:AnswerTensCa:eq1}
K_0(A) = \Gr( \Cu(A)_c ),
\end{align}
where $\Cu(A)_c$ denotes the submonoid of compact elements in $\Cu(A)$, and where $\Gr$ denotes the Grothendieck completion (see, for example, \cite{AraPerTom11Cu}, or also \cite{BroCiu09IsoHilbert}).
Now, let $A$ and $B$ be unital, simple, stably finite \ca{s}.
Assume that $A$ is nuclear, whence we can unambiguously write $\otimes$ instead of $\tensMax$ for tensor products with $A$.
Then the tensor product $A\otimes B$ is also a unital, simple, stably finite \ca{}.
Assume that the map
\[
\tau_{A,B}\colon\Cu(A)\otimes_{\CatCu}\Cu(B) \to \Cu(A\otimes B)
\]
is surjective.
Let us show that this implies that the natural map
\[
\Cu(A)_c\otimes_{\CatPom}\Cu(B)_c \to \Cu(A\otimes B)_c
\]
is surjective as well.
Note that $A,B$ and $A\otimes B$ are simple and stably finite.

More generally, let $S,T$ and $R$ be simple, stably finite $\CatCu$-semigroups satisfying \axiomO{5}, let $\varphi\colon S\otimes_\CatCu T\to R$ be a surjective $\CatCu$-morphism, and let $c\in R$ be compact.
Choose $a_k\in S$ and $b_k\in T$ for $k=1,\ldots,n$ such that $c=\varphi( \sum_k a_k\otimes b_k )$.
By \autoref{prp:softNoncpctSimple}, a nonzero element in $R$ (or $S$, $T$ respectively) is either compact or soft.
By \autoref{prp:softAbsorbing}, and using that $R$ is simple, a sum of elements in $R$ is only compact if each summand is.
Thus, $\varphi( a_k\otimes b_k )$ is compact for each $k$.
It is straightforward to check that $\varphi( a_k\otimes b_k )$ is soft whenever $a_k$ or $b_k$ is.
Hence, the elements $a_k$ and $b_k$ are compact.
This shows that the induced map $S_c\otimes_\CatPom T_c\to R_c$ is surjective, as desired.

Passing to Grothendieck completions, and using \eqref{pgr:AnswerTensCa:eq1} at the second step, we obtain a surjective map
\[
\Gr( \Cu(A)_c\otimes_{\CatPom}\Cu(B)_c )
\to \Gr( \Cu(A\otimes B)_c )
\cong K_0(A\otimes B).
\]
Since taking the Grothendieck completion commutes with tensor products (see \autoref{prp:GrTensorMon}), we have
\[
K_0(A)\otimes K_0(B)
\cong \Gr(\Cu(A)_c)\otimes_{\CatPom}\Gr(\Cu(B)_c)
\cong \Gr(\Cu(A)_c\otimes_{\CatPom}\Cu(B)_c).
\]
Thus, we have shown that the natural map
\[
K_0(A)\otimes K_0(B) \to K_0(A\otimes B)
\]
is surjective.

Let us further assume that $A$ is a \ca{} in the bootstrap class;
see \cite[V.1.5.4,~p.437]{Bla06OpAlgs}.
Then $A$ and $B$ satisfy the `K\"{u}nneth formula for tensor products in $K$-theory';
see \cite[Theorem~V.1.5.10,~p.440]{Bla06OpAlgs}.
This means that there is a short exact sequence
\[
0 \to \bigoplus_{i=0,1} K_i(A)\otimes K_i(B)
\to K_0(A\otimes B)
\to \bigoplus_{i=0,1} \Tor^\Z_1(K_i(A), K_{1-i}(B))
\to 0.
\]
Since our assumptions on $A$ and $B$ imply that the natural map from $K_0(A)\otimes K_0(B)$ to $K_0(A\otimes B)$ is surjective, we deduce from the K\"{u}nneth formula that
\[
K_1(A)\otimes K_1(B) = 0,\quad
\Tor^\Z_1(K_i(A), K_{1-i}(B))=0, \text{ for } i=0,1.
\]

In conclusion, we get that the map $\tau_{A,B}$ is not surjective whenever $A$ and $B$ are unital, simple, stably finite \ca{s} in the bootstrap class for which $K_1(A)\otimes K_1(B)\neq 0$ or for which $\Tor^\Z_1(K_i(A), K_{1-i}(B))\neq 0$ for $i=0$ or $i=1$.

On the other hand, $\tau_{A,B}$ is an isomorphism in the following cases:
\beginEnumStatements
\item
If $A$ or $B$ is an AF-algebra;
see \autoref{prp:tensProdAF}.
\item
If $A$ and $B$ are separable, and $B$ is simple, nuclear and purely infinite;
see \autoref{prp:tensCaWithSimplePI}.
\end{enumerate}
\end{pgr}

\begin{prp}
\label{prp:tensProdAF}
Let $A$ and $B$ be \ca{s}.
Assume that at least one of the algebras is an AF-algebra.
Then the natural map
\[
\tau_{A,B}\colon\Cu(A)\otimes_{\CatCu}\Cu(B) \to \Cu(A\otimes B)
\]
is an isomorphism.
\end{prp}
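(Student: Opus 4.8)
<br>

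The plan is to reduce the statement to the AF case and then exploit continuity of the tensor product functor. First I would reduce to the case where one algebra, say $A$, is AF: by symmetry of $\tensMax$ and of $\otimes_{\CatCu}$ (established in \autoref{pgr:monoidalCu}), it does not matter which factor is the AF-algebra. Since $A$ is AF, it is in particular nuclear, so that $A\tensMax B = A\tensMin B$ and the map $\tau_{A,B}$ is unambiguous. The key structural input is that an AF-algebra is, by definition, an inductive limit $A\cong\CatCaLim A_i$ of finite-dimensional \ca{s} $A_i$, and that every finite-dimensional \ca{} is a finite direct sum of matrix algebras.

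Next I would establish the result for $A$ finite-dimensional as the base case. For $A = M_{n_1}(\C)\oplus\cdots\oplus M_{n_r}(\C)$ one has $\Cu(A)\cong\overline{\N}^r$ (a simplicial $\CatCu$-semigroup) and $A\otimes B\cong\bigoplus_{k} M_{n_k}(B)$, whence $\Cu(A\otimes B)\cong\Cu(B)^r$ using that $\Cu(M_{n}(B))\cong\Cu(B)$ via stabilization. On the semigroup side, \autoref{pgr:monoidalCu} gives $\overline{\N}\otimes_{\CatCu}\Cu(B)\cong\Cu(B)$ since $\overline{\N}$ is the unit object, and additivity of the tensor product over finite direct sums (which follows formally from the universal property in \autoref{prp:tensProdCu}) yields $\overline{\N}^r\otimes_{\CatCu}\Cu(B)\cong\Cu(B)^r$. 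I would then check that under these identifications the map $\tau_{A,B}$ is exactly the canonical isomorphism, by tracing a simple tensor $[e\otimes y]$ through both descriptions; this is the routine but essential verification that the abstractly-constructed isomorphism agrees with the concrete map $\tau$.

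Finally I would pass to the limit. Writing $A\cong\CatCaLim A_i$ with each $A_i$ finite-dimensional, \autoref{prp:functorCu} gives $\Cu(A)\cong\CatCuLim\Cu(A_i)$, and since $\otimes$ is continuous it commutes with the tensor product: $\Cu(A)\otimes_{\CatCu}\Cu(B)\cong\CatCuLim\bigl(\Cu(A_i)\otimes_{\CatCu}\Cu(B)\bigr)$. On the other side, $A\otimes B\cong\CatCaLim (A_i\otimes B)$ (continuity of the minimal tensor product with a fixed \ca{}, valid as $A$ is nuclear and the $A_i$ are nuclear), so applying $\Cu$ and \autoref{prp:functorCu} again gives $\Cu(A\otimes B)\cong\CatCuLim\Cu(A_i\otimes B)$. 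The maps $\tau_{A_i,B}$ are natural in $A_i$, hence compatible with the connecting morphisms of both systems, so they induce a morphism of inductive systems; since each $\tau_{A_i,B}$ is an isomorphism by the base case, the induced map on the limits is an isomorphism, and one identifies this induced map with $\tau_{A,B}$ by naturality. The main obstacle I anticipate is not the inductive-limit formalism but the base-case bookkeeping: one must verify carefully that the purely categorical isomorphism $\overline{\N}^r\otimes_{\CatCu}\Cu(B)\cong\Cu(B)^r$ coincides with the concretely-defined $\tau_{A,B}$ coming from the chosen isomorphism $\psi$ of stabilized tensor products, and that naturality in $A$ holds at the level of the concrete maps $\tau$ rather than merely up to the approximate-unitary ambiguity in the choice of $\psi$.
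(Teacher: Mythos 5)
Your proposal is correct and follows essentially the same route as the paper: identify $\Cu(A_i)\cong\overline{\N}^{r_i}$ for finite-dimensional $A_i$, establish the base case via $\overline{\N}^{r_i}\otimes_{\CatCu}\Cu(B)\cong\Cu(B)^{r_i}\cong\Cu(A_i\otimes B)$, and pass to the limit using continuity of $\Cu$ (\autoref{prp:functorCu}), continuity of the tensor product (\autoref{prp:tensLim}), and commutation of the $C^*$-tensor product with inductive limits. The only difference is that you explicitly flag the verification that the abstract isomorphism coincides with the concrete map $\tau_{A,B}$ and that naturality holds despite the approximate-unitary ambiguity in $\psi$ — a point the paper's proof leaves implicit — so your account is, if anything, slightly more careful.
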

\begin{proof}
Without loss of generality, we may assume that $A$ is an AF-algebra.
Choose an inductive system $((A_i)_{i\in I},(\varphi_{i,j})_{i,j\in I, i\leq j})$ of finite-dimensional \ca{s}  such that $A\cong\varinjlim A_i$.
For each $i$, there is $r_i\in\N$ such that $A_i$ is isomorphic to a direct sum of $r_i$ matrix algebras.
Then $A_i\otimes\K\cong\K^{r_i}$ and $\Cu(A_i)\cong\overline{\N}^{r_i}$.
Moreover, $A_i\otimes B\otimes\K \cong (B\otimes\K)^{r_i}$ and there are isomorphisms
\[
\Cu(A_i)\otimes_{\CatCu}\Cu(B)
\cong \overline{\N}^{r_i}\otimes\Cu(B)
\cong \Cu(B)^{r_i}
\cong \Cu(A_i\otimes B),
\]
for each $i$.

Since the maximal tensor product commutes with inductive limits of \ca{s} (see \cite[II.9.6.5,~p.200]{Bla06OpAlgs}), there is a natural isomorphism $\varinjlim (A_i\otimes B) \cong A\otimes B$.
Using \autoref{prp:functorCu} at the first and last step, and using \autoref{prp:tensLim} at the second step, we obtain
\begin{align*}
\Cu(A)\otimes_{\CatCu}\Cu(B)
&\cong \left( \CatCuLim \Cu(A_i) \right) \otimes_{\CatCu}\Cu(B) \\
&\cong \CatCuLim \left( \Cu(A_i) \otimes_{\CatCu}\Cu(B) \right) \\
&\cong \CatCuLim \Cu(A_i\otimes B)
\cong \Cu(A\otimes B),
\end{align*}
as desired.
\end{proof}

\begin{pgr}
\label{pgr:tensFctl}
Recall that the set of functionals on a $\CatCu$-semigroup{} $S$ is defined as the set of generalized $\CatCu$-morphisms $S\to[0,\infty]$; see \autoref{pgr:fctl}.

Here, the product of two elements $x,y\in[0,\infty)$ is defined as the usual product of real numbers, and $0y=x0=0$ for all $x,y\in[0,\infty]$, and $\infty y = x\infty = \infty$ for all $x,y\in(0,\infty]$.
(This is the $\CatCu$-product on $[0,\infty]$ when considered with its structure as a $\CatCu$-semiring; see \autoref{exa:R}.)

Now, let $S$ and $T$ be $\CatCu$-semigroup{s}.
There is a natural map
\[
F(S)\times F(T) \to F(S\otimes_{\CatCu}T),
\]
defined as follows:
Given $\lambda\in F(S)$ and $\mu\in F(T)$, consider the map
\[
f\colon S\times T\to[0,\infty],\quad
(a,b)\mapsto \lambda(a)\mu(b),\quad
\txtFA a\in S, b\in T.
\]
It is easily checked that $f$ is a generalized $\CatCu$-bimorphism.
By \autoref{prp:tensProdCu}, $f$ induces a generalized $\CatCu$-morphism
\[
\tilde{f}\colon S\otimes_{\CatCu}T\to[0,\infty].
\]
This means $\tilde{f}$ is a functional on $S\otimes_{\CatCu}T$, satisfying $\tilde{f}(a\otimes b)=\lambda(a)\mu(b)$ for every $a\in S$ and $b\in T$.
\end{pgr}

The following is a version of \cite[Theorem~4.1.10, p.69]{Ror02Classification} for $\CatCu$-semigroup{s}.

\begin{prp}
\label{prp:tensprodTwoSimple}
Let $S$ and $T$ be simple, nonelementary $\CatCu$-semigroup{s} satisfying \axiomO{5} and \axiomO{6}.
\beginEnumStatements
\item
If $S$ and $T$ are stably finite, then so is $S\otimes_{\CatCu} T$.
\item
If $S$ or $T$ is not stably finite, then $S\otimes_{\CatCu} T \cong\{0,\infty\}$.
\end{enumerate}
\end{prp}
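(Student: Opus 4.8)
The plan is to reduce both parts to two facts about $S\otimes_\CatCu T$: that it is a nonzero \emph{simple} $\CatCu$-semigroup, and that the element $\varphi(\infty_S,b)$ is its top element, where $\varphi\colon S\times T\to S\otimes_\CatCu T$ is the universal $\CatCu$-bimorphism (\autoref{prp:tensProdCu}). First I would show that for nonzero $b\in T$ the element $\varphi(\infty_S,b)$ is independent of $b$: since $\infty_S$ is properly infinite (\autoref{prp:elementInSimple}) so is $\varphi(\infty_S,b)$, and using $b\leq\infty_T=\sup_k kb'$ together with the joint sequential continuity of $\varphi$ (\autoref{dfn:bimorCu}) one gets $\varphi(\infty_S,b)\leq\sup_k k\varphi(\infty_S,b')=\varphi(\infty_S,b')$, and symmetrically. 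Call this common value $\Omega$. Every simple tensor satisfies $\varphi(a,b)\leq\varphi(\infty_S,b)=\Omega$, and because $\Omega=2\Omega$, every finite sum of simple tensors, and hence by continuity every element of $S\otimes_\CatCu T$, lies below $\Omega$; thus $\Omega$ is the largest element. Pairing the trivial functionals $\lambda_\infty$ on $S$ and $T$ via \autoref{pgr:tensFctl} gives a functional taking the value $\infty$ at $\Omega$, so $\Omega\neq 0$.

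Simplicity then follows: given a nonzero ideal $I$, a density argument (\autoref{thm:Cuification}(ii)) shows $I$ contains a nonzero simple tensor. Indeed any nonzero $x\in I$ dominates some nonzero $x'\ll x$, which by density lies below $\alpha(z)$ for some $z=\sum_i a_i\otimes b_i$ in $S\otimes_\CatPreW T$; order-heredity forces the terms $\varphi(a_i,b_i)$ into $I$, and one of them is nonzero. For such a nonzero $\varphi(a,b)$ one computes $\infty\cdot\varphi(a,b)=\sup_k\varphi(ka,b)=\varphi(\infty_S,b)=\Omega$, so $I$ contains the top element $\Omega$ and hence equals $S\otimes_\CatCu T$.

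For part (1), with $S$ and $T$ both stably finite, \autoref{prp:simpleSF} supplies nontrivial functionals $\lambda\in F(S)$ and $\mu\in F(T)$; choose $a_0,b_0$ with $\lambda(a_0),\mu(b_0)\in(0,\infty)$. By \autoref{pgr:tensFctl} these induce $\tilde f\in F(S\otimes_\CatCu T)$ with $\tilde f(a_0\otimes b_0)=\lambda(a_0)\mu(b_0)\in(0,\infty)$, so $\tilde f$ is nontrivial. Since $S\otimes_\CatCu T$ is simple and nonzero, \autoref{prp:simpleSF} then gives that it is stably finite. For part (2), assume $S$ is not stably finite; then $\infty_S$ is compact by \autoref{prp:simpleSF}, so whenever $a\neq 0$ we have $\infty_S=\sup_k ka$ attained, i.e.\ $\infty_S=ka$ for some $k$. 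It remains to show every nonzero simple tensor $\varphi(a,b)$ equals $\Omega$ (we already have $\varphi(a,b)\leq\Omega$). Applying Glimm halving (\autoref{prp:GlimmHalving}) repeatedly in the simple, nonelementary semigroup $T$ yields nonzero $b_n$ with $2^nb_n\leq b$; choosing $n$ with $2^n\geq k$ gives $2^n a=\infty_S$, so
\[
\varphi(a,b)\geq\varphi(a,2^nb_n)=\varphi(2^na,b_n)=\varphi(\infty_S,b_n)=\Omega .
\]
Hence $\varphi(a,b)=\Omega$, and by the density argument above every nonzero element of $S\otimes_\CatCu T$ dominates some such $\varphi(a,b)=\Omega$, giving $S\otimes_\CatCu T=\{0,\Omega\}\cong\{0,\infty\}$.

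The main obstacle is the common preliminary step: verifying that $\Omega:=\varphi(\infty_S,b)$ is well defined and is the top element, which requires combining the proper infiniteness of $\infty_S$ with the joint sequential continuity of $\varphi$ and checking that sums and suprema of simple tensors stay below $\Omega$. Once this and the density reduction to simple tensors are in hand, the two cases separate cleanly, the only case-specific inputs being the existence of a nontrivial functional (part (1), via \autoref{prp:simpleSF} and \autoref{pgr:tensFctl}) and the compactness of $\infty_S$ feeding the Glimm-halving collapse (part (2)).
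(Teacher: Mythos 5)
Your proof is correct and follows essentially the same route as the paper: part (1) by pairing nontrivial functionals via \autoref{pgr:tensFctl} and \autoref{prp:simpleSF}, and part (2) by exploiting compactness of the infinite element in the non-stably-finite factor together with Glimm halving in the other factor to push every nonzero simple tensor up to the top element. The only real difference is that where the paper obtains simplicity of $S\otimes_{\CatCu}T$ by citing \autoref{prp:tensLat} (via $\Lat_{\mathrm{f}}(S\otimes_{\CatCu}T)\cong\Lat_{\mathrm{f}}(S)\otimes_{\CatCu}\Lat_{\mathrm{f}}(T)\cong\{0,\infty\}$), you prove it directly from the density property of the $\CatCu$-completion, and your pairing of the trivial functionals $\lambda_\infty$ to see that $\Omega\neq 0$ (equivalently, that nonzero simple tensors are nonzero) is a tidy self-contained substitute for that citation.
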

\begin{proof}
To show \enumStatement{1}, assume that $S$ and $T$ are simple and stably finite.
Set $R:=S\otimes_{\CatCu}T$.
There are nontrivial functionals $\lambda\in F(S)$ and $\mu\in F(T)$; see \autoref{prp:simpleSF}.
By \autoref{pgr:tensFctl}, $\lambda$ and $\mu$ induce a functional $\delta\in F(R)$ such that $\delta(a\otimes b)=\lambda(a)\mu(b)$ for every $a\in S$ and $b\in T$.
It is clear that $\delta$ is nontrivial, which implies that $R$ is stably finite.

To show \enumStatement{2}, we may assume without loss of generality that $T$ is not stably finite.
By \autoref{prp:tensLat}, $S\otimes_{\CatCu}T$ is simple.
Thus, there is a unique element $\infty\in S\otimes_{\CatCu}T$ such that $\infty=\sup_{n\in\N} nx$ for every nonzero $x\in S\otimes_{\CatCu}T$.
Let $a\in S$ and $b\in T$ be nonzero.
We will show $a\otimes b=\infty$.

Let $\infty_T$ denote the infinite element of $T$.
By \autoref{prp:simpleSF}, $\infty_T$ is compact.
Thus, there is $n\in\N$ such that $nb=\infty_T$.
By \cite[Proposition~5.2.1]{Rob13Cone}, here reproduced as \autoref{prp:GlimmHalving}, there is a nonzero element $x\in S$ such that $nx\leq a$.
It follows
\[
a\otimes b \geq (nx)\otimes b = x\otimes(nb) = x\otimes\infty_T = \infty.
\]
We also have $a\otimes b\leq \infty$, and therefore $a\otimes b=\infty$.
Since this holds for all nonzero $a$ and $b$, we get $S\otimes_{\CatCu} T \cong\{0,\infty\}$, as desired.
\end{proof}

\chapter{\texorpdfstring{$\CatCu$}{Cu}-semirings and \texorpdfstring{$\CatCu$}{Cu}-semimodules}
\label{sec:CuSrgSmod}

In \autoref{sec:solidCuSrg}, we introduce the concepts of $\CatCu$-semirings and their semimodules.
Natural examples are given by Cuntz semigroups of \ca{s} that are strongly self-absorbing and of \ca{s} that tensorially absorb such a \ca\ respectively; see \autoref{prp:semirgFromSSA}.

We say that a $\CatCu$-semiring $R$ is solid if the multiplication map $R\times R\to R$ induces an isomorphism $R\otimes_{\CatCu}R\xrightarrow{\cong} R$;
see \autoref{dfn:solidSemirg}.
This is analogous to the concept of solidity for rings as introduced in \cite[Definition~2.1;~2.4]{BouKan72Core}.
This property can also be interpreted as an algebraic analog of being strongly self-absorbing.
The Cuntz semigroup of every known strongly self-absorbing \ca{} is a solid $\CatCu$-semiring;
see \autoref{pgr:ssaSolid}.

Given a solid $\CatCu$-semiring $R$, we say that a $\CatCu$-semigroup $S$ has $R$-mul\-ti\-pli\-ca\-tion if it is a semimodule over $R$, in a suitable sense, and it is very interesting to study the class of such $\CatCu$-semigroups.
We show that every generalized $\CatCu$-morphism between $\CatCu$-semigroups with $R$-multiplication is automatically $R$-linear;
see \autoref{prp:solidTFAE}.
We deduce that a $\CatCu$-semigroup $S$ has at most one $R$-multipli\-ca\-tion.
In other words, either there is no way to give $S$ the structure of a semimodule over $R$, or there is a unique such structure.
This also means that, as we have already remarked in the Introduction, being a semimodule over $R$ is a \emph{property} of $S$, rather than an extra structure;
see \autoref{rmk:solidModuleIsProperty}.

It may therefore seem that $\CatCu$-semigroups with $R$-multiplication are rare.
However, we show in \autoref{prp:moduleTensorProd} that for every $\CatCu$-semigroup $S$, the tensor product $R\otimes_{\CatCu}S$ has $R$-multiplication.
We obtain that $S$ has $R$-multiplication if and only if $S$ is naturally isomorphic to $R\otimes_{\CatCu}S$;
see \autoref{prp:solidModuleTFAE}.

We refer to \autoref{sec:semirg} for a detailed study of the structure of $\CatCu$-semirings, including a complete classification of solid $\CatCu$-semirings in \autoref{sec:classificationSolid}.

In Sections~\ref{sec:pureInf} through \ref{sec:RMod}, we study $\CatCu$-semimodules over the following solid $\CatCu$-semirings:

(1)
If $A$ is a purely infinite, strongly self-absorbing \ca{}, for example the Cuntz algebra $\mathcal{O}_\infty$, then $\Cu(A)=\{0,\infty\}$.
In \autoref{sec:pureInf}, we study $\CatCu$-semigroups that have $\{0,\infty\}$-multiplication.
This can be considered as a theory of `purely infinite Cuntz semigroups'.

Indeed, it is clear that the Cuntz semigroup of every $\mathcal{O}_\infty$-stable \ca{} is a $\{0,\infty\}$-semimodule.
More generally, we show that a (not necessarily simple) \ca{} is purely infinite if and only if its Cuntz semigroup has $\{0,\infty\}$-multiplication.

(2)
The Jiang-Su algebra $\mathcal{Z}$ is a strongly self-absorbing \ca{} whose Cuntz semigroup is a solid $\CatCu$-semiring, denoted by $Z$.
In \autoref{sec:ZMod}, we study $\CatCu$-semigroups that have $Z$-multiplication.

The analogy between $\mathcal{Z}$-stable \ca{s} and Cuntz semigroups with $Z$-multiplication is however not as close as in the purely infinite case.
In one direction, we clearly have that the Cuntz semigroup of every $\mathcal{Z}$-stable \ca{} is a $Z$-semimodule.
However, the converse is not true in general.
Consider for example $C^*_\lambda(F_\infty)$, the reduced group \ca{} of the free group with infinitely many generators.
It is shown in \cite[Section~6.3]{Rob12LimitsNCCW} that $\Cu(C^*_\lambda(F_\infty))\cong\Cu(\mathcal{Z})$, which implies that the Cuntz semigroup of $C^*_\lambda(F_\infty)$ has $Z$-multiplication.
However, it was shown by Simon Wassermann that $C^*_\lambda(F_\infty)$ is tensorially prime, in particular $C^*_\lambda(F_\infty)\ncong C^*_\lambda(F_\infty)\otimes\mathcal{Z}$.

We show in \autoref{prp:ZModTFAE} that a $\CatCu$-semigroup has $Z$-multiplication if and only if it is almost unperforated and almost divisible.
On the other hand, it seems that the Cuntz semigroup of every $\mathcal{Z}$-stable \ca{} is even nearly unperforated;
see \autoref{conj:nearUnpCaZstable}.

(3)
Every strongly self-absorbing UHF-algebra is of the form $M_q$ for some supernatural number $q$ satisfying $q=q^2$ and $q\neq 1$.
We denote the Cuntz semigroup of $M_q$ by $R_q$, which is a solid $\CatCu$-semiring.
In \autoref{sec:RqMod}, we study $\CatCu$-semigroups that have $R_q$-multiplication.
This can be considered as a theory of `UHF-absorbing Cuntz semigroups'.
Given a $\CatCu$-semigroup $S$, we also think of $R_q\otimes_{\CatCu}S$ as a `rationalization' of $S$.

(4)
The Jacelon-Razak algebra $\mathcal{R}$ is a stably projectionless \ca{}, whence it does not satisfy the definition of a strongly self-absorbing \ca{} (which are required to be unital).
However, the Cuntz semigroup of $\mathcal{R}$ is $[0,\infty]$, which is a solid $\CatCu$-semiring.
Moreover, for every \ca{} $A$ we have $\Cu(\mathcal{R}\otimes A)\cong[0,\infty]\otimes_{\CatCu}\Cu(A)$.
In particular, if a \ca{} tensorially absorbs $\mathcal{R}$, then its Cuntz semigroup has $[0,\infty]$-multiplication.

In \autoref{sec:RqMod} we study $\CatCu$-semigroups that have $[0,\infty]$-multiplication.
This can be considered as a theory of `$\mathcal{R}$-absorbing Cuntz semigroups'.
Given a $\CatCu$-semigroup $S$, we also think of $[0,\infty]\otimes_{\CatCu}S$ as the `realification' of $S$, a term that was introduced by Robert.
\\

In the following table, we summarize some results of this chapter.
The middle column contains the characterizations when a $\CatCu$-semigroup $S$ has $R$-multiplication for the solid $\CatCu$-semiring listed in the left column.
The column on the right characterizes the effect that `stabilizing' with $R$ has on the order structure of the $\CatCu$-semigroup.

\vspace{5pt}

\noindent
{
\renewcommand{\arraystretch}{1.2}%
\begin{tabular}{|P{1.1cm}||P{5.2cm}|P{4.8cm}|}
\hline
$R$
& Characterization when $S$ is a $\CatCu$-semimodule over $R$.
& For $a,b\in S$, characterization when $1\otimes a\leq 1\otimes b$ in $R\otimes_{\CatCu}S$.
\\
\hline
\hline
$\{0,\infty\}$%
& $S$ is idempotent; \autoref{prp:PIModTFAE}.
& $a\varpropto^\ctsRel b$; \autoref{prp:MapToPIMod}.
\\%
\hline
$Z$
& $S$ is almost unperforated and almost divisible; \autoref{prp:ZModTFAE}.
& Unclear; \autoref{prbl:MapToZMod}.
\\
\hline
$R_q$
& $S$ is $q$-unperforated and $q$-divisible; \autoref{prp:RqModTFAE}.
& For each $a'\ll a$, there exists $n$ dividing $q$ such that $na'\leq nb$; \autoref{prp:MapToRqMod}.
\\
\hline
$[0,\infty]$
& $S$ is unperforated, divisible and every element is soft; \autoref{prp:RModTFAE}.
& $\hat{a}\leq\hat{b}$ in $\Lsc(F(S))$; \autoref{prp:MapToRMod}.
\\
\hline
\end{tabular}
}

\section{Strongly self-absorbing \texorpdfstring{{C}*-algebras}{{C}*-algebras} and solid \texorpdfstring{$\CatCu$}{Cu}-semirings}
\label{sec:solidCuSrg}

\begin{dfn}
\label{dfn:CuSemirg}
\index{terms}{Cu-semiring@$\CatCu$-semiring}
\index{terms}{Cu-product @$\CatCu$-product (of a $\CatCu$-semiring)}
A \emph{$\Cu$-semiring} is a $\CatCu$-semigroup $R$ together with a $\CatCu$-bimor\-phism, $(a,b)\mapsto ab$, and a distinguished element $1$ in $R$ such that for all $a,b,c\in R$ we have
\[
ab=ba,\quad a(bc)=(ab)c,\text{ and } \quad 1a=a=a1.
\]
The $\CatCu$-bimorphism $R\times R\to R$ is also called the \emph{$\CatCu$-product} of~$R$.
\end{dfn}

\begin{rmks}
\label{rmk:CuSemirg}
(1)
A $\Cu$-semiring is a commutative, unital semiring (see \autoref{sec:poRg}) with a compatible partial order turning it into a $\omega$-continuous $\omega$-\dcpo\ in the sense of lattice theory;
see \autoref{rmk:catCu}.

(2)
It is natural to assume that a $\Cu$-semiring has no zero divisors.
Indeed, if $a$ and $b$ satisfy $ab=0$, then $st=0$ for all $s$ and $t$ with $s\leq\infty\cdot a$ and $t\leq\infty\cdot b$.
Thus, if $ab=0$, then the multiplication is trivial on the ideals generated by $a$ and $b$.
In particular, a simple $\Cu$-semiring with zero divisors is isomorphic to $\{0\}$.

(3)
Let $R$ be a $\Cu$-semiring.
Recall from \autoref{pgr:ideals} that an ideal $I$ in $R$ is an order-hereditary submonoid that is closed under passing to suprema of increasing sequences.
This notion of ideal is a-priori not related to the ring-theoretic notion of an ideal, which means that $ab$ is in $I$ for any $a\in I$ and $b\in R$.

If the unit of $R$ is full (that is, if it is also an order-unit), then every ideal $I$ is also a ring-theoretic ideal.
Indeed, given $a\in I$ and $b\in R$, we have $b\leq\infty\cdot 1$, and therefore $ab\leq a(\infty\cdot 1)=\sup k(a1)=\infty\cdot a\in I$.
Then $ab$ is in $I$ as desired.

We thank the referee for pointing out the following example of a $\CatCu$-semiring whose unit is not full.
Consider $R=[0,\infty]\times[0,\infty]$, with pointwise order and addition, and with multiplication given by $(a_1,a_2)(b_1,b_2) := (a_1b_2+a_2b_1,a_2b_2)$.
Then the multiplicative unit $(0,1)$ is clearly not an order-unit.
On the other hand, if $S$ is the Cuntz semigroup of a unital \ca{} $A$ which is a $\CatCu$-semiring and the class of the unit of $A$ acts as a unit for the product, then it will be full.
\end{rmks}

The following definition is an adoption of the terminology introduced by Robert, \cite[Definition~3.1.2]{Rob13Cone}.

\begin{dfn}
\label{dfn:CuSemimod}
\index{terms}{Cu-semimodule@$\CatCu$-semimodule}
\index{terms}{R-multiplication@$R$-multiplication}
Let $S$ be a $\CatCu$-semigroup{}, and let $R$ be a $\Cu$-semiring.
An \emph{$R$-mul\-ti\-pli\-ca\-tion} on $S$ is a $\CatCu$-bimorphism $R\times S \to S$, $(r,s)\mapsto rs$ such that for all $r_1,r_2\in R$ and $s\in S$, we have
\[
(r_1r_2)s=r_1(r_2s), \quad 1s=s.
\]
In this case we also say that $S$ is a \emph{$\Cu$-semimodule} over $R$.
\end{dfn}

\begin{pgr}
\label{pgr:ssa}
One motivation for the definition of a $\Cu$-semiring comes from strongly self-absorbing \ca{s}.
Recall from \cite[Definition~1.3]{TomWin07ssa} that a unital \ca{} $D$ is \emph{strongly self-absorbing} if $D\ncong\C$ and if there exists an isomorphism $\psi\colon D\to D\otimes D$ such that $\psi$ is approximately unitarily equivalent to $\id_D\otimes 1_D$.
Every such algebra is simple, nuclear, and either purely infinite or stably finite with a unique tracial state; see \cite[1.6, 1.7]{TomWin07ssa}.
The Cuntz semigroup of a simple, purely infinite \ca\ is isomorphic to $\{0,\infty\}$.

Another source of $\CatCu$-semimodules appears in group actions.
If $G$ is a compact group, then $\Cu(C^*(G))$ has a natural structure as a $\CatCu$-semiring.
An action of $G$ on a \ca{} $A$ induces a natural $\Cu(C^*(G))$-multiplication on $\Cu(A)$.
Such $\CatCu$-semimodules have been studied in \cite{GarSan15}.
\end{pgr}

\begin{prp}
\label{prp:semirgFromSSA}
Let $D$ be a unital, separable, strongly self-absorbing \ca.
Then:
\beginEnumStatements
\item
The Cuntz semigroup $\Cu(D)$ has a natural $\CatCu$-product giving it the structure of a countably-based, simple $\Cu$-semiring satisfying \axiomO{5} and \axiomO{6}.
Moreover, if $D$ is stably finite then $\Cu(D)$ has a unique normalized functional.
\item
If $A$ is a $D$-absorbing \ca\ (that is, $A\cong A\otimes D$), then $\Cu(A)$ has a natural $\Cu(D)$-multiplication.
\end{enumerate}
\end{prp}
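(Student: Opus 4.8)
The plan is to build the product on $\Cu(D)$ and the module action on $\Cu(A)$ directly from the defining isomorphism $\psi\colon D\to D\otimes D$ of the strongly self-absorbing structure, and to reduce every algebraic identity to a statement about approximate unitary equivalence of $\ast$-homomorphisms. The tool I would isolate first is the observation that approximately unitarily equivalent $\ast$-homomorphisms induce the same $\CatCu$-morphism: if $\alpha\approx\beta\colon B\to C$, then $\alpha(x)$ and $\beta(x)$ are Cuntz equivalent for every positive $x$ (Cuntz subequivalence being invariant under unitary conjugation and stable under the norm perturbations controlled by the $(\freeVar-\varepsilon)_+$ calculus), so $\Cu(\alpha)=\Cu(\beta)$. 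I would also record that $\approx$ is preserved under pre- and post-composition with unital $\ast$-homomorphisms, under passing to inverses of $\ast$-isomorphisms, and under tensoring with an identity map $\mathrm{id}_A\otimes(\freeVar)$, all of which are routine.

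For part~(1), using the natural $\CatCu$-bimorphism $\varphi_{D,D}\colon\Cu(D)\times\Cu(D)\to\Cu(D\tensMax D)$ from \autoref{pgr:tensCa} (here $\tensMax=\otimes$ since $D$ is nuclear), I would define the product by $[a][b]:=\Cu(\psi^{-1})\big(\varphi_{D,D}([a],[b])\big)$, that is $[a][b]=[\psi^{-1}(a\otimes b)]$, with unit $1:=[1_D]$. This is a $\CatCu$-bimorphism, being a $\CatCu$-bimorphism followed by a $\CatCu$-morphism. The semiring axioms then become a.u.e.\ identities: the unit law follows from $\psi^{-1}\circ(\mathrm{id}_D\otimes 1_D)\approx\mathrm{id}_D$ and $\psi^{-1}\circ(1_D\otimes\mathrm{id}_D)\approx\mathrm{id}_D$; commutativity follows from $\psi^{-1}\circ\mathrm{flip}\approx\psi^{-1}$, which comes from $\mathrm{flip}\circ\psi\approx\psi$ (both factor embeddings being a.u.e.\ to $\psi$); and associativity follows from $\psi^{-1}\circ(\psi^{-1}\otimes\mathrm{id}_D)\approx\psi^{-1}\circ(\mathrm{id}_D\otimes\psi^{-1})$, which I would obtain by checking that both $(\psi\otimes\mathrm{id}_D)\circ\psi$ and $(\mathrm{id}_D\otimes\psi)\circ\psi$ are a.u.e.\ to the first-factor embedding $D\to D^{\otimes 3}$. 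The remaining structural claims are quotations of earlier results: $\Cu(D)$ is countably-based and satisfies \axiomO{1}--\axiomO{4} by \autoref{prp:CufromCAlg}, it satisfies \axiomO{5} and \axiomO{6} by \autoref{prp:o5o6}, and it is simple by \autoref{prp:simpleCa} since $D$ is simple. If $D$ is stably finite it is monotracial (Kirchberg) and, being nuclear, its quasitraces are traces, so it has a unique normalized $2$-quasitrace; then \autoref{prp:UniqueFctlFromCa} yields the unique functional normalized at $1=[1_D]$.

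For part~(2), the cleanest route is to put the module structure first on $\Cu(A\otimes D)$, using only the second tensor factor, and then transport it. I would define $\mu_0\colon\Cu(D)\times\Cu(A\otimes D)\to\Cu(A\otimes D)$ by multiplying $\Cu(D)$ into the $D$-factor: compose the natural bimorphism $\Cu(D)\times\Cu(A\otimes D)\to\Cu(D\otimes A\otimes D)$ with the $\CatCu$-isomorphism $\Cu(D\otimes A\otimes D)\cong\Cu(A\otimes D\otimes D)$ coming from reassociation of the factors, and with $\Cu(\mathrm{id}_A\otimes\psi^{-1})\colon\Cu(A\otimes D\otimes D)\to\Cu(A\otimes D)$. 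The unit and associativity laws for $\mu_0$ reduce, exactly as in part~(1) after tensoring everything with $\mathrm{id}_A$, to the same a.u.e.\ relations for $\psi$; concretely the unit law uses $\mathrm{id}_A\otimes(\psi^{-1}\circ(1_D\otimes\mathrm{id}_D))\approx\mathrm{id}_{A\otimes D}$, and associativity uses the tensored version of the associativity identity from part~(1). Finally, choosing any isomorphism $\theta\colon A\to A\otimes D$ (whose existence is precisely $D$-absorption), I would transport $\mu_0$ along the $\CatCu$-isomorphism $\Cu(\theta)$ to obtain a $\Cu(D)$-multiplication $\mu([r],s):=\Cu(\theta^{-1})\big(\mu_0([r],\Cu(\theta)(s))\big)$ on $\Cu(A)$. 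The virtue of routing through the second factor is that it avoids needing $\theta$ to be approximately unitarily equivalent to $\mathrm{id}_A\otimes 1_D$, so no separability hypothesis on $A$ and no appeal to the Toms--Winter characterization is required.

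The main obstacle I anticipate is bookkeeping rather than conceptual: one must carefully fix the identifications $(A\otimes\K)\tensMax(B\otimes\K)\cong(A\tensMax B)\otimes\K$ underlying the bimorphism of \autoref{pgr:tensCa}, and then track which $D$-factor each map acts on through the reassociation isomorphisms, so that the displayed a.u.e.\ identities really are the ones governing the unit, commutativity, and associativity. The genuinely substantive input is the handful of approximate-unitary-equivalence relations among composites of $\psi$ and the factor embeddings; these are immediate consequences of the definition of a strongly self-absorbing \ca{} recalled in \autoref{pgr:ssa}, and once they are in place every identity is obtained by applying the functor $\Cu$ and invoking its invariance under $\approx$.
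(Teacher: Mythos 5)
Your part~(1) is essentially the paper's argument: the product is $[a][b]=[\psi^{-1}(a\otimes b)]$ with unit $[1_D]$, and the semiring identities are reduced to approximate unitary equivalences among composites of $\psi$ and the factor embeddings (the paper phrases commutativity via the approximately inner flip of $D\otimes D$ and associativity via $x\otimes y\otimes z\approx y\otimes z\otimes x$, which is the same input as your homomorphism-level identities), with the structural claims quoted from \autoref{prp:CufromCAlg}, \autoref{prp:o5o6}, \autoref{prp:simpleCa} and \autoref{prp:UniqueFctlFromCa} exactly as you do. Part~(2) is where you genuinely diverge. The paper invokes \cite[Theorem~2.3]{TomWin07ssa} to produce an isomorphism $\phi\colon A\to D\otimes A$ with $\phi\approx 1_D\otimes\id_A$ and defines the action directly on $\Cu(A)$ by $[d]\cdot[x]=[\phi^{-1}(d\otimes x)]$, so the module axioms reduce to approximate unitary equivalences involving $\phi$ as well as $\psi$. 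You instead let $\Cu(D)$ act on $\Cu(A\otimes D)$ through the second tensor factor via $\id_A\otimes\psi^{-1}$, so that every identity you need is the part-(1) identity tensored with $\id_A$, and then transport along an arbitrary isomorphism $A\cong A\otimes D$. What your route buys is independence from the Elliott-intertwining theorem of Toms--Winter, whose hypotheses include separability of $A$ --- a hypothesis absent from the statement of the proposition --- so your argument covers nonseparable $D$-absorbing algebras, where the paper's citation does not strictly apply. What it gives up is a little canonicity: your transported multiplication a priori depends on the chosen isomorphism $\theta$, whereas the paper's $\phi$ is pinned down up to approximate unitary equivalence; this is irrelevant for the existence claim, and once $\Cu(D)$ is known to be solid, \autoref{prp:solidModuleUnique} makes the $\Cu(D)$-multiplication unique in any case. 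Both routes rest on the same preliminary facts you isolate (approximately unitarily equivalent \starHom{s} induce the same $\CatCu$-morphism; the relation is stable under composition with unital \starHom{s}, under inverses of isomorphisms, and under tensoring with $\id_A$, using multiplier unitaries when $A$ is nonunital) and on the same bookkeeping with the stabilization identifications of \autoref{pgr:tensCa}.
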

\begin{proof}
We use the symbol `$\approx$' to denote approximate unitary equivalence.
For positive elements in a \ca, this is a stronger equivalence relation than Cuntz equivalence.

\enumStatement{1}
By definition, we choose a $^*$-isomorphism $\psi\colon D\to D\otimes D$ such that $\psi\approx \id_D\otimes 1_D$.
Consider the natural $\Cu$-bimorphism $\Cu(D)\times\Cu(D)\to \Cu(D\otimes D)$ from \autoref{pgr:tensCa}.
Composed with $\Cu(\psi^{-1})$, this yields a $\Cu$-bimorphism
\[
\varphi\colon \Cu(D)\times \Cu(D)\to \Cu(D).
\]
We will show that $\varphi$ together with $1=[1_D]$ gives $\Cu(D)$ the structure of a $\Cu$-semiring.
We know from \cite[Corollary~1.11]{TomWin07ssa} that $D$ has approximately inner flip.
Thus, for any $x,y\in D_+$, we have $x\otimes y\approx y\otimes x$ in $D\otimes D$.
It follows
\[
\varphi([x],[y])=[\psi^{-1}(x\otimes y)]=[\psi^{-1}(y\otimes x)]=\varphi([y],[x]).
\]
The analogous computation holds for Cuntz classes of positive elements in $D\otimes\K$, which implies that $\varphi$ defines a commutative multiplication.

To show associativity of the product, consider positive elements $x,y$ and $z$.
Using the approximately inner flip, we get $x\otimes y\otimes z\approx y\otimes z\otimes x$.
Applying $\psi^{-1}\otimes \id_D$, it follows that $\psi^{-1}(x\otimes y)\otimes z \approx \psi^{-1}(y\otimes z)\otimes x$, and therefore
\begin{align*}
\varphi (\varphi([x],[y]),[z])
&= [\psi^{-1}(\psi^{-1}(x\otimes y)\otimes z)] \\
&= [\psi^{-1}(\psi^{-1}(y \otimes z)\otimes x)] \\
&= [\psi^{-1}(x\otimes \psi^{-1}(y\otimes z))] =\varphi([x],\varphi([y],[z])).
\end{align*}
The analogous computation in $D\otimes\K$ implies that the product is associative.

Using $\id_D\otimes 1_D\approx\psi$ and $\psi\approx 1_D\otimes\id_D$, we obtain
\[
\varphi([x],1)
=[\psi^{-1}(x\otimes 1)]
=[x]
=[\psi^{-1}(1\otimes x)]
=\varphi(1,[x]),
\]
for every positive $x$.
This finishes the proof that $\Cu(D)$ is a $\CatCu$-semiring.

The Cuntz semigroup of every separable \ca\ is countably-based, satisfies \axiomO{5} and \axiomO{6};
see \autoref{prp:CufromCAlg} and \autoref{prp:o5o6}.
Moreover, since $D$ is simple we get that $\Cu(D)$ is simple;
see \autoref{prp:simpleCa}.
If $D$ is stably finite, then it has a unique ($2$-quasi)tracial state, and then $\Cu(D)$ has a unique normalized functional by \autoref{prp:UniqueFctlFromCa}.

\enumStatement{2}
By \cite[Theorem~2.3]{TomWin07ssa} there is a ${}^*$-isomorphism $\phi\colon A\to D\otimes A$ such that $\phi\approx 1_D\otimes\id_A$ (note that the condition of $D$ being $\mathrm{K}_1$-injective is automatic by \cite[Theorem~3.1, Remark~3.3]{Win11ssaZstable}).
Arguing as in \enumStatement{1}, the natural map $D\times A\to D\otimes A$ induces a $\Cu$-bimorphism $\Cu(D)\times\Cu(A)\to \Cu(D\otimes A)$ and this, composed with $\Cu(\phi^{-1})$, yields a $\Cu$-bimorphism $\varphi\colon \Cu(D)\times \Cu(A)\to\Cu(A)$.

Given $x\in A_+$, we have $\phi(x)\approx 1_D\otimes x$ and therefore $\varphi([1_D],[x])=[x]$.
Given also $d_1,d_2\in D_+$, we have
\[
d_1\otimes d_2\otimes\phi(x) \approx d_1\otimes d_2\otimes 1_D\otimes x
\approx d_1\otimes 1_D\otimes d_2\otimes x
\]
in $D\otimes D\otimes D\otimes A$.
Applying $\phi^{-1}\circ (\psi^{-1}\otimes \phi^{-1})$ to the above relation, we get
\[
\phi^{-1}(\psi^{-1}(d_1\otimes d_2)\otimes x)
\approx \phi^{-1}(\psi^{-1}(d_1\otimes 1_D)\otimes \phi^{-1}(d_2\otimes x)).
\]
Therefore, $\varphi([d_1]\cdot[d_2],[x])=\varphi([d_1],\varphi([d_2],[x]))$.
The same computations hold for positive elements in the stabilizations, which implies that $\varphi$ defines a $\Cu(D)$-multiplication on $\Cu(A)$.
\end{proof}

A ring $R$ is called \emph{solid} if the multiplication map induces an isomorphism $R\otimes R\cong R$; see \cite[Definition~2.1; 2.4]{BouKan72Core} where it is pointed out that solidity of the ring $R$ is equivalent to the requirement that $a\otimes 1=1\otimes a$ for every $a\in R$.\index{terms}{ring!solid}
Here, we use the usual tensor product of (discrete) groups and rings, and every ring is understood to be unital and commutative.
See \autoref{sec:poRg} for more details.

As pointed out in \cite{Gut13arXivSolid}, solid rings have also been called $T$-rings and \mbox{$\Z$-epi}\-morphs;
see \cite[Definition~1.6]{BowSch77Rings} and \cite{DicSte84Epimorphs}.
We define solid $\Cu$-semirings in analogy to solid rings.

\begin{dfn}
\label{dfn:solidSemirg}
\index{terms}{Cu-semiring@$\CatCu$-semiring!solid}
A $\Cu$-semiring $R$ is \emph{solid} if the $\Cu$-bimorphism $\varphi\colon R\times R\to R$ defining the multiplication induces an isomorphism $R\otimes_{\CatCu} R\cong R$.
\end{dfn}

The next result shows that for a $\Cu$-semiring there are many conditions equivalent to being solid.
This is analogous to the case for rings.
Indeed, for a ring, all of the conditions in \autoref{prp:solidTFAE}, when suitably interpreted, are equivalent to solidity of the ring.
This is known, and most of it is shown in the references mentioned in the paragraph before \autoref{dfn:solidSemirg}.

\index{terms}{Cu-semiring@$\CatCu$-semiring!solid!characterization}
\begin{prp}
\label{prp:solidTFAE}
Let $R$ be a $\Cu$-semiring.
Then the following conditions are equivalent:
\beginEnumStatements
\item
The $\Cu$-semiring $R$ is solid.
\item
Whenever $S$ is a $\Cu$-semimodule over $R$, then the $R$-multiplication on $S$ induces an isomorphism $R\otimes_{\CatCu} S\cong S$.
\item
Whenever $S_1$ and $S_2$ are $\Cu$-semimodules over $R$, and $\tau\colon S_1\times S_2\to T$ is a generalized $\Cu$-bimorphism, then $\tau(ra_1,a_2)=\tau(a_1,ra_2)$ for all $r\in R$ and $a_i\in S_i$.
\item
Every generalized $\Cu$-morphism $S_1\to S_2$ between $\Cu$-semimodules $S_1$ and $S_2$ over $R$ is automatically $R$-linear.
\item
For all $a,b\in R$, we have $a\otimes b=b\otimes a$ in $R\otimes_{\CatCu} R$.
\item
For every $a\in R$, we have $a\otimes 1=1\otimes a$ in $R\otimes_{\CatCu} R$.
\end{enumerate}
\end{prp}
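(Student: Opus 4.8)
The plan is to prove the six conditions equivalent by a single strongly connected cycle of implications, using two tools repeatedly: the universal property of the tensor product from \autoref{prp:tensProdCu} (in particular, that two generalized $\CatCu$-morphisms out of $S\otimes_{\CatCu}T$ coincide as soon as they agree on simple tensors, since $g_1\circ\varphi\le g_2\circ\varphi$ is equivalent to $g_1\le g_2$), and the functoriality of $\freeVar\otimes_{\CatCu}\freeVar$ in each variable (see \autoref{pgr:monoidalCu}), which also produces a generalized $\CatCu$-morphism $\id_R\otimes f$ out of any generalized $\CatCu$-morphism $f$. I would arrange the implications as
\[
(1)\Rightarrow(6)\Rightarrow(2)\Rightarrow(1),\quad (6)\Leftrightarrow(5),\quad (6)\Rightarrow(3)\Rightarrow(4)\Rightarrow(2),\quad (3)\Rightarrow(6),
\]
which visits every node in both directions and hence yields all equivalences.

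The backbone is a single computational observation, the \emph{key relation}: if $(6)$ holds and $S$ is any $\CatCu$-semimodule over $R$, then $r\otimes s=1\otimes rs$ in $R\otimes_{\CatCu}S$ for all $r\in R$ and $s\in S$. To see this, fix $s$ and let $\rho_s\colon R\to S$, $x\mapsto xs$, be the $\CatCu$-morphism obtained by fixing the second variable of the action; functoriality gives a $\CatCu$-morphism $\id_R\otimes\rho_s\colon R\otimes_{\CatCu}R\to R\otimes_{\CatCu}S$, and applying it to the identity $r\otimes1=1\otimes r$ of $(6)$ produces $r\otimes s=1\otimes rs$. The backbone implications then follow quickly. $(1)\Rightarrow(6)$ holds because an isomorphism $\mu\colon R\otimes_{\CatCu}R\to R$ is injective and $\mu(a\otimes1)=a=\mu(1\otimes a)$. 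Taking $S=R$ in the key relation gives $a\otimes b=1\otimes ab=1\otimes ba=b\otimes a$, which is $(6)\Rightarrow(5)$ without ever needing a semiring structure on $R\otimes_{\CatCu}R$, while $(5)\Rightarrow(6)$ is the case $b=1$. For $(6)\Rightarrow(2)$ one checks that the action $\alpha\colon R\otimes_{\CatCu}S\to S$ and the $\CatCu$-morphism $\eta\colon S\to R\otimes_{\CatCu}S$, $s\mapsto1\otimes s$, satisfy $\alpha\eta=\id_S$ and, by the key relation, $\eta\alpha(r\otimes s)=1\otimes rs=r\otimes s$, so $\eta\alpha=\id$ by the universal property; finally $(2)\Rightarrow(1)$ is the special case $S=R$, where the $R$-multiplication is the product of $R$.

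For the implications involving $(3)$ and $(4)$ I would argue as follows. $(6)\Rightarrow(3)$: given semimodules $S_1,S_2$ and a generalized $\CatCu$-bimorphism $\tau$, apply the $\CatCu$-morphism $\rho_{a_1}\otimes\rho_{a_2}\colon R\otimes_{\CatCu}R\to S_1\otimes_{\CatCu}S_2$, with $\rho_{a_i}\colon R\to S_i$, $x\mapsto xa_i$, to $r\otimes1=1\otimes r$, obtaining $(ra_1)\otimes a_2=a_1\otimes(ra_2)$; composing with the induced generalized morphism $\tilde\tau$ gives $\tau(ra_1,a_2)=\tau(a_1,ra_2)$. Conversely $(3)\Rightarrow(6)$ by taking $S_1=S_2=R$ and $\tau=\varphi$ the universal bimorphism, with $a_1=a_2=1$. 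For $(3)\Rightarrow(4)$, given a generalized $\CatCu$-morphism $f\colon S_1\to S_2$ of semimodules, the map $\tau\colon R\times S_1\to S_2$, $(r,s)\mapsto rf(s)$, is a generalized $\CatCu$-bimorphism, and $(3)$ yields $rf(s)=\tau(r,s)=\tau(1,rs)=f(rs)$. To close the cycle, $(4)\Rightarrow(2)$: here I would equip $R\otimes_{\CatCu}S$ with its first-factor $R$-multiplication $r\cdot(x\otimes s)=(rx)\otimes s$ coming from the monoidal structure of \autoref{pgr:monoidalCu}; then $\eta\alpha$ is a generalized $\CatCu$-morphism of $R$-semimodules, hence $R$-linear by $(4)$, and since it fixes every $1\otimes s$ it fixes every $r\otimes s=r\cdot(1\otimes s)$, so $\eta\alpha=\id$ and $\alpha$ is an isomorphism.

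I expect the main obstacle to be the bookkeeping around generalized (rather than genuine) $\CatCu$-morphisms and bimorphisms: one must verify that each of $\id_R\otimes\rho_s$, $\rho_{a_1}\otimes\rho_{a_2}$, and $(r,s)\mapsto rf(s)$ really is a generalized $\CatCu$-(bi)morphism in the sense of \autoref{dfn:bimorCu}, and each reduction "agree on simple tensors $\Rightarrow$ equal" must be routed carefully through \autoref{prp:tensProdCu}. The only genuinely new structure is the first-factor $R$-multiplication on $R\otimes_{\CatCu}S$ used in $(4)\Rightarrow(2)$; checking that it is a well-defined $\CatCu$-bimorphism satisfying the module axioms (the special case of \autoref{prp:moduleTensorProd} needed here) is the most technical, though routine, point of the argument.
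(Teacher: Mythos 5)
Your proof is correct, and its engine is the same as the paper's: the key relation $r\otimes s=1\otimes rs$ extracted from condition~(6) via the generalized $\CatCu$-bimorphism $(a,b)\mapsto a\otimes bs$ (the paper's $\tau_s$), the pair $\alpha,\eta$ with $\alpha\eta=\id_S$ and $\eta\alpha$ checked on simple tensors for \implStatements{6}{2}, and the bimorphism $(r,s)\mapsto r f(s)$ for \implStatements{3}{4}. Where you differ is in the arrangement of the cycle and in two individual implications. The paper runs \implStatements{2}{3} by invoking the isomorphisms $S_i\otimes_{\CatCu}R\cong S_i$ together with associativity of the tensor product (\autoref{prp:tensProdAssoc}); you instead prove \implStatements{6}{3} directly by applying the induced map $x\otimes y\mapsto (xa_1)\otimes(ya_2)$ to the identity $r\otimes 1=1\otimes r$, which avoids associativity entirely and is arguably cleaner. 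Likewise the paper closes with \implStatements{4}{6} using the two $R$-module structures on $R\otimes_{\CatCu}R$ and the identity map, whereas you close with \implStatements{4}{2} using the first-factor module structure on $R\otimes_{\CatCu}S$ and the ``agree on simple tensors'' principle; both rest on the same auxiliary construction (the content of \autoref{prp:moduleTensorProd}, whose proof is independent of the present proposition, so no circularity arises). Your caveat about verifying that the various maps are generalized, rather than genuine, $\CatCu$-(bi)morphisms is exactly the right point of care, and each such verification goes through via \autoref{prp:bimorCu} and \autoref{prp:tensProdCu} as you anticipate.
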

\begin{proof}
The implications `\enumStatement{2} $\Rightarrow$ \enumStatement{1} $\Rightarrow$ \enumStatement{5} $\Rightarrow$ \enumStatement{6}' are clear.

In order to prove that \enumStatement{6} implies \enumStatement{2}, let $S$ be a $\CatCu$-semimodule over $R$.
Given $r\in R$ and $s\in S$, we will show $r\otimes s=1\otimes rs$ in $R\otimes_{\CatCu} S$.
To that end, consider the map
\[
\tau_s\colon R\times R\to R\otimes_{\CatCu} S,\quad
\tau_s(a,b):=a\otimes bs,\quad
\txtFA a,b\in R.
\]
It is straightforward to check that this is a generalized $\CatCu$-bimorphism, whence there is a generalized $\CatCu$-morphism $\bar\tau_s\colon R\otimes_{\CatCu} R\to R\otimes_{\CatCu} S$ such that $\tau_s(a,b)=\bar\tau_s(a\otimes b)$.
Using the assumption at the third step, we obtain
\[
r\otimes s = \tau_s(r,1)=\bar\tau_s(r\otimes 1)=\bar\tau_s(1\otimes r)=\tau_s(1,r)=1\otimes rs.
\]

Let $\varphi\colon R\times S\to S$ be the $\Cu$-bimorphism defining the $R$-multiplication.
This induces a $\Cu$-morphism $\bar\varphi\colon R\otimes_{\CatCu} S\to S$.
We let $\psi\colon S\to R\otimes S$ be the generalized $\Cu$-morphism defined by $\psi(s)=1\otimes s$.
We clearly have $\bar\varphi\circ\psi=\id_S$.
On the other hand, for every $r\in R$ and $s\in S$, using the formula of the previous paragraph at the last step, we have
\[
\psi\circ\bar\varphi(r\otimes s)=\psi(rs)=1\otimes rs=r\otimes s.
\]
It follows that $\psi\circ\bar\varphi=\id_{R\otimes_{\CatCu} S}$, and so $R\otimes_{\CatCu} S\cong S$.

Next, we prove that \enumStatement{2} implies \enumStatement{3}.
It is enough to show that for every $r\in R$, $s_1\in S_1$ and $s_2\in S_2$, we have $rs_1\otimes s_2=s_1\otimes rs_2$ in $S_1\otimes_{\CatCu} S_2$.
To that end, we use the isomorphisms $S_1\cong S_1\otimes_{\CatCu} R$ and $R\otimes_{\CatCu} S_2\cong S_2$ given by assumption, and the natural isomorphism of associativity of the tensor product from \autoref{prp:tensProdAssoc}, to obtain the identifications shown in the following diagram:
\[
\xymatrix@R=10pt{
S_1\otimes_{\CatCu} S_2 \ar@{}[r]|-{\cong} \ar@{}[d]|{\inDown}
& (S_1\otimes_{\CatCu} R)\otimes_{\CatCu} S_2 \ar@{}[r]|{\cong} \ar@{}[d]|<<<{\inDown}
& S_1\otimes_{\CatCu} (R\otimes_{\CatCu} S_2) \ar@{}[r]|-{\cong} \ar@{}[d]|<<<{\inDown}
& S_1\otimes_{\CatCu} S_2 \ar@{}[d]|{\inDown} \\
rs_1\otimes s_2 \ar@{}[r]|<<<<<<<{\longleftrightarrow}
& \parbox[c]{2.3cm}{$(rs_1\otimes 1)\otimes s_2$ \\ $=(s_1\otimes r)\otimes s_2$} \ar@{}[r]|{\longleftrightarrow}
& \parbox[c]{2.3cm}{$s_1\otimes (1\otimes rs_2)$ \\ $=s_1\otimes (r\otimes s_2)$} \ar@{}[r]|>>>>>>{\longleftrightarrow}
& s_1\otimes rs_2.
}
\]

Next, to show that \enumStatement{3} implies \enumStatement{4}, let $S_1$ and $S_2$ be $\CatCu$-semimodules over $R$, and let $\alpha\colon S_1\to S_2$ be a generalized $\Cu$-morphism.
Consider the map
\[
\tau\colon R\times S_1\to S_2,\quad
\tau(r,s):=r\alpha(s),\quad
\txtFA r\in R,s\in S_1,
\]
which is easily seen to be a generalized $\Cu$-bimorphism.
We consider $R$ with the $R$-multiplication given by its $\Cu$-semiring structure.
Given $r\in R$ and $s\in S_1$, we use the assumption at the second step to obtain
\[
\alpha(rs)=\tau(1,rs)=\tau(r,s)=r\alpha(s).
\]
Thus, the map $\alpha$ is $R$-linear, as desired.

Finally, let us show that \enumStatement{4} implies \enumStatement{6}.
We endow the $\CatCu$-semigroup{} $R\otimes_{\CatCu} R$ with two (a priori different) $R$-multiplications induced by
\[
r\cdot(r_1\otimes r_2)=rr_1\otimes r_2,\quad
r\cdot(r_1\otimes r_2)=r_1\otimes rr_2,\quad
\txtFA r,r_1,r_2\in R.
\]
Now, we consider the identity $\Cu$-morphism $\id\colon R\otimes_{\CatCu} R\to R\otimes_{\CatCu} R$, but we equip the source and target with the two different $R$-multiplications.
By assumption, the map $\id_R$ is $R$-linear.
Then, given any $r\in R$, we compute the product of the element $1\otimes 1\in R\otimes_{\CatCu} R$ with $r$ using the different $R$-multiplications.
This gives $r1\otimes 1=1\otimes r1$, as desired.
\end{proof}

\begin{exa}
\label{exa:R}
Consider the $\CatCu$-semigroup{} $[0,\infty]$.
We define the product of finite elements as for real numbers, set $\infty\cdot 0:=0$ and $\infty\cdot a:=\infty$ for every nonzero $a\in[0,\infty]$.
It is easy to check that this defines a $\CatCu$-product on $[0,\infty]$.

Thus, $[0,\infty]$ is a $\CatCu$-semiring.
Let us show that it is solid.
By \autoref{prp:solidTFAE}, it is enough to show that $1\otimes a$ equals $a\otimes 1$ for every $a\in[0,\infty]$.
Given $k,n\in\N$ with $n\neq 0$, we consider the element $\frac{k}{n}\in[0,\infty]$ and we compute
\[
1\otimes\tfrac{k}{n}
= \tfrac{n}{n}\otimes \tfrac{k}{n}
= kn\left(\tfrac{1}{n}\otimes \tfrac{1}{n}\right)
= \tfrac{k}{n}\otimes \tfrac{n}{n}
= \tfrac{k}{n}\otimes 1.
\]
Since rational elements are dense in $[0,\infty]$, we get that $[0,\infty]$ is solid.
\end{exa}

\begin{cor}
\label{prp:solidModuleUnique}
Let $R$ be a solid $\CatCu$-semiring, and let $S$ be a $\CatCu$-semigroup.
Then any two $R$-multiplications on $S$ are equal.
\end{cor}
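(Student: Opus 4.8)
The plan is to deduce this immediately from the equivalence of conditions \enumStatement{1} and \enumStatement{4} in \autoref{prp:solidTFAE}, which asserts that for a solid $\CatCu$-semiring $R$, every generalized $\CatCu$-morphism between $\CatCu$-semimodules over $R$ is automatically $R$-linear. The whole argument is a matter of applying this to the identity map on $S$.

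First I would suppose that $S$ carries two $R$-multiplications, which I denote $(r,s)\mapsto r\cdot_1 s$ and $(r,s)\mapsto r\cdot_2 s$, and I would write $S_1$ and $S_2$ for the corresponding $\CatCu$-semimodules over $R$ (the same underlying $\CatCu$-semigroup $S$, but equipped with the two a priori different module structures). The goal is then to show that $r\cdot_1 s = r\cdot_2 s$ for all $r\in R$ and $s\in S$.

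Next I would observe that the identity map $\id_S\colon S_1\to S_2$ is a $\CatCu$-morphism, hence in particular a generalized $\CatCu$-morphism. Since $R$ is solid, condition \enumStatement{4} of \autoref{prp:solidTFAE} applies and shows that $\id_S$ is $R$-linear. Unwinding the definition of $R$-linearity, where the source $S_1$ uses the first action and the target $S_2$ uses the second, this says
\[
r\cdot_2 s = r\cdot_2 \id_S(s) = \id_S(r\cdot_1 s) = r\cdot_1 s,
\]
for every $r\in R$ and $s\in S$. Thus the two multiplications coincide, as claimed.

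There is essentially no real obstacle here: the entire content is packaged into the equivalence \enumStatement{1}$\Leftrightarrow$\enumStatement{4} of \autoref{prp:solidTFAE}, and the only thing one must verify is the trivial point that the identity map on $S$ is a (generalized) $\CatCu$-morphism between the two semimodule structures, so that the proposition can legitimately be invoked.
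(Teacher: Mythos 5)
Your argument is correct and is exactly the paper's proof: one applies condition (4) of \autoref{prp:solidTFAE} to the identity map on $S$, viewed as a generalized $\CatCu$-morphism between the two semimodule structures, and $R$-linearity of that map is precisely the statement that the two multiplications agree.
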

\begin{proof}
Consider the identity morphism $S\to S$, where the range and target are equipped with the two $R$-multiplications in question.
By \autoref{prp:solidTFAE}, this map is $R$-linear, which means exactly that the two $R$-multiplications are equal.
\end{proof}

\begin{rmk}
\label{rmk:solidModuleIsProperty}
Let $R$ be a solid $\CatCu$-semiring, and let $S$ be a $\CatCu$-semigroup{}.
By \autoref{prp:solidModuleUnique}, $S$ has at most one $R$-multiplication.
Thus, having an $R$-multiplica\-tion is a \emph{property} rather than an additional \emph{structure}.
\end{rmk}

\begin{lma}
\label{prp:moduleTensorProd}
Let $R$ be a $\Cu$-semiring, and let $S$ and $T$ be $\CatCu$-semigroups.
Assume that $S$ has an $R$-multiplication.
Then $S\otimes_{\CatCu} T$ also has an $R$-multiplication that satisfies
\[
r(s\otimes t)=(rs)\otimes t,
\]
for all $r\in R$, $s\in S$ and $t\in T$.
\end{lma}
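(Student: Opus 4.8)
The plan is to build the $R$-multiplication on $S\otimes_{\CatCu}T$ by transporting the given $R$-multiplication on $S$ through the associativity isomorphism and the bifunctoriality of $\otimes_{\CatCu}$, and then to verify the semimodule axioms by checking them only on simple tensors.

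First I would package the $R$-multiplication on $S$ as a $\CatCu$-morphism. The given $R$-multiplication is a $\CatCu$-bimorphism $\mu\colon R\times S\to S$, so by \autoref{prp:tensProdCu} it induces a $\CatCu$-morphism $\bar{\mu}\colon R\otimes_{\CatCu}S\to S$ with $\bar{\mu}(r\otimes s)=rs$. Using the bifunctoriality of the tensor product in $\CatCu$ (see \autoref{pgr:monoidalCu}), I tensor $\bar\mu$ with $\id_T$ to obtain a $\CatCu$-morphism $\bar\mu\otimes\id_T\colon(R\otimes_{\CatCu}S)\otimes_{\CatCu}T\to S\otimes_{\CatCu}T$ which satisfies $(\bar\mu\otimes\id_T)((r\otimes s)\otimes t)=(rs)\otimes t$. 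Precomposing with the associativity isomorphism $R\otimes_{\CatCu}(S\otimes_{\CatCu}T)\cong(R\otimes_{\CatCu}S)\otimes_{\CatCu}T$ from \autoref{prp:tensProdAssoc}, which sends $r\otimes(s\otimes t)$ to $(r\otimes s)\otimes t$, yields a $\CatCu$-morphism
\[
\Theta\colon R\otimes_{\CatCu}(S\otimes_{\CatCu}T)\to S\otimes_{\CatCu}T,\quad \Theta(r\otimes(s\otimes t))=(rs)\otimes t.
\]
Finally, composing $\Theta$ with the universal $\CatCu$-bimorphism $R\times(S\otimes_{\CatCu}T)\to R\otimes_{\CatCu}(S\otimes_{\CatCu}T)$ produces a map $\nu\colon R\times(S\otimes_{\CatCu}T)\to S\otimes_{\CatCu}T$; in each variable $\nu$ is a composition of $\CatCu$-morphisms, so by \autoref{prp:bimorCu} it is a $\CatCu$-bimorphism, and it satisfies $\nu(r,s\otimes t)=(rs)\otimes t$. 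This is the candidate for the $R$-multiplication, and it has the asserted form on simple tensors.

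It then remains to verify the two semimodule axioms from \autoref{dfn:CuSemimod}, namely $\nu(1,x)=x$ and $\nu(r_1r_2,x)=\nu(r_1,\nu(r_2,x))$ for all $r_1,r_2\in R$ and $x\in S\otimes_{\CatCu}T$. The key observation here is that a $\CatCu$-morphism out of $S\otimes_{\CatCu}T$ is completely determined by its values on the simple tensors $s\otimes t$; this is precisely the injectivity of the assignment $g\mapsto g\circ\varphi$ guaranteed by the universal property in \autoref{prp:tensProdCu}. For the first axiom, both $\nu(1,\freeVar)$ and $\id$ are $\CatCu$-morphisms (the former because $\nu$ is a $\CatCu$-bimorphism, hence a $\CatCu$-morphism in the second variable) which agree on simple tensors, since $\nu(1,s\otimes t)=(1s)\otimes t=s\otimes t$; hence they coincide. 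For the second axiom, fix $r_1,r_2$; the maps $x\mapsto\nu(r_1r_2,x)$ and $x\mapsto\nu(r_1,\nu(r_2,x))$ are both $\CatCu$-morphisms, and on a simple tensor they give $((r_1r_2)s)\otimes t$ and $(r_1(r_2s))\otimes t$ respectively, which agree because $S$ is an $R$-semimodule; hence the two maps coincide.

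There is no serious obstacle in this argument; it is essentially a bookkeeping exercise in the monoidal structure of $\CatCu$. The only points requiring care are the identifications on simple tensors—namely that $\bar\mu\otimes\id_T$ and the associativity isomorphism act as stated on elements of the form $(r\otimes s)\otimes t$ and $r\otimes(s\otimes t)$—and the repeated use of the fact that morphisms on $S\otimes_{\CatCu}T$ are pinned down by their restriction to simple tensors. Both are immediate consequences of the universal properties already established, so the verification of associativity and unitality reduces to the corresponding identities in the $R$-semimodule $S$.
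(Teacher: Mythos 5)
Your proof is correct, but it takes a different route from the paper's. The paper fixes $r\in R$ and observes that $(s,t)\mapsto rs\otimes t$ is a generalized $\CatCu$-bimorphism $S\times T\to S\otimes_{\CatCu}T$, hence induces $\bar\alpha_r\colon S\otimes_{\CatCu}T\to S\otimes_{\CatCu}T$; it then assembles the family $(\bar\alpha_r)_r$ into a map $R\times(S\otimes_{\CatCu}T)\to S\otimes_{\CatCu}T$ and leaves to the reader the verification that this assembly is a $\CatCu$-bimorphism and satisfies the semimodule axioms. You instead work at the level of the monoidal structure: you curry the multiplication into $\bar\mu\colon R\otimes_{\CatCu}S\to S$, apply bifunctoriality to get $\bar\mu\otimes\id_T$, and precompose with the associativity isomorphism of \autoref{prp:tensProdAssoc}. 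This buys you the bimorphism property essentially for free, since your $\nu$ is the composition of the universal $\CatCu$-bimorphism with a genuine $\CatCu$-morphism $\Theta$, and it localizes the verification of unitality and associativity to the rigidity statement of \autoref{prp:tensProdCu}(2) (two generalized $\CatCu$-morphisms agreeing on simple tensors coincide) — precisely the part the paper's proof compresses into ``one shows that.'' The price is a dependence on \autoref{prp:tensProdAssoc} and on the (unstated but routine) fact that the induced morphism $\bar\mu\otimes\id_T$ and the associator act on simple tensors by the expected formulas; the paper's construction avoids associativity of the tensor product altogether. Both arguments are sound and yield the same multiplication.
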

\begin{proof}
Given $r\in R$, consider the map
\[
\alpha_r\colon S\times T\to S\otimes_{\CatCu} T,\quad
(s,t)\mapsto rs\otimes t,\quad
\txtFA s\in S, t\in T.
\]
This is a generalized $\CatCu$-bimorphism, which therefore induces a generalized $\CatCu$-morphism $\bar\alpha_r\colon S\otimes_{\CatCu} T\to S\otimes_{\CatCu} T$ with $\alpha_r(s\otimes t)=(rs)\otimes t$.
Using the universal properties of the tensor product (see \autoref{prp:tensProdCu}), one shows that the map
\[
R\times(S\otimes_{\CatCu} T)\to S\otimes_{\CatCu} T,\quad
(r,x)\mapsto \bar\alpha_r(x),\quad
\txtFA r\in R, x\in S\otimes_{\CatCu} T,
\]
is a $\CatCu$-bimorphism defining an $R$-multiplication on $S\otimes_\CatCu T$.
\end{proof}

\begin{lma}
\label{prp:constructingSolidMod}
Let $R$ be a solid $\Cu$-semiring, let $S$ be a $\CatCu$-semigroup, and let $\varphi\colon R\times S\to S$ be a generalized $\CatCu$-bimorphism.
Assume that $\varphi(1_R,a)=a$ for every $a\in S$.
Then $\varphi$ defines an $R$-multiplication on $S$.
Thus, $\varphi$ is a $\CatCu$-bimorphism satisfying
\[
\varphi(r_1r_2,a)=\varphi(r_1,\varphi(r_2,a)),
\]
for every $r_1,r_2\in R$ and $a\in S$.
\end{lma}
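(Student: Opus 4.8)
The plan is to show that $\varphi$ is, up to the canonical isomorphism $S\cong R\otimes_{\CatCu}S$, nothing but the module multiplication on $R\otimes_{\CatCu}S$ furnished by \autoref{prp:moduleTensorProd}; once this identification is in place, both the (non-generalized) bimorphism property of $\varphi$ and the associativity formula come for free from the $R$-semimodule axioms.

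First I would use the universal property of the tensor product (\autoref{prp:tensProdCu}) to convert the generalized $\CatCu$-bimorphism $\varphi$ into a generalized $\CatCu$-morphism $\bar\varphi\colon R\otimes_{\CatCu}S\to S$ with $\bar\varphi(r\otimes a)=\varphi(r,a)$. Dually, the universal $\CatCu$-bimorphism $\omega\colon R\times S\to R\otimes_{\CatCu}S$ restricts in its first variable to the generalized $\CatCu$-morphism $\psi\colon S\to R\otimes_{\CatCu}S$, $a\mapsto 1\otimes a$. The hypothesis $\varphi(1_R,a)=a$ immediately gives $\bar\varphi\circ\psi=\id_S$, so it remains only to produce the opposite composite.

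The key step is to prove the identity $1\otimes\varphi(r,a)=r\otimes a$ in $R\otimes_{\CatCu}S$ for all $r\in R$ and $a\in S$, mirroring the argument for the implication \enumStatement{6}$\Rightarrow$\enumStatement{2} of \autoref{prp:solidTFAE}. For fixed $a\in S$ I would consider the map $\tau_a\colon R\times R\to R\otimes_{\CatCu}S$ given by $\tau_a(r,r')=r\otimes\varphi(r',a)$; since $r'\mapsto\varphi(r',a)$ is a generalized $\CatCu$-morphism and the tensor map is one in each variable, \autoref{prp:bimorCu} shows $\tau_a$ is a generalized $\CatCu$-bimorphism, hence induces $\bar\tau_a\colon R\otimes_{\CatCu}R\to R\otimes_{\CatCu}S$ with $\bar\tau_a(r\otimes r')=r\otimes\varphi(r',a)$. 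Solidity of $R$ in the form $1\otimes r=r\otimes 1$ (condition \enumStatement{6} of \autoref{prp:solidTFAE}) then yields $1\otimes\varphi(r,a)=\bar\tau_a(1\otimes r)=\bar\tau_a(r\otimes 1)=r\otimes\varphi(1_R,a)=r\otimes a$. Consequently $\psi\circ\bar\varphi$ agrees with $\id_{R\otimes_{\CatCu}S}$ on every simple tensor; since both are generalized $\CatCu$-morphisms and the simple tensors lie in the image of the completion map onto $R\otimes_{\CatCu}S=\gamma(R\otimes_{\CatPreW}S)$, additivity together with \autoref{thm:Cuification}\enumStatement{3} (applied with $\leq$ in both directions) forces $\psi\circ\bar\varphi=\id$. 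Thus $\bar\varphi$ is a $\CatCu$-isomorphism with inverse $\psi$.

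Finally I would transport structure along this isomorphism. By \autoref{prp:moduleTensorProd} the $\CatCu$-semigroup $R\otimes_{\CatCu}S$ carries the $R$-multiplication $r\cdot(r'\otimes a)=(rr')\otimes a$, and conjugating it produces a $\CatCu$-bimorphism $\mu_S\colon R\times S\to S$, $\mu_S(r,a)=\bar\varphi\bigl(r\cdot\psi(a)\bigr)$, which inherits the semimodule axioms precisely because $\psi\circ\bar\varphi=\id$. A direct computation gives $\mu_S(r,a)=\bar\varphi(r\otimes a)=\varphi(r,a)$, so $\varphi=\mu_S$ is an honest $R$-multiplication; in particular $\varphi$ is a genuine $\CatCu$-bimorphism and satisfies $\varphi(r_1r_2,a)=\varphi(r_1,\varphi(r_2,a))$. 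I expect the only real obstacle to be the identity $1\otimes\varphi(r,a)=r\otimes a$: everything afterwards is formal transport of structure, whereas this step is exactly where solidity of $R$ must enter, and it requires setting up the auxiliary bimorphism $\tau_a$ so that the relation $1\otimes r=r\otimes 1$ in $R\otimes_{\CatCu}R$ can be pushed forward.
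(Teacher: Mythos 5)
Your proposal is correct and follows essentially the same route as the paper: form $\bar\varphi$ and $\rho(a)=1_R\otimes a$, verify $\bar\varphi\circ\rho=\id_S$ from the unitality hypothesis, establish $1_R\otimes\varphi(r,a)=r\otimes a$ via solidity so that $\rho\circ\bar\varphi=\id$ on simple tensors and hence everywhere, and then read off the $\CatCu$-bimorphism property and associativity from the $R$-multiplication on $R\otimes_{\CatCu}S$. The only cosmetic difference is that you re-derive the key identity directly from condition \enumStatement{6} of \autoref{prp:solidTFAE} with an auxiliary bimorphism $\tau_a$, whereas the paper cites the automatic $R$-linearity of $\rho\circ\bar\varphi$ (condition \enumStatement{4} of that proposition, itself proved by the same device), so the two arguments coincide in substance.
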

\begin{proof}
Let $\varphi\colon R\times S\to S$ be a map as in the statement.
Let $\bar{\varphi}\colon R\otimes_{\CatCu} S\to S$ be the generalized $\CatCu$-morphism induced by $\varphi$.
We will show that $\bar{\varphi}$ is a $\CatCu$-isomorphism.
Consider the map
\[
\rho\colon S\to R\otimes_{\CatCu} S,\quad
\rho(a)=1_R\otimes a,\quad
\txtFA a\in S.
\]
It is clear that $\rho$ is a generalized $\CatCu$-morphism.
Using the assumption on $\varphi$ at the third step, we deduce for each $a\in S$ that
\[
\bar{\varphi}\circ\rho(a)
=\bar{\varphi}(1_R\otimes a)
=\varphi(1_R,a)
=a.
\]
Thus, $\bar{\varphi}\circ\rho=\id_S$.
For the converse, consider the generalized $\CatCu$-morphism $\rho\circ\bar{\varphi}\colon R\otimes_{\CatCu} S\to R\otimes_{\CatCu} S$.
For each $r\in R$ and $a\in S$ we have
\[
\rho\circ\bar{\varphi}(r\otimes a) = 1_R\otimes\varphi(r,a).
\]
By \autoref{prp:moduleTensorProd}, $R\otimes_\CatCu S$ has an $R$-multiplication such that $r_1(r_2\otimes a)=(r_1r_2)\otimes a$ for all $r_1,r_2\in R$ and $a\in S$.
It follows from \autoref{prp:solidTFAE} that $\rho\circ\bar{\varphi}$ is $R$-linear.
Using this at the second step, we obtain for each $r\in R$ and $a\in S$ that
\begin{align*}
\rho\circ\bar{\varphi}(r\otimes a)
= \rho\circ\bar{\varphi}(r\cdot (1_R\otimes a))
= r\cdot \left( \rho\circ\bar{\varphi}(1_R\otimes a) \right)
= r\cdot (1_R\otimes a)
= r\otimes a.
\end{align*}
This shows that $\rho\circ\bar{\varphi}$ is the identity map on simple tensors in $R\otimes_{\CatCu} S$.
It follows $\rho\circ\bar{\varphi}=\id_{R\otimes_\CatCu S}$.

In general, every $\CatPom$-isomorphism between $\CatCu$-semigroups automatically preserves the way-below relation and suprema of increasing sequences, since these notions are completely encoded in the order structure.
We clearly have that $\bar{\varphi}$ is an isomorphism of \pom{s}.
Therefore it is also a $\CatCu$-isomorphism.
It follows that $\varphi$ is a $\CatCu$-bimorphism.

Let $r_1,r_2\in R$ and $a\in S$.
Since $\rho\circ\bar{\varphi}=\id_{R\otimes_\CatCu S}$, we have $r_2\otimes a = 1_R \otimes \varphi(r_2,a)$.
Using the $R$-multiplication of $R\otimes_\CatCu S$ to multiply by $r_1$, we deduce
\[
(r_1r_2)\otimes a
= r_1\cdot (r_2\otimes a)
= r_1\cdot(1_R \otimes \varphi(r_2,a))
= r_1 \otimes \varphi(r_2,a).
\]
Applying $\bar{\varphi}$, we have
\[
\varphi(r_1r_2,a)
=\bar{\varphi}((r_1r_2)\otimes a)
=\bar{\varphi}(r_1 \otimes \varphi(r_2,a))
=\varphi(r_1,\varphi(r_2,a)),
\]
as desired.
\end{proof}

\begin{thm}
\label{prp:solidModuleTFAE}
Let $R$ be a solid $\CatCu$-semiring, and let $S$ be a $\CatCu$-semigroup{}.
Then the following are equivalent:
\beginEnumStatements
\item
The $\CatCu$-semigroup{} $S$ has an $R$-multiplication.
\item
There exists a $\CatCu$-isomorphism between $R\otimes_{\CatCu} S$ and $S$.
\item
The map $S\to R\otimes_\CatCu S$ given by $a\mapsto 1_R\otimes a$ is a $\CatCu$-isomorphism.
\end{enumerate}
\end{thm}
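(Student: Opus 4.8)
The plan is to prove the three-way equivalence as the cycle \implStatements{1}{3}, \implStatements{3}{2}, \implStatements{2}{1}, using \autoref{prp:solidTFAE} for the first implication, triviality for the second, and \emph{transport of structure} together with \autoref{prp:moduleTensorProd} for the third. Throughout I would use that $R$ is a semimodule over itself via the $\CatCu$-product of its semiring structure, and that (as noted in the proof of \autoref{prp:constructingSolidMod}) every $\CatPom$-isomorphism between $\CatCu$-semigroups is automatically a $\CatCu$-isomorphism, since $\ll$ and suprema of increasing sequences are encoded in the order.

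For \implStatements{1}{3}, suppose $S$ carries an $R$-multiplication, and let $\bar\varphi\colon R\otimes_\CatCu S\to S$ be the generalized $\CatCu$-morphism it induces. Since $R$ is solid and $S$ is a semimodule over $R$, condition~(2) of \autoref{prp:solidTFAE} gives that $\bar\varphi$ is a $\CatCu$-isomorphism. The map $\psi\colon S\to R\otimes_\CatCu S$, $a\mapsto 1_R\otimes a$, satisfies $\bar\varphi\circ\psi=\id_S$ because $\bar\varphi(1_R\otimes a)=1_R a=a$; as $\bar\varphi$ is invertible this forces $\psi=\bar\varphi^{-1}$, so $\psi$ is a $\CatCu$-isomorphism, which is exactly statement~(3). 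The implication \implStatements{3}{2} is immediate, as (3) exhibits a concrete isomorphism $R\otimes_\CatCu S\cong S$.

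For \implStatements{2}{1} I would argue by transport of structure. Applying \autoref{prp:moduleTensorProd} with $R$ (a module over itself) in the first slot and $S$ in the second, the tensor product $R\otimes_\CatCu S$ acquires an $R$-multiplication determined by $r\cdot(r'\otimes a)=(rr')\otimes a$; in particular $1_R\cdot x=x$ for every $x\in R\otimes_\CatCu S$. Given any $\CatCu$-isomorphism $\theta\colon R\otimes_\CatCu S\to S$ furnished by (2), I would define
\[
R\times S\to S,\quad (r,a)\mapsto \theta\bigl(r\cdot\theta^{-1}(a)\bigr).
\]
Associativity and the unit law $1_R\cdot a=\theta(\theta^{-1}(a))=a$ are inherited verbatim from the corresponding identities on $R\otimes_\CatCu S$, so once this map is seen to be a $\CatCu$-bimorphism it defines an $R$-multiplication on $S$, yielding (1).

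Since all the pieces are in place, there is no deep obstacle; the one point requiring care is checking in \implStatements{2}{1} that the transported operation genuinely preserves the $\CatCu$-structure. Concretely: for fixed $r$ the map $\theta\circ(r\cdot\freeVar)\circ\theta^{-1}$ is a composite of $\CatCu$-morphisms hence a $\CatCu$-morphism (and symmetrically in the other variable), while $r'\ll r$ and $a'\ll a$ give $\theta^{-1}(a')\ll\theta^{-1}(a)$, then $r'\cdot\theta^{-1}(a')\ll r\cdot\theta^{-1}(a)$ by the bimorphism property on $R\otimes_\CatCu S$, and finally $\theta$ preserves $\ll$. Alternatively, I would sidestep the joint-continuity bookkeeping by observing that the transported map is a generalized $\CatCu$-bimorphism satisfying $1_R\cdot a=a$ and invoking \autoref{prp:constructingSolidMod}, which upgrades it to an $R$-multiplication directly.
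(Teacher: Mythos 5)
Your proof is correct and follows essentially the same route as the paper: the same cycle \implStatements{1}{3}$\Rightarrow$\implStatements{3}{2}$\Rightarrow$\implStatements{2}{1}, with \autoref{prp:solidTFAE} giving that $\bar\varphi$ is an isomorphism whose inverse is $a\mapsto 1_R\otimes a$, and \autoref{prp:moduleTensorProd} supplying the $R$-multiplication on $R\otimes_\CatCu S$ to be transported along the isomorphism in (2). You merely spell out the transport-of-structure and $\ll$-bookkeeping that the paper leaves implicit.
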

\begin{proof}
It is clear that \enumStatement{3} implies \enumStatement{2}.
It follows from \autoref{prp:moduleTensorProd} that \enumStatement{2} implies \enumStatement{1}.
To show that \enumStatement{1} implies \enumStatement{3}, assume that $S$ has an $R$-multiplication $\varphi\colon R\times S\to S$.
By \autoref{prp:solidTFAE}, the induced $\CatCu$-morphism $\bar{\varphi}\colon R\otimes_{\CatCu} S\to S$ is an isomorphism.
It is straightforward to check that the inverse of $\bar{\varphi}$ sends $a$ in $S$ to $1_R\otimes a$.
\end{proof}

The following result should be compared with an analogous result for strongly self-absorbing \ca{s} in \cite[Proposition~5.12]{TomWin07ssa}.

\begin{prp}
\label{prp:solidMorImpliesAbsorb}
Let $R$ be a solid $\Cu$-semiring, and let $S$ be another $\Cu$-semiring.
Then the following are equivalent:
\beginEnumStatements
\item
There is a $\CatCu$-isomorphism between $R\otimes_{\CatCu} S$ and $S$.
\item
There exists a unital, multiplicative, generalized $\Cu$-morphism $R\to S$.
\item
There exists a unital, generalized $\Cu$-morphism $R\to S$.
\end{enumerate}
Moreover, if a map as in \enumStatement{3} exists, then it is unique (and therefore automatically multiplicative).
Furthermore, if $1_S$ is a compact element, then any map as in \enumStatement{3} is automatically a $\CatCu$-morphism.
\end{prp}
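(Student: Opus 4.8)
The plan is to route everything through one construction—promoting $S$ to a $\CatCu$-semimodule over $R$—and then to let solidity do the work via the automatic $R$-linearity of \autoref{prp:solidTFAE}(4). First I would record the key construction. Given any unital generalized $\CatCu$-morphism $\phi\colon R\to S$ as in~(3), the assignment $(r,s)\mapsto\phi(r)s$, with the product taken in the $\CatCu$-semiring $S$, is a generalized $\CatCu$-bimorphism: it is the composite of $\phi$ with the multiplication of $S$, and since both preserve suprema of increasing sequences it is continuous in each variable (\autoref{prp:bimorCu}). Moreover $\phi(1_R)s=1_S s=s$, so \autoref{prp:constructingSolidMod} applies and endows $S$ with an $R$-multiplication, making $S$ a $\CatCu$-semimodule over $R$. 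I also note that $R$ is a $\CatCu$-semimodule over itself via its own product.

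With this in hand the implication \implStatements{3}{2} is immediate: viewing $\phi\colon R\to S$ as a generalized $\CatCu$-morphism between the $R$-semimodules $R$ and $S$, \autoref{prp:solidTFAE}(4) forces $\phi$ to be $R$-linear, which reads $\phi(rr')=r\cdot\phi(r')=\phi(r)\phi(r')$ for all $r,r'\in R$; that is, $\phi$ is multiplicative. The converse \implStatements{2}{3} is trivial. For \implStatements{3}{1} I would use that $S$ is a semimodule over the solid semiring $R$, so by \autoref{prp:solidTFAE}(2) the multiplication induces an isomorphism $R\otimes_{\CatCu}S\cong S$, which is~(1). Conversely, for \implStatements{1}{3}, a $\CatCu$-isomorphism $R\otimes_{\CatCu}S\cong S$ forces $S$ to carry an $R$-multiplication $\varphi$ by \autoref{prp:solidModuleTFAE}; then $\phi:=\varphi(\freeVar,1_S)$ is a generalized $\CatCu$-morphism (the restriction of a $\CatCu$-bimorphism to a fixed second variable preserves suprema of increasing sequences) with $\phi(1_R)=1_R\cdot 1_S=1_S$, giving~(3). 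This closes the cycle and establishes the equivalence of (1)--(3).

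For the uniqueness clause, suppose $\phi_1,\phi_2$ are two unital generalized $\CatCu$-morphisms $R\to S$. Using $\phi_1$ to equip $S$ with the $R$-multiplication $r\cdot s=\phi_1(r)s$ as above, \autoref{prp:solidTFAE}(4) applied to $\phi_2$ yields that $\phi_2$ is $R$-linear, whence $\phi_2(r)=\phi_2(r\cdot 1_R)=\phi_1(r)\,\phi_2(1_R)=\phi_1(r)1_S=\phi_1(r)$; thus $\phi_1=\phi_2$. Since the map exhibited in~(2) is multiplicative, the unique map is automatically multiplicative. Finally, for the compactness statement I would observe that by \autoref{prp:constructingSolidMod} the map $\varphi(r,s)=\phi(r)s$ is not merely a generalized but a genuine $\CatCu$-bimorphism, hence preserves $\ll$ jointly. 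Taking $r'\ll r$ in $R$ together with $1_S\ll 1_S$—this is exactly where compactness of $1_S$ is used—gives $\phi(r')=\varphi(r',1_S)\ll\varphi(r,1_S)=\phi(r)$. So $\phi$ preserves compact containment and, being already a generalized $\CatCu$-morphism, is a $\CatCu$-morphism.

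The conceptual crux is to recognize that multiplicativity and uniqueness need not be verified by hand: \autoref{prp:constructingSolidMod} upgrades a bare unital generalized morphism into a genuine semimodule structure, after which \autoref{prp:solidTFAE}(4) supplies both facts at once. The only genuinely delicate point is the compactness clause, where one must remember that a $\CatCu$-bimorphism preserves $\ll$ only \emph{jointly}, so single-variable $\ll$-preservation at the unit is available precisely when $1_S$ is itself compact; without that hypothesis $\phi$ need only be a generalized $\CatCu$-morphism.
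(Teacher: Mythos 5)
Your proof is correct and follows essentially the same route as the paper: upgrade a unital generalized $\CatCu$-morphism to an $R$-multiplication via \autoref{prp:constructingSolidMod}, then invoke \autoref{prp:solidModuleTFAE} and \autoref{prp:solidTFAE}. The only (harmless) variation is that you obtain multiplicativity and uniqueness directly from the automatic $R$-linearity of \autoref{prp:solidTFAE}(4) applied to the maps $R\to S$ themselves, whereas the paper derives multiplicativity from the identity $\psi=\varphi$ supplied by \autoref{prp:constructingSolidMod} and uniqueness from \autoref{prp:solidModuleUnique}; these amount to the same underlying fact.
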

\begin{proof}
It is clear that \enumStatement{2} implies \enumStatement{3}.
Let $\alpha\colon R\to S$ be a unital, generalized $\Cu$-morphism.
This induces a map
\[
\varphi_\alpha\colon R\times S\to S,\quad
(r,a)\mapsto \alpha(r)\cdot a,\quad
\txtFA r\in R, a\in S.
\]
It follows easily from the properties of $\alpha$ that $\varphi_\alpha$ is a generalized $\CatCu$-bimorphism satisfying $\varphi_\alpha(1_R,a)=a$ for every $a\in S$.
Then it follows from \autoref{prp:constructingSolidMod} that $\varphi_\alpha$ is a $\CatCu$-bimorphism defining an $R$-multiplication on $S$.

Thus, using \autoref{prp:solidModuleTFAE}, we obtain that \enumStatement{3} implies \enumStatement{1}.
Moreover, by \autoref{prp:solidModuleUnique}, any two $R$-multiplications on $S$ are equal.
This implies that a map satisfying \enumStatement{3} is unique (if it exists).

Let us show that \enumStatement{1} implies \enumStatement{2}.
By \autoref{prp:moduleTensorProd}, the semigroup $R\otimes_\CatCu S$ has an $R$-multiplication.
Therefore, by assumption, $S$ has an $R$-multiplication $\varphi\colon R\times S\to S$.
Consider the map
\[
\alpha\colon R\to S,\quad
r\mapsto \varphi(r,1_S),\quad
\txtFA r\in R.
\]
It is clear that $\alpha$ is a unital, generalized $\CatCu$-morphism.
In order to show that $\alpha$ is multiplicative, we consider the following map
\[
\psi\colon R\times S\to S,\quad
(r,a)\mapsto \varphi(r,1_S)a,\quad
\txtFA r\in R, a\in S.
\]
It is easy to see that $\psi$ is a generalized $\CatCu$-bimorphism satisfying $\psi(1_R,a)=a$ for every $a\in S$.
By \autoref{prp:constructingSolidMod}, we have $\psi=\varphi$.
Using this at the third step, we deduce
\begin{align*}
\alpha(r_1r_2)
=\varphi(r_1r_2,1_S)
&=\varphi(r_1,\varphi(r_2,1_S)) \\
&=\psi(r_1,\varphi(r_2,1_S))
=\varphi(r_1,1_S)\varphi(r_2,1_S)
=\alpha(r_1)\alpha(r_2).
\end{align*}
Thus, $\alpha$ is multiplicative, as desired.

Finally, if $1_S$ is compact, then it is also clear from the definition that $\alpha$ is a $\CatCu$-morphism.
\end{proof}

\vspace{5pt}
\section[Cuntz semigroups of purely infinite {C}*-algebras]{Cuntz semigroups of purely infinite \texorpdfstring{{C}*-algebras}{{C}*-algebras}}
\label{sec:pureInf}

In this section, we study $\CatCu$-semigroups that are semimodules over the $\CatCu$-sem\-i\-ring $\{0,\infty\}$.
If $A$ is a purely infinite, strongly self-absorbing \ca{} (for example $\mathcal{O}_2$, $\mathcal{O}_\infty$, or the tensor product of $\mathcal{O}_\infty$ with a UHF-algebra of infinite type), then $\Cu(A)\cong\{0,\infty\}$.

In \autoref{prp:PIModTFAE}, we characterize the $\CatCu$-semimodules over $\{0,\infty\}$ as the $\CatCu$-sem\-i\-groups that are idempotent.
We show that the tensor product of a given $\CatCu$-semigroup $S$ with $\{0,\infty\}$ is canonically isomorphic to $\Lat_{\mathrm{f}}(S)$, the semigroup of singly-generated ideals in $S$;
see \autoref{prp:tensWithInfty}.

In \autoref{prp:pureInfCa}, we apply our results to Cuntz semigroups of \ca{s} by showing that a (not necessarily simple) \ca{} $A$ is purely infinite if and only if
\[
\Cu(A)\cong \{0,\infty\}\otimes_{\CatCu}\Cu(A).
\]
We deduce that for every separable \ca{} $A$, there are natural isomorphisms of the following $\CatCu$-semigroups:
\[
\Cu(\mathcal{O}_\infty\otimes A)
\cong \Lat(A)
\cong \Lat(\Cu(A))
\cong \{0,\infty\}\otimes_{\CatCu}\Cu(A);
\]
see \autoref{prp:tensCaWithInfty}.

The following is easy to prove and hence we omit the details:

\begin{lma}
\label{prp:PureInfSolid}
The $\CatCu$-semiring $\{0,\infty\}$ is solid.
\end{lma}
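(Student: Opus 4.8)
The plan is to apply the criterion for solidity established in \autoref{prp:solidTFAE}, specifically the equivalence of solidity with condition \enumStatement{6}, namely that $a\otimes 1=1\otimes a$ for every $a\in R$ where here $R=\{0,\infty\}$. Since $\{0,\infty\}$ has only two elements, it suffices to verify this identity for $a=0$ and $a=1$ (where $1=\infty$ is the multiplicative unit of the semiring $\{0,\infty\}$, with product given by $0\cdot x=0$ and $\infty\cdot\infty=\infty$). Thus the verification reduces to a single nontrivial instance.

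First I would fix notation: in the $\CatCu$-semiring $\{0,\infty\}$, the multiplicative unit is $\infty$, since the $\CatCu$-product is the one induced by the isomorphism $\mathcal{O}_\infty\otimes\mathcal{O}_\infty\cong\mathcal{O}_\infty$, and under $\Cu(\mathcal{O}_\infty)\cong\{0,\infty\}$ the class of the unit is $\infty$. The element $0$ is the additive and multiplicative zero. For $a=1=\infty$, the desired identity $\infty\otimes\infty=\infty\otimes\infty$ is trivially true. The only content is therefore the case $a=0$, where one must check $0\otimes\infty=\infty\otimes 0$ in $\{0,\infty\}\otimes_{\CatCu}\{0,\infty\}$; but both sides equal $0$ since any simple tensor with a zero entry is the zero element of the tensor product (the universal $\CatCu$-bimorphism $\omega$ is biadditive and hence sends $(0,\cdot)$ and $(\cdot,0)$ to $0$). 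This establishes condition \enumStatement{6} of \autoref{prp:solidTFAE}, and hence solidity.

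An alternative, perhaps cleaner route would be to invoke \autoref{prp:tensWithInfty} (referenced in the section introduction) which identifies $\{0,\infty\}\otimes_{\CatCu}S$ with $\Lat_{\mathrm{f}}(S)$; applying this with $S=\{0,\infty\}$ gives $\{0,\infty\}\otimes_{\CatCu}\{0,\infty\}\cong\Lat_{\mathrm{f}}(\{0,\infty\})\cong\{0,\infty\}$, since the only singly-generated ideals of $\{0,\infty\}$ are $\{0\}$ and $\{0,\infty\}$ itself. One would then check that this isomorphism is the one induced by the multiplication map, which is immediate since the multiplication $\{0,\infty\}\times\{0,\infty\}\to\{0,\infty\}$ agrees with the join operation $\vee$ on $\Lat_{\mathrm{f}}$ under the identification. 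However, this route depends on a forward reference, so I would favor the direct verification via \autoref{prp:solidTFAE}.

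There is essentially no hard part here: the statement is genuinely easy because $\{0,\infty\}$ is the minimal nonzero $\CatCu$-semiring and solidity is a finite check. The only point requiring mild care is confirming that the tensor product $\{0,\infty\}\otimes_{\CatCu}\{0,\infty\}$ has not been enlarged by the $\CatCu$-completion step — that is, that $\gamma$ applied to the $\CatPreW$-tensor product does not introduce new elements beyond $0$ and $\infty$. This follows because $\{0,\infty\}$ is already a $\CatCu$-semigroup with all elements compact, so its tensor square is generated by the single nonzero simple tensor $\infty\otimes\infty$, which together with biadditivity and the idempotency $\infty+\infty=\infty$ forces the tensor product to be $\{0,\infty\}$. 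This is exactly why the paper remarks that the proof is easy and omits the details.
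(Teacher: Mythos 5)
Your proof is correct. The paper itself omits the argument (``The following is easy to prove and hence we omit the details''), and your verification of condition \enumStatement{6} of \autoref{prp:solidTFAE} --- checking $a\otimes 1=1\otimes a$ for $a=0$ and $a=\infty$, with the unit being $\infty$ --- is exactly the finite check the authors had in mind. One small remark: your final paragraph, worrying about whether the $\CatCu$-completion enlarges $\{0,\infty\}\otimes_{\CatCu}\{0,\infty\}$ beyond two elements, is unnecessary for the route you chose; condition \enumStatement{6} is an identity between specific elements of the tensor product (both sides are the zero element when $a=0$, and literally the same simple tensor when $a=\infty$), so no computation of the full tensor product is required. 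That observation would only be needed for your alternative route via \autoref{prp:tensWithInfty}, which, as you note, is a forward reference and best avoided here.
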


Recall that a commutative semigroup $S$ is called \emph{idempotent} if each of its elements is idempotent, that is, if $2a=a$ for every $a\in S$. \index{terms}{semigroup!idempotent}
In the literature, an idempotent, commutative semigroup $S$ is also called a \emph{commutative band}, or a \emph{semilattice} (with `join' in the semilattice corresponding to addition in the semigroup).

\begin{thm}
\label{prp:PIModTFAE}
Let $S$ be a $\CatCu$-semigroup.
Then the following are equivalent:
\beginEnumStatements
\item
We have $S\cong \{0,\infty\}\otimes_{\CatCu} S$.
\item
The $\CatCu$-semigroup $S$ has a $\{0,\infty\}$-multiplication.
\item
The semigroup $S$ is idempotent.
\end{enumerate}
\end{thm}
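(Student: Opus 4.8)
The plan is to prove the cycle of implications \enumStatement{1}$\Rightarrow$\enumStatement{2}$\Rightarrow$\enumStatement{3}$\Rightarrow$\enumStatement{1}, using the machinery of solid $\CatCu$-semirings already developed. Since $\{0,\infty\}$ is solid by \autoref{prp:PureInfSolid}, the equivalence of \enumStatement{1} and \enumStatement{2} is immediate from \autoref{prp:solidModuleTFAE}: conditions (i) and (ii) of that theorem are precisely our \enumStatement{2} and \enumStatement{1}. So the real content is tying these to idempotency \enumStatement{3}, and the natural route is to establish \enumStatement{2}$\Rightarrow$\enumStatement{3} and \enumStatement{3}$\Rightarrow$\enumStatement{2}.

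For \enumStatement{2}$\Rightarrow$\enumStatement{3}, I would argue as follows. Suppose $S$ carries a $\{0,\infty\}$-multiplication $\varphi\colon\{0,\infty\}\times S\to S$ with $\varphi(1,a)=a$, where the unit of $\{0,\infty\}$ is the element $\infty$. Here the semiring $\{0,\infty\}=\elmtrySgp{0}$ has $\infty$ as its multiplicative unit and $\infty+\infty=\infty$, so $2\cdot\infty=\infty$ in $\{0,\infty\}$. For any $a\in S$ we then compute, using biadditivity in the first variable and the module axiom $1a=a$,
\[
2a = \varphi(\infty,a)+\varphi(\infty,a) = \varphi(\infty+\infty,a) = \varphi(\infty,a) = a,
\]
which shows $S$ is idempotent. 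The only subtlety is to confirm that the unit of the semiring $\{0,\infty\}$ really is $\infty$ (not $0$); this is clear since $\infty$ is the nonzero element and any $\CatCu$-semiring must have $1\neq 0$ unless $S\cong\{0\}$, and the case $S\cong\{0\}$ is trivially idempotent anyway.

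For \enumStatement{3}$\Rightarrow$\enumStatement{2}, the plan is to construct the $\{0,\infty\}$-multiplication directly and invoke \autoref{prp:constructingSolidMod}. Given that $S$ is idempotent, define
\[
\varphi\colon\{0,\infty\}\times S\to S,\quad
\varphi(0,a):=0,\quad
\varphi(\infty,a):=a,\quad
\txtFA a\in S.
\]
I would verify that $\varphi$ is a generalized $\CatCu$-bimorphism satisfying $\varphi(\infty,a)=a$; then \autoref{prp:constructingSolidMod} (applicable since $\{0,\infty\}$ is solid) upgrades $\varphi$ automatically to a genuine $\{0,\infty\}$-multiplication, giving \enumStatement{2}. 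The checks that $\varphi$ is biadditive and continuous in each variable are where idempotency is essential: additivity in the first variable requires $\varphi(\infty+\infty,a)=\varphi(\infty,a)+\varphi(\infty,a)$, i.e. $a=2a$, which is exactly \enumStatement{3}; and $\varphi(\infty+0,a)=\varphi(\infty,a)+\varphi(0,a)$ reduces to $a=a+0$. Continuity and preservation of the way-below relation in each variable are routine since $\varphi$ is, in each fixed variable, either the identity, the zero map, or a map out of the two-element lattice $\{0,\infty\}$.

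The main obstacle will be the verification that $\varphi$ in \enumStatement{3}$\Rightarrow$\enumStatement{2} is well-behaved as a map from $\{0,\infty\}$ in its \emph{first} variable: one must check biadditivity, that $\varphi(\freeVar,a)$ preserves the order $0\leq\infty$ and sends the supremum of the constant increasing sequence correctly, and that joint compact containment holds, i.e. $s'\ll s$ in $\{0,\infty\}$ and $a'\ll a$ in $S$ imply $\varphi(s',a')\ll\varphi(s,a)$. Because $\{0,\infty\}$ has only the trivial way-below relations ($0\ll 0$, $0\ll\infty$, $\infty\ll\infty$), this reduces to a short finite case analysis, the nontrivial case being $\varphi(\infty,a')=a'\ll a=\varphi(\infty,a)$, which holds by hypothesis. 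Thus the entire argument is elementary once one recognizes that \autoref{prp:solidModuleTFAE} and \autoref{prp:constructingSolidMod} do the heavy lifting, and the sole genuine computation is the one-line equivalence between the module axiom over $\{0,\infty\}$ and the idempotency relation $2a=a$.
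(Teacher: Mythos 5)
Your proposal is correct and follows essentially the same route as the paper: the equivalence of \enumStatement{1} and \enumStatement{2} via \autoref{prp:PureInfSolid} and \autoref{prp:solidModuleTFAE}, the computation $a=(2\infty)\cdot a=2a$ for \enumStatement{2}$\Rightarrow$\enumStatement{3}, and the construction of the same map $\varphi$ followed by \autoref{prp:constructingSolidMod} for \enumStatement{3}$\Rightarrow$\enumStatement{2}. (Your worry about joint compact containment in the last step is unnecessary, since \autoref{prp:constructingSolidMod} only requires a \emph{generalized} $\CatCu$-bimorphism with $\varphi(1,a)=a$ and upgrades it automatically.)
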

\begin{proof}
Since $\{0,\infty\}$ is a solid $\CatCu$-semiring, the equivalence of \enumStatement{1} and \enumStatement{2} follows from \autoref{prp:solidModuleTFAE}.
To show that \enumStatement{2} implies \enumStatement{3}, let $a$ be an element of $S$.
Since $\infty$ is the unit of the $\CatCu$-semiring $\{0,\infty\}$ and since $2\infty=\infty$ in $\{0,\infty\}$, we obtain
\[
a = \infty\cdot a = (2\infty)\cdot a = 2(\infty\cdot a) = 2a,
\]
as desired.

Next, to prove that \enumStatement{3} implies \enumStatement{2}, assume that $S$ is an idempotent $\CatCu$-semigroup.
Consider the map
\[
\varphi\colon\{0,\infty\}\times S\to S,\quad
(0,a)\mapsto 0,\quad
(\infty,a)\mapsto a,\quad
\txtFA a\in S.
\]
We have that $\varphi$ is a generalized $\CatCu$-morphism in the second variable.
It is also clear that $\varphi$ preserves zero, order and suprema of increasing sequences (there are no nontrivial ones) in the first variable.
Using that $S$ is idempotent, it follows that $\varphi$ is additive in the first variable.
By \autoref{prp:bimorCu}, we have that $\varphi$ is a generalized $\CatCu$-bimorphism.
Then we obtain from \autoref{prp:constructingSolidMod} that $S$ has $\{0,\infty\}$-multiplication, as desired.
\end{proof}

Recall that, for a given $\CatCu$-semigroup $S$, we denote by $\Lat_{\mathrm{f}}(S)$ the $\CatCu$-sem\-i\-group of singly-generated ideals in $S$, as considered in \autoref{prp:LatCu}.

\begin{prp}
\label{prp:tensWithInfty}
Let $S$ be a $\CatCu$-semigroup.
Then there is a natural $\CatCu$-isomor\-phism
\[
\{0,\infty\}\otimes_{\CatCu}S\cong\Lat_{\mathrm{f}}(S)
\]
identifying $\infty\otimes a$ with $\Idl(a)$.
\end{prp}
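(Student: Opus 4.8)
The plan is to exhibit a $\CatCu$-bimorphism $\varphi\colon\{0,\infty\}\times S\to\Lat_{\mathrm{f}}(S)$ given by $\varphi(0,a)=0$ and $\varphi(\infty,a)=\Idl(a)$, pass to the induced $\CatCu$-morphism on the tensor product via the universal property, and then construct an explicit inverse. First I would verify that $\varphi$ is a $\CatCu$-bimorphism. By \autoref{prp:bimorCu} it suffices to check that $\varphi$ is a generalized $\CatCu$-morphism in each variable together with joint preservation of $\ll$. In the second variable this is precisely the statement that $\Idl\colon S\to\Lat_{\mathrm{f}}(S)$ is a $\CatCu$-morphism, which is part of \autoref{prp:LatCu}; in the first variable additivity reduces to $\Idl(a)\vee\Idl(a)=\Idl(a)$ together with $2\infty=\infty$, and there are no nontrivial increasing sequences to consider. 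For joint $\ll$-preservation one uses that every element of $\{0,\infty\}$ is compact (\autoref{pgr:elementarySemigr}), so that $\infty\ll\infty$ reduces the condition to the implication $a'\ll a\Rightarrow\Idl(a')\ll\Idl(a)$, established in \autoref{pgr:Lat}. By \autoref{prp:tensProdCu} this yields a $\CatCu$-morphism $\Phi\colon\{0,\infty\}\otimes_{\CatCu}S\to\Lat_{\mathrm{f}}(S)$ with $\Phi(\infty\otimes a)=\Idl(a)$, which is surjective since every element of $\Lat_{\mathrm{f}}(S)$ has the form $\Idl(a)$.

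For the inverse, consider the map $h\colon S\to\{0,\infty\}\otimes_{\CatCu}S$, $a\mapsto\infty\otimes a$. Two preliminary facts are needed: that $\{0,\infty\}\otimes_{\CatCu}S$ is idempotent (it carries a $\{0,\infty\}$-multiplication by \autoref{prp:moduleTensorProd}, hence is idempotent by \autoref{prp:PIModTFAE}; directly, $2(\infty\otimes a)=(2\infty)\otimes a=\infty\otimes a$), and that $h$ is a genuine $\CatCu$-morphism, with preservation of $\ll$ again following from $\infty\ll\infty$ and the fact that the universal bimorphism into the tensor product is a $\CatCu$-bimorphism. I would then show that $h$ factors through $\Idl$, defining $\Psi\colon\Lat_{\mathrm{f}}(S)\to\{0,\infty\}\otimes_{\CatCu}S$ by $\Psi(\Idl(a))=\infty\otimes a$. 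Well-definedness is the crucial point: if $\Idl(a)=\Idl(b)$ then $a\le\infty\cdot b$ and $b\le\infty\cdot a$ by \autoref{pgr:Lat}, and since $\infty\otimes(\infty\cdot b)=\sup_n(\infty\otimes nb)=\sup_n n(\infty\otimes b)=\infty\otimes b$ (using continuity of the universal bimorphism and idempotency), the inequality $a\le\infty\cdot b$ gives $\infty\otimes a\le\infty\otimes b$, and symmetrically $\infty\otimes b\le\infty\otimes a$.

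Next I would verify that $\Psi$ is a $\CatCu$-morphism. Additivity is immediate from $\Idl(a+b)=\Idl(a)\vee\Idl(b)$ and biadditivity of $\otimes$, and monotonicity follows from the displayed computation above. For suprema, I would use that an increasing sequence $(\Idl(b_k))_k$ has supremum $\Idl(\sup_n c_n)$ with $c_n=\sum_{k\le n}b_k$; the essential observation is that a finite sum of an increasing chain of idempotents equals its largest term, so that $\sum_{k\le n}(\infty\otimes b_k)=\infty\otimes b_n$ and the two candidate suprema coincide. Preservation of $\ll$ follows from the description of $\Idl(a)$ as the supremum of the rapidly increasing sequence $(\Idl(a_n))_n$ with $a=\sup_n a_n$. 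Finally, $\Phi$ and $\Psi$ are mutually inverse: this is clear on the generators $\Idl(a)$ and $\infty\otimes a$, and since every element of $\Lat_{\mathrm{f}}(S)$ is some $\Idl(a)$ while every element of $\{0,\infty\}\otimes_{\CatCu}S$ is a supremum of an increasing sequence of simple tensors $\infty\otimes a_k$, the fact that both composites are $\CatCu$-morphisms forces them to be the identity. Naturality in $S$ is automatic from the naturality of $\varphi$ and of $\Idl$.

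I expect the main obstacle to be the verification that $\Psi$ is well-defined and continuous, namely packaging the identities $\infty\otimes(\infty\cdot b)=\infty\otimes b$ and $\sum_{k\le n}(\infty\otimes b_k)=\infty\otimes b_n$ so that well-definedness, monotonicity, and preservation of suprema all fall out cleanly. Everything else is routine once $\varphi$ is shown to be a $\CatCu$-bimorphism.
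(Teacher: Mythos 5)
Your proposal is correct and follows essentially the same route as the paper: the same bimorphism $(\infty,a)\mapsto\Idl(a)$ inducing a surjective $\CatCu$-morphism, and the same inverse sending $\Idl(a)$ to $\infty\otimes a$. The only cosmetic difference is that the paper defines the inverse on an ideal $I$ via its largest element $\bigvee I$, so well-definedness is automatic, whereas you check it directly via the identity $\infty\otimes(\infty\cdot a)=\infty\otimes a$ — an identity the paper also needs at the corresponding step.
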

\begin{proof}
Let $S$ be a $\CatCu$-semigroup, and let $\varphi\colon \{0,\infty\}\times S\to \{0,\infty\}\otimes_{\CatCu}S$ denote the universal $\CatCu$-bimorphism.
Consider the map
\[
\tau\colon \{0,\infty\}\times S\to\Lat_{\mathrm{f}}(S),\quad
\tau(0,a)=0,\quad
\tau(\infty,a)=\Idl(a),\quad
\txtFA a\in S.
\]
It follows from \autoref{prp:LatCu} that $\tau$ is $\CatCu$-bimorphism.
Then there is a $\CatCu$-morphism
\[
\tilde{\tau} \colon \{0,\infty\}\otimes_{\CatCu}S\to\Lat_{\mathrm{f}}(S),
\]
such that $\tau=\tilde{\tau}\circ\varphi$.
It is clear that $\tilde{\tau}$ is a surjective $\CatCu$-morphism.

As shown in \autoref{pgr:Lat}, every ideal $I$ in $\Lat_{\mathrm{f}}(S)$ contains a largest element, denoted by $\bigvee I$.
We may therefore define a map
\[
\psi \colon \Lat_{\mathrm{f}}(S) \to \{0,\infty\}\otimes_{\CatCu}S,\quad
I\mapsto \infty\otimes\left( \bigvee I \right),\quad
\txtFA I\in\Lat_{\mathrm{f}}(S).
\]
It is easy to see that $\psi$ is a generalized $\CatCu$-morphism.
To see that $\psi$ preserves the way-below relation, let $I,J\in\Lat_{\mathrm{f}}(S)$ satisfy $I\ll J$.
Set $a:=\bigvee I$ and $b:=\bigvee J$.
Let $(b_n)_n$ be a rapidly increasing sequence in $S$ with supremum $b$.
Then $J = \sup_n \Idl(b_n)$ in $\Lat_{\mathrm{f}}(S)$.
By assumption, there exists $n$ such that $I\subset\Idl(b_n)$.
Using this at the second step, and using that $\infty\ll\infty$ and $b_n\ll b$ at the third step, we deduce
\[
\psi(I) = \infty\otimes a \leq \infty\otimes b_n \ll \infty\otimes b = \psi(J).
\]

Given $a\in S$, we clearly have
\[
\psi\circ\tilde{\tau}\circ\varphi(0,a) = 0 = \varphi(0,a).
\]
We can also deduce
\[
\psi\circ\tilde{\tau}\circ\varphi(\infty,a)
= \infty\otimes\left( \bigvee \Idl(a) \right)
= \infty\otimes(\infty\cdot a)
= \infty\otimes a
= \varphi(\infty,a).
\]
Thus, we have shown that $\psi\circ\tilde{\tau}\circ\varphi=\varphi$, which implies that $\psi\circ\tilde{\tau}$ is the identity on $\{0,\infty\}\otimes_{\CatCu}S$.
Therefore, the map $\tilde{\tau}$ is an order isomorphism, and therefore an isomorphism in $\CatCu$.
\end{proof}

\begin{cor}
Let $S$ be a simple, nonzero $\CatCu$-semigroup{}.
Then
\[
\{0,\infty\}\otimes_{\CatCu}S\cong\{0,\infty\}.
\]
\end{cor}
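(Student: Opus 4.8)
The plan is to deduce the statement directly from \autoref{prp:tensWithInfty}, which provides a natural $\CatCu$-isomorphism $\{0,\infty\}\otimes_{\CatCu}S\cong\Lat_{\mathrm{f}}(S)$, where $\Lat_{\mathrm{f}}(S)$ is the $\CatCu$-semigroup of singly-generated ideals of $S$ introduced in \autoref{prp:LatCu}. Thus it suffices to show that $\Lat_{\mathrm{f}}(S)\cong\{0,\infty\}$ whenever $S$ is simple and nonzero.

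First I would use simplicity: by \autoref{dfn:simpleCu}, the only ideals of $S$ are $\{0\}$ and $S$, so $\Lat_{\mathrm{f}}(S)\subset\{\{0\},S\}$. Next I would verify that both of these ideals are singly generated. Clearly $\{0\}=\Idl(0)$. For the top ideal, I would invoke the existence of a largest element in a simple $\CatCu$-semigroup, as recalled in \autoref{pgr:finiteSemigr}: for any nonzero $a\in S$ the supremum $\infty:=\sup_k ka$ exists and is the largest element of $S$. Since $\infty\cdot a=\sup_k ka=\infty$, we obtain
\[
\Idl(a)=\{x\in S:x\leq\infty\cdot a\}=\{x\in S:x\leq\infty\}=S .
\]
As $S$ is nonzero, such an $a$ exists, and hence $S=\Idl(a)\in\Lat_{\mathrm{f}}(S)$. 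It follows that $\Lat_{\mathrm{f}}(S)=\{\{0\},S\}$ has exactly two elements.

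Finally, I would pin down the semigroup structure. By \autoref{pgr:Lat}, addition in $\Lat_{\mathrm{f}}(S)$ is given by $\Idl(a)\vee\Idl(b)=\Idl(a+b)$, so that $\{0\}$ is the neutral element and $S\vee S=S$; with the order given by inclusion, both elements are compact. This is precisely the $\CatCu$-semigroup $\{0,\infty\}$ (with neutral element $0$ and $\infty+\infty=\infty$), which yields the desired isomorphism.

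Since each step is an immediate consequence of a cited result, there is no serious obstacle here; the only point requiring genuine care is confirming that the top ideal $S$ is itself singly generated, which is exactly where simplicity enters through the existence of the largest element $\infty=\infty\cdot a$.
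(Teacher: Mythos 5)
Your proof is correct and follows exactly the route the paper intends: the corollary is stated as an immediate consequence of \autoref{prp:tensWithInfty}, with simplicity forcing $\Lat_{\mathrm{f}}(S)=\{\{0\},S\}\cong\{0,\infty\}$. Your verification that $S=\Idl(a)$ for any nonzero $a$ (which also follows directly from simplicity, since $\Idl(a)$ is a nonzero ideal) is the only point needing any care, and you handle it correctly.
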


\begin{cor}
\label{prp:tensLat}
Let $S$ and $T$ be $\CatCu$-semigroup{s}.
Then there is a natural isomorphism
\[
\Lat_{\mathrm{f}}(S\otimes_{\CatCu} T) \cong \Lat_{\mathrm{f}}(S)\otimes_{\CatCu} \Lat_{\mathrm{f}}(T).
\]
\end{cor}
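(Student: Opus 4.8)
The plan is to reduce everything to the identification $\Lat_{\mathrm{f}}(X)\cong\{0,\infty\}\otimes_{\CatCu}X$ furnished by \autoref{prp:tensWithInfty}, and then to manipulate tensor products formally, exploiting that $\{0,\infty\}$ is a solid $\CatCu$-semiring.

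First I would apply \autoref{prp:tensWithInfty} three times to rewrite both sides of the desired isomorphism purely in terms of tensor products with $\{0,\infty\}$. On the left, $\Lat_{\mathrm{f}}(S\otimes_{\CatCu}T)\cong\{0,\infty\}\otimes_{\CatCu}(S\otimes_{\CatCu}T)$. On the right, $\Lat_{\mathrm{f}}(S)\otimes_{\CatCu}\Lat_{\mathrm{f}}(T)\cong(\{0,\infty\}\otimes_{\CatCu}S)\otimes_{\CatCu}(\{0,\infty\}\otimes_{\CatCu}T)$.

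Next I would use that $\CatCu$ is a symmetric, monoidal category (\autoref{pgr:monoidalCu}), so that the tensor product is associative (\autoref{prp:tensProdAssoc}) and symmetric, to regroup the four factors on the right as $\{0,\infty\}\otimes_{\CatCu}\{0,\infty\}\otimes_{\CatCu}S\otimes_{\CatCu}T$. The key step is then to invoke solidity of $\{0,\infty\}$ (\autoref{prp:PureInfSolid}): by \autoref{dfn:solidSemirg}, the multiplication induces a $\CatCu$-isomorphism $\{0,\infty\}\otimes_{\CatCu}\{0,\infty\}\cong\{0,\infty\}$. Substituting this and regrouping once more yields $\{0,\infty\}\otimes_{\CatCu}(S\otimes_{\CatCu}T)$, which agrees with the rewritten left-hand side. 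Composing all the isomorphisms gives the claim.

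The main point to verify is that the resulting composite is \emph{natural} in $S$ and $T$, which holds because each isomorphism used along the way (the identification of \autoref{prp:tensWithInfty}, the associativity and symmetry constraints, and the solidity isomorphism for $\{0,\infty\}$) is itself natural. I do not anticipate a genuine obstacle; the only mild bookkeeping is to confirm that the composite carries the simple element $\Idl(x)\otimes\Idl(y)$ (equivalently, $(\infty\otimes x)\otimes(\infty\otimes y)$) to $\Idl(x\otimes y)$ under the explicit identification $\infty\otimes a\mapsto\Idl(a)$ of \autoref{prp:tensWithInfty}, so that the isomorphism is the expected one rather than merely an abstract isomorphism of representing objects.
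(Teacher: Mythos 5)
Your proposal is correct and follows essentially the same route as the paper: both rewrite each $\Lat_{\mathrm{f}}$ via \autoref{prp:tensWithInfty}, use associativity and symmetry of $\otimes_{\CatCu}$ to regroup, and absorb $\{0,\infty\}\otimes_{\CatCu}\{0,\infty\}\cong\{0,\infty\}$ (which the paper asserts directly and you justify via solidity). Your closing remark about tracking the simple tensors is a reasonable extra check but not needed beyond what the naturality of the cited isomorphisms already provides.
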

\begin{proof}
It is clear that $\{0,\infty\}\cong \{0,\infty\}\otimes_{\CatCu} \{0,\infty\}$.
Using \autoref{prp:tensWithInfty} at the first and last step, and using the associativity and symmetry of the tensor product (see \autoref{prp:tensProdAssoc} and \autoref{pgr:monoidalCu}) at the second step, we obtain
\begin{align*}
\Lat_{\mathrm{f}}(S\otimes_{\CatCu} T)
&\cong \{0,\infty\}\otimes_{\CatCu}(S\otimes_{\CatCu} T) \\
&\cong (\{0,\infty\}\otimes_{\CatCu}S) \otimes_{\CatCu} (\{0,\infty\}\otimes_{\CatCu} T)
\cong \Lat_{\mathrm{f}}(S)\otimes_{\CatCu} \Lat_{\mathrm{f}}(T),
\end{align*}
as desired.
\end{proof}

\begin{thm}
\label{prp:MapToPIMod}
Let $S$ be a $\CatCu$-semigroup, and let $a,b\in S$.
Then the following are equivalent:
\beginEnumStatements
\item
We have $1\otimes a\leq 1\otimes b$ in $\{0,\infty\}\otimes_{\CatCu} S$, where $1=\infty$ is the unit of $\{0,\infty\}$.
\item
We have $a\varpropto^\ctsRel b$, that is, $a\leq\infty\cdot b$.
\end{enumerate}
\end{thm}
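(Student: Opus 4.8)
The plan is to reduce the statement to the identification of the tensor product $\{0,\infty\}\otimes_{\CatCu}S$ with the $\CatCu$-semigroup $\Lat_{\mathrm{f}}(S)$ of singly-generated ideals, which was already established in \autoref{prp:tensWithInfty}. Since the unit of $\{0,\infty\}$ is $1=\infty$, under the natural isomorphism $\{0,\infty\}\otimes_{\CatCu}S\cong\Lat_{\mathrm{f}}(S)$ the simple tensor $1\otimes a=\infty\otimes a$ is sent to $\Idl(a)$, and likewise $1\otimes b$ is sent to $\Idl(b)$. This transports the entire problem into the lattice $\Lat_{\mathrm{f}}(S)$, where the order is inclusion of ideals.

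First I would invoke \autoref{prp:tensWithInfty} to fix the $\CatCu$-iso\-mor\-phism $\{0,\infty\}\otimes_{\CatCu}S\cong\Lat_{\mathrm{f}}(S)$ carrying $\infty\otimes a$ to $\Idl(a)$. Because this is in particular an order isomorphism of \pom{s}, the inequality $1\otimes a\leq 1\otimes b$ in $\{0,\infty\}\otimes_{\CatCu}S$ holds if and only if $\Idl(a)\subset\Idl(b)$ in $\Lat_{\mathrm{f}}(S)$.

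Next I would identify the inclusion of ideals with the relation $\varpropto^\ctsRel$. As recorded in the proof of \autoref{prp:LatCu}, one has $\Idl(a)\subset\Idl(b)$ if and only if $\infty\cdot a\leq\infty\cdot b$; since $a\leq\infty\cdot a$ and $\infty\cdot(\infty\cdot b)=\infty\cdot b$, this is in turn equivalent to $a\leq\infty\cdot b$. Finally, by the regularization example following \autoref{dfn:regularRel} (part (4)), the condition $a\leq\infty\cdot b$ is exactly $a\varpropto^\ctsRel b$. Chaining these equivalences yields \enumStatement{1}$\Leftrightarrow$\enumStatement{2}.

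There is essentially no hard obstacle here: the statement is a direct corollary of \autoref{prp:tensWithInfty} together with the description of ideal containment in \autoref{pgr:Lat}. The only point requiring a moment of care is the bookkeeping that $1\otimes a$ really is the image of $\infty\otimes a$ under the isomorphism (i.e.\ that the unit of $\{0,\infty\}$ is $\infty$, not $0$), and that the order on $\Lat_{\mathrm{f}}(S)$ used in \autoref{prp:tensWithInfty} is inclusion, so that the direction of the inequalities matches the direction of the inclusions and of $\varpropto^\ctsRel$.
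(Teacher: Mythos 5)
Your proof is correct and follows exactly the paper's own argument: reduce via the isomorphism of \autoref{prp:tensWithInfty} to inclusion of ideals in $\Lat_{\mathrm{f}}(S)$, then identify $\Idl(a)\subset\Idl(b)$ with $a\varpropto^\ctsRel b$. The extra care you take in unwinding $\Idl(a)\subset\Idl(b)\Leftrightarrow a\leq\infty\cdot b$ is a fine (if slightly more detailed) version of the same step the paper treats as immediate.
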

\begin{proof}
Let $a,b\in S$.
By \autoref{prp:tensWithInfty}, there is an isomorphism between $\{0,\infty\}\otimes_{\CatCu}S$ and $\Lat_{\mathrm{f}}(S)$ that identifies the simple tensor $1 \otimes a$ with $\Idl(a)$.
Then we have $1\otimes a\leq 1\otimes b$ in $\{0,\infty\}\otimes_{\CatCu}S$ if and only if $\Idl(a)\subset\Idl(b)$ in $\Lat_{\mathrm{f}}(S)$, which in turn happens if and only if $a\varpropto^\ctsRel b$, as desired.
\end{proof}

\begin{prp}
\label{prp:axiomTensProdPI}
Let $S$ be a $\CatCu$-semigroup.
Then $\{0,\infty\}\otimes_{\CatCu}S$ is unperforated, divisible and satisfies \axiomO{5}.
Moreover, if $S$ satisfies \axiomO{6}, then so does $\{0,\infty\}\otimes_{\CatCu}S$.
\end{prp}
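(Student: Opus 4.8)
The plan is to reduce all four assertions to the single structural fact that $R := \{0,\infty\}\otimes_{\CatCu}S$ is \emph{idempotent}, supplemented by the identification of $R$ with the semigroup of singly-generated ideals from \autoref{prp:tensWithInfty}.

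First I would record the idempotency. The $\CatCu$-semiring $\{0,\infty\}$ carries the tautological $\{0,\infty\}$-multiplication given by its own $\CatCu$-product, so \autoref{prp:moduleTensorProd} (applied with first factor $\{0,\infty\}$ and second factor $S$) shows that $R=\{0,\infty\}\otimes_{\CatCu}S$ again carries a $\{0,\infty\}$-multiplication. By the equivalence of statements \enumStatement{2} and \enumStatement{3} in \autoref{prp:PIModTFAE}, this means precisely that $R$ is idempotent, that is, $2x=x$ for every $x\in R$. A routine induction then yields $nx=x$ for all $x\in R$ and all $n\in\N_+$.

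From $nx=x$, unperforation and divisibility are immediate. For unperforation, if $a,b\in R$ satisfy $na\leq nb$ for some $n\in\N_+$, then $a=na\leq nb=b$. For divisibility, given $a\in R$ and $n\in\N_+$ the element $b:=a$ satisfies $nb=na=a$, so $a$ is divisible by $n$. For \axiomO{5} and \axiomO{6} I would invoke the natural $\CatCu$-isomorphism $R\cong\Lat_{\mathrm{f}}(S)$ of \autoref{prp:tensWithInfty}: by \autoref{prp:LatCu} the $\CatCu$-semigroup $\Lat_{\mathrm{f}}(S)$ always satisfies \axiomO{5}, and it satisfies \axiomO{6} whenever $S$ does, and transporting along the isomorphism gives the two remaining conclusions. (As an alternative for \axiomO{5}, idempotency already forces the order on $R$ to be algebraic, since $a\leq b$ implies $a+b\leq 2b=b$, hence $a+b=b$; and algebraic order yields \axiomO{5} directly by taking the complement furnished by the defining equation.)

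There is no genuine obstacle here; the substance is bookkeeping, namely matching each of the four conclusions to the earlier result that supplies it and checking that no circularity arises, which holds because \autoref{prp:moduleTensorProd}, \autoref{prp:PIModTFAE} and \autoref{prp:tensWithInfty} all precede the present statement. The one point requiring a moment's care is confirming that $R$ really inherits a $\{0,\infty\}$-module structure so that \autoref{prp:PIModTFAE} applies, and this is exactly the role played by \autoref{prp:moduleTensorProd} with the first factor carrying the tautological multiplication.
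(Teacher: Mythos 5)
Your proof is correct and follows essentially the same route as the paper: idempotency via \autoref{prp:PIModTFAE}, from which unperforation and divisibility are immediate, and \axiomO{5}/\axiomO{6} obtained by transporting along the isomorphism $\{0,\infty\}\otimes_{\CatCu}S\cong\Lat_{\mathrm{f}}(S)$ of \autoref{prp:tensWithInfty} combined with \autoref{prp:LatCu}. Your parenthetical alternative for \axiomO{5} (idempotency forces the order to be algebraic) is a valid shortcut but does not change the substance.
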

\begin{proof}
Set $T:=\{0,\infty\}\otimes_{\CatCu}S$, which by \autoref{prp:PIModTFAE} is an idempotent $\CatCu$-semigroup.
To show that it is unperforated, let $a,b\in T$ satisfy $na\leq nb$ for some $n\in\N_+$.
Since $a=na$ and $b=nb$, we immediately get $a\leq b$.
Similarly, $T$ is divisible.
The statements about \axiomO{5} and \axiomO{6} follow directly by combining \autoref{prp:tensWithInfty} with \autoref{prp:LatCu}.
\end{proof}

A not necessarily simple \ca{} $A$ is \emph{purely infinite} \index{terms}{C*-algebra@\ca{}!purely infinite}  \index{terms}{purely infinite \ca{}} if $A$ has no characters and if for any positive elements $x$ and $y$ in $A$ we have $x\precsim y$ whenever $x$ is contained in the ideal of $A$ generated by $y$; see \cite[Definition~4.1]{KirRor00PureInf}, see also \cite[p.450ff]{Bla06OpAlgs}.
By \cite[Theorem~4.23]{KirRor00PureInf}, if $A$ is purely infinite, then so is $A\otimes\K$.

A nonzero element $x$ in $A_+$ is \emph{properly infinite}\index{terms}{properly infinite} if $x\oplus x\precsim x$, considered in $M_2(A)$.
If we denote by $[x]$ the class of $x$ in $\Cu(A)$, then $x$ is properly infinite if and only if $2[x]=[x]$ and $[x]\neq 0$.
By \cite[Theorem~4.16]{KirRor00PureInf}, a \ca{} is purely infinite if and only each of its nonzero positive elements is properly infinite.
Using \autoref{prp:PIModTFAE}, we may reformulate the result of Kirchberg and R{\o}rdam as follows:

\begin{prp}
\label{prp:pureInfCa}
Let $A$ be a \ca.
Then $A$ is purely infinite if and only if $\Cu(A)\cong \{0,\infty\}\otimes_{\CatCu}\Cu(A)$.
\end{prp}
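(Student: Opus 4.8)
The plan is to reduce the statement to the lattice-theoretic characterization already available and then invoke the Kirchberg--R\o rdam description of pure infiniteness. By \autoref{prp:PIModTFAE}, the isomorphism $\Cu(A)\cong\{0,\infty\}\otimes_{\CatCu}\Cu(A)$ holds if and only if $\Cu(A)$ is idempotent, that is, $2a=a$ for every $a\in\Cu(A)$. Thus it suffices to prove that $A$ is purely infinite precisely when $\Cu(A)$ is idempotent.

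First I would recall that $\Cu(A)=(A\otimes\K)_+/\!\!\sim$, so that every element of $\Cu(A)$ is of the form $[x]$ for some $x\in(A\otimes\K)_+$. The element $0$ trivially satisfies $2\cdot 0=0$. For a nonzero $[x]$, the relation $2[x]=[x]$ is, by the discussion preceding the statement, exactly the condition that $x$ be properly infinite (viewed in the stabilization). Hence $\Cu(A)$ is idempotent if and only if every nonzero positive element of $A\otimes\K$ is properly infinite.

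Next I would apply the Kirchberg--R\o rdam results. By \cite[Theorem~4.16]{KirRor00PureInf}, a \ca{} is purely infinite if and only if each of its nonzero positive elements is properly infinite; applied to $A\otimes\K$, this gives that $A\otimes\K$ is purely infinite if and only if $\Cu(A)$ is idempotent. Finally, by \cite[Theorem~4.23]{KirRor00PureInf}, $A$ is purely infinite if and only if $A\otimes\K$ is, closing the chain of equivalences.

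The argument is short, and the only point requiring care---the step I expect to be the main (if modest) obstacle---is the bookkeeping between the abstract and concrete pictures: one must be careful that idempotency of the $\CatCu$-semigroup quantifies over all elements including $0$, whereas proper infiniteness is defined only for nonzero positive elements, and that the notion of properly infinite used via $x\oplus x\precsim x$ in $M_2(A)$ matches the condition $2[x]=[x]$ with $[x]\neq 0$ in $\Cu(A)$ after passing to the stabilization $A\otimes\K$. Once this translation is made precise, the three cited equivalences assemble directly into the claim.
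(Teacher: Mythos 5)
Your proposal is correct and follows essentially the same route as the paper: the text preceding the statement records that a nonzero positive $x$ is properly infinite iff $2[x]=[x]$ with $[x]\neq 0$, and the proposition is then obtained by combining \autoref{prp:PIModTFAE} with Theorems~4.16 and~4.23 of Kirchberg--R{\o}rdam, exactly as you do. Your care about the $0$ element and the passage to the stabilization is the same bookkeeping the paper handles implicitly.
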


It follows from Propositions~\ref{prp:axiomTensProdPI} and~\ref{prp:pureInfCa} that the Cuntz semigroup of every purely infinite \ca{} is unperforated.
This verifies \autoref{conj:nearUnpCaZstable} for the class for purely infinite \ca{s}.

\begin{cor}
\label{prp:pureInfCanearUnp}
Let $A$ be a purely infinite \ca.
Then $\Cu(A)$ is nearly unperforated.
\end{cor}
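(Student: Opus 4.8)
The plan is to deduce this immediately from the tensor-product characterization of pure infiniteness together with the structural properties of $\{0,\infty\}$-semimodules. The key observation is that near unperforation sits between genuine unperforation and almost unperforation in the hierarchy established in \autoref{prp:nearUnperfImplications}, so it suffices to upgrade the conclusion all the way to unperforation and then read off the weaker property.

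First I would invoke \autoref{prp:pureInfCa}, which tells us that if $A$ is purely infinite, then there is a $\CatCu$-isomorphism
\[
\Cu(A) \cong \{0,\infty\}\otimes_{\CatCu}\Cu(A).
\]
Next I would apply \autoref{prp:axiomTensProdPI} with $S:=\Cu(A)$: that result asserts that $\{0,\infty\}\otimes_{\CatCu}\Cu(A)$ is unperforated (indeed it is idempotent by \autoref{prp:PIModTFAE}, and in an idempotent semigroup $na\leq nb$ forces $a=na\leq nb=b$). Transporting this property across the isomorphism of the previous step, we conclude that $\Cu(A)$ itself is unperforated.

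Finally, I would close the argument by citing the first implication of \autoref{prp:nearUnperfImplications}, namely that every unperforated \pom{} is nearly unperforated. Since $\Cu(A)$ is a \pom, this yields that $\Cu(A)$ is nearly unperforated, as desired. I do not anticipate any genuine obstacle here: the entire content has been front-loaded into \autoref{prp:axiomTensProdPI} and \autoref{prp:pureInfCa}, and the present statement is a formal corollary. The only point that deserves a word of care is checking that the isomorphism in \autoref{prp:pureInfCa} is an isomorphism in $\CatCu$ (hence a \pom-isomorphism) so that the order-theoretic property of unperforation genuinely transfers, but this is automatic since $\CatCu$-isomorphisms preserve the full order structure.
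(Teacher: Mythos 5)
Your argument is correct and is essentially the paper's own: the text immediately preceding the corollary deduces from \autoref{prp:pureInfCa} and \autoref{prp:axiomTensProdPI} that $\Cu(A)$ is unperforated, and near unperforation then follows from \autoref{prp:nearUnperfImplications}. Your extra remark about transporting the order-theoretic property along the $\CatCu$-isomorphism is a fine point of care but, as you note, automatic.
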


\begin{pgr}
\label{pgr:LatTens}
Let $A$ and $B$ be \ca{s}, and let $A_0\subset A$ and $B_0\subset B$ be sub-\ca{s}.
Then the natural map between the algebraic tensor products, $A_0\odot B_0\to A\odot B$ induces an isometric embedding $A_0\tensMin B_0 \subset A\tensMin B$;
see \cite[II.9.6.2, p.199]{Bla06OpAlgs}.
Given ideals $I\in\Lat(A)$ and $J\in\Lat(B)$, it is easy to see that $I\tensMin J$ is an ideal in $A\tensMin B$.
Moreover, if $I\in\Lat_{\mathrm{f}}(A)$ and $J\in\Lat_{\mathrm{f}}(B)$, then $I\tensMin J\in\Lat_{\mathrm{f}}(A\tensMin B)$.
Indeed, if $x$ and $y$ are positive, full elements in $I$ and $J$, respectively, then $x\otimes y$ is a positive, full element in $I\tensMin J$.
We leave it to the reader to check that the map
\[
\Lat_{\mathrm{f}}(A)\times\Lat_{\mathrm{f}}(B)\to\Lat_{\mathrm{f}}(A\tensMin B),\quad
(I,J)\mapsto I\tensMin J,
\]
is a $\CatCu$-bimorphism.
It induces a $\CatCu$-morphism
\[
\psi_{A,B}\colon\Lat_{\mathrm{f}}(A)\otimes_\CatCu\Lat_{\mathrm{f}}(B)\to\Lat_{\mathrm{f}}(A\tensMin B).
\]
\end{pgr}

\begin{prbl}
\label{prbl:LatTens}
Let $A$ and $B$ be \ca{s}.
When is the map $\psi_{A,B}$ an isomorphism?
When is it surjective?
When is it an order-embedding?
\end{prbl}

We will give partial answers to this problem in \autoref{prp:Latf}.
We expect that $\psi_{A,B}$ is always an order-embedding.

\begin{lma}
\label{prp:idealsTens}
Let $A$ and $B$ be \ca{s}, let $I_1,I_2$ be ideals in $\Lat(A)$, and let $J_1,J_2$ be ideals in $\Lat(B)$.
Then:
\beginEnumStatements
\item
We have $(I_1\tensMin J_1)\cap(I_2\tensMin J_2) = (I_1\cap I_2)\tensMin(J_1\cap J_2)$.
\item
We have $I_1\tensMin J_1 \subset I_2\tensMin J_2$ if and only if $I_1=0$, or $J_1=0$, or $I_1\subset I_2$ and $J_1\subset J_2$.
\end{enumerate}
\end{lma}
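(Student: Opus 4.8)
The plan is to reduce both parts to standard facts about the ideal structure of \emph{minimal} tensor products of $C^*$-algebras, using only tools available for the spatial tensor product and, in particular, avoiding any exactness hypothesis. The technical core is the identity
\[
(I\tensMin B)\cap(A\tensMin J)=I\tensMin J
\]
for closed ideals $I$ of $A$ and $J$ of $B$. I would prove it with approximate units: let $(e_\lambda)$ and $(f_\mu)$ be approximate units of $I$ and $J$, and observe that, viewed as multipliers of $A\tensMin B$, the nets $e_\lambda\otimes 1$ and $1\otimes f_\mu$ are approximate units of the ideals $I\tensMin B$ and $A\tensMin J$ (they act correctly on elementary tensors and extend by density). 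For $x$ in the intersection one then has $(e_\lambda\otimes 1)x\to x$ and $(1\otimes f_\mu)x\to x$; since $(e_\lambda\otimes f_\mu)x=(e_\lambda\otimes 1)(1\otimes f_\mu)x$ lies in $I\tensMin J$ and, using $\|e_\lambda\otimes 1\|\le 1$,
\[
\|(e_\lambda\otimes f_\mu)x-x\|\le\|(1\otimes f_\mu)x-x\|+\|(e_\lambda\otimes 1)x-x\|,
\]
choosing $\mu$ and then $\lambda$ makes the right side arbitrarily small, so $x\in I\tensMin J$. The reverse inclusion is trivial. The same two-factor device, now with the product $e^1_\lambda e^2_\mu\in I_1\cap I_2$ of approximate units of $I_1$ and $I_2$, yields $(I_1\tensMin B)\cap(I_2\tensMin B)=(I_1\cap I_2)\tensMin B$, and dually $(A\tensMin J_1)\cap(A\tensMin J_2)=A\tensMin(J_1\cap J_2)$.

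For part (1) I would simply assemble these three identities. Rewriting each of $I_i\tensMin J_i$ through the core identity and reordering the set-theoretic intersections gives
\begin{align*}
(I_1\tensMin J_1)\cap(I_2\tensMin J_2)
&=\big[(I_1\tensMin B)\cap(I_2\tensMin B)\big]\cap\big[(A\tensMin J_1)\cap(A\tensMin J_2)\big]\\
&=\big[(I_1\cap I_2)\tensMin B\big]\cap\big[A\tensMin(J_1\cap J_2)\big]
=(I_1\cap I_2)\tensMin(J_1\cap J_2),
\end{align*}
using the two single-ideal identities in the middle step and the core identity once more at the end.

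For part (2) the implication ``$\Leftarrow$'' is immediate: if $I_1=0$ or $J_1=0$ the left-hand ideal is $0$, while if $I_1\subseteq I_2$ and $J_1\subseteq J_2$ then injectivity of the minimal tensor norm identifies $I_1\tensMin J_1$ with a $C^*$-subalgebra of $I_2\tensMin J_2$ (see \cite[II.9.6.2]{Bla06OpAlgs}). For ``$\Rightarrow$'', assuming $I_1\tensMin J_1\subseteq I_2\tensMin J_2$ with $I_1,J_1\neq 0$, part (1) gives $I_1\tensMin J_1=(I_1\cap I_2)\tensMin(J_1\cap J_2)$. I would show this forces $I_1\subseteq I_2$: otherwise pick $a\in I_1\setminus(I_1\cap I_2)$ and a nonzero $b\in J_1$, choose $\psi\in B^\ast$ with $\psi(b)\neq 0$, and apply the left slice map $L_\psi$ (which exists and is bounded for the minimal tensor product). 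Since $a\otimes b$ lies in $(I_1\cap I_2)\tensMin(J_1\cap J_2)$ and $L_\psi$ carries this ideal into $I_1\cap I_2$ (true on elementary tensors, extended by continuity), we get $\psi(b)a\in I_1\cap I_2$, hence $a\in I_1\cap I_2$, a contradiction. The symmetric argument with right slice maps gives $J_1\subseteq J_2$.

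The main obstacle here is conceptual rather than computational: one must resist proving the core identity through the sequence $0\to I\tensMin B\to A\tensMin B\to (A/I)\tensMin B\to 0$, whose exactness in the middle is precisely the assertion that $B$ is exact and fails in general. The approximate-unit argument sidesteps this, and the only other external inputs, namely injectivity of the minimal tensor norm and the existence of bounded slice maps, are valid for every pair of $C^*$-algebras.
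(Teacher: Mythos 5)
Your proof is correct, and it diverges from the paper's in an instructive way on part (1). The paper gets the nontrivial inclusion in one stroke from the identity $K\cap L=KL$ for closed ideals: it checks on algebraic tensors that $(I_1\odot J_1)(I_2\odot J_2)\subset(I_1I_2)\odot(J_1J_2)$ and passes to closures, so the whole of part (1) is a two-line computation with no approximate units and no auxiliary "rectangle" identities. Your route instead establishes the core identity $(I\tensMin B)\cap(A\tensMin J)=I\tensMin J$ and its one-sided variants by the approximate-unit estimate, then assembles part (1) by intersecting rectangles. Both are valid; the paper's argument is shorter, while yours isolates a reusable structural fact (the core identity, which is exactly the statement that the lattice map $I\mapsto I\tensMin B$ interacts well with $J\mapsto A\tensMin J$) and makes explicit why no exactness hypothesis is needed — a point the paper leaves implicit. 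Your observation that $K\cap L=KL$ is in fact the same tool you use implicitly (via $e^1_\lambda e^2_\mu\in I_1I_2$) could have shortened your write-up to the paper's. For part (2) the two proofs are essentially the same slice-map argument, just oriented in opposite factors: the paper slices in the $A$-direction with a functional vanishing on $I_1\cap I_2$ and taking the value $1$ at the offending element $c$, whereas you slice in the $B$-direction with a functional detecting a nonzero $b\in J_1$ and use that $L_\psi$ maps $(I_1\cap I_2)\tensMin(J_1\cap J_2)$ into $I_1\cap I_2$; both rest only on boundedness of slice maps for the minimal tensor product, and both are complete.
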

\begin{proof}
\enumStatement{1}.
The inclusion `$\supset$' is clear.
Let us show the converse.
In general, for ideals $K$ and $L$ in a \ca{}, we have $K\cap L = KL = \left\{ab : a\in K, b\in L\right\}$.
Now, given $x\in I_1\odot J_1$ and $y\in I_2\odot J_2$ in the respective algebraic tensor products, one easily verifies that $xy$ is in $(I_1I_2)\odot(J_1J_2)$.
Passing to closures, we obtain
\begin{align*}
(I_1\tensMin J_1)\cap(I_2\tensMin J_2)
&= (I_1\tensMin J_1)(I_2\tensMin J_2) \\
&\subset (I_1I_2)\tensMin(J_1J_2)
= (I_1\cap I_2)\tensMin(J_1\cap J_2).
\end{align*}

\enumStatement{2}.
The `if' implication is clear.
To show the converse, assume $I_1\tensMin J_1 \subset I_2\tensMin J_2$.
A minimal tensor product of \ca{s} is only zero if each of the factors is zero.
Thus we may assume that $I_1,J_1,I_2$, and $J_2$ are nonzero.

Using \enumStatement{1} at the second step, we obtain
\begin{align}
\label{prp:idealsTens:eq1}
I_1\tensMin J_1
= (I_1\tensMin J_1)\cap(I_2\tensMin J_2)
= (I_1\cap I_2)\tensMin(J_1\cap J_2).
\end{align}
To show $I_1\cap I_2=I_1$, let us assume the converse, that is, assume there is $c\in I_1$ with $c\notin I_1\cap I_2$.
Using the Hahn-Banach theorem, choose a state $\varphi$ on $A$ with $\varphi(I_1\cap I_2)=\{0\}$ and $\varphi(c)=1$.
Consider the slice map $R_\varphi\colon A\tensMin B\to B$, which satisfies $R_\varphi(a\otimes b)=\varphi(a)b$, for $a\in A$ and $b\in B$;
see \cite[II.9.7.1., p.203]{Bla06OpAlgs}.
Using linearity and continuity of $R_\varphi$, we obtain $R_\varphi(x)=0$ for all $x\in (I_1\cap I_2)\tensMin(J_1\cap J_2)$.
In particular, by \eqref{prp:idealsTens:eq1}, we have $R_\varphi(c\otimes b)=0$ for all $b\in J_1$.
On the other hand, we have $R_\varphi(c\otimes b)=b$ for all $b\in B$.
Since $J_1\neq\{0\}$, this is a contradiction, which implies $I_1\cap I_2=I_1$ and hence $I_1\subset I_2$.
Analogously, we deduce $J_1\subset J_2$.
\end{proof}

\begin{rmk}
\label{rmk:idealsTens}
Let $A$ and $B$ be \ca{s}, and let $\psi_{A,B}$ be the $\CatCu$-mor\-phism as defined in \autoref{pgr:LatTens}.
Then \autoref{prp:idealsTens}(2) shows that $\psi_{A,B}$ determines the order of simple tensors in $\Lat_{\mathrm{f}}(A)\otimes_\CatCu\Lat_{\mathrm{f}}(B)$.
More precisely, given $I_1,I_2\in\Lat_{\mathrm{f}}(A)$, and $J_1,J_2\in\Lat_{\mathrm{f}}(B)$, the following are equivalent:
\beginEnumStatements
\item
We have $I_1\otimes J_1 \leq I_2\otimes J_2$ in $\Lat_{\mathrm{f}}(A)\otimes_\CatCu\Lat_{\mathrm{f}}(B)$.
\item
We have $\psi_{A,B}(I_1\otimes J_1) \leq \psi_{A,B}(I_2\otimes J_2)$ in $\Lat_{\mathrm{f}}(A\tensMin B)$.
\end{enumerate}
\end{rmk}

\begin{prp}
\label{prp:Latf}
Let $A$ and $B$ be separable \ca{s}, and let $\psi_{A,B}$ be the $\CatCu$-morphism as defined in \autoref{pgr:LatTens}.
Assume $B$ is simple.
Then:
\beginEnumStatements
\item
The map $\psi_{A,B}$ is an order-embedding.
\item
The map $\psi_{A,B}$ is surjective if and only if the (equivalent) conditions of Proposition~2.16 in \cite{BlaKir04PureInf} are satisfied.
In particular, this is the case if $A$ or $B$ is exact;
see \cite[Proposition~2.17]{BlaKir04PureInf}.
\end{enumerate}
\end{prp}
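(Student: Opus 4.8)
The plan is to understand $\Lat_{\mathrm{f}}(A)\otimes_\CatCu\Lat_{\mathrm{f}}(B)$ concretely under the hypothesis that $B$ is simple. Since $B$ is simple, $\Lat(B)=\{0,B\}$, and because $B$ is separable it contains a strictly positive element, so $B\in\Lat_{\mathrm{f}}(B)$; thus $\Lat_{\mathrm{f}}(B)\cong\{0,\infty\}$, the two-element $\CatCu$-semiring. By \autoref{prp:tensWithInfty} (together with the identification of $\Lat_{\mathrm{f}}$ with $\{0,\infty\}\otimes_\CatCu(-)$ via \autoref{rmk:LatCu}), we have a natural $\CatCu$-isomorphism
\[
\Lat_{\mathrm{f}}(A)\otimes_\CatCu\Lat_{\mathrm{f}}(B)
\cong \Lat_{\mathrm{f}}(A)\otimes_\CatCu\{0,\infty\}
\cong \Lat_{\mathrm{f}}(\Lat_{\mathrm{f}}(A)).
\]
Since $\Lat_{\mathrm{f}}(A)$ is already idempotent (every singly-generated ideal $I$ satisfies $I\vee I=I$), it is a $\{0,\infty\}$-semimodule by \autoref{prp:PIModTFAE}, so $\Lat_{\mathrm{f}}(\Lat_{\mathrm{f}}(A))\cong\Lat_{\mathrm{f}}(A)$. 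Hence the domain of $\psi_{A,B}$ is naturally isomorphic to $\Lat_{\mathrm{f}}(A)$, with the simple tensor $I\otimes B$ corresponding to $I$ and $I\otimes 0$ corresponding to $0$.

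For part (1), I would combine this identification with \autoref{rmk:idealsTens}, which reduces the order-embedding question to the order behavior of simple tensors, since general elements are suprema of increasing sequences of sums of simple tensors and $\psi_{A,B}$ preserves and reflects such suprema once it does so on simple tensors. By \autoref{prp:idealsTens}(2), for nonzero $I_1,I_2\in\Lat_{\mathrm{f}}(A)$ we have $I_1\tensMin B\subset I_2\tensMin B$ in $\Lat_{\mathrm{f}}(A\tensMin B)$ if and only if $I_1\subset I_2$ (using that $B$ is nonzero and simple, so $B\subset B$ is automatic), which matches exactly the order on the domain. Thus $\psi_{A,B}$ is injective and order-reflecting on simple tensors, and a diagonalization/supremum argument upgrades this to an order-embedding on all of $\Lat_{\mathrm{f}}(A)\otimes_\CatCu\Lat_{\mathrm{f}}(B)$.

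For part (2), surjectivity of $\psi_{A,B}$ amounts to asking that every singly-generated (equivalently, every closed two-sided) ideal of $A\tensMin B$ arises as $I\tensMin B$ for some ideal $I$ of $A$; this is precisely the statement that the map $I\mapsto I\tensMin B$ from $\Lat(A)$ to $\Lat(A\tensMin B)$ is surjective, which is the content of the equivalent conditions in \cite[Proposition~2.16]{BlaKir04PureInf}, and holds in particular when $A$ or $B$ is exact by \cite[Proposition~2.17]{BlaKir04PureInf}. I would spell out the translation between `$\psi_{A,B}$ surjective' and `every ideal of $A\tensMin B$ is induced from $A$', using separability to ensure each such ideal lies in $\Lat_{\mathrm{f}}$. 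The main obstacle I anticipate is the careful bookkeeping in part (1): verifying that $\psi_{A,B}$ reflects order on \emph{sums} of simple tensors and then passing to suprema requires invoking \axiomO{2} and the explicit description of the auxiliary relation on $\Lat_{\mathrm{f}}(A)\otimes_\CatPreW\Lat_{\mathrm{f}}(B)$ from \autoref{dfn:tensProdAuxRel}, rather than the slice-map argument of \autoref{prp:idealsTens} which only directly addresses simple tensors.
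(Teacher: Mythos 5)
Your proposal is correct and follows essentially the same route as the paper: since $B$ is simple, $\Lat_{\mathrm{f}}(B)\cong\{0,\infty\}$, so every element of $\Lat_{\mathrm{f}}(A)\otimes_\CatCu\Lat_{\mathrm{f}}(B)$ is a simple tensor, whence \autoref{rmk:idealsTens} (that is, \autoref{prp:idealsTens}(2)) gives the order-embedding, and surjectivity translates into condition~(ii) of \cite[Proposition~2.16]{BlaKir04PureInf}. The ``main obstacle'' you anticipate in part (1) is moot once your own identification $\Lat_{\mathrm{f}}(A)\otimes_\CatCu\Lat_{\mathrm{f}}(B)\cong\Lat_{\mathrm{f}}(A)$ is in place: there are no genuine sums or suprema of distinct simple tensors left to handle, so no diagonalization or auxiliary-relation bookkeeping is needed.
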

\begin{proof}
\enumStatement{1}.
Since $B$ is simple, we have $\Lat_{\mathrm{f}}(B)\cong\{0,\infty\}$.
Therefore, every element in $\Lat_{\mathrm{f}}(A)\otimes_\CatCu\Lat_{\mathrm{f}}(B)$ is a simple tensor, and thus the result follows since $\psi_{A,B}$ determines the order of simple tensors;
see \autoref{rmk:idealsTens}.

\enumStatement{2}.
Given $K\in\Lat(A\tensMin B)$, it is straightforward to see that $K$ lies in the image of the map $\psi_{A,B}$ if and only if $K$ is the supremum of the ideals $I\tensMin J\subset K$ with $I\in\Lat(A)$ and $J\in\Lat(B)$.
This is equivalent to condition~(ii) in \cite[Proposition~2.16]{BlaKir04PureInf}.
\end{proof}

\begin{cor}
\label{prp:tensCaWithSimplePI}
Let $A$ and $B$ be separable \ca{s}.
Assume $B$ is simple, nuclear and purely infinite.
Then the natural map $\tau_{A,B}$ is an isomorphism.
Thus, there are natural isomorphisms:
\[
\Cu(A\otimes B)
\cong \Cu(A)\otimes_{\CatCu}\Cu(B)
\cong \Cu(A)\otimes_{\CatCu}\{0,\infty\}.
\]
\end{cor}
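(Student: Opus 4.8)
The plan is to reduce the whole statement to the lattice of ideals, using that a simple, purely infinite \ca{} has Cuntz semigroup $\{0,\infty\}$ (recorded in \autoref{pgr:ssa}). Granting $\Cu(B)\cong\{0,\infty\}$, the second displayed isomorphism $\Cu(A)\otimes_{\CatCu}\Cu(B)\cong\Cu(A)\otimes_{\CatCu}\{0,\infty\}$ is immediate from functoriality of the tensor product (\autoref{pgr:monoidalCu}). Moreover, since $B$ is nuclear the natural map $A\tensMax B\to A\tensMin B$ is an isomorphism, so $\tau_{A,B}^{\txtMax}=\tau_{A,B}^{\txtMin}=\tau_{A,B}$ as observed in \autoref{pgr:AnswerTensCa}, and it remains only to prove that $\tau_{A,B}$ is an isomorphism.

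First I would compute the source: combining \autoref{prp:tensWithInfty} with $\Cu(B)\cong\{0,\infty\}$ gives a natural isomorphism $\Cu(A)\otimes_{\CatCu}\Cu(B)\cong\Lat_{\mathrm{f}}(\Cu(A))$, and by \autoref{prp:LatCa} together with \autoref{prp:LatSepCa} (using that $A$ is separable, so every ideal is $\sigma$-unital and $\Lat(A)=\Lat_{\mathrm{f}}(A)$) this is in turn isomorphic to $\Lat(A)$. Next I would compute the target. Here I would invoke the external fact that $A\otimes B$ is purely infinite whenever $B$ is separable, simple, nuclear and purely infinite (Kirchberg and R{\o}rdam; concretely $B\cong B\otimes\mathcal{O}_\infty$, so $A\otimes B$ is separable and $\mathcal{O}_\infty$-absorbing, hence purely infinite). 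Then \autoref{prp:pureInfCa} and \autoref{prp:tensWithInfty} give $\Cu(A\otimes B)\cong\Lat_{\mathrm{f}}(\Cu(A\otimes B))$, which by \autoref{prp:LatCa} and \autoref{prp:LatSepCa} equals $\Lat(A\otimes B)$. Finally, since $B$ is simple we have $\Lat_{\mathrm{f}}(B)\cong\{0,\infty\}$, and since $B$ is nuclear (hence exact) \autoref{prp:Latf} shows $\psi_{A,B}$ is both an order-embedding and surjective; thus $\Lat(A\otimes B)\cong\Lat_{\mathrm{f}}(A)\otimes_{\CatCu}\Lat_{\mathrm{f}}(B)\cong\Lat_{\mathrm{f}}(A)\otimes_{\CatCu}\{0,\infty\}\cong\Lat_{\mathrm{f}}(A)=\Lat(A)$, where the last tensor isomorphism uses that $\Lat_{\mathrm{f}}(A)$ is idempotent and hence a $\{0,\infty\}$-semimodule (\autoref{prp:PIModTFAE}).

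To conclude that $\tau_{A,B}$ itself realizes this isomorphism, rather than merely that its source and target are abstractly isomorphic, I would trace a generator through the diagram. For $x\in(A\otimes\K)_+$ and nonzero $y\in(B\otimes\K)_+$ we have $[y]=\infty$ in $\Cu(B)$, and under \autoref{prp:tensWithInfty} the simple tensor $[x]\otimes[y]$ corresponds to $\Idl([x])$, i.e.\ to the ideal $I_x$ of $A$ generated by $x$; on the other side $\tau_{A,B}([x]\otimes[y])=[\psi(x\otimes y)]$ generates the ideal $I_x\tensMin B$ of $A\otimes B$, which is exactly $\psi_{A,B}(\Idl([x])\otimes\infty)$ and corresponds to $I_x$ again. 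Hence the composite $\Lat(A)\to\Lat(A)$ induced by $\tau_{A,B}$ is the identity, forcing $\tau_{A,B}$ to be an isomorphism. The main obstacle is the external input that $A\otimes B$ is purely infinite, which is what lets us identify $\Cu(A\otimes B)$ with $\Lat(A\otimes B)$; the rest is the careful but routine bookkeeping checking that the isomorphisms from \autoref{prp:tensWithInfty}, \autoref{prp:LatCa} and \autoref{prp:Latf} are all compatible with $\tau_{A,B}$, $\psi_{A,B}$ and the $\Idl$-maps.
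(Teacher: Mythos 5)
Your proposal is correct and follows essentially the same route as the paper's proof: identify $\Cu(A)\otimes_{\CatCu}\Cu(B)$ with $\Lat(A)$ via \autoref{prp:tensWithInfty}, use Kirchberg's $\mathcal{O}_\infty$-absorption theorem to conclude that $A\otimes B$ is purely infinite so that $\Cu(A\otimes B)\cong\Lat(A\otimes B)$, and then invoke \autoref{prp:Latf}. The only difference is that you spell out the bookkeeping (the identification $\Lat(A)=\Lat_{\mathrm{f}}(A)$ for separable $A$ and the check that $\tau_{A,B}$ is compatible with these identifications on simple tensors), which the paper leaves implicit.
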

\begin{proof}
We have $\Cu(B)\cong\{0,\infty\}$, and therefore $\Cu(A)\otimes_\CatCu\Cu(B)\cong\Lat(A)$ by \autoref{prp:tensWithInfty}.
Kirchberg's $\mathcal{O}_\infty$-absorption theorem (\cite[Theorem~7.2.6, p.113]{Ror02Classification}) implies $B\cong B\otimes\mathcal{O}_\infty$, whence $A\tensMin B$ is purely infinite.
Therefore $\Cu(A\tensMin B)\cong\Lat(A\tensMin B)$, and the result follows from \autoref{prp:Latf}.
\end{proof}

\begin{cor}
\label{prp:tensCaWithInfty}
Let $A$ be a separable \ca{}.
Then there are natural isomorphisms between the following $\CatCu$-semigroups:
\[
\Cu(\mathcal{O}_\infty\otimes A)
\cong \Lat(A)
\cong \Lat(\Cu(A))
\cong \{0,\infty\}\otimes_{\CatCu}\Cu(A).
\]
\end{cor}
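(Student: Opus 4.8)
The plan is to assemble the stated isomorphisms entirely from results already established in the excerpt; no new construction is required. The one genuine input is that separability of $A$ forces $\Cu(A)$ to be countably-based, which collapses the distinction between arbitrary and singly-generated ideals and lets all four objects be recognized as the same kind of thing.

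First I would dispose of the middle isomorphism $\Lat(A)\cong\Lat(\Cu(A))$, which is precisely \autoref{prp:LatCa}: the assignment $I\mapsto\Cu(I)$ is a natural isomorphism of complete lattices carrying the sublattice $\Lat_{\mathrm{f}}(A)$ onto $\Lat_{\mathrm{f}}(\Cu(A))$. Since $A$ is separable, every ideal of $A$ is $\sigma$-unital and hence contains a full positive element, so $\Lat(A)=\Lat_{\mathrm{f}}(A)$ by the discussion in \autoref{pgr:idealsCa}. Correspondingly, $\Cu(A)$ is countably-based by \autoref{prp:CufromCAlg}, whence \autoref{prp:LatCu} yields $\Lat(\Cu(A))=\Lat_{\mathrm{f}}(\Cu(A))$. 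In particular all the lattices in sight are lattices of singly-generated ideals, carrying their $\CatCu$-semigroup structure from \autoref{prp:LatCu} and \autoref{prp:LatSepCa}.

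Next I would identify $\Lat_{\mathrm{f}}(\Cu(A))$ with the tensor product by applying \autoref{prp:tensWithInfty} to $S=\Cu(A)$, which gives a natural $\CatCu$-isomorphism $\{0,\infty\}\otimes_{\CatCu}\Cu(A)\cong\Lat_{\mathrm{f}}(\Cu(A))$ sending $\infty\otimes a$ to $\Idl(a)$. Combined with the previous step this produces the chain $\Lat(A)\cong\Lat(\Cu(A))\cong\{0,\infty\}\otimes_{\CatCu}\Cu(A)$; symmetry of $\otimes_{\CatCu}$ from \autoref{pgr:monoidalCu} lets me present the factors in whichever order the statement demands.

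Finally I would treat the outer term $\Cu(\mathcal{O}_\infty\otimes A)$. The algebra $\mathcal{O}_\infty$ is separable, simple, nuclear and purely infinite, with $\Cu(\mathcal{O}_\infty)\cong\{0,\infty\}$, so \autoref{prp:tensCaWithSimplePI} applies with $B=\mathcal{O}_\infty$ and furnishes natural isomorphisms $\Cu(A\otimes\mathcal{O}_\infty)\cong\Cu(A)\otimes_{\CatCu}\Cu(\mathcal{O}_\infty)\cong\Cu(A)\otimes_{\CatCu}\{0,\infty\}$. Splicing this to the chain of the preceding paragraphs completes the proof. There is no serious obstacle here: the only points requiring attention, rather than any hard analytic content, are checking that $\mathcal{O}_\infty$ meets the hypotheses of \autoref{prp:tensCaWithSimplePI} and that naturality is inherited by the composite of the cited natural isomorphisms.
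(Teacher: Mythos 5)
Your proposal is correct and is exactly the intended deduction: the paper states this as a corollary with no separate proof, and the chain is assembled precisely from \autoref{prp:tensCaWithSimplePI} (with $B=\mathcal{O}_\infty$), \autoref{prp:tensWithInfty}, \autoref{prp:LatCa}, and the identifications $\Lat(A)=\Lat_{\mathrm{f}}(A)$ and $\Lat(\Cu(A))=\Lat_{\mathrm{f}}(\Cu(A))$ coming from separability via \autoref{pgr:idealsCa}, \autoref{prp:CufromCAlg} and \autoref{prp:LatCu}. No gaps.
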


\begin{rmk}
Note that it follows from our observations that a \ca{} $A$ is purely infinite if and only if $\Cu(A)\cong\Cu(\mathcal{O}_{\infty}\otimes A)$.
\end{rmk}

\begin{pgr}
Let $A$ be a \ca{}.
Consider the map
\[
I_A\colon\Cu(A)\to\Lat_{\mathrm{f}}(A),
\]
that sends the class of $x\in (A\otimes\K)_+$ to the ideal generated by $x$.
(We implicitly identify $\Lat_{\mathrm{f}}(A)\cong\Lat_{\mathrm{f}}(A\otimes\K)$.)
The map $I_A$ agrees with the composition of the map $\Cu(A)\to\Lat_{\mathrm{f}}(\Cu(A))$ from \autoref{prp:LatCu} with the isomorphism $\Lat_{\mathrm{f}}(\Cu(A))\cong\Lat_{\mathrm{f}}(A)$ from \autoref{prp:LatCa}.
By \autoref{prp:pureInfCa}, the \ca{} $A$ is purely infinite if and only if $I_A$ is an isomorphism.

Now, let $A$ and $B$ be \ca{s} and assume that $A$ or $B$ is purely infinite.
In \cite[Question~5.12]{KirRor00PureInf}, Kirchberg and R{\o}rdam ask whether it follows that $A\tensMin B$ is purely infinite.
Consider the following commutative diagram:
\[
\xymatrix@R=15pt{
\Cu(A)\otimes_\CatCu\Cu(B) \ar[r]^-{\tau_{A,B}^\txtMin} \ar[d]^{I_A\otimes I_B}
& \Cu(A\tensMin B) \ar[d]^{I_{A\tensMin B}} \\
\Lat_{\mathrm{f}}(A)\otimes_\CatCu\Lat_{\mathrm{f}}(B) \ar[r]^-{\psi_{A,B}}
& \Lat_{\mathrm{f}}(A\tensMin B).
}
\]
Using that $A$ or $B$ is purely infinite, it is straightforward to check that $I_A\otimes I_B$ is an isomorphism.
Moreover, $A\tensMin B$ is purely infinite if and only if $I_{A\tensMin B}$ is an isomorphism.

If $\tau_{A,B}^\txtMin$ is an isomorphism, then $A\tensMin B$ is purely infinite.
Conversely, if $A\tensMin B$ is purely infinite, then $\tau_{A,B}^\txtMin$ is an isomorphism if and only if $\psi_{A,B}$ is.
This connects Problems~\ref{prbl:tensCa}, \ref{prbl:LatTens} and \cite[Question~5.12]{KirRor00PureInf}.
\end{pgr}

\vspace{5pt}
\section{Almost unperforated and almost divisible \texorpdfstring{$\CatCu$}{Cu}-semigroups}
\label{sec:ZMod}

In this section we study $\CatCu$-semigroups that are semimodules over the $\CatCu$-semiring of the Jiang-Su algebra $\mathcal{Z}$.
We use $Z$ to denote $\Cu(\mathcal{Z})$ and we begin by showing that $Z$ is a solid $\CatCu$-semiring;
see \autoref{prp:ZSolid}.
The main result of this section is \autoref{prp:ZModTFAE}, where we characterize the $\CatCu$-semimodules over $Z$ as the $\CatCu$-semigroups that are almost unperforated and almost divisible.
This can be interpreted as a verification of the $\CatCu$-semigroup version of the Toms-Winter conjecture;
see \autoref{rmk:TWConjecture}.

\vspace{5pt}
Let $A$ be a \ca.
Recall that $V(A)$ denotes the Murray-von Neumann semigroup of equivalence classes of projections in matrices over $A$.
We use $\QT_2(A)$ to denote the set of $2$-quasitraces on $A$;
see \autoref{sec:fctl}.
By a famous result of Haagerup, \cite{Haa14arXivQuasitraces}, every $2$-quasitrace on a unital, exact \ca{} is a trace.
We let $\Lsc(\QT_2(A))$ denote the set of lower-semicontinuous linear functions from the cone $\QT_2(A)$ to $[0,\infty]$.
The $\CatCu$-semigroup $L(\QT_2(A))$ is defined as a certain subset of $\Lsc(\QT_2(A))$;
see \autoref{pgr:fctl} and \cite{EllRobSan11Cone} for more details.
If $A$ is simple and unital, then $L_b(\QT_2(A))$ is defined as the elements in $L(\QT_2(A))$ that are bounded by a finite multiple of the function $\hat{1}\in L(\QT_2(A))$ associated to the unit of $A$.

The following result is the combination of work of many people and has appeared in several (partial) versions in the literature.
In the formulation presented here, it can be found as Corollary~6.8 and Remark~6.9 in \cite{EllRobSan11Cone}.
Equivalent results and previous partial results can be found in Theorems~4.4 and~6.5 in \cite{PerTom07Recasting}, Theorem~2.6 in \cite{BroTom07ThreeAppl}, Theorems~6.2 and~6.3 in \cite{AntBosPer11CompletionsCu}, Theorem~5.27 in \cite{AraPerTom11Cu}, and Theorem~5.5 in \cite{BroPerTom08CuElliottConj}.

\begin{prp}[{A number of people}]
\label{prp:CuSimpleZstable}
Let $A$ be a unital, separable, simple, finite, $\mathcal{Z}$-stable \ca{}.
Then the (pre)completed Cuntz semigroup of $A$ can be computed as:
\[
W(A) \cong V(A)^\times \sqcup L_b(\QT_2(A)),\quad
\Cu(A) \cong V(A)^\times \sqcup L(\QT_2(A)).
\]
In particular, if $A$ is exact and has a unique tracial state, then
\[
W(A) \cong V(A) \sqcup (0,\infty),\quad
\Cu(A) \cong V(A) \sqcup (0,\infty].
\]
\end{prp}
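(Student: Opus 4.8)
The plan is to establish the general formula for $W(A)$ and $\Cu(A)$ first, and then derive the special case by computing the relevant invariants for a monotracial, exact algebra. Since the general statement is attributed in the literature (Corollary~6.8 and Remark~6.9 in \cite{EllRobSan11Cone}, with the predecessors listed), I would treat the first isomorphisms as the substantive input and focus the argument on the specialization. The structural picture to keep in mind is that in a simple, stably finite $\CatCu$-semigroup satisfying \axiomO{5}, every nonzero element is either compact or soft (see \autoref{prp:softNoncpctSimple}). For $\Cu(A)$ with $A$ unital, separable, simple, finite and $\mathcal{Z}$-stable, the compact elements are exactly the classes coming from projections, i.e.\ $V(A)^\times$ together with $0$, while the soft (equivalently, purely noncompact) elements are governed entirely by their values on functionals, via \autoref{prp:soft_comparison}. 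This is what underlies the disjoint-union decomposition.

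First I would invoke \autoref{prp:FctlCa} and \autoref{prp:UniqueFctlFromCa}: under the exactness and unique-trace hypotheses, Haagerup's theorem forces the unique $2$-quasitrace $\tau$ to be a trace, and $\QT_2(A)$ is the ray $[0,\infty)\cdot\tau$, so $F(\Cu(A))$ is order-isomorphic to $[0,\infty]$ (a single ray of functionals together with the trivial ones). Consequently the cone $\Lsc(\QT_2(A))$ of lower-semicontinuous linear functions on a one-dimensional cone is just the set of linear maps $[0,\infty)\to[0,\infty]$, which is parametrized by a single slope in $[0,\infty]$; restricting to the subsemigroup $L(\QT_2(A))$ defined in \autoref{pgr:fctl} yields precisely the half-open interval $(0,\infty]$ (the value $0$ being excluded because it corresponds to the zero element, already accounted for, while $\infty$ survives as a genuine soft element). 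For the pre-completed version one instead gets the \emph{bounded} functions $L_b(\QT_2(A))$, namely those linear functions dominated by a finite multiple of $\hat 1$; on the one-dimensional cone these are exactly the finite slopes, giving the open interval $(0,\infty)$.

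Next I would identify the compact part. Because $A$ has a unique tracial state and is finite and $\mathcal{Z}$-stable, it has stable rank one, so every nonzero projection class is finite and the order on projection classes is determined by the trace; thus $V(A)^\times$ appears as the compact elements, and adjoining $0$ gives $V(A)=V(A)^\times\sqcup\{0\}$. Combining the two computations, $\Cu(A)\cong V(A)^\times\sqcup L(\QT_2(A))\cong V(A)^\times\sqcup(0,\infty]$; absorbing the zero of the soft half into $V(A)$ produces the stated $\Cu(A)\cong V(A)\sqcup(0,\infty]$, and likewise $W(A)\cong V(A)\sqcup(0,\infty)$. One must check that the addition and order across the two pieces match: a sum of a compact projection class and a soft element is soft, and comparison between a compact element and a soft element is recorded by the functional, which is exactly the rule in the right-hand model. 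These compatibility checks are routine given \autoref{prp:softAbsorbing} and \autoref{prp:soft_comparison}.

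The main obstacle I anticipate is pinning down the exact boundary behavior at $0$ and $\infty$ in the soft part, i.e.\ verifying that the correct semigroup is the half-open $(0,\infty]$ for $\Cu(A)$ (completed) versus the open $(0,\infty)$ for $W(A)$ (pre-completed), rather than some other truncation. This hinges on distinguishing $L(\QT_2(A))$ from $L_b(\QT_2(A))$: the completed Cuntz semigroup admits suprema of increasing sequences (\axiomO{1}), so it contains the supremum $\infty$ of an unbounded increasing sequence of soft elements, whereas $W(A)$, lacking arbitrary suprema, retains only the bounded slopes. I would make this precise by tracing through the definition of $L(F(S))$ in \autoref{pgr:fctl} and the fact that $\Cu(A)$ is the $\CatCu$-completion of $W(A)$ (\autoref{prp:commutingFctrs}), so that passing from $W$ to $\Cu$ exactly adjoins the suprema that complete $(0,\infty)$ to $(0,\infty]$. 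The remaining verification that the identifications are semigroup isomorphisms respecting the order is then a direct, if slightly tedious, matter of matching the two descriptions piece by piece.
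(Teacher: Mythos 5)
The paper offers no proof of this proposition: it is stated as a combination of known results and cited verbatim from Corollary~6.8 and Remark~6.9 of \cite{EllRobSan11Cone}, so your decision to treat the first pair of isomorphisms as the substantive literature input is exactly what the paper does. Your derivation of the monotracial special case (Haagerup plus unique trace collapsing $\QT_2(A)$ to a single ray, linear lower-semicontinuous functions on that ray being parametrized by a slope in $(0,\infty]$ versus the bounded slopes $(0,\infty)$ for $L_b$, and stable rank one identifying the compact part with $V(A)$) is correct and supplies detail the paper leaves implicit; the only bookkeeping point is that the zero element must be counted exactly once --- either inside $V(A)$ or as the zero function in $L(\QT_2(A))$, but not both --- which your identification does handle correctly in the end.
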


\begin{pgr}
\label{pgr:Z}\index{terms}{Cuntz semigroup!of Jiang-Su algebra}
Now let $Z$ be the (completed) Cuntz semigroup of the Jiang-Su algebra $\mathcal{Z}$.
Using \autoref{prp:CuSimpleZstable}, we can compute $Z$ as
\[
Z=\N\sqcup(0,\infty],
\]
where the elements of $\N\subset Z$ are compact and the elements of $(0,\infty]\subset Z$ are soft;
for the concrete case of the Cuntz semigroup of the Jiang-Su algebra, this computation has also appeared in \cite[Theorem~3.1]{PerTom07Recasting}. (We are assuming here that the set $\N$ contains $0$.)

Using this decomposition into two parts, the addition, multiplication and order on $Z$ are defined as usual in each of the parts;
see also \autoref{exa:R}.
Given a compact, nonzero element $n\in\N\subset Z$, we let $n'\in(0,\infty]$ denote the associated soft element with the same number.
Then, given $n\in\N$ and $a\in(0,\infty]$, we define $n+a=n'+a$ and $na=n'a$.
Thus, the soft part of $Z$ is additively and multiplicatively absorbing.
For a compact element $n\in Z$ and a soft element $a\in Z$ we have $n\leq a$ if and only if $n'< a$; and we have $a\leq n$ if and only if $a\leq n'$.
\end{pgr}

\begin{prp}
\label{prp:ZSolid}
The $\CatCu$-semiring $Z=\N\sqcup(0,\infty]$ is solid.
\end{prp}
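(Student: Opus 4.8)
By the characterization of solidity in \autoref{prp:solidTFAE}, it suffices to verify condition~\enumStatement{6}, namely that $a\otimes 1=1\otimes a$ in $Z\otimes_{\CatCu}Z$ for every $a\in Z$. The plan is to handle the two parts of the decomposition $Z=\N\sqcup(0,\infty]$ separately, using the density of rationals in the soft part to reduce the soft case to a rational computation, in the spirit of the argument for $[0,\infty]$ in \autoref{exa:R}. Recall that the unit of $Z$ is $1\in\N\subset Z$, which is a compact element.

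First I would treat the compact elements. For $n\in\N\subset Z$ we have $n=n\cdot 1$ in $Z$, so $n\otimes 1=n(1\otimes 1)=1\otimes n$ holds trivially by biadditivity of the tensor map; the same reasoning covers all of $\N$. The substance is in the soft part $(0,\infty]$. For a rational soft element, I would argue exactly as in \autoref{exa:R}: writing the element as $\tfrac{k}{n}$ (a soft element, using the multiplicative structure described in \autoref{pgr:Z}), one computes
\[
1\otimes\tfrac{k}{n}
= \tfrac{n}{n}\otimes \tfrac{k}{n}
= kn\left(\tfrac{1}{n}\otimes \tfrac{1}{n}\right)
= \tfrac{k}{n}\otimes \tfrac{n}{n}
= \tfrac{k}{n}\otimes 1,
\]
where the middle identifications use biadditivity together with the relations $1=n\cdot\tfrac1n$ (valid in the soft part of $Z$). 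Here one must be slightly careful: the factor $\tfrac{n}{n}$ appearing on the left is the soft element equal to the compact unit $1$ in $Z$, and the relation $1=\tfrac{n}{n}$ as elements of $Z$ is what makes the first and last equalities legitimate. This is precisely where the multiplicative bookkeeping of \autoref{pgr:Z} (mixed products land in the soft part, soft part is multiplicatively absorbing) enters.

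To pass from rational soft elements to arbitrary $a\in(0,\infty]$, I would use continuity. Every soft $a$ is the supremum of a rapidly increasing sequence of rational soft elements $q_k\ll q_{k+1}$ with $a=\sup_k q_k$; since the tensor bimorphism $\omega\colon Z\times Z\to Z\otimes_{\CatCu}Z$ is a $\CatCu$-bimorphism (and in particular continuous in each variable by \autoref{prp:bimorCu}), we have $a\otimes 1=\sup_k(q_k\otimes 1)$ and $1\otimes a=\sup_k(1\otimes q_k)$. Since $q_k\otimes 1=1\otimes q_k$ for each $k$ by the rational case, the two suprema agree, giving $a\otimes 1=1\otimes a$. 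The case $a=\infty$ is covered by taking $q_k\to\infty$.

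The main obstacle I anticipate is not conceptual but a matter of keeping the additive-versus-soft identifications straight: the element $1$ is compact while $\tfrac{n}{n}$, $\tfrac{1}{n}$, etc., are soft, and the equalities $1=\tfrac{n}{n}$ and $n=n\cdot\tfrac1n$ hold in $Z$ only because of the specific way compact and soft elements interact under addition and multiplication (as spelled out in \autoref{pgr:Z}). One should confirm that the rational computation above never secretly requires cancelling a compact element against a soft one in a way that fails, and that the density and rapid-increase approximation genuinely holds inside the soft part $(0,\infty]$ with respect to the auxiliary/way-below relation. Once these routine verifications are in place, solidity follows immediately from \autoref{prp:solidTFAE}.
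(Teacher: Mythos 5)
Your overall strategy is the paper's: reduce to condition~(6) of \autoref{prp:solidTFAE}, dispose of compact elements as multiples of the unit, and run the rational-plus-density computation of \autoref{exa:R} on the soft part. The compact case and the approximation by rational soft elements are fine. But there is one genuine error at the heart of the soft case: you assert that $\tfrac{n}{n}$ is ``the soft element equal to the compact unit $1$ in $Z$.'' It is not. In $Z$ (unlike in $[0,\infty]$) we have $n\cdot\tfrac{1}{n}=1'$, the \emph{soft} one, and by the order rules of \autoref{pgr:Z} a soft $a$ satisfies $1\leq a$ only when $1'<a$; hence $1'\leq 1$ but $1\nleq 1'$, i.e.\ $1'<1$ strictly. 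Consequently the first and last equalities of your displayed chain are not identities in $Z\otimes_{\CatCu}Z$ as written: the chain, read correctly, only establishes $1'\otimes a=a\otimes 1'$ for soft rational $a$, which is a statement about the soft unit, not about the unit of the semiring.

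The gap is fixable by one extra approximation step showing $1\otimes a=1'\otimes a$ for every soft $a$ (and symmetrically in the other variable). One direction is immediate from $1'\leq 1$. For the other, take soft $a'\ll a$ and choose $n$ with $\tfrac{n+1}{n}a'\leq a$; since $1'<\tfrac{n+1}{n}$ we have $1\leq\tfrac{n+1}{n}$ in $Z$, so
\[
1\otimes a'\ \leq\ \tfrac{n+1}{n}\otimes a'\ =\ 1'\otimes\left(\tfrac{n+1}{n}\,a'\right)\ \leq\ 1'\otimes a,
\]
where the middle equality is the genuine soft-times-soft computation of \autoref{exa:R}; taking the supremum over $a'\ll a$ gives $1\otimes a\leq 1'\otimes a$. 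With that step inserted, $1\otimes a=1'\otimes a=a\otimes 1'=a\otimes 1$ and your argument closes. (The paper's own proof is equally terse on this point, simply invoking ``the same argument'' as \autoref{exa:R}; but since you explicitly staked the first equality on the false identity $1=\tfrac{n}{n}$ in $Z$, the step as you wrote it would fail and needs the repair above.)
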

\begin{proof}
By \autoref{prp:solidTFAE}, it is enough to show that $1\otimes a=a\otimes 1$ for every $a\in Z$.
This follows easily for compact elements in $Z$, since they are multiples of the unit.
In the other case, if $a\in Z$ is a soft element, we can use the same argument that was used in \autoref{exa:R} to show that $[0,\infty]$ is solid.
\end{proof}

\begin{dfn}
\label{dfn:almDiv}
\index{terms}{element!almost $k$-divisible}
\index{terms}{element!almost divisible}
\index{terms}{almost divisible}
\index{terms}{positively ordered monoid!almost divisible}
Let $S$ be a $\CatCu$-semigroup, $a$ in $S$, and $k\in\N_+$.
We say that $a$ is \emph{almost $k$-divisible} if for any $a'\in S$ with $a'\ll a$ there exists $x\in S$ such that
\[
kx\leq a,\quad
a'\leq (k+1)x.
\]
We say that $a$ is \emph{almost divisible} if it is almost $k$-divisible for every $k\in\N_+$.
We say that $S$ is \emph{almost divisible} if each of its elements is.
\end{dfn}

\begin{rmk}
\label{rmk:almDiv}
Divisibility properties of $\CatCu$-semigroups are studied in detail in \cite{RobRor13Divisibility}.
The above definition of `almost divisibility' for $\CatCu$-semigroups can be found as Definition~3.1 in connection with Remark~3.13 in \cite{RobRor13Divisibility}.

The Cuntz semigroup $\Cu(A)$ of a \ca{} $A$ has sometimes been called almost divisible provided that for all $k\in\N$ and $a\in\Cu(A)$, there exists $x\in\Cu(A)$ such that $kx\leq a\leq (k+1)x$.
This is the natural definition of \emph{almost divisibility} for \pom{s}.
However, for $\CatCu$-semigroups this is in general a stronger form of divisibility than the one in \autoref{dfn:almDiv}.

As suggested by L.~Robert, we use the notion of almost divisibility for $\CatCu$-semigroups as in \autoref{dfn:almDiv} since this is the natural property that is preserved by quotients, limits and ultraproducts of \ca{s} (see \cite{RobTik14} and \cite{RobRor13Divisibility}).
Moreover, as we show below (\autoref{prp:alternativealmDiv}), in the presence of almost unperforation, both definitions agree for $\CatCu$-semigroups.
Therefore, whenever a $\CatCu$-semigroup is almost unperforated, we will make no distinction on which form of divisibility is being used.

Note also that we introduce the notion of `almost $k$-divisibility' for each $k$ only for technical reasons.
This should not be confused with the notion of `$k$-almost divisibility' as in \cite{RobTik14}, \cite{RobRor13Divisibility} or \cite{Win12NuclDimZstable}.
\end{rmk}

\begin{lma}
Let $S$ be an algebraic $\CatCu$-semigroup.
Then $S$ is almost divisible if and only if the subsemigroup $S_c$ of compact elements is almost divisible as a \pom.
\end{lma}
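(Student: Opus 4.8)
The plan is to prove both implications directly, exploiting that in an algebraic $\CatCu$-semigroup every element is a supremum of an increasing sequence of compact elements, and that compactness interacts well with suprema of increasing sequences via \axiomO{4}. Throughout I would use that, by \autoref{rmk:almDiv}, saying that $S_c$ is almost divisible as a \pom\ means precisely: for every compact $a$ and every $k\in\N_+$ there exists a compact $x$ with $kx\leq a\leq(k+1)x$.

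First I would show that almost divisibility of $S_c$ implies almost divisibility of $S$. Fix $a\in S$, $k\in\N_+$, and $a'\ll a$. Since $S$ is algebraic, write $a=\sup_n a_n$ with $(a_n)_n$ an increasing sequence of compact elements. As $a'\ll a=\sup_n a_n$, there is an index $n$ with $a'\leq a_n$. Applying the almost divisibility of $S_c$ to the compact element $a_n$, we obtain a compact $x$ with $kx\leq a_n\leq(k+1)x$. Then $kx\leq a_n\leq a$ and $a'\leq a_n\leq(k+1)x$, so $x$ witnesses almost $k$-divisibility of $a$ in the sense of \autoref{dfn:almDiv}. As $a$ and $k$ were arbitrary, $S$ is almost divisible.

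Conversely, suppose $S$ is almost divisible, and fix a compact element $a\in S_c$ and $k\in\N_+$. Since $a$ is compact, $a\ll a$, so I may apply almost $k$-divisibility of $S$ with $a'=a$ to obtain $x\in S$ with $kx\leq a\leq(k+1)x$. The element $x$ need not be compact, and producing a compact witness is the only real point of the argument. I would resolve this by algebraicity: write $x=\sup_m x_m$ with $(x_m)_m$ increasing and each $x_m$ compact. By \axiomO{4}, the increasing sequence $((k+1)x_m)_m$ satisfies $\sup_m (k+1)x_m=(k+1)x$, and since $a$ is compact and $a\leq\sup_m(k+1)x_m$, there is an index $m$ with $a\leq(k+1)x_m$. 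Moreover $kx_m\leq kx\leq a$. Thus $x_m$ is a compact element with $kx_m\leq a\leq(k+1)x_m$, which is exactly what almost divisibility of $S_c$ requires.

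The hard part, such as it is, will be this last passage to a compact witness; it relies essentially on the compactness of $a$ (to descend from the supremum) together with the compatibility of addition with suprema of increasing sequences from \axiomO{4}. Both directions are otherwise immediate from the definitions, so no further estimates are needed.
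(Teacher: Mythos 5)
Your proof is correct and follows essentially the same route as the paper: algebraicity supplies a compact element between $a'$ and $a$ in one direction, and in the other direction compactness of $a$ together with \axiomO{4} lets you descend from a general witness $x$ to a compact one below it. The only difference is that you spell out the approximation argument for the compact witness, which the paper states more tersely.
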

\begin{proof}
Suppose that $S$ is almost divisible, $a$ is in $S_c$ and $k\in\N$.
As $a\ll a$, there is $x\in S$ such that
\[
kx\leq a,\quad a\leq (k+1)x.
\]
Since $S$ is algebraic and $a$ is compact, we can find $x'\in S_c$ such that $x'\leq x$ and $a\leq(k+1)x'$.
Then
\[
kx'\leq kx\leq a\leq (k+1)x',
\]
which shows that $x'$ has the desired properties.

Conversely, assume that $S_c$ is almost divisible as a \pom.
Let $a'$ and $a$ be elements in $S$ satisfying $a'\ll a$, and let $k\in\N$.
Using that $S$ is algebraic there exists a compact element $b$ such that $a'\ll b\ll a$.
Now by assumption there is $x$ in $S_c$ such that
\[
kx\leq b\leq (k+1)x.
\]
It follows
\[
kx\leq b \leq a,\quad
a'\leq b\leq (k+1)x,
\]
which shows that $x$ has the desired properties.
\end{proof}

\begin{prp}
\label{prp:alternativealmDiv}
Let $S$ be an almost unperforated $\CatCu$-semigroup.
If $S$ is almost divisible, then for all $a\in S$ and $k\in\N$ there exists $x\in S$ such that
\[
kx\leq a\leq (k+1)x.
\]
\end{prp}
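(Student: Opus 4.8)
The goal is to upgrade the "almost $k$-divisibility" hypothesis (which only guarantees $kx \leq a$ and $a' \leq (k+1)x$ for $a' \ll a$) to the stronger conclusion $kx \leq a \leq (k+1)x$, using almost unperforation. The natural approach is to produce, for each rapidly increasing approximant $a' \ll a$, a witness $x_{a'}$ satisfying $k x_{a'} \leq a$ and $a' \leq (k+1)x_{a'}$, and then to take a supremum over a rapidly increasing sequence converging to $a$. First I would fix $a \in S$ and $k \in \N$, and choose (using \axiomO{2}) a rapidly increasing sequence $(a_n)_n$ with $a = \sup_n a_n$, so $a_n \ll a_{n+1} \ll a$ for each $n$.

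For each $n$, apply almost $k$-divisibility to the pair $a_n \ll a_{n+1}$ (or to $a_n \ll a$) to obtain $x_n \in S$ with $k x_n \leq a_{n+1}$ and $a_n \leq (k+1)x_n$. The subtlety is that these $x_n$ need not be increasing, so one cannot immediately take their supremum. I would arrange increasingness by a diagonal/refinement argument: since $a_n \leq (k+1)x_n$ and $a_{n-1} \ll a_n$, one can try to compare consecutive witnesses. The cleaner route is to first pass to the sup $x := \sup_n x_n'$ of a suitably chosen increasing subfamily. Concretely, I would use almost unperforation to force monotonicity: given witnesses $x_n$ and $x_{n+1}$, one has $(k+1)x_n \geq a_n$ and $k x_{n+1} \leq a_{n+2}$, and comparing multiples should yield, via $<_s$ and \autoref{prp:almUnpCu}, that a fixed choice can be made coherently. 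Alternatively, I would build $x$ directly as a supremum and verify the two inequalities at the level of approximants.

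The key inequalities to establish at the limit are $kx \leq a$ and $a \leq (k+1)x$. The first follows from $k x_n \leq a_{n+1} \leq a$ passed through the supremum (using \axiomO{4} to commute the finite sum $k(\cdot)$ with the supremum). For the second, $a = \sup_n a_n \leq \sup_n (k+1)x_n = (k+1)\sup_n x_n = (k+1)x$, again invoking \axiomO{4}. So the content is entirely in producing a single increasing sequence $(x_n)_n$ of witnesses; once that is done, both bounds are automatic. Here is where almost unperforation enters: I expect to need it to replace the given witnesses by monotone ones. The trick is that if $x_n, x_{n+1}$ are two witnesses, one shows $x_n <_s^\ctsRel x_{n+1}$ or a comparable relation by comparing $k$- and $(k+1)$-multiples against the $a_j$, and then \autoref{prp:almUnpCu} converts $<_s^\ctsRel$ into $\leq$, yielding $x_n \leq x_{n+1}$ after adjusting indices.

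\textbf{Main obstacle.} The hard part will be organizing the witnesses into a genuinely increasing sequence. Almost $k$-divisibility is an approximate, non-canonical statement, so the $x_n$ are chosen independently and carry no built-in monotonicity or compatibility. The role of almost unperforation is precisely to repair this: multiplying the two defining inequalities of consecutive witnesses and cross-comparing (in the style of the computations in \autoref{prp:relSTFAE} and \autoref{prp:nearUnperfCuAlgCanc}) should produce relations of the form $m x_n \leq m x_{n+1}$ from which $x_n \leq x_{n+1}$ follows by almost unperforation. I would carry out this comparison carefully, since the exact multiples needed (and whether one must pass to a subsequence or interleave indices) is the only genuinely delicate point; everything after the monotone sequence is in hand is a routine application of \axiomO{4}.
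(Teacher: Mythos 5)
Your overall architecture (rapidly increasing approximants, a monotone sequence of divisibility witnesses, suprema via \axiomO{4}) matches the paper's, and you correctly isolate the only delicate point: making the witnesses increasing. But the mechanism you propose for that step does not work. If $x_n$ and $x_{n+1}$ come from almost $k$-divisibility applied to $a_n\ll a_{n+1}$ and $a_{n+1}\ll a_{n+2}$, the only chain connecting them is
\[
k x_n \leq a_{n+1} \leq (k+1)x_{n+1},
\]
and multiplying through by any $m$ preserves the ratio $k:(k+1)$, which is the \emph{wrong} direction for almost unperforation: to conclude $x_n\leq x_{n+1}$ you would need $(m+1)x_n\leq m x_{n+1}$, i.e.\ a coefficient on the left strictly \emph{larger} than on the right. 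Already in $[0,\infty]$ the inequality $kx_n\leq(k+1)x_{n+1}$ is compatible with $x_n>x_{n+1}$, so no amount of cross-comparison of the four defining inequalities can yield $x_n<_s x_{n+1}$ or $x_n\leq_p x_{n+1}$. The witnesses produced by integer $k$-divisibility simply carry no usable slack.

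The paper's fix is to create that slack by using rational, not integer, divisibility. Choose $p_n,q_n\in\N_+$ with
\[
k+1>\tfrac{p_n}{q_n}>\tfrac{p_{n+1}+1}{q_{n+1}}>k,
\]
and apply almost $p_n$-divisibility to the element $q_n a_n$ (with $q_n a_{n-1}\ll q_n a_n$) to get $x_n$ satisfying $p_n x_n\leq q_n a_n$ and $q_n a_{n-1}\leq (p_n+1)x_n$. The strict inequality $\tfrac{p_n}{q_n}>\tfrac{p_{n+1}+1}{q_{n+1}}$ then gives $q_{n+1}p_n x_n\leq q_{n+1}q_n a_n\leq q_n(p_{n+1}+1)x_{n+1}$ with $q_{n+1}p_n>q_n(p_{n+1}+1)$, so almost unperforation yields $x_n\leq x_{n+1}$; and the strict inequalities $\tfrac{p_n}{q_n}>k$ and $\tfrac{p_n+1}{q_n}<k+1$ likewise convert, via almost unperforation, into $kx_n\leq a_n$ and $a_{n-1}\leq(k+1)x_n$. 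From there your supremum argument goes through verbatim. Without this rational refinement the proof has a genuine gap at its central step.
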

\begin{proof}
Let $a$ and $k$ be as in the statement.
For each $n\in\N$, choose numbers $p_n,q_n\in\N_+$ such that
\[
k+1>\tfrac{p_n}{q_n}>\tfrac{p_{n+1}+1}{q_{n+1}}>k.
\]
Choose a rapidly increasing sequence $(a_n)_n$ in $S$ such that $a=\sup_n a_n$.
Using that $S$ is almost divisible, for each $n\in\N$ we can choose an element $x_n$ in $S$ such that
\begin{align}
\label{prp:alternativealmDiv:eq1}
p_n x_n\leq q_n a_n,\quad
q_n a_{n-1}\leq (p_n+1)x_n.
\end{align}
It follows
\[
q_{n+1} p_n x_n\leq q_{n+1} q_n a_n\leq q_n (p_{n+1}+1) x_{n+1},
\]
which by almost unperforation implies $x_n\leq x_{n+1}$.
Set $x :=\sup_n x_n$.

For each $n$, it follows from \eqref{prp:alternativealmDiv:eq1} that
\[
kp_n x_n\leq kq_n a_n,\quad
(k+1)q_n a_{n-1}\leq (k+1)(p_n+1)x_n.
\]
By almost unperforation, we obtain $k x_n\leq a_n$ and $a_{n-1}\leq (k+1)x_n$.
Passing to suprema, we obtain the desired inequalities.
\end{proof}

It is easily checked that $Z$ is almost unperforated and almost divisible.
It then follows from the next result that every $\CatCu$-semigroup{} with $Z$-multiplication is almost divisible and almost unperforated.
The converse is shown in \autoref{prp:ZModTFAE}.
The following lemma is also used in the proof of \autoref{thm:simpleSemirg}, which is why we formulate it more general than needed in this section.

\begin{lma}
\label{prp:ZModAlmUnpDiv}
Let $S$ be a $\Cu$-semimodule over a $\Cu$-semiring $R$.
Assume that the unit element of $R$ is almost divisible.
Then $S$ is almost unperforated and almost divisible.
In particular, $R$ itself is almost unperforated and almost divisible.
\end{lma}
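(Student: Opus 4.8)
The plan is to prove that if $S$ is a $\CatCu$-semimodule over a $\CatCu$-semiring $R$ whose unit $1_R$ is almost divisible, then $S$ is both almost unperforated and almost divisible. The central idea is that the $R$-multiplication map $R\times S\to S$ is a $\CatCu$-bimorphism, hence is monotone, continuous, and compatible with the relation $\ll$ in each variable. This lets us transport divisibility of the unit $1_R$ into divisibility of an arbitrary element $a\in S$ by writing $a=1_R\cdot a$ and multiplying a near-divisor of $1_R$ by $a$.

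First I would prove almost divisibility. Fix $a\in S$, $a'\in S$ with $a'\ll a$, and $k\in\N_+$. Since the bimorphism preserves $\ll$ jointly, I need a compact predecessor of $1_R$: pick $e\in R$ with $e\ll 1_R$ such that multiplication by $e$ still dominates $a'$, more precisely so that $a'\ll e\cdot a$; such $e$ exists because $a'\ll a=1_R\cdot a=\sup\{e\cdot a: e\ll 1_R\}$ by continuity of the bimorphism and axiom \axiomO{2} applied to $1_R$. Now apply almost $k$-divisibility of $1_R$ to $e\ll 1_R$ to obtain $y\in R$ with $ky\leq 1_R$ and $e\leq(k+1)y$. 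Setting $x:=y\cdot a\in S$ and using that multiplication is additive and monotone in the first variable, I get $kx=(ky)\cdot a\leq 1_R\cdot a=a$ and $a'\leq e\cdot a\leq((k+1)y)\cdot a=(k+1)x$. Thus $x$ witnesses almost $k$-divisibility of $a$, and since $k$ was arbitrary, $a$ is almost divisible.

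Next I would prove almost unperforation, which is where the main work lies. Given $a,b\in S$ with $a<_s b$, say $(k+1)a\leq kb$ for some $k$, I must show $a\leq b$. The key observation is that $Z=\N\sqcup(0,\infty]$ admits a unital generalized $\CatCu$-morphism into any $R$ whose unit is almost divisible; equivalently, I expect to build from almost divisibility of $1_R$ a family of elements behaving like $\tfrac{1}{n}1_R$, allowing me to divide. Concretely, for $a'\ll a$ I would use almost $n$-divisibility of $1_R$ to produce $y_n\in R$ with $ny_n\leq 1_R\leq(n+1)y_n$ (invoking \autoref{prp:alternativealmDiv} once almost unperforation of $R$ is available, or arguing directly with the predecessor trick above to avoid circularity). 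Multiplying the relation $(k+1)a\leq kb$ by suitable $y_n$ and combining with $a'\ll a$, the estimates $(k+1)\tfrac{1}{n}$-type inequalities in the semimodule force $a'<_s b$ for every $a'\ll a$, which is precisely $a<_s^\ctsRel b$; one then concludes $a\leq b$.

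The hard part will be establishing almost unperforation without assuming it, since \autoref{prp:alternativealmDiv} presupposes it. I would avoid circularity by carrying out the division purely through the almost-divisibility inequalities $ky\leq 1_R\leq(k+1)y$ and the monotonicity of multiplication, mimicking the arithmetic of \autoref{prp:comparisonRelations} and \autoref{prp:relSTFAE} inside the semimodule: from $(k+1)a\leq kb$ one derives, after multiplying by the near-divisors $y_n$ and passing to the regularized relation $<_s^\ctsRel$, that every $a'\ll a$ satisfies $a'<_s b$, whence $a\leq b$ follows from the definition of almost unperforation applied at the level of compact predecessors. The final clause, that $R$ itself is almost unperforated and almost divisible, is immediate by viewing $R$ as a semimodule over itself via its own multiplication, so that $1_R$ is the unit and the general statement applies.
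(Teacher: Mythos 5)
Your treatment of almost divisibility is correct and is essentially the paper's argument: pick $u\ll 1_R$ with $a'\leq u\cdot a$, apply almost $k$-divisibility of $1_R$ to get $y$ with $ky\leq 1_R$ and $u\leq(k+1)y$, and set $x=y\cdot a$.

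The almost unperforation part, however, has a genuine gap: your endgame is circular. You propose to show $a'<_s b$ for every $a'\ll a$, i.e.\ $a<_s^\ctsRel b$, and then to ``conclude $a\leq b$ from the definition of almost unperforation.'' But by \autoref{prp:almUnpCu}, the implication ``$a<_s^\ctsRel b$ implies $a\leq b$'' \emph{is} almost unperforation of $S$ --- it is exactly what you are trying to prove, so you cannot invoke it. A second, smaller issue is that you want elements $y_n$ with $ny_n\leq 1_R\leq (n+1)y_n$; almost divisibility of $1_R$ only yields $ny_n\leq 1_R$ and $u\leq(n+1)y_n$ for a prescribed $u\ll 1_R$, and upgrading to $1_R\leq(n+1)y_n$ via \autoref{prp:alternativealmDiv} presupposes almost unperforation. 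You flag both problems but never resolve them. The fix is to aim directly for $a'\leq b$ rather than $a'<_s b$, reusing the very same $x$ from your divisibility step: given $(k+1)a\leq kb$ and $a'\ll a$, choose $u\ll 1_R$ with $a'\leq u\cdot a$ and then $x\in R$ with $kx\leq 1_R$ and $u\leq(k+1)x$; the chain
\[
a'\leq u\cdot a\leq \bigl((k+1)x\bigr)\cdot a = x\cdot\bigl((k+1)a\bigr)\leq x\cdot(kb)=(kx)\cdot b\leq 1_R\cdot b=b
\]
gives $a'\leq b$ for every $a'\ll a$, whence $a\leq b$ by \axiomO{2}. This is the paper's argument, and it needs neither the regularized relation $<_s^\ctsRel$ nor the two-sided inequality $1_R\leq(n+1)y_n$.
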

\begin{proof}
Choose a rapidly increasing sequence $(u_n)_n$ in $R$ such that $1_R=\sup_n u_n$.
First, we show that $S$ is almost divisible.
Let $a$ be in $S$, $k\in \N$, and let $a'\in S$ satisfy $a'\ll a$.
Since $a=\sup_n u_n a$, there exists $N\geq 1$ such that $a'\leq u_Na$.
Then, since $1_R$ is almost divisible, we choose $x\in R$ such that
\[
kx\leq 1_R,\quad
u_N\leq (k+1)x.
\]
It follows
\[
k(xa)\leq a,\quad
a'\leq u_Na\leq (k+1)(xa).
\]
This shows that $a$ is almost divisible.
Since $a$ was arbitrary, we obtain that $S$ is almost divisible.

To show that $S$ is almost unperforated, let $a,b\in S$ satisfy $(k+1)a\leq kb$ for some $k\in\N$.
Let $a'\in S$ satisfy $a'\ll a$.
As above, there exists $N\in\N$ such that $a'\leq u_Na$ and we can choose $x\in R$ such that $kx\leq 1_R$ and $u_N\leq (k+1)x$.
Then
\[
a'\leq u_Na\leq x(k+1)a\leq xkb\leq b.
\]
Since this holds for every $a'$ satisfying $a'\ll a$, we obtain $a\leq b$, as desired.
\end{proof}

To prepare the proof of \autoref{prp:ZModTFAE}, we first provide some results that are also of interest in themselves.
We need to introduce some notation. Let $S$ be an almost unperforated $\CatCu$-semigroup.
Given $a\in S$ and $k,n\in\N_+$, we set
\[
\mu((k,n),a) :=\left\{ x\in S : nx\leq ka \leq (n+1)x \right\}.
\]
Note that an element $a\in S$ is almost divisible if and only if $\mu((k,n),a)$ is nonempty for every $k,n\in\N_+$.
We interpret $\mu((k,n),a)$ as the interval $\left[\tfrac{k}{n+1}a,\tfrac{k}{n}a\right]$.
With this idea in mind, the following lemma asserts that for almost unperforated semigroups these `intervals' respect the order and way-below relation.

\begin{lma}
\label{lem:mult}
Let $S$ be an almost unperforated $\CatCu$-semigroup{}, $a,b\in S$, and  $k_1,n_1,k_2,n_2\in\N_+$ such that $\tfrac{k_1}{n_1}<\tfrac{k_2}{n_2+1}$.
\beginEnumStatements
\item
If $a\leq b$, then $x\leq y$ for every $x\in\mu((k_1,n_1),a)$ and $y\in\mu((k_2,n_2),b)$.
\item
If $a\ll b$, then $x\ll y$ for every $x\in\mu((k_1,n_1),a)$ and $y\in\mu((k_2,n_2),b)$.
\end{enumerate}
\end{lma}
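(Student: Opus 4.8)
The statement asserts that the multiplicative "intervals" $\mu((k,n),a)$ respect the order and way-below relations, provided the rational parameters are separated by the gap condition $\tfrac{k_1}{n_1}<\tfrac{k_2}{n_2+1}$. The plan is to extract from the membership $x\in\mu((k_1,n_1),a)$ and $y\in\mu((k_2,n_2),b)$ the defining inequalities
\[
n_1 x\leq k_1 a\leq (n_1+1)x,\qquad
n_2 y\leq k_2 b\leq (n_2+1)y,
\]
and to combine them with the hypothesis $a\leq b$ (respectively $a\ll b$) so as to produce a single inequality of the form $(\text{big})\,x\leq(\text{big})\,y$ (respectively $\ll$), and then cancel the common integer multiple using almost unperforation. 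The gap condition is exactly what guarantees that the integers one obtains line up with the correct strict inequality, so that almost unperforation (\autoref{prp:almUnpCu}, in the form $pc\leq qd$ with $\tfrac{q}{p}$ slightly exceeding $1$ implies $c\leq d$) can be applied.

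\textbf{The order statement (1).} First I would chain the two membership inequalities through $a\leq b$. From $n_1 x\leq k_1 a$ and $k_2 b\leq (n_2+1)y$ together with $a\leq b$, the natural move is to cross-multiply by suitable integers to reach a comparison between a multiple of $x$ and a multiple of $y$. Concretely, multiply $n_1 x\leq k_1 a\leq k_1 b$ by $n_2$ and multiply $n_2 y\leq k_2 b$ by $k_1$; this gives
\[
n_1 n_2\, x\leq k_1 n_2\, a\leq k_1 n_2\, b
\quad\text{and}\quad
k_1 n_2\, b \geq \tfrac{k_1 n_2}{k_2}\cdot n_2 y
\]
— but to stay within integer multiples I would instead compare $k_2 n_1\, x$ against $k_1 (n_2+1)\, y$ directly: from $n_1 x\leq k_1 a\leq k_1 b\leq \tfrac{k_1}{k_2}(n_2+1)y$ one multiplies through by $k_2$ to clear denominators, obtaining $k_2 n_1\, x\leq k_1(n_2+1)\, y$. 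The gap condition $\tfrac{k_1}{n_1}<\tfrac{k_2}{n_2+1}$ rearranges to $k_1(n_2+1)<k_2 n_1$, so the integer multiple on the left strictly exceeds that on the right. Writing $p:=k_2 n_1$ and $q:=k_1(n_2+1)$ with $p>q$, the inequality $p x\leq q y\leq p y$ together with almost unperforation yields $x\leq y$, as desired.

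\textbf{The way-below statement (2) and the main obstacle.} For the $\ll$ version, the extra input is $a\ll b$, and the goal is $x\ll y$. The cleanest route is to interpolate: since $a\ll b$ and the rationals are strictly separated, I would choose an intermediate rational $\tfrac{k_3}{n_3}$ with $\tfrac{k_1}{n_1}<\tfrac{k_3}{n_3+1}$ and $\tfrac{k_3}{n_3}<\tfrac{k_2}{n_2+1}$, pick an element $z\in\mu((k_3,n_3),b)$ (using that $b$, lying above $a$, is almost divisible — or more carefully, using almost divisibility as available), and apply part (1) twice: $x\leq z$ from $a\leq b$ at the first gap, and then upgrade $z\ll y$ using $a\ll b$ at the second gap. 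I expect the genuine difficulty to lie precisely here: transferring the way-below relation $a\ll b$ through the integer cancellation, since almost unperforation as stated only delivers the order relation $\leq$, not $\ll$. The resolution I anticipate is to run the argument of part (1) with $a\ll b$ replaced by the existence of an interpolating element $a\ll \tilde b\ll b$, deduce $p x\leq q\tilde b\cdot(\ldots)$ giving $x\leq(\text{multiple of }\tilde b)$, and then use that multiplication by a fixed integer and the defining inequalities for $y$ preserve $\ll$ when one endpoint is strictly below. Making this interpolation rigorous — ensuring the chosen intermediate parameters genuinely satisfy both gap conditions and that the corresponding $\mu$-sets are nonempty — is the step requiring the most care; everything else is routine integer bookkeeping combined with \autoref{prp:almUnpCu}.
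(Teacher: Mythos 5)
Your part (1) is correct and is essentially the paper's argument: clear denominators to get $k_2n_1x\leq k_1(n_2+1)y$, note that the gap condition gives $k_1(n_2+1)<k_2n_1$, and cancel by almost unperforation. One small slip: your parenthetical gloss of almost unperforation, ``$pc\leq qd$ with $\tfrac{q}{p}$ slightly exceeding $1$ implies $c\leq d$,'' has the inequality backwards; what you actually use (correctly) is $px\leq qy$ with $p>q$, which yields $(q+1)x\leq qy$, i.e.\ $x<_s y$, and then \autoref{prp:almUnpCu} applies.

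Part (2) has a genuine gap. Your plan requires choosing an element $z\in\mu((k_3,n_3),b)$ for an interpolating rational, but the lemma assumes only that $S$ is almost unperforated, not that $b$ is almost divisible, so $\mu((k_3,n_3),b)$ may be empty and no such $z$ exists; this is not a technicality one can wave away, since the lemma is used precisely as a tool \emph{before} any divisibility of the ambient elements is established. Your fallback of interpolating $a\ll\tilde b\ll b$ also does not close the argument: $y$ is tied to $b$ only through $n_2y\leq k_2b\leq(n_2+1)y$, and replacing $b$ by $\tilde b$ produces no comparison of $x$ with anything way-below $y$. The correct move is to interpolate in $y$ rather than in $b$. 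From $n_1x\leq k_1a$, $a\ll b$, $k_2b\leq(n_2+1)y$ and \axiomO{3} one gets $k_2n_1x\leq k_2k_1a\ll k_1k_2b\leq k_1(n_2+1)y$, hence $k_2n_1x\ll k_1(n_2+1)y$. Since $k_1(n_2+1)y=\sup_{y'\ll y}k_1(n_2+1)y'$ by \axiomO{2} and \axiomO{4}, there exists $y'\ll y$ with $k_2n_1x\leq k_1(n_2+1)y'$; the part-(1) cancellation then gives $x\leq y'\ll y$, so $x\ll y$. No new elements of any $\mu$-set are required.
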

\begin{proof}
Let $k_1,n_1,k_2,n_2$ be as in the statement, and let $x\in\mu((k_1,n_1),a)$ and $y\in\mu((k_2,n_2),b)$.
Then
\[
n_1x \leq k_1a,\quad
k_2 b \leq (n_2+1)y.
\]
Multiplying the first inequality by $k_2$ and the second by $k_1$, we obtain
\[
k_2n_1x \leq k_2k_1a,\quad
k_1k_2b \leq k_1(n_2+1)y.
\]
If $a\leq b$, then it follows $k_2n_1x\leq k_1(n_2+1)y$.
Since $k_1(n_2+1)<k_2n_1$ and $S$ is almost unperforated, we obtain $x\leq y$.

In the second case, assuming $a\ll b$, it follows $k_2n_1x\ll k_1(n_2+1)y$.
Choose $y'$ such that $y'\ll y$ and $k_2n_1x\ll k_1(n_2+1)y'$.
As in the first case, we obtain $x\leq y'$.
Then $x\ll y$, as desired.
\end{proof}

\begin{prp}
\label{prp:zembed}
Let $S$ be an almost unperforated $\CatCu$-semigroup{} and let $a$ in $S$.
Then there exists a generalized $\CatCu$-morphism $\alpha_a\colon Z\to S$ with $\alpha_a(1)=a$ if and only if $a$ is almost divisible.

If the map $\alpha_a$ exists, then it is unique.
Moreover, it is a $\CatCu$-morphism if and only if $a$ is compact.
\end{prp}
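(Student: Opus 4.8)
The plan is to recall the structure of $Z=\N\sqcup(0,\infty]$ from \autoref{pgr:Z} and exploit it to build $\alpha_a$ by prescribing its values on the two parts of $Z$ separately, using the intervals $\mu((k,n),a)$ introduced before \autoref{lem:mult}. On the compact part $\N\subset Z$, a unital, additive map must send $n$ to $na$, so there is no freedom there. The interesting part is the soft interval $(0,\infty]$. First I would argue the forward implication (existence forces almost divisibility): if $\alpha_a\colon Z\to S$ is a generalized $\CatCu$-morphism with $\alpha_a(1)=a$, then since $Z$ has $Z$-multiplication it is in particular almost divisible, and I would transport this divisibility through $\alpha_a$. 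Concretely, given $k,n\in\N_+$ and $a'\ll a$, I would use continuity of $\alpha_a$ together with the fact that the soft element $\tfrac{k}{n}\in(0,\infty]\subset Z$ satisfies $n\cdot\tfrac{k}{n}=k=k\cdot 1$ in the soft-absorbing arithmetic of $Z$, to produce $x:=\alpha_a(\tfrac{k}{n+1})$ (or a slightly perturbed value) witnessing almost $k$-divisibility of $a$. Some care is needed because $\alpha_a$ need only preserve suprema of increasing sequences, not the way-below relation, so I would pass to $a'\ll a$ and extract the required inequalities $kx\leq a$, $a'\leq(k+1)x$ from lower-semicontinuity.

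For the converse, assume $a$ is almost divisible, so by \autoref{prp:ZModAlmUnpDiv} (or directly) $S$ is almost unperforated and every $\mu((k,n),a)$ is nonempty. I would define $\alpha_a$ on the soft part by setting, for a soft element $t\in(0,\infty]$,
\[
\alpha_a(t):=\sup\left\{ x : x\in\mu((k,n),a),\ \tfrac{k}{n}<t \right\},
\]
and on the compact part $n\in\N$ by $\alpha_a(n):=na$, with $\alpha_a(0)=0$. The existence of the supremum follows from \axiomO{1} once one checks the relevant set is upward directed, which is exactly what \autoref{lem:mult}(1) provides (the intervals respect the order). The main verification is that $\alpha_a$ is additive, order-preserving, and preserves suprema of increasing sequences. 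Additivity on the soft part reduces to showing $\mu((k_1,n_1),a)+\mu((k_2,n_2),a)$ interacts correctly with $\mu$ of the sum of fractions; order preservation and continuity both follow from \autoref{lem:mult}. I would handle the mixed terms (sum of a compact and a soft element) using the soft-absorbing arithmetic of $Z$ described in \autoref{pgr:Z}, namely $n+t=n'+t$.

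The final two claims are comparatively short. Uniqueness: any generalized $\CatCu$-morphism $\beta\colon Z\to S$ with $\beta(1)=a$ is forced on $\N$ by additivity and unitality, and on the dense-in-suprema set of soft rationals $\tfrac{k}{n}$ by the way-below/supremum constraints, hence agrees with $\alpha_a$ everywhere by \axiomO{2} and preservation of suprema; I would phrase this by showing $\beta(t)$ must lie in $\mu((k,n),a)$ for appropriate approximants and then squeeze. The compactness criterion: $\alpha_a$ is a $\CatCu$-morphism (i.e., preserves $\ll$) if and only if $a=\alpha_a(1)$ is compact. The `only if' direction is immediate since $1\ll 1$ in $Z$ forces $a=\alpha_a(1)\ll\alpha_a(1)=a$. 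For the `if' direction, assuming $a$ compact, I would use \autoref{lem:mult}(2): whenever $s\ll t$ in $Z$ one can interpose soft rationals with $\tfrac{k_1}{n_1}<\tfrac{k_2}{n_2+1}$ and transfer the way-below relation $a\ll a$ through the intervals, the compact part being automatic.

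The step I expect to be the main obstacle is verifying additivity of $\alpha_a$ on the soft part, that is, $\alpha_a(s+t)=\alpha_a(s)+\alpha_a(t)$ for soft $s,t$. This requires a genuine interval-arithmetic argument reconciling $\mu((k_1,n_1),a)+\mu((k_2,n_2),a)$ with the intervals approximating the fraction $\tfrac{k_1}{n_1}+\tfrac{k_2}{n_2}$, and both inequalities must be squeezed through almost unperforation via \autoref{lem:mult}, with \axiomO{3} and \axiomO{4} used to pass additivity through the suprema. The bookkeeping with the off-by-one shifts in the denominators ($n$ versus $n+1$) is where the care lies.
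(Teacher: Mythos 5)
Your proposal matches the paper's proof essentially verbatim: the same formula $\alpha_a(t)=\sup\{x\in\mu((k,n),a):\tfrac{k}{n}<t\}$ on the soft part with $\alpha_a(n)=na$ on the compact part, the same use of \autoref{lem:mult} to handle order, continuity, additivity and the compactness criterion, and the same squeezing argument for uniqueness. The only cosmetic differences are that the forward implication needs no continuity or perturbation at all (just evaluate at $\tfrac{1}{n}\in Z$ and use $n\cdot\alpha_a(\tfrac1n)=\alpha_a(1')\leq a\leq(n+1)\alpha_a(\tfrac1n)$), and that the supremum defining $\alpha_a(t)$ should be realized by an explicit cofinal increasing \emph{sequence} rather than by mere upward directedness, since \axiomO{1} only provides suprema of sequences; the countability of the rational approximants supplies this readily, as in the paper.
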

\begin{proof}
If there exists a generalized $\CatCu$-morphism $\alpha_a\colon Z\to S$ with $\alpha_a(1)=a$, then for each $n\in\N_+$ we have
\[
n\cdot\alpha_a(\tfrac{1}{n})=\alpha_a(1')\leq \alpha_a(1)=a\leq \alpha_a(\tfrac{n+1}{n})=(n+1)\alpha_a(\tfrac{1}{n}),
\]
which shows that $a$ is almost divisible.

For the converse, let $a\in S$ be an almost divisible element.
We define a map $\alpha_a\colon Z\to S$ by considering the decomposition $Z=\N\sqcup(0,\infty]$.
For $n\in\N\subset Z$, we set $\alpha_a(n) :=na=a+\stackrel{n}{\ldots}+a$.
For $t\in(0,\infty]\subset Z$, we set
\[
\alpha_a(t)
:=\sup \left\{ x\in\mu((k,n),a) : \tfrac{k}{n}<t \right\}.
\]
We first prove that this supremum exists, by finding an increasing cofinal sequence.
Choose numbers $k_d,n_d\in\N_+$ for $d\in\N_+$ such that
\[
\tfrac{k_d}{n_d}<\tfrac{k_{d+1}}{n_{d+1}+1},
\quad\text{ and }\quad
\sup_d\tfrac{k_d}{n_d}=t.
\]
Since $a$ is almost divisible, for each $d$ we can choose an element $x_d\in\mu((k_d,n_d),a)$.
By \autoref{lem:mult}, $(x_d)_d$ is an increasing sequence in $S$.
Moreover, it is easily checked that for every $\tfrac{k}{n}\in\Q_+$ with $\tfrac{k}{n}<t$ and for every $x\in\mu((k,n),a)$ there is an index $d$ such that $x\leq x_d$.
It follows $\alpha_a(t)=\sup_d x_d$, which exists by \axiomO{1}.

To show uniqueness, let $\beta\colon Z\to S$ be a generalized $\CatCu$-morphism with $\beta(1)=a$.
It is clear that $\beta(n)=\alpha_a(n)$ for all elements $n\in\N\subset Z$.
Consider now $t\in (0,\infty]\subset Z$.
As above, for each $d\in\N_+$ choose numbers $k_d,n_d\in\N_+$ and an element $x_d$ such that
\[
\tfrac{k_d}{n_d}<\tfrac{k_{d+1}}{n_{d+1}+1},\quad
\sup_d\tfrac{k_d}{n_d}=t,
\text{ and }
x_d\text{ is in }\mu((k_d,n_d),a).
\]
It is easy to see that $\beta(\tfrac{k_d}{n_d})$ is in $\mu((k_d,n_d),a)$ for each $d\in\N_+$.
By \autoref{lem:mult}, we deduce for each $d$ that
\[
x_d \leq \beta(\tfrac{k_{d+1}}{n_{d+1}}) \leq x_{d+2}.
\]
Using this at the second step, and that $\beta$ preserves suprema of increasing sequences at the first step, it follows
\[
\beta(t)
=\sup_d \beta(\tfrac{k_d}{n_d})
=\sup_d x_d
=\alpha_a(t).
\]
It is left to the reader to check that $\alpha_a$ preserves the zero element, the order, and suprema of increasing sequences.
It remains to prove that $\alpha_a$ is additive.
This is clear for sums of elements in $\N\subset Z$.

Let $t_1,t_2$ be in $(0,\infty]\subset Z$.
For each $d\in\N_+$, choose numbers $k_d^{(1)},k_d^{(2)},n_d\in\N_+$ such that for $i=1,2$:
\[
\tfrac{k_d^{(i)}}{n_d}<\tfrac{k_{d+1}^{(i)}}{n_{d+1}+1},
\quad\text{ and }\quad
\sup_d\tfrac{k_d^{(i)}}{n_d}=t_i.
\]
For each $d$ and $i=1,2$, choose $x_n^{(i)}\in\mu((k_d^{(i)},n_d),a)$.
Then $\alpha_a(t_i)=\sup_d x_d^{(i)}$ for $i=1,2$.
Moreover, we get
\[
\tfrac{k_d^{(1)}+k_d^{(2)}}{n_d}
<\tfrac{k_{d+1}^{(1)}+k_{d+1}^{(2)}}{n_{d+1}+1},
\quad\text{ and }\quad
\sup_d\tfrac{k_d^{(1)}+k_d^{(2)}}{n_d}=t_1+t_2.
\]
Thus, for any sequence of elements $(y_d)_d$ with $y_d\in\mu((k_d^{(1)}+k_d^{(1)},n_d),a)$ we will get $\alpha_a(t_1+t_2)=\sup_d y_d$.
However, it is easily seen that $x_d^{(1)}+x_d^{(2)}$ belongs to $\mu((k_d^{(1)}+k_d^{(2)},n_d),a)$.
Using \axiomO{4} at the second step, we obtain
\[
\alpha_a(t_1+t_2)=\sup_d (x_d^{(1)}+x_d^{(2)})
=\sup_d x_d^{(1)} + \sup_d x_d^{(2)}
=\alpha_a(t_1)+\alpha_a(t_2).
\]
It is left to the reader to show that $\alpha_a$ preserves the sum of an element in $\N\subset Z$ with an element in $(0,\infty]\subset Z$.

Finally, let us show that $\alpha_a$ is a $\CatCu$-morphism if and only if $a$ is compact.
Assume first that $\alpha_a$ preserves the way-below relation.
Since the unit element of $Z$ is compact, we obtain
\[
a=\alpha_a(1)\ll\alpha_a(1)=a.
\]
For the converse, assume that $a$ is compact.
We need to show that $x\ll y$ implies $\alpha_a(x)\ll\alpha_a(y)$, for any $x,y\in Z$.
This is clear if $x$ or $y$ is an element in $\N\subset Z$.

Assume that $x,y$ are in $(0,\infty]\subset Z$.
Without loss of generality we may assume $x<y$.
Choose $\tfrac{k}{n}\in\Q_+$ and elements $u\in\mu((k,n+2),a)$, $v\in\mu((k,n),a)$ such that
\[
x\leq\tfrac{k}{n+3},\quad
\tfrac{k}{n}< y.
\]
Since $a\ll a$, it follows from \autoref{lem:mult} that $u\ll v$.
It also follows from \autoref{lem:mult} and the definition of $\alpha_a$ that $\alpha_a(x)\leq u$ and $v\leq\alpha_a(y)$.
Therefore
\[
\alpha_a(x)
\leq u \ll v
\leq\alpha_a(y),
\]
as desired.
\end{proof}

\begin{thm}
\label{prp:ZModTFAE}
Let $S$ be a $\CatCu$-semigroup.
Then the following are equivalent:
\beginEnumStatements
\item
We have $S\cong Z\otimes_{\CatCu} S$.
\item
The semigroup $S$ has $Z$-multiplication.
\item
The semigroup $S$ is almost unperforated and almost divisible.
\end{enumerate}
\end{thm}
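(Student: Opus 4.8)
The plan is to run the cycle $(1)\Rightarrow(2)\Rightarrow(3)\Rightarrow(2)\Rightarrow(1)$ but to treat $(1)\Leftrightarrow(2)$ and $(2)\Rightarrow(3)$ as formal consequences of the machinery already in place, reserving the real work for $(3)\Rightarrow(2)$. For $(1)\Leftrightarrow(2)$, I would recall from \autoref{prp:ZSolid} that $Z=\N\sqcup(0,\infty]$ is a solid $\CatCu$-semiring, so \autoref{prp:solidModuleTFAE} applied with $R=Z$ gives immediately that $S$ carries a $Z$-multiplication if and only if $Z\otimes_{\CatCu}S\cong S$. For $(2)\Rightarrow(3)$, I would first note that the unit $1\in\N\subset Z$ is almost divisible (for each $k$ the soft element $\tfrac1k\in(0,\infty]$ satisfies $k\cdot\tfrac1k\leq 1\leq (k+1)\tfrac1k$), and then invoke \autoref{prp:ZModAlmUnpDiv}, which says that any $\CatCu$-semimodule over a $\CatCu$-semiring with almost divisible unit is automatically almost unperforated and almost divisible.

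The substance is $(3)\Rightarrow(2)$. Assume $S$ is almost unperforated and almost divisible. For each $a\in S$, the element $a$ is almost divisible, so \autoref{prp:zembed} provides a \emph{unique} generalized $\CatCu$-morphism $\alpha_a\colon Z\to S$ with $\alpha_a(1)=a$. I would define $\varphi\colon Z\times S\to S$ by $\varphi(z,a):=\alpha_a(z)$; the unit condition $\varphi(1_Z,a)=a$ then holds by construction. The aim is to show that $\varphi$ is a generalized $\CatCu$-bimorphism: once this is established, solidity of $Z$ lets \autoref{prp:constructingSolidMod} upgrade $\varphi$ to a genuine $\CatCu$-bimorphism defining a $Z$-multiplication on $S$, which is exactly $(2)$. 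By \autoref{prp:bimorCu} it suffices to check that $\varphi$ is a generalized $\CatCu$-morphism in each variable separately.

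Continuity in the first variable is immediate, since $z\mapsto\alpha_a(z)$ is a generalized $\CatCu$-morphism by \autoref{prp:zembed}. For the second variable, additivity is free from uniqueness: $z\mapsto\alpha_a(z)+\alpha_b(z)$ is a generalized $\CatCu$-morphism sending $1$ to $a+b$, hence coincides with $\alpha_{a+b}$, giving $\alpha_{a+b}(z)=\alpha_a(z)+\alpha_b(z)$; likewise $\alpha_0=0$. Order and suprema are where \autoref{lem:mult} and almost unperforation enter. Writing $\mu((k,n),a)=\{x:nx\leq ka\leq(n+1)x\}$ and recalling the formula $\alpha_a(t)=\sup\{x\in\mu((k,n),a):\tfrac kn<t\}$ for soft $t\in(0,\infty]$, order-preservation follows from \autoref{lem:mult}(1): given $a\leq b$ and a generator $x\in\mu((k,n),a)$ with $\tfrac kn<t$, interpolate a rational $\tfrac{k_1}{n_1}$ with $\tfrac kn<\tfrac{k_1}{n_1+1}$ and $\tfrac{k_1}{n_1}<t$, pick $y\in\mu((k_1,n_1),b)$ (nonempty by almost divisibility of $b$), and conclude $x\leq y\leq\alpha_b(t)$; the compact case $z=n\in\N$ is trivial since $a\mapsto na$ respects order.

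The main obstacle I expect is suprema-preservation in the second variable for soft $t$. For $a=\sup_m a_m$ with $(a_m)$ increasing, order-preservation makes $(\alpha_{a_m}(t))_m$ increasing, so $L:=\sup_m\alpha_{a_m}(t)$ exists and $L\leq\alpha_a(t)$. For the reverse inequality I would show that each generator $x\in\mu((k,n),a)$ with $\tfrac kn<t$ satisfies $x\leq L$. Interpolating $\tfrac{k_1}{n_1}$ with $\tfrac kn<\tfrac{k_1}{n_1+1}$ and $\tfrac{k_1}{n_1}<t$, choose $z_m\in\mu((k_1,n_1),a_m)$; then $z_m\leq\alpha_{a_m}(t)\leq L$ and $k_1a_m\leq(n_1+1)z_m\leq(n_1+1)L$, so passing to the supremum via \axiomO{4} gives $k_1a\leq(n_1+1)L$. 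Combining with $nx\leq ka$ yields $nk_1\,x\leq k(n_1+1)\,L$, and the interpolation condition $\tfrac kn<\tfrac{k_1}{n_1+1}$ means $k(n_1+1)<nk_1$, so this reads as the stable-domination inequality $x<_s L$; almost unperforation of $S$ then forces $x\leq L$. Taking the supremum over all such generators gives $\alpha_a(t)\leq L$, completing continuity and hence the proof. This final interplay of rational interpolation, \autoref{lem:mult}, and almost unperforation is the technical heart of the argument.
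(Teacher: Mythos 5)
Your proof is correct, and for the substantive direction \implStatements{3}{2} it follows the same construction as the paper: define $\varphi(z,a)=\alpha_a(z)$ via \autoref{prp:zembed}, get additivity in $a$ from the uniqueness clause, and get order-preservation from \autoref{lem:mult} together with rational interpolation. The one genuine difference is how you finish. The paper verifies by hand that $\alpha$ is a full $\CatCu$-bimorphism: it checks the associativity identity $\alpha(z_1z_2,a)=\alpha(z_1,\alpha(z_2,a))$ and then the joint compact-containment condition ($t_1\ll t_2$ and $a\ll b$ imply $\alpha(t_1,a)\ll\alpha(t_2,b)$), the latter via the inequality $1_Z\leq t_1^{-1}t_2$. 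You instead stop at the generalized $\CatCu$-bimorphism level and invoke \autoref{prp:constructingSolidMod} (legitimate, since $Z$ is solid by \autoref{prp:ZSolid} and that lemma is independent of this theorem), which delivers the way-below condition and the module identities for free. This is cleaner and is in fact exactly the route the paper itself takes for the analogous UHF statement \autoref{prp:RqModTFAE}. A further small point in your favour: the paper dismisses preservation of suprema in the second variable with ``analogously,'' whereas you supply the actual argument -- interpolating $\tfrac{k}{n}<\tfrac{k_1}{n_1+1}$, $\tfrac{k_1}{n_1}<t$, passing $k_1a_m\leq(n_1+1)z_m$ to the supremum via \axiomO{4}, and converting $nk_1x\leq k(n_1+1)L$ with $k(n_1+1)<nk_1$ into $x<_sL$ and hence $x\leq L$ by almost unperforation -- and that argument is sound.
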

\begin{proof}
By \autoref{prp:ZSolid}, the $\CatCu$-semiring $Z$ is solid.
Therefore, the equivalence between \enumStatement{1} and \enumStatement{2} follows from \autoref{prp:solidModuleTFAE}.
Since the unit of $Z$ is almost divisible, we obtain from \autoref{prp:ZModAlmUnpDiv} that every $\CatCu$-semigroup with $Z$-multiplication is almost unperforated and almost divisible.
This shows that \enumStatement{2} implies \enumStatement{3}.

To show that \enumStatement{3} implies \enumStatement{2}, suppose that $S$ is almost unperforated and almost divisible.
Using \autoref{prp:zembed}, we define $\alpha\colon Z\times S \to S$ by $\alpha(z,a)=\alpha_a(z)$ for each $a\in S$ and $z\in Z$.
We claim that $\alpha$ is a $\Cu$-bimorphism.

By \autoref{prp:zembed}, $\alpha(\freeVar,a)$ is a generalized $\CatCu$-morphism for each $a\in S$.
For the other variable, it is also clear that $\alpha(n,\freeVar)$ is a $\CatCu$-morphism for each $n\in\N\subset Z$.
Let $t$ be in $(0,\infty]\subset Z$.
To show that $\alpha(t,\freeVar)$ preserves order, let $a,b\in S$ satisfy $a\leq b$.
By definition, we have
\[
\alpha(t,a)
= \sup \left\{ x\in\mu((k,n),a) : \tfrac{k}{n}<t \right\}.
\]
Thus, given any $k,n\in\N_+$ satisfying $\tfrac{k}{n}<t$ and given any element $x\in\mu((k,n),a)$, we need to show that $x\leq\alpha(t,b)$.
Choose $k',n'\in\N_+$ and an element $y$ such that
\[
\tfrac{k}{n}<\tfrac{k'}{n'+1},\
\tfrac{k'}{n'}<t,\quad
y\in\mu((k',n'),b).
\]
By \autoref{lem:mult}, we have $x\leq y$.
Therefore
\[
x \leq \sup \left\{ z\in\mu((c,d),b) : \tfrac{c}{d}<t \right\}
= \alpha(t,b),
\]
from which we deduce $\alpha(t,a)\leq\alpha(t,b)$, as desired.

To show additivity in the second variable, let $z$ be in $Z$ and $a,b$  in $S$.
Consider the following maps from $Z$ to $S$ given by
\[
\alpha_{a+b}=(x\mapsto \alpha(x,a+b)),\quad
\alpha_a+\alpha_b=(x\mapsto \alpha(x,a)+\alpha(x,b)),\quad
\txtFA x\in Z.
\]
It is clear that both maps are generalized $\CatCu$-morphisms that send the unit of $Z$ to the element $a+b$.
By \autoref{prp:zembed}, the map with this property is unique, and therefore $\alpha(z,a+b)=\alpha(z,a)+\alpha(z,b)$.
Analogously, one proves that $\alpha(z,\sup_na_n)=\sup_n\alpha(z,a_n)$ for every $z\in Z$ and every increasing sequence $(a_n)_n$ in $S$.

Thus, $\alpha\colon Z\times S\to S$ is a generalized $\CatCu$-bimorphism.
It is clear that $\alpha(1,a)=a$ for every $a\in S$ and it is straightforward to check that
\begin{align}\tag{*}
\label{thm:zmult:1}
\alpha(z_1z_2,a)=\alpha(z_1,\alpha(z_2,a)),
\end{align}
for every $z_1,z_2\in Z$ and $a\in S$.

It remains to show that for any $t_1,t_2\in Z$ with $t_1\ll t_2$ and for any $a,b\in S$ with $a\ll b$, we have $\alpha(t_1,a)\ll \alpha(t_2,b)$.
This is clear if $t_1$ or $t_2$ is an element in $\N\subset Z$.
Thus, we consider the case that $t_1,t_2$ are in $(0,\infty]$, and without loss of generality we may assume $t_1<t_2$.
Then $t_1$ is necessarily finite, and $1_Z\leq t_1^{-1}t_2$ in $Z$.
In order to show $\alpha(t_1,a)\ll \alpha(t_2,b)$, let $(x_n)_n$ be an increasing sequence in $S$ with $\alpha(t_2,b)\leq \sup_n x_n$.
Using this at the fifth step, and using \eqref{thm:zmult:1} at the fourth step, we obtain
\begin{align*}
a \ll b
=\alpha(1_Z,b)
&\leq\alpha(t_1^{-1}t_2,b) \\
&=\alpha(t_1^{-1},\alpha(t_2,b)) \\
&\leq\alpha(t_1^{-1},\sup_n x_n)
=\sup_n (\alpha(t_1^{-1},x_n)).
\end{align*}
Therefore, there exists an index $n_0$ such that $a\leq \alpha(t_1^{-1},x_{n_0})$.
Then
\[
\alpha(t_1,a)
\leq\alpha(t_1,\alpha(t_1^{-1},x_{n_0}))
=\alpha(1',x_{n_0})
\leq\alpha(1_Z,x_{n_0})
=x_{n_0}.
\]
Hence $\alpha(t_1,a)\ll \alpha(t_2,b)$, as desired.
This finishes the proof that $S$ has $Z$-multiplication.
\end{proof}

\begin{rmk}
\label{rmk:TWConjecture}\index{terms}{Regularity Conjecture}\index{terms}{Toms-Winter Conjecture}
The Toms-Winter conjecture (see \cite[Remarks~3.5]{TomWin09Villadsen} and
\cite[Conjecture~0.1]{Win12NuclDimZstable}) predicts that for every unital, separable, simple, nonelementary, nuclear \ca{} $A$, the following conditions are equivalent:
\beginEnumStatements
\item
The algebra $A$ is $\mathcal{Z}$-stable, that is, we have $A\cong \mathcal{Z}\otimes A$.
\item
The Cuntz semigroup $\Cu(A)$ is almost unperforated.
\item
The algebra $A$ has finite nuclear dimension.
\end{enumerate}

We can interpret \autoref{prp:ZModTFAE} as the verification of the $\CatCu$-semigroup version of a part of the Toms-Winter conjecture.
The analog of `$\mathcal{Z}$-stability' for a $\CatCu$-semigroup $S$ is the property that $S\cong Z\otimes_{\CatCu}S$, which is \enumStatement{1} of \autoref{prp:ZModTFAE}.
The second condition of the Toms-Winter conjecture is already formulated for $\CatCu$-semigroups.
However, in \autoref{prp:ZModTFAE}\enumStatement{3} we not only require the $\CatCu$-semigroup to be almost unperforated but also almost divisible.
We remark that not every Cuntz semigroup of a simple \ca{} is almost divisible;
see \cite{DadHirTomWin09ZNotEmb}.
On the other hand, it seems possible that the Cuntz semigroup of a simple \ca{} is automatically almost divisible whenever it is almost unperforated.
Indeed, if the Toms-Winter conjecture holds true, then this would be a consequence for at least the class of nuclear \ca{s}.

It is not clear what the analog of condition \enumStatement{3} of the Toms-Winter conjecture for $\CatCu$-semigroups should be.
This would entail to define nuclearity and dimension concepts for $\CatCu$-semigroups, which is not pursued here.
\end{rmk}

The following problem asks if there is an analog of Theorems~\ref{prp:MapToPIMod}, \ref{prp:MapToRqMod} and \ref{prp:MapToRMod} for tensor products with $Z$.

\begin{prbl}
\label{prbl:MapToZMod}
Let $S$ be a $\CatCu$-semigroup, and let $a,b$ be in $S$.
Characterize when $1\otimes a \leq 1\otimes b$ in $Z\otimes_{\CatCu}S$.
\end{prbl}

\begin{prbl}
\label{prbl:axiomTensProdZ}
When does axiom~\axiomO{5}, \axiomO{6} or weak cancellation pass from a $\CatCu$-semigroup $S$ to the tensor product $Z\otimes_{\CatCu} S$?
\end{prbl}

\begin{pgr}
\label{pgr:axiomTensProdZ}
In general, axiom~\axiomO{5} does not pass to tensor products with $Z$;
see \autoref{prp:tensornotO5}.
We have that $Z$ satisfies \axiomO{5}, \axiomO{6} and weak cancellation itself.
Therefore, if $S$ is an inductive limit of simplicial $\CatCu$-semigroups, then $Z\otimes_{\CatCu}S$ satisfies the three axioms as well;
see \autoref{prp:tensLimSimplicial}.

It seems likely that \autoref{prbl:axiomTensProdZ} has a positive answer if $S$ is assumed to be algebraic.
\end{pgr}

We end this section with some structure results about $\CatCu$-semigroup{s} with $Z$-multi\-plication.
For the next result, recall that $1'\in Z$ denotes the soft `one'.

\begin{prp}
\label{prp:softZmult}
Let $S$ be a $\CatCu$-semigroup{} with $Z$-multipli\-ca\-tion.
Then:
\beginEnumStatements
\item
An element $a\in S$ is soft if and only if $a=1'a$.
\item
For every functional $\lambda\in F(S)$ and every $a\in S$, we have $\lambda(a)=\lambda(1'a)$.
\end{enumerate}
\end{prp}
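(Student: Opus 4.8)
The plan is to reduce both statements to the explicit description of the $Z$-multiplication furnished by \autoref{prp:zembed}. First I would note that, since $S$ has $Z$-multiplication, \autoref{prp:ZModTFAE} gives that $S$ is almost unperforated and almost divisible. Because $Z$ is solid (\autoref{prp:ZSolid}), the $Z$-multiplication on $S$ is unique (\autoref{prp:solidModuleUnique}), so it must coincide with the one built in the proof of \autoref{prp:zembed}; in particular, for each $a\in S$ the map $z\mapsto za$ is the generalized $\CatCu$-morphism $\alpha_a\colon Z\to S$, and writing $\mu((k,m),a):=\{x\in S: mx\leq ka\leq(m+1)x\}$ we have $1'a=\alpha_a(1')=\sup\{x\in\mu((k,m),a): k/m<1\}$, realized as the supremum of an increasing cofinal sequence $(x_d)_d$ with $x_d\in\mu((k_d,m_d),a)$ and $k_d/m_d\nearrow 1$. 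Since $1'\leq 1$ in $Z$ and the product is order-preserving in the first variable, one always has $1'a\leq 1\cdot a=a$. Thus part~(1) splits into two claims: (A) $1'a$ is soft for every $a$, and (B) if $a$ is soft then $a\leq 1'a$.

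For (B), fix $a'\ll a$. Softness of $a$ gives $a'<_s a$, so by \autoref{prp:relSTFAE} (applied with $n=3$) there is $k$ with $(k+3)a'\leq ka$; we may assume $k\geq 1$, as $k=0$ forces $a'=0$. Almost divisibility of $a$ yields $x\in\mu((k,k+1),a)$, whence $ka\leq(k+2)x$ and therefore $(k+3)a'\leq(k+2)x$. This is exactly $a'<_s x$, so almost unperforation gives $a'\leq x\leq 1'a$. As $a=\sup\{a':a'\ll a\}$ by axiom~\axiomO{2}, we conclude $a\leq 1'a$, and combined with $1'a\leq a$ this gives $a=1'a$. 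Conversely, claim~(A) shows that $a=1'a$ forces $a$ to be soft, which completes part~(1).

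The main work is (A). Let $b'\ll 1'a$; then $b'\leq x_d$ for some $d$, and since $u\leq v<_s w$ implies $u<_s w$ it suffices to prove $x_d<_s 1'a$. I would show $x_d<_s x_{d+1}$ directly. From $x_d\in\mu((k_d,m_d),a)$ and $x_{d+1}\in\mu((k_{d+1},m_{d+1}),a)$ one has $m_dx_d\leq k_da$ and $k_{d+1}a\leq(m_{d+1}+1)x_{d+1}$; cross-multiplying and chaining gives $k_{d+1}m_d\,x_d\leq k_d(m_{d+1}+1)\,x_{d+1}$. Setting $A=k_{d+1}m_d$ and $B=k_d(m_{d+1}+1)$, the inequality $k_d/m_d<k_{d+1}/(m_{d+1}+1)$ means $B<A$, hence $A\geq B+1$ and so $(B+1)x_d\leq Ax_d\leq Bx_{d+1}$, that is $x_d<_s x_{d+1}\leq 1'a$. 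As $u<_s v\leq w$ implies $u<_s w$, we get $x_d<_s 1'a$ and hence $b'<_s 1'a$, proving $1'a$ soft. I expect this step---arranging the index bookkeeping so that it produces a genuine strict-multiple inequality $(B+1)x_d\leq Bx_{d+1}$, to which almost unperforation and the definition of $<_s$ apply---to be the principal obstacle.

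For part~(2), I would compute $\lambda(1'a)=\lambda(\sup_d x_d)=\sup_d\lambda(x_d)$, using that functionals preserve suprema of increasing sequences. Applying $\lambda$ to $m_dx_d\leq k_da\leq(m_d+1)x_d$ gives $m_d\lambda(x_d)\leq k_d\lambda(a)\leq(m_d+1)\lambda(x_d)$. If $\lambda(a)<\infty$, then $\tfrac{k_d}{m_d+1}\lambda(a)\leq\lambda(x_d)\leq\tfrac{k_d}{m_d}\lambda(a)\leq\lambda(a)$, and letting $k_d/m_d\to 1$ forces $\sup_d\lambda(x_d)=\lambda(a)$. If $\lambda(a)=\infty$, then $k_d\lambda(a)\leq(m_d+1)\lambda(x_d)$ with $k_d\geq 1$ and $m_d+1<\infty$ forces $\lambda(x_d)=\infty$, so again $\lambda(1'a)=\infty=\lambda(a)$. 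Alternatively, one may observe that $\lambda\circ\alpha_a$ is a functional on $Z$ and reduce to the identity $\nu(1')=\nu(1)$ for functionals $\nu$ on $Z$, which follows from $1'\leq 1$ and $1\leq\tfrac{n+1}{n}$ in $Z$ together with $1'=\sup_n\tfrac{n}{n+1}$ (see \autoref{pgr:Z}).
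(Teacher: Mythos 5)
Your proof is correct, but it takes a more computational route than the paper's, which exploits the $Z$-module structure directly rather than unpacking it through the construction of \autoref{prp:zembed}. For the implication ``$a$ soft $\Rightarrow a\le 1'a$'' the paper simply multiplies $(k+1)a'\le ka$ by $\tfrac{1}{k}\in Z$ and uses $1\le\tfrac{k+1}{k}$ in $Z$ to get $a'\le\tfrac{1}{k}\bigl((k+1)a'\bigr)\le\tfrac{1}{k}(ka)=1'a$ in one line, where you instead upgrade to $(k+3)a'\le ka$ via \autoref{prp:relSTFAE}, produce $x\in\mu((k,k+1),a)$ by almost divisibility, and finish with almost unperforation; both are valid. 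For softness of $1'a$ the two arguments essentially coincide: the paper writes $1'a=\sup_k\tfrac{k-1}{k}a$ and checks $\tfrac{n-1}{n}a<_s 1'a$, which is the same cross-multiplication you carry out to get $x_d<_s x_{d+1}$ (and which is already recorded in \autoref{lem:mult}, so you could have cited it instead of re-deriving it). The genuine divergence is in part~(2): the paper notes that $[0,\infty]$ has $Z$-multiplication and that, since $Z$ is solid, every functional $\lambda\colon S\to[0,\infty]$ is automatically $Z$-linear by \autoref{prp:solidTFAE}, whence $\lambda(1'a)=1'\lambda(a)=\lambda(a)$ because every element of $[0,\infty]$ is soft. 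Your direct estimate $m_d\lambda(x_d)\le k_d\lambda(a)\le(m_d+1)\lambda(x_d)$ with the case split on $\lambda(a)<\infty$ works (and your alternative reduction to functionals on $Z$ is closer in spirit to the paper's), but the solidity argument buys a two-line proof with no bookkeeping. In short, your approach is self-contained at the level of \autoref{prp:zembed}, while the paper's is shorter because it leans on the module-theoretic machinery already in place.
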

\begin{proof}
First, in order to verify \enumStatement{1}, let $a$ be in $S$.
To prove that $1'a$ is soft, let $a'\in S$ satisfy $a'\ll 1'a$.
We need to show that $a'<_s 1'a$.
Consider the increasing sequence $(\tfrac{k-1}{k})_k$ of noncompact elements in $(0,\infty]\subset Z$.
Since $1'=\sup_k\tfrac{k-1}{k}$ in $Z$, we get
\[
a'\ll 1'a=\sup_k\tfrac{k-1}{k}a.
\]
Thus, there exists $n\in\N$ such that $a'\leq\tfrac{n-1}{n}a$.
It is easy to verify that $\tfrac{n-1}{n}a<_s 1'a$.
It follows $a'<_s 1'a$, as desired

Conversely, assume that $a\in S$ is a soft element.
It is clear that $1'a\leq a$.
To show the converse inequality, it is enough to show that $a'\leq 1'a$ for every $a'\in S$ satisfying $a'\ll a$.
Given such $a'$, it follows from softness of $a$ that there exists $k\in\N$ such that $(k+1)a'\leq ka$.
Using this at the third step, we obtain
\[
a'\leq\tfrac{k+1}{k}a'
=\tfrac{1}{k}((k+1)a')
\leq\tfrac{1}{k}(ka)
=1'a,
\]
as desired.

To show \enumStatement{2}, let $\lambda$ be a functional in $F(S)$ and let $a$ be in $S$.
The $\CatCu$-semigroup $[0,\infty]$ has a $Z$-multiplication.
Since $\lambda$ is a generalized $\CatCu$-morphism from $S$ to $[0,\infty]$, it follows from \autoref{prp:solidTFAE} that $\lambda$ is $Z$-linear.
Using this at the first step, and using that every element of $[0,\infty]$ is soft, we deduce
\[
\lambda(1's)
=1'\lambda(s)
=\lambda(s),
\]
as desired.
\end{proof}

\begin{rmk}
Let $S$ be a simple, nonelementary, stably finite $\CatCu$-semigroup with $Z$-multiplication.
Then $S$ is almost unperforated and therefore the conditions in \autoref{prp:predecessors} are equivalent. It follows from \autoref{prp:softZmult} that the predecessor of any compact element $p\in S$ is given as $1'p$.
\end{rmk}

The following result provides a partial answer to \autoref{prbl:pncInCu}.

\begin{prp}
\label{prp:softInCuZmult}
Let $S$ be a $\CatCu$-semigroup{} with $Z$-multipli\-ca\-tion.
Then the subsemigroup $S_\soft$ of soft elements is a $\CatCu$-semigroup{}.
If $S$ satisfies \axiomO{5} (respectively \axiomO{6}, weak cancellation), then so does $S_\soft$.
\end{prp}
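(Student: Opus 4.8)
The plan is to apply \autoref{lem:density} with the submonoid $B:=S_\soft$. By \autoref{prp:softAbsorbing}, $S_\soft$ is a subsemigroup of $S$ that is closed under suprema of increasing sequences, so in the notation of \autoref{lem:density} we have $\overline{S_\soft}=S_\soft$. Thus it suffices to verify the single hypothesis of \autoref{lem:density}: that every soft element $a$ is the supremum of a sequence $(a_k)_k$ of soft elements which is rapidly increasing \emph{in $S$}. Granting this, \autoref{lem:density} immediately yields that $S_\soft$ is a $\CatCu$-semigroup and, crucially, that the compact containment relation of $S_\soft$ agrees with that of $S$.

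The main obstacle is precisely this density statement, and the idea is to use the $Z$-multiplication to manufacture the required soft approximants. Given $a\in S_\soft$, \autoref{prp:softZmult}(1) gives $a=1'a$, where $1'\in Z$ denotes the soft unit. I would choose a sequence $(z_k)_k$ of soft elements of $Z$ that is rapidly increasing with $\sup_k z_k=1'$ (for instance $z_k=\tfrac{k}{k+1}$ in the part $(0,\infty]\subset Z$; see \autoref{pgr:Z}), and using \axiomO{2} choose a rapidly increasing sequence $(c_k)_k$ in $S$ with $\sup_k c_k=a$. Set $a_k:=z_kc_k$. Since the $Z$-multiplication is a $\CatCu$-bimorphism, \autoref{dfn:bimorCu}(2) gives $a_k\ll a_{k+1}$ from $z_k\ll z_{k+1}$ and $c_k\ll c_{k+1}$, while \autoref{dfn:bimorCu}(1) gives $\sup_k a_k=(\sup_k z_k)(\sup_k c_k)=1'a=a$. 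Finally each $a_k$ is soft: as $z_k$ is soft, $z_k=1'z_k$, whence $a_k=1'(z_kc_k)=1'a_k$ and so $a_k\in S_\soft$ by \autoref{prp:softZmult}(1). This verifies the hypothesis of \autoref{lem:density}.

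It remains to pass the axioms from $S$ to $S_\soft$, and here I would repeatedly use two elementary facts about $Z$: that $1'\le 1$ in $Z$, so that $1'y\le y$ for every $y\in S$, and that soft elements $s$ satisfy $s=1's$. Weak cancellation is immediate, since by \autoref{lem:density} the order, the addition, and $\ll$ in $S_\soft$ are all inherited from $S$, so the defining implication of weak cancellation in $S_\soft$ is a special case of that in $S$. For \axiomO{5}, given soft data $a',a,b',b,c$ with $a+b\le c$, $a'\ll a$, $b'\ll b$, I would first interpolate soft elements $a_1,b_1$ with $a'\ll a_1\ll a$ and $b'\ll b_1\ll b$ (possible since $S_\soft$ is now a $\CatCu$-semigroup), apply \axiomO{5} in $S$ to $a_1+b\le c$ with $a'\ll a_1$ and $b_1\ll b$ to obtain $x_0\in S$ with $a'+x_0\le c\le a_1+x_0$ and $b_1\le x_0$, and then set $x:=1'x_0$, which is soft. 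The three required inequalities then follow: $a'+x\le a'+x_0\le c$ using $1'x_0\le x_0$; $c=1'c\le 1'(a_1+x_0)=a_1+x\le a+x$ using softness of $c$ and $a_1$; and $b'\ll b_1=1'b_1\le 1'x_0=x$ using softness of $b_1$.

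Axiom \axiomO{6} is handled by the same softification device: interpolate a soft $a_1$ with $a'\ll a_1\ll a$, apply \axiomO{6} in $S$ to $a_1\ll a\le b+c$ to obtain $e_0,f_0$ with $a_1\le e_0+f_0$, $e_0\le a,b$, $f_0\le a,c$, and set $e:=1'e_0$ and $f:=1'f_0$. These are soft, and satisfy $e\le a,b$ and $f\le a,c$ (since $1'e_0\le e_0$ and $1'f_0\le f_0$), while $a'\ll a_1=1'a_1\le 1'(e_0+f_0)=e+f$, so $a'\le e+f$. I expect the only genuinely delicate point to be the density step of the second paragraph; once the agreement of $\ll$ between $S_\soft$ and $S$ is in hand, the axiom verifications are routine softifications controlled by $1'y\le y$ and $s=1's$.
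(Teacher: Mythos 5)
Your proposal is correct and follows essentially the same route as the paper's proof: both establish the density claim by multiplying a rapidly increasing sequence for $a$ by an increasing sequence of soft scalars in $Z$ converging to $1'$ (the paper uses $\tfrac{k-1}{k}s_k$, you use $\tfrac{k}{k+1}c_k$), invoke \autoref{lem:density}, and then transfer \axiomO{5}, \axiomO{6} and weak cancellation by replacing the complements produced in $S$ with their softifications $1'x_0$, using $1'y\le y$ and $s=1's$ for soft $s$. The extra interpolation of soft elements $a_1,b_1$ in your axiom verifications is harmless but unnecessary, as the identities $a'=1'a'$, $c=1'c$, $b'=1'b'$ already suffice.
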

\begin{proof}
By \autoref{prp:softAbsorbing}, $S_\soft$ is a subsemigroup of $S$ that is closed under suprema of increasing sequences.
This shows that $S_\soft$ satisfies \axiomO{1}.

Claim 1:
For every $a\in S_\soft$ there exists an increasing sequence $(a_k)_k$ in $S_\soft$ such that $a=\sup_ka_k$ and such that $a_k\ll a_{k+1}$ in $S$ for each $k$.

To show this claim, let $a$ be in $S_\soft$.
Since $S$ satisfies \axiomO{2} we choose a rapidly increasing sequence $(s_k)_k$ in $S$ such that $a=\sup_k s_k$.
For each $k\in\N_+$, set
\[
a_k :=\tfrac{k-1}{k}s_k.
\]
It is easy to check that $1'a_k=a_k$, which by \autoref{prp:softZmult} implies that $a_k$ belongs to $S_\soft$.
Moreover, for each $k$ we have $\tfrac{k-1}{k}\ll\tfrac{k}{k+1}$ in $Z$ and $s_k\ll s_{k+1}$ in $S$.
Since the $Z$-multiplication on $S$ is given by a $\CatCu$-bimorphism, we obtain
\[
a_k =\tfrac{k-1}{k}s_k \ll \tfrac{k}{k+1}s_{k+1} = a_{k+1},
\]
for each $k$.
It is clear that $a=\sup_ka_k$, which finishes the proof of the claim.

By \autoref{lem:density}, for every  $a,b\in S_\soft$ we have $a\ll b$ in $S$ if and only if $a\ll b$ in $S_\soft$.
Together with claim~1, this verifies \axiomO{2} for $S_\soft$.
Then axioms \axiomO{3} and \axiomO{4} for $S_\soft$ follow from their counterparts in $S$.

Next, assume that $S$ satisfies \axiomO{5}.
In order to show that $S_\soft$ satisfies \axiomO{5}, let $a',a,b',b,c\in S_\soft$ satisfy
\[
a+b\leq c,\quad
a'\ll a,\quad
b'\ll b.
\]
Using that $S$ satisfies \axiomO{5} choose $x\in S$ such that
\[
a'+x\leq c\leq a+x,\quad
b'\leq x.
\]
Set $y :=1'x$, which by \autoref{prp:softZmult} is an element in $S_\soft$.
We claim that $y$ has the desired properties to verify \axiomO{5} for $S_\soft$.
Indeed, using \autoref{prp:softZmult} again, we have $a'=1'a'$, $a=1'a$, $c=1'c$ and $b'=1'b'$.
Therefore
\[
a'+y
=1'(a'+x)\leq 1'c = c
\leq 1'(a+x)
=a+y,\quad
b'=1'(b')\leq 1'(x)=y,
\]
as desired.

In the same way, one shows that $S_\soft$ inherits \axiomO{6} from $S$.
Finally, it is straightforward to check that $S_\soft$ is weakly cancellative whenever $S$ is.
\end{proof}

\vspace{5pt}
\section{The rationalization of a semigroup}
\label{sec:RqMod}
\index{terms}{rationalization}

In this section, we study $\CatCu$-semigroups that are semimodules over the $\CatCu$-semiring of a strongly self-absorbing UHF-algebra.
Given a supernatural number $q$ satisfying $q^2=q$ and $q\neq 1$, we let $M_q$ be the associated UHF-algebra;
see \autoref{pgr:Mq}.
We use $R_q$ to denote $\Cu(M_q)$.
In \autoref{prp:RqSolid}, we show that $R_q$ is a solid $\CatCu$-semiring.

In \autoref{dfn:qUnpDiv}, we recall the natural notions of $q$-unperforation and $q$-divisibility for semigroups.
The main result of this section is \autoref{prp:RqModTFAE}, where we characterize the $\CatCu$-semimodules over $R_q$ as the $\CatCu$-semigroups that are $q$-unperforated and $q$-divisible.

In \autoref{prp:tensRqCa}, we apply the results to the Cuntz semigroup of a \ca{} $A$.
In particular, we obtain that $\Cu(M_q\otimes A)\cong \Cu(A)$ if and only if $\Cu(A)$ is $q$-divisible and $q$-unperforated.
We also deduce that the Cuntz semigroup of a \ca{} $A$ is nearly unperforated whenever $A$ tensorially absorbs a strongly self-absorbing UHF-algebra;
see \autoref{prp:UHFStableNearUnp}.
This verifies \autoref{conj:nearUnpCaZstable} in that case.

\begin{pgr}
\label{pgr:Rq}
\index{terms}{supernatural number}
A \emph{supernatural number}  $q$ is a formal product
\[
q=\prod_{k\in\N} p_k^{n_k},
\]
where $p_0,p_1,p_2,\ldots$ is an enumeration of all prime numbers and where each $n_k$ is a number in $\{0,1,2,\ldots,\infty\}$ that denotes the multiplicity with which the prime $p_k$ occurs in $q$.
By definition, zero is not a supernatural number.
Given supernatural numbers $q=\prod_k p_k^{m_k}$ and $r=\prod_k p_k^{n_k}$, their (formal) product is given by $qr=\prod_k p_k^{m_k+n_k}$. Analogously one can naturally define the product of infinitely many (super)natural numbers $\prod_{n\in \N}q_n$ in the obvious way. If $q=q^2$, then each $n_k$ is either $0$ or $\infty$.

We identify the nonzero natural numbers with the supernatural numbers of the form $\prod_{k\in\N} p_k^{n_k}$ where $\sum_{k\in\N} n_k <\infty$.
In particular, the number `one' is the supernatural number $\prod_{k\in\N} p_k^{n_k}$ where each $n_k$ is zero.

Let $q$ be a supernatural number satisfying $q=q^2$.
We write $\Z\left[\tfrac{1}{q}\right]$ for the ring obtained by inverting in $\Z$ all primes that divide $q$, that is:
\[
\Z\left[\tfrac{1}{q}\right]
=\Z\left[\left\{ \tfrac{1}{p} : p \text{ prime}, p|q \right\}\right].
\]
Then we let $K_q$ denote the subsemiring of nonnegative numbers in $\Z\left[\tfrac{1}{q}\right]$, that is:
\[
K_q=\Q_+\cap\Z\left[\tfrac{1}{q}\right].
\]
For example, we have
\[
K_1=\N=\{0,1,2,\ldots\},\quad
K_{2^\infty}=\Q_+\cap\Z\left[\tfrac{1}{2}\right]=\N\left[\tfrac{1}{2}\right].
\]
Then $K_q$ is a unital subsemiring of $\Q_+$, and all unital subsemirings of $\Q_+$ arise this way.
\index{symbols}{K$_q$@$K_q$}

For the rest of the paragraph, we fix a supernatural number $q$ satisfying $q=q^2$ and $q\neq 1$.
We equip $K_q$ with the natural algebraic order.
Recall that, for a \pom{} $M$, we denote by $\Cu(M)$ the $\CatCu$-completion of the $\CatPreW$-semigroup $(M,\leq)$;
see \autoref{pgr:algebraicSemigp}.
Then we define
\[
R_q = \Cu(K_q).
\]
\index{symbols}{R$_q$@$R_q$}
It follows from the results about algebraic $\CatCu$-semigroups in \autoref{sec:algebraicSemigp} that $R_q$ is a weakly cancellative $\CatCu$-semigroup satisfying \axiomO{5} and \axiomO{6}, and whose submonoid of compact elements is canonically identified with $K_q$.
It is then straightforward to check that there is a decomposition of $R_q$ as
\[
R_q = K_q \sqcup (0,\infty],
\]
where $K_q\subset R_q$ are the compact elements in $R_q$, and where $(0,\infty]\subset R_q$ is the submonoid of nonzero soft elements in $R_q$.

Using that $K_q$ is a semiring, we can define a product on $R_q$.
In \autoref{pgr:CuCompletionSrg}, this construction will be carried out in greater generality.
Here, we only consider the concrete case of $R_q$.

The order and semiring-structure of $R_q$ are so that the inclusion of $K_q$ in $R_q$ and the inclusion of $(0,\infty]$ in $R_q$ are order-embeddings and semiring-homomorphisms.
We let $\iota\colon K_q\to [0,\infty]$ be the natural inclusion map.
Let $a\in K_q$ and $t\in(0,\infty]$.
Then their sum in $R_q$ is given as $a+t=\iota(a)+t$, an element in $(0,\infty]$.
If $a=0$, then $at=0\in K_q$.
If $a$ is nonzero, then the product of $a$ and $t$ in $R_q$ is given as $at=\iota(a)t\in(0,\infty]$.
Moreover, we have $a\leq t$ in $R_q$ if and only if $\iota(a)<t$, and we have $t\leq a$ in $R_q$ if and only if $t\leq\iota(a)$.

Thus, the submonoid of soft elements in $R_q$ is additively and multiplicatively absorbing.
It is straightforward to check directly that the product on $R_q$ is a $\CatCu$-bimorphism and that the unit element of $K_q$ is also a unit for $R_q$.
This gives $R_q$ the structure of a $\CatCu$-semiring.
\end{pgr}

\begin{pgr}
\label{pgr:Mq}
\index{terms}{UHF-algebra!infinite type}
\index{symbols}{M$_q$@$M_q$ \quad (UHF-algebra)}
Given a supernatural number $q$, one associates a UHF-algebra $M_q$ as follows:
If $q$ is finite, then $M_q$ denotes the \ca{} of $q$ by $q$ matrices.
If $q$ is infinite, then we choose a sequence $n_0,n_1,n_2,\ldots$ of prime numbers such that $q$ is equal to the product $\prod_{k=0}^\infty n_k$.
Then we set
\[
M_q := \bigotimes_{k=0}^\infty M_{n_k}.
\]
The isomorphism type of $M_q$ does not depend on the choice of the sequence $(n_k)_k$.

Let $q_1$ and $q_2$ be supernatural numbers.
Then $q_1=q_2$ if and only if $M_{q_1}\cong M_{q_2}$.
Moreover,
\[
M_{q_1}\otimes M_{q_2} \cong M_{q_1q_2}.
\]

The UHF-algebra $M_q$ is said to be of \emph{infinite type} if $M_q\cong M_q\otimes M_q$ and $M_q\neq\C$.
Equivalently, we have $q=q^2$ and $q\neq 1$.
It is known that every UHF-algebra of infinite type is strongly self-absorbing;
see \cite{TomWin07ssa}.
\end{pgr}

\index{terms}{Cuntz semigroup!of UFH-algebra}
\begin{prp}
\label{prp:RqFromUHF}
Let $q$ be a supernatural number satisfying $q=q^2$ and $q\neq 1$.
Then
\[
R_q \cong \Cu(M_q).
\]
\end{prp}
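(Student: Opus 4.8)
The plan is to reduce the statement to the identification of the Murray--von Neumann semigroup of $M_q$ with the semiring $K_q$, and then to transport this identification through the $\CatCu$-completion functor. The point is that $\Cu(M_q)$ and $R_q=\Cu(K_q)$ both arise, up to the right identifications, as the $\CatCu$-completion of one and the same \pom.

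First, since $q=q^2\neq 1$ forces $q$ to be an infinite supernatural number, the algebra $M_q$ is a separable AF-algebra. Consequently, as recalled in the proof of \autoref{prp:EffHandShen_CuVersion}, its completed Cuntz semigroup is isomorphic to the $\CatCu$-completion of its dimension monoid; that is, $\Cu(M_q)\cong\Cu(V(M_q))$, where on the right $\Cu(\freeVar)$ denotes the completion functor $\CatPom\to\CatCu$ from \autoref{pgr:algebraicSemigp}. On the other hand, $R_q=\Cu(K_q)$ is by definition (see \autoref{pgr:Rq}) precisely this completion applied to $K_q$. Since the completion functor sends $\CatPom$-isomorphisms to $\CatCu$-isomorphisms, it therefore suffices to exhibit an isomorphism $V(M_q)\cong K_q$ of positively ordered monoids.

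Next I would identify $V(M_q)$ with $K_q$. The $K_0$-group of $M_q$ is the subring $\Z\left[\tfrac{1}{q}\right]$ of $\Q$, ordered so that its positive cone is $K_q=\Q_+\cap\Z\left[\tfrac{1}{q}\right]$. Because $M_q$ is AF, the monoid $V(M_q)$ is cancellative and the natural map into $K_0(M_q)$ is an order-embedding onto the positive cone; hence $V(M_q)\cong K_q$ as \pom{s}, where $K_q$ carries its algebraic order. Composing with the isomorphism of the previous paragraph yields a $\CatCu$-isomorphism $\Cu(M_q)\cong R_q$ which restricts to the canonical identification $V(M_q)\cong K_q$ on the respective submonoids of compact elements, these being $V(M_q)$ and $K_q$ by \autoref{prp:algebraicSemigp}.

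Finally, to upgrade this to an isomorphism of $\CatCu$-semirings, I would compare the two products on their compact elements. The product on $\Cu(M_q)$ coming from the strongly self-absorbing structure (\autoref{prp:semirgFromSSA}) is, on classes of projections, given by $[p]\cdot[r]=[p\otimes r]$ under $M_q\otimes M_q\cong M_q$; evaluating the unique normalized trace gives $\tau(p\otimes r)=\tau(p)\tau(r)$, which shows this equals the ordinary product in $K_q$, and that product is exactly the one on $R_q$ restricted to $K_q$. Both products are $\CatCu$-bimorphisms, and since $R_q$ is algebraic, a $\CatCu$-bimorphism is determined by its values on pairs of compact elements (by continuity in each variable). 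Hence the two products agree and the isomorphism is multiplicative. The only step requiring genuine care is this last compatibility of the multiplicative structures, since \autoref{prp:EffHandShen_CuVersion} supplies only a $\CatCu$-isomorphism a priori; the identification of $V(M_q)$ with $K_q$ and the passage through the completion functor are direct applications of the AF-algebra machinery already in place.
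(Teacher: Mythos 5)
Your argument is correct, but it follows a genuinely different route from the paper. The paper's proof invokes \autoref{prp:CuSimpleZstable}, the structure theorem for Cuntz semigroups of unital, separable, simple, finite, $\mathcal{Z}$-stable \ca{s}, to get $\Cu(M_q)\cong V(M_q)\sqcup(0,\infty]$, identifies $V(M_q)$ with $K_q$ via $K_0$ and stable rank one exactly as you do, and then matches this decomposition against the explicit description $R_q=K_q\sqcup(0,\infty]$ from \autoref{pgr:Rq}. You instead exploit that $M_q$ is AF, so that $\Cu(M_q)$ is the $\CatCu$-completion of $V(M_q)$ (the fact recalled in the proof of \autoref{prp:EffHandShen_CuVersion}), and that $R_q$ is \emph{defined} as $\Cu(K_q)$; the isomorphism then drops out of $V(M_q)\cong K_q$ and functoriality of the completion, with no need to compare two ad hoc disjoint-union pictures. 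Your route trades the heavy $\mathcal{Z}$-stability machinery for the more elementary real-rank-zero/algebraic machinery and is arguably more self-contained; the paper's route is shorter given that \autoref{prp:CuSimpleZstable} is already on the table and parallels the computation of $Z=\Cu(\mathcal{Z})$. Your final paragraph verifying that the product of \autoref{prp:semirgFromSSA} matches the semiring product on $R_q$ (via the trace on compacts, then density) is a genuine addition: the paper's proof only produces a $\CatCu$-semigroup isomorphism and leaves the multiplicative compatibility implicit, even though it is used later. Your density argument via \autoref{prp:algebraicSemigp} and continuity of $\CatCu$-bimorphisms in each variable closes that gap correctly.
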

\begin{proof}
It is well-known that $M_q$ is a unital, separable, simple, $\mathcal{Z}$-stable \ca{} with stable rank one and unique tracial state.
The $K_0$-group of $M_q$ is isomorphic to $\Z\left[\tfrac{1}{q}\right]$.
Since $M_q$ has stable rank one, the positive part of the ordered $K_0$-group is naturally isomorphic with $V(M_q)$.
We therefore have $V(M_q)= \Q_+ \cap \Z\left[\tfrac{1}{q}\right] = K_q$.
Then it follows from \autoref{prp:CuSimpleZstable} that
\[
\Cu(M_q) \cong V(M_q) \sqcup (0,\infty] \cong K_q \sqcup (0,\infty] = R_q,
\]
as desired.
\end{proof}

\begin{prp}
\label{prp:RqSolid}
Let $q$ be a supernatural number satisfying $q=q^2$ and $q\neq 1$.
Then $R_q$ is a solid $\CatCu$-semiring.
\end{prp}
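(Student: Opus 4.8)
The plan is to use the characterization of solidity from \autoref{prp:solidTFAE}: since $R_q$ is a $\CatCu$-semiring, it is solid if and only if $1\otimes a = a\otimes 1$ in $R_q\otimes_{\CatCu}R_q$ for every $a\in R_q$. Let $\varphi\colon R_q\times R_q\to R_q\otimes_{\CatCu}R_q$ denote the universal $\CatCu$-bimorphism, and consider the two maps $\lambda,\rho\colon R_q\to R_q\otimes_{\CatCu}R_q$ given by $\lambda(a)=\varphi(1,a)=1\otimes a$ and $\rho(a)=\varphi(a,1)=a\otimes 1$. Being the partial maps of $\varphi$, they are $\CatCu$-morphisms by \autoref{prp:bimorCu}; in particular they preserve suprema of increasing sequences. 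Thus proving solidity amounts to proving $\lambda=\rho$, and the first reduction is to verify this on a dense subset.

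Second, I would exploit that $R_q=\Cu(K_q)$ is algebraic: by \autoref{prp:algebraicSemigp} its submonoid of compact elements is canonically $K_q$, and every element of $R_q$ is the supremum of an increasing sequence of elements of $K_q$. Since $\lambda$ and $\rho$ preserve such suprema, the identity $1\otimes a=a\otimes 1$ holds for all $a\in R_q$ as soon as it holds for every compact $a\in K_q$.

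Third, for a compact element $a=\tfrac{k}{n}\in K_q=\Q_+\cap\Z[\tfrac1q]$, every prime factor of the denominator $n$ divides $q$; since $q=q^2$, each such prime divides $q$ with infinite multiplicity, so $\tfrac1n\in K_q$ and $n\cdot\tfrac1n=1$ in the semiring $K_q$. Then the computation from \autoref{exa:R} applies verbatim inside the compact part: using biadditivity of $\otimes$,
\[
1\otimes\tfrac{k}{n}
=\left(n\cdot\tfrac1n\right)\otimes\tfrac kn
=kn\left(\tfrac1n\otimes\tfrac1n\right)
=\tfrac kn\otimes\left(n\cdot\tfrac1n\right)
=\tfrac kn\otimes 1,
\]
which completes the argument.

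The computation itself is routine; the step requiring the most care is the reduction in the second paragraph, where one must be certain that $R_q$ really is algebraic with compact monoid $K_q$ (so that compacts are dense and the two maps are pinned down by their values on $K_q$), and that compactness of $a$ forces its denominator to divide $q$, so that $\tfrac1n$ lies again in $K_q$ and the inversion trick is legitimate — this is exactly where the hypotheses $q=q^2$ and $q\neq1$ enter. As an alternative that bypasses the explicit computation, one could argue as follows: by \autoref{prp:tensorAlgebraic} there is a natural isomorphism $R_q\otimes_{\CatCu}R_q\cong\Cu(K_q\otimes_{\CatPom}K_q)$ under which the $\CatCu$-product corresponds to $\Cu$ applied to the multiplication $K_q\otimes_{\CatPom}K_q\to K_q$; writing $K_q=\varinjlim(\N\xrightarrow{\cdot n_1}\N\xrightarrow{\cdot n_2}\cdots)$ with $\prod_i n_i=q$ and using that $\freeVar\otimes_{\CatPom}K_q$ is continuous exhibits $K_q\otimes_{\CatPom}K_q$ as $\varinjlim(K_q\xrightarrow{\cdot n_i}K_q)$, a limit of isomorphisms (each $\cdot n_i$ is invertible on $K_q$ with inverse $\cdot\tfrac1{n_i}$), whence the multiplication map $K_q\otimes_{\CatPom}K_q\to K_q$ is a $\CatPom$-isomorphism and $R_q$ is solid.
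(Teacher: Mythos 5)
Your proof is correct and follows essentially the same route as the paper: reduce to the criterion $1\otimes a=a\otimes 1$ of \autoref{prp:solidTFAE}, verify it for compact $a=\tfrac{k}{n}\in K_q$ by the divisibility computation $1\otimes\tfrac{k}{n}=kn\bigl(\tfrac1n\otimes\tfrac1n\bigr)=\tfrac{k}{n}\otimes 1$, and pass to general elements by a density/suprema argument (the paper phrases the non-compact case via the rational-density argument of \autoref{exa:R}, whereas you use algebraicity of $R_q$; these are the same reduction). Your concluding alternative via the inductive-limit presentation of $K_q$ is a nice variant but not needed.
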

\begin{proof}
By \autoref{prp:solidTFAE}, it is enough to show that $1\otimes a=a\otimes 1$ for every $a\in R_q$.
If $a$ is a nonzero, compact element in $R_q$, then there are $k,n\in\N_+$ such that $n|q$ and $a=\tfrac{k}{n}$.
It follows
\[
1\otimes a
= \tfrac{n}{n}\otimes \tfrac{k}{n}
= (nk) \tfrac{1}{n}\otimes \tfrac{1}{n}
= \tfrac{k}{n}\otimes \tfrac{n}{n}
= a\otimes 1.
\]
For a soft element in $R_q$, one can apply the same argument that was used in \autoref{exa:R} to show that $[0,\infty]$ is solid.
\end{proof}

We remark that a more general result will be proved in \autoref{prp:solidSrgCu}.

The following result follows by combining the observations in \autoref{pgr:Mq} with \autoref{prp:RqFromUHF} and \autoref{prp:tensProdAF}.

\begin{prp}
\label{prp:tensProdRqs}
Let $q$ and $r$ be supernatural numbers satisfying $q=q^2\neq 1$ and $r=r^2\neq 1$.
Then $R_q\otimes_{\CatCu} R_r\cong R_{qr}$.
\end{prp}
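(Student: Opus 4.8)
The plan is to realize the claimed isomorphism $R_q \otimes_{\CatCu} R_r \cong R_{qr}$ at the level of the underlying $C^*$-algebras and to transport it through the functor $\Cu$ via the general tensor-product results already established. The essential identities are supplied by \autoref{prp:RqFromUHF}, which gives $R_q \cong \Cu(M_q)$, $R_r \cong \Cu(M_r)$ and $R_{qr} \cong \Cu(M_{qr})$, together with the standard algebraic fact (\autoref{pgr:Mq}) that $M_q \otimes M_r \cong M_{qr}$.

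First I would observe that $M_q$ is an AF-algebra (being a UHF-algebra, it is an inductive limit of finite-dimensional algebras). This is the crucial hypothesis needed to invoke \autoref{prp:tensProdAF}, which asserts that whenever at least one of two $C^*$-algebras $A$, $B$ is an AF-algebra, the natural map
\[
\tau_{A,B}\colon \Cu(A)\otimes_{\CatCu}\Cu(B)\to\Cu(A\otimes B)
\]
is a $\CatCu$-isomorphism. Applying this with $A = M_q$ and $B = M_r$ gives
\[
\Cu(M_q)\otimes_{\CatCu}\Cu(M_r)\ \cong\ \Cu(M_q\otimes M_r).
\]

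The remaining step is a chain of canonical identifications. Using \autoref{prp:RqFromUHF} on the left side and the algebraic identity $M_q\otimes M_r\cong M_{qr}$ (so that $\Cu(M_q\otimes M_r)\cong\Cu(M_{qr})\cong R_{qr}$) on the right, I would conclude
\[
R_q\otimes_{\CatCu}R_r\ \cong\ \Cu(M_q)\otimes_{\CatCu}\Cu(M_r)\ \cong\ \Cu(M_q\otimes M_r)\ \cong\ \Cu(M_{qr})\ \cong\ R_{qr}.
\]
All of these isomorphisms are natural, being induced by the functoriality of $\Cu$ and the universal properties of the tensor product. Since $M_q$ and $M_r$ are nuclear, there is no ambiguity between the maximal and minimal tensor products, so the notation $M_q\otimes M_r$ is unambiguous and the map $\tau_{M_q,M_r}$ is the canonical one.

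There is essentially no genuine obstacle here: the work has all been done in \autoref{prp:tensProdAF}, whose proof relies on writing the AF-algebra as an inductive limit of finite-dimensional algebras, computing the tensor product on the simplicial building blocks $\overline{\N}^{r_i}$, and passing to the limit using the continuity of the tensor product (\autoref{prp:tensLim}) and of $\Cu$ (\autoref{prp:functorCu}). The only point requiring a moment's care is the verification that the composite isomorphism is compatible with the $\CatCu$-semiring structures, i.e.\ that the multiplication on $R_{qr}$ corresponds under the isomorphism to the product induced by the multiplications on $R_q$ and $R_r$; but this follows from the fact that $\tau_{M_q,M_r}$ sends $[x]\otimes[y]$ to the Cuntz class of $x\otimes y$, matching the multiplicative structure coming from $M_q\otimes M_r\cong M_{qr}$. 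Thus the statement reduces cleanly to already-proven results.
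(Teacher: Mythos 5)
Your proposal is correct and follows exactly the paper's own argument: the paper derives this proposition by combining the observations in \autoref{pgr:Mq} ($M_q\otimes M_r\cong M_{qr}$) with \autoref{prp:RqFromUHF} ($R_q\cong\Cu(M_q)$) and \autoref{prp:tensProdAF} (the AF tensor-product isomorphism). Your additional remark on compatibility with the semiring structures is a harmless bonus beyond what the paper records.
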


If $q$ is a (nontrivial) supernatural number of infinite type, and $S$ is a $\CatCu$-semigroup, we will refer to $R_q\otimes_{\CatCu}S$ as a \emph{rationalization} of $S$.
\index{terms}{rationalization!Cu-semigroup@$\CatCu$-semigroup}

The concepts of $n$-unperforation and $n$-divisibility of a positively ordered mo\-noid are well-known for a natural number $n$.
The following definition is a straightforward generalization to supernatural numbers.

\begin{dfn}
\label{dfn:qUnpDiv}
\index{terms}{positively ordered monoid!q-unperforated@$q$-unperforated}
\index{terms}{positively ordered monoid!q-divisible@$q$-divisible}
\index{terms}{unperforated!q@$q$-}
\index{terms}{divisible!q@$q$-}
Let $S$ be a \pom, and let $q$ be a supernatural number.
We say that $S$ is \emph{$q$-unperforated} if for every finite number $n$ dividing $q$, and for any elements $a,b\in S$, we have $a\leq b$ whenever $na\leq nb$.

We say that $S$ is \emph{$q$-divisible} if for every finite number $n$ dividing $q$, and for every $a\in S$, there exists $x\in S$ such that $a=nx$.
\end{dfn}

\begin{rmks}
\label{rmk:qUnpDiv}
Let $S$ be a \pom, and let $n$ be a nonzero natural number.
We let $\mu_n\colon S\to S$ be the map that multiplies each element in $S$ by $n$.

(1)
The monoid $S$ is $n$-divisible if and only if $\mu_n$ is surjective.

(2)
The monoid $S$ is $n$-unperforated if and only if $\mu_n$ is an order-embedding.

(3)
Let $q$ be a supernatural number, and let $q^\infty$ denote its infinite product with itself.
Then $S$ is $q$-divisible if and only if $S$ is $q^\infty$-divisible.
Similarly, $S$ is $q$-unperforated if and only if $S$ is $q^\infty$-unperforated.

(4)
Let $q_\infty$ be the largest supernatural number, for which each prime has infinite multiplicity.
A \pom{} $S$ is \emph{divisible} if it is $n$-divisible for every $n\in\N_+$, which is equivalent to being $q_\infty$-divisible.
Similarly, $S$ is \emph{unperforated} if it is $n$-unperforated for every $n\in\N_+$, or, equivalently, if if is $q_\infty$-unperforated.
\end{rmks}

\begin{lma}
\label{prp:qUnpNearUnp}
Let $S$ be a \pom, and let $q$ be a supernatural number with $q\neq 1$.

(1)
If $S$ is $q$-divisible, then for any $a\in S$ and $n\geq 1$, there exists $x\in S$ such that $nx\leq a\leq (n+1)x$. In particular, $S$ is almost divisible.

(2)
If $S$ is $q$-unperforated, then $S$ is nearly unperforated and therefore also almost unperforated.
\end{lma}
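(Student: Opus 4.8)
The plan is to reduce both parts to the existence of a single prime $p$ dividing $q$, which is available because $q\neq 1$. For such a prime, $q$-divisibility gives that $S$ is $p$-divisible, and iterating the relation $a=px$ shows that $S$ is $p^j$-divisible for every $j\geq 1$; likewise $q$-unperforation gives that $S$ is $p$-unperforated. These are the only consequences of the hypotheses that I will use, so in particular no assumption of the form $q=q^2$ is needed.

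For part (1), fix $a\in S$ and $n\geq 1$. First I would choose $j$ large enough that $p^j>n(n+1)$ and write $a=p^j y$ for some $y\in S$, using $p^j$-divisibility. The key observation is that the real interval $[\,p^j/(n+1),\,p^j/n\,]$ has length $p^j/(n(n+1))>1$, so it contains an integer; concretely $m:=\lfloor p^j/n\rfloor$ satisfies $p^j/(n+1)<m\leq p^j/n$, hence $nm\leq p^j\leq (n+1)m$. Setting $x:=my$ and using that $k\mapsto ky$ is monotone in the positively ordered monoid $S$, these scalar inequalities transfer to $nx=(nm)y\leq p^j y=a$ and $a=p^j y\leq ((n+1)m)y=(n+1)x$, which is exactly $nx\leq a\leq (n+1)x$. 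Almost divisibility then follows since the case $n=0$ is trivial (take $x=a$) and $n\geq 1$ is what was just proved.

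For part (2), I would invoke the characterization of near unperforation in \autoref{prp:nearUnperfTFAE}: it suffices to prove that $2a\leq 2b$ and $3a\leq 3b$ imply $a\leq b$. Since every integer $\geq 2$ lies in the numerical semigroup generated by $2$ and $3$, I can write $p=2s+3t$ for some $s,t\in\N$, and then $pa=s(2a)+t(3a)\leq s(2b)+t(3b)=pb$. Because $p$ divides $q$, $p$-unperforation yields $a\leq b$. Thus $S$ is nearly unperforated, and the fact that it is also almost unperforated is immediate from \autoref{prp:nearUnperfImplications}.

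I do not expect a genuine obstacle here; the only points requiring care are the two purely arithmetic facts — that the interval $[\,p^j/(n+1),\,p^j/n\,]$ contains an integer once $p^j>n(n+1)$, and that $p$ is representable as $2s+3t$ — together with the routine bookkeeping that inequalities between natural-number multiples pass to order inequalities in $S$, which rests solely on positivity of the monoid.
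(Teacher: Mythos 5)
Your proof is correct. Part (1) is essentially the paper's argument: both reduce to the purely arithmetic fact that for a divisor $d\geq 2$ of $q$ one can find $r,k$ with $rn\leq d^k\leq r(n+1)$ (the paper phrases this as density of $\{r/d^k\}$ in $\Q_+$, you as the interval $[\,p^j/(n+1),p^j/n\,]$ having length greater than $1$), and both only use iterated $d$-divisibility, so no hypothesis $q=q^2$ is needed in either version. In part (2) you take a mildly different route: the paper works directly from the definition of $\leq_p$, choosing $k$ with $d^k\geq n_0$ and using that $d$-unperforation iterates to $d^k$-unperforation, whereas you pass through the characterization of \autoref{prp:nearUnperfTFAE} and write $p=2s+3t$, so that a single application of $p$-unperforation suffices. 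Your variant is marginally more economical (one divisor, one application of unperforation, no iteration); the paper's is marginally more direct from the definition of $\leq_p$. Both are sound, and the bookkeeping steps you flag (monotonicity of $k\mapsto ka$ and additivity of the order) are exactly what is needed.
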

\begin{proof}
To show the first statement, assume that $S$ is a $q$-divisible, \pom.
Let $a$ be in $S$ and let $n$ be in $\N_+$.
We need to find $x\in S$ such that $nx\leq a\leq(n+1)x$.

Choose a number $d\geq 2$ that divides $q$.
Since the set $\left\{ \tfrac{r}{d^k} : r,k\in \N_+ \right\}$ is dense in $\Q_+$, we can find $r$ and $k$ in $\N_+$ such that $\tfrac{1}{n+1}<\tfrac{r}{d^k}<\tfrac{1}{n}$.
Since $S$ is $d$-divisible, there exists $x\in S$ such that $d^kx=a$.
Then
\[
n(rx)\leq d^k x = a \leq (n+1)(rx),
\]
which shows that the element $rx$ has the desired properties.

To prove the second statement, assume that $S$ is a $q$-unperforated, \pom.
Choose a number $d\geq 2$ that divides $q$.
To show that $S$ is nearly unperforated, let $a,b\in S$ satisfy $a\leq_p b$.
This means that there exists $n_0\in\N$ such that $na\leq nb$ for all $n\in\N$ with $n\geq n_0$.
Choose $k\in\N_+$ such that $d^k\geq n_0$.
Then $d^ka\leq d^kb$.
As observed in \autoref{rmk:qUnpDiv}, we have that $S$ is $d^k$-unperforated.
Thus, we obtain $a\leq b$, as desired.

We have seen in \autoref{prp:nearUnperfImplications} that near unperforation implies almost unperforation in general.
\end{proof}

\begin{lma}
\label{prp:RqModUnpDiv}
Let $S$ be a $\Cu$-semimodule over a $\Cu$-semiring $R$, and let $q$ be a supernatural number with $q\neq 1$.
Assume that the unit element of $R$ is $q$-divisible.
Then $S$ is $q$-unperforated and $q$-divisible.
In particular, $R$ itself is $q$-unperforated and $q$-divisible.
\end{lma}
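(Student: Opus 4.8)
The plan is to follow the strategy behind \autoref{prp:ZModAlmUnpDiv}, although the argument will be more direct: since $q$-divisibility in the sense of \autoref{dfn:qUnpDiv} demands \emph{exact} divisibility rather than the approximate version used in the almost-divisible case, there is no need to pass to a rapidly increasing sequence approximating $1_R$. First I would fix a finite number $n$ dividing $q$. By hypothesis the unit $1_R$ is $q$-divisible, so I may choose $w\in R$ with $1_R=nw$ (an exact equality). Throughout I will use that the $R$-multiplication $\varphi\colon R\times S\to S$ is a $\CatCu$-bimorphism, hence by \autoref{prp:bimorCu} additive and order-preserving in each variable, together with the unit axiom $1_Rs=s$ from \autoref{dfn:CuSemimod}.

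For $q$-divisibility of $S$, given $a\in S$ I would set $x:=wa$. Using additivity of $\varphi$ in its first variable and then the unit axiom, one computes
\[
nx=n(wa)=(nw)a=1_Ra=a,
\]
so $a=nx$. Since $n$ dividing $q$ and $a\in S$ were arbitrary, $S$ is $q$-divisible.

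For $q$-unperforation, suppose $a,b\in S$ satisfy $na\leq nb$. Applying the order-preserving map $\varphi(w,\freeVar)$ and using additivity to move the factor $n$ across, I would obtain
\[
a=1_Ra=(nw)a=w(na)\leq w(nb)=(nw)b=1_Rb=b,
\]
whence $a\leq b$; thus $S$ is $q$-unperforated. The final assertion then follows by applying the result to $S=R$, equipped with the semimodule structure given by the $\CatCu$-semiring product of $R$, whose unit $1_R$ is $q$-divisible by hypothesis. All computations are routine; the only point needing a little care is the bookkeeping identity $(nw)a=n(wa)=w(na)$, which must be derived from additivity of $\varphi$ in \emph{each} variable separately rather than assumed, so I anticipate no genuine obstacle.
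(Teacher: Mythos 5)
Your proof is correct and is precisely the argument the paper has in mind when it declares the proof ``analogous to Lemma~\ref{prp:ZModAlmUnpDiv} and left to the reader'': you use the (now exact) decomposition $1_R=nw$ of the unit in place of the approximate one, and push it through the bimorphism. Your observation that exact $q$-divisibility makes the rapidly increasing sequence $(u_n)_n$ unnecessary is accurate, and the bookkeeping identities $(nw)a=n(wa)$ and $w(na)=n(wa)$ do follow from additivity of the $\CatCu$-bimorphism in each variable separately, as you note.
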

\begin{proof}
The proof is analogous to that of \autoref{prp:ZModAlmUnpDiv} and is left to the reader.
\end{proof}

\begin{thm}
\label{prp:RqModTFAE}
Let $S$ be a $\CatCu$-semigroup, and let $q$ be a supernatural number satisfying $q=q^2$ and $q\neq 1$.
Then the following are equivalent:
\beginEnumStatements
\item
We have $S\cong R_q\otimes_{\CatCu} S$.
\item
The $\CatCu$-semigroup $S$ has $R_q$-multiplication.
\item
The $\CatCu$-semigroup $S$ is $q$-divisible and $q$-unperforated.
\end{enumerate}
\end{thm}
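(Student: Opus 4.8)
The plan follows closely the treatment of $Z$-multiplication in \autoref{prp:ZModTFAE}. First, since $R_q$ is a solid $\CatCu$-semiring by \autoref{prp:RqSolid}, the equivalence of \enumStatement{1} and \enumStatement{2} is immediate from \autoref{prp:solidModuleTFAE}. For the implication \implStatements{2}{3}, observe that the unit $1\in K_q\subset R_q$ is $q$-divisible: for every finite $n$ dividing $q$ the element $\tfrac{1}{n}$ lies in $K_q$ and satisfies $n\cdot\tfrac{1}{n}=1$. Hence \autoref{prp:RqModUnpDiv} applies and shows that any $\CatCu$-semimodule over $R_q$ is $q$-unperforated and $q$-divisible.

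The substantial part is \implStatements{3}{2}. Mirroring the proof of \autoref{prp:ZModTFAE}, I would first establish the $R_q$-analog of \autoref{prp:zembed}: for a $q$-divisible, $q$-unperforated $\CatCu$-semigroup $S$ and each $a\in S$, there is a unique generalized $\CatCu$-morphism $\alpha_a\colon R_q\to S$ with $\alpha_a(1)=a$, and $\alpha_a$ is a $\CatCu$-morphism precisely when $a$ is compact. The construction is in fact cleaner than in the $Z$ case, because genuine $q$-divisibility and $q$-unperforation replace the approximate intervals $\mu((k,n),a)$ used there. Using the decomposition $R_q=K_q\sqcup(0,\infty]$ from \autoref{pgr:Rq}, I would define $\alpha_a$ on the compact part by $\alpha_a(\tfrac{k}{n}):=kx_n$, where $x_n$ is the unique element of $S$ with $nx_n=a$ (existence by $q$-divisibility, uniqueness by $q$-unperforation, using $n\mid q$). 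On the soft part I would set $\alpha_a(t):=\sup\{kx_n:\tfrac{k}{n}<t,\ n\mid q\}$; since $q\neq 1$ and $q=q^2$, the fractions with denominator dividing $q$ are dense in $[0,\infty)$, so this supremum is realized along an increasing cofinal sequence and exists by \axiomO{1}.

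The key monotonicity statement, analogous to \autoref{lem:mult}, is that $\tfrac{k}{n}\leq\tfrac{k'}{n'}$ with $n,n'\mid q$ implies $kx_n\leq k'x_{n'}$: multiplying through by $nn'$ gives $nn'(kx_n)=kn'a\leq k'na=nn'(k'x_{n'})$, and since $nn'\mid q$ (as $q=q^2$), $q$-unperforation yields the claim; the corresponding statement for $\ll$ follows by inserting an intermediate rational. With this in hand one checks that $\alpha_a$ preserves $0$, order, addition and suprema of increasing sequences, exactly as in \autoref{prp:zembed}, and that it preserves $\ll$ when $a$ is compact. Uniqueness follows since any two such morphisms agree on the dense compact part and hence everywhere.

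Finally, I would define $\alpha\colon R_q\times S\to S$ by $\alpha(z,a):=\alpha_a(z)$ and verify, as in the proof of \autoref{prp:ZModTFAE}, that $\alpha$ is a generalized $\CatCu$-bimorphism with $\alpha(1,a)=a$; then \autoref{prp:constructingSolidMod} upgrades it to an $R_q$-multiplication, completing \implStatements{3}{2}. The main obstacle is the same as in the $Z$ case: showing that $\alpha$ is jointly way-below preserving, that is, that $z_1\ll z_2$ and $a\ll b$ imply $\alpha(z_1,a)\ll\alpha(z_2,b)$. For soft $z_1<z_2$ one writes $1\leq z_1^{-1}z_2$ in $R_q$ and exploits the multiplicative identity $\alpha(z_1z_2,a)=\alpha(z_1,\alpha(z_2,a))$ together with continuity, just as in \autoref{prp:ZModTFAE}; this is where the interaction between the compact and soft parts of $R_q$ must be handled with care.
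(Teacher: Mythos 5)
Your proof is correct, and the equivalence \enumStatement{1}$\Leftrightarrow$\enumStatement{2} and the implication \implStatements{2}{3} are handled exactly as in the paper. For the hard implication \implStatements{3}{2}, however, you take a genuinely different route. You rebuild the morphisms $\alpha_a\colon R_q\to S$ from scratch, adapting \autoref{prp:zembed}: on $K_q$ you set $\alpha_a(\tfrac{k}{n})=kx_n$ with $nx_n=a$ (your observation that uniqueness of $x_n$ and the monotonicity $\tfrac{k}{n}\leq\tfrac{k'}{n'}\Rightarrow kx_n\leq k'x_{n'}$ follow from $nn'\mid q^2=q$ is the correct replacement for \autoref{lem:mult}), and on the soft part you take suprema over compactly dominated rationals. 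The paper instead first observes that $q$-unperforation and $q$-divisibility imply almost unperforation and almost divisibility (\autoref{prp:qUnpNearUnp}), so $S$ already has $Z$-multiplication by \autoref{prp:ZModTFAE}; it then lets the soft part $(0,\infty]\subset R_q$ act through that $Z$-multiplication and defines the action of $\tfrac{k}{n}\in K_q$ as $k\mu_n^{-1}(a)$, using that $\mu_n$ is a $\CatPom$- and hence $\CatCu$-isomorphism. This outsources all the delicate continuity verifications to the already-proved $Z$-case, at the cost of invoking that theorem; your construction is self-contained but repeats a substantial amount of that work. One small point: since you invoke \autoref{prp:constructingSolidMod}, which only requires a \emph{generalized} $\CatCu$-bimorphism satisfying $\varphi(1,a)=a$ and then delivers both the joint $\ll$-preservation and associativity automatically, the "main obstacle" you flag at the end (proving $z_1\ll z_2$ and $a\ll b$ imply $\alpha(z_1,a)\ll\alpha(z_2,b)$ by hand, as in \autoref{prp:ZModTFAE}) is not actually needed; you may simply stop once $\alpha$ is shown to be a generalized $\CatCu$-bimorphism with $\alpha(1,a)=a$.
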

\begin{proof}
By \autoref{prp:RqSolid}, the $\CatCu$-semiring $R_q$ is solid.
Therefore, the equivalence between \enumStatement{1} and \enumStatement{2} follows from \autoref{prp:solidModuleTFAE}.
The unit of $R_q$ is clearly $q$-divisible.
Therefore, it follows from \autoref{prp:RqModUnpDiv} that \enumStatement{2} implies \enumStatement{3}.

Finally, to show that \enumStatement{3} implies \enumStatement{2}, suppose that $S$ is $n$-divisible and $n$-unperforated for every $n\in\N_+$ that divides $q$.
It follows from \autoref{prp:qUnpNearUnp} and \autoref{prp:ZModTFAE} that $S$ has $Z$-multiplication.
By \autoref{prp:constructingSolidMod}, it is enough to define a generalized $\Cu$-bimorphism
\[
\varphi\colon R_q\times S\to S
\]
such that $\varphi(1,a)=a$ for each $a\in S$.
Recall that $R_q=K_q\sqcup (0,\infty]$, where $K_q$ is a unital subsemiring of $\Q_+$.
For $r\in (0,\infty]\subset R_q$ we use the $Z$-multiplication on $S$ to define $\varphi(r,\freeVar)$.
Let $r$ be in $K_q$.
Then there exist unique coprime integers $n,k\in\N_+$ such that $r=\tfrac{k}{n}$ and $n$ divides $q$.
Consider the map $\mu_n\colon S\to S$ that multiplies each element in $S$ by $n$.
Since $S$ is $n$-divisible and $n$-unperforated, the map $\mu_n$ is a $\CatPom$-isomorphism and therefore a $\CatCu$-isomorphism;
see \autoref{rmk:qUnpDiv}.
Given $a\in S$, we set
\[
\varphi(r,a) := k \mu_n^{-1}(a).
\]
It is now straightforward to check that $\varphi$ is a $\CatCu$-bimorphism.
It is also clear that $\varphi(1,a)=a$ for each $a\in S$.
Therefore, we may apply \autoref{prp:constructingSolidMod} to deduce that $S$ has $R_q$-multiplication.
\end{proof}

\begin{thm}
\label{prp:MapToRqMod}
Let $S$ be a $\CatCu$-semigroup, let $a,b$ be elements in $S$, and let $q$ be a supernatural number satisfying $q=q^2$ and $q\neq 1$.
Then the following are equivalent:
\beginEnumStatements
\item
We have $1\otimes a\leq 1\otimes b$ in $R_q\otimes_{\CatCu} S$.
\item
For each $a'\in S$ satisfying $a'\ll a$, there exists $n\in\N_+$ dividing $q$ such that $na'\leq nb$ in $S$.
\end{enumerate}
\end{thm}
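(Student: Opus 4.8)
The plan is to replace the tensor product by an inductive limit in which the order relation among the relevant elements becomes transparent. Writing $q=\prod_{i\ge 1}p_i$ with the primes dividing $q$ listed so that each occurs infinitely often (possible precisely because $q=q^2$), I would first establish the identification
\[
R_q\otimes_{\CatCu}S \ \cong\ S_q := \CatCuLim\bigl(S\xrightarrow{\cdot p_1}S\xrightarrow{\cdot p_2}S\xrightarrow{\cdot p_3}\cdots\bigr),
\]
under which $1\otimes(\freeVar)$ corresponds to the canonical map $\psi_{1,\infty}\colon S\to S_q$ from the first copy. This follows from results already available: the system $\N\xrightarrow{\cdot p_i}\N$ has inductive limit $(K_q,\le)$ in $\CatPreW$ (the limit auxiliary relation of \autoref{dfn:limAuxRel} on $\varinjlim_{\CatPom}\N=K_q$ is again $\le$), so by continuity of the reflection $\gamma$ one obtains $R_q=\Cu(K_q)\cong\CatCuLim(\overline{\N}\xrightarrow{\cdot p_i}\overline{\N})$; tensoring with $S$, using that $\overline{\N}$ is the tensor unit (\autoref{pgr:monoidalCu}) and that the tensor product is continuous in each variable (\autoref{prp:tensLim}), yields the displayed isomorphism, and tracking the unit $1\in R_q$ (the image of $1\in\overline{\N}$ at the first stage) gives the stated identification of $1\otimes(\freeVar)$. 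I then write $\iota:=\psi_{1,\infty}$, record that the connecting map from stage $1$ to stage $k$ is multiplication by the finite divisor $N_k:=p_1\cdots p_{k-1}$ of $q$, and note the two facts that every finite $n\mid q$ divides some $N_k$ (again using $q=q^2$) and that $\iota$ is a $\CatCu$-morphism, hence preserves $\ll$ and suprema of increasing sequences.

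For \implStatements{2}{1}, since $\iota$ preserves suprema and $a=\sup_j a_j$ for a rapidly increasing sequence $(a_j)_j$ with $a_j\ll a$, it suffices to show $\iota(a_j)\le\iota(b)$ for each $j$. Fix $a':=a_j\ll a$ and choose, by hypothesis, a finite $n\mid q$ with $na'\le nb$ in $S$. Picking $k$ so large that $n\mid N_k$, say $N_k=nm$, multiplication by $m$ gives $N_k a'=m(na')\le m(nb)=N_k b$ in $S$; that is, the first-stage classes of $a'$ and $b$ already compare at stage $k$, so $\iota(a')\le\iota(b)$ in $S_q$. Taking the supremum over $j$ yields $\iota(a)\le\iota(b)$, which is \enumStatement{1}.

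For \implStatements{1}{2}, let $a'\ll a$. Since $a'\prec a$ and $\iota$ preserves $\ll$, we have $\iota(a')\ll\iota(a)\le\iota(b)$, whence $\iota(a')\ll\iota(b)$. Now $S_q$ is the $\CatCu$-completion of the $\CatPreW$-limit $\CatPreWLim(S\xrightarrow{\cdot p_i}S)$, and under this completion $\iota(a')$ and $\iota(b)$ are the images of the first-stage classes $[a']_1$ and $[b]_1$. The embedding property of the $\CatCu$-completion (condition (i) of \autoref{thm:Cuification}) converts $\ll$ back into the auxiliary relation: from $\iota(a')\ll\iota(b)$ we obtain $[a']_1\prec[b]_1$ in the $\CatPreW$-limit. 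By the description of the auxiliary relation on an inductive limit (\autoref{dfn:limAuxRel}) this means there is a stage $k$ with $N_k a'\prec N_k b$ in $S$, and in particular $N_k a'\le N_k b$; as $N_k$ is a finite divisor of $q$, this is exactly \enumStatement{2}.

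The main obstacle is the preliminary identification of $R_q\otimes_{\CatCu}S$ with the telescope $S_q$ together with the bookkeeping of the map $1\otimes(\freeVar)$; once this is in place, both implications are short, the crux being the single application of the embedding half of \autoref{thm:Cuification}, which is what allows one to read off a genuine inequality $N_k a'\le N_k b$ inside $S$ from a way-below relation in the completed tensor product. A secondary point requiring care is the repeated use of $q=q^2$: it guarantees both that the primes may be listed with infinite multiplicity and that every finite divisor of $q$ eventually divides $N_k$, which is what makes the two directions match. I would flag that a direct approach through $R_q\otimes_{\CatPreW}S$ and the explicit relation $\lessdot$ of \autoref{dfn:tensProdAuxRelHelping} runs into the difficulty that $s\mapsto 1\otimes s$ need not be an order-embedding into the underlying $\CatPom$-tensor product, so that clearing denominators there does not obviously return an inequality in $S$; the inductive-limit description is precisely what circumvents this.
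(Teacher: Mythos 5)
Your proof is correct. It differs from the paper's in that you route everything through the telescope $S_q=\CatCuLim(S\xrightarrow{\cdot p_1}S\xrightarrow{\cdot p_2}\cdots)$ of \autoref{prp:RqLimSimplicial}, whereas the paper works directly with the pre-completed tensor product: it writes $R_q\otimes_{\CatCu}S=\gamma(K_q\otimes_{\CatPreW}S)$, uses compactness of $1$ and the embedding property of the $\CatCu$-completion to descend from $1\otimes a'\ll 1\otimes b$ to $1\otimes a'\leq 1\otimes b$ in $K_q\otimes_{\CatPom}S=\N[\tfrac1q]\otimes_{\CatPom}S$, and then invokes (as "easy to see") the fact that $1\otimes x\leq 1\otimes y$ there iff $nx\leq ny$ for some finite $n\mid q$; for \implStatements{2}{1} it simply computes $1\otimes a_k=\tfrac1{n_k}\otimes(n_ka_k)\leq\tfrac1{n_k}\otimes(n_kb)=1\otimes b$ using invertibility of $n_k$ in $R_q$. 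The two arguments are close in substance --- your \implStatements{1}{2} is essentially the paper's, with the embedding half of \autoref{thm:Cuification} doing the same work --- but your version is more self-contained: the stage-wise description of order and of the auxiliary relation in a $\CatPreW$-inductive limit (\autoref{dfn:limAuxRel}) replaces the paper's unproved claim about the order on $\N[\tfrac1q]\otimes_{\CatPom}S$. Your closing worry about the $\CatPreW$-route is legitimate as stated, but note that the paper's claim is nonetheless correct for exactly the reason your argument works: since $\freeVar\otimes_{\CatPom}T$ is a left adjoint, $\N[\tfrac1q]\otimes_{\CatPom}S$ is itself the $\CatPom$-limit of the same telescope, where order is detected at a finite stage; so the two proofs are really the same computation carried out in isomorphic models. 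What the paper's route buys is the one-line \implStatements{2}{1}; what yours buys is that no order claim about the $\CatPom$-tensor product is left implicit.
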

\begin{proof}
First, let us show that \enumStatement{1} implies \enumStatement{2}.
By definition, $R_q$ is the $\Cu$-completion of the (algebraically ordered) $\CatW$-semigroup $(K_q,\leq)$.
By \autoref{thm:tensProdCompl}, we have
\[
R_q\otimes_{\CatCu} S
= \gamma\left( K_q \otimes_{\CatPreW} S \right).
\]
Let $\alpha\colon K_q \otimes_{\CatPreW} S \to R_q\otimes_{\CatCu} S$ denote the universal $\CatW$-morphism of the $\Cu$-completion.
The underlying \pom{} of $K_q \otimes_{\CatPreW} S$ is
\[
K_q \otimes_{\CatPom} S
= \N\left[\tfrac{1}{q}\right] \otimes_{\CatPom} S.
\]
Then, given elements $x,y\in S$, it is easy to see that $1\otimes x\leq 1\otimes y$ in $K_q \otimes_{\CatPom} S$ if and only if there exists a natural number $n$ dividing $q$ such that $nx\leq ny$ in $S$.
Now, let $a,b\in S$ satisfy $1\otimes a\leq 1\otimes b$ in $R_q\otimes_{\CatCu} S$, and let $a'\in S$ satisfy $a'\ll a$.
Using at the second step that the unit of $R_q$ is a compact element, it follows
\[
\alpha(1\otimes a') = 1\otimes a' \ll 1\otimes b = \alpha(1\otimes b)
\]
in $R_q\otimes_{\CatCu} S$.
By properties of the $\Cu$-completion, we deduce $1\otimes a'\prec 1\otimes b$ in $K_q \otimes_{\CatPreW} S$.
Hence, $1\otimes a'\leq 1\otimes b$ in $K_q \otimes_{\CatPom} S$.
As observed above, this implies that there exists $n\in\N_+$ dividing $q$ such that $na'\leq nb$ in $S$.
This verifies \enumStatement{2}.

Next, let us show that \enumStatement{2} implies \enumStatement{1}.
Choose a rapidly increasing sequence $(a_k)_k$ in $S$ such that $a=\sup_k a_k$.
By assumption, for each $k$ there exists $n_k\in\N_+$ that divides $q$ and such that $n_ka_k\leq n_kb$.
Since $n_k$ is invertible in $R_q$, we deduce
\[
1\otimes a_k
= \left(n_k\tfrac{1}{n_k}\right)\otimes a_k
= \tfrac{1}{n_k} \otimes \left(n_k a_k\right)
\leq \tfrac{1}{n_k} \otimes \left(n_k b\right)
= \left(n_k\tfrac{1}{n_k}\right)\otimes b
= 1 \otimes b.
\]
Since this holds for each $k$, and since $1\otimes a=\sup_k \left( 1\otimes a_k \right)$ in $R_q\otimes_{\CatCu} S$, we obtain $1\otimes a\leq 1\otimes b$, as desired.
\end{proof}

\begin{prp}
\label{prp:RqLimSimplicial}
Let $q$ be a supernatural number satisfying $q=q^2$ and $q\neq 1$.
Let $(d_k)_{k\in\N}$ be a sequence of natural numbers such that $q=\prod_{k\in\N}d_k$.
Then $R_q$ is isomorphic to the limit of the following inductive system of simplicial $\CatCu$-semigroups:
\[
\overline{\N} \xrightarrow{d_0} \overline{\N} \xrightarrow{d_1} \overline{\N} \xrightarrow{d_2} \ldots.
\]
Consequently, if we are given a $\CatCu$-semigroup $S$, then $R_q\otimes_{\CatCu}S$ is isomorphic to the limit of the inductive system
\[
S \xrightarrow{d_0} S \xrightarrow{d_1} S \xrightarrow{d_2} \ldots
\]
\end{prp}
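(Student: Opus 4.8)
The plan is to realize the given system as the image, under the functor $\Cu\colon\CatPom\to\CatCu$ of \autoref{pgr:algebraicSemigp}, of a purely order-theoretic system, to compute the limit there, and to transport it back. First I would note that $\overline{\N}=\Cu(\N)$ and that, for each $k$, multiplication by $d_k$ on $\overline{\N}$ is precisely $\Cu(\mu_{d_k})$, where $\mu_{d_k}\colon\N\to\N$ denotes multiplication by $d_k$ in $\CatPom$; indeed both are $\CatCu$-morphisms that agree on the (dense) submonoid $\N$ of compact elements. Hence the system $\overline{\N}\xrightarrow{d_0}\overline{\N}\xrightarrow{d_1}\cdots$ is the image under $\Cu$ of the system $\N\xrightarrow{d_0}\N\xrightarrow{d_1}\cdots$ in $\CatPom$.

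The key step is to establish that $\Cu\colon\CatPom\to\CatCu$ preserves inductive limits. Since $\Cu$ factors as the inclusion $\iota\colon M\mapsto(M,\leq)$ of $\CatPom$ into $\CatPreW$ followed by the reflection $\gamma\colon\CatPreW\to\CatCu$, and since $\gamma$ is continuous (being a left adjoint, as noted after \autoref{prp:CureflectivePreW}), it suffices to check that $\iota$ is continuous. For this I would invoke \autoref{prp:limitsPreW}: the underlying \pom{} of $\CatPreWLim(M_i,\leq)$ is $\CatPomLim M_i$, and the auxiliary relation from \autoref{dfn:limAuxRel} reduces, because $\prec$ equals $\leq$ in each $M_i$, precisely to the order $\leq$ on $\CatPomLim M_i$. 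Thus $\CatPreWLim(M_i,\leq)=(\CatPomLim M_i,\leq)=\iota(\CatPomLim M_i)$, so $\iota$, and therefore $\Cu$, is continuous.

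Next I would compute the limit $M:=\CatPomLim(\N\xrightarrow{d_k}\cdots)$. Identifying the copy of $\N$ at stage $n$ with the fractions $\tfrac{a}{d_0\cdots d_{n-1}}$ (the connecting map $\mu_{d_k}$ leaves the fraction unchanged) shows that $M$ is the submonoid of $\Q_+$ of all such fractions, equipped with its natural order. The main, though elementary, obstacle is the number-theoretic identification of this submonoid with $K_q=\Q_+\cap\Z[\tfrac1q]$: the hypothesis $q=q^2$ forces every prime dividing $q$ to occur with infinite multiplicity in $\prod_k d_k$, so that any denominator whose prime factors all divide $q$ eventually divides a partial product $d_0\cdots d_{n-1}$; conversely each partial product has only such prime factors, so it lies in $\Z[\tfrac1q]$. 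Combining continuity of $\Cu$ with \autoref{prp:algebraicSemigp} then gives $\CatCuLim(\overline{\N}\xrightarrow{d_k}\cdots)=\Cu(M)=\Cu(K_q)=R_q$, which is the first assertion.

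Finally, for the consequence about $R_q\otimes_\CatCu S$, I would apply continuity of the tensor product in the first variable (\autoref{prp:tensLim}) to obtain $R_q\otimes_\CatCu S\cong\CatCuLim(\overline{\N}\otimes_\CatCu S\xrightarrow{d_k\otimes\id}\cdots)$. Since $\overline{\N}$ is the unit object of $\CatCu$ (\autoref{pgr:monoidalCu}), the natural isomorphism $\overline{\N}\otimes_\CatCu S\cong S$ sends $n\otimes s$ to $ns$, and under it the map $\mu_{d_k}\otimes\id_S$ corresponds to multiplication by $d_k$ on $S$. This identifies the transported system with $S\xrightarrow{d_0}S\xrightarrow{d_1}\cdots$ and yields $R_q\otimes_\CatCu S\cong\CatCuLim(S\xrightarrow{d_k}\cdots)$, as desired.
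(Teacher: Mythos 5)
Your proposal is correct and follows essentially the same route as the paper: identify the system as the image under $\Cu$ of the system $\N\xrightarrow{d_0}\N\xrightarrow{d_1}\cdots$ in $\CatPom$, compute its $\CatPom$-limit as $K_q=\N\bigl[\tfrac{1}{q}\bigr]$, transport this through the continuous reflection $\gamma\colon\CatPreW\to\CatCu$, and deduce the tensor-product statement from \autoref{prp:tensLim}. You merely spell out two steps the paper leaves implicit (the continuity of $M\mapsto(M,\leq)$ and the number-theoretic identification of the limit with $K_q$), both of which are handled correctly.
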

\begin{proof}
Consider the following inductive system, where the map at the $k$-th step is multiplication by $d_k$:
\[
\N \xrightarrow{d_0} \N \xrightarrow{d_1} \N \xrightarrow{d_2} \ldots.
\]
It is straightforward to check that the inductive limit of this system in $\CatPom$ is $\N\left[\tfrac{1}{q}\right]$.
If we endow $\N$ and $\N\left[\tfrac{1}{q}\right]$ with auxiliary relations equal to their partial order, then we also have
\[
\N\left[\tfrac{1}{q}\right]
\cong \CatWLim \left( \N \xrightarrow{d_0} \N \xrightarrow{d_1} \N \xrightarrow{d_2} \ldots \right).
\]
Applying the reflection functor $\gamma\colon\CatPreW\to\CatCu$, which is a continuous functor, and using also \autoref{prp:limitsCu}, we obtain
\begin{align*}
R_q \cong \gamma\left(\N\left[\tfrac{1}{q}\right]\right)
&\cong \gamma\left( \CatWLim \left( \N \xrightarrow{d_0} \N \xrightarrow{d_1} \N \xrightarrow{d_2} \ldots \right) \right) \\
&\cong \CatCuLim \left( \overline{\N} \xrightarrow{d_0} \overline{\N} \xrightarrow{d_1} \overline{\N} \xrightarrow{d_2} \ldots \right),
\end{align*}
as desired.
The result for $R_q\otimes_{\CatCu}S$ follows from the limit presentation for $R_q$ in combination with \autoref{prp:tensLim}.
\end{proof}

\begin{cor}
\label{prp:axiomTensProdRq}
Let $S$ be a $\CatCu$-semigroup, and let $q$ be a supernatural number satisfying $q=q^2$ and $q\neq 1$.
If $S$ satisfies \axiomO{5} (respectively \axiomO{6}, weak cancellation), then so does $S\otimes R_q$.
\end{cor}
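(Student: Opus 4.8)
The plan is to reduce the statement to two facts already established in the preceding sections: the inductive-limit presentation of $R_q\otimes_{\CatCu}S$ from \autoref{prp:RqLimSimplicial}, and the fact that \axiomO{5}, \axiomO{6} and weak cancellation pass to inductive limits in $\CatCu$ (\autoref{prp:axiomsPassToLim}). Since the tensor product in $\CatCu$ is symmetric (\autoref{pgr:monoidalCu}), it suffices to treat $R_q\otimes_{\CatCu}S$, because $S\otimes R_q=S\otimes_{\CatCu}R_q\cong R_q\otimes_{\CatCu}S$.

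First I would choose a sequence $(d_k)_{k\in\N}$ of natural numbers with $d_k\geq 2$ and $q=\prod_{k\in\N}d_k$, which exists precisely because $q=q^2\neq 1$. By \autoref{prp:RqLimSimplicial},
\[
R_q\otimes_{\CatCu}S \cong \CatCuLim \left( S \xrightarrow{d_0} S \xrightarrow{d_1} S \xrightarrow{d_2} \cdots \right),
\]
where the connecting map at the $k$-th step is multiplication by $d_k$.

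Next I would verify that this is a genuine inductive system in $\CatCu$ all of whose objects satisfy the axiom under consideration. The connecting maps are the multiplication-by-$n$ maps $\mu_n\colon S\to S$ for $n=d_k$; such a map preserves addition, order and the zero element, it preserves the way-below relation by repeated use of \axiomO{3}, and it preserves suprema of increasing sequences by \axiomO{4}. Hence each $\mu_{d_k}$ is a $\CatCu$-morphism and the displayed system lives in $\CatCu$. Since every object of this system is equal to $S$, and $S$ satisfies \axiomO{5} (respectively \axiomO{6}, weak cancellation) by hypothesis, \autoref{prp:axiomsPassToLim} shows that the inductive limit satisfies the same axiom. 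Combined with the symmetry isomorphism of the previous paragraph, this finishes the proof.

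There is essentially no serious obstacle, as the substantive work has already been done in \autoref{prp:RqLimSimplicial} and \autoref{prp:axiomsPassToLim}. The only point requiring a moment's care is the verification that multiplication by a natural number is a $\CatCu$-morphism, so that the system in the display above is genuinely an inductive system in $\CatCu$ and \autoref{prp:axiomsPassToLim} is applicable; this is immediate from \axiomO{3} and \axiomO{4}.
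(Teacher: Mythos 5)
Your proposal is correct and follows essentially the same route as the paper: the paper deduces the corollary from \autoref{prp:RqLimSimplicial} together with \autoref{prp:tensLimSimplicial}, and the latter is itself proved by exactly your argument (writing the tensor product as an inductive limit of copies of $S$ and invoking \autoref{prp:axiomsPassToLim}). Your explicit check that multiplication by $d_k$ is a $\CatCu$-morphism is harmless but already contained in the statement of \autoref{prp:RqLimSimplicial}.
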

\begin{proof}
By \autoref{prp:tensLimSimplicial}, each of the axioms~\axiomO{5}, \axiomO{6} and weak cancellation is preserved by taking the tensor product with a $\CatCu$-semigroup that is an inductive limit of simplicial $\CatCu$-semigroups.
Therefore, the result follows from \autoref{prp:RqLimSimplicial}.
\end{proof}

\begin{prp}
\label{prp:tensRqCa}
Let $A$ be a \ca{}, and let $q$ be a supernatural number satisfying $q=q^2$ and $q\neq 1$.
Then there are natural isomorphisms
\[
\Cu(M_q\otimes A)
\cong \Cu(M_q)\otimes_{\Cu}\Cu(A)
\cong R_q\otimes_{\Cu} \Cu(A).
\]
In particular, we have $\Cu(A\otimes M_q)\cong \Cu(A)$ if and only if $\Cu(A)$ is $q$-unperforated and $q$-divisible.
\end{prp}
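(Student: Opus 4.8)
The plan is to read the result off the general tensor-product machinery developed earlier, since almost all of the work has already been done. The second isomorphism in the displayed chain, $\Cu(M_q)\otimes_{\CatCu}\Cu(A)\cong R_q\otimes_{\CatCu}\Cu(A)$, is immediate from \autoref{prp:RqFromUHF}, which identifies $\Cu(M_q)$ with $R_q$. For the first isomorphism, the key observation is that $M_q$ is an AF-algebra: when $q$ is finite it is a matrix algebra, and when $q$ is infinite it is the inductive limit of the finite-dimensional algebras $\bigotimes_{k=0}^N M_{n_k}$ as in \autoref{pgr:Mq}. In particular $M_q$ is nuclear, so the maximal and minimal tensor products with $A$ agree and the natural map $\tau_{M_q,A}$ is unambiguous. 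Since one of the two factors is an AF-algebra, \autoref{prp:tensProdAF} applies directly and shows that $\tau_{M_q,A}\colon \Cu(M_q)\otimes_{\CatCu}\Cu(A)\to\Cu(M_q\otimes A)$ is an isomorphism, which is the first displayed isomorphism. All three identifications are natural, so they compose to a natural isomorphism $\Cu(M_q\otimes A)\cong R_q\otimes_{\CatCu}\Cu(A)$.

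For the `in particular' clause, I would combine this natural isomorphism with the characterisation of $R_q$-semimodules. Because the isomorphism $\Cu(A\otimes M_q)\cong R_q\otimes_{\CatCu}\Cu(A)$ is canonical, we have $\Cu(A\otimes M_q)\cong\Cu(A)$ if and only if $R_q\otimes_{\CatCu}\Cu(A)\cong\Cu(A)$. Now $R_q$ is a solid $\CatCu$-semiring by \autoref{prp:RqSolid}, so \autoref{prp:solidModuleTFAE} gives that $R_q\otimes_{\CatCu}\Cu(A)\cong\Cu(A)$ is equivalent to $\Cu(A)$ admitting an $R_q$-multiplication. Finally, \autoref{prp:RqModTFAE}, applied to $S=\Cu(A)$, identifies this last condition with $\Cu(A)$ being $q$-divisible and $q$-unperforated, which completes the equivalence.

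The remaining verifications are routine: that $M_q$ is AF (immediate from its construction in \autoref{pgr:Mq}), and that the chain of isomorphisms is natural in $A$ (each constituent is either the canonical identification $\Cu(M_q)\cong R_q$ or the natural map $\tau_{M_q,A}$ from \autoref{pgr:tensCa}). There is no genuine obstacle here; the substantive inputs — continuity of the tensor product in each variable underlying \autoref{prp:tensProdAF}, together with the solidity and divisibility analysis behind \autoref{prp:RqSolid} and \autoref{prp:RqModTFAE} — were all established earlier, so this proposition is essentially their corollary.
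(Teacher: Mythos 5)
Your proposal is correct and follows essentially the same route as the paper: the second isomorphism is \autoref{prp:RqFromUHF}, the first is \autoref{prp:tensProdAF} applied to the AF-algebra $M_q$, and the `in particular' clause is the combination of \autoref{prp:RqSolid}, \autoref{prp:solidModuleTFAE} and \autoref{prp:RqModTFAE}. The paper's proof is in fact terser, leaving the last clause implicit, so your write-up simply makes explicit what the paper takes for granted.
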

\begin{proof}
The isomorphism on the left follows from \autoref{prp:tensProdAF} since $M_q$ is an AF-algebra.
By \autoref{prp:RqFromUHF}, we have $\Cu(M_q)\cong R_q$, which gives the isomorphism on the right.
\end{proof}

\begin{cor}
\label{prp:UHFStableNearUnp}
Let $A$ be a \ca{}.
If $A$ tensorially absorbs a UHF algebra of infinite type, then $\Cu(A)$ is nearly unperforated.
\end{cor}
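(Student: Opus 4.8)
The plan is to reduce the statement to two results already established: the computation of the Cuntz semigroup of a UHF-stabilization in \autoref{prp:tensRqCa}, and the implication from $q$-unperforation to near unperforation in \autoref{prp:qUnpNearUnp}. The whole argument is essentially a matter of assembling these, so I expect no serious obstacle; the only point requiring care is the bookkeeping that translates ``absorbs a UHF-algebra of infinite type'' into the hypotheses of the cited results.

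First I would unwind the hypothesis. Recall from \autoref{pgr:Mq} that a UHF-algebra of infinite type is precisely an algebra of the form $M_q$, where $q$ is a supernatural number satisfying $q=q^2$ and $q\neq 1$, and that every such $M_q$ is strongly self-absorbing. Saying that $A$ tensorially absorbs such an algebra therefore means that $A\cong A\otimes M_q$ for some supernatural $q$ with $q=q^2$ and $q\neq 1$. Applying the functor $\Cu$ (which is well-defined on isomorphism classes) then gives a $\CatCu$-isomorphism $\Cu(A)\cong\Cu(A\otimes M_q)$.

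Next I would invoke \autoref{prp:tensRqCa} directly: its ``in particular'' clause states that $\Cu(A\otimes M_q)\cong\Cu(A)$ holds if and only if $\Cu(A)$ is $q$-unperforated and $q$-divisible. Since the isomorphism $\Cu(A)\cong\Cu(A\otimes M_q)$ was just obtained from $A\cong A\otimes M_q$, we conclude that $\Cu(A)$ is $q$-unperforated (and $q$-divisible, though only the former is needed here). Finally, because $q\neq 1$, part~(2) of \autoref{prp:qUnpNearUnp} applies to the underlying \pom\ of $\Cu(A)$ and yields that $\Cu(A)$ is nearly unperforated, as desired.

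The step I would watch most carefully is the invocation of \autoref{prp:qUnpNearUnp}: that lemma is stated for \pom s and requires $q\neq 1$, which is exactly the condition guaranteeing that some prime $d\geq 2$ divides $q$ and hence that the powers $d^k$ dominate any threshold $n_0$ in the definition of $\leq_p$. Verifying that $q$-unperforation of $\Cu(A)$ in the sense of \autoref{dfn:qUnpDiv} is literally the hypothesis of \autoref{prp:qUnpNearUnp}(2) is routine once the supernatural number $q$ produced in the first step satisfies $q\neq 1$, which it does by the definition of infinite type. Beyond this matching of hypotheses there is no genuine difficulty, as all the analytic and order-theoretic content has been absorbed into the earlier results.
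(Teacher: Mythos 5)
Your proposal is correct and follows the paper's own proof exactly: the paper likewise writes $A\cong M_q\otimes A$ for a supernatural number $q$ with $q=q^2$ and $q\neq 1$, deduces $q$-unperforation of $\Cu(A)$ from \autoref{prp:tensRqCa}, and concludes via \autoref{prp:qUnpNearUnp}. Nothing to add.
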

\begin{proof}
Let $q$ be a supernatural number such that $q^2=q$ and $q\neq 1$ and $A\cong M_q\otimes A$.
By \autoref{prp:tensRqCa}, we have that $\Cu(A)$ is $q$-unperforated.
Then it follows from \autoref{prp:qUnpNearUnp} that $\Cu(A)$ is nearly unperforated.
\end{proof}

\vspace{5pt}
\section{The realification of a semigroup}
\label{sec:RMod}

In this section, we study $\CatCu$-semigroups that are semimodules over the $\CatCu$-semiring $[0,\infty]$.
We have already shown in \autoref{exa:R} that $[0,\infty]$ is a solid $\CatCu$-semiring.
It is also known that $[0,\infty]$ is the Cuntz semigroup of a \ca{}, called the Jacelon-Razak algebra $\mathcal{R}$;
see \autoref{rmk:Razac}.

In \autoref{prp:RModTFAE}, we characterize the $\CatCu$-semimodules over $[0,\infty]$ as the $\CatCu$-semigroups that are unperforated, divisible and that contain only soft elements.
We observe in \autoref{rmk:realMult} that a $\CatCu$-semigroup has $[0,\infty]$-multiplication if and only if it has `real multiplication' in the sense of Robert, \cite{Rob13Cone}.
Given a $\CatCu$-semigroup $S$, Robert defines a `realification' $S_R$, which is a $\CatCu$-semigroup with real multiplication satisfying a natural universal property.
In \cite[Remark~3.1.5]{Rob13Cone}, Robert suggests that the realification of a $\CatCu$-semigroup can be considered as the tensor product of $S$ with $[0,\infty]$.
We verify this in \autoref{prp:RModRealification}.

\begin{rmk}
\label{rmk:Razac}
\index{terms}{Cuntz semigroup!of Jacelon-Razak algebra}
\index{symbols}{R@$\mathcal{R}$ \quad (Jacelon-Razak algebra)}
The $\CatCu$-semiring $[0,\infty]$ is the Cuntz semigroup of the stably projectionless \ca\ known as the Jacelon-Razak algebra.
This algebra has been studied in \cite{Jac13Projectionless}, where it is denoted by $\mathcal{W}$.
Following Robert, we denote the Jacelon-Razak algebra by $\mathcal{R}$;
see \cite{Rob13Cone}.

Using the result in \cite{Jac13Projectionless}, the Cuntz semigroup of $\mathcal{R}$ was computed by Robert, \cite[\S~5]{Rob13Cone} as
\[
\Cu(\mathcal{R}) \cong [0,\infty].
\]
\end{rmk}

\begin{rmk}
\label{rmk:realMult}
\index{terms}{real multiplication}
Let $S$ be a $\CatCu$-semigroup.
In \cite[Definition~3.1.2]{Rob13Cone}, Robert defines $S$ to have \emph{real multiplication} if there exists a map
\[
(0,\infty]\times S\to S,\quad
(t,a)\mapsto t\cdot a,\quad
\txtFA t\in(0,\infty], a\in S,
\]
that preserves addition, order and suprema of increasing sequences in each variable, and such that $1\cdot a=a$ for every $a\in S$.
It is clear that such a map extends uniquely to a generalized $\CatCu$-bi\-morphism
\[
\varphi\colon[0,\infty]\times S\to S,
\]
satisfying $\varphi(1,a)=a$ for each $a\in S$.
As observed in \autoref{exa:R}, the semiring $[0,\infty]$ is a solid $\CatCu$-semiring.
Thus, we may apply \autoref{prp:constructingSolidMod} to deduce that $S$ has a $[0,\infty]$-multiplication in the sense of \autoref{dfn:CuSemimod}.

To summarize, a $\CatCu$-semigroup has real multiplication in the sense of Robert if and only if it is a $\CatCu$-semimodule over the solid $\CatCu$-semiring $[0,\infty]$.
\end{rmk}

\begin{lma}
\label{prp:RModViaSoft}
Let $S$ be a $\CatCu$-semigroup with $Z$-multiplication.
Then the map
\[
\varphi\colon[0,\infty]\times S \to S,\quad
(t,a)\mapsto t\cdot a,\quad
\txtFA t\in [0,\infty]\subset Z, a\in S,
\]
is a $\CatCu$-bimorphism that induces an isomorphism
\[
\bar{\varphi}\colon[0,\infty]\otimes_{\CatCu}S \xrightarrow{\cong} S_{\soft}.
\]
\end{lma}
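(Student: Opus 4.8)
The plan is to construct an explicit inverse to the induced morphism $\bar\varphi$. First I would record the easy structural facts. Since $S$ has $Z$-multiplication, the product $Z\times S\to S$ is a $\CatCu$-bimorphism, and $[0,\infty]$ sits inside $Z$ as the zero element together with the soft part $(0,\infty]$, with the inclusion $[0,\infty]\hookrightarrow Z$ a $\CatCu$-morphism. Hence $\varphi$ is a $\CatCu$-morphism in each variable, so by \autoref{prp:bimorCu} it is a $\CatCu$-bimorphism, and by \autoref{prp:tensProdCu} it induces a $\CatCu$-morphism $\bar\varphi\colon[0,\infty]\otimes_\CatCu S\to S$ with $\bar\varphi(t\otimes a)=ta$. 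For $t\in[0,\infty]\subset Z$ the element $t$ is soft or zero, so $1'(ta)=(1't)a=ta$ and thus $ta\in S_\soft$ by \autoref{prp:softZmult}; since $S_\soft$ is a submonoid closed under suprema of increasing sequences (\autoref{prp:softAbsorbing}) and $\bar\varphi$ is a $\CatCu$-morphism, its image lies in $S_\soft$, which is itself a $\CatCu$-semigroup by \autoref{prp:softInCuZmult}. I therefore regard $\bar\varphi$ as a map into $S_\soft$.

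Next I would define $\psi\colon S_\soft\to[0,\infty]\otimes_\CatCu S$ by $\psi(a)=1'\otimes a$, where $1'$ is the unit of $[0,\infty]$ (the soft one of $Z$); this is order-preserving. For $a\in S_\soft$ we have $\bar\varphi(\psi(a))=1'a=a$ by \autoref{prp:softZmult}, so $\bar\varphi\circ\psi=\mathrm{id}_{S_\soft}$, and in particular $\bar\varphi$ is surjective onto $S_\soft$. The whole content of the lemma is then the identity $\psi\circ\bar\varphi=\mathrm{id}$, that is, $t\otimes a = 1'\otimes ta$ in $[0,\infty]\otimes_\CatCu S$ for all $t\in[0,\infty]$ and $a\in S$. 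Once this is proved, $\bar\varphi$ and $\psi$ are mutually inverse order-preserving bijections, hence $\bar\varphi$ is a $\CatPom$-isomorphism between $\CatCu$-semigroups and therefore a $\CatCu$-isomorphism, as noted in the proof of \autoref{prp:constructingSolidMod}; agreement of $\psi\circ\bar\varphi$ with the identity on simple tensors suffices by the universal property of \autoref{prp:tensProdCu}.

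Establishing the displayed identity is the hard part. The key preliminary is a scalar-sliding relation coming from solidity of $[0,\infty]$ (\autoref{exa:R}): for fixed $c\in S$ the map $(p,q)\mapsto p\otimes qc$ is a generalized $\CatCu$-bimorphism $[0,\infty]\times[0,\infty]\to[0,\infty]\otimes_\CatCu S$, inducing $\bar\sigma_c$ with $\bar\sigma_c(p\otimes q)=p\otimes qc$. Applying $\bar\sigma_c$ to the solidity identity $p\otimes q=1'\otimes pq$ (valid by \autoref{prp:solidTFAE}) and specialising $q=1'$ yields the relation
\[
p\otimes 1'c = 1'\otimes pc,\qquad p\in[0,\infty],\ c\in S.
\]
In particular $1'\otimes ta=t\otimes 1'a$, and since $1'a\le a$ the inequality $t\otimes 1'a\le t\otimes a$ is free. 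For the reverse inequality I would use softness of $t$ to absorb the passage from $a$ to $1'a$. Writing $t=\sup_n t_n$ and $a=\sup_n a_n$ as rapidly increasing suprema and using that $\otimes$ preserves joint suprema (\autoref{dfn:bimorCu}), any $x\ll t\otimes a$ satisfies $x\le t_N\otimes a_N$ for some $N$ with $t_N$ finite, $t_N<t$ and $a_N\ll a$. For every $k$ one has $a_N\le\tfrac{k+1}{k}a_N$ in $S$, because $1\le\tfrac{k+1}{k}$ in $Z$ (the compact one lies below the soft $\tfrac{k+1}{k}$, by \autoref{pgr:Z}); moreover $\tfrac{k+1}{k}a_N$ is soft, so the relation above gives
\[
t_N\otimes a_N \le t_N\otimes\tfrac{k+1}{k}a_N = t_N\otimes 1'\bigl(\tfrac{k+1}{k}a_N\bigr) = 1'\otimes\tfrac{k+1}{k}(t_N a_N).
\]
Choosing $k$ so large that $\tfrac{k+1}{k}t_N\le t$ gives $\tfrac{k+1}{k}(t_N a_N)=(\tfrac{k+1}{k}t_N)a_N\le ta$, whence $x\le 1'\otimes ta$. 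As this holds for all $x\ll t\otimes a$, axiom \axiomO{2} yields $t\otimes a\le 1'\otimes ta=t\otimes 1'a$, completing the identity.

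The main obstacle is precisely this last inequality: every manipulation available through solidity reproduces the factor $1'$ on the $S$-variable, so one genuinely has to exploit divisibility and softness of $t$ through the $\tfrac{k+1}{k}$-approximation in order to trade the difference between $a$ and $1'a$ against the strict inequality $t_N<t$. I expect the only other point requiring care to be the bookkeeping of the auxiliary relation in the completed tensor product, namely justifying the representation $t\otimes a=\sup_n t_n\otimes a_n$ and the reduction of the verification to a single way-below element $x\ll t\otimes a$.
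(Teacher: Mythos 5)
Your proof is correct, and its skeleton (the inverse $\psi(a)=1'\otimes a$, the easy composite $\bar{\varphi}\circ\psi=\id_{S_\soft}$, and the reduction of everything to the identity $t\otimes a=1'\otimes(ta)$ on simple tensors) coincides with the paper's. Where you diverge is in the proof of that identity. The paper disposes of it in one line: since both $[0,\infty]$ and $S$ carry $Z$-multiplications, \autoref{prp:moduleTensorProd} equips $[0,\infty]\otimes_{\CatCu}S$ with two $Z$-multiplications (one acting on each tensor factor), and \autoref{prp:solidModuleUnique} forces them to agree because $Z$ is solid; applying the common action of the scalar $t\in[0,\infty]\subset Z$ to $1'\otimes a$ gives $(t\cdot 1')\otimes a=1'\otimes(ta)$, i.e.\ $t\otimes a=1'\otimes(ta)$ at once. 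You instead slide only $[0,\infty]$-scalars using solidity of $[0,\infty]$, which yields the weaker relation $p\otimes 1'c=1'\otimes pc$ (with the factor $1'$ stuck on the $S$-variable, exactly as you observe), and you then repair the gap between $a$ and $1'a$ by the $\tfrac{k+1}{k}$-approximation exploiting softness and the strict inequality $t_N<t$. This extra work is genuinely needed on your route and you carry it out correctly (including the reduction to $x\ll t\otimes a$ via joint suprema and the choice of $k$ with $\tfrac{k+1}{k}t_N\leq t$), but it is entirely avoidable: the uniqueness of $Z$-module structures over the solid semiring $Z$ already encodes the scalar-sliding for the compact unit $1_Z$ as well as the soft scalars, which is precisely what your $[0,\infty]$-only sliding lacks. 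Your argument does have the modest virtue of relying only on solidity of $[0,\infty]$ rather than on the (harder) solidity of $Z$, but within this paper both facts are available well before this lemma, so the shorter route is preferable.
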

\begin{proof}
The map $\varphi$ is a restriction of the $\CatCu$-bimorphism defining the $Z$-multiplication.
Therefore, $\varphi$ is a $\CatCu$-bimorphism.
By \autoref{prp:softInCuZmult}, the submonoid $S_\soft$ of soft elements in $S$ is a $\CatCu$-semigroup.
By \autoref{prp:softZmult}, an element $a$ in $S$ is soft if and only if $1'a=a$, where $1'$ denotes the `one' in the submonoid $[0,\infty]=Z_\soft\subset Z$ of soft elements in $Z$.
Therefore, $\bar{\varphi}$ maps into $S_\soft$.
We define a map, which will turn out to be the inverse of $\bar{\varphi}$, as follows:
\[
\psi\colon S_\soft \to [0,\infty]\otimes_{\CatCu} S,\quad
a\mapsto 1'\otimes a,\quad
\txtFA a\in S_\soft.
\]
It is easy to see that $\psi$ is a generalized $\CatCu$-morphism and that $\bar{\varphi}\circ\psi$ is the identity on $S_\soft$.
Let $t$ be in $[0,\infty]$ and let $a$ be in $S$.
Using that $[0,\infty]$ and $S$ have $Z$-multiplication, it follows from \autoref{prp:moduleTensorProd} (and also \autoref{prp:solidModuleUnique}) that $1'\otimes (t\cdot a)=t\otimes a$ in $[0,\infty]\otimes_{\CatCu}S$.
Using this at the third step, we deduce
\[
\psi\circ\varphi(t,a)
= \psi(t\cdot a)
= 1'\otimes (t\cdot a)
= t\otimes a.
\]
This implies that $\psi\circ\bar{\varphi}$ is the identity on $[0,\infty]\otimes_{\CatCu}S$, and hence $\bar{\varphi}$ is an isomorphism, as desired.
\end{proof}

\begin{thm}
\label{prp:RModTFAE}
Let $S$ be a $\CatCu$-semigroup.
Then the following are equivalent:
\beginEnumStatements
\item
We have $S\cong [0,\infty]\otimes_{\CatCu} S$.
\item
The semigroup $S$ has $[0,\infty]$-multiplication.
\item
The semigroup $S$ is almost unperforated and almost divisible, and every element of $S$ is soft.
\item
The semigroup $S$ is unperforated and divisible, and every element of $S$ is soft.
\end{enumerate}
\end{thm}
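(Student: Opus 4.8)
The plan is to establish \enumStatement{1}$\Leftrightarrow$\enumStatement{2} on its own, and then to close the cycle \enumStatement{2}$\Rightarrow$\enumStatement{4}$\Rightarrow$\enumStatement{3}$\Rightarrow$\enumStatement{2}. The equivalence of \enumStatement{1} and \enumStatement{2} is immediate: since $[0,\infty]$ is a solid $\CatCu$-semiring by \autoref{exa:R}, one applies \autoref{prp:solidModuleTFAE} with $R=[0,\infty]$, which says precisely that $S$ has an $[0,\infty]$-multiplication if and only if $[0,\infty]\otimes_{\CatCu}S\cong S$.

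For \enumStatement{2}$\Rightarrow$\enumStatement{4} I would argue directly with the multiplication $\varphi\colon[0,\infty]\times S\to S$, writing $t\cdot a:=\varphi(t,a)$ and using that $\varphi$ is a $\CatCu$-bimorphism (so biadditive, monotone, and continuous in each variable) satisfying $1\cdot a=a$. Divisibility follows by taking $x=\tfrac{1}{n}\cdot a$: biadditivity in the first variable gives $nx=(n\cdot\tfrac{1}{n})\cdot a=1\cdot a=a$. The same computation, now using additivity in the second variable, shows $\tfrac{1}{n}\cdot(na)=a$, whence $na\leq nb$ forces $a=\tfrac{1}{n}\cdot(na)\leq\tfrac{1}{n}\cdot(nb)=b$ by monotonicity; so $S$ is unperforated. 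For softness I would use that $1=\sup_k\tfrac{k}{k+1}$ in $[0,\infty]$ with $\tfrac{k}{k+1}\ll\tfrac{k+1}{k+2}$; continuity of $\varphi$ gives $a=\sup_k\tfrac{k}{k+1}\cdot a$, so any $a'\ll a$ satisfies $a'\leq\tfrac{k}{k+1}\cdot a$ for some $k$, and then $(k+1)a'\leq(k+1)\cdot(\tfrac{k}{k+1}\cdot a)=k a$, which is exactly the softness condition of \autoref{dfn:softElement} with $n=k$. The implication \enumStatement{4}$\Rightarrow$\enumStatement{3} is routine, as unperforation implies almost unperforation and divisibility implies almost divisibility.

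The substantive step is \enumStatement{3}$\Rightarrow$\enumStatement{2}, which I would route through the $Z$-multiplication. Assuming \enumStatement{3}, almost unperforation together with almost divisibility gives, by \autoref{prp:ZModTFAE}, that $S$ carries a $Z$-multiplication. Then \autoref{prp:RModViaSoft} produces a $\CatCu$-isomorphism $[0,\infty]\otimes_{\CatCu}S\cong S_\soft$. Since by hypothesis every element of $S$ is soft, $S_\soft=S$, so $[0,\infty]\otimes_{\CatCu}S\cong S$. Invoking \autoref{prp:solidModuleTFAE} in the direction \enumStatement{2}$\Rightarrow$\enumStatement{1} (the bare existence of such an isomorphism is all that is required) then yields that $S$ has $[0,\infty]$-multiplication, which is \enumStatement{2}.

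The main obstacle to keep in view is that the argument be genuinely non-circular and that the identifications be compatible: \autoref{prp:RModViaSoft} presupposes a $Z$-multiplication, so in \enumStatement{3}$\Rightarrow$\enumStatement{2} one must first deduce $Z$-multiplication from \enumStatement{3} before invoking it. One should also note that the unit $1$ of the $\CatCu$-semiring $[0,\infty]$ corresponds to the \emph{soft} one $1'$ of $Z$, not to the compact unit of $Z$; this is precisely why \autoref{prp:RModViaSoft} lands in $S_\soft$ rather than in all of $S$, and it is the hypothesis $S_\soft=S$ that collapses the distinction. Once these bookkeeping points are settled, the remaining verifications are routine manipulations with the $\CatCu$-bimorphism axioms.
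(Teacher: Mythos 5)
Your proof is correct and follows essentially the same route as the paper: \enumStatement{1}$\Leftrightarrow$\enumStatement{2} via solidity of $[0,\infty]$ and \autoref{prp:solidModuleTFAE}, the direct computation with the $\CatCu$-bimorphism (using $\tfrac{1}{n}\cdot n=1$) for \enumStatement{2}$\Rightarrow$\enumStatement{4}, and the passage through $Z$-multiplication via \autoref{prp:ZModTFAE} and \autoref{prp:RModViaSoft} together with $S_\soft=S$ for \enumStatement{3}$\Rightarrow$\enumStatement{2}. The only difference is that you spell out the softness verification in \enumStatement{2}$\Rightarrow$\enumStatement{4} (via $1=\sup_k\tfrac{k}{k+1}$), a detail the paper leaves implicit.
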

\begin{proof}
Since $[0,\infty]$ is a solid $\CatCu$-semiring, the equivalence between \enumStatement{1} and \enumStatement{2} follows from \autoref{prp:solidModuleTFAE}.
It is clear that \enumStatement{4} implies \enumStatement{3}.
Let us show that \enumStatement{2} implies \enumStatement{4}.
To show that $S$ is unperforated, let $a,b\in S$ be such that $na\leq nb$ for some $n\in\N_+$.
Since in $[0,\infty]$ we have $1=\tfrac{1}{n}n$, we obtain
\[
a =
(\tfrac{1}{n}n)\cdot a
= \tfrac{1}{n}\cdot (na)
\leq \tfrac{1}{n}\cdot (nb)
= b.
\]
It is also clear that $S$ is divisible.

Next, let us show that \enumStatement{3} implies \enumStatement{1}.
By \autoref{prp:ZModTFAE}, we have that $S$ has $Z$-multiplication.
Using \autoref{prp:RModViaSoft} to obtain the first isomorphism, and using the assumption for the second equality, we obtain
\[
[0,\infty]\otimes_{\CatCu} S\cong S_\soft = S,
\]
as desired.
\end{proof}

\begin{lma}
\label{prp:RModViaRq}
Let $S$ be a $\CatCu$-semigroup.
Then there is a natural isomorphism
\[
[0,\infty]\otimes_{\CatCu}S \cong (R_{2^\infty}\otimes_{\CatCu} S)_{\soft}.
\]
\end{lma}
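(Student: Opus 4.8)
The plan is to set $T:=R_{2^\infty}\otimes_{\CatCu}S$ and to identify its soft part $T_\soft$ by passing through the functor $[0,\infty]\otimes_{\CatCu}(-)$, exploiting that $[0,\infty]$ is a more ``regular'' $\CatCu$-semiring than $R_{2^\infty}$ and hence absorbs it. First I would record that $R_{2^\infty}$ has $Z$-multiplication: by \autoref{prp:RqModUnpDiv} the semiring $R_{2^\infty}$ is itself $2^\infty$-unperforated and $2^\infty$-divisible, so by \autoref{prp:qUnpNearUnp} it is almost unperforated and almost divisible, whence $Z$-multiplication follows from \autoref{prp:ZModTFAE}. Consequently \autoref{prp:moduleTensorProd} gives that $T=R_{2^\infty}\otimes_{\CatCu}S$ also has $Z$-multiplication.

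The core of the argument is then the following chain of natural isomorphisms. Applying \autoref{prp:RModViaSoft} to the $Z$-semimodule $T$ yields $(R_{2^\infty}\otimes_{\CatCu}S)_\soft = T_\soft \cong [0,\infty]\otimes_{\CatCu}T = [0,\infty]\otimes_{\CatCu}(R_{2^\infty}\otimes_{\CatCu}S)$. Using associativity (\autoref{prp:tensProdAssoc}) and symmetry (\autoref{pgr:monoidalCu}) of the tensor product in $\CatCu$, I would rewrite the right-hand side as $R_{2^\infty}\otimes_{\CatCu}([0,\infty]\otimes_{\CatCu}S)$. Finally, the semigroup $[0,\infty]\otimes_{\CatCu}S$ carries a $[0,\infty]$-multiplication by \autoref{prp:moduleTensorProd}, hence is unperforated and divisible by \autoref{prp:RModTFAE}, and in particular $2^\infty$-unperforated and $2^\infty$-divisible; thus it has $R_{2^\infty}$-multiplication by \autoref{prp:RqModTFAE}. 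Since $R_{2^\infty}$ is solid (\autoref{prp:RqSolid}), \autoref{prp:solidModuleTFAE} then gives $R_{2^\infty}\otimes_{\CatCu}([0,\infty]\otimes_{\CatCu}S)\cong[0,\infty]\otimes_{\CatCu}S$. Concatenating these identifications proves the lemma.

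The argument is essentially an exercise in moving regularity properties across tensor products, so there is no single hard computation; the main point to get right is the bookkeeping of \emph{which} semigroup plays the distinguished role at each step. In particular, one must verify the hypotheses of \autoref{prp:RModViaSoft} for $T$ rather than for $S$ — this is exactly where the preliminary observation that $R_{2^\infty}$, and hence $T$, has $Z$-multiplication is needed — and one must notice the asymmetry that drives the absorption: after tensoring with $[0,\infty]$ the resulting semigroup is already fully divisible and unperforated, so the subsequent $R_{2^\infty}$-factor is redundant, whereas tensoring $R_{2^\infty}$ with $[0,\infty]$ and hoping for $R_{2^\infty}$ back (via \autoref{prp:solidMorImpliesAbsorb}) is \emph{not} the route, since there is no unital generalized $\CatCu$-morphism $[0,\infty]\to R_{2^\infty}$. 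Finally I would remark that every isomorphism used — those of \autoref{prp:RModViaSoft}, of associativity and symmetry, and of \autoref{prp:solidModuleTFAE} — is natural in $S$, so the composite isomorphism is natural, as claimed.
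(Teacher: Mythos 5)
Your proof is correct and takes essentially the same route as the paper: both first establish that $R_{2^\infty}\otimes_{\CatCu}S$ has $Z$-multiplication via \autoref{prp:moduleTensorProd} and then apply \autoref{prp:RModViaSoft} to that tensor product, together with associativity and symmetry. The only (harmless) divergence is in absorbing the leftover $R_{2^\infty}$-factor: the paper applies \autoref{prp:RModViaSoft} a second time to get $[0,\infty]\otimes_{\CatCu}R_{2^\infty}\cong(R_{2^\infty})_{\soft}\cong[0,\infty]$ and then tensors with $S$, whereas you absorb $R_{2^\infty}$ into $[0,\infty]\otimes_{\CatCu}S$ via \autoref{prp:RModTFAE}, \autoref{prp:RqModTFAE} and solidity of $R_{2^\infty}$; both justifications are valid, the paper's being slightly shorter.
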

\begin{proof}
Since $R_{2^\infty}$ has $Z$-multiplication, it follows from \autoref{prp:moduleTensorProd} that $R_{2^\infty}\otimes_{\CatCu} S$ has $Z$-multiplication.
Then, using \autoref{prp:RModViaSoft} to obtain the last isomorphism, and using that $[0,\infty] \cong [0,\infty]\otimes_{\CatCu}R_{2^\infty}$ at the first step (which follows from a second usage of \autoref{prp:RModViaSoft}), we obtain
\begin{align*}
[0,\infty]\otimes_{\CatCu}S
&\cong \left([0,\infty]\otimes_{\CatCu}R_{2^\infty}\right)\otimes_{\CatCu} S \\
&\cong [0,\infty]\otimes_{\CatCu}\left( R_{2^\infty}\otimes_{\CatCu} S\right)
\cong (R_{2^\infty}\otimes_{\CatCu} S)_{\soft},
\end{align*}
as desired.
\end{proof}

\begin{prp}
\label{prp:axiomTensProdR}
Let $S$ be a $\CatCu$-semigroup.
If $S$ satisfies \axiomO{5} (respectively \axiomO{6}, weak cancellation), then so does $[0,\infty]\otimes_{\CatCu}S$.
\end{prp}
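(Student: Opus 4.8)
The plan is to reduce the assertion to two results already at our disposal: the identification of the realification with a soft part, and the fact that the axioms pass to soft parts of $\CatCu$-semigroups carrying a $Z$-multiplication. The crucial starting point is the natural isomorphism of \autoref{prp:RModViaRq},
\[
[0,\infty]\otimes_{\CatCu}S \cong (R_{2^\infty}\otimes_{\CatCu} S)_{\soft},
\]
so that it suffices to prove that the right-hand side satisfies \axiomO{5} (respectively \axiomO{6}, weak cancellation) whenever $S$ does.

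First I would record that $R_{2^\infty}\otimes_{\CatCu}S$ carries a $Z$-multiplication. Indeed, $R_{2^\infty}=R_q$ for $q=2^\infty$, which satisfies $q=q^2$ and $q\neq 1$, and by \autoref{prp:RqModUnpDiv} it is $q$-divisible and $q$-unperforated; by \autoref{prp:qUnpNearUnp} it is then almost divisible and almost unperforated, so by \autoref{prp:ZModTFAE} it has $Z$-multiplication. Applying \autoref{prp:moduleTensorProd} with the $\CatCu$-semiring $Z$ (in the role of $R$), $S'=R_{2^\infty}$ and $T=S$, it follows that $R_{2^\infty}\otimes_{\CatCu}S$ also has a $Z$-multiplication.

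Second I would invoke \autoref{prp:axiomTensProdRq} with $q=2^\infty$: if $S$ satisfies \axiomO{5} (respectively \axiomO{6}, weak cancellation), then so does $S\otimes_{\CatCu}R_{2^\infty}$, and hence also $R_{2^\infty}\otimes_{\CatCu}S$ by symmetry of the tensor product (see \autoref{pgr:monoidalCu}). Thus $R_{2^\infty}\otimes_{\CatCu}S$ is a $\CatCu$-semigroup with $Z$-multiplication satisfying the axiom under consideration, and \autoref{prp:softInCuZmult} applies to show that its subsemigroup of soft elements $(R_{2^\infty}\otimes_{\CatCu}S)_{\soft}$ is a $\CatCu$-semigroup satisfying that same axiom. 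Combined with the isomorphism of \autoref{prp:RModViaRq}, this yields the claim, and the argument for each of \axiomO{5}, \axiomO{6} and weak cancellation is identical.

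I do not anticipate a genuine obstacle: the proof is purely an orchestration of \autoref{prp:RModViaRq}, \autoref{prp:axiomTensProdRq} and \autoref{prp:softInCuZmult}. The only point deserving explicit verification is that $R_{2^\infty}\otimes_{\CatCu}S$ really does possess a $Z$-multiplication, since this is the hypothesis needed for \autoref{prp:softInCuZmult}; this is exactly what the first step above secures.
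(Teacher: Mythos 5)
Your proof is correct and follows essentially the same route as the paper: reduce via \autoref{prp:RModViaRq} to the soft part of $R_{2^\infty}\otimes_{\CatCu}S$, apply \autoref{prp:axiomTensProdRq} to get the axiom for that tensor product, and then \autoref{prp:softInCuZmult} for its soft part. The only difference is that you explicitly justify the $Z$-multiplication on $R_{2^\infty}\otimes_{\CatCu}S$ (via \autoref{prp:ZModTFAE} and \autoref{prp:moduleTensorProd}), a detail the paper's proof asserts without elaboration; your verification of it is correct.
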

\begin{proof}
Assume that $S$ is a $\CatCu$-semigroup satisfying \axiomO{5}.
By \autoref{prp:axiomTensProdRq}, the tensor product $R_{2^\infty}\otimes_{\CatCu} S$ satisfies \axiomO{5}.
Since $R_{2^\infty}\otimes_{\CatCu} S$ has $Z$-multiplication, it follows from \autoref{prp:softInCuZmult} that the subsemigroup of soft elements in the tensor product $R_{2^\infty}\otimes_{\CatCu} S$ is a $\CatCu$-semigroup satisfying \axiomO{5}.
Now the desired result follows from \autoref{prp:RModViaRq}.
It is proved analogously that axiom \axiomO{6} and weak cancellation pass from $S$ to $[0,\infty]\otimes_{\CatCu}S$.
\end{proof}

\begin{pgr}
\label{pgr:realification}
\index{terms}{realification}
\index{symbols}{S$_R$@$S_R$ \quad (realification)}
Let $S$ be a $\CatCu$-semigroup.
Recall from \autoref{pgr:fctl}, that there is a natural map
\[
S \to \Lsc(F(S)),\quad
a\mapsto\hat{a},\quad
\txtFA a\in S,
\]
where $F(S)$ is the cone of functionals on $S$.

In \cite[\S~3.1]{Rob13Cone}, Robert defines the \emph{realification} of $S$ as the smallest subsemigroup of $\Lsc(F(S))$ that is closed under passing to suprema of increasing sequences, and which contains all elements of the form $\tfrac{1}{n}\hat{a}$ for some $n\in\N_+$ and some $a\in S$.
We denote the realification of $S$ by $S_R$.
In \cite[Proposition~3.1.1]{Rob13Cone}, it is shown that $S_R$ is a $\CatCu$-semigroup.
Moreover, if $S$ satisfies \axiomO{5'}, the original version of the almost algebraic order axiom (see \autoref{rmk:addAxioms}(1)), then so does $S_R$.
In \autoref{prp:RModRealification}, we show that $S_R$ is naturally isomorphic to $[0,\infty]\otimes_{\CatCu}S$.
Then it follows from \autoref{prp:axiomTensProdR} that $S_R$ satisfies \axiomO{5} whenever $S$ does.
\end{pgr}

\begin{lma}
\label{prp:fctlRMod}
Let $S$ be a $\CatCu$-semigroup.
Consider the map $\vartheta\colon S\to [0,\infty]\otimes_{\CatCu} S$ that sends $a$ in $S$ to $1\otimes a$.
Then, given a functional $\lambda\in F([0,\infty]\otimes_{\CatCu} S)$, the composition $\lambda\circ\vartheta$ is a functional in $F(S)$.
Moreover, the assignment
\[
\theta\colon F([0,\infty]\otimes_{\CatCu}S)\to F(S),\quad
\lambda\mapsto\lambda\circ\vartheta,
\]
is an isomorphism of topological cones.
\end{lma}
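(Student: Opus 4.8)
The plan is to exhibit an explicit inverse to $\theta$ and to reduce both composites to identity maps using the solidity of $[0,\infty]$, after which the topological claim follows formally. First I would check that $\theta$ is well defined. The map $\vartheta$ is $\varphi(1,\freeVar)$ for the universal $\CatCu$-bimorphism $\varphi\colon[0,\infty]\times S\to[0,\infty]\otimes_\CatCu S$; since $\varphi$ is a $\CatCu$-bimorphism, \autoref{prp:bimorCu} shows it is a generalized $\CatCu$-morphism in each variable, so $\vartheta$ is a generalized $\CatCu$-morphism. Composing the functional $\lambda$ (a generalized $\CatCu$-morphism to $[0,\infty]$) with $\vartheta$ yields again a generalized $\CatCu$-morphism $S\to[0,\infty]$, that is, a functional; hence $\theta$ takes values in $F(S)$.

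Next I would build the candidate inverse. Applying the construction of \autoref{pgr:tensFctl} to the identity functional $\id\in F([0,\infty])$ and an arbitrary $\lambda'\in F(S)$, one obtains a functional $\Theta(\lambda')$ on $[0,\infty]\otimes_\CatCu S$ determined on simple tensors by $\Theta(\lambda')(t\otimes a)=t\,\lambda'(a)$. The composite $\theta\circ\Theta$ is immediate, since $\theta(\Theta(\lambda'))(a)=\Theta(\lambda')(1\otimes a)=\lambda'(a)$, so $\theta\circ\Theta=\id_{F(S)}$. The reverse identity $\Theta\circ\theta=\id$ is the heart of the matter and amounts to the scalar-pullout formula $\lambda(t\otimes a)=t\,\lambda(1\otimes a)$. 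Here solidity enters: by \autoref{prp:moduleTensorProd} (taking $R=[0,\infty]$ acting on the first factor) the semigroup $[0,\infty]\otimes_\CatCu S$ carries an $[0,\infty]$-multiplication with $t\cdot(s\otimes a)=(ts)\otimes a$, so in particular $t\cdot(1\otimes a)=t\otimes a$; and $[0,\infty]$ is a semimodule over itself. Since $[0,\infty]$ is a solid $\CatCu$-semiring by \autoref{exa:R}, \autoref{prp:solidTFAE} forces the generalized $\CatCu$-morphism $\lambda$ to be $[0,\infty]$-linear, whence $\lambda(t\otimes a)=\lambda(t\cdot(1\otimes a))=t\,\lambda(1\otimes a)=\Theta(\theta(\lambda))(t\otimes a)$. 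Thus $\lambda$ and $\Theta(\theta(\lambda))$ agree on simple tensors, and since both are additive and preserve suprema of increasing sequences while every element of $[0,\infty]\otimes_\CatCu S$ is a supremum of an increasing sequence of finite sums of simple tensors, they coincide (alternatively one invokes the uniqueness clause of \autoref{prp:tensProdCu}). Hence $\theta$ is a bijection.

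For the cone structure, the operations on $F(\freeVar)$ are pointwise, so precomposition with the fixed map $\vartheta$ makes $\theta$ additive, order-preserving and compatible with the scalar action $[0,\infty]\times F(\freeVar)\to F(\freeVar)$ from \autoref{pgr:fctl}; the inverse $\Theta$ respects the same structure on simple tensors and hence everywhere, so $\theta$ is an isomorphism of ordered cones. For the topology, recall from \autoref{pgr:fctl} and \cite[Theorem~4.8]{EllRobSan11Cone} that both $F(S)$ and $F([0,\infty]\otimes_\CatCu S)$ are compact Hausdorff for the topology of pointwise evaluation. For each $a\in S$ the assignment $\lambda\mapsto\theta(\lambda)(a)=\lambda(1\otimes a)$ is evaluation at the fixed point $1\otimes a$ and is therefore continuous, so $\theta$ is continuous; a continuous bijection from a compact space to a Hausdorff space is automatically a homeomorphism, completing the proof.

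The main obstacle I expect is precisely the reverse identity $\Theta\circ\theta=\id$, namely justifying $\lambda(t\otimes a)=t\,\lambda(1\otimes a)$: the entire argument rests on recognizing $\lambda$ as a generalized $\CatCu$-morphism between $[0,\infty]$-semimodules and extracting its $[0,\infty]$-linearity from solidity. Once that linearity is available, the agreement on all of the tensor product, the cone-isomorphism properties, and the upgrade from continuous bijection to homeomorphism are all routine or formal.
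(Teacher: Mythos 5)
Your proof is correct and follows essentially the same route as the paper: you construct the same inverse $\mu\mapsto\bigl((t,a)\mapsto t\mu(a)\bigr)$ via the universal property of the tensor product, and the key step — extracting $\lambda(t\otimes a)=t\,\lambda(1\otimes a)$ from the automatic $[0,\infty]$-linearity of generalized $\CatCu$-morphisms between semimodules over the solid semiring $[0,\infty]$ (\autoref{prp:solidTFAE}) — is exactly the paper's argument. The only imprecision is in the topological step, which the paper also leaves as ``straightforward'': in the topology of \cite[Theorem~4.8]{EllRobSan11Cone} the evaluation maps $\lambda\mapsto\lambda(x)$ are only lower semicontinuous rather than continuous, so continuity of $\theta$ requires a short extra argument (using $t\otimes a'\ll 1\otimes a$ for $t<1$ and $a'\ll a$, together with linearity), after which your compact-to-Hausdorff bijection trick does finish the proof.
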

\begin{proof}
We first define a map that will turn out to be the inverse of $\theta$.
Given a functional $\mu$ in $F(S)$, consider the map
\[
[0,\infty]\times S \to [0,\infty],\quad
(t,a)\mapsto t\cdot\mu(a)\quad
\txtFA t\in [0,\infty], a\in S.
\]
It is straightforward to check that this is a generalized $\CatCu$-bimorphism, which therefore induces a generalized $\CatCu$-morphism
\[
\tilde{\mu}\colon [0,\infty]\otimes_{\CatCu}S \to [0,\infty].
\]
This means that $\tilde{\mu}$ is a functional in $F([0,\infty]\otimes_{\CatCu}S)$ such that $\tilde{\mu}(t\otimes a)=t\cdot\mu(a)$ for each $t\in[0,\infty]$ and $a\in S$.
This defines a map
\[
\psi\colon F(S)\to F([0,\infty]\otimes_{\CatCu}S),\quad
\mu\mapsto\tilde{\mu},\quad
\txtFA \mu\in F(S).
\]
Given $\mu$ in $F(S)$ and $a\in S$, we deduce
\[
\theta\circ\psi(\mu)(a)
= \theta(\tilde{\mu})(a)
= \tilde{\mu}(1\otimes a)
= 1\cdot\mu(a)
= \mu(a).
\]
Thus, $\theta\circ\psi$ is the identity on $F(S)$.
Conversely, let $\lambda$ be a functional in $F([0,\infty]\otimes_{\CatCu}S)$.
Since $[0,\infty]$ is solid, it follows from \autoref{prp:solidTFAE} that $\lambda$ is automatically $[0,\infty]$-linear.
Using this at the last step, we deduce for each $t\in[0,\infty]$ and $a\in S$ that
\[
\psi\circ\theta(\lambda)(t\otimes a)
= t\cdot\theta(\lambda)(a)
= t\lambda(1\otimes a)
= \lambda(t\otimes a).
\]
It follows that $\psi\circ\theta$ is the identity on $F([0,\infty]\otimes_{\CatCu}S)$.
It is straightforward to check that $\theta$ and $\psi$ are continuous and linear, which shows the desired result.
\end{proof}

\begin{prp}
\label{prp:RModRealification}
Let $S$ be a $\CatCu$-semigroup.
Then the natural map
\[
\varphi\colon[0,\infty]\times S \to S_R \subset \Lsc(F(S)),\quad
(t,a)\mapsto t\cdot\hat{a},\quad
\txtFA t\in[0,\infty], a\in S,
\]
is a $\CatCu$-bimorphism that induces an isomorphism
\[
\bar{\varphi}\colon[0,\infty]\otimes_{\CatCu}S \xrightarrow{\cong} S_R.
\]
\end{prp}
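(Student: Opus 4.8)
The plan is to reduce the statement to showing that the induced map $\bar\varphi$ is a bijective order-embedding, since any $\CatPom$-isomorphism between $\CatCu$-semigroups is automatically a $\CatCu$-isomorphism. First I would exhibit $\varphi$ as a composition $[0,\infty]\times S\xrightarrow{\id\times\eta}[0,\infty]\times S_R\xrightarrow{m}S_R$, where $\eta\colon S\to S_R$ is the natural map $a\mapsto\hat a$ (a generalized $\CatCu$-morphism by \autoref{pgr:fctl}, landing in $S_R$ since $\hat a=\sup_n\tfrac{n-1}{n}\hat a$) and $m$ is the $[0,\infty]$-multiplication that $S_R$ carries as the realification, a genuine $\CatCu$-bimorphism by \autoref{rmk:realMult}. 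That each $t\cdot\hat a$ actually lies in $S_R$ follows because the rational multiples $\tfrac kn\hat a$ are in $S_R$ by definition and closure under addition, and $t\cdot\hat a=\sup\{q\hat a:q\in\Q_+,\,q\le t\}$ lies in $S_R$ by closure under suprema. Thus $\varphi$ is a generalized $\CatCu$-bimorphism into $S_R$, and the universal property \autoref{prp:tensProdCu} yields a generalized $\CatCu$-morphism $\bar\varphi\colon[0,\infty]\otimes_{\CatCu}S\to S_R$ with $\bar\varphi(t\otimes a)=t\cdot\hat a$.

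Write $T:=[0,\infty]\otimes_{\CatCu}S$. By \autoref{prp:moduleTensorProd}, $T$ carries an $[0,\infty]$-multiplication, so by \autoref{prp:RModTFAE} it is unperforated, divisible, and all of its elements are soft. Being unperforated it is almost unperforated, so \autoref{prp:soft_comparison} applies to every element: for $x,y\in T$ one has $x\le y$ if and only if $\hat x\le\hat y$ in $\Lsc(F(T))$, whence the natural map $\Psi\colon T\to\Lsc(F(T))$, $x\mapsto\hat x$, is an order-embedding. Next I would invoke \autoref{prp:fctlRMod}, which gives a cone isomorphism $\psi\colon F(S)\to F(T)$ (the inverse of $\theta$) satisfying $\psi(\nu)(t\otimes a)=t\cdot\nu(a)$; precomposition $\psi^{\ast}\colon\Lsc(F(T))\to\Lsc(F(S))$, $f\mapsto f\circ\psi$, is then a $\CatPom$-isomorphism. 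Evaluating on simple tensors gives $(\psi^{\ast}\circ\Psi)(t\otimes a)(\nu)=\psi(\nu)(t\otimes a)=t\cdot\nu(a)=(t\cdot\hat a)(\nu)$, so $\psi^{\ast}\circ\Psi$ and $\bar\varphi$ agree on simple tensors; since both are generalized $\CatCu$-morphisms, uniqueness in \autoref{prp:tensProdCu} forces $\bar\varphi=\psi^{\ast}\circ\Psi$, and in particular $\bar\varphi$ is an order-embedding.

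It remains to check surjectivity onto $S_R$. The image $\bar\varphi(T)$ is a submonoid of $\Lsc(F(S))$ contained in $S_R$ (by the factorization above together with closure of $S_R$ under sums and suprema), and it is closed under suprema of increasing sequences: if $\bar\varphi(x_k)$ increases, then $(x_k)$ increases by the order-embedding property and $\bar\varphi(\sup_k x_k)=\sup_k\bar\varphi(x_k)$. Since $\bar\varphi(T)$ contains every $\tfrac1n\hat a=\bar\varphi(\tfrac1n\otimes a)$, and $S_R$ is by definition the smallest subsemigroup of $\Lsc(F(S))$ with these closure properties containing the $\tfrac1n\hat a$ (see \autoref{pgr:realification}), we obtain $S_R\subseteq\bar\varphi(T)\subseteq S_R$. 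Hence $\bar\varphi$ is a surjective order-embedding, i.e.\ a $\CatPom$-isomorphism, and therefore a $\CatCu$-isomorphism. Finally, writing $\omega$ for the universal $\CatCu$-bimorphism $[0,\infty]\times S\to T$, the identity $\varphi=\bar\varphi\circ\omega$ exhibits $\varphi$ as the composition of a $\CatCu$-bimorphism with a $\CatCu$-isomorphism, hence itself a $\CatCu$-bimorphism, recovering the first assertion.

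The main obstacle is the identification of $\bar\varphi$ as an order-embedding, which depends on two inputs working in tandem: that the intrinsic order of $T$ is completely detected by its functionals (\autoref{prp:soft_comparison}, legitimate only because \autoref{prp:RModTFAE} tells us $T$ is unperforated with every element soft), and that these functionals are canonically those of $S$ (\autoref{prp:fctlRMod}). Checking that $\psi^{\ast}\circ\Psi$ genuinely coincides with $\bar\varphi$ rather than merely resembling it is where the bookkeeping with the cone identification must be carried out with care. The remaining points — that $\varphi$ lands in $S_R$ and that $\bar\varphi(T)$ matches the generators and closure conditions defining $S_R$ — are routine once the functional picture is in place.
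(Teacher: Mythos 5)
Your proposal is correct and follows essentially the same route as the paper's proof: both obtain $\bar{\varphi}$ from the universal property, identify it (up to the cone isomorphism of \autoref{prp:fctlRMod}) with the canonical map $x\mapsto\hat{x}$ on $[0,\infty]\otimes_{\CatCu}S$, and conclude it is an order-embedding via \autoref{prp:RModTFAE} and \autoref{prp:soft_comparison}, with surjectivity coming from the minimality in the definition of $S_R$. The only difference is cosmetic: you work with $\psi=\theta^{-1}$ and the identity $\bar{\varphi}=\psi^{\ast}\circ\Psi$, whereas the paper writes the equivalent identity $\Gamma=\theta^{\ast}\circ\bar{\varphi}$.
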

\begin{proof}
It is straightforward to check that $\varphi$ is a generalized $\CatCu$-bimorphism.
By universal properties of the tensor product, the induced map $\bar{\varphi}$ is a generalized $\CatCu$-morphism.
It follows easily from the definition of $S_R$ that $\bar{\varphi}$ is surjective.

Next, we show that $\bar{\varphi}$ is an order-embedding.
Consider the isomorphism $\theta\colon F([0,\infty]\otimes_{\CatCu}S)\to F(S)$ from \autoref{prp:fctlRMod}.
It induces an isomorphism of \pom{s}
\[
\theta^\ast \colon \Lsc(F(S)) \to \Lsc(F([0,\infty]\otimes_{\CatCu}S)),\quad
f\mapsto f\circ\theta.
\]
We let
\[
\Gamma\colon [0,\infty]\otimes_{\CatCu}S \to \Lsc(F([0,\infty]\otimes_{\CatCu}S)),
\]
denote the canonical map that sends $x\in [0,\infty]\otimes_{\CatCu}S$ to $\hat{x}$.
Then, for $t\in[0,\infty]$, $a\in S$ and $\lambda\in F([0,\infty]\otimes_{\CatCu}S)$, we have
\[
\Gamma(t\otimes a)(\lambda)
= \lambda(t\otimes a)
= t\cdot \theta(\lambda)(a)
= \varphi(t, a)(\theta(\lambda))
= (\theta^\ast\circ\bar{\varphi})(t\otimes a)(\lambda).
\]
This implies that $\Gamma=\theta^\ast\circ\bar{\varphi}$.
The situation is shown in the following commutative diagram:
\[
\xymatrix{
[0,\infty]\otimes_{\CatCu}S \ar[r]^{\bar{\varphi}} \ar[dr]^{\Gamma}
& \Lsc(F(S)) \ar[d]^{\theta^\ast}_{\cong} \\
& \Lsc(F([0,\infty]\otimes_{\CatCu}S))
}
\]
It follows from \autoref{prp:RModTFAE} that the $\CatCu$-semigroup $[0,\infty]\otimes_{\CatCu}S$ is almost unperforated and that each of its elements is soft.
Then \autoref{prp:soft_comparison} implies that the map $\Gamma$ is an order-embedding.
Since $\theta^\ast$ is an order isomorphism, it follows that $\bar{\varphi}$ is an order-embedding.
Thus, $\bar{\varphi}$ is an isomorphism of \pom{s}, and consequently a $\CatCu$-isomorphism.
\end{proof}

\begin{cor}
\label{prp:tensProdWithR}
Let $S$ be a $\CatCu$-semigroup.
Then there are canonical isomorphisms
\[
[0,\infty]\otimes_{\CatCu}S \cong S_R \cong (Z\otimes_{\CatCu} S)_{\soft}.
\]
In particular, if $S$ has $Z$-multiplication, then there are isomorphisms
\[
[0,\infty]\otimes_{\CatCu}S \cong S_R \cong S_{\soft}.
\]
\end{cor}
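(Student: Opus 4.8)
The plan is to assemble the statement from the lemmas already proved in this section, the only genuinely new ingredient being the computation $[0,\infty]\otimes_{\CatCu}Z\cong[0,\infty]$. The first isomorphism $[0,\infty]\otimes_{\CatCu}S\cong S_R$ is exactly \autoref{prp:RModRealification}, so nothing further is needed there.

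For the second isomorphism $[0,\infty]\otimes_{\CatCu}S\cong(Z\otimes_{\CatCu}S)_{\soft}$, I would argue exactly as in the proof of \autoref{prp:RModViaRq}, replacing $R_{2^\infty}$ by $Z$. Since $Z$ is a $\CatCu$-semiring (see \autoref{pgr:Z}), it is a semimodule over itself and hence has $Z$-multiplication; therefore by \autoref{prp:moduleTensorProd} the tensor product $Z\otimes_{\CatCu}S$ also has $Z$-multiplication. Applying \autoref{prp:RModViaSoft} to $Z\otimes_{\CatCu}S$ yields
\[
[0,\infty]\otimes_{\CatCu}(Z\otimes_{\CatCu}S)\cong(Z\otimes_{\CatCu}S)_{\soft}.
\]
It remains to identify the left-hand side with $[0,\infty]\otimes_{\CatCu}S$. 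For this I would first record that, since $Z$ has $Z$-multiplication, \autoref{prp:RModViaSoft} applied to $Z$ gives $[0,\infty]\otimes_{\CatCu}Z\cong Z_{\soft}$; and by the explicit description $Z=\N\sqcup(0,\infty]$ from \autoref{pgr:Z}, the soft part is $Z_{\soft}=\{0\}\cup(0,\infty]=[0,\infty]$. Combining this with associativity of the tensor product (\autoref{prp:tensProdAssoc}), we obtain
\[
[0,\infty]\otimes_{\CatCu}(Z\otimes_{\CatCu}S)\cong([0,\infty]\otimes_{\CatCu}Z)\otimes_{\CatCu}S\cong[0,\infty]\otimes_{\CatCu}S,
\]
and all isomorphisms are natural. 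Chaining the two displays produces $[0,\infty]\otimes_{\CatCu}S\cong(Z\otimes_{\CatCu}S)_{\soft}$, which together with the first isomorphism establishes the general statement.

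Finally, for the \emph{in particular} clause, suppose $S$ has $Z$-multiplication. Since $Z$ is a solid $\CatCu$-semiring (\autoref{prp:ZSolid}), \autoref{prp:solidModuleTFAE} gives $Z\otimes_{\CatCu}S\cong S$, whence $(Z\otimes_{\CatCu}S)_{\soft}\cong S_{\soft}$; feeding this into the general statement yields $[0,\infty]\otimes_{\CatCu}S\cong S_R\cong S_{\soft}$. (Alternatively, $[0,\infty]\otimes_{\CatCu}S\cong S_{\soft}$ follows directly from \autoref{prp:RModViaSoft}.)

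I do not anticipate a serious obstacle: the corollary is a bookkeeping exercise over the preceding lemmas. The one point demanding care is checking that the composite isomorphisms are genuinely \emph{canonical} rather than merely abstract. This requires that each invoked isomorphism (from \autoref{prp:RModViaSoft}, \autoref{prp:moduleTensorProd} and \autoref{prp:tensProdAssoc}) be the canonical one on simple tensors, and that their composition tracks an element $a\in S$ consistently to $1\otimes a$; verifying this compatibility on simple tensors is the only routine-but-nontrivial step, and it is what makes the word \textbf{canonical} in the statement legitimate.
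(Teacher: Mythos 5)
Your proposal is correct and follows essentially the same route as the paper: the paper's (one-line) proof combines \autoref{prp:RModRealification} with \autoref{prp:RModViaSoft}, noting exactly the computation $[0,\infty]\otimes_{\CatCu}Z\cong Z_{\soft}=[0,\infty]$ that you carry out, and your expanded argument is just the proof of \autoref{prp:RModViaRq} with $Z$ in place of $R_{2^\infty}$. No gaps.
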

\begin{proof}
This follows by combining \autoref{prp:RModViaSoft} with \autoref{prp:RModRealification}. (Notice that $[0,\infty]\otimes_{\CatCu}Z\cong Z_{\soft}=[0,\infty]$.)
\end{proof}

\begin{thm}
\label{prp:MapToRMod}
Let $S$ be a $\CatCu$-semigroup, and let $a,b$ be elements in $S$.
Then the following are equivalent:
\beginEnumStatements
\item
We have $1\otimes a\leq 1\otimes b$ in $[0,\infty]\otimes_{\CatCu}S$.
\item
We have $\hat{a}\leq\hat{b}$ in $\Lsc(F(S))$.
\item
For every $a'\in S$ satisfying $a'\ll a$, and every $\varepsilon>0$, there exist $k,n\in\N_+$ such
that $(1-\varepsilon)<\tfrac{k}{n}$ and $ka'\leq nb$ in $S$.
\end{enumerate}
\end{thm}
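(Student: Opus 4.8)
The plan is to deduce all three equivalences from two results already established in the excerpt, so that essentially no new computation is required: the work lies entirely in lining up the statements correctly. The two inputs are the identification of the tensor product $[0,\infty]\otimes_{\CatCu}S$ with the realification $S_R$ (\autoref{prp:RModRealification}) and the characterization of the relation $\hat a\leq\hat b$ provided by \autoref{prp:comparison_hat}.

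First I would establish \implStatements{1}{2} together with its converse. By \autoref{prp:RModRealification} the canonical map $\bar{\varphi}\colon[0,\infty]\otimes_{\CatCu}S\to S_R$ is a $\CatCu$-isomorphism satisfying $\bar{\varphi}(t\otimes a)=t\cdot\hat{a}$ for all $t\in[0,\infty]$ and $a\in S$. Taking $t=1$ yields $\bar{\varphi}(1\otimes a)=\hat{a}$ and $\bar{\varphi}(1\otimes b)=\hat{b}$. Since $\bar{\varphi}$ is in particular an order isomorphism onto $S_R$, the inequality $1\otimes a\leq 1\otimes b$ in $[0,\infty]\otimes_{\CatCu}S$ holds if and only if $\hat{a}\leq\hat{b}$ holds in $S_R$. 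Finally, because $S_R$ is by construction a subsemigroup of $\Lsc(F(S))$ carrying the induced (pointwise) order, as recalled in \autoref{pgr:realification}, the inequality $\hat{a}\leq\hat{b}$ in $S_R$ is literally the inequality $\hat{a}\leq\hat{b}$ in $\Lsc(F(S))$. This gives \enumStatement{1} $\Leftrightarrow$ \enumStatement{2}.

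For \enumStatement{2} $\Leftrightarrow$ \enumStatement{3} I would simply invoke \autoref{prp:comparison_hat}. Statement \enumStatement{2} of the present theorem is exactly condition \enumStatement{1} of that proposition (both assert $\hat{a}\leq\hat{b}$, the order being pointwise on $F(S)$), while statement \enumStatement{3} here is verbatim condition \enumStatement{3} there. Hence the two are equivalent, and combining this with the previous paragraph completes the chain.

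The only points requiring care — and the closest thing to an obstacle — are bookkeeping ones: confirming that the value of $\bar{\varphi}$ on the simple tensor $1\otimes a$ is $\hat{a}$ rather than some rescaling of it, and confirming that the order on the realification $S_R$ is precisely the restriction of the pointwise order on $\Lsc(F(S))$, so that the order comparison transported through the isomorphism $\bar{\varphi}$ coincides with the comparison appearing in \autoref{prp:comparison_hat}. Both are immediate from the definition of $S_R$ in \autoref{pgr:realification} and the explicit formula for $\bar{\varphi}$ in \autoref{prp:RModRealification}, so no substantive difficulty arises.
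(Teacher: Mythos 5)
Your proposal is correct and follows the paper's own proof exactly: the equivalence of \enumStatement{1} and \enumStatement{2} via the isomorphism $[0,\infty]\otimes_{\CatCu}S\cong S_R$ from \autoref{prp:RModRealification} identifying $1\otimes a$ with $\hat{a}$, and the equivalence of \enumStatement{2} and \enumStatement{3} by direct appeal to \autoref{prp:comparison_hat}. Nothing further is needed.
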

\begin{proof}
As shown in \autoref{prp:RModRealification}, there is an isomorphism between the tensor product $[0,\infty]\otimes_{\CatCu}S$ and $S_R$ that identifies $1\otimes a$ with $\hat{a}$ and $1\otimes b$ with $\hat{b}$.
This shows the equivalence between  \enumStatement{1} and \enumStatement{2}.
The equivalence between statement \enumStatement{2} and \enumStatement{3} is shown in \autoref{prp:comparison_hat}.
\end{proof}

Let $A$ be a \ca.
It is shown in \cite[Theorem~5.1.2]{Rob13Cone} that there is a natural isomorphism between $\Cu(\mathcal{R}\otimes A)$ and $\Cu(A)_R$.
Using \autoref{prp:RModRealification} we can rephrase the result of Robert as follows:

\begin{prp}
\label{prp:tensRCa}
Let $A$ be a \ca{}.
Then there are natural isomorphisms
\[
\Cu(\mathcal{R}\otimes A)
\cong\Cu(\mathcal{R})\otimes_{\Cu}\Cu(A)
\cong[0,\infty]\otimes_{\Cu}\Cu(A).
\]
In particular, we have $\Cu(\mathcal{R}\otimes A)\cong \Cu(A)$ if and only if $\Cu(A)$ is unperforated, divisible and each element in $\Cu(A)$ is soft (or equivalently, purely noncompact).
\end{prp}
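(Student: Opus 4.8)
The plan is to assemble the isomorphisms from already established results, and then read off the ``in particular'' statement from our characterization of $[0,\infty]$-semimodules.

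First I would recall that $\Cu(\mathcal{R})\cong[0,\infty]$ by \autoref{rmk:Razac}, so that the middle and right-hand $\CatCu$-semigroups in the displayed formula, $\Cu(\mathcal{R})\otimes_{\CatCu}\Cu(A)$ and $[0,\infty]\otimes_{\CatCu}\Cu(A)$, are naturally isomorphic. For the remaining isomorphism, I would invoke Robert's computation \cite[Theorem~5.1.2]{Rob13Cone}, which provides a natural isomorphism $\Cu(\mathcal{R}\otimes A)\cong\Cu(A)_R$ between the Cuntz semigroup of $\mathcal{R}\otimes A$ and the realification of $\Cu(A)$. Combining this with \autoref{prp:RModRealification}, which identifies $S_R$ with $[0,\infty]\otimes_{\CatCu}S$ naturally for any $\CatCu$-semigroup $S$ (taking $S=\Cu(A)$), yields
\[
\Cu(\mathcal{R}\otimes A)\cong\Cu(A)_R\cong[0,\infty]\otimes_{\CatCu}\Cu(A),
\]
and all three isomorphisms in the statement are natural because each cited isomorphism is.

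For the second assertion, I would use the chain of isomorphisms just established to replace the condition $\Cu(\mathcal{R}\otimes A)\cong\Cu(A)$ by the equivalent condition $[0,\infty]\otimes_{\CatCu}\Cu(A)\cong\Cu(A)$. Applying the equivalence of conditions \enumStatement{1} and \enumStatement{4} in \autoref{prp:RModTFAE} to $S=\Cu(A)$ then shows that this holds precisely when $\Cu(A)$ is unperforated, divisible, and every element of $\Cu(A)$ is soft.

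It remains to justify the parenthetical remark that, in this situation, ``soft'' may be replaced by ``purely noncompact.'' By \autoref{prp:o5o6}, $\Cu(A)$ always satisfies \axiomO{5}, so by \autoref{prp:soft_wpnc} softness and weak pure noncompactness coincide for every element of $\Cu(A)$; in particular every purely noncompact element is soft. For the converse, note that if every element of $\Cu(A)$ is soft and $\Cu(A)$ is unperforated, then $\Cu(A)$ is in particular almost unperforated, hence satisfies $(RQQ)$ by \autoref{prp:RQQ}; \autoref{prp:softTFAE} then gives that soft, weakly purely noncompact, and purely noncompact all agree. Thus, under the standing hypotheses, ``every element is soft'' and ``every element is purely noncompact'' are equivalent conditions. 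Since the whole argument is a synthesis of previously proved statements, there is no serious obstacle; the only point requiring care is verifying that the side condition phrased via pure noncompactness is genuinely equivalent to the one phrased via softness, which is exactly where \axiomO{5} and almost unperforation enter.
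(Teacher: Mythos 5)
Your proof is correct and follows essentially the same route as the paper: the paper likewise obtains the displayed isomorphisms by combining Robert's computation $\Cu(\mathcal{R}\otimes A)\cong\Cu(A)_R$ from \cite[Theorem~5.1.2]{Rob13Cone} with \autoref{prp:RModRealification}, and reads off the ``in particular'' clause from \autoref{prp:RModTFAE}. Your additional verification that ``soft'' may be replaced by ``purely noncompact'' (via \axiomO{5}, \autoref{prp:soft_wpnc}, \autoref{prp:RQQ} and \autoref{prp:softTFAE}) is a correct justification of a point the paper leaves implicit.
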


\vspace{5pt}
\section{Examples and Applications}

\begin{pgr}
\label{pgr:ssaSolid}
Let $D$ be a unital, separable, strongly self-absorbing \ca{}.
As mentioned before in \autoref{prp:semirgFromSSA}, it is known that $D$ is nuclear and simple and that $D$ is either purely infinite, in which case $\Cu(D)\cong\{0,\infty\}$, or that $D$ is stably finite with unique tracial state.
The only known examples of purely infinite, strongly self-absorbing \ca{s} are the Cuntz algebras $\mathcal{O}_\infty$ and $\mathcal{O}_2$, and the tensor products of $\mathcal{O}_\infty$ with a UHF-algebra of infinite type.
It follows from the Kirchberg-Phillips classification theorem, \cite[Theorem~8.4.1, p.128]{Ror02Classification}, that these are the only purely infinite, strongly self-absorbing \ca{s} satisfying the Universal Coefficient Theorem (UCT).

Let us assume that $D$ is stably finite.
In that case, the only known examples are the Jiang-Su algebra $\mathcal{Z}$ and the UHF-algebras of infinite type $M_q$;
see \autoref{pgr:Mq}.
Each of these algebras satisfies the UCT.
We also have that $D$ is $\mathcal{Z}$-stable, by \cite[Theorem~3.1]{Win11ssaZstable};
see also \autoref{prp:ssaZstable}.
Therefore, it follows from \autoref{prp:CuSimpleZstable} that the Cuntz semigroup of $D$ can be computed as
\[
\Cu(D) \cong V(D) \sqcup (0,\infty].
\]
By \cite[Theorem~6.7]{Ror04StableRealRankZ}, $D$ has stable rank one.
Therefore, $V(D)$ is a cancellative, algebraically ordered monoid that is isomorphic to the positive part of $K_0(D)$.

We conclude that the only known Cuntz semigroups realized by stably finite, strongly self-absorbing \ca{s} are the following:
\[
Z = \N \sqcup (0,\infty] = \Cu(\mathcal{Z}),
\quad
R_q = \N\left[\tfrac{1}{q}\right] \sqcup (0,\infty] = \Cu(M_q).
\]
It follows from the $K$-theory computations in \cite[Proposition~5.1]{TomWin07ssa} that the Cuntz semigroups $Z$ and $R_q$ are the only Cuntz semigroups of stably finite, strongly self-absorbing \ca{s} that satisfy the UCT.

We have seen that $Z$ and $R_q$ (and also $\{0,\infty\}$) are solid $\CatCu$-semirings;
see for example \autoref{prp:ZSolid} and \autoref{prp:RqSolid}.
Therefore, the Cuntz semigroup of every strongly self-absorbing \ca{} satisfying the UCT is a solid $\CatCu$-semiring.

It is an open problem whether every nuclear \ca{} satisfies the UCT.
It is also unclear if there exist strongly self-absorbing \ca{s} that do not satisfy the UCT.
More modestly, we ask the following question:
\end{pgr}

\begin{prbl}
\label{prbl:ssaSolid}
Given a strongly self-absorbing \ca{} $D$, is the Cuntz semiring $\Cu(D)$ a solid $\CatCu$-semiring?
\end{prbl}

As noted above, the answer is `yes' for every strongly self-absorbing \ca{} satisfying the UCT.
In \autoref{sec:classificationSolid}, we provide a complete classification of solid $\CatCu$-semirings.
We remark that even when excluding $\CatCu$-semirings that are elementary or have noncompact unit, there exist solid $\CatCu$-semirings that are not the Cuntz semigroup of any known strongly self-absorbing \ca{};
see \autoref{thm:solidSemirgClassification}.

We can summarize \autoref{prp:tensCaWithInfty}, \autoref{prp:tensRqCa} and \autoref{prp:tensRCa} as follows:

\begin{prp}
\label{prp:tensSaCa}
Let $A$ and $D$ be \ca{s}.
Assume that either $D=\mathcal{R}$ or that $D$ is a (unital) separable, strongly self-absorbing \ca{} satisfying the UCT but not equal to $\mathcal{Z}$.
Then there is a natural isomorphism
\[
\Cu(D\otimes A)
\cong\Cu(D)\otimes_{\CatCu} \Cu(A).
\]
\end{prp}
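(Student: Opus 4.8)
The plan is to reduce the statement to the three tensor-product computations already established, namely \autoref{prp:tensCaWithSimplePI} (which generalizes \autoref{prp:tensCaWithInfty}), \autoref{prp:tensRqCa} and \autoref{prp:tensRCa}, by running a case analysis over the isomorphism type of $D$. First I would record that all the algebras involved are nuclear: $\mathcal{R}$ is nuclear, and every strongly self-absorbing \ca{} is nuclear by the Effros--Rosenberg theorem recalled in \ref{subsc:ssa}. Consequently $A\tensMax D=A\tensMin D$, so $A\otimes D$ is unambiguous and the natural map $\tau_{A,D}$ from \autoref{pgr:tensCa} is exactly the comparison morphism we must show is an isomorphism. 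Next I would invoke the classification of strongly self-absorbing \ca{s} satisfying the UCT: by \cite{TikWhiWin15}, together with the list of examples in \ref{subsc:ssa}, any such $D$ is isomorphic to one of $\mathcal{O}_2$, $\mathcal{O}_\infty$, a UHF-algebra $M_q$ of infinite type, $\mathcal{O}_\infty\otimes M_q$, or $\mathcal{Z}$. Excluding $\mathcal{Z}$ and adjoining the separate hypothesis $D=\mathcal{R}$, it therefore suffices to treat $D\in\{\mathcal{R},M_q,\mathcal{O}_2,\mathcal{O}_\infty,\mathcal{O}_\infty\otimes M_q\}$.

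The case analysis then runs as follows. If $D=\mathcal{R}$, then $\Cu(\mathcal{R})\cong[0,\infty]$ by \autoref{rmk:Razac}, and the claim is precisely \autoref{prp:tensRCa}. If $D=M_q$ for a supernatural number with $q=q^2\neq 1$, then $\Cu(M_q)\cong R_q$ by \autoref{prp:RqFromUHF}, and the claim is precisely \autoref{prp:tensRqCa}. In the remaining cases $D\in\{\mathcal{O}_2,\mathcal{O}_\infty,\mathcal{O}_\infty\otimes M_q\}$ the algebra $D$ is separable, simple, nuclear and purely infinite, so $\Cu(D)\cong\{0,\infty\}$ as recalled in \autoref{pgr:ssa}, and \autoref{prp:tensCaWithSimplePI} yields the desired isomorphism $\Cu(A\otimes D)\cong\Cu(A)\otimes_{\CatCu}\Cu(D)$. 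In each case the isomorphism is $\tau_{A,D}$, and its naturality in $A$ is inherited from the naturality asserted in the corresponding earlier result.

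The genuine content has all been absorbed into the earlier propositions, so the only delicate points are bookkeeping ones. The first is that reducing a general stably finite strongly self-absorbing UCT algebra to a concrete UHF-algebra $M_q$ genuinely uses the deep classification theorem \cite{TikWhiWin15}: without identifying $D$ with an AF-algebra one cannot feed it into \autoref{prp:tensRqCa}, whose proof passes through \autoref{prp:tensProdAF} and the AF-structure of $M_q$. The second, and the point I expect to require the most care, is the separability of $A$ in the purely infinite cases: \autoref{prp:tensCaWithSimplePI} assumes $A$ separable, so to cover an arbitrary \ca{} $A$ one should either restrict to separable $A$ or supply a direct argument that $\tau_{A,D}$ is an isomorphism for any $A$ when $D$ is purely infinite strongly self-absorbing---for instance by checking that $A\otimes D$ is purely infinite and then identifying both sides with $\Lat_{\mathrm{f}}(\Cu(A))$ via \autoref{prp:pureInfCa} and \autoref{prp:tensWithInfty}.
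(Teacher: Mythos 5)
Your proposal matches the paper's own argument: the paper gives no separate proof but simply states that the proposition summarizes \autoref{prp:tensCaWithInfty}, \autoref{prp:tensRqCa} and \autoref{prp:tensRCa}, and the case analysis over the classified list of UCT strongly self-absorbing algebras (plus $\mathcal{R}$) is exactly what is intended. Your remark about separability is well taken — the proposition as stated places no hypothesis on $A$ while \autoref{prp:tensCaWithSimplePI} and \autoref{prp:tensCaWithInfty} require $A$ separable, so in the purely infinite cases one should either assume $A$ separable or argue directly via \autoref{prp:pureInfCa} and \autoref{prp:tensWithInfty} as you suggest.
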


\begin{pgr}
\label{pgr:strictComp}
\index{terms}{strict comparison}
Recall that a \ca\ $A$ is said to have \emph{strict comparison of positive elements} if for any positive elements $x,y\in M_\infty(A)$ the following holds:
If $x$ is contained in $\overline{AyA}$, the closed two-sided ideal generated by $y$, and if $d(x)<d(y)$ for every dimension function $d$ on $A$ satisfying $d(y)=1$, then $x\precsim y$ ($x$ is Cuntz subequivalent to $y$).
It was shown by R{\o}rdam that a \ca\ $A$ has strict comparison of positive elements if and only if its pre-completed Cuntz semigroup $W(A)$ is almost unperforated, \cite[Proposition~3.2]{Ror04StableRealRankZ}; see also \cite[Lemma~5.7]{AraPerTom11Cu}.
It is easy to see that $W(A)$ is almost unperforated if and only if $\Cu(A)$ is.

For a unital, simple \ca\ $A$, and positive elements $x,y\in M_\infty(A)$, the condition that $x$ belongs to $\overline{AyA}$ is automatically satisfied whenever $y$ is nonzero, and it is moreover enough to consider lower-semicontinuous dimension functions.
In this form, the notion of strict comparison of positive elements in simple \ca{s} was introduced by Blackadar as the `Fundamental Comparability Question (FCQ4)' in \cite[\S~6.4.7]{Bla88Comparison}.

In \cite[Question~5.3]{RorWin10ZRevisited}, R{\o}rdam and Winter ask whether the Jiang-Su algebra unitally embeds into any unital \ca{} $A$ such that the class of the unit is almost divisible in $\W(A)$.
It is easy to see that the class of the unit is almost divisible in $W(A)$ if and only if it is in $\Cu(A)$.
Thus, the implication `\enumStatement{2}$\Rightarrow$\enumStatement{4}' of the following \autoref{prp:embedZCa} provides a positive answer to the question of  R{\o}rdam and Winter for \ca{s} that have stable rank one and strict comparison of positive elements.
\end{pgr}

\begin{prp}
\label{prp:embedZCa}
Let $A$ be a unital \ca\ with stable rank one and with strict comparison of positive elements.
Then the following statements are equivalent:
\beginEnumStatements
\item
For each $n\in\N$, there exists a unital \starHom\ $Z_{n,n+1}\to A$, where $Z_{n,n+1}$ is the dimension drop algebra.
\item
The element $[1_A]$ is almost divisible in $\Cu(A)$.
\item
There exists a $\CatCu$-morphism $Z\to\Cu(A)$ that sends $1_Z$ to $[1_A]$.
\item
There exists a unital \starHom\ $\mathcal{Z}\to A$.
\end{enumerate}
\end{prp}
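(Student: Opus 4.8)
The plan is to prove the cycle of implications
\[
\enumStatement{4}\Rightarrow\enumStatement{1}\Rightarrow\enumStatement{2}\Leftrightarrow\enumStatement{3}\Rightarrow\enumStatement{4},
\]
isolating the two genuinely \ca{ic} inputs (the structure of $\mathcal{Z}$ and Robert's classification) from the purely semigroup-theoretic bookkeeping. Two standing observations feed every step: since $A$ has strict comparison, $\Cu(A)$ is almost unperforated (\autoref{pgr:strictComp}); and since $1_A$ is a projection, $[1_A]$ is a compact element of $\Cu(A)$. Moreover, stable rank one makes $A$ stably finite, a fact I would use at the very end to upgrade a \starHom\ to a unital one.

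First I would dispatch \enumStatement{2}$\Leftrightarrow$\enumStatement{3}, which is immediate from \autoref{prp:zembed}: applied to $S=\Cu(A)$ and $a=[1_A]$, it says that a $\CatCu$-morphism $Z\to\Cu(A)$ with $1_Z\mapsto[1_A]$ exists precisely when $[1_A]$ is almost divisible and compact. As $[1_A]$ is always compact, this is exactly the equivalence of \enumStatement{2} and \enumStatement{3}. For \enumStatement{1}$\Rightarrow$\enumStatement{2}, I would use that the unit of the dimension drop algebra $Z_{n,n+1}$ is almost $n$-divisible in $\Cu(Z_{n,n+1})$ (a direct computation with the dimension-drop fibres), together with the transfer principle: a unital \starHom\ $Z_{n,n+1}\to A$ induces a $\CatCu$-morphism $\Cu(Z_{n,n+1})\to\Cu(A)$ sending $[1_{Z_{n,n+1}}]\mapsto[1_A]$, and almost $n$-divisibility of a compact element passes along such a morphism. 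Ranging over all $n$ then yields almost $k$-divisibility of $[1_A]$ for every $k\in\N_+$, i.e. \enumStatement{2}. The implication \enumStatement{4}$\Rightarrow$\enumStatement{1} is obtained by precomposing the embedding $\mathcal{Z}\to A$ of \enumStatement{4} with the standard unital embeddings of the prime dimension drop algebras $Z_{n,n+1}$ into $\mathcal{Z}$ coming from the Jiang-Su inductive-limit presentation.

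The heart of the proof is \enumStatement{3}$\Rightarrow$\enumStatement{4}, the passage from a morphism of Cuntz semigroups to an actual \starHom, and this is where I expect the real difficulty. Here I would invoke Robert's classification of \starHom{s} out of inductive limits of $1$-dimensional NCCW complexes with trivial $K_1$ into \ca{s} of stable rank one \cite{Rob12LimitsNCCW}. Since $\mathcal{Z}$ is such an inductive limit (of prime dimension drop algebras, which have vanishing $K_1$) and $A$ has stable rank one, the existence part of that theorem realizes the given $\CatCu$-morphism $\alpha\colon Z=\Cu(\mathcal{Z})\to\Cu(A)$ by a \starHom\ $\phi\colon\mathcal{Z}\to A$ with $\Cu(\phi)=\alpha$. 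Finally, $\phi(1_{\mathcal{Z}})$ is a projection $p$ with $[p]=\alpha(1_Z)=[1_A]$; since stable rank one forces $A$ to be stably finite and $p\leq 1_A$ up to equivalence, we get $p=1_A$, so $\phi$ is automatically unital.

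The main obstacle is precisely \enumStatement{3}$\Rightarrow$\enumStatement{4}: verifying that $\alpha$ meets the compatibility hypotheses of Robert's realization theorem (that it respects the order-unit, which holds since $\alpha(1_Z)=[1_A]$, and any auxiliary positivity condition, which should be automatic because $\Cu(A)$ is almost unperforated with stable rank one), and then extracting a genuine, unital \starHom\ rather than a mere morphism at the level of invariants. All remaining steps are either formal consequences of \autoref{prp:zembed} or routine computations with dimension drop algebras and the functoriality of $\Cu$.
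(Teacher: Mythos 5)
Your proof is correct and rests on the same two pillars as the paper's: \autoref{prp:zembed} for \implStatements{2}{3} (using that $\Cu(A)$ is almost unperforated and $[1_A]$ is compact), and Robert's Theorems~1.0.1 and~3.2.2 in \cite{Rob12LimitsNCCW} for \implStatements{3}{4}. Where you differ is in the wiring of \enumStatement{1} and \enumStatement{4} into the equivalence. The paper cites \cite[Proposition~5.1]{RorWin10ZRevisited} for the full biconditional \enumStatement{1}$\Leftrightarrow$\enumStatement{2} and then observes that \implStatements{4}{3} is immediate (a unital \starHom{} $\mathcal{Z}\to A$ induces a $\CatCu$-morphism $\Cu(\mathcal{Z})\cong Z\to\Cu(A)$ sending $1_Z$ to $[1_A]$ by functoriality). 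You instead prove only the easy direction \implStatements{1}{2} by hand and recover \implStatements{2}{1} for free from the cycle, at the price of routing \enumStatement{4} through \enumStatement{1} rather than through \enumStatement{3}. What your decomposition buys is that you never need the hard direction of R{\o}rdam--Winter as a black box; what it costs is that \implStatements{4}{3} (one line of functoriality) is replaced by \implStatements{4}{1}, which needs an external fact.

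That external fact is the one soft spot: you assert unital embeddings $Z_{n,n+1}\to\mathcal{Z}$ for every $n$ ``coming from the Jiang-Su inductive-limit presentation.'' The Jiang-Su building blocks are prime dimension drop algebras $Z_{p_i,q_i}$ for particular coprime pairs, not $Z_{n,n+1}$ for all $n$, so the embeddings are not literally read off the presentation. The fact itself is true and standard, but its usual proof is precisely the implication \implStatements{2}{1} applied to $A=\mathcal{Z}$ (or Dadarlat--Toms' existence result \cite[Theorem~3.6]{DadTom10Ranks}), i.e.\ the very step your cycle is designed to avoid proving. This is not a circularity in your argument --- you may legitimately cite the fact --- but you should cite it honestly rather than attribute it to the inductive-limit construction. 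If you prefer to keep the argument self-contained, the cheaper fix is to replace your \implStatements{4}{1} by the paper's \implStatements{4}{3}, which needs nothing beyond functoriality of $\Cu$, and then your cycle becomes \enumStatement{1}$\Rightarrow$\enumStatement{2}$\Leftrightarrow$\enumStatement{3}$\Rightarrow$\enumStatement{4}$\Rightarrow$\enumStatement{3}, with \implStatements{3}{1} still needing an argument --- so in the end some version of the R{\o}rdam--Winter or Dadarlat--Toms input is unavoidable for getting back to \enumStatement{1}. Your handling of unitality in \implStatements{3}{4} (cancellation from stable rank one forces $\varphi(1_{\mathcal{Z}})=1_A$) is correct and is a detail the paper leaves implicit.
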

\begin{proof}
The equivalence between \enumStatement{1} and \enumStatement{2} is shown in \cite[Proposition~5.1]{RorWin10ZRevisited}.
The equivalence between \enumStatement{2} and \enumStatement{3} follows from \autoref{prp:zembed}.
Finally, it is clear that \enumStatement{4} implies \enumStatement{3}.
The converse follows from Theorems~1.0.1 and 3.2.2 in \cite{Rob12LimitsNCCW}.
\end{proof}

\begin{pgr}
Let $S$ be a simple, stably finite $\CatCu$-semigroup{} satisfying \axiomO{5}.
Recall that we denote by $S_c$ (respectively $S_c^\times$) the subsemigroup of (nonzero) compact elements in $S$.
As shown in \autoref{prp:softNoncpctSimple}, an element $a\in S$ is soft whenever it is not compact.
The only element that is both compact and soft is the zero element.
Thus, there is a natural decomposition
\[
S=S_c^\times\sqcup S_\soft.
\]

If we additionally assume that $S$ has $Z$-multiplication, then we can apply \autoref{prp:tensProdWithR} to compute $S_\soft$ as $S_R$, the realification of $S$.
We obtain
\[
S\cong S_c^\times\sqcup S_R.
\]

Since $S$ also satisfies \axiomO{5}, it follows from \cite[Theorem~3.2.1]{Rob13Cone}  that $S_R$ is isomorphic to $L(F(S))$.
The point is that in this case the semigroup $L(F(S))$ only depends on $F(S)$, the cone of functionals on $S$.
On the other hand, it is not clear if $S_R$ only depends on $F(S)$ in general;
see \autoref{prbl:LFS}.

We summarize our representation result for simple $\CatCu$-semigroups with $Z$-mul\-ti\-pli\-ca\-tion in the following Theorem.
For Cuntz semigroups of \ca{s}, the analogous result has appeared in \cite[Theorem~2.6]{BroTom07ThreeAppl} and \cite[Theorem~6.3]{AntBosPer11CompletionsCu}.
\end{pgr}

\begin{thm}
\label{thm:isothm}
Let $S$ be a simple, stably finite $\CatCu$-semigroup that satisfies \axiomO{5} and has $Z$-multipli\-ca\-tion.
Then the soft part of $S$ is isomorphic to $L(F(S))$.
Thus, there is a natural isomorphism
\[
S\cong S_c^\times \sqcup L(F(S)).
\]
\end{thm}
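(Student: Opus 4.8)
The plan is to combine the compact/soft decomposition of a simple, stably finite $\CatCu$-semigroup with the identification of the soft part as a realification, both of which are essentially assembled in the paragraph preceding the statement. First I would invoke \autoref{prp:softNoncpctSimple}: since $S$ is simple, stably finite and satisfies \axiomO{5}, every nonzero element is either compact or soft, and $0$ is the only element that is both. This gives the set-level splitting $S = S_c^\times \sqcup S_\soft$. To see it is compatible with the algebraic structure, I would note that $S$ has $Z$-multiplication and is therefore almost unperforated by \autoref{prp:ZModTFAE}, and then appeal to \autoref{prp:softAbsorbing}: the sum of a nonzero compact element $p$ with a soft element $b$ is again soft, because simplicity forces $p \varpropto^\ctsRel b$ and $S_\soft$ is absorbing in this sense. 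Thus addition respects the decomposition in the `glued' fashion recorded for the model examples $Z$ and $R_q$ in \autoref{pgr:Z} and \autoref{pgr:Rq}.

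Next I would identify the soft part. Since $S$ has $Z$-multiplication, \autoref{prp:tensProdWithR} yields canonical isomorphisms $S_\soft \cong [0,\infty]\otimes_{\CatCu}S \cong S_R$, where $S_R$ is the realification. Tracing these through \autoref{prp:RModViaSoft} and \autoref{prp:RModRealification}, the isomorphism $S_\soft \to S_R$ is just the restriction of the natural map $a \mapsto \hat{a}$ into $\Lsc(F(S))$; its injectivity on soft elements is exactly \autoref{prp:soft_comparison} (using almost unperforation), while surjectivity onto $S_R$ holds because each generator $\tfrac{1}{n}\hat{a}$ of $S_R$ equals $\widehat{\tfrac{1}{n}\cdot a}$ with $\tfrac{1}{n}\cdot a$ soft (by \autoref{prp:softZmult}), and $S_\soft$ is closed under suprema of increasing sequences. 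Finally, because $S$ satisfies \axiomO{5}, Robert's \cite[Theorem~3.2.1]{Rob13Cone} identifies $S_R$ with $L(F(S))$, giving $S_\soft \cong L(F(S))$, which is the first assertion.

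Combining the two, the map that is the identity on $S_c^\times$ and equals $a \mapsto \hat{a}$ on $S_\soft$ furnishes the claimed isomorphism $S \cong S_c^\times \sqcup L(F(S))$, with the gluing data on the right-hand side transported from $S$. The only real friction I expect is checking that this map intertwines the glued structure across the compact/soft boundary: additivity is automatic since $\widehat{p+b} = \hat{p}+\hat{b}$, but the order comparisons between a compact $p$ and a soft $b$ are strict in a way not recovered from $\hat{p}\leq\hat{b}$ alone (compare the explicit order on $Z$ in \autoref{pgr:Z}). I would resolve this by verifying, using \axiomO{5} and softness, that the comparison of a compact element with a soft element is precisely the data carried by `$\sqcup$' on $S_c^\times \sqcup L(F(S))$, so that it is faithfully transported by the soft-part isomorphism. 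Naturality in $S$ then follows from the naturality of each cited isomorphism.
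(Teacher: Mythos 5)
Your proof follows exactly the route the paper takes (its argument is the paragraph preceding the statement): the decomposition $S=S_c^\times\sqcup S_\soft$ via \autoref{prp:softNoncpctSimple}, the identification $S_\soft\cong S_R$ via \autoref{prp:tensProdWithR}, and then $S_R\cong L(F(S))$ via \cite[Theorem~3.2.1]{Rob13Cone}. Your extra care about how the order and addition glue across the compact/soft boundary is a welcome elaboration of what the paper leaves implicit, but it is not a different approach.
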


Let $S$ be a $\CatCu$-semigroup.
By definition, $S_R$ and $L(F(S))$ are submonoids of $\Lsc(F(S))$.
It follows from \cite[Proposition~3.1.6]{Rob13Cone} that $S_R$ is a subset of $L(F(S))$.
(The implicit assumption of \axiomO{5} in \cite{Rob13Cone} is not needed in the proof of \cite[Proposition~3.1.6]{Rob13Cone}.)

It is natural to ask whether $S_R$ is in fact equal to $L(F(S))$.
Under the assumption of \axiomO{5} this is indeed the case;
see \cite[Theorem~3.2.1]{Rob13Cone}.
Thus, we ask if the result of Robert holds without the assumption of \axiomO{5}.

\begin{prbl}
\label{prbl:LFS}
Let $S$ be a $\CatCu$-semigroup.
Is it true that $S_R=L(F(S))$?
\end{prbl}

\chapter{Structure of \texorpdfstring{$\CatCu$}{Cu}-semirings}
\label{sec:semirg}

In this chapter, we study the structure of certain classes of $\CatCu$-semirings that satisfy \axiomO{5}.
The main result is \autoref{thm:simpleSemirg}, where we show that every simple, nonelementary $\CatCu$-semiring with \axiomO{5} is automatically almost unperforated and almost divisible.
Together with \autoref{prp:ZModTFAE}, we obtain that every simple, nonelementary $\CatCu$-semiring has $Z$-multiplication, which can be interpreted as the $\CatCu$-semiring version of Winter's result that every strongly self-absorbing \ca{} is $\mathcal{Z}$-stable;
see \autoref{prp:simpleSemirgZmult}.
We also use our findings to give an alternative proof of Winter's result;
see \autoref{prp:ssaZstable}.

In \autoref{sec:algebraicSemirg}, we study algebraic $\CatCu$-semirings.
We establish an equivalence between the category of weakly cancellative, algebraic $\Cu$-semirings and the category of directed, \por{s};
see \autoref{prp:equivalenceSrgAlgCu}.
We also give several characterizations when a simple $\Cu$-semiring with unique normalized functional is algebraic;
see \autoref{prp:imagelambda}.

In \autoref{sec:classificationSolid}, we analyse the structure of solid $\CatCu$-semirings.
The main result is \autoref{thm:solidSemirgClassification}, where we classify all solid, nonelementary $\CatCu$-semirings satisfying~\axiomO{5}.

\section{Simple \texorpdfstring{$\Cu$}{Cu}-semirings}
\label{sec:SimpleSrg}

Recall that a simple $\Cu$-semigroup is \emph{elementary} if it contains a minimal nonzero element;
see \autoref{pgr:elementarySemigr}.

\begin{lma}
\label{prp:simpleSemirgElmtry}
Let $R$ be a simple $\Cu$-semiring satisfying \axiomO{5}.
Then:
\beginEnumStatements
\item
If $R$ is nonelementary, then there exist nonzero $c,d\in R$ with $c+d\leq 1$.
\item
Assume $R\ncong\{0\}$.
Then $R$ is elementary if and only if the unit $1_R$ is a minimal nonzero element.
\item
If $1_R$ is properly infinite (that is, $2\cdot 1_R=1_R$), then $R\cong\{0,\infty\}$ or $R\cong\{0\}$.
\end{enumerate}
\end{lma}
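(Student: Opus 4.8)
The plan is to treat the three parts in turn, using \axiomO{5} (in the equivalent form \axiomO{5'} supplied by \autoref{prp:o5o5'}) to split the unit order-theoretically, and the multiplicative structure—chiefly the fact that a nonzero simple $\CatCu$-semiring has no zero divisors, recorded in \autoref{rmk:CuSemirg}—to control products. Throughout I would stay with \axiomO{5} only, so that \autoref{prp:GlimmHalving} and \autoref{prp:downwards} (which need \axiomO{6}) are deliberately avoided. For \enumStatement{1}, the argument is purely order-theoretic: since $R$ is nonelementary it is nonzero, so $1_R\neq 0$, and $1_R$ is not a minimal nonzero element. Hence there is a nonzero $c\leq 1_R$ with $c\neq 1_R$. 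Using \axiomO{2} choose a nonzero $c'\ll c$, and apply \axiomO{5'} to $c'\ll c\leq 1_R$ to obtain $x$ with $c'+x\leq 1_R\leq c+x$. If $x$ were $0$, then $1_R\leq c\leq 1_R$ would force $c=1_R$, a contradiction; thus $x\neq 0$, and $c',x$ are the required nonzero elements summing below $1_R$.

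For \enumStatement{2}, the backward implication is immediate: if $1_R$ is a minimal nonzero element then $R$ (being $\neq\{0\}$) contains a minimal nonzero element and is elementary by definition. For the forward implication I would argue by contradiction. Assume $R$ is elementary with minimal nonzero element $a$, but that $1_R$ is not minimal. Exactly as in \enumStatement{1}, the failure of minimality of $1_R$ yields nonzero $c',x$ with $c'+x\leq 1_R$. Multiplying by $a$ and using $a\cdot 1_R=a$ together with biadditivity of the product gives $ac'+ax\leq a$. Since $R$ is nonzero and simple it has no zero divisors (\autoref{rmk:CuSemirg}), so $ac'$ and $ax$ are nonzero; as each lies below $a$, minimality of $a$ forces $ac'=a$ and $ax=a$, whence $2a\leq a$, i.e. $2a=a$. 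Then $a$ is properly infinite, so $\infty=\sup_k ka=a$ is the largest element of $R$ (see \autoref{pgr:finiteSemigr}), and minimality of $a=\infty$ forces every nonzero element to equal $\infty$. Thus $R\cong\{0,\infty\}$ and $1_R=\infty=a$ is minimal—contradicting the assumption, so $1_R$ is minimal.

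For \enumStatement{3}, assume $2\cdot 1_R=1_R$; I may assume $R\neq\{0\}$. For every $a\in R$, biadditivity of the product and the unit law give $2a=(1_R+1_R)a=(2\cdot 1_R)a=1_R a=a$, so $R$ is idempotent, and inductively $ka=a$ for all $k\geq 1$. Since $R$ is simple it has a largest element $\infty=\sup_k ka$ for any nonzero $a$ (\autoref{pgr:finiteSemigr}), and idempotency gives $\infty=a$. Hence every nonzero element of $R$ equals $\infty$, that is, $R\cong\{0,\infty\}$, completing this case together with the trivial possibility $R\cong\{0\}$.

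The main obstacle is the forward implication of \enumStatement{2}: one must transport the order splitting $c'+x\leq 1_R$ through the product, where the no-zero-divisor property is essential to keep $ac'$ and $ax$ nonzero, and then carefully dispose of the degenerate properly infinite case that arises. A secondary point requiring attention is that only \axiomO{5} is available, so the halving and downward-comparison tools must be replaced by the direct \axiomO{5'} construction used in \enumStatement{1}.
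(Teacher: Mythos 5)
Your proof is correct and follows essentially the same route as the paper's: part \enumStatement{1} by applying \axiomO{5} to a strict decomposition of the non-minimal unit, part \enumStatement{2} by multiplying $c'+x\leq 1_R$ by a candidate minimal element and using the absence of zero divisors (the paper phrases this as the contrapositive, showing every nonzero element is non-minimal, rather than as a contradiction, but the computation is identical), and part \enumStatement{3} by the direct idempotency calculation the paper leaves to the reader.
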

\begin{proof}
We first assume $1_R\neq 0_R$ and that $1_R$ is not a minimal nonzero element.
Choose a nonzero $x\in R$ with $x<1$ and a nonzero $c\in R$ satisfying $c\ll x$.
Using that $R$ satisfies \axiomO{5}, we can choose $d\in R$ such that
\[
c+d \leq 1 \leq x+d.
\]
Since $x\neq 1$, $d$ is nonzero.
This immediately implies statement \enumStatement{1}.

To show \enumStatement{2}, note that the assumption $R\ncong\{0\}$ implies $1_R\neq 0_R$.
It follows that $R$ has no zero divisors;
see \autoref{rmk:CuSemirg}.
If $1_R$ is a minimal, nonzero element, then $R$ is elementary by definition.
Conversely, assume $1_R$ is not minimal.
As shown at the beginning of the proof, we can choose nonzero elements $c$ and $d$ such that $c+d\leq 1$.

In order to prove that $R$ is nonelementary, assume $a$ is a nonzero element in $R$.
Consider the elements $ca$ and $da$, which are nonzero since $R$ has no zero divisors.
Moreover, $ca+da\leq a$.
If $ca\neq a$ or $da\neq a$, then $a$ is not a minimal, nonzero element.
Otherwise, we can deduce $a=2a$ and hence $a=\infty$.
Since $1\leq \infty$ and $1$ is assumed not to be a minimal, nonzero element, we obtain that $a$ is not a minimal, nonzero element in either case.
Thus, $R$ is nonelementary.

Finally, statement \enumStatement{3} is easily verified.
\end{proof}

\begin{exa}
\label{exa:elmtrySemirg}
The elementary $\Cu$-semigroups $\overline{\N}$ and $\elmtrySgp{k}=\{0,1,2,\ldots,k,\infty\}$ have natural (and unique) $\CatCu$-products giving them the structure of solid $\CatCu$-semirings.
These are the only simple, nonzero, elementary $\CatCu$-semirings satisfying \axiomO{5} and \axiomO{6};
see \autoref{pgr:elementarySemigr}.

Without \axiomO{6}, there are other examples of simple, elementary $\CatCu$-semirings:
Consider $S=\{0,1,1',2,3,4,\ldots,\infty\}$, with addition and multiplication among the un-apostrophized elements as usual and such that $1'+k=1+k$ and $k\cdot 1'=k$ for each $k\in\N$.
The elements $1$ and $1'$ are incomparable.
\end{exa}

To prove \autoref{thm:simpleSemirg}, we need several lemmas.
Given a $\CatCu$-semigroup $S$ and $k\in\N_+$, recall that an element $a$ in $S$ is \emph{almost $k$-divisible} if for every $a'\ll a$ there exists $x\in S$ such that $kx\leq a$ and $a'\leq (k+1)x$;
see \autoref{dfn:almDiv}.
If this holds for every $k\in\N_+$, we say that the element is \emph{almost divisible}.
Moreover, $S$ is \emph{almost divisible} if each of its elements is.

\begin{lma}
\label{lma:simpleSemirg1}
Let $R$ be a simple $\Cu$-semiring.
Let $c,d\in R$ be nonzero elements satisfying $c+d\leq 1$.
Then for every $x\in R$ with $x\ll\infty$ there exists $n\in\N$ such that $xc^n\leq 1$.
\end{lma}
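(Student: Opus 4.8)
The plan is to convert the single relation $c+d\le 1$ into a family of ``geometric series'' estimates and then use simplicity of $R$ to dominate $x$ by a suitable power of $c$ times $d$. First I would record the structural facts I need: the multiplication on $R$ is a $\CatCu$-bimorphism, hence order-preserving in each variable, and the semiring axioms make it commutative and associative. In particular, since everything in a \pom{} is nonnegative, $c\le c+d\le 1$, and therefore the sequence of powers $(c^k)_k$ is decreasing, because $c^{k+1}=c\cdot c^k\le 1\cdot c^k=c^k$.

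Next I would establish by induction on $N$ the estimate
\[
c^{N}+\sum_{k=0}^{N-1}c^{k}d\le 1,\qquad N\ge 1.
\]
The base case $N=1$ is the hypothesis. For the step, multiplying the inequality for $N$ by $c$ gives $c^{N+1}+\sum_{k=1}^{N}c^{k}d\le c$, and then adding $d=c^{0}d$ and using $c+d\le 1$ yields the inequality for $N+1$. Since the summands $c^{k}d$ decrease in $k$, the smallest among $c^{0}d,\dots,c^{N-1}d$ is $c^{N-1}d$, so each of the $N$ summands dominates $c^{N-1}d$ and hence
\[
N\,c^{N-1}d\le \sum_{k=0}^{N-1}c^{k}d\le 1,\qquad N\ge 1.
\]

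Finally I would bring in simplicity. As $R$ is simple and $d\neq 0$, the largest element satisfies $\infty=\sup_k kd$ with $(kd)_k$ increasing, by the computation in \autoref{pgr:finiteSemigr}. For $x\ll\infty$ the compact containment relation (applied to this increasing sequence, whose supremum is $\infty$) produces some $m\in\N$ with $x\le md$. If $m=0$ then $x=0$ and any $n$ works; otherwise I would set $n:=m-1$ and compute, using commutativity and the last display with $N=m$,
\[
x\,c^{n}\le (md)\,c^{\,m-1}=m\,c^{\,m-1}d\le 1,
\]
giving the desired $n$.

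The argument is short, and the only delicate point is the bookkeeping that ties the two estimates together: the number of copies $m$ coming from $x\le md$ must be matched to the exponent in the bound $N\,c^{N-1}d\le 1$, which is exactly why $n=m-1$ (rather than $n=m$) is the correct choice. I expect the main obstacle to be purely organizational—keeping the telescoping induction and the decreasing-powers comparison aligned so that precisely $m$ summands each dominate $c^{m-1}d$—rather than conceptual. I would also flag that simplicity is used essentially, through the step $x\le md$: without it $x$ need not lie in the ideal generated by $d$ and the conclusion may fail.
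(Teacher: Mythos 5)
Your proof is correct and follows essentially the same route as the paper: an induction driven by ``multiply by $c$, add $d$, use $c+d\leq 1$'' to obtain the key estimate $N c^{N-1} d \leq 1$ (the paper carries the multiple $(k+1)d\,c^k$ directly through the induction rather than via your telescoping sum, but the resulting inequality is identical), followed by simplicity and $x\ll\infty=\sup_k kd$ to get $x\leq md$ and hence $xc^{m-1}\leq m c^{m-1}d\leq 1$.
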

\begin{proof}
We inductively show
\begin{align}
\label{lma:simpleSemirg1:eq1}
[(k+1)d]c^k\leq 1,
\end{align}
for every $k\in\N$.
For $k=0$ this is clear.
For the induction step, assume \eqref{lma:simpleSemirg1:eq1} is satisfied for some $k\in\N$.
Multiplying both sides of \eqref{lma:simpleSemirg1:eq1} by $c$, we obtain
\[
[(k+1)d]c^{k+1}\leq c.
\]
Using this at the second step, we deduce
\[
[(k+2)d]c^{k+1}
\leq [(k+1)d]c^{k+1}+d
\leq c+d
\leq 1.
\]
Since $R$ is simple, and since $d\neq 0$ and $x\ll\infty$, we can find $n\in\N$ such that $x\leq (n+1)d$.
Then $xc^n\leq [(n+1)d]c^n\leq 1$, as desired.
\end{proof}

\begin{lma}
\label{lma:simpleSemirg2}
Let $R$ be a simple $\Cu$-semiring, and let $k\in\N_+$.
If $c+d\leq 1$ for some nonzero elements $c,d\in R$, then there exists a nonzero element $a\in R$ such that $ka\ll (k+1)a\leq 1$.
\end{lma}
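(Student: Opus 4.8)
The plan is to prove Lemma~\ref{lma:simpleSemirg2} by combining the ``halving'' structure provided by the product with \axiomW{5}/\axiomO{5}, using the multiplicative contraction $x\mapsto xc^n$ from \autoref{lma:simpleSemirg1} to shrink the unit below itself. The goal is to produce a nonzero $a$ with $ka\ll (k+1)a\leq 1$, which is a divisibility statement \emph{inside} the order interval below $1_R$. First I would record the standing setup: $R$ is a simple $\CatCu$-semiring (so in particular $R\ncong\{0\}$ and $R$ has no zero divisors, by \autoref{rmk:CuSemirg}), $k\in\N_+$ is fixed, and $c,d\in R$ are nonzero with $c+d\leq 1$. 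Because $R$ is simple and $c,d$ are nonzero, all powers $c^n$ and all products of the form $d\,c^n$ are nonzero.

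The key idea is to iterate the inequality $c+d\leq 1$ multiplicatively to manufacture an element that is split off the unit $k+1$ times. First I would multiply $c+d\leq 1$ by itself and by powers of $c$ to get, for each $m$, that $1\geq c^m + (\text{sum of } m \text{ terms of the form } d\,c^j)$; concretely $1\geq c^m + d(1+c+\cdots+c^{m-1})$. More usefully, since $c+d\le 1$ gives $1\ge c+d\ge c^2+cd+d \ge \cdots$, one extracts that $1$ dominates a sum of $k+1$ mutually ``parallel'' nonzero pieces. I would choose a nonzero $e\ll d$ (possible since $d\neq 0$ and $R$ is a $\CatCu$-semigroup satisfying \axiomO{2}) and set $a$ to be a nonzero element of the form $e\,c^n$ for $n$ to be determined. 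The point of \autoref{lma:simpleSemirg1} is that for any $x\ll\infty$ there is $n$ with $xc^n\le 1$; applying it to $x=(k+1)e$ (which satisfies $x\ll\infty$ since $e\ll d\le 1\ll\infty$, using \axiomO{3}) yields $n$ with $(k+1)(ec^n)\leq 1$. Setting $a:=ec^n$ then gives $(k+1)a\leq 1$ with $a$ nonzero, since $R$ has no zero divisors. This already secures the ``$\leq 1$'' half.

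The remaining and main obstacle is upgrading $ka\leq(k+1)a$ to the \emph{compact containment} $ka\ll(k+1)a$. Equality of orders is automatic, but $\ll$ requires a strict gap. Here I would arrange the construction so that $a$ itself is compactly contained in a slightly larger element of the same multiplicative form. Concretely, because $e\ll d$, I would first pick $\tilde e$ with $e\ll\tilde e\ll d$, run \autoref{lma:simpleSemirg1} with $x=(k+1)\tilde e$ to obtain $n$ with $(k+1)(\tilde ec^n)\le 1$, and then set $a:=ec^n$, $\tilde a:=\tilde ec^n$. Multiplicativity of the $\CatCu$-product (it is a $\CatCu$-bimorphism, hence jointly preserves $\ll$ by \autoref{prp:bimorCu}) gives $a\ll\tilde a$, and by \axiomO{3} also $ka\ll k\tilde a\leq (k+1)\tilde a\le 1$. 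Thus $ka\ll(k+1)a$ provided $k\tilde a\leq(k+1)a$, i.e.\ provided the extra $a$ absorbs the gap between $\tilde a$ and $a$; I would secure this by instead building $a$ as a sum $a=a_1+\cdots+a_{k+1}$ of $k+1$ nonzero pieces each $\ll$-below $1$ and arranged (via \axiomO{5} applied to $c+d\le 1$, cf.\ the mechanism of \autoref{prp:downwards} and \autoref{prp:GlimmHalving}) so that $k$ of them are jointly compactly contained in all $k+1$. The hard part will be pinning down this last absorption cleanly: one must choose the pieces so that dropping one summand still leaves something $\ll$-below the full sum, which is exactly where simplicity (to compare arbitrary nonzero elements up to scaling) and \axiomO{5} (to realize differences as genuine elements $\le 1$) must be used in tandem. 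I expect the cleanest route is to prove an auxiliary claim that in a simple $\CatCu$-semiring satisfying \axiomO{5}, existence of nonzero $c,d$ with $c+d\le1$ forces $1$ to be almost $k$-divisible below itself, and then read off $ka\ll(k+1)a\le1$ directly from the definition of almost $k$-divisibility together with a strict-inequality refinement obtained from \axiomO{5}.
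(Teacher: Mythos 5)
There is a genuine gap, and it sits exactly where you flag it: upgrading $ka\leq(k+1)a$ to $ka\ll(k+1)a$. Neither of your proposed fixes closes it. In the first, after setting $a=ec^n$ and $\tilde a=\tilde ec^n$ with $e\ll\tilde e$, you need $k\tilde a\leq(k+1)a$, and there is no reason for the single extra copy of $a$ to absorb the (multiplicative) gap between $\tilde a$ and $a$; you concede this yourself. In the second, you invoke \axiomO{5}, but the lemma is stated for an arbitrary simple $\CatCu$-semiring --- \axiomO{5} only enters at the next step (\autoref{lma:simpleSemirg3}) --- so that hypothesis is not available here. Worse, the auxiliary claim you propose (that $c+d\leq 1$ forces the unit to be almost $k$-divisible) is essentially the conclusion of the whole chain of lemmas culminating in \autoref{thm:simpleSemirg}; assuming it would beg the question.

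The idea you are missing is the production of a nonzero element $b$ with $b\ll nb$ for some $n$, which is what makes the strict gap appear. Take a nonzero $c_1\ll c$; by simplicity (after arranging $c\ll\infty$) choose $n$ with $c\leq nc_1$, then interpolate $c_1\ll c_2\ll\cdots\ll c_m\ll c$ and set $b:=c_1c_2\cdots c_m$, where $m$ is chosen via \autoref{lma:simpleSemirg1} so that $k(k+1)n^2c^m\leq 1$. The telescoping shift
\[
b=[c_1\cdots c_{m-1}]c_m\ll[c_2\cdots c_m]c\leq[c_2\cdots c_m]nc_1=nb
\]
(using that the $\CatCu$-product jointly preserves $\ll$) gives $b\ll nb$, and $b\leq c^m$ gives $k(k+1)n^2b\leq 1$. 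Then choose $b\ll b_1\ll\cdots\ll b_{kn}\ll nb$ and set $a:=b_1+\cdots+b_{kn}$; dropping the last summand and shifting yields $a\ll a+nb$, while $knb\leq a$, so multiplying by $k$ gives $ka\ll ka+knb\leq(k+1)a$, and $(k+1)a\leq(k+1)kn\cdot nb\leq 1$. Your first two steps (using \autoref{lma:simpleSemirg1} to get $(k+1)a\leq 1$ for some nonzero $a$ of product form) are fine as far as they go, but without an element satisfying $b\ll nb$ the compact containment cannot be extracted. (A small side remark: your justification ``$1\ll\infty$'' for $(k+1)e\ll\infty$ is unwarranted --- the unit need not be compact --- though the conclusion follows anyway from $e\ll d\leq\infty$.)
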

\begin{proof}
Let $c,d$ and $k$ be as in the statement.
We may assume $c\ll\infty$ (by replacing $c$ by some nonzero $c'\in R$ satisfying $c'\ll c$, if necessary).
We construct the element $a$ in two steps.

Step 1:
Let $c_1$ be a nonzero element in $R$ satisfying $c_1\ll c$.
Since $c\ll\infty$, we can choose $n\in\N$ satisfying $c\leq nc_1$.
By \autoref{lma:simpleSemirg1}, we can find $m\in\N$ such that
\[
[k(k+1)n^2]c^m\leq 1.
\]
Choose elements $c_i$ in $R$ for $i=2,\ldots,m$ with
\[
c_1\ll c_2\ll\ldots\ll c_m \ll c.
\]
Set $b :=c_1c_2\cdots c_m$ (the product).
Using at the second step that compact containment is preserved under multiplication, we obtain
\[
b
= [c_1c_2\cdots c_{m-1}]c_m
\ll [c_2c_3\cdots c_m]c
\leq [c_2c_3\cdots c_m]nc_1
= nb.
\]
Moreover, we have $b\leq c^m$, and therefore $k(k+1)n^2 b\leq 1$.

Step 2:
Choose elements $b_i$ in $R$ for $i=1,\ldots,kn$ such that
\[
b\ll b_1\ll b_2\ll\ldots\ll b_{kn} \ll nb.
\]
Set $a :=b_1+\ldots+b_{kn}$.
Using at the second step that compact containment is preserved under addition, we deduce
\[
a
= [b_1+b_2+\ldots+b_{kn-1}]+b_{kn}
\ll [b_2+b_3+\ldots+b_{kn}]+nb
\leq a+nb.
\]
Multiplying this inequality by $k$, and using $knb\leq a$ at the last step, we obtain
\[
ka
\ll ka+knb
\leq (k+1)a.
\]
Moreover, we have $a\leq kn(nb)$, and thus $(k+1)a\leq (k+1)kn^2b\leq 1$,
as desired.
\end{proof}

\begin{lma}
\label{lma:simpleSemirg3}
Let $R$ be a simple $\Cu$-semiring satisfying \axiomO{5}, and let $k\in\N_+$.
Suppose that there exists a nonzero element $a\in R$ such that $2ka\ll (2k+1)a\leq 1$.
Then there exists $c\in R$ such that $kc\leq 1\leq (k+1)c$.
\end{lma}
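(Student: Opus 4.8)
The plan is to manufacture $c$ as the supremum of a geometric-type series built by repeatedly multiplying the given divisibility relation by a running ``remainder'' and peeling off a piece with \axiomO{5}. The construction guide is the exact case: \emph{if} $(2k+1)a=1$ held, then $c:=2a$ would already work, since $kc=2ka\leq 1$ while $(k+1)c=(2k+2)a=(2k+1)a+a\geq 1$. The whole difficulty is that $(2k+1)a\leq 1$ may be strict and, worse, $a$ may be arbitrarily ``small'' (its value under a functional is not pinned by the hypothesis), so no fixed finite multiple or power of $a$ reaches the scale of $\tfrac1k$. This is exactly why the unit must be fed into the construction: it is the only element that fixes the scale.

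Concretely, I would set $x_0:=1$ and construct a decreasing sequence $(x_n)_n$ of ``remainders''. Since the product is a $\CatCu$-bimorphism, multiplying $2ka\ll(2k+1)a$ by $x_n$ preserves the relation and yields
\[
2k(ax_n)\ll(2k+1)(ax_n)=\bigl((2k+1)a\bigr)x_n\leq x_n .
\]
Applying \axiomO{5} to $2k(ax_n)+(ax_n)\leq x_n$ produces $x_{n+1}$ together with a refinement $(ax_n)'\ll ax_n$ such that
\[
2k(ax_n)'+x_{n+1}\leq x_n\leq 2k(ax_n)+x_{n+1};
\]
in particular $x_{n+1}\leq x_n$. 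Telescoping the \emph{left} inequalities gives $\sum_{n=0}^{N}2k(ax_n)'+x_{N+1}\leq x_0=1$, so the element $c:=\sup_N\sum_{n=0}^{N}2(ax_n)'$, whose existence is guaranteed by \axiomO{1}, satisfies $kc\leq 1$. Telescoping the \emph{right} inequalities gives $1=x_0\leq\sum_{n=0}^{N}2k(ax_n)+x_{N+1}$ for every $N$. Thus, once one knows that $x_{N+1}\leq c$ for some $N$, one concludes $1\leq kc+c=(k+1)c$, completing the sandwich. The point is that the telescoping is anchored at $x_0=1$, so $kc$ ``equals'' $1$ minus the vanishing remainder, independently of the size of $a$.

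The main obstacle is twofold and lies entirely in the passage to the limit. First, the granules entering $c$ are the primed $2(ax_n)'$, whereas the lower telescoping produces the unprimed $2k(ax_n)$; reconciling these requires building the refinements as a single rapidly increasing sequence and running the lower estimate against elements $w\ll 1$, refining at each stage, exactly the diagonalization used in \autoref{prp:CuificationExists}. Second, and more seriously, one must show that the remainders $x_N$ eventually drop below the fixed nonzero element $c$ (equivalently, that they shrink to $0$). This is delicate precisely because $\CatCu$-semigroups carry suprema of increasing sequences but \emph{no} infima of decreasing ones, so one cannot simply ``take the limit'' in $1\leq kc+x_{N+1}$. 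I would resolve this using simplicity together with the multiplicative shrinkage encoded in \autoref{lma:simpleSemirg1}: each multiplication by $a$ strictly contracts, and by simplicity $x_N$ is dominated by a controlled multiple of $a x_{N-1}$, so the decreasing sequence $(x_N)$ passes below the fixed nonzero $c$ after finitely many steps. This convergence estimate is where I expect essentially all the work to concentrate; the two telescoping identities and the final inequalities $kc\leq 1\leq(k+1)c$ are then immediate. (One could alternatively try to bypass the remainder analysis by first upgrading the hypothesis to an exact sandwich and taking $c=2\tilde a$, but that reroutes the problem through a \emph{narrower} target interval and so appears strictly harder; I would therefore keep the anchored-remainder construction.)
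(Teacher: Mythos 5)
Your overall architecture (anchor a telescoping series at the unit, peel off granules of the form $2(ax_n)'$ with \axiomO{5}, take $c$ to be the supremum of the partial sums, and use the slack between $kc$ and $(k+1)c$ to absorb the final remainder) is in the right spirit, and your lower estimate $kc\leq 1$ does go through. But the step you yourself identify as the crux --- showing that some remainder $x_{N+1}$ drops below $c$ --- is a genuine gap, and the tools you propose do not close it. First, \axiomO{5} only asserts the \emph{existence} of a complement $x_{n+1}$ with $2k(ax_n)'+x_{n+1}\leq x_n\leq 2k(ax_n)+x_{n+1}$; nothing forces the sequence $(x_N)$ to decay (if $(ax_n)'$ is taken to be $0$ then $x_{n+1}=x_n$ is a legitimate output, and even for large refinements the complement is not pinned down, since weak cancellation is not available here). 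Second, the quantitative input you invoke does not apply: simplicity yields $x\leq mb$ only for $x\ll\infty$, whereas your remainders satisfy only $x_N\leq 1$ and $1$ need not be way below $\infty$ at this stage of the development; and \autoref{lma:simpleSemirg1} controls powers $xa^n$ against $1$, not remainders against the fixed element $c$. Third, even granting a bound $x_N\leq m\,(ax_{N-1})$, the element $c$ only contains the single granule $2(ax_{N-1})'$ from that stage, so the shapes do not match; and arguing decay of $\lambda(x_N)$ under functionals and then comparing with $c$ would require almost unperforation, which is precisely what \autoref{thm:simpleSemirg} uses this lemma to prove, so that route is circular. A secondary inaccuracy: a $\CatCu$-bimorphism preserves $\ll$ jointly in both variables, so $2ka\ll(2k+1)a$ does not by itself give $2k(ax_n)\ll(2k+1)(ax_n)$; this is harmless for invoking \axiomO{5} with a refinement of the first summand, but the displayed claim is unjustified.

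The paper avoids the problem by giving the remainders multiplicative structure. It applies \axiomO{5} exactly once, to $2ka\ll(2k+1)a\leq 1$, obtaining a single complement $t$ with $2ka+t\leq 1\leq(2k+1)a+t$, and then sets $z:=\sup_n(1+t+\cdots+t^n)$ and $c:=2az$. The remainder after $n$ steps of the geometric expansion is then literally $t^n$ (times an element $t_r\ll t$ arising from testing against $w_r\ll 1$), and convergence is handled by the purely multiplicative absorption $t^{m}t_r\leq a(1+t+\cdots+t^{m})\leq az$, which is valid because $t_r\ll\infty$ gives $t_r\leq(m+1)a$ by simplicity and $t\leq 1$ gives $t^{m}\leq t^{j}$ for $j\leq m$. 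That one extra copy of $az$ is exactly the slack between $(2k+1)az$ and $(2k+2)az=(k+1)c$. If you want to salvage your version, the fix is to make your remainders powers of a single fixed complement rather than outputs of repeated applications of \axiomO{5}.
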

\begin{proof}
Using that $R$ satisfies \axiomO{5} we can choose $t$ in $R$ such that
\[
2ka+t\leq 1 \leq (2k+1)a+t.
\]
We think of this inequality as `$2ka\leq (1-t) \leq (2k+1)a$'.
Then we want to multiply by $(1-t)^{-1}$, which we think of as $(1+t+t^2+\ldots)$.
To make this precise, set
\[
z := 1+t+t^2+\ldots = \sup_{n\in\N} \left(1+t+t^2+\ldots+t^n\right).
\]
We construct $c$ such that $kc\leq 1\leq (k+1)c$ in two steps.

Step 1:
We show $2kaz \leq 1$.
To obtain this, we first show that if $x,y\in R$ satisfy $x+y\leq 1$, then
\[
x\cdot\left(\sum_{n=0}^\infty y^n\right)\leq 1.
\]
Indeed, multiplying the inequality $x+y\leq 1$ by $y$, we obtain $xy+y^2\leq y$.
Using this at the second step, we deduce
\[
x(1+y)+y^2 = x+(xy+y^2)\leq x+y \leq 1.
\]
Inductively, we obtain
\[
x(1+y+y^2+\ldots+y^n)+y^{n+1}\leq 1,
\]
for all $n\in\N_+$, and therefore $x\cdot\left(\sum_{n=0}^\infty y^n\right)\leq 1$, as desired.
Applying this to $2ka+t\leq 1$, we deduce $2kaz\leq 1$.

Step 2:
We show $1\leq (2k+2)az$.
Choose a rapidly increasing sequence $(w_r)_{r\in\N}$ in $R$ satisfying $1=\sup_r w_r$.
(If the unit element is compact, the following argument can be simplified.)
For each fixed $r\in\N$, since $w_r\ll 1 \leq (2k+1)a+t$, we can choose $t_r$ in $R$ such that
\[
t_r\ll t,\quad
w_r\leq (2k+1)a+t_r.
\]
We have $1\leq (2k+1)a+t$.
Multiplying this inequality by $t_r$, we obtain
\[
t_r \leq (2k+1)at_r +tt_r \leq (2k+1)at + tt_r.
\]
Since $w_r\leq (2k+1)a+t_r$, it follows
\[
w_r \leq (2k+1)a(1+t)+tt_r.
\]
Inductively, we obtain
\[
w_r \leq (2k+1)a(1+t+\ldots+t^n)+t^nt_r,
\]
for all $n\in\N$.
Since $t_r\ll\infty$ and $a\neq 0$, we can choose $m_r\in\N$ such that $t_r\leq(m_r+1)a$.
Then
\[
t^{m_r}t_r \leq t^{m_r}(m_r+1)a =a(t^{m_r}+\ldots+t^{m_r})\leq a(1+\dots+t^{m_r}),
\]
This implies
\begin{align*}
w_r
&\leq (2k+1)a(1+t+\ldots+t^{m_r})+t^{m_r}t_r \\
&\leq (2k+2)a(1+t+\dots+t^{m_r}) \\
&\leq (2k+2)a(1+t+\dots+t^{m_r}+\ldots) \\
&=(2k+2)az.
\end{align*}
Since this holds for all $r\in\N$ and since $1=\sup_r w_r$, we deduce $1\leq (2k+2)az$.
Then $k(2az)\leq 1\leq (k+1)(2az)$, which finishes the proof setting $c=2az$.
\end{proof}

The following theorem is the main structure result for simple $\CatCu$-semirings.

\begin{thm}
\label{thm:simpleSemirg}
Let $R$ be a simple, nonelementary $\Cu$-semiring satisfying \axiomO{5}.
Then $R$ is almost unperforated and almost divisible.
\end{thm}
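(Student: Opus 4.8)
The plan is to prove almost divisibility and almost unperforation by combining the three technical lemmas just established. The key observation is that by \autoref{prp:ZModAlmUnpDiv}, once we know that the unit $1_R$ is almost divisible, both properties follow immediately: indeed $R$ is a $\Cu$-semimodule over itself, so if $1_R$ is almost divisible then \autoref{prp:ZModAlmUnpDiv} yields that $R$ is almost unperforated and almost divisible. Therefore the entire proof reduces to showing that \emph{the unit element $1_R$ is almost $k$-divisible for every $k\in\N_+$}.

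First I would fix $k\in\N_+$ and use the nonelementarity hypothesis together with \autoref{prp:simpleSemirgElmtry}(1) to produce nonzero elements $c,d\in R$ with $c+d\leq 1$. Applying \autoref{lma:simpleSemirg2} with the integer $2k$ in place of $k$, I obtain a nonzero element $a\in R$ satisfying
\[
2ka\ll (2k+1)a\leq 1.
\]
This is exactly the hypothesis of \autoref{lma:simpleSemirg3}, which then produces an element $c\in R$ (I will rename it to avoid clash, say $e$) with
\[
ke\leq 1\leq (k+1)e.
\]
This last inequality says precisely that $1_R$ is `$k$-divisible' in the strong \pom{} sense. To conclude almost $k$-divisibility in the sense of \autoref{dfn:almDiv}, note that given any $1'\ll 1_R$, the same element $e$ works since $ke\leq 1_R$ and $1'\leq 1_R\leq (k+1)e$. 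Since $k$ was arbitrary, $1_R$ is almost divisible.

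The main obstacle is really already encapsulated in \autoref{lma:simpleSemirg2} and \autoref{lma:simpleSemirg3}, so the remaining work is just the bookkeeping of chaining them correctly; in particular one must take care to apply \autoref{lma:simpleSemirg2} with parameter $2k$ (not $k$) so that its output matches the hypothesis $2ka\ll(2k+1)a\leq 1$ demanded by \autoref{lma:simpleSemirg3}. The only subtle point worth double-checking is the degenerate case $R\cong\{0\}$, which trivially satisfies both conclusions, and the verification that nonelementarity genuinely supplies the decomposition $c+d\leq 1$ via \autoref{prp:simpleSemirgElmtry}(1) rather than merely via minimality of the unit. Once the element $e$ with $ke\leq 1\leq (k+1)e$ is in hand for all $k$, I would invoke \autoref{prp:ZModAlmUnpDiv} to finish, rather than re-deriving almost unperforation by hand.
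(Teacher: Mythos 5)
Your proposal is correct and follows exactly the paper's own argument: reduce to almost divisibility of the unit via \autoref{prp:ZModAlmUnpDiv}, produce $c+d\leq 1$ from nonelementarity via \autoref{prp:simpleSemirgElmtry}, apply \autoref{lma:simpleSemirg2} with parameter $2k$, and conclude with \autoref{lma:simpleSemirg3}. The extra care you take with the $2k$ versus $k$ bookkeeping and the passage from $ke\leq 1\leq(k+1)e$ to almost $k$-divisibility is exactly what the paper's proof does implicitly.
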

\begin{proof}
By \autoref{prp:ZModAlmUnpDiv}, it is enough to show that the unit of $R$ is almost divisible.
Let $k$ be a natural number with $k\geq 1$.
By \autoref{prp:simpleSemirgElmtry} we can choose nonzero elements $c,d\in R$ satisfying $c+d\leq 1$.
Thus, we may apply \autoref{lma:simpleSemirg2} (for $2k$) to obtain a nonzero element $a\in R$ such that $2ka\ll (2k+1)a\leq 1$.
Now it follows from \autoref{lma:simpleSemirg3} that the unit of $R$ is almost $k$-divisible.
\end{proof}

Combining the above result with \autoref{prp:ZModTFAE}, we obtain the $\CatCu$-semigroup{} version of Winter's result that strongly self-absorbing \ca{s} are $\mathcal{Z}$-stable, \cite[Theorem~3.1]{Win11ssaZstable}. An actual alternative proof will be obtained as a consequence.

\begin{cor}
\label{prp:simpleSemirgZmult}
Let $R$ be a simple, nonelementary $\Cu$-semiring satisfying axiom~\axiomO{5}.
Then $R$ has $Z$-multiplication and is therefore `$Z$-stable' in the sense that $R\cong Z\otimes_{\CatCu}R$.
\end{cor}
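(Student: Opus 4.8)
The plan is to combine the main structural theorem for simple, nonelementary $\CatCu$-semirings with the tensorial characterization of $Z$-multiplication already established. The statement to prove is \autoref{prp:simpleSemirgZmult}: a simple, nonelementary $\CatCu$-semiring $R$ satisfying \axiomO{5} has $Z$-multiplication, equivalently $R\cong Z\otimes_\CatCu R$. Essentially all the work has already been done, so the proof is a short deduction; the role of this statement is to package \autoref{thm:simpleSemirg} and \autoref{prp:ZModTFAE} into the desired $\CatCu$-semiring form.

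First I would invoke \autoref{thm:simpleSemirg}, which asserts that under exactly the stated hypotheses — $R$ simple, nonelementary, satisfying \axiomO{5} — the semiring $R$ is automatically almost unperforated and almost divisible. This is the substantive input, and it is precisely what the preceding three lemmas (\autoref{lma:simpleSemirg1}, \autoref{lma:simpleSemirg2}, \autoref{lma:simpleSemirg3}) were built to supply. Second, I would apply the equivalence \implStatements{3}{2} of \autoref{prp:ZModTFAE}: a $\CatCu$-semigroup is almost unperforated and almost divisible if and only if it has $Z$-multiplication. Since $R$ is in particular a $\CatCu$-semigroup, this immediately gives $Z$-multiplication. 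Finally, the equivalence \implStatements{2}{1} of the same \autoref{prp:ZModTFAE} yields the isomorphism $R\cong Z\otimes_\CatCu R$, which is the `$Z$-stability' assertion. The entire argument is therefore a two-line chain: \textbf{almost unperforated $+$ almost divisible} (from \autoref{thm:simpleSemirg}) $\Rightarrow$ \textbf{$Z$-multiplication} $\Rightarrow$ \textbf{$R\cong Z\otimes_\CatCu R$} (both from \autoref{prp:ZModTFAE}).

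There is no genuine obstacle at this stage, since the hard analysis is already encapsulated in \autoref{thm:simpleSemirg} and the solidity of $Z$ (used in \autoref{prp:ZModTFAE}) is established in \autoref{prp:ZSolid}. The one point worth a remark is that \autoref{prp:ZModTFAE} is stated for $\CatCu$-semigroups, and one should simply forget the multiplicative structure of $R$ when applying it; the resulting $Z$-multiplication is then a genuine additional structure on $R$, compatible with but distinct from its internal product. If one wanted to be careful, I would note that the $Z$-multiplication produced is unique by \autoref{prp:solidModuleUnique} (as $Z$ is solid), so there is no ambiguity in speaking of `the' $Z$-multiplication on $R$.

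The broader significance, which I would mention in a sentence, is that this is the $\CatCu$-semiring analog of Winter's theorem that every strongly self-absorbing \ca{} is $\mathcal{Z}$-stable: the compactness and functional-calculus arguments of the \ca{} setting are here replaced entirely by the algebraic divisibility estimates of \autoref{lma:simpleSemirg2} and \autoref{lma:simpleSemirg3}. This sets up the subsequent \autoref{prp:ssaZstable}, where the \ca ic statement is recovered by applying the present corollary to $R=\Cu(D)$ for a strongly self-absorbing $D$.
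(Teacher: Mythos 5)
Your proposal is correct and matches the paper's own derivation exactly: the corollary is obtained by combining \autoref{thm:simpleSemirg} (which gives almost unperforation and almost divisibility under the stated hypotheses) with the equivalences of \autoref{prp:ZModTFAE}. Your additional remarks on uniqueness via \autoref{prp:solidModuleUnique} and the analogy with Winter's theorem are accurate but not needed for the argument.
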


\begin{prp}[{Winter, \cite[Theorem~3.1]{Win11ssaZstable}}]
\label{prp:ssaZstable}
Let $D$ be a unital, separable, strongly self-ab\-sorb\-ing \ca.
Then $D$ is $\mathcal{Z}$-stable, that is, $D\cong\mathcal{Z}\otimes D$.
\end{prp}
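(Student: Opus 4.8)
The plan is to deduce Winter's theorem from the $\CatCu$-semiring results developed above, specifically from \autoref{prp:simpleSemirgZmult}, combined with the descent principle for $\mathcal{Z}$-stability. First I would recall the key structural facts about a unital, separable, strongly self-absorbing \ca{} $D$ that are already available in the excerpt: by \autoref{prp:semirgFromSSA}(1), the Cuntz semigroup $\Cu(D)$ carries a natural $\CatCu$-product making it a countably-based, simple $\CatCu$-semiring satisfying \axiomO{5} and \axiomO{6}. The goal is to show $\Cu(D)$ has $Z$-multiplication and then translate this back to $\mathcal{Z}$-stability of $D$ itself.

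The crucial dichotomy is whether $D$ is elementary as a \ca{} (equivalently whether $\Cu(D)$ is an elementary $\CatCu$-semiring). Since $D$ is strongly self-absorbing, $D\ncong\C$ and $D$ is simple, nuclear, and either purely infinite or stably finite; in all these cases $D$ is nonelementary as a \ca{} (a strongly self-absorbing \ca{} is never a matrix algebra or $\K$). Consequently $\Cu(D)$ is a simple, nonelementary $\CatCu$-semiring satisfying \axiomO{5}. Applying \autoref{prp:simpleSemirgZmult} directly yields that $\Cu(D)$ has $Z$-multiplication, and hence $\Cu(D)\cong Z\otimes_{\CatCu}\Cu(D)$. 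By \autoref{prp:ZModTFAE}, this is equivalent to $\Cu(D)$ being almost unperforated and almost divisible.

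The remaining and main obstacle is the descent step: passing from the semigroup-level statement ``$\Cu(D)$ has $Z$-multiplication'' back to the algebra-level statement ``$D\cong\mathcal{Z}\otimes D$''. The plan is to invoke \autoref{prp:embedZCa}: one first checks that $D$ has stable rank one (in the stably finite case, by \cite{Ror04StableRealRankZ} as cited in \autoref{pgr:ssaSolid}; the purely infinite case, where $\Cu(D)\cong\{0,\infty\}$, must be handled separately since stable rank one fails there). In the stably finite case, $D$ has strict comparison of positive elements because $\Cu(D)$ is almost unperforated (see \autoref{pgr:strictComp}), and the class $[1_D]$ is almost divisible in $\Cu(D)$ since $\Cu(D)$ is almost divisible. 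Then the implication \implStatements{2}{4} of \autoref{prp:embedZCa} produces a unital \starHom{} $\mathcal{Z}\to D$. Finally, one upgrades this unital embedding to an isomorphism $D\cong\mathcal{Z}\otimes D$ using that $D$ has approximately inner half-flip together with the standard McDuff-type absorption argument (as in \cite{TomWin07ssa}). The purely infinite case is disposed of separately, since there $D$ absorbs $\mathcal{O}_\infty$ and hence $\mathcal{Z}$ by Kirchberg's theorems. The genuinely delicate point is the last paragraph's upgrade from a unital embedding of $\mathcal{Z}$ to tensorial absorption; this is where the separability and approximately-inner-half-flip hypotheses on $D$ are essential and where the $\CatCu$-semiring machinery no longer suffices on its own.
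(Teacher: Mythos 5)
Your overall architecture matches the paper's: the purely infinite case is dispatched by Kirchberg's $\mathcal{O}_\infty$-absorption theorem, and in the stably finite case the $\CatCu$-semiring structure theorem (\autoref{thm:simpleSemirg}, equivalently \autoref{prp:simpleSemirgZmult} plus \autoref{prp:ZModTFAE}) gives that $\Cu(D)$ is almost unperforated and almost divisible, hence that $D$ has strict comparison and an almost divisible unit class. Up to that point your argument is sound.

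The gap is in your descent step, and it is a circularity rather than a mere omission. You propose to apply \autoref{prp:embedZCa}, whose standing hypothesis is that the algebra has stable rank one (this hypothesis is genuinely used: the implication \implStatements{2}{1} rests on \cite[Proposition~5.1]{RorWin10ZRevisited}, and the passage from \enumStatement{3} to \enumStatement{4} uses Robert's classification \cite{Rob12LimitsNCCW}, both of which require stable rank one of the codomain). To justify stable rank one of $D$ you cite \cite{Ror04StableRealRankZ} ``as in \autoref{pgr:ssaSolid}'' --- but the result invoked there is \cite[Theorem~6.7]{Ror04StableRealRankZ}, which asserts stable rank one for unital, simple, finite, \emph{$\mathcal{Z}$-stable} \ca{s}. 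In other words, the only available route to stable rank one of a stably finite, strongly self-absorbing $D$ passes through the very conclusion you are trying to prove. The paper avoids this by not using \autoref{prp:embedZCa} at all: it invokes \cite[Theorem~3.6]{DadTom10Ranks}, which produces unital \starHom{s} $Z_{k,k+1}\to D$ from strict comparison, almost $k$-divisibility of $[1_D]$, and nonemptiness of the state space alone, with no stable rank assumption, and then concludes $\mathcal{Z}$-stability from \cite[Proposition~2.2]{TomWin08ASH}. Your final ``McDuff-type upgrade'' paragraph is the right idea and is essentially what \cite[Proposition~2.2]{TomWin08ASH} encapsulates, but the embedding it needs must be obtained by the Dadarlat--Toms route, not via \autoref{prp:embedZCa}.
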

\begin{proof}
The famous $\mathcal{O}_\infty$-absorption theorem states that every unital, separable, nuclear, purely infinite, simple \ca{} $A$ satisfies $\mathcal{O}_\infty\otimes A\cong A$;
see \cite[Theorem~3.15]{KirPhi00Crelle1}.
Thus, if $D$ is purely infinite, then it is $\mathcal{O}_\infty$-stable and therefore also $\mathcal{Z}$-stable.

Assume now that $D$ is stably finite.
Let $R$ be the Cuntz semigroup of $D$.
By \autoref{prp:semirgFromSSA}, $R$ is a simple $\Cu$-semiring satisfying \axiomO{5}.
Since $R$ is also nonelementary, we obtain from \autoref{thm:simpleSemirg} that $R$ is almost unperforated and almost divisible.

This implies that $D$ has strict comparison of positive elements, and that the class of the unit $1_D$ in $R$ is almost $k$-divisible, for any $k\in\N_+$.
Note also that the state space of $D$ is nonempty (as $D$ has a unique tracial state).
With these assumptions we can apply \cite[Theorem~3.6]{DadTom10Ranks} to deduce that there exists a unital \starHom{} from the dimension drop algebra $Z_{k,k+1}$ to $D$.
Since the Jiang-Su algebra $\mathcal{Z}$ is an inductive limit of dimension drop algebras $Z_{k,k+1}$, it follows from \cite[Proposition~2.2]{TomWin08ASH} that $D$ is $\mathcal{Z}$-stable, as desired.
\end{proof}

\begin{cor}
\label{prp:simpleSemirgCompacts}
Let $R$ be a simple $\Cu$-semiring satisfying \axiomO{5}.
Then:
\beginEnumStatements
\item
If $R$ is nonelementary, then $R$ is stably finite.
\item
Either the unit $1_R$ is compact, or $R$ contains no nonzero compact elements.
\end{enumerate}
\end{cor}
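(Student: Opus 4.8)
The plan is to establish \enumStatement{1} first and then deduce \enumStatement{2}, treating the elementary and nonelementary cases of the latter separately. Throughout I would feed in two facts from the structure theory already developed: by \autoref{thm:simpleSemirg} a simple, nonelementary $\CatCu$-semiring with \axiomO{5} is automatically almost unperforated and almost divisible, and by \autoref{prp:simpleSemirgZmult} it has $Z$-multiplication. The target characterizations of stable finiteness and of softness live in \autoref{sec:fctl} and \autoref{sec:soft}, and these are the results \enumStatement{1} and \enumStatement{2} will be reduced to.

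For \enumStatement{1}, by \autoref{prp:simpleSF} it suffices to show that the largest element $\infty$ is not compact. First note that $1_R$ is finite: since $R$ is nonelementary it is not isomorphic to $\{0\}$ or to $\{0,\infty\}$, so $1_R$ is not properly infinite by \autoref{prp:simpleSemirgElmtry}, and in a simple $\CatCu$-semigroup every infinite element equals $\infty$ by \autoref{prp:elementInSimple}; hence $1_R\neq\infty$ is finite. Suppose toward a contradiction that $\infty$ is compact. Since $\infty=\sup_n n1_R$ (see \autoref{pgr:finiteSemigr}), compactness yields some $n$ with $n1_R=\infty$, and $n\geq 2$ because $1_R$ is finite.

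The key step — and the place where the \emph{multiplicative} structure does the work — is the observation that $\infty\cdot a=\infty$ for every nonzero $a\in R$: writing $\infty=\sup_m m1_R$ and using that the $\CatCu$-product is continuous in each variable, one gets $\infty\cdot a=\sup_m(m1_R)\cdot a=\sup_m ma=\infty$, the last equality by simplicity. Consequently $na=(n1_R)a=\infty\cdot a=\infty$ for \emph{every} nonzero $a$. Applying almost divisibility to $1_R$ (\autoref{dfn:almDiv}), I would choose a nonzero $a'\ll 1_R$ together with $y$ satisfying $ny\leq 1_R$ and $a'\leq(n+1)y$; then $y\neq 0$, so $\infty=ny\leq 1_R$, forcing $1_R=\infty$, a contradiction. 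Hence $\infty$ is not compact and $R$ is stably finite. I expect this to be the main obstacle: the naive attempt to produce a nontrivial functional (via \autoref{prp:existenceFctl}) is circular, since a functional exists precisely when $\infty$ is \emph{not} compact; it is the semiring product, absent in the purely additive setting, that breaks the circularity.

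For \enumStatement{2}, I would show that the presence of any nonzero compact element forces $1_R$ to be compact. If $R\cong\{0\}$ there are no nonzero compact elements and nothing to prove. If $R$ is nonzero and elementary, then $1_R$ is a minimal nonzero element by \autoref{prp:simpleSemirgElmtry}; were $1_R$ not compact, \axiomO{2} would present it as the supremum of a rapidly increasing sequence $(t_k)_k$ with each $t_k<1_R$ (otherwise $1_R$ would be compact) and some $t_k\neq 0$, contradicting minimality, so $1_R$ is compact. Finally, if $R$ is nonelementary, then by \enumStatement{1} it is stably finite, so by \autoref{prp:softNoncpctSimple} a nonzero element is soft exactly when it is noncompact. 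Let $c\neq 0$ be compact, hence not soft, and suppose for contradiction that $1_R$ is not compact; then $1_R$ is soft, so $1_R=1'1_R$ for the soft unit $1'\in Z$ by \autoref{prp:softZmult}. Since the $\CatCu$-product on $R$ is a $\CatCu$-bimorphism and $R$ is a semimodule over the solid semiring $Z$, \autoref{prp:solidTFAE} gives $(1'c)1_R=c(1'1_R)$, whence $1'c=c1_R=c$; by \autoref{prp:softZmult} this means $c$ is soft, contradicting that $c$ is a nonzero compact element. Thus $1_R$ is compact, which completes the dichotomy.
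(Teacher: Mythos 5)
Your proof is correct and follows essentially the same route as the paper: part (1) rests on the identity $\infty\cdot t=\infty$ for nonzero $t$, the relation $n1_R=\infty$ forced by compactness of $\infty$, and almost divisibility from \autoref{thm:simpleSemirg}, while part (2) uses the soft/compact dichotomy of \autoref{prp:softNoncpctSimple} together with $Z$-multiplication and $1_R=1'1_R$ to propagate softness of the unit to all nonzero elements. The only differences are cosmetic (you contradict finiteness of $1_R$ where the paper contradicts nonelementarity of $R$, and you treat the elementary case of (2) explicitly rather than folding it into a claim).
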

\begin{proof}
To show \enumStatement{1}, let $R$ be a simple, nonelementary $\Cu$-semiring satisfying \axiomO{5}.
By \autoref{thm:simpleSemirg},  $R$ is almost divisible.
We now proceed similarly to \autoref{prp:tensprodTwoSimple}.

Assume that $R$ is not stably finite.
As shown in \autoref{prp:simpleSF}, it follows that $\infty$ is a compact element.
Since $R$ is nonelementary and $1_R$ is nonzero, we can find $k\in\N$ such that $k 1_R=\infty$.
Given a nonzero $a\in R$, let us show $a=\infty$.
Since $R$ is almost divisible, we can choose $t\in R$ with $kt\leq a\leq (k+1)t$.
This implies that $t$ is nonzero.
Then
\[
\infty=\infty\cdot t = k 1_R \cdot t\leq a\leq\infty.
\]
Thus, we have shown $a=\infty$ for every nonzero $a\in R$.
This implies $R\cong\{0,\infty\}$, which is a contradiction since $R$ was assumed to be nonelementary.

Let us show \enumStatement{2}.
The statement is clearly true if $R\cong\{0\}$ or if $1_R$ is compact.
Thus we may assume from now on $R\ncong\{0\}$ and that $1_R$ is not compact.

We claim that $R$ is nonelementary, and hence stably finite by statement \enumStatement{1}. Indeed, assume the opposite.
Then, by \autoref{prp:simpleSemirgElmtry}, $1_R$ is a minimal, nonzero element.
This implies that $1_R$ is compact, a contradiction.

Hence $R$ is nonelementary, and therefore stably finite by statement \enumStatement{1}.
Then, by \autoref{prp:softNoncpctSimple}, every nonzero element of $R$ is either soft or compact.
Thus, the unit element $1_R$ is soft.
It follows from \autoref{prp:simpleSemirgZmult} that $R$ has $Z$-mul\-ti\-pli\-ca\-tion.
By \autoref{prp:softZmult}, an element $a$ in $R$ is soft if and only if $a=1'_Za$, where $1_Z'$ is the `soft' unit of $Z$.
We deduce $1_R=1'_Z1_R$.
Given a nonzero element $a$ in $R$, we obtain
\[
a = 1_R a = 1'_Z 1_R a = 1'_Za.
\]
Using \autoref{prp:softNoncpctSimple} again, it follows that $a$ is noncompact, as desired.
\end{proof}

Next, we study the multiplicativity of normalized functionals on $\Cu$-semirings.
The results are inspired by \cite[Corollary~3]{Han13SimpleDimGps}.
Given a $\CatCu$-semiring $R$, recall that $F_1(R)$ denotes the functionals of $R$ that are normalized at $1$, the unit element.

The main application is \autoref{prp:multiplicativeUniqueFctl}, where we consider $\CatCu$-semirings with a unique normalized functional.
As we will see in \autoref{sec:classificationSolid}, in particular \autoref{prp:propertiesSolid}, this class includes all (stably finite) solid $\Cu$-semirings.
It also includes the Cuntz semigroups of stably finite, strongly self-absorbing \ca{s};
see \autoref{prp:semirgFromSSA}.

\begin{lma}
\label{prp:FctlExtrMult}
Let $R$ be simple, nonelementary $\Cu$-semiring satisfying \axiomO{5}.
Then $\lambda\in F_1(R)$ is multiplicative whenever it is an extreme point of $F_1(R)$.
\end{lma}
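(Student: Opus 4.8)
The plan is to adapt the classical argument that extreme states on a partially ordered ring with order unit are ring homomorphisms (as in \cite{Han13SimpleDimGps}), the only genuine difficulty being that $\CatCu$-semirings have no subtraction, so the ``complementary functional'' cannot be formed naively. I would circumvent this using that $F(R)$ is algebraically ordered, which holds because $R$ satisfies \axiomO{5}; see \autoref{pgr:fctl}. For $a\in R$ consider the map $\mu_a\colon R\to[0,\infty]$ given by $\mu_a(b):=\lambda(ab)$. Since the $\CatCu$-product of $R$ is a $\CatCu$-bimorphism (\autoref{dfn:CuSemirg}), the map $b\mapsto ab$ is a $\CatCu$-morphism by \autoref{prp:bimorCu}, and composing with the functional $\lambda$ shows that $\mu_a$ is again a functional on $R$. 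The goal is to prove $\mu_a=\lambda(a)\lambda$ for every $a\in R$, which is exactly the asserted multiplicativity.

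First I would establish this for elements $a$ with $a\leq n\cdot 1$ for some $n\in\N$. In that case $ab\leq nb$ for every $b$, whence $\mu_a\leq n\lambda$ pointwise, and in particular $\lambda(a)=\mu_a(1)\leq n<\infty$. Using that $F(R)$ is algebraically ordered, I would choose $\nu\in F(R)$ with $\mu_a+\nu=n\lambda$; evaluating at $1$ gives $\nu(1)=n-\lambda(a)$. If $a=0$ there is nothing to prove. If $a\neq 0$, then $\lambda(a)>0$, since a nonzero functional on a simple $\CatCu$-semigroup is faithful (as recorded in the proof of \autoref{prp:existenceFctlSimple}). When $\lambda(a)=n$ we get $\nu(1)=0$, so $\nu=0$ by faithfulness, and hence $\mu_a=n\lambda=\lambda(a)\lambda$. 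When $0<\lambda(a)<n$, I would write
\[
\lambda=\tfrac{\lambda(a)}{n}\cdot\tfrac{1}{\lambda(a)}\mu_a+\tfrac{n-\lambda(a)}{n}\cdot\tfrac{1}{n-\lambda(a)}\nu,
\]
a convex combination of the two elements $\tfrac{1}{\lambda(a)}\mu_a$ and $\tfrac{1}{n-\lambda(a)}\nu$ of $F_1(R)$. Extremality of $\lambda$ then forces $\tfrac{1}{\lambda(a)}\mu_a=\lambda$, that is, $\mu_a=\lambda(a)\lambda$.

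To pass to arbitrary $a\in R$, I would use \axiomO{2} to write $a=\sup_k a_k$ with $a_k\ll a$ for each $k$. Since $R$ is simple and $1\neq 0$, we have $\infty=\sup_m(m\cdot 1)$ (see \autoref{pgr:finiteSemigr}), and as $a\leq\infty$ the relation $a_k\ll a$ yields $a_k\leq m\cdot 1$ for some $m$ depending on $k$. By the previous paragraph, $\mu_{a_k}=\lambda(a_k)\lambda$ for each $k$. As $b\mapsto ab$ and $\lambda$ preserve suprema of increasing sequences, we obtain $\lambda(ab)=\sup_k\lambda(a_k b)=\sup_k\lambda(a_k)\lambda(b)=\lambda(a)\lambda(b)$ for every $b$, using that multiplication in $[0,\infty]$ by the fixed value $\lambda(b)$ preserves suprema. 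Thus $\mu_a=\lambda(a)\lambda$ for all $a$, proving that $\lambda$ is multiplicative.

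The main obstacle is the one already flagged: in the absence of a complement $1-a$ one must produce the functional $\nu$ abstractly, and the clean way to do so is precisely the algebraic orderedness of $F(R)$ coming from \axiomO{5}; once $\nu$ is available, the extreme-point computation and the way-below approximation are routine. One should still double-check the degenerate values $\lambda(a)\in\{0,n\}$ (handled above via faithfulness) and confirm that all rescalings remain genuine functionals normalized at $1$, but these present no real difficulty.
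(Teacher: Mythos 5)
Your proof is correct, but it takes a genuinely different route from the paper's. Both arguments hinge on the same convex-decomposition trick---exhibiting $\lambda$ as a proper convex combination in $F_1(R)$ of a rescaled ``multiplication functional'' $\tfrac{1}{\lambda(a)}\mu_a$ and a complementary functional---but they differ in how that complement is produced. The paper argues by contradiction and works inside the semiring: it first reduces to soft elements via $Z$-multiplication (which requires \autoref{thm:simpleSemirg}, hence nonelementarity), passes to $L(F(R))$, and invokes \cite[Lemma~3.3.2]{Rob13Cone} to find an actual element $y$ with $x+y=n1'$, from which the complementary functional $s\mapsto\lambda(y)^{-1}\lambda(ys)$ is built. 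You instead construct the complement $\nu$ abstractly in the cone of functionals, using that $F(R)$ is algebraically ordered under \axiomO{5} (\autoref{pgr:fctl}, citing \cite[Proposition~2.2.3]{Rob13Cone}); the degenerate cases $\lambda(a)\in\{0,n\}$ are correctly dispatched by faithfulness of nonzero functionals on simple $\CatCu$-semigroups, and the passage from $a\leq n\cdot 1$ to general $a$ by suprema is sound, including when $\lambda(a)$ or $\lambda(b)$ equals $0$ or $\infty$ under the product conventions on $[0,\infty]$. Your route is shorter, avoids the softness/realification machinery entirely, and never actually uses that $R$ is nonelementary, so it proves the statement under slightly weaker hypotheses; the trade-off is only that both approaches ultimately lean on results of Robert about $F(R)$, just different ones.
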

\begin{proof}
To reach a contradiction, assume $\lambda(ab)\neq\lambda(a)\lambda(b)$ for some $a,b\in R$.
By Corollaries~\ref{prp:simpleSemirgCompacts} and~\ref{prp:simpleSemirgZmult}, we know that $R$ is stably finite and that $R$ has $Z$-multiplication.
By \autoref{prp:softZmult}, we have $\lambda(x)=\lambda(1'x)$ for every $x\in R$.
Thus, we may assume that $a$ and $b$ are soft elements, by replacing $a$ with $1'a$ and by replacing $b$ with $1'b$, if necessary.

Since $a$ is the supremum of a rapidly increasing sequence and since functionals preserve suprema of increasing sequences, we may also assume $a\ll\infty$.
Choose $n\in\N$ such that $a\ll n1'$.

Since $R$ satisfies \axiomO{5}, we may apply \cite[Theorem~3.2.1]{Rob13Cone} to deduce $R_\soft\cong L(F(R))$.
By the definition of $L(F(R))$, we can find a sequence $(x_k)_k$ in $L(F(R))$ such that $x_k\lhd x_{k+1}$ for each $k\in\N$ and such that $a=\sup_k x_k$.
Since
\[
\sup_k\lambda(x_kb)=\lambda(ab)
\neq\lambda(a)\lambda(b) = \sup_k\lambda(x_k)\lambda(b),
\]
we can choose $k\in\N$ with $\lambda(x_kb)\neq\lambda(x_k)\lambda(b)$.
Set $x:=x_k$.

We have $x\lhd x_{k+1}\ll n1'$.
By \cite[Lemma~3.3.2]{Rob13Cone}, we can choose $y\in L(F(R))\cong R_\soft$ satisfying $x+y=n1'$.
Without loss of generality, we may assume $x,y\neq 0$.
Then we can consider the maps $\lambda_i\colon R\to[0,\infty]$ given by
\[
\lambda_0(s) =\lambda(x)^{-1}\lambda(xs),\quad
\lambda_1(s) =\lambda(y)^{-1}\lambda(ys),
\]
for $s\in R$.
It is easy to check that $\lambda_0$ and $\lambda_1$ are functionals on $R$ and that $\lambda_0(b)\neq\lambda_1(b)$.
Since
\[
\lambda=\tfrac{\lambda(x)}{n}\lambda_0+\tfrac{\lambda(y)}{n}\lambda_1,
\]
we have shown that $\lambda$ is not an extreme point of $F_1(R)$, as desired.
\end{proof}

\begin{prp}
\label{prp:multiplicativeUniqueFctl}
Let $R$ be a simple, nonelementary $\Cu$-semiring satisfying \axiomO{5}.
Assume that $R$ has a unique functional $\lambda$ that is normalized at $1$.
Then $\lambda$ is multiplicative.
\end{prp}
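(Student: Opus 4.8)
The plan is to deduce the statement directly from the preceding \autoref{prp:FctlExtrMult}, which asserts that for a simple, nonelementary $\CatCu$-semiring satisfying \axiomO{5}, every functional that is an extreme point of $F_1(R)$ is automatically multiplicative. Since the hypotheses of the present proposition coincide exactly with those of that lemma, the entire task reduces to checking that the unique normalized functional $\lambda$ is an extreme point of $F_1(R)$.

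First I would observe that the assumption of a \emph{unique} functional normalized at $1$ means precisely that $F_1(R)=\{\lambda\}$, a one-point set. A singleton subset of any convex set is trivially extreme: if $\lambda=t\mu_1+(1-t)\mu_2$ with $\mu_1,\mu_2\in F_1(R)$ and $t\in(0,1)$, then necessarily $\mu_1=\mu_2=\lambda$, so $\lambda$ cannot be written as a nontrivial convex combination of distinct elements of $F_1(R)$. Thus $\lambda$ is an extreme point of $F_1(R)$, and \autoref{prp:FctlExtrMult} immediately gives that $\lambda$ is multiplicative.

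There is essentially no obstacle here: all of the analytic content—the decomposition argument using $Z$-multiplication, the identification $R_\soft\cong L(F(R))$, and the splitting of a normalized functional along a complemented pair $x+y=n1'$—has already been carried out in the proof of \autoref{prp:FctlExtrMult}. The only thing one must be slightly careful about is the elementary point that a singleton forces extremality, which does not even require knowing that $F_1(R)$ is convex or closed (although it is, since $1$ is a compact element when present; and if $1$ is not compact the argument still only uses that $F_1(R)$ has exactly one element). Hence the proof is a one-line corollary of the lemma, and I would present it as such.
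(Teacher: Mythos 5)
Your proof is correct and is essentially identical to the paper's own argument: the paper likewise notes that $F_1(R)=\{\lambda\}$, so $\lambda$ is trivially an extreme point, and then invokes \autoref{prp:FctlExtrMult}. No further comment is needed.
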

\begin{proof}
This follows directly from \autoref{prp:FctlExtrMult}, since $F_1(R)=\{\lambda\}$ and hence $\lambda$ is an extreme point of $F_1(R)$.
\end{proof}

\begin{cor}
\label{prp:uniquenessR}
Let $R$ be a simple $\Cu$-semiring satisfying \axiomO{5}.
Assume that $R$ has a unique normalized functional.
Then $R\cong [0,\infty]$, as $\CatCu$-semirings, if and only if $1_R$ is not compact.
\end{cor}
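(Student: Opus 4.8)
The plan is to treat the two implications separately. For the `only if' direction, recall that $[0,\infty]$ contains no nonzero compact elements (for instance $1=\sup_n(1-\tfrac1n)$, yet no $1-\tfrac1n\geq 1$). Any $\CatCu$-semiring isomorphism $R\cong[0,\infty]$ must send $1_R$ to the unit $1$ of $[0,\infty]$, and isomorphisms preserve compactness; hence $1_R$ is not compact.

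For the `if' direction, assume $1_R$ is not compact. First I would record the structural reductions. Since $0$ is always compact we have $R\neq\{0\}$; moreover a minimal nonzero element $a$ of any $\CatCu$-semigroup is automatically compact (by \axiomO{2}, $a=\sup a^{\ll}$, and every $x\ll a$ satisfies $x\leq a$, so $x\in\{0,a\}$, forcing $a\ll a$), so $R$ is nonelementary. Then \autoref{prp:simpleSemirgCompacts} shows $R$ is stably finite and contains no nonzero compact elements, whence by \autoref{prp:softNoncpctSimple} every nonzero element of $R$ is soft. By \autoref{thm:simpleSemirg}, $R$ is almost unperforated and almost divisible, so by \autoref{prp:RModTFAE} it has an $[0,\infty]$-multiplication and $R\cong[0,\infty]\otimes_\CatCu R$. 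Finally, by \autoref{prp:multiplicativeUniqueFctl} the unique normalized functional $\lambda$ (with $\lambda(1_R)=1$) is multiplicative.

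Next I would build the isomorphism and pin down what must be shown. Since $[0,\infty]$ is solid (\autoref{exa:R}), every functional on an $[0,\infty]$-module is $[0,\infty]$-linear (\autoref{prp:solidTFAE}); thus $\lambda\colon R\to[0,\infty]$ is a unital, multiplicative, $[0,\infty]$-linear generalized $\CatCu$-morphism. Writing $\iota\colon[0,\infty]\to R,\ t\mapsto t\cdot 1_R$, one checks $\lambda\circ\iota=\id$, so $\lambda$ is surjective; it then suffices to show $\lambda$ is an order-embedding, for it will then be a $\CatCu$-semigroup isomorphism and, being unital and multiplicative, a $\CatCu$-semiring isomorphism. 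Because every nonzero element is soft and $R$ is almost unperforated, \autoref{prp:soft_comparison} reduces the order-embedding property to: $\lambda(a)\leq\lambda(b)$ implies $\hat a\leq\hat b$, that is, $\mu(a)\leq\mu(b)$ for every functional $\mu\in F(R)$. So the entire problem reduces to classifying $F(R)$: I must show every functional on $R$ is a scalar multiple of $\lambda$. The finite case is immediate: if $0<\mu(1_R)<\infty$ then $\mu/\mu(1_R)$ is normalized, hence equals $\lambda$, so $\mu=\mu(1_R)\lambda$; and if $\mu(1_R)=0$ then $\mu(\infty)=\sup_n\mu(n1_R)=0$, and since $R$ is simple $\infty\cdot 1_R$ is the largest element, forcing $\mu\equiv 0$.

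The essential point — and the main obstacle — is to exclude nontrivial functionals $\mu$ with $\mu(1_R)=\infty$. I would reduce this to showing $1_R\varpropto e$ (i.e. $1_R\leq Ne$ for some $N$) for every nonzero $e\in R$: granting that, any nontrivial $\mu$ admits $e$ with $0<\mu(e)<\infty$, whence $\mu(1_R)\leq N\mu(e)<\infty$, returning us to the finite case. To prove $1_R\varpropto e$ I would argue by contradiction with the refined state-extension machinery of \autoref{prp:extendingStates} and \autoref{prp:stateEstimateR} (which the paper develops precisely to control extended states): starting from the canonical state $n1_R\mapsto n$ on $\langle 1_R\rangle$, if $1_R\not\varpropto e$ one obtains extensions to $\langle 1_R,e\rangle$ realizing a nondegenerate interval of values at $e$ — the upper bound being finite and bounded below by $\lambda(e)>0$ since $\lambda$ is faithful and finite on $1_R$ — and passing to functionals on $R$ via the construction in the proof of \autoref{prp:existenceFctl} and renormalizing at $1_R$ would yield two distinct functionals normalized at $1_R$, contradicting uniqueness. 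With the classification in hand, $\mu(a)=\mu(1_R)\lambda(a)$ for all $\mu$ and $a$, so $\lambda(a)\leq\lambda(b)$ gives $\hat a\leq\hat b$ and hence $a\leq b$ by \autoref{prp:soft_comparison}, completing the isomorphism $R\cong[0,\infty]$. I expect this state-extension step — equivalently, ruling out nontrivial functionals that are infinite on the unit — to be the delicate part, since $R$ is not assumed weakly cancellative and the regularization inherent in passing from states to $\CatCu$-functionals must be handled with care; the remaining steps are routine given the earlier results.
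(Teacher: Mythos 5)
Your reductions are all correct up to the point where $\lambda$ must be shown to be an order-embedding, and you have correctly located the crux: via \autoref{prp:soft_comparison} everything hinges on knowing that $\mu(a)\leq\mu(b)$ for \emph{every} $\mu\in F(R)$ whenever $\lambda(a)\leq\lambda(b)$, i.e.\ on a full classification of $F(R)$. The gap is that your proposed argument for that classification is not a proof. The uniqueness hypothesis only forbids two distinct functionals taking the value $1$ at $1_R$, so a functional with $\mu(1_R)=\infty$ and $0<\mu(e)<\infty$ does not contradict it directly; you must manufacture from the failure of $1_R\varpropto e$ a second \emph{normalized} functional. But the quantities $p',r',r$ of \autoref{prp:extendingStates} and \autoref{prp:stateEstimateR} govern extensions under the hypothesis $e\varpropto 1_R$ (which always holds after replacing $e$ by a nonzero $e'\ll e$, since $e'\leq n\cdot 1_R$); the assumption $1_R\not\varpropto e$ does not visibly force $p'<r'$, so you have not exhibited two distinct states on $\langle 1_R,e\rangle$. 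Even granting them, the passage to functionals in \autoref{prp:existenceFctl} runs through an uncontrolled Blackadar--R{\o}rdam extension followed by the regularization $\lambda_0(b)=\sup\{\tilde f(b'):b'\ll b\}$, and since $1_R$ is \emph{not} compact this regularization can lower the value at $1_R$ by different amounts for the two states, so after renormalizing at $1_R$ the resulting functionals need not remain distinct. Every elementary attempt to repair this (e.g.\ deducing $x\leq e$ from $\lambda(x)<\lambda(e)$ via \autoref{prp:comparisonRelations} or \autoref{prp:relSTFAE}) circles back to the comparison-by-all-functionals statement you are trying to prove.

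The paper's proof avoids this circle: after the same preliminary reductions (nonelementary, $R_c=\{0\}$ by \autoref{prp:simpleSemirgCompacts}, almost unperforated and almost divisible by \autoref{thm:simpleSemirg}), it invokes \autoref{thm:isothm} to conclude $R=R_\soft\cong L(F(R))\cong[0,\infty]$ as $\CatCu$-semigroups --- the needed information about $F(R)$ is packaged there via Robert's identification of the realification with $L(F(S))$ under \axiomO{5} --- and then rescales the isomorphism so that it is normalized at $1_R$; being then a normalized functional, it is automatically multiplicative by \autoref{prp:multiplicativeUniqueFctl}, hence a $\CatCu$-semiring isomorphism. If you wish to keep your direct construction of the isomorphism as $\lambda$, the clean way to close the gap is to quote \autoref{thm:isothm} (or \autoref{prp:tensProdWithR} and \autoref{prp:RModRealification}, which identify $R\cong R_R\subset\Lsc(F(R))$) rather than to re-derive the structure of $F(R)$ by hand.
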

\begin{proof}
The unit of $[0,\infty]$ is clearly not compact.
Conversely, assume that $R$ satisfies the conditions of the statement and that $1_R$ is not compact.
Then $R_c=\{0\}$, by \autoref{prp:simpleSemirgCompacts}.
Using \autoref{prp:simpleSemirgElmtry} and \autoref{thm:simpleSemirg}, we deduce that $R$ is nonelementary, almost unperforated and almost divisible.
It follows from \autoref{thm:isothm} that $R\cong[0,\infty]$, as $\CatCu$-semigroups.
By rescaling, we can find an isomorphism $\varphi\colon R\to[0,\infty]$ with $\varphi(1_R)=1$.
Then $\varphi$ is a normalized functional and therefore automatically multiplicative by \autoref{prp:multiplicativeUniqueFctl}.
\end{proof}

The requirement in the following result that the function $\widehat{1}\in\Lsc(F(R))$ be continuous is not very restrictive.
It is automatically satisfied if $1$ is a compact element or if $R$ has only finitely many extremal functionals.

\begin{prp}
\label{prp:extremeFunctionals}
Let $R$ be simple, nonelementary $\Cu$-semiring satisfying \axiomO{5} and \axiomO{6}.
Assume that $\widehat{1}$ is continuous.
Then:
\beginEnumStatements
\item
A functional $\lambda\in F_1(R)$ is multiplicative if and only if it is an extreme point of $F_1(R)$.
\item
The space $F_1(R)$ is a Bauer simplex, that is, a Choquet simplex with closed extreme boundary.
\item
Every functional $\lambda\in F_1(R)$ satisfies $\lambda(ab)^2\leq\lambda(a^2)\lambda(b^2)$ for all $a,b\in R$.
In particular, we have $\lambda(a)^2\leq\lambda(a^2)$ for every $a\in R$.
\end{enumerate}
\end{prp}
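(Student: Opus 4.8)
The plan is to prove the three statements in the order (2), then the outstanding implication of (1), then (3), using the easy implication of (1) as an input. First recall what the hypotheses already give us: by \autoref{thm:simpleSemirg} the semiring $R$ is almost unperforated and almost divisible, by \autoref{prp:simpleSemirgZmult} it has $Z$-multiplication, and by \autoref{prp:simpleSemirgCompacts} it is stably finite; in particular \autoref{thm:isothm} identifies its soft part with $L(F(R))$. The implication ``extreme point $\Rightarrow$ multiplicative'' of (1) is exactly \autoref{prp:FctlExtrMult}, so the extreme boundary $\partial_e F_1(R)$ is contained in the set $X$ of normalized multiplicative functionals. Since $\widehat{1}$ is continuous, $F_1(R)=\{\lambda\in F(R):\widehat{1}(\lambda)=1\}$ is a genuine compact convex base of the cone $F(R)$, so that Choquet theory applies.

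First I would establish that $F_1(R)$ is a Bauer simplex. That it is a Choquet simplex should follow from the general functional theory of \autoref{sec:fctl}: the cone $F(R)$ of a $\CatCu$-semigroup satisfying \axiomO{5} is a lattice-ordered cone (the abstract form of the fact that trace cones of \ca{s} are lattices, cf.\ \cite{EllRobSan11Cone}), and a compact convex set is a Choquet simplex precisely when the cone it generates is a lattice. The new content is the closedness of the extreme boundary, and this is where the multiplicative structure enters. Here I would use that, by biadditivity of the product, a functional $\lambda$ is multiplicative if and only if $\lambda(a^2)=\lambda(a)^2$ for every $a\in R$ (polarizing $(a+b)^2=a^2+2ab+b^2$ and cancelling the factor $2$ on finite values). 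Reducing to soft elements via \autoref{prp:softZmult}, and invoking the continuity of $\widehat1$ to control the otherwise merely lower semicontinuous functions $\widehat a$, I would show that $X$ is closed in $F_1(R)$.

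Next I would run the measure-representation argument to obtain simultaneously the converse of (1) and statement (3). Each $\lambda\in F_1(R)$ is the barycentre of a maximal probability measure $\nu_\lambda$, which in a Choquet simplex is the unique boundary measure and is carried by $\overline{\partial_e F_1(R)}\subseteq X$. On $X$ the functionals are multiplicative, so for every $a$ we have $\lambda(a^2)=\int_X \omega(a)^2\,d\nu_\lambda(\omega)$ and $\lambda(a)=\int_X \omega(a)\,d\nu_\lambda(\omega)$. If $\lambda$ is itself multiplicative, then $\int_X \omega(a)^2\,d\nu_\lambda=\lambda(a^2)=\lambda(a)^2=\left(\int_X \omega(a)\,d\nu_\lambda\right)^2$, which is the equality case of Jensen's inequality and forces $\omega\mapsto\omega(a)$ to be $\nu_\lambda$-almost everywhere constant for each $a$; running this over a basis of $R$ gives $\nu_\lambda=\delta_\lambda$, so $\lambda$ is extreme. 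This proves ``multiplicative $\Rightarrow$ extreme'', whence $\partial_e F_1(R)=X$ is closed and $F_1(R)$ is a Bauer simplex, finishing (1) and (2). Statement (3) is then immediate from the Cauchy--Schwarz inequality for $\nu_\lambda$: $\lambda(ab)^2=\left(\int_X \omega(a)\omega(b)\,d\nu_\lambda\right)^2\leq \int_X \omega(a)^2\,d\nu_\lambda\,\int_X \omega(b)^2\,d\nu_\lambda=\lambda(a^2)\lambda(b^2)$, with $b=1$ giving $\lambda(a)^2\leq\lambda(a^2)$; the values $+\infty$ are handled separately, using that $\lambda$ is finite on every element way-below $\infty$ by simplicity.

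The hard part will be the Bauer-simplex step, and specifically the closedness of $X$: the functions $\widehat a$ are in general only lower semicontinuous on $F(R)$, so the naive description of $X$ as a joint equality set of such functions need not be closed. The continuity hypothesis on $\widehat1$ is precisely what is available to repair this, and the argument will have to combine it with the reduction to the soft part $L(F(R))$ and the almost-unperforation comparison of \autoref{prp:soft_comparison}, in the spirit of Handelman's treatment of simple ordered rings \cite{Han13SimpleDimGps}. Establishing that $F(R)$ is a lattice cone in the abstract setting, rather than merely citing the \ca{} case, is the other point that will require care.
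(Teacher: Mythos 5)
Your skeleton is the right one -- the Choquet simplex structure via the lattice cone of \cite{Rob13Cone}, \autoref{prp:FctlExtrMult} for ``extreme implies multiplicative'', a representing measure plus Cauchy--Schwarz for (3) -- but the order in which you arrange the steps turns the one thing you admit you cannot do into a load-bearing prerequisite. You put the closedness of the set $X$ of normalized multiplicative functionals first, and you genuinely need it there: the representation $\lambda(x)=\int_X\omega(x)\,d\nu_\lambda$ rests on the inclusion $\overline{\partial_e F_1(R)}\subseteq X$, which presupposes $X$ closed, and since the functions $\widehat{a}$ are only lower semicontinuous this closedness is exactly the point you flag as unresolved. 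The paper's proof is arranged precisely so that no such statement is ever needed as an input: the maximal measure is taken on the extreme boundary $\partial F_1(R)$ itself, where \autoref{prp:FctlExtrMult} already gives multiplicativity; the formula $\lambda(x)=\int_{\partial F_1(R)}\varphi(x)\,d\mu(\varphi)$ is checked first for $x$ with $\widehat{x}$ continuous and then extended to all of $R$ by monotone convergence (compact elements, then soft elements via \cite[Proposition~3.1.6]{Rob13Cone}); Cauchy--Schwarz then gives (3). The converse of (1) follows from (3) by a two-line convexity computation with no measure theory at all: if $\lambda=\tfrac{1}{2}(\lambda_0+\lambda_1)$ with $\lambda_0(a)=\lambda(a)-\varepsilon$ and $\lambda_1(a)=\lambda(a)+\varepsilon$, then $\lambda(a^2)\geq\tfrac{1}{2}\bigl(\lambda_0(a)^2+\lambda_1(a)^2\bigr)=\lambda(a)^2+\varepsilon^2$, so $\lambda$ is not multiplicative. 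Only after (1) is in hand does one conclude that the extreme boundary equals $X$ and is closed.

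A second, independent problem is your ``equality case of Jensen'' argument for ``multiplicative implies extreme'': you conclude $\nu_\lambda=\delta_\lambda$ by making $\omega\mapsto\omega(a)$ almost everywhere constant ``over a basis of $R$''. The proposition does not assume $R$ countably-based, so $F_1(R)$ need not be metrizable and the union of the exceptional $\nu_\lambda$-null sets over uncountably many $a$ need not be null; you also still have to manage the value $+\infty$ inside Jensen. The convex-combination argument above sidesteps both issues. I would therefore restructure along the paper's lines -- simplex structure, then (3), then the converse of (1), then closedness of the boundary as a corollary -- rather than attempting a direct proof that $X$ is closed.
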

\begin{proof}
The assumption that $\widehat{1}$ is continuous implies that $F_1(R)$ is a closed subset of $F(R)$.
(In fact, this is equivalent to $\widehat{1}$ being continuous.)
Since $F(R)$ is compact, it follows that $F_1(R)$ is a compact, convex set.
We denote the subset of its extreme points by $\partial F_1(R)$.

We note that $F_1(R)$ is a Choquet simplex.
This follows for instance from \cite[Proposition~3.2.3, Theorem~4.1.2]{Rob13Cone}.
Note first that $F_1(R)$ is a basis for
\[
F_0(R)=\left\{ \lambda\in F(R) : \lambda(a)<\infty \text{ for all } a\ll\infty \right\}.
\]
Let $V(F_0(R))$ denote the vector space of linear, real-valued, continuous functions on $F_0(R)$.
Then $F_0(R)$ is a lattice-ordered, strict, convex cone in the vector space of linear functionals on $V(F_0(R))$.

Let us show \enumStatement{3}.
Since $F_1(R)$ is a Choquet simplex, there is a measure $\mu$ on its extreme boundary $\partial F_1(R)$ such that
\begin{align}
\label{prp:extremeFunctionals:eq1}
\lambda(x)=\int_{\partial F_1(R)}\varphi(x)d\mu(\varphi),
\end{align}
for every element $x$ in $R$ for which $\widehat{x}$ is continuous.

We claim that \eqref{prp:extremeFunctionals:eq1} holds for every $x\in R$.
This is clear if $x$ is compact, since then $\widehat{x}$ is continuous.
If $x$ is soft, then it follows from \cite[Proposition~3.1.6]{Rob13Cone} that there is an increasing sequence $(x_k)_k$ in $R$ such that $x=\sup_k x_k$ and $\widehat{x_k}$ is continuous for each $k\in\N$.
Using the theorem of monotone convergence at the third step, we obtain
\begin{align*}
\lambda(x)
= \sup_k\lambda(x_k)
&= \sup_k \int_{\partial F_1(R)}\varphi(x_k)d\mu(\varphi) \\
&= \int_{\partial F_1(R)}\sup_k\varphi(x_k)d\mu(\varphi)
\,=\, \int_{\partial F_1(R)}\varphi(x)d\mu(\varphi),
\end{align*}
which verifies \eqref{prp:extremeFunctionals:eq1}.

Now, given $a$ and $b$ in $R$, we use the Cauchy-Schwarz inequality at the second step to deduce
\begin{align*}
\lambda(ab)^2
&=\left(\int_{\partial F_1(R)}\varphi(a)\varphi(b)d\mu(\varphi)\right)^2 \\
&\leq \int_{\partial F_1(R)}\varphi(a)^2d\mu(\varphi) \int_{\partial C}\varphi(b)^2d\mu(\varphi)
\,=\,\lambda(a^2)\lambda(b^2).
\end{align*}

Next, let us show \enumStatement{1}.
By \autoref{prp:FctlExtrMult}, every functional in $\partial F_1(R)$ is multiplicative.
To show the converse, assume $\lambda=\frac{1}{2}(\lambda_0+\lambda_1)$ for two different functionals $\lambda_0$ and $\lambda_1$.
Choose $a\in R$ satisfying $\lambda_0(a)\neq\lambda_1(a)$.
By switching the role of $\lambda_0$ and $\lambda_1$, if necessary, we may find $\varepsilon>0$ such that
\[
\lambda_0(a)=\lambda(a)-\varepsilon,\quad
\lambda_1(a)=\lambda(a)+\varepsilon.
\]
Then, using \enumStatement{3} at the second step, it follows
\[
\lambda(a^2)=\frac{1}{2}(\lambda_0(a^2)+\lambda_1(a^2))
\geq \frac{1}{2}(\lambda_0(a)^2+\lambda_1(a)^2)
=\lambda(a)^2+\varepsilon^2,
\]
which shows $\lambda(a^2)\neq\lambda(a)^2$ and thus $\lambda$ is not multiplicative.

Finally, it follows easily from \enumStatement{1} that $\partial F_1(R)$ is a closed subset of $F_1(R)$.
This verifies \enumStatement{2}.
\end{proof}

\section{Algebraic \texorpdfstring{$\CatCu$}{Cu}-semirings}
\label{sec:algebraicSemirg}

Recall from \autoref{dfn:algebraicSemigp} that a $\CatCu$-semigroup $S$ is \emph{algebraic} if every element in $S$ is the supremum of an increasing sequence of compact elements.
\index{terms}{Cu-semiring@$\CatCu$-semiring!algebraic}
\index{terms}{algebraic Cu-semiring@algebraic $\CatCu$-semiring}

\begin{pgr}
\label{pgr:CuCompletionSrg}
Let $K$ be a cancellative, conical semiring.
We equip $K$ with the algebraic order.
Then $K$ is a \pom{} and we may apply the construction of \autoref{sec:algebraicSemigp} to the underlying additive monoid of $K$.
We denote by $S$ the resulting $\CatCu$-completion of $K$.
Then $S$ is an algebraic $\CatCu$-semigroup whose compact elements can be identified with $K$.
We therefore think of $K$ as a submonoid of $S$.

The multiplication on $K$ can be extended to $S$ as follows:
First, we define the product of an element in $K$ with an element in $S$.
Let $a\in K$ and $b\in S$.
Choose an increasing sequence $(b_k)_k$ in $K$ with $b=\sup_k b_k$.
Then the sequence $(ab_k)_k$ is increasing (in $K$) and we may set $ab := \sup_k (ab_k)$.
It is straightforward to check that this is independent of the choice of the sequence $(b_k)_k$.
Moreover, if $a'$ and $a$ in $K$ satisfy $a'\leq a$, then $a'b\leq ab$ for every $b\in S$.

Now we can define the product of two arbitrary elements $a$ and $b$ in $S$ as follows.
Choose an increasing sequence $(a_k)_k$ in $K$ with $a=\sup_k a_k$.
For each $k$, the product $a_kb$ is already defined.
Moreover, the resulting sequence $(a_kb)_k$ is increasing and we may set $ab := \sup_k (a_kb)$.
It is easy to check that this defines a $\CatCu$-product on $S$.
We denote the resulting $\CatCu$-semiring by $\Cu(K)$.

By \autoref{prp:propertiesAlgebraic}, $\Cu(K)$ is a weakly cancellative, algebraic $\CatCu$-semiring satisfying \axiomO{5}.
Moreover, there is a natural isomorphism between $K$ and the semiring of compact elements in $\Cu(K)$.

Given cancellative, conical semirings $K$ and $L$, it is clear that every semiring homomorphism $K\to L$ induces a multiplicative $\CatCu$-morphism from $\Cu(K)$ to $\Cu(L)$.
This defines a functor from the category $\CatCon\CatSrg_\txtCanc$ of cancellative, conical semirings to the category of algebraic $\CatCu$-semirings.

Conversely, for every $\CatCu$-semiring $S$ with compact unit, the compact elements in $S$ form a subsemiring.
The assignment $S\mapsto S_c$ can be extended to a functor from the category of algebraic $\CatCu$-semirings to the category of conical semirings.

Let $S$ be a weakly cancellative, algebraic $\Cu$-semiring satisfying \axiomO{5}.
By \autoref{prp:propertiesAlgebraic}, the subset $S_c$ is a cancellative, conical, algebraically ordered semiring.
Moreover, the $\CatCu$-semiring $S$ is naturally isomorphic to $\Cu(S_c)$.
\end{pgr}

\begin{prp}
\label{prp:equivalenceSrgAlgCu}
The functors from Paragraphs \ref{pgr:CuCompletionSrg} and \ref{pgr:equivalenceSrgPoRg} establish equivalences between the following categories:
\beginEnumStatements
\item
Directed, \por{s}, together with ring homomorphisms.
\item
Cancellative, conical semirings, together with semiring homomor\-phisms.
\item
Weakly cancellative, algebraic $\Cu$-semirings satisfying \axiomO{5}, together with multiplicative $\Cu$-morphisms.
\end{enumerate}
\end{prp}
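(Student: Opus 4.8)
The plan is to establish the equivalence by exhibiting mutually inverse (up to natural isomorphism) functors between the three categories and then invoking the appropriate earlier results to verify that these functors are well-defined and land in the correct subcategories. The key observation is that the equivalence factors through a chain: categories (1) and (2) are already linked by the functors referenced in Paragraph~\ref{pgr:equivalenceSrgPoRg} (passing from a directed \por{} $G$ to its positive cone $G_+$, and back via the Grothendieck completion $\Gr$), while categories (2) and (3) are linked by the functors in \autoref{pgr:CuCompletionSrg} (the $\CatCu$-completion $K\mapsto\Cu(K)$ and the compact-elements functor $S\mapsto S_c$). Since an equivalence of categories is closed under composition, it suffices to prove that each of these two pairs is an equivalence; the composite then gives the full equivalence among all three.

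First I would treat the equivalence between (2) and (3), which is the genuinely $\CatCu$-theoretic content. On objects, I would use the last paragraph of \autoref{pgr:CuCompletionSrg}: for a weakly cancellative, algebraic $\Cu$-semiring $S$ satisfying \axiomO{5}, the earlier \autoref{prp:propertiesAlgebraic} guarantees that $S_c$ is a cancellative, conical, algebraically ordered semiring, and moreover $S\cong\Cu(S_c)$ naturally. Conversely, for a cancellative conical semiring $K$, the construction of \autoref{pgr:CuCompletionSrg} shows $\Cu(K)$ is such a $\Cu$-semiring with $(\Cu(K))_c\cong K$. These two natural isomorphisms $S\cong\Cu(S_c)$ and $K\cong(\Cu(K))_c$ are exactly the unit and counit of the adjunction, and their being isomorphisms is precisely what makes the pair an equivalence. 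The functoriality on morphisms is already recorded in \autoref{pgr:CuCompletionSrg}: semiring homomorphisms induce multiplicative $\Cu$-morphisms and vice versa; one must also check that every multiplicative $\Cu$-morphism between algebraic $\Cu$-semirings restricts to a semiring homomorphism on compact elements and is determined by this restriction (using algebraicity and continuity), which is routine given \autoref{pgr:algebraicSemigp} and \autoref{pgr:equivalenceAlgCu}. The underlying additive equivalence between cancellative conical monoids and algebraic $\CatCu$-semigroups is already \autoref{pgr:equivalenceAlgCu}, so here one only needs to track that the multiplicative structure is carried along compatibly in both directions.

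The equivalence between (1) and (2) is essentially classical: sending a directed \por{} $G$ to its positive cone $G_+$ yields a cancellative, conical semiring (cancellativity and conicality coming from $G$ being a partially ordered \emph{group}, directedness ensuring $G=G_+-G_+$), and the inverse is the Grothendieck completion, which recovers $G$ from $G_+$ precisely because $G_+$ is cancellative and $G$ is directed. One must verify that the ring multiplication on $G$ corresponds to the semiring multiplication on $G_+$ and that this matches the product used in \autoref{pgr:CuCompletionSrg}; this is where the compatibility of $\Gr$ with multiplicative structure (analogous to \autoref{prp:GrTensorMon} for tensor products) enters. I expect the main obstacle to be the bookkeeping around the multiplicative unit and conicality: one must ensure that the unit $1$ of the semiring is handled correctly under $\Gr$ and that the product extends from $G_+$ to all of $G$ without ambiguity, and that no spurious zero divisors or order-pathologies are introduced. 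The purely order-theoretic and additive parts are fully supplied by \autoref{pgr:equivalenceAlgCu} and \autoref{prp:propertiesAlgebraic}, so the real work is confined to checking that multiplication is transported faithfully through each functor, and that the naturality squares commute — a matter of diagram-chasing rather than any new inequality.
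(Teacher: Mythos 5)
Your proposal is correct and follows essentially the same route as the paper, which offers no separate proof: the statement is presented as a summary of the constructions in \autoref{pgr:CuCompletionSrg} and \autoref{pgr:equivalenceSrgPoRg}, with the equivalence between (2) and (3) supplied by \autoref{prp:propertiesAlgebraic} (giving $S_c$ cancellative, conical and algebraically ordered, and $S\cong\Cu(S_c)$) together with $(\Cu(K))_c\cong K$, and the equivalence between (1) and (2) supplied by \autoref{prp:equivalenceSrgPoRg}. Your composition of the two equivalences and the routine checks on morphisms are exactly what the paper intends.
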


The following notion of weak divisibility was introduced in \cite[Definition~2.2]{OrtPerRor11CoronaRefinement} (see also \cite{PerRor04AFembedding} and \cite{AraGooPerSil10NonsimplePurelyInfinite}).
This property has also been called \emph{quasi-divisible} in \cite[Definition~3.2]{Weh98EmbeddingRefinement}.

\begin{dfn}
\label{dfn:wkDivMon}
\index{terms}{monoid!weakly divisible}
\index{terms}{weakly divisible monoid}
A monoid $M$ is \emph{weakly divisible} if for every $s\in M$, there are $a, b\in M$ such that $s=2a+3b$.
\end{dfn}

\begin{dfn}
\label{dfn:nonElemSrg}
\index{terms}{semiring!nonelementary}
\index{terms}{nonelementary semiring}
A conical semiring $R$ is \emph{nonelementary} if there exist nonzero elements $s,t\in R$ such that $1=s+t$.
\end{dfn}

\begin{rmks}
\label{rmk:wkDivNonElemSrg}
(1)
Let $S$ be a conical semiring.
Then the underlying additive monoid of $S$ is weakly divisible if and only if there exist $s,t\in S$ with $1=2s+3t$.

(2)
Let $S$ be a conical semiring.
If we equip $S$ with its algebraic pre-order, then $S$ is nonelementary if and only if $S\neq\{0\}$ and the unit is not a minimal nonzero element.
\end{rmks}

It is easily seen that every (nonzero) weakly divisible, conical semiring is nonelementary.
In the next result, we show that the converse holds for simple, conical semirings.
Part of the argument below is inspired by \cite{DadRor09ssa}.

\begin{prp}
\label{prp:simpleSrgDiv}
Let $R$ be a nonelementary, conical semiring that is simple for its algebraic pre-order.
Then $R$ is weakly divisible.
\end{prp}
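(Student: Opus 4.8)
The plan is to reduce the statement to producing a single relation $1 = 2s + 3t$, to build it from a nonelementary splitting of the unit by imitating the flip argument of \cite{DadRor09ssa}, and then to use simplicity to absorb the residual ``corner'' terms. First, by \autoref{rmk:wkDivNonElemSrg}(1) it suffices to find $s,t\in R$ with $1 = 2s + 3t$: once this holds, multiplying by an arbitrary $x$ gives $x = 2(xs) + 3(xt)$. Since $R$ is nonelementary, write $1 = c + d$ with $c,d$ nonzero. Multiplying this relation by $c$ and by $d$ yields the exact identities $c = c^2 + cd$ and $d = cd + d^2$, so in particular $cd \leq c$ and $cd \leq d$. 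I would first dispose of the degenerate cases: if $cd = 0$ then $c = c^2 + cd = c^2$, while multiplying a relation $c + y = md$ (coming from $c\varpropto d$) by $c$ gives $c^2 + cy = m\,cd = 0$, whence $c^2 = 0$ and so $c = 0$, a contradiction; thus $cd \neq 0$. Likewise, if $c^2 = d^2 = 0$, then $c = cd = d$ and $1 = c + d = 2cd \in 2R$, so weak divisibility holds outright. Hence I may assume $cd\neq 0$ and, in the main case, $c^2,d^2\neq 0$.

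Set $I := 2R + 3R$. Using distributivity one checks that $I$ is closed under addition, is an ideal in the sense that $rI \subseteq I$, and contains $n\cdot 1$ for every integer $n \geq 2$ (each such $n$ lies in $2\N + 3\N$). The key structural step uses commutativity of the product exactly as the inner flip is used in \cite{DadRor09ssa}: for each $n \geq 2$,
\[
1 = (c+d)^n = c^n + d^n + \sum_{k=1}^{n-1}\binom{n}{k} c^k d^{n-k},
\]
and every middle coefficient satisfies $\binom{n}{k} \geq 2$, hence $\binom{n}{k} \in 2\N + 3\N$, so each middle monomial lies in $I$ and therefore $B_n := \sum_{k=1}^{n-1}\binom{n}{k} c^k d^{n-k} \in I$. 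Consequently $1 = (c^n + d^n) + B_n$ with $B_n \in I$, and since $I$ is additively closed it suffices to show that the corner $c^n + d^n$ lies in $I$ for some $n$.

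The main obstacle is precisely this absorption of the corner. Simplicity gives at once $c^n + d^n \varpropto cd$, that is $c^n + d^n \leq M\,cd$ for some $M$, and $M\,cd \in I$; the difficulty is that $R$ is not assumed cancellative, so the order relation $c^n + d^n \leq (\text{element of } I)$ carries an uncontrolled remainder and does not immediately upgrade to the exact membership $c^n + d^n \in I$. Equivalently, the whole proposition reduces to showing that $I$ is order-hereditary, for then simplicity together with $2\cdot 1 \in I\setminus\{0\}$ forces $I = R \ni 1$; note that order-heredity of $I$ genuinely requires nonelementarity, since it fails for $\N$ (there $1 \leq 2 \in I$ but $1 \notin I$). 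To carry out the absorption I would iterate the nonelementary splittings $c^n = (c^2 + cd)^n$ and $d^n = (cd + d^2)^n$, whose binomial expansions again have all middle coefficients $\geq 2$ and hence give, modulo $I$, the self-similar relation $c^n + d^n = c^{2n} + d^{2n} + 2(cd)^n + (\text{element of } I)$, and then combine these with the comparisons $c^n + d^n \leq M\,cd$ supplied by simplicity to drive the corner exactly into $I$.

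I expect this last exact absorption — converting the simplicity comparisons into an honest equation inside the possibly non-cancellative semiring $R$ — to be the technical heart of the argument, whereas the reduction in \autoref{rmk:wkDivNonElemSrg}(1), the identities $c = c^2+cd$, $d = cd+d^2$, and the binomial ``flip'' computation are all routine.
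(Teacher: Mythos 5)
Your setup---the reduction to producing a single relation $1=2s+3t$, the observation that $R$ has no zero divisors, and the ``flip'' splitting $1=(s+t)^2=s^2+t^2+2st$---matches the paper's proof. But the step you yourself flag as the technical heart is genuinely missing, and the way you have framed it (show $c^n+d^n\in I$ for $I:=2R+3R$, or show $I$ is order-hereditary) points in a direction that does not close: simplicity only gives $c^n+d^n\le M\,cd$, i.e.\ an equation $c^n+d^n+z=M\,cd$ with an unknown remainder $z$, and without cancellation this never upgrades to membership in $I$. Order-heredity of $I$ is exactly as hard as the proposition itself, so reducing to it is circular.

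The paper closes the gap differently, and the trick is worth internalizing: do not try to put the corner term into $I$; instead keep an \emph{exact} identity of the form $1=b+2a'$ and arrange $b\le a'$. Since $\le$ is the algebraic pre-order, the inequality $b\le a'$ \emph{is} an exact equation $b+a=a'$ for some $a$, whence $1=b+2a'=b+2(b+a)=2a+3b$, and the remainder is simply folded into the coefficient $2$. Concretely, with $e:=st$ and $f:=s^2+t^2$ one has $1=f+2e$, hence $f=f^2+2ef$, and iterating this substitution gives the exact identity $1=f^m+2e(1+f+\cdots+f^{m-1})$ for every $m$. Simplicity yields $f\le me$ for some $m$, and since $f\le 1$ forces $f^{m-1}\le f^k$ for all $k\le m-1$, one gets
\[
f^m\le m\,ef^{m-1}\le e(1+f+\cdots+f^{m-1}),
\]
so $b:=f^m$ and $a':=e(1+f+\cdots+f^{m-1})$ satisfy $1=b+2a'$ and $b\le a'$, which finishes the proof as above. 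The missing ingredient in your proposal is therefore not a cleverer absorption scheme but the elementary observation that in the algebraic pre-order every inequality already carries an exact witness.
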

\begin{proof}
In this proof, we will write $\leq$ for the algebraic pre-order on $R$.
Then simplicity of $R$ means that for every $x,y\in R$ with $y\neq 0$ there exists $n\in\N$ such that $x\leq ny$.

We first observe that $R$ contains no zero divisors.
Indeed, assume nonzero elements $x,y\in R$ satisfy $xy=0$.
Since $R$ is simple, we can choose $x',y'\in R$ and $k,k\in\N$ such that $1+x'=kx$ and $1+y'=ly$.
This implies $1+x'+y'+x'y'=0$.
Since $R$ is conical, it follows that the unit of $R$ is zero, whence $R=\{0\}$, which contradicts that $R$ is nonelementary.

To prove weak divisibility of $R$, it is enough to show that there are exist $a,b\in R$ such that $1=2a+3b$.
Since $R$ is nonelementary we can find nonzero $s,t\in R$ satisfying $1=s+t$.
Then $1=s^2+t^2+2st$.
Set
\[
f:=s^2+t^2,\quad
e:=st.
\]
Note that both $e$ and $f$ are nonzero.
Then $1=f+2e$, which implies $f=f^2+2ef$.
It follows $1=f^2+2e(1+f)$.
Inductively, we obtain
\[
1=f^m+2e(1+f+\cdots+f^{m-1}),
\]
for each $m\in\N$.
By simplicity of $R$, we can choose $m\in\N_+$ with $f\leq me$.
Then
\[
f^m\leq mef^{m-1}\leq e(1+f+\cdots+f^{m-1}).
\]
Set $b:=f^m$ and $a':=e(1+f+\cdots+f^{m-1})$.
Then $1=b+2a'$.
Since $b\leq a'$, we can find $a\in R$ such that $b+a=a'$.
Then
\[
1=b+2a'=b+2b+2a=2a+3b,
\]
as desired.
\end{proof}

\begin{lma}
\label{prp:wkDivNearUnperf}
Let $R$ be a weakly divisible semiring.
Let $M$ be a \pom\ that is a semimodule over $R$.
Then $M$ is nearly unperforated.

In particular, if $R$ has a compatible positive order, then it is nearly unperforated itself.
\end{lma}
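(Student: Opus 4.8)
The plan is to reduce the near unperforation of $M$ to a purely arithmetic statement about the relation $\leq_p$, and then exploit the weak divisibility of $R$ to cancel a suitable power. By \autoref{prp:nearUnperfTFAE}, it suffices to show that for $a,b\in M$ with $2a\leq 2b$ and $3a\leq 3b$ we have $a\leq b$; equivalently, by the remark following \autoref{dfn:nearUnperf}, it suffices to treat the relation $a\leq_p b$ and deduce $a\leq b$. So first I would fix $a,b\in M$ with $a\leq_p b$, meaning there is $k_0$ with $ka\leq kb$ for all $k\geq k_0$.

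The key idea is that the $R$-multiplication lets us multiply inequalities in $M$ by elements of $R$, and that weak divisibility of $R$ produces, for the unit $1_R$, a representation $1_R=2s+3t$ for some $s,t\in R$. Iterating the semiring structure (in the spirit of the computation in \autoref{prp:simpleSrgDiv}, where $1=f+2e$ is bootstrapped to $1=f^m+2e(\cdots)$) one obtains, for every $m$, a representation of $1_R$ as a combination of the form $1_R=\sum_j c_j\, 2^{i_j}3^{m-i_j}$ with coefficients $c_j\in R$, so that $1_R$ is expressed using products of $2$'s and $3$'s whose total multiplicity is large. The point is that multiplying the inequalities $2a\leq 2b$ and $3a\leq 3b$ repeatedly and combining via the distributive $R$-action shows that $(\sum_j c_j 2^{i_j}3^{m-i_j})a\leq(\sum_j c_j 2^{i_j}3^{m-i_j})b$, which is exactly $1_R\cdot a\leq 1_R\cdot b$, i.e.\ $a\leq b$. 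More precisely, each monomial $2^{i}3^{m-i}$ applied to the inequality $a\leq_p b$ gives $2^{i}3^{m-i}a\leq 2^{i}3^{m-i}b$ once the exponent is at least $\log_2 k_0$, since $2^i 3^{m-i}\geq k_0$ forces $na\leq nb$ for that $n$.

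Concretely I would proceed as follows. First I would record that, since $M$ is an $R$-semimodule, for any $c\in R$ and any $x,y\in M$ with $x\leq y$ we have $cx\leq cy$, and that the action distributes over the semiring addition of $R$. Next I would show that weak divisibility of $R$, combined with the bootstrapping argument used in \autoref{prp:simpleSrgDiv}, yields for each $N\in\N$ a finite family $(c_j)_j$ in $R$ and exponents $(i_j)_j$ with $\min_j(\min\{i_j,m-i_j\})\geq N$ such that $1_R=\sum_j c_j\,2^{i_j}3^{m-i_j}$; the crucial feature is that every monomial appearing is divisible by a high power of $2$ \emph{and} of $3$, so each factor $2^{i_j}3^{m-i_j}$ exceeds the threshold $k_0$. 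Then I would apply the inequality $na\leq nb$ (valid for $n=2^{i_j}3^{m-i_j}\geq k_0$) monomial by monomial, multiply by the $c_j$, and sum, concluding $a=1_R a\leq 1_R b=b$. Finally, the ``in particular'' clause is immediate: a semiring $R$ carrying a compatible positive order is a semimodule over itself, so near unperforation of $R$ follows from the general statement.

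The main obstacle I anticipate is the bootstrapping step producing $1_R=2s+3t$ in a form where \emph{every} monomial carries both a high power of $2$ and a high power of $3$. Starting from $1_R=2s+3t$, naive powering $1_R=(2s+3t)^m$ expands into monomials $2^i3^{m-i}\binom{m}{i}s^it^{m-i}$, and the extreme terms $2^m s^m$ (with no factor of $3$) and $3^m t^m$ (with no factor of $2$) are problematic, since a pure power of $2$ need not exceed $k_0$ in a way that lets us invoke the $3$-inequality, and conversely. The fix is to avoid the binomial expansion and instead iterate the substitution $1_R=2s+3t$ inside itself, as in \autoref{prp:simpleSrgDiv}, replacing a lone factor by $2s+3t$ again, thereby guaranteeing that after finitely many steps no surviving monomial is a pure power of a single prime; alternatively one argues directly that for $k\geq k_0$ the hypothesis $a\leq_p b$ already gives $ka\leq kb$, so only the threshold $2^{i_j}3^{m-i_j}\geq k_0$ matters and it suffices that each monomial be large, which a single sufficiently high power $m$ secures for all terms except the two extreme ones — and those two are handled by using that $2a\leq 2b$ and $3a\leq 3b$ are assumed outright. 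Balancing these cases cleanly is where the care is needed.
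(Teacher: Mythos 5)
Your core idea is the right one, but you have buried a one-line proof under machinery that is never needed, and the ``main obstacle'' you anticipate is not an obstacle at all. Once you invoke \autoref{prp:nearUnperfTFAE} to reduce near unperforation to the single implication ``$2a\leq 2b$ and $3a\leq 3b$ imply $a\leq b$'', you should stay there: there is no reason to return to the relation $\leq_p$, to a threshold $k_0$, or to representations of $1_R$ by monomials $2^i3^{m-i}$ exceeding that threshold. Weak divisibility gives $s,t\in R$ with $1_R=2s+3t$, and then the $R$-action (which is additive and order-preserving in the $M$-variable) finishes the argument in one display:
\[
a = (2s+3t)a = s(2a)+t(3a) \leq s(2b)+t(3b) = (2s+3t)b = b.
\]
This is exactly the paper's proof. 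No powering of $1_R=2s+3t$, no bootstrapping in the style of \autoref{prp:simpleSrgDiv}, and no case analysis for ``extreme monomials'' is required; indeed, even in your own framework the worry about the terms $2^ms^m$ and $3^mt^m$ dissolves, since $2a\leq 2b$ already yields $n(2a)\leq n(2b)$ for every $n\in\N$ by compatibility of the order with addition, so every monomial is handled by one of the two hypotheses directly. Your treatment of the ``in particular'' clause (a positively ordered semiring is a semimodule over itself) is correct.
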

\begin{proof}
By \autoref{prp:nearUnperfTFAE}, it is enough to show that $2a\leq 2b$ and $3a\leq 3b$ imply $a\leq b$, for any $a,b\in M$.
Let such $a$ and $b$ be given.
By weak divisibility of $R$, there are elements $s,t\in R$ such that $1=2s+3t$.
Then
\[
a = (2s+3t)a
= s(2a) + t(3a)
\leq s(2b) +t(3b)
= (2s+3t)b
= b,
\]
as desired.
\end{proof}

\begin{prp}
\label{prp:cancSimpleSemirg}
Let $R$ be a nonelementary, conical semiring that is simple and stably finite for its algebraic (pre)order.
Then $R$ is cancellative.
\end{prp}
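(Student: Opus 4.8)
The plan is to reduce the statement to the equivalence for positively ordered monoids already recorded in \autoref{prp:nearUnperfSimpleCu}'s companion result \autoref{prp:nearUnperfSimplePom}, by first promoting the algebraic pre-order on $R$ to a genuine partial order and then establishing near unperforation. The cancellation asserted here is the order-theoretic cancellation of \autoref{dfn:RieszProperties}, namely that $a+x\leq b+x$ implies $a\leq b$, so throughout the argument I work with the algebraic (pre-)order on $R$.

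First I would record that, since $R$ is stably finite, its algebraic pre-order is antisymmetric by \autoref{rmk:preminSimpleSFPrePom}(3); hence $R$, equipped with its algebraic order, is a positively ordered monoid that is algebraically ordered, simple, and stably finite in the sense of \autoref{dfn:preminSimpleSFPrePom}. This is the step where stable finiteness is used, precisely in order to pass from the pre-ordered to the partially ordered setting, which is what is needed to invoke the \pom-results of \autoref{sec:nearUnp}.

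Next I would establish that $R$ is nearly unperforated. Since $R$ is nonelementary, conical and simple for its algebraic pre-order, \autoref{prp:simpleSrgDiv} shows that $R$ is weakly divisible. Regarding $R$ as a semimodule over itself via its own multiplication (which is biadditive and, for the algebraic order, order-preserving), the ``in particular'' clause of \autoref{prp:wkDivNearUnperf} then yields that $R$ is nearly unperforated; here one uses that the compatible positive order supplied by the previous step makes the hypotheses of \autoref{prp:wkDivNearUnperf} applicable.

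Finally, $R$ is a simple, stably finite, algebraically ordered \pom{} that is nearly unperforated, so the ``only if'' direction of \autoref{prp:nearUnperfSimplePom} gives that $R$ is cancellative (and, incidentally, almost unperforated), completing the proof. The argument is essentially a chaining of existing results, so the only point requiring genuine care is the verification of hypotheses: establishing antisymmetry before appealing to the positively-ordered-monoid machinery, and confirming that $R$ qualifies as a \pom-semimodule over the weakly divisible semiring $R$. As an alternative to the last step, one could argue directly, noting that every algebraically ordered monoid is automatically preminimally ordered, so that \autoref{prp:cancUpToP_simple}(1) gives $a\leq_p b$ from $a+x\leq b+x$, whereupon near unperforation converts $\leq_p$ into $\leq$.
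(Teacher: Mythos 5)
Your proof is correct and follows essentially the same route as the paper's own argument: antisymmetry of the algebraic pre-order via \autoref{rmk:preminSimpleSFPrePom}, weak divisibility from \autoref{prp:simpleSrgDiv}, near unperforation from \autoref{prp:wkDivNearUnperf}, and then cancellation from \autoref{prp:nearUnperfSimplePom}. Your alternative ending via \autoref{prp:cancUpToP_simple}(1) is also exactly how the paper proves the relevant direction of \autoref{prp:nearUnperfSimplePom}, so nothing new is needed there.
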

\begin{proof}
The assumptions imply that the algebraic pre-order of $R$ is antisymmetric;
see \autoref{rmk:preminSimpleSFPrePom}.
Thus, the underlying monoid of $R$ is a simple, stably finite, partially ordered monoid (with its algebraic order).
By \autoref{prp:simpleSrgDiv} and \autoref{prp:wkDivNearUnperf}, the semiring $R$ is nearly unperforated.
Then we may apply \autoref{prp:nearUnperfSimplePom} to deduce that $R$ is cancellative.
\end{proof}

\begin{lma}
\label{lma:wkDivNonElemSrg}
Let $R$ be a simple algebraic $\CatCu$-semiring satisfying \axiomO{5}.
Then the following conditions are equivalent:
\beginEnumStatements
\item $R$ is nonelementary as a $\CatCu$-semiring.
\item $R_c$ is stably finite, $1_R$ is compact and $R_c$ is a nonelementary semiring.
\item $R_c$ is stably finite, $1_R$ is compact and not minimal.
\end{enumerate}
\end{lma}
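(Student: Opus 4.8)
The plan is to prove the two equivalences \enumStatement{1}$\Leftrightarrow$\enumStatement{3} and \enumStatement{2}$\Leftrightarrow$\enumStatement{3}, after disposing of the trivial case. We may assume $R\neq\{0\}$: under the convention that ``minimal'' means ``minimal nonzero'' (so that the unit being non-minimal presupposes $1_R\neq 0$), all three conditions fail when $R=\{0\}$. Two preliminary observations will be used throughout. First, since $R$ is algebraic and nonzero it contains a nonzero compact element (write any nonzero $a$ as the supremum of an increasing sequence of compact elements by algebraicity; not all terms can vanish), so the alternative ``$R$ contains no nonzero compact element'' of \autoref{prp:simpleSemirgCompacts}(2) is excluded and $1_R$ is automatically compact. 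Hence the clause ``$1_R$ compact'' is free in the nondegenerate case. Second, minimality of $1_R$ among the nonzero elements of $R$ coincides with minimality among the nonzero elements of $R_c$: one direction is clear since $R_c\subseteq R$, and conversely, if $0<c<1_R$ in $R$, then writing $c=\sup_k c_k$ with $c_k\in R_c$ increasing and choosing $c_{k_0}\neq 0$ yields a compact element with $0<c_{k_0}<1_R$.

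For \enumStatement{1}$\Leftrightarrow$\enumStatement{3}, I would first invoke \autoref{prp:simpleSemirgElmtry}(2): since $R\neq\{0\}$ is a simple $\CatCu$-semiring satisfying \axiomO{5}, it is elementary precisely when $1_R$ is a minimal nonzero element. Thus \enumStatement{1} is equivalent to ``$1_R$ is not minimal'' (in $R$, equivalently in $R_c$ by the preliminary observation). This already gives \enumStatement{3}$\Rightarrow$\enumStatement{1}, the stable finiteness clause of \enumStatement{3} being unneeded. For \enumStatement{1}$\Rightarrow$\enumStatement{3} it remains to produce stable finiteness of $R_c$ as a \pom. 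By \autoref{prp:simpleSemirgCompacts}(1), \enumStatement{1} forces $R$ to be stably finite as a $\CatCu$-semigroup; since every $a\in R_c$ satisfies $a\ll a$, it is finite in the sense of \autoref{pgr:finiteSemigr}, that is, $a+b=a$ implies $b=0$ for all $b\in R$, and restricting $b$ to $R_c$ shows $a$ is finite in the \prePom\ sense of \autoref{dfn:preminSimpleSFPrePom}. As $1_R$ is also compact and non-minimal, \enumStatement{3} follows.

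For \enumStatement{2}$\Leftrightarrow$\enumStatement{3}, both conditions share the clauses ``$R_c$ stably finite'' and ``$1_R$ compact'', so it suffices to compare the remaining clauses under these hypotheses. By \autoref{prp:propertiesAlgebraic}(1), \axiomO{5} for $R\cong\Cu(R_c)$ (see \autoref{prp:algebraicSemigp}) entails that $R_c$ is algebraically ordered, so its induced order coincides with its algebraic pre-order. Then \autoref{rmk:wkDivNonElemSrg}(2) applies to the conical semiring $R_c$ and shows that $R_c$ is a nonelementary semiring (in the sense of \autoref{dfn:nonElemSrg}) if and only if $R_c\neq\{0\}$ and $1_R$ is not a minimal nonzero element of $R_c$. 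Since $1_R$ is compact and nonzero, $R_c\neq\{0\}$ holds automatically, and by the preliminary observation non-minimality in $R_c$ is the same as non-minimality in $R$. Hence the third clauses of \enumStatement{2} and \enumStatement{3} agree, proving \enumStatement{2}$\Leftrightarrow$\enumStatement{3}.

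The bookkeeping is routine; the main point to get right is reconciling the two notions of stable finiteness (as a $\CatCu$-semigroup via \autoref{pgr:finiteSemigr} versus as a \pom\ via \autoref{dfn:preminSimpleSFPrePom}) and the two notions of order and minimality (the ambient $\CatCu$-order on $R$ versus the algebraic order on $R_c$). Both are handled by algebraicity together with \autoref{prp:propertiesAlgebraic}(1), so that the only genuine inputs are \autoref{prp:simpleSemirgElmtry}(2), \autoref{prp:simpleSemirgCompacts}, and \autoref{rmk:wkDivNonElemSrg}(2). I expect the degenerate case $R=\{0\}$ and the precise reading of ``non-minimal unit'' to be the most error-prone points rather than any substantive difficulty.
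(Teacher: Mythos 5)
Your proof is correct, and it reaches the paper's conclusions by a slightly more modular route. The paper dismisses \enumStatement{2}$\Leftrightarrow$\enumStatement{3} as clear, proves \implStatements{1}{2} essentially as you do (algebraicity yields a nonzero compact element, hence $1_R$ is compact by \autoref{prp:simpleSemirgCompacts}, stable finiteness comes from the same corollary, and \axiomO{5} turns a compact $a<1_R$ into a decomposition $1_R=a+b$ with $b$ nonzero), and then proves \implStatements{2}{1} by a direct zero-divisor argument: if $c$ were a minimal nonzero element of $R$, then $c\in R_c$ by algebraicity, $c=ca+cb$ with $ca,cb$ nonzero since $R$ has no zero divisors, and $ca<c$ by stable finiteness, a contradiction. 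You avoid redoing that multiplication trick by routing the hard direction through \autoref{prp:simpleSemirgElmtry}(2), whose proof already contains it; this is legitimate and turns your \implStatements{3}{1} into a one-line citation. The one point deserving care is your appeal to \autoref{rmk:wkDivNonElemSrg}(2): as stated there the claim is actually false in general (the Boolean semiring $\{0,\infty\}$ is nonelementary in the sense of \autoref{dfn:nonElemSrg}, since its unit $\infty$ satisfies $\infty=\infty+\infty$, yet that unit is a minimal nonzero element); the direction ``nonelementary implies unit not minimal'' requires stable finiteness to guarantee $s<s+t=1$ in a decomposition $1=s+t$. Since you invoke the remark only after fixing the shared clause ``$R_c$ stably finite'', your argument is sound, but it would be cleaner to make explicit that stable finiteness is what is doing this work rather than the remark alone. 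Your explicit handling of the degenerate case $R=\{0\}$, and of the reconciliation of the two notions of stable finiteness and of minimality in $R$ versus $R_c$, is bookkeeping the paper leaves implicit, and you carry it out correctly.
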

\begin{proof}
It is clear that (2) and (3) are equivalent conditions.

Assume condition (1).
Then $R_c\neq \{0\}$ as $R$ is algebraic, and thus $1_R$ is compact by \autoref{prp:simpleSemirgCompacts}. Another use of \autoref{prp:simpleSemirgCompacts} shows that $R$ (hence also $R_c$) is stably finite. Since $R$ is nonelementary and algebraic we can choose a compact element $a\in R$ such that $a<1_R$, and by \axiomO{5} we obtain a nonzero (compact) element $b$ with $1_R=a+b$.
This shows that $R_c$ is a nonelementary (conical) semiring.

Assume now (2), and let us show (1).
We have $1_R=a+b$ for nonzero compact elements $a$ and $b$.
If $c$ is a minimal nonzero element in $R$, then $c\in R_c$ (since $R$ is algebraic).
Now $c=ca+cb$, with both $ca$ and $cb$ nonzero as $R$ does not have zero divisors.
Note that $ca<c$ because $R_c$ is stably finite.
This contradicts the minimality of $c$, and thus $R$ is nonelementary as a $\CatCu$-semiring.
\end{proof}

\begin{cor}
\label{prp:cancSimpleAlgebraicSemirg}
Let $R$ be a simple, nonelementary, algebraic $\Cu$-semiring satisfying~\axiomO{5}.
Then $R$ is weakly divisible and weakly cancellative.
\end{cor}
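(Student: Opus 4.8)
The plan is to reduce both assertions to the semiring $R_c$ of compact elements of $R$ and then invoke the structure theory already developed for conical semirings. Since $R$ is algebraic and satisfies \axiomO{5}, \autoref{prp:algebraicSemigp} identifies $R$ with the $\CatCu$-completion $\Cu(R_c)$ of its \pom{} of compact elements, and \autoref{prp:propertiesAlgebraic}(1) tells me that $R_c$ is algebraically ordered. Because $R$ is simple and nonelementary as a $\CatCu$-semigroup, \autoref{lma:wkDivNonElemSrg} gives that the unit $1_R$ is compact, that $R_c$ is stably finite for its algebraic order, and that $R_c$ is a nonelementary conical semiring.

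The one point that needs genuine verification is that $R_c$ is simple for its algebraic pre-order. The key observation is that every compact element $x$ satisfies $x\ll\infty$: indeed $x\ll x\leq\infty$ together with condition (ii) of an auxiliary relation (see \autoref{pgr:axiomsW}) yields $x\ll\infty$. In a simple nonzero $\CatCu$-semigroup one has $\infty=\sup_k ky$ for every nonzero $y$ (see \autoref{pgr:finiteSemigr}), so applying compact containment to the increasing sequence $(ky)_k$ produces an index $k$ with $x\leq ky$. For $x,y\in R_c$, this inequality read inside the algebraically ordered monoid $R_c$ means exactly $x\varpropto y$, so $R_c$ is simple in the sense of \autoref{dfn:preminSimpleSFPrePom}.

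With $R_c$ established as a nonelementary, conical, algebraically ordered semiring that is simple and stably finite for its algebraic order, both conclusions follow from the semiring results. For weak cancellation, \autoref{prp:cancSimpleSemirg} shows that $R_c$ is cancellative, and then \autoref{prp:propertiesAlgebraic}(2) transfers this to weak cancellation of $R\cong\Cu(R_c)$. For weak divisibility, \autoref{prp:simpleSrgDiv} shows that $R_c$ is weakly divisible; in particular the compact unit decomposes as $1_R=2u+3v$ with $u,v\in R_c\subseteq R$. Since the $\CatCu$-product of $R$ is biadditive, every $s\in R$ then satisfies $s=1_R s=2(us)+3(vs)$, which is exactly weak divisibility of the monoid $R$.

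The main obstacle is precisely the simplicity step for $R_c$, since simplicity of a $\CatCu$-semigroup a priori only yields comparison up to the regularized relation $\varpropto^\ctsRel$; compactness of $x$ (through $x\ll\infty$) is what upgrades this to honest comparison $x\leq ky$ inside $R_c$. Everything else is a direct appeal to the cited results, with care that the equivalences of \autoref{prp:propertiesAlgebraic} apply exactly because $R$ is algebraic and satisfies \axiomO{5}.
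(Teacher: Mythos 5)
Your proof is correct and follows essentially the same route as the paper's: pass to the conical semiring $R_c$ of compact elements, verify it is nonelementary, stably finite, algebraically ordered and simple, then apply \autoref{prp:simpleSrgDiv} and \autoref{prp:cancSimpleSemirg} and transfer back via \autoref{prp:propertiesAlgebraic}. The only difference is that you spell out the simplicity of $R_c$ (via $x\ll\infty=\sup_k ky$ for compact $x$), a step the paper states without justification, and your justification is exactly the right one.
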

\begin{proof}
First notice that the subsemiring $R_c$ of compact elements in $R$ is conical.
By \autoref{lma:wkDivNonElemSrg}, $R_c$ is a nonelementary semiring (with unit $1_R$) whose underlying additive monoid is stably finite.

Since $R$ satisfies \axiomO{5}, we may apply \autoref{prp:propertiesAlgebraic} to deduce that the partial order on $R$ induces the algebraic order on $R_c$.

Using that $R$ is simple and algebraic, it follows that $R_c$ is simple.
Then $R_c$ is weakly divisible, by \autoref{prp:simpleSrgDiv}. Now write $1_R=2a+3b$ for $a$ and $b$ in $R_c$, and then $c=2ac+3bc$ for any $c\in R$. This shows that $R$ is weakly divisible as well.
Moreover, $R_c$ is cancellative by \autoref{prp:cancSimpleSemirg}.
Using \autoref{prp:propertiesAlgebraic} again, we obtain that $R$ is weakly cancellative.
\end{proof}

The following proposition is a $\CatCu$-semigroup version of results for \ca{s} that have appeared in \cite[Proposition~5.8]{TomWin07ssa} and \cite[Theorem~2.5]{DadRor09ssa}.

\begin{prp}
\label{prp:imagelambda}
Let $R$ be a simple, nonelementary $\Cu$-semiring satisfying~\axiomO{5} and with a unique functional $\lambda$ that is normalized at $1_R$.
Then the following conditions are equivalent:
\beginEnumStatements
\item
There exists a compact element $p\in R$ with $0<p<1_R$.
\item
There exists a compact element $p\in R$ with $\lambda(p)\notin\N$.
\item
The set $\lambda(R_c)$ is dense in $\R_+$.
\item
The $\CatCu$-semiring $R$ is weakly divisible.
\item
The $\CatCu$-semiring $R$ is algebraic.
\end{enumerate}
\end{prp}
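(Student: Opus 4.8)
The plan is to establish a cycle of implications organized around the spine \enumStatement{1}$\Rightarrow$\enumStatement{3}$\Rightarrow$\enumStatement{5}$\Rightarrow$\enumStatement{4}$\Rightarrow$\enumStatement{1}, together with the easy equivalences \enumStatement{1}$\Leftrightarrow$\enumStatement{2}$\Leftrightarrow$\enumStatement{3} among the three ``compact witness'' conditions. Throughout I would keep in mind the dichotomy from \autoref{prp:simpleSemirgCompacts} and \autoref{prp:uniquenessR}: either $1_R$ is compact, or $R$ has no nonzero compact element and $R\cong[0,\infty]$. At the outset I would collect the standing facts supplied by the hypotheses: $R$ is stably finite and nonelementary, so $1_R$ is finite; by \autoref{thm:simpleSemirg} it is almost unperforated and almost divisible; by \autoref{prp:simpleSemirgZmult} it has $Z$-multiplication; by \autoref{prp:multiplicativeUniqueFctl} the functional $\lambda$ is multiplicative, and being nonzero on a simple semigroup it is faithful; and by \autoref{prp:softNoncpctSimple} every nonzero element is either compact or soft, the two being mutually exclusive.

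For \enumStatement{1}$\Rightarrow$\enumStatement{2}: given compact $p$ with $0<p<1_R$, note $1_R$ is compact (\autoref{prp:simpleSemirgCompacts}), so $p\ll 1_R$ and \axiomO{5} (in the form that $a\ll a\le b$ yields $c$ with $a+c=b$) produces $q\neq 0$ with $p+q=1_R$; applying $\lambda$ and using faithfulness gives $0<\lambda(p)=1-\lambda(q)<1$, so $\lambda(p)\notin\N$. The converse \enumStatement{2}$\Rightarrow$\enumStatement{1} is the first genuinely interactive step: from compact $p$ with $\lambda(p)=\alpha\notin\N$ I would reduce to $\alpha\in(0,1)$. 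If $\alpha>1$, set $m=\lfloor\alpha\rfloor$; since $\widehat{m1_R}<_s\widehat{p}$ and $R$ is almost unperforated, \autoref{prp:comparisonRelations} gives $m1_R<p$, so \axiomO{5} produces $c$ with $m1_R+c=p$ and $\lambda(c)=\alpha-m\in(0,1)$. The key point is that $c$ must be compact: were it soft, \autoref{prp:softAbsorbing}\enumStatement{2} (using $m1_R\varpropto^\ctsRel c$ by simplicity) would force $p=m1_R+c$ to be soft, contradicting that $p$ is a nonzero compact. Once $\lambda(p)=\alpha\in(0,1)$, $\widehat{p}<_s\widehat{1_R}$ gives $p<_s 1_R$, hence $p\le 1_R$, and as $\lambda(p)\neq 1$ we get $p<1_R$.

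Next, \enumStatement{1}$\Rightarrow$\enumStatement{3}: with a compact $p$, $\lambda(p)=\alpha\in(0,1)$, the powers $p^j$ are compact (multiplication preserves $\ll$) with $\lambda(p^j)=\alpha^j\to 0$, so the compact elements $m1_R+np^j$ realize all values $m+n\alpha^j$, which are $\alpha^j$-dense; hence $\lambda(R_c)$ is dense in $\R_+$. The implication \enumStatement{3}$\Rightarrow$\enumStatement{2} is immediate, a dense subset of $\R_+$ not being contained in $\N$. For \enumStatement{3}$\Rightarrow$\enumStatement{5} I would show each soft $a$ is a supremum of an increasing sequence of compacts: if $\lambda(a)=\infty$ then $a=\infty=\sup_n n1_R$; otherwise, for $0<\theta:=\lambda(a)<\infty$, softness forces $\lambda(b)<\theta$ for every $b\ll a$ (apply $\lambda$ to $(k+1)b\le ka$), and using density together with almost unperforation and soft comparison (\autoref{prp:soft_comparison}) I would inductively pick compacts $p_k$ with $b_k\le p_k\le a$ and $p_k\le p_{k+1}$, choosing $\lambda(p_{k+1})$ strictly between $\max\{\lambda(p_k),\lambda(b_{k+1})\}$ and $\theta$, so that $\sup_k p_k=a$; together with \axiomO{2} this gives algebraicity.

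Finally, the closing implications: \enumStatement{5}$\Rightarrow$\enumStatement{1} follows from \autoref{lma:wkDivNonElemSrg}, which makes $R_c$ a nonelementary semiring, so $1_R=p+q$ with $p,q$ nonzero compact and $p<1_R$; and \enumStatement{5}$\Rightarrow$\enumStatement{4} is exactly \autoref{prp:cancSimpleAlgebraicSemirg}. For \enumStatement{4}$\Rightarrow$\enumStatement{1}, writing $1_R=2s+3t$ with $1_R$ compact, not both $s,t$ can be soft (a sum of soft elements is soft by \autoref{prp:softAbsorbing}, whereas $1_R$ is a nonzero compact), so one of them, say $s$, is a nonzero compact with $s\le 2s\le 1_R$, and stable finiteness excludes $s=1_R$ (which would make $1_R$ properly infinite), yielding $s<1_R$. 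I expect the main obstacle to lie precisely here, at condition \enumStatement{4}: under the literal monoid definition of weak divisibility (\autoref{dfn:wkDivMon}) the algebra $[0,\infty]$ is weakly divisible yet has non-compact unit, so \enumStatement{4}$\Rightarrow$\enumStatement{1} can only hold once weak divisibility is read in its $\CatCu$-semiring sense, namely as weak divisibility of the compact semiring $R_c$ (equivalently, with compact witnesses, as in the proof of \autoref{prp:cancSimpleAlgebraicSemirg}). With that reading, the case $1_R$ not compact, i.e.\ $R\cong[0,\infty]$ by \autoref{prp:uniquenessR} with $R_c=\{0\}$, is correctly excluded, and making this interface precise is the delicate part of the argument.
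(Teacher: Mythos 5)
Your proof is correct and follows essentially the same route as the paper: the same key inputs (\autoref{thm:simpleSemirg}, \autoref{prp:multiplicativeUniqueFctl}, \autoref{prp:softNoncpctSimple}, \autoref{prp:softAbsorbing}, \autoref{prp:soft_comparison}, \autoref{prp:cancSimpleAlgebraicSemirg}) drive the same arguments. The paper organizes the cycle as \enumStatement{2}$\Rightarrow$\enumStatement{1}$\Rightarrow$\enumStatement{3}$\Rightarrow$\enumStatement{5}$\Rightarrow$\enumStatement{4}$\Rightarrow$\enumStatement{3}$\Rightarrow$\enumStatement{2}, so your separate implications \enumStatement{1}$\Rightarrow$\enumStatement{2} and \enumStatement{5}$\Rightarrow$\enumStatement{1} are redundant; its \enumStatement{2}$\Rightarrow$\enumStatement{1} subtracts from above (producing $p$ with $a+p=(n+1)1_R$) rather than from below as you do, and its \enumStatement{1}$\Rightarrow$\enumStatement{3} phrases your powers-of-$p$ argument as ``$\lambda(R_c)$ is a subsemiring of $[0,\infty]$ containing arbitrarily small elements''; its \enumStatement{3}$\Rightarrow$\enumStatement{5} is slightly leaner than yours, since it does not force $b_k\leq a$ but simply takes $b=\sup_k b_k$, notes $b$ is soft with $\hat{b}=\hat{a}$, and concludes $a=b$ from \autoref{prp:soft_comparison}. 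These are cosmetic differences. The substantive point is your closing caveat about condition \enumStatement{4}, and you are right to flag it: the paper disposes of \enumStatement{4} by asserting that \enumStatement{4}$\Rightarrow$\enumStatement{3} is ``clear'', but under the literal monoid reading of \autoref{dfn:wkDivMon} this fails for $R\cong[0,\infty]$, which satisfies all the hypotheses and is weakly divisible while \enumStatement{1}--\enumStatement{3} and \enumStatement{5} fail there. So the issue you locate is a gap in the statement/proof of the paper, not in your argument; both the paper's \enumStatement{4}$\Rightarrow$\enumStatement{3} and your \enumStatement{4}$\Rightarrow$\enumStatement{1} become valid exactly when $1_R$ is compact (equivalently, $R_c\neq\{0\}$, by \autoref{prp:simpleSemirgCompacts}), i.e.\ when weak divisibility is witnessed by compact elements as in \autoref{prp:cancSimpleAlgebraicSemirg}. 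Your softness-absorption argument showing that the witnesses $s,t$ in $1_R=2s+3t$ must then be compact is exactly the right way to make that reading precise.
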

\begin{proof}
By \autoref{thm:simpleSemirg}, the $\Cu$-semiring $R$ is almost unperforated.
Then, by \autoref{prp:almUnp}, we have $a<b$ if and only if $\lambda(a)<\lambda(b)$, for any $a,b\in R$.

The implications `\enumStatement{4} $\Rightarrow$ \enumStatement{3} $\Rightarrow$ \enumStatement{2}' are clear.
By \autoref{prp:cancSimpleAlgebraicSemirg}, \enumStatement{5} implies \enumStatement{4}.
To show that \enumStatement{2} implies \enumStatement{1}, choose a compact element $a\in R$ satisfying $\lambda(a)\notin\N$.
Let $n\in\N$ such that $n<\lambda(a)<n+1$.
Then
\[
\lambda(n 1_R) = n < \lambda(a) < n+1 = \lambda((n+1) 1_R).
\]
As explained at the beginning of the proof, it follows $n 1_R<a<(n+1) 1_R$.
Since $R$ satisfies \axiomO{5} we can choose a compact element $p$ in $S$ such that $a+p=(n+1)1_R$.
Then $0<\lambda(p)<1$, which implies that $0<p<1_R$, as desired.

Next, we show that \enumStatement{3} implies \enumStatement{5}.
Let $a\in R$ be an element.
We need to show that $a$ is the supremum of an increasing sequence of compact elements.
This is clear if $a$ is compact itself.

Thus, we may assume that $a$ is noncompact.
By \autoref{prp:softNoncpctSimple}, we get that $a$ is soft.
By assumption, there is a sequence $(b_k)_k$ of compact elements such that $\lambda(b_k)_k$ is strictly increasing with $\lambda(a)=\sup_k\lambda(b_k)$.
As observed at the beginning of the proof, it follows that the sequence $(b_k)_k$ is increasing.

Set $b:=\sup_k b_k$.
Then $\lambda(b)=\lambda(a)$.
Since $R$ is stably finite, the element $b$ is noncompact and therefore soft.
Then \autoref{prp:soft_comparison} implies $a=b$.
Thus, $a$ is the supremum of the increasing sequence $(b_k)_k$ of compact elements, as desired.

Finally, let us show that \enumStatement{1} implies \enumStatement{3}.
By \autoref{prp:multiplicativeUniqueFctl}, the functional $\lambda$ is multiplicative.
Therefore $\lambda(R_c)$ is a subsemiring of $[0,\infty]$, which must be dense as it contains arbitrarily small elements.
\end{proof}

\begin{cor}[{Dadarlat, R{\o}rdam, \cite[Theorem~2.5]{DadRor09ssa}}]
Let $D$ be a strongly self-absorbing \ca.
Then $D$ has real rank zero if and only if it contains a nontrivial projection.
\end{cor}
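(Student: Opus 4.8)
The plan is to derive this classical corollary about strongly self-absorbing \ca{s} from the purely semigroup-theoretic characterization established in \autoref{prp:imagelambda}, via the Cuntz semigroup dictionary. First I would dispose of the easy direction: if $D$ has real rank zero, then projections are plentiful, and in particular $D$ contains a nontrivial projection. (This direction is immediate and requires no Cuntz-semigroup machinery.)

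For the substantive direction, suppose $D$ is strongly self-absorbing and contains a nontrivial projection $p$, i.e.\ $0 \neq p \neq 1_D$. The first step is to translate this into a statement about $R := \Cu(D)$. By \autoref{prp:semirgFromSSA}, $R$ is a countably-based, simple $\Cu$-semiring satisfying \axiomO{5} and \axiomO{6}; moreover, if $D$ is stably finite it has a unique normalized functional. The purely infinite case is handled separately: there $\Cu(D) \cong \{0,\infty\}$, which contains no nontrivial projections and $D$ has real rank zero trivially (being purely infinite simple), so I would check that case presents no genuine content or is excluded by the hypothesis that $D$ carries a nontrivial projection in a way compatible with finiteness. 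Assuming $D$ stably finite, the class $[p] \in \Cu(D)$ is a compact element (projections give compact elements, as recalled in \autoref{rmk:CufromCAlg}(2)), and $0 < [p] < [1_D] = 1_R$. Thus condition \enumStatement{1} of \autoref{prp:imagelambda} holds.

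Now I would invoke the equivalence \enumStatement{1}$\Leftrightarrow$\enumStatement{5} of \autoref{prp:imagelambda} to conclude that $R = \Cu(D)$ is algebraic. The final step is to translate ``$\Cu(D)$ is algebraic'' back into ``$D$ has real rank zero.'' Here I would appeal to \autoref{rmk:algebraicSemigp}(2): for a \ca{} of stable rank one, $\Cu(A)$ is algebraic if and only if $A$ has real rank zero. Since a stably finite, strongly self-absorbing \ca{} has stable rank one (by \cite[Theorem~6.7]{Ror04StableRealRankZ}, as already used in \autoref{pgr:ssaSolid}), this equivalence applies to $D$, yielding that $D$ has real rank zero.

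The main obstacle I anticipate is not any single deep step but rather assembling the hypotheses of \autoref{prp:imagelambda} cleanly and making sure the purely infinite/stably finite dichotomy is handled without a gap. In particular, one must confirm that $D$ satisfies all the standing assumptions (simple, nonelementary, unique normalized functional, \axiomO{5}) \emph{and} that the compact element $[p]$ genuinely witnesses condition \enumStatement{1}; the strict inequalities $0<[p]<1_R$ rely on $D$ being stably finite so that $[p]$ is a finite compact element strictly below $[1_D]$. The translation in the last step depends on the stable rank one hypothesis, so the cleanest exposition records that a stably finite strongly self-absorbing \ca{} has stable rank one before invoking \autoref{rmk:algebraicSemigp}(2). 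Modulo this bookkeeping, the corollary is essentially a direct readout of \autoref{prp:imagelambda}.
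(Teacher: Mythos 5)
Your proposal is correct and matches the paper's intended argument: the corollary is stated immediately after \autoref{prp:imagelambda} with no written proof, precisely because it is the direct readout you describe, namely the equivalence \enumStatement{1}$\Leftrightarrow$\enumStatement{5} applied to $\Cu(D)$ via \autoref{prp:semirgFromSSA}, together with the separate (trivial) purely infinite case and the stable-rank-one translation of ``algebraic'' into ``real rank zero'' from \autoref{rmk:algebraicSemigp}(2). Your bookkeeping points (stable finiteness forcing $[p]<1_R$ strictly, and both sides of the equivalence holding automatically when $D$ is purely infinite) are exactly the right ones.
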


\vspace{5pt}
\section{Classification of solid \texorpdfstring{$\Cu$}{Cu}-semirings}
\label{sec:classificationSolid}

\index{terms}{Cu-semiring@$\CatCu$-semiring!solid}
\index{terms}{solid Cu-semiring@solid $\CatCu$-semiring}
We now study the structure of general solid $\CatCu$-semirings.
The goal is the classification result in \autoref{thm:solidSemirgClassification}.

\begin{thm}
\label{prp:propertiesSolid}
Let $R$ be a solid $\Cu$-semiring.
Then $R$ is simple.
Moreover, $R$ has at most one functional that is normalized at the unit element $1$.
\end{thm}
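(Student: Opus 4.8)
The plan is to run everything through the reformulations of solidity in \autoref{prp:solidTFAE}, together with the identification of $\{0,\infty\}\otimes_{\CatCu}R$ with the ideal semigroup $\Lat_{\mathrm{f}}(R)$ from \autoref{prp:tensWithInfty}. Neither \axiomO{5} nor \axiomO{6} will be needed.

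For the uniqueness of normalized functionals, suppose $\lambda,\mu\in F(R)$ both satisfy $\lambda(1)=\mu(1)=1$. Recalling that functionals are generalized $\CatCu$-morphisms $R\to[0,\infty]$, I would form the map $f\colon R\times R\to[0,\infty]$, $f(a,b)=\lambda(a)\mu(b)$, where the product is the one making $[0,\infty]$ a $\CatCu$-semiring (\autoref{exa:R}). As in \autoref{pgr:tensFctl}, $f$ is a generalized $\CatCu$-bimorphism (one checks each variable via \autoref{prp:bimorCu}), so by \autoref{prp:tensProdCu} it factors as $f=\tilde f\circ\varphi$ for a generalized $\CatCu$-morphism $\tilde f\colon R\otimes_{\CatCu}R\to[0,\infty]$ with $\tilde f(a\otimes b)=\lambda(a)\mu(b)$. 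Since $R$ is solid, \autoref{prp:solidTFAE}\,(6) gives $a\otimes 1=1\otimes a$ for every $a\in R$; applying $\tilde f$ yields
\[
\lambda(a)=\lambda(a)\mu(1)=\tilde f(a\otimes 1)=\tilde f(1\otimes a)=\lambda(1)\mu(a)=\mu(a),
\]
so $\lambda=\mu$. (If no functional is normalized at $1$, the assertion is vacuous.)

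For simplicity, the case $R\cong\{0\}$ is trivial, so assume $R\neq\{0\}$. First I would record the map $\nu\colon R\to\{0,\infty\}$ sending $x$ to $0$ if $x=0$ and to $\infty$ otherwise, and check that it is a unital generalized $\CatCu$-morphism: it preserves $0$, order, addition and suprema of increasing sequences because $x+y=0$ (resp.\ $\sup_k x_k=0$) forces all summands (resp.\ all terms) to vanish, and $\nu(1)=\infty=1_{\{0,\infty\}}$ since $1\neq 0$. As $\{0,\infty\}$ is a solid $\CatCu$-semiring (\autoref{prp:PureInfSolid}) and $R$ is solid, \autoref{prp:solidMorImpliesAbsorb} applied to $\nu$ gives $R\otimes_{\CatCu}\{0,\infty\}\cong\{0,\infty\}$; combined with \autoref{prp:tensWithInfty} this yields $\Lat_{\mathrm{f}}(R)\cong\{0,\infty\}$. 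Since the elements of $\Lat_{\mathrm{f}}(R)$ are the singly generated ideals $\Idl(a)$, and $\Idl(a)=\{0\}$ exactly when $a=0$, this isomorphism forces $\Idl(a)=\Idl(b)$ for all nonzero $a,b$; writing $J$ for this common ideal, every nonzero $c$ lies in $\Idl(c)=J$, whence $J=R$ and $\Idl(c)=R$ for every nonzero $c$. Then any nonzero ideal $I$ contains a nonzero $a$ and hence $\Idl(a)=R$, so $I=R$; thus $\{0\}$ and $R$ are the only ideals and $R$ is simple in the sense of \autoref{dfn:simpleCu}.

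I expect the only real subtleties to be bookkeeping rather than conceptual: confirming that $\nu$ genuinely meets the hypotheses of \autoref{prp:solidMorImpliesAbsorb}, and correctly extracting from $\Lat_{\mathrm{f}}(R)\cong\{0,\infty\}$ that every nonzero element generates all of $R$ (in particular that the unit is automatically full, which here drops out rather than being assumed, avoiding the non-full example in \autoref{rmk:CuSemirg}\,(3)). The conceptual heart is simply that tensoring with $\{0,\infty\}$ computes the lattice of singly generated ideals, and that solidity collapses this lattice to two elements.
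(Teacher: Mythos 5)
Your proof is correct, and the uniqueness-of-functionals half is essentially the paper's own argument: the paper also forms the generalized $\CatCu$-bimorphism $(a,b)\mapsto\lambda(a)\mu(b)$ and uses solidity to factor it, concluding $\lambda(a)=\tau(a,1)=\tau(1,a)=\mu(a)$; your phrasing via $a\otimes 1=1\otimes a$ and \autoref{prp:solidTFAE}(6) is the same computation seen from the tensor-product side. For simplicity, however, you take a genuinely different route. The paper argues directly: for each ideal $I$ it builds the $\{0,\infty\}$-valued bimorphism $\tau_I(a,b)$ (zero iff $a\in I$ or $b=0$), factors it through the multiplication, and reads off first that $1$ is full and then that every proper ideal is zero. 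You instead package the whole argument into existing machinery: the single unital generalized $\CatCu$-morphism $\nu\colon R\to\{0,\infty\}$ triggers \autoref{prp:solidMorImpliesAbsorb}, giving $R\otimes_{\CatCu}\{0,\infty\}\cong\{0,\infty\}$, and \autoref{prp:tensWithInfty} converts this into $\Lat_{\mathrm{f}}(R)\cong\{0,\infty\}$, from which simplicity follows by your (correct) bookkeeping with $\Idl(a)$ -- including the observation that fullness of the unit drops out rather than being assumed. Both arguments are sound and non-circular (everything you cite lives in Chapter 7, before the classification chapter); the paper's is more elementary and self-contained, while yours is shorter at the point of use and makes the conceptual content -- that solidity collapses the ideal lattice because tensoring with $\{0,\infty\}$ computes $\Lat_{\mathrm{f}}$ -- more visible. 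One tiny remark: \autoref{prp:solidMorImpliesAbsorb} only needs the target to be a $\CatCu$-semiring, so your appeal to \autoref{prp:PureInfSolid} is harmless but not needed there.
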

\begin{proof}
We first show that $R$ is simple.
Let $I$ be an ideal in $R$. Define $\tau_I\colon R\times R\to\{0,\infty\}$ by
\[
\tau_I(a,b) =
\begin{cases}
0, & \text{if $a\in I$ or $b=0$} \\
\infty, & \text{otherwise,}
\end{cases}
\]
for all $a,b\in R$.
This is easily checked to be a generalized $\CatCu$-bimorphism.
Since $R$ is solid, the map~$\tau_I$ factors through multiplication in $R$.
This means that there exists a generalized $\CatCu$-morphism $\widetilde{\tau}_I\colon R\to \{0,\infty\}$ such that $\widetilde{\tau}_I(ab)=\tau_I(a,b)$ for all $a,b\in R$.

Consider the case that $I$ is the ideal generated by $1$.
Then
\[
0=\tau_I(1,a)=\widetilde{\tau}_I(a)=\tau_I(a,1),
\]
for all $a\in R$.
This implies that $I=R$ and thus $1$ is a full element.

Now let $J$ be an ideal in $R$ satisfying $J\neq R$.
Since $1$ is full, this implies $1\notin J$.
Let $a\in J$.
We deduce
\[
0=\tau_J(a,1)=\widetilde{\tau}_J(a)=\tau_J(1,a).
\]
This implies $a=0$, hence $J=\{0\}$.
Thus, we have shown that $R$ is simple.

To show that $R$ has at most one normalized functional, let $\lambda_1$ and $\lambda_2$ be functionals on $R$ satisfying $\lambda_1(1)=\lambda_2(1)=1$.
Consider the map
\[
\tau\colon R\times R\to [0,\infty],\quad
\tau(a,b)=\lambda_1(a)\lambda_2(b),
\txtFA a,b\in R.
\]
It is clear that $\tau$ is a generalized $\CatCu$-bimorphism.
Since $R$ is solid, there exists a generalized $\CatCu$-morphism $\widetilde{\tau}\colon R\to [0,\infty]$ such that $\widetilde{\tau}(ab)=\lambda_1(a)\lambda_2(b)$ for all $a,b\in R$.
Then we obtain
\[
\lambda_1(a)=\tau(a,1)=\widetilde{\tau}(a1)=\widetilde{\tau}(1a)=\tau(1,a)=\lambda_2(a),
\]
for all $a\in R$.
This shows $\lambda_1=\lambda_2$, as desired.
\end{proof}

The aim of the following two results is to show that a nonelementary, solid $\CatCu$-semiring satisfying \axiomO{5} has a unique functional which is moreover multiplicative.

\begin{lma}
\label{prp:existenceFctlZMult}
Let $S$ be an almost unperforated, almost divisible $\CatCu$-semigroup, and let $a\in S$.
Then there exists $\lambda\in F(S)$ with $\lambda(a)=1$ if and only if $a\neq 2a$.
\end{lma}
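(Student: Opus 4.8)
The plan is to reduce the nontrivial (backward) implication to a verification of condition~(1) of \autoref{prp:existenceFctl}, which characterizes exactly when a functional normalized at $a$ exists. The forward implication is immediate: if $\lambda\in F(S)$ satisfies $\lambda(a)=1$, then since $\lambda$ is additive we get $\lambda(2a)=2\lambda(a)=2\neq 1=\lambda(a)$, whence $2a\neq a$. So the work is entirely in producing a functional from the hypothesis $a\neq 2a$ (which in particular forces $a\neq 0$).

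For the backward implication I would assume $a\neq 2a$ and show that both parts of \autoref{prp:existenceFctl}(1) hold, after which \autoref{prp:existenceFctl} supplies the desired $\lambda$. The recurring mechanism is that almost unperforation, in the guise of \autoref{prp:almUnpCu} (namely $u<_s^\ctsRel v\Rightarrow u\leq v$), upgrades crude inequalities between multiples into honest order comparisons. First I would establish the condition that $ma\leq na$ holds if and only if $m\leq n$. The nontrivial direction: if $ma\leq na$ with $m>n$, then $(n+1)a\leq na$, and adding copies of $a$ inductively gives $ka=na$ for all $k\geq n$; choosing any $k\geq n$ yields $(2k+2)a=na=ka$, that is $(k+1)(2a)\leq ka$, which is precisely $2a<_s a$ in the sense of \autoref{dfn:relS}. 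Almost unperforation then gives $2a\leq a$, hence $2a=a$, contradicting the hypothesis.

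The part I expect to be the main obstacle is the second half of \autoref{prp:existenceFctl}(1): the existence of $l,L\in\N$ and $x\ll la$ with $kLx\nleq ka$ for all $k\in\N_+$. Here I would argue by contradiction. If no such triple exists, then in particular for $l=2$ and $L=2$ every $x\ll 2a$ admits some $k\in\N_+$ with $2kx\leq ka$; since $k+1\leq 2k$ this forces $(k+1)x\leq ka$, i.e. $x<_s a$. As $x\ll 2a$ was arbitrary and $2a=\sup\{x:x\ll 2a\}$ by \axiomO{2}, this says exactly that $2a<_s^\ctsRel a$ in the sense of \autoref{dfn:regularRel}. Applying \autoref{prp:almUnpCu} once more yields $2a\leq a$, hence $2a=a$, again contradicting $a\neq 2a$. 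Thus the required $x$ exists, condition~(1) of \autoref{prp:existenceFctl} is met, and a functional with $\lambda(a)=1$ is obtained.

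The delicate point to get right is the translation between the combinatorial statement of \autoref{prp:existenceFctl}(1) and the regularized relation $<_s^\ctsRel$: one must be careful that the quantifier "$kLx\nleq ka$ for all $k$", when negated, supplies for \emph{each} approximant $x\ll 2a$ a (possibly $x$-dependent) witness $k$, which is nonetheless enough to conclude $x<_s a$ and thereby $2a<_s^\ctsRel a$. I anticipate that only almost unperforation is genuinely used in this direction, with almost divisibility present among the standing hypotheses but not actually invoked; I would double-check this against \autoref{prp:existenceFctl}, whose equivalences hold for arbitrary $\CatCu$-semigroups and therefore require no divisibility assumption.
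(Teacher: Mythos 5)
Your proof is correct, but it takes a genuinely different route from the paper's. The paper argues by contraposition and uses both hypotheses in an essential way: from almost unperforation together with almost divisibility it invokes \autoref{prp:ZModTFAE} to equip $S$ with a $Z$-multiplication, observes via \autoref{prp:softZmult} that $1'a$ and $3'a$ are soft and that every functional takes the same value on them as on $a$, and then applies the comparison theorem for soft elements (\autoref{prp:soft_comparison}) to conclude $1'a=3'a$, whence $2a\leq 3'a=1'a\leq a$ and so $a=2a$. Your argument instead verifies condition~(1) of \autoref{prp:existenceFctl} directly, and both reductions are sound: a failure of ``$ma\leq na\Rightarrow m\leq n$'' stabilizes the multiples of $a$ and yields $2a<_sa$, while the negation of the second half of the condition, specialized to $l=L=2$, gives for each $x\ll 2a$ an $x$-dependent witness $k\in\N_+$ with $(k+1)x\leq 2kx\leq ka$, which is exactly the definition of $2a<_s^\ctsRel a$; in either case \autoref{prp:almUnpCu} forces $2a\leq a$ and hence $2a=a$, the desired contradiction. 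The quantifier bookkeeping you flag is handled correctly. What your approach buys is that almost divisibility is indeed never used, so you have in fact proved the slightly stronger statement that the equivalence holds in any almost unperforated $\CatCu$-semigroup; the paper's route is shorter given the machinery already in place, but it genuinely consumes the divisibility hypothesis through \autoref{prp:ZModTFAE}.
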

\begin{proof}
Clearly, if $\lambda(a)=1$ for some $\lambda\in F(S)$, then $a\neq 2a$.
For the converse, assume there exists no $\lambda\in F(S)$ such that $\lambda(a)=1$.
This implies $\lambda(a)\in\{0,\infty\}$ for all $\lambda\in F(S)$.
By \autoref{prp:ZModTFAE}, $S$ has $Z$-multiplication.
Consider the elements $1'a$ and $3'a$.
By \autoref{prp:softZmult}, both $1'a$ and $3'a$ are soft, and $\lambda(1'a)=\lambda(a)$ for all $\lambda\in F(S)$.
Then $\lambda(1'a)=\lambda(3'a)$ for all $\lambda\in F(S)$.
Since $S$ is almost unperforated, it follows from \autoref{prp:soft_comparison} that $1'a=3'a$.
We deduce
\[
2a \leq 3'a = 1'a \leq a,
\]
which shows $a=2a$, as desired.
\end{proof}

\begin{cor}
\label{prp:solidZmult}
Let $R$ be a nonelementary, solid $\Cu$-semiring satisfying axiom~\axiomO{5}.
Then $R$ is simple, almost unperforated, almost divisible (hence $R\cong Z\otimes_{\CatCu}R$) and stably finite.
Moreover, there is a unique functional $\lambda\in F(R)$ satisfying $\lambda(1)=1$.
This functional is multiplicative.
\end{cor}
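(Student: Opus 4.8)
The plan is to assemble this corollary directly from results already established in the excerpt, since its hypotheses are precisely the ones under which those results apply. Let $R$ be a nonelementary, solid $\CatCu$-semiring satisfying \axiomO{5}. First I would invoke \autoref{prp:propertiesSolid} to conclude that $R$ is simple and has at most one functional normalized at $1$. Simplicity together with \axiomO{5} and nonelementariness puts us exactly in the setting of \autoref{thm:simpleSemirg}, which yields that $R$ is almost unperforated and almost divisible. By \autoref{prp:ZModTFAE}, this is equivalent to $R$ having $Z$-multiplication and to the isomorphism $R\cong Z\otimes_{\CatCu}R$. Stable finiteness follows from \autoref{prp:simpleSemirgCompacts}(1), which asserts that a simple, nonelementary $\CatCu$-semiring satisfying \axiomO{5} is automatically stably finite.

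The remaining point is the existence and multiplicativity of the normalized functional. Uniqueness is already handed to us by \autoref{prp:propertiesSolid}, so the task reduces to producing one functional $\lambda$ with $\lambda(1)=1$. Here I would apply \autoref{prp:existenceFctlZMult}: since $R$ is almost unperforated and almost divisible, a normalized functional at $1$ exists precisely when $1\neq 2\cdot 1$. Thus I must rule out $2\cdot 1 = 1$, i.e.\ that the unit is properly infinite. But \autoref{prp:simpleSemirgElmtry}(3) states that if $1_R$ is properly infinite then $R\cong\{0,\infty\}$ or $R\cong\{0\}$, both of which are elementary; this contradicts the nonelementariness hypothesis. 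Hence $1\neq 2\cdot 1$, and \autoref{prp:existenceFctlZMult} provides the desired $\lambda\in F(R)$ with $\lambda(1)=1$.

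Finally, multiplicativity of $\lambda$ follows immediately from \autoref{prp:multiplicativeUniqueFctl}: $R$ is simple, nonelementary, satisfies \axiomO{5}, and has a unique functional normalized at $1$ (just established), so that functional is multiplicative. Assembling these citations in the order simplicity/uniqueness, then almost unperforation and almost divisibility, then stable finiteness, then existence via the non-proper-infiniteness of the unit, then multiplicativity, completes the proof.

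The main obstacle, such as it is, is not any single deep step but rather recognizing that every clause of the conclusion has already been prepared by an earlier result, and in particular spotting that the existence of the normalized functional hinges on excluding $2\cdot 1 = 1$—which is exactly where nonelementariness is used a second time, via \autoref{prp:simpleSemirgElmtry}(3). One should double-check that \autoref{prp:existenceFctlZMult} is being applied to $a = 1_R$ and that its hypotheses (almost unperforation and almost divisibility, both secured above) are genuinely in force before concluding. Everything else is a routine chaining of the cited propositions.
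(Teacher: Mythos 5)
Your proof is correct and follows essentially the same route as the paper: simplicity and uniqueness of the normalized functional from \autoref{prp:propertiesSolid}, almost unperforation and almost divisibility via \autoref{thm:simpleSemirg} (the paper cites \autoref{prp:simpleSemirgZmult}, which is just that theorem combined with \autoref{prp:ZModTFAE}), existence of the functional from \autoref{prp:existenceFctlZMult} after excluding $1=2$ by \autoref{prp:simpleSemirgElmtry}, and multiplicativity from \autoref{prp:multiplicativeUniqueFctl}. If anything, you are slightly more complete than the paper, which leaves the stable finiteness claim implicit, whereas you correctly source it from \autoref{prp:simpleSemirgCompacts}(1).
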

\begin{proof}
By \autoref{prp:propertiesSolid}, $R$ is simple.
It follows from \autoref{prp:simpleSemirgZmult} that $R$ satisfies $R\cong Z\otimes_{\CatCu}R$.
Also by \autoref{prp:propertiesSolid}, $R$ has at most one functional that is normalized at the unit element $1$.
We have $1\neq 2$, since otherwise $R$ is elementary;
see \autoref{prp:simpleSemirgElmtry}.
Since $R$ is almost unperforated and almost divisible, we may apply \autoref{prp:existenceFctlZMult} to deduce that there exists $\lambda\in F(R)$ satisfying $\lambda(1)=1$.
By \autoref{prp:multiplicativeUniqueFctl}, $\lambda$ is multiplicative.
\end{proof}

\begin{pgr}
\label{pgr:solidRings}
Let us recall the classification of solid rings from \cite[Proposition~3.5]{BouKan72Core} and \cite[Proposition~1.10]{BowSch77Rings}.
Every unital subring of the rational numbers~$\Q$ is a (torsion-free) solid ring.
Conversely, every torsion-free, solid ring is isomorphic to a unital subring of~$\Q$.

Given a set of primes $P$, we let $\Z\left[P^{-1}\right]$ denote the subring of $\Q$ generated by $\Z$ and the numbers $\tfrac{1}{p}$ for every $p\in P$.
We associate to $P$ the supernatural number $n_P = \prod_{p\in P}p^\infty$.
Then, using the notation from \autoref{pgr:Rq},
\[
\Z\left[P^{-1}\right]
=\Z\left[\left\{ \tfrac{1}{p} : p \in P \right\}\right]
=\Z\left[\tfrac{1}{n_P}\right].
\]
Every unital subring of $\Q$ is of the form $\Z\left[P^{-1}\right]$ for some set of primes~$P$.

Given a ring $R$, we let $t(R)$ denote the torsion part of~$R$.
If $R$ is a solid ring, then $R/t(R)$ is a torsion-free, solid ring.
It is possible that $R/t(R)=\{0\}$, which happens precisely when the unit of $R$ is a torsion element.

Let us now assume that $R$ is a solid ring whose unit is not torsion.
Then $R/t(R)$ is a unital subring of $\Q$, and consequently there is a set of primes $P$ such that
\[
R/t(R)\cong\Z[P^{-1}].
\]
Furthermore, it is known that the order of every torsion element in $R$ is divisible in $R/t(R)$.
More precisely, it is shown in \cite[3.12]{BouKan72Core} that there is a subset $K\subset P$ and integers $e(p)$ for $p\in K$ such that
\[
t(R)\cong\bigoplus_{p\in K} \Z_{p^{e(p)}}.
\]
If $R$ is a solid ring whose unit is not torsion and $R/t(R)\cong \Z$, then $t(R)=\{0\}$ and hence $R\cong \Z$. Indeed, if $R$ has nonzero $p$-torsion elements for a prime $p$, then $p$ becomes invertible in $R/t(R)$, which is impossible.
\end{pgr}

\begin{dfn}
\label{dfn:solidRgPositive}
Let $R$ be a solid ring whose unit is not torsion.
As explained in \autoref{pgr:solidRings}, there exists a canonical embedding of $R/t(R)$ into $\Q$.
Let $\lambda_0\colon R\to\Q$ be the ring homomorphism obtained by composing the quotient map $R\to R/t(R)$ with the embedding $R/t(R)\subset\Q$.
We define $R_+$ as the set
\[
R_+ := \left\{ r\in R : \lambda_0(r)>0 \right\} \cup \{0\}.
\]
\end{dfn}

\begin{dfn}
\label{dfn:solidSrg}
\index{terms}{semiring!solid}
A semiring $R$ is \emph{solid} if for every $a\in R$ the equality
\[
a\otimes 1=1\otimes a
\]
holds in $R\otimes_{\CatSrg}R$.
\end{dfn}

\begin{lma}
\label{prp:solidSrgRg}
(1)
Let $S$ be a solid semiring.
Then the Grothendieck completion $\Gr(S)$ is a solid ring.

(2)
Let $R$ be a solid ring whose unit is not a torsion element.
Then the subset $R_+$ from \autoref{dfn:solidRgPositive} is a unital, conical subsemiring of $R$.
Moreover, the semiring $R_+$ is cancellative, and solid in the sense of \autoref{dfn:solidSrg}, and the algebraic order of $R_+$ is almost unperforated.
\end{lma}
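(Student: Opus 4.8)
The plan is to prove the two parts separately, both reducing to checks on the defining identity $a\otimes 1 = 1\otimes a$.

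For part (1), I would start by recalling that the Grothendieck completion is a functor $\Gr\colon\CatSrg\to\CatPoRg$ that commutes with tensor products (this is the content of the referenced \autoref{prp:GrTensorMon}, which gives $\Gr(S)\otimes\Gr(S)\cong\Gr(S\otimes_{\CatSrg}S)$ as rings). First I would fix an element $[a]\in\Gr(S)$, represented by a difference $a_1-a_2$ with $a_1,a_2\in S$; by additivity of $\otimes$ it suffices to treat a single $a\in S$ viewed inside $\Gr(S)$ via the canonical map $S\to\Gr(S)$. Under the natural isomorphism $\Gr(S)\otimes\Gr(S)\cong\Gr(S\otimes_{\CatSrg}S)$, the element $a\otimes 1$ in $\Gr(S)\otimes\Gr(S)$ corresponds to the image of the simple tensor $a\otimes 1\in S\otimes_{\CatSrg}S$, and similarly for $1\otimes a$. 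Since $S$ is solid, $a\otimes 1 = 1\otimes a$ already holds in $S\otimes_{\CatSrg}S$, hence their images agree in $\Gr(S\otimes_{\CatSrg}S)$ and therefore in $\Gr(S)\otimes\Gr(S)$. As the simple tensors and their differences generate $\Gr(S)\otimes\Gr(S)$, this establishes $x\otimes 1 = 1\otimes x$ for all $x\in\Gr(S)$, which is exactly solidity of the ring $\Gr(S)$.

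For part (2), I would first verify that $R_+$ is a unital, conical subsemiring. The map $\lambda_0\colon R\to\Q$ of \autoref{dfn:solidRgPositive} is a ring homomorphism, so $\lambda_0^{-1}((0,\infty))\cup\{0\}$ is closed under addition and multiplication (a sum or product of elements with positive image has positive image, using that $\Q_+$ is closed under these operations), contains $1$ since $\lambda_0(1)=1>0$, and is conical because $\lambda_0(r)=0$ for $r\in R_+$ forces $r=0$ by definition. Cancellativity is inherited from $R$: if $a+c=b+c$ in $R_+\subset R$, then $a=b$ in the ring $R$. For the almost unperforation of the algebraic order, I would invoke the structural route already built in the excerpt: by \autoref{prp:solidSrgRg}(1), or rather by the hypothesis that $R$ is a solid ring, $R_+$ is a solid semiring, and a solid semiring is in particular weakly divisible once it is nonelementary; then \autoref{prp:wkDivNearUnperf} shows near unperforation, and \autoref{prp:nearUnperfImplications} gives almost unperforation of the algebraic order. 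The cleanest argument, though, is direct: weak divisibility of $R_+$ gives $s,t\in R_+$ with $1=2s+3t$, and the computation in the proof of \autoref{prp:wkDivNearUnperf} then yields near unperforation, whence almost unperforation.

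The main obstacle I anticipate is establishing solidity (equivalently weak divisibility) of $R_+$ and pinning down precisely which form of the order identity transfers from the ring $R$ to the semiring $R_+$. The subtlety is that the defining equation for a solid semiring lives in $R_+\otimes_{\CatSrg}R_+$, whereas the hypothesis only gives the analogous equation in $R\otimes_{\Z}R$; I would need the compatibility between the semiring tensor product and the ring tensor product under the inclusion $R_+\hookrightarrow R$, together with the classification recalled in \autoref{pgr:solidRings} identifying $R/t(R)$ with a subring of $\Q$. Concretely, weak divisibility should follow because $\lambda_0(R_+)$ is a unital subsemiring of $\Q_+$ that is dense near $0$ (as $R/t(R)\cong\Z[P^{-1}]$ with the primes in $P$ invertible), so one can realize $1=2s+3t$; verifying that this lift back to $R_+$ respects the torsion part is the delicate bookkeeping step. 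Once weak divisibility is in hand, the remaining order and cancellation assertions are routine given the lemmas already proved.
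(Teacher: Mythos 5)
Your argument for part (1) is correct and essentially the paper's: one pushes the identity $a\otimes 1=1\otimes a$ forward along the canonical map $\delta\colon S\to\Gr(S)$ and extends to differences by additivity (the paper does this directly via $\delta\otimes\delta$, without even needing to invoke the isomorphism $\Gr(S\otimes_{\CatSrg}S)\cong\Gr(S)\otimes\Gr(S)$).

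Part (2) has a genuine gap: you never actually prove that $R_+$ is solid. You correctly identify the obstacle --- the hypothesis gives $a\otimes 1=1\otimes a$ only in $R\otimes R$, whereas \autoref{dfn:solidSrg} requires it in $R_+\otimes_{\CatSrg}R_+$, so one needs the natural map $\iota\otimes\iota\colon R_+\otimes_{\CatSrg}R_+\to R\otimes R$ to be injective --- but you then pivot to weak divisibility as if solidity were ``equivalently weak divisibility''. It is not: weak divisibility is the additive statement $1=2s+3t$ (which is what feeds into near and almost unperforation via \autoref{prp:simpleSrgDiv} and \autoref{prp:wkDivNearUnperf}), while solidity is a statement about the tensor square, and no density of $\lambda_0(R_+)$ in $\Q_+$ will produce it. The injectivity of $\iota\otimes\iota$ is the real content of the paper's proof and does not follow from injectivity of $\iota$ alone, since the tensor product of injective morphisms of monoids need not be injective. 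The paper's route is: show that $R_+\otimes_{\CatSrg}R_+$ is simple, conical and stably finite (using the state $\lambda_0\otimes\lambda_0$), and nonelementary (writing $1=c+d$ with $c,d\in R_+$ nonzero, which is possible since one may assume $R\ncong\Z$), so that \autoref{prp:cancSimpleSemirg} gives cancellativity; hence $R_+\otimes_{\CatSrg}R_+$ embeds into its Grothendieck completion, which by \autoref{prp:GrTensorMon} is $\Gr(R_+)\otimes\Gr(R_+)\cong R\otimes R$. Only then does the computation $(\iota\otimes\iota)(a\otimes 1)=\iota(a)\otimes 1=1\otimes\iota(a)=(\iota\otimes\iota)(1\otimes a)$ yield the identity in $R_+\otimes_{\CatSrg}R_+$. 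Note also that the case $R\cong\Z$ must be split off ($\N$ is obviously solid but is \emph{not} weakly divisible, so your unperforation argument needs this case separately as well). Your remaining verifications --- $R_+$ unital, conical and cancellative, and almost unperforation via weak divisibility when $R\ncong\Z$ --- are fine.
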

\begin{proof}
To show the first part of the statement, let $S$ be a solid semiring.
Let $R$ denote the Grothendieck completion of $S$, and let
\[
\delta\colon S\to R:=\Gr(S)
\]
denote the natural map.
To prove that $R$ is solid, let $a\in R$.
We need to show $a\otimes 1=1\otimes a$ in $R\otimes R$.

By properties of the Grothendieck completion, we can choose $x,y\in S$ such that $a=\delta(x)-\delta(y)$.
Using that $S$ is solid at the second step, we deduce
\[
a\otimes 1
=(\delta\otimes\delta)(x\otimes 1) - (\delta\otimes\delta)(y\otimes 1)
=(\delta\otimes\delta)(1\otimes x) - (\delta\otimes\delta)(1\otimes y)
=1\otimes a,
\]
as desired.

To show the second part of the statement, let $R$ be a solid ring with non-torsion unit.
It is straightforward to check that $R_+$ is a unital, conical subsemiring of $R$.
Let us show that $R_+$ is a solid semiring.
If $R\cong\Z$, then $R_+\cong\N$, which is obviously a solid semiring.
Therefore, we may assume $R\ncong\Z$.
Let us denote the inclusion of $R_+$ into $R$ by $\iota\colon R_+\to R$.
Let $\lambda_0\colon R\to\Q$ denote the canonical ring homomorphism introduced in \autoref{dfn:solidRgPositive}.
By abuse of notation, we denote the composition $\lambda_0\circ\iota\colon R_+\to\Q$ also by $\lambda_0$.

We endow $R_+$ with the algebraic order.
Then, for any $a,b\in R_+$ we have
\[
a<b\quad
\text{ if and only if }\quad
\lambda_0(a) < \lambda_0(b).
\]
Note that for every nonzero element $a$ in $R_+$, we have $\lambda_0(a)>0$.
It follows easily that $R_+$ is a simple semiring.
This implies that $R_+\otimes_{\CatSrg}R_+$ is a simple semiring, as well.
It is easy to see that $\lambda_0$ is a state when considering it as a map $\lambda_0\colon R_+\to\R$.
This induces a state $\lambda_0\otimes\lambda_0$ on $R_+\otimes_{\CatSrg}R_+$, such that $(\lambda_0\otimes\lambda_0)(x)>0$ for every nonzero element $x\in R_+\otimes_{\CatSrg}R_+$.
It follows that $R_+\otimes_{\CatSrg}R_+$ is conical and stably finite.

Since $R\ncong\Z$, we have $R/t(R)\cong\Z[P^{-1}]$ for some nonempty set of primes $P$.
Then it is easy to see that $R_+$ contains nonzero elements $c$ and $d$ such that $1=c+d$.
Note that the elements $1\otimes c$ and $1\otimes d$ in $R_+\otimes_{\CatSrg}R_+$ are nonzero.
Thus, the unit of $R_+\otimes_{\CatSrg}R_+$ is equal to $1\otimes c+1\otimes d$, the sum of two nonzero elements.
By \autoref{prp:cancSimpleSemirg}, the semiring $R_+\otimes_{\CatSrg}R_+$ is cancellative.
The following commutative diagram shows the (semi)rings and maps to be considered.
\[
\xymatrix@R=15pt@M+3pt{
R_+\otimes_{\CatSrg}R_+ \ar@{}[r]|{=}
& R_+\otimes_{\CatMon}R_+ \ar@{^{(}->}[r] \ar[d]^{\iota\otimes\iota}
& \Gr(R_+\otimes_{\CatMon}R_+) \ar[d]^{\cong} \\
R\otimes R \ar@{}[r]|{=}
& R\otimes_{\CatMon}R \ar@{}[r]|-{\cong}
& \Gr(R_+)\otimes_{\CatMon}\Gr(R_+) \\
}
\]
The tensor product of two (semi)rings is just the tensor product of the underlying monoids, equipped with a natural multiplication;
see \autoref{sec:poRg}.
We want to show that the map $\iota\otimes\iota$ is injective.
This does not follow directly from the injectivity of $\iota$, since in general the tensor product of two injective morphisms need not be injective again.
However, we have shown above that $R_+\otimes_{\CatSrg}R_+$ is cancellative.
Therefore, the map to the Grothendieck completion is injective, as indicated by the upper-right horizontal arrow in the diagram.

In general, if $M$ and $N$ are monoids, there is a natural isomorphism between $\Gr(M\otimes_{\CatMon}N)$, the Grothendieck completion of their tensor product, and $\Gr(M)\otimes_{\CatMon}\Gr(N)$, the tensor product of their respective Grothendieck com\-ple\-tions;
see \cite[Proposition~17]{Ful70Tensor}, and also \autoref{prp:GrTensorMon}.

It is clear that $R$ is canonically isomorphic to the Grothendieck completion of~$R_+$.
It follows from the commutativity of the above diagram that the map $\iota\otimes\iota$ is injective.

Now let $a\in R_+$ be given.
Using that $R$ is solid at the second step, we deduce
\[
(\iota\otimes\iota)(a\otimes 1)
= \iota(a)\otimes 1
= 1\otimes\iota(a)
= (\iota\otimes\iota)(1\otimes a),
\]
in $R\otimes R$.
Since the map $\iota\otimes\iota$ is injective, this implies
\[
a\otimes 1
=1\otimes a,
\]
in $R_+\otimes_{\CatSrg}R_+$, as desired.
It is left to the reader to check that the algebraic order of $R_+$ is almost unperforated.
\end{proof}

\begin{rmk}
\label{rmk:solidRgOrdered}
Let $R$ be a solid ring whose unit is not a torsion element.
By \autoref{prp:solidSrgRg}, the subset $R_+$ of $R$ is a unital, conical, subsemiring.
It follows that $R$ has the structure of a \por{} with positive cone given by $R_+$;
see \autoref{pgr:equivalenceSrgPoRg}.
It is clear that $R$ is directed.
This means that every solid ring with non-torsion unit has a canonical structure as a directed, \por.
\end{rmk}

\begin{lma}
\label{prp:solidElementsRational}
Let $R$ be a solid ring whose unit is not a torsion element, and let $a\in R_+$.
Then there exist $k,n\in\N$ such that $na=k1$.
\end{lma}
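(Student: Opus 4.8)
The plan is to exploit the canonical ring homomorphism $\lambda_0\colon R\to\Q$ from \autoref{dfn:solidRgPositive}. Its kernel is precisely the torsion subgroup $t(R)$: indeed, $\lambda_0$ factors as the quotient map $R\to R/t(R)$ followed by the injective embedding $R/t(R)\hookrightarrow\Q$ described in \autoref{pgr:solidRings}, so $\ker\lambda_0=\ker(R\to R/t(R))=t(R)$. First I would observe that, since $a\in R_+$, the value $\lambda_0(a)$ lies in $\Q$ and satisfies $\lambda_0(a)\geq 0$ (it is positive if $a\neq 0$, and $0$ if $a=0$). Hence we may write $\lambda_0(a)=\tfrac{k}{n}$ for some $k\in\N$ and $n\in\N_+$.

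Next I would consider the element $na-k1\in R$. Using that $\lambda_0$ is a unital ring homomorphism (so $\lambda_0(1)=1$, as both $R\to R/t(R)$ and $R/t(R)\cong\Z[P^{-1}]\hookrightarrow\Q$ are unital), one computes
\[
\lambda_0(na-k1)=n\lambda_0(a)-k=n\cdot\tfrac{k}{n}-k=0,
\]
so that $na-k1\in\ker\lambda_0=t(R)$. In particular $na-k1$ is a torsion element of the additive group of $R$. The final step is to clear this torsion: since $t(R)\cong\bigoplus_{p\in K}\Z_{p^{e(p)}}$ is a torsion group, every element has finite additive order, so there is $m\in\N_+$ with $m(na-k1)=0$, that is, $(mn)a=(mk)1$. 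Replacing $n$ by $mn\in\N_+$ and $k$ by $mk\in\N$ then yields the desired identity $na=k1$.

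I do not expect a serious obstacle; the argument is essentially a two-line computation once the structure theory of \autoref{pgr:solidRings} is in hand. The only points requiring care are the identification $\ker\lambda_0=t(R)$ and the fact that $\lambda_0$ is unital, both of which follow directly from the construction of $\lambda_0$. The degenerate case $a=0$ is immediate (take $n=1$, $k=0$), and the special case $R\cong\Z$ is subsumed, since there $t(R)=\{0\}$ and $\lambda_0$ is the identity, forcing $na-k1=0$ already at the second step.
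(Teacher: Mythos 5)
Your proposal is correct and follows essentially the same route as the paper's proof: compute $\lambda_0(a)=\tfrac{k}{n}$, observe that $na-k1$ lies in $\ker\lambda_0=t(R)$, and multiply by the additive order of that torsion element to get $(mn)a=(mk)1$. The only difference is that you spell out the identification $\ker\lambda_0=t(R)$ and the degenerate cases explicitly, which the paper leaves implicit.
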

\begin{proof}
We may assume that $a$ is nonzero.
Let $\lambda_0\colon R\to\Q$ be the canonical ring homomorphism introduced in \autoref{dfn:solidRgPositive} such that
\[
R_+ = \left\{ r\in R : \lambda_0(r)>0 \right\} \cup \{0\}.
\]
Choose positive $k_0,n_0\in\N$ such that $\lambda_0(a)=\tfrac{k_0}{n_0}$.
Then
\[
\lambda_0(n_0 a - k_0 1)=0.
\]
Therefore, $n_0 a - k_0 1$ is a torsion element of $R$.
Let $m$ be its order.
Then
\[
(mn_0)a = (mk_0)1,
\]
which shows that $n:=mn_0$ and $k:=mk_0$ have the desired properties.
\end{proof}

\begin{prp}
\label{prp:equivalenceSolidSrgRg}
There is a natural one-to-one correspondence between the following classes:
\beginEnumStatements
\item
Solid rings whose unit is not a torsion element.
\item
Solid, cancellative, conical semirings for which the algebraic order is almost unperforated.
\end{enumerate}
The correspondence is given by associating to a solid ring $R$ with non-torsion unit the solid semiring $R_+$ from \autoref{dfn:solidRgPositive}, and conversely by associating to a solid semiring $S$ its Grothendieck completion $\Gr(S)$.
\end{prp}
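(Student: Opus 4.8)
The plan is to realize the asserted bijection as the restriction, to the solid subclasses, of the general equivalence between directed \por{s} and cancellative conical semirings recorded in \autoref{pgr:equivalenceSrgPoRg}. Under that equivalence a directed \por{} $R$ corresponds to its positive cone and a cancellative conical semiring $S$ corresponds to its Grothendieck completion $\Gr(S)$, and these assignments are mutually inverse, so that $\Gr(R_+)\cong R$ for a directed \por{} $R$ while the positive cone of $\Gr(S)$ is $S$ for a cancellative conical semiring $S$. The two maps of the statement are exactly the images of $R\mapsto R_+$ and $S\mapsto\Gr(S)$ under this dictionary, so the whole content is to verify that they carry one subclass onto the other.

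First I would check that both assignments are well defined between the indicated classes. For the forward map, a solid ring $R$ with non-torsion unit carries a canonical structure of a directed \por{} whose positive cone is the set $R_+$ of \autoref{dfn:solidRgPositive}; this is \autoref{rmk:solidRgOrdered}. By \autoref{prp:solidSrgRg}(2) this $R_+$ is a cancellative, conical, solid semiring whose algebraic order is almost unperforated, i.e.\ an object of class~(2). For the backward map, let $S$ be a solid, cancellative, conical semiring with almost unperforated algebraic order. By \autoref{prp:solidSrgRg}(1) its Grothendieck completion $\Gr(S)$ is a solid ring. Its unit is not a torsion element: since $S$ is cancellative the natural map $S\to\Gr(S)$ is injective, and if $n\cdot 1=0$ in $\Gr(S)$ for some $n\geq 1$ then $n\cdot 1=0$ already in $S$, whence $1=0$ by conicality and $S=\{0\}$; thus for $S\neq\{0\}$ the unit of $\Gr(S)$ has infinite additive order, so $\Gr(S)$ lies in class~(1).

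It then remains to see that the two round trips are the identity. For $R\mapsto R_+\mapsto\Gr(R_+)$ this is immediate from \autoref{rmk:solidRgOrdered} together with \autoref{pgr:equivalenceSrgPoRg}: viewing $R$ as a directed \por{} with positive cone $R_+$, the Grothendieck completion of $R_+$ recovers $R$ as an ordered ring, hence as a ring. For $S\mapsto\Gr(S)\mapsto(\Gr(S))_+$ I would argue that the positive cone $(\Gr(S))_+$ produced by \autoref{dfn:solidRgPositive} coincides with the order-theoretic positive cone of $\Gr(S)$ furnished by the equivalence, which by construction is exactly the image of $S$. The inclusion $S\subseteq(\Gr(S))_+$ follows because the restriction to $S$ of the canonical ring homomorphism $\lambda_0\colon\Gr(S)\to\Q$ is a unital semiring homomorphism that is nonnegative and, by the injectivity and conicality argument above, strictly positive on nonzero elements of $S$. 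The reverse inclusion is where the hypotheses on $S$ are used: given $r\in(\Gr(S))_+$, \autoref{prp:solidElementsRational} provides $k,n\in\N$ with $nr=k\cdot 1$ in $\Gr(S)$, so that, writing $r=[a]-[b]$ with $a,b\in S$, one has $na=nb+k\cdot 1$ in $S$ and must deduce $b\leq a$ in the algebraic order of $S$, i.e.\ that $r$ lies in the image of $S$.

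The main obstacle is precisely this last deduction: passing from $nb\leq na$, with the extra information $na=nb+k\cdot 1$, to $b\leq a$. This is an unperforation-type cancellation that is not literally almost unperforation, and I expect to have to exploit simplicity of $S$ — which, as in the proof of \autoref{prp:solidSrgRg}(2), follows from conicality together with strict positivity of $\lambda_0$ on nonzero elements — to compare $a$, $b$ and $1$ by multiples and then invoke almost unperforation of the algebraic order. Finally, naturality of the correspondence — functoriality of $R\mapsto R_+$ on ring homomorphisms (a ring homomorphism intertwines the unique maps to $\Q$, hence preserves the $\lambda_0$-cones) and of $S\mapsto\Gr(S)$ on semiring homomorphisms, together with their compatibility — is inherited directly from the equivalence of \autoref{pgr:equivalenceSrgPoRg}, so no separate work is needed there.
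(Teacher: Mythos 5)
Your overall route is the paper's route: verify well-definedness via \autoref{prp:solidSrgRg}, reduce the round trips to the equivalence of \autoref{pgr:equivalenceSrgPoRg}, and fight the battle at the inclusion $(\Gr(S))_+\subseteq S$. But you have left exactly that decisive step open, and the plan you sketch for closing it does not work as stated. You propose to deduce $b\leq a$ from $na=nb+k\cdot 1$ by first establishing that $S$ is simple (every nonzero element an order unit for the \emph{algebraic order of $S$}), citing the argument in the proof of \autoref{prp:solidSrgRg}(2). That argument, however, proves simplicity of $R_+=\{r:\lambda_0(r)>0\}\cup\{0\}$: there, $m\lambda_0(y)>\lambda_0(x)$ gives $my-x\in R_+$ and hence $x\leq my$ because membership in $R_+$ \emph{is} the positivity condition. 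For your $S$ you only know $S\subseteq(\Gr(S))_+$ a priori, so $my-x\in(\Gr(S))_+$ does not put $my-x$ in $S$, which is what $x\leq my$ in the algebraic order of $S$ requires. Using simplicity of $S$ here is circular: it is essentially equivalent to the inclusion $(\Gr(S))_+\subseteq S$ you are trying to prove. (A smaller unproven claim of the same flavour: you assert that $\lambda_0$ is nonnegative on $S$; conicality and injectivity only give $\lambda_0(a)\neq 0$ for $a\neq 0$, and ruling out $\lambda_0(a)<0$ needs the separate observation that $na+k1$ would then be a torsion element of $\Gr(S)$ lying in $S$, hence $0$, contradicting conicality.)

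The gap is fillable, and the paper does it without any appeal to simplicity: given nonzero $r\in(\Gr(S))_+$, write $r+x=y$ with $x,y\in S$ nonzero, and apply \autoref{prp:solidElementsRational} to $x$ and $y$ \emph{separately} (both lie in $(\Gr(S))_+$ by the first inclusion) to get $nx=k_1\cdot 1$ and $ny=k_2\cdot 1$ with $k_1<k_2$. Then $k_2nx=k_1k_2\cdot 1=k_1ny$ with $k_1n<k_2n$, so $(k_1n+1)x\leq k_2nx=k_1ny$, i.e.\ $x<_s y$; almost unperforation of the algebraic order gives $x\leq y$, and cancellativity then yields $r\in S$. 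Your variant (applying \autoref{prp:solidElementsRational} to $r$ itself and then to $a$ and $b$) can be made to work by the same $k_1<k_2$ manipulation, but as written the argument stops one computation short of a proof and the stated bridge to that computation is unsound.
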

\begin{proof}
It is clear from \autoref{prp:solidSrgRg} that the assignments of the statement are well-defined.
Thus, it remains to show that the assignments are inverse to each other.

Given a solid ring $R$ with non-torsion unit, the inclusion $\iota\colon R_+\to R$ induces a map $\iota_*\colon\Gr(R_+)\to \Gr(R)=R$.
It is easy to verify that $\iota_*$ is an injective ring homomorphism.
To show that $\iota_*$ is surjective, let $r\in R$.
Let $\lambda_0\colon R\to\Q$ be the canonical ring homomorphism from \autoref{dfn:solidRgPositive}.
It is clear that $r$ belongs to the image of $\iota_*$ whenever $\lambda_0(r)\neq 0$.
If $\lambda_0(r)=0$, then $r\in t(R)$.
Then $r=(r+1)-1$, with $r+1, 1\in R_+$ (as the unit is not torsion), which shows that also in this case $r$ belongs to the image of $\iota_*$.

Conversely, let $S$ be a solid, cancellative, conical semiring for which the algebraic order is almost unperforated.
Let $R$ be the Grothendieck completion of $S$.
Since $S$ is cancellative, we can consider $S$ as a unital subsemiring of $R$.
We need to show that $S=R_+$.
As before, we denote by $\lambda_0\colon R\to\Q$ the canonical ring homomorphism.

In order to show $S\subset R_+$, we let $a\in S$.
We may assume $a\neq 0$.
Since $S$ is conical, it contains no torsion element.
Thus, we have $\lambda_0(a)\neq 0$.
To reach a contradiction, assume $\lambda_0(a)<0$.
Choose $k,n\in\N$ such that $\lambda_0(a)=-\tfrac{k}{n}$.
Then
\[
\lambda_0(na+k1)=0.
\]
Since $1$ is an element of $S$, we deduce $na+k1\in S$, and therefore $na+k1=0$.
This contradicts conicality of $S$.
Thus $\lambda(a)\geq 0$ for each $a\in S$.
It follows $S\subset R_+$, as desired.

To show $R_+\subset S$, let $a\in R_+$.
We may assume $a\neq 0$.
Since $R$ is the Grothendieck completion of $S$, we can choose $x,y\in S$ with $a+x=y$.
Then $y$ is nonzero, and we may clearly assume that $x$ is also nonzero.
Let us show $x<_s y$.
It follows from the proof of \autoref{prp:solidElementsRational} that we can find $n,k_1,k_2\in\N$ such that
\[
nx = k_11,\quad
ny = k_21.
\]
Since $a$ is nonzero, we have $k_1<k_2$.
Then $k_2nx=k_1k_21=k_1ny$, with $k_1n<k_2n$, and thus $x<_s y$.
Since $S$ is almost unperforated (with the algebraic order), we deduce $x\leq y$.
Using that the order of $S$ is algebraic and that $S$ is cancellative, we obtain $a\in S$, as desired.
\end{proof}

\begin{rmk}
In \autoref{prp:equivalenceSrgPoRg}, we recall the natural one-to-one correspondence between directed, \por{s} and cancellative, conical se\-mi\-rings, given by assigning to a \por{} its positive cone, and conversely by associating to a conical semiring its Grothendieck completion.
Every solid ring whose unit is not a torsion element has a canonical structure as a directed \por;
see \autoref{rmk:solidRgOrdered}.
Then \autoref{prp:equivalenceSolidSrgRg} shows that the above correspondence restricts to a natural identification between directed, \por{s} coming from solid rings and cancellative, conical semirings that are solid and whose algebraic order is almost unperforated.
\end{rmk}

\begin{lma}
\label{prp:solidSrgCu}
(1)
Let $K$ be a solid, cancellative, conical semiring.
Then its $\Cu$-completion $\Cu(K)$ as constructed in \autoref{pgr:CuCompletionSrg} is a solid $\CatCu$-semiring.

(2)
Let $R$ be a solid, nonelementary, algebraic $\CatCu$-semiring satisfying~\axiomO{5}.
Then the subsemiring of compact elements $R_c$ is a solid, nonelementary, cancellative, conical semiring for which the algebraic order is almost unperforated.
\end{lma}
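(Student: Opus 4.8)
The plan is to verify solidity in both parts through the criterion of \autoref{prp:solidTFAE} for $\CatCu$-semirings (that $S$ is solid iff $a\otimes 1=1\otimes a$ in $S\otimes_{\CatCu}S$ for every $a$) together with \autoref{dfn:solidSrg} for semirings, using as a bridge the natural isomorphism $\Cu(M)\otimes_{\CatCu}\Cu(N)\cong\Cu(M\otimes_{\CatPom}N)$ from \autoref{prp:tensorAlgebraic}, which identifies the simple tensor of two compact elements $a,b$ with the class of $a\otimes b\in M\otimes_{\CatPom}N$. For part~(1), by \autoref{pgr:CuCompletionSrg} the object $\Cu(K)$ is a weakly cancellative, algebraic $\CatCu$-semiring satisfying \axiomO{5} whose compact elements are identified with $K$. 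Since $\Cu(K)$ is algebraic and the tensor product is continuous in each variable (\autoref{prp:tensLim}), it suffices to check $a\otimes 1=1\otimes a$ for compact $a$, that is for $a\in K$. Via \autoref{prp:tensorAlgebraic} this reduces to the equality $a\otimes 1=1\otimes a$ in $K\otimes_{\CatPom}K$; and since $K$ is a solid semiring this equality already holds in $K\otimes_{\CatSrg}K=K\otimes_{\CatMon}K$, whence it pushes forward along the canonical surjective monoid morphism $K\otimes_{\CatMon}K\to K\otimes_{\CatPom}K$. Thus part~(1) is immediate, the point being that we only transport an equality along a surjection.

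For part~(2) I would first record the structural properties of $R_c$. Since $R$ is solid it is simple (\autoref{prp:propertiesSolid}), and being also nonelementary and satisfying \axiomO{5} it is stably finite, almost unperforated and almost divisible, with a unique normalized functional $\lambda$ that is moreover multiplicative (\autoref{prp:solidZmult} and \autoref{prp:multiplicativeUniqueFctl}). In particular $1_R$ is compact (\autoref{prp:simpleSemirgCompacts}), so $R_c$ is a conical subsemiring containing the unit and $R\cong\Cu(R_c)$ (\autoref{prp:algebraicSemigp}). By \autoref{prp:cancSimpleAlgebraicSemirg} and \autoref{prp:propertiesAlgebraic}, $R_c$ is cancellative and algebraically ordered, and this algebraic order is almost unperforated since it is induced from $R$ and $R$ is almost unperforated on compact elements. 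That $R_c$ is nonelementary as a semiring follows from \autoref{lma:wkDivNonElemSrg}.

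It remains to show $R_c$ is a solid semiring, which is the heart of the argument. Combining solidity of $R$ with $R\cong\Cu(R_c)$ and \autoref{prp:tensorAlgebraic}, one obtains $a\otimes 1=1\otimes a$ in the \pom{} tensor product $R_c\otimes_{\CatPom}R_c$ for every $a\in R_c$ (the simple tensors being compact, the equality of their classes in $\Cu(R_c\otimes_{\CatPom}R_c)$ descends to $R_c\otimes_{\CatPom}R_c$ by the order-embedding of compacts). The difficulty is that $R_c\otimes_{\CatSrg}R_c$ has $R_c\otimes_{\CatMon}R_c$ as underlying monoid while $R_c\otimes_{\CatPom}R_c$ is only a \emph{quotient} of it, so transferring the equality back requires the canonical surjection $R_c\otimes_{\CatMon}R_c\to R_c\otimes_{\CatPom}R_c$ to be injective. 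Because $R_c$ is algebraically ordered, the pre-order defining the \pom{} tensor product agrees with the algebraic pre-order of $R_c\otimes_{\CatMon}R_c$ (and $R_c\otimes_{\CatPom}R_c$ is algebraically ordered by \autoref{prp:tensPomPreservesAlgOrd}); hence this injectivity is equivalent to antisymmetry of that pre-order, which holds precisely when $R_c\otimes_{\CatSrg}R_c$ is cancellative and conical.

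The main obstacle is therefore establishing cancellativity of $R_c\otimes_{\CatSrg}R_c$, and here I would mirror the method of \autoref{prp:solidSrgRg}. The functional $\lambda$ restricts to a faithful, multiplicative state $\lambda_c\colon R_c\to[0,\infty)$ --- faithful because a nonzero functional on a simple $\CatCu$-semigroup is faithful, and finite on compact elements because $R$ is simple and stably finite. Then $\lambda_c\otimes\lambda_c$ is a faithful state on $R_c\otimes_{\CatSrg}R_c$, forcing that semiring to be conical and stably finite; it is simple as a tensor product of simple conical semirings, and nonelementary since its unit decomposes as a sum of two nonzero elements. Applying \autoref{prp:cancSimpleSemirg} yields that $R_c\otimes_{\CatSrg}R_c$ is cancellative, which by the previous paragraph identifies it with $R_c\otimes_{\CatPom}R_c$ and so delivers $a\otimes 1=1\otimes a$ in $R_c\otimes_{\CatSrg}R_c$. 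This completes the proof that $R_c$ is solid, and with the structural properties already in hand, part~(2) follows.
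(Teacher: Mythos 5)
Your proof is correct and follows essentially the same route as the paper: part (1) by pushing the identity $a\otimes 1=1\otimes a$ forward from $K\otimes_{\CatSrg}K$ and passing to suprema of compacts, and part (2) by showing $R_c\otimes_{\CatSrg}R_c$ is conical, stably finite, simple and nonelementary via the faithful product state $\lambda\otimes\lambda$, invoking \autoref{prp:cancSimpleSemirg} to get cancellativity, and then transferring the identity back along the resulting order-embedding. The only slight imprecision is the claim that injectivity of $R_c\otimes_{\CatMon}R_c\to R_c\otimes_{\CatPom}R_c$ holds \emph{precisely} when the tensor square is cancellative and conical (it is equivalent to antisymmetry of the algebraic pre-order, for which conical plus stably finite already suffices), but you only use the implication in the direction you actually need.
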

\begin{proof}
Let $K$ be a cancellative, conical semiring.
Consider the tensor square $K\otimes_{\CatMon}K$ of $K$ in the category $\CatMon$ of monoids.
Equipped with the natural multiplication, the monoid $K\otimes_{\CatMon}K$ becomes a semiring, denoted by $K\otimes_{\CatSrg}K$, which is the tensor square of $K$ in the category $\CatSrg$ of (unital, commutative) semirings;
see \autoref{pgr:srg}.

As explained in \autoref{pgr:algebraicSemigp}, we obtain a $\CatW$-semigroup $(K,\leq)$ if we equip the monoid $K$ with the auxiliary relation that is equal to its partial order.
As shown in \autoref{pgr:CuCompletionSrg}, it follows that the $\CatCu$-completion of $(K,\leq)$ is a $\CatCu$-semiring which we denote by $\Cu(K)$.
We denote the universal $\CatW$-morphism to the $\CatCu$-completion by
\[
\alpha\colon K\to\Cu(K).
\]
Considering $K$ and $\Cu(K)$ as semirings, the map $\alpha$ is a semiring homomorphism and an order-embedding that identifies $K$ with the compact elements of $\Cu(K)$;
see \autoref{rmk:Cuification} and \autoref{prp:algebraicSemigp}.
We therefore think of $K$ as a subset of~$\Cu(K)$ and identify $a$ with $\alpha(a)$, for each $a\in K$.

The map
\[
K\times K \to \Cu(K)\otimes_{\CatCu}\Cu(K),\quad
(a,b)\mapsto a\otimes b,\quad
\txtFA a,b\in K,
\]
is a monoid bimorphism and therefore induces a monoid homomorphism
\[
\varphi\colon K\otimes_{\CatMon} K \to \Cu(K)\otimes_{\CatCu}\Cu(K)
\]
such that $\varphi(a\otimes b)=a\otimes b$ for each $a,b\in K$.

To show the first part of the statements, assume that $K$ is a solid, cancellative, conical semiring.
Let $a\in\Cu(K)$.
In order to prove that $\Cu(K)$ is a solid $\CatCu$-semiring, we need to show by \autoref{prp:solidTFAE} that $1\otimes a= a\otimes 1$ in $\Cu(K)\otimes_{\CatCu}\Cu(K)$.
Assume first that $a$ is a compact element.
Then $a$ is an element of $K$.
Using that $K$ is solid at the second step, we obtain
\[
1\otimes a
= \varphi(1\otimes a)
= \varphi(a\otimes 1)
= a\otimes 1.
\]
If $a$ is a not necessarily compact element, then we can choose an increasing sequence $(a_k)_k$ of compact elements in $\Cu(K)$ such that $a=\sup_k a_k$.
Then
\[
1\otimes a
= 1\otimes (\sup_ka_k)
= \sup_k(1\otimes a_k)
= \sup_k(a_k\otimes 1)
= a\otimes 1.
\]

In order to show the second part of the statement, let $R$ be a solid, nonelementary, algebraic $\CatCu$-semiring satisfying~\axiomO{5}.
By \autoref{prp:propertiesSolid}, $R$ is simple.
Then it follows from \autoref{prp:cancSimpleAlgebraicSemirg} that $R$ is weakly divisible and weakly cancellative.

We set $K:=R_c$, the subsemiring of compact elements, and we identify $R$ with $\Cu(K)$.
It follows from \autoref{prp:propertiesAlgebraic} that $K$ is a conical, cancellative semiring such that the order on $K$ induced by $R$ is the algebraic order.
We know from \autoref{lma:wkDivNonElemSrg} that $K$ is a stably finite nonelementary semiring.
Moreover, by \autoref{prp:solidZmult}, $R$ has a unique normalized functional, which we denote by $\lambda$.
Note that $\lambda(a)>0$ for every nonzero element of $R$.
The map $\lambda$ is a state on $K$, which induces a state $\lambda\otimes\lambda$ on $K\otimes_{\CatSrg}K$ with the property that $(\lambda\otimes\lambda)(x)>0$ for every nonzero element $x$ of $K\otimes_{\CatSrg}K$.
It follows that $K\otimes_{\CatSrg}K$ is stably finite. Since $K$ is a nonelementary semiring we see, as in the proof of \autoref{prp:solidSrgRg}, that $K\otimes_{\CatSrg}K$ is also nonelementary.
It is also straightforward to deduce that $K\otimes_{\CatSrg}K$ is simple and weakly divisible.
Therefore, by \autoref{prp:cancSimpleSemirg}, the semiring $K\otimes_{\CatSrg}K$ is cancellative.
It follows that its algebraic pre-order is partial.
Thus, we have shown that the natural quotient map
\[
K\otimes_{\CatMon}K \to K\otimes_{\CatPom}K
\]
is an isomorphism.

We obtain a $\CatW$-semigroup by equipping $K\otimes_{\CatMon}K$ with the auxiliary relation that is equal to its partial order.
The tensor square of $(K,\leq)$ in the category $\CatPreW$ is given as the tensor square in $\CatPom$ of the underlying monoids together with a naturally defined auxiliary relation;
see \autoref{dfn:tensProdAuxRel}.
It follows that there is a natural isomorphism
\[
(K\otimes_{\CatMon}K,\leq) \cong (K,\leq)\otimes_{\CatPreW}(K,\leq)
\]
Applying $\CatCu$-completions to both sides and using \autoref{thm:tensProdCompl}, we deduce that there is a natural isomorphism
\[
\Cu(K\otimes_{\CatSrg}K) \cong \Cu(K) \otimes_{\CatCu} \Cu(K).
\]

We denote the universal $\CatW$-morphism to the $\CatCu$-completion of $K\otimes_{\CatSrg}K$ by
\[
\beta\colon K\otimes_{\CatSrg}K\to\Cu(K\otimes_{\CatSrg}K).
\]
By \autoref{rmk:Cuification}, the map $\beta$ is an order-embedding.
In conclusion, the map $\varphi$ from the beginning of the proof is an order-embedding.

Then, if $a$ is an element of $K$, we have
\[
\beta(1\otimes a)
= 1\otimes \alpha(a)
= \alpha(a)\otimes 1
= \beta(a\otimes 1).
\]
Since $\beta$ is an oder-embedding, we obtain $1\otimes a=a\otimes 1$ in $K\otimes_{\CatSrg}K$.
Thus, $K$ is a solid semiring.
It is easy to check that $K$ is almost unperforated.
\end{proof}

\begin{thm}
\label{prp:equivalenceSolidSrgRgCu}
There is a natural one-to-one correspondence between each of the following classes:
\beginEnumStatements
\item
Solid rings whose unit is not a torsion element and that are not isomorphic to $\Z$.
\item
Solid, nonelementary, cancellative, conical semirings for which the algebraic order is almost unperforated.
\item
Solid, nonelementary, algebraic $\CatCu$-semirings satisfying~\axiomO{5}.
\end{enumerate}
The correspondence between \enumStatement{1} and \enumStatement{2} is given by associating to a solid ring $R$ with non-torsion unit the solid semiring $R_+$ from \autoref{dfn:solidRgPositive}, and conversely by associating to a solid semiring $S$ its Grothendieck completion $\Gr(S)$.

The correspondence between \enumStatement{2} and \enumStatement{3} is given by associating to a solid semiring $K$ the $\CatCu$-semiring $\Cu(K)$ as constructed in \autoref{pgr:CuCompletionSrg}, and conversely by associating to a solid, algebraic $\CatCu$-semiring $S$ its subsemiring of compact elements.
\end{thm}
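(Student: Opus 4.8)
The plan is to obtain the theorem by splicing together the two pairwise correspondences already prepared in the earlier structural results, and then checking that the extra defining conditions of the three classes match up under the relevant assignments. No new deep argument is needed; the content is entirely in the bookkeeping of subclasses.

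For the correspondence between (1) and (2), I would start from \autoref{prp:equivalenceSolidSrgRg}, which already establishes a one-to-one correspondence between solid rings with non-torsion unit and solid, cancellative, conical semirings whose algebraic order is almost unperforated, via $R\mapsto R_+$ and $S\mapsto\Gr(S)$. It then remains to check that, under this correspondence, the ring $R$ is isomorphic to $\Z$ precisely when $R_+$ is elementary. If $R\cong\Z$ then $R_+\cong\N$, whose unit is a minimal nonzero element, so $R_+$ is elementary. Conversely, if $R\not\cong\Z$, then since the unit is not torsion we have $R/t(R)\cong\Z\left[P^{-1}\right]$ for a nonempty set of primes $P$ (nonempty, since $P=\emptyset$ would force $R\cong\Z$ by \autoref{pgr:solidRings}); arguing exactly as in the proof of \autoref{prp:solidSrgCu}, $R_+$ then contains nonzero elements $c,d$ with $1=c+d$, so $R_+$ is nonelementary in the sense of \autoref{dfn:nonElemSrg}. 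Thus excluding $\Z$ on the ring side corresponds exactly to requiring nonelementarity on the semiring side, and the bijection of \autoref{prp:equivalenceSolidSrgRg} restricts to the desired one.

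For the correspondence between (2) and (3), I would invoke \autoref{prp:equivalenceSrgAlgCu}, which already shows that $K\mapsto\Cu(K)$ and $S\mapsto S_c$ are mutually inverse between cancellative, conical semirings and weakly cancellative, algebraic $\CatCu$-semirings satisfying \axiomO{5}. Both of our classes sit inside these larger categories: a class-(2) semiring is cancellative and conical, while a class-(3) semiring is solid, hence simple by \autoref{prp:propertiesSolid}, hence weakly cancellative by \autoref{prp:cancSimpleAlgebraicSemirg}. Consequently the two assignments are automatically mutually inverse once I check they preserve the remaining conditions. That $K\mapsto\Cu(K)$ lands in class (3) follows from \autoref{prp:solidSrgCu}(1) (solidity) together with the construction in \autoref{pgr:CuCompletionSrg} (algebraicity and \axiomO{5}); for nonelementarity I would apply \autoref{lma:wkDivNonElemSrg}, whose hypotheses hold since $\Cu(K)$ is simple (being solid), $K=\Cu(K)_c$ is stably finite (being cancellative, see \autoref{rmk:preminSimpleSFPrePom}), the unit is compact, and $K$ is a nonelementary semiring by assumption. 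That $R\mapsto R_c$ lands in class (2) is precisely \autoref{prp:solidSrgCu}(2).

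Since all the assignments involved ($R_+$, $\Gr$, $\Cu(\freeVar)$ and $S\mapsto S_c$) are functorial, naturality of the resulting correspondences is inherited from the earlier statements, and composing the two pairwise bijections yields the correspondence between (1) and (3) as well. The only genuine work beyond citing prior results lies in matching the subclass conditions, concretely the equivalence ``$R\not\cong\Z$ iff $R_+$ nonelementary'' and the preservation of nonelementarity under $\Cu(\freeVar)$. This is exactly where \autoref{pgr:solidRings} and \autoref{lma:wkDivNonElemSrg} enter, and I expect this careful matching, rather than any new structural argument, to be the main point to get right.
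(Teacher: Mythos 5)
Your proposal is correct and follows essentially the same route as the paper, which deduces the (1)--(2) correspondence from \autoref{prp:equivalenceSolidSrgRg} and the (2)--(3) correspondence from \autoref{prp:solidSrgCu} together with \autoref{prp:algebraicSemigp}. You are in fact somewhat more careful than the paper's terse proof: the explicit check that ``$R\not\cong\Z$ iff $R_+$ is nonelementary'' and the use of \autoref{lma:wkDivNonElemSrg} to see that $\Cu(K)$ is nonelementary are exactly the bookkeeping the published argument leaves implicit.
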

\begin{proof}
The correspondence between the classes \enumStatement{1} and \enumStatement{2} follows directly from \autoref{prp:equivalenceSolidSrgRg}.
Let us show the correspondence between the classes \enumStatement{2} and \enumStatement{3}. By \autoref{prp:solidSrgCu}, the assignments of the statement are well-defined; thus, it just remains to show that the assignments are inverse to each other.
This follows directly from \autoref{prp:algebraicSemigp}.
\end{proof}

\index{terms}{Cu-semiring@$\CatCu$-semiring!solid!classification}
\begin{thm}
\label{thm:solidSemirgClassification}
Let $S$ be a nonzero solid $\CatCu$-semiring satisfying~\axiomO{5}.
If $S$ is nonelementary, then exactly one of the following statements holds:
\beginEnumStatements
\item
We have $S\cong[0,\infty]$.
\item
We have $S\cong Z$.
\item
There is a solid ring $R$ with non-torsion unit such that $R\ncong\Z$ and such that $S\cong \Cu(R_+)$.
\end{enumerate}
If $S$ is elementary and satisfies \axiomO{6}, then exactly one of the following conditions holds:
\beginEnumStatements
\setcounter{enumi}{3}
\item
We have $S\cong\N$.
\item
There is $k\in\N$ such that $S\cong\elmtrySgp{k}=\{0,1,2,\ldots,k,\infty\}$.
\end{enumerate}

The $\CatCu$-semiring $S$ is algebraic if and only if it satisfies \enumStatement{3}, \enumStatement{4} or \enumStatement{5}.
\end{thm}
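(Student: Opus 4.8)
The plan is to first extract the structural consequences of solidity, then organise the classification by three successive dichotomies: elementary versus nonelementary, unit compact versus not, and algebraic versus not. The cases (1), (3), (4), (5) will follow by directly citing earlier results, and the whole difficulty will concentrate in establishing case (2).

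First I would record what solidity gives. By \autoref{prp:propertiesSolid}, $S$ is simple and has at most one functional normalized at $1$. If $S$ is elementary, then $S$ is a nonzero, simple, elementary $\CatCu$-semigroup satisfying \axiomO{5} and \axiomO{6}, so \autoref{prp:elmtryO6} yields $S\cong\overline{\N}$ or $S\cong\elmtrySgp{k}$; since by \autoref{exa:elmtrySemirg} each of these carries a unique $\CatCu$-product, the $\CatCu$-semigroup isomorphism is automatically a $\CatCu$-semiring isomorphism, giving statements (4) and (5). So I may assume $S$ is nonelementary, and then \autoref{prp:solidZmult} applies: $S$ is simple, almost unperforated, almost divisible, stably finite, has a unique normalized functional $\lambda$ (which is multiplicative), and has $Z$-multiplication. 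By \autoref{prp:simpleSemirgCompacts}(2), either $S$ has no nonzero compact element or $1_S$ is compact. In the first case $1_S$ is noncompact, so \autoref{prp:uniquenessR} gives $S\cong[0,\infty]$, which is statement (1). Thus I may assume $1_S$ is compact and invoke \autoref{prp:imagelambda}: if $S$ is algebraic, then $S$ is a solid, nonelementary, algebraic $\CatCu$-semiring satisfying \axiomO{5}, so \autoref{prp:equivalenceSolidSrgRgCu} produces a solid ring $R$ with non-torsion unit, $R\ncong\Z$, with $S_c=R_+$ and $S\cong\Cu(R_+)$; this is statement (3).

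The remaining case — $1_S$ compact but $S$ not algebraic — is where I expect the main obstacle. By \autoref{thm:isothm} (whose hypotheses hold, using the $Z$-multiplication from \autoref{prp:solidZmult}) we have $S\cong S_c^\times\sqcup L(F(S))$, and since $\lambda$ is the unique normalized functional, $F(S)$ is a single ray and hence $L(F(S))\cong[0,\infty]$; so it suffices to prove $S_c\cong\N$, i.e. that every compact element is a multiple of $1_S$, for then $S\cong\{1,2,\dots\}\sqcup[0,\infty]\cong Z$. By \autoref{prp:imagelambda}, non-algebraicity forces $\lambda(S_c)\subseteq\N$ and forces $S$ to not be weakly divisible; in particular $S_c$ is not weakly divisible (weak divisibility of $S_c$ would split the unit and make $S$ weakly divisible, hence algebraic). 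Since $S_c$ is a simple conical semiring, \autoref{prp:simpleSrgDiv} then shows $S_c$ is elementary, so $1_S$ is additively irreducible and is a minimal nonzero compact element. The hard part is to upgrade this to $S_c=\N\cdot 1_S$. Here I would exploit the interaction between compact and soft elements: for a compact $a$ with $\lambda(a)=n$, the element $1'a$ (with $1'$ the soft unit) is by \autoref{prp:softZmult} and \autoref{prp:soft_comparison} the unique soft element of $\lambda$-value $n$; applying \axiomO{5} through \autoref{prp:o5o5'} together with \autoref{prp:comparison_hat} produces relations of the form $ma=1_S+x$, and the absorption property of soft elements (\autoref{prp:softAbsorbing}, combined with \autoref{prp:softNoncpctSimple}, which identifies nonzero soft elements with noncompact ones) shows that a summand of a compact element must itself be compact, excluding soft remainders; an induction on the integer $\lambda(a)$ should then force $a=n\,1_S$. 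Equivalently, one may try to show that the canonical unital multiplicative map $\beta\colon Z\to S$ furnished by \autoref{prp:solidMorImpliesAbsorb} is surjective, which again reduces precisely to $S_c=\N\cdot 1_S$.

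Finally I would verify exclusivity and the closing equivalence. In the nonelementary case the three alternatives are separated by the invariants $S_c=\{0\}$ (case (1)), $S_c\cong\N$ with $S$ not algebraic (case (2)), and $S$ algebraic (case (3)), which are pairwise incompatible; in the elementary case $\overline{\N}$ and the semigroups $\elmtrySgp{k}$ are pairwise non-isomorphic, giving ``exactly one.'' For the last assertion, statements (3), (4), (5) are all algebraic — the elementary semirings because all their elements are compact, and $\Cu(R_+)$ by its construction in \autoref{pgr:CuCompletionSrg} — whereas $[0,\infty]$ has $0$ as its only compact element and $Z$ is not algebraic, since its soft elements are not suprema of increasing sequences of nonzero compact elements. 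Hence $S$ is algebraic if and only if it satisfies (3), (4), or (5).
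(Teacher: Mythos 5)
Your overall organisation --- elementary versus nonelementary via \autoref{exa:elmtrySemirg}, then ``no nonzero compacts'' versus ``compact unit'' via \autoref{prp:simpleSemirgCompacts} and \autoref{prp:uniquenessR}, then ``algebraic'' versus ``$\lambda(S_c)\subset\N$'' via \autoref{prp:imagelambda} and \autoref{prp:equivalenceSolidSrgRgCu} --- is exactly the paper's, as are your exclusivity and algebraicity checks. The problem is case \enumStatement{2}, which you rightly identify as the crux but do not actually close. Your plan is to prove $S_c=\N\cdot 1_S$ by induction on $\lambda(a)$, but the base case fails with the tools you list: almost unperforation only yields $a\leq b$ from a \emph{strict} inequality of normalized functional values, so two distinct compact elements with $\lambda(a)=\lambda(1_S)=1$ need not be comparable, and \axiomO{5} together with soft absorption only produces identities such as $(n+1)1_S=na+y_n$ with $y_n$ compact of value $1$ --- identities that are equally satisfied in additive monoids of the form $\{0\}\sqcup(\N_{\geq 1}\times G)$ for a finite abelian group $G$, where $a=(1,g)$ with $g\neq 0$ is a compact element of value $1$ distinct from and incomparable with the unit. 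In other words, $S_c\cong\N$ is not a consequence of the simple/stably finite/almost unperforated structure alone; it genuinely needs solidity, and your induction never invokes it.

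The paper's proof avoids this head-on computation by going in the opposite direction. Once $\lambda(S_c)\subset\N$, the map $\alpha\colon S=S_c\sqcup(0,\infty]\to Z=\N\sqcup(0,\infty]$ sending a compact $r$ to $\lambda(r)$ and a soft element to itself is a unital, multiplicative generalized $\CatCu$-morphism. Applying \autoref{prp:solidMorImpliesAbsorb} with the solid semiring $S$ in the role of $R$ and with target $Z$ gives $S\otimes_{\CatCu}Z\cong Z$ --- this is precisely where solidity of $S$ does the work --- while \autoref{prp:solidZmult} gives $Z\otimes_{\CatCu}S\cong S$; combining the two absorptions yields $S\cong Z$, with no need to compute $S_c$ first. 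If you want to keep your route, you must inject solidity into the base case (for instance via condition \enumStatement{5} or \enumStatement{6} of \autoref{prp:solidTFAE}) to force two compact elements of equal $\lambda$-value to coincide; but that essentially re-derives the absorption argument, so I would adopt the paper's two-sided absorption instead.
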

\begin{proof}
We have observed that all $\CatCu$-semirings in statements \enumStatement{1}-\enumStatement{5} are solid.
It is also clear that a solid $\CatCu$-semiring can satisfy at most one of the statements.

Let $S$ be a solid $\CatCu$-semiring.
We will show that $S$ satisfies one of the statements.

Case 1:
Assume $S$ is nonelementary.
By \autoref{prp:solidZmult}, $S$ is simple, almost unperforated, almost divisible, and stably finite.
Moreover, there is a unique normalized functional $\lambda$ on $S$, which is automatically multiplicative.
We obtain from \autoref{thm:isothm} that there is a canonical decomposition
\[
S = S_c \sqcup (0,\infty].
\]
If $S$ contains no nonzero compact element, then $S\cong[0,\infty]$ by \autoref{prp:uniquenessR}.
Otherwise, by \autoref{prp:simpleSemirgCompacts}, the unit of $S$ is compact.

It follows from \autoref{prp:imagelambda} that either $\lambda(S_c)\subset\N$ or that $S$ is algebraic.
In the latter case, $S$ satisfies \enumStatement{3};
see \autoref{prp:equivalenceSolidSrgRgCu}.
Thus, let us assume $\lambda(S_c)\subset\N$.
Then we consider the map
\[
\alpha\colon S = S_c\sqcup(0,\infty] \to Z = \N\sqcup(0,\infty],
\]
which maps a compact element $r$ in $S$ to the compact element $\lambda(r)\in\N\subset Z$, and which maps a soft element in $S_\soft = (0,\infty]$ to the same in $Z_\soft = (0,\infty]$.
It is straightforward to check that $\alpha$ is a unital, multiplicative $\CatCu$-morphism.
It follows from \autoref{prp:solidMorImpliesAbsorb} that $Z\otimes_{\CatCu} S\cong Z$.
By \autoref{prp:solidZmult}, we also have $S\cong S\otimes_{\CatCu} Z$.
It follows $S\cong Z$, which shows that $S$ satisfies \enumStatement{2}.

Case 2:
Assume that $S$ is elementary and satisfies~\axiomO{6}.
Then $S$ satisfies \enumStatement{4} or \enumStatement{5};
see \autoref{exa:elmtrySemirg}.
\end{proof}

\begin{rmk}
\label{rmk:solidSemirgClassification}
If $S$ is a nonelementary, solid $\CatCu$-semiring satisfying \axiomO{5}, then by the classification above, we see that \axiomO{6} is also satisfied. Indeed, only the case where $S=\Cu(R_+)$ for a solid ring $R$ with nontorsion unit needs verification. In this situation, by \autoref{prp:propertiesAlgebraic}, it is enough to show that $R_+$, endowed with the algebraic order, is a Riesz semigroup. This is easy to check once we note that $R/t(R)\cong\Z[P^{-1}]$ for a nonemtpy set of primes $P$.
\end{rmk}

We end this section with a result about initial and terminal objects among solid $\CatCu$-semirings.
This can be considered as a $\CatCu$-semigroup{s} version of \cite[Corollary~3.2]{Win11ssaZstable}, which characterizes the Jiang-Su algebra $\mathcal{Z}$ as the initial object in the category of unital, strongly self-absorbing \ca{s} with \starHom s up to approximate unitary equivalence.

\begin{lma}
\label{prp:CompactSolidElementsRational}
Let $S$ be a solid, nonelementary $\CatCu$-semiring satisfying \axiomO{5}, and let $\lambda$ be its unique normalized functional.
Then $\lambda(S_c)\subset\Q_+$.
\end{lma}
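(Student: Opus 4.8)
The plan is to invoke the classification of solid $\CatCu$-semirings. Since $S$ is nonzero, solid, nonelementary and satisfies \axiomO{5}, \autoref{thm:solidSemirgClassification} tells us that exactly one of the following holds: $S\cong[0,\infty]$, or $S\cong Z$, or there is a solid ring $R$ with non-torsion unit such that $R\ncong\Z$ and $S\cong\Cu(R_+)$. I would treat these three cases separately, recalling from \autoref{prp:solidZmult} that in each of them the unique normalized functional $\lambda$ is automatically multiplicative and that $\lambda(1)=1$.

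In the first case $S\cong[0,\infty]$ contains no nonzero compact element, so $S_c=\{0\}$ and the claim is trivial. In the second case $S\cong Z=\N\sqcup(0,\infty]$ has $S_c=\N$, and since $\lambda(n)=n\lambda(1)=n$ we obtain $\lambda(S_c)=\N\subset\Q_+$. The essential case is the third. There the isomorphism $S\cong\Cu(R_+)$ identifies the subsemiring $S_c$ of compact elements of $S$ with $R_+$ (see \autoref{prp:equivalenceSolidSrgRgCu}). Given a nonzero $a\in S_c$, I would apply \autoref{prp:solidElementsRational} to produce $k,n\in\N$ with $n\geq 1$ and $na=k1$; applying $\lambda$ then gives $n\lambda(a)=k\lambda(1)=k$, whence $\lambda(a)=\tfrac{k}{n}\in\Q_+$. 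Together with $\lambda(0)=0$, this yields $\lambda(S_c)\subset\Q_+$.

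The only point requiring care is the third case, and specifically the bookkeeping around \autoref{prp:solidElementsRational}: one must check that the relation $na=k1$ furnished there has $n\geq 1$, so that the division producing $\lambda(a)=\tfrac{k}{n}$ is legitimate. This is guaranteed by the proof of that lemma, where $n=mn_0$ with $m,n_0\geq 1$ for nonzero $a$. One should also confirm that the identification of $S_c$ with $R_+$ provided by \autoref{prp:equivalenceSolidSrgRgCu} is an isomorphism of semirings sending the unit to the unit, so that $\lambda$ indeed restricts to a normalized functional on $R_+$; this is exactly what the stated equivalence of categories supplies. No genuinely hard estimate is involved, as the entire argument consists of assembling already-established structural results.
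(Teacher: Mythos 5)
Your argument is correct and there is no circularity: \autoref{thm:solidSemirgClassification} is established before this lemma and does not depend on it, so you are free to invoke it. The route is, however, packaged differently from the paper's. The paper does not pass through the full classification theorem; it applies \autoref{prp:imagelambda} directly to obtain the dichotomy that either $\lambda(S_c)\subset\N$ (in which case one is done immediately) or $S$ is algebraic, and in the algebraic case it uses \autoref{prp:solidSrgCu} and \autoref{prp:equivalenceSolidSrgRg} to identify $S_c$ with the positive cone $R_+$ of a solid ring with non-torsion unit, finishing with \autoref{prp:solidElementsRational} exactly as you do. So the key mechanism -- reduce to $R_+$ and use the relation $na=k\cdot 1$ to force $\lambda(a)=k/n$ -- is identical; the difference is that you trichotomize via the heavier classification theorem, whose cases $S\cong[0,\infty]$ and $S\cong Z$ correspond to the paper's ``$S_c=\{0\}$'' and ``$\lambda(S_c)\subset\N$'' branches. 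Your version costs a little more machinery but makes the case analysis very transparent; the paper's version is leaner and makes visible that only \autoref{prp:imagelambda} (an ingredient of the classification) is actually needed. Your attention to the detail that $n\geq 1$ in \autoref{prp:solidElementsRational}, so that the division in $[0,\infty]$ is legitimate and forces $\lambda(a)<\infty$, is exactly the right point to check.
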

\begin{proof}
By \autoref{prp:imagelambda}, either $\lambda(S_c)\subset\N$ or $S$ is algebraic.
If $S$ is algebraic, by \autoref{prp:solidSrgCu} $S_c$ is a conical, cancellative, solid semiring for which the algebraic order is almost unperforated.
Now the result follows from \autoref{prp:equivalenceSolidSrgRg} and \autoref{prp:solidElementsRational}.
\end{proof}

\begin{exa}
\label{exa:Q}
\index{symbols}{Q@$Q$}
Let $\Q$ be the solid ring of rational numbers.
We obtain a corresponding solid $\CatCu$-semiring, which we denote by $Q$.
Thus
\[
Q = \Cu(\Q_+) \cong \Q_+\sqcup (0,\infty].
\]
As we show below, $Q$ is the terminal object in a suitable category.
\end{exa}

We remark that for every simple, nonelementary  $\CatCu$-semiring $S$ that satisfies \axiomO{5} and has a compact unit, there exists a multiplicative $\CatCu$-morphism from $Z$ to $S$.
This follows from \autoref{prp:simpleSemirgZmult}.
If $S$ has a unique normalized functional, then the map is unique.

\begin{prp}
\label{prp:initialTerminal}
Let $S$ be a solid, nonelementary $\CatCu$-semiring satisfying~\axiomO{5}.
Assume that $1_s$ is compact.
Let $Q$ be the solid $\CatCu$-semiring from \autoref{exa:Q}.
Then there are unique unital, multiplicative $\Cu$-morphisms
\[
Z\to S\to Q.
\]
Thus, $Z\otimes_{\CatCu} S\cong S$ and $S\otimes_{\CatCu} Q\cong Q$.

This means that $Z$ and $Q$ are the initial and final objects of the category of considered $\CatCu$-semirings with unital, multiplicative $\CatCu$-morphisms.
\end{prp}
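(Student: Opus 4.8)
The plan is to derive everything from \autoref{prp:solidMorImpliesAbsorb}, which converts the desired isomorphisms and morphisms into the mere existence of unital generalized $\CatCu$-morphisms $Z\to S$ and $S\to Q$. First I would record the structural facts about $S$: by \autoref{prp:solidZmult} the semiring $S$ is simple, almost unperforated, almost divisible and stably finite, and it carries a unique functional $\lambda$ normalized at $1_S$, which is moreover multiplicative. Since $1_S$ is compact, \autoref{prp:simpleSemirgCompacts} together with \autoref{prp:softNoncpctSimple} and \autoref{thm:isothm} yields the stratification $S=S_c^\times\sqcup S_\soft$ into its nonzero compact and soft elements, and \autoref{prp:CompactSolidElementsRational} gives $\lambda(S_c)\subset\Q_+$.

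For the morphism $Z\to S$ I would argue that $Z$ is solid by \autoref{prp:ZSolid}, and that by \autoref{prp:simpleSemirgZmult} (or directly by \autoref{prp:solidZmult}) we have $Z\otimes_{\CatCu}S\cong S$. Applying \autoref{prp:solidMorImpliesAbsorb} with the solid semiring $Z$ in the role of $R$, this isomorphism is equivalent to the existence of a unital generalized $\CatCu$-morphism $Z\to S$; that proposition further guarantees such a map is unique and automatically multiplicative, and since $1_S$ is compact it is in fact a $\CatCu$-morphism. This also re-proves the remark preceding the statement.

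For the morphism $S\to Q$, where $Q=\Cu(\Q_+)\cong\Q_+\sqcup(0,\infty]$ as in \autoref{exa:Q}, I would build the map by hand, mimicking the construction of the map $S\to Z$ in Case~1 of the proof of \autoref{thm:solidSemirgClassification}. Define $\alpha\colon S\to Q$ by sending a compact element $a$ to $\lambda(a)\in\Q_+$ (which lies in the compact part of $Q$, by \autoref{prp:CompactSolidElementsRational}), a nonzero soft element $a$ to $\lambda(a)\in(0,\infty]$ (which lies in the soft part, using that $\lambda$ is faithful on the simple semigroup $S$), and $\alpha(0)=0$. Multiplicativity and unitality of $\alpha$ follow at once from those of $\lambda$, once one checks that products of compacts stay compact while products involving a soft factor stay soft, matching the product in $Q$. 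The nontrivial point is that $\alpha$ preserves order and suprema across the compact/soft boundary: for a compact $p$ and a soft $t$ one has $p\leq t$ iff $\lambda(p)<\lambda(t)$ and $t\leq p$ iff $\lambda(t)\leq\lambda(p)$, by \autoref{prp:almUnp} and \autoref{prp:soft_comparison} together with the one-dimensionality of $F(S)$, and this is exactly the order on $Q$; the supremum computation follows because a compact supremum forces the increasing sequence to stabilize while a soft supremum is detected by $\lambda$. Granting that $\alpha$ is a unital generalized $\CatCu$-morphism, \autoref{prp:solidMorImpliesAbsorb} with $S$ in the role of $R$ gives $S\otimes_{\CatCu}Q\cong Q$, uniqueness of $\alpha$, and---since $1_Q$ is compact---that $\alpha$ is a genuine, multiplicative $\CatCu$-morphism.

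Finally I would assemble the pieces: the two morphisms and the isomorphisms $Z\otimes_{\CatCu}S\cong S$ and $S\otimes_{\CatCu}Q\cong Q$ are now in hand, and the uniqueness clauses coming from \autoref{prp:solidMorImpliesAbsorb} say precisely that $Z$ admits a unique unital multiplicative $\CatCu$-morphism into, and $Q$ a unique such morphism out of, every $\CatCu$-semiring of the type considered, i.e.\ $Z$ and $Q$ are the initial and terminal objects. The main obstacle is the verification that the stratified map $\alpha$ is a $\CatCu$-morphism; this is where the comparison theory for soft versus compact elements in simple, stably finite, almost unperforated $\CatCu$-semigroups is essential, and it is reassuring that the analogous check has already been carried out for the target $Z$ in the proof of \autoref{thm:solidSemirgClassification}.
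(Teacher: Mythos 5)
Your proposal is correct and follows essentially the same route as the paper: it records the structure of $S$ via \autoref{prp:solidZmult} and \autoref{prp:CompactSolidElementsRational}, builds the map $S\to Q$ on the decomposition $S=S_c^\times\sqcup S_\soft$ using the unique normalized functional, and derives the absorption statements and uniqueness from \autoref{prp:solidMorImpliesAbsorb}. The extra detail you supply on order preservation across the compact/soft boundary fills in exactly the verification the paper leaves to the reader.
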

\begin{proof}
The existence and uniqueness of the map $Z\to S$ is observed in the paragraph before this proposition.
Let $S$ be as in the statement.
As in the beginning of the proof of \autoref{thm:solidSemirgClassification}, we obtain that there is a natural decomposition
\[
S = S_c \sqcup (0,\infty].
\]
By \autoref{prp:solidZmult} and \autoref{prp:CompactSolidElementsRational}, $S$ has a unique normalized functional $\lambda$, which is automatically multiplicative, and which satisfies $\lambda(S_c)\subset\Q_+$.
Thus, we may consider the map
\[
\alpha\colon S = S_c\sqcup(0,\infty] \to Q = \Q_+\sqcup(0,\infty],
\]
which maps a compact element $r$ in $S_c$ to the compact element $\lambda(r)\in\Q_+\subset Q$, and which maps a soft element in $S_\soft = (0,\infty]$ to the same in $Q_\soft = (0,\infty]$.
It is easy to see that $\alpha$ is a unital, multiplicative $\CatCu$-morphism, as desired.

It is left to the reader to show uniqueness of the map $S\to Q$.
The results about tensorial absorption follow from \autoref{prp:solidMorImpliesAbsorb}.
\end{proof}

\chapter{Concluding remarks and Open Problems}
\label{sec:openproblems}

In this chapter we list some problems that we believe to be open and that have appeared in the course of our investigations.

\begin{enumerate}[leftmargin=*]
\item
\autoref{prbl:tensCa}:
Let $A$ and $B$ be \ca{s}.
When are the natural $\CatCu$-morphisms
\begin{align*}
\tau_{A,B}^\txtMin\colon\Cu(A)\otimes_{\CatCu}\Cu(B) \to \Cu(A\tensMin B), \\
\tau_{A,B}^\txtMax\colon\Cu(A)\otimes_{\CatCu}\Cu(B) \to \Cu(A\tensMax B),
\end{align*}
from \autoref{pgr:tensCa} surjective, or order-embeddings, or isomorphisms?
More generally, what is the relation between $\Cu(A)\otimes_{\CatCu}\Cu(B)$ and $\Cu(A\otimes B)$?

In \autoref{pgr:AnswerTensCa}, we mention some partial result concerning this problem.
This problem asks for a more general formula of a K\"{u}nneth type flavor.
It looks plausible that such a formula will have to take the $K_1$-groups of the involved \ca{s} into account.
One possible invariant is $\Cu_{\T}(\freeVar)$, as introduced in
\cite{AntDadPerSan14RecoverElliott}, which for a \ca{} $A$ is defined as
\[
\Cu{(C(\T)\otimes A)}.
\]
In significant cases, this invariant records both the Cuntz semigroups of $A$ and the $K_1$-group of $A$.

\vspace{5pt}
\item
\label{prbl:Cuclosed}
Is $\CatCu$ a closed category?

This is a natural question given that $\CatCu$ is a symmetric, monoidal category.
A positive answer to this problem would provide additional structure to the morphism sets in $\CatCu$, and this is a potentially useful tool in connection with the current development of a bivariant version of the Cuntz semigroup (see \cite{BosTorZac14BivarCu}).

\vspace{5pt}
\item
\autoref{prbl:AxiomsTensProd}:
Given $\CatCu$-semigroups $S$ and $T$ that satisfy \axiomO{5} (respectively \axiomO{6}, weak cancellation).
When does $S\otimes_\CatCu T$ satisfy \axiomO{5} (respectively \axiomO{6}, weak cancellation)?

In \autoref{pgr:answersAxiomsTensProd}, we mention some partial result concerning this problem.
A particular variant of this problem is:

\vspace{5pt}
\item
\autoref{prbl:axiomTensProdZ}:
When does axiom~\axiomO{5}, \axiomO{6} or weak cancellation pass from a $\CatCu$-semigroup $S$ to the tensor product $Z\otimes_{\CatCu} S$?

\vspace{5pt}
\item
\label{prbl:lsctensor}
Let $X$ be a finite-dimensional, compact, Hausdorff space, and let $S$ be a $\CatCu$-semigroup.
It was proved in \cite[Theorem~5.15]{AntPerSan11PullbacksCu} that the semigroup of lower semicontinuous functions from $X$ to $S$, denoted by $\Lsc(X,S)$, is a $\CatCu$-semigroup.
Under which conditions on $S$ and $X$ does $\Lsc(X,S)$ satisfy \axiomO{5} (respectively \axiomO{6}, weak cancellation)?

We remark that if $\Lsc(X,S)$ satisfies \axiomO{5} (respectively \axiomO{6}, weak cancellation), then so does $S$.
The natural test case is $X=[0,1]$.
A positive answer seems likely if $S$ is algebraic.

\vspace{5pt}
\item
Let $X$ be a finite-dimensional, compact, Hausdorff space, and let $S$ be a $\CatCu$-semigroup.
When does it hold that $\Lsc(X,S)=\Lsc(X,\overline{\N})\otimes_{\CatCu}S$?

We show in \autoref{cor:tensornotlsc} that this question has a negative answer for $X=[0,1]$ and $S=Z$.
On the other hand, a positive answer seems likely if $S$ is algebraic.

\vspace{5pt}
\item
\label{prbl:rangequtn}
Range problem:
Under which conditions can a $\CatCu$-semigroup $S$ be realized as the Cuntz semigroup $\Cu(A)$ of a \ca{} $A$?

Necessarily, such a $\Cu$-semigroup satisfies \axiomO{5} and \axiomO{6}.
Thus, we are asking for additional conditions on $\CatCu$-semigroups beyond these two axioms which would guarantee that a $\CatCu$-semigroup is realized by a \ca{}.

As we have already mentioned, Robert showed in \cite{Rob13CuSpaces2D} that if $X$ is a compact Hausdorff space whose covering dimension is at least $3$, then there is no \ca{} $A$ with $\Cu(A)\cong \Lsc(X,\overline{\N})$.
It was shown by Bosa (private communication), that none of the elementary semigroups $\elmtrySgp{k}$ as described in \autoref{pgr:elementarySemigr} for $k\geq 1$ are realized as the Cuntz semigroup of a \ca{}.
(The $\CatCu$-semigroup $\elmtrySgp{0}=\{0,\infty\}$ is of course the Cuntz semigroup of any purely infinite simple \ca.)

\vspace{5pt}
\item
\label{listPrbls:Zprime}
Consider the $\CatCu$-semigroup $Z'$ defined as follows:
\[
Z'=\{0,1,1',2,3,4,\ldots\}\sqcup (0,\infty],
\]
with addition as in $Z$, except that $k+1'=k+1$ for any $k\in\N_+$.
It is easy to check that $Z'$ is a simple, stably finite $\CatCu$-semigroup satisfying \axiomO{5} and \axiomO{6}, but it is not weakly cancellative as $1+1'=1'+1'$ but $1\neq 1'$.
It is also easy to prove that $Z'$ has $Z$-multiplication, and therefore $Z' \cong Z\otimes_{\CatCu}Z'$.

A particularly interesting instance of the range problem is the following:
Does there exist a (separable, unital, simple, stably finite) \ca{} $A$ such that~$\Cu(A)\cong Z'$?

Note that if such a \ca{} $A$ exists, then it is necessarily simple, not $\mathcal{Z}$-stable, and not nuclear.
For if $A$ is $\mathcal{Z}$-stable, then it has stable rank one and hence its Cuntz semigroup has weak cancellation.
Similarly, if $A$ is nuclear, then as $Z'$ has only one normalized functional, we would get that $A$ is monotracial.
In that situation, the solution of the Toms-Winter conjecture (see \cite{MatSat12StrComparison}) would imply that $A$ is $\mathcal{Z}$-stable, a contradiction.

The $\CatCu$-semigroup $Z'$ seems to be the simplest example that is not weakly cancellative and has $Z$-multiplication.
A more general question is then:

\vspace{5pt}
\item
\label{prbl:Zprimeprime}
\label{listPrbls:RangeZmultNotWkCanc}
Does there exist a finite, simple \ca{} $A$, such that $\Cu(A)$ has $Z$-mul\-ti\-pli\-ca\-tion, but is not weakly cancellative?

Let $A$ be such a \ca{}.
If $A$ is nuclear, then we could deduce as above that $A$ is not $\mathcal{Z}$-stable although $\Cu(A)$ is almost unperforated.
Therefore, the Toms-Winter Conjecture predicts that $A$ cannot be nuclear.

It is natural to seek for additional axioms that allow us to rule out $Z'$ as a semigroup in a future reformulation of the category $\CatCu$.
We make this explicit with the following question.

\vspace{5pt}
\item
\label{prbl:dichotomyCu}
Under what additional axioms (besides \axiomO{5} and \axiomO{6}) is a simple $\CatCu$-semi\-group with $Z$-multiplication necessarily weakly cancellative?

This question also refers to structural properties of $\CatCu$-semigroups.
In this direction, \autoref{conj:nearUnpCaZstable} bears repeating.
Let us recall from \autoref{dfn:nearUnperf} that a $\CatCu$-semigroup $S$ is nearly unperforated if and only if $a\leq b$ whenever $a\leq_p b$ for any $a$ and $b$ in $S$.
Equivalently, by \autoref{prp:nearUnperfTFAE}, we have $a\leq b$ whenever $2a\leq 2b$ and $3a\leq 3b$.

\vspace{5pt}
\item
\autoref{conj:nearUnpCaZstable}:
Let $A$ be a $\mathcal{Z}$-stable \ca.
Then $\Cu(A)$ is nearly unperforated.

\vspace{5pt}
\item
\autoref{prbl:axiomsExtension}:
Let $S$ be a $\CatCu$-semigroup, and let $I$ be an ideal in $S$.
Assume that $I$ and $S/I$ satisfy \axiomO{5} (respectively \axiomO{6}, weak cancellation).
Under what assumptions does this imply that $S$ itself satisfies the respective axiom?

\vspace{5pt}
\item
\autoref{prbl:pncInCu}:
Given a $\CatCu$-semigroup $S$, is the subsemigroup $S_\soft$ of soft elements again a $\CatCu$-semigroup?
Does this hold under the additional assumption that $S$ satisfies \axiomO{5}?
If that is the case, does then $S_\soft$ satisfy \axiomO{5} as well?

\vspace{5pt}
\item
\autoref{prbl:nearUnpFromAlmUnp}:
Let $S$ be an almost unperforated $\CatCu$-semigroup.
Which conditions are necessary and sufficient for $S$ to be nearly unperforated?
In particular, is it sufficient to assume that $S$ satisfies weak cancellation and \axiomO{5}?

\vspace{5pt}
\item
\autoref{prbl:LatTens}:
Let $A$ and $B$ be \ca{s}.
When is the map $\psi_{A,B}$ from \autoref{pgr:LatTens} an isomorphism?
When is it surjective?
When is it an order embedding?

\vspace{5pt}
\item
\autoref{prbl:MapToZMod}:
Let $S$ be a $\CatCu$-semigroup, and let $a,b\in S$.
Characterize when $1\otimes a \leq 1\otimes b$ in $Z\otimes_{\CatCu}S$.

\vspace{5pt}
\item
\autoref{prbl:ssaSolid}:
Given a strongly self-absorbing \ca{} $D$, is the Cuntz semiring $\Cu(D)$ a solid $\CatCu$-semiring?

Should the answer to this problem be positive, our \autoref{thm:solidSemirgClassification} (see also \autoref{rmk:solidSemirgClassification}) would yield a complete list of the possible Cuntz semigroups for stably finite, strongly self-absorbing \ca{s}, and this would be valuable information towards finding a possible non-UCT example, if such exists.

\vspace{5pt}
\item
\autoref{prbl:LFS}:
Let $S$ be a $\CatCu$-semigroup.
Is it true that $S_R=L(F(S))$?
\end{enumerate}

\appendix


\chapter{Monoidal and enriched categories}
\label{sec:appendixMonoidal}
\setcounter{lma}{0}

In this appendix, we will recall the basic theory of monoidal and enriched categories.
For details we refer the reader to \cite{Kel05EnrichedCat} and \cite{MacLan71Categories}.

\begin{pgrChapter}[Monoidal categories]
\label{pgr:monoidalCat}
\index{terms}{monoidal category}
\index{terms}{monoidal category!symmetric}
A \emph{monoidal category} $\CatV$ consists of the following data:
An underlying category $\CatV_0$, a bifunctor
\[
\otimes\colon\CatV_0\times\CatV_0\to\CatV_0,
\]
a unit object $I$ in $\CatV_0$, and natural isomorphisms
\[
(X\otimes Y)\otimes Z\cong X\otimes(Y\otimes Z),\quad
I\otimes X\cong X,\quad
X\otimes I\cong X.
\]
whenever $X,Y$ and $Z$ are objects in $\CatV_0$. Moreover, certain coherence axioms need to be satisfied.

By an object in $\CatV$ we mean an object of the underlying category.
Given objects $X$ and $Y$ of $\CatV$, we let $\CatVMor_0(X,Y)$ denote the collection of $\CatV_0$-morphisms from $X$ to $Y$, and we will always assume that it is a set.
(This means that $\CatV_0$ is \emph{locally small}.)

The monoidal category $\CatV$ is \emph{symmetric} if for any objects $X$ and $Y$ in $\CatV$ there is a natural isomorphism
\[
X\otimes Y\cong Y\otimes X.
\]
\end{pgrChapter}

\begin{pgrChapter}[Concrete monoidal categories]
\label{pgr:concreteCat}
\index{terms}{monoidal category!concrete}
Let $\CatV$ be a monoidal category with unit object $I$.
Since $\CatV$ is assumed to be locally small, the representable functor $\CatV_0(I,\freeVar)$ is a functor from $\CatV_0$ to the category of sets.
We denote this functor by
\[
V\colon\CatV_0\to\CatSet.
\]

We say that $\CatV$ is a \emph{concrete monoidal category} if $V$ is faithful.
In that case we can think of objects in $\CatV$ as sets with additional structure, and we can think of morphisms in $\CatV$ as maps preserving that structure.

Let $X$ be an object in $\CatV$.
Then an \emph{element $x$ of $X$} is an element in $V(X)$, that is, a $\CatV_0$-morphism $x\colon I\to X$.
We write $x\in X$ to denote that $x$ is an element of $X$.
This terminology is even used when $V$ is not necessarily faithful.

Let $X$ and $Y$ be objects in $\CatV$, and let $x\in X$ and $y\in Y$.
Then the composed morphism
\[
I \xrightarrow{\cong}
I\otimes I \xrightarrow{x\otimes y}
X\otimes Y,
\]
is an element of $X\otimes Y$, which we will also denote by $x\otimes y$.
\end{pgrChapter}

\begin{exaChapter}
\label{exa:monoidalCat}
The category $\CatTop$ of topological spaces is a concrete, symmetric monoidal category.
The tensor product of two spaces is their Cartesian product with the product topology.
The unit object is the one-element space.
\end{exaChapter}

\begin{pgrChapter}[Enriched categories]
\label{pgr:enrichedCat}
\index{terms}{enriched category}
\index{terms}{V-category@$\CatV$-category}
\index{terms}{V-functor@$\CatV$-functor}
\index{terms}{V-natural transformation@$\CatV$-natural transformation}
Let $\CatV$ be a monoidal category.
A \emph{$\CatV$-category} $\CatC$ (also called a category \emph{enriched over $\CatV$}) consists of the following data:
A collection of objects in $\CatC$; an object $\CatCMor(X,Y)$ in $\CatV$, for any $X,Y\in\CatC$, playing the role of the collection of morphisms from $X$ to $Y$;
for each object $X$ in $\CatC$, an element $\id_X$ in $\CatCMor(X,X)$,
called the identity on $X$ and playing the role of the identity morphism on $X$;
and for any objects $X,Y, Z$ in $\CatC$, a $\CatV_0$-morphism
\[
M_{X,Y,Z}\colon\CatCMor(Y,Z)\otimes\CatCMor(X,Y)\to\CatCMor(X,Z),
\]
implementing the composition of morphisms.
This structure is required to satisfy certain conditions expressing for example the associativity of composition of morphisms.

Recall that the element $\id_X$ in $\CatCMor(X,X)$ is, by definition, a $\CatV_0$-morphism
\[
\id_X\colon I\to\CatCMor(X,X).
\]

Given $\CatV$-categories $\CatC$ and $\CatD$, a \emph{$\CatV$-functor} $F$ from $\CatC$ to $\CatD$ consists of the following data:
An assignment $F$ from the objects of $\CatC$ to the objects of $\CatD$,
and for any objects $X,Y$ in $\CatC$ a $\CatV_0$-morphism
\[
F_{X,Y}\colon\CatCMor(X,Y)\to\CatDMor\left( \vphantom{A^A} F(X),F(Y) \right).
\]
It is required that certain diagrams, which for instance express compatibility with composition of morphisms, commute.
(Again, we refer to \cite{Kel05EnrichedCat} for details.)

Given $\CatV$-functors $F$ and $G$ from $\CatC$ to $\CatD$, a \emph{$\CatV$-natural transformation} from $F$ to $G$, denoted by $F\Rightarrow G$, is a collection of elements $\alpha_X$ in $\CatDMor(F(X),G(X))$,
indexed by the objects $X$ in $\CatC$, such that certain natural conditions are satisfied.
\end{pgrChapter}

\begin{pgrChapter}[Concrete enriched categories]
\label{pgr:concreteEnrCat}
Let $\CatC$ be a category that is enriched over the concrete monoidal category $\CatV$.
We can use the faithful functor $V$ to associate to $\CatC$ an ordinary underlying category $\CatC_0$ as follows:
The objects of $\CatC_0$ are the same as the objects of $\CatC$;
the $\CatC_0$-morphisms between two objects $X$ and $Y$ in $\CatC_0$ are given as
\[
\CatC_0(X,Y) :=V\left( \vphantom{A^A} \CatC(X,Y) \right)
=\CatV_0\left( \vphantom{A^A} I,\CatC(X,Y) \right);
\]
for each object $X$ in $\CatC_0$, the identity morphism $\id_X$ in $\CatC_0$ is just the $\CatV_0$-morphism
\[
\id_X\colon I\to \CatC(X,X),
\]
considered as an element of $\CatC_0(X,X)$;
and for any $\CatC_0$-morphisms $f\in\CatC_0(X,Y)$ and $g\in\CatC_0(Y,Z)$, which by definition are $\CatV_0$-morphisms
\[
f\colon I\to\CatC(X,Y),\quad
g\colon I\to\CatC(Y,Z),
\]
we consider the composed $\CatV_0$-morphism
\[
I\xrightarrow{g\otimes f}\CatC(Y,Z)\otimes\CatC(X,Y)
\xrightarrow{M_{X,Y,Z}}\CatC(X,Z),
\]
which is an element of $\CatC_0(X,Z)$ defining the composition $g\circ f$ in $\CatC_0$.
Using the coherence axioms for monoidal and enriched structures, one can show that the laws of a category are fulfilled for $\CatC_0$.

We can then think of $\CatC(X,Y)$ as the set of morphisms $\CatC_0(X,Y)$ endowed with additional structure making it into an object in $\CatV$.
\end{pgrChapter}

\begin{pgrChapter}[Closed Categories]
\label{pgr:closedCat}
\index{terms}{monoidal category!closed}
\index{terms}{internal hom-bifunctor}
A monoidal category $\CatV$ is \emph{closed} if, for each object $Y$ in $\CatV$, the functor
\[
\freeVar\otimes Y\colon\CatV_0\to\CatV_0
\]
has a right adjoint, which we will denote by $(\freeVar)^Y$.
Thus, in a closed, monoidal category $\CatV$, for any objects $X,Y$ and $Z$, there is a bijection (natural in $X$ and $Z$) between the following sets of morphisms:
\begin{align}
\label{enriched:innerHom}
\CatVMor_0(X\otimes Y,Z) \cong \CatVMor_0\left( X,Z^Y \right).
\end{align}

We let $\CatV_0^\op$ denote the opposite category of $\CatV_0$, that is, $\CatV_0^\op$ has the same objects as $\CatV_0$ and for any objects $X$ and $Y$ we identify $\CatVMor_0^\op(X,Y)$ with $\CatVMor_0(Y,X)$.
Given $\CatV_0$-morphisms $f\colon Y_2\to Y_1$ and $g\colon Z_1\to Z_2$, there is an induced $\CatV_0$-morphism $(f^\ast,g)\colon Z_1^{Y_1}\to Z_2^{Y_2}$.
This defines a bifunctor
\[
(\freeVar)^{(\freeVar)}\colon\CatV_0^\op\times\CatV_0\to\CatV_0,
\]
called the \emph{internal hom-bifunctor} of $\CatV$.
Given objects $X$ and $Y$, the object $Y^X$ encodes the set of morphisms $\CatVMor_0(X,Y)$ via the identifications
\[
V(Y^X)=\CatVMor_0(I,Y^X) \cong \CatVMor_0(I\otimes X,Y) \cong \CatVMor_0(X,Y).
\]
Therefore, the object $Y^X$ is an internal realization of the morphisms from $X$ to $Y$.

Let $Z$ and $Y$ be objects in $\CatV$.
The evaluation morphism
\[
\varepsilon_Y^Z\colon  Z^Y \otimes Y\to Z,
\]
is defined as the $\CatV_0$-morphism that corresponds to the identity morphism under the natural identification
\[
\CatVMor_0\left( Z^Y \otimes Y,Z \right)
\cong\CatVMor_0\left( Z^Y, Z^Y \right).
\]

Let $\CatV$ be a closed, symmetric, monoidal category.
Then $\CatV$ is enriched over itself.
Given objects $X$ and $Y$ in $\CatV$, we use the internal hom-bifunctor to define $\CatV(X,Y)=Y^X$;
for each object $X$ in $\CatV$, the identity morphism $\id_X\in X^X$ is defined as the $\CatV_0$-morphism $\id_X\colon I \to X^X$ corresponding to the identity morphism from $X$ to $X$ under the identification
\[
\CatVMor_0(I,X^X) \cong\CatVMor_0(I\otimes X,X)\cong\CatVMor_0(X,X);
\]
for any objects $X,Y$ and $Z$ in $\CatV$, the map
\[
M_{X,Y,Z}\colon Z^Y \otimes Y^X \to Z^X,
\]
implementing the composition of morphisms is the $\CatV_0$-morphism that under the identification
\[
\CatVMor_0\left( Z^Y \otimes Y^X, Z^X \right)
\cong\CatVMor_0\left( \left(Z^Y \otimes Y^X \right)\otimes X,Z \right),
\]
corresponds to the composition
\[
\left( Z^Y \otimes Y^X \right)\otimes X \cong
Z^Y \otimes\left( Y^X\otimes X \right)
\xrightarrow{\id_{Z^Y}\otimes\,\varepsilon_X^Y}
Z^Y \otimes Y \xrightarrow{\varepsilon_Y^Z}
Z.
\]
\end{pgrChapter}

\begin{exasChapter}
\label{exa:closedCat}
\index{terms}{compactly generated space}
(1)
We will see in Paragraphs~\ref{pgr:monoidalMon} and \ref{pgr:monoidalPom} that the category $\CatMon$ of monoids and the category $\CatPom$ of \pom{s} are both closed, symmetric, monoidal categories.

(2)
The category $\CatTop$ is not closed.
However, it contains several full, symmetric, monoidal subcategories that are closed.

A Hausdorff, topological space $X$ is \emph{compactly generated} if a subset $M\subset X$ is closed whenever $M\cap K$ is closed for every compact subset $K$ of $X$.
The class of compactly generated, Hausdorff spaces contains for example all metric spaces and all locally compact, Hausdorff spaces.
We let $\CatCGHTop$ denote the full subcategory of $\CatTop$ consisting of compactly generated, Hausdorff spaces.
It is known that $\CatCGHTop$ is a reflective subcategory of $\CatTop$.
Given a topological space $X$, we let $X_k$ denote its reflection in $\CatCGHTop$.

Let $X$ and $Y$ be compactly generated, Hausdorff spaces.
Their Cartesian product need not be compactly generated.
Therefore, the `correct' tensor product of $X$ and $Y$ in the category $\CatCGHTop$ is $(X\times Y)_k$.
Consider the set $C(X,Y)$ of continuous maps from $X$ to $Y$ equipped with the compact open topology.
Again, $C(X,Y)$ need not be compactly generated.
Nevertheless, $\CatCGHTop$ is a closed category with internal hom-bifunctor given by
\[
\CatCGHTop(X,Y) = (C(X,Y))_k.
\]
Then, given spaces $X,Y$ and $Z$ in $\CatCGHTop$, we have a natural isomorphism (that is, a homeomorphism) between the following spaces:
\[
\CatCGHTop((X\times Y)_k, Z) \cong \CatCGHTop(X,\CatCGHTop(Y,Z)).
\]

(3)
It is shown in \cite{DubPor71ConvenientCatTopAlg} that the category $\CatCa$ of \ca{s} is enriched over $\CatCGHTop$.
For any \ca{s} $A$ and $B$, the set of \starHom{s} from $A$ to $B$ is denoted by $\CatCaMor(A,B)$ and it is endowed with the topology of pointwise convergence.
This means that a net $(\varphi_i)_i$ in $\CatCaMor(A,B)$ converges to a \starHom{} $\varphi\colon A\to B$ if and only if $\varinjlim_i \|\varphi_i(x)-\varphi(x)\|=0$ for each $x\in A$.
With this topology, $\CatCaMor(A,B)$ is a compactly generated, Hausdorff space.
\end{exasChapter}

\chapter{Partially ordered monoids, groups and rings}
\label{sec:appendixPom}

In this appendix we will only consider commutative structures.
Therefore, every monoid, semigroup and group is written with operation of addition, and multiplication in every semiring and ring is commutative.
\vspace{5pt}

In \autoref{sec:mon}, we study the category $\CatMon$ of (abelian) monoids, and the full, reflective subcategories $\CatGp$ of groups, and $\CatCon$ of conical monoids.
We recall the construction of tensor products in $\CatMon$ and we show that $\CatMon$ is a closed, symmetric, monoidal category.

In \autoref{sec:pom}, we study the category $\CatPrePom$ of positively pre-or\-dered mo\-noids and its full subcategory $\CatPom$ of \pom{s}.
We show that $\CatPom$ is a reflective subcategory of $\CatPrePom$.
Both categories have a tensor product giving them a closed, symmetric, monoidal structure.

We also show that $\CatCon$ can be identified with the full (and reflective) subcategory of $\CatPrePom$ consisting of algebraically pre-ordered monoids.

In \autoref{sec:poGp}, we study the category $\CatPoGp$ of \pog{s}.
In particular, we recall the equivalence between the categories of directed, \pog{s} and the category of cancellative, conical monoids.

Finally, in \autoref{sec:poRg}, we study semirings and (partially ordered) rings.

For further details, the reader is referred to, for example, \cite{Ful70Tensor}, \cite{Gol99Semirings}, \cite{Goo86Poag}, \cite{Gri69TensorCommutative}, \cite{Weh96TensorInterpol}.

\vspace{5pt}
\section{The category \texorpdfstring{$\CatMon$}{Mon} of monoids}
\label{sec:mon}

\begin{pgr}
\label{pgr:mon}
\index{terms}{monoid}
\index{symbols}{Mon@$\CatMon$ \quad (category of monoids)}
In this paper, by a \emph{monoid} we always mean an abelian monoid, written additively, and with zero element denoted by $0$.

Let $M,N$ and $R$ be monoids.
A \emph{monoid homomorphism} from $M$ to $R$ is a map $f\colon M\to R$ that preserves addition and the zero element.
We denote the collection of such maps by $\CatMonMor(M,R)$.
We let $\CatMon$ denote the category whose objects are all monoids and whose morphisms are monoid homomorphisms.

We can endow the set $\CatMonMor(M,R)$ with a natural monoid structure as follows.
Given $f,g\in\CatMonMor(M,R)$, we define their sum $f+g$ by pointwise addition, that is, we have $(f+g)(a)=f(a)+g(a)$ for each $a\in M$.
The zero element in $\CatMonMor(M,R)$ is given by the zero map that sends every $a$ in $M$ to the zero element in $R$.

It is not difficult to verify that the homomorphism-monoid $\CatMonMor(M,R)$ is functorial in both variables.
Therefore, we obtain a bifunctor
\[
\CatMonMor(\freeVar,\freeVar)\colon\CatMon^{\op}\times\CatMon\to\CatMon,
\]
called the \emph{internal hom-bifunctor} of $\CatMon$.

A \emph{monoid bimorphism} from $M\times N$ to $R$ is a map $f\colon M\times N\to R$ which is a monoid homomorphism in each variable.
In other words, for fixed $a\in M$ and $b\in N$, the maps $f_a\colon M\to R$ and $g_b\colon N\to R$ given (respectively) by $f_a(x)=f(x,b)$ and $g_b(y)=f(a,y)$, for $x\in M$ and $y\in N$, are monoid homomorphisms.
We denote the set of these monoid bimorphisms by~$\CatMonBimor(M\times N,R)$.
It becomes a monoid when endowed with pointwise addition.
As for the monoid morphisms, one can check that monoid bimorphisms are functorial in all entries.
We therefore have a multifunctor
\[
\CatMonBimor(\freeVar\times\freeVar,\freeVar)\colon\CatMon^{\op}\times\CatMon^{\op}\times\CatMon\to\CatMon.
\]
\end{pgr}

Next, we recall the construction of tensor products in the category $\CatMon$.
The construction is based on the tensor product of abelian semigroups as studied by Grillet, \cite{Gri69TensorCommutative}.
The tensor product in $\CatMon$ has also been studied in \cite{Ful70Tensor}, where it is denoted by $\otimes_0$.

\begin{pgr}
\label{pgr:tensorMonConstr}
\index{symbols}{$\otimes$ \quad (tensor product in $\CatMon$)}
\index{terms}{congruence relation}
Let $M$ and $N$ be monoids.
Consider the free abelian monoid $F=\N[M\times N]$ whose basis is the cartesian product~$M\times N$.
For $a\in M$ and $b\in N$ we let $a\odot b$ denote the element in~$F$ that takes value~$1$ at~$(a,b)$ and that takes value~$0$ elsewhere.
Then for every element~$f\in F$ there exist a finite index set $I$ and elements $a_i\in M$ and $b_i\in N$ for $i\in I$ such that
\[
f=\sum_{i\in I} a_i\odot b_i.
\]
We do not require that $a_i\neq a_j$ for distinct indices $i$ and $j$.
Moreover, the presentation of $f$ in this way is essentially unique (up to permutation of the index set).

Following the notation in \cite[Section~2]{Weh96TensorInterpol}, we define two binary relations $\rightarrow^0$ and~$\rightarrow$ on~$F$.
We also define a binary relation~$\cong_0$ on~$F$.
Let $f$ and $g$ be elements in $F$.
Then:
\begin{enumerate}
\item
We set $f\rightarrow^0g$ if and only if there exists a pair $(a,b)\in M\times N$, nonempty finite index sets $I$ and $J$, and elements $a_i\in M$ for $i\in I$, $b_j\in N$ for $j\in J$ such that
\[
a=\sum_{i\in I}a_i,\quad
b=\sum_{j\in J}b_j,\quad
f=a\odot b,\quad
g=\sum_{i\in I, j\in J}a_i\odot b_j.
\]
\item
We set $f\rightarrow g$ if and only if either $f=g=0$ or else there are $n\in\N$ and $f_k,g_k\in F$ for $k=0,\ldots,n$ such that
\[
f=\sum_{k=0}^nf_k,\quad
g=\sum_{k=0}^ng_k,\quad\text{ and }\quad
f_k\rightarrow^0 g_k, \text{ for each } k.
\]
\item
For any $a\in M$ and $b\in N$, we set $0\cong_0 a\odot 0$ and $0\cong_0 0\odot b$.
\end{enumerate}

A binary relation $\mathcal{R}$ on $M$ is called \emph{additive} if, for any elements $a,b,c,d\in M$, we have $(a+c,b+d)\in\mathcal{R}$ whenever $(a,b)\in\mathcal{R}$ and $(c,d)\in\mathcal{R}$.
As in \cite[Lemma~2.1]{Weh96TensorInterpol}, the relation $\rightarrow$ is reflexive and additive.

A \emph{congruence relation} on a monoid is an additive equivalence relation.
We let~$\cong$ be the congruence relation on~$F$ generated by~$\rightarrow$, $\leftarrow$, and $\cong_0$.
We set
\[
M\otimes N := F/\!\!\cong,
\]
which is the set of $\cong$-congruence classes in $F$.
It is easy to check that $M\otimes N$ is a monoid (see \cite{Ful70Tensor}).

Given $(a,b)\in M\times N$, we write $a\otimes b$ for the congruence class of $a\odot b$ in $M\otimes N$.
In particular, we have $a\otimes 0=0$ and $0\otimes b=0$ for every $a\in M$ and $b\in N$.
We define a map
\[
\omega\colon M\times N\to M\otimes N\,\quad
\omega(a,b)=a\otimes b,\quad
\txtFA a\in M, b\in N,
\]
which is easily seen to be a monoid bimorphism.
\end{pgr}

\begin{rmk}
\label{rmk:tensorMon}
We denote the tensor product in $\CatMon$ by $\otimes$.
If we need to specify the category in which the tensor product is taken, we will also write $\otimes_{\CatMon}$.

As we will see below, the tensor product in $\CatMon$ restricts to the tensor product in the categories of conical monoids, groups, semirings and rings.
Therefore, the tensor product in $\CatMon$ seems most universal and that is why we will usually drop the subscript to shorten notation.
\end{rmk}

\begin{prp}
\label{prp:tensorMon}
Let $M$ and $N$ be monoids.
Then the monoid $M\otimes N$ and the monoid bimorphism
\[
\omega\colon M\times N\to M\otimes N
\]
constructed in \autoref{pgr:tensorMonConstr} have the following universal property:

For every monoid $R$ and for every monoid bimorphism $f\colon M\times N\to R$, there exists a unique monoid homomorphism $\tilde{f}\colon M\otimes N\to R$ such that $\tilde{f}\circ\omega=f$.

Thus, the assignment $g \mapsto g\circ\omega$ defines a map
\[
\CatMonMor(M\otimes N,R)\to \CatMonBimor(M\times N,R),
\]
which is a monoid isomorphism when considering the (bi)morphism sets as monoids.
\end{prp}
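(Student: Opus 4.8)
The plan is to establish the universal property of the tensor product $M \otimes N$ by exhibiting the desired homomorphism $\tilde{f}$ explicitly and then verifying it is well-defined, unique, and that the resulting assignment on (bi)morphism sets is a monoid isomorphism. First I would construct $\tilde{f}$ at the level of the free monoid $F = \N[M \times N]$: given a monoid bimorphism $f \colon M \times N \to R$, define $\bar{f} \colon F \to R$ on generators by $\bar{f}(a \odot b) = f(a,b)$ and extend additively, which is automatic since $F$ is free on $M \times N$. The main work is then to show that $\bar{f}$ descends to the quotient $M \otimes N = F/\!\cong$, i.e.\ that $\bar{f}$ is constant on $\cong$-congruence classes.

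Since $\cong$ is the congruence relation generated by $\rightarrow$, $\leftarrow$, and $\cong_0$, it suffices to check that $\bar{f}$ respects each of these generating relations; compatibility with the generated congruence then follows because the set of pairs on which $\bar{f}$ agrees is itself an additive equivalence relation containing the generators. For $\cong_0$, I would use that $f$ is a bimorphism, hence linear in each variable, to get $f(a,0) = 0 = f(0,b)$, matching $\bar{f}(a \odot 0) = 0 = \bar{f}(0 \odot b) = \bar{f}(0)$. For $\rightarrow^0$, the key computation is that if $a = \sum_{i \in I} a_i$ and $b = \sum_{j \in J} b_j$, then biadditivity of $f$ gives
\[
f(a,b) = f\Bigl( \sum_{i} a_i, \sum_{j} b_j \Bigr) = \sum_{i \in I, j \in J} f(a_i, b_j),
\]
which is precisely $\bar{f}$ evaluated on the right-hand side $\sum_{i,j} a_i \odot b_j$. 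The relation $\rightarrow$ is handled by additivity of $\bar{f}$ together with the $\rightarrow^0$ case, and $\leftarrow$ is symmetric. This yields a well-defined monoid homomorphism $\tilde{f} \colon M \otimes N \to R$ with $\tilde{f} \circ \omega = f$, since $\tilde{f}(a \otimes b) = \bar{f}(a \odot b) = f(a,b) = f(\omega(a,b))$.

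For uniqueness, I would observe that the simple tensors $a \otimes b$ generate $M \otimes N$ as a monoid (every class is represented by a finite sum $\sum_i a_i \odot b_i$), and any homomorphism $g$ satisfying $g \circ \omega = f$ is forced to satisfy $g(a \otimes b) = f(a,b)$ on generators, hence equals $\tilde{f}$. Finally, to see that the assignment $g \mapsto g \circ \omega$ is a monoid isomorphism $\CatMonMor(M \otimes N, R) \to \CatMonBimor(M \times N, R)$, the existence and uniqueness just proved show it is a bijection with inverse $f \mapsto \tilde{f}$; that it preserves addition and the zero element follows directly from the pointwise monoid structures, since $(g_1 + g_2) \circ \omega = g_1 \circ \omega + g_2 \circ \omega$ and the zero morphism composes with $\omega$ to the zero bimorphism.

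I expect the main obstacle to be the bookkeeping in verifying that $\bar{f}$ respects the generating relation $\rightarrow^0$, and more precisely arguing cleanly that compatibility with the generators propagates to the full generated congruence $\cong$ without circularity. The subtlety is that $\cong$ is generated not just as an equivalence relation but as an \emph{additive} one, so I would want to note explicitly that $\{(f,g) \in F \times F : \bar{f}(f) = \bar{f}(g)\}$ is an additive equivalence relation (it is the kernel congruence of the homomorphism $\bar{f}$), and therefore contains the congruence generated by any set of pairs it contains. Everything else is routine verification of universal-property formalities.
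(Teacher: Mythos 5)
Your proposal is correct and follows essentially the same route as the paper: extend $f$ to the free monoid $F=\N[M\times N]$ by freeness, check that the extension is constant on $\cong$-classes (the paper dismisses this as routine, while you spell out the verification for $\cong_0$, $\rightarrow^0$, and $\rightarrow$, and correctly note that the kernel congruence is additive so compatibility with generators suffices), then deduce uniqueness from generation by simple tensors and observe that $g\mapsto g\circ\omega$ is additive. No gaps.
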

\begin{proof}
Let $M$ and $N$ be monoids.
To check the universal property of $\omega$, let $R$ and $f$ be as in the statement.
Since $F=\N[M\times N]$ is the free abelian monoid on the set $M\times N$, there is a unique monoid homomorphism $f'\colon F\to R$ such that $f'(a\odot b)=f(a,b)$ for every $(a,b)\in M\times N$.
It is routine to verify that $f'$ is constant on the congruence classes of~$\cong$.
Therefore, $f'$ induces a map
\[
\tilde{f}\colon M\otimes N = F/\!\!\cong \to R.
\]
It is clear that $\tilde{f}$ is a monoid homomorphism.
The rest of the statement is easy to check.
\end{proof}

\begin{pgr}
\label{pgr:monoidalMon}
Using that the monoid bimorphisms are functorial in the first two entries, the tensor product in $\CatMon$ induces a bifunctor
\[
\otimes\colon\CatMon\times\CatMon\to\CatMon.
\]
In \autoref{pgr:functorialityTensor}, we explain this in more detail in the setting of enriched categories.

We remark that the category $\CatMon$ is enriched over the symmetric, closed, monoidal category of sets.
With this viewpoint, the tensor product in $\CatMon$ fits into the framework developed in \autoref{sec:abstractTensProd}.

Let $M,N$ and $R$ be monoids.
It is easy to verify that there is a natural isomorphism
\[
M\otimes N \cong N\otimes M,
\]
identifying the simple tensor $a\otimes b$ with $b\otimes a$, for any $a\in M$ and $b\in N$.
Similarly, there is a natural isomorphism
\[
(M\otimes N)\otimes R
\cong M\otimes(N\otimes R),
\]
identifying the simple tensor $(a\otimes b)\otimes c$ with $a\otimes(b\otimes c)$ for $a\in M$, $b\in N$ and $c\in R$.

The monoid $\N$ acts as a unit for the tensor product, that is, there are natural isomorphisms
\[
\N\otimes M \cong M \cong M\otimes\N.
\]
One can show that this gives to the category $\CatMon$ the structure of a symmetric, monoidal category.

Given monoids $M$ and $N$, we have seen that $\CatMonMor(M,N)$ has a natural structure as a monoid.
Thus, we can consider $\CatMonMor(M,N)$ as an object in $\CatMon$.
It follows that the category $\CatMon$ has an internal hom-bifunctor, which is equal to the (given) hom-bifunctor $\CatMonMor(\freeVar,\freeVar)$.

Moreover, the category $\CatMon$ is closed. Given a monoid $N$, the internal hom-bifunctor $\CatMonMor(N,\freeVar)$ is a right adjoint to the functor $\freeVar\otimes_{\CatMon} N$.
Indeed, given monoids $M$ and $R$, there are natural isomorphisms between the following monoids:
\[
\CatMonBimor(M\times N,R)
\cong \CatMonMor(M\otimes N,R)
\cong \CatMonMor(M,\CatMonMor(N,R)).
\]
\end{pgr}

Next, we consider two important subcategories of $\CatMon$:
The category $\CatGp$ of (abelian) groups, and the category $\CatCon$ of conical monoids.

\begin{pgr}
\label{pgr:gp}
\index{symbols}{Gp@$\CatGp$ \quad (category of groups)}
We let $\CatGp$ denote the category of (abelian) groups.
A map between two groups is a group homomorphism if and only if it is a monoid homomorphism.
Therefore, by considering a group as a monoid, we think of $\CatGp$ as a full subcategory of $\CatMon$.

There are two important observations:
\beginEnumStatements
\item
The category $\CatGp$ is reflective in $\CatMon$.
\item
The category $\CatGp$ is closed under the tensor product in $\CatMon$.
\end{enumerate}

Indeed, given a monoid $M$, its reflection in $\CatGp$ is the Grothendieck completion $\Gr(M)$ of $M$.
This induces a reflection functor
\[
\Gr\colon\CatMon\to\CatGp.
\]
\index{symbols}{Gr(M)@$\Gr(M)$ \quad (Grothendieck completion)}
Given groups $G$ and $H$, their tensor product as monoids, $G\otimes_{\CatMon}H$, is in fact a group.
Indeed, for a pair $(a,b)\in G\times H$, the inverse of the simple tensor $a\otimes b$ is equal to $(-a)\otimes b$.

In general, the tensor product of a monoidal category induces a tensor product in every reflective subcategory.
Given two objects in the subcategory, their tensor product in the subcategory is the reflection of their tensor product in the containing monoidal category.

In the concrete case of $\CatGp$ and $\CatMon$, this agrees with the tensor product of $\CatGp$ constructed above.
Indeed, given groups $G$ and $H$, their (abstract) tensor product is defined as
\[
\Gr(G\otimes_{\CatMon}H),
\]
the Grothendieck completion of their tensor product as monoids.
However, as observed above, the monoid $G\otimes_{\CatMon}H$ is automatically a group and therefore
\[
G\otimes_{\CatGp}H = \Gr(G\otimes_{\CatMon}H) = G\otimes_{\CatMon}H.
\]
Hence, it is unambiguous to write $G\otimes H$ for the tensor product of $G$ and $H$.

More generally, given monoids $M$ and $N$, there is a natural isomorphism between $\Gr(M\otimes N)$, the Grothendieck completion of their tensor product as monoids, and $\Gr(M)\otimes\Gr(N)$, the tensor product of their respective Grothendieck com\-ple\-tions.
\end{pgr}

\begin{prp}[{Fulp, \cite[Proposition~17]{Ful70Tensor}}]
\label{prp:GrTensorMon}
Let $M$ and $N$ be monoids.
Then
\[
\Gr(M\otimes N) \cong \Gr(M)\otimes \Gr(N).
\]
\end{prp}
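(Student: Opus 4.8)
The plan is to avoid the explicit congruence defining $\otimes$ in $\CatMon$ (\autoref{pgr:tensorMonConstr}) and instead identify both sides as representing the same functor on the category of groups, then invoke the Yoneda lemma. Throughout I use the two facts recorded in \autoref{pgr:gp}: that $\Gr$ is left adjoint to the inclusion $\CatGp\hookrightarrow\CatMon$, so a monoid homomorphism from a monoid into a group factors uniquely through its Grothendieck completion; and that the $\CatMon$-tensor product of two groups is again a group, so that $\Gr(M)\otimes\Gr(N)$ may be treated simultaneously as a monoid and as a group.

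First I would fix an arbitrary group $H$ and compute the homomorphisms out of the left-hand side. Using the reflection adjunction at the first step and the universal property of the tensor product (\autoref{prp:tensorMon}) at the second, I obtain natural isomorphisms of monoids
\[
\CatMonMor(\Gr(M\otimes N),H)
\cong \CatMonMor(M\otimes N,H)
\cong \CatMonBimor(M\times N,H).
\]
For the right-hand side, since $\Gr(M)\otimes\Gr(N)$ is already a group, its homomorphisms into $H$ agree whether computed in $\CatMon$ or $\CatGp$, and \autoref{prp:tensorMon} gives
\[
\CatMonMor(\Gr(M)\otimes\Gr(N),H)
\cong \CatMonBimor\bigl(\Gr(M)\times\Gr(N),H\bigr).
\]

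The crux is then to produce a natural isomorphism
\[
\CatMonBimor(M\times N,H)
\cong \CatMonBimor\bigl(\Gr(M)\times\Gr(N),H\bigr),
\]
and this factorization lemma is the main thing to verify. Given a monoid bimorphism $f\colon M\times N\to H$, I would fix $b\in N$ and observe that $f(\freeVar,b)\colon M\to H$ is a monoid homomorphism into a group, hence factors uniquely through $\Gr(M)$; letting $b$ vary produces a bimorphism $\Gr(M)\times N\to H$. Repeating the argument in the second variable, using that $H$ is a group and $N\to\Gr(N)$ is the reflection, extends this to a bimorphism $\Gr(M)\times\Gr(N)\to H$. Uniqueness of both extensions (and hence naturality in $H$) follows from the uniqueness clauses in the reflection adjunction together with the fact that the images of $M$ and $N$ generate $\Gr(M)$ and $\Gr(N)$ as groups. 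Composing the displayed isomorphisms yields a natural isomorphism $\CatMonMor(\Gr(M\otimes N),H)\cong\CatMonMor(\Gr(M)\otimes\Gr(N),H)$ of functors in $H\in\CatGp$, whence the Yoneda lemma gives the desired isomorphism $\Gr(M\otimes N)\cong\Gr(M)\otimes\Gr(N)$, and one checks it is natural in $M$ and $N$.

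I expect the factorization lemma to be the only genuine step; everything else is formal adjunction bookkeeping. A concrete alternative, should one prefer to avoid Yoneda, is to build the isomorphism directly: functoriality of $\otimes$ applied to the unit maps $M\to\Gr(M)$ and $N\to\Gr(N)$ gives a homomorphism $M\otimes N\to\Gr(M)\otimes\Gr(N)$ into a group, which extends over $\Gr(M\otimes N)$; the inverse comes from the bimorphism $\Gr(M)\times\Gr(N)\to\Gr(M\otimes N)$ extending $(a,b)\mapsto a\otimes b$, and the two composites are checked to be the identity on the generating simple tensors. Either way the content is the same factorization through Grothendieck completions, performed one variable at a time.
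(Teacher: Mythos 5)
Your argument is correct, but note that the paper itself offers no proof of this statement: it is quoted verbatim from Fulp's paper (Proposition~17 of \cite{Ful70Tensor}), so there is no in-text argument to compare against. Your representability route is sound and self-contained. The only genuinely non-formal step is, as you say, the factorization lemma
\[
\CatMonBimor(M\times N,H)\cong\CatMonBimor\bigl(\Gr(M)\times\Gr(N),H\bigr)
\]
for $H$ a group, and your one-variable-at-a-time extension goes through for the reason you indicate: a monoid homomorphism $\Gr(M)\to H$ into a group is determined by its restriction to $\delta(M)$, since $\delta(M)$ generates $\Gr(M)$ as a group, so additivity of the extended map in the \emph{other} variable can be verified after restricting back to $M$, where it is the hypothesis on $f$. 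You also correctly use the fact recorded in \autoref{pgr:gp} that $\Gr(M)\otimes\Gr(N)$ is already a group, so that all four hom-sets live over $\CatGp$ and the Yoneda lemma applies there; without that observation the two representable functors would live on different categories. Fulp's original argument (and the concrete alternative you sketch) instead constructs the two canonical maps explicitly --- one from functoriality of $\otimes$ applied to the unit maps together with the universal property of $\Gr$, the other from the extended bimorphism $\Gr(M)\times\Gr(N)\to\Gr(M\otimes N)$ --- and checks they are mutually inverse on simple tensors; both versions hinge on exactly the same factorization-through-$\Gr$ lemma, so the representable-functor packaging buys brevity rather than new content.
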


\begin{pgr}
\label{pgr:con}
\index{terms}{conical monoid}
\index{terms}{monoid!conical}
\index{symbols}{Con@$\CatCon$ \quad (category of conical monoids)}
A monoid $M$ is \emph{conical} if $a+b=0$ implies $a=b=0$ for any $a,b\in M$.
Equivalently, the subset $M^\times$ of nonzero elements is a subsemigroup.
This property has appeared in the literature under many different names;
see \cite[p.268]{Weh96TensorInterpol}.

We let $\CatCon$ denote the full subcategory of $\CatMon$ consisting of conical (abelian) monoids.
Analogous to the category of groups, we have the following facts:
\beginEnumStatements
\item
The category $\CatCon$ is reflective in $\CatMon$.
\item
The category $\CatCon$ is closed under the tensor product in $\CatMon$.
\end{enumerate}

Given a monoid $M$, we let $U(M)$ denote the subgroup of units, that is,
\[
U(M) = \left\{ a\in M : a+b=0 \text{ for some } b\in M \right\}.
\]
Then $M$ is conical if and only if $U(M)=\{0\}$.

We define a binary relation $\sim$ on $M$ by setting $a\sim b$ if and only if there exist $x,y\in U(M)$ such that $a+x=b+y$, for any $a,b\in M$.
It is easy to check that $\sim$ is a congruence relation on $M$.
We set
\[
M_{\CatCon} := M/\!\!\sim,
\]
the set of congruence classes in $M$.
Then $M_{\CatCon}$ is a conical monoid, which is the reflection of $M$ in $\CatCon$.
This induces a reflection functor from $\CatMon$ to $\CatCon$.

Given monoids $M$ and $N$, it is shown in \cite[Corollary~8]{Ful70Tensor} that
\[
U(M\otimes N) \cong U(M)\otimes U(N).
\]
Thus, if $M$ and $N$ are conical monoids, their tensor product in $\CatMon$ is also conical.
Therefore, it is unambiguous to write $M\otimes N$ for the tensor product of $M$ and $N$.
\end{pgr}

\begin{pgr}
\label{pgr:tensorConConstr}
Let us recall a different construction of the tensor product of conical monoids, as considered by Wehrung in \cite{Weh96TensorInterpol}.
Let $M$ and $N$ be conical monoids.
Set $M^\times:=M\setminus\{0\}$.
Since $M$ is conical, $M^\times$ is a subsemigroup of $M$.
Analogously, $N^\times:=N\setminus\{0\}$ is a subsemigroup of $N$.

In \autoref{pgr:tensorMonConstr}, we considered the binary relations $\rightarrow^0$ and $\rightarrow$ on the free monoid $\N[M\times N]$.
Now, we consider the free monoid
\[
F=\N[M^\times \times N^\times].
\]
We can define binary relations $\rightarrow^0$ and $\rightarrow$ on $F$ as in \autoref{pgr:tensorMonConstr}, and this is in fact the original definition of $\rightarrow^0$ and $\rightarrow$ as in \cite[Section~2]{Weh96TensorInterpol}.

We let~$\cong$ be the congruence relation on~$F$ generated by~$\rightarrow$ and~$\leftarrow$. (Note that, since we are taking elements in $M^\times\times N^\times$, the binary relation $\cong_0$ does not make sense.)
Thus, for elements~$f$ and~$g$ in~$F$ we have $f\cong g$ if and only if there are $n\in\N$ and elements $f_k,f_k'\in F$ for $k=0,\ldots,n$ such that $f=f_0$, $f_n=g$ and $f_k \rightarrow f_k' \leftarrow f_{k+1}$ for each $k<n$:
\[
f=f_0 \rightarrow f_0' \leftarrow
f_1 \rightarrow f_1' \leftarrow \ldots
f_n \rightarrow f_n' = g.
\]
We set
\[
M\otimes_{\CatCon}N := F/\!\!\cong.
\]
It is clear that $M\otimes_{\CatCon}N$ is an abelian semigroup.
Using that $F$ is conical, it follows from the definition of~$\leftarrow$ and~$\rightarrow$ that~$f\to 0$ or~$0\to f$ implies $f=0$ for any $f\in F$.
Therefore, the congruence class of the element $0$ contains only $0$ itself.
It follows that $F\setminus\{0\}$ is a subsemigroup of $F$ that is closed under the congruence relation.
Thus, $M\otimes_{\CatCon}N$ is a conical monoid.

The natural map from $\N[M^\times \times N^\times]$ to $\N[M\times N]$ induces a map
\[
M\otimes_{\CatCon} N
=\N[M^\times \times N^\times]_{/\langle \leftarrow,\rightarrow\rangle}
\to \N[M\times N]_{/\langle \leftarrow,\rightarrow,\cong_0\rangle}
= M\otimes N,
\]
which is easily checked to be an isomorphism.
\end{pgr}

\vspace{5pt}
\addtocontents{toc}{\SkipTocEntry}
\section[The categories \texorpdfstring{$\CatPrePom$}{PrePOM} and \texorpdfstring{$\CatPom$}{POM}]{The categories \texorpdfstring{$\CatPrePom$}{PrePOM} and \texorpdfstring{$\CatPom$}{POM} of positively (pre)ordered monoids}
\addtocontents{toc}{\protect\contentsline{section}%
{\protect\tocsection{}{\thesection}%
{The categories PrePOM and POM}}%
{\thepage}{section.\thesection}}
\label{sec:pom}

\begin{pgr}
\label{pgr:pom}
\index{terms}{partially ordered monoid}
\index{terms}{positively ordered monoid}
\index{terms}{monoid!partially ordered}
\index{terms}{monoid!positively ordered}
\index{terms}{Pom@$\CatPom$}
\index{symbols}{PrePom@$\CatPrePom$ \quad (category of positively pre-ordered monoids)}
\index{symbols}{Pom@$\CatPom$ \quad (category of positively ordered monoids)}
A \emph{partially ordered monoid} is a monoid $M$ with a partial order $\leq$ such that $a\leq b$ implies $a+c\leq b+c$, for any $a,b,c\in M$.
If, in addition, we have $0\leq a$ for every $a\in M$, then we call $M$ a \emph{\pom}.

If the order is not necessarily antisymmetric, we speak of a \emph{(positively) pre-ordered monoid}.
This terminology follows Wehrung, \cite{Weh92InjectivePOM}.
Note, however, that in \cite{Weh92InjectivePOM} a `positively ordered monoid' (abbreviated by P.O.M.\ there) is only assumed to be pre-ordered.
We include the assumption of antisymmetry in our definition of `\pom' since our focus is on partially ordered structures.

A \emph{$\CatPom$-morphism} is an order-preserving monoid homomorphism.
Let $\CatPrePom$ denote the category of \prePom{s} together with $\CatPom$-mor\-phisms.
We let $\CatPom$ be the full subcategory of \pom{s}.

Let $M, N$ and $R$ be \prePom{s}.
Given $\CatPom$-morphisms $f,g\colon M\to R$, we set $f\leq g$ if and only if $f(a)\leq g(a)$ for each $a\in M$.
This defines a positive pre-order on $\CatPomMor(M,R)$.
Together with pointwise addition, this endows the set $\CatPomMor(M,R)$ of $\CatPom$-morphisms with the structure of a \prePom.
If the order of $R$ is antisymmetric, then this is also the case for the order of $\CatPomMor(M,R)$.
One can extend the assignment $R\mapsto\CatPomMor(M,R)$ to the following functors:
\[
\CatPomMor(M,\freeVar)\colon\CatPrePom\to\CatPrePom,\quad
\CatPomMor(M,\freeVar)\colon\CatPom\to\CatPom.
\]
The functor on the left is the internal hom-bifunctor of $\CatPrePom$.
If $M$ is in $\CatPom$, then the functor on the right is the internal hom-bifunctor of $\CatPom$.

A \emph{$\CatPom$-bimorphism} from $M\times N$ to $R$ is a map $f\colon M\times N\to R$ that is a $\CatPom$-morphism in each variable.
We denote the set of such bimorphisms by $\CatPomBimor(M\times N,R)$.
It is a \prePom\ when endowed with pointwise order and addition.
If the order of $R$ is antisymmetric, then so is the order of $\CatPomBimor(M\times N,R)$.
One can extend the assignment $R\mapsto\CatPomBimor(M\times N,R)$ to the following functors:
\[
\CatPomBimor(M\times N,\freeVar)\colon\CatPrePom\to\CatPrePom,\quad
\CatPomBimor(M\times N,\freeVar)\colon\CatPom\to\CatPom.
\]

Since our focus is on the category $\CatPom$, we denote the (bi)morphisms in both $\CatPom$ and $\CatPrePom$ as $\CatPom$-(bi)morphisms.
\end{pgr}

\begin{pgr}
\label{pgr:reflectPom-PrePom}
Let $M$ be a \prePom.
We define a relation on $M$ by setting $a\equiv b$ if and only if $a\leq b$ and $b\leq a$, for any $a,b\in M$.
Then $\equiv$ is a congruence relation.
We define $\mu(M)$ as $M/\!\!\equiv$ and we let $\beta\colon M\to\mu(M)$ denote the quotient map.
The pre-order on $M$ induces a partial order on $\mu(M)$.
This gives $\mu(M)$ the structure of a \pom\ and it follows that $\beta$ is a $\CatPom$-morphism.

The assignment $M\mapsto\mu(M)$ extends to a functor
\[
\mu\colon\CatPrePom\to\CatPom,
\]
which is left adjoint to the inclusion of $\CatPom$ in $\CatPrePom$.
More precisely, for any \pom\ $R$, the following universal properties hold:
\beginEnumStatements
\item
For every $\CatPom$-morphism $f\colon M\to R$, there is a unique $\CatPom$-morphism $\tilde{f}\colon \mu(M)\to R$ such that $\tilde{f}\circ\beta=f$.
\item
If $g_1,g_2\colon \mu(M)\to R$ are $\CatPom$-morphisms, then $g_1\leq g_2$ if and only if $g_1\circ\beta\leq g_2\circ\beta$.
\end{enumerate}
Thus, the assignment $g \mapsto g\circ\beta$ defines a map
\[
\CatPomMor(\mu(M),R) \to \CatPomMor(M,R),
\]
which is a $\CatPom$-isomorphism when considering the (bi)morphism sets as \pom{s}.
\end{pgr}

\begin{prp}
\label{prp:reflectPom-PrePom}
The category $\CatPom$ is a full, reflective subcategory of the category $\CatPrePom$.
\end{prp}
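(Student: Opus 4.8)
The plan is to treat the two assertions separately. Fullness is immediate: by definition (see \autoref{pgr:pom}), $\CatPom$ is introduced as the \emph{full} subcategory of $\CatPrePom$ whose objects are the \pom{s}, so the inclusion functor $\iota\colon\CatPom\to\CatPrePom$ induces bijections on all morphism sets and there is nothing to verify.

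For reflectivity, the goal is to show that $\iota$ admits a left adjoint, and the candidate is precisely the assignment $M\mapsto\mu(M)$ from \autoref{pgr:reflectPom-PrePom}. First I would confirm that $\mu$ is a well-defined functor $\CatPrePom\to\CatPom$. This requires checking that $\equiv$ is a congruence relation on a \prePom\ $M$ (which uses compatibility of the pre-order with addition), that the induced binary relation on $\mu(M)=M/\!\!\equiv$ is a genuine partial order — antisymmetry holds exactly because we have quotiented by the symmetric part of the pre-order — and that $\mu(M)$ is thereby a \pom\ with $\beta_M\colon M\to\mu(M)$ a $\CatPom$-morphism. On morphisms, given a $\CatPom$-morphism $f\colon M\to N$ of \prePom{s}, I would define $\mu(f)$ as the unique factorization of $\beta_N\circ f$ through $\beta_M$ provided by the universal property; functoriality (preservation of identities and composition) then follows from the uniqueness clause.

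The core of the argument is the adjunction itself, and here I would invoke the universal properties (i) and (ii) recorded in \autoref{pgr:reflectPom-PrePom}. For any \prePom\ $M$ and any \pom\ $R=\iota(R)$, property (i) says that post-composition with $\beta_M$ defines a surjection $\CatPomMor(\mu(M),R)\to\CatPomMor(M,R)$, $g\mapsto g\circ\beta_M$, with a well-defined inverse $f\mapsto\tilde f$, while property (ii) says that this correspondence is an isomorphism of the morphism sets regarded as \pom{s}; in particular it is a bijection. Naturality in both $M$ and $R$ is a routine diagram chase using the uniqueness of factorizations through $\beta$. This natural bijection $\CatPomMor(\mu(M),\iota(R))\cong\CatPomMor(M,R)$ — with $\beta_M$ serving as the unit — is exactly the statement that $\mu$ is left adjoint to $\iota$, so $\CatPom$ is reflective in $\CatPrePom$.

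Since all the substantive content is already established in \autoref{pgr:reflectPom-PrePom}, there is no genuine obstacle: the proof is a bookkeeping exercise assembling that construction into the language of adjoint functors. The only point deserving a moment's care is the verification that the induced order on $\mu(M)$ is antisymmetric, so that $\mu(M)$ actually lands in $\CatPom$, but this is immediate from the definition of $\equiv$. One may additionally note, as a sanity check, that the counit $\mu(\iota(R))\to R$ is an isomorphism for every \pom\ $R$, since on an already antisymmetric $R$ the relation $\equiv$ coincides with equality; this confirms that the reflection restricts to the identity on $\CatPom$, as it must.
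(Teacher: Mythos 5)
Your proposal is correct and follows exactly the route the paper takes: the paper gives no separate proof of this proposition, since it is a direct consequence of the construction of $\mu$ and the universal properties recorded in the preceding paragraph, which is precisely the material you assemble into the adjunction. Your additional checks (antisymmetry of the induced order, functoriality via uniqueness, the counit being an isomorphism on objects already in $\CatPom$) are the right routine verifications and match the paper's implicit argument.
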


\begin{pgr}[Tensor product in $\CatPrePom$]
\label{pgr:tensorPrePomConstr}
\index{symbols}{$\otimes_{\CatPrePom}$}
Let $M$ and $N$ be \prePom s.
We first consider the tensor product of the underlying monoids as constructed in \autoref{pgr:tensorMonConstr}.
Set $F:=\N[M\times N]$.

We define binary relations on $F$ as follows:
\begin{enumerate}
\item
We set $f\leq^0g$ if and only if there are $(a,b)$ and $(\tilde{a},\tilde{b})$ in $M\times N$ such that
\[
a\leq\tilde{a},\quad
b\leq\tilde{b},\quad
f=a\odot b,\text{ and }
g=\tilde{a}\odot\tilde{b}.
\]
\item
We set $f\leq' g$ if and only if $f=0$ or if there are $n\in\N$ and $f_k,g_k\in F$ for $k=0,\ldots,n$ such that
\[
f_k\leq^0g_k \text{ for each } k,\quad
f=\sum_{k=0}^nf_k,\text{ and }
g=\sum_{k=0}^ng_k.
\]
\end{enumerate}
Recall that $\cong$ is the congruence relation on $F$ generated by $\leftarrow$, $\rightarrow$ and $\cong_0$.
We let $\leq$ be the transitive relation on $F$ generated by $\cong$ and $\leq'$.
Thus, for any $f,g\in F$ we have $f\leq g$ if and only if there is $n\in\N$ and elements $f_k,f_k'\in F$ for $k=0,\dots,n$ such that $f=f_0$, $g=f_n$ and $f_k \leq' f_k' \cong f_{k+1}$ for all $k<n$.

It is easy to see that $\leq$ is a positive pre-order on $F$.
This induces a positive pre-order on $M\otimes_{\CatMon}N = F/\!\!\cong$.
We denote the resulting positively pre-ordered monoid by $M\otimes_{\CatPrePom}N$.

By construction, the universal monoid-bimorphism
\[
\omega\colon M\times N\to M\otimes N
\]
is order-preserving in each variable.
We may therefore consider $\omega$ as a $\CatPom$-bimorphism from $M\times N$ to $M\otimes_{\CatPrePom}N$.
\end{pgr}

\begin{prp}
\label{prp:tensorPom}
\index{symbols}{$\otimes_{\CatPom}$}
Let $M$ and $N$ be \prePom{s}.
Then the \prePom{} $M\otimes_{\CatPrePom} N$ and the $\CatPom$-bimorphism
\[
\omega\colon M\times N\to M\otimes_{\CatPrePom}N
\]
constructed in \autoref{pgr:tensorPrePomConstr} satisfy the following universal properties for each \prePom{} $R$:
\beginEnumStatements
\item
For every $\CatPom$-bimorphism $f\colon M\times N\to R$ there exists a unique $\CatPom$-morphism $\tilde{f}\colon M\otimes_{\CatPrePom} N\to R$ such that $\tilde{f}\circ\omega=f$.
\item
If $g_1,g_2\colon M\otimes_{\CatPrePom} N\to R$ are $\CatPom$-morphisms, then $g_1\leq g_2$ if and only if $g_1\circ\omega\leq g_2\circ\omega$.
\end{enumerate}
Thus, the assignment $g \mapsto g\circ\omega$ defines a map
\[
\CatPomMor(M\otimes_{\CatPrePom} N,R) \to \CatPomBimor(M\times N,R),
\]
which is a $\CatPom$-isomorphism when considering the (bi)morphism sets as \prePom s.

Moreover, the reflection $\mu\colon\CatPrePom\to\CatPom$ induces a tensor product in $\CatPom$.
More precisely, given \pom{s} $M$ and $N$, we set
\[
M\otimes_{\CatPom} N := \mu(M\otimes_{\CatPrePom}N).
\]
The composition $\beta\circ\omega\colon M\times N\to M\otimes_{\CatPom} N$ is a $\CatPom$-bimorphism which has the analogous universal properties of the tensor product in $\CatPom$.
\end{prp}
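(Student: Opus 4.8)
The plan is to build everything on top of the monoid-level tensor product from \autoref{prp:tensorMon}, adding only the order-theoretic content. Recall that, as a set and a monoid, $M\otimes_{\CatPrePom}N$ coincides with $M\otimes_{\CatMon}N=F/\!\!\cong$ where $F=\N[M\times N]$, and that $\omega$ is precisely the universal monoid bimorphism; moreover $\omega$ was already observed to be a $\CatPom$-bimorphism in \autoref{pgr:tensorPrePomConstr}. So throughout I can treat the additive and congruence structure as known and focus on the pre-order $\leq$.

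First I would prove property \enumStatement{1}. Given a $\CatPom$-bimorphism $f\colon M\times N\to R$, it is in particular a monoid bimorphism, so \autoref{prp:tensorMon} supplies a unique monoid homomorphism $\tilde{f}\colon M\otimes_{\CatPrePom}N\to R$ with $\tilde{f}\circ\omega=f$; this already settles existence and uniqueness of $\tilde{f}$ as a map, so the only thing left is to check that $\tilde{f}$ is order-preserving. Since $\tilde{f}$ is well-defined on $\cong$-classes and the pre-order on $F$ is, by \autoref{pgr:tensorPrePomConstr}, the transitive relation generated by $\cong$ and $\leq'$, it suffices to verify that $\tilde{f}$ respects each generating step, and by additivity of $\tilde{f}$ it is enough to treat the generator $\leq^0$. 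Concretely, if $f_0=a\odot b$ and $g_0=\tilde{a}\odot\tilde{b}$ with $a\leq\tilde{a}$ and $b\leq\tilde{b}$, then using that $f$ is a $\CatPom$-morphism in each variable we get
\[
\tilde{f}([a\odot b])=f(a,b)\leq f(\tilde{a},b)\leq f(\tilde{a},\tilde{b})=\tilde{f}([\tilde{a}\odot\tilde{b}]).
\]
Extending through finite sums (the clause $f=0$ in the definition of $\leq'$ being trivial, since $0\leq\tilde{f}(g_0)$ as $R$ is positively pre-ordered), through the congruence $\cong$, and through transitivity, shows that $\tilde{f}$ preserves $\leq$. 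Hence $\tilde{f}$ is a $\CatPom$-morphism.

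Next I would dispatch property \enumStatement{2}. The forward implication is immediate. For the converse, suppose $g_1\circ\omega\leq g_2\circ\omega$, so that $g_1(a\otimes b)\leq g_2(a\otimes b)$ for all $a\in M$ and $b\in N$. Any $x\in M\otimes_{\CatPrePom}N$ is a finite sum $x=\sum_i a_i\otimes b_i$, and since $g_1,g_2$ are monoid homomorphisms and addition is order-compatible,
\[
g_1(x)=\sum_i g_1(a_i\otimes b_i)\leq\sum_i g_2(a_i\otimes b_i)=g_2(x),
\]
so $g_1\leq g_2$. Combining \enumStatement{1} and \enumStatement{2}, the assignment $g\mapsto g\circ\omega$ is bijective by \enumStatement{1}, additive because composition distributes over pointwise addition, and an order-embedding in both directions by \enumStatement{2}; it is therefore a $\CatPom$-isomorphism of the (bi)morphism \prePom{s}.

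Finally, the statement about $\CatPom$ follows by composing with the reflection $\mu$ of \autoref{prp:reflectPom-PrePom}. For a genuine \pom{} $R$, a $\CatPom$-bimorphism $M\times N\to R$ factors uniquely through $\omega$ as above, and the resulting $\CatPom$-morphism $M\otimes_{\CatPrePom}N\to R$ factors uniquely through $\beta$ by the universal property of $\mu$ recorded in \autoref{pgr:reflectPom-PrePom}; both factorizations are isomorphisms of morphism \pom{s}, and their composite yields the universal property of $\beta\circ\omega\colon M\times N\to M\otimes_{\CatPom}N=\mu(M\otimes_{\CatPrePom}N)$. I expect the only genuinely delicate point to be the reduction in property \enumStatement{1} to the generator $\leq^0$: one must unwind the layered definition of $\leq$ (the transitive closure of $\cong$ together with $\leq'$, the latter itself built from $\leq^0$) and confirm that order-preservation propagates cleanly through each layer, handling the degenerate clauses. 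Everything else is a formal consequence of \autoref{prp:tensorMon} and the reflection $\mu$.
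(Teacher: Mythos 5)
Your proposal is correct and follows essentially the same route as the paper: obtain $\tilde{f}$ from the monoid-level universal property of \autoref{prp:tensorMon}, check order-preservation against the generators of the pre-order on $F$, verify \enumStatement{2} by decomposing elements into finite sums of simple tensors, and deduce the $\CatPom$ case by composing with the reflection $\mu$. The paper's own proof is terser (it leaves the order-preservation of $\tilde{f}$ and all of \enumStatement{2} to the reader), so your write-up simply supplies the details the authors omitted.
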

\begin{proof}
Let $M$ and $N$ be \prePom s.
To check the universal property of $\omega$, let $R$ and $f$ be as in the statement.
Since the underlying monoid of $M\otimes_{\CatPrePom} N$ is the tensor product in $\CatMon$, there is a unique monoid homomorphism $\tilde{f}\colon M\otimes_{\CatMon}N \to R$ such that $\tilde f\circ\omega=f$.
It follows from the definition of the pre-order on $M\otimes_{\CatMon}N$ that $\tilde{f}$ is order-preserving.
This proves \enumStatement{1}, and the statement \enumStatement{2} is left to the reader.

Now, let $M$ and $N$ be \pom{s}.
Define $M\otimes_{\CatPom} N$ as the reflection of $M\otimes_{\CatPrePom} N$ in $\CatPom$.
To show that this has the analogous universal properties, let $R$ be a \pom.
Given a $\CatPom$-morphism $f\colon M\otimes_{\CatPom} N\to R$, we consider the maps $f\circ\beta$ and $f\circ\beta\circ\omega$ shown in the following commutative diagram:
\[
\xymatrix@M+2pt{
{ M\otimes_{\CatPom} N } \ar@{=}[r]
& { \mu(M\otimes_{\CatPrePom} N) }  \ar[dr]^{f}
& { M\otimes_{\CatPrePom} N }  \ar[l]^{\beta} \ar[d]^{f\circ\beta}
& M\times N \ar[dl]^{f\circ\beta\circ\omega} \ar[l]^{\omega} \\
& & R
}
\]
It follows from \autoref{pgr:reflectPom-PrePom} and the universal property of the tensor product in $\CatPrePom$ that this induces bijective maps
\[
\CatPomMor(M\otimes_{\CatPom} N,R) \xrightarrow{\cong} \CatPomMor(M\otimes_{\CatPrePom} N,R) \xrightarrow{\cong} \CatPomBimor(M\times N,R),
\]
which are $\CatPom$-isomorphism when considering the (bi)morphism sets as \pom{s}.
\end{proof}

\begin{pgr}
\label{pgr:monoidalPom}
Analogous to \autoref{pgr:monoidalMon}, we  one proves that $\CatPrePom$ and $\CatPom$ are closed, symmetric, monoidal categories.
\end{pgr}

\begin{pgr}
\label{pgr:algOrder}
\index{terms}{algebraic pre-order}
\index{terms}{algebraic order}
Let us clarify the connection between the categories $\CatMon$, $\CatCon$, $\CatPrePom$ and $\CatPom$.
We have already observed that $\CatCon$ is a full, reflective subcategory of $\CatMon$ and that $\CatPom$ is a full, reflective subcategory of $\CatPrePom$.

To every \prePom{} we may associate its underlying additive monoid.
This induces the forgetful functor
\[
\mathfrak{F}\colon\CatPrePom\to\CatMon.
\]
Conversely, let $M$ be a monoid.
The \emph{algebraic pre-order} on $M$ is defined as follows:
Given $a,b\in M$, we set $a\leq_\alg b$ if and only if there exists $x\in M$ such that $a+x=b$.
It is clear that $\leq_\alg$ is a positive pre-order on $M$.
Given monoids $M$ and $N$, every monoid homomorphism $f\colon M\to N$ becomes a $\CatPom$-morphism when $M$ and $N$ are equipped with their respective algebraic pre-orders.
This defines a functor
\[
\mathfrak{A}\colon\CatMon\to\CatPrePom,
\]
which assigns to a monoid $M$ the \prePom{} $(M,\leq_\alg)$.

We say that a \prePom{} $M$ is \emph{algebraically pre-ordered}, or we simply say that $M$ is an \emph{algebraically pre-ordered monoid}, if it is equipped with the algebraic pre-order of the underlying monoid.
It is easy to check that $\mathfrak{A}$ is a fully faithful functor.
Thus, we may identify $\CatMon$ with the full subcategory of $\CatPrePom$ consisting of algebraically pre-ordered monoids.

Moreover, we have the following:
\beginEnumStatements
\item
The category $\CatMon$ is reflective in $\CatPrePom$.
\item
The property of being algebraically pre-ordered is closed under tensor products in $\CatPrePom$;
see \autoref{prp:tensPomPreservesAlgOrd}.
Thus, the category $\CatMon$ considered as a subcategory of $\CatPrePom$ is closed under the tensor product in $\CatPrePom$.
\end{enumerate}

To see that $\CatMon$ is a reflective subcategory of $\CatPrePom$, let us show that the forgetful functor $\mathfrak{F}$ is a left adjoint to the inclusion $\mathfrak{A}$.
Indeed, given a monoid $M$ and a \prePom{} $R$, observe that every monoid homomorphism $f\colon M\to\mathfrak{F}(R)$ is automatically order-preserving as a map from $M$ (with the algebraic pre-order) to $R$.
Thus, we have a natural bijection of the following morphism-sets:
\[
\CatMonMor(M,\mathfrak{F}R) \cong \CatPomMor(\mathfrak{A}(M),R).
\]
\end{pgr}

\begin{prp}
\label{prp:tensPomPreservesAlgOrd}
Let $M$ and $N$ be algebraically pre-ordered monoids.
Then $M\otimes_{\CatPrePom}N$ is algebraically pre-ordered.
\end{prp}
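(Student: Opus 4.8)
The plan is to show that the tensor pre-order $\leq$ on $M\otimes_{\CatMon}N$ constructed in \autoref{pgr:tensorPrePomConstr} coincides with the algebraic pre-order $\leq_\alg$ of the underlying monoid. One inclusion is free: as noted in \autoref{pgr:algOrder}, in any positively pre-ordered monoid $a+x=b$ forces $a=a+0\leq a+x=b$, so $\leq_\alg$ is contained in every positive pre-order; since $\leq$ is a positive pre-order, $\leq_\alg\subseteq\leq$. The substance of the argument is the reverse inclusion $\leq\,\subseteq\,\leq_\alg$.

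For the reverse inclusion, the key step is to analyze the generating relation $\leq'$. Suppose $f\leq' g$ in $F=\N[M\times N]$ with $f\neq 0$, so that $f=\sum_k f_k$ and $g=\sum_k g_k$ with $f_k\leq^0 g_k$ for each $k$. Writing $f_k=a_k\odot b_k$ and $g_k=\tilde a_k\odot\tilde b_k$ with $a_k\leq\tilde a_k$ and $b_k\leq\tilde b_k$, I would use that the orders on $M$ and $N$ are algebraic to choose $a_k',b_k'$ with $\tilde a_k=a_k+a_k'$ and $\tilde b_k=b_k+b_k'$. Then the bilinearity relation $\rightarrow^0$ from \autoref{pgr:tensorMonConstr} gives, in $M\otimes_{\CatMon}N$,
\[
\tilde a_k\otimes\tilde b_k=a_k\otimes b_k+\big(a_k\otimes b_k'+a_k'\otimes b_k+a_k'\otimes b_k'\big),
\]
so that $[f_k]\leq_\alg[g_k]$. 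Summing over $k$ and using additivity of the quotient map $F\to F/\!\!\cong$ yields $[f]\leq_\alg[g]$; the case $f=0$ is immediate since then $[f]=0$.

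Finally I would propagate this through the definition of $\leq$ as the transitive relation generated by $\cong$ and $\leq'$. By the description recalled in \autoref{pgr:tensorPrePomConstr}, an inequality $[f]\leq[g]$ in $M\otimes_{\CatMon}N$ comes from a chain $f=f_0\leq' f_0'\cong f_1\leq' f_1'\cong\cdots\cong f_n=g$ in $F$. In the quotient each $\cong$-step becomes an equality of classes, while each $\leq'$-step becomes an algebraic inequality by the previous paragraph. Since $\leq_\alg$ is transitive and contains equality, the whole chain collapses to $[f]\leq_\alg[g]$. Combining both inclusions gives $\leq\,=\,\leq_\alg$, that is, $M\otimes_{\CatPrePom}N$ is algebraically pre-ordered.

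I expect the only delicate point to be the bookkeeping in the last step---making sure the alternating $\cong/\leq'$ chain descends correctly to the quotient---rather than any genuine difficulty. The heart of the argument is the bilinear expansion displayed above, which converts the hypothesis ``$\tilde a_k$ dominates $a_k$ and $\tilde b_k$ dominates $b_k$'' into an honest algebraic decomposition of $\tilde a_k\otimes\tilde b_k$ over $a_k\otimes b_k$; here it is essential that the orders on both factors are algebraic, so that the dominating elements furnish explicit complements $a_k'$ and $b_k'$.
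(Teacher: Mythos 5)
Your proposal is correct and follows essentially the same route as the paper: the bilinear expansion of $\tilde a_k\otimes\tilde b_k$ over $a_k\otimes b_k$ is exactly the paper's Claim~1 (where the complement $h=x\odot b+\tilde a\odot y$ is written down explicitly in $F$), summing over $k$ is its Claim~2, and collapsing the alternating $\cong/\leq'$ chain by transitivity is the paper's final telescoping step. The only cosmetic difference is that you also record the trivial inclusion $\leq_\alg\subseteq\leq$, which the paper leaves implicit.
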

\begin{proof}
We use the notation that was introduced in \autoref{pgr:tensorMonConstr} and \autoref{pgr:tensorPrePomConstr}.
Thus, we consider the monoid $F=\N[M\times N]$ and the congruence relation $\cong$ on~$F$ generated by $\leftarrow$, $\rightarrow$ and $\cong_0$.

Claim 1:
If $f,g\in F$ satisfy $f\leq^0 g$, there exists $h\in F$ such that $f+h\cong g$.
To prove the claim, assume that $f,g\in F$ satisfying $f\leq^0 g$ are given.
By definition there are $(a,b)$ and $(\tilde{a},\tilde{b})$ in $M\times N$ such that
\[
a\leq\tilde{a},\quad
b\leq\tilde{b},\quad
f=a\odot b,\text{ and }
g=\tilde{a}\odot\tilde{b}.
\]
Since $M$ and $N$ are algebraically pre-ordered we can choose $x\in M$ and $y\in N$ such that $a+x=\tilde{a}$ and $b+y=\tilde{b}$. Set $h:=x\odot b+\tilde{a}\odot y$.
Then
\[
f + h
= a\odot b + x\odot b + \tilde{a}\odot y
\cong \tilde{a}\odot b + \tilde{a}\odot y
\cong \tilde{a}\odot \tilde{b}
= g,
\]
as desired.

Claim 2:
If $f,g\in F$ satisfy $f\leq' g$, then there exists $h\in F$ such that $f+h\cong g$.
This follows from claim 1 since $\leq'$ is defined as the additive closure of $\leq^0$, and since $\cong$ is an additive relation.

Next, let us show that the pre-order of $M\otimes_{\CatPrePom}N$ is algebraic.
The underlying monoid of $M\otimes_{\CatPrePom}N$ is equal to $M\otimes_{\CatMon}N = F/\!\!\cong$.
The pre-order $\leq$ of $F$ is defined as the transitive relation generated by $\leq'$ and $\cong$.
This induces the pre-order of $M\otimes_{\CatPrePom}N$, which by abuse of notation is also denoted by $\leq$.
Now, let $x,y\in M\otimes_{\CatPrePom}N$ satisfy $x\leq y$.
Choose representatives $f$ and $g$ in $F$ such that $x=[f]$ and $y=[g]$.
Then $f\leq g$.
This means that there are $n\in\N$ and elements $f_k,f_k'\in F$ for $k\leq n$ such that
\[
f=f_0\leq' f_0' \cong f_1 \leq' f_1' \cong \ldots  \cong f_n \leq' f_n' = g.
\]
For each $k\leq n$ we have $f_k\leq' f_k'$.
By claim 2 we can choose $h_k\in F$ such that $f_k+h_k \cong f_k'$.
Then
\[
f_k+h_k \cong f_{k+1},
\]
for each $k\leq n$.
Set $h:=h_0+\ldots+h_n$.
It follows that
\[
f+h
= f_0 + \sum_{k=0}^n h_k
\cong f_1 + \sum_{k=1}^n h_k
\cong f_2 + \sum_{k=2}^n h_k
\cong \ldots
\cong g.
\]
Thus, $f+h\cong g$, which implies that $x+[h]=y$ in $M\otimes_{\CatPrePom}N$, as desired.
\end{proof}

\begin{pgr}
The underlying monoid of a \pom{} is conical.
Therefore, the forgetful functor $\mathfrak{F}\colon\CatPrePom\to\CatMon$ considered in \autoref{pgr:algOrder} restricts to a functor
\[
\mathfrak{F}\colon\CatPom\to\CatCon,
\]
which, by abuse of notation, is also denoted by $\mathfrak{F}$.

However, the algebraic pre-order on a conical monoid is not necessarily antisymmetric.
Therefore, the functor $\mathfrak{A}\colon\CatMon\to\CatPrePom$ from \autoref{pgr:algOrder} does not restrict to a functor from $\CatMon$ to $\CatPom$.

We say that a \pom{} $M$ is \emph{algebraically ordered}, or we simply say that $M$ is an \emph{algebraically ordered monoid}, if it is equipped with the algebraic partial order of the underlying monoid.
\end{pgr}

\vspace{5pt}
\section{The category \texorpdfstring{$\CatPoGp$}{POGp} of partially ordered groups}
\label{sec:poGp}

\begin{pgr}
\label{pgr:pog}
\index{terms}{partially ordered group}
\index{symbols}{PoGp@$\CatPoGp$ \quad (category of partially ordered groups)}
A \emph{\pog} is an (abelian) group with a partial order $\leq$ such that
for any group elements $a,b$ and $c$, $a\leq b$ implies $a+c\leq b+c$.
Given \pog{s} $G$ and $R$, a \emph{$\CatPoGp$-morphism} from $G$ to $R$ is an order-preserving group homomorphism.
We denote the collection of such maps by $\CatPoGpMor(G,R)$.
We let $\CatPoGp$ denote the category of \pog{s} and $\CatPoGp$-morphisms.

Given another \pog\ $H$, a \emph{$\CatPoGp$-bimorphism} from $G\times H$ to $R$ is a map $f\colon G\times H\to R$ that is a group homomorphism in each variable and such that whenever $a_1\leq a_2\in G$ and $b_1\leq b_2\in H$, then $f(a_1,b_1)\leq f(a_2,b_2)$.
We denote the collection of such bimorphisms by $\CatPoGpBimor(G\times H,R)$.

Note that for a $\CatPoGp$-bimorphism $f\colon G\times H\to R$ and a fixed $a\in G$, the map $
H\to R$, defined by the assignment $b\mapsto f(a,b)$, for $b\in H$,
is not necessarily order-preserving unless $a\geq 0$.
\end{pgr}

\begin{pgr}
\label{pgr:equivalenceMonPoGp}
\index{terms}{positively ordered monoid!cancellative}
\index{terms}{partially ordered group!directed}
\index{terms}{positive cone}
\index{terms}{partially ordered group!directed}
\index{symbols}{G$_+$@$G_+$ \quad (positive cone)}
To clarify the connection between $\CatPoGp$ and $\CatMon$, we need to recall the following notion.
We say that a \pom\ $M$ has \emph{cancellation} (or that $M$ is \emph{cancellative}) if for any $a,b,x\in M$ we have $a+x\leq b+x$ implies $a\leq b$.
Let $\CatMon_\txtCanc$ denote the full subcategory of $\CatMon$ of cancellative monoids.
Similarly, we let $\CatCon_\txtCanc$ be the full subcategory of $\CatCon$ of cancellative, conical monoids.

Let $G$ be a \pog.
The \emph{positive cone} of $G$ is defined as
\[
G_+ = \left\{a\in G : 0\leq a \right\}.
\]
It is easy to check that $G_+$ satisfies
\[
G_+\cap(-G_+)=\{0\},\quad
G_++G_+\subset G_+.
\]
Therefore, $G_+$ is a conical submonoid of $G$.
Moreover, the order of $G$ induces the algebraic order on $G_+$.
Thus, considering $G_+$ as an algebraically ordered monoid, the inclusion of $G_+$ in $G$ is an order-embedding.

Now, let $G$ be a group and let $P$ be a conical submonoid of $G$.
This defines a partial order on $G$ by setting, for any $a,b\in G$, $a\leq b$ if and only if there exists $x\in P$ such that $a+x=b$.
For every group $G$ this establishes a natural one-to-one correspondence between:
\beginEnumStatements
\item
Partial orders on $G$ such that $(G,\leq)$ is a \pog.
\item
Conical submonoids of $G$.
\end{enumerate}
Since the positive cone of a \pog{} is automatically cancellative, there is a functor
\[
\mathfrak{P}\colon\CatPoGp\to\CatCon_\txtCanc,
\]
that assigns to a \pog{} $G$ its positive cone $G_+$.

Recall that a \pog{} $G$ is \emph{directed} if $G=G_+-G_+$.
We let $\CatPoGp_\txtDir$ denote the full subcategory of $\CatPoGp$ consisting of directed, \pog{s}.

Now, let $M$ be a monoid.
Consider the Grothendieck completion $\Gr(M)$ together with the universal map
\[
\delta\colon M\to\Gr(M).
\]
We have $\delta$ is injective if and only if $M$ is cancellative.
The image of $\delta$ is a submonoid of $\Gr(M)$.
If $M$ is conical then so is $\delta(M)$, whence it gives $\Gr(M)$ the structure of a \pog{} whose positive cone is $\delta(M)$.
Moreover, $\Gr(M)$ is directed.
This induces a functor
\[
\Gr\colon\CatCon\to\CatPoGp_\txtDir.
\]

A \pog{} $G$ is in the image of the functor $\Gr$ if and only if it is directed, in which case $G\cong \Gr(G_+)$.
Conversely, a conical monoid $M$ is in the image of the functor $\mathfrak{P}$ if and only if it is cancellative, in which case $M\cong \Gr(M)_+$.
We summarize this as follows:
\end{pgr}

\begin{prp}
\label{prp:equivalenceMonPoGp}
The functors $\mathfrak{P}$ and $\Gr$ from \autoref{pgr:equivalenceMonPoGp} establish an equivalence between the following categories:
\beginEnumStatements
\item
The category $\CatPoGp_\txtDir$ of directed, \pog{s}.
\item
The category $\CatCon_\txtCanc$ of cancellative, conical monoids.
\end{enumerate}
\end{prp}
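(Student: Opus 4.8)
The plan is to exhibit natural isomorphisms realizing $\Gr$ as a quasi-inverse to $\mathfrak{P}$, since both functors, together with the object-level isomorphisms $G\cong\Gr(G_+)$ (for directed $G$) and $M\cong\Gr(M)_+$ (for cancellative conical $M$), are already recorded in \autoref{pgr:equivalenceMonPoGp}. First I would confirm that the two functors restrict correctly to the stated subcategories. For $\mathfrak{P}$, given a directed \pog{} $G$, the positive cone $G_+$ is a conical submonoid, and it is cancellative because it sits inside the group $G$; an order-preserving group homomorphism $f\colon G\to H$ satisfies $f(G_+)\subseteq H_+$, so $\mathfrak{P}(f)=f|_{G_+}$ is a $\CatCon_\txtCanc$-morphism. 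For $\Gr$, given a cancellative, conical monoid $M$, the canonical map $\delta\colon M\to\Gr(M)$ is injective (cancellativity) and its image is a conical submonoid (here one uses both properties: if $\delta(a)+\delta(b)=0$ then $\delta(a+b)=\delta(0)$, hence $a+b=0$ and so $a=b=0$), giving $\Gr(M)$ the structure of a directed \pog{} with positive cone $\delta(M)$; functoriality sends $\varphi$ to $\Gr(\varphi)$, which maps $\delta_M(M)$ into $\delta_N(N)$ and is therefore order-preserving.

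Next I would build the unit and counit. For the counit $\varepsilon_M\colon\mathfrak{P}(\Gr(M))\to M$, I would use that $\mathfrak{P}(\Gr(M))=\Gr(M)_+=\delta(M)$ and take $\varepsilon_M=\delta^{-1}$, the inverse of the corestriction of $\delta$ onto its image. This is a monoid isomorphism; since both $M$ and $\Gr(M)_+$ carry the algebraic order (the latter induced from $\Gr(M)$), a monoid isomorphism automatically preserves and reflects it, so $\varepsilon_M$ is an isomorphism in $\CatCon_\txtCanc$. For the unit $\eta_G\colon G\to\Gr(G_+)$, I would start from the inclusion $\iota\colon G_+\hookrightarrow G$, which by the universal property of the Grothendieck completion induces a group homomorphism $\iota_*\colon\Gr(G_+)\to G$, and set $\eta_G$ to be its inverse. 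Here the content is exactly the object-level isomorphism $G\cong\Gr(G_+)$ already asserted for directed $G$: surjectivity of $\iota_*$ is directedness $G=G_+-G_+$, and injectivity follows because $a-b=c-d$ in $G$ with $a,b,c,d\in G_+$ gives $a+d=b+c$ in $G_+$, i.e.\ the defining relations of $\Gr(G_+)$ are reflected.

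The step that needs the most care is verifying that $\eta_G$ is an isomorphism in $\CatPoGp$, not merely a group isomorphism: I must check that $\iota_*$ carries the positive cone $\Gr(G_+)_+=\delta(G_+)$ bijectively onto $G_+$, so that both $\iota_*$ and its inverse are order-preserving. Concretely, $\iota_*(\delta(G_+))=G_+$ is immediate, and conversely if $\iota_*([a]-[b])=a-b\in G_+$ then $a=b+y$ for some $y\in G_+$, whence $[a]-[b]=[y]\in\Gr(G_+)_+$; this \emph{order-reflection} property is the genuinely delicate point, because (as noted before \autoref{pgr:equivalenceMonPoGp}) a $\CatPoGp$-morphism need only preserve the order, so an isomorphism additionally requires the positive cones to correspond exactly. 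Finally I would check naturality of $\eta$ and $\varepsilon$, which is routine: for $f\colon G\to H$ the square relating $\eta_G$, $\eta_H$ and $\Gr(f|_{G_+})$ commutes by the uniqueness clause in the universal property of $\Gr$ applied to the monoid homomorphism $\iota_H\circ f|_{G_+}$, and the analogous square for $\varepsilon$ commutes because $\Gr(\varphi)\circ\delta_M=\delta_N\circ\varphi$ by the definition of $\Gr(\varphi)$. Assembling these, $\eta$ and $\varepsilon$ are natural isomorphisms, which establishes the asserted equivalence of categories.
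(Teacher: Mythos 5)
Your proposal is correct and follows the route the paper intends: the proposition is stated without a formal proof, as a summary of the object-level facts recorded in \autoref{pgr:equivalenceMonPoGp} ($G\cong\Gr(G_+)$ for directed $G$ and $M\cong\Gr(M)_+$ for cancellative conical $M$), and your argument simply upgrades these to natural isomorphisms by constructing the unit and counit and checking naturality. You also correctly isolate the one genuinely delicate point, namely that $\iota_*\colon\Gr(G_+)\to G$ must carry the positive cone bijectively onto $G_+$ (order-reflection) so that its inverse is again a $\CatPoGp$-morphism.
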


\begin{pgr}
\label{pgr:tensorPoGp}
Let $G$ and $H$ be \pog{s}.
Consider the tensor product $G\otimes H$ of the underlying groups.
The map $G_+\times H_+\to G\otimes H$, that sends $(a,b)$ to $a\otimes b$, for $a\in G_+, b\in H_+$,
is a monoid bimorphism.
It therefore induces a monoid homomorphism
\[
\delta\colon G_+\otimes H_+ \to G\otimes H.
\]
Since $G_+$ and $H_+$ are conical, so is $G_+\otimes H_+$;
see \autoref{pgr:con}.

The image of the map $\delta$ is a conical submonoid of $G\otimes H$.
As explained in \autoref{pgr:equivalenceMonPoGp}, this induces a partial order on $G\otimes H$ whose positive cone is the image of $\delta$.
We denote the resulting \pog{} by $G\otimes_{\CatPoGp}H$, which is the tensor product of $G$ and $H$ in the category $\CatPoGp$;
see \cite[Proposition~1.1]{Weh96TensorInterpol}.

In \autoref{prp:equivalenceMonPoGp}, we have seen that the full subcategory $\CatPoGp_\txtDir$ of directed, \pog{s} is equivalent to $\CatCon_\txtCanc$.
This equivalence is also compatible with the tensor product.
Thus, given directed \pog{s} $G$ and $H$, we have
\[
G\otimes_{\CatPoGp}H \cong \Gr( G_+ \otimes H_+).
\]
Conversely, if $M$ and $N$ are cancellative, conical monoids, then the monoid
\[
\left( \Gr(M)\otimes_{\CatPoGp}\Gr(N)\right)_+
\]
is isomorphic to the reflection of $M\otimes N$ in the subcategory of cancellative, conical monoids.
A proof of these statements can be found in \cite[Proposition~1.2]{Weh96TensorInterpol}.

It is natural to ask whether the tensor product of two cancellative, conical monoids is again cancellative. As it turns out, the answer to this question is negative. We thank Fred Wehrung for showing us a counterexample.

In \cite[Examples 1.4 and 1.5]{Weh96TensorInterpol}, examples of partially ordered abelian groups with Riesz interpolation $G$ and $H$ such that $G\otimes_{\CatPoGp} H$ does not have interpolation are given. If we let $M=G_+$ and $N=H_+$, then $M$ and $N$ are conical, cancellative, monoids that satisfy the Riesz refinement property. It then follows from \cite[Theorem 2.9]{Weh96TensorInterpol} that $M\otimes N$ also satisfies the Riesz refinement property.
Since
\[
G\otimes_{\CatPoGp}H=\Gr(M)\otimes_{\CatPoGp}\Gr(N)\cong\Gr(M\otimes N)
\]
we conclude that $\Gr(M\otimes N)_+$ does not satisfy the Riesz refinement property, and hence $M\otimes N$ is not a cancellative monoid (as otherwise would be isomorphic to the positive cone of its Grothendieck completion).
\end{pgr}

\vspace{5pt}
\section{The category \texorpdfstring{$\CatPoRg$}{PORg} of partially ordered rings}
\label{sec:poRg}

\begin{pgr}
\label{pgr:srg}
\index{terms}{semiring}
\index{terms}{ring}
A \emph{semiring} is a monoid $R$ together with a commutative and distributive multiplication with a unit element (which we denote by~$1$).
A semiring homomorphism is a multiplicative monoid homomorphism preserving the unit element.
We let $\CatSrg$ denote the category of semirings together with semiring homomorphisms.

Let $R$ and $S$ be semirings.
Consider the tensor product $R\otimes S$ of the underlying monoids.
Given $r_1,r_2\in R$ and $s_1,s_2\in S$, we define the product of simple tensors as
\[
(r_1\otimes s_1)(r_2\otimes s_2) = (r_1r_2)\otimes(s_1s_2).
\]
This extends to a well-defined, commutative multiplication on $R\otimes S$.
The element $1\otimes 1$ is a unit element.
It follows that $R\otimes S$ has a natural structure as a semiring.
If we want to stress that $R\otimes S$ has a semiring-structure, we write $R\otimes_{\CatSrg} S$.
This is the tensor product of $R$ and $S$ in the category of semirings.

All \emph{rings} will be unital and commutative.
Equivalently, a ring will be a semiring such that the underlying additive monoid is a group.
Given rings $R$ and $S$, the tensor product $R\otimes S$ of the underlying groups has a natural multiplication such that $R\otimes S$ is a ring.
This is the tensor product of $R$ and $S$ in the category of rings.
\end{pgr}

\begin{pgr}
\label{pgr:por}
\index{terms}{partially ordered ring}
\index{terms}{ring!partially ordered}
\index{symbols}{PoRg@$\CatPoRg$ \quad (category of partially ordered rings)}
A \emph{\por} is a ring $R$ together with a partial order $\leq$ such that $0\leq 1$ and such that:
\beginEnumConditions
\item
If $a\leq b$, then $a+c\leq b+c$, for any $a,b,c\in R$.
\item
If $a\leq b$ and $0\leq c$, then $ac\leq bc$, for any $a,b,c\in R$.
\end{enumerate}
In particular, the underlying group of $R$ together with $\leq$ is a \pog.
This induces a forgetful functor
\[
\mathfrak{F}\colon\CatPoRg\to\CatPoGp.
\]
Let $R$ be a \por.
As for \pog s, we define the \emph{positive cone} of $R$ as
\[
R_+ =\left\{ a\in R : 0\leq a \right\}.
\]
As in the group case, we have $R_++R_+\subset R_+$ and $R_+\cap(-R_+)=\{0\}$.
Moreover, we have $R_+\cdot R_+\subset R_+$.
Thus, the positive cone is a cancellative, conical semiring.

Now let $R$ and $S$ be \por{s}.
Consider the tensor product $R\otimes S$ of the underlying rings.
As for \pog{s}, there is a natural partial order on $R\otimes S$ with positive cone given as the image of $R_+\otimes S_+$ in $R\otimes S$.
This partial order is compatible with the multiplication, and we denote the resulting \por\ by $R\otimes_{\CatPoRg}S$.
This is the tensor product of $R$ and $S$ in the category $\CatPoRg$.

Equivalently, we can consider $R\otimes_{\CatPoRg}S$ as the tensor product $R\otimes_{\CatPoGp}S$ of the underlying \pog{s} equipped with a product as in \autoref{pgr:srg}.
\end{pgr}

\begin{pgr}
\label{pgr:equivalenceSrgPoRg}
Recall that $\CatSrg$ denotes the category of semirings.
We let $\CatCon\CatSrg$ denote the full subcategory of conical semirings.
Further, we let $\CatSrg_\txtCanc$ (respectively $\CatCon\CatSrg_\txtCanc$) denote the full subcategories of cancellative (conical) semirings.
We have a functor
\[
\mathfrak{P}\colon\CatPoRg\to\CatCon\CatSrg_\txtCanc,
\]
which assigns to a \por{} $R$ its positive cone $R_+$.

Conversely, it is easy to see that the Grothendieck completion of a semiring $S$ has a natural multiplication giving it the structure of a ring.
If $S$ is conical, the image of $S$ in $\Gr(S)$ defines a partial order on $\Gr(S)$.
Let $\CatPoRg_\txtDir$ be the full subcategory of $\CatPoRg$ consisting of directed \por{s}.
Then the Grothendieck completion induces a functor
\[
\Gr\colon\CatCon\CatSrg\to\CatPoRg_\txtDir.
\]
The situation is completely analogous to the connection between (cancellative) conical monoids and (directed) \pog{s} that was discussed in \autoref{pgr:equivalenceMonPoGp}.
Therefore, we have the following analog of \autoref{prp:equivalenceMonPoGp}:
\end{pgr}

\begin{prp}
\label{prp:equivalenceSrgPoRg}
The functors $\mathfrak{P}$ and $\Gr$ from \autoref{pgr:equivalenceSrgPoRg} establish an equivalence between the following categories:
\beginEnumStatements
\item
The category $\CatPoRg_\txtDir$ of directed \por{s}.
\item
The category $\CatCon\CatSrg_\txtCanc$ of cancellative, conical semirings.
\end{enumerate}
\end{prp}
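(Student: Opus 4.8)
The plan is to deduce this from the analogous equivalence for monoids and groups, \autoref{prp:equivalenceMonPoGp}, by observing that the multiplicative structure is uniquely determined by, and compatible with, the additive and order structure already handled there. As noted in \autoref{pgr:por} and \autoref{pgr:equivalenceSrgPoRg}, the two functors are already well defined on the subcategories in question: the positive cone $R_+$ of a directed \por{} $R$ is a cancellative, conical semiring, since $R_+\cdot R_+\subset R_+$ and the underlying additive monoid is cancellative and conical; and the Grothendieck completion $\Gr(S)$ of a cancellative, conical semiring $S$ carries a natural multiplication making it a directed \por. Thus it suffices to produce natural isomorphisms $\Gr(\mathfrak{P}(R))\cong R$ and $\mathfrak{P}(\Gr(S))\cong S$ that are isomorphisms of \por{s} and of semirings respectively.

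First I would recall the underlying additive isomorphisms from \autoref{prp:equivalenceMonPoGp}. For a cancellative, conical semiring $S$, the canonical map $\delta\colon S\to\Gr(S)$ is an injective monoid homomorphism whose image is exactly the positive cone $\Gr(S)_+$; hence $\delta$ corestricts to an isomorphism $S\xrightarrow{\cong}\mathfrak{P}(\Gr(S))$ of ordered monoids. Dually, for a directed \por{} $R$, the inclusion $R_+\hookrightarrow R$ induces, via the universal property of the Grothendieck completion, a canonical map $\eta_R\colon\Gr(R_+)\to R$ which \autoref{prp:equivalenceMonPoGp} shows to be an isomorphism of directed \pog{s}.

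The one point that genuinely requires the ring-theoretic structure is to check that these two additive isomorphisms are multiplicative and unit-preserving; this is the main (and essentially the only nontrivial) step. For $\delta$ this is immediate, because the multiplication on $\Gr(S)$ is defined precisely so that $\delta$ is a semiring homomorphism, and the inverse of a bijective semiring homomorphism is again one. For $\eta_R$ I would use directedness: since $R=R_+-R_+$, every element of $\Gr(R_+)$ is a difference of images of elements of $R_+$, and both the multiplication on $\Gr(R_+)$ and the multiplication on $R$ are biadditive extensions of the multiplication on $R_+$. As $\eta_R$ restricts to the inclusion on $R_+$ and sends $1$ to $1$, biadditivity forces $\eta_R\bigl((a_1-b_1)(a_2-b_2)\bigr)=\eta_R(a_1-b_1)\,\eta_R(a_2-b_2)$ for all $a_i,b_i\in R_+$, so $\eta_R$ is a ring homomorphism and hence an isomorphism of \por{s}. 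Finally I would verify naturality of both families of isomorphisms and that $\mathfrak{P}$ and $\Gr$ act correctly on morphisms (a semiring homomorphism restricts to a semiring homomorphism of positive cones and extends to one of Grothendieck completions); these checks are routine and parallel those underlying \autoref{prp:equivalenceMonPoGp}.
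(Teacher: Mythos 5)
Your proposal is correct and follows exactly the route the paper intends: the paper gives no written proof of \autoref{prp:equivalenceSrgPoRg}, simply declaring in \autoref{pgr:equivalenceSrgPoRg} that the situation is ``completely analogous'' to the monoid/group equivalence of \autoref{prp:equivalenceMonPoGp}, and your argument supplies precisely the missing details — reducing to the additive equivalence and then verifying that the unit isomorphisms $\delta$ and $\eta_R$ are multiplicative, the latter via directedness and biadditivity. No gaps; the only remark worth adding is that $1\in R_+$ (so $\eta_R$ is unital) follows from the axiom $0\leq 1$ in the definition of a \por{}, which you implicitly use.
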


\begin{pgr}
\label{pgr:tensorSrg}
In \autoref{prp:equivalenceSrgPoRg}, we have seen that the full subcategory $\CatPoRg_\txtDir$ of directed, \por{s} is equivalent to $\CatCon\CatSrg_\txtCanc$ of cancellative, conical semirings.
This equivalence is also compatible with the tensor product in the following sense:
Given directed \por{s} $R$ and $S$, we have
\[
R\otimes_{\CatPoRg}S \cong \Gr( R_+ \otimes S_+).
\]
Conversely, if $M$ and $N$ are cancellative, conical semirings, then the semiring
\[
\left( \Gr(M)\otimes_{\CatPoGp}\Gr(N)\right)_+
\]
is isomorphic to the reflection of $M\otimes N$ in the subcategory of cancellative, conical semirings.
\end{pgr}

\backmatter



\newcommand{\etalchar}[1]{$^{#1}$}
\providecommand{\bysame}{\leavevmode\hbox to3em{\hrulefill}\thinspace}
\providecommand{\MR}{\relax\ifhmode\unskip\space\fi MR }
\providecommand{\MRhref}[2]{%
  \href{http://www.ams.org/mathscinet-getitem?mr=#1}{#2}
}
\providecommand{\href}[2]{#2}

\renewcommand*{\indexname}{Index of Terms}
\begin{theindex}

  \item algebraic $\CatCu$-semigroup, 68
  \item algebraic $\CatCu$-semiring, 146
  \item algebraic order, 174
  \item algebraic pre-order, 174
  \item almost algebraic order, 31
  \item almost divisible, 119
  \item almost Riesz decomposition, 31
  \item almost unperforated, 53
  \item auxiliary relation, 11
    \subitem on $S\otimes_{\CatPom}T$, 90
    \subitem on (bi)morphism sets, 87

  \indexspace

  \item basis (of a $\CatPreW$-semigroup), 11

  \indexspace

  \item \ca{}
    \subitem local, 16
    \subitem pre-, 16
    \subitem purely infinite, 115
    \subitem simple, 43
    \subitem strongly self-absorbing, 2
  \item category
    \subitem $\CatCa$, 29
    \subitem $\CatCu$, 21
    \subitem $\CatLocCa$, 16
    \subitem $\CatPreW$, 12
    \subitem $\CatW$, 12
  \item compact containment, 11
  \item compactly generated space, 165
  \item completed Cuntz semigroup, 21
  \item congruence relation, 168
  \item conical monoid, 171
  \item countably-based, 11
  \item $\CatCu$-bimorphism, 94
    \subitem generalized, 94
  \item $\CatCu$-completion, 25
  \item $\CatCu$-morphism, 21
    \subitem generalized, 21
  \item $\CatCu$-product (of a $\CatCu$-semiring), 107
  \item $\CatCu$-semigroup, 21
    \subitem algebraic, 68
    \subitem elementary, 44
    \subitem residually stably finite, 59
    \subitem simple, 43
    \subitem simplicial, 72
    \subitem stably finite, 47
  \item $\CatCu$-semimodule, 107
  \item $\CatCu$-semiring, 107
    \subitem algebraic, 146
    \subitem solid, 109, 149
      \subsubitem characterization, 109
      \subsubitem classification, 156
  \item Cuntz equivalent, 17
  \item Cuntz semigroup
    \subitem completed, 28
    \subitem of Jacelon-Razak algebra, 132
    \subitem of Jiang-Su algebra, 118
    \subitem of UFH-algebra, 128
    \subitem pre-completed, 17
  \item Cuntz subequivalent, 17

  \indexspace

  \item dimension monoid, 72
  \item directed complete partially ordered set (dcpo), 22
  \item divisible
    \subitem $q$-, 129

  \indexspace

  \item element
    \subitem almost $k$-divisible, 119
    \subitem almost divisible, 119
    \subitem compact, 11
    \subitem finite, 47
    \subitem full, 35
    \subitem properly infinite, 47
    \subitem purely noncompact, 57
    \subitem soft, 53, 57
    \subitem weakly purely noncompact, 57
  \item Elliott's Classification Conjecture, 1
  \item enriched category, 163
  \item extended state, 45

  \indexspace

  \item finite element, 47
  \item functional, 45

  \indexspace

  \item ideal, 37
    \subitem lattice, 39
  \item inductive limits
    \subitem in $\CatCu$, 27
    \subitem in $\CatLocCa$, 19
    \subitem in $\CatPreW$, 15
    \subitem in $\CatW$, 16
  \item infinite element, 47
  \item internal hom-bifunctor, 164
  \item interval, 56
    \subitem countably generated, 69
    \subitem soft, 56

  \indexspace

  \item Jiang-Su algebra, 2, 73

  \indexspace

  \item local \ca, 16

  \indexspace

  \item monoid, 167
    \subitem conical, 171
    \subitem partially ordered, 172
    \subitem positively ordered, 172
    \subitem weakly divisible, 146
  \item monoidal category, 163
    \subitem closed, 164
    \subitem concrete, 163
    \subitem symmetric, 163

  \indexspace

  \item nearly unperforated, 73
  \item nearly unperforated Conjecture, 80
  \item no $K_1$-obstructions, 78
  \item nonelementary $\CatCu$-semigroup, 44
  \item nonelementary semiring, 147

  \indexspace

  \item \axiomO{1}-\axiomO{4}, 21
  \item \axiomO{5}, 31
  \item \axiomO{6}, 31
  \item order-hereditary, 37
  \item order-ideal, 37

  \indexspace

  \item partially ordered group, 176
    \subitem directed, 176
  \item partially ordered monoid, 172
  \item partially ordered ring, 178
  \item $\CatPom$, 172
  \item positive cone, 176
  \item positively ordered monoid, 172
    \subitem almost divisible, 119
    \subitem almost unperforated, 53
    \subitem cancellative, 69, 176
    \subitem nearly unperforated, 73
    \subitem preminimally ordered, 74
    \subitem $q$-divisible, 129
    \subitem $q$-unperforated, 129
    \subitem simple, 74
    \subitem simplicial, 72
    \subitem stably finite, 74
    \subitem unperforated, 72
    \subitem weakly separative, 74
  \item \preCa, 16
  \item predecessor, 65
  \item preminimally ordered, 74
  \item $\CatPreW$-semigroup, 12
  \item properly infinite, 115
  \item properly infinite element, 47
  \item property
    \subitem $(QQ)$, 59
    \subitem $(RQQ)$, 59
  \item purely infinite \ca{}, 115
  \item purely noncompact, 57

  \indexspace

  \item $R$-multiplication, 107
  \item Range problem, 5
  \item rapidly increasing sequence, 21
  \item rationalization, 126
    \subitem $\CatCu$-semigroup, 128
  \item real multiplication, 132
  \item realification, 134
  \item regular relation, 53
  \item Regularity Conjecture, 3, 124
  \item regularization of a relation, 53
  \item relation
    \subitem compact containment, 11
    \subitem Cuntz equivalence, 17
    \subitem Cuntz subequivalence, 17
    \subitem stable domination, 52
    \subitem way-below \seeonly{compact containment}, 11
  \item representable functor, 82
  \item Riesz decomposition property, 69
  \item Riesz interpolation property, 69
  \item Riesz refinement property, 69
  \item ring, 178
    \subitem partially ordered, 178
    \subitem solid, 109

  \indexspace

  \item Scott-topology, 22
  \item semigroup
    \subitem idempotent, 113
  \item semiring, 178
    \subitem nonelementary, 147
    \subitem solid, 151
  \item simple ($\CatCu$-semigroup), 43
  \item simple (positively ordered monoid), 74
  \item simplicial monoid, 72
  \item soft, 57
  \item soft element, 53
  \item soft interval, 56
  \item solid $\CatCu$-semiring, 149
  \item stably dominated, 52
  \item stably finite $\CatCu$-semigroup, 47
  \item stably finite positively pre-ordered monoid, 74
  \item state, 45
  \item strict comparison, 137
  \item strongly self-absorbing \ca{}, 2
  \item supernatural number, 126

  \indexspace

  \item tensor product, 84
    \subitem in $\CatCu$, 94
      \subsubitem associative, 96
      \subsubitem continuous, 96
    \subitem in $\CatPreW$, 90
      \subsubitem continuous, 96
    \subitem in $\CatW$, 95
  \item Toms-Winter Conjecture, 3, 124

  \indexspace

  \item UHF-algebra
    \subitem infinite type, 128
  \item unperforated, 72
    \subitem almost, 53
    \subitem nearly, 73
    \subitem $q$-, 129

  \indexspace

  \item $\CatV$-category, 163
  \item $\CatV$-functor, 163
  \item $\CatV$-natural transformation, 163

  \indexspace

  \item $\CatW$-bimorphism, 86
    \subitem generalized, 86
  \item $\CatW$-completion, 13
  \item $\omega$-continuous, 22
  \item $\CatW$-morphism
    \subitem generalized, 12
  \item $\CatW$-semigroup, 12
  \item \axiomW{1}-\axiomW{4}, 11
  \item \axiomW{5}, 32
  \item \axiomW{6}, 32
  \item way-below \seeonly{compact containment}, 11
  \item weak cancellation, 31
  \item weakly divisible monoid, 146
  \item weakly purely noncompact, 57

\end{theindex}

\renewcommand*{\indexname}{Index of Symbols}
\begin{theindex}

  \item $<_s$ \quad (stable domination relation), 52
  \item $<_s^\ctsRel$, 53
  \item $\CatLocCa$ \quad (category of local \ca{s}), 16
  \item $\gamma(S)$ \quad (reflection from $\CatPreW$ in $\CatCu$), 27
  \item $\leq_I$, 37
  \item $\leq_p$, 73
  \item $\lhd$, 46
  \item $\ll$ \quad(compact containment), 11
  \item $\otimes$ \quad (tensor product in $\CatMon$), 168
  \item $\otimes_{\CatCu}$, 94
  \item $\otimes_{\CatPom}$, 173
  \item $\otimes_{\CatPrePom}$, 173
  \item $\otimes_{\CatPreW}$, 91
  \item $\prec$ \quad(auxiliary relation), 11
  \item $\precsim$ \quad (Cuntz subequivalence), 17
  \item $\sim$ \quad (Cuntz equivalence), 17
  \item $\sim_I$, 37
  \item $\tau_{A,B}^\txtMax$, 101
  \item $\tau_{A,B}^\txtMin$, 101
  \item $\varpropto$, 52
  \item $\varpropto^\ctsRel$, 53

  \indexspace

  \item $\hat{a}$, 46
  \item $a^\prec$ \quad(set of $\prec$-predecessor of $a$), 11

  \indexspace

  \item $\CatCa$ \quad (category of \ca{s}), 29
  \item $\CatLocCaLim$ \quad (inductive limit in $\CatLocCa$), 19
  \item $\CatCon$ \quad (category of conical monoids), 171
  \item $\CatCu$ \quad (category of $\CatCu$-semigroups), 21
  \item $\Cu(A)$ \quad (completed Cuntz semigroup), 28
  \item $\CatCuMor(S,T)$ \quad ($\CatCu$-morphisms), 21
  \item $\CatCuGenMor{S,T}$ \quad (generalized $\CatCu$-morphisms), 21
  \item $\CatCuLim$ \quad (inductive limit in $\CatCu$), 27

  \indexspace

  \item $\elmtrySgp{k}$ \quad (elementary $\CatCu$-semigroup), 44

  \indexspace

  \item $F(S)$ \quad (functionals), 45

  \indexspace

  \item $G_+$ \quad (positive cone), 176
  \item $\CatGp$ \quad (category of groups), 170
  \item $\Gr(M)$ \quad (Grothendieck completion), 170

  \indexspace

  \item $\Idl(a)$ \quad (ideal generated by $a$), 40

  \indexspace

  \item $K_q$, 127

  \indexspace

  \item $L(F(S))$, 46
  \item $\Lat(S)$ \quad (lattice of ideals), 39
  \item $\Lat_{\mathrm{f}}(S)$ \quad (singly-generated ideals), 40
  \item $\Lsc(F(S))$, 46

  \indexspace

  \item $M_\infty(A)$, 16
  \item $M_q$ \quad (UHF-algebra), 128
  \item $\CatMon$ \quad (category of monoids), 167

  \indexspace

  \item $\CatPoGp$ \quad (category of partially ordered groups), 176
  \item $\CatPom$ \quad (category of positively ordered monoids), 172
  \item $\CatPoRg$ \quad (category of partially ordered rings), 178
  \item $\CatPrePom$ \quad (category of positively pre-ordered monoids),
		172
  \item $\CatPreW$ \quad (category of $\CatPreW$-semigroups), 12
  \item $\CatPreWLim$ \quad (inductive limit in $\CatPreW$), 15

  \indexspace

  \item $Q$, 157

  \indexspace

  \item $\mathcal{R}$ \quad (Jacelon-Razak algebra), 132
  \item $R_q$, 127
  \item $R^\ctsRel$ \quad (regularization of a relation), 53

  \indexspace

  \item $S_\pnc$ \quad (purely noncompact elements), 59
  \item $S_\soft$ \quad (soft elements), 57
  \item $S_\wpnc$ \quad (weakly purely noncompact elements), 59
  \item $S_R$ \quad (realification), 134
  \item $S/I$ \quad (quotient semigroup), 37

  \indexspace

  \item $\CatW$ \quad (category of $\CatW$-semigroups), 12
  \item $\W(A)$ \quad (precompleted Cuntz semigroup), 17
  \item $\CatWMor(S,T)$ \quad ($\CatW$-morphisms), 12
  \item $\CatWGenMor{S,T}$ \quad (generalized $\CatW$-morphisms), 12
  \item $\CatWLim$ \quad (inductive limit in $\CatW$), 16

  \indexspace

  \item $Z$ \quad (Cuntz semigroup of Jiang-Su algebra), 98
  \item $\mathcal{Z}$ \quad (Jiang-Su algebra), 73

\end{theindex}



\end{document}